\documentclass{article}
\usepackage[english]{babel}
\usepackage{amssymb}
\usepackage{amsmath}
\usepackage{hyperref}
\usepackage{enumerate}
\usepackage{amsthm}
\usepackage{amsfonts}
\usepackage[dvipsnames]{xcolor}
\usepackage[curve]{xypic}
\usepackage{diagbox}
\usepackage{multirow}

\usepackage{MnSymbol}

\DeclareMathSymbol{\mlq}{\mathord}{operators}{``}
\DeclareMathSymbol{\mrq}{\mathord}{operators}{`'}

\usepackage{tikz}
\newcommand*\mathcircled[1]{\tikz[baseline=(char.base)]{
            \node[shape=circle,draw,inner sep=2pt] (char) {#1};}}

\usepackage[numbers,sort&compress]{natbib}

\newcommand*{\tildeotimes}{\mathbin{\tilde{\otimes}}}
\newcommand*{\utildeotimes}{\mathbin{\tilde{\underline{\otimes}}}}
\newcommand*{\tildeboxtimes}{\mathbin{\tilde{\boxtimes}}}
\newcommand*{\utildetimes}{\mathbin{\tilde{\underline{\times}}}}







\newtheorem{SATZ}{Theorem}[section]

\newtheorem{KEYLEMMA}[SATZ]{Key Lemma}
\newtheorem{LEMMA}[SATZ]{Lemma}
\newtheorem{DEF}[SATZ]{Definition}
\newtheorem{PROP}[SATZ]{Proposition}

\newtheorem{DEFLEMMA}[SATZ]{Definition/Lemma}
\newtheorem{BEISPIEL}[SATZ]{Example}

\newtheorem{EX}[SATZ]{Exercise}

\newtheorem{KOR}[SATZ]{Corollary}

\newtheorem{BEM}[SATZ]{Remark}

\newtheorem{WARNING}[SATZ]{Warning}

\newtheoremstyle{bare}        
  {}            
  {}            
  {\normalfont}                 
  {}                            
  {\mdseries\scshape}                   
  {}                            
  {.0em}                           
  {\thmnumber{#2}#1. \thmnote{\normalfont\textsc{(#3)}} } 

\theoremstyle{bare}
\newtheorem{PAR}[SATZ]{}

\makeatletter
\NewDocumentCommand{\intrev}{e{_^}}{%
  \mathop{\mathpalette\intrev@{{#1}{#2}}}%
}
\NewDocumentCommand{\intrev@}{mm}{%
  \intrev@@#1#2%
}
\NewDocumentCommand{\intrev@@}{mmm}{%
  \begingroup
  \sbox\z@{$\m@th#1\int$}%
  \reflectbox{\usebox\z@}%
  \IfValueT{#2}{
    _{#2}%
  }%
  \IfValueT{#3}{
    ^{\kern-\ifx#1\displaystyle0.5\else0.4\fi\wd\z@#3}%
  }%
  \endgroup
}
\makeatother


\newcommand{\iso}{\stackrel{\sim}{\longrightarrow}}

\newcommand{\Mat}[1]{ \left(\begin{matrix} #1 \end{matrix} \right) }

\newcommand{\R}{ \mathbb{R} }

\newcommand{\Z}{ \mathbb{Z} }

\newcommand{\N}{ \mathbb{N} }

\newcommand{\OOO}{\text{\footnotesize$\mathcal{O}$}}
\newcommand{\OO}{ {\cal O} }

\DeclareMathOperator{\sgn}{sgn}

\DeclareMathOperator{\eval}{eval}
\DeclareMathOperator{\PF}{PF}
\DeclareMathOperator{\Sh}{Sh}
\DeclareMathOperator{\Sz}{Sz}
\DeclareMathOperator{\AW}{AW}
\DeclareMathOperator{\EZ}{EZ}
\DeclareMathOperator{\Aw}{Aw}
\DeclareMathOperator{\Ez}{Ez}
\DeclareMathOperator{\Alg}{Alg}
\DeclareMathOperator{\Def}{Def}
\newcommand{\uHom}{ \underline{\Hom} }

\newcommand{\uAlg}{ \underline{\Alg} }
\newcommand{\uDef}{ \underline{\Def} }

\newcommand{\Awfrak}{ \mathfrak{Aw} }
\newcommand{\Ezfrak}{ \mathfrak{Ez} }
\newcommand{\AWfrak}{ \mathfrak{AW} }
\newcommand{\EZfrak}{ \mathfrak{EZ} }
\newcommand{\Cfrak}{ \mathfrak{C} }

\DeclareMathSymbol{\mlq}{\mathord}{operators}{``}
\DeclareMathSymbol{\mrq}{\mathord}{operators}{`'}

\DeclareMathOperator{\switch}{switch}

\DeclareMathOperator{\fg}{fg}
\DeclareMathOperator{\nd}{nd}
\DeclareMathOperator{\Kar}{Kar}

\DeclareMathOperator{\Cor}{Cor}
\DeclareMathOperator{\Nat}{Nat}
\newcommand{\uNat}{ \underline{\Nat} }
\DeclareMathOperator{\Coh}{Coh}
\newcommand{\uCoh}{ \underline{\Coh} }
\DeclareMathOperator{\Dinat}{Dinat}

\DeclareMathOperator*{\colim}{colim}

\DeclareMathOperator*{\laxlim}{laxlim}
\DeclareMathOperator*{\laxcolim}{laxcolim}
\DeclareMathOperator*{\oplaxlim}{oplaxlim}
\DeclareMathOperator*{\oplaxcolim}{oplaxcolim}
\DeclareMathOperator{\Ind}{Ind}

\DeclareMathOperator{\id}{id}
\DeclareMathOperator{\barconst}{bar}
\DeclareMathOperator{\cobarconst}{cobar}
\DeclareMathOperator{\barlurie}{Bar}
\DeclareMathOperator{\Coalg}{Coalg}
\DeclareMathOperator{\cobarlurie}{Cobar}
\DeclareMathOperator{\dd}{d}
\DeclareMathOperator{\cart}{cart}
\DeclareMathOperator{\cocart}{cocart}
\DeclareMathOperator{\lax}{lax}
\DeclareMathOperator{\oplax}{oplax}
\DeclareMathOperator{\pseudo}{pseudo}
\DeclareMathOperator{\inert}{inert}
\DeclareMathOperator{\act}{act}

\DeclareMathOperator{\coker}{coker}
\DeclareMathOperator{\Hom}{Hom}
\DeclareMathOperator{\Fun}{Fun}

\DeclareMathOperator{\End}{End}

\DeclareMathOperator{\can}{can}

\DeclareMathOperator{\Pro}{Pro}

\DeclareMathOperator{\FinSet}{FinSet}

\DeclareMathOperator{\inj}{inj}

\DeclareMathOperator{\pr}{pr}

\DeclareMathOperator{\Cat}{Cat}
\DeclareMathOperator{\SCat}{SCat}
\DeclareMathOperator{\AbCat}{AbCat}
\DeclareMathOperator{\Ab}{Ab}
\DeclareMathOperator{\Op}{Op}
\DeclareMathOperator{\Coop}{Coop}
\DeclareMathOperator{\Gpd}{Gpd}
\DeclareMathOperator{\Set}{Set}
\DeclareMathOperator{\Dia}{Dia}

\DeclareMathOperator{\im}{im}
\DeclareMathOperator{\gr}{gr}
\DeclareMathOperator{\op}{op}

\newcommand{\ddd}{{}^{\downarrow \downarrow}}
\newcommand{\tw}{{}^{\uparrow \downarrow}}
\newcommand{\twc}{{}^{\uparrow \downarrow \uparrow}}
\newcommand{\twcop}{{}^{\downarrow \uparrow \downarrow}}
\newcommand{\twop}{{}^{\downarrow \uparrow}}
\newcommand{\twtw}{{}^{\uparrow \downarrow\uparrow \downarrow}}

\DeclareMathOperator{\Ch}{Ch}

\DeclareMathOperator{\dec}{dec}
\DeclareMathOperator{\tot}{tot}

\DeclareFontFamily{U}{wncy}{}
    \DeclareFontShape{U}{wncy}{m}{n}{<->wncyr10}{}
    \DeclareSymbolFont{mcy}{U}{wncy}{m}{n}
    \DeclareMathSymbol{\Sha}{\mathord}{mcy}{"58} 

\begin{document}

\title{Lectures on bar and cobar}
\date{20.7.2025}
\author{Fritz H\"ormann\footnote{These lectures are funded by donations. If the notes helped you, please consider a contribution: \url{https://donorbox.org/fritz-hormann-independent-researcher-in-mathematics}.}}

\maketitle

\begin{abstract}

We discuss Lurie's (derived) bar and cobar constructions, the classical ones 
for simplicial groups and sets (due to Eilenberg-MacLane and Kan), and the classical ones for differential graded 
(co)algebras (due to Eilenberg-MacLane and Adams) and their relations, putting them 
 into an abstract framework which makes sense much more generally for any cofibration of $\infty$-operads. 
Along these lines we give new and rather conceptual existence proofs of Lurie's adjunction (where bar is left adjoint) 
and the classical adjunction (where bar is right adjoint). We also recover various classical comparison maps, 
e.g.\@ the Szczarba and Hess-Tonks maps comparing Adams cobar with Kan's loop group.
\end{abstract}

\tableofcontents

\section{Introduction}

 A small category $I$ gives rise to a diagram
\begin{equation} \label{eqintro0} \vcenter{ \xymatrix{ & \twop I \ar[ld]_{\pi_1} \ar[rd]^{\pi_2} \\
I & & I^{\op}
} }
\end{equation}
where $\twop I$ is the (dual) twisted arrow category\footnote{see Section~\ref{SECTTW} for a discussion and legitimization of the unusual notation. }.
This yields for a complete and cocomplete $\infty$-category $\mathcal{C}$ an adjunction
\begin{equation}\label{eqintrocobar} \xymatrix{\mathcal{C}^{I} \ar@<3pt>[rrr]^{\barlurie_{\mathcal{C}}\, :=\, \pi_{2,!} \pi_1^*} & & &  \ar@<3pt>[lll]^{\cobarlurie_{\mathcal{C}}\, :=\, \pi_{1,*} \pi_2^*}  \mathcal{C}^{I^{\op}}  }\end{equation}
with $\pi_{1,*} \pi_2^*$ {\em right} adjoint, 
where the $\pi_{1,!}$ and $\pi_{2,*}$ are the left and right Kan extensions along $\pi_1$, and $\pi_2$, respectively. 
This is already (a simple special case) of our {\em derived} bar and cobar adjunction. There is also an adjunction
\[ \xymatrix{ \mathcal{C}^{\twop I}  \ar@<3pt>[rr]^{\cobarconst_{\mathcal{C}}\, :=\, \pi_{1,!}} & &  \ar@<3pt>[ll]^{\barconst_{\mathcal{C}}\, :=\, \pi_1^*}  \mathcal{C}^{I}  }\]
with $\pi_1^*$ fully-faithful and $\pi_{1,!}$ {\em left} adjoint. This is already (a simple special case) of our {\em classical} bar and cobar adjunction. Here we imagine a 1-category $\mathcal{C}$, but this is not necessary. 

It might be surprising that these simple constructions can be useful. However, they have a straight-forward generalization to (co)fibrations of $\infty$-operads\footnote{In these notes, so far, I decided to work with planar (co)operads as opposed to symmetric ones, but nothing special about planar operads is used in the
general definitions.}, such that for a monoidal $\infty$-category $\mathcal{C} \to  \OOO$ and $I= \OOO$ (planar associative operad) we have
\[ \boxed{ \mathrm{(Co)}\barlurie_{\mathcal{C} \to \OOO} = \text{ Lurie's (co)bar construction. } }   \]
In the presence of augmentations\footnote{i.e.\@ passing to a situation where the unit is final} and for $I= \OOO$, this yields a formula (where $\rho^*$ is a certain equivalence of $\infty$-categories, see below)
\[ \barlurie_{\mathcal{C} \to \OOO}\, :=\, \pi_{2,!} \circ  \widetilde{\pi_1^*} \cong \colim_{\Delta^{\op}} \circ (\rho^*)^{-1} \circ \widetilde{\pi_1^*} .  \]
Here $\widetilde{\pi_1^*}$ --- the ``{\em classical} bar construction'' $\barconst_{\mathcal{C} \to \OOO}$ in this case --- is not a simple pull-back as above anymore but a slightly twisted variant that transfers an algebra into a coalgebra and also $\tw I$, for an operad $I$, is a bit different from the usual notion. 
This implies the existence of Lurie's bar construction, as soon as geometric realizations exist, and hence a simple alternative to the existence proof presented in \cite{Lur11}.

For the {\em classical} (co)bar construction, we have (justifying the name)
\[ \boxed{ \dec_* \circ (\rho^*)^{-1} \circ \barconst_{(\Set^{\Delta^{\op}}, \times) \to \OOO} = \overline{W} }  \]
with the functor $\overline{W}$ (Eilenberg-MacLane classifying space)  and 
\[ \boxed{  \cobarconst_{(\Set^{\Delta^{\op}}, \times) \to \OOO} \circ \rho^* \circ \dec^*  = M^K   } \]
where $M^K$ is the geometric cobar construction, a simplicial monoid whose associated simplicial group is Kan's loop group, 

Also, we have for an Abelian tensor category $(\mathcal{D}, \otimes)$:
\[ \boxed{ \dec_* \circ (\rho^*)^{-1} \circ \barconst_{(\Ch_{\ge 0}(\mathcal{D})_{/1}, \tildeotimes) \to \OOO} \cong \barconst^{\mathrm{EM}}  }\]
where $\barconst^{\mathrm{EM}}$ is the Eilenberg-MacLane bar construction and 
\[ \boxed{  \cobarconst_{(\Ch_{\ge 0}(\mathcal{D}), \tildeotimes) \to \OOO} \circ \rho^* \circ \dec^* \cong \cobarconst^{\mathrm{Adams}} \circ P } \]
where $\cobarconst^{\mathrm{Adams}}$ is the Adams cobar construction and $P$ is the functor ``connected cover'' with its canonical coaugmentation. 

The reader familiar with Lurie's definition will have noticed that this is not the definition in terms of a certain pairing of categories that Lurie gives. 
However, the (co)bar adjunction in the simple case above (i.e.\@ over a point) can also be described (not completely obviously) by saying that the functor $\rho_1 \times \rho_2$ in the diagram
\begin{equation} \label{eqintro1} \vcenter{ \xymatrix{ & (\twop \mathcal{C})^I \ar[ld]_{\rho_1} \ar[rd]^{\rho_2} \\
\mathcal{C}^I & & (\mathcal{C}^{I^{\op}})^{\op} } }
\end{equation}
 is a fibration represented by the the bifunctor
\[  X, Y \mapsto  \Hom_{\mathcal{C}^{I^{\op}}}(\barlurie_{\mathcal{C}} X, Y) \cong \Hom_{\mathcal{C}^{I}}(X, \cobarlurie_{\mathcal{C}} Y).  \]
with $\barlurie$ and $\cobarlurie$ defined in (\ref{eqintrocobar}).
This also generalizes straight-forwardly to the setting of cofibrations of operads, and gives precisely Lurie's definition. 

The primary aim of these lectures was to explain the facts mentioned so far in detail.  
Eventually, however, they grew into lectures about many related things:
\begin{enumerate}
\item An alternative approach to Eilenberg-Zilber theorems using the notion of symmetry (Section~\ref{SECTSYMM}).  In particular, we give definitions that do not need any combinatorics, of the Alexander-Whitney and Eilenberg-Zilber morphisms,
which work in the non-Abelian and Abelian setting alike, yet give back the classical morphisms in the Abelian case. Similar ideas are already in \cite[\S 13]{May75}.
\item An alternative description of
the canonical simplicial enrichment on simplicial objects (Section~\ref{SECTALT}, cf.\@ also \cite{OR20}):
 \[ \boxed{ \uHom(X, Y)_{[n]} \cong \Hom_{\mathcal{C}^{(\Delta^{\op})^{n+1}}}(\dec_{n+1}^* X, \dec_{n+1}^*Y). } \]
 This is used to construct, for instance, the homotopy (Shih operator) $\Ez \Aw \Rightarrow \id$ using abstract principles (as opposed to specifying a formula, or referring to the method of ``acyclic models'').
\item A self-contained discussion of the Dold-Kan theorem with the least possible amount of calculation (Section~\ref{SECTDOLDKAN}). 
\item A comparison between different classical cobar constructions (Section~\ref{SECTFUNCT}). In particular, we recover from abstract principles the Szczarba map \cite{Szc61} from the Adams cobar construction to the ``singular chains'' on the geometric cobar constructions or Kan's loop group, and its (homotopy) inverse, the Hess-Tonks map \cite{HT10}.
This uses the functoriality of (a completion) of cobar in the Abelian case in morphisms of $A_{\infty}$-algebras. To this end, we discuss a connection between coherent transformations in the sense
of Cordier-Porter \cite{CP97} and $A_{\infty}$-morphisms (Section~\ref{SECTCOHERENTAINFTY}). 
\item Using (dual) classical cobar to construct cofree coalgebras (Section~\ref{SECTCOFREECOALG}).
\end{enumerate}

I would have liked to base all {\em constructions} in these lectures on abstract principles (as opposed to writing down a formula). This could be largely achieved, but not in every case. 
The most notable exception is the definition of the map from coherent transformations to $A_{\infty}$-morphisms. Although the definition is {\em very} simple it is by means of specifying a formula with signs...  
Another excepetion is the  functoriality of the Abelian cobar in $A_{\infty}$-morphisms. This works --- in general --- only after completion of the latter, thus one cannot expect a too simple ``abstract principle'' behind. 
I hope to improve on these points in the future.

Chapters 4--6 and the appendices deal almost entirely with 1-categories and very classical constructions concerning them. 
The reader not familiar with $\infty$-categories can thus concentrate on this part. Some facts and definitions from  Chapters 1--3 are used, which are
stated there for $\infty$-categories. However, the relevant definitions and statements are not very different from the ones which one would make restricting to 1-categories. 
Let me emphasize, however: Although the definition of the {\em derived } (co)bar (as opposed to classical (co)bar) could also be literally stated for 1-categories, it would be almost useless. Its non-triviality is an entirely higher categorical phenomenon. 
Concerning pro-functors, also a little care is needed\footnote{The ``embedding'' $\Cat^{\PF} \to \Cat_{\infty}^{\PF}$ is not compatible with composition.}, therefore in the corresponding sections, an explicit distinction of 1-categories and $\infty$-categories is carried along anyway.

\subsection{(Co)bar}

Bar and cobar constructions are ubiquitous in mathematics, especially in algebraic topology and homological algebra, and there seems to be no general definition encompassing 
all their appearances. Several attempts have been made to generalize and unify them in certain contexts. The first most general definition
of a (one-sided, two-sided) bar construction in the context of monads has been given by Godement \cite{God58}. 
In the homological context the first bar construction is due to Eilenberg-MacLane \cite{EM53} and the first cobar construction due to Adams \cite{Ada56}, cf.\@ also \cite{Bau80}. 
Boardman and Vogt defined a very general bar construction for ``theories'' in \cite{BV73}.
Meyer has suggested a unification of different kinds of (co)bar constructions in \cite{Mey84, Mey86}. 
There are very general definitions of (co)bar constructions in 
the context of differential graded (co)algebras and (co)operads, cf.\@ \cite{MSS02, LV12}. 
In the $\infty$-categorical context, Lurie has given a quite general definition
of a (co)bar duality  in \cite{Lur11}.

\subsection{General classical and derived (co)bar duality}

Lurie calls a diagram such as (\ref{eqintro1}) a left and right representable pairing of categories (cf.\@ Definition~\ref{DEFPAIRING}), if  $\rho_1 \times \rho_2$ is a fibration with groupoid fibers  in such a way that
allows to extract the functors $\barlurie$ and $\cobarlurie$ which are then automatically adjoint.

Diagram (\ref{eqintro1}) has a straightforward generalization to $\infty$-(co)operads and more generally to cofibrations of $\infty$-operads $\mathcal{C} \to \mathcal{S}$, and a small $\infty$-operad $I$:
Consider the  diagram\footnote{In which $\mathcal{C}^{\vee} \to \mathcal{S}^{\op}$ is the fibration of cooperads with the same fibers obtained from $\mathcal{C} \to \mathcal{S}$ and $\mathcal{C} \twop \to \mathcal{S}$ is the cofibration of operads whose fibers are the twisted arrow categories of the fibers of $\mathcal{C} \to \mathcal{S}$.} 
\[ \xymatrix{ & ( \mathcal{C} \twop )^I \ar[ld]_{\rho_1} \ar[rd]^{\rho_2} \\
\mathcal{C}^I  \ar[rd] & & ((\mathcal{C}^{\vee})^{I^{\op}})^{\op} \ar[ld] \\
& \mathcal{S}^I  
}\]
which in the simplest case $\mathcal{S} = I = \OOO$, when $\mathcal{C}$ is just a  monoidal $\infty$-category, gives
\[ \xymatrix{ & \Alg( \twop \mathcal{C} ) \ar[ld]_{\rho_1} \ar[rd]^{\rho_2} \\
\Alg(\mathcal{C}) & & \Coalg(\mathcal{C})^{\op}
}\]
the diagram Lurie considers to define his (co)bar functors. 

We show that {\em also the diagram (\ref{eqintro0})} has a straight-forward generalization to the relative setting of cofibrations of $\infty$-operads 
permitting thus to reduce the existence of Lurie's (co)bar functors to the existence of a certain relative (a.k.a.\@ operadic) Kan extensions. 
In fact (\ref{eqintro0}) has also an switched form involving $\tw I$ and thus {\em two} generalizations: 

\begin{equation}\label{eqbarintro} 
\vcenter{ \xymatrix@C=1pc{
& (\mathcal{C}^{\vee})^{\twop I}_{\pi_2^* S^{\op}} \ar@{<-}[ld]_{\widetilde{\pi_1^*}} \ar@{<-}[rd]^{\pi_2^*}\\
\mathcal{C}^I_{S}  & & (\mathcal{C}^\vee)^{I^{\op}}_{S^{\op}}  \\
}} \quad \vcenter{\xymatrix@C=1pc{
& \mathcal{C}^{\tw I}_{\Pi_2^* S} \ar@{<-}[ld]_{\Pi_2^*} \ar@{<-}[rd]^{\widetilde{\Pi_1^*}} \\
\mathcal{C}^I_{S}  & & (\mathcal{C}^\vee)^{I^{\op}}_{S^{\op}}   \\
}}
\end{equation}
and we have (cf.\@ Proposition~\ref{PROPCOBAR}) similarly that $\rho_1 \times \rho_2$ is the fibration represented fiber-wise over $S \in \mathcal{S}^{I}$ by equivalently
\[ X, Y \mapsto \Hom(\widetilde{\pi_1^*} X,  \pi_2^*Y) \cong \Hom(\Pi_2^* X, \widetilde{\Pi_1^*} Y).  \]
Thus (the generalization of) Lurie's (co)bar functors --- which are, by definition, functors such that  $\rho_1 \times \rho_2$ is the fibration represented fiber-wise by
\[ X, Y \mapsto \Hom(\barlurie X,  Y) \cong \Hom( X, \cobarlurie Y)  \]
--- exist, if suitable relative Kan extensions along $\pi_2$ and $\Pi_2$ exist. 
The appearing {\em operad} $\tw I$ (resp.\@ cooperad $\twop I$)  is a generalization of the notion of ``twisted arrow category'' to operads. It is an operad whose ``category of operators''  is not quite the twisted arrow category
of the category of operators of $I$ but merely a localization of it. Its objects can be identified with {\em active} morphisms in $I$.

The classical (co)bar adjunction generalizes to
\[ \xymatrix{  (\mathcal{C}^{\vee})^{\twop I}_{\pi_2^* S^{\op}}
 \ar@<3pt>[rr]^-{\cobarconst_{\mathcal{C} }}  & &  \ar@<3pt>[ll]^-{\barconst_{\mathcal{C}}\, :=\, \widetilde{\pi_1^*}} 
\mathcal{C}^I_{S}  
}. \]
Here $\barconst_{\mathcal{C}}$ always exists and is fully-faithful and $\cobarconst_{\mathcal{C}}$ is its {\em left} adjoint.

In the simplest case $\mathcal{S} = I = \OOO$ this gives
\begin{gather}\label{eqbarintro2} 
\vcenter{ \xymatrix@C=1pc{
& (\mathcal{C}^{\vee})^{(\Delta_{\act}, \ast')^{\op}} \ar@{<-}[ld]_{\widetilde{\pi_1^*}} \ar@{<-}[rd]^{\pi_2^*}\\
\Alg(\mathcal{C})  & & \Coalg(\mathcal{C}) \\
} }
\end{gather}
where $(\Delta_{\act}, \ast')$ is the monoidal category $\Delta_{\act}$, the simplex category with endpoint-preserving morphisms, equipped with the monoidal product $\ast'$
\footnote{In fact, we have an isomorphism $(\Delta_{\act}, \ast') \cong (\Delta^{\op}_{\emptyset}, \ast)$ by the ``duality of ordered sets and intervals'' (Lemma~\ref{LEMMADUAL}) and you may choose your favorite model among both.}.
It is quite unlikely that $\pi_2^*$ will have a left adjoint that commutes with the forgetful functor (for  $\mathcal{S} = I = \OOO$ it certainly does not). 
However, if we replace $\mathcal{C}$ by the category of augmented objects in $\mathcal{C}$, which has the effect that the unit
will become final
\begin{equation} \label{eqintrorho} \rho^*:  (\mathcal{C}^{\vee})^{(\Delta, \ast)^{\op}} \to (\mathcal{C}^{\vee})^{(\Delta_{\act}, \ast')^{\op}}   \end{equation}
is an equivalence. 
Here $(\Delta, \ast)$ is not monoidal anymore, because it lacks a unit, but still pro-monoidal or, as we say, an exponential (even $\infty$-exponential) fibration of operads over $\OOO$. (This suffices also to construct a Day convolution.)
Now $\pi_2^*$ has a left adjoint that commutes with the forgetful functor under no assumptions at all\footnote{The technical reason is that $\ast'$ is not cofinal, whereas $\dec=\ast$ is cofinal.} (apart from existence of the colimit). In particular, on underlying objects is just given by $\colim_{\Delta^{\op}}$ (geometric realization). 
What happens here is very simple. $\widetilde{\pi_1^*}$ maps an algebra $A$ to the diagram of shape $\Delta^{\op}_{\act}$ (all but one degeneracy not depicted):
\begin{equation}\label{eqintro2} \xymatrix{ \cdots  \ar@<4pt>[r]  \ar[r] \ar@<-4pt>[r] &A \otimes A\otimes A \ar@<2pt>[r]  \ar@<-2pt>[r] & A \otimes A \ar[r] & A  & \ar@{-->}[l] 1 } \end{equation}
whose morphisms are the structure morphisms of $A$ with a canonical coalgebra structure 
which extends thus, if the unit is final, canonically to a diagram of shape $\Delta^{\op}$  (all but one degeneracy not depicted):
\[ \xymatrix{ \cdots  \ar@<8pt>[r]  \ar[r] \ar@<-8pt>[r] \ar@<4pt>[r]  \ar[r] \ar@<-4pt>[r] &A \otimes A\otimes A \ar@<2pt>[r]  \ar@<-2pt>[r]\ar@<6pt>[r]  \ar@<-6pt>[r] & A \otimes A \ar[r]  \ar@<4pt>[r] \ar@<-4pt>[r]  & A \ar@<4pt>[r] \ar@<-4pt>[r] & \ar@{-->}[l] 1 } \]
with coalgebra structure now w.r.t.\@  the monoidal product $\dec_* - \boxtimes -$.
The bar construction $\barlurie(A)$ is then just the colimit of this diagram. The cobar construction $\cobarlurie$ is precisely the dual construction. 

The {\em classical} cobar construction $\cobarconst$ is a left adjoint to the (fully-faithful) association  which maps $A$ to the coalgebra (\ref{eqintro2}).
We show (Theorem~\ref{THEOREMEXISTENCECOBAR}) that it can --- under very general assumptions --- be given as a certain colimit over $\tw \Delta^{\op}_{\act}$ (which is essentially the category of {\em necklaces} \cite{DS11, Riv22, BS23}).
{\em In the 1-categorical context} it is thus just given by mapping
\[ \xymatrix{ \cdots  \ar@<4pt>[r]  \ar[r] \ar@<-4pt>[r] & A_{[3]} \ar@<2pt>[r]  \ar@<-2pt>[r] & A_{[2]} \ar[r] & A_{[1]}  & \ar@{-->}[l] A_{[0]} } \]
(with its coalgebra structure) to the coequalizer of
\[ \coprod A_{[1]} \otimes \cdots \times A_{[0]} \otimes \cdots \otimes A_{[1]} \amalg  \coprod A_{[1]} \otimes \cdots \times A_{[2]} \otimes \cdots \otimes A_{[1]} \rightrightarrows \coprod_{n=0}^{\infty} A_{[1]}^{\otimes n}  \]
with maps, for instance, given by $A_{[2]} \to A_{[1]}$ (diagram map) and $A_{[2]} \to A_{[1]} \otimes A_{[1]}$ (part of the coalgebra structure),
i.e.\@ a very simple quotient of the free algebra in $A_{[1]}$. You will immediately recognize the construction of the fundamental group (or better: monoid) of a connected simplicial set as a special case.
In the $\infty$-categorical context the same construction works, only that we have to take the full diagram of shape $\tw \Delta^{\op}_{\act}$ into account here, including the other $A_{[n]}$ as well.

\subsection{Concrete instances of classical bar and cobar}

In Chapter~\ref{CHAPTERBASICEXAMPLES} we show, as mentioned above, that the quite simple classical bar and cobar constructions (in case $I = \mathcal{S} = \OOO$) agree with many constructions in the literature, as for example the
construction of Kan's loop group and the Adams cobar and Eilenberg-Maclane bar construction. However, these constructions are not quite {\em the same} as the ``classical bar''  and ``classical cobar'' but have the form of the (still adjoint) functors
\[ \dec_* \circ (\rho^*)^{-1} \circ \barconst \]
and
\[  \cobarconst \circ \rho^* \circ \dec^*. \]
(a way of thinking certainly inspired by Stevenson's article~\cite{Ste12}). Here $\rho^*$ is the functor (\ref{eqintrorho}) above.
To make this precise, we must turn $\dec_*$ and $\dec^*$ into functors of (co)operads and then explicitly calculate the composition. This will be done for $\mathcal{C} = (\Set^{\Delta^{\op}}, \times)$ in \ref{SECTCOBARSSET}, 
and for $\mathcal{C} = (\Ch_{\ge 0}(\mathcal{D}), \tildeotimes)$ with $(\mathcal{D}, \otimes)$ Abelian tensor category in \ref{SECTCOBARDG}. 

The derived (Lurie's) bar construction $\barlurie$ can in these cases (under mild assumptions on $\mathcal{D}$ in the Abelian case) also be computed as
\[ \dec_*\circ (\rho^*)^{-1}\circ \barconst \]
because $\dec_*$ represents the (homotopy) colimit over $\Delta^{\op}$. This will be explained in detail in Section~\ref{SECTCOMPLURIE}.

However, the left adjoint $\cobarconst \circ \rho^* \circ \dec^*$ is, a priori, not directly related to the derived (Lurie's) cobar construction $\cobarlurie$. Note that the first is a left adjoint before localization and the latter a right adjoint after localization.
Often times, however, $\cobarconst \circ \rho^* \circ \dec^*$ also preserves weak equivalences {\em at least when restricted to a large subcategory}, and the classical adjunction gives a derived {\em equivalence}. Then, accordingly, also the (restrictions of the) two cobar constructions agree. 

The chosen notation ``classical (co)bar'' and ``derived (co)bar'' is thus probably a bit unfortunate. My motivation has been to stay as close to the 
existing nomenclature as possible. Keep in mind, however, that while the classical bar construction is really a component of the derived bar construction, the constructions denoted ``cobar'' 
denote something a bit different, yet related, in the classical and derived case. Rather the following is true: The derived cobar is the dual of the derived bar construction and thus the dual classical bar construction $\barconst^{\vee}$ is a component of it.

\subsection{Comparisons of Abelian and non-Abelian classical cobar constructions}

One motivation of this work has been to understand the relation between the geometric (Kan) cobar for simplicial sets and Adams cobar for complexes of Abelian groups.
Since they are formally given by exactly the same construction ``$\cobarconst \circ \rho^* \circ \dec^*$'' one would expect a simple formal comparison.
This is indeed true, if one chooses on (non-negatively graded) complexes of Abelian groups the monoidal product $\otimes$ (i.e.\@ under Dold-Kan the point-wise tensor product on simplicial Abelian groups). In this case, the construction (quite obviously) commutes with the free Abelian group functor $\Z[-]$, i.e.\@ ``singular chains''.
It remains thus to compare the two purely Abelian constructions in complexes of Abelian groups w.r.t.\@ $\otimes$ and $\tildeotimes$. This turns out to be fairly intricate:

\begin{itemize}
\item The difficulty in the construction of a map  ``$\cobarconst_{\tildeotimes} \to \cobarconst_{\otimes}$'' is the following: 
 The morphism given by abstract functoriality w.r.t.\@ the Eilenberg-Zilber  map does not land in the cobar construction of the dg-algebra $\Z[X]$ with its usual (diagonal) coalgebra structure w.r.t.\@ $\otimes$ but with its composition $\Z[X] \to \Z[X] \otimes \Z[X] \to \Z[X] \otimes \Z[X] $ with $\EZ \circ \AW$!
 This problem has been dealt with by either giving explicit constructions (Adams \cite{Ada56}, Szczarba \cite{Szc61}, etc.) or using homotopy deformation theory (Shih \cite{Shi62}, etc.) We propose
 a more conceptual approach which is completely explicit. Roughly it is as follows (hiding here some details about the transport along $\dec^*$): The Shih operator $\Xi: \id \Rightarrow \EZ \circ \AW$ will be constructed from abstract principles, and not by specifying a formula. For a coalgebra $C$ it gives rise first to a coherent transformation $\exp(\Xi)(C)$ in the sense of Cordier-Porter \cite{CP97}, and then --- via a very general comparison map from coherent transformations to morphisms of $A_{\infty}$-coalgebras --- rise to a morphism of $A_{\infty}$-coalgebras $\exp(\Xi)(C)_{\infty}$. It turns out that the components of this $A_{\infty}$-morphism are essentially\footnote{Up to different indexing conventions} the {\bf Szczarba-maps} \cite{Szc61}. It gives thus a map between the cobar constructions (a priori, only a completion of cobar is functorial in $A_{\infty}$-morphisms).
 
There are many other attempts to understand the Szczarba maps in the literature, see for example \cite{MRZ23, Fra21, Fra24}.

\item The difficulty in the construction of a map  ``$\cobarconst_{\otimes} \to \cobarconst_{\tildeotimes}$'' is the following: We can make $\dec^*$ naturally (lax) monoidal w.r.t.\@ the respective tensor products, but the resulting extensions $\dec^*_{\otimes}$ and $\dec^*_{\tildeotimes}$ 
are not compatible with the $\AW$-morphism! It turns out, however, that one can plug in the inverse of the $A_{\infty}$-morphism $\exp(\Xi)_{\infty}$ constructed before to get a morphism  in the other direction. 
It is however, in general, only defined after a completion (or, a posteriori, localization) of cobar. It is very likely that this is (up to the different indexing issue) the {\bf Hess-Tonks map} \cite{HT10}, although this remains to be checked in detail. 
\end{itemize}

\subsection{Plans}

These lecture notes are far from complete and also far from how I would have imagined them. The reason is mainly that they got quite long already and I wanted to 
make them available before (hopefully) being able to elaborate on the following points: 

\begin{enumerate}
\item A  {\em motivation} for the bar and cobar constructions, in particular, for the classical cases discussed in detail. 
This omission can hopefully be excused for the moment because many of the sources discuss this thoroughly. 
\item A discussion of the {\em concrete properties} of the bar and cobar adjunctions (classical and derived) in the main cases simplicial sets and complexes and $\Gpd_{\infty}$ (spaces). 
\item A discussion of the vast generalizations in the dg-setting: (co)bar for dg-{\em operads}, Sweedler theory \cite{AJ13}, and so on. Many of these
features should generalize (cf.\@ also \cite{Chi12, Per22, HM25}).
\item A discussion of the classical and derived (co)bar for $\mathrm{LMod}$, the planar operad encoding (left) modules over algebras over $\OOO$. 
For example, the classical (co)bar (for simplicial sets) should recover the theory of prinicipal twisted Cartesian products. 
The derived (co)bar should give a generalization of Lurie's duality to modules and comodules (cf.\@ also \cite{BP23}). 
\item A concrete  discussion of the derived (Lurie) (co)bar for other operads than $\OOO$ and for non-planar (i.e.\@ symmetric) operads.
Whereas the abstract part carries over to symmetric operads without modification, I would have liked to include an extistence proof in the same spirit for $\mathbb{E}_k$-(co)algebras at least.

\end{enumerate}

\section{Categorical prerequisites}

This chapter discusses several categorical concepts that will be used in the sequel of these lectures. It is intended for referential purpose and to fix notation.
Proofs are only occasionally sketched. The reader is advised to skim over it on a first reading. 

\subsection{Pro-functors}

\begin{PAR}
In this lecture Kan extensions which are the left adjoints $\alpha_!$ or right adjoints $\alpha_*$ of pre-composition $\alpha^*: \mathcal{C}^{J} \to \mathcal{C}^{I}$ with a functor $\alpha: I \to J$, are ubiquitous. 
They comprise in particular all limits and colimits. 
The collection of all $\alpha^*$ and $\alpha_!$ (say), where $\alpha$ runs through all functors $\alpha: I \to J$ between small categories (or small $\infty$-categories) fulfill a rich algebra. This algebra is encoded
in the 2-category $\Cat^{\PF}$ (resp.\@ $(\infty,2)$-category $\Cat_{\infty}^{\PF}$) of small categories and pro-functors. The reader who is not completely at ease with pro-functors should keep in mind that they are all of the form $\beta_! \alpha^*$ for suitable $\alpha$ and $\beta$
and the 2-morphisms between compositions are precisely those, that these functors acquire universally for all categories $\mathcal{C}$, or which amounts to the same, for $\mathcal{C} = \Set$ (resp.\@ for $\mathcal{C} =  \Gpd_{\infty}$ when working with $\infty$-categories). However, as definition, a more concrete approach is convenient:  
\end{PAR}

\begin{PAR} 
{\bf Pro-functors} $\gamma: I \to J$ are functors
\[ J^{\op} \times I   \to \Set \]
(resp.\@ {\bf $\infty$-pro-functors} are functors $J^{\op} \times I \to \Gpd_{\infty}$, where $I$ and $J$ can be $\infty$-categories themselves) 
with composition given by
\[ \beta \circ \alpha := \int^{j}  \beta(-, j) \times \alpha(j, -).  \]
While this is a proper definition in the 1-categorical context, it is of course more involved in the $\infty$-categorical setting (cf.\@ \cite{AF20} for a precise construction).
There are canonical functors 
\[ \iota: \Cat^{} \to \Cat^{\PF} \qquad (\text{resp.} \Cat_{\infty}^{} \to \Cat_{\infty}^{\PF})   \]
mapping $\alpha: I \to J$ to $j, i \mapsto \Hom(j, \alpha(i))$, a pro-functor $I \to J$, 
as well as
\[  {}^t \iota:  \Cat^{1-\op, 2-\op} \to \Cat^{\PF} \qquad (\text{resp.}  \Cat_{\infty}^{1-\op, 2-\op} \to \Cat_{\infty}^{\PF}) \]
mapping $\alpha: I \to J$ to ${}^t \alpha: i, j \mapsto \Hom(\alpha(i), j)$, a pro-functor $J \to I$.
The pro-functor ${}^t \alpha$ is, in fact, right adjoint to $\alpha$.
 \end{PAR}

\begin{PAR}\label{PARPFLR}
A {\em cocomplete} category $\mathcal{C}$ gives rise to a functor
\begin{eqnarray*} L_{\mathcal{C}}: \Cat^{\PF, 1-\op} &\to& \Cat \qquad (\text{resp.}  \Cat_{\infty}^{\PF, 1-\op} \to \Cat_{\infty})  \\
 I &\mapsto& \mathcal{C}^{I} 
\end{eqnarray*}
 mapping $\alpha$ to $\alpha^*$ and its adjoint ${}^t \alpha$ to $\alpha_!$ (left Kan extension), and more generally
\begin{eqnarray*} L(\gamma): \mathcal{C}^{J} &\to& \mathcal{C}^{I} \\
  X  &\mapsto& \int^j \gamma(j,-) \times X(j). 
  \end{eqnarray*} 

A {\em complete} category $\mathcal{C}$ gives rise to  a functor
\begin{eqnarray*} 
R_{\mathcal{C}}: \Cat^{\PF, 2-\op} &\to& \Cat \qquad (\text{resp.}  \Cat_{\infty}^{\PF, 2-\op} \to \Cat_{\infty})  \\
 I &\mapsto& \mathcal{C}^{I} 
\end{eqnarray*}
mapping ${}^t \alpha$ to $\alpha^*$ and $\alpha$ to $\alpha_*$ (right Kan extension), and more generally
\begin{eqnarray*} R(\gamma): \mathcal{C}^{I} &\to& \mathcal{C}^{J} \\
  X  &\mapsto& \int_i \Hom(\gamma(-,i), X(i)).
  \end{eqnarray*} 
 \end{PAR}

\begin{PAR}\label{PAROPPF}
There is an operation 
\[ \op: \Cat^{\PF, 1-\op} \to \Cat^{\PF}  \qquad (\text{resp.}  \Cat_{\infty}^{\PF, 1-\op} \to \Cat_{\infty}^{\PF}) \]
\begin{eqnarray*} I &\mapsto& I^{\op} \\
 \gamma: I \to J &\mapsto&  \gamma: J^{\op} \to I^{\op} 
 \end{eqnarray*}
making the following commutative (and similarly in the $\infty$-categorical context)
\begin{equation}\label{eqdiapf}
 \vcenter{ \xymatrix{ \Cat^{1-\op} \ar[r]^-{\iota} \ar[d]^{\op} & \Cat^{\PF, 1-\op} \ar[r]^-{L_{\mathcal{C}}} \ar[d]^{\op} & \Cat \ar[d]^{\op} \ar[d] \\
 \Cat^{1-\op, 2-\op} \ar[r]^-{{}^t\! \iota} & \Cat^{\PF} \ar[r]^-{R_{\mathcal{C}^{\op}}} &   \Cat^{2-\op} } } 
 \end{equation}
 Furthermore, we have $L_{\mathcal{C}}\,\iota = R_{\mathcal{C}}\, {}^{t}\! \iota$.
 
 We discussed pro-functors separately for usual categories and $\infty$-categories for a good reason: 
 Although they behave completely analogously, there does not exist an embedding $\Cat^{\PF} \hookrightarrow \Cat_{\infty}^{\PF}$ because the inclusion
 $\Set \hookrightarrow \Gpd_{\infty}$ does not commute with colimits and thus the compositions of pro-functors are different.  
 This reflects the fact that there much less relations between (co)limits and Kan extensions in the $\infty$-categorical context than in the
 classical context.
 \end{PAR}
 
 \begin{PAR}
A 2-morphism
\[ \gamma \Rightarrow \gamma' \]
is an isomorphism precisely, if $L_{\mathcal{C}}(\gamma) \Rightarrow L_{\mathcal{C}}(\gamma')$ is an isomorphism for all cocomplete categories (resp.\@ $\infty$-categories) $\mathcal{C}$ or precisely, if 
$L_{\Set}(\gamma) \Rightarrow L_{\Set}(\gamma')$ (resp.\@ $L_{\Gpd_{\infty}}(\gamma) \Rightarrow L_{\Gpd_{\infty}}(\gamma')$) is an isomorphism. 
In fact, even more is true: The functor on morphism categories: 
\begin{gather*} 
L_{\Set}: \Hom_{\Cat^{\PF}}(I, J) \rightarrow \Hom(\Set^J, \Set^I)  \\ 
(\text{resp.\@} L_{\Gpd_{\infty}}: \Hom_{\Cat^{\PF}_{\infty}}(I, J) \rightarrow \Hom(\Gpd_{\infty}^J, \Gpd_{\infty}^I))   
\end{gather*}
is full. The essential image is {\em precisely the category of colimit preserving functors}. 
 \end{PAR}
 
  \begin{PAR}
 There is a similar picture for additive categories and additive pro-functors (only in the 1-categorical setting discussed and needed)
 \[ J^{\op} \times I \to \mathcal{AB} \]
 forming a 2-category $\AbCat^{\PF}$. 
 There is an embedding
 \[ \Cat^{\PF} \hookrightarrow \AbCat^{\PF} \]
 applying $\Z[-]$ (free Abelian group) to the pro-functors. This is compatible with composition because $\Z[-]$ commutes with coproducts and maps $\times$ into $\otimes$. 
 We have then 
 \[ L_{\mathcal{C}}: \AbCat^{\PF, 1-\op} \to \AbCat 
\quad R_{\mathcal{C}}: \AbCat^{\PF, 2-\op} \to \AbCat \]
into additive categories when $\mathcal{C}$ is additive and (co)complete and (co)tensored over Abelian groups. Actually this will be applied to Abelian categories only. These are automatically (co)tensored over $\Ab$ when they admit infinite (co)products. 
\end{PAR}

 \begin{PAR}
Consider an arbitrary pro-functor $\gamma: J^{\op} \times I \to \Set$ (resp.\@ $\gamma: J^{\op} \times I \to \Gpd_{\infty}$). It gives rise to a category
\[ \xymatrix{& \ar[ld]_{l_{\gamma}} \nabla \int \gamma \ar[rd]^{r_{\gamma}} \\
I & &  J } \]
(applying the Grothendieck construction, and its dual, respectively) equipped with a cofibration $l_{\gamma}$, and a fibration $r_{\gamma}$, respectively. 
We have then an isomorphism
\[ \gamma \cong r_\gamma \, {}^t\! l_\gamma \]
i.e.\@ all pro-functors can be expressed in this form. 
For $\alpha: I \to K$ and $\beta: J \to K$, letting $\gamma = {}^t \alpha\, \beta$, i.e.\@ $\gamma = \Hom(\alpha(-), \beta(-))$, we get 
the diagram 
\[ \xymatrix{& \ar[ld]_{l} I \times_{/K} J \ar[rd]^{r} \\
I & &  J } \]
(comma category) and hence an isomorphism ${}^t\! \alpha\, \beta \cong r\, {}^t\! l$. This is commonly known as {\bf Kan's formula} (For $\beta: \cdot \to K$ and applying $L_{\mathcal{C}}$ it gives a point-wise formula for the Kan extension because, being  a cofibration, $L(\,{}^t\!l_\gamma) = l_{\gamma, !}$ is computed fiber-wise).
 Here, when $I, J$ and $K$ are 1-categories, for {\em both} compositions, it accidentally does not matter whether they are considered in $\Set$ or $\Gpd_{\infty}$. 
More generally, we have
 \end{PAR}

\begin{DEFLEMMA}\label{PROPEXACT}
A diagram of small categories (resp.\@ small $\infty$-categories) of the form
\begin{equation}\label{EQEXACT} \vcenter{ \xymatrix{ I \ar[r]^{\alpha} \ar[d]_{\beta} \ar@{}[rd]|\Downarrow & J \ar[d]^{\gamma} \\
K \ar[r]_{\delta} & L  } }
\end{equation}
is called  {\bf exact} (resp.\@ {\bf $\infty$-exact}) if the following, equivalent statements hold true: 
\begin{enumerate}
\item The mate 
\[  \alpha\, {}^t\! \beta  \to  {}^t\! \gamma\, \delta  \]
is an isomorphism in $\Cat^{\PF}$ (resp.\@ $\Cat^{\PF}_{\infty}$), i.e.\@ for all objects $(j,k) \in J \times K$
the canonical morphism
\[ \int^{i} \Hom_J(j, \alpha(i)) \times \Hom_K(\beta(i), k)  \to \Hom_{L}(\gamma(j), \delta(k)) \]
is an isomorphism.
\item 
$ \beta_!\, \alpha^* \to \delta^*\,\gamma_!$ is an isomorphism for diagrams in any cocomplete 1-category (resp.\@ $\infty$-category). 
\item
 $\gamma^*\, \delta_* \to \alpha_*\,\beta^* $ is an isomorphism for diagrams in any complete 1-category (resp.\@ $\infty$-category). 
\item For all objects $(j,k) \in J \times K$, the canonical functor
\[ j \times_{/ J } I \times_{/K} k \to j \times_{/L} k \quad  ( = \Hom_{L}(\gamma(j), \delta(k)) ) \]
has connected fibers (resp.\@ becomes an isomorphism\footnote{or if $I, J, K, L$ are 1-categories, expressed more classically by saying that the nerve applied to this functor is a weak equivalence of simplicial sets} in $\Gpd_{\infty}$).
\end{enumerate}
In particular, those conditions are satisfied (for 1-categories and $\infty$-categories alike) if the 2-morphism is an isomorphism, the diagram is Cartesian, and one of the following holds true:
\begin{enumerate}
\item $\gamma$ (hence $\beta$) is a cofibration,
\item $\delta$ (hence $\alpha$) is a fibration.
\end{enumerate}
\end{DEFLEMMA}

Observe that for squares of 1-categories, by criterion 4.\@ for instance, $\infty$-exact implies exact but not vice versa.

\begin{PAR}
A special case are diagrams of the form
\[ \xymatrix{ I \ar[d] \ar[r]^{\alpha} &  J \ar[d] \\
\cdot \ar@{=}[r] & \cdot
}  \]
In this case, we say that $\alpha$ is {\bf cofinal} (resp.\@ {\bf $\infty$-cofinal}), if this diagram is exact (resp.\@ $\infty$-exact) or --- in other words --- if the pull-back $\alpha^*$ does not change colimits (resp.\@ $\infty$-colimits).  

Similarly, if 
\[ \xymatrix{ I \ar[d]_{\alpha} \ar[r] &  \cdot \ar@{=}[d] \\
J \ar[r] & \cdot
}  \]
is exact (resp.\@ $\infty$-exact), we say that $\alpha$ is {\bf final} (resp.\@ {\bf $\infty$-final}) or --- in other words --- if the pull-back $\alpha^*$ does not change limits (resp.\@ $\infty$-limits).  

{\em Warning: } This notation varies according to source. 
\end{PAR}

\begin{BEISPIEL}\label{PARFINAL}
Let $I$ be a small 1-category. 
Consider the nerve of $I$, i.e.\@ the simplicial set $N(I)$ with $n$-simplices being sequences of morphisms $i_0 \to \cdots \to i_n$ (i.e.\@ functors $[n] \to I$). This is, in fact, the usual nerve construction associated 
with the cosimplicial object in 1-categories $[n] \mapsto [n]$. As functor $N(I): \Delta^{\op} \to \Set$, it has an associated cofibration (unstraightening) which is equipped with a functor
\[ \alpha: \int N(I) \to I \]
mapping a pair $[n], \{i_0 \to \cdots \to i_n\}$ to $i_n$. One can show that it is $\infty$-cofinal\footnote{Criterion 4.\@ of Proposition~\ref{PROPEXACT} boils down to ``$N \int N(I \times_{/I} i)$ contractible''. However, for any 1-category  $J$ with final object $N(J)$ is contractible, and there is a weak equivalence $X \to N \int X$ for any simplicial set $X$ \cite{Cis19}.}. Since $p: \int N(I) \to \Delta^{\op}$ is a cofibration, $p_!$ is computed fiber-wise by a coproduct (because $p$ is discrete) and the equation
\[ \colim_{\Delta^{\op}} \, p_!\, \alpha^* \cong \pi_{I,!} \]
shows that any $\infty$-colimit over $I$ can be computed by a fiber-wise coproduct ($p_!$) followed by a $\infty$-colimit over $\Delta^{\op}$ (called ``geometric realization'' by Lurie \cite{Lur09}).
Also, it is well-known that the inclusion
\[ \Delta^{\op}_{\ge 1} \hookrightarrow \Delta^{\op}\]
is cofinal (but, of course, not $\infty$-cofinal) hence every 1-colimit can be computed by a fiber-wise coproduct followed by a colimit over $\Delta^{\op}_{\ge 1}$ (i.e.\@ a split coequalizer). 
\end{BEISPIEL}

\begin{BEISPIEL}[Variant] \label{PARFINALCOEND}
Also the morphism
\[ \alpha': \int N(I) \to \tw I  \]
mapping a pair $[n], i_0 \to \cdots \to i_n$ to $i_0 \to i_n$ is $\infty$-cofinal\footnote{Criterion 4.\@ of Proposition~\ref{PROPEXACT} boils down to ``$N \int N((i \times_{/I} I \times_{/I} j)_{\alpha})$ contractible'' for any $\alpha: i \to j$. However also $(i \times_{/I} I \times_{/I} j)_{\alpha}$ has a final object.}. This shows that a 1-coend (resp.\@ $\infty$-coend) can be computed 
directly by the formula
\[ \colim_{\Delta_{\op}}\, p_!\, {\alpha'}^*\, \iota^* \cong \colim_{\tw I}\, \iota^* = \int^I \]
(where $\iota$ is the fibration $\tw I \to I^{\op} \times I$) as a coequalizer of the last two maps (resp.\@ ``geometric realization'') of the diagram
\[ \xymatrix{ \cdots  \coprod_{i_0 \to i_1 \to i_2} A(i_0,i_2)  \ar@<6pt>[r] \ar@<0pt>[r] \ar@<-6pt>[r] &  \coprod_{i_0 \to i_1} A(i_0,i_1) \ar@<3pt>[r] \ar@<-3pt>[r]  & \coprod_{i} A(i,i). }  \]
In fact, more generally, for a functor $A: \tw I \to \mathcal{C}$:
\[ \colim_{\tw I} A = \colim \left( \xymatrix{ \cdots  \coprod_{i_0 \to i_1 \to i_2} A(i_0 \to i_2)  \ar@<6pt>[r] \ar@<0pt>[r] \ar@<-6pt>[r] &  \coprod_{i_0 \to i_1} A(i_0 \to i_1) \ar@<3pt>[r] \ar@<-3pt>[r]  & \coprod_{i} A(\id_i) }  \right). \]

In the 1-categorical case one can even restrict the $\coprod_{i_0 \to i_1}$ to a generating set of morphisms. 
\end{BEISPIEL}

\subsection{Commutative diagrams and correspondences}

We are often in need to show commutativity of a diagram of 1-morphisms and 2-morphisms in a (2,1)-category\footnote{The section has an analogue for $(\infty, 2)$-categories, that we will not mention because it is not needed in these lectures.} $\mathcal{C}$ (in the examples: $\Cat$ or $\Cat^{\PF}$)
in which the 1-morphisms  are compositions of a 1-morphism and the adjoint of another. 
\begin{DEF}Let $X$ and $Y$ be objects. We define the 2-category
\[ \Cor(X, Y) \]
with objects diagrams
\[ \xymatrix{ & Z \ar@{<-}[ld]_f \ar@{<-}[rd]^g \\
X & & Y }  \]
in which $f$ has a right adjoint ${}^t\!f$. 1-Morphisms $h: (f,g) \to (f',g')$ and 2-Morphisms $h \Rightarrow h'$ are 2-commutative diagrams
\[ \xymatrix{ & Z \ar@{<-}[ld]_f \ar[dd]^{h} \ar@{<-}[rd]^g \\
X \ar@{}[r]|{\Rightarrow} & & Y \ar@{}[l]|{\Rightarrow} \\
& Z' \ar@{<-}[lu]^{f'}  \ar@{<-}[ru]_{g'} 
} \qquad \xymatrix{ & Z \ar@{<-}[ld]_f \ar@/_7pt/[dd]_{h}^{\Rightarrow} \ar@/^7pt/[dd]^{h'} \ar@{<-}[rd]^g \\
X \ar@{}[r]|{\Rightarrow} & & Y \ar@{}[l]|{\Rightarrow} \\
& Z' \ar@{<-}[lu]^{f'}  \ar@{<-}[ru]_{g'} 
} \]
\end{DEF}

\begin{LEMMA}
There is a 2-functor
\begin{align*}
\can: \Cor(X, Y) &\to \Hom_{\mathcal{C}}(X, Y), \\
 (f, g) &\mapsto  {}^t\!f \circ g,  \\
 h & \mapsto  \left( {}^t\!f \circ  g  \to  {}^t\!f \circ {}^t\!h \circ h  \circ   g  \to {}^t\!f' \circ g'\right) \qquad \text{(unit)}.
 \end{align*}
 In particular, since the target category is a 1-category, any two 1-morphisms that are connected by a chain of 2-morphisms are mapped to the same morphism\footnote{In the analogous ($\infty$,2)-categorical construction, 
 2-morphisms become isomorphisms.}. 
\end{LEMMA}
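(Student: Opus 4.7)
The plan is to verify three assertions: that the formula for $\can$ on a 1-morphism $h$ produces a well-defined 2-morphism of $\mathcal{C}$; that $\can$ respects vertical composition of 1-morphisms and identities; and that 2-morphisms of $\Cor(X, Y)$ are collapsed to equalities in the target 1-category $\Hom_{\mathcal{C}}(X,Y)$. The last point is essentially automatic from the first two, since the target is treated as a 1-category and so has no non-trivial 2-cells to match.

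For well-definedness I would interpret the formula $\left({}^t\!f \circ g \to {}^t\!f \circ {}^t\!h \circ h \circ g \to {}^t\!f' \circ g'\right)$ as the classical \emph{mate} construction applied to the triangle 2-isomorphism $\phi: f \Rightarrow f' \circ h$ packaged into $h$: whiskering with ${}^t\!f$ on one side and ${}^t\!f'$ on the other, one builds a 2-morphism $h \circ {}^t\!f \Rightarrow {}^t\!f'$ out of the unit of $f' \dashv {}^t\!f'$, the 2-cell $\phi$, and the counit of $f \dashv {}^t\!f$, and then composes with $g$ and the other triangle 2-isomorphism $\psi: g \Rightarrow g' \circ h$ to land in ${}^t\!f' \circ g'$. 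The symbol ${}^t\!h$ appearing in the formula is most usefully read as bookkeeping for this standard procedure; the construction does not require $h$ itself to have a right adjoint.

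For functoriality I would take a composable pair $h_1: (f, g) \to (f',g')$ and $h_2: (f', g') \to (f'', g'')$, note that the triangle 2-isomorphisms attached to $h_2 \circ h_1$ are the vertical pastings of those for $h_1$ and $h_2$, and then expand both $\can(h_2 \circ h_1)$ and $\can(h_2) \circ \can(h_1)$ via the definition, simplifying using the triangle identities of $f \dashv {}^t\!f$, $f' \dashv {}^t\!f'$, and $f'' \dashv {}^t\!f''$. The identity case reduces directly to a single triangle identity, since for $h = \id$ the 2-isomorphisms $\phi$ and $\psi$ are identities. The hard part will be this pasting-diagram check: it is standard but tedious bookkeeping in a 2-category, combining two mate constructions with several triangle identities, and requires no conceptual input beyond the universal properties of the adjunctions $f \dashv {}^t\!f$.
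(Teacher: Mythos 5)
The paper states this lemma without proof, so there is nothing to compare routes with; your reading of the displayed composite as a mate calculation is the natural one. Interpreting the arrow as the mate of the invertible 2-cell $h\circ f\Rightarrow f'$ under the adjunctions $f\dashv {}^t\!f$ and $f'\dashv {}^t\!f'$, pasted with the 2-cell $h\circ g\Rightarrow g'$, correctly avoids assuming a right adjoint for $h$ (and agrees with the printed formula, via the canonical identification ${}^t\!f\circ{}^t\!h\cong{}^t\!f'$, whenever ${}^t\!h$ does exist, as it does in the paper's intended applications). Your plan for functoriality on 1-morphisms --- compatibility of mates with pasting plus the triangle identities --- is also the right one, and the identity case is indeed a single triangle identity.

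There is, however, a genuine gap at your third point: the collapse of 2-morphisms is \emph{not} automatic from the first two. Since $\Hom_{\mathcal{C}}(X,Y)$, viewed as a 2-category, has only identity 2-cells, 2-functoriality demands precisely that $\can(h)=\can(h')$ whenever there is a 2-morphism $h\Rightarrow h'$ in $\Cor(X,Y)$; this is the substantive content of the lemma (it is exactly the ``in particular'' the paper goes on to use), and a construction that is merely well defined on 1-morphisms and compatible with composition need not satisfy it. The verification requires the data you never invoke, namely the compatibility conditions built into the definition of a 2-morphism of $\Cor(X,Y)$: the 2-cell $\mu\colon h\Rightarrow h'$ is required to paste the structural 2-cells of $h'$ to those of $h$, i.e.\ $\alpha=\alpha'\circ(\mu f)$ and $\beta=\beta'\circ(\mu g)$, where $\alpha\colon hf\Rightarrow f'$, $\beta\colon hg\Rightarrow g'$ (and primed versions for $h'$) denote the structural 2-cells. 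Substituting these into the mate formula for $\can(h)$ and using the interchange law to slide the whiskering of $\mu$ past the counit term ${}^t\!f'\,h\,\epsilon\,g$, the factors $\mu$ and $\mu^{-1}$ cancel (invertibility is available because $\mathcal{C}$ is a $(2,1)$-category), and one obtains $\can(h)=\can(h')$ on the nose. With this short computation added, your outline becomes a complete proof; as written, it asserts the key point rather than proving it.
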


\begin{PAR}\label{PARSTANDARD}
A diagram in the image of ``$\can$'' is called of {\bf standard form} and to 
check its commutativity is thus a matter of checking the commutativity of two diagrams involving only the left adjoints. 

Of course, there is a dual construction, where $f$ is assumed to have a left adjoint, which we leave to the reader to state. If the $f$ and $g$ are of the form $\phi^*, \gamma^*$ and hence ${}^t f = \phi_*$ (or their corresponding 1-morphisms in $\Cat^{\PF}$) 
a composition $\Cor(X, Y) \times \Cor(Y, Z) \to \Cor(X, Z)$ can be defined, such that ``$\can$'' is compatible. And there is an operadic version of this, see e.g.\@~\cite{Hor18}.  
\end{PAR}

\subsection{Preliminaries on pre-sheaves}

This section contains some basic facts about pre-sheaves. It should be consulted only when needed, except for the following definition:

\begin{DEFLEMMA}\label{PARENRICHMENTPRESHEAVES}
Let $\mathcal{C}$ be a 1-category and $I$ a 1-category.
The category $\mathcal{C}^{I^{\op}}$ is enriched in $(\Set^{I^{\op}}, \times)$ by the formula
\[  \uHom(C, D)_k := \int_{i \in I} \Hom(\Hom_I(i,k),\Hom(C(i),D(i)).  \]
If $\mathcal{C}$ has coproducts then it is left-tensored via
\[ (C \times D)(k) = C(k) \times D(k). \]
This is called the {\bf canonical enrichment} in pre-sheaves. 
\end{DEFLEMMA}

\begin{PAR}\label{PAR2VAR}
Consider an adjunction in 2 variables
\[ F: \mathcal{C} \times \mathcal{D} \to \mathcal{E} \qquad   G: \mathcal{C}^{\op} \times \mathcal{E} \to \mathcal{D} \qquad H: \mathcal{E} \times \mathcal{D}^{\op} \to \mathcal{C}.  \]
It yields functors
\[  \mathcal{C}^I \times \mathcal{D}^J \to \mathcal{E}^{I \times J} \qquad  (\mathcal{C}^{I})^{\op} \times \mathcal{E}^J \to \mathcal{D}^{I^{\op} \times J}  \qquad  \mathcal{E}^{I} \times (\mathcal{D}^J)^{\op} \to \mathcal{C}^{I \times J^{\op}} \ \]
by applying $F, G, H$ point-wise. 
Assuming that suitable Kan extensions exist, we also have again an adjunction in 2 variables
\[ \mathcal{C}^I \times \mathcal{D}^J \to \mathcal{E}^{I \times J} \qquad  (\mathcal{C}^{I})^{\op} \times \mathcal{E}^{I\times J} \to \mathcal{D}^{J} \qquad \mathcal{E}^{I \times J} \times (\mathcal{D}^{J})^{\op} \to \mathcal{C}^{I}   \]
where, for instance, the second functor is the composition
\[  (\mathcal{C}^{I})^{\op} \times \mathcal{E}^{I \times J} \to \mathcal{D}^{I^{\op} \times I \times J}  \to   \mathcal{D}^{J}  \]
in which the second functor is $R_{\mathcal{C}}$ applied to the product of $\id_J$ with the  pro-functor $I^{\op} \times I \to \cdot$ given by $\Hom_I(-, -)$ which is equivalently $r\,{}^t\!l$ for 
\[ \xymatrix{& \ar[ld]_{l} \tw I \ar[rd]^{r}   \\
 I^{\op} \times I & & \cdot }\]
 In fact, $R_{\mathcal{C}}(r\,{}^t\!l) = \int_I$ (the end). (Pre-)composing with the diagonal and its adjoint, we get in particular an adjunction in two variables:
\[ F: \mathcal{C}^I \times \mathcal{D}^I \to \mathcal{E}^{I} \quad   G: (\mathcal{C}^{I})^{\op} \times \mathcal{E}^I \to \mathcal{D}^{I} \quad H: \mathcal{E}^{I} \times (\mathcal{D}^{I})^{\op} \to \mathcal{C}^{I}.   \]

All this is nicely explained by the fact that the functor $\op: I \to I^{\op}$ (\ref{PAROPPF}) can be seen as the internal Hom $\Hom_{\Cat^{\PF}}(-, \cdot)$ (resp.\@ $\Hom_{\Cat_{\infty}^{\PF}}(-, \cdot)$) in pro-functors, the 
``evaluation morphism'' being $r\,{}^t\!l$, but we will not discuss this connection here. 
\end{PAR}

\begin{LEMMA}\label{LEMMAHOM}
\begin{enumerate}
\item Let $\alpha: I \to J$ be functor and $F, G, H$ an adjunction as above.  We have
\begin{equation} \label{eqpullback} \alpha^* F(-,-) \cong F(\alpha^* -, \alpha^* -)  \end{equation}
or equivalently 
\[ G(-,\alpha_*) \cong \alpha_* G(\alpha^*-, -) \qquad H(\alpha_* -, -) \cong \alpha_* H(-,\alpha^*-)    \]
\item If $\alpha$ is fully-faithful, we also have: 
\[ \alpha^* G(\alpha_!,\alpha_*) \cong  G(-,-) \qquad \alpha^* H(\alpha_* -, \alpha_! -) \cong H(-,-)    \]
\item For {\em cofibrations} $\alpha: I \to J$ the mates of (\ref{eqpullback})
\[  \alpha_! F(-,\alpha^* -) = F(\alpha_!, -) \quad \alpha_! F(\alpha^* -, -) = F(-, \alpha_!-) \]
are isomorphisms as well, or equivalently:
\[ G(\alpha_!,-) \cong \alpha_* G(-,\alpha^* -) \qquad \alpha^* G(-,-) \cong G(\alpha^* -,\alpha^*)      \]
\[ H(-,\alpha_!) \cong \alpha_* H(\alpha^*-, -) \qquad \alpha^* H(-,-) \cong H(\alpha^* -,\alpha^*)      \]
\end{enumerate}
\end{LEMMA}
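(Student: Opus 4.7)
Part 1 is essentially a tautology together with an application of the mate calculus. Since $F$ is applied pointwise in the definition of $F: \mathcal{C}^I \times \mathcal{D}^I \to \mathcal{E}^I$, and $\alpha^*$ is precomposition with $\alpha$, we have $(\alpha^* F(X,Y))(i) = F(X(\alpha(i)), Y(\alpha(i))) = F(\alpha^* X, \alpha^* Y)(i)$ on the nose. The equivalent reformulations for $G$ and $H$ are obtained by taking mates with respect to the adjunctions $\alpha^* \dashv \alpha_*$ on each of $\mathcal{C}^{J}, \mathcal{D}^{J}, \mathcal{E}^{J}$, combined with the variable-wise adjunctions $F(c,-) \dashv G(c,-)$ and $F(-,d) \dashv H(-,d)$.

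Part 2 is then a short formal consequence. Starting from the Part 1 equivalent $G(-,\alpha_*-) \cong \alpha_* G(\alpha^*-, -)$ and applying $\alpha^*$, the right-hand side simplifies by $\alpha^* \alpha_* \cong \id$, which holds because $\alpha$ is fully faithful. Substituting $\alpha_! X$ in the first slot and invoking the second fully-faithful identity $\alpha^* \alpha_! \cong \id$ yields $\alpha^* G(\alpha_! X, \alpha_* Y) \cong G(X, Y)$. The analogous identity for $H$ is completely symmetric.

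The interesting content is Part 3, the projection formula. First I would construct the comparison morphism $\alpha_! F(X, \alpha^* Y) \to F(\alpha_! X, Y)$ as the mate of the composite $F(X, \alpha^* Y) \to F(\alpha^* \alpha_! X, \alpha^* Y) \cong \alpha^* F(\alpha_! X, Y)$, where the first arrow uses the unit of $\alpha_! \dashv \alpha^*$ and the isomorphism is Part 1. To check this mate is an equivalence, I would evaluate pointwise at $j \in J$. For a cofibration $\alpha$, the left Kan extension is computed on the fiber, i.e. $(\alpha_! Z)(j) \cong \colim_{i \in I_j} Z(i)$, and on this fiber $\alpha^* Y$ is the constant diagram with value $Y(j)$. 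Therefore
\[ (\alpha_! F(X, \alpha^* Y))(j) \cong \colim_{i \in I_j} F(X(i), Y(j)) \cong F\bigl(\colim_{i \in I_j} X(i), Y(j)\bigr) \cong F(\alpha_! X, Y)(j), \]
where the middle step uses that $F(-, Y(j))$ preserves colimits (it has a right adjoint $H(-, Y(j))$). The symmetric identity $\alpha_! F(\alpha^* -, -) \cong F(-, \alpha_! -)$ is proved the same way swapping the roles of $F(c,-)$ and $F(-,d)$. Finally, the reformulations in terms of $G$ and $H$ follow by the same mate calculus as in Part 1.

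The main obstacle is Part 3, since it genuinely uses the cofibration hypothesis to reduce Kan extension to a fiberwise colimit; everything else is a diagrammatic consequence of the pointwise formula and the adjoint triangle identities. In the $\infty$-categorical setting, the fiberwise formula for $\alpha_!$ along a cocartesian fibration is the substantive input (cf.\ \cite{Lur09}), but once available the argument is identical.
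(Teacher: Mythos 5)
Your proposal is correct and follows essentially the same route as the paper's (much terser) proof: part 1 is the pointwise identity with mates giving the $G$/$H$ reformulations, part 2 uses that fully-faithfulness of $\alpha$ makes $\alpha^*\alpha_!$ and $\alpha^*\alpha_*$ equivalences (equivalently, $\alpha_!$ and $\alpha_*$ fully faithful), and part 3 uses that left Kan extension along a cofibration is computed fiberwise together with the fact that $F$, being a left adjoint in each variable, preserves colimits. The only difference is the level of detail; the substance is identical.
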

\begin{proof}
1.\@ is clear, 2.\@ follows by observing that the assumption implies that $\alpha_*$ and $\alpha_!$ are fully-faithful, and 3.\@ follows because the left Kan extension along cofibrations is computed fiber-wise and $F$, being a left adjoint when regarded as a functor of any of the two variables, commutes with colimits. 
The other morphisms are the formal adjoints of the morphisms involving $F$.
\end{proof}

\begin{PAR}For example:
For a complete and cocomplete 1-category $\mathcal{C}$, we have the (co)tensoring adjunction
\[  \times: \Set \times \mathcal{C} \to \mathcal{C} \qquad  \Hom_l: \Set^{\op} \times \mathcal{C} \to \mathcal{C} \qquad \Hom_{r}= \Hom: \mathcal{C}^{\op} \times \mathcal{C} \to \Set  \]
which induces as in \ref{PAR2VAR} an adjunction on pre-sheaves
\[  \times: \Set^{I^{\op}} \times \mathcal{C}^{I^{\op}} \to \mathcal{C}^{I^{\op}}  \quad \Hom_l: (\Set^{I^{\op}})^{\op} \times \mathcal{C}^{I^{\op}} \to \mathcal{C}^{I^{\op}} \quad \Hom_{r}:  (\mathcal{C}^{I^{\op}})^{\op} \times \mathcal{C}^{I^{\op}} \to  \Set^{I^{\op}}   \]
with formula (which can be extracted from the discussion in \ref{PAR2VAR}):
\begin{eqnarray*}
 (C \times D)(k) &=& C(k) \times D(k) \\
 \Hom_r(C, D)(k) &=& \int_{i \in I} \Hom(\Hom_I(i,k),\Hom(C(i),D(i)) 
\end{eqnarray*}
The formula for $\Hom_r$ is valid even without any assumption on $\mathcal{C}$ and turns  $\mathcal{C}^{I^{\op}}$ into a $(\Set^{I^{\op}}, \times)$-enriched category. This gives \ref{PARENRICHMENTPRESHEAVES}.
\end{PAR}

\subsection{(Co)operads}\label{SECTCOOP}

\begin{PAR}\label{PAROP}
In these notes we adopt a very flexible notion of $\infty$-operad. However, the purpose is clearly to discuss either $\infty$-(co)operads {\em or} planar $\infty$-(co)operads in the sense of Lurie. 
Planar (co)operads (Definition~\ref{DEFPLANAR}) are combinatorially very pleasant to discuss the classical bar and cobar constructions, whereas more general $\infty$-operads will be important, for instance, for generalizations to (co)algebras over the $\mathbb{E}_k$ (little discs) operads. 
\end{PAR}

 {\em For simplicity}, we understand in the sequel for the moment
\begin{center}
\boxed{ \text{ ($\infty$-)(co)operad = planar ($\infty$-)(co)operad. } }
\end{center}
This may change in a future version of these notes.

\begin{PAR}\label{PARSIMPLEX}
Let $\Delta$ be the simplex category. We adopt the convention that its objects are precisely the  ordinals $[n] = \{0, \dots, n\}$. (One can include all finite non-empty ordinals, obtaining an equivalent category, of course). 
Define $\Delta_{\emptyset}$ be the
category of the $[n]$, including $[-1]:=\emptyset$. 
Call a morphism in $\Delta$ of the form $i < i+1 < \dots < i+k \hookrightarrow 0 < \dots < n$ {\bf inert} and a morphism $a: [n] \to [m]$ {\bf active}, if
$a(0) = 0$ and $a(n)=m$. Every morphism factors uniquely as an active morphism followed by an inert one. A morphism also factors  
uniquely into a surjective map (called a {\bf degeneracy}) followed by an injective map (called {\bf face} map). 
Call the subcategory of active morphism $\Delta_{\act}$. This is sometimes called the category of {\bf finite intervals}. Both $\Delta_{\emptyset}$ and $\Delta_{\act}$ are symmetric monoidal
with products $\ast$ and $\ast'$, given by
\[ [n] \ast [m] := [n+m+1] \]
concatenation and
 \[ [n] \ast' [m] := [n+m] \]
concatenation with identification of the extremal points. The units being $\emptyset$, and $[0]$, respectively. 
The restriction of $\ast$ to $\Delta_{\act}$ comes equipped with 
a natural transformation
\begin{equation}\label{eqcan} s_{\can}: \ast \Rightarrow \ast' \end{equation}
the {\em canonical} degeneracy, identifying the extremal points.
\end{PAR}

Recall that
\begin{DEF}\label{DEFPLANAR}
A {\bf planar ($\infty$-)operad} is an ($\infty$-)category $\mathcal{C}$ equipped with a functor $p: \mathcal{C} \to \Delta^{\op}$ such that
\begin{enumerate}
\item coCartesian morphisms over inert morphisms exist,
\item for the standard family $\alpha_i: [n] \hookleftarrow [1]$ of inert morphisms \footnote{given by the inclusions $\alpha_i: \{i < i+1\} \hookrightarrow [n]$}, the morphism (choosing push-forward functors along these inert ones)
\begin{eqnarray*} \mathcal{C}_{[n]} &\rightarrow& \prod \mathcal{C}_{[1]}  \\
 X &\mapsto& (X_1, \dots, X_n) 
\end{eqnarray*}
is an isomorphism such that it induces (via composition with the corresponding coCartesian morphisms $X \to X_i$) an isomorphism
\[ \Hom_f(Z, X) \cong \Hom_{f_1}(Z, X_1) \times \cdots \times \Hom_{f_n}(Z, X_n)  \]
for all objects $X, Z \in \mathcal{C}$ and $f: p(Z) \to p(X)$.

We denote by $\mathrm{(co)Op}$ (resp.\@ $\mathrm{(co)Op}_{\infty}$) the (1,2)-category (resp.\@ $(\infty,2)$-category) of planar (co)operads.
We denote by $\OOO$ the associative planar operad, i.e.\@ the category $\Delta^{\op}$ equipped with the identity, considered as operad. 
\end{enumerate}

A planar ($\infty$-)cooperad is an ($\infty$-)category equipped with a functor $p: \mathcal{C} \to \Delta$ such that $\mathcal{C}^{\op}, p^{\op}$ is a planar ($\infty$-)operad.
\end{DEF}

\begin{PAR}
The morphisms over the unique active morphism $[n] \to [1]$ may, via the identification  $\mathcal{C}_{[n]} \cong \prod \mathcal{C}_{[1]}$, be seen as
as sets (resp.\@ $\infty$-groupoids) of multi-morphims
\[ \Hom(X_1, \dots, X_n; Y) \]
for all $X_1, \dots, X_n, Y \in \mathcal{C}_{[1]}$ that are composed as in a multi-category. In fact a planar 1-operad is the same as a non-symmetric multi-category and a planar $\infty$-operad is a non-symmetric multi-category
which is ``weakly enriched in $\infty$-groupoids'' in the same way that an $\infty$-category is a usual category ``weakly enriched in $\infty$-groupoids''.
\end{PAR}

\begin{PAR}
Already, {\em in general statements}, only the following facts about operads will be used, which are true for $\infty$-operads and planar $\infty$-operads alike, and even in generalizations of these concepts:
\begin{enumerate}
\item 
There is an $\infty$-category $\OOO$ (in both cases we are interested in, in fact, a 1-category) with a unique factorization system in the sense of \cite[5.2.8.8]{Lur09} into {\bf inert} and {\bf active} morphisms.
\item 
An operad $X$ is an $\infty$-category with a functor  $X \to \OOO$ satisfying certain conditions (as in Definition~\ref{DEFPLANAR}, for instance) including the existence of coCartesian morphisms over inert morphisms. 
Such morphisms are again called inert in $X$ and arbitrary morphisms lying over active ones in $\OOO$ are called active. They form thus again a unique factorization system in $X$  \cite[Proposition~2.1.2.5]{Lur11}.
\item  A functor of operads $X \to Y$ is a functor of $\infty$-categories over $\OOO$ which maps inert morphisms to inert morphisms. 
It is called a cofibration of operads if it is, in addition, a cofibration of $\infty$-categories.
\item There is an operad $\Cat_{\infty}^\times \to \OOO$ with functor $\times: \Cat_{\infty}^{\times} \to \Cat_{\infty}$ of $\infty$-categories such that a cofibration $X \to I$, where $X$ is an arbitrary $\infty$-category and $I$ an operad, is a cofibration of operads if and only if
the straighening $I \to \Cat_{\infty}$ classifying $X \to I$ factorizes  into
\[ I \to \Cat_{\infty}^{\times} \to \Cat_{\infty}. \]
and the functor $I \to \Cat_{\infty}^{\times}$ is a functor of operads. 
In that case, it factors in an essentially unique way. In other words, $\Cat_{\infty}^\times$ (as operad) classifies cofibrations of operads. 
\item 
For two morphisms between cofibrations of operads classified by $F, G: I \to \Cat_{\infty}^{\times}$ with compositions with $\times$ denoted $F^{\times}$ and $G^{\times}$ 
we have\footnote{where the first equivalence is induced by the usual one for cofibrations of $\infty$-categories, cf.\@ Proposition~\ref{PROPTRANSLATE} below.}
\[ \Hom_{\Op_{\infty}/M}(\int F^{\times}, \int G^{\times} ) \cong \Hom^{\lax,\inert-\pseudo}_{\Cat^I}(F^{\times}, G^{\times}) \cong \Hom^{\lax,\inert-\pseudo}_{(\Cat_{\infty}^{\times})^I}(F, G) \]
for $\Cat_{\infty}^{\times}$ is considered as $(\infty, 2)$-category in the obvious way.
\item For an operad $I$ the functor
\[ i \mapsto I_{\act} \times_{/I_{\act}} i \]
(comma category) which, by the unique factorization, extends to a functor $I \to \Cat_{\infty}$ factors through a morphism of operads $I \to \Cat_{\infty}^\times$.
\end{enumerate}
\end{PAR}

\subsection{(Co)fibrations, exponential fibrations, and Day convolutions}

\begin{PAR}\label{PAREXTRACT}
Recall the definition of fibration and cofibration for a functor $F: J \to I$. The definition can be split up into two conditions of which one is shared by both of them. 
Functors satisfying only the shared condition are a very convenient class. First observe that {\em any} functor $F: J \to I$ defines for every morphism $\alpha: i \to j$ a pro-functor $\alpha^{\bullet}: J_j \to J_i$
between the fibers given by
\[  \alpha^{\bullet}: J_i^{\op}  \times J_j \to \Set \qquad (\text{resp.\@}\ \alpha^{\bullet}: J_i^{\op}  \times J_j \to \Gpd_{\infty})   \]
\[ x \times y \mapsto \Hom_{J}(x, y)  \]
where $J_i$ is the fiber of $F$ over $i$. 
In fact, there is an equivalence of categories between pro-functors from $X$ to $Y$ and the category of functors $\mathcal{C} \to [1]$ with fibers $Y$ and $X$ (this is a 1-category (resp.\@ $(\infty, 1)$-category) because the fibers are fixed).

For composable morphisms $\alpha$ and $\beta$, composition in $J$ yields a morphism:
\begin{equation}\label{eqexp} \alpha^{\bullet}\ \beta^{\bullet} \Rightarrow (\beta \alpha)^{\bullet}.   \end{equation}
(One could say that $F$ yields a {\em lax functor} $I^{\op} \to \Cat^{\PF}$ (resp.\@ $I^{\op} \to \Cat_{\infty}^{\PF}$) but we will not use this unless it is an actual functor). 
We consider the following conditions:
\end{PAR}
\begin{DEF}
\begin{enumerate}
\item $F$ is called {\bf locally coCartesian} (resp.\@ {\bf locally Cartesian}), if for all $\alpha$, the pro-functor $\alpha^{\bullet}$ is of the form ${}^t\!\beta$ (resp.\@ $\beta$) for a functor $\beta: J_i \to J_j$ (resp.\@ $\beta: J_j \to J_i$).
\item $F$ is called an {\bf exponential fibration} (resp.\@ {\bf an $\infty$-exponential fibration}), if (\ref{eqexp}) is always an isomorphism.
\item $F$ is called a {\bf cofibration} (resp.\@ a {\bf fibration}) if it satisfies condition 2.\@ and the corresponding version of 1.
\end{enumerate}
\end{DEF}
One does not have to distinguish between fibrations and $\infty$-fibrations for a functor $I \to J$ of 1-categories because here fibration already implies $\infty$-fibration. Similarly for cofibrations. 

Exponential fibrations (resp.\@ $\infty$-exponential fibrations) without necessarily satisfying a version of condition 1.\@ are thus a natural generalization of both cofibrations and fibrations and they are classified by functors $I \to \Cat^{\PF}$ (resp.\@ $I \to \Cat_{\infty}^{\PF}$).
Many important constructions that work for cofibrations and fibrations alike have generalizations to exponential fibrations. The first is the internal Hom in categories over $I$. Its existence is in fact equivalent to being
``exponential fibration'' (cf.\@ \cite{AF20}):

\begin{PROP}
$F$ is an exponential fibration (resp.\@ $\infty$-exponential fibration) if and only if the functor
\[ \mathcal{C} \mapsto J \times_{I} \mathcal{C} \]
has a right 2-adjoint in $\Cat_{/I}$ (resp.\@ $\Cat_\infty{/I}$) 
\[ \mathcal{C} \mapsto D_I(J, \mathcal{C}). \]
In particular, sections of $D_I(J, \mathcal{C}) \to I$ are equal to $\Hom_{I}(J, \mathcal{C})$.
\end{PROP}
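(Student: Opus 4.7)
The plan is to establish both directions via the classifying (lax) functor $I^{\op} \to \Cat^{\PF}$ (resp.\@ $\Cat_{\infty}^{\PF}$) from Paragraph~\ref{PAREXTRACT}, whose strict functoriality is precisely the exponential condition. For ``$\Leftarrow$'' I would use that left adjoints preserve colimits to force strictness on a single decisive pushout, while for ``$\Rightarrow$'' I would build $D_I(J,\mathcal{C})$ fibrewise, using strictness to glue the fibres into a genuine object over $I$.

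I would first dispatch ``$\Leftarrow$''. If $J \times_I -$ admits a right (2-)adjoint, it preserves all colimits in $\Cat_{/I}$ (resp.\@ $\Cat_{\infty/I}$). Apply this to the pushout $[1]_{\alpha} \sqcup_{[0]_{j}} [1]_{\beta} \cong [2]$, structured over $I$ so as to classify a composable pair $i \xrightarrow{\alpha} j \xrightarrow{\beta} k$. Then
\[ J \times_I [2] \;\cong\; (J \times_I [1]_{\alpha}) \sqcup_{J_{j}} (J \times_I [1]_{\beta}). \]
The left-hand side has morphisms from objects of $J_{i}$ to objects of $J_{k}$ lying over $0 \to 2$ given by $(\beta\alpha)^{\bullet}$, while on the right-hand side the corresponding morphisms are the composition $\alpha^{\bullet}\beta^{\bullet}$. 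The isomorphism hence identifies (\ref{eqexp}) with an isomorphism of pro-functors, which is condition 2 in Paragraph~\ref{PAREXTRACT}.

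For ``$\Rightarrow$'', assume $F$ is an exponential fibration, so that the classifying functor $\Phi: I^{\op} \to \Cat^{\PF}$ (resp.\@ $\Cat_{\infty}^{\PF}$) is strict. Given $p: \mathcal{C} \to I$, I would construct $D_I(J, \mathcal{C}) \to I$ as the object whose fibre over $i$ is $\Fun(J_i, \mathcal{C}_i)$ (equivalently $R_{\mathcal{C}_i}(\iota(J_i))$ in the language of Paragraph~\ref{PARPFLR}), and whose transition pro-functor along $\alpha: i \to j$ is given by pre-composition with $\alpha^{\bullet}$ and post-composition with the analogous pro-functor coming from $p$. That this yields an actual category (and not merely a lax family) is precisely the content of strictness of $\Phi$. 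The adjunction
\[ \Hom_I(J \times_I \mathcal{D},\, \mathcal{C}) \;\cong\; \Hom_I(\mathcal{D},\, D_I(J, \mathcal{C})) \]
is then checked by writing both sides as ends over $I$ of fibrewise $\Hom$-data and invoking the end formulas from Paragraph~\ref{PARPFLR}. The addendum on sections is then immediate: take $\mathcal{D} = I$ with $\mathrm{id}_I$, whence $J \times_I I = J$ yields the identification of sections with $\Hom_I(J, \mathcal{C})$.

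The main obstacle will be carrying out the construction of $D_I(J, \mathcal{C})$ in the $\infty$-categorical setting: assembling the fibrewise $\Fun(J_i, \mathcal{C}_i)$ and their transition data into an actual functor $I \to \Cat_{\infty}$ (to then unstraighten) requires genuine coherence data rather than the bare pro-functor composition isomorphisms encoded in the exponential fibration condition. This coherence is automatic in the 1-categorical setting, but in the $\infty$-setting it is essentially the content of \cite{AF20}, to which I would defer.
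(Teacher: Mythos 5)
Your proposal is correct in substance, and its forward direction (exponential fibration $\Rightarrow$ right adjoint) is essentially the paper's own treatment: the paper defines $D_I(J,\mathcal{C})$ by exactly your fibrewise recipe --- objects are pairs $(i, X\in(\mathcal{C}_i)^{J_i})$, morphisms over $\alpha$ are elements of $\Hom_{\Cat^{\PF}}(\alpha^{\bullet}_I,(X,Y)^*\alpha^{\bullet}_{\mathcal{C}})$ --- notes that strict functoriality of the $\alpha^{\bullet}_I$ (the $\alpha^{\bullet}_{\mathcal{C}}$ may remain lax, which your write-up is consistent with) is what makes composition well defined, and then, like you, leaves the verification of the universal property as a routine check and delegates the $\infty$-categorical coherence to \cite{AF20}. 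Where you genuinely diverge is the converse: the paper gives no argument there at all, simply citing \cite{AF20} for the equivalence, whereas you extract it from colimit preservation applied to the pushout $[1]\sqcup_{[0]}[1]\cong[2]$ structured over $I$ --- the classical Conduch\'e-style argument. This buys a self-contained 1-categorical proof of the "only if" half, at the price of one step you assert rather than prove: that the hom-set from $x\in J_i$ to $y\in J_k$ in $(J\times_I[1]_{\alpha})\sqcup_{J_j}(J\times_I[1]_{\beta})$ is the coend $\int^{z\in J_j}\alpha^{\bullet}(x,z)\times\beta^{\bullet}(z,y)$, so that the comparison functor to $J\times_I[2]$ really induces the map (\ref{eqexp}) on homs. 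In the 1-categorical setting this is a straightforward computation with the pushout of the two collages glued along $J_j$ (a cosieve in one piece, a sieve in the other), so no real gap; in the $\infty$-categorical setting both this identification and the equivalence $\Delta^1\sqcup_{\Delta^0}\Delta^1\simeq\Delta^2$ are true but no longer free, so there your converse is not materially more elementary than the citation of \cite{AF20} it replaces. The addendum on sections (take $\mathcal{D}=I$ with the identity) is fine and matches the paper.
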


The objects and morphisms in the category $D_I(J, \mathcal{C})$ can be explicitly described: Objects are pairs of an object $i \in I$ and a functor 
$X \in (\mathcal{C}_i)^{J_i}$ and morphisms $(X, i) \to (Y, i')$ are morphisms $\alpha: i \to i'$ in $I$ together with an element in
\begin{eqnarray*} 
\Hom_{\Cat^{\PF}}(\alpha^{\bullet}_I, (X,Y)^*\alpha^\bullet_{\mathcal{C}}) & = & \int_{j,j'} \Hom(\alpha^{\bullet}_I(j, j'), \Hom(X(j), Y(j'))) 
\end{eqnarray*}
where $\alpha^{\bullet}_{\mathcal{C}}: \mathcal{C}_{i}^{\op}  \times \mathcal{C}_{i'}$ is the pro-functor extracted from $\mathcal{C}$ as in \ref{PAREXTRACT} ($\mathcal{C}$ does not have to be small to extract it).
One sees immediately that those can be composed, if the $\alpha^{\bullet}_I$ are functorial (and not only laxly functorial). The $\alpha^\bullet_{\mathcal{C}}$ can be laxly functorial though. Taking this as a definition in the 1-categorical context, one can show that it satisfies the universal property above. For the $\infty$-categorical generalization, see \cite{AF20}.

For (co)operads (resp.\@ $\infty$-(co)operads) the same is true mutatis mutandis.
See section~\ref{SECTCOOP} for a discussion of the definition, and of generalizations and an axiomatization. 
\begin{DEF}
Let $J \to I$ be a morphism of (co)operads.  
\begin{enumerate}
\item $F$ is called {\bf locally Cartesian} (resp.\@ {\bf locally coCartesian}), if $\alpha^{\bullet}$ is of the form $\beta$ (resp.\@ ${}^t\, \beta$) for a functor $\beta: J_{i'} \to J_i$ (resp.\@ $\beta: J_i \to J_{i'}$) \underline{for all active morphisms} $\alpha: i \to i'$.
\item $F$ is called an {\bf exponential fibration} (resp.\@ {\bf $\infty$-exponential fibration}), if (\ref{eqexp}) is always an isomorphism \underline{for active $\alpha$ and $\beta$}.
\item $F$ is called a {\bf cofibration} (resp.\@ a {\bf fibration}) if it satisfies conditions 1.\@ and 2.
\end{enumerate}
\end{DEF}

\begin{BEM}
Usually, we state the definition of coCartesian for operads (over $I=\OOO$ this is equivalent to being a monoidal category considered as operad) and the definition of Cartesian for cooperads (over $I=\OOO$ it is also equivalent to monoidal category, but considered as cooperad). In these cases, it is equivalent to claim that the whole functor of associated categories (of operators) is a (co)fibration in the usual sense, because the locally (co)Cartesianity for the inert morphisms holds by definition and
the exponential fibration condition follows for compositions involving inert morphisms from the other axioms. 
A fibration of operads will also occasionally be considered. Be aware that, in the monoidal case, say, this is {\em different} from the existence of a right adjoint w.r.t.\@ to one of the arguments, i.e.\@ the existence of internal Homs. 
\end{BEM}
\begin{WARNING} For a general exponential fibration of operads according to the above definition, (\ref{eqexp}), in general, does not have to be an isomorphism for compositions involving inert morphisms. This is 
related to the fact that $\times$ is not a product in $\Cat^{\PF}$ and $\cdot$ is not a terminal object.
\end{WARNING}

The proposition holds also true for (co)operads:
\begin{PROP}\label{PROPDAY}
$F: J \to I$ is an exponential fibration (resp.\@ $\infty$-exponential fibration) if and only if the functor
\[ \mathcal{C} \mapsto J \times_{I} \mathcal{C} \]
has a right 2-adjoint in $\mathrm{(co)Op}_{/I}$ (resp.\@ $\mathrm{(co)Op}_{\infty/I}$)
\[ \mathcal{C} \mapsto D_I(J, \mathcal{C}). \]

In particular, sections of $D_I(J, \mathcal{C}) \to I$ (e.g.\@ $I$-(co)algebras) are equal to $\Hom_{I}(J, \mathcal{C})$.
\end{PROP}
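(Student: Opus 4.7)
The plan is to bootstrap from the non-operadic analogue stated just before in the excerpt (relying on \cite{AF20}) and add operadic bookkeeping for inert morphisms. First I would construct $D_I(J, \mathcal{C})$ by the explicit recipe: objects over $i\in I$ are functors $X\in(\mathcal{C}_i)^{J_i}$, and a morphism $(X,i)\to(Y,i')$ over an \emph{active} $\alpha\colon i\to i'$ is an element of $\Hom_{\Cat^{\PF}}(\alpha^\bullet_J,(X,Y)^*\alpha^\bullet_{\mathcal{C}})$. Composition along active morphisms is well-defined because (\ref{eqexp}) is an isomorphism for active compositions in both $J$ and $\mathcal{C}$. Over \emph{inert} morphisms I use the coCartesian structure of $J\to I$ and $\mathcal{C}\to I$: the pro-functor $\alpha^\bullet_J$ is of the form ${}^t\!\alpha^J_!$ and likewise for $\mathcal{C}$, so the obvious candidate for a (co)Cartesian lift sends $(X,i)$ to $\alpha^{\mathcal{C}}_!\circ X\circ (\alpha^J_!)^{\dagger}$, where the dagger indicates the canonical transport furnished by the inert-pseudo structure (cf.\ the discussion after Definition~\ref{DEFPLANAR}).

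Second, I would verify the (co)operad axioms for $D_I(J,\mathcal{C})\to I$. The Segal decomposition $D_I(J,\mathcal{C})_{[n]}\cong \prod_{k=1}^n D_I(J,\mathcal{C})_{[1]}$ follows directly from the Segal decompositions for $J_{[n]}$ and $\mathcal{C}_{[n]}$, and the required factorisation of multi-hom sets is automatic from the pro-functor formula applied fibrewise.

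Third, I would establish the universal property. For $K\in\mathrm{(co)Op}_{/I}$, morphisms $K\to D_I(J,\mathcal{C})$ of (co)operads over $I$ should correspond to morphisms $J\times_I K\to\mathcal{C}$ of (co)operads over $I$. The underlying bijection on $\infty$-categorical morphisms is supplied by the non-operadic analogue; the new content is that inert-preservation matches on both sides. This follows by construction, since the (co)Cartesian lifts in $D_I(J,\mathcal{C})$ over inert morphisms were defined via the (co)Cartesian lifts of $J$ and $\mathcal{C}$, so inert-preserving sections of $D_I(J,\mathcal{C})\to I$ transpose precisely to inert-preserving functors $J\to\mathcal{C}$. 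The converse implication is the easy direction: if the right $2$-adjoint exists in $\mathrm{(co)Op}_{/I}$, forgetting the operadic structure and testing the adjunction on the underlying $\infty$-categories recovers the non-operadic exponential fibration property from \cite{AF20}, hence (\ref{eqexp}) is an isomorphism for active composable pairs, which is exactly the operadic exponential fibration condition.

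The main obstacle I anticipate is the $(\infty,2)$-categorical bookkeeping, namely pinning down $D_I(J,\mathcal{C})$ as an object of the $(\infty,2)$-category $\mathrm{(co)Op}_{\infty/I}$ in which $2$-morphisms must act pseudo-functorially over inert morphisms but only laxly over active ones. Checking that the adjunction unit and counit respect this inert-pseudo/active-lax distinction, and not merely the underlying $1$-adjunction, is the substantive additional work beyond what is handled by the non-operadic case; in the $1$-categorical setting this degenerates and the verification reduces to the direct pro-functor computation sketched above.
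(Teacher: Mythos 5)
Your construction of $D_I(J,\mathcal{C})$ matches the explicit description the paper records (the paper itself only cites \cite[2.2.6]{Lur11} for the proof), but your argument has two genuine problems. The more serious one is the converse direction. You propose to deduce from the existence of the right 2-adjoint in $\mathrm{(co)Op}_{/I}$ the non-operadic exponential-fibration property of \cite{AF20} ``by forgetting the operadic structure''. This transfer is not available: morphisms in $\mathrm{(co)Op}_{/I}$ are the inert-preserving functors over $I$, not all functors between the underlying ($\infty$-)categories of operators, so a right 2-adjoint in the operadic slice does not induce one in $\Cat_{(\infty)/I}$ for the underlying categories. Worse, if the reduction were valid it would show that the underlying functor of operators satisfies (\ref{eqexp}) for \emph{all} composable pairs, including those involving inert morphisms --- which is exactly what the Warning following the definition says is false in general for exponential fibrations of operads (the operadic condition only concerns active pairs). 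The correct converse has to be extracted from the universal property directly, e.g.\ by testing the adjunction against suitable (co)operads over $I$ built from active arrows, so as to recover strict compositionality of the pro-functors $\alpha^{\bullet}_J$ for active $\alpha$ only.

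In the forward direction there are two related slips. First, composition in $D_I(J,\mathcal{C})$ does \emph{not} require (\ref{eqexp}) to be an isomorphism for $\mathcal{C}$: only the strict functoriality of the $\alpha^{\bullet}_J$ enters, while the $\alpha^{\bullet}_{\mathcal{C}}$ (which sit covariantly inside the $\Hom$) need only be laxly functorial, as they are for an arbitrary functor by \ref{PAREXTRACT}. This matters because the proposition asserts a right 2-adjoint on all of $\mathrm{(co)Op}_{/I}$; if your construction genuinely needed $\mathcal{C}\to I$ to be an exponential fibration, the adjunction would only be defined on a subcategory. Second, you cannot have the ``underlying bijection on $\infty$-categorical morphisms supplied by the non-operadic analogue'': that analogue applies when the underlying functor of operators is an exponential fibration of $\infty$-categories, which, by the same Warning, it typically is not. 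So the universal property must be verified directly from the pro-functor description (including the inert-pseudo/active-lax 2-morphism bookkeeping you rightly flag), rather than bootstrapped from \cite{AF20}.
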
 See e.g.\@ \cite[2.2.6]{Lur11}.
$D_I(J, \mathcal{C})$ is called the {\bf Day convolution} of the (co)operads. 

The objects and morphisms in the (co)operad $D_I(J, \mathcal{C})$ can be explicitly described in the same way as before. 
E.g.\@ in the cooperad case, objects (over $[1]$) are pairs of an object $i \in I_{[1]}$ and a functor 
$X \in (\mathcal{C}_i)^{J_i}$ and morphism $(X, i) \to (Y_1, i_1'), \dots, (Y_n, i_n')$ are morphisms $\alpha: i \to i_1', \dots, i_n'$ in $I$ together with an element in
\begin{align*} 
& \Hom_{\Cat^{\PF}}(\alpha^{\bullet}_I, (X;Y_1, \dots, Y_n)^*\alpha^\bullet_{\mathcal{C}}) \\
= & \int_{j, j_1', \dots, j_n'} \Hom(\alpha^{\bullet}_I(j, j'_1, \dots, j'_n), \Hom(X(j); Y(j'_1), \dots, Y(j'_n))) 
\end{align*}
where $\alpha^{\bullet}_{\mathcal{C}}: \mathcal{C}_i^{\op} \times \mathcal{C}_{i_1'} \times \cdots \times \mathcal{C}_{i_n'}$ is the pro-functor extracted from $\mathcal{C}$ as in \ref{PAREXTRACT} ($\mathcal{C}$ does not have to be small to extract it).

In the following case, we have more control over the morphism spaces in this (co)operad: 
\begin{DEF}\label{DEFLR}
An arbitrary functor $\mathcal{C} \to I$ of $\infty$-categories or $\infty$-cooperads is
\begin{enumerate}
\item {\bf $L$-admissible}, if the fibers are cocomplete and the canonical morphism
\[ \Hom(\colim_J X; Y_1, \dots, Y_n) \to \lim_{J^{\op}} \Hom(X; Y_1, \dots, Y_n) \]
via composition with the canonical element in $\lim_{J^{\op}} \Hom(X, \colim_J X)$
is an isomorphism for all $i \in I, J, X \in \mathcal{C}_{i}^J, Y_1, \dots, Y_n$. 
\item {\bf $R$-admissible}, if the fibers are complete and for any small category $J$
\[   \Hom(X; Y_1, \dots, \lim_J Y_k, \dots, Y_n) \to \lim_{J} \Hom(X; Y_1, \dots, Y_n)   \]
via composition with the canonical element in $\lim_{J} \Hom(\lim_J Y_k, Y_k)$ 
is an isomorphism for all $X, Y_1, \dots, Y_n, i \in I, k, J, Y_k \in \mathcal{C}_{i}^J$.
\end{enumerate}
Similarly for $\infty$-operads. 
\end{DEF}
The following is clear from the definition:
\begin{LEMMA}
If $\mathcal{C} \to I$ is a functor between $\infty$-{\em categories} (not $\infty$-(co)operads) then the $L$-admissibility (resp.\@ $R$-admissibility) amounts to 
$\mathcal{C} \to I$ having (co)complete fibers such that the inclusion of the fibers preserves (co)limits. 
\end{LEMMA}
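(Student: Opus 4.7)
The plan is to unwind definitions and observe that, for $\mathcal{C} \to I$ a functor of $\infty$-categories (where only the ``$n=1$'' case of multi-morphisms appears), the Hom-condition in Definition~\ref{DEFLR} literally expresses the universal property of a (co)limit in the total category $\mathcal{C}$ applied to a cone coming from the fiber.

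Concretely I would first specialize $L$-admissibility to this setting: every multi-morphism space $\Hom(X; Y_1, \dots, Y_n)$ collapses to the ordinary $\Hom_{\mathcal{C}}(X, Y)$, so the condition reads that each fiber $\mathcal{C}_i$ is cocomplete and that, for every diagram $X \colon J \to \mathcal{C}_i$ and every $Y \in \mathcal{C}$, the comparison map
\[ \Hom_{\mathcal{C}}\bigl(\colim_J^{\mathcal{C}_i} X,\ Y\bigr) \longrightarrow \lim_{J^{\op}} \Hom_{\mathcal{C}}(X, Y) \]
induced by the fiber-wise universal cone $X \Rightarrow \colim_J^{\mathcal{C}_i} X$ is an equivalence. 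But this is literally the statement that the fiber-wise colimit, together with its cone, is also a colimit of $X$ regarded as a $J$-diagram in $\mathcal{C}$, i.e.\@ that the inclusion $\mathcal{C}_i \hookrightarrow \mathcal{C}$ preserves the colimit. The converse is equally immediate: cocompleteness of $\mathcal{C}_i$ produces the fiber-wise colimit, and colimit-preservation by the inclusion says exactly that it has the universal property required in $\mathcal{C}$. The $R$-admissible case is obtained by the dual argument (swapping $\colim$ with $\lim$, $J$ with $J^{\op}$, and the roles of source and target variables).

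I do not expect a real obstacle here: the statement is essentially an unpacking of Definition~\ref{DEFLR} once one uses that, in an $\infty$-category, the multi-morphism notation reduces to the ordinary mapping space. The only point to verify carefully is that the ``canonical map'' of Definition~\ref{DEFLR}, defined there via composition with the tautological element of $\lim_{J^{\op}}\Hom_{\mathcal{C}}(X, \colim_J^{\mathcal{C}_i} X)$, agrees with the comparison map induced by the fiber-wise colimit cone, which is immediate from the construction.
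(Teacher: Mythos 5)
Your unwinding is correct and matches the paper, which states this lemma with no proof beyond ``clear from the definition'': in the category case the multi-morphism spaces reduce to $\Hom_{\mathcal{C}}(X,Y)$, and the comparison map of Definition~\ref{DEFLR} is exactly the map testing whether the fiber-wise (co)limit cone is still a (co)limit cone in $\mathcal{C}$, i.e.\@ whether the inclusion of the fiber preserves (co)limits. Nothing further is needed.
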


\begin{PAR}\label{PARLR}
Definition~\ref{DEFLR} may be stated (for cooperads) as follows: Let $\alpha: i \to i_1', \dots, i_n'$ be a morphism in $I$ and $X \in \mathcal{C}_i^{K}$, $Y_k \in \mathcal{C}_{i_k'}^{K_{k}}$.
We have for each $m \in \{1, \dots, n\}$ and a pro-functor $\beta: K_m \to K_m'$ a morphisms of pro-functors
\[ (X,Y_1, \dots, R(\beta)Y_m, \dots, Y_n)^*\alpha_{\mathcal{C}}^{\bullet} \to R(\beta)((X;Y_1, \dots, Y_n)^*\alpha_{\mathcal{C}}^{\bullet}) \]
and for each pro-functor $\beta: K' \to K$ 
\[ (L(\beta)X;Y_1, \dots, Y_n)^*\alpha_{\mathcal{C}}^{\bullet} \to R(\beta^{\op})((X;Y_1, \dots, Y_n)^*\alpha_{\mathcal{C}}^{\bullet}) \]
which are isomorphisms if and only if $\mathcal{C}$ is $R$, resp.\@ $L$-admissible. 
\end{PAR}

\begin{PAR}\label{PARLR2}
We also have a canonical morphism
\begin{equation} \label{eqr} (X;Y_1, \dots, Y_n)^*\alpha_{\mathcal{C}} \to R(\beta^{\op})(R(\beta)X;Y_1, \dots, Y_n)^*\alpha_{\mathcal{C}} \end{equation}
which is the composition with the counit $L(\beta)R(\beta) \to \id$, if $L(\beta)$ exists. However, the morphism always exists. 

We also have canonical morphisms
\begin{equation} \label{eql} (X;Y_1, \dots, Y_n)^*\alpha_{\mathcal{C}} \to R(\beta)(X;Y_1, \dots, L(\beta)Y_k, \dots, Y_n)^*\alpha_{\mathcal{C}} \end{equation}
which is the composition with the unit $\id \to R(\beta)L(\beta)$ if $R(\beta)$ exists. However, the morphism always exists. 
\end{PAR}

\begin{PAR}\label{PARPROJECTION}By (slight) abuse of notation we continue to write $\times$ for the usual product of two categories also as objects in $\Cat^{\PF}_{\infty}$. 
Notice, however, that $(\Cat^{\PF}_{\infty}, \times)$ is not Cartesian (as monoidal $(\infty,2)$-category, i.e.\@ $\times$ is not the product), yet there is a natural morphism 
\[ \Hom_{\Cat^{\PF}_{\infty}}(I_1, \dots, I_n; J) \to \Hom_{\Cat^{\PF}_{\infty}}(I_1; J) \times \cdots \times  \Hom_{\Cat^{\PF}_{\infty}}(I_n; J)  \]
given by composition with the pro-functor $\cdot \to I_i$ given by the final object in $\Hom(I_i^{\op}, \Set)$.
Similarly for $(\Cat^{\PF}_{\infty}, \times)^{\vee}$, there is a natural morphism 
\[ \Hom_{\Cat^{\PF}_{\infty}}(J; I_1, \dots, I_n) \to \Hom_{\Cat^{\PF}_{\infty}}(J; I_1) \times \cdots \times  \Hom_{\Cat^{\PF}_{\infty}}(J; I_n)  \]
given by composition with the pro-functor $I_i \to \cdot$ given by the final object in $\Hom(I_i, \Set)$.
\end{PAR}

\begin{LEMMA}\label{LEMMALR}
\begin{enumerate}
\item 
Let $\mathcal{C} \to I$ be a cofibration of $\infty$-operads. Then it is automatically $R$-admissible if it has complete fibers and $L$-admissible if it has cocomplete fibers and the push-forward functors are cocontinous in the sense that for all $\alpha: i \to i_1', \dots, i_n'$, and all entries $\in \{1, \dots, n\}$ the diagram
\[ \xymatrix{ \mathcal{C} \times  \cdots \times \mathcal{C}^J \times  \cdots \times \mathcal{C} \ar[r]^-{\alpha_{\bullet, \mathcal{C}}} \ar[d]_-{\id \times \cdots \times  \colim \times \cdots \times \id} & \mathcal{C}^{J}  \ar[d]^{ \colim}  \\
\mathcal{C} \times \cdots \times \mathcal{C} \ar[r]_-{\alpha_{\bullet, \mathcal{C}}} &  \mathcal{C}
 }  \]
is commutative (and no condition on the 0-ary morphisms, e.g.\@ units). 

\item 
Let $\mathcal{C} \to I$ be a fibration of $\infty$-operads. Then it is automatically $L$-admissible if it has cocomplete fibers and $R$-admissible if it has complete fibers and the pull-back functors are cocontinous in the sense that 
 for all $\alpha: i \to i_1', \dots, i_n'$, the diagram
\[ \xymatrix{  \mathcal{C}^J \ar[r]^-{\alpha^{\bullet}_{\mathcal{C}}} \ar[d]_{\lim} & \mathcal{C}^J \times \cdots \times \mathcal{C}^J \ar[d]^{\lim \times \cdots \times  \lim}  \\
\mathcal{C} \ar[r]^-{\alpha^{\bullet}_{\mathcal{C}}} & \mathcal{C} \times \cdots \times \mathcal{C}
 } \]
 are commutative (and no condition on the 0-ary morphisms).
\end{enumerate}
Similarly for cooperads. 
\end{LEMMA}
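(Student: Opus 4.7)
The plan is to reduce both the cofibration and fibration cases to the characterisations of $L$- and $R$-admissibility in \ref{PARLR}: one must check that the two canonical mate morphisms displayed there are isomorphisms. Because the conditions (and proofs) for fibrations are formally dual to those for cofibrations, I would write out only the cofibration case in detail and then invoke duality (replacing $\mathcal{C} \to I$ by the associated fibration of cooperads $\mathcal{C}^{\vee} \to I^{\op}$).

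The key input in the cofibration case is the very definition: for every active $\alpha: i \to i_1', \dots, i_n'$ in $I$, the pro-functor $\alpha^{\bullet}_{\mathcal{C}}$ is of the form ${}^{t}\beta$ for a push-forward $\beta = \alpha_{\bullet,\mathcal{C}}: \mathcal{C}_i \to \mathcal{C}_{i_1'} \times \cdots \times \mathcal{C}_{i_n'}$; in multi-morphism notation,
\[ \Hom(X; Y_1, \dots, Y_n) \;\cong\; \prod_{k=1}^{n} \Hom_{\mathcal{C}_{i_k'}}\bigl(\beta_k(X), Y_k\bigr), \]
where $\beta_k$ is the projection of $\beta$ to the $k$-th factor. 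This is the only nontrivial fact that is used.

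For $R$-admissibility, the mate to check is
\[ (X; Y_1, \dots, R(\gamma)Y_m, \dots, Y_n)^{*}\alpha^{\bullet}_{\mathcal{C}} \;\to\; R(\gamma)\bigl((X; Y_1, \dots, Y_n)^{*}\alpha^{\bullet}_{\mathcal{C}}\bigr). \]
Substituting the product decomposition above, the slot involving $Y_m$ becomes $\Hom(\beta_m(X), R(\gamma)Y_m)$, which identifies with $R(\gamma)\Hom(\beta_m(X), Y_m)$ because the Yoneda functor into $\Gpd_{\infty}$ commutes with the right-Kan-extension-type operation $R(\gamma)$ on pro-functors, and completeness of $\mathcal{C}_{i_m'}$ ensures that this rewriting actually lives in $\mathcal{C}_{i_m'}$. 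The remaining factors are unaffected; using that $R(\gamma)$ commutes with finite products, we identify the whole expression with the right-hand side. No cocontinuity hypothesis is needed — this is why mere completeness of the fibers suffices for $R$-admissibility.

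For $L$-admissibility, the mate to check is
\[ (L(\gamma)X; Y_1, \dots, Y_n)^{*}\alpha^{\bullet}_{\mathcal{C}} \;\to\; R(\gamma^{\op})\bigl((X; Y_1, \dots, Y_n)^{*}\alpha^{\bullet}_{\mathcal{C}}\bigr). \]
The left-hand side becomes $\prod_{k}\Hom(\beta_k(L(\gamma)X), Y_k)$. The hypothesis that each projection $\beta_k$ of the push-forward commutes with colimits — which is precisely the commutative square assumed in the statement, applied one variable at a time — yields $\beta_k(L(\gamma)X) \cong L(\gamma)(\beta_k(X))$. The adjunction between $L(\gamma)$ and $R(\gamma^{\op})$ on pro-functors (dually, the defining hom-tensor relation for $L_{\Gpd_{\infty}}$ and $R_{\Gpd_{\infty}}$ from \ref{PARPFLR}) converts each factor into $R(\gamma^{\op})\Hom(\beta_k(X), Y_k)$, and pulling the common $R(\gamma^{\op})$ out of the finite product produces the right-hand side. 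The ``main obstacle'' here is really only bookkeeping: verifying that the cocontinuity assumption on the push-forward in each variable separately is exactly what turns each factor into a colimit-preserving functor — and that the 0-ary case (units), which the statement explicitly excludes, plays no role because it corresponds to a morphism with empty input and imposes no colimit compatibility at all. The fibration half of the lemma is then obtained by applying the cofibration argument to the opposite cooperad $\mathcal{C}^{\vee} \to I^{\op}$, which swaps the roles of limits and colimits and swaps $L$- with $R$-admissibility.
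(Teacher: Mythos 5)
Your key decomposition is the wrong one for the case you are treating, and this is exactly where the argument breaks. Item 1 concerns a cofibration of $\infty$-\emph{operads}: active morphisms have many inputs and one output, the push-forward is $\alpha_{\bullet,\mathcal{C}}\colon \mathcal{C}_{i_1}\times\cdots\times\mathcal{C}_{i_n}\to\mathcal{C}_{i}$, and the defining property gives a \emph{single} mapping space $\Hom(X_1,\dots,X_n;Y)\simeq\Hom_{\mathcal{C}_i}(\alpha_{\bullet,\mathcal{C}}(X_1,\dots,X_n),Y)$. With this, $R$-admissibility is immediate from $\Hom(-,\lim_J Y)\simeq\lim_J\Hom(-,Y)$, and $L$-admissibility follows because cocontinuity in each input lets $\colim_J$ out of $\alpha_{\bullet,\mathcal{C}}$; no commutation of limits with products is ever invoked. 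Your formula $\Hom(X;Y_1,\dots,Y_n)\simeq\prod_k\Hom(\beta_k X,Y_k)$ instead describes a cofibration of \emph{cooperads} (one input, many outputs), and the cocontinuity you impose on the components $\beta_k$ is not the hypothesis displayed in the statement, which is about a multi-input push-forward, cocontinuous in each input separately.

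Within your own cooperad-style setup the $R$-part has a genuine gap at ``the remaining factors are unaffected \dots $R(\gamma)$ commutes with finite products''. The operation $R(\gamma)$ of \ref{PARLR} acts in the $m$-th variable only; applied to a factor that is \emph{constant} in that variable it does not return that constant (it returns the cotensor by the shape). Concretely, for $K_m$ discrete on two objects the right-hand side $\lim_{K_m}\prod_l\Hom(\beta_l X,Y_l(k_m))$ contains two copies of every factor with $l\neq m$, the canonical map is a diagonal on those factors, and it is not an equivalence; so completeness of the fibers alone does not make your mate invertible in the many-output picture. (The claim of item 1 is true precisely because the operadic cofibration collapses the multi-hom to one mapping space, so no constant factors occur.) Finally, the closing duality step is off: in this paper $\mathcal{C}^{\vee}$ is classified by the \emph{same} functor as $\mathcal{C}$, hence has the same fibers and exchanges neither limits with colimits nor $L$- with $R$-admissibility; genuine op-duality (fiberwise opposites) turns a fibration of operads into a cofibration of \emph{cooperads}, not into the case you treated. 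Item 2 should instead be checked directly from the fibration decomposition $\Hom(X_1,\dots,X_n;Y)\simeq\prod_k\Hom(X_k,\alpha^{\bullet}_{\mathcal{C},k}Y)$, where the $R$-clause goes through under the continuity hypothesis because every factor varies with $Y$ — and where the same constant-factor phenomenon shows that the ``automatic'' $L$-clause must be handled with care (only one input slot carries the colimit, the others sit as constants), rather than being disposed of by a formal product manipulation.
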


In particular, for cofibrations of $\infty$-operads, the notion of $L$-admissible is the same as a ``cofibration compatible with $K$-indexed colimits'' (for all $K$) in the sense of  \cite[Definition~3.1.1.18]{Lur11}.
For example, a monoidal category $(\mathcal{C}, \otimes) \to \OOO$ is $L$-admissible, if and only if it is cocomplete and $\otimes$ commutes with colimits in each variable separately.

\begin{PROP}\label{PROPDAYCOFIB}
Let $J \to I$ be an ($\infty$-)exponential fibration of ($\infty$-)(co)operads, and $\alpha: i_1, \dots, i_n \to i$ (resp.\@ $\alpha: i \to i_1', \dots, i_n'$) be an active morphism in $I$ 
with associated pro-functor $\alpha^{\bullet}_J: J_{i_1'}^{\op}\times \cdots \times J_{i_1'}^{\op} \times  J_{i}   \to \Gpd_{\infty}$
(resp.\@ $\alpha^{\bullet}_J:   J_{i'}^{\op} \times J_{i_1} \times \cdots \times J_{i_n}   \to \Gpd_{\infty}$).
\begin{enumerate}
\item 
If $\mathcal{C} \to I$ is an $L$-admissible cofibration of ($\infty$-)(co)operads. Then also 
\[ D_I(J, \mathcal{C}) \to I \]
is a cofibration with push-forward along $\alpha$ given by 
\[ L(\alpha^{\bullet}_J)  \circ \alpha_{\bullet, \mathcal{C}} (-, \dots, -)  
\text{ resp.\@ } 
(L(\alpha^{\bullet}_{J,1}), \dots, L(\alpha^{\bullet}_{J,n}))   \circ \alpha_{\bullet, \mathcal{C}} (-)   
\]

\item
If $\mathcal{C} \to I$ is an $R$-admissible fibration of ($\infty$-)(co)operads. Then also 
\[ D_I(J, \mathcal{C}) \to I \]
is a fibration with pull-back along $\alpha$ given by 
\[ (R(\alpha^{\bullet}_{J,1}), \dots, R(\alpha^{\bullet}_{J,n}))  \circ \alpha^{\bullet}_{\mathcal{C}} (-)
\text{ resp.\@ } 
 R(\alpha^{\bullet}_J) \circ \alpha^{\bullet}_{\mathcal{C}} (-, \dots, -)    . 
\]
\end{enumerate}
Here the $\alpha^{\bullet}_{J,k}$ are the projections of the pro-functor $J_{i_1'} \times \cdots \times J_{i_n'} \to J_{i}$ (resp.\@  $J_{i} \to J_{i_1'} \times \cdots \times J_{i_n'}$) (\ref{PARPROJECTION}).
\end{PROP}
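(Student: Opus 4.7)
The plan is to deduce the proposition from the explicit description of objects and morphisms in $D_I(J, \mathcal{C})$ recalled just before the statement, by identifying morphism spaces there with morphism spaces into a candidate push-forward. I work out part (1) in the cooperad case in detail; the operad subcase is strictly analogous with one source in place of one target, and part (2) is formally dual via \ref{PAROPPF} (passing to $\mathcal{C}^{\op}$ exchanges $L$ with $R$, coends with ends, and $L$-admissibility with $R$-admissibility).

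Fix an active $\alpha: i \to i_1', \dots, i_n'$ in $I$ and $X \in \mathcal{C}_i^{J_i}$. Because $\mathcal{C} \to I$ is a cofibration of cooperads, the pro-functor $\alpha^{\bullet}_{\mathcal{C}}$ is represented by the functor $\alpha_{\bullet,\mathcal{C}}: \mathcal{C}_i \to \mathcal{C}_{i_1'} \times \cdots \times \mathcal{C}_{i_n'}$. I define the candidate push-forward componentwise by $Y_k := L(\alpha^{\bullet}_{J,k})(\alpha_{\bullet,\mathcal{C}}(X)_k) \in \mathcal{C}_{i_k'}^{J_{i_k'}}$, which exists because $L$-admissibility provides cocomplete fibers. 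To see that $(Y_1, i_1'), \dots, (Y_n, i_n')$ is the coCartesian push-forward of $(X,i)$ over $\alpha$, I carry out a single $\Hom$-computation: for any target $(Y_k')$, a morphism $(X,i) \to (Y_1', i_1'), \dots, (Y_n', i_n')$ over $\alpha$ is an element of $\Hom_{\Cat^{\PF}}(\alpha^{\bullet}_J,\, (X; Y_1', \dots, Y_n')^* \alpha^{\bullet}_{\mathcal{C}})$; using representability, the pulled-back pro-functor evaluates to $(j, j_1', \dots, j_n') \mapsto \prod_k \Hom_{\mathcal{C}_{i_k'}}(\alpha_{\bullet,\mathcal{C}}(X(j))_k, Y_k'(j_k'))$. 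Writing $\Hom_{\Cat^{\PF}}$ as an end and splitting the product over $k$, the integration over the $j_l'$ with $l \neq k$ couples only with $\alpha^{\bullet}_J$, producing by \ref{PARPROJECTION} the projection pro-functor $\alpha^{\bullet}_{J,k}$; the $L \dashv R$ adjunction for pro-functors then rewrites each factor as $\Hom_{\mathcal{C}_{i_k'}^{J_{i_k'}}}(Y_k, Y_k')$, which is exactly the universal property sought.

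It remains to upgrade these locally coCartesian morphisms over actives to an actual cofibration of cooperads, i.e.\@ to verify composability: for composable actives $\alpha$, $\beta$, the iterated push-forward must agree with the push-forward of $\beta\alpha$. Two interchanges are needed. On the $J$-side, $L(\beta^{\bullet}_J) \circ L(\alpha^{\bullet}_J) \cong L((\beta \alpha)^{\bullet}_J)$ holds because $J \to I$ is an $\infty$-exponential fibration of cooperads, so the underlying pro-functors already compose correctly. On the $\mathcal{C}$-side, the Kan extension $L(\alpha^{\bullet}_{J,k})$ must commute with the multi-push-forward $(\beta_k)_{\bullet,\mathcal{C}}$ in each entry; by \ref{LEMMALR}, this is precisely $L$-admissibility of $\mathcal{C} \to I$.

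The main technical obstacle I anticipate is the multi-variable bookkeeping: when $\alpha$ is $n$-ary and each subsequent $\beta_k$ is $m_k$-ary, the iterated Kan extensions on the $J$-side live over products of fibers whose factors must be matched correctly against the iterated multi-push-forwards of $\mathcal{C}$, and the cocontinuity provided by $L$-admissibility must be invoked separately in each slot. In the $1$-categorical setting this reduces to an explicit coend manipulation; in the $\infty$-categorical setting one can either work inside the 2-functor $R_{\mathcal{C}}$ on $\Cat_\infty^{\PF}$ of \ref{PARPFLR} or invoke the operadic Kan extension framework of \cite{Lur11}.
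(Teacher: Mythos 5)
Your proposal is correct and follows essentially the same route as the paper: identify the morphism spaces of $D_I(J,\mathcal{C})$ over $\alpha$ via the explicit end formula, use representability of $\alpha^{\bullet}_{\mathcal{C}}$ (the cofibration structure) to split into factors, integrate out the remaining variables to obtain the projections $\alpha^{\bullet}_{J,k}$, and conclude by the $L$-adjunction and $L$-admissibility that the stated formula gives the coCartesian push-forward. Your explicit composability check is exactly what the paper compresses into its closing remark that (\ref{eqexp}) is an isomorphism ``by a formal calculation using the $L$-, resp.\@ $R$-admissibility.''
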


\begin{proof}
We prove the statements for cooperads, the others are dual. 
If $\mathcal{C} \to I$ is an $L$-admissible cofibration then
\begin{align*} 
& \int_{j, j'_1, \dots, j_n'} \Hom(\alpha^{\bullet}_I(j; j'_1, \dots, j_n'), \Hom(X(j); Y_1(j_1'), \dots, Y_n(j_n'))) \\
\cong & \int_{j, j'_1, \dots, j_n'}  \Hom(\alpha^{\bullet}_I(j; j'_1, \dots, j_n'), \Hom((\alpha_{\bullet, \mathcal{C},1}X(j)); Y_1(j_1')) \times \cdots  \\
& \times \Hom((\alpha_{\bullet, \mathcal{C},n}X(j)), Y_n(j_n'))) \\
\cong & \int_{j'_1, \dots, j_n'}  \Hom(\int^{j} \alpha^{\bullet}_I(j; j'_1, \dots, j_n') \times (\alpha_{\bullet, \mathcal{C},1}X(j)); Y_1(j_1')) \times \cdots  \\
&  \times\Hom(\int^{j} \alpha^{\bullet}_I(j; j'_1, \dots, j_n') \times (\alpha_{\bullet, \mathcal{C},n}X(j)), Y_n(j_n'))) \\
\cong & \int_{j_1'}  \Hom((L(\alpha^{\bullet}_{I,1})(\alpha_{\bullet, \mathcal{C},1}X(j'_1))); Y_1(j_1')) \times \cdots  \\
& \times \int_{j_n'} \Hom(L(\alpha^{\bullet}_{I,n})(\alpha_{\bullet, \mathcal{C},n}X(j_n')), Y_n(j_n'))).
\end{align*} 
(notice that the end is just a limit, once the argument depends on the variable only contra- or covariantly).
If $\mathcal{C} \to I$ is an $R$-admissible fibration then
\begin{align*} 
& \int_{j, j'_1, \dots, j_n'}  \Hom(\alpha^{\bullet}_I(j; j'_1, \dots, j_n'), \Hom(X(j); Y_1(j_1'), \dots, Y_n(j_n'))) \\
\cong & \int_{j, j'_1, \dots, j_n'}  \Hom(\alpha^{\bullet}_I(j; j'_1, \dots, j_n'), \Hom(X(j); \alpha^{\bullet}_{\mathcal{C}}(Y_1(j_1'), \dots, Y_n(j_n')))) \\
\cong & \int_{j} \Hom(X(j); \int_{j'_1, \dots, j_n'}\Hom(\alpha^{\bullet}_I(j; j'_1, \dots, j_n'),  \alpha^{\bullet}_{\mathcal{C}}(Y_1(j_1'), \dots, Y_n(j_n')))) \\
\cong & \int_{j} \Hom(X(j); R(\alpha^{\bullet}_I) (\alpha^{\bullet}_{\mathcal{C}}(Y_1, \dots, Y_n))(j)).
\end{align*} 
In each case (\ref{eqexp}) is an isomorphism by a formal calculation using the $L-$, resp.\@ $R$-admissibility.
\end{proof}

Of course, instead of the $L$- or $R$-admissibility one needs only to assume the existence of {\em certain} (co)limits and compatibility for them. We will not put this into an explicit statement.

\begin{BEISPIEL}\label{EXDAY}
Let $J \to \OOO$ be an exponential fibration of operads (sometimes $J$ is then called a pro-monoidal category). For
 $(\mathcal{C}, \otimes)$ a monoidal category with $\mathcal{C}$ having the relevant colimits\footnote{more precisely, the Kan extensions $L(\gamma)$, where $\gamma$ is one of the pro-functors encoding $J$ must exist} such that $\otimes$ commutes with them entrywise,  $D(J, (\mathcal{C}, \otimes))$ is again monoidal, with product
 \[ - \otimes -  := L(m^{\bullet}) - \boxtimes -  \]
 where 
 \[ \boxtimes: \mathcal{C}^{J_{[1]}} \times \mathcal{C}^{J_{[1]}} \to \underbrace{\mathcal{C}^{J_{[1]} \times J_{[1]}}}_{\cong \mathcal{C}^{J_{[2]}}} \] 
 is the point-wise application of $\otimes$.
 If $J$ is itself monoidal (i.e.\@ $J \to \OOO$ a cofibration) with product $\dec: J_{[1]} \times J_{[1]} \to J_{[1]}$ then $- \otimes -  := \dec_! - \boxtimes - $.
 If $\mathcal{C}$ has the relevant colimits but $\otimes$ does not commute with them then $D(J, (\mathcal{C}, \otimes)) \to \OOO$ is locally coCartesian but not a cofibration. 

 Similarly, if $J \to \OOO$ is an exponential fibration of cooperads. For
 $(\mathcal{C}, \otimes)^{\vee}$ a monoidal category (considered as cooperad) with $\mathcal{C}$ having the relevant limits such that  $\otimes$ commutes with them, $D(J, (\mathcal{C}, \otimes)^\vee)$ is the monoidal category (considered as cooperad) with product
 \[ - \otimes -  := R(m^{\bullet}) - \boxtimes - \]
  If $J$ is itself monoidal (i.e.\@ $J \to \OOO^{\op}$ a fibration) with product $\dec: J_{[1]} \times J_{[1]} \to J_{[1]}$ then $- \otimes -  := \dec_* - \boxtimes - $.  
  
This example is central for the understanding of the differences between the two natural tensor products on non-negatively graded complexes in the Abelian case (cf.\@ \ref{EXDAY2}).
\end{BEISPIEL}

\begin{BEISPIEL}\label{EXDAY2}
If $J \to I$ is a fibration and $\mathcal{C} \to I$ is a cofibration of operads or vice versa then $D_I(J, \mathcal{C})$ is a cofibration (or fibration, respectively) and we have
\[ (D_I(J, \mathcal{C}))^{\vee} \cong D_I(J^{\vee}, \mathcal{C}^{\vee}). \]
Moreover, if they are classified by functors $\Xi_{J}: I^{\op} \to (\Cat_{\infty}, \times)$ and $\Xi_{\mathcal{C}}: I \to (\Cat_{\infty}, \times)$ then $D_I(J, \mathcal{C})$ is classified by the functor
\begin{align*} I &\to (\Cat_{\infty}, \times) \\
 i & \mapsto \Hom(\Xi_{J}(i),\Xi_{\mathcal{C}}(i)) \qquad \text{for $i \in I_{[1]}$}.
\end{align*}
Dually, the same holds true, of course, for cooperads. 
\end{BEISPIEL}

\begin{PAR}
A functor $F: \mathcal{C} \to \mathcal{D}$ over $I$ between ($\infty$-)exponential fibrations of ($\infty$-)(co)operads gives rise to an oplax transformation 
\[ F: \Hom^{\oplax, \inert-\pseudo}_{(\Cat^{\PF,\times})^{I^{\op}}}(\Xi_{\mathcal{C}}, \Xi_{\mathcal{D}})  \qquad (\text{resp.\@ } F: \Hom^{\oplax, \inert-\pseudo}_{(\Cat^{\PF,\times}_{\infty})^{I^{\op}}}(\Xi_{\mathcal{C}}, \Xi_{\mathcal{D}}))   \]
where $\Xi_{\mathcal{C}}, \Xi_{\mathcal{D}}: I^{\op} \to (\Cat^{\PF}, \times)$ (resp.\@ $(\Cat^{\PF}_{\infty}, \times)$) are the classifying functors
and $\Cat^{\PF,\times}$ (resp.\@ $\Cat^{\PF,\times}_{\infty}$)  is the underlying $(1,2)$- (resp.\@ $(\infty, 2)$-) category (of operators) of $(\Cat^{\PF}, \times)$ (resp.\@ $(\Cat^{\PF}_{\infty}, \times)$) 
 with components for active $\alpha: i \to i'$:
\begin{equation}\label{eqcart} F_i\, \alpha_{\mathcal{C}}^{\bullet} \to \alpha_{\mathcal{D}}^{\bullet}\, F_{i'} \end{equation}
or equivalently, gives rise to a lax transformation (its mate):
\[ F: \Hom^{\lax, \inert-\pseudo}_{(\Cat^{\PF,\times})^{I^{\op}}}(\Xi_{\mathcal{D}}, \Xi_{\mathcal{C}})  \qquad (\text{resp.\@ } F: \Hom^{\lax, \inert-\pseudo}_{(\Cat^{\PF,\times}_{\infty})^{I^{\op}}}( \Xi_{\mathcal{D}}, \Xi_{\mathcal{C}}))   \]
with components: 
\begin{equation}\label{eqcocart}  \alpha_{\mathcal{C}}^{\bullet}\, {}^t\! F_{i'} \to {}^t\! F_i\, \alpha_{\mathcal{D}}^{\bullet}.  \end{equation}
Notice that the functors have values in $\Cat^{\PF,\times}_{(\infty)}$ (not $\Cat^{\PF}_{(\infty)}$). Therefore the formation of mate lands in the full  subcategory of $\inert-\pseudo$ transformation again. Notice also $(\Cat^{\PF}_{(\infty)}, \times)$ is to be considered as cooperad, if $I$ is an operad and vice versa. $\Cat^{\PF, \times}_{(\infty)}$ depends on this!
\end{PAR}

\begin{DEF}\label{DEFCART}
We call $F$ {\bf Cartesian} (resp.\@ {\bf $\infty$-Cartesian}), if (\ref{eqcart}) is an isomorphism for active morphisms and {\bf coCartesian} (resp.\@ $\infty$-coCartesian), if (\ref{eqcocart}) is an isomorphism for active morphisms. 
\end{DEF}

Note that a functor is a morphism of fibrations in the usual sense (i.e.\@ maps Cartesian morphisms to Cartesian ones) if and only if it is Cartesian according to the Definition above. Similarly, 
a functor is a morphism of cofibrations (i.e.\@ maps coCartesian morphisms to coCartesian ones) if and only if it is coCartesian according to the Definition above.

\begin{PROP}\label{PROPDAYLAX}
If $\mathcal{C} \to I$ is an $L$-admissible functor of cooperads then the functor Day convolution extends to a functor of 2-categories:
\[ \xymatrix{ \Hom^{1-\oplax, 1-\inert-\pseudo}(I^{\op}, (\Cat^{\PF}, \times))^{1-\op} \ar[r]^-{L_{\mathcal{C}}}  & \mathrm{coOp}_{/I} } \]
If $\mathcal{C} \to I$ is an $R$-admissible functor of cooperads then the functor Day convolution extends to a functor of 2-categories:
\[ \xymatrix{ \Hom^{1-\lax, 1-\inert-\pseudo}(I^{\op}, (\Cat^{\PF}, \times)) \ar[r]^-{R_{\mathcal{C}}}  & (\mathrm{coOp}_{/{I}})^{2-\op} } \]
The same is true with the decoration $\infty$ everywhere and for operads. 
\end{PROP}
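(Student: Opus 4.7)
The plan is to upgrade the construction $\Xi \mapsto D_I(J_\Xi, \mathcal{C})$ from Proposition~\ref{PROPDAY} to a strict 2-functor by describing its action on 1- and 2-morphisms and checking compatibility with compositions. On objects, an $\Xi$ in the source classifies (via its inert-pseudo structure) an exponential fibration $J_\Xi \to I$ of cooperads, and we set $L_{\mathcal{C}}(\Xi) := D_I(J_\Xi, \mathcal{C})$; by Proposition~\ref{PROPDAYCOFIB} combined with $L$-admissibility, this is a cooperad over $I$.

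Next I would treat the action on 1-morphisms. An oplax inert-pseudo transformation $\eta: \Xi \Rightarrow \Xi'$ has components $\eta_i$ given by pro-functors between $J_{\Xi, i}$ and $J_{\Xi', i}$ (plus multi-variable analogues), together with structure 2-cells indexed by the active morphisms of $I$. Applying $L_{\mathcal{C}_i}$ fiberwise sends $\eta_i$ to a functor of underlying categories $\mathcal{C}_i^{J_{\Xi', i}} \to \mathcal{C}_i^{J_{\Xi, i}}$, which accounts for the $1$-op in the source. The oplax 2-cell of $\eta$ at an active $\alpha: i \to i'_1, \ldots, i'_n$ comparing $\alpha^{\bullet}_{J_{\Xi}}$ and $\alpha^{\bullet}_{J_{\Xi'}}$, horizontally composed with $\alpha_{\bullet,\mathcal{C}}$ and then fed through $L$, furnishes the required compatibility with the push-forwards on both sides, which were computed in Proposition~\ref{PROPDAYCOFIB}. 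The verification is a direct coend manipulation of the same shape as the computation in that proposition's proof, and $L$-admissibility is precisely what lets the manipulation go through. The result is a morphism of cooperads $D_I(J_{\Xi'}, \mathcal{C}) \to D_I(J_\Xi, \mathcal{C})$ over $I$.

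For 2-morphisms I would use modifications in the source: a modification $\eta \Rrightarrow \eta'$ consists, in each component, of a 2-cell between pro-functors compatible with the oplax structure cells. Applying $L_{\mathcal{C}_i}$ fiberwise produces a natural transformation between the induced functors, and compatibility with the cooperad structure is automatic from the modification axioms. Strict 2-functoriality (respect for horizontal composition of oplax transformations and vertical composition of modifications) then follows from the 2-functoriality of $L_{\mathcal{C}_i}$ on each fiber together with a Fubini-type coend computation for composition in $\Cat^{\PF}$.

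The hard part will be the active-versus-inert bookkeeping: the inert-pseudo hypothesis is used to ensure that the structural 2-cells over inert morphisms are invertible, so that the induced functor on underlying categories automatically respects the inert-coCartesian structure distinguishing a cooperad; the non-trivial compatibility arises only over active morphisms. The $R_{\mathcal{C}}$ case is dual: one uses $R$-admissibility, lax in place of oplax, and the $2$-op in the target arises because $R_{\mathcal{C}_i}$ is covariant on $1$-morphisms of $\Cat^{\PF}$ but contravariant on $2$-morphisms. The operad versions follow by passing to opposites, and the $\infty$-categorical statements follow by the same argument with the evident replacement of (co)ends by their $\infty$-categorical counterparts.
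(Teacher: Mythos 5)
Your overall plan is the one the paper follows (and the paper itself only sketches it, being content to specify the map on morphism spaces), but the one place where the actual content lies is exactly the place your write-up leans on structure that is not available under the stated hypotheses. In Proposition~\ref{PROPDAYLAX} the functor $\mathcal{C} \to I$ is only assumed to be an $L$- (resp.\@ $R$-)admissible \emph{functor} of cooperads, not a cofibration. Hence the push-forwards $\alpha_{\bullet,\mathcal{C}}$ you horizontally compose with do not in general exist, the Day convolutions $D_I(J_\Xi,\mathcal{C})$ need not be cofibrations, and the push-forward formulas of Proposition~\ref{PROPDAYCOFIB} (whose proof uses the cofibration/fibration hypotheses) are not there to be ``compatible with''. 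What makes $D_I(J_\Xi,\mathcal{C})$ a cooperad over $I$ is Proposition~\ref{PROPDAY}, and its multimorphism spaces are the explicit ends $\int \Hom(\alpha^\bullet_{J},(X;Y_1,\dots,Y_n)^*\alpha^\bullet_{\mathcal{C}})$; the action of a 1-morphism has to be defined directly on these ends.

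Concretely, the missing step is the chain that the paper writes out: starting from such an end, one first applies the canonical maps (\ref{eqr}) resp.\@ (\ref{eql}) of \ref{PARLR2} (which exist without any adjoint or push-forward hypotheses), then the $L$/$R$ adjunction for pro-functor actions to move the components $\beta$ of the transformation to the other slot, then the (op)laxness constraint of $\eta$ over the active morphism $\alpha$, then the adjunction back, and only at the last step the $L$- resp.\@ $R$-admissibility to identify the result with the multimorphism space of the target Day convolution at $(R(\beta_i)X;\dots)$ resp.\@ $(L(\beta_i)X;\dots)$. Saying this is ``a coend manipulation of the same shape as Proposition~\ref{PROPDAYCOFIB}'' points at the wrong computation: that proof manipulates the end using the (co)Cartesian structure of $\mathcal{C}\to I$, which you do not have here. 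Your treatment of modifications, of the variances ($1$-op source for $L$, $2$-op target for $R$), and of the operad/$\infty$ cases is fine, and your Fubini remark about strict 2-functoriality is a reasonable plan (the paper itself declines to verify this rigorously), but the proposal as written does not supply the definition of the functor on 1-morphisms, which is the substance of the statement.
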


To be clear: The 2-category on the left hand side has as objects functors of operads $\Xi_{\mathcal{C}}: I^{\op} \to (\Cat^{\PF}, \times)$ and morphism categories are given by
\[ \Hom^{\mathrm{oplax}, \inert-\pseudo}_{(\Cat^{\PF,\times})^{I^{\op}}}(\Xi_{\mathcal{D}}, \Xi_{\mathcal{C}})  \]
and
\[ \Hom^{\mathrm{lax}, \inert-\pseudo}_{(\Cat^{\PF,\times})^{I^{\op}}}(\Xi_{\mathcal{C}}, \Xi_{\mathcal{D}})  \]
where $\Cat^{\PF,\times}$ is as before. 

\begin{proof}We will not rigorously prove this and be content with specifying the map on morphism spaces.
Let $J \to I, K \to I$ be exponential fibrations of cooperads classified by functors $\Xi_J, \Xi_K: I^{\op} \to (\Cat^{\PF}, \times)$ and assume given a 
lax transformation $\mu: \Xi_J \Rightarrow \Xi_K$. 
For a morphism $\alpha: i \to i'_1, \dots i_n'$ in $I$ this yields a diagram
\[ \xymatrix{ J_{i'_1} \times \cdots \times J_{i'_n} \ar[r]^-{\alpha^{\bullet}_J} \ar[d]_{\beta_{i'_1}, \dots, \beta_{i'_n}} \ar@{}[rd]|{\Nearrow} & J_i \ar[d]^{\beta_i} \\
K_{i'_1} \times \cdots \times K_{i'_n} \ar[r]_-{\alpha^{\bullet}_K} & K_i  } \] 
Let $\alpha: i \to i_1', \dots, i_n'$ be a morphism in $I$. Then the map is given by:
\begin{align*}
& \  [\alpha^{\bullet}_J, (X; Y_{1}, \dots, Y_{n})^* \alpha_{\mathcal{C}}^{\bullet}]  \\
\to & \  
[\alpha^{\bullet}_J, R(\beta_i^{\op}) \left( (R(\beta_i)X; Y_{1}, \dots, Y_{n})^* \alpha_{\mathcal{C}}^{\bullet} \right)]  & \text{canonical map (\ref{eqr})} \\
= & \ 
[L(\beta_i^{\op}) \alpha^{\bullet}_J,  (R(\beta_i)X; Y_{1}, \dots, Y_{n})^* \alpha_{\mathcal{C}}^{\bullet} ]  & \text{$L$/$R$-adjunction} \\
\to & \ 
[L(\beta_{i'_1}, \dots, \beta_{i'_n}) \alpha^{\bullet}_K,  (R(\beta_i)X; Y_{1}, \dots, Y_{n})^* \alpha_{\mathcal{C}}^{\bullet} ] & \text{laxness constraint}   \\
= & \ 
[ \alpha^{\bullet}_K, R(\beta_{i'_1}, \dots, \beta_{i'_n}) \left( (R(\beta_i)X; Y_{1}, \dots, Y_{n})^* \alpha_{\mathcal{C}}^{\bullet} \right)] & \text{$L$/$R$-adjunction}     \\
\cong & \ 
[ \alpha^{\bullet}_K, \left( (R(\beta_i)X; R(\beta_{i'_1})Y_1, \dots, R(\beta_{i'_n})Y_n)^* \alpha_{\mathcal{C}}^{\bullet} \right)]  & \text{$R$-admissibility}   
\end{align*}
Similarly for the $L$-functoriality: 
\begin{align*}
& \  [\alpha^{\bullet}_J, (X; Y_{1}, \dots, Y_{n})^* \alpha_{\mathcal{C}}^{\bullet}]  \\
\to & \  
[\alpha^{\bullet}_J, (R(\beta_{i_1'}), \dots,  R(\beta_{i_n'})) \left( (X;  L(\beta_{i_1'})  Y_{1}, \dots, L(\beta_{i_n'}) Y_{n})^* \alpha_{\mathcal{C}}^{\bullet} \right)]  & \text{canonical map (\ref{eql})} \\
= & \ 
[(L(\beta_{i_1'}, \dots, \beta_{i_1'}) \alpha^{\bullet}_J,  (X; L(\beta_{i_1'})Y_{1}, \dots, L(\beta_{i_n'})Y_{n})^* \alpha_{\mathcal{C}}^{\bullet} ]  & \text{$L$/$R$-adjunction} \\
\to & \ 
[L(\beta_{i}^{\op}) \alpha^{\bullet}_K,  (X; L(\beta_{i_1'})Y_{1}, \dots, L(\beta_{i_n'})Y_{n})^* \alpha_{\mathcal{C}}^{\bullet} ] & \text{oplaxness constraint}   \\
= & \ 
[ \alpha^{\bullet}_K, R(\beta_{i}^{\op}) \left( (X; L(\beta_{i_1'})Y_{1}, \dots, L(\beta_{i_n'})Y_{n})^* \alpha_{\mathcal{C}}^{\bullet} \right)] & \text{$L$/$R$-adjunction}     \\
\cong & \ 
[ \alpha^{\bullet}_K, \left( (L(\beta_i)X; L(\beta_{i'_1})Y_1, \dots, L(\beta_{i'_n})Y_n)^* \alpha_{\mathcal{C}}^{\bullet} \right)]  & \text{$L$-admissibility}   
\end{align*}
The operad case is dual and everything works the same with $\infty$-cooperads. 
\end{proof}

\begin{PAR}\label{PARCONVOLUTION}
If $\mathcal{C} \to I$ and $\mathcal{D} \to I$ are exponential fibrations, an oplax transformation (and inert-pseudo) of the associated functors $\Xi_\mathcal{C} \Rightarrow \Xi_\mathcal{D}$  is called an {\bf oplax pro-functor of (co)operads} $\mathcal{C} \to \mathcal{D}$,
and a lax transformation (and inert-pseudo) $F_\mathcal{D} \Rightarrow F_\mathcal{C}$ is called a {\bf  lax pro-functor of (co)operads} $\mathcal{C} \to \mathcal{D}$. We call an oplax pro-functor {\bf Cartesian}, if it a natural transformation (not only oplax) and a lax pro-functor {\bf coCartesian} if it a natural transformation (not only lax).

As just seen, a usual functor $F: \mathcal{C} \to \mathcal{D}$ of (co)operads over $I$ can be seen as a either of these morphisms. This mirrors the situation over a point discussed in \ref{PAROPPF}. In fact, if $\mathcal{C} \to I$ is $L$-admissible, the whole diagram (\ref{eqdiapf}) 
extends to fibrations of (co)operads over $I$: 
\[ \footnotesize \xymatrix{ \mathrm{(co)Op}_{/I}^{\exp,1-\op} \ar[r]^-{\iota} \ar[d]^{\op} & \Hom^{1-\oplax, 1-\inert-\pseudo}(I^{\op}, (\Cat^{\PF}, \times))^{1-\op} \ar[r]^-{L_{\mathcal{C}}} \ar[d]^{\op} & \mathrm{(co)Op}_{/I} \ar[d]^{\op} \ar[d] \\
 \mathrm{(co)Op}_{/I^{\op}}^{\exp,1-\op, 2-\op} \ar[r]^-{{}^t \iota} & \Hom^{1-\lax, 1-\inert-\pseudo}(I, (\Cat^{\PF}, \times)) \ar[r]^-{R_{\mathcal{C}^{\op}}} &   \mathrm{(co)Op}^{2-\op}_{/I^{\op}} }\]
 where Op and coOp are interchanged in the two rows. 
 Here $L_{\mathcal{C}}$ and also $R_{\mathcal{C}}$, for a category or (co)operad $\mathcal{C} \to I$, are given by the Day convolution on objects, and on morphisms by Proposition~\ref{PROPDAYLAX}.
 
Also $\iota$ and ${}^t \iota$ agree on objects. More generally, if $F,G: I^{\op} \to (\Cat^{\PF}, \times)$ are functors and
\[ \rho \in \Hom^{\oplax, \inert-\pseudo}_{\Cat^{\PF, \times}}(F, G)   \]
has a point-wise right adjoint, then they assemble to a mate
\[ {}^t\!\rho \in \Hom^{\lax, \inert-\pseudo}_{\Cat^{\PF, \times}}(G, F)  \]
(and vice versa) and for $\mathcal{C} \to I$ we have 
\[ L_{\mathcal{C}}(\rho) = R_{\mathcal{C}}({}^t\!\rho). \]
In particular, $L_{\mathcal{C}}\, \iota = R_{\mathcal{C}}\, {}^{t}\! \iota$ also on morphisms. 
The same is true in the $\infty$-categorical context. 
\end{PAR}

\begin{PAR}\label{PAREXPSUB}
Let $\mathcal{C} \to I$ be an ($\infty$-)exponential fibration of ($\infty$-)(co)operads with associated pro-functors $\alpha^{\bullet}: \mathcal{C}_{i'} \to \mathcal{C}_i$.
Let $\iota: \mathcal{C}' \hookrightarrow \mathcal{C}$ be a full embedding. Then the pro-functors associated with  $\mathcal{C}' \to I$ are obviously given by
$(\alpha')^\bullet = {}^t\! \iota\, \alpha^\bullet \iota$ and the transition morphism
\begin{eqnarray} \label{eqtrans2}  \,{}^t\! \iota\, g^\bullet \iota \,{}^t\! \iota f^\bullet \iota  \Rightarrow  \,{}^t\! \iota  (fg)^\bullet \iota    \end{eqnarray}
is induced by the counit  $\iota \,{}^t\!\iota \to \id$. Thus $\mathcal{C}'$ is an ($\infty$-)exponential fibration again, if and only if (\ref{eqtrans2}) is an isomorphism.
This is obviously the case 
if $\iota$ is ($\infty$\nobreakdash-)Cartesian (i.e.\@ if $\iota\,{}^t\!\iota\,f^{\bullet}\, \iota \to f^{\bullet}\, \iota$ is an isomorphism) or ($\infty$-)coCartesian (i.e.\@ if ${}^t\!\iota\, f^{\bullet}\,  \iota\, {}^t\!\iota  \to {}^t\!\iota\,f^{\bullet}$ is an isomorphism). We have proven: 
\end{PAR}
\begin{LEMMA}\label{LEMMAFLAT}
Let $\mathcal{C} \to I$ be an ($\infty$-)exponential fibration of ($\infty$-)(co)operads and $\iota: \mathcal{C}' \hookrightarrow \mathcal{C}$ be a full embedding.
If $\iota$ is ($\infty$-)Cartesian or ($\infty$-)coCartesian\footnote{cf.\@ Definition~\ref{DEFCART}, which is not literally applicable as stated, thus interpreted as in~\ref{PAREXPSUB}} then $\mathcal{C}' \to I$ is again an ($\infty$-)exponential fibration. 
 \end{LEMMA}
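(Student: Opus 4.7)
The plan is to unpack what the pro-functors associated to $\mathcal{C}' \to I$ look like, and then show that the transition morphism encoding exponentiality for $\mathcal{C}'$ reduces — via the given (co)Cartesian hypothesis — to the one for $\mathcal{C}$, which is an isomorphism by assumption.

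First, for each active morphism $\alpha$ in $I$, the fiber of $\mathcal{C}'$ over a source object is the full subcategory spanned by the restriction of $\iota$, so the pro-functor extracted from $\mathcal{C}' \to I$ in the sense of \ref{PAREXTRACT} is given by
\[ (\alpha')^{\bullet} \;=\; {}^t\!\iota \, \alpha^{\bullet} \, \iota, \]
since $\Hom_{\mathcal{C}'}(x,y) = \Hom_{\mathcal{C}}(\iota x, \iota y)$. Full faithfulness of $\iota$ gives moreover ${}^t\!\iota \, \iota \cong \id$, so the composition transition for the lax functor $\alpha \mapsto (\alpha')^{\bullet}$ is
\[ (g')^{\bullet}(f')^{\bullet} \;=\; {}^t\!\iota \, g^{\bullet} \, (\iota \, {}^t\!\iota) \, f^{\bullet} \, \iota \;\longrightarrow\; {}^t\!\iota \, g^{\bullet} \, f^{\bullet} \, \iota \;\xrightarrow{\sim}\; {}^t\!\iota \, (fg)^{\bullet} \, \iota \;=\; ((fg)')^{\bullet}, \]
where the first arrow is induced by the counit $\iota\,{}^t\!\iota \Rightarrow \id$ and the second is an isomorphism because $\mathcal{C} \to I$ is an exponential fibration.

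It remains to verify that the counit-induced arrow above is an isomorphism in each of the two cases. If $\iota$ is Cartesian, then by (the interpretation of) Definition~\ref{DEFCART} recorded in \ref{PAREXPSUB} the map $\iota\,{}^t\!\iota\, f^{\bullet}\, \iota \to f^{\bullet}\, \iota$ is an isomorphism, and sandwiching on the left by ${}^t\!\iota\, g^{\bullet}$ gives what we need. Dually, if $\iota$ is coCartesian, then ${}^t\!\iota\, g^{\bullet}\, \iota\,{}^t\!\iota \to {}^t\!\iota\, g^{\bullet}$ is an isomorphism, and sandwiching on the right by $f^{\bullet}\, \iota$ again gives the result. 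In either case the composition transition for $\mathcal{C}'$ is an isomorphism, so $\mathcal{C}' \to I$ is an exponential fibration of (co)operads.

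The only possible obstacle is bookkeeping: one must check that the formal calculation respects the restriction to active morphisms (which is automatic because the hypothesis and conclusion both refer only to active morphisms) and that the same argument transports verbatim to the $\infty$-categorical setting, where ${}^t\!\iota\,\iota \cong \id$ is still valid for fully faithful $\iota$ and the counit $\iota\,{}^t\!\iota \Rightarrow \id$ still makes sense as a 2-morphism in $\Cat^{\PF}_{\infty}$. Both points are standard, so no further work is required.
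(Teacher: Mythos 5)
Your argument is correct and is essentially the paper's own proof (carried out in \ref{PAREXPSUB}): the pro-functors of $\mathcal{C}'$ are $(\alpha')^{\bullet}={}^t\!\iota\,\alpha^{\bullet}\,\iota$, the transition morphism is induced by the counit $\iota\,{}^t\!\iota\Rightarrow\id$, and the ($\infty$-)Cartesian or ($\infty$-)coCartesian hypothesis makes that map an isomorphism by sandwiching, exactly as you do. No gaps; your closing remarks on active morphisms and the $\infty$-categorical transport are also consistent with how the paper treats these points.
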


\subsection{Twisted arrow categories for operads}\label{SECTTW}

\begin{PAR}
Let $\Xi \in \{\downarrow, \uparrow\}^n$ be an ordered sequence of directions. For an $\infty$-category, or more generally, an $\infty$-operad $I$, we will define 
$\infty$-categories (resp.\@ $\infty$-operads) ${}^{\Xi} I$ such that for $\infty$-categories:
\[ {}^{\downarrow} I = I,\ {}^{\uparrow} I = I^{\op},\ \twop I = \mathrm{tw}(I), \ \tw I = \mathrm{tw}^{\op}(I)  \]
where $\mathrm{tw}(I)$ is the twisted arrow category \cite[5.2.1]{Lur11}
and such that, more generally, the {\em active} morphism spaces in ${}^{\Xi} I$ are equivalent to the space of commutative diagrams of the form
\begin{equation} \label{EQMORTW}
\vcenter{ \xymatrix{ x_1 \ar[r] \ar@{<->}[d] & x_2  \ar[r] \ar@{<->}[d] & \cdots \ar[r] & x_n  \ar@{<->}[d] \\
x_1' \ar[r] & x_2' \ar[r] & \cdots \ar[r] & x_n' 
 } }  \end{equation}
in which all morphisms are active, and where the arrow directions are dictated by $\Xi$. For operads, the equation $\twop I = \mathrm{tw}(I)$ does not hold true 
when one understands by $\mathrm{tw}(I)$ the usual twisted arrow category associated with the underlying $\infty$-category (of operators). The two are related, however, see Lemma~\ref{LEMMAOPTWIST} below. 
\end{PAR}

\begin{DEF}\label{DEFTW}
Let  $I$ be an $\infty$-operad or $\infty$-category. Define inductively:
\[ {}^{\Xi \downarrow} I := \int {}^{\Xi}(I_{\act} \times_{/I_{\act}} i)   \]
the unstraightening construction associated with the functor of $\infty$-categories (of operators):
\begin{eqnarray*} I &\to& \Cat_{\infty} \\
 i & \mapsto & I_{\act} \times_{/I_{\act}} i
 \end{eqnarray*}
the functoriality being obtained by unique factorization into inert and active.
Finally set:
\[ {}^{\Xi \uparrow} I := ({}^{\Xi' \downarrow} I)^{\op}  \]
where in $\Xi'$ all arrow directions are reversed. 
\end{DEF} 
This defines recursively ${}^{\Xi} I$ for all operads $I$. Notice that the $\infty$-operad structure only plays a role in the first step; the comma category $I _{\act} \times_{/I_{\act}} i$ is just to be considered as a plain $\infty$-category and does not
carry any operad structure. 

\begin{LEMMA}
 ${}^{\Xi} I$ is an $\infty$-operad again, if the last arrow of $\Xi$ is $\downarrow$, and otherwise an $\infty$-cooperad. 
 The last projection $\pi_n: {}^{\Xi} I \to I$ is a cofibration of operads, resp.\@ a fibration of cooperads $\pi_n: {}^{\Xi} I \to I^{\op}$.
\end{LEMMA}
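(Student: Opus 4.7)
I would proceed by induction on $n = |\Xi|$, taking the convention ${}^{\emptyset}(-) = *$ so that the base cases are immediate: ${}^{\downarrow} I = \int_{i \in I} * \simeq I$ with $\pi_1 = \id_I$ trivially a cofibration of operads, and ${}^{\uparrow} I = ({}^{\downarrow} I)^{\op} = I^{\op}$ with $\pi_1 = \id_{I^{\op}}$ trivially a fibration of cooperads.

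Before the main induction I would establish an auxiliary claim by a parallel induction: for every sequence $\Xi_0$, the construction ${}^{\Xi_0}: \Cat_{\infty} \to \Cat_{\infty}$ commutes with finite products. The step $\Xi_0 = \Xi_0' \downarrow$ uses that slices of products split, $(\mathcal{C} \times \mathcal{D})/(c,d) \simeq \mathcal{C}/c \times \mathcal{D}/d$, the inductive hypothesis applied to ${}^{\Xi_0'}$, and the fact that the Grothendieck construction over a product commutes with products, i.e.\
\[ \int_{(c,d) \in \mathcal{C} \times \mathcal{D}} F(c) \times G(d) \simeq \int_{\mathcal{C}} F \times \int_{\mathcal{D}} G. \]
The step $\Xi_0 = \Xi_0' \uparrow$ then follows formally from the already-established $\overline{\Xi_0'} \downarrow$ case (of equal length) combined with the product-preservation of $(-)^{\op}$.

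For the main inductive step with $\Xi = \Xi_0 \downarrow$ I would invoke property 6 from the paper's list to promote $i \mapsto I_{\act} \times_{/I_{\act}} i$ to a morphism of operads $\xi_I: I \to \Cat_{\infty}^\times$. By the auxiliary claim, ${}^{\Xi_0}$ preserves products and so lifts canonically to an endofunctor $({}^{\Xi_0})^\times$ of the Cartesian operad $\Cat_{\infty}^\times$. The composite $({}^{\Xi_0})^\times \circ \xi_I$ is then a morphism of operads whose underlying functor $I \to \Cat_{\infty}$ sends $i$ to ${}^{\Xi_0}(I_{\act} \times_{/I_{\act}} i)$; by property 4 (that $\Cat_{\infty}^\times$ classifies cofibrations of operads), its unstraightening --- which is ${}^{\Xi} I$ by definition --- is a cofibration of operads over $I$, giving both the operad structure and the fact that $\pi_n$ is a cofibration of operads. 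The case $\Xi = \Xi_0 \uparrow$ is then immediate from the definition ${}^{\Xi} I = ({}^{\overline{\Xi_0} \downarrow} I)^{\op}$: taking opposites of a cofibration of operads over $I$ produces a fibration of cooperads over $I^{\op}$, and $\pi_n$ is the opposite of the $\downarrow$-case projection.

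The main obstacle I anticipate is the rigorous promotion of a product-preserving endofunctor of $\Cat_{\infty}$ to an operadic endofunctor of $\Cat_{\infty}^\times$ at the $(\infty,2)$-categorical level, since one must track coherent compatibility with compositions and 2-cells. A clean way around this is to appeal to the universal property of the Cartesian operad: any finite-product-preserving functor $\Cat_{\infty} \to \Cat_{\infty}$ lifts essentially uniquely to a morphism of operads $\Cat_{\infty}^\times \to \Cat_{\infty}^\times$, naturally in the input functor, so that the higher coherences needed for the composition $({}^{\Xi_0})^\times \circ \xi_I$ to classify a cofibration of operads come along automatically.
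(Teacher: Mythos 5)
Your argument is correct and is essentially the paper's proof: the paper likewise invokes fact 6 of \ref{PAROP} (the comma-category construction is a morphism of operads $I \to \Cat_{\infty}^{\times}$) together with fact 4 (that $\Cat_{\infty}^{\times}$ classifies cofibrations of operads) and handles the $\uparrow$-case by passing to opposites. The only difference is that you make explicit the step of post-composing with the finite-product-preserving functor ${}^{\Xi_0}$ and lifting it operadically, which the paper's two-line proof leaves implicit.
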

\begin{proof}
By basic facts on $\infty$-operads \ref{PAROP}, 6.\@
the association
 \[  i  \mapsto  I_{\act} \times_{/I_{\act}} i  \]
factors
\[ I \to \Cat_{\infty}^\times \to \Cat_{\infty} \]
where the first is a functor of $\infty$-operads, and thus the corresponding cofibration is a cofibration of $\infty$-operads  (cf.\@ \ref{PAROP}, 4.\@).
\end{proof}

\begin{DEF}\label{DEFTYPEI}
An {\em active} morphism which corresponds to a diagram (\ref{EQMORTW}) in which all vertical morphisms, except for the $i$-th one, are isomorphisms, is called a {\bf type-$i$ morphism.} With respect to the recursive definition, those are precisely the morphisms that lie in the fiber for $n-i$ steps and are coCartesian for the next unstraighening (resp.\@ for $i=1$ are just any morphism in the fibers for the last step). 
In particular, all type-$i$ morphisms for $i<n$ lie over an isomorphism in $\OOO$.
\end{DEF}

\begin{LEMMA}\label{LEMMATWLOC}
Let $I$ be an $\infty$-category.
Let $\Xi' \subset \Xi$ be a non-empty subsequence of arrow directions. Then the projection 
\[ \pi: {}^{\Xi}I \rightarrow {}^{\Xi'}I \]
is a localization, i.e.\@ it exhibits the target as the localization of ${}^{\Xi}I$ at those morphisms that are mapped to isomorphisms under $\pi$. 
\end{LEMMA}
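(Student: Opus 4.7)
The plan is to reduce to a codimension-one inclusion $\Xi' \subset \Xi$ and then exhibit a fully faithful one-sided adjoint to the projection~$\pi$. Since a composition of localizations is again a localization, it suffices to treat the case where $\Xi'$ is obtained from $\Xi$ by deleting the direction $d_k$ at a single position~$k$. Using the identity ${}^{\Xi}I \cong ({}^{\widetilde{\Xi}}I)^{\op}$ (which follows inductively from Definition~\ref{DEFTW}, with $\widetilde{\Xi}$ the direction-reversal of $\Xi$) together with the elementary fact that a functor is a localization iff its opposite is, I reduce further to the case $d_k = \downarrow$.

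Next, construct a section $s\colon {}^{\Xi'}I \to {}^{\Xi}I$ by \emph{inserting an identity column at position~$k$}: on objects, $s$ sends a chain $(y_1 \to \cdots \to y_{n-1})$ to $(y_1, \ldots, y_{k-1}, y_k, y_k, y_{k+1}, \ldots, y_{n-1})$, where the inserted $k$-th column carries an identity morphism; on morphisms, described by the grids~(\ref{EQMORTW}), $s$ inserts identity vertical arrows at the new column. By construction $\pi s = \id$, so $s$ is automatically fully faithful. For $x = (x_1 \to \cdots \to x_n) \in {}^{\Xi}I$, the composite $s\pi(x)$ equals $(x_1, \ldots, x_{k-1}, x_{k+1}, x_{k+1}, x_{k+2}, \ldots, x_n)$, with the morphism $x_{k-1} \to x_{k+1}$ being the composite extracted from the original chain. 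A unit $\eta_x\colon x \to s\pi(x)$ is then given by the grid whose vertical at each position $i \neq k$ is an identity and whose vertical at position~$k$ is the chain-morphism $x_k \to x_{k+1}$ itself (of the correct direction $d_k = \downarrow$); commutativity of the two adjacent squares reduces precisely to the associativity of composition in $x$. This exhibits $s$ as a fully faithful right adjoint of $\pi$, so $\pi$ presents ${}^{\Xi'}I$ as a reflective localization of ${}^{\Xi}I$. The boundary case $k = n$ is dual: insert the identity column at the right end, producing a counit $s\pi \to \id$ and a coreflective localization $s \dashv \pi$.

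The main technical hurdle is to upgrade the above pointwise prescription to a genuine $\infty$-functor~$s$ and $\infty$-natural transformation~$\eta$, rather than a 1-categorical sketch. I would handle this by induction on $|\Xi|$: writing $\Xi = \Xi_0 \downarrow$, for $k < n$ both ${}^{\Xi}I$ and ${}^{\Xi'}I$ are cofibrations over~$I$, and $\pi$ is fiberwise the projection ${}^{\Xi_0}(I_{\act} \times_{/I_{\act}} i) \to {}^{\Xi_0'}(I_{\act} \times_{/I_{\act}} i)$, to which the inductive hypothesis applies. Invoking the general principle that a map of cofibrations which is fiberwise a reflective localization is itself a reflective localization then provides the globalization. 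The base case $k = n$ reduces, via the inductive definition, to the classical coreflective adjunction $\id_{(-)}\colon I \rightleftarrows {}^{\downarrow\downarrow}I$ between $I$ and its arrow category, with the domain projection as coreflection; this is straightforward and well known.
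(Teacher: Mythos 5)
Your reduction to deleting a single direction, and then (via reversal and opposites) to a deleted $\downarrow$, matches the paper, and so does the idea of using a fully faithful section in the favourable cases and a fiberwise argument to globalize. But the central claim — that the ``insert an identity column'' prescription always defines a section $s$ — is false, and this is a genuine gap, not a technicality. To make $s$ a functor you must fill in the vertical arrow of the new column for an arbitrary ladder of ${}^{\Xi'}I$: that arrow must point in the deleted direction $\downarrow$ (it joins the doubled object of the source chain to the doubled object of the target chain, so it is not literally an identity), and the only datum available is the adjacent vertical, whose direction is that of the neighbouring entry of $\Xi$. Hence the section exists precisely when the deleted $\downarrow$ has a $\downarrow$ neighbour on the side you double towards — exactly the case distinction the paper makes (``the right-most arrow of $L$ is $\downarrow$ or the left-most arrow of $R$ is $\downarrow$''). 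When the deleted $\downarrow$ is flanked by $\uparrow$'s, or sits at an end next to an $\uparrow$, there is no such section; worse, $\pi$ then admits no adjoint on either side in general, so the whole strategy of exhibiting a (co)reflective localization cannot be repaired. The simplest instance is $\Xi={\uparrow\downarrow}$, $\Xi'={\uparrow}$, i.e.\ the projection $\tw I\to I^{\op}$ of the twisted arrow category: already for $I$ the category with two parallel arrows one checks by hand that neither a left nor a right adjoint exists, although the projection \emph{is} a localization. Note these are precisely the projections the lemma is most needed for in the paper (e.g.\ $\mathrm{tw}^{\op}(I)\to\tw I$, $\twtw I \to \ddd I$), so your argument misses the essential cases; your ``base case'' ${}^{\downarrow\downarrow}I\to I$ is one of the harmless ones.

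For the residual cases a different mechanism is required, and this is what the paper supplies: if $L$ or $R$ is empty, the projection is a (co)fibration whose fibers are comma categories $i\times_{/I}I$, resp.\ $I\times_{/I}i$, hence weakly contractible, and a (co)fibration with weakly contractible fibers is a localization onto its base; if both $L$ and $R$ are non-empty, one views ${}^{L\downarrow R}I\to{}^{LR}I$ as a morphism of (co)fibrations over ${}^{R}I$ which is fiberwise of the previous form, and invokes the fact that a morphism of (co)fibrations which is fiberwise a localization is a localization \cite[4.1.11]{AF20}. Your final paragraph gestures at a fiberwise globalization of the same flavour, but it globalizes the (nonexistent) section rather than replacing it, so it does not close the gap.
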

\begin{proof}
By induction, it suffices to show the following.
Let $\Xi = L\!\! \downarrow \!\! R$ be a sequence of arrow directions and let $\Xi' = L\,R$.  One of $R$ or $L$ may be empty, but not both. Then the projection 
\[ {}^{\Xi}I \rightarrow {}^{\Xi'}I \]
is a localization. 
We first discuss the case in which the right-most arrow of $L$ is $\downarrow$ or the leftmost arrow of $R$ is $\downarrow$.
Then there is even a section 
\[ {}^{\Xi'}I \rightarrow {}^{\Xi}I \]
which turns ${}^{\Xi'}I$ a (co)reflective subcategory. 
Now assume that the condition is not satisfied. If $L$ or $R$ is empty then this projection is actually a (co)fibration with fibers of the form
$i \times_{/I} I$, resp.\@ $I \times_{/I} i$, hence contractible. The morphism from a (co)fibration with contractible fibers to its base is a localization.
If $R$ and $L$ are not empty consider the projection

\[  \xymatrix{ {}^{L \downarrow R} I  \ar[dr] \ar[rr] & & {}^{L R} I \ar[dl] \\
& {}^{R} I  } \]
This is a morphism of (co)fibrations (i.e.\@ mapping  (co)Cartesian morphisms to (co)Cartesian ones) and the morphism between the fibers is of the form:
\[ {}^{L \downarrow } I \times_{/I} i  \to  {}^{L }  I \times_{/I} i  \]
hence of a form discussed already. A morphism of (co)fibrations which is fiber-wise a localization is a localization itself \cite[4.1.11]{AF20}.
\end{proof}

\begin{LEMMA}\label{LEMMAOPTWIST}
Let $I$ be an $\infty$-operad. There is a  functor
\[ \Pi: \mathrm{tw}^{\op}(I) \to \tw I  \]
which is a localization. Here $\mathrm{tw}^{\op}(I)$ denotes the usual (dual) twisted arrow category of $I$ considered as $\infty$-category (of operators). 
\end{LEMMA}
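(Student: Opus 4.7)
The functor $\Pi$ is constructed using the unique $(\inert, \act)$-factorization system on $I$ (cf.\@~\cite[5.2.8.8]{Lur09}): for an object $\alpha \colon x \to y$ of $\mathrm{tw}^{\op}(I)$, factor $\alpha = \alpha_{\inert} \alpha_{\act}$ with $\alpha_{\act} \colon x \to \tilde y$ active and $\alpha_{\inert} \colon \tilde y \to y$ inert, and set $\Pi(\alpha) := \alpha_{\act}$, viewed as an object of $\tw I$ via its unstraightening description. Functoriality comes from promoting the factorization to an $\infty$-functor $\Fun([1], I) \to \Fun([2], I)$ inverse to the equivalence $\Fun^{\act, \inert}([2], I) \iso \Fun([1], I)$ given by composition. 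This is well-defined because any morphism $(f,g)\colon \alpha \to \alpha'$ in $\mathrm{tw}^{\op}(I)$ induces, by orthogonality, a canonical morphism $\alpha_{\act} \to \alpha'_{\act}$ lying over a morphism in $I$ that is itself built from the active and inert parts of $f$ and $g$.

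Following the strategy used in the proof of Lemma~\ref{LEMMATWLOC}, I would exhibit $\Pi$ as a (co)Cartesian fibration with weakly contractible fibers, from which the localization statement follows by the criterion \cite[4.1.11]{AF20} invoked there. For $\beta \colon x \to \tilde y$ an active morphism (object of $\tw I$), the fiber $\Pi^{-1}(\beta)$ is by uniqueness of the factorization equivalent to the $\infty$-category of inert morphisms out of $\tilde y$, which admits $\id_{\tilde y}$ as an initial object and is hence weakly contractible. An equivalent formulation is to identify the class $W$ of morphisms inverted by $\Pi$---namely the morphisms $(\id_x, g) \colon \alpha \to \alpha'$ in $\mathrm{tw}^{\op}(I)$ with $g$ inert, which by construction satisfy $\alpha_{\act} \simeq \alpha'_{\act}$---and check the universal property of $\mathrm{tw}^{\op}(I)[W^{-1}]$ for $\Pi$ directly, factoring any functor $\mathrm{tw}^{\op}(I) \to \mathcal{D}$ sending $W$ to equivalences through $\Pi$ via the factorization assignment.

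\textbf{Main obstacle.} The principal technical difficulty is verifying the (co)Cartesian fibration property of $\Pi$. The morphism spaces of $\tw I$ are defined through the unstraightening in Definition~\ref{DEFTW} and mix together the active and inert morphisms of $I$ (encoding the full operad structure), so the natural candidate for a (co)Cartesian lift of a morphism $\beta \to \beta'$ in $\tw I$---obtained by transporting the inert part $\alpha_{\inert}$ along $\beta \to \beta'$ via the compatibility of inert and active morphisms in the operad---must be shown to be well-defined and universal. This is essentially a relative version of the orthogonality of the $(\inert, \act)$-factorization system in the presence of the multi-morphisms coming from the operad structure, and one must be careful that inert morphisms behave well under composition with the active morphisms appearing in the operadic twisted arrow category.
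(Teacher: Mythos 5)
There are two genuine problems with your proposal. First, the factorization you use goes the wrong way: in an $\infty$-operad the unique factorization system is \emph{inert followed by active} (\ref{PAROP}, citing \cite[Proposition~2.1.2.5]{Lur11}); already in $\OOO=\Delta^{\op}$ a factorization ``active then inert'' as in your $x \xrightarrow{\act} \tilde y \xrightarrow{\inert} y$ neither exists uniquely nor defines a factorization system (it is $\Delta$, not $\Delta^{\op}$, in which active comes first). So the assignment $\alpha \mapsto \alpha_{\act}$ is not well defined as you state it. With the correct order the active part of $\alpha\colon x\to y$ is a morphism $\tilde x \to y$ with the \emph{same target}; this is not a cosmetic point, because it is exactly what makes $\Pi$ a morphism over $I$ (both $\mathrm{tw}^{\op}(I)$ and $\tw I$ are cofibrations over $I$ via the target, with fibers $I\times_{/I} i$ and $I_{\act}\times_{/I_{\act}} i$ respectively), and your subsequent identifications (the fiber of $\Pi$ over $\beta$ as inert morphisms \emph{out of} $\tilde y$, the class $W$ as pairs $(\id_x,g)$ with $g$ inert) inherit the same reversal.

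Second, and more seriously, the localization property is not actually proved. Your plan is to exhibit $\Pi$ itself as a (co)Cartesian fibration with weakly contractible fibers, but you flag precisely this as an unresolved obstacle, and it is doubtful as a strategy: fiber-wise over $I$ the comparison map $I\times_{/I} i \to I_{\act}\times_{/I_{\act}} i$ is a \emph{reflective localization} (the factorization map is left adjoint to the fully faithful inclusion of the active over-category), and reflections are in general not (co)Cartesian fibrations; moreover \cite{AF20}, 4.1.11 is the statement that a morphism of (co)fibrations which is \emph{fiber-wise a localization} is a localization, not a contractible-fiber criterion for $\Pi$ (the contractible-fiber argument in the proof of Lemma~\ref{LEMMATWLOC} is applied there to a different projection). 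The paper's proof avoids your obstacle by working fiber-wise over $I$: $\Pi$ is the unstraightening of the natural transformation $I\times_{/I} i \to I_{\act}\times_{/I_{\act}} i$ given by unique (inert, active) factorization, one checks that for $\alpha\colon i\to i'$ the evident square commutes (the functoriality defining $\tw I$ being the same factorization), each fiber-wise map is a reflective localization, and then the cited fact for morphisms of cofibrations over $I$ yields the claim. Your fallback of ``checking the universal property of $\mathrm{tw}^{\op}(I)[W^{-1}]$ directly'' is exactly the content that requires an argument and is not supplied.
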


\begin{proof}
The functor is the unstraightening (in the sense of $\infty$-categories) of the fiber-wise (over $I$) functor
\[ I \times_{/I} i \to I_{\act} \times_{/I_{\act}} i \]
given by unique factorization. 
This is even a {\em reflective} localization. 
One checks that for a morphism $\alpha: i \to i'$ in $I$ one has a commutative diagram 
\[ \xymatrix{ I \times_{/I} i \ar[r]  \ar[d] & I_{\act} \times_{/I_{\act}} i \ar[d] \\
 I \times_{/I} i' \ar[r] & I_{\act} \times_{/I_{\act}} i' } \]
where the left vertical functor is composition and the right hand side is the functoriality used to define $\tw I$ which also stems from the unique factorization. 
They assemble to a natural transformation of functors $I \to \Cat_{\infty}$.
\end{proof}

\subsection{(Co)fibrations of (co)operads and (op)lax limits}

This section contains a brief discussion of the relation between (co)fibrations of (co)operads and the corresponding notions of 
(op)lax morphisms of their classifying functors (more background, explicit constructions in a particular model of $(\infty,2)$-categories, and proofs can be found in \cite{
HHLN23,HHLN23a, Abe22, Ber24, AGH24, GHL24, GHL20, AF20}).

\begin{PAR}
Let $\mathcal{C}$ be an $(\infty,2)$-category and $I$ a small $\infty$-category. We denote by
\[ \Hom^{\lax}_{\mathcal{C}^I}(F, G)  \]
the $(\infty,1)$-category of lax natural transformations (cf.\@ e.g.\@ \cite{Ber24, Abe22, HHLN23a}). We adopt the convention that for a lax transformation  and
for each morphism $i \to i'$ in $I$ there is a diagram in $C$ of shape
\[ \xymatrix{ F(i) \ar[r] \ar[d] \ar@{}[rd]|\Uparrow  & F(i') \ar[d] \\
 G(i) \ar[r] & G(i')
} \]
(as opposed to the other direction of the 2-morphism). 
For $F \in C^{I^{\op}}$ and $G \in C^{I}$ we also denote by  
\[ \Dinat^{\lax}(F, G)  \]
a similar construction in which for each morphism $i \to i'$ in $I$ there is a diagram in $C$
\[ \xymatrix{ F(i) \ar@{<-}[r] \ar[d] \ar@{}[rd]|\Rightarrow  & F(i') \ar[d] \\
 G(i) \ar[r] & G(i')
} \]
\end{PAR}

We have the notions of (op)lax limit and colimit which are the 2-adjoint to the constant functor. They exist with decoration for a set $C$ of morphisms in $I$, i.e.
\[ \Hom_{\Cat_{\infty}^I}^{\mathrm{(op)lax}, C-\pseudo}(F, \alpha^*G) \cong \Hom_{\Cat_{\infty}}(\mathrm{(op)laxcolim}^{C-\pseudo} F, G)   \]
where $\Hom_{\Cat_{\infty}^I}^{\mathrm{(op)lax}, C-\pseudo} \subseteq \Hom_{\Cat_{\infty}^I}^{\lax}$ means the full subcategory of lax transformation in which the constraint is an isomorphism for all morphisms in $C$ and
\[ \Hom_{\Cat_{\infty}^I}^{\mathrm{(op)lax}, C-\pseudo}(\alpha^*F, G) \cong \Hom_{\Cat_{\infty}}(F, \mathrm{(op)laxlim}^{C-\pseudo} G).  \]
They are  called  {\bf partially (op)lax limits} in \cite{Ber24, AGH24}.

This allows to write also 
\[ \mathrm{(op)laxlim}^{C-\pseudo} G = \Hom_{\Cat_{\infty}^I}^{\mathrm{(op)lax}, C-\pseudo}(\cdot, G). \]
Furthermore, we have
\[ \laxcolim F = \int F \qquad  \oplaxcolim F = \intrev F. \]
the unstraightening construction, and its dual, respectively. These are a cofibration, resp.\@ a fibration over $I$. 
Then 
\begin{gather*}
 \laxlim_I^{C-\pseudo} G = \Hom_{\Cat_{\infty}/I}(I, \laxcolim_I G)^{C-\cocart} \\
   \oplaxlim_I G = \Hom_{\Cat_{\infty}/I}(I, \oplaxcolim_I G)^{C-\cart}  \end{gather*}
Here $C-\mathrm{(co)cart}$ means the full subcategory of functors that map morphisms in $C$ to (co)Cartesian morphisms. 

We have also 
\[ \laxcolim_I^{C-\pseudo} YG =  \laxcolim_I G[\{C-\cart\}^{-1}] \qquad  \oplaxcolim_I^{C-\pseudo} G =  \oplaxcolim_I G [\{C-\cocart\}^{-1}]  \]
(localization at the Cartesian morphism over morphisms in $C$) which will no be needed in the sequel, though.

\begin{LEMMA}\label{LEMMAOPTWIST2}
If $F, G: I \to (\Cat_{\infty}, \times)$ are functors of $\infty$-operads then the association\footnote{Here $\Cat_{\infty}^{\times}$ denotes the underlying $(\infty,2)$-category (of operators) of the $(\infty,2)$-operad $(\Cat_{\infty}, \times)$, i.e.\@ with $(\Cat_{\infty}^{\times})_{[n]} = \Cat_{\infty}^n$. }
\begin{eqnarray*}
\mathrm{tw}^{\op}(I) &\to& \Cat_{\infty} \\
 \alpha: i \to i' &\mapsto& \Hom^{p(\alpha)}_{\Cat_{\infty}^{\times}}(F(i), G(i'))  
 \end{eqnarray*}
where $p: I \to \OOO$ is the structural morphism, 
factors through $\tw I$ and induces an isomorphism of $(\infty, 1)$-categories: 
\begin{gather*}
\Hom_{(\Cat^\times_\infty)^I}^{\lax, \inert-\pseudo}(F, G) \cong
\laxlim_{\mu: i \to i' \in \tw I }^{1-\pseudo, \inert-\pseudo} \Hom^{p(\mu)}_{\Cat_{\infty}^{\times}}(F(i), G(i')).     
\end{gather*}
\end{LEMMA}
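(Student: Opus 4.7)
The plan is to verify the two assertions in sequence: first that the functor $\Psi \colon \mathrm{tw}^{\op}(I) \to \Cat_\infty$ sending $\alpha \colon i \to i'$ to $\Hom^{p(\alpha)}_{\Cat_\infty^\times}(F(i),G(i'))$ factors through $\Pi \colon \mathrm{tw}^{\op}(I) \to \tw I$, and second that the resulting $\tw I$-diagram's partial lax limit computes the $(\infty,1)$-category of inert-pseudo lax natural transformations.

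For the factorization, I would invoke Lemma~\ref{LEMMAOPTWIST}: since $\Pi$ is a reflective localization, it suffices to check that $\Psi$ inverts the unit morphisms of this localization. From the proof of Lemma~\ref{LEMMAOPTWIST} these arise from the inert-active factorization in $I$, so they have the shape $(f,g)$ in which one of $f,g$ is a non-trivial inert morphism. A general morphism $(f \colon j \to i, \, g \colon i' \to j')$ of $\mathrm{tw}^{\op}(I)$ acts on $\Psi(\alpha)$ by pre-composition with $F(f)$ and post-composition with $G(g)$. Because $F$ and $G$ are functors of $\infty$-operads, inert morphisms of $I$ are sent to inert morphisms of $\Cat_\infty^\times$, and in the cartesian operad $(\Cat_\infty,\times)$ the inert morphisms are precisely the canonical projections between tuples of categories. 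Pre- or post-composing a multi-morphism of $\Cat_\infty^\times$ with such projections induces an equivalence on the spaces $\Hom^{p(\alpha)}_{\Cat_\infty^\times}$ once the degree in $\OOO$ is adjusted correspondingly, so $\Psi$ descends to a functor $\widetilde\Psi \colon \tw I \to \Cat_\infty$.

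For the isomorphism, the plan is to match both sides componentwise via the universal property of partial lax limits. An inert-pseudo lax transformation $F \Rightarrow G$ consists of components $\eta_i \in \Hom^{\id}_{\Cat_\infty^\times}(F(i),G(i))$ together with naturality $2$-cells invertible on inert morphisms; by the unique inert-active factorization in $I$ these $2$-cells are determined by their values on active morphisms, so the data assemble into a section of $\laxcolim_{\tw I} \widetilde\Psi \to \tw I$ satisfying the appropriate coCartesianity. On the right, $\laxlim^{1-\pseudo,\,\inert-\pseudo}_{\tw I} \widetilde\Psi$ is by definition the $\infty$-category of sections $\tw I \to \laxcolim_{\tw I} \widetilde\Psi$ that are coCartesian over the type-$1$ morphisms (the $1-\pseudo$ decoration) and over the morphisms classified as $\inert$ in the target $(\Cat_\infty,\times)$-structure. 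Unfolding the Grothendieck construction identifies these sections with exactly the active-part data described above, and the compatibility constraints coming from the functoriality of $F$ and $G$.

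The hard part will be pinning down the $(\infty,2)$-categorical coherences in the second step rigorously. At the level of homotopy categories the bijection is essentially formal, but as an equivalence of $(\infty,1)$-categories one would likely want to invoke an established universal property of partial lax limits with pseudo-constraints (cf.\@~\cite{HHLN23a, Abe22, Ber24}) rather than hand-construct a Segal-style comparison. A secondary technical check is that the operadic multi-hom description of $\Cat_\infty^\times$ interacts correctly with the type-$i$ morphisms of $\tw I$ (Definition~\ref{DEFTYPEI}), ensuring that the descended functor $\widetilde\Psi$ has the intended behavior on each stratum of $\tw I$ by types and matches the inert-pseudo constraint on the transformation side.
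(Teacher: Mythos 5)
Your second step is where the real content of the lemma lies, and it is left open. You describe an inert-pseudo lax transformation informally as components $\eta_i$ together with $2$-cells that are ``determined by their values on active morphisms'' and then defer the actual comparison of $(\infty,1)$-categories to an unspecified external universal property; as you acknowledge, this only gives a bijection-type heuristic at the level of homotopy categories. The paper closes exactly this gap with a two-step reduction that your proposal does not contain: first, Proposition~\ref{PROPTWIST} (with the decoration bookkeeping of \ref{PARC}) identifies $\Hom^{\lax,\inert-\pseudo}_{(\Cat^{\times}_{\infty})^{I}}(F,G)$ with the partial lax limit $\laxlim^{1-\pseudo,\inert-\pseudo}_{\mathrm{tw}^{\op}(I)}\Hom^{p(\alpha)}_{\Cat^{\times}_{\infty}}(F(i),G(i'))$ over the \emph{ordinary} twisted arrow category of the category of operators; second, this partial lax limit is rewritten as the category of decorated sections $\Hom_{\tw I}(\mathrm{tw}^{\op}(I),\laxcolim_{\tw I}(\cdots))^{1-\cocart,\inert-\cocart}$, and since $\Pi$ is a localization at the inert type-$1$ morphisms, which such decorated sections send to equivalences, one may replace $\mathrm{tw}^{\op}(I)$ by $\tw I$. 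Without the first reduction (or a precise substitute for it), the claim that the ``active-part data'' assemble into the asserted equivalence is precisely what remains to be proved.

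There are also two inaccuracies in your factorization step, though they do not destroy the conclusion. The class of morphisms inverted by $\Pi$ is not ``pairs $(f,g)$ with one leg a non-trivial inert morphism'': it consists of the inert type-$1$ morphisms, i.e.\@ those whose contravariant leg is inert and whose covariant leg is an identity. Moreover, your claim that \emph{post}-composition with an inert morphism of $\Cat_{\infty}^{\times}$ (a projection) induces an equivalence of multimorphism spaces is false: by the operad axiom (Definition~\ref{DEFPLANAR}, 2.) such post-composition is the projection onto one factor of a product. This is harmless only because those morphisms are not inverted by $\Pi$. The case that does matter, pre-composition with $F(\iota)$ for $\iota$ inert, is exactly the paper's argument: since $F$ is a functor of operads, $F(\iota)$ is coCartesian, and the coCartesian universal property gives the equivalence $\Hom^{p(\alpha)}(F(i'),G(i))\simeq\Hom^{p(\alpha\iota)}(F(i_1),G(i))$; your ``adjust the degree in $\OOO$'' phrasing gestures at this but should be made into that statement.
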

\begin{proof}
Recall from Lemma~\ref{LEMMAOPTWIST} that
\[ \Pi: \mathrm{tw}^{\op}(I) \to \tw I  \]
is a localization. Thus, we have to show that the functor maps morphisms $\beta$, which are mapped to isomorphisms under $\Pi$, to isomorphisms. 
Such a $\beta$ induces a diagram
\[ \xymatrix{  i_1 \ar[r]^{\iota_1} \ar[d]_{\beta} &  i' \ar[r]^{\alpha} &  i  \\
i_2 \ar[ru]_{\iota_2}  }\]
where $\iota_1$ and $\iota_2$ are inert and $\alpha$ is active, and which in turn induces a commutative diagram
\[ \xymatrix{  \Hom^{p(\alpha \iota_2)}_{\Cat_{\infty}^{\times}}(F(i_2), G(i)) \ar@{<-}[d]^{\sim} \ar[rr]^{F(\beta) \circ -}  & &  \Hom^{p(\alpha \iota_1)}_{\Cat_{\infty}^{\times}}(F(i_1), G(i)) \ar@{<-}[d]^{\sim} \\
 \Hom^{p(\alpha)}_{\Cat_{\infty}^{\times}}(\iota_{2,\bullet}F(i_2), G(i)) \ar[rr] \ar@{<-}[rd] &  & \Hom^{p(\alpha)}_{\Cat_{\infty}^{\times}}(\iota_{1,\bullet} F(i_1), G(i)) \ar@{<-}[ld] \\
&  \Hom^{p(\alpha)}_{\Cat_{\infty}^{\times}}(F(i'), G(i))} 
\]
Here $\iota_{1, \bullet}$ and $\iota_{2,\bullet}$ are the (2-categorical) push-forward along the inert morphisms $\iota_1$, $\iota_2$.
Since $F$ is a functor of $(\infty, 2)$-operads the diagonal morphisms are isomorphisms as well. 

The last assertion boils down to that statement that this following morphism induces by the localization functor
$\Pi: \mathrm{tw}^{\op} I \to \tw I$ is an equivalence: 
\[ \laxlim_{\alpha: i \to i' \in \mathrm{tw}^{\op}(I) }^{1-\pseudo, \inert-\pseudo} \Hom^{p(\alpha)}_{\Cat_{\infty}^{\times}}(F(i), G(i'))   \cong
\laxlim_{\alpha: i \to i' \in \tw I }^{1-\pseudo, \inert-\pseudo} \Hom^{p(\alpha)}_{\Cat_{\infty}^{\times}}(F(i), G(i'))    \]
From the first part follows that 
\begin{align*} & \laxlim_{\alpha: i \to i' \in \mathrm{tw}^{\op}(I) }^{1-\pseudo, \inert-\pseudo} \Hom^{p(\alpha)}_{\Cat_{\infty}^{\times}}(F(i), G(i'))   \\
  = & \Hom_{\mathrm{tw}^{\op}(I)}(\mathrm{tw}^{\op}(I),\Pi^* \laxcolim_{\tw I} (\Hom^{p(\alpha)}_{\Cat_{\infty}^{\times}}(F(i), G(i')))^{1-\cocart, \inert-\cocart}   \\
 = & \Hom_{\tw I}(\mathrm{tw}^{\op}(I), \laxcolim_{\tw I} (\Hom^{p(\alpha)}_{\Cat_{\infty}^{\times}}(F(i), G(i')))^{1-\cocart, \inert-\cocart}  .
\end{align*}
$\Pi$ is a localization at inert type-1 morphisms. Those are mapped to isomorphisms by functors that map inert morphisms to coCartesian (here iso-) morphisms.
Hence this is the same as: 
\[ = \Hom_{\tw I}(\tw I, \laxcolim_{\tw I} (\Hom^{p(\alpha)}_{\Cat_{\infty}^{\times}}(F(i), G(i')))^{1-\cocart, \inert-\cocart}. \qedhere \]
\end{proof}

\begin{PROP}\label{PROPTRANSLATE}
Let $F: I \to \Cat_{\infty}$ be a functor. 
\begin{enumerate}
\item 
Let $I$ be an $\infty$-category and $F, G: I \to \Cat_{\infty}$ be functors. 
Denote by $\Cat_{\infty}/I$ the slice $(\infty,2)$-category (not lax slice) over $I$. 
Then we have a natural isomorphism of $(\infty,1)$-categories
\[ \Hom_{\Cat_{\infty}/I}(\int F, \int G) \cong \Hom^{\lax}_{\Cat^I_{\infty}}(F, G).  \]
\item Let $I$ be an $\infty$-category and $F: I^{\op} \to \Cat_{\infty}$ and $G: I \to \Cat_{\infty}$ be functors. Then 
\[ \Hom_{\Cat_{\infty}/I}(\intrev F, \int G) \cong \Dinat^{\lax}(F, G).  \]
\item Let $I$ be an $\infty$-category and $F, G: I^{\op} \to \Cat_{\infty}$ be functors.
Then  
\[ \Hom_{\Cat_{\infty}/I}(\intrev F, \intrev G) \cong \Hom^{\oplax}_{\Cat_{\infty}^{I^{\op}}}(F, G).  \]
\item Let $I$ be an $\infty$-category and $F: I \to \Cat_{\infty}$ and $G: I^{\op} \to \Cat_{\infty}$ be functors. Then 
\[ \Hom_{\Cat/I}(\int F, \intrev G) \cong \Dinat^{\oplax}(F, G).  \]
\end{enumerate}
\end{PROP}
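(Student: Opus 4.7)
The plan is to derive all four isomorphisms from the $(\infty,2)$-categorical unstraightening theorem (established in various forms in \cite{HHLN23,HHLN23a,GHL20,AF20}), which furnishes, for any $\infty$-category $I$, two $(\infty,2)$-equivalences
\[ \int\colon \Fun(I, \Cat_{\infty})^{\lax} \xrightarrow{\sim} \Cat_{\infty}/I, \qquad \intrev\colon \Fun(I^{\op}, \Cat_{\infty})^{\oplax} \xrightarrow{\sim} \Cat_{\infty}/I, \]
sending a functor to its cocartesian, resp.\@ Cartesian, unstraightening, where the superscripts indicate the flavor of natural transformations used as 1-morphisms. Applying $\Hom$ on both sides produces parts 1 and 3 directly.

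For parts 2 and 4 the source and target are unstraightenings of opposite variance, so no single straightening equivalence applies verbatim. I would proceed by unpacking fiberwise. For part 2, a morphism $\phi\colon \intrev F \to \int G$ in $\Cat_{\infty}/I$ restricts on fibers to $\phi_i\colon F(i) \to G(i)$. For $\alpha\colon i \to i'$ in $I$ and $x' \in F(i')$, the canonical Cartesian edge $F(\alpha)(x') \to x'$ over $\alpha$ in $\intrev F$ is sent to a morphism over $\alpha$ in $\int G$ starting at $\phi_i(F(\alpha)(x'))$ and ending at $\phi_{i'}(x')$; coCartesian factorization in $\int G$ expresses this as a coCartesian edge followed by a morphism $G(\alpha)(\phi_i(F(\alpha)(x'))) \to \phi_{i'}(x')$ in the fiber $G(i')$, i.e.\@ a 2-cell $G(\alpha) \circ \phi_i \circ F(\alpha) \Rightarrow \phi_{i'}$, which is exactly the data of a lax dinatural transformation in the sense of the excerpt. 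Part 4 is strictly dual, obtained by exchanging the roles of $\int$ and $\intrev$ (and using coCartesian factorization in $\int F$ together with the Cartesian structure of $\intrev G$), producing a 2-cell of the opposite direction matching $\Dinat^{\oplax}$.

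The main obstacle is promoting the fiberwise analysis in parts 2 and 4 to a genuine equivalence of $\infty$-groupoids, keeping track of all higher coherences. The route I envision is to reduce to parts 1 and 3 by embedding both $\int G$ and $\intrev G$ into a common $(\infty,2)$-category of correspondences over $I$, for example via the pro-functor formalism of the earlier subsection, in which $\int$ and $\intrev$ appear as the two natural adjoints of the canonical pro-functor $I \to I$. In that framework $\Hom_{\Cat_{\infty}/I}(\intrev F, \int G)$ unfolds as an end over $I$ of Hom-$\infty$-groupoids between the straightenings, which is precisely the formula defining $\Dinat^{\lax}(F, G)$; the same calculation with the roles swapped handles part 4. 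Alternatively, one can appeal to a mixed-variance straightening theorem for bifibrations (cf.\@ \cite{AF20}), which directly captures Hom-spaces between objects of different variance in $\Cat_{\infty}/I$.
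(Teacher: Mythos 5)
The paper does not actually prove Proposition~\ref{PROPTRANSLATE}: it is stated as a known result, with the proofs deferred to the literature cited at the head of the section (\cite{HHLN23, HHLN23a, Abe22, Ber24, AGH24, GHL24, GHL20, AF20}). Your route for parts 1 and 3 --- applying $\Hom$ to the $(\infty,2)$-categorical (un)straightening equivalences between $\Fun(I,\Cat_{\infty})$ with lax (resp.\@ $\Fun(I^{\op},\Cat_{\infty})$ with oplax) transformations and $\Cat_{\infty}/I$ --- is exactly the content of those references, so for these two parts you are taking the same path the paper implicitly relies on, and there is nothing to add beyond getting the directional conventions for the 2-cells to match the paper's.

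For parts 2 and 4 your fiberwise analysis correctly identifies the 1-dimensional data, but the first route you propose for assembling the coherences is off as stated: $\Hom_{\Cat_{\infty}/I}(\intrev F, \int G)$ is \emph{not} an end of Hom-objects over $I$ (nor of Hom-$\infty$-groupoids --- both sides are $(\infty,1)$-categories). An honest end/limit of $i \mapsto \Fun(F(i),G(i))$ forces the comparison 2-cells you extracted from the (co)Cartesian factorizations to be invertible, so it computes the pseudo-dinatural transformations, not $\Dinat^{\lax}(F,G)$. The correct assembly is the \emph{lax} limit (equivalently a partially lax limit, in the sense used elsewhere in this paper) of the functor $i \mapsto \Fun(F(i),G(i))$, which is precisely what the mixed-variance (two-variable fibration / orthofibration) straightening results of \cite{HHLN23} provide; your alternative suggestion to invoke such a mixed-variance statement is the right fix, and part 4 then follows dually as you say. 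So the proposal is viable, but only if the "end over $I$" step is replaced by the lax-limit (or two-variable straightening) statement; as written that step would prove a different, stronger-looking but wrong identification.
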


\begin{PAR}\label{PARC}
If $C$ is a class of morphisms in $I$ then more generally 
\[ \Hom_{\Cat_{\infty}/I}^{C-\cocart}(\int F, \int G) \cong \Hom^{\lax, C-\pseudo}_{\Cat^I_{\infty}}(F, G)  \]
where the LHS is the full subcategory of 
of those functors which map  coCartesian morphisms lying over morphisms in $C$ to coCartesian morphisms 
and the RHS is the full subcategory of those lax natural transformations where the corresponding 2-morphism is an isomorphism for every morphism in $C$. The same is true dually. 

This can be used to establish analogues of the preceding equivalences for operads: 
\end{PAR}

Let $I$ be an operad and $F: I \to (\Cat_{\infty},\times)$ a functor of $\infty$-operads. We write $\Cat_{\infty}^{\times} \to \OOO$ for the category of operators of the target, i.e.\@ with $(\Cat_{\infty}^{\times})_{[n]} = \Cat_{\infty}^n$.
The composition $I \to \Cat_{\infty}^{\times} \to \Cat_{\infty}$ with $\times$ is denoted by $F^{\times}$.
Then $F$ classifies a cofibration of $\infty$-operads $\int F \to I$ whose underlying $\infty$-category (of operators) is given by $\int F^{\times} = \laxcolim_I F^{\times}$.

\begin{PROP}\label{PROPTRANSLATEOP}
\begin{enumerate}
\item 
Let $I$ be an $\infty$-operad and $F, G: I \to (\Cat_{\infty},\times)$ be functors of $\infty$-operads. 
Then 
\[ \Hom_{\Op_{\infty}/I}(\int F, \int G) \cong \Hom^{\lax,\inert-\pseudo}_{(\Cat_{\infty}^\times)^I}(F, G) \cong \Hom^{\lax,\inert-\pseudo}_{\Cat_{\infty}^I}(F^{\times}, G^{\times})  \]
\item Let $I$ be an $\infty$-cooperad and $F, G: I^{\op} \to (\Cat_{\infty}, \times)$ be functors of $\infty$-operads. 
Then 
\[ \Hom_{\Coop_{\infty}/I}(\intrev F, \intrev G) \cong \Hom^{\oplax,\inert-\pseudo}_{(\Cat_{\infty}^{\times})^{I^{\op}}}(F, G) \cong \Hom^{\oplax,\inert-\pseudo}_{\Cat_{\infty}^{I^{\op}}}(F^{\times}, G^{\times})  \]
\end{enumerate}
\end{PROP}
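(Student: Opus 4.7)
The plan is to deduce this from the categorical statement Proposition~\ref{PROPTRANSLATE} together with its decorated refinement in~\ref{PARC}, by restricting to the full subcategories picked out by the operad structure.

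First I would use the defining property of $\Cat_{\infty}^{\times}$ recalled in~\ref{PAROP}, 4.: the underlying $\infty$-category of operators of the cofibration of operads $\int F \to I$ is precisely $\int F^{\times}$, and similarly for $G$. Applying Proposition~\ref{PROPTRANSLATE}(1) to $F^{\times}, G^{\times}: I \to \Cat_{\infty}$ then gives the unrestricted equivalence
\[ \Hom_{\Cat_{\infty}/I}(\textstyle\int F^{\times}, \int G^{\times}) \cong \Hom^{\lax}_{\Cat_{\infty}^I}(F^{\times}, G^{\times}). \]
A functor $\int F \to \int G$ over $I$ is a morphism of $\infty$-operads iff it sends inert coCartesian morphisms to inert coCartesian morphisms. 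Taking $C$ to be the class of inert morphisms in $I$, the decoration in~\ref{PARC} restricts the above equivalence to
\[ \Hom_{\Op_{\infty}/I}(\textstyle\int F, \int G) \cong \Hom^{\lax,\inert-\pseudo}_{\Cat_{\infty}^I}(F^{\times}, G^{\times}), \]
which is the second isomorphism of part~(1).

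For the first equivalence of~(1), I would unwind the definition of the $(\infty,2)$-category $\Cat_{\infty}^{\times}$ underlying the $\infty$-operad $(\Cat_{\infty}, \times)$: over an active multi-morphism $[n]\to[1]$ of $\OOO$, its morphism $\infty$-category is $\Hom_{\Cat_{\infty}}(\mathcal{C}_1 \times \cdots \times \mathcal{C}_n, \mathcal{D})$, and composition with $\times: \Cat_{\infty}^{\times} \to \Cat_{\infty}$ is the identity on these hom-$\infty$-categories. Hence a lax, inert-pseudo natural transformation $F \Rightarrow G$ in $(\Cat_{\infty}^{\times})^I$, whose value on a morphism $\mu: i \to i'$ of $I$ lying over an active $[n]\to[1]$ is a $2$-cell with $1$-source a product functor $F(i)_1 \times \cdots \times F(i)_n \to G(i')$, is the same data as a lax, inert-pseudo natural transformation $F^{\times}\Rightarrow G^{\times}$ in $\Cat_{\infty}^I$. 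The inert-pseudo condition is essential: it ensures that the values over non-active morphisms are determined by the inert product decomposition, so no data is lost or gained by composing with $\times$.

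Part~(2) is formally dual: replace $\int$ by $\intrev$, lax by oplax, and invoke Proposition~\ref{PROPTRANSLATE}(3) in place of~(1); the ``inert coCartesian'' condition becomes ``inert Cartesian'', which for inert morphisms agrees with the coCartesian notion. The main obstacle will be a careful verification of the first equivalence in each part --- the identification of $(\infty,2)$-morphisms in $\Cat_{\infty}^{\times}$ with those of $\Cat_{\infty}$ under the inert-pseudo constraint --- as this ultimately depends on the chosen model of $(\infty,2)$-operads. However, once the structure of $\Cat_{\infty}^{\times}$ classifying cofibrations of operads as in~\ref{PAROP}, 4.--5. is in place, this identification is essentially built into the construction, so the proposition really just packages \ref{PROPTRANSLATE} together with the decoration~\ref{PARC}.
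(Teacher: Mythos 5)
Your proposal is correct and follows essentially the same route as the paper, whose proof is exactly the combination you describe: Proposition~\ref{PROPTRANSLATE} together with the decorated refinement of \ref{PARC} (with $C$ the inert morphisms) and the basic facts on operads in \ref{PAROP}, 4.--5., which supply both the identification of $\int F^{\times}$ as the category of operators of $\int F$ and the comparison of $(\Cat_{\infty}^{\times})^I$- with $\Cat_{\infty}^I$-valued inert-pseudo transformations. Your extra unwinding of the first equivalence is a reasonable elaboration of what the paper treats as a cited fact.
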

\begin{proof}Follows immediately from Proposition~\ref{PROPTRANSLATE}, the discussion in \ref{PARC} and basic facts on operads \ref{PAROP}.
\end{proof}
See \cite[Proposition~5.2.]{Ber24} for a similar statement.

\begin{PAR}
Let $I$ be a small $\infty$-category. The 
twisted arrow categories $\twop I$, and $\tw I$, discussed in Section~\ref{SECTTW} may be used to transfer between lax and oplax natural transformations.
This construction is at the heart of the (co)bar constructions discussed in the next chapter. 
For $\tw I$, or more generally for categories of the form ${}^\Xi I$, for $\Xi \in \{\uparrow, \downarrow\}^n$, we write $i-\pseudo$ instead of $C-\pseudo$ (\ref{PARC}) for $C$ being the class of type $i$-morphisms (\ref{DEFTYPEI}).  
\end{PAR}

\begin{figure}
\[ \Hom^{\lax}_{\Cat^I_\infty}: \vcenter{ \xymatrix{ F(i) \ar[r]^{F(\alpha)} \ar[d]_{\mu(i)} \ar@{}[rd]|\Uparrow & F(i') \ar[d]^{\mu(i')} \\
G(i) \ar[r]_{G(\alpha)} & G(i')  } }\]

\[ \Hom^{\oplax}_{\Cat^I_\infty}: \vcenter{ \xymatrix{ F(i) \ar[r]^{F(\alpha)} \ar[d]_{\mu(i)} \ar@{}[rd]|\Downarrow & F(i') \ar[d]^{\mu(i')} \\
G(i) \ar[r]_{G(\alpha)} & G(i')  } }\]

\[\Dinat^{\oplax, 1-\pseudo}:  \vcenter{ \xymatrix{ F(i) \ar[d]_{\mu(i)}  \ar@{}[rd]|{\Rightarrow} & \ar[l]_{=} \ar[d]|{\mu(i')F(\alpha)} F(i) \ar[r]^{F(\alpha)} & F(i') \ar[d]^{\mu(i')}   \\
G(i) \ar[r]_{G(\alpha)} & G(i') &  \ar[l]^= G(i') } }\]
\[\Dinat^{\oplax, 2-\pseudo}:  \vcenter{ \xymatrix{ F(i) \ar[d]_{\mu(i)}  & \ar[l]_{=} \ar[d]|{G(\alpha)\mu(i)} F(i) \ar[r]^{F(\alpha)} & F(i') \ar[d]^{\mu(i')}   \ar@{}[ld]|{\Leftarrow} \\
G(i) \ar[r]_{G(\alpha)} & G(i') &  \ar[l]^= G(i') } }\]

\[  \Hom^{\oplax, 1-\pseudo}_{\Cat^{\tw I}_\infty}:\vcenter{ \xymatrix{ \cdot \ar[r]^= \ar[d]_{\mu(i)} \ar@{}[rd]|{\Leftarrow} & \ar[d]|{\mu(i')F(\alpha)} \cdot   & \ar[l]_= \cdot \ar[d]^{\mu(i')} \\
 \Hom(F(i),G(i)) \ar[r] &  \Hom(F(i),G(i')) &  \ar[l] \Hom(F(i'),G(i'))  } }\]

\[  \Hom^{\oplax, 2-\pseudo}_{\Cat^{\tw I}_\infty}: \vcenter{ \xymatrix{ \cdot \ar[r]^= \ar[d]_{\mu(i)}& \ar[d]|{G(\alpha)\mu(i)} \cdot  & \ar[l]_= \cdot \ar[d]^{\mu(i')} \ar@{}[ld]|{\Rightarrow} \\
 \Hom(F(i),G(i)) \ar[r] &  \Hom(F(i),G(i')) &  \ar[l] \Hom(F(i'),G(i'))  } }\]
 
(squares without depicted 2-morphisms  commute, i.e.\@ have invertible 2-morphisms)
\caption{Illustration of the different categories appearing in Proposition~\ref{PROPTWIST}.}
\end{figure}

\begin{PROP}\label{PROPTWIST}
Let $F, G \in \Cat^I_{\infty}$.  There are canonical isomorphisms:
\begin{align*} 
 \Hom^{\lax}_{\Cat^I_\infty}(F, G) &\cong \Dinat^{\oplax, 2-\pseudo}(\pi_1^*F, \pi_2^*G) \cong \oplaxlim^{2-\pseudo}_{\tw I} \Hom(F(i), G(i'))  \\
 \Hom^{\oplax}_{\Cat^I_\infty}(F, G) &\cong \Dinat^{\oplax, 1-\pseudo}(\pi_1^*F, \pi_2^*G) \cong \oplaxlim^{1-\pseudo}_{\tw I} \Hom(F(i), G(i'))  
\end{align*}
The same holds true with lax and oplax interchanged. 
The terms on the right might be called {\bf (op)lax ends} of $\Hom(F, G)$ as functor on $I^{\op} \times I \to \Cat_{\infty}$.
\end{PROP}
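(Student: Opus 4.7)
The plan is to identify the data underlying each of the three $(\infty,1)$-categories and exhibit natural equivalences between them. Recall that an object of $\tw I$ is an active morphism $\alpha\colon i\to i'$ in $I$, together with the two projections $\pi_1,\pi_2\colon \tw I\to I$ sending $\alpha$ to $i$ and $i'$ respectively; the first is naturally ``contravariant'' in source variations (type-1 morphisms) while the second is covariant in target variations (type-2 morphisms). Thus $\Hom(F(i),G(i'))$ is exactly the functor $\Hom(\pi_1^*F,\pi_2^*G)\colon \tw I\to \Cat_\infty$ appearing on the right.

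First I would unwind the rightmost term via unstraightening (Proposition~\ref{PROPTRANSLATE}): the partially (op)lax limit $\oplaxlim^{c\text{-}\pseudo}_{\tw I}$ is, by the universal property discussed before the proposition, the $(\infty,1)$-category of sections $\tw I\to\intrev\Hom(\pi_1^*F,\pi_2^*G)$ which are Cartesian on the designated class of morphisms. Combined with Lemma~\ref{LEMMATWLOC} (which identifies the projections out of ${}^{\Xi}I$ with localizations at the corresponding classes of type-$i$ morphisms), this means that a section pseudo on type-$2$ morphisms is freely determined by its values on a generating subcategory controlled by type-$1$ morphisms, and dually for the $1$-$\pseudo$ case.

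Next I would compare with the dinatural middle term and with the (op)lax natural transformations. The data of a lax natural transformation $\mu\colon F\Rightarrow G$ is a collection of morphisms $\mu(i)\colon F(i)\to G(i)$ together with 2-cells $\mu(\alpha)\colon G(\alpha)\mu(i)\Rightarrow \mu(i')F(\alpha)$; both composites are objects of $\Hom(F(i),G(i'))$ above $\alpha\in\tw I$, and $\mu(\alpha)$ is an edge in that mapping category. Sending $\alpha\mapsto G(\alpha)\mu(i)$ yields a section which is strictly functorial (pseudo) for type-$2$ transitions (where the target varies) and carries an oplax 2-cell for type-$1$ transitions (where the source varies), matching exactly the specification of $\oplaxlim^{2\text{-}\pseudo}_{\tw I}$; the inverse recovers $\mu(i)$ by evaluating at $\id_i\in\tw I$ and reconstructs $\mu(\alpha)$ from the 2-cell attached to the morphism $\id_i\to\alpha$ in $\tw I$. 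Identifying this section with the corresponding dinatural transformation is formal: the 2-cell compatibility encoded in $\Dinat^{\oplax,2\text{-}\pseudo}(\pi_1^*F,\pi_2^*G)$ amounts, after passage through $\iota\colon\tw I\to I^{\op}\times I$, precisely to the $\tw I$-section condition. The oplax/1-$\pseudo$ version is dual, with the roles of $G(\alpha)\mu(i)$ and $\mu(i')F(\alpha)$ (and of source/target variation) swapped.

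The main obstacle I anticipate is the bookkeeping of 2-cell directions: matching ``lax versus oplax'' on the $\Cat^I_\infty$ side with ``type-$1$-pseudo versus type-$2$-pseudo'' on the $\tw I$ side requires carefully tracking which of the two composites is taken as the underlying value of the section, and it is easy to reverse a direction. Carrying the argument out as an $(\infty,1)$-equivalence, rather than merely a bijection on objects, should reduce via the universal properties of partially (op)lax limits (and Proposition~\ref{PROPTRANSLATE}) to comparing two explicit fibrations over $\tw I$, so no further homotopy-coherence bookkeeping is needed; a minor subtlety is that in the operadic generalization one would need Lemma~\ref{LEMMAOPTWIST} to pass between $\tw I$ and $\mathrm{tw}^{\op}(I)$, but in the present $(\infty,1)$-categorical statement the two coincide.
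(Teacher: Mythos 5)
A preliminary remark: the paper itself contains no proof of Proposition~\ref{PROPTWIST}; it is stated as background material, with proofs deferred to the references cited at the head of this subsection, and the preceding figure is only an illustration. So there is no in-paper argument to compare yours against. Judged on its own, the objectwise dictionary you describe is the correct one and agrees with that figure: over $\alpha\colon i\to i'$ the section has value $G(\alpha)\mu(i)$ in the lax/2-$\pseudo$ case, resp.\ $\mu(i')F(\alpha)$ in the oplax/1-$\pseudo$ case, the constraint over the type-2 (resp.\ type-1) leg is invertible, and the constraint over the other leg is the (op)lax naturality cell. (Incidentally, in your recovery step for the 2-$\pseudo$ case the cell encoding $\mu(\alpha)$ sits over the type-1 morphism $\id_{i'}\to\alpha$, not over $\id_i\to\alpha$ as you write --- exactly the kind of reversal you warn about.)

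As a proof of an equivalence of $(\infty,1)$-categories, however, the proposal has a genuine gap. First, Lemma~\ref{LEMMATWLOC} is misapplied: ``$2$-$\pseudo$'' means the section sends type-2 morphisms to Cartesian edges of the fibration $\intrev \Hom(\pi_1^*F,\pi_2^*G)$, not that it inverts them; since this fibration does not descend along the localization $\pi_1$ (its fibres depend on both endpoints of $\alpha$), no ``free determination by type-1 data'' follows from the localization lemma. Second, and more seriously, the comparison with $\Hom^{\lax}_{\Cat^I_\infty}(F,G)$ is only specified on objects (``send $\mu$ to $\alpha\mapsto G(\alpha)\mu(i)$''; ``recover $\mu$ by evaluating at identities''). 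In the $\infty$-categorical setting such a recipe defines neither a functor nor an inverse pair: reconstructing a section over all of $\tw I$, with its coherence over composites, from its values at identities and single morphisms is essentially the statement being proved. Your claimed reduction ``via Proposition~\ref{PROPTRANSLATE} to comparing two explicit fibrations over $\tw I$'' does not close this, because Proposition~\ref{PROPTRANSLATE} identifies $\Hom^{\lax}_{\Cat^I_\infty}(F,G)$ with $\Hom_{\Cat_{\infty}/I}(\int F,\int G)$, a category of functors over $I$, and you never construct the second fibration over $\tw I$ whose partially Cartesian sections would model this, nor prove that its sections agree with functors over $I$. That construction and identification --- the ``lax end'' formula, established in the cited literature by free-fibration or mate-calculus arguments --- is precisely the missing core of the argument; without it the proposal is a description of the expected correspondence rather than a proof.
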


Mutatis mutandis the Proposition holds also for $\infty$-operads: 
\begin{PROP}\label{PROPOPTWIST}
Let $I$ be an $\infty$-operad and $F,G \in (\Cat_{\infty}, \times)^I$.  
\begin{align*}
 \Hom^{\lax, \inert-\pseudo}_{(\Cat_\infty^{\times})^I}(F, G) &\cong  \oplaxlim^{\inert-\pseudo, 2-\pseudo}_{\tw I} \Hom_{\Cat_\infty^{\times}}^{p(\mu)}(F(i), G(i'))  \\
 \Hom^{\oplax,\inert-\pseudo}_{(\Cat_\infty^{\times})^I}(F, G) &\cong \oplaxlim^{\inert-\pseudo, 1-\pseudo}_{\tw I}  \Hom^{p(\mu)}_{\Cat_\infty^{\times}}(F(i), G(i'))  
\end{align*}
Notice that the expression $\Hom_{\Cat_\infty^{\times}}^{p(\mu)}(F(i), G(i'))$ factors through 
$\tw I \to \Cat_\infty$
as shown in Lemma~\ref{LEMMAOPTWIST2}. 
\end{PROP}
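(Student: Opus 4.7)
The plan is to bootstrap from Proposition~\ref{PROPTWIST} applied to the underlying $\infty$-categorical data $F^{\times}, G^{\times}: I \to \Cat_{\infty}$ and then refine via the operadic localization $\Pi: \mathrm{tw}^{\op}(I) \to \tw I$ of Lemma~\ref{LEMMAOPTWIST}, together with the factorization property established in Lemma~\ref{LEMMAOPTWIST2}. I will detail the first isomorphism; the second is obtained by the dual argument.

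First, Proposition~\ref{PROPTRANSLATEOP} identifies $\Hom^{\lax, \inert-\pseudo}_{(\Cat_{\infty}^\times)^I}(F, G)$ with $\Hom^{\lax, \inert-\pseudo}_{\Cat_{\infty}^I}(F^{\times}, G^{\times})$, so I am reduced to a statement about plain lax transformations carrying a pseudo decoration. Proposition~\ref{PROPTWIST} (for the underlying $\infty$-category of $I$) already gives
\[ \Hom^{\lax}_{\Cat_{\infty}^I}(F^{\times}, G^{\times}) \cong \oplaxlim^{2-\pseudo}_{\mathrm{tw}^{\op}(I)} \Hom(F^{\times}(i), G^{\times}(i')). \]
I would then cut this down to the inert-pseudo locus: the inert-pseudo condition on the left amounts to invertibility of the lax square at every inert $\alpha: i \to i'$ of $I$, and through the equivalence this translates precisely to requiring invertibility of the structural 2-cell at the corresponding inert objects of $\mathrm{tw}^{\op}(I)$. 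Hence
\[ \Hom^{\lax, \inert-\pseudo}_{\Cat_{\infty}^I}(F^{\times}, G^{\times}) \cong \oplaxlim^{\inert-\pseudo, 2-\pseudo}_{\mathrm{tw}^{\op}(I)} \Hom(F^{\times}(i), G^{\times}(i')). \]

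The final step invokes Lemma~\ref{LEMMAOPTWIST2}: the integrand refines to a functor out of $\tw I$ which sends an active $\mu: i \to i'$ to the operadic multi-mapping space $\Hom^{p(\mu)}_{\Cat_{\infty}^{\times}}(F(i), G(i'))$, and the localization $\Pi$ inverts exactly the inert type-1 morphisms of $\mathrm{tw}^{\op}(I)$. Under the inert-pseudo decoration the oplax limit over $\mathrm{tw}^{\op}(I)$ of a functor which inverts the localized morphisms coincides with the oplax limit over $\tw I$ of the factored functor (the (op)lax-limit analogue of \cite[4.1.11]{AF20}, cf.\@ the last paragraph in the proof of Lemma~\ref{LEMMAOPTWIST2}). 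This produces
\[ \oplaxlim^{\inert-\pseudo, 2-\pseudo}_{\mathrm{tw}^{\op}(I)} \Hom(F^{\times}(i), G^{\times}(i')) \cong \oplaxlim^{\inert-\pseudo, 2-\pseudo}_{\tw I} \Hom^{p(\mu)}_{\Cat_{\infty}^{\times}}(F(i), G(i')), \]
as desired.

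The main obstacle will be the last step: confirming that partially (op)lax limits behave well under the localization $\Pi$ with all the compatible decorations, and in particular that the underlying mapping space $\Hom(F^{\times}(i), G^{\times}(i'))$ attached to an object of $\mathrm{tw}^{\op}(I)$ really agrees, once inert-pseudoness is imposed, with the operadic $\Hom^{p(\mu)}_{\Cat_{\infty}^{\times}}(F(i), G(i'))$ attached to the corresponding object of $\tw I$. This rests on the universal property of $\Cat_{\infty}^{\times}$ as classifier of cofibrations of operads (item~5 of \ref{PAROP}), which packages exactly the data of a multi-morphism compatible with $p(\mu)$. The second isomorphism of the proposition is then obtained by running the same argument starting from the oplax version of Proposition~\ref{PROPTWIST} and interchanging the roles of $1$-pseudo and $2$-pseudo throughout.
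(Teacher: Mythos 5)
Your proposal is correct and follows essentially the same route as the paper, whose own proof is a one-line citation of Proposition~\ref{PROPTWIST}, the decoration discussion in \ref{PARC}, and the operad facts of \ref{PAROP}; your expansion (translate via Proposition~\ref{PROPTRANSLATEOP}, apply the decorated Proposition~\ref{PROPTWIST}, then transfer along the localization $\Pi:\mathrm{tw}^{\op}(I)\to\tw I$ exactly as in the last part of the proof of Lemma~\ref{LEMMAOPTWIST2}) is precisely what that citation packages. The only point you flag as an obstacle --- matching the full mapping category with the fiber $\Hom^{p(\mu)}_{\Cat_\infty^{\times}}$ under the inert-pseudo decoration --- is indeed where the ``basic facts on operads'' (the classification property of $\Cat_\infty^{\times}$) enter, just as you indicate.
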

\begin{proof}Follows immediately from Proposition~\ref{PROPTWIST}, the discussion in \ref{PARC} and basic facts on operads \ref{PAROP}.
\end{proof}

\begin{BEM}\label{BEMOPTWISTLIM}
In the special case $F= \cdot$ we get, in particular: 
\begin{gather*}
 \oplaxlim_{I}(G)  \cong \oplaxlim^{1-\pseudo}_{\tw I}(\pi_2^*G),   \\
 \oplaxlim_{I}(G^{\op}) \cong (\laxlim_{I}(G))^{\op}  \cong (\oplaxlim^{2-\pseudo}_{\tw I}(\pi_2^*G))^{\op} . 
 \end{gather*}
Translated to the corresponding (co)fibrations this gives a way to relate cofibrations of operads with fibrations of cooperads, which will be at the heart of the (co)bar constructions, see \ref{PARTWIST}.
\end{BEM}

\begin{PAR}
We need a simple fact about compositions of fibrations that is, however, a bit tricky to state:
Recall that the composition $K \to J \to I$ of two fibrations is a fibration. If $J \to I$ is classified by $G: I^{\op} \to \Cat_{\infty}$ and $K \to J$ is classified by 
$F: J^{\op}=(\oplaxcolim_{I^{\op}} G)^{\op} \to \Cat_{\infty}$ then $K \to I$ is classified by a functor $I^{\op} \to \Cat_{\infty}$ which is a left oplax Kan extension along $J^{\op} \to I^{\op}$ of $F$. It maps $i \mapsto \oplaxcolim_{G(i)^{\op}} F|_{G(i)^{\op}}$. We get a corresponding functor between categories of sections 
\begin{equation}\label{eqhomcofib} \Hom_{\Cat_{\infty/I}}(I, K) \to \Hom_{\Cat_{\infty/I}}(I, J). \end{equation}
This has fibers over $\alpha: I \to J$ given by $\Hom_{\Cat_{\infty/I}}(I, \alpha^{*} K) = \oplaxlim_I F \circ \alpha^{\op}$
where $\alpha^{*} K$ is defined by the Cartesian diagram
\[ \xymatrix{ \alpha^* K \ar[d] \ar[r] & K \ar[d] \\
I \ar[r]^{\alpha} & J } \]
actually because the whole $\Hom$-categories commute with fiber products (and not just their groupoids of invertible morphisms). 
We claim: The construction gives a functor
\begin{align*} \Hom_{\Cat_{\infty/I}}(I, J)^{\op} &\to \Cat_{\infty}, \\
 \alpha &\mapsto \oplaxlim_I F \circ \alpha^{\op}, \end{align*}
and the functor (\ref{eqhomcofib}) is a fibration classified by this functor. A rigorous proof is omitted for the moment.  
Translated purely in terms of the classifying functors it means: 
\end{PAR}

\begin{PROP}\label{PROPHOMCOFIB}
Let $G \in \Cat^{I^{\op}}_{\infty}$ be a functor and $F \in \Cat^{(\oplaxcolim_{I^{\op}} G)^{\op}}_{\infty}$ a functor. Then there is a canonical isomorphism: 
\[ \boxed{ \oplaxlim_{i \in I^{\op}}(\oplaxcolim_{x \in G(i)^{\op}} F(i, x))  \cong \oplaxcolim_{(x(i) \in G(i))_i \in (\oplaxlim_{I^{\op}} G)^{\op}} ( \oplaxlim_{i \in I^{\op}} F(i, x(i))). } \]
\end{PROP}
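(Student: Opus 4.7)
The plan is to identify both sides with two different descriptions of sections of the composite fibration $K \to J \to I$ from the paragraph preceding the statement. Let $p\colon J \to I$ be the fibration classified by $G\colon I^{\op} \to \Cat_{\infty}$, so $J = \oplaxcolim_{I^{\op}} G$, and $q\colon K \to J$ the fibration classified by $F$. The composite $pq\colon K \to I$ is again a fibration; its fibre over $i$ is the restriction of $q$ to $p^{-1}(i) = G(i)$, which is the fibration classified by $F|_{G(i)^{\op}}$, namely $\oplaxcolim_{x \in G(i)^{\op}} F(i,x)$. Since sections of a fibration over $I$ compute the $\oplaxlim_{I^{\op}}$ of its classifying functor, this gives
$$\Hom_{\Cat_{\infty}/I}(I, K) \;\cong\; \oplaxlim_{i \in I^{\op}} \oplaxcolim_{x \in G(i)^{\op}} F(i, x),$$
which is the left-hand side.

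For the right-hand side, apply $\Hom_{\Cat_{\infty}/I}(I,-)$ to $q$ to obtain
$$\rho\colon \Hom_{\Cat_{\infty}/I}(I,K) \longrightarrow \Hom_{\Cat_{\infty}/I}(I,J) \cong \oplaxlim_{I^{\op}} G.$$
For a section $\alpha$ of $p$, the fibre $\rho^{-1}(\alpha)$ consists of sections of the pulled-back fibration $\alpha^{*}K \to I$; since pull-back corresponds to pre-composition of classifying functors, $\alpha^{*}K$ is classified by $F \circ \alpha^{\op}$, so $\rho^{-1}(\alpha) \cong \oplaxlim_{i \in I^{\op}} F(i,\alpha(i))$. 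The key assertion, stated (and explicitly left to a separate rigorous justification) in the paragraph preceding the proposition, is that $\rho$ is itself a fibration, classified by the functor
$$(\oplaxlim_{I^{\op}} G)^{\op} \longrightarrow \Cat_{\infty}, \qquad \alpha \longmapsto \oplaxlim_{i \in I^{\op}} F(i,\alpha(i)).$$
Granting this, the total space of $\rho$ is by definition the oplax unstraightening of this functor, which is exactly the right-hand side.

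The main obstacle is of course the ``key assertion'' above: the pointwise description of the fibres of $\rho$ must be promoted to one that is functorial also in the oplax $2$-cells of $\oplaxlim_{I^{\op}} G$, and then shown to classify $\rho$ as a fibration. In the $1$-categorical case this is a routine Grothendieck-construction computation; in the $\infty$-categorical setting it requires an explicit model for $(\infty,2)$-categorical unstraightening, such as those developed in \cite{HHLN23, HHLN23a, GHL24, AGH24}. An alternative route would be to verify the asserted isomorphism directly by mapping in an arbitrary test $\infty$-category $T$: both sides then represent the same functor of $T$, namely ``oplax sections of $pq\colon K \to I$ parametrised by $T$'', as one sees by iterated application of Proposition~\ref{PROPTRANSLATE} together with the universal property of partially-pseudo oplax limits. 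In either formulation the statement contains no new content beyond the fibrational discussion immediately preceding it; only the $(\infty,2)$-categorical bookkeeping is delicate.
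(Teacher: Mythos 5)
Your argument coincides with the paper's own treatment: the Proposition is stated there precisely as the translation of the fibrational claim in the preceding paragraph — namely that the section functor $\rho\colon \Hom_{\Cat_{\infty}/I}(I,K)\to\Hom_{\Cat_{\infty}/I}(I,J)$ is a fibration classified by $\alpha\mapsto\oplaxlim_{I^{\op}}F\circ\alpha^{\op}$ — and the paper likewise defers the rigorous justification of exactly that point (``a rigorous proof is omitted for the moment''), so your identification of the two sides and your localisation of the remaining $(\infty,2)$-categorical bookkeeping match the paper. The only additional content in the paper is Remark~\ref{BEMOPLAXLIM}, which constructs the comparison map by purely adjunction-theoretic manipulations (via $\eval$ and mates of fiber-wise oplax Kan extensions), a route different from your suggested test-object argument.
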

A similar statement holds true with a set $C$ of morphisms in $I$: 
\[  \oplaxlim_{i \in I^{\op}}^{C-\pseudo}(\oplaxcolim_{x \in G(i)^{\op}} F(i, x))  \cong \oplaxcolim_{(x(i) \in G(i))_i \in (\oplaxlim^{C-\pseudo}_{I^{\op}} G)^{\op} }( \oplaxlim_{i \in I^{\op}}^{C-\pseudo} F(i, x(i))).  \]

\begin{BEM}\label{BEMOPLAXLIM}
One can also construct the map (almost) purely by abstract adjunction properties of lax limits and colimits. Because we have for $F: I \to \Cat_{\infty}$:
\[ \oplaxcolim F = \intrev F \]
and
\[ \oplaxlim F = \Hom_{\Cat_{\infty}/I^{\op}}(I^{\op}, \intrev F) \]
we have a canonical morphism
\[ \eval: I^{\op} \times \oplaxlim F \to  \oplaxcolim F. \]
Then consider the diagram
\[ \xymatrix{I \times \oplaxlim_{I^{\op}} G \ar[r]^{\eval} \ar[d]_{\pi_2} \ar[rd]^{\pi_1} & \oplaxcolim_{I^{\op}} G \ar[d]^\pi \\
\oplaxlim_{I^{\op}} G  \ar[rd]^p & I \ar[d]^p \\
& \cdot }\]

The non-horizontal functors are either cofibrations (the map $\pi$) or projections (in particular, fibrations and cofibrations). 
Therefore there exist fiber-wise oplax limits and colimits  (oplax Kan extensions) along these maps which we denote here by $p_!$ and $p_*$. 
They form 2-adjunctions
\[ \Cat_{\infty}^{I, \oplax} \leftrightarrow  \Cat_{\infty}^{J, \oplax}  \]
in the sense that there is an isomorphism of $\infty$-categories (not just groupoids)
\[ \Hom_{\mathcal{C}^{I}}^{\oplax}(F, \alpha^* G) \cong \Hom_{\mathcal{C}^{J}}^{\oplax}(\alpha_! F, G). \]
Therefore, we can calculate with them like with usual Kan extensions. First obviously
\[ \pi_1^* p^*  \cong \pi_2^* p^* \]
therefore there is a mate
\[ p^* p_*  \Rightarrow \pi_{2,*} \pi_1^*  \]
which is an isomorphism because $\pi_{2,*}$ is computed fiber-wise.
Hence we get a morphism
\[   p_! \pi_{2,*}  \Rightarrow  p_* \pi_{1,!}  \]
and finally: 
\[   p_! \pi_{2,*} \eval^*  \Rightarrow  p_* \pi_{1,!}  \eval^* \Rightarrow p_* \pi_{!}     \]
This is the functor in the statement. 
\end{BEM}

\begin{PROP}\label{PROPHOM}
Let $F \in \Cat^I_{\infty}$ be a functor and 
\[ X, Y \in \oplaxlim_I F \] 
then we have a canonical equivalence
\[ \Hom_{\oplaxlim_I F} (X, Y) \iso \lim_{\twop I} \Hom_{F(i)}(X(i), F(\mu)Y(i'))  = \lim_{\twop I} \Hom_{\oplaxcolim_I F}^{\mu}(X(i), Y(i'))  \]
where $\mu: i \to i'$ denotes an object in $\twop I$. 
\end{PROP}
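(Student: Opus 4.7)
The plan is to realize the oplax limit as sections of the associated fibration $\pi\colon \intrev F \to I$, compute mapping spaces there via the $\infty$-categorical end formula, and identify the fiberwise Hom using the Cartesian structure. First, the defining adjunction gives $\oplaxlim_I F = \Hom_{\Cat_\infty/I}(I, \intrev F)$, so a morphism $X \to Y$ in $\oplaxlim_I F$ is a natural transformation of sections lying over the identity $2$-cell of $\id_I$. In other words,
\[
\Hom_{\oplaxlim_I F}(X, Y) \simeq \mathrm{fib}_{\id_{\id_I}}\!\bigl(\Hom_{\Fun(I, \intrev F)}(X, Y) \longrightarrow \Hom_{\Fun(I, I)}(\id_I, \id_I)\bigr),
\]
the displayed map being post-composition with $\pi$.

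The second step applies the end formula for mapping spaces in $\infty$-functor categories to both terms; this is the fully-pseudo specialization of Proposition~\ref{PROPTWIST}. Concretely,
\[
\Hom_{\Fun(I, \intrev F)}(X, Y) \simeq \lim_{\mu\colon i \to i' \in \twop I} \Hom_{\intrev F}(X(i), Y(i')),
\]
and the analogous statement yields $\Hom_{\Fun(I,I)}(\id_I,\id_I) \simeq \lim_{\twop I} \Hom_I(i, i')$. The comparison map between these limits is componentwise post-composition with $\pi$, and since limits in $\Gpd_\infty$ commute with fibers I obtain
\[
\Hom_{\oplaxlim_I F}(X, Y) \simeq \lim_{\mu \in \twop I} \mathrm{fib}_{\mu}\!\bigl(\Hom_{\intrev F}(X(i), Y(i')) \to \Hom_I(i, i')\bigr) = \lim_{\mu} \Hom^{\mu}_{\intrev F}(X(i), Y(i')).
\]

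The final step invokes the Cartesian fibration property of $\pi$. For each $\mu\colon i \to i'$ and $Y(i') \in F(i')$, the universal Cartesian morphism $F(\mu) Y(i') \to Y(i')$ over $\mu$ produces the standard equivalence $\Hom^{\mu}_{\intrev F}(X(i), Y(i')) \simeq \Hom_{F(i)}(X(i), F(\mu) Y(i'))$. Substituting into the limit gives the claimed formula, and the second identification in the statement is just the notation $\Hom^{\mu}_{\oplaxcolim_I F}$ for the component of the total mapping space lying over $\mu$.

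The main obstacle lies in ensuring the correct variance and orientation conventions throughout: the appearance of $\twop I$ rather than $\tw I$, the precise direction of the pullback functor $F(\mu)\colon F(i') \to F(i)$ in the fibration $\intrev F$, and the compatibility of these with the pseudo end formula of Proposition~\ref{PROPTWIST}. Once these are fixed consistently, every remaining step is either a standard $\infty$-categorical manipulation (commuting fibers with limits, the Cartesian pullback formula) or an immediate consequence of the defining adjunction of the oplax limit, so no new ingredient beyond Proposition~\ref{PROPTWIST} is required.
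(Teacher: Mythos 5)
Your argument is essentially the paper's own proof: the paper also computes $\Hom$ in the category of sections as the fiber of $\Hom_{\mathcal{C}^I}(X,Y)\to\Hom_{\mathcal{S}^I}(S,S)$ over the identity, applies the end formula $\Hom_{\mathcal{C}^I}(X,Y)\cong\lim_{\twop I}\Hom_{\mathcal{C}}(X(i),Y(i'))$, and commutes $\lim_{\twop I}$ with the Cartesian square defining the fiberwise $\Hom^{\mu}$, with the identification $\Hom^{\mu}(X(i),Y(i'))\simeq\Hom_{F(i)}(X(i),F(\mu)Y(i'))$ coming from the Cartesian lifts exactly as in your last step. The only cosmetic difference is that the paper states the key step for an arbitrary functor $\mathcal{C}\to\mathcal{S}$ (your case being $\intrev F\to I$ with $S=\id_I$), emphasizing that no fibration structure is needed until the final rewriting.
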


\begin{proof}
This has nothing to do with fibrations and is a simple consequence of the end formula for morphisms in functor categories. 
 Actually, for any functor $\mathcal{C} \to \mathcal{S}$, for $X, Y \in \mathcal{C}^{I}_S$, for $S \in \mathcal{S}$, we have
\[ \Hom_{\mathcal{C}^{I}_S} (X, Y) = \lim_{\twop I} \Hom_{\mathcal{C}, S(\mu)}(X(i), Y(i')) . \]
Indeed, we have
\[ \Hom_{\mathcal{C}^{I}} (X, Y) \cong \lim_{\twop I} \Hom_{\mathcal{C}}(X(i), Y(i')) \]
and, by definition, the Cartesian diagram
\[ \xymatrix{ \Hom_{\mathcal{C}, S(\mu)}(X(i), Y(i')) \ar[r] \ar[d] & \Hom_{\mathcal{C}}(X(i), Y(i')) \ar[d] \\
\{S(\mu)\} \ar[r] & \Hom_{\mathcal{S}}(S(i), S(i')) } \]
Applying $\lim_{\twop I}$, we get a Cartesian diagram
\[ \begin{gathered}[b] \xymatrix{ \lim_{\twop I} \Hom^{S(\mu)}_{\mathcal{C}}(X(i), Y(i')) \ar[r] \ar[d] &  \Hom_{\mathcal{C}^I}(X, Y) \ar[d] \\
 \{ \id_S  \} \ar[r] &  \Hom_{\mathcal{S}^I}(S, S) }\\[-\dp\strutbox] \end{gathered} \qedhere \]
\end{proof}

\begin{KOR}\label{KORTW}
Let $I$ be a small $\infty$-category.
For a functor $G \in \Cat^{I}_{\infty}$  there is an isomorphism
\[ \oplaxlim_I \twop G \cong \twop (\oplaxlim_{\tw I} \pi_2^* G)^{{(1,2),(2,1)-\pseudo}}  \]
where the RHS is the full subcategory of objects $X \to Y \in \twop (\oplaxlim_{\tw I} \pi_2^* G)$ such that $X \in (\oplaxlim_{\tw I}^{2-\pseudo} \pi_2^* G)$ and $Y \in (\oplaxlim_{\tw I}^{1-\pseudo} \pi_2^* G)$. 
Similarly, let $I$ be a small $\infty$-operad. For a functor $G \in (\Cat_{\infty}, \times)^I$, inducing $G^{\times}: I \to \Cat_{\infty}^{\times} \to \Cat_{\infty}$ classifying the categories of operators, there is an isomorphism
\[ \oplaxlim_I^{\inert-\pseudo} \twop G^{\times} \cong \twop (\oplaxlim_{\tw I}^{\inert-\pseudo} \pi_2^* G^{\times})^{{(1,2),(2,1)-\pseudo}}  \]
where $\tw I$ means twisted arrow category in the sense of $\infty$-operads (\ref{DEFTW}), observing that in all other occurrences it is applied to just $\infty$-categories. 
\end{KOR}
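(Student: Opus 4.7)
My plan is to prove the statement by decomposing both sides into pieces that can be matched via Proposition~\ref{PROPTWIST} (and its operad analogue Proposition~\ref{PROPOPTWIST}). The three key observations are: (a) for an $\infty$-category $\mathcal{D}$, the twisted arrow category $\twop \mathcal{D}$ sits in a fibration $\twop \mathcal{D} \to \mathcal{D}^{\op} \times \mathcal{D}$ representing $\Hom_{\mathcal{D}}(-,-)$, and this structure is natural in $\mathcal{D}$; (b) Proposition~\ref{PROPTWIST} applied with $F = \cdot$ (cf.\@ Remark~\ref{BEMOPTWISTLIM}) yields $\oplaxlim_I G \cong \oplaxlim_{\tw I}^{1-\pseudo} \pi_2^* G$ and $\laxlim_I G \cong \oplaxlim_{\tw I}^{2-\pseudo} \pi_2^* G$; (c) by the definition of $(-)^{\op}$ on diagrams, objects of $\oplaxlim_I G^{\op}$ are in bijection with objects of $\laxlim_I G$.

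I would then argue as follows. An object of $\oplaxlim_I \twop G$ is, unwinding the definition, the data of a morphism $f_i: X(i) \to Y(i)$ in $G(i)$ for each $i \in I$, together with oplax coherence data for each $\alpha: i \to i'$ in $I$. Projecting along the fibration $\twop G(i) \to G(i)^{\op} \times G(i)$ separates this into: a source component $X$, which by the contravariance of the first factor assembles into an element of $\oplaxlim_I G^{\op} \cong \laxlim_I G$; a target component $Y$ which assembles into an element of $\oplaxlim_I G$; and the residual morphism data connecting them. Applying identification~(b), the source $X$ corresponds to a $2$-pseudo object of $\oplaxlim_{\tw I} \pi_2^* G$, the target $Y$ to a $1$-pseudo object, and the morphism $f$ translates into a morphism $X \to Y$ in the ambient $\oplaxlim_{\tw I} \pi_2^* G$, i.e., exactly an object of $\twop(\oplaxlim_{\tw I} \pi_2^* G)$ whose source is $2$-pseudo and whose target is $1$-pseudo. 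Compatibility on morphism spaces follows from Proposition~\ref{PROPHOM}, which expresses $\Hom$ in the oplax limit as an end over $\twop \tw I$ and thus matches the $\Hom$-space description on both sides.

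For the $\infty$-operad version, the same argument carries over verbatim, with Proposition~\ref{PROPOPTWIST} replacing Proposition~\ref{PROPTWIST}, the category $\tw I$ interpreted as the twisted arrow $\infty$-operad of Definition~\ref{DEFTW}, and inert-pseudo decorations added throughout. The compatibility with inert morphisms is automatic because $G$ is a functor of $\infty$-operads (taking values in $(\Cat_\infty, \times)$), so its value on an inert morphism is already an equivalence in the relevant sense.

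The main obstacle is keeping track of variances. Specifically, one must verify that the source becomes $2$-pseudo and the target becomes $1$-pseudo (and not the reverse): this rests on examining how the source-contravariant / target-covariant structure of $\twop$ interacts with type-$1$ and type-$2$ morphisms in $\tw I$ (Definition~\ref{DEFTYPEI}). Type-$2$ morphisms of $\tw I$ lie over actual morphisms in $I$ and thus interact with the oplax coherence of $Y$, producing the $1$-pseudo condition on the target; type-$1$ morphisms project to identities under $\pi_2$ and only constrain the fibration in the first variable, yielding the $2$-pseudo condition on the source. This bookkeeping is elementary but delicate, and upgrading the bijection on objects to a fully coherent $\infty$-categorical equivalence requires a careful matching of mapping spaces using Proposition~\ref{PROPHOM}.
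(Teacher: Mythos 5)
Your overall strategy points at the right identifications (Remark~\ref{BEMOPTWISTLIM} for the $1$-/$2$-pseudo descriptions of $\oplaxlim_I G$ and $\laxlim_I G$, and Proposition~\ref{PROPHOM} for mapping spaces), and your variance bookkeeping for why the source must land in the $2$-pseudo part and the target in the $1$-pseudo part is correct in spirit. But there is a genuine gap at the heart of the argument: in the $\infty$-categorical setting you cannot establish the asserted isomorphism by ``unwinding the definition'' of an object of $\oplaxlim_I \twop G$ into source, target, and morphism data, and then separately matching objects and Hom-spaces. That only produces pointwise identifications; it does not construct a comparison functor, let alone show it is an equivalence. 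The mechanism the paper uses to make this decomposition coherent is Proposition~\ref{PROPHOMCOFIB}: since $\twop G(i) \cong \oplaxcolim_{G(i)^{\op}\times G(i)} \mlq \Hom \mrq$, one applies the exchange formula for oplax limits of oplax colimits to obtain, in one step,
\[ \oplaxlim_{I}\bigl(\oplaxcolim_{G(i)^{\op} \times G(i)} \mlq \Hom \mrq\bigr)  \cong \oplaxcolim_{(X,Y) \in (\oplaxlim_I G \times G^{\op})^{\op}} \bigl( \oplaxlim_I \Hom(X(i),Y(i))\bigr), \]
which exhibits $\oplaxlim_I \twop G$ as fibered over the pair category $(\oplaxlim_I G \times \oplaxlim_I G^{\op})^{\op}$ with the correct fibers; only then do Remark~\ref{BEMOPTWISTLIM}, Proposition~\ref{PROPHOM}, the identification $\twop(\twop I) = \twtw I$ and Lemma~\ref{LEMMATWLOC} identify those fibers with $\Hom$-spaces in $\oplaxlim_{\tw I}\pi_2^* G$. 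This distributivity step (or some substitute for it) is exactly what your proposal is missing, and without it the ``upgrade to a coherent equivalence'' you defer to the end has no actual construction behind it.

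A second, smaller gap: the operad case does not carry over ``verbatim,'' and the inert compatibility is not automatic for the reason you give. The value of $G^{\times}$ on an inert morphism is a projection, not an equivalence; what matters is that the oplax constraints are invertible over inerts. Moreover $\tw I$ in the operad sense is only a localization of $\mathrm{tw}^{\op}$ of the category of operators (Lemma~\ref{LEMMAOPTWIST}), so one must argue that the inert-pseudo oplax limit over $\mathrm{tw}^{\op}(I)$ agrees with the one over $\tw I$ --- the paper does this by noting that the morphisms inverted by $\Pi: \mathrm{tw}^{\op}(I) \to \tw I$ are type-$1$ inert morphisms, which are already sent to coCartesian (hence invertible) edges by inert-pseudo sections, invoking Lemma~\ref{LEMMAOPTWIST2}. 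Your proposal should either include this localization argument or explain why it can be avoided.
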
 
\begin{proof}[Proof (sketch):]
Consider the functor $\mlq \Hom \mrq: (\oplaxcolim_I  G \times G^{\op})^{\op} \to \Gpd_{\infty}$ given by $\Hom$ fiber-wise, where $G^{\op}$ is the {\em composition} of $G$ with $\op$. To see that this is a valid morphism, observe that 
\[ \mlq \Hom \mrq \in \Hom^{\lax}_{\Cat^I_{\infty}}(G^{\op} \times G, \pi_I^* \Gpd_{\infty}) \cong \Hom_{\Cat_{\infty}}(\laxcolim_I G^{\op} \times G, \Gpd_{\infty}).    \]
\[ \cong \Hom_{\Cat_{\infty}}((\oplaxcolim_I G \times G^{\op})^{\op}, \Gpd_{\infty}) \]
Apply Proposition~\ref{PROPHOMCOFIB} with $(I, G, F)$ being $(I^{\op}, G \times G^{\op}, \mlq \Hom \mrq)$:
\[ \oplaxlim_{I}( \underbrace{ \oplaxcolim_{G(i)^{\op} \times G(i)} \mlq \Hom \mrq}_{\twop G(i)} )  \cong \oplaxcolim_{(X,Y) \in (\oplaxlim_I G \times G^{\op})^{\op}} (\oplaxlim_I \Hom(X(i),Y(i))). \]
Using Remark~\ref{BEMOPTWISTLIM} identify
\begin{equation}\label{eqoplaxlim}  \oplaxlim_I G \times \oplaxlim_I  G^{\op}  \cong \oplaxlim_{\tw I}^{1-\pseudo} \pi_2^* G \times  (\oplaxlim_{\tw I}^{2-\pseudo} \pi_2^* G)^{\op}.  \end{equation}
We have to show that there is a commutative diagram
\[ \xymatrix{  (\oplaxlim_I G \times \oplaxlim_I  G^{\op})^{\op}  \ar[r]^-{\sim} \ar[d]^{\oplaxlim_I \Hom } & (\oplaxlim_{\tw I}^{1-\pseudo} \pi_2^* G)^{\op} \times  \oplaxlim_{\tw I}^{2-\pseudo} \pi_2^* G \ar[d]^{\Hom} \\ 
\Hom(I, \Gpd_{\infty})  \ar[r]^{\lim} &  \Gpd_{\infty} } \]
By  Proposition~\ref{PROPHOM}, and using the identification $\twop(\twop I) = \twtw I$ (denoting an object by $\xymatrix{i_1 \ar[r]^{\mu_1} & i_2 \ar[r]^{\mu_2} & i_3 \ar[r]^{\mu_3} & i_4  }$), and denoting by $\widetilde{X}, \widetilde{Y}$ the image of the pair $X, Y$ under (\ref{eqoplaxlim}): 
\[ \Hom_{\oplaxlim_{\tw I}}(\widetilde{X}, \widetilde{Y}) \cong \lim_{\twtw I}\Hom_{G(i_4)}(\widetilde{X}(i_1 \to i_4), G(\mu_3)\widetilde{Y}(i_2 \to i_3))  \]
\[ \cong  \lim_{\twtw I}\Hom_{G(i_4)}(X(i_4), G(\mu_3)G(\mu_2)Y(i_2)) \cong \lim_{\ddd I}\Hom_{G(i_4)}(X(i_4),G(\mu)Y(i_2)) \] 
(using Lemma~\ref{LEMMATWLOC} and numbering  $\ddd I$ with the corresponding subindices of $\twtw I$) and this is
\[ \cong \lim_I \Hom_{G(i)}(X(i), Y(i)).   \]
In the operad-case we get  by Proposition~\ref{PROPHOMCOFIB}
\[ \oplaxlim_I^{\inert-\pseudo}( \underbrace{ \oplaxcolim_{G(i)^{\op} \times G(i)} \mlq \Hom \mrq}_{\twop G(i)} )  \cong \oplaxcolim_{(X,Y) \in (\oplaxlim_I^{\inert-\pseudo} G \times G^{\op})^{\op}} \oplaxlim_I^{\inert-\pseudo} \Hom(X(i),Y(i)). \]
and refining the argument above: 
\[ \oplaxlim_I^{\inert-\pseudo} \tw G \cong \twop (\oplaxlim_{(\mathrm{tw}^{\op} I)^{\op}}^{\inert-\pseudo} \pi_2^* G)^{{(1,2),(2,1)-\pseudo}}.  \]
Then notice that 
\begin{gather*} \oplaxlim_{(\mathrm{tw}^{\op} I)^{\op}}^{\inert-\pseudo} \pi_2^* G = \Hom_{\mathrm{tw}^{\op} I}(\mathrm{tw}^{\op} I, \oplaxcolim \pi_2^* G)^{\inert-\cocart}    \\
 = \Hom_{I}(\mathrm{tw}^{\op} I, \oplaxcolim_I G)^{\inert-\cocart} 
\end{gather*}
and that the morphism that go to isomorphisms under $\mathrm{tw}^{\op} I \to  \tw I$ are precisely the type-1 inert morphisms which go thus to isomorphisms, hence by Lemma~\ref{LEMMAOPTWIST2} this is the same as
\begin{gather*} \cong \Hom_{I}(\twop I, \oplaxcolim_I G)^{\inert-\cocart}  \\
 \cong  \Hom_{\twop I}(\twop I, \oplaxcolim \pi_2^* G)^{\inert-\cocart} \cong  \oplaxlim_{\tw I}^{\inert-\pseudo} \pi_2^* G. \qedhere   
\end{gather*}
\end{proof}

\subsection{Relative (operadic) Kan extensions}

Let $\mathcal{C} \to \mathcal{S}$ and  $\alpha: J \to I$ be functors of $\infty$-(co)operads, the latter small. 
Let $S \in \mathcal{S}^I$. By pre-composition, it gives rise to a functor
\[ \alpha^*: \mathcal{C}^{I}_{S} \to \mathcal{C}^{J}_{\alpha^*S}.  \]
\begin{DEF}\label{DEFKAN}
If a left adjoint  $\alpha_!^{(S)}$ (resp.\@ right adjoint $\alpha_*^{(S)}$) of $\alpha^*$ exists, we call it a {\bf relative left (resp.\@ right) Kan extension}.
\end{DEF}
Note that this comprises, in particular, the functors called {\em operadic Kan extensions} by Lurie \cite[3.1.2]{Lur11}.
Various sufficient criteria for their existence will be discussed in Sections~\ref{SECTKAN1}--\ref{SECTKAN3}. 

\begin{PAR}
We call a relative left Kan extension, if it exists, {\bf fiber-wise}, if for any object $i \in I_{[1]}$, the mate
\[  \colim_{J_i} \iota_i^* \to  i^*\, \alpha_{!}^{(S)}  \]
where $\iota_i: J_i \hookrightarrow J$ is the inclusion, is an isomorphism. Similarly for right Kan extensions. 
Somewhat informally, we say that a relative left or right Kan extension is {\bf point-wise}, if there is a simple formula à la Kan, cf.\@ Proposition~\ref{PROPKAN}
for $i^*\, \alpha_{!}^{(S)}$.
\end{PAR}

 \subsection{Relative Kan extensions --- fiber-wise case}\label{SECTKAN1}

\begin{PROP}\label{PROPDAYADJOINT}
Let $\mathcal{C}, I, J, K$ be ($\infty$-)(co)operads, the latter three small. Let $\mathcal{C} \to I$ be a functor and 
\[ \xymatrix{ J \ar[rr] \ar[rd] & & K \ar[ld] \\
& I } \]
 be a morphism of ($\infty$-)exponential fibrations over $I$.
\begin{enumerate}
\item 
If $\mathcal{C} \to I$ is $L$-admissible (Definition~\ref{DEFLR}) and 
$\alpha$
 is ($\infty$-)coCartesian (Definition~\ref{DEFCART}). Then 
the functor $\alpha^*: D_I(K, \mathcal{C}) \to D_I(J, \mathcal{C})$ has a left adjoint in the $(\infty, 2)$-category $\mathrm{(co)Op}_{\infty/I}$ (resp.\@ in the  $(1, 2)$-category $\mathrm{(co)Op}_{/I}$). 

\item 
If $\mathcal{C} \to I$ is $R$-admissible (Definition~\ref{DEFLR}) and 
$\alpha$ is ($\infty$-)Cartesian (Definition~\ref{DEFCART}). Then 
the functor, $\alpha^*: D_I(K, \mathcal{C}) \to D_I(J, \mathcal{C})$ has a right adjoint in the $(\infty, 2)$-category $\mathrm{(co)Op}_{\infty/I}$ (resp.\@ in the $(1, 2)$-category $\mathrm{(co)Op}_{/I}$). 
\end{enumerate}
\end{PROP}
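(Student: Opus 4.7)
I focus on case 1., as case 2.\ is strictly dual (using $R_\mathcal{C}$ in place of $L_\mathcal{C}$, and applying it directly to the lax transformation $\alpha: \Xi_J \Rightarrow \Xi_K$, which is strict by the Cartesian hypothesis). The strategy is to build the adjoint fiber-wise as an ordinary left Kan extension and to assemble these fiber-wise adjoints into a morphism in $\mathrm{(co)Op}_{\infty/I}$ by passing through the 2-functor $L_\mathcal{C}$ of Proposition~\ref{PROPDAYLAX}.

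The functor $\alpha: J \to K$ is classified, per~\ref{PARCONVOLUTION}, by an oplax inert-pseudo transformation $\alpha: \Xi_J \Rightarrow \Xi_K$ of the classifying functors $I^{\op} \to (\Cat^{\PF}, \times)$; applying $L_\mathcal{C}$ recovers $\alpha^*: D_I(K,\mathcal{C}) \to D_I(J,\mathcal{C})$. The $\infty$-coCartesian hypothesis (Definition~\ref{DEFCART}) means precisely that the mate lax transformation ${}^t\alpha: \Xi_K \Rightarrow \Xi_J$, whose components are the right-adjoint pro-functors ${}^t\alpha_i: K_i \to J_i$, is in fact strictly natural. Being strict, ${}^t\alpha$ is in particular an oplax inert-pseudo transformation in its own right, so I may apply $L_\mathcal{C}$ to it, obtaining a morphism of (co)operads over $I$
\[ \alpha_! := L_\mathcal{C}({}^t\alpha): D_I(J, \mathcal{C}) \to D_I(K, \mathcal{C}), \]
whose fiber-wise restriction over each $i \in I_{[1]}$ is the ordinary left Kan extension $L_{\mathcal{C}_i}({}^t\alpha_i) = \alpha_{i,!}$ (cf.~\ref{PARPFLR}), which exists thanks to the cocompleteness of $\mathcal{C}_i$ granted by $L$-admissibility.

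The adjunction $\alpha_! \dashv \alpha^*$ is produced by the same device applied to the unit and counit of the fiber-wise pro-functor adjunctions $\alpha_i \dashv {}^t\alpha_i$: these assemble, by naturality of mate-formation, into 2-morphisms in the 2-category of (op)lax inert-pseudo transformations, and the 2-functor $L_\mathcal{C}$ transports them to the unit and counit of the candidate adjunction in $\mathrm{(co)Op}_{\infty/I}$, with the triangle identities descending from the fiber-wise ones via the explicit description of $L_\mathcal{C}$ on morphism categories.

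The main technical obstacle is upgrading this to a genuine 2-adjunction in the $(\infty,2)$-categorical setting rather than an adjunction on underlying 1-categories of morphisms. For this I invoke the universal property of the Day convolution (Proposition~\ref{PROPDAY}): testing against an arbitrary $\mathcal{D} \to I$, the natural equivalence
\[ \Hom_{\mathrm{(co)Op}_{\infty/I}}(\mathcal{D}, D_I(J, \mathcal{C})) \simeq \Hom_{\mathrm{(co)Op}_{\infty/I}}(J \times_I \mathcal{D}, \mathcal{C}) \]
reduces the 2-adjoint property to an adjunction relating pre-composition with $\alpha$ on the inside argument, where the fiber-wise calculations (in the style of the proof of Proposition~\ref{PROPDAYCOFIB}) recover the desired 2-adjunction naturally in $\mathcal{D}$.
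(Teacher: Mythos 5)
Your argument is essentially the paper's own proof: identify $\alpha^*$ as $L_{\mathcal{C}}$ applied to the oplax transformation classifying $\alpha$ (equivalently $R_{\mathcal{C}}$ of its lax mate, Proposition~\ref{PROPDAYLAX}), observe that the (co)Cartesian hypothesis makes the relevant mate strictly natural so that the other Day-convolution $2$-functor can be applied to it, and transport the pointwise pro-functor adjunction $\alpha \dashv {}^t\alpha$ through that $2$-functor, whose variance gives the stated handedness. Your fiber-wise identification of the adjoint as a Kan extension and the closing appeal to the universal property of Day convolution are elaborations rather than a different route; note moreover that your pairing (coCartesian with $L_{\mathcal{C}}$ and a left adjoint, Cartesian with $R_{\mathcal{C}}$ and a right adjoint) is the one consistent with the statement and its later use, whereas the paper's proof sketch appears to have the words ``Cartesian'' and ``coCartesian'' interchanged in its last two sentences.
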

\begin{proof}
The functor $\alpha$ can be seen as a morphism in $\Hom^{\oplax, \inert-\pseudo}_{\Cat^{\PF, \times}}(\Xi_{K}, \Xi_{J})$ or (via passing to the mate) as $\Hom^{\oplax, \inert-\pseudo}_{\Cat^{\PF, \times}}(\Xi_{J}, \Xi_{K})$.
The functor in question is given equivalently by applying $L$ to the first or $R$ to the second (cf.\@ Proposition~\ref{PROPDAYLAX}, assuming that $\mathcal{C} \to I$ is $L$-, resp.\@ $R$-admissible).
If $\alpha$ is ($\infty$\nobreakdash-)coCartesian we can also apply $R_{\mathcal{C}}$ to the first, being a natural transformation (not only oplax) in this case. This yields a right adjoint. If $\alpha$ is ($\infty$\nobreakdash-)Cartesian, we can apply $L_{\mathcal{C}}$ to the second, being a natural transformation (not only lax) in this case. This yields a left adjoint. 
\end{proof}

\begin{KOR}\label{KOREXISTFIBERWISERELKANEXT}
\begin{enumerate}
\item
If $\mathcal{C} \to \mathcal{S}$ is $L$-admissible (Definition~\ref{DEFLR})  and $\alpha: J \to K$ is a ($\infty$-)Cartesian morphism of ($\infty$-)exponential fibrations over $I$. Then 
for each $S \in \mathcal{S}^I$, the functor
\[ \alpha^*: \mathcal{C}^K_{p_K^*S} \to \mathcal{C}^J_{p_J^* S} \]
has a left adjoint $\alpha_!^{(p_J^*S)}$, i.e.\@ a relative left Kan extension exists. It is computed fiber-wise over $I$, i.e.\@ the mate
\[  \colim_{J_i} \iota_i^* \to  i^* \alpha_{!}^{(S)}  \]
where $\iota_i: J_i \to J$ denotes the inclusion of the fiber, is an isomorphism. 

\item 
If $\mathcal{C} \to \mathcal{S}$ is $R$-admissible (Definition~\ref{DEFLR})  and $\alpha: J \to K$  is a ($\infty$\nobreakdash-)coCartesian morphism of ($\infty$-)exponential fibrations over $I$. Then 
for each $S \in \mathcal{S}^I$, the functor
\[ \alpha^*: \mathcal{C}^K_{p_K^*S} \to \mathcal{C}^J_{p_J^* S} \]
has a right adjoint $\alpha_*^{(p_J^*S)}$, i.e.\@ a relative (operadic) right Kan extension exists.  It is computed fiber-wise over $I$, i.e.\@ the mate
\[   i^* \alpha_{*}^{(S)} \to \lim_{J_i} \iota_i^*   \]
where $\iota_i: J_i \to J$ denotes the inclusion of the fiber, is an isomorphism.   
\end{enumerate}
\end{KOR}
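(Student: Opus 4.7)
The plan is to reduce both claims to Proposition~\ref{PROPDAYADJOINT} by pulling $\mathcal{C}$ back to sit over $I$. First, form $S^*\mathcal{C} \to I$ as the pullback of $\mathcal{C} \to \mathcal{S}$ along $S \colon I \to \mathcal{S}$. Since pullback preserves fibers pointwise and transports the canonical comparison morphisms appearing in Definition~\ref{DEFLR}, the $L$-admissibility (resp.\@ $R$-admissibility) of $\mathcal{C} \to \mathcal{S}$ passes to $S^*\mathcal{C} \to I$. Moreover, $\alpha \colon J \to K$ is already a morphism of exponential fibrations over $I$, so this pullback changes nothing about its (co)Cartesian character.

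Next, apply Proposition~\ref{PROPDAYADJOINT} to the pair $(S^*\mathcal{C} \to I,\ \alpha \colon J \to K)$: the Cartesian hypothesis on $\alpha$ together with the $L$-admissibility of $S^*\mathcal{C}$ yields a 2-left adjoint of $\alpha^* \colon D_I(K, S^*\mathcal{C}) \to D_I(J, S^*\mathcal{C})$ inside the $(\infty,2)$-category $\mathrm{(co)Op}_{\infty/I}$, and dually for the coCartesian/$R$-admissible case. By the defining universal property of the Day convolution (Proposition~\ref{PROPDAY}), $I$-sections identify
\[ \Hom_I(I, D_I(J, S^*\mathcal{C})) \cong \Hom_I(J, S^*\mathcal{C}) \cong \mathcal{C}^J_{p_J^* S}, \]
and analogously for $K$. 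A 2-adjunction in $\mathrm{(co)Op}_{\infty/I}$ descends to a genuine 1-adjunction on $I$-sections, producing $\alpha_!^{(S)} \dashv \alpha^*$ (resp.\@ $\alpha^* \dashv \alpha_*^{(S)}$) as required.

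For the fiber-wise statement, I would restrict the whole picture to an object $i \in I_{[1]}$. Over such a unary fiber the operad structure is trivial, the fibers of $J, K, S^*\mathcal{C}$ are the plain $\infty$-categories $J_i, K_i, \mathcal{C}_{S(i)}$, and the explicit push-forward formula from Proposition~\ref{PROPDAYCOFIB} collapses to the ordinary left Kan extension along $\alpha_i \colon J_i \to K_i$ with values in $\mathcal{C}_{S(i)}$. The asserted mate $\colim_{J_i} \iota_i^* \to i^* \alpha_!^{(S)}$ is then the standard comparison for a Kan extension computed fiber-wise over $I$, and is an isomorphism. The main obstacle is checking that the abstract mate description of $\alpha_!^{(S)}$ produced in the proof of Proposition~\ref{PROPDAYADJOINT} genuinely commutes with restriction to fibers over $I$: this is precisely where the ($\infty$-)Cartesianness of $\alpha$ is used, since only under that hypothesis does the mate transformation constructed from $\alpha$ become a genuine natural equivalence rather than a merely lax one, so that $L_{S^*\mathcal{C}}$ of it yields an adjoint that is strictly compatible with the inclusions $\iota_i$. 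The dual (right adjoint) assertion follows by reversing all arrows and interchanging Cartesian with coCartesian.
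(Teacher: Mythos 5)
Your argument is essentially the paper's own proof: pull back $\mathcal{C}\to\mathcal{S}$ along $S$ (preserving $L$/$R$-admissibility) to reduce to $\mathcal{S}=I$, identify $\mathcal{C}^J_{p_J^*S}$ and $\mathcal{C}^K_{p_K^*S}$ with $I$-sections of the Day convolutions via Proposition~\ref{PROPDAY}, and transport the adjunction of Proposition~\ref{PROPDAYADJOINT} to sections. Your extra remarks on the fiber-wise formula only make explicit what the paper leaves implicit (the adjoint is a morphism over $I$, computed by the push-forward formula of Proposition~\ref{PROPDAYCOFIB}), so the proposal is correct and follows the same route.
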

\begin{proof}
We can pullback $\mathcal{C} \to \mathcal{S}$ along $S: I \to \mathcal{S}$ which preserves $L$/$R$-admissibility, and assume w.l.o.g. that $\mathcal{S}=I$ and $S=\id$.
Then we have $\mathcal{C}^K_{p_K} = \Hom_{\Cat_{/I}}(I, D_I(K, \mathcal{C}))$ and $\mathcal{C}^J_{p_J} = \Hom_{\Cat_{/I}}(I, D_I(J, \mathcal{C}))$ by the universal property of the Day convolution. Hence
the adjunction from Proposition~\ref{PROPDAYADJOINT} gives the required Kan extension adjunction. 
\end{proof}

\begin{PAR}
Let $p: J \to I$ be an ($\infty$-)exponential fibration. 
Then the functor is, in particular, a morphism of ($\infty$-)exponential fibrations. 
It is ($\infty$-)Cartesian (Definition~\ref{DEFCART}), if 
\[ \alpha^\bullet\, p_{J_i} \cong p_{I_i} \] 
and ($\infty$-)coCartesian (Definition~\ref{DEFCART}), if
\[ {}^t\!p_{J_i}\, \alpha^\bullet  \cong \,{}^t\!p_{I_i}. \] 
Obviously a fibration is always $\infty$-Cartesian and a cofibration is $\infty$-coCartesian. 
A fibration is ($\infty$-)coCartesian if and only if the pull-back functors are ($\infty$\nobreakdash-)cofinal and
a cofibration is ($\infty$-)Cartesian, if and only if the push-forward functors are ($\infty$-)final. 
This shows the following: 
\end{PAR}

\begin{BEM}\label{REMKANEXT}
Let $\mathcal{C}, \mathcal{S}, I, J$ be ($\infty$-)(co)operads (all of the same type), the latter two small. 

Let $\mathcal{C} \to \mathcal{S}$ and $J \to I$ be (co)fibrations of ($\infty$-)(co)operads (not necessarily of the same fibration type), we may summarize the assumptions of Corollary~\ref{KOREXISTFIBERWISERELKANEXT} as follows:

\begin{enumerate}
\item
Relative left Kan extensions (that are computed fiber-wise) exist along $J \to I$, if $\mathcal{C}$ has cocomplete fibers, and
\begin{center}
\begin{tabular}{ c | c c }
 \hbox{\diagbox{$\mathcal{C} \to \mathcal{S}$}{$J \to I$}} & fibration & cofibration \\ 
 \hline
 \multirow{2}{*}{fibration} & pull-backs for $J\to I$ & \multirow{2}{*}{always} \\
 &  are ($\infty$-)cofinal  \\  
 \multirow{2}{*}{cofibration} & \multirow{2}{*}{both properties hold} & push-forwards for $\mathcal{C} \to \mathcal{S}$  \\
 & & are ($\infty$-)cocontinuous    
\end{tabular}
\end{center}

\item
Relative right Kan extensions (that are computed fiber-wise) exist along $J \to I$, if $\mathcal{C}$ has complete fibers, and
\begin{center}
\begin{tabular}{ c | c c }
 \hbox{\diagbox{$\mathcal{C} \to \mathcal{S}$}{$J \to I$}} & fibration & cofibration \\ 
 \hline
 \multirow{2}{*}{fibration} & pull-backs for $\mathcal{C} \to \mathcal{S}$  &  \multirow{2}{*}{both properties hold} \\
 &  are ($\infty$-)continuous   &   \\  
 \multirow{2}{*}{cofibration} & \multirow{2}{*}{always} & push-forwards for $J\to I$ \\
& &  are ($\infty$-)final  
\end{tabular}
\end{center}
\end{enumerate}

Note that it does not matter whether the objects are operads, or cooperads, respectively, apart from the fact that {\em fibrations of operads} as well as {\em cofibrations of cooperads} are not
as commonly considered as the other cases. 
\end{BEM}

 \subsection{Relative Kan extensions --- point-wise case}\label{SECTKAN2}

 The calculus of left relative Kan extensions is particularly rich for $L$-admissible cofibrations of $\infty$-operads  $\mathcal{C} \to \mathcal{S}$. Recall that  $L$-admissible here means
that the fibers are cocomplete and that the push-forward functors commute with colimits (argument-wise). 
Similarly, of course, the same holds for right relative Kan extensions for $R$-admissible fibrations of  $\infty$-cooperads.
In particular, they always exist, and a point-wise formula holds true (analogous to Kan's formula for usual Kan extensions). See Proposition~\ref{PROPKAN} below.

\begin{DEF}\label{DEFGENMOR}
Let $I$ and $J$ be  $\infty$-operads. A {\bf generalized morphism}  $I \to J$ is a 2-commutative diagram
\[  \xymatrix{ I \ar[rr]^{\alpha} \ar[rd]  &\ar@{}[d]|-\Downarrow & J \ar[ld] \\
  &  \OOO }  \]
  such that $\alpha$ maps inert morphisms to inert morphisms and such that the 2-morphism consists of active morphisms. Denote the corresponding $(\infty,2)$-category by
  $\Op_{\infty}^{g}$.
\end{DEF}

\begin{DEF}\label{DEFDIAS}
Let $\mathcal{S}$ be an  $\infty$-operad.
We define the $(\infty,2)$-category $\Dia(\mathcal{S})$ as the full subcategory of the lax slice category $\Op_{\infty}^g // \mathcal{S}$ in which
the objects are honest morphisms of operads with {\em small} source.  Similarly, we define $\Dia^{\op}(\mathcal{S})$ as the oplax slice category.
\end{DEF}

A morphism in  $\Dia(\mathcal{S})$ is thus represented by a diagram
\[  \xymatrix{ M \ar[rr]^{\alpha} \ar[rd]  &\ar@{}[d]|-{\Downarrow^\mu} & N \ar[ld] \\
  &  \mathcal{S} }  \]
in which $\mu$ consists point-wise of active morphisms. If $\mu$ is an equivalence, we say that the morphism is {\bf of diagram type}. The corresponding $\alpha$ is then necessarily 
an honest morphism of operads. 

\begin{PAR}\label{PARFIBDER}
Let $\mathcal{C} \to \mathcal{S}$ be a cofibration of $\infty$-operads. It gives rise to a functor of $(\infty,2)$-categories
\[ y(\mathcal{C}): \Dia(\mathcal{S}) \to \Cat_{\infty} \]
mapping $(I, S)$ to $\mathcal{C}^{M}_S$, the fiber of $\mathcal{C}^I \to \mathcal{S}^I$ over $S$. 
A morphism of diagram type $\alpha: I \to J$ is mapped to the functor `composition with $\alpha$':
\[ \alpha^*: \mathcal{C}^J \to \mathcal{C}^I. \]
A morphism with components $\alpha = \id_I, \mu: X \to Y$ is mapped to the corresponding push-forward functor 
\[ \mu_\bullet: \mathcal{C}^I_X \to \mathcal{C}^I_Y. \]
\end{PAR}

 \begin{DEF}[(Co)operadic comma category] \label{DEFCOMMA}
 For two morphisms of $\infty$-(co)operads $J \to I$ and $K \to I$ we define a {\bf (co)operadic comma category} together with a 2-commutative diagram 
 in the 2-category $\Op_{\infty}^{g}$:
 \[ \xymatrix{ I \times_{/J} K \ar[d]_{\pi_2} \ar@{}[rd]|{\Swarrow} \ar[r]^-{\pi_1} & I \ar[d]^{\alpha} \\
 K \ar[r]_{\beta} & J }\]
 where the morphism $\pi_1: I \times_{/J} K \to I$ is only a generalized morphism of operads in the sense of Definition~\ref{DEFGENMOR}.
 
 For any $S \in \mathcal{S}^J$ this yields a diagram in $\Dia(\mathcal{S})$ of the form
 \[ \xymatrix{ (I \times_{/J} K, \pi_2^* \beta^* S) \ar@{}[rd]|{\Swarrow^{\mu}} \ar[r]^-{\widetilde{\pi_1}} \ar[d]_{\pi_2} & (I, \alpha^*S) \ar[d] \\
 (K, \beta^*S) \ar[r]^{\beta} & (J, S) }\]
 
We can define it as the limit of the following diagram of (co)operads and generalized morphisms:
\[ \xymatrix{ & & I \ar[d] \\
& {}^{\downarrow\downarrow} J \ar[r]^{\pi_1} \ar[d]_{\pi_2} & J \\
K \ar[r] & J &  }\]
where ${}^{\downarrow\downarrow} J$ has been defined in Definition~\ref{DEFTW}.
One can also mimic the definition of ${}^{\downarrow\downarrow} J$ and define a functor of operads
\begin{align*} K &\to (\Cat_{\infty}, \times) \\
 k &\mapsto I_{\act} \times_{/J_{\act}} \beta(k) \qquad \text{usual comma category} 
\end{align*}
using factorization into inert and active. $I \times_{/J} K  \to K$ is then the associated cofibration (unstraightening). 
 \end{DEF}

 \begin{PROP}[relative (operadic) Kan formula]\label{PROPKAN}
 If $\mathcal{C} \to \mathcal{S}$ is an $L$-admissible cofibration of $\infty$-operads (or just $\infty$-categories) then relative left Kan extensions exist along any functor $\alpha: I \to J$ of small $\infty$-operads whatsoever.
 Moreover, there is an isomorphism
 \begin{equation}\label{eqrelkan} j^* \alpha^{(S)}_! \cong \colim_{I \times_{/J} j} S(\mu)_{\bullet} \pi_1^*  \end{equation}
 for objects $k \in K$ and $S \in \mathcal{S}^K$.
 
 There is a similar dual statement for $R$-admissible fibrations of $\infty$-cooperads.
 \end{PROP}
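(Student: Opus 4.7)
The plan is to factor $\alpha^*$ through the operadic comma category and reduce to the fiber-wise case of Corollary~\ref{KOREXISTFIBERWISERELKANEXT}, together with a pointwise Kan formula for a fully faithful inclusion. Let $p: \widetilde{I} := I \times_{/J} J \to J$ be the projection from Definition~\ref{DEFCOMMA}; by construction it is the unstraightening of $j \mapsto I_{\act} \times_{/J_{\act}} j$, which by \ref{PAROP}, 6 refines to a functor $J \to (\Cat_{\infty}, \times)$, so that $p$ is a cofibration of $\infty$-operads. The comma construction further supplies, in $\Op_{\infty}^g$, the generalized morphism $\widetilde{\pi_1}: \widetilde{I} \to I$ with active $2$-cell $\mu: \alpha\widetilde{\pi_1} \Rightarrow p$, and a canonical section $s: I \to \widetilde{I}$ sending $i \mapsto (i, \id_{\alpha(i)})$, satisfying $p s = \alpha$ and $\widetilde{\pi_1} s = \id_I$.

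Applying $y(\mathcal{C}): \Dia(\mathcal{S}) \to \Cat_{\infty}$ of \ref{PARFIBDER} to these data yields $p^*: \mathcal{C}^J_S \to \mathcal{C}^{\widetilde{I}}_{p^*S}$, the composite $\Phi := \mu_\bullet \widetilde{\pi_1}^*: \mathcal{C}^I_{\alpha^*S} \to \mathcal{C}^{\widetilde{I}}_{p^*S}$, and $s^*: \mathcal{C}^{\widetilde{I}}_{p^*S} \to \mathcal{C}^I_{\alpha^*S}$, satisfying the relations $p^* = \Phi \alpha^*$, $s^* p^* = \alpha^*$, and $s^*\Phi = \id$. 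I claim that $\Phi \dashv s^*$. Since $s$ is fully faithful with image the objects of the form $(i, \id_{\alpha(i)})$, the relevant comma for the pointwise Kan formula, $I \times_{/\widetilde{I}} (i_0, \mu_0)$, reduces to the slice $I/i_0$, which has final object $(i_0, \id)$; a direct calculation then gives
\[ s_!^{(\alpha^*S)} X(i_0, \mu_0) = S(\mu_0)_\bullet X(i_0) = \Phi(X)(i_0, \mu_0). \]

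Since $p$ is a cofibration of $\infty$-operads and $\mathcal{C} \to \mathcal{S}$ is $L$-admissible, Corollary~\ref{KOREXISTFIBERWISERELKANEXT} provides $p_!^{(S)} \dashv p^*$ computed fiber-wise, with $(p_!^{(S)} Z)(j) = \colim_{(i,\mu) \in I \times_{/J} j} Z(i,\mu)$. The composite $F := p_!^{(S)} \circ \Phi$ is therefore left adjoint to $s^* p^* = \alpha^*$, proving the existence of $\alpha_!^{(S)}$ and yielding
\[ \alpha_!^{(S)} X(j) = p_!^{(S)}(\Phi X)(j) = \colim_{(i,\mu) \in I \times_{/J} j} S(\mu)_\bullet X(i), \]
which is exactly formula~(\ref{eqrelkan}).

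The main obstacle is the identification $\Phi \cong s_!^{(\alpha^*S)}$. Classically this is the pointwise Kan formula along a fully faithful inclusion, but in the $\infty$-operadic setting one must verify that the relative Kan extension actually exists and is compatible with the multilinear structure, i.e.\ that the colimits interact correctly with the inert-active factorization. A conceptual way to handle this is to observe that $s$ is right adjoint to $\widetilde{\pi_1}$ in $\Dia(\mathcal{S})$ with unit and counit mediated by the active $2$-cell $\mu$, which forces $s_!$ to be $\widetilde{\pi_1}^*$ twisted by the push-forward $\mu_\bullet$, namely $\Phi$; the operad-compatibility then descends from the corresponding property of $p$ as a cofibration of operads and the $L$-admissibility hypothesis.
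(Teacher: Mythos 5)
Your proposal is essentially the paper's own proof: factor $\alpha$ through the operadic comma category as the canonical section $s$ (the paper's $\iota$) followed by the cofibration $p=\pi_2$, identify the relative left Kan extension along $s$ with $\Phi=S(\mu)_\bullet\pi_1^*$ via the adjunction in $\Dia(\mathcal{S})$, and compute $p_!^{(S)}$ fiber-wise by Corollary~\ref{KOREXISTFIBERWISERELKANEXT} using $L$-admissibility. Two small slips, neither fatal: the asserted relation $p^*=\Phi\,\alpha^*$ is false in general (but you never use it, only $s^*p^*=\alpha^*$), and the handedness of the final adjunction is reversed --- $s$ is \emph{left} adjoint to $\widetilde{\pi_1}$ in $\Dia(\mathcal{S})$ (unit the identity $\widetilde{\pi_1}s=\id$, counit $s\widetilde{\pi_1}\Rightarrow\id$), which is precisely what makes $y(\mathcal{C})$, contravariant on $1$-cells, yield $\Phi\dashv s^*$ rather than $s^*\dashv\Phi$; note also that your first, pointwise justification of $\Phi\cong s_!$ presupposes the very Kan formula being proved, so the $\Dia(\mathcal{S})$-adjunction argument is the one to keep.
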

 \begin{proof}
 We have an adjunction in the $(\infty,2)$-category  $\Dia(\mathcal{S})$ 
 \[ \xymatrix{  (I \times_{/J} J, \pi_2^* S)  \ar@<3pt>[rr]^-{\widetilde{\pi_1}=(\pi_1, S(\mu))} & &  \ar@<3pt>[ll]^-{(\iota, \id)}  (I, \alpha^* S) } \]
 and  an isomorphism
 \[ \alpha_!^{(S)} \cong \pi_{2,!}^{(S)}\, \iota_!^{(\pi_2^*S)}   \]
 in the strong sense that the existence of the RHS adjoints implies the existence of the LHS. 
 By the adjunction in $\Dia(\mathcal{S})$, we have
 \[ \iota_!^{(\pr_2^*S)} \cong \widetilde{\pi_1}^* = S(\mu)_{\bullet}\, \pi_1^*   \]
 giving the formula:
 \[ \alpha_!^{(S)} \cong \pi_{2,!}^{(S)}\, S(\mu)_{\bullet}\, \pi_1^*   \]
 $\pi_{2}$ is a cofibration of cooperads and thus a relative Kan extension exists and is computed fiber-wise by Proposition~\ref{KOREXISTFIBERWISERELKANEXT}.
 The formula (\ref{eqrelkan}) follows.  
 \end{proof}

\begin{BEISPIEL}[Free algebra]
Consider the morphism $\cdot \to \OOO$ where $\cdot$ is the final category. Let $(\mathcal{C}, \otimes)$ be a monoidal
$\infty$-category such that $\otimes$ commutes with colimits (in each variable separately). Then the left relative Kan extension
\[  \mathcal{C} \to (\mathcal{C}, \otimes)^{\OOO}  = \Alg(\mathcal{C}, \otimes) \]
exists. Notice that this is (by definition) a free algebra functor (left adjoint to the forgetful functor). Examining the proof, is suffices to see that $\mathcal{C}$ has countable coproducts and that 
$\otimes$ commutes with them (in each variable separately). Formula (\ref{eqrelkan}) is the usual formula for the free algebra. 
For, observe that 
\[ \cdot \ \times_{/ \OOO} \OOO \cong (\N_0,+) \] 
(as discrete operad). If $\otimes$ does not commute with countable coproducts then the construction of a left Kan extension becomes more difficult but not always impossible (see Section~\ref{SECTKAN3} and Section~\ref{SECTCOFREECOALG} for a dual example). 
\end{BEISPIEL}

 \subsection{Relative Kan extensions --- general case}\label{SECTKAN3}

The last section settled the case of left Kan extensions along arbitrary functors $I \to J$ of $\infty$-operads w.r.t.\@ an $L$-admissible cofibration  $\mathcal{C} \to \mathcal{S}$ of $\infty$-operads (say).
The $L$-admissibility of course can be weakened to the existence of {\em certain} colimits (depending on $I \to J$) and the commutation of the push-forwards in $\mathcal{C} \to \mathcal{S}$ with those. 
Often times, however,  the push-forward in $\mathcal{C}$ {\em does  not commute with these colimits}! 

For example, dually, the functor ``cofree coalgebra'' is a special case of a right relative Kan extension. Its construction by the general machinery accordingly assumes that $\otimes$  commutes with countable products. This already fails in $(\Ab, \otimes)^{\vee}$.

However, assuming (w.l.o.g. --- by using the yoga of Section~\ref{SECTKAN2}) that $I \to J$ is a cofibration, and using the equivalence of \ref{PARTWIST}, we may pass to the corresponding {\em fibration} $\mathcal{C}^{\vee} \to \mathcal{S}^{\op}$:
\[ \xymatrix{ \mathcal{C}^I_{\alpha^* S} \ar[r]^-{\sim} \ar@{<-}[d]_{\alpha^*} & (\mathcal{C}^{\vee})^{\twop J \times_{J} I, 2-\cart}_{\pi_2^* \alpha^* S^{\op}} \ar@{<-}[d]^{(\alpha')^*} \\
 \mathcal{C}^J_{S} \ar[r]^-{\sim} & (\mathcal{C}^{\vee})^{\twop J, 2-\cart}_{\pi_2^* S^{\op}} }  \]
 where $\twop J \times_{J} I \to \twop J$ is a cofibration (sic) of cooperads constructed similarly to the operadic comma category (cf.\@ Definition~\ref{DEFCOMMA}).
 
 Assume, that the fibers of $\mathcal{C} \to \mathcal{S}$ have all relevant colimits (but no commutation!). 
 Then, for fibrations, there is no further obstacle to construct left Kan extensions, hence a left adjoint $(\alpha')_!^{(\pi_2^* S^{\op})}$ exists (Corollary~\ref{KOREXISTFIBERWISERELKANEXT} and especially Remark~\ref{REMKANEXT}). 
 However, it is only a functor
 \[ (\mathcal{C}^{\vee})^{\twop J \times_{J} I, 2-\cart}_{\pi_2^* \alpha^* S^{\op}} \to (\mathcal{C}^{\vee})^{\twop J}_{\pi_2^* S^{\op}} .  \] 
 It will land in the full subcategory $(\mathcal{C}^{\vee})^{\twop L, 2-\cart}_{\pi_2^* S^{\op}}$ of 2-Cartesian objects precisely if the commutativity of the push-forward in $\mathcal{C}$ with (the relevant) 
colimits is satisfied. Hence this seems like circular reasoning. However, we gained some additional information:

\begin{PROP}
Let $\mathcal{C} \to \mathcal{S}$ be a cofibration of $\infty$-operads with cocomplete\footnote{or such that at least the relevant colimits exits...} fibers (not necessariy $L$-admissible).
Then relative left Kan extensions along the cofibration $I \to J$ exists, if the
fully faithful inclusion
\[ (\mathcal{C}^{\vee})^{\twop J, 2-\cart}_{\pi_2^* S^{\op}}  \hookrightarrow (\mathcal{C}^{\vee})^{\twop J}_{\pi_2^* S^{\op}} \]
has a (partial) left adjoint (a Cartesian projector) defined on the essential image of $(\alpha')_!^{(\pi_2^* S^{\op})}$.
\end{PROP}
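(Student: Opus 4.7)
The plan is to construct the relative left Kan extension explicitly as a composition of three functors, one of which is the supplied Cartesian projector. Write $\mathcal{A} := \mathcal{C}^I_{\alpha^* S}$ and $\mathcal{B} := \mathcal{C}^J_S$ for the source and target of $\alpha^*$, and
$\widetilde{\mathcal{A}} := (\mathcal{C}^{\vee})^{\twop J \times_{J} I}_{\pi_2^* \alpha^* S^{\op}}$, $\widetilde{\mathcal{B}} := (\mathcal{C}^{\vee})^{\twop J}_{\pi_2^* S^{\op}}$, with the full embeddings $\iota_\mathcal{A}: \mathcal{A} \hookrightarrow \widetilde{\mathcal{A}}$ and $\iota_\mathcal{B}: \mathcal{B} \hookrightarrow \widetilde{\mathcal{B}}$ coming from the identification of $\mathcal{A}, \mathcal{B}$ with the 2-Cartesian sections (the equivalence \ref{PARTWIST} recorded in the diagram preceding the proposition). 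Under these identifications, $\alpha^*$ corresponds to the restriction of $(\alpha')^*: \widetilde{\mathcal{B}} \to \widetilde{\mathcal{A}}$; in particular $(\alpha')^* \circ \iota_\mathcal{B} \cong \iota_\mathcal{A} \circ \alpha^*$.

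First, I would invoke Corollary~\ref{KOREXISTFIBERWISERELKANEXT} (or equivalently the table of Remark~\ref{REMKANEXT}) applied to the fibration of cooperads $\mathcal{C}^\vee \to \mathcal{S}^{\op}$, which has cocomplete fibers by hypothesis, and the cofibration of cooperads $\alpha': \twop J \times_{J} I \to \twop J$. This falls exactly into the ``fibration / cofibration'' cell where a relative left Kan extension exists unconditionally (no commutation requirement arises). This produces a functor $(\alpha')_!^{(\pi_2^* S^{\op})}: \widetilde{\mathcal{A}} \to \widetilde{\mathcal{B}}$ left adjoint to $(\alpha')^*$, and computed fiber-wise over $\twop J$. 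The construction of Section~\ref{SECTKAN2} of course does \emph{not} apply directly on the operad side because $\mathcal{C} \to \mathcal{S}$ is not assumed $L$-admissible; the whole point of the detour through $\mathcal{C}^\vee$ is to trade cocontinuity of push-forwards for mere cocompleteness of fibers.

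Next, let $L$ denote the partial left adjoint to $\iota_\mathcal{B}$ supplied by hypothesis, defined on the essential image of $(\alpha')_!^{(\pi_2^* S^{\op})}$ (a fortiori on the essential image of $(\alpha')_!^{(\pi_2^* S^{\op})} \circ \iota_\mathcal{A}$). Define
\[ F \ := \ L \circ (\alpha')_!^{(\pi_2^* S^{\op})} \circ \iota_\mathcal{A}\ :\ \mathcal{A} \longrightarrow \mathcal{B}. \]
I would verify $F \dashv \alpha^*$ by the chain of natural equivalences, for $X \in \mathcal{A}$ and $Y \in \mathcal{B}$:
\begin{align*}
 \Hom_\mathcal{B}(F(X), Y)
 & \cong \Hom_{\widetilde{\mathcal{B}}}\bigl((\alpha')_!^{(\pi_2^* S^{\op})} \iota_\mathcal{A} X,\ \iota_\mathcal{B} Y\bigr) \\
 & \cong \Hom_{\widetilde{\mathcal{A}}}\bigl(\iota_\mathcal{A} X,\ (\alpha')^* \iota_\mathcal{B} Y\bigr) \\
 & \cong \Hom_{\widetilde{\mathcal{A}}}\bigl(\iota_\mathcal{A} X,\ \iota_\mathcal{A} \alpha^* Y\bigr) \\
 & \cong \Hom_\mathcal{A}(X,\ \alpha^* Y),
\end{align*}
where the four steps use, respectively, the (partial) adjunction $L \dashv \iota_\mathcal{B}$ on the relevant essential image, the Kan extension adjunction from the first step, the compatibility $(\alpha')^* \circ \iota_\mathcal{B} \cong \iota_\mathcal{A} \circ \alpha^*$ noted in the setup, and full faithfulness of $\iota_\mathcal{A}$.

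The genuine obstacle---and the reason the proposition must be phrased conditionally---is entirely the production of the Cartesian projector $L$; that is exactly the information which an $L$-admissible $\mathcal{C} \to \mathcal{S}$ would have supplied for free through commutation of push-forwards with colimits. Once $L$ is granted on the essential image, the remainder of the argument is pure adjunction calculus and poses no further difficulty, including at the $(\infty, 2)$-level where one reads the Hom equivalences above as equivalences of mapping $\infty$-groupoids.
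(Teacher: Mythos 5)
Your proposal is correct and takes essentially the same route as the paper, whose argument is the discussion immediately preceding the Proposition in Section~\ref{SECTKAN3}: pass to the fibration $\mathcal{C}^{\vee} \to \mathcal{S}^{\op}$, obtain $(\alpha')_!^{(\pi_2^* S^{\op})}$ unconditionally from Corollary~\ref{KOREXISTFIBERWISERELKANEXT}/Remark~\ref{REMKANEXT} (fibration with cocomplete fibers, Kan extension along a cofibration of cooperads), and compose with the partial Cartesian projector. Your explicit four-step adjunction chain merely spells out the adjunction calculus the paper leaves implicit, so there is nothing to add.
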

As an illustration we will construct the cofree coalgebra using the (dual) classical cobar construction in section~\ref{SECTCOFREECOALG}.

\subsection{Pairings}

\begin{DEF}[cf.\@ {\cite[Definition~5.2.1.5]{Lur11}}]\label{DEFPAIRING}
A fibration of $\infty$-categories
\[ \mathcal{P} \to \mathcal{C} \times \mathcal{D}^{\op} \]
with groupoid fibers, or equivalently, a functor
\[ \mathcal{C}^{\op} \times \mathcal{D} \to \Gpd_{\infty} \]
is called a {\bf pairing} of $\infty$-categories. An object $M \in \mathcal{P}$ over $(C, D)$ is called {\bf left universal}, resp.\@ {\bf right universal}, if  it
is a final object in the fiber over $C$, resp.\@ $D$. The pairing is called {\bf left (resp.\@ right)  representable} if for each $C$ there exists a left universal object over $C$ (resp.\@ for each $D$ there exists a right universal object over $D$).
\end{DEF}
The distinguished example is the twisted arrow category $\twop \mathcal{C}  \to  \mathcal{C} \times \mathcal{C}^{\op}$. Here $\id_C$ is left universal over $C \in \mathcal{C}$ and right universal over $C \in \mathcal{C}^{\op}$. A pairing which is left (resp.\@ right) representable gives rise to a functor
\[ F: \mathcal{C} \to \mathcal{D}, \]
respectively 
\[ G: \mathcal{D} \to \mathcal{C}, \]
in such a way that  the pairing is classified by the functor
\[ C, D \mapsto \Hom_{\mathcal{D}}(FC, D)  \quad \text{resp.} \quad  C, D \mapsto \Hom_{\mathcal{C}}(C, GD).   \]
In particular, if $\mathcal{P}$ is left and right representable then $F$ is left adjoint to $G$. The distinguished example corresponds to $F$ and $G$ being both the identity
$\mathcal{C} \to \mathcal{C}$. This characterizes the  distinguished example up to equivalence: 
\begin{LEMMA}[{\cite[Corollary 5.2.1.22]{Lur11}}]T.f.a.e.\@ for a left and right representable pairing:
\begin{enumerate}
\item The functors $F$ and $G$ are equivalences;
\item The pairing is isomorphic to the distinguished one  given by $\twop \mathcal{C}  \to  \mathcal{C} \times \mathcal{C}^{\op}$; 
\item An object of $\mathcal{P}$ is left universal if and only if it is right universal.  
\end{enumerate}
\end{LEMMA}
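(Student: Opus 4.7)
The plan is to prove the cycle $(1) \Rightarrow (2) \Rightarrow (3) \Rightarrow (1)$, exploiting that both $F$ and $G$ are read off the representability data of $\mathcal{P}$. The implication $(2) \Rightarrow (3)$ is immediate from inspection of the distinguished pairing $\twop \mathcal{C} \to \mathcal{C} \times \mathcal{C}^{\op}$: an object of $\twop \mathcal{C}$ is left universal over $C \in \mathcal{C}$ iff it is $\id_C$, and likewise right universal over $C \in \mathcal{C}^{\op}$ iff it is $\id_C$, so the two classes coincide.

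For $(1) \Rightarrow (2)$, assume $F$ (equivalently $G$) is an equivalence. The pairing is classified by the bifunctor $(C, D) \mapsto \Hom_{\mathcal{D}}(FC, D)$; transporting the $\mathcal{D}$-variable along $F^{-1} \colon \mathcal{D} \iso \mathcal{C}$ identifies this with $\Hom_{\mathcal{C}}(-, -) \colon \mathcal{C}^{\op} \times \mathcal{C} \to \Gpd_{\infty}$, which classifies the twisted arrow fibration $\twop \mathcal{C} \to \mathcal{C} \times \mathcal{C}^{\op}$. Unstraightening then yields an equivalence $\mathcal{P} \iso \twop \mathcal{C}$ of fibrations over $\mathcal{C} \times \mathcal{D}^{\op} \simeq \mathcal{C} \times \mathcal{C}^{\op}$.

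The substantive step is $(3) \Rightarrow (1)$. The two representability conditions give, for each pair $(C, D)$, canonical equivalences of $\infty$-groupoids
\[ \Hom_{\mathcal{D}}(FC, D) \iso \mathcal{P}_{(C, D)} \iso \Hom_{\mathcal{C}}(C, GD), \]
whose composition is the $F \dashv G$ adjunction equivalence; this last compatibility is the small coherence that makes $F$ and $G$ into an honest adjoint pair in the first place. Under these identifications, $M \in \mathcal{P}_{(C, D)}$ is left universal iff the corresponding morphism $\alpha \colon FC \to D$ is an equivalence (for left universality says $M$ is terminal in the fiber $\mathcal{P}_C$, which, being classified by $\Hom_{\mathcal{D}}(FC, -)$, has terminal objects represented precisely by the equivalences out of $FC$). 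Dually, $M$ is right universal iff the corresponding $\beta \colon C \to GD$ is an equivalence, and $\alpha, \beta$ are mates, so $\beta = G(\alpha) \circ \eta_C$. Specializing to $D = FC$ and $\alpha = \id_{FC}$ (left universal, hence by (3) right universal), its mate $\eta_C \colon C \to GFC$ must be an equivalence. The dual specialization ($C = GD$, $\beta = \id_{GD}$) forces $\epsilon_D \colon FGD \to D$ to be an equivalence, so $F \dashv G$ is an adjoint equivalence.

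The main obstacle is the coherence step above: verifying that the two identifications of $\mathcal{P}_{(C, D)}$ one obtains from left and from right representability really are the two sides of the $F \dashv G$ adjunction, so that ``mate'' is the correct comparison between $\alpha$ and $\beta$. Once this is in place, each step reduces to the uniqueness (up to contractible choice) of terminal objects in an $\infty$-groupoid, which forces the required equivalences by pure universal-property reasoning.
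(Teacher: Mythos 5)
Your argument is correct. Note, however, that the paper does not prove this lemma at all: it is quoted verbatim from Lurie (\cite[Corollary 5.2.1.22]{Lur11}), so there is no in-paper proof to compare against; your proof is essentially the standard (and essentially Lurie's) mechanism. The key identifications you use are right: under left representability the fiber $\mathcal{P}_C$ is the (opposite of the) coslice under $FC$, so an object over $(C,D)$ is left universal precisely when the corresponding $\alpha\colon FC\to D$ is an equivalence, and dually for right universality; and since the adjunction $F\dashv G$ is \emph{defined} by the composite $\Hom_{\mathcal{D}}(FC,D)\iso \mathcal{P}_{(C,D)}\iso \Hom_{\mathcal{C}}(C,GD)$, the mate comparison in your step $(3)\Rightarrow(1)$ holds by construction, which settles the coherence point you flag. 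Two small touch-ups: in $(2)\Rightarrow(3)$ the left (resp.\ right) universal objects of $\twop\mathcal{C}$ over $C$ are those whose underlying arrow is an \emph{equivalence}, not literally $\id_C$ (only equivalent to it), though this does not affect the conclusion that the two classes coincide; and in $(1)\Rightarrow(2)$ one should say explicitly that ``isomorphic to the distinguished pairing'' is meant over the identification $\mathcal{D}\simeq\mathcal{C}$ furnished by $F$ (equivalently $G$), which is exactly what your transport along $F^{-1}$ provides.
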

If these conditions are satisfied  the pairing is called {\bf perfect}.

\section{Classical and Lurie's (co)bar} \label{CHAPTERABSTRACT}

This chapter, which forms the abstract heart of these lectures, discusses the definitions of the derived and classical bar and cobar constructions.
It contains an existence proof of classical cobar in full generality, and an existence proof of derived bar and cobar for monoidal $\infty$-categories. 

\subsection{The general classical and derived (co)bar}\label{SECTCOBAR}

In this section, we define the notion of {\em classical} bar and cobar constructions with cobar left adjoint (mainly for 1-(co)operads, but not necessarily) and the
{\em derived} bar and cobar constructions with cobar right adjoint. The latter are a generalization of Lurie's definition. 
We consider an arbitrary cofibration of $\infty$-operads 
 $\mathcal{C} \to \mathcal{S}$, classified by a functor of $\infty$-operads $\Xi: \mathcal{S} \to (\Cat_\infty, \times)$. 
 Denote by $\mathcal{C}^{\downarrow \uparrow}$ the cofibration given by composing $\Xi$ with the functor
$I \mapsto \twop I$ (twisted arrow category, cf.\@ Section~\ref{SECTTW}). 

Denote by $\mathcal{C}^{\vee} \to \mathcal{S}^{\op}$ the {\em fibration of cooperads} which is classified by the same functor $\Xi$. Then $(\mathcal{C}^\vee)^{\op} \to \mathcal{S}$ is the cofibration classified by composing $\Xi$ with the functor
$I \mapsto  I^{\op}$.

We thus get a diagram of cofibrations of operads over $\mathcal{S}$
\begin{equation}\label{eqcobar0} \vcenter{ \xymatrix{
& \mathcal{C}^{\downarrow \uparrow} \ar[ld]_{\rho_1} \ar[rd]^{\rho_2} \\
\mathcal{C} \ar[rd] & & (\mathcal{C}^\vee)^{\op} \ar[ld] \\
& \mathcal{S}
}} \end{equation}

Notice that $\mathcal{C}^{\downarrow \uparrow }$ is {\em not} the twisted arrow category $\twop \mathcal{C}$ of the operad $\mathcal{C}$ in the sense of Section~\ref{SECTTW} unless $\mathcal{S}$ is the terminal $\infty$-category. 

\begin{PAR}\label{PARCOBARSETTING}
For each small $\infty$-operad $I$ and $S \in \mathcal{S}^I$ this gives rise to a diagram
\begin{equation}\label{EQBARCOBAR} \vcenter{ \xymatrix{
& (\mathcal{C}^{\downarrow \uparrow })^I_S \ar[ld]_{\rho_1} \ar[rd]^{\rho_2} \\
\mathcal{C}^I_{S}  & & ((\mathcal{C}^\vee)^{I^{\op}}_{S^{\op}})^{\op}  \\
}}\end{equation}
The derived bar and cobar adjunction can be derived from the fact that that this
is a pairing of $\infty$-categories which is left and right representable. We will, however, proceed to give a more explicit definition of the
bar and cobar functors and then prove that (\ref{EQBARCOBAR}) is isomorphic to the left- and right representable pairing defined by their adjunction. 

In the special case $\mathcal{S} = I = \OOO$ and $S=\id$, the diagram (\ref{EQBARCOBAR}) is the one of Lurie \cite[Theorem 5.2.2.17]{Lur11}:
\begin{equation*}
\vcenter{ \xymatrix{
& \Alg(\twop \mathcal{C}) \ar[ld] \ar[rd] \\
\Alg(\mathcal{C})  & & \Coalg(\mathcal{C})^{\op} \\
}}\end{equation*}
because the cofibrations $\mathcal{C} \to \OOO$, $(\mathcal{C}^{\vee})^{\op} \to \OOO$, and $\mathcal{C}^{\downarrow \uparrow} \to \OOO$, are just monoidal $\infty$-categories in this case and their associated $\infty$-categories (forgetting the monoidal structure) are just $\mathcal{C}_{[1]}, (\mathcal{C}_{[1]})^{\op}$ and $\twop(\mathcal{C}_{[1]})$, respectively. The monoidal structure are, in all cases, the obvious induced ones, e.g.\@ for $\twop (\mathcal{C}_{[1]})$: $(X_1 \to Y_1) \otimes (X_2 \to Y_2) = (X_1 \otimes X_2 \to Y_1 \otimes Y_2)$. 
\end{PAR}

\begin{PAR}\label{PARCOBARSETTING2}
For $S \in \mathcal{S}^I$, there are two natural diagrams
\begin{equation}\label{EQBAR} 
\vcenter{ \xymatrix@C=1pc{
& (\mathcal{C}^{\vee})^{\twop I}_{\pi_2^* S^{\op}} \ar@{<-}[ld]_{\widetilde{\pi_1^*}} \ar@{<-}[rd]^{\pi_2^*}\\
\mathcal{C}^I_{S}  & & ((\mathcal{C}^\vee)^{I^{\op}}_{S^{\op}})^{\op}  \\
}}
 \quad 
 \vcenter{\xymatrix@C=1pc{
& \mathcal{C}^{\tw I}_{\Pi_2^* S} \ar@{<-}[ld]_{\Pi_2^*} \ar@{<-}[rd]^{\widetilde{\Pi_1^*}} \\
\mathcal{C}^I_{S}  & & ((\mathcal{C}^\vee)^{I^{\op}}_{S^{\op}})^{\op}   \\
}  }
\end{equation}
where $\Pi_2^*, \pi_2^*$ are of the form discussed in (\ref{PARFIBDER}) (precomposition) but $\widetilde{\Pi_1^*}$ and $\widetilde{\pi_1^*}$ are slightly twisted variants (see Definition~\ref{DEFPISTAR} below). 
See Section~\ref{SECTTW} for the definition of the twisted arrow (co)operads $\tw I$ and $\twop I$ for operads $I$. 
The diagrams (\ref{EQBAR}) are linked to the preceding (\ref{EQBARCOBAR}) by the following:
\end{PAR}

\begin{PROP}\label{PROPCOBAR}
The pairing (\ref{EQBARCOBAR}) is isomorphic to the pairing defined by the two (isomorphic) groupoid valued functors
\begin{align*}
(\mathcal{C}^I_{S})^{\op} \times (\mathcal{C}^\vee)^{I^{\op}}_{S^{\op}} &\to \Gpd_{\infty} \\
 (C, D) &\mapsto    \Hom_{(\mathcal{C}^\vee)^{\twop I}_{\pi_2^* S^{\op}}}(\widetilde{\pi_1^*}C, \pi_2^*D) \cong \Hom_{\mathcal{C}^{\tw I}_{\pi_2^* S}}(\Pi_2^*C, \widetilde{\Pi_1^*} D).
\end{align*}
\end{PROP}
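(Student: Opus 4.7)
The plan is to rewrite $(\mathcal{C}^{\downarrow \uparrow})^I_S$ via unstraightening and then apply Corollary~\ref{KORTW} to recognise the pairing~(\ref{EQBARCOBAR}) as a twisted arrow category in disguise.

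First, since $\mathcal{C}^{\downarrow \uparrow} \to \mathcal{S}$ is by construction the cofibration of operads classified by the composite $\twop \circ \Xi \colon \mathcal{S} \to (\Cat_\infty,\times)$, Proposition~\ref{PROPTRANSLATEOP} identifies sections of its pullback along $S \colon I \to \mathcal{S}$ with the inert-pseudo (op)lax limit of $\twop(\Xi \circ S)^\times$. The analogous identifications realise $\mathcal{C}^I_S$ and $(\mathcal{C}^\vee)^{I^{\op}}_{S^{\op}}$ as the corresponding (op)lax limits of $(\Xi \circ S)^\times$ and its dualised form. Applying Corollary~\ref{KORTW} with $G = \Xi \circ S$ then gives
\[
(\mathcal{C}^{\downarrow \uparrow})^I_S \;\cong\; \twop\bigl(\mathcal{C}^{\tw I}_{\Pi_2^*S}\bigr)^{(1,2),(2,1)\text{-}\pseudo},
\]
so that an object of $(\mathcal{C}^{\downarrow \uparrow})^I_S$ is identified with an arrow $X \to Y$ in $\mathcal{C}^{\tw I}_{\Pi_2^*S}$ whose source $X$ is a $2$-pseudo section and whose target $Y$ is a $1$-pseudo section.

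Next, the two pseudo subcategories must be matched with $\mathcal{C}^I_S$ and $(\mathcal{C}^\vee)^{I^{\op}}_{S^{\op}}$. By Remark~\ref{BEMOPTWISTLIM}, the $2$-pseudo sections of $\mathcal{C}^{\tw I}$ are exactly the pullbacks along $\Pi_2 \colon \tw I \to I$, so $\Pi_2^*$ yields an equivalence $\mathcal{C}^I_S \simeq (\mathcal{C}^{\tw I}_{\Pi_2^*S})^{2\text{-}\pseudo}$. Dually, the $1$-pseudo sections depend only on $\Pi_1 \colon \tw I \to I^{\op}$; the tilde in $\widetilde{\Pi_1^*}$ records precisely the variance swap between the cofibration $\mathcal{C}^{\tw I} \to \mathcal{S}$ and the fibration of cooperads $\mathcal{C}^\vee \to \mathcal{S}^{\op}$ (the two have the same fibres, while active morphisms point oppositely), and furnishes an equivalence $(\mathcal{C}^\vee)^{I^{\op}}_{S^{\op}} \simeq (\mathcal{C}^{\tw I}_{\Pi_2^*S})^{1\text{-}\pseudo}$. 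Under these two equivalences, the source/target projection of the twisted arrow category on the right matches $\rho_1 \times \rho_2$ from~(\ref{EQBARCOBAR}), and its fibre over $(C,D)$ becomes exactly $\Hom_{\mathcal{C}^{\tw I}_{\Pi_2^*S}}(\Pi_2^*C, \widetilde{\Pi_1^*}D)$, which is the second of the two claimed formulas.

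The first formula, involving $\twop I$ and $(\mathcal{C}^\vee)^{\twop I}$, is obtained by running the same argument through the fibration of cooperads $\mathcal{C}^\vee \to \mathcal{S}^{\op}$ rather than the cofibration $\mathcal{C} \to \mathcal{S}$, using the dual of Corollary~\ref{KORTW}. Since $\mathcal{C}$ and $\mathcal{C}^\vee$ have literally the same fibres, this produces the same pairing classified differently; equivalently, the two descriptions can be compared directly via the localisation $\twop I \to \tw I$ of Lemma~\ref{LEMMAOPTWIST} and the identification of $2$-pseudo- (resp.\@ $1$-pseudo-) sections on either side. The main obstacle is notational bookkeeping: keeping track of which (op)lax limit convention appears at each stage, verifying that the construction $\widetilde{\Pi_1^*}$ (introduced around~(\ref{eqbarintro})) really implements the required identification with $(\mathcal{C}^\vee)^{I^{\op}}_{S^{\op}}$, and propagating the inert-pseudo decorations through each identification. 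None of these is conceptually difficult once Corollary~\ref{KORTW} is in hand, but a faithful write-up will consume most of the proof's length.
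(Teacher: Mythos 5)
Your proposal is correct and follows essentially the same route as the paper's own proof: the identifications of \ref{PARTWIST} together with Corollary~\ref{KORTW} exhibit the pairing (\ref{EQBARCOBAR}) as the full subcategory of a twisted arrow category spanned by arrows whose source lies in the image of the algebra-side embedding and whose target lies in the image of the coalgebra-side embedding, so that the canonical twisted-arrow fibration has exactly the stated $\Hom$-groupoids as fibers, the other formula following by duality ($\mathcal{C} \to \mathcal{S}$ replaced by $(\mathcal{C}^{\vee})^{\op} \to \mathcal{S}$) --- you merely run the $\tw I$ (operadic) picture first where the paper runs the $\twop I$ (co-operadic) one. The only blemish is a harmless label swap in your bookkeeping: in $\mathcal{C}^{\tw I}_{\Pi_2^*S}$ the image of $\Pi_2^*$ consists of the \emph{1}-coCartesian sections while $\widetilde{\Pi_1^*}$ lands in the \emph{2}-coCartesian ones (cf.\@ \ref{PARTWIST} and Definition~\ref{DEFPISTAR}), not the other way around; this convention issue, which you yourself flag, does not affect the substance of the argument.
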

Before giving the proof, we have to discuss the precise definition of the functors $\widetilde{\pi_1^*}$ and $\widetilde{\Pi_1^*}$.

\begin{PAR}\label{PARTWIST}
To define the functors $\widetilde{\pi_1^*}$ and $\widetilde{\Pi_1^*}$ and to prove Proposition~\ref{PROPCOBAR} and Lemma~\ref{LEMMACOBAR}, first observe the following. 
Let $\mathcal{C} \to \mathcal{S}$ be a cofibration of $\infty$-operads which is classified by a functor of $\infty$-operads $\Xi: \mathcal{S} \to (\Cat_\infty, \times)$. We can associate with it a fibration of cooperads $\mathcal{C}^\vee \to  \mathcal{S}^{\op}$ which is classified by the same functor. It may be described more directly by the following procedure: 
Let $I$ be a small operad and consider the pull-back: 
\[ \xymatrix{ 
 \mathcal{C}^{\tw I, 2-\cocart}_{\pi_2^*\mathcal{S}} \ar[d] \ar[r] & \mathcal{C}^{\tw I, 2-\cocart} \ar[d] \\
 \mathcal{S}^{I} \ar[r]^{\pi_2^*} & \mathcal{S}^{\tw I}  \\
  } \]
  
Here $2-\cocart$ denotes the full subcategory of those functors mapping type-2 morphisms (Definition~\ref{DEFTYPEI}) to coCartesian ones. Then we have for the fibers 
\[ (\mathcal{C}^\vee)^{I^{\op}}_{S^{\op}} \cong \mathcal{C}^{\tw I, 2-\cocart}_{\pi_2^* S} \] 
   the identification being given by the composition:
  \[ \xymatrix{ (\mathcal{C}^\vee)_{S^{\op}}^{ I^{\op}} \ar[d]^{\sim} &  \mathcal{C}_{\pi_2^* S}^{ \tw  I, 2-\cocart} \ar[d]^{\sim} \\
   \Hom^{\oplax, \inert-\pseudo}_{(\Cat^{\times}_{\infty})^{I}}(\cdot, \Xi) \ar[r]^-{\sim}  & \Hom^{\lax,\inert-\pseudo,2-\pseudo}_{(\Cat_{\infty}^{\times})^{\tw I}}(\cdot, \pi_2^*\Xi) 
   }  \]
where the bottom equivalence is Proposition~\ref{PROPOPTWIST}.
\end{PAR}

\begin{DEF}\label{DEFPISTAR} \label{DEFCOBARCLASS} 
Using the discussion in \ref{PARTWIST}, the functor $\barconst:=\widetilde{\pi^*_1}$
\[ \xymatrix{  \mathcal{C}_{S}^{I} \ar[r]^-{\sim} &  (\mathcal{C}^\vee)_{\pi_2^* S^{\op}}^{ \twop I, 2-\cart} \ar@{^{(}->}[r] &  (\mathcal{C}^\vee)_{\pi_2^* S^{\op}}^{ \twop  I}  }\]
is called the {\bf classical bar construction}, and a left adjoint (if it exists) will be called the {\bf classical cobar construction}.
Dually, the functor $\barconst^{\vee}:=\widetilde{\Pi^*_1}$ is the composition
\[ \xymatrix{ ( \mathcal{C}^\vee)_{S^{\op}}^{ I^{\op}} \ar[r]^-{\sim} &  \mathcal{C}_{\pi_2^* S}^{ \tw I, 2-\cocart} \ar@{^{(}->}[r] &  \mathcal{C}_{\pi_2^* S}^{ \tw I}  }\]
is called the {\bf dual classical bar construction}, and a right adjoint (if it exists) will be called the {\bf dual classical cobar construction}.
\end{DEF} 

The names are justified by the discussion in Chapter~\ref{CHAPTERBASICEXAMPLES} where we will see that these functors are closely related (up to composition with total decalage, and its right adjoint, respectively) to the classical bar and cobar constructions of 
Kan for simplicial groups/sets and Eilenberg-MacLane and Adams for dg-(co)algebras. 

It would be equally reasonable to discuss the functor $\widetilde{\Pi_2^*}$ (dual bar construction) and its (potential) {\em right} adjoint, but this was rarely done classically. As an illustation, however, section~\ref{SECTCOFREECOALG} uses this dual cobar construction to construct the cofree coalgebra. 
Any statement concerning them can abstractly be obtained by 
replacing $\mathcal{C} \to \mathcal{S}$ with $(\mathcal{C}^{\vee})^{\op} \to \mathcal{S}$. This interchanges Lurie's bar and cobar constructions but {\em not} the classicial bar and cobar constructions!

\begin{KOR}\label{KORCOBAR}
The following are equivalent: 
\begin{enumerate}
\item The pairing (\ref{EQBARCOBAR}) is left representable;
\item $\widetilde{\Pi_1^*}$ has a (partial) left adjoint defined on the essential image of $\Pi_2^*$;
\item $\pi_2^*$ has a (partial) left adjoint defined on the essential image of $\widetilde{\pi_1^*}$.
\end{enumerate}
and dually:
\begin{enumerate}
\item The pairing (\ref{EQBARCOBAR}) is right representable;
\item $\widetilde{\pi_1^*}$ has a (partial)  right adjoint defined on the essential image of $\pi_2^*$;
\item $\Pi_2^*$ has a (partial)  right adjoint  defined on the essential image of $\widetilde{\Pi_1^*}$.
\end{enumerate}
\end{KOR}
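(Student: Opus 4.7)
The plan is to derive the corollary as a direct consequence of the representability identification in Proposition~\ref{PROPCOBAR}. The key observation is that a pairing $\mathcal{P} \to \mathcal{C}\times\mathcal{D}^{\op}$ is left representable precisely when, for each $C\in \mathcal{C}$, the right fibration $\mathcal{P}_C \to \mathcal{D}^{\op}$ classifying the functor $D\mapsto \mathcal{P}_{C,D}\in \Gpd_{\infty}$ admits a terminal object; equivalently, the functor $\mathcal{P}_{C,-}\colon \mathcal{D}\to \Gpd_{\infty}$ is corepresentable. Dually, right representability amounts to corepresentability of $\mathcal{P}_{-,D}^{\op}\colon \mathcal{C}^{\op}\to \Gpd_{\infty}$ for each $D$.

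Step one: unpack the first identification in Proposition~\ref{PROPCOBAR}, namely $\mathcal{P}_{C,D}\cong \Hom_{(\mathcal{C}^\vee)^{\twop I}_{\pi_2^* S^{\op}}}(\widetilde{\pi_1^*}C,\ \pi_2^*D)$. The functor $D\mapsto \Hom(\widetilde{\pi_1^*}C,\pi_2^*D)$ is corepresentable in $(\mathcal{C}^\vee)^{I^{\op}}_{S^{\op}}$ if and only if the value of a partial left adjoint of $\pi_2^*$ exists at the object $\widetilde{\pi_1^*}C$. Letting $C$ range over $\mathcal{C}^I_S$ this yields exactly condition (3): a partial left adjoint of $\pi_2^*$ defined on the essential image of $\widetilde{\pi_1^*}$. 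This establishes (1)$\Leftrightarrow$(3).

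Step two: repeat the argument with the second identification $\mathcal{P}_{C,D}\cong \Hom_{\mathcal{C}^{\tw I}_{\pi_2^* S}}(\Pi_2^*C,\ \widetilde{\Pi_1^*} D)$. Corepresentability of $D\mapsto \Hom(\Pi_2^*C,\widetilde{\Pi_1^*}D)$ for every $C$ is exactly the existence of a partial left adjoint of $\widetilde{\Pi_1^*}$ on the essential image of $\Pi_2^*$, yielding (1)$\Leftrightarrow$(2). The equivalence (2)$\Leftrightarrow$(3) is then automatic, but also follows directly from the natural isomorphism of the two $\Hom$-functors furnished by Proposition~\ref{PROPCOBAR}: the two partial adjoints represent the same groupoid-valued functor, hence agree (when either exists) via a canonical equivalence.

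Step three: the dual half of the corollary is obtained by the symmetric argument. Right representability of the pairing means that for every $D$, the functor $C\mapsto \mathcal{P}_{C,D}^{\op}$ on $\mathcal{C}^{\op}$ is corepresentable, equivalently that $C\mapsto \Hom(\widetilde{\pi_1^*}C,\pi_2^*D)$ admits a right adjoint value at each $\pi_2^*D$, and similarly for the second formula. This translates, exactly as above, into the existence of partial right adjoints of $\widetilde{\pi_1^*}$ and $\Pi_2^*$ on the indicated essential images. No additional work is needed beyond Proposition~\ref{PROPCOBAR} and the general dictionary between representability of a bivariate $\Hom$-functor and the existence of a (pointwise) adjoint; there is no genuine obstacle, since the corollary is purely a restatement.
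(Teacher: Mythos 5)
Your proposal is correct and follows the same route the paper intends: the corollary is read off directly from Proposition~\ref{PROPCOBAR} together with the standard dictionary between left/right representability of a pairing and pointwise (co)representability of the classifying $\Hom$-functors, which is exactly why the paper states it as an immediate corollary without separate proof. Your identification of which partial adjoint corresponds to which of the two isomorphic $\Hom$-formulas (left adjoint of $\pi_2^*$ at $\widetilde{\pi_1^*}C$, left adjoint of $\widetilde{\Pi_1^*}$ at $\Pi_2^*C$, and dually for right adjoints) matches the statement precisely.
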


 \begin{DEF}\label{DEFCOBARLURIE}
We call the composition 
\[ \barlurie := \pi_{2,!}^{(S^{\op})}\ \widetilde{\pi_1^*}  \]
the {\bf derived (Lurie) bar construction} if $\pi_{2,!}^{(S^{\op})}$ (relative left Kan extension, cf.\@ \ref{DEFKAN}) exists on the image of $\widetilde{\pi_1^*}$.
We call the composition 

\[ \cobarlurie := \Pi_{2,*}^{(S)}\ \widetilde{\Pi_1^*}  \]
the {\bf derived (Lurie) cobar construction} if $\Pi_{2,*}^{(S)}$ (relative right Kan extension, cf.\@ \ref{DEFKAN}) exists on the image of $\widetilde{\Pi_1^*}$.
\end{DEF}
It is clear from Corollary~\ref{KORCOBAR} and \cite[Theorem 5.2.2.17]{Lur11} that for $I=\mathcal{S}=\OOO$ these are precisely the (co)bar constructions defined by Lurie in \cite[Definition 5.2.2.1]{Lur11}.

\begin{BEM}
The above discussion already makes sense, if $\mathcal{S}$ is a usual $\infty$-category --- considered as a trivial $\infty$-operad with only 1-ary morphism spaces --- or even if $\mathcal{S} = \cdot$ is the terminal category. In the latter case  $\mathcal{C}$ is just an $\infty$-category and $I$ a diagram (small $\infty$-category).  Then the discussion boils down to the following statement: The diagram
\begin{equation} \label{EQPAIRDIA} \vcenter{ \xymatrix{
& (\twop \mathcal{C})^I \ar[ld] \ar[rd] \\
\mathcal{C}^I  & & (\mathcal{C}^{\op})^I  \\
}} \end{equation}
defines a pairing classified by 
\[ (C, D) \mapsto \Hom_{\mathcal{C}^{\tw I}}(\pi_2^*C, \pi_1^* D)  \cong   \Hom_{\mathcal{C}^{\twop I}}(\pi_1^*C, \pi_2^*D).   \]
Thus this pairing is always left and right representable, i.e.\@ the derived bar and cobar constructions exist, if $\pi_2^*$ or $\pi_1^*$ has a right adjoint resp.\@ left adjoint, i.e.\@ if $\mathcal{C}$ has the relevant limits and colimits. 
\end{BEM}
\begin{EX}
For $I=\lefthalfcap$ the pairing (\ref{EQPAIRDIA}) is perfect if and only if $\mathcal{C}$ is stable. 
\end{EX}

\begin{proof}[Proof of Proposition~\ref{PROPCOBAR}] 
By the discussion in \ref{PARTWIST}, and Corollary~\ref{KORTW}, the diagram~(\ref{EQBARCOBAR}) is identified with  
 \[\xymatrix{  &  \twop ((\mathcal{C}^{\vee})_{\pi_2^* S^{\op}}^{\twop I})^{(1,2),(2,1)-\cart}  \ar[rd]^{\rho_2} \ar[ld]_{\rho_1} \\
 (\mathcal{C}^{\vee})_{\pi_2^* S^{\op}}^{\twop I, 2-\cart} & &  (\mathcal{C}_{\pi_2^* S^{\op}}^{\twop I, 1-\cart})^{\op}
 } \]
 where the functors are induced by the projections $\rho_1$ and $\rho_2$, and where 
 \[ \twop (\mathcal{C}_{\pi_2^* S^{\op}}^{\twop I})^{(1,2),(2,1)-\cart} \subseteq  \twop (\mathcal{C}_{\pi_2^* S}^{\tw I}) \] is precisely the full-subcategory of the objects that map via $\rho_1$ and $\rho_2$ to the
 full subcategories in the diagram.  One sees immediately that $\rho_1 \times \rho_2$ is, as full subcategory of the twisted arrow category, a fibration with fiber over $(\mathcal{E}, \mathcal{F})$ in $\twop (\mathcal{C}_{\pi_2^* S^{\op}}^{\twop I})$ being the groupoid 
  \[ \Hom_{\mathcal{C}_{\pi_2^* S^{\op}}^{\twop I}}(\mathcal{E},  \pi_2^* \mathcal{F}) \]
 where  $\mathcal{E}$ is considered via $\widetilde{\pi_1^*}$ as an object in $\mathcal{C}^{I}_{S} \cong \mathcal{C}_{\pi_2^* S^{\op}}^{\twop I, 2-\cart}$ and $\mathcal{F}$ an object in $(\mathcal{C}^{\vee})_{S^{\op}}^{I^{\op}}$.
 The other statement is obtained dually by replacing $\mathcal{C} \to \mathcal{S}$ by $(\mathcal{C}^{\vee})^{\op} \to \mathcal{S}$.
\end{proof}

\subsection{The derived (co)bar for (co)algebras}

In this section we discuss the derived (co)bar for plain (co)algebras i.e.\@ the case $I=\mathcal{S}=\OOO$. In particular, we establish easy criteria for the derived (Lurie's) (co)bar functors to exist. 
This gives an alternative (to \cite{Lur11}) approach to the proof. For some statements concerning basic 1-operads appearing, which are purely combinatorial, we refer to the subsequent Chapter~\ref{CHAPTERNONAB} where these fact are discussed thoroughly. 

Hence let $I = \mathcal{S} = \OOO$ be the associative planar operad and $S=\id$. In this case, a cofibration $\mathcal{C} \to \OOO$ is the same thing as a monoidal $\infty$-category $\mathcal{C}$ and  
\[ \mathcal{C}^I_S \cong \Alg(\mathcal{C})  \qquad  (\mathcal{C}^\vee)^{I^{\op}}_{S^{\op}} \cong \Coalg(\mathcal{C}) \]

The following is clear by construction (cf.\@ also \ref{PARNECKLACE}): 

\begin{LEMMA}\label{LEMMAOOOTW}
There is a canonical isomorphism of 1-cooperads (cf.\@ \ref{PARSIMPLEX})
\[ \twop \OOO = (\Delta_{\act}^{\op}, \ast')^{\vee} \]
where $\ast'$ is concatenation, identifying the extremal points. 
\end{LEMMA}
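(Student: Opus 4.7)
The plan is to unwind Definition~\ref{DEFTW} fiber by fiber over $\OOO^{\op}=\Delta$ and compare the result with the explicit structure of $(\Delta_{\act}^{\op},\ast')^{\vee}$. Since $\OOO=\Delta^{\op}$ and ``active in $\OOO$'' is by definition the opposite of ``endpoint-preserving in $\Delta$'', one has $\OOO_{\act}=(\Delta_{\act})^{\op}$. Applying Definition~\ref{DEFTW} with $\Xi={\downarrow}{\uparrow}$ gives
\[
\twop\OOO \;=\; \Bigl(\int_{[n]\in\OOO}\bigl(\OOO_{\act}\times_{/\OOO_{\act}}[n]\bigr)^{\op}\Bigr)^{\op},
\]
so after taking the outer opposite the fiber of $\twop\OOO\to\OOO^{\op}$ over $[n]$ is the slice category $(\Delta_{\act}^{\op})_{/[n]}$. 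Its objects are morphisms $[k]\to[n]$ in $\Delta_{\act}^{\op}$, equivalently endpoint-preserving maps $\alpha\colon[n]\to[k]$ in $\Delta$, and a morphism from $\alpha$ to $\beta\colon[n]\to[k']$ is an endpoint-preserving $\gamma\colon[k']\to[k]$ with $\gamma\beta=\alpha$.

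The combinatorial heart of the proof is the bijection
\[
\bigl\{\alpha\colon[n]\to[k]\ \text{endpoint-preserving}\bigr\}\ \longleftrightarrow\ (\Delta_{\act}^{\op})^n,\qquad \alpha\longmapsto\bigl([k_1],\dots,[k_n]\bigr),
\]
with $k_i:=\alpha(i)-\alpha(i-1)$, and inverse $([k_1],\dots,[k_n])\mapsto\alpha$ given by $\alpha(i):=k_1+\cdots+k_i$. On morphisms: any endpoint-preserving $\gamma\colon[k']\to[k]$ with $\gamma\beta=\alpha$ sends the sub-interval $[\beta(i-1),\beta(i)]\subseteq[k']$ into $[\alpha(i-1),\alpha(i)]\subseteq[k]$ (by monotonicity and $\gamma(\beta(j))=\alpha(j)$), so it restricts to an endpoint-preserving map $\gamma_i\colon[k'_i]\to[k_i]$; conversely any such $n$-tuple glues to a unique $\gamma$ with the required property. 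This upgrades the above bijection to an isomorphism of categories $(\twop\OOO)_{[n]}\iso(\Delta_{\act}^{\op})^n$, matching the fibers of $(\Delta_{\act}^{\op},\ast')^{\vee}$.

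It remains to check that this fiber-wise identification is compatible with the cooperad structure. By the lemma following Definition~\ref{DEFTW}, $\twop\OOO\to\OOO^{\op}$ is a fibration of cooperads, hence classified by a functor of operads $\OOO\to\Cat^{\times}$; this functor is pinned down by its values on the inert morphisms $\iota_i\colon[n]\to[1]$ (the $i$-th edge) and on the unique active morphism $[n]\to[1]$ in $\OOO$. Applying the functoriality recipe of Definition~\ref{DEFTW} --- unique (inert,active) factorization in $\OOO$ --- one computes directly: through $\iota_i$, the object $([k],\alpha)$ passes to $([k_i],\alpha')$ with $\alpha'$ the unique active $[k_i]\to[1]$, i.e.\@ to the projection onto the $i$-th component of $([k_1],\dots,[k_n])$; through the active $[n]\to[1]$, the object $([k],\alpha)$ passes to itself viewed over $[1]$, yielding $[k]=[k_1+\cdots+k_n]=[k_1]\ast'\cdots\ast'[k_n]$. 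These are precisely the projections and the product $\ast'$ classifying $(\Delta_{\act}^{\op},\ast')^{\vee}$, and the fiber-wise identifications thus assemble into the asserted isomorphism of 1-cooperads. I expect no real mathematical obstacle beyond the careful tracking of the various opposites involved --- which is exactly what makes the identification ``canonical''.
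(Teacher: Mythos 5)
Your proof is correct, and it follows exactly the route the paper intends: the paper gives no argument beyond ``clear by construction'' with a pointer to \ref{PARNECKLACE}, and your unwinding of Definition~\ref{DEFTW} --- identifying the fiber over $[n]$ with $(\Delta_{\act}^{\op})_{/[n]}\cong(\Delta_{\act}^{\op})^n$ via the decomposition $[k]=[k_1]\ast'\cdots\ast'[k_n]$ and checking the inert/active push-forwards --- is precisely the explicit description recorded there. So this is the same approach, just written out in full.
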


The bar diagram (\ref{EQBAR}, RHS) has the following form in this case:
\[  \xymatrix{. & (\mathcal{C}^{\vee})^{(\Delta_{\act}^{\op},*')^{\op}} \ar@{<-}[ld]_{\widetilde{\pi_1^*}} \ar@{<-}[rd]^{\pi_2^*}    \\
\Alg(\mathcal{C}) & & \Coalg(\mathcal{C}^{\vee})
}  \]

If the fibers of $\mathcal{C} \to \mathcal{S}$ have finite limits and $\otimes$ commutes with them,  we have
\[ (\mathcal{C}^{\vee})^{(\Delta_{\act}^{\op},*')^{\op}} \cong D((\Delta_{\act}, \ast')^{\op}, \mathcal{C}^{\vee})^{\OOO^{\op}} = \Coalg(\mathcal{C}^{\Delta_{\act}^{\op}}).  \]
where $D((\Delta_{\act}, \ast')^{\op}, \mathcal{C}^{\vee})$ is the Day convolution coalgebra (Proposition~\ref{PROPDAY}) which, in this case is  $\mathcal{C}^{\Delta_{\act}^{\op}}$ equipped with the new monoidal product 
\[ (A,B) \mapsto (\ast')_* A \boxtimes B \]
considered as cooperad
(cf.\@ Proposition~\ref{PROPDAYCOFIB}). 

$\widetilde{\pi_1^*}$ is  a functor which maps an algebra $A$ to a coalgebra in $\mathcal{C}^{\Delta_{\act}^{\op}}$ of the form (not depicting all but one degeneracy):
\[ \xymatrix{ \cdots  \ar@<4pt>[r]  \ar[r] \ar@<-4pt>[r] &A \otimes A\otimes A \ar@<2pt>[r]  \ar@<-2pt>[r] & A \otimes A \ar[r] & A  & \ar@{-->}[l] 1 } \]
in which the non-degenerate morphisms are given by the multiplication in $A$ and the degeneracies by inserting units. 

By applying Proposition~\ref{PROPKAN}, $\pi_2^*$ has a left adjoint, i.e.\@ a relative (operadic) left Kan existension $\pi_{2,!}^{(S^{\op})}$ as soon as $\mathcal{C}$ is $L$-admissible. 
However, we do not want to assume the commutation of $\otimes$ with any colimits, and this is also too general, because  we did not discuss augmentations so far\footnote{We ignore whether it would make sense to consider this in the absence of augmentations. It amounts probably to applying the augmented construction that we are about to discuss to a freely (co)augmented (co)algebra.}.
Notice that it is not reasonable to expect that $\pi_2^*$ has a relative left Kan existension $\pi_{2,!}^{(S^{\op})}$ that is computed fiber-wise (i.e.\@ here compatible with the
forgetful functors forgetting the coalgebra structure, i.e.\@ given as the colimit of the underlying diagram of shape $\Delta_{\act}^{\op}$). By Corollary~\ref{KOREXISTFIBERWISERELKANEXT} (cf.\@ also Remark~\ref{REMKANEXT}), a sufficient criterion would be that $\ast': \Delta_{\act}^{\op} \times \Delta_{\act}^{\op} \to \Delta_{\act}^{\op}$ is $\infty$-cofinal, which is not true. Notice also that the colimit of the underlying diagram of shape $\Delta_{\act}^{\op}$ would be just evaluation at $[1]$ because that is a final object in $\Delta_{\act}^{\op}$, and $A$ does not carry a coalgebra structure in general. 

\begin{PAR}\label{PARAUG}For the remaining part of the section we assume that $\mathcal{C}$ is a monoidal $\infty$-category admitting geometric realizations, and that {\em in $\mathcal{C}$ the unit $1$ is a final object}. Notice: This can always be achieved by considering augmented objects in $\mathcal{C}$ (cf.\@ \cite[5.2.3.9]{Lur11}). We do {\em not} assume any compatibility of $\otimes$ with geometric realizations (no $L$-admissibility). 
\end{PAR}

\begin{SATZ}\label{THEOREMLURIE}
\begin{enumerate}
\item
Under assumption \ref{PARAUG}
\[ \rho^*: (\mathcal{C}^{\vee})^{(\Delta, \ast)^{\op}} \to (\mathcal{C}^{\vee})^{(\Delta_{\act}, \ast')^{\op}} \]
(cf.\@ \ref{PARRHOSTAR}) is an isomorphism.
\item
Furthermore, a fiber-wise relative left Kan extension
\[ \colim_{\Delta^{\op}}:  (\mathcal{C}^{\vee})^{(\Delta, \ast)^{\op}} \to  \Coalg(\mathcal{C}) \]
exists, and hence Lurie's bar construction $\barlurie = \colim_{\Delta^{\op}} \circ (\rho^*)^{-1} \circ \widetilde{\pi_1^*}$ exists. 
\end{enumerate}
\end{SATZ}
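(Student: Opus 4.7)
The plan is to treat the two parts sequentially, since (1) provides the identification that makes (2) amount to a direct application of the Kan extension machinery of Chapter~2.

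For (1), the morphism $\rho$ is built from the inclusion $\Delta_{\act} \hookrightarrow \Delta$ together with the canonical transformation $s_{\can}: \ast \Rightarrow \ast'$ from (\ref{eqcan}), so $\rho^*$ both restricts a $(\Delta,\ast)^{\op}$-coalgebra to active morphisms and twists its comultiplication along $s_{\can}$. I would construct the inverse directly, exploiting the finality of the unit $1$. Given $Y \in (\mathcal{C}^{\vee})^{(\Delta_{\act}, \ast')^{\op}}$, extend to $\Delta^{\op}$ by setting $Y([-1]) := 1$; for an inert inclusion $\iota: [k] \hookrightarrow [n]$ corresponding to a factorization $[n] = [a] \ast' [k] \ast' [b]$, define $Y(\iota): Y([n]) \to Y([k])$ as the $\ast'$-comultiplication followed by the unique maps $Y([a]), Y([b]) \to 1$. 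The $\ast$-comultiplication $Y([n+m+1]) \to Y([n]) \otimes Y([m])$ is then obtained by first comultiplying against the decomposition $[n+m+1] = [n] \ast' [0] \ast' [m]$ and then collapsing the central $Y([0])$-factor to $1$. Functoriality and invertibility of the assignment $Y \mapsto X$ follow from the uniqueness of maps to $1$. A more abstract route, preferable if coherence bookkeeping gets heavy, is to describe both sides as partial oplax limits over $\twop$-categories via Proposition~\ref{PROPOPTWIST} and identify $\rho^*$ with a localization which becomes an equivalence precisely under the finality assumption on $1$.

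For (2), I apply Corollary~\ref{KOREXISTFIBERWISERELKANEXT} to $\pi_2: (\Delta, \ast)^{\op} \to \OOO^{\op}$ (a cofibration of cooperads) together with $\mathcal{C}^{\vee} \to \OOO^{\op}$ (a fibration of cooperads). The relevant entry of the table in Remark~\ref{REMKANEXT}(1) is labelled ``always'', so the only requirement is that the fibers of $\mathcal{C}^{\vee}$ admit the needed colimits---and the fiber of $(\Delta, \ast)^{\op}$ over $[1]$ is exactly $\Delta^{\op}$, whose colimits are geometric realizations, which exist by assumption~\ref{PARAUG}. Thus $\pi_{2,!}^{(S^{\op})}$ exists and is computed fiber-wise, and on underlying objects becomes $\colim_{\Delta^{\op}}$. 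Crucially, the induced coalgebra structure on this realization needs no compatibility of $\otimes$ with colimits---it is supplied automatically by the fibration property of $\mathcal{C}^{\vee}$ over $\OOO^{\op}$. This is the precise benefit of passing through the equivalence $(\rho^*)^{-1}$ rather than attempting the Kan extension directly on $(\Delta_{\act}, \ast')^{\op}$, whose fibers $\Delta_{\act}^{\op}$ are not cofinal.

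The main obstacle is Part (1): while the outline above is straightforward in spirit, verifying that the extension $Y \mapsto X$ is coherent as a morphism of cooperads---i.e.\ that all iterated comultiplications, associativity squares, and higher degeneracies are pinned down uniquely by the finality of $1$---requires careful combinatorial bookkeeping with the unique active--inert factorization in $\Delta$, or alternatively a clean abstract identification of the class of morphisms at which $\rho^*$ is the localization.
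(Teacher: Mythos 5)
Your treatment of (2) has a genuine gap. The projection $(\Delta,\ast)^{\op} \to \OOO^{\op}$ is \emph{not} a cofibration of cooperads: its structure pro-functors over the $2$-ary active morphisms are induced by $\dec=\ast\colon \Delta^{\op}\times\Delta^{\op}\to\Delta^{\op}$, which has no left adjoint, and the $0$-ary ones are not representable at all since the would-be unit $\emptyset$ is missing; it is only an $\infty$-exponential fibration of cooperads that is ``almost a fibration'' (Lemma~\ref{LEMMAIOTACART}). Consequently the ``always'' entry of the table in Remark~\ref{REMKANEXT}(1) does not apply, and the condition you must verify is not vacuous: to invoke Corollary~\ref{KOREXISTFIBERWISERELKANEXT} one needs the projection to be an $\infty$-coCartesian morphism of exponential fibrations, which amounts exactly to the $\infty$-cofinality of $\dec=\ast\colon\Delta^{\op}\times\Delta^{\op}\to\Delta^{\op}$ (Lemma~\ref{LEMMAEXACT3}). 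This is the technical heart of the statement: it is precisely what fails for $\ast'$ on $(\Delta_{\act},\ast')^{\op}$ (which \emph{is} a monoidal cooperad, yet admits no fiber-wise Kan extension because $\ast'$ is not $\infty$-cofinal), and it is the only reason that passing through $(\rho^*)^{-1}$ helps at all. Your remark that the compatibility of the coalgebra structure with the realization ``is supplied automatically by the fibration property of $\mathcal{C}^{\vee}$'' covers only half of the hypotheses (the automatic $L$-admissibility of a fibration with the relevant colimits, Lemma~\ref{LEMMALR}); without the cofinality of $\dec$, the claim that the relative left Kan extension exists \emph{and is computed fiber-wise} is unjustified.

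Part (1) follows essentially the paper's route (Lemma~\ref{LEMMARHO} plus finality of the unit, i.e.\ Corollary~\ref{KORRHO}), but your formulas contain an indexing slip: $[n]\ast'[0]\ast'[m]=[n+m]$, not $[n+m+1]$; the $\ast$-comultiplication is recovered from the decomposition $[n+m+1]=[n]\ast'[1]\ast'[m]$ by collapsing the middle factor $Y([1])\to 1$ (this map is exactly the adjoined $0$-ary morphism $\rho_1$ of Lemma~\ref{LEMMARHO}), and there is no object $[-1]$ to adjoin --- $\rho$ is a bijection on objects, so only the inert/$0$-ary data must be supplied. Your acknowledgement that the $\infty$-categorical coherence of this extension is the delicate point is fair; note that the paper itself defers that verification in the proof of Corollary~\ref{KORRHO}.
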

\begin{proof}
1.\@ is Corollary~\ref{KORRHO}.
2.\@ follows from Corollary~\ref{KOREXISTFIBERWISERELKANEXT}, 
because a) $\mathcal{C}^{\vee} \to \mathcal{S}^{\op}$ is, as fibration of cooperads, automatically $L$-admissible (and geometric realization is the only colimit needed)
 and b) by Lemma~\ref{LEMMAIOTACART}, $(\Delta, \ast)^{\op} \to \OOO^{\op}$ is an $\infty$-exponential fibration of cooperads (it is almost monoidal w.r.t.\@ the product $\ast$, i.e.\@ a fibration of cooperads, but lacks counits) and the morphism is $\infty$-coCartesian (this amount to $\dec = \ast: \Delta^{\op} \times \Delta^{\op} \to \Delta^{\op}$ being $\infty$-cofinal). 
\end{proof}

\begin{KOR}[{Lurie~\cite[Theorem~5.2.2.17]{Lur11}}] \label{KORLURIE}
If $\mathcal{C}$ is a monoidal $\infty$-category such that 1 is final and initial and which admits geometric realizations, and such that $\mathcal{C}^{\op}$ admits geometric realizations, then Lurie's bar and cobar constructions exist and form an adjunction
\[ \xymatrix{ \Alg(\mathcal{C}) \ar@<3pt>[rr]^{\barlurie} & & \ar@<3pt>[ll]^{\cobarlurie} \Coalg(\mathcal{C}) } \]
with $\barlurie$ left adjoint. 
\end{KOR}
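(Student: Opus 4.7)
The plan is to deduce the corollary from Theorem~\ref{THEOREMLURIE} by applying it twice --- once directly to $\mathcal{C}$, and once to a suitably dualized setup --- and then assembling the resulting adjunction via Proposition~\ref{PROPCOBAR}.

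First I would apply Theorem~\ref{THEOREMLURIE} directly to $\mathcal{C}$. The hypothesis ``$1$ is final in $\mathcal{C}$'' is exactly assumption~\ref{PARAUG}, and the existence of geometric realizations in $\mathcal{C}$ is also given. Hence part (2) of Theorem~\ref{THEOREMLURIE} produces the bar construction
\[ \barlurie = \colim_{\Delta^{\op}} \circ (\rho^*)^{-1} \circ \widetilde{\pi_1^*}. \]
In view of Definition~\ref{DEFCOBARLURIE} and Corollary~\ref{KORCOBAR}, this already shows that the pairing (\ref{EQBARCOBAR}) is right representable.

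Next I would obtain $\cobarlurie$ by the dual construction. The observation in Definition~\ref{DEFCOBARCLASS} is that replacing $\mathcal{C} \to \mathcal{S}$ by $(\mathcal{C}^\vee)^{\op} \to \mathcal{S}$ interchanges Lurie's bar and cobar constructions (it exchanges $\widetilde{\pi_1^*}$ with $\widetilde{\Pi_1^*}$ and $\pi_2^*$ with $\Pi_2^*$). Now $(\mathcal{C}^\vee)^{\op}$, as a monoidal $\infty$-category, has the \emph{same} unit as $\mathcal{C}$ but with opposite underlying $\infty$-category; its unit is final precisely when $1$ is initial in $\mathcal{C}$, which is part of our hypothesis. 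Moreover $(\mathcal{C}^\vee)^{\op}$ admits geometric realizations iff $\mathcal{C}^{\op}$ does, which is also assumed. Applying Theorem~\ref{THEOREMLURIE} to this dualized setup therefore produces the derived bar construction \emph{for} $(\mathcal{C}^{\vee})^{\op}$, which by the duality just explained is exactly $\cobarlurie$ for $\mathcal{C}$. Equivalently, this gives a left adjoint to $\Pi_2^*$ on the essential image of $\widetilde{\Pi_1^*}$, so by Corollary~\ref{KORCOBAR} the pairing (\ref{EQBARCOBAR}) is also left representable.

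Finally, Proposition~\ref{PROPCOBAR} identifies the diagram (\ref{EQBARCOBAR}) with the pairing classified by
\[ (C, D) \mapsto \Hom_{(\mathcal{C}^\vee)^{\twop \OOO}}(\widetilde{\pi_1^*} C, \pi_2^* D) \cong \Hom_{\mathcal{C}^{\tw \OOO}}(\Pi_2^* C, \widetilde{\Pi_1^*} D). \]
Since the pairing is now both left and right representable, the general formalism of pairings (Definition~\ref{DEFPAIRING} and the discussion following it) yields that the two representing functors $\barlurie \colon \Alg(\mathcal{C}) \to \Coalg(\mathcal{C})$ and $\cobarlurie \colon \Coalg(\mathcal{C}) \to \Alg(\mathcal{C})$ are adjoint with $\barlurie$ left adjoint, which is the content of the corollary.

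The only genuinely delicate point is making sure the ``dualization'' $\mathcal{C} \rightsquigarrow (\mathcal{C}^\vee)^{\op}$ really does swap the roles of $\widetilde{\pi_1^*}$ with $\widetilde{\Pi_1^*}$ and of $\pi_2^*$ with $\Pi_2^*$ (and hence bar with cobar); this is what is asserted parenthetically in Definition~\ref{DEFCOBARCLASS} and follows formally from the symmetry of the two diagrams in (\ref{EQBAR}) under the passage $\mathcal{C} \leftrightarrow (\mathcal{C}^\vee)^{\op}$, which leaves $\mathcal{C}^{\downarrow\uparrow}$ unchanged up to involution of $\twop$. Once this symmetry is in hand, the dual of Theorem~\ref{THEOREMLURIE} is automatic rather than a separate argument.
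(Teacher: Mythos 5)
Your proposal is correct and is essentially the paper's own (implicit) argument: the corollary is deduced from Theorem~\ref{THEOREMLURIE} applied once to $\mathcal{C}$ (using $1$ final and geometric realizations) and once to the dualized cofibration $(\mathcal{C}^{\vee})^{\op}$ (using $1$ initial and geometric realizations in $\mathcal{C}^{\op}$), with Proposition~\ref{PROPCOBAR}/Corollary~\ref{KORCOBAR} supplying the left- and right-representability of the pairing and hence the adjunction. The duality observation you flag as the delicate point is exactly the remark the paper makes after Definition~\ref{DEFCOBARCLASS}, so nothing further is needed.
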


\begin{PAR}\label{PARCOBARALG}
{\em Under assumption~\ref{PARAUG} } 
Lurie's Bar is thus given by the composition: 
\[  \xymatrix{ \Alg(\mathcal{C}) \ar[r]^-{\barconst = \widetilde{\pi_1^*}} & (\mathcal{C}^{\vee})^{(\Delta_{\act}, \ast')^{\op}} \ar[r]^{(\rho^*)^{-1}} &(\mathcal{C}^{\vee})^{(\Delta, \ast)^{\op}}  \ar[rr]^{\colim_{\Delta^{\op}}} & & \Coalg(\mathcal{C})
}  \]

The composition $(\rho^*)^{-1} \circ \widetilde{\pi_1^*}$ maps an algebra $A$ to a coalgebra in $\mathcal{C}^{\Delta^{\op}}$ of the form (not depicting degeneracies):
\[ \xymatrix{ \cdots    \ar@<6pt>[r]  \ar@<2pt>[r]   \ar@<-2pt>[r]  \ar@<-6pt>[r]   & A \otimes A     \ar@<4pt>[r]  \ar[r] \ar@<-4pt>[r] & A  \ar@<2pt>[r]  \ar@<-2pt>[r] & 1 } \]
in which the non-degenerate active morphisms are given by the multiplication in $A$, the inert morphisms by the (canonical) augmentations, and the degeneracies are given by inserting units. This will be discussed in more detail in Section~\ref{SECTCOBARALG}.
\end{PAR}

\subsection{General classical cobar --- existence}

\begin{SATZ}\label{THEOREMEXISTENCECOBAR}
Assume that $\mathcal{C} \to \mathcal{S}$ is an $L$-admissible cofibration of operads (i.e.\@ has cocomplete fibers and the push-forward functors respect colimits in each variable separately) 
then for each small $\infty$-operad $I$ and $S \in \mathcal{S}^I$, the classical bar functor (Definition~\ref{DEFCOBARCLASS})
\[ \barconst = \widetilde{\pi_1^*}:   \mathcal{C}_{S}^{I}  \to  (\mathcal{C}^\vee)_{\pi_2^* S^{\op}}^{ \twop  I }  \]
has a left adjoint $\cobarconst$ (classical cobar construction) given by the restriction of the relative left Kan extension
\[ \pi_{3,!}^{(S)}:   \mathcal{C}_{\pi_3^* S}^{\twcop I, 3-\cocart}   \to  \mathcal{C}_{S}^{I}.   \]
The $\infty$-categories $ (\mathcal{C}^\vee)_{\pi_2^* S^{\op}}^{ \twop  I}$ and $\mathcal{C}_{\pi_3^* S}^{\twcop I, 3-\cocart}$ are canonically equivalent. 
\end{SATZ}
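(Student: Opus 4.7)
The plan is to transport the classical bar construction across the equivalence asserted in the final sentence of the theorem and then apply the relative Kan extension machinery of Corollary~\ref{KOREXISTFIBERWISERELKANEXT}.

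The first step is to establish the equivalence $(\mathcal{C}^\vee)_{\pi_2^* S^{\op}}^{\twop I} \cong \mathcal{C}_{\pi_3^* S}^{\twcop I, 3-\cocart}$. This is the ``vee-to-twist'' identification of \ref{PARTWIST} promoted from a single (co)operad to an unstraightening construction. Recall from Definition~\ref{DEFTW} that $\twcop I$ is by construction the unstraightening over $I$ of the functor $i \mapsto \twop(I_{\act} \times_{/I_{\act}} i)$, so the projection $\pi_3 \colon \twcop I \to I$ is a cofibration of operads (by the lemma following Definition~\ref{DEFTW}) whose fibers are twisted-arrow categories of comma categories. Applying \ref{PARTWIST} fiberwise converts the cooperad-indexed side into an operad-indexed side, and gluing via the oplax-limit formalism of Proposition~\ref{PROPHOMCOFIB} and Corollary~\ref{KORTW} yields the global equivalence; the type-$3$ morphisms of $\twcop I$ (Definition~\ref{DEFTYPEI}) are precisely the $\pi_3$-coCartesian ones, so the fiberwise $2$-cocart conditions patch together into the global $3$-cocart condition.

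Under this equivalence, the classical bar construction $\barconst = \widetilde{\pi_1^*}$, which by Definition~\ref{DEFPISTAR} is the fully faithful inclusion $\mathcal{C}^I_S \cong (\mathcal{C}^\vee)^{\twop I, 2-\cart}_{\pi_2^* S^{\op}} \hookrightarrow (\mathcal{C}^\vee)^{\twop I}_{\pi_2^* S^{\op}}$, corresponds to the pullback $\pi_3^* \colon \mathcal{C}^I_S \to \mathcal{C}^{\twcop I, 3-\cocart}_{\pi_3^* S}$. Since $\pi_3$ is a cofibration of operads between small operads and $\mathcal{C} \to \mathcal{S}$ is $L$-admissible, the ``cofibration versus cofibration'' case of Remark~\ref{REMKANEXT} applies and Corollary~\ref{KOREXISTFIBERWISERELKANEXT} produces the fiberwise relative left Kan extension $\pi_{3,!}^{(S)} \colon \mathcal{C}^{\twcop I}_{\pi_3^* S} \to \mathcal{C}^I_S$, left adjoint to $\pi_3^*$ on the full functor category. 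Because the $3$-cocart subcategory is full and $\pi_3^*$ lands in it, the adjunction restricts: the restriction of $\pi_{3,!}^{(S)}$ to $\mathcal{C}^{\twcop I, 3-\cocart}_{\pi_3^* S}$ is left adjoint to $\barconst$, exhibiting the desired $\cobarconst$.

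The main obstacle is the book-keeping in the first two steps. One must match the various cartesianity conditions on the two sides of the equivalence --- in particular, verify that the $2$-cart subcategory on the $(\mathcal{C}^\vee)^{\twop I}$-side corresponds to the essential image of $\pi_3^*$ inside $\mathcal{C}^{\twcop I, 3-\cocart}$ --- which involves tracking the types of morphisms in the iterated twisted-arrow construction and their behavior under the several projections $\pi_1, \pi_2, \pi_3$. Once these compatibilities are made explicit, the existence of $\cobarconst$ and the Kan-extension formula are formal consequences of $L$-admissibility through Corollary~\ref{KOREXISTFIBERWISERELKANEXT}.
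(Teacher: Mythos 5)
The central step of your proposal—that under the canonical equivalence $(\mathcal{C}^\vee)^{\twop I}_{\pi_2^* S^{\op}} \simeq \mathcal{C}^{\twcop I,3-\cocart}_{\pi_3^*S}$ the classical bar functor corresponds to the pullback $\pi_3^*$—is false, and this is exactly where the real content of the theorem lies. Under that equivalence (which the paper sets up in Lemma~\ref{LEMMACOBAR}), $\barconst$ corresponds to the \emph{twisted} pullback $\widetilde{\pi_1}^* = S(\mu)_\bullet\,\pi_1^*$ along the generalized morphism $\widetilde{\pi_1}\colon (\twcop I,\pi_3^*S)\to (I,S)$, which is \emph{not} of diagram type: its value at an object with underlying active morphism $\mu\colon \pi_1(e)\to\pi_3(e)$ is the push-forward $S(\mu)_\bullet X(\pi_1 e)$ (concretely, the tensor powers $A_{[n_1]}\otimes\cdots\otimes A_{[n_m]}$ of \ref{PARCOALG}), not $X(\pi_3 e)$. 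Note also that $\pi_3^*X$ is in general not even $3$-coCartesian, since the structure morphisms $X(\alpha)$ of a section need not be coCartesian; so the bookkeeping you defer (``match the $2$-cart subcategory with the essential image of $\pi_3^*$'') cannot be carried out. Consequently the adjunction $\pi_{3,!}\dashv\pi_3^*$ does not simply restrict to the asserted adjunction $\cobarconst\dashv\barconst$; one would at least have to prove that for $3$-coCartesian $\mathcal{E}$ the natural map $\Hom(\mathcal{E},\widetilde{\pi_1}^*X)\to\Hom(\mathcal{E},\pi_3^*X)$ induced by $\widetilde{\pi_1}^*\Rightarrow\pi_3^*$ is an equivalence, and that is not a formality.

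The paper's proof supplies precisely the missing ingredients: it factors $\widetilde{\pi_1}^*$ as $\pi_{13}^*$ composed with the analogous functor into $\mathcal{C}^{\ddd I}_{\pi_3^*S}$, observes that the latter is an equivalence onto the $3$-coCartesian objects with $\pi_{3,!}$ (along $\ddd I\to I$) as adjoint inverse there, and then reduces everything to Lemma~\ref{LEMMACOBAR2}: the relative Kan extension $\pi_{13,!}^{(\pi_3^*S)}$ exists, is computed fiber-wise over $\pi_3$, and—this is the crux—\emph{preserves $3$-coCartesian objects}. That preservation is proved by an exactness argument for the comma-category square (\ref{EQEXACTFIBER}) together with the $L$-admissibility of the push-forward functors; this is where $L$-admissibility enters in an essential way, beyond merely guaranteeing the existence of $\pi_{3,!}^{(S)}$ via Corollary~\ref{KOREXISTFIBERWISERELKANEXT}. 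Your proposal uses $L$-admissibility only for the existence of the Kan extension and skips the preservation statement entirely, so the argument as written does not establish the theorem.
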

We stated the Theorem in its most simple form. It is easy to extract from the proof, exactly which colimits need to exist and have to be preserved by the push-forward functors. 

Note that $\pi_3: \twcop I \to I$ is a cofibration and thus the Kan extension $ \pi_{3,!}^{(S)}$ exists by Corollary~\ref{KOREXISTFIBERWISERELKANEXT} (assuming $L$-admissibility) and is computed fiber-wise. For $I=\mathcal{S}=\OOO$, for instance, where there is only one fiber, is is given by a colimit
over $\twop \Delta_{\act}^{\op}$ which is essentially the category of necklaces (\ref{PARNECKLACE}). See Section~\ref{SECTCOBARALG} for a thorough discussion of classical cobar for $I=\mathcal{S}=\OOO$.

There is a morphism $\widetilde{\pi_1}: (\twcop I, \pi_3^*S) \to (I, S)$ in $\Dia(\mathcal{S})$ (cf.\@ \ref{DEFDIAS}) given by the obvious diagram
\[ \xymatrix{ \twcop I \ar[rr]^{\pi_1} \ar[rd]_{\pi_3^*S} &\ar@{}[d]|-{\Downarrow^{S(\mu)}} & I  \ar[ld]^{S} \\
& \mathcal{S}  } \] 
Be aware that $\pi_1$ is only a generalized morphism of $\infty$-operads (in the sense of Definition~\ref{DEFGENMOR}). Nevertheless, $\widetilde{\pi_1}$ induces a pull-back functor
\[ \widetilde{\pi_1}^* = S(\mu)_\bullet \pi_1^*: \mathcal{C}_{S}^{I}   \to \mathcal{C}_{\pi_3^* S}^{\twcop I} \]
as explained in \ref{PARFIBDER}.

\begin{LEMMA}\label{LEMMACOBAR}
We have a commutative diagram: 
  \[  \footnotesize
  \xymatrix{ \mathcal{C}_{S}^{I}  \ar[dd]^{\sim}  \ar[rd]^{\widetilde{\pi_1^*}} \ar[rr]^{\widetilde{\pi_1}^* = S(\mu)_\bullet \pi_1^*} & &  \mathcal{C}_{\pi_3^* S}^{\twcop I, 3-\cocart} \ar[d]^{\sim}\\
&   (\mathcal{C}^\vee)_{\pi_2^* S^{\op}}^{\twop I} \ar[d]^{\sim} \ar@{-->}[ru]^{\widetilde{\pi_{12}^*}}  &  \Hom^{\lax,\inert,3-\pseudo}_{(\Cat_{\infty}^{\times})^{\twcop I}}(\cdot, \pi_3^*\Xi) \ar[d]^{\sim}_{\pi_{234}^*} \\
  \Hom^{\lax, \inert-\pseudo}_{(\Cat_{\infty}^{\times})^I}(\cdot, \Xi) \ar[r] & \Hom^{\oplax, \inert-\pseudo}_{(\Cat_{\infty}^{\times})^{\tw I}}(\cdot, \pi_2^*\Xi) \ar[r]^-{\sim}  & \Hom^{\lax,\inert,1,4-\pseudo}_{(\Cat_{\infty}^{\times})^{\twtw I}}(\cdot, \pi_4^*\Xi) 
   }  \]
where $\Xi: I \rightarrow \mathcal{S} \to  \Cat_{\infty}$ is the morphism classifying the pull-back of the cofibration $\mathcal{C} \to \mathcal{S}$ via $S$, 
and where the lower horizontal morphisms are given by Proposition~\ref{PROPTWIST} using the obvious isomorphism $\tw(\tw I) = \twtw I$. The dashed morphism is defined by this diagram.
\end{LEMMA}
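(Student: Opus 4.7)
The plan is to translate every corner of the diagram into a space of (op)lax natural transformations between the classifying functors, so that the commutativity reduces to the naturality built into Propositions~\ref{PROPTWIST} and~\ref{PROPOPTWIST}. In particular, each of the three vertical equivalences in the diagram is of the same type: for a cofibration of $\infty$-operads over a base, sections of the cofibration correspond to lax inert-pseudo transformations between the classifying functors, possibly combined with the identification recalled in~\ref{PARTWIST} which reads a section of $\mathcal{C}^\vee \to \mathcal{S}^{\op}$ as a $2$-coCartesian section of $\mathcal{C} \to \mathcal{S}$.

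First, I would rewrite each vertex. The left vertical is Proposition~\ref{PROPOPTWIST} applied to $\mathcal{C} \to \mathcal{S}$ pulled back along $S$. The middle vertical combines~\ref{PARTWIST} with Proposition~\ref{PROPOPTWIST} on $\tw I$, the $2$-Cartesian condition matching the $2$-pseudo decoration. The right vertical applies Proposition~\ref{PROPOPTWIST} on $\twcop I$ (the $3$-coCartesian condition matching the $3$-pseudo decoration), and then precomposes along the projection $\twtw I \to \twcop I$ to reach the bottom right corner.

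Second, the bottom horizontal row is precisely the chain of equivalences produced by Proposition~\ref{PROPTWIST} (in its operadic form, Proposition~\ref{PROPOPTWIST}) linking lax transformations on $I$, oplax transformations on $\tw I$, and lax transformations on $\twtw I = \tw(\tw I)$. The lower two squares of the diagram commute by the naturality of those equivalences, because the vertical identifications of the first step are built from the very same universal end/coend packaging that drives the horizontal ones. It only remains to check the upper triangle and to verify the existence of the dashed arrow.

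Third, under the identifications of step one, the composition $\pi_{234}^*\circ\widetilde{\pi_1^*}$ acts by precomposing an (op)lax transformation on $\tw I$ with the forgetful projection $\twcop I \to \tw I$; since the type-$3$ morphisms in $\twcop I$ are collapsed to isomorphisms by this projection (Lemma~\ref{LEMMATWLOC}), the result automatically satisfies the $3$-pseudo condition and hence lands in the $3$-coCartesian subcategory, defining $\widetilde{\pi_{12}^*}$. What remains is to recognize this precomposition, viewed as a section of $\mathcal{C} \to \mathcal{S}$ over $\pi_3^*S$, as $S(\mu)_\bullet\pi_1^*$: on each object $\alpha \in \twcop I$, the component of the resulting transformation is obtained by applying $\Xi(\mu)$ after $\Xi(\pi_1\alpha)$ to the input datum, which is exactly how the generalized morphism $\widetilde{\pi_1} \in \Dia(\mathcal{S})$ acts via $S(\mu)_\bullet\pi_1^*$. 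The main obstacle is the bookkeeping in this last identification: one must match the active $2$-morphism $\mu$ appearing in $\widetilde{\pi_1}$ with the active parts of the (inert, active) factorization that is implicit in the equivalence of Lemma~\ref{LEMMAOPTWIST2}. Once this is checked on objects, functoriality is automatic from the naturality of the equivalences used.
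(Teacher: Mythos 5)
Your overall strategy is the same as the paper's: translate every corner into a decorated space of (op)lax transformations of the classifying functors via Propositions~\ref{PROPTRANSLATE}, \ref{PROPTWIST} and \ref{PROPOPTWIST}, and get the commutativity of the lower squares from the naturality of these identifications. However, the one assertion in the Lemma that genuinely requires an argument — that $\pi_{234}^*$ is an equivalence — is not addressed: you describe the right-hand column as ``apply Proposition~\ref{PROPOPTWIST} on $\twcop I$ and then precompose along $\twtw I \to \twcop I$'', but you never show that this precomposition is an equivalence between the decorated $\Hom$-categories. This is precisely where Lemma~\ref{LEMMATWLOC} enters: $\pi_{234}\colon \twtw I \to \twcop I$ is a localization at the type-$1$ morphisms, and a transformation on $\twtw I$ which is $1$-pseudo (and inert-pseudo) inverts exactly these, hence factors essentially uniquely through the localization (reasoning as in the proof of Lemma~\ref{LEMMAOPTWIST}), with the $4$-pseudo decoration on $\twtw I$ corresponding to the $3$-pseudo decoration on $\twcop I$ after reindexing. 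Without this step the dashed arrow is not ``defined by the diagram'', since its definition uses the inverse of the right-hand vertical composite.

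In addition, your justification for the dashed arrow landing in the $3$-coCartesian (equivalently $3$-pseudo) part is incorrect as stated. You claim that the type-$3$ morphisms of $\twcop I$ are collapsed to isomorphisms by the projection $\twcop I \to \tw I$; but that projection forgets the \emph{first} coordinate, so by Definition~\ref{DEFTYPEI} and Lemma~\ref{LEMMATWLOC} it collapses the type-$1$ morphisms, whereas type-$3$ morphisms map to type-$2$ morphisms of $\tw I$, which are not invertible in general. There is also a variance problem: precomposing an \emph{oplax} transformation on $\tw I$ along $\twcop I \to \tw I$ would produce an oplax datum, while the right-hand column consists of \emph{lax} (inert-, $3$-pseudo) transformations on $\twcop I$; the passage from oplax on $\tw I$ to lax is exactly what Proposition~\ref{PROPTWIST} mediates by passing to $\twtw I = \tw(\tw I)$, and is not a plain precomposition. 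The correct mechanism is that the $4$-pseudo condition is automatic from Proposition~\ref{PROPTWIST}, while the $1$-pseudo condition permits descent along the localization $\pi_{234}$, where it becomes the $3$-pseudo, i.e.\@ $3$-coCartesian, condition identifying the target with $\mathcal{C}^{\twcop I, 3-\cocart}_{\pi_3^* S}$.
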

\begin{proof}Follows from Propositions~\ref{PROPTRANSLATE} and \ref{PROPTWIST} examining the construction. For the fact that $\pi_{234}^*$ is an isomorphism use Lemma~\ref{LEMMATWLOC} and reason as in the proof of Lemma~\ref{LEMMAOPTWIST}.
\end{proof}

 \begin{LEMMA}\label{LEMMACOBAR2}Let $I$ be a small $\infty$-operad.
 Consider the $\infty$-coCartesian\footnote{Because both are cofibrations this means just: it maps coCartesian to coCartesian morphisms} functor
 \[ \pi_{13}: \twcop I  \to \ddd I  \]
of cofibrations over $I$ (via $\pi_3$).  
It has a transpose over $I^{\op}$:
 \[ \pi_{13}^{\vee}:  (\twcop I)^{\vee} \to (\ddd I)^{\vee} \]
 (which, by construction, can be identified with the functor of cooperads
 $\pi_{13}:  {}^{\downarrow\uparrow\uparrow} I \to  {}^{\downarrow\uparrow} I$).
\begin{enumerate}
\item
 The functor 
 \[ \xymatrix{   \mathcal{C}_{\pi_3^* S}^{\ddd I}   \ar[rr]^-{\pi_{13}^*} & &  \mathcal{C}_{\pi_3^* S}^{\twcop I}}  \]
 has a left adjoint $\pi_{13,!}^{(\pi_3^*S)}$  (i.e.\@ a relative left Kan extension). 
 \item An object $\mathcal{E} \in \mathcal{C}^K_{\pi_3^*S}$ for either category $K \in \{\ddd I, \twcop I \}$ is 3-Cartesian\footnote{i.e.\@ type-3 morphisms (choosing the same indexing in both categories) in $K$ are mapped to Cartesian morphisms} if and only if for all active $\alpha: i' \to i$ in $I$
 \[ \alpha_{\mathcal{C}, \bullet} (\iota_{i'})^* \mathcal{E} \to (\alpha_{\bullet,K})^* (\iota_{i})^* \mathcal{E}    \]
 is an isomorphism where $\iota_{i'}: K_{i'} \hookrightarrow K$, resp.\@ $\iota_i: K_i \hookrightarrow K$ are the inclusions of the respective fibers. 

\item
 The functor $\pi_{13}^{\vee}$ is $\infty$-coCartesian (Definition~\ref{DEFCART}) as well, i.e.\@ for all active $\alpha: i' \to i$ in $I$
 \[  \alpha_{\bullet, \twcop I} \,{}^t\!\pi_{13} \cong \,{}^t\!\pi_{13}\, \alpha_{\bullet, \ddd I}.   \]
 Notice $\alpha_{\bullet, \ddd I} = \alpha^{\bullet}_{{}^{\downarrow\uparrow} I}$ and similarly for $\twcop I$.
 
 \item
 The functor $\pi_{13,!}^{(\pi_3^*S)}$ preserves 3-coCartesian objects.
\end{enumerate}
 \end{LEMMA}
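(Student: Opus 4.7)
The plan is to address the four parts in order, leveraging the recursive unstraightening description of ${}^{\Xi}I$ from Definition~\ref{DEFTW} and the fiber-wise Kan extension formalism of Section~\ref{SECTKAN1}.

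For (1), the main point is that $\pi_{13}$ is itself a cofibration of operads over $I$. Both $\twcop I$ and $\ddd I$ are cofibrations over $I$ classified by $i \mapsto {}^{\downarrow\uparrow}(I_{\act}\times_{/I_{\act}}i)$ and $i \mapsto I_{\act}\times_{/I_{\act}}i$ respectively; the functor $\pi_{13}$ arises as the unstraightening of the natural transformation given fiber-wise by the projection ${}^{\downarrow\uparrow}(-)\to(-)$, which is itself a cofibration by yet another application of Definition~\ref{DEFTW}. Hence $\pi_{13}$ is a cofibration of operads, and since $\mathcal{C}\to\mathcal{S}$ is $L$-admissible, Corollary~\ref{KOREXISTFIBERWISERELKANEXT} furnishes the relative left Kan extension $\pi_{13,!}^{(\pi_3^*S)}$, computed fiber-wise.

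For (2), by Definition~\ref{DEFTYPEI} a type-$3$ morphism in $K\in\{\ddd I,\twcop I\}$ is exactly a coCartesian lift for $K\to I$ of an active $\alpha\colon i'\to i$; for $x'\in K_{i'}$ it takes the form $x'\to\alpha_{\bullet,K}(x')$. Asking $\mathcal{E}$ to send all such lifts to coCartesian morphisms in $\mathcal{C}\to\mathcal{S}$ is equivalent to asking that the comparison $\alpha_{\mathcal{C},\bullet}\mathcal{E}(x')\to\mathcal{E}(\alpha_{\bullet,K}(x'))$ be an isomorphism naturally in $x'$, which is precisely the displayed statement obtained after adjoining along the fiber inclusions $\iota_{i'}, \iota_i$.

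For (3), rephrase the $\infty$-coCartesianity of $\pi_{13}$ from (1) in pro-functor language: for active $\alpha\colon i'\to i$, the equation $\pi_{13}\circ\alpha_{\bullet,\twcop I}\cong\alpha_{\bullet,\ddd I}\circ\pi_{13}$ holds as 1-morphisms in $\Cat^{\PF}_\infty$. Taking transposes via the 2-functor $\op\colon\Cat^{\PF}_\infty\to\Cat^{\PF}_\infty$ of~\ref{PAROPPF} (which preserves composition and sends a functor to its transpose pro-functor) yields the dual intertwining $\,{}^t\pi_{13}\circ\alpha_{\bullet,\ddd I}\cong\alpha_{\bullet,\twcop I}\circ\,{}^t\pi_{13}$, which is the stated $\infty$-coCartesianity of $\pi_{13}^{\vee}$.

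For (4), given a 3-coCartesian $\mathcal{E}$, the fiber-wise formula from (1) gives
\[ (\iota_{i'})^*\pi_{13,!}\mathcal{E} \;\cong\; (\pi_{13}|_{(\twcop I)_{i'}})_!\,(\iota_{i'}^{\twcop I})^*\mathcal{E}. \]
Applying $\alpha_{\mathcal{C},\bullet}$ and commuting with the colimit by $L$-admissibility, then using the 3-coCartesianity of $\mathcal{E}$ via (2), yields
\[ \alpha_{\mathcal{C},\bullet}(\iota_{i'})^*\pi_{13,!}\mathcal{E} \;\cong\; (\pi_{13}|_{(\twcop I)_{i'}})_!\,(\alpha_{\bullet,\twcop I})^*(\iota_i^{\twcop I})^*\mathcal{E}. \]
A Beck--Chevalley argument for the commutative square supplied by (3), together with the cofibration structure of $\pi_{13}$, rewrites the right-hand side as $(\alpha_{\bullet,\ddd I})^*(\iota_i)^*\pi_{13,!}\mathcal{E}$, which is the criterion of~(2) for $\pi_{13,!}\mathcal{E}$.

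The principal technical obstacle is Part~(1): carefully identifying $\pi_{13}$ as a cofibration of operads via the nested unstraightening, and checking that the fiber-wise projections assemble into an honest functor of operads respecting the inert/active factorization. Once (1) is established, the remaining parts reduce to routine diagram chases using $L$-admissibility and the duality $(-)^\vee$.
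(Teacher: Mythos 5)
Your parts (2) and (4) track the paper's argument (part (4) is exactly the paper's strategy: fiber-wise computation, $L$-admissibility on one side, part (3) on the other). But part (3), which is the only nontrivial content of the lemma, contains a genuine gap. The coCartesianity of $\pi_{13}$ gives an isomorphism of honest functors $\pi_{13}\,\alpha_{\bullet,\twcop I}\cong \alpha_{\bullet,\ddd I}\,\pi_{13}$; the statement of (3) is about the \emph{mate} of this isomorphism with respect to the adjunction $\pi_{13}\dashv{}^t\pi_{13}$, namely the canonical 2-morphism $\alpha_{\bullet,\twcop I}\,{}^t\pi_{13}\Rightarrow{}^t\pi_{13}\,\alpha_{\bullet,\ddd I}$, and mates of isomorphisms are not invertible in general. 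Your appeal to the operation of \ref{PAROPPF} does not repair this: the transpose 2-functor ${}^t\iota$ is contravariant and would transform the equation into ${}^t\alpha_{\bullet,\twcop I}\,{}^t\pi_{13}\cong{}^t\pi_{13}\,{}^t\alpha_{\bullet,\ddd I}$, involving the \emph{transposes} of the push-forwards, not the push-forwards themselves as in the claim. Invertibility of the mate is precisely the exactness (Beck--Chevalley condition) of the square (\ref{EQEXACTFIBER}), and this is where the paper does real work: it invokes criterion 4 of \ref{PROPEXACT}, identifies the relevant comma category as a category of diagrams of shape $i_0\to x\to y\to i_1$ over $i_2\to i\to i'$, and contracts it onto $\Hom_{I\times_{/I}i'}(i_0\to i',i_2\to i')$ by a chain of adjunctions. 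Without some such argument your (3) is unproved, and consequently your (4) — which uses it as the Beck--Chevalley input — is incomplete as well.

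A secondary inaccuracy: in (1) you assert that $\pi_{13}$ is itself a cofibration and deduce a Kan extension computed fiber-wise over $\ddd I$. The paper neither claims nor needs this; it only uses that $\pi_{13}$ is a (co)Cartesian morphism of cofibrations \emph{over $I$} (via $\pi_3$), so that Corollary~\ref{KOREXISTFIBERWISERELKANEXT} yields $\pi_{13,!}^{(\pi_3^*S)}$ computed fiber-wise over $I$ — which is also the form of fiber-wiseness your step (4) actually uses. Whether the fiber-wise projections $\twop(I_{\act}\times_{/I_{\act}}i)\to I_{\act}\times_{/I_{\act}}i$ are cofibrations is not checked in your argument and is not needed; I would drop that claim and cite the corollary in the relative-over-$I$ form.
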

 \begin{proof}
 1.\@ follows from Corollary~\ref{KOREXISTFIBERWISERELKANEXT}.
 
 2.\@ follows from the definitions.
 
3.\@ Consider an active morphism $\alpha: i \to i'$ in $I$. The assertion is the statement that the commutative diagram
\begin{equation} \label{EQEXACTFIBER} 
\vcenter{ \xymatrix{ 
\twop I \times_{/I} i \ar[rr]^{\alpha_{\bullet, \twcop I}} \ar[d]_{\pi_{13}} & & \twop I \times_{/I} i' \ar[d]^{\pi_{13}} \\
I \times_{/I} i \ar[rr]_-{\alpha_{\bullet, \ddd I}} & & I \times_{/I} i'
 } }
 \end{equation}
be exact. By Lemma~\ref{PROPEXACT}, 4.\@, this is the case, if for all objects $i_0 \to i_1 \to i'$  of $\tw I \times_{/I} i'$ and objects $i_2 \to i$   of $I \times_{/I} i$ the morphism
\[ \{ i_0 \to i_1 \to i' \} \times_{/ \tw (I \times_{/I} i')} \tw (I \times_{/I} i)  \times_{/(I \times_{/I} i)}  \{i_2 \to i\}  \to \Hom_{I \times_{I} i'}(i_0 \to i', i_2 \to i') \]
is an equivalence after forming groupoids. The left hand side is a category of diagrams in $I$ of the form
\[ \xymatrix{ i_0 \ar[r]  & x \ar[r] \ar[d] & y \ar[d] \ar[r] & i_1 \ar[d]  \\
& i_2 \ar[r] & i \ar[r] & i' 
}\]
contravariant in $x$ and covariant in $y$. There is chain of adjunctions to the category of diagrams of the form
\[ \xymatrix{ i_0 \ar@{=}[r]  & i_0 \ar@{=}[r] \ar[d] & i_0 \ar[d] \ar[r] & i_1 \ar[d]  \\
& i_2 \ar[r] & i \ar[r] & i' 
}\]
which is equivalent to $\Hom_{I \times_{/I} i'}(i_0 \to i', i_2 \to i')$. Therefore the diagram (\ref{EQEXACTFIBER}) is exact.

4.\@ Let $\mathcal{E}$ be a 3-coCartesian object. 
Using 2.\@, we have to prove that  
\[ \alpha_{\mathcal{C}, \bullet} (\iota_{i'})^* \pi_{13,!}  \mathcal{E} \to (\alpha_{\bullet,K})^* (\iota_{i})^* \pi_{13,!} \mathcal{E}   \]
is an isomorphism. Since $\pi_{13,!}$ is computed fiber-wise (w.r.t.\@ $\pi_3$) this is the same as
\[ \alpha_{\mathcal{C}, \bullet} \pi_{13,!}  (\iota_{i'})^*  \mathcal{E} \to (\alpha_{\bullet,K})^* \pi_{13,!}  (\iota_{i})^* \mathcal{E}.   \]
Examining the proof of Proposition~\ref{PROPDAYADJOINT} we have a commutative diagram: 
\[ \xymatrix{
 \alpha_{\mathcal{C}, \bullet} \pi_{13,!}  (\iota_{i'})^*  \mathcal{E}  \ar[r] & (\alpha_{\bullet,K})^* \pi_{13,!}  (\iota_{i})^* \mathcal{E}  \\
\pi_{13,!} \alpha_{\mathcal{C}, \bullet}   (\iota_{i'})^*  \mathcal{E} \ar[r] \ar[u] & \pi_{13,!} (\alpha_{\bullet,K})^*  (\iota_{i})^* \mathcal{E}  \ar[u]
}   \]
where the lower morphism is $\pi_{13,!}$ applied to the isomorphism expressing the 3-coCartesianity of $\mathcal{E}$ and the vertical maps are the obvious mates. 
The left hand side morphism is an isomorphism because of the $L$-admissibility of $\mathcal{C}$ and the right hand side is an isomorphism by 3. Therefore $\pi_{13,!}^{(\pi_3^*S)} \mathcal{E}$ is 3-coCartesian. 
 \end{proof}
 
\begin{proof}[Proof of Theorem~\ref{THEOREMEXISTENCECOBAR}] By Lemma~\ref{LEMMACOBAR}
 the functor $\barconst = \widetilde{\pi_1^*}$ is isomorphic to the functor 
\[ \xymatrix{  \mathcal{C}_{S}^{I}  \ar[rr]^-{\widetilde{\pi_1}^* = S(\mu)_\bullet \pi_1^*} & &  \mathcal{C}_{\pi_3^* S}^{\twcop I, 3-\cocart}}.  \]
 It is not reasonable to expect that  $\widetilde{\pi_1}^*$ has a relative left Kan extension on all of $\mathcal{C}_{\pi_3^* X}^{\twcop I}$ (notice that $\widetilde{\pi_1}$ is not of diagram type). 
In the statement of the Theorem it is claimed that the relative left Kan extension $\pi_{3,!}^{(S)}$ is left adjoint to $\widetilde{\pi_1}^*$ {\em when restricted to the full subcategory of 3-coCartesian objects. }

Observe that the functor in question factors: 
\[ \xymatrix{  \mathcal{C}_{S}^{I}  \ar[rr]^-{\widetilde{\pi_1}^* = S(\mu)_\bullet \pi_1^*} & &  \mathcal{C}_{\pi_3^* S}^{\ddd I}   \ar[rr]^-{\pi_{13}^*} & &  \mathcal{C}_{\pi_3^* S}^{\twcop I}}  \]
where we have numbered the indices in $\ddd I$ and $\twcop I$ in a coherent way. 
The left functor is an equivalence onto the subcategory of 3-coCartesian objects of the category in the middle and $\widetilde{\pi_1}^* \pi_{3,!} = \widetilde{\pi_1}^* \delta^* = \pi_1^*\delta^*$ is, in fact, a {\em right} coCartesian projector, or in other words, $\pi_{3,!}$ is a {\em right} adjoint to the fully-faithful inclusion $\widetilde{\pi_1}^*$. 
The relative left Kan extension $\pi_{13,!}^{(\pi_3^*S)}$ exists and is computed fiber-wise over $\pi_3$ by Lemma~\ref{LEMMACOBAR2}, 1.
It thus suffices to see that $\pi_{13,!}^{(\pi_3^*S)}$ preserves the condition of being 3-coCartesian. This is Lemma~\ref{LEMMACOBAR2}, 4.
\end{proof}

\subsection{Classical cobar for coalgebras}\label{SECTCOBARALG}

In this section, we discuss the classical  cobar construction (Definition~\ref{DEFCOBARCLASS}) associated with the operad $\OOO$ for {\em arbitrary monoidal ($\infty$-)categories},
i.e.\@ letting $(\mathcal{C}, \otimes) \to \OOO$ a monoidal ($\infty$-)category (considered as cofibration of operads) and $I:= \OOO$ and $S:=\id$ in the abstract setting (\ref{PARCOBARSETTING}). 

Assume that $\mathcal{C}$ is countably cocomplete and such that $\otimes$ commutes with countable colimits. Assume also that $\mathcal{C}$ is finitely complete, and such $\otimes$ commutes with $\dec_*$ (in the Abelian 1-categorical case this means only that $\otimes$ has to commute with finite products which we always assume anyway).

Recall the abstract classical (co)bar adjunction:
\[ \xymatrix{  ((\mathcal{C}, \otimes)^{\vee})^{\twop \OOO} \ar@<4pt>[rr]^-{\cobarconst} & & \ar@<4pt>@{_{(}->}[ll]^-{\barconst} (\mathcal{C}, \otimes)^{\OOO} . } \]
We will always apply $\cobarconst$ to an object in the image of $\rho^*$:
\begin{equation*}
\xymatrix{  ((\mathcal{C}, \otimes)^{\vee})^{(\Delta,*)^{\op}}   \ar[rr]^-{\rho^*} & & ((\mathcal{C}, \otimes)^{\vee})^{\twop \OOO} } 
\end{equation*}
(cf.\@ \ref{EXDAY2} and \ref{PARRHOSTAR}).  

 \begin{PAR}\label{PARCOALG}The source of $\rho^*$ is just $\Coalg(\mathcal{C}^{\Delta^{\op}}, \tildeotimes)$ under suitable $R$-admissibility.
If $\mathcal{C}$ is an Abelian 1-category, an object in $\Coalg(\mathcal{C}^{\Delta^{\op}}, \tildeotimes)$  is thus a coalgebra in non-negatively graded complexes (a dg-coalgebra) w.r.t.\@ to the usual tensor product of complexes.
In $((\mathcal{C}, \otimes)^{\vee})^{(\Delta,*)^{\op}}$ it translates to an object $A \in \mathcal{C}^{\Delta^{\op}}$ with comultiplication and counit 
\[ \mu_{n,m}: A_{[n] \ast [m]} \to A_{[n]} \otimes A_{[m]} \qquad  A_{[n]} \to 1  \]
in such a way that 
\[ \xymatrix{  A_{[n] \ast' [m]}  \ar[r]^-{s_{\can}} \ar@{->>}[d]  & A_{[n] \ast [m]}  \ar[r]^-{\mu_{n,m}} & A_{[n]} \otimes A_{[m]}   \ar@{->>}[d]  \\
A_{n+m} \ar[rr]_-{m_{n,m}} & & A_n \otimes A_m } \]
is commutative, where in the bottom row, $m_{n,m}$ is the component of the comultiplication in the complexes viewpoint (cf.\@ Proposition~\ref{PROPEXPLICITAB}). 

Back in the general case, pulling back under the morphism $\rho: \twop \OOO = (\Delta^{\op}_{\act},*') \to (\Delta,*)^{\op}$, we get an object $\rho^*A$ in
$((\mathcal{C}, \otimes)^{\vee})^{\twop \OO}$, i.e.\@ the same $A_{[n]}$ with structure maps
\[ \mu_{n,m}': A_{[n] \ast' [m]} \to A_{[n]} \otimes A_{[m]} \qquad  A_{[0]} \to 1  \]
which are nothing else then the composition with the canonical degeneracy (considered before in the Abelian case) and
the counit is the corresponding restriction to $A_{[0]}$. If 1 is final in $\mathcal{D}$, Lemma~\ref{LEMMARHO} below implies, that $A$ can be recovered from $\rho^* A$. 

To compute $\cobarconst \rho^* A$ using Theorem~\ref{THEOREMEXISTENCECOBAR}, we have to identify
 \[ ((\mathcal{C}, \otimes)^{\vee})^{\twop \OO} \cong (\mathcal{C}, \otimes)^{\twcop \OO, 3-\cocart}  \]
and then take the relative Kan extension $\pi_{3,!}$. See \ref{PARNECKLACE} below for a more thorough discussion of $\twcop \OO$ which is essentially the category of necklaces cf.\@ \cite{DS11, Riv22, BS23}. 

Since $\pi_3$ is a cofibration this is computed fiber-wise (Corollary~\ref{KOREXISTFIBERWISERELKANEXT}) and thus its underlying object in $\mathcal{C}$ is given by a colimit over the category 
$\mathcal{C}^{\twcop \OO}_{[1]} = \mathcal{C}^{\twop \Delta^{\op}_{\act}}$.
$\rho^*A$ corresponds to the following object $B \in \mathcal{C}^{\twop \Delta^{\op}_{\act}}:$
Active morphisms are mapped 
\[  [n] \leftarrow [m] \quad  \mapsto \quad   A_{[n_1]} \otimes \cdots \otimes A_{[n_m]} \]
where $[n] = [n_1] \ast' \cdots \ast' [n_m]$ is the induced decomposition of $[n]$ (which corresponds to taking fibers identifying $\Delta^{\op}_{\act} \cong \Delta_{\emptyset}$ via Lemma~\ref{LEMMADUAL}).
A type-1 morphism
\[ \xymatrix{ {[n]} \ar@{<-}[r] \ar@{<-}[d] &  {[m]} \ar@{=}[d] \\
{[n']} \ar@{<-}[r] &  {[m]}
} \]
maps to the factor-wise structure morphism
\[ A_{[n_1]} \otimes \cdots \otimes A_{[n_m]}  \to A_{[n_1']} \otimes \cdots \otimes A_{[n_m']} \]
while a type-2 morphism corresponds to applying the structure morphism of the algebra, for instance
\[ \xymatrix{ {[n_1] \ast' [n_2]} \ar@{<-}[r] \ar@{=}[d] &  {[1]} \ar[d]^{\delta_1} \\
{[n_1] \ast' [n_2]} \ar@{<-}[r] & {[2]}
} \]
maps to the component of the comultiplication: 
\[ \mu_{n_1, n_2}': A_{[n_1]} \otimes  A_{[n_2]}  \to A_{[n_1] \ast' [n_2]}. \]
 \end{PAR}
 
\begin{PAR}\label{PARCOBAREXPLICIT}
The colimit over $\twop \Delta^{\op}_{\act}$ can be computed using the observation \ref{PARFINALCOEND}. 
If $\mathcal{C}$ is a 1-category then this colimit is very simple: In \ref{PARFINALCOEND} it suffices to restrict to the
cokernel of the last two maps and  we may restrict to a generating set of morphisms and get: 
\begin{gather}
 \boxed{ \cobarconst \rho^* A =    
  \colim \left(  \substack { \coprod A_{[1]} \otimes \cdots \times A_{[2]} \otimes \cdots \otimes A_{[1]} \\ \amalg \\ \coprod A_{[1]} \otimes \cdots \times A_{[0]} \otimes \cdots \otimes A_{[1]}   } \rightrightarrows  \coprod_{n=0}^{\infty} A_{[1]}^{\otimes n}  \right) }  \label{eqformulacobar}
  \end{gather}
  where the two morphisms are induced by
  \[  A_{[2]} \to A_{[1]} \otimes A_{[1]} \]
  (comultiplication) and by
  \[ A(\delta_1):  A_{[2]} \to  A_{[1]}  \]
 resp.\@ 
  \[  A_{[0]} \to 1 \]
  (counit) and
  \[  A(s_0): A_{[0]} \to A_{[1]}. \]
\end{PAR}

In Chapter~\ref{CHAPTERBASICEXAMPLES} this colimit is computed explicitly for a number of mostly 1-categorical examples.

\subsection{Functoriality of (co)bar --- non-connected (co)bar}\label{SECTCOBARFUNCT}

\begin{PAR}
Let $\mathcal{C} \to \mathcal{S}$, $\mathcal{D} \to \mathcal{S}$ be cofibrations of $\infty$-operads and $R: \mathcal{C} \to \mathcal{D}$ be a functor of $\infty$-operads over $\mathcal{S}$. 
Assume that $R$ has a fiber-wise left adjoint. Then those assemble to a functor of fibrations
\[ Q: \mathcal{D}^{\vee} \to \mathcal{C}^{\vee}  \]
over $\mathcal{S}^{\op}$.  
For $S \in \mathcal{S}^I$, we get functors
\[ R: \mathcal{C}_S^I \to \mathcal{D}_S^I \qquad Q: (\mathcal{D}^{\vee})_{S^{\op}}^{I^{\op}} \to (\mathcal{C}^{\vee})_{S^{\op}}^{I^{\op}}   \]

which are --- in terms of the classifying functors $\Xi_{\mathcal{C}}, \Xi_{\mathcal{D}}: I \to \mathcal{S} \to (\Cat_{\infty}, \times)$ given by composition with 
$R: \Xi_{\mathcal{C}} \Rightarrow \Xi_{\mathcal{D}}$ which is a lax morphism:
\[ \Hom^{\lax, \inert-\pseudo}_{(\Cat_{\infty}^\times)^{I}}(\cdot, \Xi_{\mathcal{C}}) \to \Hom^{\lax, \inert-\pseudo}_{(\Cat_{\infty}^\times)^{I}}(\cdot, \Xi_{\mathcal{D}})  \]
and gives via composition with 
$Q: \Xi_{\mathcal{D}} \Rightarrow \Xi_{\mathcal{C}}$ (its mate), which is an oplax morphism, a morphism
\[ \Hom^{\oplax, \inert-\pseudo}_{(\Cat_{\infty}^\times)^{I}}(\cdot, \Xi_{\mathcal{D}}) \to \Hom^{\oplax, \inert-\pseudo}_{(\Cat_{\infty}^\times)^{I}}(\cdot, \Xi_{\mathcal{C}}).  \]
\end{PAR}
\begin{PAR}\label{PARCOBARSETTINGFUNCT}
In this situation we can refine the (co)bar diagrams: 
\begin{equation}\label{eqcobar0funct} \vcenter{ \xymatrix{
& (Q \dashv R)^{\downarrow \uparrow} \ar[ld]_{\rho_1} \ar[rd]^{\rho_2} \\
\mathcal{D} \ar[rd] & & (\mathcal{C}^\vee)^{\op} \ar[ld] \\
& \mathcal{S}
}} \end{equation}
where $(Q \dashv R)^{\downarrow \uparrow }$ is classified by the functor
\begin{eqnarray*}
\mathcal{S} & \to & \Cat_{\infty}\\
S & \mapsto & \mathcal{X}_S
\end{eqnarray*}
where $\mathcal{X}_S$ is the pairing defined by the adjunction $Q_S \dashv R_S$, i.e.\@ classified by
\begin{eqnarray*}
\mathcal{D}_S^{\op} \times \mathcal{C}_S & \to & \Cat_{\infty}\\
D, C & \mapsto & \Hom(Q(D), C) \cong \Hom(D, R(C))
\end{eqnarray*}

For each small $\infty$-operad $I$ and $S \in \mathcal{S}^I$ this gives rise to a diagram
\begin{equation}\label{EQBARCOBARFUNCT} \vcenter{ \xymatrix{
& ((Q \dashv R)^{\downarrow \uparrow })^I_S \ar[ld]_{\rho_1} \ar[rd]^{\rho_2} \\
\mathcal{D}^I_{S}  & & ((\mathcal{C}^\vee)^{I^{\op}}_{S^{\op}})^{\op}  \\
}}\end{equation}
\end{PAR}

\begin{PAR}\label{PARCOBARSETTING2FUNCT}
For $S \in \mathcal{S}^I$, there are two natural diagrams
\begin{equation}\label{EQBARFUNCT} 
\vcenter{ \xymatrix@C=1pc{
& (\mathcal{D}^{\vee})^{\twop I}_{\pi_2^* S^{\op}} \ar@{<-}[ld]_{\widetilde{\pi_1^*}} \ar[r]^Q &  (\mathcal{C}^{\vee})^{\twop I}_{\pi_2^* S^{\op}} \ar@{<-}[rd]^{\pi_2^*}\\
\mathcal{D}^I_{S}  & & & ((\mathcal{C}^\vee)^{I^{\op}}_{S^{\op}})^{\op}  \\
}}
 \quad 
 \vcenter{\xymatrix@C=1pc{
& \mathcal{D}^{\tw I}_{\Pi_2^* S} \ar@{<-}[ld]_{\Pi_2^*}  & \ar[l]_R \mathcal{C}^{\tw I}_{\Pi_2^* S}   \ar@{<-}[rd]^{\widetilde{\Pi_1^*}} \\
\mathcal{D}^I_{S}  & & & ((\mathcal{C}^\vee)^{I^{\op}}_{S^{\op}})^{\op}   \\
}  }
\end{equation}
We have then similarly: 
\end{PAR}

\begin{PROP}\label{PROPCOBARFUNCT}
The pairing (\ref{EQBARCOBARFUNCT}) is isomorphic to the pairing defined by the two (isomorphic) groupoid valued functors
\begin{align*}
(\mathcal{D}^I_{S})^{\op} \times (\mathcal{C}^\vee)^{I^{\op}}_{S^{\op}} &\to \Gpd_{\infty} \\
 (D, C) &\mapsto   \Hom_{(\mathcal{C}^\vee)^{\twop I}_{\pi_2^* S^{\op}}}(Q\, \widetilde{\pi_1^*}D, \pi_2^*C) \cong \Hom_{\mathcal{C}^{\tw I}_{\pi_2^* S}}(\Pi_2^*D, R\, \widetilde{\Pi_1^*} C).
\end{align*}
\end{PROP}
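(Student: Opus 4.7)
The plan is to adapt the proof of Proposition~\ref{PROPCOBAR} by replacing its identity adjunction with the given $Q \dashv R$. The key fiber-wise observation is that the pairing $\mathcal{X}_S$ underlying $((Q \dashv R)^{\downarrow\uparrow})_S$ is obtained by Cartesian pullback from either of two twisted arrow categories,
\[
\mathcal{X}_S \;\cong\; (Q_S \times \id_{\mathcal{C}_S^{\op}})^* \twop \mathcal{C}_S \;\cong\; (\id_{\mathcal{D}_S} \times R_S^{\op})^* \twop \mathcal{D}_S,
\]
both classified by $(D, C) \mapsto \Hom_{\mathcal{C}_S}(Q_S D, C) \cong \Hom_{\mathcal{D}_S}(D, R_S C)$ via the fiberwise adjointness. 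Because $R$ is a morphism of cofibrations of operads over $\mathcal{S}$ and, by hypothesis, $Q$ assembles to a functor of fibrations of cooperads over $\mathcal{S}^{\op}$, these pullback presentations globalize to equivalences of cofibrations of operads over $\mathcal{S}$: the first exhibits $(Q \dashv R)^{\downarrow\uparrow}$ as a suitable fiber product of $\mathcal{C}^{\downarrow\uparrow}$ with $\mathcal{D}$ over $\mathcal{C}$ via $Q$, and the second as a fiber product of $\mathcal{D}^{\downarrow\uparrow}$ with $(\mathcal{C}^\vee)^{\op}$ over $(\mathcal{D}^\vee)^{\op}$ via $R$.

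Applying the translation of \ref{PARTWIST} and Corollary~\ref{KORTW} within each presentation identifies $((Q \dashv R)^{\downarrow\uparrow})^I_S$ with a full subcategory of the twisted arrow category of a mixed $I$-indexed functor $\infty$-category interpolating between $\mathcal{C}$ and $\mathcal{D}$ along $Q$ (respectively $R$), in direct analogy with the diagram appearing in the proof of Proposition~\ref{PROPCOBAR}. The projection $\rho_1 \times \rho_2$ is then exhibited as a fibration with groupoid fibers, and its fiber over $(D, C) \in \mathcal{D}^I_S \times (\mathcal{C}^\vee)^{I^{\op}}_{S^{\op}}$ is computed by reading off the Hom-space in the ambient twisted arrow category. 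The first presentation produces
\[
\Hom_{(\mathcal{C}^\vee)^{\twop I}_{\pi_2^* S^{\op}}}\bigl(Q\,\widetilde{\pi_1^*} D,\; \pi_2^* C\bigr)
\]
(with $Q$ applied pointwise to the classical bar of $D$), while the second presentation produces
\[
\Hom_{\mathcal{C}^{\tw I}_{\Pi_2^* S}}\bigl(\Pi_2^* D,\; R\,\widetilde{\Pi_1^*} C\bigr).
\]
The canonical isomorphism between these two Hom-groupoids is the globalization along $I$ of the fiberwise adjointness $Q \dashv R$.

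The main technical obstacle will be confirming that the fiberwise Cartesian pullbacks above globalize coherently as cofibrations of operads (rather than only as $\infty$-categories for each fixed $S$), and that Corollary~\ref{KORTW} continues to apply after these pullbacks. Concretely, one needs to verify that composing $\Xi_{\mathcal{C}}$ (respectively $\Xi_{\mathcal{D}}$) with the twisted arrow functor and then pulling back along the inert-pseudo oplax transformation induced by $Q$ (respectively the inert-pseudo lax transformation induced by $R$) still produces a functor $I \to (\Cat_{\infty}, \times)$ to which Proposition~\ref{PROPOPTWIST} and Corollary~\ref{KORTW} apply, so that the end formulas read off the two claimed Hom-expressions. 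Once this coherence is in place, the remaining identifications and the conclusion run precisely parallel to the proof of Proposition~\ref{PROPCOBAR}.
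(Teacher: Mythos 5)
The paper gives no proof of Proposition~\ref{PROPCOBARFUNCT} (it is ``omitted for the moment''), so there is nothing to compare against directly; judging your proposal on its own terms and against the template of Proposition~\ref{PROPCOBAR}, there is a genuine gap at exactly the step you flag as the ``main technical obstacle'', and the justification you give for it is not correct. Your fiberwise identifications $\mathcal{X}_S \cong (Q_S \times \id)^* \twop \mathcal{C}_S \cong (\id \times R_S^{\op})^* \twop \mathcal{D}_S$ are fine, but they do not globalize in the way you assert. The first presentation needs a functor $\mathcal{D} \to \mathcal{C}$ over $\mathcal{S}$ assembled from the $Q_S$, and the second needs a functor $(\mathcal{C}^{\vee})^{\op} \to (\mathcal{D}^{\vee})^{\op}$ over $\mathcal{S}$ assembled from the $R_S$; neither exists in general. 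In terms of classifying functors, $R$ is only a \emph{lax} transformation $\Xi_{\mathcal{C}} \Rightarrow \Xi_{\mathcal{D}}$ (the paper does not assume $R$ preserves coCartesian morphisms --- indeed the interesting case is precisely when it does not), and its mate $Q$ is only \emph{oplax}; a lax transformation gives a functor of the coCartesian unstraightenings over $\mathcal{S}$ (that is $R\colon \mathcal{C}\to\mathcal{D}$), an oplax one gives a functor of the Cartesian unstraightenings over $\mathcal{S}^{\op}$ (that is $Q\colon \mathcal{D}^{\vee}\to\mathcal{C}^{\vee}$), and the mixed directions you need are exactly the ones that are unavailable. The same obstruction is visible on section categories: $Q$ does not induce $\mathcal{D}^I_S \to \mathcal{C}^I_S$ and $R$ does not induce $(\mathcal{C}^{\vee})^{I^{\op}}_{S^{\op}} \to (\mathcal{D}^{\vee})^{I^{\op}}_{S^{\op}}$ (already for $I=\mathcal{S}=\OOO$, an oplax monoidal $Q$ does not send algebras to algebras). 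So the squares whose pullback would present $(Q\dashv R)^{\downarrow\uparrow}$ as a fiber product of cofibrations over $\mathcal{S}$ commute only up to non-invertible $2$-cells, the pullback presentations are not there to apply Corollary~\ref{KORTW} inside, and the subsequent ``read off the fiber as a Hom in $\twop\mathcal{C}$ resp.\@ $\twop\mathcal{D}$'' step has no ambient category to take place in.

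The repair is not to reduce to $\twop\mathcal{C}$ or $\twop\mathcal{D}$ at all, but to rerun the argument of Proposition~\ref{PROPCOBAR} with the mixed bifunctor built into it: replace the fiberwise functor $\mlq \Hom \mrq$ on $\Xi^{\op}\times\Xi$ used in the proof of Corollary~\ref{KORTW} by $\mlq \Hom_{\mathcal{C}}(Q-,-)\mrq \cong \mlq \Hom_{\mathcal{D}}(-,R-)\mrq$ on $\Xi_{\mathcal{D}}^{\op}\times\Xi_{\mathcal{C}}$ (its functoriality over $I$ is exactly encoded by the lax structure of $R$, equivalently the oplax structure of $Q$, together with the unit/counit $2$-cells as in the sketch of Proposition~\ref{PROPFUNCTCOBAR}), apply Proposition~\ref{PROPHOMCOFIB} to that functor, and then identify the resulting fiber over $(D,C)$ with $\Hom(Q\,\widetilde{\pi_1^*}D,\pi_2^*C)$ and, dually, with $\Hom(\Pi_2^*D, R\,\widetilde{\Pi_1^*}C)$. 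That keeps the overall architecture you describe, but the adjunction must be threaded through the lax-limit formalism rather than through global Cartesian pullbacks of twisted arrow cofibrations, which is where your write-up currently fails.
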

The proof is omitted for the moment. 

\begin{DEF}
We call
\[ \barlurie_Q := \pi_{2,!}^{(S^{\op})} \circ Q \circ \widetilde{\pi_1^*}  \qquad \cobarlurie_R := \Pi_{2,!}^{(S)} \circ R \circ \widetilde{\Pi_1^*}   \]
the derived (co)bar constructions w.r.t.\@ the adjunction $Q \dashv R$, when the respective Kan extensions exist. 
\end{DEF}
As before, the Proposition shows that if $\barlurie_Q$ and $\cobarlurie_R$ exist, then the pairing (\ref{EQBARCOBARFUNCT}) is left
and right representable with those functors associated. 

For the special case $I= \mathcal{S} = \OOO$ one can deduce exactly as  in Corollary~\ref{KORLURIE}:

\begin{KOR}
If $\mathcal{D}$ is a monoidal $\infty$-category such that 1 is final and which admits geometric realizations, and If $\mathcal{C}$ is a monoidal $\infty$-category such that 1 is initial and
such that $\mathcal{C}^{\op}$ admits geometric realizations, then the derived bar and cobar constructions w.r.t.\@ the adjunction $Q \dashv R$ exist and form an adjunction
\[ \xymatrix{ \Alg(\mathcal{D}) \ar@<3pt>[rr]^{\barlurie_Q} & & \ar@<3pt>[ll]^{\cobarlurie_R} \Coalg(\mathcal{C}) } \]
with $\barlurie_Q$ left adjoint. 
\end{KOR}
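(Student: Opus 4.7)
The plan is to deduce the corollary just as Corollary~\ref{KORLURIE} is deduced from Theorem~\ref{THEOREMLURIE}, invoking the latter separately on each side: for $\mathcal{D}$ to handle the bar construction, and (dually) for $\mathcal{C}$ to handle the cobar construction.

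For the bar side, I would apply Theorem~\ref{THEOREMLURIE} to $\mathcal{D}$: the hypotheses that $1$ is final in $\mathcal{D}$ and that $\mathcal{D}$ admits geometric realizations are exactly those required, yielding $\barlurie_{\mathcal{D}} = \colim_{\Delta^{\op}} \circ (\rho^*)^{-1} \circ \widetilde{\pi_1^*}$. Since $Q: \mathcal{D}^{\vee} \to \mathcal{C}^{\vee}$ is a functor of fibrations of cooperads over $\OOO^{\op}$, post-composition with $Q$ descends to a functor $\Coalg(\mathcal{D}) \to \Coalg(\mathcal{C})$, and I would set $\barlurie_Q := Q \circ \barlurie_{\mathcal{D}}$. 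The identification with the abstract formula $\pi_{2,!}^{(S^{\op})} \circ Q \circ \widetilde{\pi_1^*}$ of Definition~\ref{DEFCOBARLURIE} (adapted to the $Q$-version) will rely on each $Q_S$ being fiber-wise a left adjoint and hence commuting with $\colim_{\Delta^{\op}}$. The specific geometric realizations needed in $\mathcal{C}$ (which is not assumed to admit all of them) exist automatically as images under this colimit-preserving $Q$ of geometric realizations in $\mathcal{D}$.

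Dually, I would apply (the dual of) Theorem~\ref{THEOREMLURIE} to $\mathcal{C}$: the hypotheses that $1$ is initial in $\mathcal{C}$ and that $\mathcal{C}^{\op}$ admits geometric realizations (i.e.\@ $\mathcal{C}$ admits totalizations) give the existence of $\cobarlurie_{\mathcal{C}}: \Coalg(\mathcal{C}) \to \Alg(\mathcal{C})$. Since $R: \mathcal{C} \to \mathcal{D}$ is a functor of operads, it descends to $R: \Alg(\mathcal{C}) \to \Alg(\mathcal{D})$, and I would set $\cobarlurie_R := R \circ \cobarlurie_{\mathcal{C}}$. That each $R_S$ is fiber-wise a right adjoint, and hence commutes with the relevant totalization $\Pi_{2,*}$, will identify this with the abstract formula $\Pi_{2,*}^{(S)} \circ R \circ \widetilde{\Pi_1^*}$.

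The adjunction itself will follow at once from Proposition~\ref{PROPCOBARFUNCT}: the existence of $\barlurie_Q$ and $\cobarlurie_R$ just constructed exhibits the pairing (\ref{EQBARCOBARFUNCT}) (specialized to $I = \mathcal{S} = \OOO$) as both left and right representable with associated functors $\barlurie_Q$ and $\cobarlurie_R$, which by the general theory of representable pairings (Definition~\ref{DEFPAIRING}) are then automatically adjoint. The hard part is really encapsulated in the setup of~\ref{PARCOBARSETTINGFUNCT}, namely the coherent packaging of the fiber-wise adjunctions $Q_S \dashv R_S$ into genuine functors of fibrations of (co)operads $Q: \mathcal{D}^{\vee} \to \mathcal{C}^{\vee}$ and $R: \mathcal{C} \to \mathcal{D}$ over $\mathcal{S}^{\op}$, respectively $\mathcal{S}$. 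Granted this ambient framework, the remainder of the argument is a purely formal reshuffling of facts already established.
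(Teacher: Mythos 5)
Your overall strategy is not the paper's, and it has a genuine gap at its central step. The paper deduces the corollary ``exactly as in Corollary~\ref{KORLURIE}'': the content is the left and right representability of the pairing of Proposition~\ref{PROPCOBARFUNCT}, i.e.\@ the existence of the partial relative Kan extensions $\pi_{2,!}$ on the image of $Q\circ\widetilde{\pi_1^*}$ and $\Pi_{2,*}$ on the image of $R\circ\widetilde{\Pi_1^*}$. You instead form the absolute constructions $\barlurie_{\mathcal{D}}$ and $\cobarlurie_{\mathcal{C}}$ and post-compose with $Q$, resp.\@ $R$, asserting that the identification with $\pi_{2,!}\circ Q\circ\widetilde{\pi_1^*}$ (resp.\@ $\Pi_{2,*}\circ R\circ\widetilde{\Pi_1^*}$) follows because $Q$ preserves geometric realizations (resp.\@ $R$ totalizations). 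That is precisely the point left unaddressed: colimit preservation identifies underlying objects only once one already knows that the operadic Kan extension $\pi_{2,!}$ exists on $Q\widetilde{\pi_1^*}A$ and is computed fiber-wise, and the mechanism of Theorem~\ref{THEOREMLURIE} for this requires inverting $\rho^*$ on the $\mathcal{C}$-side, i.e.\@ the unit of $\mathcal{C}$ to be \emph{final} (Corollary~\ref{KORRHO}), which your setup does not provide. Transporting the $\rho$-extension from $\mathcal{D}^{\vee}$ through $Q$ does produce an extension of $Q\widetilde{\pi_1^*}A$ over $(\Delta,\ast)^{\op}$, but not obviously a relative \emph{Kan} extension along $\rho$: when $1$ is not final in $\mathcal{C}$, morphisms in $(\mathcal{C}^{\vee})^{\twop \OOO}$ need not respect the transported counits, so $\Hom(Q\widetilde{\pi_1^*}A,\pi_2^*C)$ is not identified with $\Hom(Q\barlurie_{\mathcal{D}}A,C)$; equivalently, the universal property of your candidate is unproven. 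The same issue occurs, dually, for $R\circ\cobarlurie_{\mathcal{C}}$.

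There is also a mismatch with how the corollary is used: in Example~\ref{PARNONCONNECTED} (and in the Cartesian computation of Section~\ref{SECTCART}) the unit is \emph{initial} in $\mathrm{Bimod}_X(\mathcal{C})_{X/}$, which plays the role of $\mathcal{D}$, and \emph{final} in $\mathcal{C}_{X/}$, which plays the role of $\mathcal{C}$, so the ``final''/``initial'' conditions in the statement appear to be attached to the wrong categories. The intended argument performs the $\rho^*$-inversion for the bar in $\mathcal{C}$ (unit final there) and for the cobar in $\mathcal{D}$ (unit initial there), and imports the needed geometric realizations, resp.\@ totalizations, of the specific diagrams $Q(A^{\otimes\bullet})$, resp.\@ $R(C^{\otimes\bullet})$, through the colimit-preserving $Q$, resp.\@ limit-preserving $R$; it never forms $\barlurie_{\mathcal{D}}$ or $\cobarlurie_{\mathcal{C}}$, which under that reading need not even exist (Theorem~\ref{THEOREMLURIE} does not apply to $\mathcal{D}$ when $1$ is not final there). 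So under the intended reading your first step is unavailable, and under the literal reading your reduction leaves the actual crux --- representability of the $Q$/$R$-twisted pairing --- unestablished.
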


Notice that, if $Q$ preserves  the monoidal product (not only laxly), then $\widetilde{\pi_1^*} \circ Q \cong Q \circ \widetilde{\pi_1^*}$ and thus
$\barlurie_Q = \barlurie \circ Q$ and we gain nothing new.

\begin{BEISPIEL}[Non-connected (co)bar]\label{PARNONCONNECTED}
Let $\mathcal{C}$ be a monoidal $\infty$-category such that 1 is final (but not necessarily initial) and let $X \in \Coalg(\mathcal{C})$ be a coalgebra. 
Consider the following adjunction between monoidal $\infty$-categories: 
\[ \xymatrix{ \mathrm{Bimod}_{X}(\mathcal{C})_{X/} \ar@<3pt>[r]^-{Q}  & \ar@<3pt>[l]^-{R} \mathcal{C}_{X/} }    \]
Here $\mathrm{Bimod}_{X}(\mathcal{C})$ is equipped with the monoidal product $- \otimes_X -$ described in \cite[4.4.2]{Lur11}, cf.\@ \cite[Proposition~4.4.3.12]{Lur11}, and unit $X$ (considered as $X$-bimodule) and 
$\mathrm{Bimod}_{X}(\mathcal{C})_{X/}$ is, as undercategory under the unit, monoidal itself. 
However, also the undercategory $\mathcal{C}_{X/}$ is monoidal because $X$ is a coalgebra (two morphisms $X \to Y_1$ and $X \to Y_2$ are mapped to the
composition $X \to X \otimes X \to Y_1 \otimes Y_2$).
$Q$ is the forgetful functor which forgets the bimodule structure but keeps the coaugmentation. $R$ is the free-bialgebra functor \cite[Proposition~4.3.3.12]{Lur11} transferring the coaugmentation 
 by means of the adjunction
\[ \Hom_{\mathrm{Bimod}_{X}}(X, RY) \cong \Hom_{\mathcal{C}}(QX, Y) \]
where $X$ is considered as bimodule over itself (= unit in $\mathrm{Bimod}_{X}$).

Then $X$ is initial in $\mathrm{Bimod}_{X}(\mathcal{C})_{X/}$ and $X \to 1$ is final in $\mathcal{C}_{X/}$, hence if $\mathcal{C}^{\op}$ has geometric realizations 
and $\mathrm{Bimod}_{X}(\mathcal{C})_{X/}$ has geometric realizations (it has by the dual of \cite[4.3.3.9]{Lur11} as soon as $\mathcal{C}$ has) then 
we have a derived (co)bar adjunction
\[ \xymatrix{ \Alg(\mathrm{Bimod}_{X}(\mathcal{C})_{X/}) \ar@<3pt>[rr]^-{\barlurie_Q} & & \ar@<3pt>[ll]^-{\cobarlurie_R} \Coalg(\mathcal{C}_{X/}) } \]
\end{BEISPIEL}

\begin{DEF}
We call $\Alg(\mathrm{Bimod}_{X}(\mathcal{C})_{X/})=\Alg(\mathrm{Bimod}_{X}(\mathcal{C}))$ the category of {\bf category objects} of $\mathcal{C}$ with objects $X$ in this case\footnote{I just became aware of the articles \cite{Mer24, LM25} which contain related theory.}.
\end{DEF}
See Section~\ref{SECTCART} for the Cartesian case, explaining the notion. 

We proceed to discuss a functoriality of the {\em classical} (co)bar which will be important for comparisons between different such construction (Section~\ref{SECTFUNCT}).

\begin{PROP} \label{PROPFUNCTCOBAR}There is an adjunction
\[ \cobarconst \circ  Q \dashv \barconst \circ R \]
which is to say there are morphisms
 \[ Q \circ  \barconst \circ R \to \barconst \qquad \cobarconst \to R \circ \cobarconst \circ Q   \]
 satisfying suitable compatibilities. 
\end{PROP}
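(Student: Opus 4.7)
The plan is to prove the adjunction by the mate calculus, using the two classical cobar adjunctions $\cobarconst_\mathcal{C} \dashv \barconst_\mathcal{C}$ and $\cobarconst_\mathcal{D} \dashv \barconst_\mathcal{D}$ supplied by Theorem~\ref{THEOREMEXISTENCECOBAR}. The equivalent reformulation of the sought adjunction via $\cobarconst_\mathcal{C} \dashv \barconst_\mathcal{C}$ is the isomorphism
\[ \Hom_{\mathcal{C}^I_S}(\cobarconst_\mathcal{C} Q(X), A) \cong \Hom_{(\mathcal{C}^\vee)^{\twop I}_{\pi_2^* S^{\op}}}(Q(X), \barconst_\mathcal{C}(A)) \cong \Hom_{(\mathcal{D}^\vee)^{\twop I}_{\pi_2^* S^{\op}}}(X, \barconst_\mathcal{D} R(A)) \]
so the substance of the statement lies in the second isomorphism, and this is precisely what the natural transformation $\alpha\colon Q \circ \barconst_\mathcal{D} \circ R \Rightarrow \barconst_\mathcal{C}$ (together with its mate $\beta\colon \cobarconst_\mathcal{D} \Rightarrow R \circ \cobarconst_\mathcal{C} \circ Q$) encodes.

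To construct $\alpha$, I would first build a natural 2-cell $\barconst_\mathcal{D} \circ R \Rightarrow R \circ \barconst_\mathcal{C}$ at the level of underlying functors $\twop I \to \mathcal{D}$ (ignoring cooperad structure on the right). By definition $\barconst = \widetilde{\pi_1^*} = S(\mu)_\bullet \pi_1^*$, and since $R$ is a morphism of cofibrations of operads it commutes strictly with $\pi_1^*$ but only up to a canonical (generally non-invertible) lax constraint with the push-forward $\mu_\bullet$ along active morphisms; packaged over $\twop I$ these constraints yield the desired 2-cell. Taking the transpose under the fiber-wise adjunction $Q \dashv R$ produces $\alpha\colon Q \circ \barconst_\mathcal{D} \circ R \Rightarrow \barconst_\mathcal{C}$ as a natural transformation of underlying functors $\twop I \to \mathcal{C}^\vee$; one then verifies that $\alpha$ respects the cooperad structure (i.e.\@ lies in $(\mathcal{C}^\vee)^{\twop I}_{\pi_2^* S^{\op}}$) using that $Q$ is a morphism of fibrations of cooperads, so it commutes with the coCartesian lifts over inert morphisms in $\twop I$.

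The transformation $\beta$ is defined as the mate of $\alpha$: explicitly, $\beta(X)$ is obtained as the image of $\id_{\cobarconst_\mathcal{D}(X)}$ under the chain
\[ \Hom(\cobarconst_\mathcal{D}(X),\cobarconst_\mathcal{D}(X)) \cong \Hom(X,\barconst_\mathcal{D}\cobarconst_\mathcal{D}(X)) \to \Hom(X,\barconst_\mathcal{D} R \cobarconst_\mathcal{C} Q(X)) \cong \Hom(\cobarconst_\mathcal{D}(X), R\cobarconst_\mathcal{C} Q(X)), \]
the middle map being induced by $\barconst_\mathcal{D}$ applied to the composite $\cobarconst_\mathcal{D}(X) \to \cobarconst_\mathcal{D}\barconst_\mathcal{D} R \cobarconst_\mathcal{C} Q(X) \to R\cobarconst_\mathcal{C} Q(X)$ that one extracts from $\alpha$. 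The triangle identities for the claimed adjunction $\cobarconst_\mathcal{C}\circ Q \dashv \barconst_\mathcal{D} \circ R$ then reduce, via the full-faithfulness of $\barconst_\mathcal{C}$ and $\barconst_\mathcal{D}$ (so that $\cobarconst_? \barconst_? \cong \id$) and the triangle identities for $\cobarconst_\mathcal{C}\dashv\barconst_\mathcal{C}$, $\cobarconst_\mathcal{D}\dashv\barconst_\mathcal{D}$, and $Q\dashv R$ fiber-wise, to the single compatibility that $\alpha$ and the transposition across $Q\dashv R$ are natural with respect to one another.

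The main obstacle I expect is the verification in the last step of \emph{that compatibility}, and in particular the cooperad-structure check for $\alpha$ in Step~2. The tension is that $R$ is only a functor of operads (not of cooperads), so applying $R$ level-wise to a cooperad-valued functor does \emph{not} produce a cooperad-valued one; one has to show that the transposition under $Q\dashv R$ cancels out this failure exactly, producing a bona fide morphism of cooperads $\alpha\colon Q\barconst_\mathcal{D} R \Rightarrow \barconst_\mathcal{C}$. Concretely this amounts to compatibility of the lax constraint of $R$ on active push-forwards with the oplax comonoidal constraint of $Q$, which follows formally from the mate identities of the adjunction $Q\dashv R$ between (co)fibrations of (co)operads but requires some care with the 2-categorical bookkeeping provided by Section~2.5.
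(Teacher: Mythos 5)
Your construction of $\alpha\colon Q\circ\barconst_{\mathcal{D}}\circ R\Rightarrow\barconst_{\mathcal{C}}$ is sound and is essentially the paper's: the 2-cell comes from the lax constraint of $R$ on active push-forwards, transposed through the pointwise (co)units of $Q\dashv R$, and the cooperad-compatibility is exactly the mate-identity check you flag. The gap is in the second half. The transformation $\beta\colon\cobarconst_{\mathcal{D}}\Rightarrow R\circ\cobarconst_{\mathcal{C}}\circ Q$ is \emph{not} a formal mate of $\alpha$: the chain you write down is circular, because its middle map is postcomposition with $\barconst_{\mathcal{D}}$ applied to a morphism $\cobarconst_{\mathcal{D}}(X)\to R\,\cobarconst_{\mathcal{C}}Q(X)$ --- which is precisely $\beta(X)$, the thing being defined. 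There is no adjunction available to transpose across: $Q$ lives on the $\twop I$-diagram categories while $R$ lives on the $I$-sections, so they are not adjoint to each other as functors of those categories, and from $\alpha$ alone one only extracts the counit-candidate $\cobarconst_{\mathcal{C}}\,Q\,\barconst_{\mathcal{D}}\,R\Rightarrow\id$ (using $\cobarconst_{\mathcal{C}}\barconst_{\mathcal{C}}\cong\id$); a counit-candidate by itself does not produce a unit. Note also that your levelwise comparison points the wrong way for this purpose: it reads $\barconst_{\mathcal{D}}R\Rightarrow R^{\mathrm{lev}}\barconst_{\mathcal{C}}$, so you cannot obtain the needed unit $X\to\barconst_{\mathcal{D}}R\,\cobarconst_{\mathcal{C}}Q X$ by transposing the unit of $\cobarconst_{\mathcal{C}}\dashv\barconst_{\mathcal{C}}$ levelwise.

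What is missing is an actual construction on the cobar side --- equivalently, a proof that the comparison $\Hom(X,\barconst_{\mathcal{D}}RA)\to\Hom(QX,\barconst_{\mathcal{C}}A)$ induced by $Q$ and $\alpha$ is an equivalence; this is the real content and does not follow from full faithfulness of bar plus naturality. The paper supplies it by presenting $\cobarconst$ as the relative Kan extension $\pi_{3,!}^{(S)}\circ\widetilde{\pi_{12}^*}$ (Lemma~\ref{LEMMACOBAR}, Theorem~\ref{THEOREMEXISTENCECOBAR}) and building the 2-cell $\cobarconst_{\mathcal{D}}\Rightarrow R\,\cobarconst_{\mathcal{C}}\,Q$ from the pointwise unit $\id\Rightarrow RQ$, the lax constraint of $R$, and the canonical exchange $\pi_{3,!}\circ R\Rightarrow R\circ\pi_{3,!}$; only with both 2-cells in hand does the (still nontrivial, and in the paper omitted) verification of the triangle identities make sense. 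To complete your argument you need this second construction, or a direct comparison of the two mapping spaces, e.g.\ via their limit descriptions (Proposition~\ref{PROPHOM}) together with the 2-(co)Cartesian characterization of the essential image of $\barconst$, which is where the pointwise adjunction $Q\dashv R$ actually gets used object by object.
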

\begin{proof}[Proof (sketch).]

We have for the fibers 
\[ \mathcal{C}^{I}_{S} \cong (\mathcal{C}^{\vee})^{\twop I, 2-\cocart}_{\pi_2^* S^{\op}} \] 
   the identification being given by the composition:
  \[ \xymatrix{ \mathcal{C}_{S}^{I} \ar[d]^{\sim} &  (\mathcal{C}^{\vee})_{\pi_2^* S^{\op}}^{ \twop  I, 2-\cocart} \ar[d]^{\sim} \\
   \Hom^{\lax}_{(\Cat_{\infty}^{\times})^{I}}(\cdot, \Xi) \ar[r]^-{\sim}  & \Hom^{\oplax,\inert-\pseudo,2-\pseudo}_{(\Cat_{\infty}^{\times})^{\tw I}}(\cdot, \pi_2^* \Xi) 
   }  \]
 and similarly dually. 
There is a commutative diagram where the natural transformation is given (essentially) by the units of the point-wise adjunction. 
  \[ \xymatrix{  
   \Hom^{\lax}_{(\Cat_{\infty}^{\times})^{ I}}(\cdot, \Xi_{\mathcal{D}}) \ar[r] \ar[d]_{R} \ar@{}[rd]|{\Uparrow} & \Hom^{\oplax,\inert-\pseudo}_{(\Cat_{\infty}^{\times})^{\tw I}}(\cdot, \pi_2^*\Xi_{\mathcal{D}})  \\
   \Hom^{\lax}_{(\Cat_{\infty}^{\times})^{ I}}(\cdot, \Xi_{\mathcal{C}}) \ar[r]  & \Hom^{\oplax,\inert-\pseudo}_{(\Cat_{\infty}^{\times})^{\tw I}}(\cdot, \pi_2^*\Xi_{\mathcal{C}}) \ar[u]_{Q} 
   }  \]
and similarly with (op)lax exchanged.

This defines the  2-morpisms in the following diagrams
\[ \vcenter{ \xymatrix@C=1pc{
\mathcal{C}^I_{S} \ar@{}[rrd]|{\Uparrow} \ar[d]_R & & (\mathcal{C}^{\vee})^{\twop I}_{\pi_2^* S^{\op}} \ar@{<-}[ll]_-{\widetilde{\pi_1^*}} \\ 
\mathcal{D}^I_{S}  & &  (\mathcal{D}^{\vee})^{\twop I}_{\pi_2^* S^{\op}} \ar@{<-}[ll]^-{\widetilde{\pi_1^*}}  \ar[u]_{Q} 
}  }
\]
and

 \[ 
 \xymatrix{  %
   (\mathcal{C}^{\vee})_{\pi_2^*S^{\op}}^{\twop I} \ar[rr]^{\widetilde{\pi_{12}^*}} \ar@{}[rrd]|{\Uparrow} &  & \mathcal{C}_{\pi_3^*S}^{\twcop I}  \ar[rr]^{\pi_{3,!}^{(S)}}  \ar[d]^{R}   \ar@{}[rrd]|{\Uparrow} & &  \mathcal{C}_{S}^{I}  \ar[d]^R \\
   (\mathcal{D}^{\vee})_{\pi_2^*S^{\op}}^{\twop M} \ar[rr]_{\widetilde{\pi_{12}^*}} \ar[u]^{Q} & & \mathcal{D}_{\pi_3^*S}^{\twcop I} \ar[rr]_{\pi_{3,!}^{(S)}} & &  \mathcal{D}_{S}^{I}
}  
  \]
  with the morphism $\widetilde{\pi_{12}^*}$ from Lemma~\ref{LEMMACOBAR}.
  The proof that these two 2-morphisms yield an adjunction is omitted for the moment. 
\end{proof}

\subsection{Classical and derived (co)bar in the Cartesian case} \label{SECTCART}

We consider again the classical and derived (co)bar for $I = \mathcal{S} = \OOO$ in which case $(\mathcal{C}, \otimes) \to \OOO$ is just a monoidal $\infty$-category. 
We investigate the Cartesian case in this section, i.e.\@ $\otimes=\times$. 

Recall that if $(\mathcal{C}, \times)$ is a Cartesian monoidal $\infty$-category  then we have
\[ \Alg(\mathcal{C}, \times) \cong \mathrm{Mon}(\mathcal{C}), \qquad \Coalg(\mathcal{C}, \times) \cong \mathcal{C}. \]
We will consider the non-connected case (cf.\@ \ref{PARNONCONNECTED}) right away. 
The connected case can be obtained as the special case $X = \cdot$. 
Let $X$ be an object of $\mathcal{C}$ which we can see as ``coalgebra'' in this case. $\mathrm{Bimod}_X(\mathcal{C})$ is, in this case, nothing but 
\[ \mathcal{C}_{/X \times X} \] 
with product $M_1, M_2 \mapsto M_1 \times_{\pr_2, X, \pr_1} M_2$.
Objects in $\Alg(\mathrm{Bimod}_X(\mathcal{C})_{X/}) = \Alg(\mathrm{Bimod}_X(\mathcal{C}))$  are thus {\bf category objects} in $\mathcal{C}$ in the usual sense with objects $X$. 
In the adjunction 
\[ \xymatrix{ \mathrm{Bimod}_{X}(\mathcal{C})_{X /} \ar@<3pt>[r]^-{Q}  & \ar@<3pt>[l]^-{R} \mathcal{C}_{X /} }    \]
the functor $Q$ is given by forgetting the morphism $M \to X \times X$ (but remembering the unit = coaugmentation)  and $R$ associates with $X \to Y$ the ``free bimodule'' $X \times Y \times X$ with obvious augmentation 
$X \to X \times Y \times X$.

\begin{PROP}\label{PROPLOOP}
We have for $X \to Y$ in the category $\mathcal{C}_{X /}$  
\[ \boxed{ \cobarlurie_R(X \to Y) = X \times_Y X } \]
with product given by the canonical ``concatenation'' operation:
\[  (X \times_Y X) \times_X (X \times_Y X) \cong X \times_Y X \times_Y X \to X \times_Y X. \]
\end{PROP}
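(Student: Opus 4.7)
The plan is to verify the universal property of $X \times_Y X$ via the adjunction $\barlurie_Q \dashv \cobarlurie_R$ from Proposition~\ref{PROPCOBARFUNCT}. Writing $\phi: X \to Y$ for the structure map, I will construct for every category object $A \in \Alg(\mathrm{Bimod}_X(\mathcal{C})_{X/})$ a natural equivalence
\[ \Hom_{\Alg(\mathrm{Bimod}_X(\mathcal{C})_{X/})}(A, X \times_Y X) \cong \Hom_{\mathcal{C}_{X/}}(\barlurie_Q(A), Y), \]
which by Yoneda identifies $X \times_Y X$ (with its concatenation algebra structure) as $\cobarlurie_R(X \to Y)$.

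First I will identify $\barlurie_Q(A)$ with the classifying space $|A| := \colim_{\Delta^{\op}} N(A)_\bullet$ in $\mathcal{C}_{X/}$, where $N(A)_n = A^{\otimes_X n}$ is the nerve of $A$. Unwinding $\barlurie_Q = \pi_{2,!}^{(S^{\op})} \circ Q \circ \widetilde{\pi_1^*}$, the composition $Q \circ \widetilde{\pi_1^*}(A)$ is precisely $N(A)_\bullet$ viewed as a simplicial coalgebra in $\mathcal{C}_{X/}$ via the augmentations from the unit of $A$. Because $(X \to 1)$ is final in $\mathcal{C}_{X/}$, Theorem~\ref{THEOREMLURIE} applies on the coalgebra side, and $\pi_{2,!}^{(S^{\op})} \circ (\rho^*)^{-1}$ computes the geometric realization, yielding $\barlurie_Q(A) \cong |A|$.

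Next I will unpack both sides. On the left, since $X \times_Y X \hookrightarrow X \times X$ is by construction the pullback of the diagonal $Y \to Y \times Y$, the space of bimodule maps $A \to X \times_Y X$ over $X \times X$ is the space of factorizations of $A \to X \times X \xrightarrow{\phi \times \phi} Y \times Y$ through $\Delta$, equivalently the space of homotopies $\phi \circ s \simeq \phi \circ t$ in $\mathcal{C}(A, Y)$. Compatibility of such a morphism with the multiplications $A \otimes_X A \to A$ and $X \times_Y X \times_Y X \to X \times_Y X$ and with units is then forced by the identities $s(ab) = s(a)$, $t(ab) = t(b)$ inside $A$. On the right, the colimit description of $|A|$ gives
\[ \Hom_{\mathcal{C}_{X/}}(|A|, Y) \cong \lim_{[n] \in \Delta} \Hom_{\mathcal{C}_{X/}}(N(A)_n, Y), \]
and at level $n=1$ a datum consists of a map $\alpha: A \to Y$ together with homotopies $\alpha \simeq \phi \circ s$ and $\alpha \simeq \phi \circ t$, i.e., precisely a homotopy $\phi \circ s \simeq \phi \circ t$. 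I will verify that the data in degrees $\ge 2$ is determined by simplicial coherence from the face maps (source, target, and multiplication), so that the limit recovers exactly the left-hand datum.

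The main obstacle will be justifying the coskeletality claim in this last step in the $\infty$-categorical setting, i.e., that the $\Hom$-limit is controlled by its $1$-truncation and that all higher coherences are determined by the lower ones. This is the assertion that $N(X \times_Y X)_\bullet$, which coincides with the Čech nerve of $\phi$, is $2$-coskeletal as an augmented simplicial object over $Y$. Alternatively one may compute $\cobarlurie_R(X \to Y)$ directly as the totalization in $\mathrm{Bimod}_X(\mathcal{C}) = \mathcal{C}_{/X \times X}$ of the cosimplicial object $[n] \mapsto R((X \to Y)^{\otimes n}) = X \times Y^n \times X$ arising from $\Pi_{2,*} \circ R \circ \widetilde{\Pi_1^*}$; the two coface maps $X \times X \rightrightarrows X \times Y \times X$ are insertion of $\phi$ on either side, so the equalizer is $X \times_Y X$, with the concatenation algebra structure arising from the cosimplicial comultiplication.
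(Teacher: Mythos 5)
Your first route (verifying the universal property against the adjunction of Proposition~\ref{PROPCOBARFUNCT}) has a genuine gap beyond the one you flag yourself. The claim that compatibility of a bimodule map $A \to X\times_Y X$ with the multiplications and units ``is forced by the identities $s(ab)=s(a)$, $t(ab)=t(b)$'' is a 1-categorical argument: it amounts to saying that $X\times_Y X \to X\times X$ is a monomorphism, being a pullback of the diagonal of $Y$. In an $\infty$-category the diagonal is not $(-1)$-truncated, so being an algebra map is additional coherence data, not a property of the underlying map over $X\times X$; consequently the comparison of the two mapping spaces cannot be made by matching data in simplicial degrees $\le 1$, and the coskeletality assertion you identify as ``the main obstacle'' is exactly the missing ingredient. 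As it stands this route does not close.

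Your alternative in the final paragraph is in substance the paper's proof: the paper computes $\cobarlurie_R(X\to Y)$ as $\lim_{\Delta}$ of the cosimplicial object with value $X\times Y^n\times X$ in degree $n$, obtained from $(\rho^*)^{-1}\circ R\circ \barconst^{\vee}$ (the passage from $\Delta_{\act}$- to $\Delta$-indexing uses the dual of Corollary~\ref{KORRHO}, possible because the unit $X$ is initial in $\mathrm{Bimod}_X(\mathcal{C})_{X/}$). However, your concluding step --- that the two coface maps $X\times X \rightrightarrows X\times Y\times X$ have equalizer $X\times_Y X$ and that this computes the totalization --- is not a valid evaluation of an $\infty$-categorical limit over $\Delta$: the totalization is not the equalizer of the bottom cofaces, and reducing it to a finite limit is precisely the content of Lemma~\ref{LEMMAXIFINAL}, the $\infty$-finality of $\xi\colon \overline{\Delta}\to\righthalfcup$, which yields $\lim_{\Delta} p_*\xi^* \cong \lim_{\righthalfcup}$ of the cospan $X\to Y\leftarrow X$, i.e.\@ $X\times_Y X$. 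Supplying that finality (or an equivalent statement that the cosimplicial object is right Kan extended from the cospan) is what both of your routes lack; with it, the identification of the underlying object is complete, and the multiplicative refinement follows as in the paper by a further (sketched) argument.
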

In particular, for $X= \cdot$, we obtain the functor $\mathcal{L}$ (loop space). 

Consider the morphism $\rho: \Delta \cong \Delta_{\act}^{\op} \subset \Delta^{\op}$ (cf.\@ \ref{PARRHOSTAR}). Considering $[n]$ as discrete category $\{0, \dots, n\}$, $\rho^{\op}$ gives rise to 
a fibration $p: \overline{\Delta} \to \Delta$ with fiber over $[n]$ being $\{0, \dots, n+1\}$.
There is a morphism 
\[ \xi:  \overline{\Delta} \to \righthalfcup  \]
mapping fiberwise $0$ to $(0,1)$, $n+1$ to $(1,0)$ and everything else to $(1,1)$.
\begin{LEMMA}\label{LEMMAXIFINAL} The morphism $\xi$ is $\infty$-final. 
\end{LEMMA}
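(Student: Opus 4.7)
The plan is to first identify $\overline{\Delta}$ with the slice category $\Delta_{/[1]}$ and then verify the three required contractibility statements via criterion 4 of Proposition~\ref{PROPEXACT}. The fibration $p: \overline{\Delta} \to \Delta$ has discrete fiber $\{0, \dots, n+1\}$ over $[n]$, so I would identify it as the unstraightening of a simplicial set, namely $\Hom_{\Delta}(-, [1]) = \Delta^1$ (which has precisely $n+2$ monotone maps $[n] \to [1]$ as its $n$-simplices), via the bijection sending $k \in \{0, \dots, n+1\}$ to the monotone map $\psi_k : [n] \to [1]$ with $\psi_k^{-1}(1) = \{n-k+1, \dots, n\}$. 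Then $k=0$ corresponds to the constant-$0$ map, $k = n+1$ to the constant-$1$ map, and $1 \le k \le n$ to the mixed (non-constant) maps, matching the description of $\xi$: under the resulting isomorphism $\overline{\Delta} \cong \Delta_{/[1]}$, the functor $\xi$ sends $f : [n] \to [1]$ to $(0,1)$, $(1,1)$, or $(1,0)$ according as $f$ is constant at $0$, mixed, or constant at $1$. Functoriality is automatic since precomposing a constant map yields a constant map of the same value.

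Applying criterion 4 of Proposition~\ref{PROPEXACT} to the finality square, I would then reduce $\infty$-finality of $\xi$ to showing that, for every $c \in \righthalfcup$, the over-category $\overline{\Delta} \times_{/\righthalfcup} c$ is $\infty$-contractible. In each case the over-category has a terminal object. For $c = (1,1)$, every object of $\righthalfcup$ admits a unique morphism to $c$, so the over-category is all of $\Delta_{/[1]}$, which has terminal object $\mathrm{id}_{[1]}$ (since for any $f: [n] \to [1]$ the unique morphism $f \to \mathrm{id}_{[1]}$ in $\Delta_{/[1]}$ is $f$ itself). For $c = (0,1)$, the only morphism into $c$ in $\righthalfcup$ is $\mathrm{id}_c$, so the over-category is the full subcategory of $\Delta_{/[1]}$ spanned by constant-$0$ maps; this subcategory is equivalent to $\Delta$ itself via $[n] \mapsto (\text{constant-}0\text{ map } [n] \to [1])$, since composition of any $g: [n_1] \to [n_2]$ with a constant-$0$ map yields a constant-$0$ map (so compatibility is automatic and all of $\Hom_\Delta([n_1],[n_2])$ contributes). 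Since $[0]$ is terminal in $\Delta$, this is contractible. The case $c = (1,0)$ is entirely symmetric.

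The only conceptual step is recognizing $\overline{\Delta}$ as $\Delta_{/[1]}$ with $\xi$ reading off whether a morphism $[n] \to [1]$ is constant at $0$, constant at $1$, or mixed; after that each of the three verifications reduces to exhibiting a terminal object, so I do not anticipate a serious obstacle.
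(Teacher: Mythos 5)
Your argument is correct: the identification $\overline{\Delta} \cong \Delta_{/[1]}$ (matching the paper's ordering convention from Lemma~\ref{LEMMADUAL}, so that $0$ and $n+1$ in the fiber are the constant maps), the reduction via criterion 4 of \ref{PROPEXACT} to contractibility of the over-categories $\overline{\Delta} \times_{/\righthalfcup} c$ for the three objects $c$, and the exhibition of terminal objects ($\id_{[1]}$ for $c=(1,1)$, and $[0]$ after identifying the other two comma categories with $\Delta$) are all sound. The paper states Lemma~\ref{LEMMAXIFINAL} without proof, so there is nothing to compare against; your proof is the natural one and fills that gap.
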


\begin{proof}[Proof (Sketch) of Proposition~\ref{PROPLOOP}.]
Consider the non-connected cobar diagram (\ref{PARCOBARSETTING2FUNCT}) in this case:
\[  \xymatrix{  (\mathcal{C}_{X/}, \times)^{(\Delta_{\act}, \ast')}  \ar[r]^-R & (\mathrm{Bimod}_X(\mathcal{C})_{X/}, \times_X)^{(\Delta_{\act}, \ast')}   \\
\mathcal{C}_{X/} \cong \Coalg(\mathcal{C}_{X/}, \times)  \ar[u]_{\barconst^{\vee}} &   \ar[u]  \Alg(\mathrm{Bimod}_X(\mathcal{C})_{X/}, \times_X) } \]
where $\barconst^{\vee}$ is the dual classical bar construction or, in other words the functor $\widetilde{\Pi_2^*}$ from Definition~\ref{DEFCOBARLURIE}.
In the monoidal $\infty$-category $\mathrm{Bimod}_X(\mathcal{C})_{X/}$, we have (dual of Corollary~\ref{KORRHO}) because the unit $X$ is initial: 
\[ \rho^*: (\mathrm{Bimod}_X(\mathcal{C})_{X/})^{(\Delta, \ast)} \cong (\mathrm{Bimod}_X(\mathcal{C})_{X/})^{(\Delta_{\act}, \ast')} \]
and we have by construction of $\barconst^\vee$ --- for the underlying objects 
\[ (\rho^*)^{-1} \circ R \circ \barconst^{\vee} (X \to Y) =  p_* \xi^* \left( \vcenter{ \xymatrix{ & X \ar[d] \\ X \ar[r] & Y  } } \right) \]
(note that $p$ is a discrete fibration and thus $p_* \xi^*$ computes fiber-wise the product $X \times Y^n \times X$) 
and therefore
\begin{gather*} \cobarlurie_R(X \to Y) = \lim_{\Delta} \circ (\rho^*)^{-1} \circ R \circ \barconst^{\vee} (X \to Y) \\ 
\cong \lim_{\Delta} p_* \xi^*  \left( \vcenter{ \xymatrix{ & X \ar[d] \\ X \ar[r] & Y  } } \right) \cong \lim_{\righthalfcup}  \left( \vcenter{ \xymatrix{ & X \ar[d] \\ X \ar[r] & Y  } } \right) 
\end{gather*}
(Lemma~\ref{LEMMAXIFINAL}) or in other words:
\[ \cobarlurie_R(X \to Y) = X \times_Y X \]
and a bit more refined argument shows that the induced algebra structure is as claimed. 
\end{proof}

\begin{PAR}
Let $A \in \Alg(\mathrm{Bimod}_X(\mathcal{C})_{X/}, \times_X)$. Then $\barconst(A)$ is a coalgebra $B$ of shape $\Delta^{\op}_{\act}$  of the form
\begin{equation}
 \xymatrix{ \cdots  \ar@<4pt>[r]  \ar[r] \ar@<-4pt>[r] &A \times_X A\times_X A \ar@<2pt>[r]  \ar@<-2pt>[r] & A \times_X A \ar[r] & A  & \ar@{-->}[l] X } 
 \end{equation}
where the morphisms are the structure morphisms of the algebra
with canonical coalgebra structure
\[ B_{[i+j]} \cong B_{[i]} \times_X B_{[j]}. \]
Then apply the forgetful functor $Q$ and then $(\rho^*)^{-1}$ (in $\mathcal{C}_{X/}$ the unit becomes final!) to get a simplicial diagram (with redundant coalgebra structure by the trivial Eilenberg-Zilber Theorem~\ref{SATZEZTRIVIAL})
\[ \xymatrix{ \cdots  \ar@<8pt>[r]  \ar[r] \ar@<-8pt>[r] \ar@<4pt>[r]  \ar[r] \ar@<-4pt>[r] &A \times_X A\times_X A \ar@<2pt>[r]  \ar@<-2pt>[r]\ar@<6pt>[r]  \ar@<-6pt>[r] & A \times_X A \ar[r]  \ar@<4pt>[r] \ar@<-4pt>[r]  & A \ar@<4pt>[r] \ar@<-4pt>[r] & \ar@{-->}[l] X. } \]
This is a simplicial diagram in $\mathcal{C}_{X/}$, not in $\Alg(\mathrm{Bimod}_X(\mathcal{C}))$!
Examining the definitions, one can easily determine the morphisms. For instance, the two morphisms $A \to X$ are given by the left and right comodule structure, i.e.\@ by the given morphism $A \to X \times X$ (which are not bimodule morphisms themselves, for instance). 
$\barlurie_Q(A)$ is then the colimit over this diagram. 
\end{PAR}

\begin{BEM}
If $\mathcal{C} = \Gpd_{\infty}$ and for $X = \cdot$, we have $1 \times_Y 1  = \Hom_Y(1, 1)$, where $1 \in Y$ denotes the distinguished object given by the coaugmentation with 
the product given by composition. 
\end{BEM}

\begin{KOR}[Connected case, cf.\@ also {\cite[Corollary~5.2.2.13]{Lur11}}] 
There are  adjunctions
\[ \xymatrix{  \mathcal{C}  \ar@/^20pt/[rrrr]^{\Sigma} \ar@<3pt>[rr]^-{F}  & & \ar@<3pt>[ll] \mathrm{Mon}(\mathcal{C})  \ar@<3pt>[rr]^-{\barlurie}  &  & \ar@<3pt>[ll]^-{\cobarlurie} \mathcal{C}  \ar@/^20pt/[llll]^{\mathcal{L}}  }\]
where $\Sigma: X \mapsto 1 \amalg_{X} 1$ is the suspension, i.e.\@ the left adjoint of $\mathcal{L}: X \mapsto 1 \times_X 1$, and $F$ is the free monoid functor. 
\end{KOR}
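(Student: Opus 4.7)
The plan is to derive this from Proposition~\ref{PROPLOOP} specialized to $X = 1$, combined with the standard free monoid adjunction and Corollary~\ref{KORLURIE}.

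First, I would specialize Proposition~\ref{PROPLOOP} to the case $X = 1$ (the monoidal unit, which is also the terminal object since $\otimes = \times$). Here one has canonical equivalences $\mathrm{Bimod}_1(\mathcal{C}) \simeq \mathcal{C}_{/1 \times 1} \simeq \mathcal{C}$ and, because $1$ is assumed to also be initial, $\mathcal{C}_{1/} \simeq \mathcal{C}$ and $\mathrm{Bimod}_1(\mathcal{C})_{1/} \simeq \mathcal{C}$. Moreover, the product $\times_1$ on bimodules agrees with $\times$ on $\mathcal{C}$. Under these identifications the adjunction $Q \dashv R$ becomes (equivalent to) the identity adjunction, so $\cobarlurie_R$ from Proposition~\ref{PROPLOOP} coincides with $\cobarlurie$ from Corollary~\ref{KORLURIE}. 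The formula $\cobarlurie_R(X \to Y) = X \times_Y X$ then yields
\[ \cobarlurie(Y) \;=\; 1 \times_Y 1 \]
as an object of $\mathrm{Mon}(\mathcal{C}) = \Alg(\mathcal{C}, \times)$, with multiplication given by the concatenation pullback
\[ (1 \times_Y 1) \times (1 \times_Y 1) \;\simeq\; (1 \times_Y 1) \times_1 (1 \times_Y 1) \;\simeq\; 1 \times_Y 1 \times_Y 1 \;\to\; 1 \times_Y 1. \]

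Next I would invoke the two adjunctions. By Corollary~\ref{KORLURIE}, $\barlurie \dashv \cobarlurie$. The free/forgetful adjunction $F \dashv U$ between $\mathcal{C}$ and $\mathrm{Mon}(\mathcal{C})$ exists because $(\mathcal{C}, \times)$ is $L$-admissible (it is a monoidal $\infty$-category with colimits, and $\times$ preserves them in each variable separately under the standing hypothesis; this free algebra is the example discussed after Proposition~\ref{PROPKAN}). Composing the two adjunctions gives
\[ \barlurie \circ F \;\dashv\; U \circ \cobarlurie, \]
and by the previous paragraph the right adjoint sends $Y$ to the underlying object $1 \times_Y 1$, which is exactly $\mathcal{L}(Y)$.

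Finally I would identify $\barlurie \circ F$ with $\Sigma$. Since left adjoints are unique up to equivalence, it suffices to verify directly that $X \mapsto 1 \amalg_X 1$ is left adjoint to $Y \mapsto 1 \times_Y 1$. This is a standard pushout/pullback manipulation: for any $X, Y \in \mathcal{C}$,
\begin{align*}
\Hom(1 \amalg_X 1, Y) &\;\cong\; \Hom(1, Y) \times_{\Hom(X, Y)} \Hom(1, Y), \\
\Hom(X, 1 \times_Y 1) &\;\cong\; \Hom(X, 1) \times_{\Hom(X, Y)} \Hom(X, 1).
\end{align*}
Because $1$ is both initial and final, $\Hom(1, Y)$ and $\Hom(X, 1)$ are contractible, so both expressions compute the same loop space of $\Hom(X, Y)$ based at the unique map $X \to 1 \to Y$. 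This gives the asserted adjunction $\Sigma \dashv \mathcal{L}$, and by uniqueness of left adjoints $\Sigma \simeq \barlurie \circ F$.

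The only non-formal step is the first one: verifying that the specialization $X = 1$ of Proposition~\ref{PROPLOOP} really reduces the non-connected adjunction $Q \dashv R$ to the trivial adjunction on $\mathcal{C}$ and that the $X$-bimodule tensor product $\otimes_X = \times_X$ collapses to $\times$. Once this bookkeeping is done, everything else is a formal consequence of composing adjunctions and the pushout/pullback computation.
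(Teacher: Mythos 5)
Your proposal is correct and follows essentially the route the paper intends: specialize Proposition~\ref{PROPLOOP} to $X=\cdot$ (where $Q\dashv R$ trivializes), compose the resulting adjunction $\barlurie\dashv\cobarlurie$ of Corollary~\ref{KORLURIE} with the free monoid adjunction, identify the right composite as $\mathcal{L}$, and conclude $\barlurie\circ F\simeq\Sigma$ by uniqueness of left adjoints. Your explicit verification that $\Sigma\dashv\mathcal{L}$ (and your reading that $1$ must be initial as well as final, i.e.\ that one is really in the pointed/KORLURIE setting) only spells out what the paper leaves implicit.
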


Of course dual statements are true for a coCartesian monoidal $\infty$-category $(\mathcal{C}, \amalg)$, i.e.\@ where $\amalg$ is the coproduct.

\begin{PAR}
Consider the diagram
\[ \xymatrix{ ((\mathrm{Bimod}_X(\mathcal{C})_{X/}, \times_X)^{\vee})^{\twop \OOO} \ar[d]^{Q} \\
 ((\mathcal{C}_{/X}, \times)^{\vee})^{\twop \OOO} & \ar[l]_-{\rho^*}^-{\sim} ((\mathcal{C}_{/X}, \times)^{\vee})^{(\Delta, *)^{\op}} \ar[r]^{\sim} & \Coalg(\mathcal{C}_{/X}^{\Delta^{\op}}, \times) \ar[r]^-{\sim} &  \mathcal{C}_{/X}^{\Delta^{\op}} }   \]
where the middle equivalence is given by the trivial Eilenberg-Zilber Theorem~\ref{SATZEZTRIVIAL}. 
Unraveling the definitions, we see that the composition induces an isomorphism: 
\[ ((\mathrm{Bimod}_X(\mathcal{C})_{X/}, \times_X)^{\vee})^{\twop \OOO, 2-\cart} \cong  (\mathcal{C}_{X/}^{\Delta^{\op}})_{\mathrm{Segal}, 0} \] 
where the index  ``0'' means the full subcategory in which $X \to Y_{[0]}$ is an isomorphism
and where ``$\mathrm{Segal}$'' means the full subcategory of the $Y$ satisfying the {\bf Segal condition}, that is, the induced morphism
\[ Y_{[n+m]} \to Y_{[n]} \times_{Y_{[0]}} Y_{[m]}     \]
is an isomorphism for all morphisms $[n+m] = [n] \ast' [m] \to  [n] \ast [m]$ given by the canonical degeneracy. In particular $Y_{[n]} \cong Y_{[1]} \times_{Y_{[0]}} \cdots \times_{Y_{[0]}} Y_{[1]}$. Notice that $((\mathrm{Bimod}(\mathcal{C})_{X/}, \times_X)^{\vee})^{\twop \OOO, 2-\cart}$ is precisely the image of the (fully faithful) classical bar construction 
in this case and thus isomorphic to $\Alg(\mathrm{Bimod}(\mathcal{C})_{X/}, \times_X)$ (category objects).
In the connected case, this implies: 
\end{PAR}

\begin{PROP}
The connected (i.e.\@ for $X=\cdot$) classical (co)bar adjunction  has the following form in the Cartesian case: 
\[ \xymatrix{ \mathrm{Mon}(\mathcal{C}_{\cdot/})  \ar[rr]^-{\barconst}_-{\sim}  & &  (\mathcal{C}_{\cdot/}^{\Delta^{\op}})_{\mathrm{Segal}, 0} \ar@{^{(}->}[rr] & & \ar@/^20pt/[llll]^{\cobarconst}   \mathcal{C}_{\cdot/}^{\Delta^{\op}} } \]
\end{PROP}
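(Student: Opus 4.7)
The plan is to derive the proposition directly from the preceding identification by specializing the non-connected Cartesian discussion to $X = \cdot$. First I would observe that when $X$ is terminal, $\mathrm{Bimod}_\cdot(\mathcal{C})_{\cdot/} = \mathcal{C}_{/\cdot\times\cdot, \cdot/} = \mathcal{C}_{\cdot/}$ and the relative product $-\times_\cdot- = -\times-$ reduces to the Cartesian product, so that $\Alg(\mathrm{Bimod}_\cdot(\mathcal{C})_{\cdot/}, \times_\cdot) = \Alg(\mathcal{C}_{\cdot/}, \times) = \mathrm{Mon}(\mathcal{C}_{\cdot/})$. This identifies the left hand side of the diagram with $\Alg$ of the monoidal category entering the abstract setting of Chapter~\ref{CHAPTERABSTRACT} for $\mathcal{C} \to \mathcal{S} = \OOO$ and $S = \id$.

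Next I would unpack Definition~\ref{DEFCOBARCLASS}: the classical bar construction is, by construction, the composition of the canonical equivalence $\mathcal{C}^I_S \simeq (\mathcal{C}^\vee)^{\twop I, 2-\cart}_{\pi_2^* S^{\op}}$ (from \ref{PARTWIST}) with the inclusion of the 2-Cartesian objects. Hence $\barconst$ is automatically fully faithful with essential image the 2-Cartesian subcategory $((\mathcal{C}_{\cdot/}, \times)^\vee)^{\twop \OOO, 2-\cart}$. Applying the explicit isomorphism just stated before the proposition, namely
\[ ((\mathrm{Bimod}_X(\mathcal{C})_{X/}, \times_X)^\vee)^{\twop \OOO, 2-\cart} \cong (\mathcal{C}_{X/}^{\Delta^{\op}})_{\mathrm{Segal}, 0}, \]
in the case $X = \cdot$ identifies this essential image with $(\mathcal{C}_{\cdot/}^{\Delta^{\op}})_{\mathrm{Segal}, 0}$, yielding the indicated equivalence $\barconst: \mathrm{Mon}(\mathcal{C}_{\cdot/}) \xrightarrow{\sim} (\mathcal{C}_{\cdot/}^{\Delta^{\op}})_{\mathrm{Segal}, 0}$.

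It remains to handle the lower arrow. Since $(\mathcal{C}_{\cdot/}, \times) \to \OOO$ is an $L$-admissible cofibration of operads as soon as $\mathcal{C}$ has the requisite colimits and $\times$ preserves them (which is always the assumption for the classical cobar to exist), Theorem~\ref{THEOREMEXISTENCECOBAR} provides the left adjoint $\cobarconst$ to $\barconst$. Composing $\barconst^{-1}$ with the fully faithful inclusion $(\mathcal{C}_{\cdot/}^{\Delta^{\op}})_{\mathrm{Segal}, 0} \hookrightarrow \mathcal{C}_{\cdot/}^{\Delta^{\op}}$ gives the right hand pair of arrows in the displayed diagram, and its left adjoint is by construction $\cobarconst$ precomposed with the inclusion, which is exactly the curved arrow in the statement.

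The only substantive step is the identification of 2-Cartesian objects with Segal objects satisfying $Y_{[0]} = X$, which I quote from the paragraph above the statement. That step is itself a direct unwinding: the type-2 morphisms in $\twop \OOO$ are precisely those induced by the canonical degeneracy $\ast \Rightarrow \ast'$ of \eqref{eqcan}, and sending them to Cartesian morphisms in $(\mathrm{Bimod}_X(\mathcal{C})_{X/})^\vee$ translates exactly to the requirement that $Y_{[n+m]} \to Y_{[n]} \times_{Y_{[0]}} Y_{[m]}$ be an isomorphism together with $Y_{[0]} \cong X$. The expected obstacle is thus cosmetic rather than real: ensuring the indexing conventions between $\twop \OOO$, $(\Delta_{\act}^{\op}, \ast')$, and $\Delta^{\op}$ used in the Segal formulation match up compatibly with the two trivial Eilenberg--Zilber identifications entering the preceding displayed chain, so that the $Y_{[0]} = \cdot$ condition really does correspond to the collapse of the nullary part of the algebra.
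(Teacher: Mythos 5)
Your proposal is correct and follows essentially the same route as the paper: the paper deduces the Proposition by specializing the immediately preceding identification $((\mathrm{Bimod}_X(\mathcal{C})_{X/}, \times_X)^{\vee})^{\twop \OOO, 2-\cart} \cong (\mathcal{C}_{X/}^{\Delta^{\op}})_{\mathrm{Segal}, 0}$ to $X=\cdot$, using that the classical bar is by construction an equivalence onto the 2-Cartesian objects, and invoking Theorem~\ref{THEOREMEXISTENCECOBAR} (under existence of geometric realizations compatible with $\times$) for the left adjoint $\cobarconst$. Your filling-in of these steps, including the translation of 2-Cartesianity into the Segal condition with $Y_{[0]}\cong X$, matches the paper's argument.
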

Note that the left adjoint ``$\cobarconst$'' exists by Theorem~\ref{THEOREMEXISTENCECOBAR}, if $\mathcal{C}$ has geometric realizations and $\times$ commutes with them in each variable, which is true in any $\infty$-topos, for example. 
The Proposition is a special case of results of \cite{BS23} (see also \cite{HS23}).

\begin{PROP}
Via the identification $Q \circ (\rho^*)^{-1} \circ \barconst$ of category objects with Segal objects, we have for $X \to Y \in \mathcal{C}_{X/}$ 
\[ \boxed{  \cobarlurie_R (X \to Y)  \cong  \text{\v{C}ech nerve of $X \to Y$}  }  \]
and if $\mathcal{C}$ is an $\infty$-topos (e.g.\@ $\mathcal{C}=\Gpd_{\infty}$), the restriction of the derived (co)bar adjunction is an isomorphism
\[ \xymatrix{   (\mathcal{C}_{X/}^{\Delta^{\op}})_{\mathrm{Gpd}, 0}   \ar@<3pt>[rr]^-{\barlurie_Q}_{\sim}  &  & \ar@<3pt>[ll]^-{\cobarlurie_R} (\mathcal{C}_{X/})_{(-1)-\mathrm{conn}}   } \]
where $\mathrm{Gpd}$ indicates the full subcategory of $(\mathcal{C}_{X/}^{\Delta^{\op}})_{\mathrm{Segal}, 0}$ of groupoid objects\footnote{meaning, in the picture above, that, in addition to the Segal condition, $Y_{[n+m]} \to Y_{[n]} \times_{Y_{[0]}} Y_{[m]}$ is an isomorphism induced by {\em any} partition of $[n+m]$ sharing one vertex.   }
and where $(\mathcal{C}_{X/})_{(-1)-\mathrm{conn}}$ is the full subcategory of morphisms $X \to Y$ which are $(-1)$-connected. 
\end{PROP}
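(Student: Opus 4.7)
The plan is to derive the boxed equivalence from Proposition~\ref{PROPLOOP} via the Segal identification discussed above, and then reduce the restricted (co)bar adjunction to the effectivity axioms of $\infty$-topoi.

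For the boxed formula, Proposition~\ref{PROPLOOP} identifies the underlying bimodule of $\cobarlurie_R(X \to Y)$ with $X \times_Y X$ equipped with the concatenation algebra structure in $(\mathrm{Bimod}_X(\mathcal{C})_{X/}, \times_X)$. Under the identification $Q \circ (\rho^*)^{-1} \circ \barconst$ of $\Alg(\mathrm{Bimod}_X(\mathcal{C})_{X/}, \times_X)$ with Segal objects $Z_\bullet$ in $\mathcal{C}_{X/}$ satisfying $Z_{[0]} = X$, an algebra $A$ is sent to the Segal object with $Z_{[n]} \cong A^{\times_X n}$ (the $n$-fold tensor taken w.r.t.\@ the bimodule structure over $X$). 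Substituting $A = X \times_Y X$ with the two canonical projections as its bimodule structure gives $Z_{[n]} \cong X \times_Y X \times_Y \cdots \times_Y X$ ($n+1$ factors of $X$), which is the \v{C}ech nerve of $X \to Y$.

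For the restricted equivalence, the first step is to verify that $\cobarlurie_R$ actually lands in $(\mathcal{C}_{X/}^{\Delta^{\op}})_{\mathrm{Gpd}, 0}$: this is immediate from the explicit \v{C}ech-nerve formula, since each partition square $X^{\times_Y (n+m+1)} \to X^{\times_Y(n+1)} \times_{X^{\times_Y (k+1)}} X^{\times_Y(m+1)}$ is manifestly a pullback. Conversely, $\barlurie_Q$ computes the geometric realization $\colim_{\Delta^{\op}} Y_\bullet$ in $\mathcal{C}_{X/}$ (by the analogue of Theorem~\ref{THEOREMLURIE}), and for a groupoid object $Y_\bullet$ with $Y_{[0]} = X$ the induced map $X \to |Y_\bullet|$ is $(-1)$-connected essentially by definition, since in an $\infty$-topos $(-1)$-connected morphisms are exactly the effective epimorphisms.

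It then remains to show that the unit and counit of the restricted adjunction are equivalences, and both statements are direct translations of the Giraud--Rezk--Lurie axioms for $\infty$-topoi (\cite{Lur09}): the counit $\cobarlurie_R \barlurie_Q(Y_\bullet) \to Y_\bullet$ on a groupoid object compares $Y_\bullet$ with the \v{C}ech nerve of its realization, and is an equivalence because groupoid objects in an $\infty$-topos are effective; the unit $(X \to Y) \to \barlurie_Q \cobarlurie_R(X \to Y)$ on a $(-1)$-connected map is the statement that $Y$ is the colimit of the \v{C}ech nerve of any effective epimorphism onto it, which is a form of descent. I expect the main obstacle to be not this topos-theoretic input, which is standard, but rather the bookkeeping needed to identify $\barlurie_Q$ on a category object with the plain geometric realization of its associated Segal diagram, compatibly with the abstract definition as $\pi_{2,!}^{(S^{\op})} \circ Q \circ \widetilde{\pi_1^*}$; once this compatibility is established via the equivalence $\rho^*$ (dual of Corollary~\ref{KORRHO}) and the cofinality-type arguments of Lemma~\ref{LEMMAXIFINAL}, the remainder of the proof is formal.
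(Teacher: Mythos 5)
Your proposal is correct and follows essentially the same route as the paper: the boxed formula is deduced from Proposition~\ref{PROPLOOP} together with the Segal/category-object identification, and the restricted equivalence is reduced to the $\infty$-topos axioms (effectivity of groupoid objects for one triangle, descent for $(-1)$-connected maps, i.e.\@ effective epimorphisms, for the other, as in \cite[6.2.3.15]{Lur09}). You merely spell out the unit/counit bookkeeping and the identification of $\barlurie_Q$ with geometric realization, which the paper leaves implicit.
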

\begin{proof}
The first part is clear from Proposition~\ref{PROPLOOP} and the definitions. The fully-faithfulness of $\barlurie_Q$ on groupoid objects is thus actually part of the axioms of an $\infty$-topos. The characterization of
the essential image is in \cite[Proposiiton~6.2.3.15]{Lur09}). 
\end{proof}

\subsection{Using cobar to construct Kan extensions --- e.g.\@ the cofree coalgebra}\label{SECTCOFREECOALG}

In Section~\ref{SECTKAN3} an idea to construct relative (operadic) Kan extensions has been discussed, even in situations which are not $L$- resp.\@ $R$-admissible. 
This leads precisely to the (dual) classical cobar construction discussed in this chapter and is included as an illustration. It is otherwise not related to the main aims of these lectures.  

We will prove the following classical theorem using (the dual) cobar, noting that this method to construct problematic relative Kan extensions can be applied in far greater generality.
\begin{SATZ}
Let $(\mathcal{C}, \otimes)$ be an Abelian tensor category. Assume 
\begin{enumerate}
\item $\otimes$ is exact;
\item the mate
\[ (\prod_{i \in \N} X_i) \otimes Y  \to \prod_{i \in \N} (X_i \otimes Y) \]
 is a {\em monomorphism};
\item $\otimes$ commutes with countable intersections. 
\end{enumerate}

Then the forgetful functor
\[ \Coalg(\mathcal{C}, \otimes) \rightarrow \mathcal{C} \]
has a right adjoint (cofree coalgebra functor).
\end{SATZ}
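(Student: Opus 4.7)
\emph{Plan.} My plan is to dualize the strategy behind Theorem~\ref{THEOREMEXISTENCECOBAR} and follow the recipe of Section~\ref{SECTKAN3}: first construct a right relative Kan extension in a larger ambient category (with no Cartesian constraint), and then carve out the Cartesian part by a partial left adjoint (a Cartesian projector) built from hypotheses (i)--(iii). Concretely, I would factor the forgetful functor through the (fully-faithful) dual classical bar,
\[ \Coalg(\mathcal{C},\otimes)\ \xrightarrow{\ \barconst^{\vee}\ }\ \mathcal{C}^{\tw \OOO}\ \xrightarrow{\ \mathrm{ev}_{[1]}\ }\ \mathcal{C}, \]
where $\barconst^{\vee}$ sends $C$ to the object $[n]\mapsto C^{\otimes n}$ with its natural structural maps. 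A right adjoint of $\mathrm{ev}_{[1]}$ is easy at the level of plain functors, so the substantial task is to produce a (partial) right adjoint to $\barconst^{\vee}$ on the essential image of a suitable auxiliary right Kan extension.

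Next, I would build the relative right Kan extension dual to the $\pi_{3,!}$ of Theorem~\ref{THEOREMEXISTENCECOBAR} \emph{in the ambient functor category}, without demanding that its output be $3$-Cartesian. Since $\mathcal{C}$ is complete (as Abelian category with countable products, guaranteed implicitly by (ii)), this Kan extension exists pointwise by the dual of Corollary~\ref{KOREXISTFIBERWISERELKANEXT}. Dualizing formula~(\ref{eqformulacobar}) shows that, applied to $V\in\mathcal{C}$, it produces an object sitting over (a subobject of)
\[ \widehat{T}(V)\ :=\ {\prod}_{n\ge 0} V^{\otimes n}, \]
carrying all the structural maps that a coalgebra with underlying object $\widehat{T}(V)$ would carry --- except that its would-be comultiplication a priori lands in $\prod_{i,j} V^{\otimes i}\otimes V^{\otimes j}$ rather than in $\widehat{T}(V)\otimes\widehat{T}(V)$.

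The main and hardest step is to produce the Cartesian projector, and this is where hypotheses (i)--(iii) enter essentially. By (ii) the canonical comparison
\[ \widehat{T}(V)\otimes \widehat{T}(V)\ \hookrightarrow\ {\prod}_{i,j} V^{\otimes i}\otimes V^{\otimes j} \]
is a monomorphism, so there is a largest subobject $T^{c,(1)}(V)\subseteq \widehat{T}(V)$ through which the would-be comultiplication factors, defined as a pullback (well-behaved thanks to exactness (i) of $\otimes$). Iterating the same reasoning with the $k$-fold deconcatenation gives a descending chain $T^{c,(k)}(V)$, and I set
\[ T^c(V)\ :=\ {\bigcap}_{k\ge 1} T^{c,(k)}(V). \]
Hypothesis (iii) then implies that $\otimes$ commutes with this countable intersection, so the comultiplication restricts to $T^c(V)\to T^c(V)\otimes T^c(V)$ and $T^c(V)$ is a genuine coalgebra, i.e.\@ the $3$-Cartesian replacement of $\widehat{T}(V)$. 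The hard part will be verifying the universal property: for any $f\colon C\to V$ with $C$ a coalgebra, the associated map $C\to \widehat{T}(V)$, assembled from iterated coproducts on $C$ followed by $f^{\otimes n}$, should factor through every $T^{c,(k)}(V)$, because the iterated coproducts of $C$ land in honest $k$-fold tensor products; hypothesis (iii) together with exactness (i) then let one pass to the intersection. Once this is established, $T^c$ is by construction the required right adjoint.
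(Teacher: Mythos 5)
Your proposal is correct and follows essentially the same route as the paper: both implement the recipe of Section~\ref{SECTKAN3} by first forming the ambient (dual-cobar) right Kan extension, whose underlying object is built from $\prod_{n} V^{\otimes n}$, and then carving out the genuine coalgebra as a countable intersection along the monomorphisms supplied by hypothesis (2), with hypotheses (1) and (3) guaranteeing that this intersection (and the fiber-wise Kan extension) commutes with $\otimes$ --- exactly the content the paper delegates to Lemma~\ref{LEMMACOFREECOALG}. The only differences are presentational: you verify the couniversal property of $T^{c}(V)$ by hand where the paper extracts it from the dual of Theorem~\ref{THEOREMEXISTENCECOBAR}, and your closure step ($\Delta$ restricting to $T^{c}(V)\otimes T^{c}(V)$) tacitly uses, besides (3), the small check that the deconcatenation maps send $T^{c,(k+l)}(V)$ into $T^{c,(k)}(V)\otimes T^{c,(l)}(V)$, which follows from exactness of $\otimes$.
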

\begin{proof}
Denote $c: \cdot \hookrightarrow \OOO^{\op}$. 
As in the proof of Proposition~\ref{PROPKAN} we have
an adjunction in the $2$-category $\Dia^{\op}(\mathcal{\OOO}^{\op})$ 
 \[ \xymatrix{  (\OOO^{\op} \times_{/\OOO^{\op}} \cdot, \pi_1)  \ar@<3pt>[rr]^-{\widetilde{\pi_2}=(\pi_2, S(\mu))} & &  \ar@<3pt>[ll]^-{(\iota, \id)}  (\cdot, c) } \]
 and an isomorphism
 \[ c_* \cong \pi_{1,*}\, \iota_*^{(\pi_1)}   \]
 in the strong sense that the existence of the RHS adjoints implies the existence of the LHS. By the adjunction, we see that 
 \[ \iota_*^{(\pi_1)} = S(\mu)^{\bullet} \pi_2^*. \]
 We have $\OOO^{\op} \times_{/\OOO^{\op}} \cdot \cong (\N_0, +)^{\vee}$ as discrete cooperad and $X':=S(\mu)^{\bullet} \pi_2^* X$ is the object
 \[ X'_{n} = X^{\otimes n} \]
 with the coalgebra structure given by the isomorphisms
 \[ X'_{i+j} \cong X'_i \otimes X'_j   \qquad  X'_0 \cong 1. \]
A right relative (or operadic) Kan extension $\pi_{1,*}$ exists in general only if $\otimes$ commutes with infinite products. 
However, 
in \ref{SECTKAN3} we saw that
 it suffices to construct a (dual) cobar functor for the object $X'$ pulled back to $(\tw \OOO) \times_{\OOO^{\op}} (\N_0, +)^{\vee}$ 
 and then right Kan extended along the fibration (!) of operads
 \[ (\tw \OOO) \times_{\OOO^{\op}} (\N_0, +)^{\vee} \to \tw \OOO. \]
 
 By the dual of Theorem~\ref{THEOREMEXISTENCECOBAR}, we have $(\mathcal{C}, \otimes)^{\tw \OOO} = ((\mathcal{C}, \otimes)^{\vee})^{\twc \OOO, 3-\cart}$, and the dual cobar is given by the Kan extension $\pi_{3,*}$ which is computed fiber-wise, i.e.\@ as limit over $((\mathcal{C}, \otimes)^{\vee})^{\twc \OOO}_{[1]} \cong \mathcal{C}^{\tw \Delta^{\op}_{\act}}$, {\em if $\otimes$ commutes with the relevant limits}. We will argue that the same colimit works in this case as well without that hypothesis. 
 The corresponding object $X''' \in \mathcal{C}^{\tw \Delta^{\op}_{\act}}$ is the following:
 \[ \left( [n_1] \ast' \cdots \ast' [n_m] \leftarrow [m] \right) \  \mapsto \  X''_{[n_1]} \otimes \cdots \otimes X''_{[n_m]} \]
 (the $[n_1] \ast' \cdots \ast' [n_m]$ is the induced decomposition, cf.\@ \ref{PARNECKLACE}) with
 \[ X''_{[n]} = \prod_{k_1,\cdots,k_n} X^{\otimes k_1} \otimes \cdots \otimes X^{\otimes k_n}. \]
 Thus the dual of the cobar formula (\ref{PARCOBAREXPLICIT}) would give
\begin{gather}\label{cobardual}
 \boxed{ \cobarconst^{\vee}  X' =    
  \lim \left(   \coprod_{n=0}^{\infty} (X''_{[1]})^{\otimes n}  \rightrightarrows  \substack { \prod X''_{[1]} \otimes \cdots \times X''_{[2]} \otimes \cdots \otimes X''_{[1]} \\ \amalg \\ \coprod X''_{[1]} \otimes \cdots \times X''_{[0]} \otimes \cdots \otimes X''_{[1]}   } \right) } 
  \end{gather}
  From 2.\@ follows that the object $X'''$ has the property that type-2 morphisms go to monomorphisms. 
  Hence (\ref{cobardual}) is essentially a countable intersection. 
 
The general existence Theorem~\ref{THEOREMEXISTENCECOBAR} (better: its dual) cannot be applied as it stands because $\otimes$ does not commute with limits (here countable products would be sufficient, which we do not want to assume either).
Examining the proof, we see that this is used at two places:
\begin{enumerate}
\item to construct the Kan extension along $\pi_3: \twc \OOO \to \OOO^{\op}$. 
However the relevant object $X'''$ has the property that type-2 morphisms go to monomorphisms. To have a fiber-wise Kan extension the limit over the fiber must commute with $\otimes$. This is Lemma~\ref{LEMMACOFREECOALG}, 2.\@
\item to see that the (fiber-wise for $\pi_{3}$) Kan extension $\pi_{1,*}: \tw \Delta^{\op}_{\act} \to \Delta_{\act}$ commutes with $\otimes$. This is Lemma~\ref{LEMMACOFREECOALG}, 1.\@ \qedhere
\end{enumerate}
\end{proof}

\begin{LEMMA}\label{LEMMACOFREECOALG}
Let $(\mathcal{C}, \otimes)$ be an Abelian tensor category with $\otimes$ exact and commuting with countable intersections. 
If $\mathcal{E} \in \Ab^{\tw \Delta^{\op}_{\act}}$ maps type-2 morphisms to monomorphisms
then
\begin{enumerate}
\item the Kan extension $\mathcal{E} \mapsto \pi_{1,*} \mathcal{E} \in \Ab^{\Delta_{\act}}$ commutes with $- \otimes X$ for any $X \in \mathcal{C}$,
\item the limit $\mathcal{E} \mapsto \lim \mathcal{E}$ commutes with $- \otimes X$ for any $X \in \mathcal{C}$.
\end{enumerate}
\end{LEMMA}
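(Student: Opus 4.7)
Proof plan.

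The strategy for both parts is the same: present the relevant limit as a countable inverse limit of finite limits with monomorphism transition maps, and then use that $\otimes$ preserves finite limits (by exactness) and preserves countable intersections of subobjects (by hypothesis~(3)).

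I start with part~(1), which is the structurally cleaner case. The projection $\pi_1 : \tw\Delta^{\op}_{\act} \to \Delta_{\act}$ is a fibration, so the Kan extension $\pi_{1,*}$ is computed fiber-wise by Corollary~\ref{KOREXISTFIBERWISERELKANEXT}: $(\pi_{1,*}\mathcal{E})_{[N]} = \lim_{\pi_1^{-1}([N])}\mathcal{E}$. Morphisms inside a fiber of $\pi_1$ act as the identity on the $\pi_1$-coordinate and are therefore, by Definition~\ref{DEFTYPEI}, type-2 morphisms of $\tw\Delta^{\op}_{\act}$; hence the restriction of $\mathcal{E}$ to any fiber sends \emph{every} morphism to a monomorphism. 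Filter the (countable) fiber by finite full subcategories $\mathcal{F}_0 \subseteq \mathcal{F}_1 \subseteq \cdots$ whose union is the whole fiber, and set $L_k := \lim_{\mathcal{F}_k}\mathcal{E}$. Each $L_k$ is a finite limit, so $L_k\otimes X \cong \lim_{\mathcal{F}_k}(\mathcal{E}\otimes X)$ by exactness of $\otimes$; exactness also guarantees that $\mathcal{E}\otimes X$ still has monomorphic transitions. The restriction maps $L_{k+1}\to L_k$ are monomorphisms, because the value of any compatible family at a newly appearing object is uniquely determined through a mono transition map into $\mathcal{F}_k$. Therefore $\lim_{\pi_1^{-1}([N])}\mathcal{E} \cong \lim_k L_k$ is presented as a countable intersection of subobjects inside a fixed ambient $L_{k_0}$, and hypothesis~(3) yields
\[
(\pi_{1,*}\mathcal{E})_{[N]}\otimes X \;\cong\; \bigcap_k(L_k\otimes X) \;\cong\; \lim_k(L_k\otimes X) \;\cong\; \pi_{1,*}(\mathcal{E}\otimes X)_{[N]},
\]
which proves~(1).

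For part~(2) the same mechanism applies to all of $\tw\Delta^{\op}_{\act}$: filter by finite full subcategories $\mathcal{F}_k$ (for instance $\mathcal{F}_k = \{(\phi:[a]\to[b]) : a,b\le k\}$), each $L_k := \lim_{\mathcal{F}_k}\mathcal{E}$ is a finite limit commuting with $-\otimes X$, and the argument concludes exactly as in part~(1) \emph{provided} the restrictions $L_{k+1}\to L_k$ are monomorphisms. This combinatorial claim is the main obstacle: one must show that every object $\phi \in \mathcal{F}_{k+1}\setminus\mathcal{F}_k$ admits at least one type-2 morphism $\phi\to\phi'$ with $\phi'\in\mathcal{F}_k$, so that $x_\phi$ in any compatible family is uniquely determined by $x_{\phi'}$ through the monomorphism $\mathcal{E}(\phi\to\phi')$. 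Using the partition description of $\tw\Delta^{\op}_{\act}$ (cf.\@ \ref{PARCOALG}) --- an object is a sequence $(n_1,\dots,n_m)$ and type-2 morphisms alter $m$ while keeping $N=\sum n_i$ fixed --- a suitable joint bound on $m$ and each $n_i$ makes the required type-2 contractions explicit. As a backup, part~(2) also follows from part~(1) by applying the same countable-intersection machinery to the outer limit in the identification $\lim_{\tw\Delta^{\op}_{\act}}\mathcal{E} \cong \lim_{\Delta_{\act}}\pi_{1,*}\mathcal{E}$, provided a compatible exhausting filtration of $\Delta_{\act}$ can be chosen.
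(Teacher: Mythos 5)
The paper offers no proof to compare against here (it literally says ``Exercise''), so I am judging your argument on its own. Part (1) is essentially right: $\pi_1$ is a fibration, so $\pi_{1,*}$ is computed as a limit over each fiber, every morphism in a fiber is type-2 by Definition~\ref{DEFTYPEI} and hence goes to a monomorphism, and the ``finite limits $+$ countable intersection of a decreasing chain of subobjects'' mechanism is exactly what the hypotheses on $\otimes$ are for. The one thing you gloss over is that for an \emph{arbitrary} exhaustion by finite full subcategories the transition $L_{k+1}\to L_k$ need not be mono: you must arrange that every newly added object admits a morphism into $\mathcal{F}_k$. This is easy to arrange (every decomposition of $[N]$ has a type-2 morphism to the one-part decomposition, so put that weakly terminal object into $\mathcal{F}_0$), but it should be said, since it is the same point on which your part (2) founders.

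For part (2) your primary route has a genuine gap, and it cannot be patched the way you suggest. Type-2 morphisms fix the $\pi_1$-coordinate (in the partition picture they change $m$ but keep $N=\sum n_i$ fixed), so an object of $\mathcal{F}_{k+1}\setminus\mathcal{F}_k$ whose value of $N$ is new admits \emph{no} type-2 morphism into $\mathcal{F}_k$ whatsoever; no ``joint bound on $m$ and the $n_i$'' can produce the required contraction, and the only arrows relating different values of $N$ are type-1, which are not assumed to go to monomorphisms. Hence $L_{k+1}\to L_k$ is not monic in general for your filtration and the intersection argument collapses. Your backup route, however, is correct and is in fact much easier than you make it sound: since $[1]$ is initial in $\Delta_{\act}$ (there is a unique active map $[1]\to[n]$ for every $n$), the outer limit is simply evaluation at $[1]$, so $\lim_{\tw\Delta^{\op}_{\act}}\mathcal{E}\cong(\pi_{1,*}\mathcal{E})_{[1]}$ and part (2) follows from part (1) with no further filtration of $\Delta_{\act}$ and no additional use of the countable-intersection hypothesis. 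I would drop the primary argument for (2) entirely and state the reduction to (1) via the initial object.
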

\begin{proof}Exercise.
\end{proof}

\begin{BEM}
In \cite{Ane14}, for instance, it is shown for a large class of Abelian tensor categories that, in fact, 
already
\[  \cobarconst^{\vee} X'''  = \lim \left( \vcenter{ \xymatrix{  & X''_{[1]} \ar[d] \\
X''_{[1]} \otimes X''_{[1]} \ar@{^{(}->}[r] & X''_{[2]} } } \right). \]
\end{BEM}

\section{Non-Abelian foundations} \label{CHAPTERNONAB}

This chapter discusses basic properties of simplicial sets and simplicial objects in categories culminating in the non-Abelian Eilenberg-Zilber Theorem~\ref{SATZEZ}.
It deals almost exclusively with 1-categories. Some facts from Sections~\ref{SECTSIMPLEX}--\ref{SECTBASICCOOP} have already been used in Chapter~\ref{CHAPTERABSTRACT}.

\subsection{The simplex category}\label{SECTSIMPLEX}

\begin{PAR}
Recall from \ref{PARSIMPLEX} the definition of the simplex category, the  notions of active and inert morphism, and the definition of the concatenation products $\ast$ and $\ast'$.
\end{PAR}

\begin{PAR}\label{PARFILT}
Let $\mathcal{C}$ be a Barr-exact category (\cite{Bar71}, we will here only be interested in $\Set_{\bullet}$ and Abelian categories), and let $X \in \mathcal{C}^{\Delta^{\op}}$.
Then there is a filtration by subobjects 
\[ F^0 X_{[n]} \subset  \cdots \subset F^n X_{[n]} = X_{[n]} \]
such that each $F^k X_{[n]}$ is the smallest subobject over which all $X(\nu)$ for degeneracies $\nu: [m] \to [n]$ with $m \le k$ factor. We define
\[ X_{[n]}^{\nd} := X_{[n]} / F^{n-1}X_{[n]}. \]
the quotient of {\bf non-degenerate elements}. In fact, the $F^k X_{[n]}$ assemble to simplicial objects $F^k X$ themselves and are called the {\bf skeletal filtration}.

For any $n$ and $k$, consider a subset $S \subset \{s: [k] \twoheadleftarrow [n]\}$ of the degeneracies. Since each such degeneracy can be written 
uniquely as $s = s_{i_1} \cdots s_{i_{n-k}}$ with $i_1 < i_2 < \cdots < i_{n-k}$, the lexicographic ordering of the $i_j$ defines a total ordering on $S$.
\end{PAR}

\begin{LEMMA}\label{LEMMAEZ1}
Fix integers $m$ and $n$.
For each subset $S \subset \{s: [m] \twoheadleftarrow [n]\}$ of degeneracies, there is a face morphism $D: [n] \hookleftarrow [m]$ such that $D\,s = \id$ for $s \in S$ precisely if $s$ is the smallest 
element of $S$. 
\end{LEMMA}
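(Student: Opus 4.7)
My plan is to take $D$ to be the ``rightmost'' section of $s_{\min}$, where $s_{\min}$ denotes the smallest element of $S$, and to verify both directions of the ``precisely if'' via a reverse-lex comparison. First encode each degeneracy $s \colon [n] \twoheadrightarrow [m]$ by its sequence of fiber right-endpoints
\[ B(s) := \bigl( b_0(s), b_1(s), \ldots, b_{m-1}(s) \bigr), \qquad b_j(s) := \max s^{-1}(j), \]
a strictly increasing tuple in $\{0, \ldots, n-1\}$. This gives a bijection between degeneracies $[n] \twoheadrightarrow [m]$ and $m$-element subsets of $\{0, \ldots, n-1\}$. The complementary $(n-m)$-element ``repetition set'' $R(s) = \{r : s(r) = s(r+1)\}$ coincides with the normal-form index set $\{i_1 < \cdots < i_{n-m}\}$ used to order $S$.

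The first step is the elementary combinatorial observation that passing to complements reverses lex order on subsets of fixed size: for $k$-subsets $R, R' \subseteq \{0, \ldots, n-1\}$ with complements $B, B'$, one has $R <_{\mathrm{lex}} R'$ iff $B >_{\mathrm{lex}} B'$. One proves this directly by letting $j^*$ be the first position where the sorted tuples of $R$ and $R'$ differ, say $r_{j^*} < r_{j^*}'$: the initial segment $[0, r_{j^*}]$ then contains $j^*+1$ elements of $R$ but only $j^*$ of $R'$, so $r_{j^*}$ itself lies in $B'$ but not in $B$, pinpointing the first position at which the sorted tuple of $B$ strictly exceeds that of $B'$. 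Consequently, the lex-minimal $s_{\min} \in S$ (in terms of normal form) is characterized as the element whose $B(s_{\min})$ is lex-\emph{maximal} among $\{B(s) : s \in S\}$.

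Now define $D \colon [m] \hookrightarrow [n]$ by $D(j) := b_j(s_{\min})$ for all $j$ (with the convention $b_m(s_{\min}) := n$). This is strictly monotone, hence a face map, and $s_{\min} \circ D = \id_{[m]}$ by construction. Conversely, for any $s \in S$ with $s \circ D = \id_{[m]}$, the condition $D(j) \in s^{-1}(j)$ forces $b_j(s_{\min}) \leq b_j(s)$ for every $j$; thus $B(s_{\min}) \leq_{\mathrm{lex}} B(s)$. Combined with the lex-maximality from the previous step, this forces $B(s_{\min}) = B(s)$ and hence $s = s_{\min}$.

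The only genuine work lies in the combinatorial reversal of lex order on complementary subsets; once that fact is in place, both directions of the equivalence follow immediately from the explicit description of $D$ as the rightmost section of $s_{\min}$. This is the step requiring the most care in presentation, but it involves no substantive obstacle.
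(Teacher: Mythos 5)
Your proof is correct, but it takes a genuinely different route from the paper's. The paper argues by induction on the number of degeneracy factors: letting $i$ be the smallest index occurring as a left-most factor among elements of $S$, it decomposes $S = s_i S' \cup S''$, obtains $D'$ for $S'$ from the inductive hypothesis, sets $D = D' d_i$, and checks the two cases using the simplicial identities (cancellation of $s_i$ against $d_i$, and commuting $d_i$ past $s_j$ for $j>i$ to see that elements of $S''$ cannot admit $D$ as a section). You instead exhibit $D$ in closed form as the rightmost section of $s_{\min}$, $D(j) = \max s_{\min}^{-1}(j)$ --- which is in fact the same face the paper's recursion unwinds to --- and verify both directions directly: the forward direction is immediate, and the converse follows from the componentwise bound $b_j(s_{\min}) \le b_j(s)$ valid for any $s$ admitting $D$ as a section, combined with your observation that complementation reverses lexicographic order on subsets of fixed size, which turns lex-minimality of $s_{\min}$ into lex-maximality of its tuple of fiber maxima. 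Your argument I checked in detail and it is sound; what it buys is an induction-free, fully explicit construction that moreover characterizes \emph{all} degeneracies of which $D$ is a section (those with $B(s) \ge B(s_{\min})$ componentwise), which is more than the lemma asks; what the paper's version buys is that it stays entirely inside the calculus of elementary faces and degeneracies, with no auxiliary encoding. The one step you should spell out is the identification of orderings: the paper orders $S$ lexicographically via the unique factorization $s = s_{i_1} \cdots s_{i_{n-m}}$ with $i_1 < \cdots < i_{n-m}$, and your comparison needs the (standard, but here only asserted) fact that this index set coincides with the repetition set $\{r : s(r)=s(r+1)\}$, i.e.\ with the complement of your $B(s)$; one line of justification closes that gap.
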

\begin{proof}
Let $s_{i}$ be the smallest degeneracy occurring as a left-most factor of  $s \in S$ and decompose
\[ S =  s_{m} S' \cup S'' \]
where $S'$ is a set of degeneracies $[m+1] \twoheadleftarrow [n]$ and $S''$ is a set of degeneracies in which only $s_{j}$ for $j > i$ occur. 
By induction, there is a face $D': [n] \hookleftarrow [m+1]$ which is a section of the first element of $S'$ and not of any other. 
Then $D:=D' d_i$ does the job, because $D' d_i s_i s = D' s = \id$ precisely for the first element of $s \in S'$ and not for any other, while $D' d_i s_{j_1} \cdots s_{j_m} s = D' s_{j_1-1} \cdots s_{j_m-1} d_i$ for $j_k>i$ cannot be the identity. 
\end{proof}

\begin{PAR}\label{PAREZWEAK}
The Lemma is usually stated in a weaker form saying ``the set of sections of a degeneracy $s: [m] \twoheadleftarrow [n]$ is non-empty and determines $s$''.
This follows obviously from the previous Lemma, because it shows non-emptyness for $\#S = 1$, and, for $\# S = 2$, that for each pair of degeneracies there is a section of the first which is not a section of the second. 
\end{PAR}

\begin{LEMMA}[Eilenberg-Zilber]\label{LEMMAEZ2}
If $\mathcal{C}$ is Barr-exact and has a zero object (we will be interested in $\Set_{\bullet}$ and Abelian Categories only), then 
\begin{equation}\label{eqgraded} \gr^i X_n \cong \coprod_{s: [m] \twoheadleftarrow [n]} X_m^{\nd} \end{equation}
where the coproduct runs over all degeneracies. 
\end{LEMMA}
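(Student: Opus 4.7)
The plan is to fix $i \le n$ and construct an explicit isomorphism
\[ \phi_i: \coprod_{s:\,[n]\twoheadrightarrow [i]} X_i^{\nd} \;\longrightarrow\; \gr^i X_n \]
whose $s$-component comes from $X(s): X_i \to X_n$, then verify that $\phi_i$ is epi by a general-nonsense argument and split it using the previous lemma together with the zero object. First I would observe that $X(s)$ factors $F^{i-1}X_i \to F^{i-1}X_n$ (a degeneracy of something further-degenerate remains degenerate with the same bound), so it descends to a morphism $X_i^{\nd} \to \gr^i X_n$; assembling these over $s$ produces $\phi_i$. Surjectivity of $\phi_i$ is immediate from the definition of $F^i X_n$: this subobject is generated by the images of $X(\nu)$ for $\nu:[n]\twoheadrightarrow [m]$ with $m\le i$, and those with $m<i$ already lie in $F^{i-1}X_n$, so only the degeneracies with $m=i$ contribute to $\gr^i$.

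The main content is constructing a retraction of each component. Fix $s_0: [n]\twoheadrightarrow [i]$. By Lemma~\ref{LEMMAEZ1} applied to the set $S = \{s:[n]\twoheadrightarrow [i]\}$ (reorder so $s_0$ is smallest), there is a face $D:[i]\hookrightarrow [n]$ with $D s_0 = \id_{[i]}$ and $D s' \ne \id_{[i]}$ for every other $s'\in S$. Because any non-identity endomorphism of $[i]$ in $\Delta$ fails to be a bijection, each such $Ds'$ factors through some $[j]$ with $j<i$; hence $X(Ds'): X_i \to X_i$ factors through $F^{i-1}X_i$ and induces the zero morphism $X_i^{\nd} \to X_i^{\nd}$ (here the zero-object hypothesis is crucial). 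Meanwhile $X(Ds_0)= \id$. Thus $X(D): X_n \to X_i$ maps $F^{i-1}X_n$ into $F^{i-1}X_i$ (same argument), so it descends to a morphism $r_{s_0}: \gr^i X_n \to X_i^{\nd}$ that retracts the $s_0$-summand of $\phi_i$ and annihilates every other summand.

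Finally I would assemble the $r_{s_0}$ into a two-sided inverse of $\phi_i$. In an Abelian category the finite coproduct is a biproduct, so the family $(r_{s_0})_{s_0 \in S}$ packages into $r: \gr^i X_n \to \coprod_{s} X_i^{\nd}$; the relations $r_{s_0}\circ \phi_i|_s = \delta_{s,s_0}\cdot \id$ give $r \circ \phi_i = \id$, and combined with the already-established epiness of $\phi_i$ one concludes $\phi_i \circ r = \id$ as well. In $\Set_\bullet$ the same retractions show that an element of $\gr^i X_n$ cannot lie in the image of two distinct summands (any second summand would be killed by its retraction, forcing the element to be the basepoint there), giving uniqueness of lifts against the already-proven surjectivity. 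The expected obstacle is purely bookkeeping: one must carefully check that $X(D)$ really does descend to $\gr^i X_n$ (not just that it has nice behaviour on the summand images), which is where Barr-exactness --- ensuring that the skeletal filtration consists of well-behaved subobjects and that ``image'' and ``factors through'' interact cleanly --- is used.
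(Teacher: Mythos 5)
There is a genuine gap at the heart of your argument: the retraction $r_{s_0}$ does not exist for an arbitrary degeneracy $s_0$. You invoke Lemma~\ref{LEMMAEZ1} ``reordered so that $s_0$ is smallest'', but the ordering in that lemma is the fixed lexicographic one from \ref{PARFILT} and cannot be rearranged; the lemma only produces a face $D$ that is a section of the \emph{lexicographically smallest} element of the chosen subset $S$ and of no other element \emph{of $S$}. The stronger statement you actually need --- for every degeneracy $s_0:[n]\twoheadrightarrow[i]$ a face $D$ with $s_0D=\id$ and $s'D\neq\id$ for \emph{every} other degeneracy $s'$ --- is false. Concretely, take $n=3$, $i=2$ and $s_0=s_1:[3]\twoheadrightarrow[2]$ (collapsing $\{1,2\}$). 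Its only sections are $d_1$ and $d_2$, and $s_0d_1=\id$ while $s_2d_2=\id$, so each section of $s_1$ is also a section of another degeneracy. (In general a degeneracy admits a ``distinguishing'' section only when all of its middle blocks are singletons.) Hence the family of retractions you want to assemble into a two-sided inverse of $\phi_i$ simply is not there, and the splitting argument collapses; the same problem defeats the uniqueness-of-lifts argument in $\Set_\bullet$.

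The paper's proof is arranged precisely to avoid this. It proves injectivity (in the Abelian case) by contradiction: if the kernel $K$ of $\bigoplus_s X_m^{\nd}\to\gr^m X_n$ were nonzero, choose a \emph{minimal} subset $S$ of degeneracies with $K\subseteq\bigoplus_{s\in S}X_m^{\nd}$, and apply Lemma~\ref{LEMMAEZ1} to \emph{that} $S$: the resulting $D$ is a section of the smallest $s\in S$ and of no other element of $S$, so composing with $X(D)$ (which respects the filtration, as you correctly note) shows the component of $K$ in the smallest summand vanishes, contradicting minimality. In other words, one can only ever peel off the smallest summand of a suitably chosen subset, never isolate an arbitrary summand outright; your parts that do survive (the epimorphism statement, and the descent of $X(D)$ to $\gr^i$) agree with the paper, but the claimed summand-by-summand splitting must be replaced by this minimality argument.
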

\begin{proof}
We will show this for $\mathcal{C}$ Abelian (by suitably embedding a suitable small subcategory of $\mathcal{C}$, this actually suffices, but we won't discuss the technical details).
Consider the morphism 
\[ \bigoplus_{s: [m] \twoheadleftarrow [n]} X_m^{\nd} \to F^m X_n / F^{m-1} X_n. \]
It is an epimorphism by definition of the filtration, so we have to see that it is a monomorphism. Let $K$ be the kernel. If it is non-trivial, consider a minimal subset $S \subset \{s: [m] \twoheadleftarrow [n]\}$ with the property that
\[K \subset \bigoplus_{s \in S} X_m^{\nd}. \]
By the Lemma, there is $D$ such that $D s = \id$ precisely for the smallest $s \in S$. All other $D s$ factor thus through a degeneracy and hence the composition  with $D$ 
\[ \bigoplus_{s \in S} X_m^{\nd} \to F^m X_n / F^{m-1} X_n \to X_m^{\nd} \]
maps the first (w.r.t.\@ the lexicographic ordering) $X_m^{\nd}$ isomorphically to $X_m^{\nd}$ and all others to zero\footnote{Note that $D$ must map $F^{i-1} X_n$ to $F^{i-1}X_m$ and hence the morphism is well-defined.}. Thus 
\[ K \subset \bigoplus_{s' \in S \setminus \{s\} } X_m^{\nd} \]
in contradiction to the minimality. This shows that $K$ is trivial. 
\end{proof}

Let $X \in \Set^{\Delta^{\op}}$ be a simplicial set. 
Applied to $\mathcal{C}:=\Set_{\bullet}$, the category of pointed sets, and $X_{\bullet}$, the statement of Lemma~\ref{LEMMAEZ2} translates to
\begin{equation*} X_n = \coprod_{m} F^m X_n \setminus F^{m-1} X_n \cong \coprod_{[m] \to [n]} \underbrace{X_m \setminus F^{m-1}X_m}_{=:X_m^{\nd}}, \end{equation*}
i.e.\@ that every element in 
$X_n$ is in the image of the non-degenerate elements of $X_m$ for a unique degeneracy $[m] \twoheadleftarrow [n]$. (This follows even more directly from the weaker form \ref{PAREZWEAK} of Lemma~\ref{LEMMAEZ1}.)

The properties discussed here are shared by other diagrams than $\Delta^{\op}$ as well. These are condensed, for example, in the notions of {\bf elegant Reedy category} and {\bf Eilenberg-Zilber category} \cite{BM11, BR13}.

\subsection{Some basic (co)operads}\label{SECTBASICCOOP}

This section discusses the following zoo of very basic ($\infty$-exponential) operads over $\OOO$:

\begin{equation}\label{eqfundamentalop} \vcenter{ \xymatrix{ & (\FinSet_{\emptyset}, \coprod) \ar@{<-}[dd]^(.35)i & \\
(\Delta_{\emptyset}, \coprod) \ar[ru]^{\iota_{\delta}} \ar@{<-}[dd]^i  &  & (\Delta_{\emptyset}, \ast) ={}^{\downarrow \downarrow} \OOO  \ar[lu]_{\iota_{\dec}} \ar@{<-}[dd]^i \ar[ll]^(.35){\mathrm{forget}}   & (\Delta_{\emptyset}^{\op}, \ast) =   \tw \OOO  \ar[ldd]^{\rho^{\op}}  \\
& (\FinSet, \coprod) & \\
(\Delta, \coprod) \ar[ru]^{\iota_{\delta}} &  & (\Delta, \ast) \ar[lu]_{\iota_{\dec}}  \ar[ll]^{\AW^{\op} =  \mathrm{forget}}
} } 
\end{equation}
These operads (or their opposites) will occur in various related and unrelated places, most notably:
\begin{enumerate}
\item The cooperad $(\Delta, \ast)^{\op}$  describes the canonical simplicial enrichment on simplicial objects in a category $\mathcal{C}$ in a different way, which is very convenient to 
construct homotopies. See, in particular, Proposition~\ref{PROPTWOENRICHMENTS}.
\item The cooperads $(\Delta, \ast)^{\op}$ and $(\Delta, \coprod)^{\op}$ give, via Day convolution, rise to the two different tensor products $\otimes$ (point-wise for simplicial objects) and $\tildeotimes$ (usual tensor product of complexes)
on simplicial objects = non-negatively graded complexes in Abelian categories (cf.\@ \ref{EXDAY2}). The forgetful map gives rise to the Alexander-Whitney map. In fact, the diagram above extends (cf.\@ Theorem~\ref{SATZCOHEZ}) in the case of $\Ab$-enriched operads so as to include a model
of the Eilenberg-Zilber map as well. 
\item The morphism $\rho^{\op}$ can be described by generators and relations (cf.\@ Lemma~\ref{LEMMARHO}) and plays an important role in the (co)bar construction (cf.\@ Theorem~\ref{THEOREMLURIE}), more precisely, it takes care of the augmentation. 
\item The product in $(\Delta, \ast)^{\op}$, also denoted $\dec$ (decalage), connects (via pullback $\dec^*$) the classical cobar construction (Definition~\ref{DEFCOBARCLASS}) with the geometric cobar construction (or Kan's loop group), and Adams cobar construction, respectively. Dually, 
its adjoint $\dec_*$ (Artin-Mazur codiagonal, or total complex in the Abelian case) connects the classical bar construction with the classifying space, and Eilenberg-MacLane bar construction, respectively. 
\end{enumerate}

\begin{PAR}\label{PARIOTACOCART}
In the diagram (\ref{eqfundamentalop}) the $i$'s are $\infty$-Cartesian (for the right hand side $i$ this is Lemma~\ref{LEMMAIOTACART}, for the others it is trivial), the $\iota_{\delta}$'s are $\infty$-Cartesian, and the $\iota_{\dec}$'s are 1-coCartesian (for the upper trivial, it is even $\infty$-coCartesian, while for the lower, it boils down to the 1-finality of $\iota^{\op}$, Lemma~\ref{LEMMAIOTAFINAL}.)
\end{PAR}

\begin{PAR}
Consider the category $\FinSet_{\emptyset}$ of finite sets. It is monoidal w.r.t.\@ to the coproduct $\coprod$ giving rise to an operad
\[ (\FinSet_{\emptyset}, \coprod). \]
Since $\coprod: \FinSet_{\emptyset} \times \FinSet_{\emptyset} \to \FinSet_{\emptyset}$ has the right adjoint $\delta: \FinSet_{\emptyset} \to \FinSet_{\emptyset} \times \FinSet_{\emptyset}$
the morphism
\[ (\FinSet_{\emptyset}, \coprod) \to \OOO \]
is thus a {\em fibration and cofibration} of operads. 
On $\Delta_{\emptyset}$ (totally ordered finite sets) there is the concatenation product $\ast$ (obtained from $\coprod$ by choosing one of the two canonical total orderings) which is not symmetric anymore.
$\Delta_{\emptyset}$ can also be turned into an operad $(\Delta_{\emptyset}, \coprod)$ setting $\Hom(x, y; z) = \Hom(x, z) \times \Hom(y, z)$. This operad is not monoidal, of course, because $\Delta_{\emptyset}$ does not have a coproduct, but
\[ (\Delta_{\emptyset}, \coprod) \to \OOO \]
is also a {\em fibration} of operads. 
There are two morphisms of operads:
\[ \iota_{\delta}: (\Delta_{\emptyset}, \coprod) \to (\FinSet_{\emptyset}, \coprod)  \]
which is a morphism of fibrations (i.e.\@ maps Cartesian (active) morphisms to Cartesian morphisms) and
\[ \iota_{\dec}: (\Delta_{\emptyset}, \ast) \to (\FinSet_{\emptyset}, \coprod)  \]
which is a morphism of cofibrations (i.e.\@ maps coCartesian (active) morphisms to coCartesian morphisms).
\end{PAR}

The concatenation product $\ast$ restricts from $\Delta_{\emptyset}$ to $\Delta$, but the unit doesn't. 
However, $(\Delta, \ast)$ is still pro-monoidal (even in the $\infty$-categorical sense), or in other words: 
\begin{LEMMA}\label{LEMMAIOTACART}
The restriction
\[ i: (\Delta, \ast) \hookrightarrow (\Delta_{\emptyset}, \ast) \]
 is $\infty$-Cartesian and thus (cf.\@ Lemma~\ref{LEMMAFLAT}) $(\Delta, \ast) \to \OOO$ is an $\infty$-exponential fibration of operads. 
Furthermore, the morphism $(\Delta, \ast) \to \OOO$ is $\infty$-Cartesian. 
\end{LEMMA}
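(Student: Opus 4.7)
I will first identify the pro-functors attached to $(\Delta,\ast)$ as those of the ambient monoidal category $(\Delta_\emptyset,\ast)$ restricted to nonempty ordered sets, and then verify the $\infty$-Cartesian condition (\ref{eqcart}) for $i$ pointwise; Lemma~\ref{LEMMAFLAT} will then give the $\infty$-exponential fibration statement. The geometric fact driving everything is that concatenation of nonempty totally ordered finite sets is again nonempty.

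To begin, I observe that $(\Delta_\emptyset,\ast)\to\OOO$ is an honest cofibration of operads, with push-forward along an active $\alpha$ of arity $n$ given by $n$-fold concatenation $\ast_n\colon \Delta_\emptyset^n\to\Delta_\emptyset$; hence the associated pro-functor is $\alpha^\bullet={}^t(\ast_n)$. Because $\ast_n$ restricts to a functor $\ast_n|_{\Delta^n}\colon\Delta^n\to\Delta$, we obtain a strictly commutative square
\[ \vcenter{\xymatrix{ \Delta^n \ar[r]^-{\ast_n|_{\Delta^n}} \ar[d]_{\iota^n} & \Delta \ar[d]^{\iota} \\ \Delta_\emptyset^n \ar[r]_-{\ast_n} & \Delta_\emptyset } }\]
Combining with the formula $(\alpha')^\bullet = {}^t\iota^n\,\alpha^\bullet\,\iota$ from \ref{PAREXPSUB}, the pro-functor of $(\Delta,\ast)$ along $\alpha$ is represented by $\ast_n|_{\Delta^n}$, so $(\Delta,\ast)\to\OOO$ is already locally coCartesian.

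The $\infty$-Cartesian condition (\ref{eqcart}) for $i$ then amounts to the claim that the canonical mate $\iota^n\circ{}^t(\ast_n|_{\Delta^n})\to{}^t\ast_n\circ\iota$, evaluated at each pair $(A,b)\in(\Delta_\emptyset^n)^{\op}\times\Delta$, is a bijection. The right hand side is simply $\Hom_{\Delta_\emptyset}(\ast_n A,b)$; to compute the left hand side as a coend and match, I plan to use the identity
\[ \int^{A'\in\Delta}\Hom_\Delta(A'\ast C,b) \cong \Hom_\Delta(C,b) \qquad (C,b\in\Delta), \]
which expresses $\infty$-finality of $\ast$ and follows from the observation that the slice comma category has an obvious final-like zig-zag reducing to $A'=[0]$. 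Iterating this identity collapses every empty slot of $A$; the degenerate case $A=\emptyset^n$ gives $\{\mathrm{pt}\}=\Hom_{\Delta_\emptyset}(\emptyset,b)$. Once this is checked, Lemma~\ref{LEMMAFLAT} delivers the assertion that $(\Delta,\ast)\to\OOO$ is an $\infty$-exponential fibration of operads.

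For the ``furthermore'' statement, I view $(\Delta,\ast)\to\OOO$ as a morphism over $\OOO$ between the exp. fibration $(\Delta,\ast)$ and the trivial exp. fibration $\OOO\to\OOO$: the fibers of the latter are terminal, so both pro-functors appearing in (\ref{eqcart}) become the terminal pro-functor and the canonical map is tautologically an isomorphism. The only real work lies in the pointwise verification of Step~2 — the bookkeeping of empty slots and iterated reduction via the coend identity — which is routine but must be done with care concerning adjunction transposes.
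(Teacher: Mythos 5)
Your reduction of the Cartesianness of $i$ to pointwise coend identities is workable in spirit, but two of your claims are genuinely wrong. First, $(\Delta,\ast)\to\OOO$ is \emph{not} locally coCartesian, and the pro-functor along an active morphism is \emph{not} represented by $\ast_n|_{\Delta^n}$ in general: this fails for the $0$-ary active morphisms and for every active morphism that inserts units, because the unit $\emptyset$ is missing from $\Delta$ and $\Delta$ has no initial object (representability of $(s_0)^{\bullet}$ would force one). That $(\Delta,\ast)$ fails to be a cofibration but is still an exponential fibration is the whole point of the lemma; the paper's proof reduces to the generating actives $\delta_1$ and $s_0$, where $(s_0)^{\bullet}=\pi$ is handled trivially (the fiber over $[0]$ is a point) and the binary case is (the dual of) Lemma~\ref{LEMMAEXACT4}. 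Your check, as set up, only covers actives $[n]\to[1]$ with $n\ge 1$; moreover your justification of the key identity $\int^{A'}\Hom_\Delta(A'\ast C,b)\cong\Hom_\Delta(C,b)$ by a ``final-like zig-zag'' only yields connectedness of the relevant comma categories, i.e.\ $1$-cofinality, whereas the $\infty$-statement needs weak contractibility — which is exactly what Lemma~\ref{LEMMAEXACT3} (dually) provides, via comma fibers identified with slices of $\Delta$ possessing an initial object. Note also that the case where \emph{all} slots of $A$ are empty is not a formality: there the left-hand side is $\colim_{A'\in(\Delta^{n})^{\op}}\Hom_\Delta(\ast A',b)$ and its contractibility is precisely the $\infty$-finality of $\ast$.

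Second, and more seriously, your argument for the ``furthermore'' statement proves nothing. The fibers of $\OOO\to\OOO$ being terminal does \emph{not} make (\ref{eqcart}) tautological: the left-hand side is the composite of $\alpha^{\bullet}_{(\Delta,\ast)}$ with the terminal pro-functor, hence a coend — for the binary active it is $y\mapsto\colim_{(x_1,x_2)\in(\Delta^{\op})^2}\Hom_\Delta(x_1\ast x_2,y)$ computed in $\Gpd_{\infty}$ — and the assertion is exactly that this colimit is contractible for every $y$, i.e.\ the $\infty$-finality of $\ast\colon\Delta\times\Delta\to\Delta$, dual to the $\infty$-cofinality of $\dec$ in Lemma~\ref{LEMMAEXACT3}. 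If your tautology argument were correct, every exponential fibration over $\OOO$ would be $\infty$-Cartesian, which is false. So this half of the lemma requires the finality input, as in the paper's proof: trivial over the $0$-ary and unit-inserting actives (where locally Cartesian lifts exist), and given for $\ast$ by Lemma~\ref{LEMMAEXACT3} dually.
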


\begin{proof}
Let $\alpha: [n] \leftarrow [m] \in \Delta^{\op}_{\act}$ be an active morphism. 
For the pull-back pro-functors $\alpha^{\bullet}$ in $(\Delta, \ast)$, and $\alpha^{\bullet}_{\emptyset}$ in $(\Delta_{\emptyset}, \ast)$, we have
\[ \alpha^{\bullet} =  {}^t i_m \alpha^{\bullet}_{\emptyset} i_n.  \]
 Since the latter is a cofibration, the $\alpha^{\bullet}_{\emptyset}$ are of the form
${}^t \beta$. They are, in particular, determined by $(\delta_1)^{\bullet}_{\emptyset} = {}^t \dec_{\emptyset}$ and $(s_0)^{\bullet}_{\emptyset} = {}^t [\emptyset]$ where $\dec_{\emptyset} = \ast: \Delta_{\emptyset} \times \Delta_{\emptyset} \to \Delta_{\emptyset}$ is
the monoidal product and $[\emptyset]: [\emptyset] \hookrightarrow \Delta_{\emptyset}$ is the embedding.
$\dec_{\emptyset}$ restricts to a morphism $\dec: \Delta \times \Delta \to \Delta$. Thus $(\delta_1)^{\bullet} = {}^t\! \dec$.
Furthermore $[\emptyset]: [\emptyset] \hookrightarrow \Delta_{\emptyset}$ has the right adjoint $\pi_{\emptyset}: \Delta_{\emptyset} \to \cdot$ (because $[\emptyset]$ is initial). Therefore ${}^t [\emptyset] = \pi_{\emptyset}$.
This restricts to $\pi: \Delta \to \cdot$, i.e.\@ $(s_0)^{\bullet} = \pi$. $i$ being $\infty$-Cartesian boils down to 
\[  i\, {}^t\!\dec_{\infty}  \cong {}^t\!\dec\, i  \qquad \pi \cong   \pi_{\emptyset} \, i  \]
The second is trivial, while the first is shown (dually) in Lemma~\ref{LEMMAEXACT4}. 

The $\infty$-Cartesianity of $(\Delta, \ast) \to \OOO$ is trivial over degeneracies (because locally Cartesian morphisms exist) while for $\ast$, it is the $\infty$-finality (cf.\@ (dually) Lemma~\ref{LEMMAEXACT3}). 
\end{proof}

Dually, we obtain, that 
\[ (\Delta, \ast)^{\op} \to \OOO^{\op}  \]
is an $\infty$-exponential fibration of cooperads. 
Notice that $(\Delta_{\emptyset}, \ast)^{\op} = (\Delta_{\emptyset}^{\op}, \ast)^{\vee}$ is again the cooperad associated with a monoidal  category, namely $\Delta_{\emptyset}^{\op}$ with the dual structure. 
Restricting the {\em operad} $(\Delta_{\emptyset}^{\op}, \ast)$ to $\Delta^{\op}$ though gives an operad, which is somewhat unnatural, and is never considered.

\begin{LEMMA}[Duality between (finite) ordered sets and intervals]\label{LEMMADUAL}
There is an  equivalence of operads (or, equivalently, monoidal categories):
\[ (\Delta_{\act}^{\op},\ast') \cong (\Delta_{\emptyset}, \ast) \]
given by\footnote{The total ordering on $\Hom_{\Delta_{\emptyset}}([n], [1])$ is the natural one with minimal element the constant morphism $0$ and with maximal element the constant morphism $1$. The one on $\Hom_{\Delta_{\act}}([n], [1]) $ is its restriction.}
\[ [n] \mapsto \Hom_{\Delta_{\act}}([n], [1]) \cong [n-1]  \]
\[ [m] \mapsto \Hom_{\Delta_{\emptyset}}([m], [1]) \cong [m+1]  \]
This equivalence exchanges face and degeneracy maps. 
\end{LEMMA}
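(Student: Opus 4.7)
The plan is to construct the equivalence explicitly as the ``double cut'' functor and then check it is compatible with the monoidal structures. Define
\[ F: \Delta_{\act}^{\op} \to \Delta_{\emptyset}, \qquad F([n]) := \Hom_{\Delta_{\act}}([n], [1]), \]
with total order given by the pointwise order of maps into $[1]$, and on morphisms by pre-composition. An active map $f: [n] \to [1]$ satisfies $f(0) = 0$ and $f(n) = 1$ and is thus determined by the threshold $k \in \{1, \dots, n\}$ where $f$ jumps from $0$ to $1$; so $F([n]) \cong [n-1]$ for $n \geq 1$ and $F([0]) = \emptyset = [-1]$, agreeing with the formula in the statement. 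Dually, define $G: \Delta_{\emptyset} \to \Delta_{\act}^{\op}$ by $G([m]) := \Hom_{\Delta_{\emptyset}}([m], [1])$, again pointwise-ordered. Here $\Hom_{\Delta_{\emptyset}}([m], [1])$ always contains the constant maps $0$ and $1$ as its minimum and maximum (for $m = -1$, the unique empty function plays both roles, giving $[0]$), so $G$ does land in $\Delta_{\act}^{\op}$; counting thresholds one sees $G([m]) \cong [m+1]$.

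Next, I would verify that $F$ and $G$ are mutually inverse. The key is the classical bijection: for any $[n] \in \Delta_{\act}$, each element $i \in [n]$ determines an active map $[n] \to [1]$ (namely the one that sends $j \mapsto 0$ for $j < i$ and $j \mapsto 1$ for $j \geq i$, with the additional convention at the endpoints). In the opposite direction, evaluation induces a natural bijection $[n] \iso \Hom_{\Delta_{\emptyset}}(\Hom_{\Delta_{\act}}([n], [1]), [1])$ of totally ordered sets, yielding $G \circ F \cong \id_{\Delta_{\act}^{\op}}$; the symmetric computation gives $F \circ G \cong \id_{\Delta_{\emptyset}}$.

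Finally, I would check monoidality. On objects, $F([n] \ast' [m]) = F([n+m]) \cong [n+m-1] = [n-1] \ast [m-1] = F([n]) \ast F([m])$, and $F$ sends the unit $[0]$ of $(\Delta_{\act}^{\op}, \ast')$ to $[-1] = \emptyset$, the unit of $(\Delta_{\emptyset}, \ast)$. The structural isomorphism $F([n] \ast' [m]) \iso F([n]) \ast F([m])$ is induced by the obvious splitting: an active map $[n+m] \to [1]$ either factors through the left summand, the right summand, or is the canonical ``jump at the gluing point'' map; the first two yield the two halves of the concatenation in $\Delta_{\emptyset}$. Compatibility with morphisms then follows by unwinding the definitions, and the exchange of face and degeneracy maps is a routine consequence: a face $d_i: [n-1] \hookrightarrow [n]$ in $\Delta_{\act}$ acts on cuts by a degeneracy (identifying the two cuts adjacent to $i$ become indistinguishable after pulling back), while a degeneracy $s_i: [n+1] \twoheadrightarrow [n]$ doubles the corresponding cut, producing a face map after passing to $\Hom(-, [1])$. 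The step requiring the most care is verifying that the threshold parametrization is indeed functorial and that the comparison with $\ast$ respects associators; this is essentially bookkeeping once the correspondence between cuts and elements is set up cleanly.
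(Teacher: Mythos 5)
Your overall strategy — realize the functor as $[n]\mapsto\Hom_{\Delta_{\act}}([n],[1])$, invert it by double dualization via evaluation, and check monoidality block by block — is the natural one (the paper itself dismisses the proof as a combinatorial exercise), and the object counts, the evaluation isomorphisms and the face/degeneracy exchange all go through. The genuine problem is the monoidality step, exactly the point you defer to ``bookkeeping''. With the pointwise order you fix (constant $0$ minimal, as in the footnote), the canonical comparison does \emph{not} give $F([n]\ast'[m])\cong F([n])\ast F([m])$ but $F([n]\ast'[m])\cong F([m])\ast F([n])$: the active maps $[n+m]\to[1]$ that restrict to active maps on the \emph{right} block are constant $0$ on the left block, hence are the pointwise-smaller ones and occupy the \emph{lower} half of the chain. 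And no natural unswapped isomorphism exists. Concretely, let $u:[1]\to[0]$ be the collapse and consider $p_L=\id_{[1]}\ast' u=(0,1,1):[2]\to[1]$; if $F$ were strong monoidal with $F([1]\ast'[1])\cong F([1])\ast F([1])$ in the stated order, then $F(p_L^{\op})$ would have to be the inclusion of the first $\ast$-factor, i.e.\ hit the \emph{minimum} of $F([2])$, whereas precomposition with $p_L$ sends $\id_{[1]}$ to $(0,1,1)$, the pointwise \emph{maximum}. (A smaller slip: your trichotomy ``factors through the left summand / right summand / jump at the gluing point'' is not a partition — the block inclusions are not active, so no active map factors through them, and the jump at the gluing point is already one of the two restriction classes; the correct splitting uses the two collapse maps $\id\ast'u$ and $u\ast'\id$.)

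So as written your argument produces a \emph{reverse}-monoidal equivalence, not the claimed one. The fix is to order $\Hom(-,[1])$ by the position of the jump (earliest jump minimal), i.e.\ the \emph{opposite} of the pointwise order you — following the statement's footnote — chose, or equivalently to post-compose with the order-reversal automorphism of $\Delta_{\emptyset}$, which interchanges the two sides of $\ast$; one then has to redo the evaluation isomorphisms and the face/degeneracy dictionary consistently (the indices get reflected). This matters for how the lemma is used later (e.g.\ ``collecting preimages'' in the necklace discussion is supposed to produce the blocks of a decomposition in their given order), so the order convention is not cosmetic: the step ``the first two yield the two halves of the concatenation'' is false under the convention you set up, and it is precisely where the care promised in your plan was needed.
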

\begin{proof}
A combinatorial exercise. 
\end{proof}

\begin{PAR}\label{PARNECKLACE}
Recall from Section~\ref{SECTTW} the notions of (twisted) arrow operads $\tw I$ and ${}^{\downarrow\downarrow} I$ for an operad $I$. 
For $I=\OOO$ those are, being cofibered over $I$, in particular monoidal categories, and 
we have isomorphisms
\begin{equation} \label{necklace} \tw \OOO \cong (\Delta_{\act}, *') \cong (\Delta_{\emptyset}^{\op}, *) 
 \end{equation}
and also
\[ {}^{\downarrow\downarrow} \OOO \cong (\Delta_{\act}^{\op}, *') \cong (\Delta_{\emptyset}, *)  \]
in such a way that (for the first isomorphism) a multi-morphism $[n_0], \dots, [n_k] \to [m]$ in $(\Delta_{\act}, *')$ corresponding to
\[ [n_0] \ast' \cdots  \ast'  [n_k] \rightarrow [m]  \]
(using the arrow direction in $\Delta^{\op}$) is mapped to 
\[\xymatrix{ 
 [n_1] \ast' \cdots  \ast'  [n_k] \ar@{<-}[d] & \ar@{<-}[l]  [m] \ar@{<-}[d]    \\
[k]    \ar@{<-}[r] &  [1]  } \]
with the obvious maps.
In the other direction an active morphism in $\tw \OOO$ respresented by a  diagram
\[\xymatrix{ 
 [x] \ar@{<-}[d] & \ar@{<-}[l] [m]  \ar@{<-}[d]  \\
[k]  \ar@{<-}[r]  & [1]     } \]
determines a decomposition of $[x]$ into $[x_1] \ast' \cdots  \ast'  [x_k]$, and is mapped to
the corresponding multi-morphism $[x_1], \dots, [x_k] \to [m]$ in $(\Delta_{\act}, *')$.

The category (of operators) $(\tw \OOO)_{\act} \cong (\tw \Delta_{\act}) \cong \tw \Delta_{\emptyset}$ is essentially {\bf the category of necklaces},  cf.\@ \cite{DS11, Riv22, BS23}). It is itself 
monoidal, which can be expressed easiest by saying that  
\[ {}^{\uparrow \downarrow \downarrow} \OOO \to \OOO \]
(or dually $\twcop \OOO$) is cofibered, and hence the left hand side is monoidal and its fiber over $[1]$ is the category $(\tw \OOO)_{\act}$. 
The associated monoidal product is just $\ast'$ applied to source and destination of the morphism. 
All that was said can be translated replacing (via Lemma~\ref{LEMMADUAL}) $(\Delta_{\act}^{\op}, *')$ by the perhaps more tractable $(\Delta_{\emptyset}, *)$ everywhere.
For instance, the process of associating to 
\[ [x-1] \leftarrow [k-1] \]
a decomposition $[x_1-1], \dots, [x_k-1]$ is then just {\em collecting preimages} of the morphism in $\Delta_{\emptyset}$.

The  functor
\begin{align*}
 \Xi_{\emptyset}: \Delta_{\emptyset} &\to \Cat \\
 [n] &\mapsto (\Delta^{\op}_{\emptyset})^{n+1} 
\end{align*}
classifying the operad (\ref{necklace}) (as covariant functor) but composing into $\Cat^{\PF}$, it is rather associated with the cooperad functor:
\[ (\Delta_{\emptyset}, \ast)^{\op} \cong {}^{\uparrow \uparrow} \OOO \to \OOO^{\op} \]
\end{PAR}
There is also the functor
\begin{align*}
 \Xi: \Delta_{\emptyset} &\to \Cat^{\PF} \\
 [n] &\mapsto (\Delta^{\op})^{n+1} 
\end{align*}
where the transitions are the $\dec$ and the ${}^t\! \pr$. It is associated with the cooperad $(\Delta, \ast)^{\op}$.

\begin{LEMMA}\label{LEMMACOF}
All functors in the image of $\Xi_{\emptyset}$ are cofibrations with discrete fibers. 
\end{LEMMA}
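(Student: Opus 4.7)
The plan is to make the functors $\Xi_\emptyset(\alpha)$ completely explicit and reduce the claim to the basic binary concatenation $\ast:(\Delta_\emptyset^{op})^2\to\Delta_\emptyset^{op}$.

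First, I would unfold the correspondence. Under the identification $\tw\OOO\cong(\Delta_\emptyset^{op},\ast)$ of (\ref{necklace}) and the equivalence $(\Delta_{\act}^{op},\ast')\cong(\Delta_\emptyset,\ast)$ of Lemma~\ref{LEMMADUAL} (which shifts $[n]\leftrightarrow[n+1]$), a morphism $\alpha:[n]\to[m]$ in $\Delta_\emptyset$ is the same datum as an active morphism $[m+1]\to[n+1]$ in $\Delta$, i.e.\ a sequence $(k_0,\ldots,k_m)$ of non-negative integers with $\sum_j k_j=n+1$. The coCartesian transport in the cofibration $\tw\OOO\to\OOO$ along this morphism is block-wise concatenation, so
\[ \Xi_\emptyset(\alpha)(T_0,\ldots,T_n)\;=\;\bigl(T_0\ast\cdots\ast T_{k_0-1},\ T_{k_0}\ast\cdots\ast T_{k_0+k_1-1},\ \ldots\bigr), \]
with an empty block ($k_j=0$) producing the unit $[-1]=\emptyset\in\Delta_\emptyset^{op}$. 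This is, coordinate-wise over $j\in\{0,\ldots,m\}$, the product of the $k_j$-fold concatenation functors $\ast:(\Delta_\emptyset^{op})^{k_j}\to\Delta_\emptyset^{op}$.

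Products and finite compositions of cofibrations with discrete fibers are again cofibrations with discrete fibers, and the factors for $k_j\in\{0,1\}$ are trivially such. So the claim reduces to the binary concatenation $\ast:(\Delta_\emptyset^{op})^2\to\Delta_\emptyset^{op}$. Discreteness of the fibers is automatic for this functor: a morphism $(\phi_1,\phi_2)$ in a fiber projects to the identity, i.e.\ $\phi_1\ast\phi_2=\id$ in $\Delta_\emptyset^{op}$, and the identity on $[t_1+t_2+1]$ admits only the tautological decomposition as $\id_{[t_1]}\ast\id_{[t_2]}$. For coCartesian lifts, given $(T_1,T_2)=([t_1],[t_2])$ and a morphism $\psi:T_1\ast T_2\to S=[s]$ in $\Delta_\emptyset^{op}$, that is $\psi^{op}:[s]\to[t_1+t_2+1]$ in $\Delta_\emptyset$, I would set
\[ t_1':=\max\bigl\{i\in[s]\cup\{-1\}\,\bigm|\,\psi^{op}(i)\le t_1\bigr\},\qquad t_2':=s-t_1'-1. \]
Then $[t_1']\ast[t_2']=[s]$ and $\psi^{op}$ factors uniquely as $\phi_1^{op}\ast\phi_2^{op}$ with $\phi_i^{op}:[t_i']\to[t_i]$ the evident restrictions to the two blocks, yielding the required unique lift $(\phi_1,\phi_2)$.

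The only subtlety, which I expect to be the main point of the argument, is the edge case in which $\psi^{op}$ never attains a value $\le t_1$ or always does: this forces $t_1'=-1$ or $t_2'=-1$, and the lift exists only because $[-1]=\emptyset\in\Delta_\emptyset$ is available as the unit of $\ast$. This is precisely why the statement is formulated over $\Delta_\emptyset$ rather than $\Delta$, and it is the essential ingredient making these functors honest cofibrations rather than merely locally coCartesian ones.
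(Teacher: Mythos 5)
Your proof is correct, but it takes a more computational route than the paper's. The paper's own argument is a structural one-liner: it identifies $(\Delta_{\emptyset}^{\op})^{n+1}$ with $(\Delta_{\emptyset} \times_{/\Delta_{\emptyset}} [n])^{\op}$ (an object $[k]\to[n]$ of the comma category being recorded by its fibres, which is exactly the block decomposition you use), so that $\Xi_{\emptyset}(\alpha)$ becomes the opposite of the post-composition functor $\Delta_{\emptyset} \times_{/\Delta_{\emptyset}} [n] \to \Delta_{\emptyset} \times_{/\Delta_{\emptyset}} [m]$; such a functor is a map over $\Delta_{\emptyset}$ between the discrete fibrations classifying $\Hom(-,[n])$ and $\Hom(-,[m])$, hence is itself a discrete fibration, and the lemma follows. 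You instead unfold $\Xi_{\emptyset}(\alpha)$ as block-wise concatenation, reduce via products and compositions to the binary $\ast$, and verify unique lifting by splitting a monotone map $[s]\to[t_1]\ast[t_2]$ at the threshold $t_1$ --- which is the same combinatorial fact that makes the comma-category projection a discrete fibration, just carried out by hand. Your version buys explicitness (your formula for the coCartesian transport is precisely what is used later, e.g.\ in the proof of Proposition~\ref{PROPEXPLICIT}), at the cost of the case analysis around empty blocks; your observation that the availability of the unit $\emptyset$ is what makes the lifts exist, and hence why one must work over $\Delta_{\emptyset}$ rather than $\Delta$, is exactly the right point to stress. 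One small addition worth making explicit: you establish unique lifting together with discreteness of the fibres, i.e.\ a discrete opfibration, and should record that unique lifts are automatically coCartesian, so this does yield a cofibration with discrete fibres in the sense used in the paper.
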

\begin{proof}
They are of the form
\[ (\Delta_{\emptyset} \times_{/\Delta_{\emptyset}} [n])^{\op} \to (\Delta_{\emptyset} \times_{/\Delta_{\emptyset}} [m])^{\op} \]
given by composition with $[n] \to [m]$.
\end{proof}

\begin{PAR}\label{PARRHOSTAR}
 Denote by  $(\Delta_{\act}^{\op}, \ast')^{\vee}$ the associated cooperad, i.e.\@ 
 \[\Hom([n]; [k_0], \dots, [k_m]) = \Hom_{\Delta_{\act}^{\op}}([n], [k_0] \ast' \cdots \ast' [k_m]).  \]
The sets of 0-ary morphisms are $\Hom([n]; ) = \emptyset$ for $n > 0$ and $\Hom([0]; ) = \{ \rho_0 \}$ with $\rho_0$ corresponding to $\Delta_0 = \Delta_0$.
The transformation (\ref{eqcan}) induces (by composition) a functor of cooperads
\[ \rho:  (\Delta_{\emptyset}, \ast)^{\vee} \cong (\Delta_{\act}^{\op}, \ast')^{\vee} \rightarrow  (\Delta, \ast)^{\op}. \]
\end{PAR}

\begin{LEMMA}\label{LEMMARHO}
$(\Delta, \ast)^{\op}$ is obtained from $(\Delta_{\act}^{\op}, \ast')^{\vee}$ by adjoining a
0-ary morphism
 \[ \rho_1 \in \Hom([1]; ) \]
 subject to the relation that there be only one 0-ary morphism from any object: $\Hom([n]; ) = \{ \rho_n \}$  (including  $n=0$). 
\end{LEMMA}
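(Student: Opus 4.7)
The plan is to verify the universal property of $(\Delta, \ast)^{\op}$ asserted by the lemma. Given any cooperad $\mathcal{D}$ equipped with a functor $F_0 : (\Delta_{\act}^{\op}, \ast')^{\vee} \to \mathcal{D}$ and a 0-ary morphism $\tilde\rho_1 \in \Hom_{\mathcal{D}}([1]; )$ such that every $\Hom_{\mathcal{D}}([n]; )$ is a singleton $\{\tilde\rho_n\}$ (with $\tilde\rho_n$ obtained by cooperadically composing $\tilde\rho_1$ with the image under $F_0$ of the unique active morphism $[n] \to ([1])$), I would exhibit a unique extension $F : (\Delta, \ast)^{\op} \to \mathcal{D}$ with $F \circ \rho = F_0$ and $F(\rho_1) = \tilde\rho_1$. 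First I would verify that $(\Delta, \ast)^{\op}$ itself satisfies the uniqueness hypothesis: by its Day-convolution description coming from the monoidal category $(\Delta_{\emptyset}, \ast)$, one has $\Hom_{(\Delta, \ast)^{\op}}([n]; ) = \Hom_{\Delta_\emptyset}([-1], [n])$, and this is a singleton because $[-1]$ is initial in $\Delta_\emptyset$; this supplies the $\rho_n$ of the statement.

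The construction of $F$ rests on a canonical decomposition of multi-morphisms. For $n \geq 1$ one has $\Hom_{(\Delta, \ast)^{\op}}([x]; [y_1], \ldots, [y_n]) = \Hom_{\Delta_\emptyset}([y_1] \ast \cdots \ast [y_n], [x])$. Given such a $\psi$, let $\alpha_j, \beta_j$ denote the values of $\psi$ on the first, respectively last, element of the $j$-th block $[y_j]$, and set $g_0 := \alpha_1$, $g_j := \alpha_{j+1} - \beta_j$ for $1 \leq j < n$, $g_n := x - \beta_n$; these are all $\geq 0$. Then $\psi$ factors uniquely as $s_{\can}$ followed by an active morphism
\[ \phi : [g_0] \ast' [y_1] \ast' [g_1] \ast' \cdots \ast' [y_n] \ast' [g_n] \to [x] \]
in $\Delta_{\act}$, giving a bijection between multi-morphisms $\psi$ and pairs $(\phi, (g_i)_{i=0}^n)$. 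I define $F(\psi)$ as the cooperadic composition of $F_0(\phi)$ with $\tilde\rho_{g_i}$ on each of its $[g_i]$-outputs, and for $n = 0$ set $F(\rho_x) := \tilde\rho_x$. Compatibility with the prescribed data is then immediate: if $\psi = \rho(\phi')$ for an active $\phi'$ then all $g_i = 0$, so $F(\psi) = F_0(\phi')$, and for $\psi = \rho_1$ only the single $\tilde\rho_1$ is inserted.

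The main obstacle will be verifying that $F$ respects cooperadic composition. For $\psi$ as above and further multi-morphisms $\psi'_i : [z_{i,1}] \ast \cdots \ast [z_{i,m_i}] \to [y_i]$ on its outputs, the canonical decomposition of the composite $\psi \circ (\psi'_1, \ldots, \psi'_n)$ in $\Delta_\emptyset$ has active core obtained by concatenating (via $\ast'$) the active cores of $\psi$ and the $\psi'_i$, and gap-list obtained by inserting the gaps of each $\psi'_i$ into the gap-list of $\psi$ after suitable shifts by the boundary values $\alpha_j, \beta_j$. When every $\psi'_i$ has at least one output this is a direct combinatorial identification. The hard case is when some $\psi'_i$ is 0-ary, so that the $[y_i]$-output of $\psi$ is killed and its block is absorbed into a larger gap in the composite; matching the iterated composition in $\mathcal{D}$ (kill $[y_i]$ first, then absorb into an enclosing $\tilde\rho$) against this single enlarged $\tilde\rho$ requires the uniqueness relation, which identifies any cooperadic composition ending in a 0-ary morphism with the unique $\tilde\rho$. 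Uniqueness of $F$ is then automatic, since its values on $\rho_1$ and on source morphisms determine it on every multi-morphism via the canonical decomposition.
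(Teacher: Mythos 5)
Your route (verifying a universal property of $(\Delta,\ast)^{\op}$ rather than, as the paper does, writing down the adjoined cooperad explicitly and exhibiting mutually inverse maps on multi-morphism sets) is viable, but the sketch has a concrete false step exactly at its crux, the compatibility of $F$ with cooperadic composition. You claim that the canonical decomposition of a composite $\psi\circ(\psi'_1,\dots,\psi'_n)$ has ``active core obtained by concatenating the active cores and gap-list obtained by inserting the gaps of the $\psi'_i$ after suitable shifts,'' and that this is a ``direct combinatorial identification'' whenever all $\psi'_i$ have an output. This is not true: the outer morphism $\psi$ stretches the gaps of the $\psi'_i$ and merges adjacent gaps, so neither the gap sizes nor the core are preserved. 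For instance take $\psi\in\Hom([2];[1])$ given by the active map $[1]\to[2]$, $0\mapsto 0$, $1\mapsto 2$ (gap list $(0,0)$), and $\psi'\in\Hom([1];[1])$ given by the constant map $[1]\to[1]$ with value $0$ (gap list $(0,1)$, core $[1]\ast'[1]\to[1]$). The composite is the constant map $[1]\to[2]$, whose canonical decomposition has gap list $(0,2)$ and core $[1]\ast'[2]\to[2]$, while the composite of the cores is the active map $[1]\ast'[1]\to[2]$ with gap $1$. So even the ``easy'' case already requires the uniqueness relation: the correct statement is that the composite of cores factors through the core of the composite by plugging multi-morphisms of $(\Delta_{\act}^{\op},\ast')^{\vee}$ into the gap outputs, and the two cappings are then identified in $\mathcal{D}$ precisely because capping all outputs of an $F_0$-image with $\tilde\rho$'s depends only on the source. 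Since the functoriality check is the heart of your argument and rests on this incorrect identity, the proof as written does not go through; with the corrected combinatorial statement (and the same mechanism applied uniformly, including your 0-ary case) it can be repaired.

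A second, more structural point: the universal property you verify is not literally the one defining ``adjoin $\rho_1$ subject to the relation.'' You require the target $\mathcal{D}$ to have a \emph{unique} 0-ary morphism out of each $F_0([n])$, whereas the quotient construction is characterized by extensions to all $\mathcal{D}$ for which merely the images of the formal composites (multi-morphisms of $(\Delta_{\act}^{\op},\ast')^{\vee}$ with all outputs capped by $\tilde\rho_1$ or $\tilde\rho_0$) coincide. So even granting your extension theorem, the lemma only follows after a bridging step: either weaken your hypothesis to that descent condition (which is in fact all your construction uses), or note that the actual quotient cooperad has singleton 0-ary homs and compare the two universal properties to get the isomorphism. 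The paper avoids both issues by describing the adjoined cooperad directly --- its multi-morphisms are active maps $[m]\leftarrow[l_0]\ast'\cdots\ast'[l_s]$ in which the missing blocks are single $[1]$'s capped by $\rho_1$, no two consecutive --- and then matching these against $\Hom_{(\Delta,\ast)^{\op}}$ by explicit mutually inverse constructions; your normal form with gap blocks of exact size $g_i$ is a legitimate alternative, but the bookkeeping above must be done honestly for it to count as a proof.
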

Notice that there are new 1-ary morphisms introduced, for example the unique morphism in $\Hom([1]; [1], [1])$ composed with $\rho_1$ at one of the slots. 
 
\begin{proof}
In the category obtained from $(\Delta_{\act}^{\op}, \ast')^{\vee}$, as described in the Lemma, we have $\rho_n=\rho_1 p_n$ where $p_n: [n] \leftarrow [1]$ is the
morphism to the final object in $\Delta_{\act}^{\op}$ (also for $n=0$). 
The category 
thus has multi-morphisms
\[  \Hom([m]; [k_0], \dots , [k_n]) = \{ [m] \leftarrow [l_0] \ast' \cdots \ast' [l_{s}]  \}  \]
where the $k_i$ are an ordered subset of the $l_{i}$, such that no two consecutive elements are missing, and such that all missing $l_i$ are equal to $1$.
We can map it to the unique morphism in $\Hom_{(\Delta, \ast)^{\op}}([m]; [k_0], \dots, [k_n])$ corresponding to
\[  [m]  \leftarrow [l_0] \ast' \cdots \ast' [l_{s}] \leftarrow [k_0] \ast \cdots \ast [k_n]  \]
mapping the $[k_i]$ to the corresponding factor. If there is no $k_i$ then we map it to the unique 0-ary morphism in $\Hom_{(\Delta, \ast)^{\op}}([m]; )$.

On the other hand, consider an $n$-ary morphism in $\Hom_{(\Delta, \ast)^{\op}}([m]; [k_0], \dots, [k_n])$ given by
\[  \alpha:  [m] \leftarrow [k_0] \ast \cdots \ast [k_n]. \]
Now add factors $[1]$ everywhere, where the condition that the extremal points go to the same point, resp.\@ to the inital and final points, is violated: 
\[  [m] \leftarrow ([1]) \ast [k_0] \ast ([1]) \ast \cdots \ast ([1]) \ast  [k_n] \ast ([1])  . \]
This morphism factors
\begin{gather*}
   [m] \leftarrow    ([1]) \ast' [k_0] \ast' ([1]) \ast' \cdots \ast' ([1]) \ast'  [n_k] \ast' ([1]) \\
    \leftarrow ([1]) \ast [k_0] \ast ([1]) \ast \cdots \ast ([1]) \ast  [k_n] \ast ([1]) 
\end{gather*}
and thus corresponds to a multi-morphism in 
\[ \Hom_{(\Delta_{\act}^{\op}, \ast')^{\vee}}([m];  ([1]), [k_0], ([1]), \dots, ([1]),  [k_n], ([1])) \]
Compose this with at the various $[1]$ with the $\rho_1: [1] \rightarrow \emptyset$ adjoint, to yield a morphism
\[ [n] \to  [k_0], \dots, [k_n]. \]
Finally, the unique 0-ary morphism in $\Hom([n]; )$ in $(\Delta, \ast)^{\op}$ is mapped to $\rho_n$.
The two constructions obviously determine an isomorphism of cooperads. 
\end{proof}

\begin{KOR}\label{KORRHO}
For any monoidal $\infty$-category $\mathcal{C}$ in which the unit is a final object, $\rho^*$ induces an isomorphism
\[  (\mathcal{C}^{\vee})^{(\Delta, \ast)^{\op}} \cong  (\mathcal{C}^{\vee})^{(\Delta_{\act}^{\op}, \ast')^{\vee}}.   \]
\end{KOR}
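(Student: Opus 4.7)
The plan is to exploit the explicit presentation of $(\Delta, \ast)^{\op}$ from Lemma~\ref{LEMMARHO}, which exhibits it as obtained from $(\Delta_{\act}^{\op}, \ast')^{\vee}$ by freely adjoining a single 0-ary morphism $\rho_1 \in \Hom([1]; )$, subject to the relation that $\Hom([n]; ) = \{\rho_n\}$ is a singleton for every $n \ge 0$ (where $\rho_n = \rho_1 \circ p_n$ with $p_n : [n] \leftarrow [1]$ the unique morphism to the final object in $\Delta_{\act}^{\op}$). By this universal property, specifying a functor of cooperads $F: (\Delta, \ast)^{\op} \to \mathcal{C}^{\vee}$ amounts to specifying an $F_0 := F \circ \rho \in (\mathcal{C}^{\vee})^{(\Delta_{\act}^{\op}, \ast')^{\vee}}$ together with a coherent 0-ary extension, i.e.\@ a 0-ary morphism $\tau : F_0([1]) \to ()$ in $\mathcal{C}^{\vee}$, such that for every $n$ the induced 0-ary morphism $F_0([n]) \to ()$ obtained by composing with $p_n$ agrees with the unique $\rho_n$-value.

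Next, I would observe that in the cooperad $\mathcal{C}^{\vee}$ a 0-ary morphism with source $X$ is by definition a morphism $X \to 1$ in $\mathcal{C}$, where $1$ is the monoidal unit. By the hypothesis that $1$ is final in $\mathcal{C}$, the space of 0-ary morphisms out of any object $X$ is contractible. Hence for any $F_0$, the space of compatible extensions to a functor $F$ is contractible: the choice of $\tau$ is essentially unique, and the required compatibility conditions relating $\tau$ with the $\rho_n$ (for $n \ne 1$) are automatic because all candidate morphisms to $1$ coincide up to contractible ambiguity. This formally exhibits $\rho^*$ as a trivial Kan fibration (in the underlying model) of $\infty$-categories, hence an equivalence.

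The main obstacle will be making the universal property of Lemma~\ref{LEMMARHO} precise in the $\infty$-categorical setting: in the 1-categorical case the presentation by generators and relations immediately yields the desired description of $\Hom_{\mathrm{coOp}}((\Delta, \ast)^{\op}, \mathcal{C}^{\vee})$, but in the $\infty$-categorical setting one has to verify that the relevant pushout of cooperads truly computes the $\infty$-category of functors out of $(\Delta, \ast)^{\op}$. A clean way to handle this is to work fiberwise over $\OOO^{\op}$ and reduce, via the cofibrancy properties already established for the categories ${}^{\Xi} \OOO$, to checking that the restriction map
\[ \Hom_{\mathrm{coOp}/\OOO^{\op}}((\Delta, \ast)^{\op}, \mathcal{C}^{\vee}) \longrightarrow \Hom_{\mathrm{coOp}/\OOO^{\op}}((\Delta_{\act}^{\op}, \ast')^{\vee}, \mathcal{C}^{\vee}) \]
has contractible homotopy fibers, which again follows from the contractibility of the space of morphisms to a final object. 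Alternatively, one can phrase the argument as saying that the category of augmented objects in $\mathcal{C}$ is equivalent to $\mathcal{C}$ once $1$ is final, and that passing to this augmented picture converts $(\Delta_{\act}^{\op}, \ast')^{\vee}$-diagrams into $(\Delta, \ast)^{\op}$-diagrams; either route reduces the corollary to the contractibility statement above.
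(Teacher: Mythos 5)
Your proposal follows essentially the same route as the paper: reduce to the presentation of $(\Delta, \ast)^{\op}$ from Lemma~\ref{LEMMARHO} and use that 0-ary morphisms in $\mathcal{C}^{\vee}$ are maps to the final unit, so the extension data is (essentially) unique. The paper's own proof is exactly this observation in the 1-categorical case, with the careful $\infty$-categorical argument explicitly omitted — the same gap you identify and sketch a strategy for, so your write-up is at least as complete as what the paper records.
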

\begin{proof}That is an obvious consequence of Lemma~\ref{LEMMARHO} in the 1-categorical case but needs some careful argument for $\infty$-categories that is omitted for the moment. 
\end{proof}

\begin{PAR}
$(\FinSet_{\emptyset}, \coprod)$ restricts to $(\FinSet, \coprod)$, not as cofibration over $\OOO$ (the unit is lacking) but as fibration, in particular still $\infty$-exponential. 
Similarly $(\Delta_{\emptyset}, \coprod)$ restricts to $(\Delta, \coprod)$ as a fibration and  $(\Delta_{\emptyset}, \ast)$ restricts to $(\Delta, \ast)$, which is neither a cofibration
nor a fibration, but $\infty$-exponential by Lemma~\ref{LEMMAIOTACART}. Furthermore there are obvious forgetful maps
$(\Delta_{\emptyset}, \ast) \to (\Delta_{\emptyset}, \coprod)$ etc.\@
\end{PAR}

\begin{PAR}\label{EXDAY2} Cf.\@ also Example~\ref{EXDAY}. Let $\mathcal{C}$ be a finitely complete category with monoidal product $\otimes$ commuting with finite limits. It defines a cooperad
\[ (\mathcal{C}, \otimes)^{\vee}. \]
We will be interested in the Day convolution cooperads
\[  D((\Delta, \ast)^{\op}, (\mathcal{C}, \otimes)^{\vee}) \qquad \text{and} \qquad
  D((\Delta, \coprod)^{\op}, (\mathcal{C}, \otimes)^{\vee}). \]
By Proposition~\ref{PROPDAY}, those are again cooperads on $\mathcal{C}^{\Delta^{\op}}$ associated with monoidal products
\[ - \tildeotimes - := \dec_* - \boxtimes - \qquad  \text{and} \qquad
 - \otimes - := \delta^* - \boxtimes -, \]
while in both cases the unit is given by $\pi^* 1$, where $1$ is the unit in $\mathcal{C}$. Note that finitely complete is sufficient for the monoidality, because $\dec_*$ is effectively computed by finite limits (see Proposition~\ref{PROPEXPLICIT} and also \ref{PROPEXPLICITAB}). 
The product $ - \otimes - $ is just the point-wise extension of $\otimes$ on $\mathcal{C}^{\Delta^{\op}}$.
If $(\mathcal{C}, \otimes)$ is an Abelian tensor category, then $- \tildeotimes -$ translates via Dold-Kan into the usual tensor product of complexes (and commutation with finite products is sufficient). This will be discussed at length in Section~\ref{SECTEZAB}.
\end{PAR} 
\begin{PAR}
The cooperad $(\mathcal{C}^{\Delta^{\op}}, \otimes)^{\vee}$ is always fibered over $\OOO^{\op}$ (i.e.\@ comes from a monoidal product), regardless of any properties of $(\mathcal{C}, \otimes)$. In fact, this is the situation of \ref{EXDAY2}.
We can thus equally well describe the {\em operad}
\[ (\mathcal{C}^{\Delta^{\op}}, \otimes)  = D((\Delta^{\op}, \coprod), (\mathcal{C}, \otimes)) \]
as a Day convolution. 
Notice that $((\Delta, \coprod)^{\op})^{\vee} = (\Delta^{\op}, \coprod)$.
The {\em operad} $(\mathcal{C}^{\Delta^{\op}}, \tildeotimes)$, however, cannot be described as a Day convolution\footnote{Notice that it involves morphisms departing from a {\em right} Kan extension.}. 
\end{PAR}

\subsection{Canonical enrichments in pre-sheaves}\label{SECTALT}

Let $I$ be a small category and
let $\mathcal{C}$ a category. We have recalled in
 \ref{PARENRICHMENTPRESHEAVES} that $\mathcal{C}^{I^{\op}}$ has a canonical enrichment in pre-sheaves
 $\Set^{I^{\op}}$ (with particularly easy left tensors --- if $\mathcal{C}$ is cocomplete --- namely just given by the point-wise tensoring of $\mathcal{C}$ with $\Set$).
 There is a second description of the same enrichment, which will be very convenient in Section~\ref{SECTACYCLIC} to construct homotopies, as, for example, the homotopy between $\Ez \Aw$ and $\id$.
 In these applications, we will always have $I = \Delta$ and the second description amounts to the fact that 
 \[ \boxed{ \uHom(X, Y)_{[n]} \cong \Hom_{\mathcal{C}^{(\Delta^{\op})^{n+1}}}(\dec_{n+1}^* X, \dec_{n+1}^*Y) } \]
 and thus (under suitable (co)completeness assumptions)
 $\Delta_n \otimes X$ can also be described as $\dec_{n+1,!} \dec_{n+1}^* X$ with
the functoriality in $\Delta_n$ (essentially) given by certain adjunction units, and $\mathcal{HOM}(\Delta_n, X)$ as $\dec_{n+1,*} \dec_{n+1}^* X$.
 This second description is well-known (cf.\@  \cite{OR20}) but seems not to be widely used in the literature.

\begin{DEF}\label{DEFENRICHMENTPRESHEAVES}
Let $I$ be a small category with final object $i$. 
Consider a functor
\[ F: I^{\op} \to \Cat \]
and define
\begin{eqnarray*} 
\uHom_F: I^{\op} \times F(i)^{\op} \times F(i) &\to& \Set \\
 (j, X, Y) &\mapsto& \Hom_{F(j)}(F(p_j) X, F(p_j)Y)  
\end{eqnarray*} 
where $p_j: j \to i$ denotes the unique morphism to $i$.
\end{DEF}

\begin{PROP}\label{PROPENRICH}
The $\uHom_F$ of Definition~\ref{DEFENRICHMENTPRESHEAVES}, defines an enrichment on $F(i)$  in $\Set^{I^{\op}}$ w.r.t.\@ the point-wise
product
\[ \Set^{I^{\op}} \times \Set^{I^{\op}} \to  \Set^{I^{\op}}. \]
If the functors w.r.t.\@ all $p_j$ for all objects in $I$ have a left (resp.\@ right adjoint), and $F(i)$ is cocomplete (resp.\@ complete) then this is left, resp.\@ right tensored. 

More generally a functor
\[ F: I^{\op} \to \Cat_{/J} \qquad\text{or}\qquad  F: I^{\op} \to \mathrm{(co)Op}_{/J} \]
 gives rise to a $\Set^{I^{\op}}$ enriched category or (co)operad $F(i) \to J$. 
 If the $F(i) \to J$ are (co)fibrations, then this only yields again a (co)fibration of simplicially enriched categories (i.e.\@ induces a structure of simplicially enriched functor on the pull-backs, resp.\@ push-forwards) if $F$ has
 values in (co)fibrations and (co)Cartesian functors. 
\end{PROP}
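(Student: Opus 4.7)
The plan is to verify all parts directly from the definitions, starting with the enrichment axioms. For any morphism $\alpha: j' \to j$ in $I$, the uniqueness of the final object yields $p_{j'} = p_j \circ \alpha$, hence $F(\alpha) \circ F(p_j) = F(p_{j'})$. Therefore $F(\alpha)$ induces a transition map
\[ \uHom_F(X,Y)_j = \Hom_{F(j)}(F(p_j)X, F(p_j)Y) \longrightarrow \Hom_{F(j')}(F(p_{j'})X, F(p_{j'})Y) = \uHom_F(X,Y)_{j'}, \]
and functoriality of $F$ makes $\uHom_F(X,Y)$ a presheaf on $I$. The composition and identity morphisms in the putative enrichment are defined componentwise by the composition and identities of each $F(j)$; naturality in $j$ follows because each $F(\alpha)$ is a functor and so respects composition and identities. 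The associativity and unit axioms then reduce pointwise to the corresponding axioms in the $F(j)$, giving the $\Set^{I^{\op}}$-enrichment w.r.t.\@ the pointwise product.

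For the tensoring statements I would first handle representables via Yoneda: if each $F(p_j)$ has a left adjoint $L_j$, then
\[ \Hom_{\Set^{I^{\op}}}(y(j), \uHom_F(X,Y)) = \uHom_F(X,Y)_j = \Hom_{F(j)}(F(p_j)X, F(p_j)Y) \cong \Hom_{F(i)}(L_j F(p_j) X, Y), \]
so $y(j) \otimes X := L_j F(p_j) X$ is forced. I would then extend this to an arbitrary $K \in \Set^{I^{\op}}$ by writing $K$ as the colimit of its category of elements and setting $K \otimes X := \colim_{(j,k) \in \int K} L_j F(p_j) X$, which is well-defined when $F(i)$ is cocomplete. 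The adjunction $K \otimes - \dashv \uHom_F(-,Y)$ follows by passing the colimit out of the Hom and applying the representable case. The right-tensoring is constructed dually as $\mathcal{HOM}(K,X) = \lim_{(j,k) \in \int K} R_j F(p_j) X$ using the right adjoints $R_j$, under the dual completeness assumption.

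For the relative version (Part~3), everything is performed fibrewise over $J$: for $F: I^{\op} \to \Cat_{/J}$ I would replace $\Hom_{F(j)}$ with $\Hom_{F(j)/J}$, observing that the definition of $\uHom_F$ and the construction of composition commutes with this restriction. For $\mathrm{(co)Op}_{/J}$ the identical definition yields a multi-$\Hom$ of operads indexed by $I^{\op}$, and the enrichment axioms are inherited from those in each $F(j)$ as above. For part~4, the key observation is the following: given a (co)fibration $F(i) \to J$ with push-forward $f_{\bullet}$ along a morphism $f: x \to x'$ in $J$, making $f_{\bullet}$ into an enriched functor requires, at each level $k \in I$, a map $\Hom_{F(k)_x}(F(p_k)X, F(p_k)Y) \to \Hom_{F(k)_{x'}}(F(p_k) f_{\bullet}X, F(p_k) f_{\bullet}Y)$ that is compatible with the composition structure. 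The natural candidate is the push-forward along $f$ in $F(k) \to J$ itself (which requires $F(k) \to J$ to be a (co)fibration), followed by the canonical comparison $f_{\bullet} F(p_k) \to F(p_k) f_{\bullet}$; the latter is an isomorphism precisely when $F(p_k)$ is (co)Cartesian in the sense of Definition~\ref{DEFCART}. Hence both conditions on $F$ are necessary and sufficient.

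The main obstacle will be bookkeeping in Part~4: making sure that the enriched (co)fibration condition (including the universal property of (co)Cartesian morphisms in the enriched sense) really does reduce pointwise to the (co)Cartesianity of the $F(\alpha)$'s, rather than to some strictly stronger compatibility. I expect this to come out cleanly by testing against the representables $y(j)$ and invoking Yoneda, reducing the enriched universal property to its underlying ordinary one in each $F(j)$.
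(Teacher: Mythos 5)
Your proposal is correct and follows essentially the same route as the paper: composition is defined and checked pointwise in each $F(j)$, and the left tensoring is generated by $j\ltimes X := G(p_j)F(p_j)X$ and extended by the coend $\int^{j} C(j)\times(j\ltimes X)$, which is exactly your colimit over the category of elements. The paper's own proof stops after the enrichment and the left-tensoring formula (treating the relative and (co)fibration statements as routine), so your sketches of those parts go slightly beyond it but in the expected direction.
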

\begin{proof}
For fixed $j$ the morphisms in $\uHom_F(-,-)$ can obviously be composed and the composition is functorial in $j$.
If all $F(p_j)$ have left adjoints $G(p_j)$, then we can define 
\begin{align*}
 I \times F(i) &\to F(i) \\
 j, X &\mapsto j \ltimes X := G(p_j)  F(p_j) X 
\end{align*} 
This gives rise to a left tensoring defined by
\begin{align*} \Set^{I^{\op}} \times F(i) &\to F(i) & \\
 C, X &\mapsto \int^i C(i) \times (i \ltimes X)  & & \qedhere
\end{align*} 
\end{proof}

\begin{PAR}
If the $F(p_j)$ have left adjoints $G(p_j)$, then the isomorphisms
$F(\alpha) F(p_k) \Rightarrow F(p_j)$ for $\alpha: j \to k$ yield morphisms $j \ltimes X \to k \ltimes X$  by means of applying Yoneda to the commutative diagram
\[ \xymatrix{ \Hom(F(p_k)X, F(p_k)Y) \ar[r]^-{F(\alpha)} \ar[d]^{\sim} & \Hom(F(\alpha)F(p_k)X, F(\alpha)F(p_k)Y)  \ar[d]^{\sim} \\
\Hom(k \ltimes X, Y) \ar@{-->}[r] & \Hom(j \ltimes X, Y) } \]
Note that the upper horizontal arrow is the functoriality of the construction in the last Proposition. 
These morphisms can be described equivalently as follows: 
\begin{enumerate}
\item 
$G(p_k) F(p_k) \cong G(p_k) F(\alpha) F(p_j) \Rightarrow   G(p_j) F(p_j)$
where $G(p_k) F(\alpha) \Rightarrow G(p_j)$ is the mate of $F(\alpha) F(p_k) \cong F(p_j)$.
\item If the $F(\alpha)$ also have left adjoints then this may  be described even more easily as
\[ G(p_k) F(p_k) \cong G(p_j) G(\alpha) F(\alpha) F(p_j) \Rightarrow G(p_j) F(p_j) \]
given by the counit $G(\alpha)F(\alpha) \to \id$.
\end{enumerate}
We leave the proof as exercise. 
\end{PAR}

\begin{PAR}\label{PARIFUNCT}
Let
\[ F, F': I^{\op} \to \Cat \]
be functors and assume that the $F(p_j)$ (resp.\@ $F'(p_j)$) have left adjoints $G(p_j)$ (resp.\@ $G'(p_j)$). 
A natural transformation
\[ \mu: F' \Rightarrow F \]
yields a natural transformation
\[ k \ltimes \mu X \rightarrow \mu (k \ltimes X) \]
defined as the composition
\[ \xymatrix{ 
G(p_k) F(p_k) \mu(k) \ar[r]^{\sim}  &  G(p_k) \mu(i) F'(p_k) \ar[r] & \mu(k) G'(p_k) F'(p_k) } \]
where the right hand side morphism is the mate of the isomorphism featuring in the natural transformation. 
\end{PAR}

\begin{LEMMA}\label{LEMMAIFUNCT}
The transformation $k \ltimes (\mu -) \Rightarrow \mu (k \ltimes -)$ defined in \ref{PARIFUNCT} is natural in $k$, even if not all $F(\alpha)$ have left adjoints. 
\end{LEMMA}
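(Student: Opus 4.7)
The plan is to present both composites around the naturality square as mates of a single 2-cell in $\Cat$, formed using only the given adjunctions $G(p_*)\dashv F(p_*)$, and then to invoke the standard compatibility of the mate construction with 2-cell pasting.

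First I would rewrite the transition $\Psi_\alpha\colon G(p_j)F(p_j)\Rightarrow G(p_k)F(p_k)$ as the mate, under $G(p_j)\dashv F(p_j)$ and $G(p_k)\dashv F(p_k)$, of the identity $F(p_j)=F(\alpha)F(p_k)$. Explicitly it is the pasting
\[
G(p_j)F(p_j) \xrightarrow{\eta_k} G(p_j)F(p_j)G(p_k)F(p_k) = G(p_j)F(\alpha)F(p_k)G(p_k)F(p_k) \xrightarrow{\varepsilon_j} G(p_k)F(p_k),
\]
using only the unit of $G(p_k)\dashv F(p_k)$ and the counit of $G(p_j)\dashv F(p_j)$; in particular no adjoint to $F(\alpha)$ is required, and analogously for $\Psi'_\alpha$. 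By construction, $\phi_F(k)$ is the mate of the naturality equality $\mu(k)F'(p_k)=F(p_k)\mu(i)$ under $G(p_k)\dashv F(p_k)$ and $G'(p_k)\dashv F'(p_k)$, and similarly for $\phi_F(j)$ at the level of $p_j$.

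The key observation is that the $\mu$-naturality identity at $p_j$ admits a canonical decomposition using $p_j=p_k\alpha$, namely the pasting
\[
\mu(j)F'(p_j)= \mu(j)F'(\alpha)F'(p_k)= F(\alpha)\mu(k)F'(p_k)= F(\alpha)F(p_k)\mu(i)= F(p_j)\mu(i),
\]
in which the outer equalities are the defining identities for $F(p_j)$ and $F'(p_j)$, the middle one is $\mu$-naturality at $p_k$, and the remaining one is $\mu$-naturality at $\alpha$. Taking the mate of this entire pasting under the four adjunctions $G(p_*)\dashv F(p_*)$ and $G'(p_*)\dashv F'(p_*)$ yields a single 2-cell $G(p_j)F(p_j)\mu(i)\Rightarrow \mu(i)G'(p_k)F'(p_k)$. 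By the pasting lemma for mates --- ``the mate of a composite of 2-cells is the corresponding composite of the mates'' --- this common mate can be computed in two ways: grouping the two leftmost equalities into $\phi_F(j)$ and the two rightmost into $\mu(i)\Psi'_\alpha$ yields one composite, while grouping the two leftmost into $\Psi_\alpha\mu(i)$ and the two rightmost into $\phi_F(k)$ yields the other. These two composites are therefore equal, which is exactly the commutativity of the naturality square.

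The main obstacle is purely one of discipline: because neither $F(\alpha)$ nor $F'(\alpha)$ is assumed to have a left adjoint, the $\mu$-naturality at $\alpha$ step must be carried along inside the pasting rather than converted into an $\alpha$-mate, and every other mate must be formed exclusively through the four available adjunctions $G(p_*)\dashv F(p_*)$. Once the pasting is arranged with this constraint in mind, the comparison reduces to repeated use of the triangle identities for the $p_k$- and $p'_k$-adjunctions together with the standard 2-categorical pasting calculus for mates.
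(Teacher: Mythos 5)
Your overall strategy is sound and is in substance the paper's own: the paper reduces naturality in $k$ to a single square (the compatibility of the mate at $p_k$ with the mates defining the transition maps) and leaves it as ``an exercise with mates''; your idea of exhibiting both composites around the naturality square as mates of one and the same pasting, formed using only the four adjunctions $G(p_\bullet)\dashv F(p_\bullet)$, $G'(p_\bullet)\dashv F'(p_\bullet)$ and the compatibility of mates with pasting, is exactly how that exercise is solved, and your explicit unit/counit formulas for $\Psi_\alpha$ and for $\phi_F(k)$ are correct and visibly avoid any adjoint of $F(\alpha)$.

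The one place that needs repair is the grouping you assert, which as written does not parse: $\phi_F(j)$ is the mate of the naturality identity at $p_j$ (i.e.\@ of the \emph{whole} chain $\mu(j)F'(p_j)=F(p_j)\mu(i)$), not of the two leftmost equalities, and $\Psi'_\alpha$ comes from the single equality $F'(\alpha)F'(p_k)=F'(p_j)$; moreover the block consisting of the two leftmost equalities, read as a square, has $F(\alpha)$ as one of its outer vertical edges, so no mate of that block can be formed at all. The correct bookkeeping is: $(\mu(i)\Psi'_\alpha)\circ\phi_F(j)$ is (after whiskering by $F'(p_k)$) the mate, with respect to $G'(p_k)\dashv F'(p_k)$ and $G(p_j)\dashv F(p_j)$, of the horizontal pasting of the square $F'(\alpha)F'(p_k)=F'(p_j)$ with the naturality square at $p_j$ along their common edge $F'(p_j)$; while $\phi_F(k)\circ(\Psi_\alpha\mu(i))$ is the mate of the pasting of the naturality square at $p_k$ with the square $F(\alpha)F(p_k)=F(p_j)$ along $F(p_k)$. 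Naturality of $\mu$ at $\alpha$ is used only to see that these two total pastings are the same identity $2$-cell $\mu(j)F'(\alpha)F'(p_k)=F(\alpha)\mu(k)F'(p_k)=F(p_j)\mu(i)$, and it is never mated --- consistent with the absence of adjoints for $F(\alpha)$, $F'(\alpha)$. With this correction your argument is complete and coincides with the computation the paper leaves to the reader.
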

\begin{proof}
We have to show that the outer square in 
\[ \footnotesize \xymatrix{ 
G(p_k) F(p_k) \mu \ar[rr] \ar[d]^{\sim} & &  G(p_k) \mu F'(p_k) \ar[r] \ar[d]^{\sim} & \mu G'(p_k) F'(p_k) \ar[d]^{\sim} \\
G(p_k) F(\alpha) F(p_l) \mu \ar[d] \ar[r] & G(p_k) F(\alpha) i F'(p_l) \ar[d] \ar[r] & G(p_k) \mu F'(\alpha) F'(p_l)  \ar[r] & \mu G'(p_k) F'(\alpha) F'(p_l) \ar[d] \\
G(p_l) F(p_l) \mu \ar[r] & G(p_l) \mu F'(p_l) \ar[rr] & & \mu G'(p_l) F'(p_l) 
} \]
commutes. Here obviously everything commutes, except  
\[ \xymatrix{ 
 G(p_k) F(\alpha) \mu  \ar[d] \ar[r] & G(p_k) \mu F'(\alpha)  \ar[r] & \mu G'(p_k) F'(\alpha)  \ar[d] \\
 G(p_l) \mu  \ar[rr] & & \mu G'(p_l).
} \]
This is an exercise with mates, that is left to the reader. 
\end{proof}

\begin{PROP}\label{PROPENRICHTWARROW}
Let $I$ be a small category with a final object $i$ and let $\mathcal{C}$ be a category. 
Consider the functor
\begin{align*} f: I &\to \Cat  \\
 j &\mapsto (I \times_{/I} j)^{\op}
\end{align*}
classifying the fibration ${}^{\uparrow \uparrow} I \to I^{\op}$ (or, equivalently, the cofibration $\tw I \to I$). It yields via composition with $L_{\mathcal{C}}$ (see \ref{PARPFLR}) a functor
\[ F:=L_\mathcal{C} \circ f^{\op}: I^{\op} \to \Cat . \]
Notice that $\mathcal{C}$ does not have to be cocomplete, because the composition consists entirely of pull-backs. 
The enrichment in $\Set^{I^{\op}}$ obtained by this functor $F$ in Definition~\ref{DEFENRICHMENTPRESHEAVES} is naturally isomorphic to the canonical enrichment in pre-sheaves (\ref{PARENRICHMENTPRESHEAVES}) (i.e.\@ to the one right adjoint to the
{\em point-wise tensoring}, if the latter exists), given by
\begin{align*} (\mathcal{C}^{I^{\op}})^{\op} \times (\mathcal{C}^{I^{\op}}) &\to \Set^{I^{\op}} \\
 C, D &\mapsto j \mapsto \int_k \Hom(\Hom_I(k, j), \Hom_{\mathcal{C}}(C(k), D(k)).
\end{align*}
\end{PROP}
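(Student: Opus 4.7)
My plan is to unfold both sides explicitly and match them. First, because $i$ is final, the overcategory $I\times_{/I} i = I/i$ is canonically equivalent to $I$ (sending $k$ to the unique arrow $p_k\colon k \to i$), so $F(i) = \mathcal{C}^{I^{\op}}$, which agrees with the domain of the canonical enrichment. For general $j \in I$, let $\pi_j\colon I/j \to I$ denote the forgetful functor; it is a discrete fibration whose fiber over $k$ is the set $\Hom_I(k,j)$. Unwinding the definitions, the functor $F(p_j)\colon \mathcal{C}^{I^{\op}} \to \mathcal{C}^{(I/j)^{\op}}$ is simply pre-composition with $\pi_j^{\op}$, i.e.\@ $(F(p_j)X)(k,\alpha\colon k \to j) = X(k)$.

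The key step is then to compute the end
\[
\uHom_F(X,Y)(j) \;=\; \Hom_{\mathcal{C}^{(I/j)^{\op}}}(\pi_j^* X,\pi_j^* Y) \;=\; \int_{(k,\alpha)\in I/j} \Hom_{\mathcal{C}}(X(k),Y(k)).
\]
Since $\pi_j$ is a discrete fibration, the end over $I/j$ decomposes as an end over $I$ with each factor raised to the power indexed by the fiber set, giving
\[
\int_{k\in I} \prod_{\alpha\in\Hom_I(k,j)} \Hom_{\mathcal{C}}(X(k),Y(k)) \;=\; \int_{k\in I} \Hom_{\Set}\bigl(\Hom_I(k,j),\Hom_{\mathcal{C}}(X(k),Y(k))\bigr),
\]
which is exactly the formula from \ref{PARENRICHMENTPRESHEAVES}.

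It remains to check that the isomorphism is natural in $j$. For $\beta\colon j \to j'$, note $p_j = p_{j'}\circ\beta$ (as $i$ is final), and the transition $F(\beta)\colon\mathcal{C}^{(I/j')^{\op}}\to\mathcal{C}^{(I/j)^{\op}}$ is pre-composition along the post-composition functor $\beta_*\colon I/j \to I/j'$, $(k,\gamma)\mapsto(k,\beta\circ\gamma)$. Under the end decomposition, $\beta_*$ acts on the indexing by the map of fibers $\Hom_I(k,\beta)\colon\Hom_I(k,j)\to\Hom_I(k,j')$, and this is precisely the functoriality of the canonical enrichment. The structure map of the $F$-enrichment from Definition~\ref{DEFENRICHMENTPRESHEAVES} is, by construction, the composite $F(\beta)$ followed by the canonical identification $F(\beta)F(p_{j'})\cong F(p_j)$; under our end decomposition the latter is an equality, so the two functorialities coincide.

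The only step requiring some care — and the main (mild) obstacle — is checking that the mate construction in \ref{PARIFUNCT}, which could in principle contribute extra adjunction data, reduces here to the bare pre-composition described above. This is immediate because in the present setup $F$ consists entirely of pull-back functors and the coherences $F(\beta)F(p_{j'})\cong F(p_j)$ are strict equalities on pre-composition, so the naturality reduces to the trivial compatibility of the end decomposition with pre-composition on the fibers.
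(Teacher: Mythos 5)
Your proof is correct and follows essentially the same route as the paper: identify $\uHom_F(X,Y)(j)$ with $\Hom_{\mathcal{C}^{(I/j)^{\op}}}(\pi_j^*X,\pi_j^*Y)$, write this as an end, and use the discrete-fibration structure of $I/j\to I$ (the paper phrases the same fact via $\tw(I\times_{/I}j)\to\tw I$ having discrete fibers $\Hom_I(k,j)$) to rewrite it as the end defining the canonical enrichment. Your explicit naturality check in $j$, including the observation that the coherence isomorphisms are strict here, is a harmless elaboration of what the paper dismisses as "functorial in $j$, $F$ and $G$."
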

\begin{proof}
We have
\[ \Hom(p_j^* C, p_j^* D) \cong \lim_{k' \to k \to j \in \tw(I \times_{/I} j)} \Hom_{\mathcal{C}}(C(k), D(k')) \]
and there is a canonical isomorphism, functorial in $j$, $F$ and $G$:
\[  \lim_{k' \to k \to j \in \tw(I \times_{/I} j)} \Hom_{\mathcal{C}}(C(k), D(k')) \cong \lim_{k' \to k \in \tw I} \Hom( \Hom_I(k, j), \Hom_{\mathcal{C}}(C(k), D(k')) \]
because $\tw(I \times_{/I} j) \to \tw I$ is a fibration with discrete fibers $\Hom_I(k, j)$. 
\end{proof}

\begin{PAR}\label{PARASTSIMPLSTRUCTURE}
Applying Proposition~\ref{PROPENRICHTWARROW} to the functor (cf.\@ \ref{PARNECKLACE})
\[ \Xi_{\emptyset}: \Delta_{\emptyset} \to \Cat \]
with $\Xi_{\emptyset}([0]) = \Delta^{\op}_{\emptyset}$, classifying the monoidal cooperad $(\Delta_{\emptyset}, \ast)^{\op}$,
we see that the enrichment on $\mathcal{C}^{\Delta^{\op}_{\emptyset}}$ defined by
\[ [n] \mapsto \Hom(\dec_{\emptyset, n+1}^*X, \dec_{\emptyset, n+1}^* Y) \]
(i.e.\@ obtained by letting $F:=L_{\mathcal{C}} \circ \Xi_{\emptyset}^{\op}$ in Definition~\ref{DEFENRICHMENTPRESHEAVES})
 is the same as the canonical enrichment in $\Set^{\Delta^{\op}_{\emptyset}}$.

Now assume that $\mathcal{C}$ is cocomplete. 
We have seen (cf.\@ \ref{PARNECKLACE}), that there is a functor
\[ \Xi: \Delta_{\emptyset} \to \Cat^{\PF} \] 
with $\Xi([0]) = \Delta^{\op}$, classifying the $\infty$-exponential cooperad $(\Delta, \ast)^{\op}$. 
We have a transformation
\[ {}^t\!i: \Xi_{\emptyset} \Rightarrow \Xi \]
which is natural (and not only oplax) by Lemma~\ref{LEMMAIOTACART}.
By applying $L_{\mathcal{C}}$ it gives a functor
\[ L_{\mathcal{C}} \circ \Xi^{\op} \circ i: \Delta^{\op} \to \Cat \]
and a natural transformation
\[  L_{\mathcal{C}}(i):  L_{\mathcal{C}}  \circ \Xi^{\op} \circ i \Rightarrow L_{\mathcal{C}} \circ \Xi^{\op}_{\emptyset} \circ i \]
and the the enrichment on $\mathcal{C}^{\Delta^{\op}}$ defined by\footnote{Note that, although this seems similar to before, the face maps now act as partial colimits!}
\[ [n] \mapsto \Hom(\dec_{n+1}^*X, \dec_{n+1}^* Y) \]
(i.e.\@ obtained by letting $F:=L_{\mathcal{C}} \circ \Xi^{\op} \circ i$ in Definition~\ref{DEFENRICHMENTPRESHEAVES}) is {\em again} the same 
as the canonical enrichment in $\Set^{\Delta^{\op}}$:
\end{PAR}

\begin{PROP}\label{PROPTWOENRICHMENTS}
The structure on $\mathcal{C}^{\Delta^{\op}}$ obtained by letting $F:=L_{\mathcal{C}} \circ \Xi^{\op} \circ i$ in Definition~\ref{DEFENRICHMENTPRESHEAVES} is again naturally isomorphic to the canonical enrichment (\ref{PARENRICHMENTPRESHEAVES}) in pre-sheaves (i.e.\@, here, simplicial sets).
\end{PROP}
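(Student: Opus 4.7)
The plan is to build a natural comparison map between the two $\Set^{\Delta^{\op}}$-valued Hom-bundles and prove it is a bijection compatible with composition. Rewriting via the end formula for Hom in a functor category yields
\[
  \uHom_F(X,Y)_{[n]} = \int_{([a_0],\ldots,[a_n]) \in (\Delta^{\op})^{n+1}} \Hom(X_{a_0 \ast \cdots \ast a_n},\, Y_{a_0 \ast \cdots \ast a_n}),
\]
while $\uHom_{\mathrm{can}}(X,Y)_{[n]} = \int_{[m]} \Hom(\Hom_\Delta([m],[n]),\, \Hom(X_m, Y_m))$ by \ref{PARENRICHMENTPRESHEAVES}. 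I would define a natural map $\Phi\colon \uHom_{\mathrm{can}} \to \uHom_F$ by sending a simplicial map $\phi\colon \Delta[n] \times X \to Y$ (an element of the LHS) to the family indexed by $([a_0],\ldots,[a_n])$ obtained by evaluating $\phi$ at the canonical \emph{which-piece} surjection $\pi_\ast\colon [a_0 \ast \cdots \ast a_n] \twoheadrightarrow [n]$ determined by the decomposition. Naturality in the tuple follows because componentwise morphisms $[a_i'] \to [a_i]$ in $\Delta$ concatenate to a map in $\Delta$ intertwining the respective which-piece projections.

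To show $\Phi$ is an isomorphism I would first establish the augmented analogue, where $\Delta$ is everywhere replaced by $\Delta_\emptyset$ and one uses $L_\mathcal{C} \circ \Xi_\emptyset^{\op}$. In that setting $\dec_{\emptyset,n+1}$ is a genuine cofibration of cooperads since $(\Delta_\emptyset, \ast)$ is monoidal. The preimage partition of $\pi$ (allowing empty preimages) sets up a bijection between maps $\pi\colon [m] \to [n]$ in $\Delta_\emptyset$ and tuples in $\Delta^{n+1}_\emptyset$ satisfying $[a_0] \ast \cdots \ast [a_n] = [m]$; this extends to an equivalence of the indexing categories of the two ends, so the augmented $\Phi_\emptyset$ is an isomorphism. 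Descent to $\Delta$ uses the natural transformation ${}^t i \colon \Xi_\emptyset \Rightarrow \Xi$ and the $\infty$-Cartesianity of $i\colon (\Delta,\ast) \hookrightarrow (\Delta_\emptyset,\ast)$ from Lemma~\ref{LEMMAIOTACART}, via the comparison of enrichments already discussed in \ref{PARASTSIMPLSTRUCTURE}.

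The main obstacle will be handling the non-augmented restriction: non-surjective maps $\pi\colon [m] \to [n]$ in $\Delta$ correspond on the tuple side to tuples containing some empty $[a_i]$, which do not lie in $(\Delta^{\op})^{n+1}$. One must argue that the restricted end over $(\Delta^{\op})^{n+1}$ nonetheless recovers the full canonical end; this should follow from the Cartesianity of $i$ (Lemma~\ref{LEMMAIOTACART}), which makes restriction along $i^{\op}$ compatible with the pro-functor structure appearing in the end computation. Finally, compatibility of $\Phi$ with the full enriched structure (composition and simplicial face/degeneracy maps of $\uHom$) is supplied by Lemma~\ref{LEMMAIFUNCT} applied to the natural transformation ${}^t i$, ensuring $\Phi$ is an isomorphism of $\Set^{\Delta^{\op}}$-enriched structures rather than merely of underlying Hom-bundles.
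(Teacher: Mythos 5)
Your overall strategy---treat the augmented case over $\Delta_{\emptyset}$ first (where $\dec_{\emptyset,n+1}$ is a discrete cofibration, so the end over $(\Delta_{\emptyset}^{\op})^{n+1}$ visibly reorganizes into the weighted end defining the canonical enrichment) and then descend along $i\colon \Delta\hookrightarrow\Delta_{\emptyset}$---is the same as the paper's, and your map $\Phi$ (evaluation at the which-piece surjections) together with its naturality in the tuple is fine. The genuine gap sits exactly at the step you yourself flag as the main obstacle: the claim that the end over $(\Delta^{\op})^{n+1}$ recovers the full canonical end over $\Delta$ ``should follow from the Cartesianity of $i$''. Cartesianity (Lemma~\ref{LEMMAIOTACART}) only provides the Beck--Chevalley-type compatibility of $\dec_{n+1}^*$ with $i_!$ and $i^*$; by itself it says nothing about why an element of the canonical end---which also has components $g_{m,\alpha}$ for the \emph{non-surjective} $\alpha\colon[m]\to[n]$, i.e.\ the decompositions with empty blocks invisible to $(\Delta^{\op})^{n+1}$---is uniquely and consistently determined by its values at which-piece surjections. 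The end relations over $\Delta$ only constrain $g_{m,\alpha}$ after precomposition with face maps $X(\delta)$, and to pin these components down one has to exploit degeneracy sections ($\sigma\delta=\id$, so $X(\sigma)$ splits $X(\delta)$), a genuinely simplicial argument that is nowhere in your write-up; surjectivity of $\Phi$ (consistent extension of a which-piece family to all of $\Delta$) needs the same input.

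What actually makes the descent work in the paper is not Cartesianity alone but the asymmetric use of full faithfulness of $i$: embed the source by $i_!$ and the target by $i_*$, so that $\uHom_{\mathcal{C}^{\Delta^{\op}}}(X,Y)\cong i^*\,\uHom_{\mathcal{C}^{\Delta_{\emptyset}^{\op}}}(i_!X,i_*Y)$ and $\Hom(i_!A,i_*Y)\cong\Hom(A,Y)$ (Lemma~\ref{LEMMAHOM}, 2.), and then commute the tensoring past $i_!$, i.e.\ prove $[n]\ltimes(i_!-)\cong i_!([n]\ltimes-)$ \emph{naturally in} $[n]$; this is where Lemma~\ref{LEMMAIFUNCT} and the equality $\dec_{n+1}^*\,i^*=i^*\,\dec_{\emptyset,n+1}^*$ enter, and it simultaneously handles the point you defer to the end, namely compatibility with the simplicial structure maps of the $\dec$-enrichment, whose faces act by partial colimits rather than pullbacks (cf.\ the warning in \ref{PARASTSIMPLSTRUCTURE}). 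So either reproduce this $i_!/i_*$ mechanism and then check that your explicit $\Phi$ agrees with the comparison it produces, or replace the appeal to Cartesianity by the direct combinatorial reconstruction of the components at non-surjective $\alpha$ (via degeneracy sections, with verification of all end relations). As it stands, the decisive step of the proof is asserted rather than proved, and the reason given for it is insufficient.
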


\begin{proof}Cf.\@ \cite{OR20} for a similar proof (for $n=1$). 
From Lemma~\ref{LEMMAIFUNCT} applied to the natural transformation $L_{\mathcal{C}}(i)$ (\ref{PARASTSIMPLSTRUCTURE}), we get a  morphism {\em functorial in $n$}
\[ [n] \ltimes (i_! -) = \dec_{\emptyset,n+1,!} \dec_{\emptyset,n+1}^* i_! \cong \dec_{\emptyset,n+1,!}  i_!  \dec_{n+1}^* \cong i_! \dec_{n+1,!}  \dec_{n+1}^* = i_! ([n] \ltimes -) \]
which is an {\em isomorphism} (because the mate is just the adjoint of the equality  $\dec_{n+1}^* i^* = i^* \dec_{\emptyset,n+1}^*$).
We have thus functorially in $[n] \in \Delta^{\op}$:
\begin{equation*} \begin{array}[b]{llll} 
\uHom_{\mathcal{C}^{\Delta^{\op}}}(X, Y)_{[n]} &\cong& \uHom_{\mathcal{C}^{\Delta^{\op}_{\emptyset}}}(i_! X, i_* Y)_{[n]}  & \text{Lemma~\ref{LEMMAHOM}, 2.} \\
&\cong& \Hom_{\mathcal{C}^{\Delta^{\op}_{\emptyset}}}([n] \ltimes (i_! X),  i_* Y) & \text{Proposition~\ref{PROPENRICHTWARROW}} \\
&\cong& \Hom_{\mathcal{C}^{\Delta^{\op}_{\emptyset}}}(i_! ([n] \ltimes X),  i_*  Y) & \text{as seen above}   \\
&\cong& \Hom_{\mathcal{C}^{\Delta^{\op}}}( [n] \ltimes X, Y) & \text{$i$ is fully-faithful.}  
\end{array} \qedhere    \end{equation*}
\end{proof}

\begin{BEM}
The enrichment of $\mathcal{C}^{\Delta^{\op}}$ in $\Set^{\Delta^{\op}}$ given by procedure \ref{DEFENRICHMENTPRESHEAVES} 
(i.e.\@ applied to $L_{\mathcal{C}} \circ \Xi^{\op}$ instead of $L_{\mathcal{C}} \circ \Xi^{\op} \circ i$)
is in fact an enrichment in $\Set^{\Delta_{\emptyset}^{\op}}$. It follows from
the definition, that we have
\[ \uHom(X, Y)_{[-1]} = \Hom_{\mathcal{C}}(\pi_! X, \pi_! Y). \]
\end{BEM}

 \begin{KOR}\label{KORHOMOTOPY}
 If a left tensoring exists, 
 the diagram of functors 
 \[ \xymatrix{ \id \cong \pr_{i,!} \dec^* \ar@<3pt>[r] \ar@<-3pt>[r] &  \dec_! \dec^* \ar[r] & \id } \]  
 is canonically equivalent to the diagram
 \[ \xymatrix{ \id \cong \Delta_0 \otimes - \ar@<3pt>[r]^-{\delta_0} \ar@<-3pt>[r]_-{\delta_1} & \Delta_1 \otimes - \ar[r]^-{s_0} & \Delta_0 \otimes - }. \]
 Furthermore, for a morphism $f: \dec^* X \to \dec^* Y$, denote by $\overline{f}$ the corresponding $\Delta_1 \otimes X \to Y$. Then the following is commutative 
  \[ \xymatrix{ \Delta_1 \otimes X \ar@/^20pt/[rrr]^-{\overline{fg}} \ar[r]_-{\delta \otimes X} & \Delta_1 \otimes \Delta_1 \otimes X \ar[r]_-{\Delta_1 \otimes \overline{g}} & \Delta_1 \otimes Y  \ar[r]_-{\overline{f}} & Z   } \]
  \end{KOR}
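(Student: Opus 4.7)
The plan is to deduce both parts directly from Proposition~\ref{PROPTWOENRICHMENTS}, by unwinding the abstract tensoring $[n]\ltimes(-)$ defined in Proposition~\ref{PROPENRICH}. Since $F = L_{\mathcal{C}}\circ \Xi^{\op}\circ i$ has $F(p_{[n]}) = \dec_{n+1}^{*}$, its left adjoint is $G(p_{[n]}) = \dec_{n+1,!}$, so
\[
\Delta_n \otimes X \ \cong\ [n]\ltimes X \ =\ \dec_{n+1,!}\,\dec_{n+1}^{*}\,X.
\]
Specialising, $\Delta_0\otimes X \cong X$ (since $\dec_1 = \id$) and $\Delta_1\otimes X \cong \dec_!\dec^{*}X$, which already pins down the two endpoints of the displayed diagram.

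Next I would read off the three structure maps of $\Delta_1$ via $\Xi$. The degeneracy $s_0\colon [1]\to[0]$ corresponds, by the proof of Lemma~\ref{LEMMAIOTACART}, at the pro-functor level to $\dec$, and its induced map $[1]\ltimes X \to [0]\ltimes X$ (obtained from the functoriality in $[n]$ of Lemma~\ref{LEMMAIFUNCT}) is precisely the counit $\dec_!\dec^{*}X \to X$ of $\dec_!\dashv\dec^{*}$. The two face maps $\delta_i\colon [0]\to[1]$ get sent by $\Xi$ to two pro-functor sections $\pr_i$ of $\dec$, encoding the partial units of $(\Delta,\ast)$ (Section~\ref{SECTBASICCOOP}). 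The relation $\dec\circ \pr_i \cong \id$ in $\Cat^{\PF}$ gives, under $L_{\mathcal{C}}$, the isomorphism $\pr_{i,!}\dec^{*}\cong \id$; and the canonical 2-morphisms $\pr_i \to \dec$ produce, via $L_{\mathcal{C}}$, the maps $\pr_{i,!}\dec^{*}\to \dec_!\dec^{*}$ which, upon tracing through Lemma~\ref{LEMMAIFUNCT}, are exactly $\delta_i\otimes X$. This yields the equivalence of the two diagrams.

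For the second statement I would argue as follows. Under Proposition~\ref{PROPTWOENRICHMENTS}, the composition pairing $\uHom(Y,Z)\times \uHom(X,Y)\to \uHom(X,Z)$ at simplicial level $1$ reduces to ordinary composition in $\mathcal{C}^{(\Delta^{\op})^2}$, so $\overline{fg}$ is by definition adjoint to $f\circ g\colon \dec^{*}X\to \dec^{*}Z$. On the other hand, for any $\Set^{\Delta^{\op}}$-enriched and tensored category, the formula
\[
\Delta_1\otimes X \xrightarrow{\delta\otimes X} \Delta_1\otimes\Delta_1\otimes X \xrightarrow{\Delta_1\otimes \overline{g}} \Delta_1\otimes Y \xrightarrow{\overline{f}} Z,
\]
with $\delta\colon \Delta_1\to \Delta_1\times\Delta_1 = \Delta_1\otimes\Delta_1$ the diagonal, computes the adjoint of the composite $\widetilde{f}\circ\widetilde{g}\colon \Delta_1 \to \uHom(X,Z)$ of the unit-based representatives of $f,g$. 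Since the two composites agree at level $1$, the triangle commutes.

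The main obstacle I foresee is purely bookkeeping: matching the orientations $\delta_0\leftrightarrow \pr_1$ versus $\delta_0\leftrightarrow \pr_2$, i.e.\@ keeping the ``first/last vertex'' convention coherent as one passes between the two sections of $\dec$ in $\Cat^{\PF}$ and the face maps of $\Delta_1$, and likewise verifying that the natural transformations $\pr_i\to\dec$ used above are the ones actually produced by Lemma~\ref{LEMMAIFUNCT}. Everything else is formal, being a rewriting of the operadic relations in $(\Delta,\ast)$ via $L_{\mathcal{C}}$ together with the general adjunction calculus for tensored enriched categories.
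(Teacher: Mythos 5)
Your argument is correct and is essentially the paper's own proof, which consists precisely of identifying the transition pro-functors of $\Xi$ at $s_0$ and $\delta_0,\delta_1$ as $\dec$ and ${}^t\!\pr_i$ and applying $L_{\mathcal{C}}$ (so that they become $\dec^*$ and $\pr_{i,!}$), the composition triangle then being the level-$1$ compatibility of the isomorphism of Proposition~\ref{PROPTWOENRICHMENTS} with composition. Only a small bookkeeping slip: the transition pro-functors for the faces are the transposes ${}^t\!\pr_i$ (not $\pr_i$), and the relation you need is $\dec\,{}^t\!\pr_i\cong\id$ from Lemma~\ref{LEMMAEXACT3}, exactly as you anticipated in your final remark.
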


\begin{proof}
The transition pro-functors for $s_0: [1] \to [0]$ and $\delta_0, \delta_1: [0] \to [1]$ 
 in $\Xi$ are given by $\dec: (\Delta^{\op})^2 \to \Delta^{\op}$ and ${}^t\!\pr_i: \Delta^{\op} \to (\Delta^{\op})^2$, respectively, which are mapped by $L_{\mathcal{C}}$ to $\dec^*$, and $\pr_{i,!}$, respectively.
\end{proof}

\begin{PAR}\label{PARDECEXPLICIT}
The isomorphism (cf.\@ Corollary~\ref{KORHOMOTOPY})
\[ \Hom( \dec^* X, \dec^*Y) \cong \Hom(\Delta_1 \otimes X, Y) \]
 can be made very explicit:
A morphism
\[ F: \dec^*X \to \dec^*Y \]
which is in components given by
\[ F_{[i],[j]}: X_{[i]\ast[j]} \to Y_{[i]\ast[j]} \]
also determines morphisms $F_{[-1],[j]}$ and $F_{[i],[-1]}$ via the colimit. 

In turn, an element
\[ G: \Delta_1 \times X \to Y  \]
is given in degree $n$ by
\[ (G_{[n],i})_i: \Hom_{\Delta^{\op}}([1],[n]) \times X_{[n]} \to Y_{[n]} \]
where $i=-1, \dots, n$ index the elements of $\Hom_{\Delta^{\op}}([1],[n]) $.
If $F$ and $G$ correspond under the isomorphism above, we have explicitly:
\[ \boxed{ G_{[n],i} = F_{[i],[n-i-1]}}.   \]
This can be seen, for instance, from the formula 
\[ (\dec_! K)_{[n]} = \coprod_{\substack{i+j=n-1\\i,j \ge -1}} K_{[i],[j]} \] established in Proposition~\ref{PROPEXPLICIT} below. 
\end{PAR}

There is a monoidal version of the above: 

\begin{PROP}\label{PROPSIMPLICIALOPERAD}
Let $(\mathcal{C}, \otimes)$ be a cocomplete monoidal category such that $\otimes$ commutes with colimits. 
The category of simplicial objects is equipped with the point-wise monodial structure. The corresponding operad $(\mathcal{C}^{\Delta^{\op}}, \otimes)$ can also be seen as the 
Day convolution $D((\Delta^{\op}, \coprod), (\mathcal{C}, \otimes))$, and the corresponding cooperad $(\mathcal{C}^{\Delta^{\op}}, \otimes)^{\vee}$ as the Day convolution $D((\Delta, \coprod)^{\op}, (\mathcal{C}, \otimes)^\vee)$ (cf.\@ \ref{EXDAY2}).
The simplicially enriched structure on $\otimes$ given by
\begin{equation}\label{eqenrich} K \otimes (X \otimes Y) \to   (K \otimes X) \otimes (K \otimes Y) \end{equation}
induced by the diagonal $K \to K \times K$ (all $\otimes$ are computed point-wise) can be equally given (using the procedure in Definition~\ref{DEFENRICHMENTPRESHEAVES} above) by a 
functor:
\[ \Xi: \Delta_{\emptyset} \to \Hom^{1-\oplax, 1-\inert-\pseudo}(\OOO^{\op}, (\Cat^{\PF}_{}, \times)^{\vee})   \]
with $\Xi([0])$ classifying $(\Delta^{\op}, \coprod) \to \OOO$ ($\Xi$ consists actually of natural transformation --- not only oplax) 
and applying $L_{(\mathcal{C}, \otimes)}$ (Proposition~\ref{PROPDAYLAX}). Likewise, it can be given by a  functor
\[\Xi^{\vee}: \Delta_{\emptyset} \to \Hom^{1-\oplax, 1-\inert-\pseudo}(\OOO, (\Cat^{\PF}_{}, \times))   \]
with $\Xi^{\vee}([0])$ classifying $(\Delta, \coprod)^{\op} \to \OOO^{\op}$ ($\Xi^{\vee}$ does not consist of natural transformations) and applying $L_{(\mathcal{C}, \otimes)^{\vee}}$ (Proposition~\ref{PROPDAYLAX}).
\end{PROP}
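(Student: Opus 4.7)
The plan is to refine Proposition~\ref{PROPTWOENRICHMENTS} by running the same argument \emph{at the level of (co)fibrations of (co)operads over $\OOO$}, instead of merely at the level of categories. Concretely, I will lift the functor $\Xi \colon \Delta_{\emptyset} \to \Cat^{\PF}$ used there --- which sends $[n]$ to $(\Delta^{\op})^{n+1}$ with face maps $\dec$ (concatenation) and degeneracies ${}^t\!\pr_i$ (insertion of a $\pi$ slot) --- to a functor valued in ($\infty$-)exponential fibrations of operads (resp.\@ cooperads) over $\OOO$, and then apply Proposition~\ref{PROPDAYLAX} for $L_{(\mathcal{C},\otimes)}$ or $L_{(\mathcal{C},\otimes)^\vee}$.

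First, I would identify the point-wise monoidal structure with a Day convolution. By construction, $X \otimes Y$ is $\delta^{*}(X \boxtimes Y)$, where $\delta \colon \Delta^{\op} \to \Delta^{\op} \times \Delta^{\op}$ is the diagonal; this is exactly the push-forward functor for the cofibration of operads $(\Delta^{\op}, \coprod) \to \OOO$ along the active binary arrow of $\OOO$. Hence Example~\ref{EXDAY} gives $(\mathcal{C}^{\Delta^{\op}}, \otimes) = D((\Delta^{\op}, \coprod), (\mathcal{C}, \otimes))$ as an operad over $\OOO$, and dually $(\mathcal{C}^{\Delta^{\op}}, \otimes)^{\vee} = D((\Delta, \coprod)^{\op}, (\mathcal{C}, \otimes)^{\vee})$ as a cooperad over $\OOO^{\op}$.

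Next, for the operad picture, I observe that each $(\Delta^{\op})^{n+1}$ is a cofibration over $\OOO$ (an $(n+1)$-fold product of $(\Delta^{\op}, \coprod) \to \OOO$), and that the transition morphisms $\dec$ and ${}^t\!\pr_i$ in $\Xi$ are \emph{natural} (coCartesian) morphisms of these cofibrations: concatenating coproducts in adjacent slots, resp.\@ inserting the pro-functor $\pi \dashv {}^{t}\!\pi$ into an empty slot, both commute strictly with the coproduct operad structure on $\OOO$. This upgrades $\Xi$ to an honest functor $\Delta_\emptyset \to \Hom^{1\text{-}\oplax, 1\text{-}\inert\text{-}\pseudo}(\OOO^{\op}, (\Cat^{\PF}, \times)^{\vee})$ consisting of natural (not merely oplax) transformations, and Proposition~\ref{PROPDAYLAX} then gives, after applying $L_{(\mathcal{C},\otimes)}$, a functor $\Delta_{\emptyset} \to \mathrm{coOp}_{/\OOO}$ whose value at $[0]$ is $(\mathcal{C}^{\Delta^{\op}}, \otimes)$. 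Unwinding this applied to an $n$-ary operation, one sees that the induced constraint is precisely the oplax map $K \otimes (X \otimes Y) \to (K \otimes X) \otimes (K \otimes Y)$ coming from the diagonal $K \to K \times K$; indeed, after composing with the unit $X \to \dec_{n+1,*}\dec_{n+1}^{*}X$ and using the coCartesianity isomorphism $\dec_{n+1}^{*}(X \otimes Y) \cong \dec_{n+1}^{*}X \otimes \dec_{n+1}^{*}Y$, the mate construction of Proposition~\ref{PROPDAYLAX} recovers $\delta$.

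The cooperad version is formally dual: the same $\Xi^{\vee}$ sends $[n]$ to $(\Delta^{\op})^{n+1}$, now viewed as an $\infty$-exponential fibration of cooperads (the $(n+1)$-fold product of $(\Delta, \coprod)^{\op} \to \OOO^{\op}$), and the transitions $\dec$ are now \emph{only oplax} morphisms of cooperads --- because $\dec_{n+1} \colon (\Delta^{\op})^{n+1} \times (\Delta^{\op})^{m+1} \to (\Delta^{\op})^{n+m+1}$ is not compatible on the nose with the cooperad diagonals (this is precisely the content of the parenthetical remark in the statement) --- while projections remain natural. Applying $L_{(\mathcal{C},\otimes)^{\vee}}$ to this oplax datum via Proposition~\ref{PROPDAYLAX} gives the cooperad $(\mathcal{C}^{\Delta^{\op}}, \otimes)^{\vee}$ with its simplicial enrichment, and Proposition~\ref{PROPTWOENRICHMENTS} (applied after forgetting the (co)operad structure) identifies the underlying enriched category with the canonical one. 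The main obstacle is the last identification: checking that the particular oplax/natural constraints produced by the mate calculus in Proposition~\ref{PROPDAYLAX} really coincide with the diagonal-induced ones of (\ref{eqenrich}); this is essentially bookkeeping, but has to be done carefully since on the cooperad side one must keep track of which slots are mapped coCartesianly and which are mapped only laxly.
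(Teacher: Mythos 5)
Your overall strategy is the same as the paper's: lift the enrichment functor $\Xi$ of \ref{PARASTSIMPLSTRUCTURE}/Proposition~\ref{PROPTWOENRICHMENTS} so that each $(\Delta^{\op})^{n+1}$ carries the operad structure over $\OOO$ coming from $(\Delta^{\op},\coprod)$ (equivalently, construct a bifunctor $\Delta_{\emptyset}\times\Delta_{\emptyset}^{\op}\to\Cat^{\PF}$ sending $\ast$ to products in both variables), check the compatibility squares between the transitions $\dec,\ {}^t\!\pr_i$ and the structure data $\delta,\pi$, and then apply $L_{(\mathcal{C},\otimes)}$ resp.\@ $L_{(\mathcal{C},\otimes)^{\vee}}$ via Proposition~\ref{PROPDAYLAX}. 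The gap is in the sentence asserting that $\dec$ and ${}^t\!\pr_i$ ``commute strictly'' with that structure. Only the square pairing $\dec$ with the diagonal $\delta$ commutes on the nose. The squares pairing ${}^t\!\pr_i$ --- i.e.\@ ${}^t\!\pi$ --- with $\delta$ and with $\pi$ do \emph{not} commute strictly as diagrams of pro-functors: one must produce the isomorphisms $\delta\,{}^t\!\pi\cong{}^t\!\pi$ and $\pi\,{}^t\!\pi\cong\id$, and these are precisely the ($\infty$-)cofinality of $\delta$ (Lemma~\ref{LEMMAEXACT5}, i.e.\@ contractibility/connectedness of the relevant comma categories over $[n]\times[m]$) and the cofinality of $\pi$ (contractibility of $\Delta^{\op}$). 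This is the mathematical substance of the proposition: it is exactly what makes the constraints of $\Xi$ invertible, i.e.\@ makes $\Xi$ consist of natural rather than merely oplax transformations, so declaring the squares strict leaves the main claim unestablished.

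A related slip feeds into this: $(\Delta^{\op},\coprod)\to\OOO$ is not a cofibration (that would require coproducts in $\Delta^{\op}$, i.e.\@ products in $\Delta$); it is a fibration of operads, the pull-back along the binary active morphism being the diagonal $\delta$ --- equivalently an exponential fibration whose classifying data are $\delta$ and $\pi$. Keeping this straight matters for the second half of the statement: $\Xi^{\vee}$ is obtained from $\Xi$ by transposing in the $\delta$-direction and passing to mates, and it is precisely because (for instance) the strictly commuting $\dec$-versus-$\delta$ square acquires a non-invertible mate after transposition that $\Xi^{\vee}$ is only oplax; your ``coCartesian morphisms of cofibrations'' bookkeeping does not produce this dichotomy. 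Your identification of the resulting constraint with (\ref{eqenrich}) is fine (the paper likewise leaves that unravelling to the reader).
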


\begin{proof}
We need to observe that the exponential operad $(\Delta^{\op}, \coprod)$ extends to a functor 
$\Xi: \Delta_{\emptyset} \to \Hom^{1-\oplax, 1-\inert-\pseudo}(\OOO^{\op}, (\Cat^{\PF}_{}, \times)^{\vee})$
 or, in other words, that there is a functor\footnote{Shifting slightly the perspective and considering  $\Delta_{\emptyset}$ instead of the usual $\Delta_{\act}^{\op}$ via Lemma~\ref{LEMMADUAL}.}
\[ \Xi: \Delta_{\emptyset} \times \Delta_{\emptyset}^{\op} \to \Cat^{\PF}_{} \]
sending $\ast$ to products in both variables, and roughly speaking, encoding the operad $\Z[(\Delta^{\op}, \coprod)]$ vertically and the cooperad $\Z[(\Delta, \ast)^{\op}]$ horizontally. 
For this one one has to construct the following commutative diagram:
\[ \xymatrix{
 (\Delta^{\op})^2 \ar[r]^{\dec} \ar[d]_{\delta_{12,34}}  &  (\Delta^{\op}) \ar[d]^{\delta}  \\
 (\Delta^{\op})^4 \ar[r]_{\dec_{13,24}} & (\Delta^{\op})^2  }\]
which is obvious, and 
\[ \xymatrix{
 (\Delta^{\op}) \ar[d]_{\delta}  \ar@{<-}[r]^-{{}^t\! \pi}  &  \cdot \ar@{=}[d]  \\
 (\Delta^{\op})^2  \ar@{<-}[r]_-{{}^t\! \pi} & \cdot  }
\quad \xymatrix{
 (\Delta^{\op})  \ar[d]_{ \pi} \ar@{<-}[r]^-{{}^t\! \pi}   &  \cdot \ar@{=}[d] \\
 \cdot  \ar@{=}[r] & \cdot  }\]
 which is the cofinality of $\delta$ (Lemma~\ref{LEMMAEXACT5}), and the cofinality of $\pi$, i.e.\@ the contractibility of $\Delta^{\op}$, and a fourth type which is trivial. 
 The fact that this yields the structure defined by (\ref{eqenrich}) follows basically by unraveling the definition, and is left to the reader. 
 For this functor all diagrams commute (not only oplaxly), i.e.\@ it is morphism-wise Cartesian, reflecting the fact that this yields a simplicially enriched structure on the push-forward $\otimes$.
 The functor
 \[\Xi^{\vee}: \Delta_{\emptyset} \to \Hom^{1-\oplax, 1-\inert-\pseudo}(\OOO, (\Cat^{\PF}_{}, \times))   \]
 is formed by applying ${}^t$ in the $\delta$-direction and passing to mates. We get only oplax commutativity.
\end{proof}

\subsection{Naive homotopy}

\begin{PAR}\label{SENRICHEDNATTRANS}Recall the notions of {\bf simplicially enriched category, functor, and natural transformation} (see, for instance \cite{Kel82}). 
Let $\mathcal{C}$, $\mathcal{D}$ be simplicially enriched categories, and $\mu: A \Rightarrow B$ a natural transformation between simplicially enriched functors. 
$\mu$ is simplicially enriched, if  
\[ \xymatrix{  \uHom(i, j) \ar[d]_-B \ar[rr]^-A && \uHom(A(i), A(j)) \ar[d]^{ \mu(j) \circ }   \\
 \uHom(B(i), B(j)) \ar[rr]_{ \circ \mu(i) } &&  \uHom(A(i), B(j))  }  \]
commutes. The set of simplicially enriched natural transformations can be enhanced to a simplicial set turning the category $\Fun(\mathcal{C}, \mathcal{D})$ 
into a simplicially enriched category, as follows: 
\end{PAR}

\begin{PAR}\label{PARUHOM}
Given a simplicially enriched category $\mathcal{C}$,
for each simplicial set $K$, we may define a simplicially enriched category $\mathcal{C}_K$ with the same objects and morphisms
\[ \uHom(K, \uHom(X, Y)) = \uHom(K \times X, Y) \]
with composition induced by the diagonal $K \to K \times K$.
There is a simplicially enriched inclusion functor
\[ \mathcal{C} \hookrightarrow \mathcal{C}_K \]
given by composition with $K \to \Delta_0$ and a 
 simplicially enriched functor $F: \mathcal{C} \to \mathcal{D}$  induces a functor
\[ F_K:  \mathcal{C} \to \mathcal{D} \hookrightarrow \mathcal{D}_K. \]
For a second such functor $G$ there is a simplicial set $\uNat(F, G)$ characterized by
\[ \Hom(K, \uNat(F, G)) = \Hom(F_K, G_K)  \]
which can also be expressed using the {\em enriched} end: 
\[ \uNat(F, G) \cong \int_c \uHom(F(c), G(c)). \]
This can also be seen as the natural enriched $\Hom$-object in the enriched functor category of functors from $\mathcal{C} \to \mathcal{D}$.
\end{PAR}

\begin{PAR}\label{PARNAIVE1}
Let $I$ be a small category or (co)operad and let $\mathcal{C} \to I$ and  $\mathcal{D} \to I$ be simplicially enriched cofibrations (of categories or (co)operads) classified by functors
$\Xi_{\mathcal{C}}, \Xi_{\mathcal{D}}: I \to \SCat$ (resp.\@ $I \to (\SCat, \times))$ where $\SCat$ denotes the 2-category of simplicially enriched categories. 
We have
\[ \Hom_I(\mathcal{C}, \mathcal{D}) = \Hom^{\lax}(\Xi_{\mathcal{C}}, \Xi_{\mathcal{D}}). \]
The compatibility with the simplicially enriched structure is as follows:  
 $\Hom_I(\mathcal{C}, \mathcal{D})$  consists of simplicially enriched functors. 
For $\mu \in \Hom^{\lax}(\Xi_{\mathcal{C}}, \Xi_{\mathcal{D}})$ the $\mu(i)$ have to be simplicially enriched and  
\[ \xymatrix{ \Xi_{\mathcal{C}}(i) \ar[r]^{\mu(i)} \ar[d]_{\Xi_{\mathcal{C}}(\alpha)} \ar@{}[rd]|{\Leftarrow^{\mu(\alpha)}}  & \Xi_{\mathcal{D}}(i) \ar[d]^{\Xi_{\mathcal{D}}(\alpha)} \\  
\Xi_{\mathcal{C}}(j) \ar[r]_{\mu(j)} & \Xi_{\mathcal{D}}(j)}\]
the natural transformations $\mu(\alpha)$ have to be simplicially enriched, in the sense of \ref{SENRICHEDNATTRANS}.

Let $I^{\flat}$ the underlying set of objects (only those in $I_{[1]}$ in the (co)operad case). Denote $()^{\flat}$ the composition with (or pull-back to) $I^{\flat} \hookrightarrow I$.
Fix an object $X \in \Hom(\mathcal{C}^{\flat}, \mathcal{D}^{\flat}) = \Hom^{(\lax)}(\Xi_{\mathcal{C}}^{\flat}, \Xi_{\mathcal{D}}^{\flat})$ {\em that is simplicially enriched, } i.e.\@  a collection, for all objects $i \in I$, 
of a simplicially enriched functor $X(i): \mathcal{C}_i \to \mathcal{D}_i$.

Denote by $\Hom_I(\mathcal{C}, \mathcal{D})_X \cong \Hom^{\lax}(\Xi_{\mathcal{C}}, \Xi_{\mathcal{D}})_X$ the {\em set}\footnote{This is a set, because the functors between fibers are fixed, only the values on morphisms, resp.\@ the laxness constraint, are variable.} of functors (resp.\@ lax natural transformations) that map to $X$.

Each simplicial set $K$ define categories 
$\Hom_I(\mathcal{C}, \mathcal{D}_K) = \Hom^{\lax}(\Xi_{\mathcal{C}}, (\Xi_{\mathcal{D}})_K)$
where $(\Xi_{\mathcal{D}})_K$ is the functor that maps $i$ to the simplicially enriched category $\mathcal{D}_{i,K}$.
\end{PAR}

\begin{DEF}\label{DEFNAIVE}
We define a simplicial set of {\bf naive deformations}
\[ \underline{\mathrm{Def}}_X(\Xi_{\mathcal{C}}, \Xi_{\mathcal{D}})   \]
such that 
\[ \Hom(K, \uDef_X(\Xi_{\mathcal{C}}, \Xi_{\mathcal{D}})) =   \Hom_I(\mathcal{C}, \mathcal{D}_K)_X = \Hom^{\lax}(\Xi_{\mathcal{C}}, (\Xi_{\mathcal{D}})_K)_X. \]
\end{DEF}

The adjective ``naive'' is used to distinguished these deformations from the coherent transformations discussed in Section~\ref{SECTCOHTRANS}, which are more sophisticated, and encode
higher coherence as well. Sometimes, however, this higher coherence data can be built from a naive deformation, see Proposition~\ref{PROPCOHERENT}.

\begin{PAR}
Let us unravel the definition: For a simplicial set $K$ the following set of data are the same
\begin{enumerate}
\item $\Hom(K, \underline{\mathrm{Def}}_X)$
\item For each morphism $f: x \to y$ in $\mathcal{C}$ a morphism
\[  \mu_f: K \otimes X(x) \to X(y)   \]
which if $f$ lies over an identity of $I$ is the constant morphism $X(f): X(x) \to X(y)$. If $\mathcal{C}$ is simplicially enriched, then we have a morphism
\[ \uHom(x, y) \to \uHom(K \times X(x), X(y)). \]
mapping inert morphisms to inert morphisms, and compatible with composition.
\item For each morphism $f: i \to j$ in $I$ a $K$-natural transformation: 
\[ \mu_f: \Hom(K, \uNat(X(j) F(f), G(f) X(i)). \]
(invertible if $f$ is inert) compatible with composition.
\end{enumerate}

If the simplicially enriched structure on $J=\mathcal{C}$ is discrete --- and we will be mainly interested in this case ---  the deformations $F: J \to \mathcal{D}$ over $X: J^{\flat} \to \mathcal{D}^{\flat}$ can also be described as follows: 
\end{PAR}

\begin{PAR}\label{PARENRICHEDMONAD}
Let $\mathcal{D}$ be a left tensored simplicially enriched category. 
Let $(M \in \End(\mathcal{D}), m, u)$ be a simplicially enriched monad. Then for each simplicial set $K$ and object $G \in \mathcal{D}$, we may define a set of $M \times K$-algebra structures on $G$ by morphisms
$\mu_K: MG \times K \to G$ such that
\[ \xymatrix{ G \times K \ar[r]^-{(u, \id)} \ar[rd]_{\pr} &  MG \times K \ar[d]^{\mu_K} \\
& G
}\]
and
\[ \xymatrix{
M(MG \times K) \times K \ar@{<-}[r]^-{\delta} \ar[d]_{(M\mu_K, \id)} &  M^2G \times K \ar[r]^{(m, \id)} & M G \times K \ar[d]^{\mu_K}  \\
MG \times K \ar[rr]_-{\mu_K }& & G
}\]
commute, where in the arrow $\delta$ also the simplicially enriched structure of $M$ is involved. This is clearly functorial in $K$ and thus, 
in particular, given $G$, we get a simplicial set of $M$-algebra structures 
\[ \underline{\Alg}_M(G)_{[n]} = \{ \Delta_n \times M\text{-algebra structures on $G$} \}   \]
which represents the previous functor on simplicial sets.
\end{PAR}

\begin{PAR}\label{PARCOFIB}
Let $I$ be a small category or (co)operad and
let $\mathcal{D} \to I$ be a simplicially enriched cofibration and $p: J \to I$ a usual (discrete) cofibration. 
Let $I^{\flat}$ be the set of objects of $I$ and $\nu: I^{\flat} \hookrightarrow I$ the canonical functor and $J^{\flat}$, resp.\@ $\mathcal{D}^{\flat}$ the pull-backs. 
We have a pull-back diagram
\[ \xymatrix{ J^{\flat} \ar[r]^{\nu} \ar[d]_{p_{}^{\flat}} & J  \ar[d]^{p} \\
I^{\flat} \ar[r]^{\nu} & I 
} \]
We assume that 
$\nu^*: \mathcal{D}^{J}_{p} \to \mathcal{D}^{J^{\flat}}_{\nu^* p} = (\mathcal{D}^{\flat})^{J^{\flat}}_{p^{\flat}}$ has a left adjoint $\nu_!^{(p)}$ (relative left Kan extension, see Proposition~\ref{PROPKAN} for sufficient criteria for its existence).
It is well-known that the adjunction is monadic and thus $\mathcal{D}^{J}_{p}$ can be reconstructed as the algebras in $(\mathcal{D}^{\flat})^{J^{\flat}}_{p^{\flat}}$ 
w.r.t.\@ the monad
\[ M = \nu^* \nu_!^{(p)}. \]
In the special case that $J$ is an operad and $I=\OOO$, this is the interpretation of algebras over the operad $J$ as algebras for a  monad. 

The monad is simplicially enriched w.r.t.\@ the simplicially enriched structure on $\mathcal{D} \to I$.
We have then 
\[ \uDef_X(J, \mathcal{D}) = \uAlg_{ \nu^* \nu_!^{(p)}}(X). \]
\end{PAR}

\subsection{Acyclicity (Construction of homotopies)}\label{SECTACYCLIC}

\begin{PAR}\label{PARHOMOTOPYPOINT}
A functor
\[ Y: \Delta_{\emptyset} (\cong \Delta_{\act}^{\op}) \to \Cat^{\PF}. \]
 defines a structure of simplicially enriched category (in fact enriched over $\Set^{\Delta^{\op}_{\emptyset}}$) already on 
\[ \Hom_{\Cat^{\PF}}(Y_{[0]}, X)    \]
for any category $X$ as follows: Its objects are pro-functors $\alpha: Y_{[0]} \to X$ and we define simplicial sets of morphisms by
\[ \uHom(\alpha, \beta)_{[n]} := \Hom( \alpha Y(p_n), \beta Y(p_n)). \]

If $\mathcal{C}$ is a (finitely) cocomplete category, then 
$L_{\mathcal{C}} \circ Y$ equips $L_{\mathcal{C}}(Y_{[0]}) = \mathcal{C}^{Y_{[0]}}$ with the structure of simplicially enriched category by means of Proposition~\ref{PROPENRICH}, and we have a map of simplicial sets: 
\[ \uHom(\alpha, \beta) \rightarrow \uHom(L_{\mathcal{C}}(\alpha), L_{\mathcal{C}}(\beta)) \]
turning $L_{\mathcal{C}}$ into a simplicially enriched functor. Here the $\uHom$ on the RHS is the simplicial enrichment of functor categories $\Hom(\mathcal{C}^{X}, \mathcal{C}^{Y_{[0]}})$ (\ref{PARUHOM}), but {\em with the trivial (discrete) structure on $\mathcal{C}^X$!}

The standard example is the functor $Y=\Xi$ obtained from the cooperad $(\Delta, \ast)^{\op}$ determined by $\Xi_{[0]} = \Delta^{\op}$, $\Xi(p_1) = \dec$ and $\Xi(p_{\emptyset}) = {}^t \pi$.
We have seen in Proposition~\ref{PROPTWOENRICHMENTS} that $L_{\mathcal{C}} \circ \Xi$ yields the {\em canonical enrichment} in simplicial sets for simplicial objects in $\mathcal{C}$. 

For now it was not used that $Y$ comes from a cooperad, i.e.\@ that it is compatible with the product $\ast$ on $\Delta_{\emptyset}$ and $\times$ on $\Cat^{\PF}_{}$.
This circumstance has
strong acyclicity implications for the simplicial sets considered above (and also their cousins $\uDef(\cdots)$ discussed in \ref{PARFUNCT} below) that are eventually the source
of all homotopies constructed in these lectures. This is in some sense parallel to the theory of acyclic models \cite{Bar02}, but has the advantage of being entirely constructive, and not limited to Abelian situations. 

In particular, two homotopic transformations $\mu, \nu: \alpha \Rightarrow \beta$ induce homotopic transformations $L_{\mathcal{C}}(\alpha) \Rightarrow L_{\mathcal{C}}(\beta)$.
In particular, if $Y=\Xi$ is the standard example, and $\mathcal{W}$ is a class of weak equivalences in $\mathcal{C}^{\Delta^{\op}}$ satisifying (W1) of \ref{PARAXIOMSW} then
for the induced
\[ L(\alpha)', L(\beta)': \mathcal{C}^{X} \to \mathcal{C}^{\Delta^{\op}}[\mathcal{W}^{-1}] \]
we have $L(\mu)' = L(\nu)'$. 
\end{PAR}

\begin{PAR}\label{PARPROP}
Assume that {\em also $X$} extends to a functor \[ X: \Delta_{\emptyset} (\cong \Delta_{\act}^{\op})\to \Cat^{\PF} \]
and assume that $X$ and $Y$ have the following properties: 
\begin{enumerate}
\item They are monoidal w.r.t.\@ the concatenation product $\ast$ on $\Delta_{\emptyset}$ and $\times$ on $\Cat^{\PF}$. 
(This is the case if and only if they correspond to  exponential fibrations of cooperads $I \to \OOO^{\op}$. )
\item The unit $\cdot \cong X_{\emptyset} \rightarrow X_{0}$ is given by ${}^t \pi$. (By 1.\@ this determines all values of $X$ under injective maps.)
\end{enumerate}
\end{PAR}

\begin{PROP}\label{PROPCREATIONHOMOTOPY}
If $\alpha$ and $\beta$ extend to an oplax transformation, and lax transformation, respectively: 
\[ \widetilde{\alpha} \in \Hom^{\oplax, p_\emptyset-\pseudo, \times}(Y, X) \qquad \widetilde{\beta} \in \Hom^{\lax, p_\emptyset-\pseudo, \times}(Y, X) \]
((op)lax, pseudo on $p_{\emptyset}$\footnote{Unraveling the definition, in view of assumption 2.\@, this means that $\alpha$ and $\beta$ must be cofinal, i.e.\@ satisfy $\alpha\,{}^t\! \pi = {}^t\! \pi$ and $\beta\,{}^t\! \pi = {}^t\! \pi$.} , and compatible with the product structure)  then 
\[ \uHom(\alpha, \beta) \]
 is empty or contractible. 
\end{PROP}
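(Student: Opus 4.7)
My plan is to establish contractibility (when nonempty) by exhibiting a canonical augmentation reducing to a point, together with an extra degeneracy on $\uHom(\alpha,\beta)$; the standard split-augmented argument then finishes.

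First, I would use that the enrichment lands in $\Set^{\Delta^{\op}_\emptyset}$, so $\uHom(\alpha,\beta)$ is canonically augmented with value
\[
\uHom(\alpha,\beta)_{\emptyset} \;=\; \Hom_{\Cat^{\PF}}(\alpha\,Y(p_\emptyset),\,\beta\,Y(p_\emptyset)).
\]
Hypothesis 2 says $X(p_\emptyset)={}^t\pi$, and the $p_\emptyset$-pseudo decoration on $\widetilde\alpha,\widetilde\beta$ (combined with the product compatibility) yields canonical isomorphisms $\alpha\,Y(p_\emptyset)\cong{}^t\pi\cong\beta\,Y(p_\emptyset)$. Since ${}^t\pi$ has trivial endomorphisms as a pro-functor into $X_{[0]}$ (its values are terminal groupoids), $\Hom({}^t\pi,{}^t\pi)$ is a singleton, so the augmentation is a one-point set. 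Thus it suffices to construct an extra degeneracy compatible with this augmentation.

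Second, I would build the extra degeneracy $s^+\colon \uHom(\alpha,\beta)_{[n]}\to\uHom(\alpha,\beta)_{[n+1]}$ using the decomposition $[n+1]=[n]\ast[0]$. Let $\delta\colon [n]\hookrightarrow[n+1]$ be the face obtained from $[n]=[n]\ast\emptyset\hookrightarrow[n]\ast[0]=[n+1]$, so $p_{n+1}\circ\delta=p_n$; under the monoidal identifications, $Y(\delta)=\id\times Y(p_\emptyset)$, and by the $\times$-compatibility together with $p_\emptyset$-pseudo, the 2-cells $\widetilde\alpha(\delta)$ and $\widetilde\beta(\delta)$ are \emph{invertible}. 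Given $h\colon \alpha\,Y(p_n)\to\beta\,Y(p_n)$, define $s^+(h)$ as the composite
\[
\alpha\,Y(p_{n+1}) \xrightarrow{\widetilde\alpha(\delta)^{-1}} X(\delta)\,\alpha\,Y(p_n) \xrightarrow{X(\delta)\cdot h} X(\delta)\,\beta\,Y(p_n) \xrightarrow{\widetilde\beta(\delta)} \beta\,Y(p_{n+1})
\]
where the first and last morphisms are the invertible unit 2-cells just discussed.

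Third, I would verify the standard relations of a contracting extra degeneracy: $d_0 s^+=\id$ (which is forced by the fact that pre-composition with $Y(\delta_0)=Y(\delta)$ inverts the two outer invertible 2-cells and leaves $h$), $d_{i+1}s^+=s^+d_i$ for $i\geq 0$ (which unwinds to a naturality square for $\widetilde\alpha,\widetilde\beta$ against the commuting square $p_n\delta_i=\delta_i p_{n+1}$ in $\Delta_\emptyset$), and the analogous relations with degeneracies (reducing to the cocycle identities of lax/oplax transformations and the (co)associativity of $(Y,\ast)$, $(X,\ast)$, and $(\Cat^{\PF},\times)$). Iterating $s^+$ then supplies a simplicial nullhomotopy of $\id_{\uHom(\alpha,\beta)}$ onto the augmentation point, yielding contractibility.

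The main obstacle is the combinatorial bookkeeping in step three: one must carefully track the various monoidal identifications $\alpha_{[n+1]}\cong\alpha^{\times(n+2)}$ and $Y_{[n+1]}\cong Y_{[0]}\times Y_{[n]}$ and ensure that the oplax-lax coherence diagrams assemble correctly, using the $p_\emptyset$-strictness at exactly the right places. Also, one should check that the construction of $s^+$ is independent of the specific $\times$-decomposition $[n+1]=[n]\ast[0]$ chosen (vs.\@ $[n+1]=[0]\ast[n]$); either works, and they produce homotopic contractions, but picking one consistently is what makes the verifications unambiguous.
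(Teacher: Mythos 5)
There is a genuine gap at the heart of your argument: the extra degeneracy $s^+$ cannot be constructed from the lax/oplax constraints alone, and in fact no canonical dimension-raising operator compatible with your one-point augmentation can exist. An $(n{+}1)$-simplex of $\uHom(\alpha,\beta)$ is a $2$-cell between pro-functors $Y_{[n+1]}\to X_{[0]}$, whereas the constraints $\widetilde\alpha(\delta)$, $\widetilde\beta(\delta)$ at $\delta=\id_{[n]}\ast p_\emptyset$ compare composites $X(\delta)\,\widetilde\alpha_{[n]}$ and $\widetilde\alpha_{[n+1]}\,Y(\delta)$, which are pro-functors $Y_{[n]}\to X_{[n+1]}$; your displayed composite therefore does not typecheck (``$X(\delta)\,\alpha\,Y(p_n)$'' is not composable, and its source is $Y_{[n]}$, not $Y_{[n+1]}$). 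The real issue is that $Y_{[n+1]}\cong Y_{[n]}\times Y_{[0]}$ has an extra $Y_{[0]}$-factor on which a given $h\in\uHom_{[n]}$ supplies no data; conjugating by invertible constraints only restricts along $Y(\delta)$, it never extends over that factor. Moreover, any canonical $s^+$ satisfying the bottom-level identities against the singleton augmentation would produce a canonical $0$-simplex, i.e.\@ a canonical $2$-cell $\alpha\Rightarrow\beta$ --- but $\uHom(\alpha,\beta)$ can be empty (nothing in the hypotheses compares $\alpha$ and $\beta$ at level $[0]$), which is exactly why the statement reads ``empty or contractible''. So the contraction cannot be basepoint-free; your observation that $\uHom(\alpha,\beta)_{[-1]}$ is a point is correct but does not substitute for a $0$-simplex.

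The paper's proof supplies the missing data differently: using the monoidality of $\widetilde\alpha,\widetilde\beta$ (so that $\widetilde\alpha_{[n+m+1]}\cong\widetilde\alpha_{[n]}\times\widetilde\alpha_{[m]}$) one pastes a \emph{pair} $(\mu_1,\mu_2)\in\uHom_{[n]}\times\uHom_{[m]}$ through the oplax constraint of $\widetilde\alpha$ and the lax constraint of $\widetilde\beta$ at $p_{n+m+1}$, obtaining a section of the canonical map $\dec^*\uHom(\alpha,\beta)\to\uHom(\alpha,\beta)\boxtimes\uHom(\alpha,\beta)$; the $p_\emptyset$-pseudo hypothesis (together with $X(p_\emptyset)=Y(p_\emptyset)={}^t\pi$ being final) is used only to check that restriction along the extreme faces $Y(\delta_l),Y(\delta_r)$ returns $\mu_1,\mu_2$. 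Corollary~\ref{LEMMADECCONTRACTIBLE},~2 then gives ``empty or contractible'': there the extension over the extra factor is obtained by pairing with a \emph{chosen} $0$-simplex, which is precisely the input your construction was missing. If you want to phrase the argument as a contraction, you must first fix a $0$-simplex $x_0$ (assuming nonemptiness) and define $s^+(h)$ as the pairing of $h$ with $x_0$; that is essentially the proof of Corollary~\ref{LEMMADECCONTRACTIBLE},~2 and brings you back to the paper's route.
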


\begin{PAR}\label{PARFUNCT}
Before we give the somewhat technical proof, we discuss  a functorial version of \ref{PARHOMOTOPYPOINT}. Consider a category or (co)operad $I$ and functors
\[ C, D \in \Delta_{\emptyset} \to \Hom^{1-\oplax, 1-\inert-\pseudo}(I^{\op}, (\Cat^{\PF}, \times))  \]
which satisfy  the properties of \ref{PARPROP} point-wise in $i$, such that also the oplax-ness constraints are compatible with the product structure. 
Let 
\[ F:  D|_{I^{\flat}} \to C|_{I^{\flat}}   \]
be a natural transformation (i.e.\@ a collection of pro-functors $F(i): D(i) \to C(i)$) compatible with product.
Then define 
\[ \uDef(D, C)_{F,[n]} := \Hom^{\oplax, \inert-\pseudo}(D_{[n]}, C_{[0]})_{ F_{[0]} \circ D^{\flat}(p_n)} \] 
writing $D^{\flat} :=  D|_{I^{\flat}}$.
The functoriality for $\alpha: [n] \to [m]$ is given by the pre-composition maps
\[   \Hom^{\oplax, \inert-\pseudo}(D_{[m]}, C_{[0]}) \to \Hom^{\oplax, \inert-\pseudo}(D_{[n]}, C_{[0]}) \]
induced by $D(\alpha) \in \Hom^{\oplax}(D_{[n]}, D_{[m]})$.
Note that this yields a set, because the ``values'' of the natural transformations are fixed; the only variable  is the oplaxness constraint. 
Notice: For this definition, the structure extension of $C_{[0]}$ to $\Delta_{\emptyset}$ does not play a role, yet!
\end{PAR}

\begin{PAR}\label{PARDAYDEF}
If $\mathcal{C} \to I$ is a functor of categories or (co)operads with (finitely) cocomplete fibers, then
$D$ equips $L_{\mathcal{C}}(D_{[0]}) = D(D_{[0]}, \mathcal{C})$ (the Day convolution) with the structure of simplicially enriched category over $I$ (cf.\@ Proposition~\ref{PROPENRICH}), and we have a map of simplicial sets (cf.\@ Definition~\ref{DEFNAIVE} for the RHS): 
\[ \uDef(D, C)_{F} \rightarrow \uDef(D(C_{[0]}, \mathcal{C}), D(D_{[0]}, \mathcal{C}))_{L_{\mathcal{C}}(F)} \]
{\em in which $D(C_{[0]}, \mathcal{C})$ is considered discrete} and $D(D_{[0]}, \mathcal{C})$ receives its simplicial structure from the functor $L_{\mathcal{C}} \circ D$.
Warning: The Day convolution $D(D_{[0]}, \mathcal{C}) \to I$ is only a {\em simplicially enriched} cofibration over $I$, if $\mathcal{C} \to I$ is a cofibration and 
$D$ is actually a functor $\Delta_{\emptyset} \to \Hom(I^{\op}, \Cat^{\PF})$ i.e.\@ maps morphisms to natural transformations (as opposed to oplax ones). 

Whether $D(D_{[0]}, \mathcal{C}) \to I$ is a simplicially enriched {\em fibration}, cannot be in general detected with the calculus of pro-functors, because this concerns $R$ and the simplicial structure is encoded via $L$. If $D_{[0]}$ actually corresponds to a fibration (i.e.\@ has morphisms in $\im {}^t \iota$), we may form ${}^{t}\! D_0$, and this can be done in certain cases.
\end{PAR}

\begin{PAR}
For $I= [1]$, as a special case, we obtain the situation in \ref{PARHOMOTOPYPOINT}, in case $\widetilde{\alpha}$ and $\widetilde{\beta}$ are actually natural transformations (not only lax or oplax), 
with the following data:
$C$ and $D$ then encode $\widetilde{\alpha}$ and $\widetilde{\beta}$ as cofibrations over $I=[1]$ and $F$ is the identity. 
Then we have 
\[ \uHom(\alpha, \beta) = \uDef(D, C)_{\id}.  \]
\end{PAR}

\begin{PAR}
For $I=[2]$ and $\alpha = \alpha_2 \alpha_1$ and $\beta = \beta_2 \alpha_1$, encoded by $C$ and $D$, $\uDef(D, C)_{\id}$ consists of those simplices 
in
\[ \uHom(\alpha_1, \beta_1) \times \uHom(\alpha_2, \beta_2) \times \uHom(\alpha, \beta) \]
 such that the pasting
\[ \uHom(\alpha_1, \beta_1) \times \uHom(\alpha_2, \beta_2) \to  \uHom(\alpha_2 \alpha_1, \alpha_2 \beta_1) \times \uHom(\alpha_2  \beta_1, \beta_2 \beta_1) \to \uHom(\alpha, \beta)      \]
maps the elements (of the same degree) in the former pair of simplicial sets to the one in the latter.
For the map $\uHom(\alpha_2, \beta_2) \to \uHom(\alpha_2  \beta_1, \beta_2 \beta_1)$  to exist, it is essential, that $\beta_1$ is ``simplicially enriched'' meaning that it extends to a functor $\Delta_{\emptyset} \times [1] \to \Cat^{\PF}$.
\end{PAR}

\begin{PAR}
There is a canonical element in $\uDef(D, C)_{F,[-1]}$ given by 
\[ \Hom^{\oplax}(D_{\emptyset}=\cdot, C_{[0]})_{\underbrace{F_{[0]} \circ D^{\flat}(p_{\emptyset})}_{\cong C^{\flat}(p_{\emptyset})}} \]
given by $C(p_{\emptyset})$. If that one consists of isomorphisms then it is the unique element in there. 
\end{PAR}

\begin{PROP}\label{PROPCREATIONHOMOTOPYFUNCT}
In the situation of \ref{PARFUNCT}, the fiber of 
\[ \uDef(D, C)_{F}  \]
over the canonical element $C(p_{\emptyset})$ in $\uDef(D, C)_{F,[-1]}$ is empty or contractible. 
\end{PROP}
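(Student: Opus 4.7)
The strategy is to promote the argument of Proposition~\ref{PROPCREATIONHOMOTOPY} to the functorial setting by exhibiting an extra degeneracy on the augmented simplicial set $K_\bullet := \uDef(D,C)_{F,\bullet}$ (augmented at $K_{[-1]}$), restricted to the fiber over the canonical element $C(p_\emptyset)$. Recall that an augmented simplicial set equipped with such an extra degeneracy is automatically either empty or contractible, via the standard contracting homotopy. The construction will use monoidality of $C$ and $D$ to produce a ``shift by $[0]\ast-$'' operator, and the point-wise compatibility of $F$ with the monoidal structure to guarantee that the shifted simplex stays in the prescribed fiber.

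\textbf{Construction of the extra degeneracy.} For each $n\ge -1$, using the monoidality of $D$ (property (1) of \ref{PARPROP}), we have a natural identification $D_{[n+1]} = D_{[0]\ast [n]} \cong D_{[0]} \times D_{[n]}$; similarly $C_{[1]} = C_{[0]\ast [0]} \cong C_{[0]}\times C_{[0]}$. Given an $n$-simplex $\mu \in K_{[n]}$, i.e.\@ an oplax transformation
\[ \mu : D_{[n]} \to C_{[0]} \quad \text{in } \Hom^{\oplax, \inert-\pseudo}_{(\Cat^{\PF},\times)^{I^{\op}}} \]
whose point-wise restriction equals $F_{[0]} \circ D^\flat(p_n)$, we define the extra degeneracy by
\[ s^{\mathrm{ex}}(\mu) : D_{[n+1]} = D_{[0]} \times D_{[n]} \xrightarrow{\; F_{[0]}\, \tilde{\times}\, \mu\;} C_{[0]}\times C_{[0]} = C_{[1]} \xrightarrow{\;C(p_1)\;} C_{[0]}, \]
where $p_1:[1]\to[0]$ is the unique morphism in $\Delta_\emptyset$, and $\tilde{\times}$ denotes the product of oplax transformations. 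Point-wise compatibility is automatic: $C(p_1) \circ (F_{[0]}^\flat \,\tilde{\times}\, F_{[0]}^\flat \circ D^\flat(p_n)) = F_{[0]}^\flat \circ D^\flat\bigl(p_1 \circ (\id \ast p_n)\bigr) = F_{[0]}^\flat \circ D^\flat(p_{n+1})$, by the product-compatibility of $F$ and the factorization $p_{n+1} = p_1 \circ (\id \ast p_n)$ in $\Delta_\emptyset$. Thus $s^{\mathrm{ex}}(\mu) \in K_{[n+1]}$.

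\textbf{Verification of simplicial identities and landing in the fiber.} The identity $d_{\mathrm{last}} \circ s^{\mathrm{ex}} = \id$ reduces, after unwinding, to the equation $p_1 \circ (p_\emptyset \ast \id_{[0]}) = \id_{[0]}$ in $\Delta_\emptyset$ together with the naturality relation $F_{[0]} \circ D(p_\emptyset) = C(p_\emptyset)$ (the underlying $(-1)$-simplex of $F_{[0]}$). The other simplicial identities $d_i \circ s^{\mathrm{ex}} = s^{\mathrm{ex}} \circ d_{i-1}$ and $s_j \circ s^{\mathrm{ex}} = s^{\mathrm{ex}} \circ s_{j-1}$ follow by applying $D$ and $C$ to the corresponding identities between face/degeneracy maps of $\Delta_\emptyset$ and using monoidality to factor through $[0] \ast -$. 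The iterate of $s^{\mathrm{ex}}$ with these face maps then provides the contracting homotopy to the constant simplex at $C(p_\emptyset)$, proving the fiber is contractible when non-empty.

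\textbf{Main difficulty.} The principal obstacle is not the abstract contraction argument (which is standard once an extra degeneracy is available) but the verification that $s^{\mathrm{ex}}(\mu)$ really is an \emph{oplax, inert-pseudo} natural transformation in $\Hom^{\oplax,\inert-\pseudo}_{(\Cat^{\PF},\times)^{I^{\op}}}$. The oplaxness constraints of $s^{\mathrm{ex}}(\mu)$ are obtained by assembling those of $F_{[0]}$, those of $\mu$, and those built into the functoriality of $C$ via its product structure; the compatibility of these with inert morphisms in $I$ uses crucially property (2) of \ref{PARPROP} --- that the laxness constraints of $C$ and $D$ are themselves monoidal (in particular $p_\emptyset$-pseudo) --- and the hypothesis that $F$ is compatible with products. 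Once this bookkeeping is carried out, the rest of the proof is formal.
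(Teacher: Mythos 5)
Your contraction strategy founders on the very first step: the map $s^{\mathrm{ex}}$ is not defined. An $(n{+}1)$-simplex of $\uDef(D,C)_F$ is an oplax (inert-pseudo) transformation $D_{[n+1]}\Rightarrow C_{[0]}$ whose \emph{point-wise} components are fixed; the variable data are the oplaxness constraints over the morphisms of $I$. To build $s^{\mathrm{ex}}(\mu)$ as $C(p_1)\circ(F_{[0]}\,\tilde\times\,\mu)$ you need, for every $\alpha\colon i\to j$, a constraint $F_{[0]}(j)\circ D_{[0]}(\alpha)\Rightarrow C_{[0]}(\alpha)\circ F_{[0]}(i)$ on the $D_{[0]}$-slot. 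But in the situation of \ref{PARFUNCT}, $F$ is only a collection of pro-functors indexed by the \emph{objects} of $I$ (it lives over $I^{\flat}$); it carries no constraints over morphisms, and supplying such constraints coherently is exactly what an element of the fiber of $\uDef(D,C)_F$ would be. So your extra degeneracy presupposes a $0$-simplex of the fiber with $F_{[0]}$ as underlying data, i.e.\ it presupposes what is to be proved (and would not even make sense when the fiber is empty). The later "bookkeeping" you defer ("assembling those of $F_{[0]}$ \dots") cannot be carried out.

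The paper's proof avoids needing any distinguished transformation extending $F$: instead of an extra degeneracy it constructs a \emph{section} of
\[ \dec^* \uDef(D,C)_F \longrightarrow \uDef(D,C)_F \boxtimes \uDef(D,C)_F, \]
by concatenating two arbitrary simplices $\mu_1,\mu_2$ (each of which \emph{does} carry its own constraints) through the product identification $D_{[n+m+1]}\cong D_{[n]}\times D_{[m]}$ and the constraint of $C(p_1)$; Corollary~\ref{LEMMADECCONTRACTIBLE},\,2 then gives "empty or contractible". Your idea can be repaired along these lines: if the fiber is non-empty, choose some $0$-simplex $\mu_0$ in it and concatenate with $\mu_0$ to get an extra degeneracy --- but that is exactly the paper's construction specialized, and one must still verify that restricting along the extreme faces returns $\mu$ (resp.\ collapses to the canonical element), which is where the hypothesis that $D(p_{\emptyset})={}^t\pi$, the $p_{\emptyset}$-pseudo condition, and the assumption that the simplices lie in the fiber over $C(p_{\emptyset})$ all enter; your reduction of this check to an identity in $\Delta_{\emptyset}$ plus "$F_{[0]}\circ D(p_{\emptyset})=C(p_{\emptyset})$" substantially understates it.
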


\begin{proof}[Proof of Proposition~\ref{PROPCREATIONHOMOTOPY}] 
By Lemma~\ref{LEMMADECCONTRACTIBLE}, 2.\@ below it suffices to construct a section of the natural map
\[ \dec^* \uHom(\alpha, \beta) \rightarrow \uHom(\alpha, \beta) \boxtimes \uHom(\alpha, \beta).   \]
Given $\mu_1: \alpha Y(p_n) \Rightarrow \beta Y(p_n)$ and $\mu_2: \alpha Y(p_m) \Rightarrow \beta Y(p_m)$
we have to define a morphism
\[  \nu:  \alpha Y(p_{n+m+1}) \to   \beta Y(p_{n+m+1})  \]
which applied to $Y(\delta_l)$ and $Y(\delta_r)$ gives back $\mu_1$ and $\mu_2$.
It is defined as the pasting
\begin{equation} \label{eqdia1} \vcenter{ \xymatrix{ & &  \ar[rrdd]^{Y(p_{n+m+1})} \ar@/_8pt/[dd]_{\widetilde{\alpha}_{n+m+1}} \ar@/^8pt/[dd]^{\widetilde{\beta}_{n+m+1}}_{\Rightarrow}  \ar[lldd]_{Y(p_{n+m+1})}   \\
\\
\ar[rrdd]_{\alpha} & \Rightarrow &   \ar[dd]|{X(p_{n+m+1})} & \Rightarrow & \ar[lldd]^{\beta} \\
\\
& &   }  }
\end{equation}
where the middle natural transformation is $(\mu_1, \mu_2)$ exploiting the product structure on $\widetilde{\alpha}_{n+m+1}$ and $\widetilde{\beta}_{n+m+1}$.
We have to show that be get back $\mu_1$ after pre-composition with $Y(\delta_l)$ (and similarly, $\mu_2$ after pre-composition with $Y(\delta_r)$). 

{\em Claim:} The pasting
\[\xymatrix{ 
\ar[r]^{Y(\delta_l)} &  \ar@/_8pt/[rr]_{\widetilde{\beta}_{n+m+1}} \ar@/^8pt/[rr]^{\widetilde{\alpha}_{n+m+1}}_{\Downarrow} & & \\
 } \]
is  equal to the pasting: 
\begin{equation} \label{eqdia2} \vcenter{
\xymatrix{ & \ar[rrdd]^{\widetilde{\alpha}_{n+m+1}} \\
& \Downarrow \\
\ar[ruu]^{Y(\delta_l)} \ar[rdd]_{Y(\delta_l)}  \ar@/_8pt/[rr]_{\beta Y(p_n)} \ar@/^8pt/[rr]^{\alpha Y(p_n)}_{\Downarrow^{\mu_1}}  & &  \ar[r]^{X(\delta_l)}  & \\
& \Downarrow \\
& \ar[rruu]_{\widetilde{\beta}_{n+m+1}}  }  }
\end{equation} 

Both diagrams are actually products, namely
\[ \xymatrix{ 
\ar[r]^{Y(\id_n)} &  \ar@/_8pt/[rr]_{Y(p_n)\beta} \ar@/^8pt/[rr]^{Y(p_n)\alpha}_{\Downarrow^{\mu_1}} & & \\
 } \quad \times \quad \xymatrix{ 
\ar[r]^{Y(p_{\emptyset}^{m+1})} &  \ar@/_8pt/[rr]_{Y(p_m)\beta} \ar@/^8pt/[rr]^{Y(p_m)\alpha}_{\Downarrow^{\mu_2}} & & \\
 }  \]
and
\[ \vcenter{ \xymatrix{ & \ar[rrdd]^{Y(p_n)\alpha} \\
& \Downarrow \\
\ar[ruu]^{Y(\id_n)} \ar[rdd]_{Y(\id_n)}  \ar@/_8pt/[rr]_{Y(p_n)\beta} \ar@/^8pt/[rr]^{Y(p_n)\alpha}_{\Downarrow^{\mu_1}}  & &  \ar[r]^{X(\id_n)}  & \\
& \Downarrow \\
& \ar[rruu]_{Y(p_n)\beta}  } } 
\times 
\vcenter{ \xymatrix{ & \ar[rrdd]^{Y(p_m)\alpha} \\
& \Downarrow \\
\ar[ruu]^{Y(p_{\emptyset}^{m+1})} \ar[rdd]_{Y(p_{\emptyset}^{m+1})}  \ar@{=}@/_8pt/[rr]_{} \ar@{=}@/^8pt/[rr]^{}_{=}  & &  \ar[r]^{X(p_{\emptyset}^{m+1})}  & \\
& \Downarrow \\
& \ar[rruu]_{Y(p_m)\beta}  }} \]
For the left parts the assertion is thus clear. For the right parts note that by assumption $X(p_{\emptyset}) = Y(p_{\emptyset}) = {}^t\!\pi$ and that $\widetilde{\alpha}$ and $\widetilde{\beta}$ are pseudo on 
$p_{\emptyset}$, thus $Y(p_{\emptyset}) \alpha$ and $Y(p_{\emptyset}) \beta$ are thus both isomorphic to ${}^t\!\pi$ which is a final object. This proves the claim. 

The conclusion is made by pre-composing (\ref{eqdia1}) with $Y(\delta_l)$ and inserting (\ref{eqdia2}).
 \end{proof}

\begin{proof}[Proof of Proposition~\ref{PROPCREATIONHOMOTOPYFUNCT}]
By Lemma~\ref{LEMMADECCONTRACTIBLE}, 2.\@ below, it suffices to construct a section of the natural map
\[ \dec^* \uDef(D, C)_F \rightarrow \uDef(D, C)_F \boxtimes \uDef(D, C)_F.   \]
Given $\mu_1  \in \Hom^{\oplax}(D_n, C_0)_{F_0 \circ D^{\flat}(p_n)}$ and  $\mu_2  \in \Hom^{\oplax}(D_m, C_0)_{F_0 \circ D^{\flat}(p_m)}$, we have to produce 
$\nu \in \Hom^{\oplax}(D_{n+m+1}, D_0)_{F_0 \circ D^{\flat}(p_{n+m+1})}$ connecting the two.
This is more or less the same construction as in the proof of Proposition~\ref{PROPCREATIONHOMOTOPY}:
We have given for each $\alpha: i \to j$:
\[ \xymatrix{ \ar[rr]^-{F_0(j) \circ D(p_n,j)} \ar[d]_{D_n(\alpha)}  \ar@{}[rrd]|{\Downarrow^{\mu_1}} & & \ar[d]^{C_0(\alpha)} \\
\ar[rr]_-{F_0(i) \circ D(p_n,i)} & &  }  \quad 
\xymatrix{ \ar[rr]^-{F_0(j) \circ D(p_m,j)} \ar[d]_{D_m(\alpha)}  \ar@{}[rrd]|{\Downarrow^{\mu_2}} & & \ar[d]^{C_0(\alpha)} \\
\ar[rr]_-{F_0(i) \circ D(p_m,i)} & &  } \]
or which is the same:
\[ \xymatrix{ \ar[rr]^-{C(p_n,j) \circ F_n(j) } \ar[d]_{D_n(\alpha)}  \ar@{}[rrd]|{\Downarrow^{\mu_1}} & & \ar[d]^{C_0(\alpha)} \\
\ar[rr]_-{C(p_n,i) \circ F_n(i) } & &  }  \quad 
\xymatrix{ \ar[rr]^-{C(p_m,j) \circ F_m(j)  } \ar[d]_{D_m(\alpha)}  \ar@{}[rrd]|{\Downarrow^{\mu_2}} & & \ar[d]^{C_0(\alpha)} \\
\ar[rr]_-{C(p_m,i) \circ F_m(i) } & &  } \]
and have to construct a 2-morphism in 
\[ \xymatrix{ \ar[rr]^-{D(p_{n+m+1},j)} \ar[d]_{D_{n+m+1}(\alpha)}  \ar@{}[rrrrd]|{\Downarrow^{\nu}}   & &  \ar[rr]^-{F_0(j)}&  & \ar[d]^{C_0(\alpha)} \\
\ar[rr]_-{D(p_{n+m+1},i)} & &  \ar[rr]_-{F_0(i)} & &    } \]
The square is isomorphic to the outer square in 
\[ \xymatrix{ \ar[rr]^-{C(p_n \ast p_m,j) F_{n+m+1}(j) } \ar[d]_{D_{n+m+1}(\alpha)}  \ar@{}[rrd]|{\Downarrow^{(\mu_1, \mu_2)}} & &  \ar[rr]^{C(p_{1},j)}  \ar@{}[rrd]|{\Downarrow^{}} \ar[d]|{C_{1}(\alpha)} &  & \ar[d]^{C_{0}(\alpha)} \\
\ar[rr]_-{C(p_n \ast p_m,i) F_{n+m+1}(i) } & & \ar[rr]_{C(p_{1},i)} & &   } \]
and it is clear from the
functorialities that the constructed element yields a valid element of $\uDef(D, C)_{F,n+m+1}$.

We have to see that we get back $\mu_1$ and $\mu_2$ after applying $\delta_l, \delta_r$. Investigate the case of $\delta_l$, the other is analog: 
\[ \xymatrix{ \ar[rr]^-{D(\delta_l, j) } \ar[d]_{D_n(\alpha)}  \ar@{}[rrd]|{\Downarrow} &&  \ar[rr]^{C(p_n \ast p_m,j) F_{n+m+1}(j)}  \ar@{}[rrd]|{\Downarrow^{(\mu_1, \mu_2)}} \ar[d]|{D_{n+m+1}(\alpha)} &  & \ar[d]|{C_{1}(\alpha)} \ar[rr]^{C(p_{1},j)}  \ar@{}[rrd]|{\Downarrow}  & & \ar[d]^{C_0(\alpha)}  \\
\ar[rr]_-{D(\delta_l, i)}  & & \ar[rr]_{C(p_n \ast p_m,i) F_{n+m+1}(i)} & &  \ar[rr]_{C(p_{1},i)} & &    } \]
The left squares are the product  
\[ \vcenter{ \xymatrix{  \ar@{=}[rr]   \ar[d]_{D_n(\alpha)} &  & \ar[d]|{D_n(\alpha)} \ar[rr]^{C(p_n,j) F_{n}(j) } \ar@{}[rrd]|{\Downarrow^{\mu_1}}  & & \ar[d]^{C_0(\alpha)}   \\
 \ar@{=}[rr] & &  \ar[rr]_{C(p_n,i) F_{n}(i)} & &    } } \quad  \times \quad  \vcenter{ 
 \xymatrix{  \ar[rr]^{D(p_{\emptyset}^{m+1},j)}   \ar@{=}[d] &  & \ar[d]|{D_m(\alpha)} \ar[rr]^{C(p_m,j) F_{m}(j) } \ar@{}[rrd]|{\Downarrow^{\mu_2}}  & & \ar[d]^{C_0(\alpha)}   \\
 \ar[rr]_{D(p_{\emptyset}^{m+1}, i)} & &  \ar[rr]_{C(p_m,i) F_{m}(i)} & &    } }  \] 
 in which the second factor is isomorphic to a diagram of the form
\[   \xymatrix{  \ar[rr]^{C(p_{\emptyset},j)}  \ar@{}[rrd]|-{\Downarrow} \ar@{=}[d] &  &  \ar[d]^{C_0(\alpha)}   \\
 \ar[rr]_{C(p_{\emptyset},i)} & &   }  \]
 and we have by assumption that $\mu_1$ and $\mu_2$ map to the oplaxness-constraint given by $C(p_{\emptyset})$. (If that is an isomorphism, then both compositions are the final object and thus there
 is only one 2-morphism anyway. )
 We can thus write the pasting as 
\[ \xymatrix{ \ar[rr]^-{C(p_n,j) F_{n}(j) } \ar[d]_{D_n(\alpha)}  \ar@{}[rrd]|{\Downarrow^{\mu_1}} &&  \ar[rr]^{C(p_1 \ast p_{\emptyset},j) }  \ar@{}[rrd]|{\Downarrow} \ar[d]|{C_{0}(\alpha)} &  & \ar[d]|{C_{1}(\alpha)} \ar[rr]^{C(p_{1},j)}  \ar@{}[rrd]|{\Downarrow}  & & \ar[d]^{C_0(\alpha)}  \\
\ar[rr]_-{C(p_n,i) F_{n}(i)}  & & \ar[rr]_{C(p_1 \ast p_{\emptyset},i) } & &  \ar[rr]_{C(p_{1},i)} & &    } \]
which gives back $\mu_1$. 
 \end{proof}

The following Lemma and its Corollary, which is well-known, can also be obtained easily without calculation from the description of homotopies via $\dec$ but is a bit different, and also more elementary than
the Propositions discussed before.

\begin{LEMMA}\label{LEMMADECCONTRACTIBLE1}
The canonical map
\[  P: \dec_n \Rightarrow [n] \circ \pi: \Delta^{\op}  \to \Delta^{\op}  \]
is a homotopy equivalence in the sense of \ref{PARHOMOTOPYPOINT} (for $Y=\Xi$ the standard example).
\end{LEMMA}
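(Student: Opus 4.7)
The plan is to exhibit a homotopy inverse $Q \colon [n]\circ\pi \Rightarrow \dec_n$ together with an explicit $1$-simplex in $\uHom(\dec_n,\dec_n)$ connecting $QP$ to the identity. Define the component of $Q$ at $[k]$ to be the morphism $[n] \to [n]\ast[k]$ in $\Delta^{\op}$ corresponding to the canonical retraction $r_k \colon [n]\ast[k] \to [n]$ in $\Delta$, $j \mapsto \min(j,n)$. Naturality in $[k]$ reduces to the identity $r_k \circ ([n]\ast f) = r_{k'}$ for every $f \colon [k'] \to [k]$ in $\Delta$, which is a one-line check. Pointwise, $P \circ Q$ is the composition $r_k \circ i_k|_{[n]} = \id_{[n]}$, so $PQ = \id_{[n]\circ\pi}$ strictly; whereas $Q \circ P$ is the natural transformation $\rho$ whose component at $[k]$ is the idempotent $\rho_k \colon [n]\ast[k] \to [n]\ast[k]$, $j \mapsto \min(j,n)$. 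It remains to show $\rho \sim \id$ in $\uHom(\dec_n,\dec_n)$.

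To this end I construct a $1$-simplex $H \in \uHom(\dec_n,\dec_n)_{[1]} = \Hom(\dec_n \circ \dec,\, \dec_n \circ \dec)$. As a natural transformation between pro-functors $(\Delta^{\op})^2 \to \Delta^{\op}$, such an $H$ is prescribed by a natural family of endomorphisms $h_{k_0,k_1} \colon [n]\ast[k_0]\ast[k_1] \to [n]\ast[k_0]\ast[k_1]$ in $\Delta$; I take
\[ h_{k_0,k_1}(j) \;:=\; \min(j,\, n+k_0+1), \]
which is the identity on the $[n]\ast[k_0]$-block and collapses the remaining $k_1+1$ entries to the top vertex $n+k_0+1$ of that block. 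Naturality in the $k_0$-direction is automatic because $h$ is the identity there, and naturality in the $k_1$-direction holds because the image of the collapsed block is a single vertex already lying in $[n]\ast[k_0]$.

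Finally, I must compute the two face restrictions of $H$ along the structure pro-functors $\Xi(\delta_0), \Xi(\delta_1) \colon \Delta^{\op} \to (\Delta^{\op})^2$ from \ref{PARASTSIMPLSTRUCTURE}. Using the explicit description of $\Xi$ in \ref{PARNECKLACE} together with Lemma~\ref{LEMMAIOTACART}, these pro-functors act by ``filling the suppressed slot with the unit $\emptyset = [-1]$ of $\ast$'' --- a heuristic which, at the level of the coend defining pro-functor composition, is made precise by co-Yoneda. Restriction along the face suppressing $k_1$ thus yields $h_{k_0,-1}(j) = \min(j, n+k_0+1) = j$ on $[n]\ast[k_0]$, recovering $\id_{\dec_n}$; restriction along the face suppressing $k_0$ yields $h_{-1,k_1}(j) = \min(j, n)$ on $[n]\ast[k_1]$, recovering $\rho = QP$. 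Executing this coend bookkeeping carefully is the main technical step. A more conceptual (but no shorter) alternative would be to invoke Proposition~\ref{PROPCREATIONHOMOTOPY} after extending both $\dec_n$ and $[n]\circ\pi$ to (op)lax monoidal transformations $\Xi \Rightarrow \Xi$; however, $\dec_n = [n]\ast(-)$ is only oplax monoidal with respect to $\ast$ (precisely what makes the homotopy nontrivial), and verifying the $p_\emptyset$-pseudo hypothesis ends up being equally delicate.
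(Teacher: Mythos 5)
Your overall strategy is the paper's: produce $Q$ with $PQ=\id$ and then a $1$-simplex in $\uHom(\dec_n,\dec_n)_{[1]}=\Hom(\dec_n\circ\dec,\dec_n\circ\dec)$ restricting to $QP$ and $\id$ along the two face pro-functors. But the specific $1$-simplex you write down is not a natural transformation, so the argument breaks at its central step. Your $h_{k_0,k_1}(j)=\min(j,\,n+k_0+1)$ collapses the $[k_1]$-block onto the top vertex of the $[k_0]$-block, and that vertex is not preserved by arbitrary monotone maps in the $k_0$-variable (only by endpoint-preserving ones). Concretely, take $n=0$, $f_0\colon[0]\to[1]$ with $f_0(0)=0$, $f_1=\id_{[0]}$: then $h_{1,0}\circ([0]\ast f_0\ast f_1)$ sends $2\mapsto 2$, while $([0]\ast f_0\ast f_1)\circ h_{0,0}$ sends $2\mapsto 1$, so the naturality square fails. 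Your remark that naturality in the $k_0$-direction is ``automatic because $h$ is the identity there'' overlooks exactly the behaviour of the collapsed $[k_1]$-block. The repair is to collapse the block \emph{adjacent to the fixed $[n]$} onto an endpoint of $[n]$: in your convention $\dec_n([k])=[n]\ast[k]$, let $h'_{k_0,k_1}$ be the identity on $[n]$ and on the $[k_1]$-block and send the whole $[k_0]$-block to the top vertex $n$. This is natural precisely because the collapse target lies in the block on which all structure maps act as the identity, and its two faces are $QP$ and $\id$ (in the order opposite to the one you announced, which is harmless). This is exactly the paper's homotopy $\xi$, written in the mirror convention $\dec_n([x])=[x]\ast[n]$, where the middle block is sent to the minimal element of $[n]$.

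A second, lesser gap: the identification of the face restrictions, i.e.\ computing $h\,{}^t\!\pr_i$ and recognizing it as $\id$ resp.\ $QP$ under $\dec\,{}^t\!\pr_i\cong\id$, is where the paper does genuine work. It extends the homotopy to $\dec_{\emptyset,n}\dec_{\emptyset}$ over $\Delta_{\emptyset}$, where the two faces become honest functors $p_i=([\emptyset],\id)$ and the evaluation is transparent, and then transfers back using Lemmas~\ref{LEMMAEXACT4} and~\ref{LEMMAEXACT2}. You defer this step (``coend bookkeeping''), and your heuristic of filling the suppressed slot with $\emptyset$ only becomes meaningful after such an extension; note also that your $h$ as written does not become natural over $\Delta_{\emptyset}$ either, since the same counterexample applies. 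With the corrected $h'$ the extension to $\Delta_{\emptyset}$ is immediate and the two cited lemmas finish the identification, as in the paper.
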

\begin{proof}(cf.\@ also the proof of Lemma~\ref{LEMMAEXACT3}). 
Define 
\[ S: [n] \circ \pi \Rightarrow \dec_n \]
by the degeneracy $[n] \twoheadleftarrow [x] \ast [n]$ which maps $[x]$ to the minimal element in $[n]$.
Obviously, $PS = \id$. We have to define a morphism
\[ \xi: \dec_n \dec \to \dec_n \dec \]
such that the pre-composition with ${}^t\!\pr_0$ gives back the identity and the pre-composition with ${}^t\!\pr_1$ gives back $SP$.
We define the morphism 
\[ \xi: [a] \ast  [b] \ast [n] \to  [a] \ast  [b] \ast [n] \]
as the identity on $[a]$ and $[n]$ and mapping $[b]$ to the minimal element of $[n]$. 
We may in fact extend $\xi$ to a morphism  
\[ \xi_{\emptyset}: \dec_{\emptyset, n} \dec_{\emptyset}  \to \dec_{\emptyset, n} \dec_{\emptyset}.  \]
That one has the {\em obvious} property that composition with $p_0 = ([\emptyset], \id): \Delta_{\emptyset}^{\op} \to \Delta_{\emptyset}^{\op} \times \Delta_{\emptyset}^{\op}$ gives back (the restriction of) $SP$ and composition with $p_1$ gives back the identity. Thus
\[ {}^t\!i \dec_{\emptyset, n} \dec_{\emptyset} p_i i \to {}^t\!i \dec_{\emptyset, n} \dec_{\emptyset} p_i i  \]
is $SP$, and $\id$, respectively. But ${}^t\!i \dec ([n], \id) \dec \cong \dec ({}^t\!i, {}^t\!i) ([n], \id) \dec \cong \dec  ([n], \id) {}^t\!i \dec  \cong  \dec ([n], \id) \dec ({}^t\!i , {}^t\!i)$ using Lemma~\ref{LEMMAEXACT4} and finally
$({}^t\!i , {}^t\!i) p_i i = \pi_i$ using Lemma~\ref{LEMMAEXACT2}.
\end{proof}

\begin{KOR}\label{LEMMADECCONTRACTIBLE}
\begin{enumerate}
\item The canonical map
\[ \dec^* \Rightarrow \pr_1^*: \mathcal{C}^{\Delta^{\op}} \to \mathcal{C}^{\Delta^{\op} \times \Delta^{\op}}  \]
is, point-wise in the second variable, a homotopy equivalence, i.e.\@ all the canonical maps $\dec_n^* X \to X_n$ where $X_n$ is considered constant, 
are homotopy equivalences. 
\item If $X$ is a simplicial set such that there is a section of the natural morphism
\[ \dec^* X \rightarrow X \boxtimes X   \]
then $X$ is either empty or contractible. 
\end{enumerate}
\end{KOR}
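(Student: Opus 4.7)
The plan is to derive Part 1 directly from Lemma~\ref{LEMMADECCONTRACTIBLE1} by transport along $L_\mathcal{C}$. That lemma already produces a homotopy equivalence $P \colon \dec_n \Rightarrow [n] \circ \pi$ between pro-functors $\Delta^{\op} \to \Delta^{\op}$, with respect to the simplicial enrichment on $\Hom_{\Cat^{\PF}}(\Delta^{\op},\Delta^{\op})$ induced by the cooperad functor $\Xi$ as in \ref{PARHOMOTOPYPOINT}. Since $L_\mathcal{C}$ is a simplicially enriched $2$-functor (again \ref{PARHOMOTOPYPOINT}), applying it yields a homotopy equivalence $L_\mathcal{C}(P) \colon \dec_n^* \Rightarrow \pi^* \circ [n]^*$ between endofunctors of $\mathcal{C}^{\Delta^{\op}}$. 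The resulting enrichment on $\mathcal{C}^{\Delta^{\op}}$, built from $L_\mathcal{C} \circ \Xi^{\op}$ via Definition~\ref{DEFENRICHMENTPRESHEAVES}, coincides with the canonical simplicial enrichment by Proposition~\ref{PROPTWOENRICHMENTS}. Unpacked, this is precisely the assertion that each canonical map $\dec_n^* X \to X_n$ (with $X_n$ viewed as a constant simplicial object) is a simplicial homotopy equivalence.

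\textbf{Part 2.} For Part 2 I would construct an extra degeneracy out of the section and a chosen vertex. Assuming $X$ is non-empty, fix $x \in X_0$ and set
\[ h_n \colon X_n \to X_{n+1}, \qquad h_n(y) := s_{n, 0}(y, x), \]
using the component $s_{n,0} \colon X_n \times X_0 \to X_{[n] \ast [0]} = X_{n+1}$. The section condition $X(\delta_l) \circ s_{n, 0} = \pr_1$, with $\delta_l \colon [n] \hookrightarrow [n] \ast [0] = [n+1]$ identified with the last coface $d_{n+1}$, immediately yields $d_{n+1} \circ h_n = \id_{X_n}$. For the remaining identities $d_i \circ h_n = h_{n-1} \circ d_i$ and $s_i \circ h_n = h_{n+1} \circ s_i$ for $0 \leq i \leq n$, I would invoke the naturality of $s$ in its first variable with respect to morphisms of the form $\alpha \ast \id_{[0]}$ where $\alpha$ is a face, respectively a degeneracy, of $\Delta$. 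The family $(h_n)_{n \geq 0}$ is then an extra degeneracy on $X$ with base vertex $x$, which by a standard argument packages into a simplicial homotopy $H \colon \Delta^1 \otimes X \to X$ from $\id_X$ to the constant map $c_x$; hence $X$ is contractible.

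The only real obstacle is notational: one must carefully translate the inclusions $\delta_l, \delta_r$ and the concatenation product $\ast$ on $\Delta_\emptyset$ into the standard face/degeneracy calculus on simplicial sets. Once that identification is in place, both parts reduce to mechanical applications of naturality and of the defining properties of $\Xi$ and $s$.
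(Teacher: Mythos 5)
Part 1 is fine and is essentially the paper's own argument: the paper records it as an immediate consequence of Lemma~\ref{LEMMADECCONTRACTIBLE1}, transported through $L_{\mathcal{C}}$ exactly as you do, using \ref{PARHOMOTOPYPOINT} and Proposition~\ref{PROPTWOENRICHMENTS}.

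In Part 2 there is a genuine gap. You only ever invoke the $\delta_l$-half of the section condition ($X(\delta_l)\circ s_{n,0}=\pr_1$) together with naturality of $s$ in its first variable. These give precisely the identities $d_{n+1}h_n=\id$, $d_i h_n = h_{n-1} d_i$ and $s_i h_n = h_{n+1} s_i$ for $0\le i\le n$ (note that at $n=0$ the middle identity refers to an undefined $h_{-1}$), and such a family does \emph{not} suffice for contractibility: on the discrete simplicial set with two vertices, taking $h_n$ to be the unique map $X_n\to X_{n+1}$ satisfies all of them, yet $X$ is not contractible. Equivalently, your data only exhibit $X$ as a retract of $\dec_0^* X$, which is homotopy equivalent to the discrete set $X_0$; this contracts each connected component of $X$ but says nothing about connectedness. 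What is missing is the identity tying the extra degeneracy to the base vertex, namely that $d_0 h_0$ is the constant map at $x$ (more generally, that the last vertex of $h_n(y)$ is $x$ for every $y$), and this is exactly where the $\delta_r$-half of the section condition, $X(\delta_r)\circ s_{n,0}=\pr_2$, must be used. With that one additional line your argument closes, and it then becomes an explicit form of the paper's proof: the paper restricts the section to $\dec_0^* X \to X\times X_0$, observes that the fibre over $x\in X_0$ is contractible by Part 1 (the homotopy of Lemma~\ref{LEMMADECCONTRACTIBLE1} fixes the last vertex), and notes that $y\mapsto s_{n,0}(y,x)$ exhibits $X$ as a retract of that fibre.
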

\begin{proof}
1.\@ is an obvious consequence of Lemma~\ref{LEMMADECCONTRACTIBLE1}. For 2.\@ consider the morphism 
\[ \dec_0^* X \rightarrow X \times X_0   \]
which has a section by assumption.  The fiber over any element of $X_0$ of the left hand side is contractible by 1. That is thus also the case
for $X$.
\end{proof}

\subsection{Weak equivalences and geometric realization}\label{SECTGEOMREAL}

The non-Abelian Eilenberg-Zilber theorem that will be investigated in the next two sections is a statement ``up to weak equivalence''. Indeed, even
the Eilenberg-Zilber {\em morphism} that will be constructed is only defined as a morphism in the homotopy category (unless the situation is {\em strongly symmetric}, as in the Abelian case). We do not want to 
restrict the situation unnecessarily, allowing for simplicial objects $\mathcal{C}^{\Delta^{\op}}$ in a general 1-category (being mainly interested in $\mathcal{C} = \Set$ and $\mathcal{C}$ Abelian in these lectures).
The proper language for such a general situation would be, of course, that of model categories. However, as long as we are in a situation (as in these main examples), where ``potentially'' every simplicial object is cofibrant, {\em much} less is needed for our purposes: 
 
\begin{PAR}\label{PARAXIOMSW}
Let $\mathcal{C}$ be a (finitely) complete and cocomplete category. We consider the following axioms on a class of ``weak equivalences'' $\mathcal{W}$ in $\mathcal{C}^{\Delta^{\op}}$.
\begin{enumerate}
\item[(W1)] $\Delta_1 \otimes X  \to X$ (or equivalently $X \to \mathcal{HOM}(\Delta_1, X)$) is a weak equivalence for all $X$\footnote{where $\otimes$ and $\mathcal{HOM}$ are the (co)tensoring of the
canonical simplicial enrichment \ref{PARENRICHMENTPRESHEAVES}.}.
\item[(W2)] $\delta^*: \mathcal{C}^{\Delta^{\op} \times \Delta^{\op}} \to \mathcal{C}^{\Delta^{\op}}$ maps point-wise weak equivalences (in either direction) to weak equivalences.
\end{enumerate}
\end{PAR}

\begin{LEMMA}\label{LEMMAW1}
The two versions of (W1) are equivalent and also imply that 
$X \to \Hom(\Delta_n, X)$, and $\Delta_n \otimes X \to X$ are weak equivalences for all $n$.
\end{LEMMA}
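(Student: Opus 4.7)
\emph{Plan.} The plan is to derive every claim from a single principle: granted (W1) in either of its two forms, any two simplicially homotopic maps $f, g \colon X \to Y$ in $\mathcal{C}^{\Delta^{\op}}$ become equal in the localization $\mathcal{C}^{\Delta^{\op}}[\mathcal{W}^{-1}]$. Indeed, if $h \colon \Delta_1 \otimes X \to Y$ exhibits $f = h \circ (\delta_0 \otimes X)$ and $g = h \circ (\delta_1 \otimes X)$, then by (W1) the projection $p \otimes X \colon \Delta_1 \otimes X \to X$ is a weak equivalence; the two sections $\delta_i \otimes X$ are right inverses of it, and in the localization they must therefore both coincide with its unique two-sided inverse, forcing $f = g$ there. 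Consequently, every simplicial homotopy equivalence in $\mathcal{C}^{\Delta^{\op}}$ becomes an isomorphism in the localization, and hence --- under the standard saturation of $\mathcal{W}$ --- already lies in $\mathcal{W}$.

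With this principle in hand, the $\Delta_n$ versions are immediate. The simplicial set $\Delta_n$ admits a canonical contraction $\Delta_n \times \Delta_1 \to \Delta_n$ onto a vertex, so tensoring (resp.\ cotensoring) with $X$ presents $\Delta_n \otimes X \to X$ (resp.\ $X \to \mathcal{HOM}(\Delta_n, X)$) as a simplicial homotopy equivalence, and hence as a weak equivalence. The equivalence of the two forms of (W1) is also subsumed: $\Delta_1 \to \Delta_0$ is already a simplicial homotopy equivalence in $\Set^{\Delta^{\op}}$, and either form of (W1) suffices to activate the principle above, which then delivers both $\Delta_1 \otimes X \to X$ and $X \to \mathcal{HOM}(\Delta_1, X)$ as weak equivalences for all $X$ simultaneously.

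The main obstacle is that the saturation of $\mathcal{W}$ is not stated as an axiom alongside (W1)--(W2), and is used implicitly throughout the above. To dispense with it --- at least for the equivalence of the two forms --- I would appeal to (W2) directly: using Proposition~\ref{PROPTWOENRICHMENTS} to identify $\Delta_1 \otimes X \cong \dec_! \dec^* X$ and $\mathcal{HOM}(\Delta_1, X) \cong \dec_* \dec^* X$, I would realize each of the morphisms in question as $\delta^*$ of a suitable bisimplicial morphism whose components in one direction reduce, on constant simplicial objects, to instances of the opposing form of (W1); (W2) then converts one form into the other on the diagonal. Setting up these bisimplicial models cleanly and verifying the component calculation will be the most delicate technical point.
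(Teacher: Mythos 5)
Your first two paragraphs are essentially the paper's own proof: either form of (W1) forces simplicially homotopic maps to coincide in the localization, hence homotopy equivalences are weak equivalences, and since $-\otimes X$ and $\mathcal{HOM}(-,X)$ carry the homotopy equivalence $\Delta_n \to \Delta_0$ of simplicial sets to homotopy equivalences in $\mathcal{C}^{\Delta^{\op}}$, all the stated maps (including both $\Delta_1$-forms, which gives the equivalence of the two versions of (W1)) are weak equivalences. The saturation point you flag is glossed over in the paper as well --- its proof simply asserts that homotopy equivalences are weak equivalences --- so the incomplete (W2)/bisimplicial detour in your last paragraph is neither needed for the intended argument nor part of the paper's proof.
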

\begin{proof}
Any of the two statements in (W1) implies that homotopy equivalences are weak equivalences, and,
since $\otimes$ is associative, $- \otimes X$ and $\Hom(-, X)$ map homotopy equivalences in $\Set^{\Delta^{\op}}$ to homotopy equivalences in $\mathcal{C}^{\Delta^{\op}}$, and $s: \Delta_n \to \Delta_0$ is a homotopy equivalence in $\Set^{\Delta^{\op}}$.
\end{proof}

\begin{LEMMA}\label{LEMMAW2}
The axioms of \ref{PARAXIOMSW} hold true for $\mathcal{C}=\Set$, and for $\mathcal{C}$ Abelian, and the class $\mathcal{W}$ of usual weak equivalences, and the class of quasi-isomorphisms, respectively. 
\end{LEMMA}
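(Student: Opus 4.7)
The plan is to deduce (W1) in both cases from the contractibility of $\Delta_1$ in $\Set^{\Delta^{\op}}$ transported through the enriched tensoring, and to reduce (W2) in the two cases to two classical results: the ``diagonal of a bisimplicial set'' theorem for simplicial sets, and the preservation of pointwise quasi-isomorphisms under totalization of first-quadrant double complexes for the Abelian case.

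For (W1), I would first observe that the unique map $\Delta_1 \to \Delta_0$ is a simplicial homotopy equivalence in $\Set^{\Delta^{\op}}$: an explicit homotopy from $\id_{\Delta_1}$ to a constant map is given by the ``minimum'' morphism $\Delta_1 \times \Delta_1 \to \Delta_1$. Because the canonical left tensoring of~\ref{PARENRICHMENTPRESHEAVES} is simplicially enriched in the first variable, this homotopy equivalence transports, for any $X \in \mathcal{C}^{\Delta^{\op}}$, to a simplicial homotopy equivalence $\Delta_1 \otimes X \to \Delta_0 \otimes X = X$. For $\mathcal{C} = \Set$ this is in particular a weak equivalence. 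For $\mathcal{C}$ Abelian, a simplicial homotopy translates after normalization into a chain homotopy, so one obtains a chain-homotopy equivalence and hence a quasi-isomorphism.

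For (W2) with $\mathcal{C} = \Set$, this is the classical theorem that the diagonal $\delta^* \colon \Set^{\Delta^{\op} \times \Delta^{\op}} \to \Set^{\Delta^{\op}}$ sends pointwise weak equivalences to weak equivalences. A clean derivation proceeds via geometric realization: there is a natural weak equivalence $|\delta^* X| \simeq |X|_{\mathrm{bi}}$, with $|-|_{\mathrm{bi}}$ the iterated realization, and the latter visibly preserves pointwise weak equivalences since the ordinary $|-|$ does.

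For (W2) with $\mathcal{C}$ Abelian, I would reduce via Dold-Kan to double complexes: the normalized complex $N(\delta^* Y)$ of the diagonal of a bisimplicial object in $\mathcal{C}$ is naturally quasi-isomorphic to the total complex $\Tot N_{\bullet,\bullet}(Y)$ of the bi-normalized complex (Eilenberg-Zilber), and a pointwise quasi-isomorphism of first-quadrant double complexes induces a quasi-isomorphism on totals by the strongly convergent spectral sequence of the column filtration. The main obstacle will be to avoid circular use of the Eilenberg-Zilber comparison proved only in later sections; this is easily sidestepped by running the spectral-sequence argument directly on the bi-normalized bicomplex of $\delta^* f$, so that no prior Eilenberg-Zilber theorem is needed at this stage.
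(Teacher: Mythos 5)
Your (W1) arguments and your (W2) argument for $\Set$ are fine and close to the paper's (for $\Set$ the paper deduces (W1) from $\times$ being a left Quillen bifunctor and quotes Goerss--Jardine for the diagonal; for $\mathcal{C}$ Abelian it turns an $\otimes$-homotopy into an ordinary chain homotopy by precomposing with the Eilenberg--Zilber map $\Delta_1 \tildeotimes X \to \Delta_1 \otimes X$, which is the mechanism behind your ``normalization'' step). The genuine problem is your last step for (W2) in the Abelian case. What you need is the comparison $N\delta^*Y \simeq \Tot N_{\bullet,\bullet}Y$, and the proposed sidestep does not supply it: ``the bi-normalized bicomplex of $\delta^*f$'' is not a defined object ($\delta^*f$ is a map of simplicial, not bisimplicial, objects), and what you can actually run the column-filtration spectral sequence on is $N_{\bullet,\bullet}(f)$; that only shows $\Tot N_{\bullet,\bullet}(f)$ is a quasi-isomorphism, which is the easy half. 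Transferring this to $\delta^*f$ is exactly the Eilenberg--Zilber/Dold--Puppe comparison you set out to avoid, so as written your proof of the Abelian (W2) is incomplete.

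The circularity you feared is in fact absent, and this is precisely the point the paper makes: in the strongly symmetric case, which includes $\mathcal{C}$ Abelian by Lemma~\ref{LEMMASYM}, the proof of the Eilenberg--Zilber Theorem~\ref{SATZEZ} does not use (W2) (and the classical Abelian EZ theorem, or the coherent version of Section~\ref{SECTEZAB}, is available independently of these axioms). Accordingly, the paper's proof of (W2) simply uses EZ to replace $\delta^*$ by $\dec_* \cong \tot$ (Proposition~\ref{PROPEXPLICITAB}) and then checks that the total complex of a non-negatively graded double complex preserves quasi-isomorphisms which are point-wise in either direction --- your spectral-sequence step. So keep your original reduction, drop the ``sidestep'', and instead justify the use of EZ by its independence of (W2) in the strongly symmetric/Abelian setting.
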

\begin{proof}
Case $\mathcal{C}=\Set$.

(W1) holds true because $\otimes=\times$ is a left Quillen bifunctor (cf.\@ \cite{GJ09}), every object is cofibrant, and $\delta_0, \delta_1: \Delta_0 \hookrightarrow \Delta_1$ are trivial cofibrations, therefore $\id \times \delta_0: X \to X \times \Delta_1$ is a trivial cofibration, and therefore the first map of (W1) which is a section of $\id \times \delta_0$ is a weak equivalence.
(W2) is shown e.g.\@ in \cite[IV, Proposition 1.7]{GJ09}.

Case $\mathcal{C}$ Abelian.

(W1) If two morphisms $f, g: X \to Y$ are $\otimes$-homotopic, i.e.\@ if there is a morphism
$\Delta_1 \otimes X \to Y$
such that the restrictions give $f$ and $g$, respectively, then the composition $\Delta_1 \tildeotimes X \to \Delta_1 \otimes X \to Y$
with the Eilenberg-Zilber map yields  a homotopy in the usual sense (cf.\@ also Lemma~\ref{LEMMAYONEDA}) which implies that $f$ and $g$ induce the same map on homology groups.
Hence homotopy-equivalences w.r.t.\@ $\otimes$ are quasi-isomorphisms as well. Then argue as in the proof of Lemma~\ref{LEMMAW1}.

(W2) The proof of the Eilenberg-Zilber theorem does not need (W2) in the strongly symmetric case (cf.\@ Definition~\ref{DEFSYMMETRIC}).
Thus it suffices to show that $\dec_* \cong \tot$ (cf.\@ Proposition~\ref{PROPEXPLICITAB}) preserves quasi-isomorphisms in any direction. This is true in {\em any} Abelian category for non-negatively graded complexes. 
\end{proof}

Note that we have (for the examples $\mathcal{C}$ Abelian, and $\Set$, and more generally in case that $(\mathcal{C}^{\Delta^{\op}}, \mathcal{W})$ is part of a model category structure) an equivalence of
localizations: \[ (\mathcal{C}^{\Delta^{\op}}[\mathcal{W}^{-1}])^{\Delta^{\op}} \cong \mathcal{C}^{\Delta^{\op} \times \Delta^{\op}}[\mathcal{W}_v^{-1}] \] at the vertical weak equivalences (\cite[Proposition~7.9.2.]{Cis19}).
This is not at all true for the 1-categorical localizations. The statement below makes sense for them as well, if one takes $\mathcal{C}^{\Delta^{\op} \times \Delta^{\op}}[\mathcal{W}_v^{-1}]$ and the homotopy colimit, i.e.\@ the
derived functor of the colimit. 

\begin{PROP}\label{PROPHOCOLIM}
If \ref{PARAXIOMSW} hold then we have a commutative diagram of $\infty$-categories:
\[ \xymatrix{
\mathcal{C}^{\Delta^{\op} \times \Delta^{\op}} \ar[rr]^{\delta^*} \ar[d] & &  \mathcal{C}^{\Delta^{\op}} \ar[d] \\
(\mathcal{C}^{\Delta^{\op}}[\mathcal{W}^{-1}])^{\Delta^{\op}} \ar[rr]_{\colim_{\Delta^{\op}}} & &  \mathcal{C}^{\Delta^{\op}}[\mathcal{W}^{-1}]
}\]
in which the vertical functors are the canonical ones into the localizations which we consider as $\infty$-categorical localizations. 
In other words 
\[ \boxed{ \delta^* = \text{ geometric realization. } }  \]
\end{PROP}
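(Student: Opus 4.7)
My strategy is to rewrite $\delta^*$ as a Bousfield--Kan style coend, then deform it using (W1) and (W2). The starting point is the purely formal identity
\[ \delta^* Y \;\cong\; \int^{[n] \in \Delta^{\op}} \Delta_n \otimes Y_n \]
in $\mathcal{C}^{\Delta^{\op}}$, where $Y$ is viewed as a simplicial object $[n] \mapsto Y_n \in \mathcal{C}^{\Delta^{\op}}$ and $\otimes$ is the canonical simplicial tensoring of \ref{PARENRICHMENTPRESHEAVES}. This comes from the co-Yoneda lemma applied in the second variable: evaluating the right-hand side at $[m]$ and invoking Fubini collapses the expression via $\int^{[n]} \Hom_\Delta([m],[n]) \cdot Y_{n,m} \cong Y_{m,m} = \delta^* Y([m])$.

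Second, I would argue that both $L\delta^*$ and $\colim_{\Delta^{\op}}$ are colimit-preserving functors $\mathcal{D}^{\Delta^{\op}} \to \mathcal{D}$ (writing $\mathcal{D} := \mathcal{C}^{\Delta^{\op}}[\mathcal{W}^{-1}]$) and that they agree on a generating family. The functor $\colim_{\Delta^{\op}}$ is colimit-preserving as a left adjoint to the constant-diagram functor. For $L\delta^*$, the functor $\delta^*$ at the 1-categorical level is both a left and right adjoint (being restriction along $\delta: \Delta^{\op} \to \Delta^{\op} \times \Delta^{\op}$), and (W2) guarantees that it descends to the localization; suitable cofibrant-replacement arguments upgrade the strict colimit preservation to $\infty$-categorical colimit preservation. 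The natural generating family is $\{ h_n \otimes X \}_{n \ge 0,\, X \in \mathcal{D}}$, where $h_n : \Delta^{\op} \to \Gpd_\infty$ is the representable at $[n]$. On this family one computes directly: $\colim_{\Delta^{\op}}(h_n \otimes X) \simeq X$ by representability, while by the coend formula $\delta^*(h_n \otimes X) \cong \Delta_n \otimes X$ (now in the canonical simplicial tensoring), and axiom (W1) together with Lemma~\ref{LEMMAW1} yields $\Delta_n \otimes X \simeq X$ in $\mathcal{D}$. The two functors thus agree on generators and both preserve $\infty$-colimits, hence are canonically equivalent.

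The main obstacle is verifying rigorously that $L\delta^*$ preserves $\infty$-categorical colimits under the bare hypotheses (W1) and (W2), without invoking a full model structure. The cleanest route is to exploit the coend formula directly: by Example~\ref{PARFINALCOEND}, the coend $\int^{[n]} \Delta_n \otimes Y_n$ is a strict colimit over $\tw \Delta^{\op}$, and then one must show this strict colimit realises the $\infty$-categorical weighted colimit with weight $[n] \mapsto \Delta_n$. Since each $\Delta_n$ is contractible as a simplicial set and (W1) transports this contractibility into $\mathcal{D}$, the weight is equivalent to the terminal one, and the weighted colimit reduces to the plain colimit --- provided one justifies the interchange of the coend with the localization. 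This last interchange is where (W2) is essential, as it lets one replace any simplicial diagram in $\mathcal{D}$ by a pointwise equivalent bisimplicial object in $\mathcal{C}^{\Delta^{\op}\times\Delta^{\op}}$ (Reedy-cofibrant replacement), to which the strict coend formula applies.
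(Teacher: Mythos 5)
There is a genuine gap, and it sits exactly where you flag it. Your argument needs three things that the axioms of \ref{PARAXIOMSW} do not provide: that $\mathcal{D}:=\mathcal{C}^{\Delta^{\op}}[\mathcal{W}^{-1}]$ is cocomplete (or at least admits the tensors $h_n\otimes X$ and the co-Yoneda colimit decompositions needed to say that these objects generate $\mathcal{D}^{\Delta^{\op}}$ under colimits), that $\colim_{\Delta^{\op}}$ exists on all of $\mathcal{D}^{\Delta^{\op}}$ (part of the content of the proposition is precisely that the relevant realizations exist), and, most seriously, that the induced functor $L\delta^*$ preserves $\infty$-categorical colimits. Strict, $1$-categorical colimit preservation of $\delta^*$ does not descend to $\infty$-categorical colimit preservation after localization, and your proposed repair --- Reedy-cofibrant replacement and a weighted-colimit calculus in which the weight $[n]\mapsto\Delta_n$ is identified with the terminal weight --- presupposes a model structure (functorial resolutions, a homotopy-coherent coend calculus) that this section deliberately does not assume: only (W1) and (W2) are available, and the whole point of the setup in the surrounding section is to get by with just these. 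So the ``agree on generators and both preserve colimits'' scheme, which is fine in a presentable or model-categorical setting (it is essentially the Bousfield--Kan argument), is not justified under the stated hypotheses.

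The paper's proof avoids all of this by staying with adjunctions. From the coend formula $\delta^*X\cong\int^n\Delta_n\otimes X_{\bullet,n}$ (your starting point as well) one reads off that $\delta^*$ is left adjoint to $\delta_*$ with $(\delta_*X)_{\bullet,n}\cong\mathcal{HOM}(\Delta_n,X)$; by (W2) and Lemma~\ref{LEMMAW1} both functors preserve weak equivalences, so the adjunction descends to the $\infty$-categorical localizations. By Lemma~\ref{LEMMAW1} the natural map $\pr_2^*\to\delta_*$ is a pointwise weak equivalence, so after localization $\delta_*$ becomes the constant-diagram functor, and uniqueness of adjoints identifies the descended $\delta^*$ with $\colim_{\Delta^{\op}}$ --- in particular the colimits in question exist, with no cocompleteness, generation, or colimit-preservation hypotheses needed. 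If you want to keep your route, you must either add presentability/model-category assumptions or reorganize the argument around the right adjoint $\delta_*$ as above.
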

\begin{proof}
Let $\delta: [n] \mapsto \Delta_n$ be the canonical cosimplicial simplicial object. We have
\[ \delta^* X \cong \int^n \Delta_n \otimes X_{\bullet, n}.  \]
This is left adjoint to the functor
\[ (\delta_* X)_{\bullet, n} \cong \mathcal{HOM}(\Delta_n, X).  \]
By \ref{PARAXIOMSW} and Lemma~\ref{LEMMAW1} both functors preserve weak equivalences and thus descend to an adjunction between the localizations. 
However, by Lemma~\ref{LEMMAW1} the morphism $\pr_2^* \to \delta_*$ induced by $\Delta_n \to \Delta_0$ is a  (point-wise) weak equivalence.
The statement follows thus from the uniqueness of adjoints. 
\end{proof}

\begin{BEM}
In a situation  in which the Eilenberg-Zilber Theorem holds (e.g.\@ if $\mathcal{C}^{\Delta^{\op}}$ is symmetric w.r.t.\@ $\mathcal{W}$), thus also $\dec_*$ computes geometric realization in the localization.

In the Abelian case, the same proof gives also {\em directly} that $\dec_* =$ geometric realization:
We have, denoting $\Delta_\bullet^{\circ} := \Z[\Delta_\bullet]$ the cosimplicial object in $\Ch_{\ge 0}(\Ab)^{\Delta}$:
\[ \dec_* X \cong \int^n \Delta_n^{\circ} \tildeotimes X_{\bullet, n}.  \]
(because $\dec_* = \tot$ obviously commutes with colimits) and this has the right adjoint (cf.\@ also Section~\ref{SECTMOREADJOINTS})
\[ \dec^?: X \mapsto \mathcal{HOM}^{\tildeotimes}(\Delta^{\circ}, X). \]
Again both functors preserve quasi-isomorphisms and the second functor is isomorphic to $\pr_2^*$ in the localization. 
\end{BEM}

\begin{BEM}
The statement in Proposition~\ref{PROPHOCOLIM} is obviously also equivalent (using that $\delta$ is $\infty$-cofinal, see Lemma~\ref{LEMMAEXACT5}) to the commutativity of the diagram 
\[ \xymatrix{\mathcal{C}^{\Delta^{\op}} \ar[rrd]  \ar[d]  \\
 (\mathcal{C}^{\Delta^{\op}}[\mathcal{W}^{-1}])^{\Delta^{\op}} \ar[rr]_{\colim_{\Delta^{\op}}}& & \mathcal{C}^{\Delta^{\op}}[\mathcal{W}^{-1}] } \]
 where the vertical morphism is induced by the composition $\mathcal{C} \to \mathcal{C}^{\Delta^{\op}} \to \mathcal{C}^{\Delta^{\op}}[\mathcal{W}^{-1}]$.
 In other words, every object in $\mathcal{C}^{\Delta^{\op}}[\mathcal{W}^{-1}]$ is the ``colimit of itself considered as simplicial diagram''.
\end{BEM}

\subsection{Symmetry}\label{SECTSYMM}

This section contains a discussion of the symmetries of simplicial objects that are at the heart of the Eilenberg-Zilber theorems, 
Theorem~\ref{SATZEZ} in the general case, and Theorem~\ref{SATZCOHEZ} in the Abelian case.
These symmetries are more tractable and efficient in the Abelian case. 
Let $\iota: \Delta^{\op} \to \FinSet^{\op}$, where $\FinSet$ is the category of non-empty finite sets\footnote{for simplicity, the equivalent category with the same objects as $\Delta^{\op}$}, be the inclusion.

\begin{DEF}\label{DEFSYMMETRIC}
We say that $\mathcal{C}^{\Delta^{\mathrm{op}}}$ (with chosen class of weak equivalences $\mathcal{W}$) is (strongly) symmetric if
there is a functor:
\[ \Cfrak: \mathcal{C}^{\Delta^{\mathrm{op}}} \to \mathcal{C}^{\FinSet^{\mathrm{op}}} \]
with a natural transformation
\[ \id \to \iota^* \Cfrak  \]
which is object-wise a weak equivalence (resp.\@ an isomorphism). 
\end{DEF}

\begin{PAR}
Recall the axioms \ref{PARAXIOMSW}. 
(W2) is only needed in the non-strongly symmetric case.
\end{PAR}

\begin{LEMMA}\label{LEMMADELTADEC}
If $\mathcal{W}$ is a class of weak equivalences such that (\ref{PARAXIOMSW}) hold then
there is a weak equivalence $\delta^* \dec^* \to \id$. In particular, $\delta^* \dec^*$ preserves weak equivalences.
\end{LEMMA}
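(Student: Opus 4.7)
The plan is to obtain $\delta^* \dec^* \Rightarrow \id$ by applying $\delta^*$ to the natural transformation $\dec^* \Rightarrow \pr_1^*$ already provided by Lemma~\ref{LEMMADECCONTRACTIBLE}(1), and exploiting that $\pr_1 \circ \delta = \id$ so that $\delta^* \pr_1^* = \id$.

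First I would recall that the canonical natural transformation $\mu: \dec^* \Rightarrow \pr_1^*$ has, in bidegree $(n,m)$, the component $X_{[n+m+1]} \to X_{[n]}$ induced by the degeneracy $[n+m+1] \twoheadrightarrow [n]$ that collapses the last $m+1$ entries. Lemma~\ref{LEMMADECCONTRACTIBLE}(1) says that for each fixed second variable $m$ the map $(\dec^*X)(-,m) \to X$ is a homotopy equivalence of simplicial objects. By axiom (W1) together with Lemma~\ref{LEMMAW1}, every such homotopy equivalence is a weak equivalence, so $\mu_X$ is a point-wise weak equivalence in exactly the sense required by axiom (W2).

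Now (W2) applies directly: $\delta^*\mu_X$ is a weak equivalence, and since $\delta^*\pr_1^* = \id$, this gives a natural weak equivalence
\[
\eta_X := \delta^*(\mu_X) : \delta^* \dec^* X \longrightarrow X,
\]
proving the first claim. For the ``in particular'' statement, given a weak equivalence $f: X \to Y$ the naturality square
\[
\xymatrix{ \delta^* \dec^* X \ar[r]^{\delta^*\dec^*(f)} \ar[d]_{\eta_X} & \delta^* \dec^* Y \ar[d]^{\eta_Y} \\ X \ar[r]_f & Y }
\]
has three sides known to lie in $\mathcal{W}$, so the 2-out-of-3 property (implicit in any reasonable class of weak equivalences, and in particular present in the two applications of interest by Lemma~\ref{LEMMAW2}) forces $\delta^*\dec^*(f)$ into $\mathcal{W}$ as well.

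There is no real obstacle in this argument: once Lemma~\ref{LEMMADECCONTRACTIBLE}(1) has been established, axiom (W1) upgrades its homotopy equivalence to a weak equivalence, and axiom (W2) is tailored precisely to pull it back along the diagonal. The only mildly non-formal step is the appeal to 2-out-of-3 at the very end, which could be avoided if one prefers by observing that, equally, both vertical maps in the square are weak equivalences and running the argument in the localization $\mathcal{C}^{\Delta^{\op}}[\mathcal{W}^{-1}]$.
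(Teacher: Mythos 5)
Your proof is correct and takes essentially the same route as the paper: invoke Lemma~\ref{LEMMADECCONTRACTIBLE}, 1.\@ for the point-wise (in the second variable) homotopy equivalence $\dec^* \Rightarrow \pr_1^*$, upgrade it to a point-wise weak equivalence via (W1) and Lemma~\ref{LEMMAW1}, and conclude with (W2) and $\delta^*\pr_1^* = \id$. Your explicit 2-out-of-3 justification of the ``in particular'' clause is a small addition the paper leaves implicit, and your description of the components as induced by degeneracies (rather than the face inclusions $[n]\hookrightarrow [n]\ast[m]$) is a harmless slip that plays no role in the argument.
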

\begin{proof}
Lemma~\ref{LEMMADECCONTRACTIBLE} shows that $\dec^* \to \pr_1^*$ is point-wise in the second variable a homotopy equivalence. Thus by (W1) it is point-wise in the second variable a weak equivalence. 
Thus by  (W2) 
\[ \delta^* \dec^* \to \delta^* \pr_1^* = \id \]
is a weak equivalence. 
\end{proof}

\begin{LEMMA}\label{LEMMASYM}
 $\Set^{\Delta^{\op}}$ is symmetric and  $\mathcal{C}^{\Delta^{\op}}$ is strongly symmetric for any Abelian category $\mathcal{C}$. 
\end{LEMMA}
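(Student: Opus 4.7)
My plan splits into the two cases, handled by different techniques.

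For the Set case, I would exploit geometric realization. Define $\Cfrak(X)(S) := \Hom_{\mathrm{Top}}(|\Delta^S|, |X|)$, where $|\Delta^S|$ is the topological simplex spanned by the finite set $S$, depending functorially on $S \in \FinSet^{\op}$ via affine maps between simplices (which need not respect any ordering). The restriction $\iota^*\Cfrak(X)$ is then the usual singular simplicial set $\mathrm{Sing}|X|$, and the canonical map $X \to \mathrm{Sing}|X|$ is a weak equivalence by the classical simplicial approximation theorem. This provides $\Set^{\Delta^{\op}}$ with its symmetric structure in the sense of Definition~\ref{DEFSYMMETRIC}.

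For the Abelian case I would use the Eilenberg--Zilber decomposition of Lemma~\ref{LEMMAEZ2}: there is a canonical splitting $A_{[n]} \cong \bigoplus_{s:[n]\twoheadrightarrow[m]} A_{[m]}^{\mathrm{nd}}$ in any Abelian category $\mathcal{C}$, and since the notion of surjection between finite sets is intrinsic to the set structure, this suggests setting
\[ \Cfrak(A)(S) := \bigoplus_{\pi:S \twoheadrightarrow T} A_T^{\mathrm{nd}}, \]
the sum ranging over surjections of finite sets (with $T$ a representative of each isomorphism class). The functoriality under an arbitrary set map $f: T' \to T$ is defined as follows: given a summand indexed by $\pi: T \twoheadrightarrow S$, factor the composite $T' \to T \twoheadrightarrow S$ uniquely as a surjection $T' \twoheadrightarrow S'$ followed by an injection $S' \hookrightarrow S$, and use the action of the latter injection on non-degenerate parts induced by the face operators of $A$. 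Restricting to $\Delta^{\op}$ then recovers $A$ by Lemma~\ref{LEMMAEZ2}, yielding the required isomorphism $\iota^*\Cfrak(A) \cong A$ for strong symmetry.

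The principal obstacle is coherence in the Abelian case: one must verify that the assembled $\FinSet^{\op}$-action respects composition and is independent of the auxiliary choices (such as linear orderings on the intermediate sets used to make sense of $A_T^{\mathrm{nd}}$ and of the action of injections). This reduces to a combinatorial check which uses the unique factorization of set maps into surjections and injections, together with the simplicial identities applied at the level of non-degenerate parts. The Set case, by contrast, is essentially automatic from the topological definition; what makes the Abelian case stronger (an isomorphism rather than merely a weak equivalence) is precisely the availability of the canonical splitting of Lemma~\ref{LEMMAEZ2}, which fails without an additive structure.
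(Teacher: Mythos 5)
Your treatment of the $\Set$ case is essentially the paper's own: the paper also takes $\Cfrak$ to be the singular simplicial set of the geometric realization, using that the topological simplices carry an evident $\FinSet$-action (permutation of barycentric coordinates) and that the unit $X \to \iota^*\Cfrak X$ of the realization/singular adjunction is a weak equivalence. (Calling this the simplicial approximation theorem is a naming slip only.)

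The Abelian half, however, has a genuine gap. The recipe ``factor the composite as a surjection followed by an injection, and let the injection act on non-degenerate parts via the face operators'' does not define a $\FinSet^{\op}$-action. The crucial missing ingredient is the action of \emph{permutations}: a non-monotone injection is, after choosing orderings, an order-preserving injection composed with a permutation, and the simplicial structure of $A$ provides no action of permutations on $A^{\nd}_m$ whatsoever --- producing that action is exactly the content of strong symmetry. Moreover the action is forced: on non-degenerate parts permutations must act through the sign character, which comes from the identity $(i\ i{+}1) = -\id + d_is_i + d_{i+1}s_i$ on the chains of the standard simplices (Lemma~\ref{LEMMAFINSET}, cf.\ the proof of Key Lemma~\ref{LEMMAKEY}), not from any simplicial identity of $A$. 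Worse, even with the sign inserted, the correct action is \emph{not} diagonal with respect to the Eilenberg--Zilber/Dold--Kan decomposition: for the transposition of $[1]$ the genuine symmetric structure sends $(c_1,c_0) \in C_1 \oplus C_0$ to $(-c_1,\, c_0 + \dd c_1)$, i.e.\ there is an off-diagonal correction term involving the differential, of which your recipe produces no trace; without it, functoriality already fails on composites such as $(0\,1)\circ\delta_1 = \delta_0$. So the ``combinatorial coherence check'' you defer would in fact fail for the construction as stated. (There is also an indexing issue: summing over all set-theoretic surjections onto chosen representatives makes $\iota^*\Cfrak$ strictly larger than $A$, while summing over surjections modulo automorphisms of the target requires precisely the undefined permutation action to make the identification.)

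The paper sidesteps all of this by installing the $\FinSet$-action once and for all on the cosimplicial object $\Delta^{\circ}$ of normalized chains of simplices (Lemma~\ref{LEMMAFINSET}) and then transporting it through the Dold--Kan nerve/realization adjunction, $\Cfrak := N_{\Delta^{\circ,s}}R_{\Delta^{\circ}}$; the unit of that adjunction supplies the isomorphism $\id \cong \iota^*\Cfrak$, and the signs and correction terms above are generated automatically by precomposition with the action on $\Delta^{\circ}$. If you want to salvage your explicit description on $\bigoplus_{\pi} A^{\nd}$, you would have to build in exactly these data by hand.
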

\begin{proof}
The standard cosimplicial objects 
\[ \Delta_n^T = \{(x_0, \dots, x_n)  \in \R^{[n]}\ | \ \sum x_i = 1 \} \]
in $\mathcal{TOP}$ and 
\[ (\Delta^{\circ}_n)_i = \Z[\Hom_{\Delta}([i], [n])]  \]
in $(\Ab^{\fg})^{\Delta^{\op}}$  extend to functors $\Delta_n^{T,s}$ and $\Delta^{\circ,s}$ from $\FinSet$ (the first by obvious action of $S_n$ on the coordinates and the second using the action of Lemma~\ref{LEMMAFINSET}).
Defining $\Cfrak:=N_{\Delta_n^{T,s}} R_{\Delta_n^T}$, and $\Cfrak:= N_{\Delta^{\circ, s}}R_{\Delta^{\circ}}$, respectively, we have
$\iota^*\Cfrak = N_{\Delta_n^{T}}R_{\Delta_n^T}$, and $\iota^*\Cfrak = N_{\Delta_n^{\circ}}R_{\Delta_n^{\circ}}$, respectively, which come equipped with a
natural transformation (unit of the $N$, $R$ adjunction) $\id \to \iota^*\Cfrak$. It is a weak equivalence of simplicial sets, and an isomorphism in $\mathcal{C}^{\Delta^{\op}}$, respectively. 
\end{proof}

\begin{LEMMA}\label{LEMMACPROFUNCTOR}
Let $\mathcal{C}$ be Abelian. 
For the functor $\Cfrak$ of Lemma~\ref{LEMMASYM} we have
\[ \Cfrak =  L_{\mathcal{C}}(({}^t\!C)^{\op}) = R_{\mathcal{C}}(C^{\op}) \]
where $C$ and ${}^t C$ are the $\Ab$-enriched pro-functors $C: \FinSet \to \Delta$  (resp.\@ ${}^t\!C: \Delta \to \FinSet$) with $C$ left adjoint:
\begin{eqnarray*} 
  C: n, m &\mapsto& \Hom(\Delta^{\circ}_n, \Delta^{\circ}_m) \in \Hom(\Delta^{\op} \times \FinSet, \Ab)     \\
 {}^t C: n, m &\mapsto& \Hom(\Delta^{\circ}_n, \Delta^{\circ}_m) \in \Hom(\FinSet^{\op} \times \Delta, \Ab)     
  \end{eqnarray*}
and there is an isomorphism of pro-functors $C\, \iota \cong \id$ (resp.\@ ${}^t\!\iota\, {}^t\!C \cong \id$). 
\end{LEMMA}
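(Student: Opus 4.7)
The plan is to identify $\Cfrak$ with both $L_{\mathcal{C}}(({}^tC)^{\op})$ and $R_{\mathcal{C}}(C^{\op})$ by direct unraveling, and then verify $C\iota \cong \id$ via an $\Ab$-enriched co-Yoneda manipulation.

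First I would unravel the definition. By construction, $\Cfrak(X)(S) = \Hom_{\mathcal{C}^{\Delta^{\op}}}(\Delta^{\circ,s}_S, R_{\Delta^{\circ}}(X))$, and the realization $R_{\Delta^{\circ}}(X) = \int^n X(n) \otimes \Delta^{\circ}_n$ is naturally isomorphic to $X$ by the $\Ab$-enriched co-Yoneda lemma applied to the (dense) Yoneda-like embedding $\Delta^{\circ}: \Delta \to \Ab^{\Delta^{\op}}$. Hence $\Cfrak(X)(S) \cong \Hom_{\mathcal{C}^{\Delta^{\op}}}(\Delta^{\circ,s}_S, X)$, and the end formula for $\Hom$ in an $\Ab$-enriched functor category rewrites this as $\int_n \Hom_{\Ab}(C(n,S), X(n))$, which is exactly $R_{\mathcal{C}}(C^{\op})(X)(S)$ per the formula in \ref{PARPFLR}. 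The equality $L_{\mathcal{C}}(({}^tC)^{\op}) = R_{\mathcal{C}}(C^{\op})$ will then follow from an adjunction $C \dashv {}^tC$ in $\AbCat^{\PF}$ (hence $({}^tC)^{\op} \dashv C^{\op}$) via the principle recorded in \ref{PARCONVOLUTION}. This adjunction is essentially tautological once one notices that $C$ and ${}^tC$ have the same underlying bifunctor $(n,S) \mapsto \Hom_{\Ab^{\Delta^{\op}}}(\Delta^{\circ}_n, \Delta^{\circ,s}_S)$ viewed in opposite directions: unit and counit come from composition in $\Ab^{\Delta^{\op}}$, and the triangle identities reduce to associativity together with the identification $\Hom(\Delta^{\circ}_n, -) \cong (-)_n$ (which in particular shows that $\Hom(\Delta^{\circ}_n, -)$ commutes with all coends).

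For $C\iota \cong \id$, I would compute directly
\[
(C\iota)(n,m) \;=\; \int^{S \in \FinSet} \Hom_{\Ab^{\Delta^{\op}}}(\Delta^{\circ}_n, \Delta^{\circ,s}_S) \otimes_{\Z} \Z[\Hom_{\FinSet}(S, \iota m)].
\]
Since $\Hom(\Delta^{\circ}_n, -) = (-)_n$ commutes with colimits, this equals $\Hom(\Delta^{\circ}_n, \int^S \Z[\Hom_{\FinSet}(S, \iota m)] \otimes \Delta^{\circ,s}_S)$. The inner coend simplifies by $\Ab$-enriched co-Yoneda applied to the functor $\Delta^{\circ,s}: \FinSet \to \Ab^{\Delta^{\op}}$ to give $\Delta^{\circ,s}_{\iota m}$, which equals $\Delta^{\circ}_m$ because $\Delta^{\circ,s}$ extends $\Delta^{\circ}$. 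Hence $(C\iota)(n,m) \cong \Hom(\Delta^{\circ}_n, \Delta^{\circ}_m) = \Z[\Hom_{\Delta}(n,m)]$ by Yoneda, which is precisely the identity pro-functor on $\Delta$ in $\AbCat^{\PF}$. The dual statement ${}^t\iota\, {}^tC \cong \id$ follows by transposing all variables.

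The main obstacle is verifying the adjunction $C \dashv {}^tC$ rigorously: all other steps are formal co-Yoneda manipulations, but for the adjunction one must produce explicit unit and counit using the specific extension of $\Delta^{\circ}$ to $\Delta^{\circ,s}$ provided by Lemma~\ref{LEMMAFINSET}, and check that the triangle identities hold. Once that is in place, the rest of the lemma reduces to routine bookkeeping with coends and Yoneda.
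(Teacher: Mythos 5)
There is a genuine gap, and it sits exactly where you flagged "the main obstacle." Your plan makes everything hinge on the adjunction $C \dashv {}^t\!C$, which you propose to prove directly by declaring it "essentially tautological once one notices that $C$ and ${}^t\!C$ have the same underlying bifunctor viewed in opposite directions." That premise is false: by the convention spelled out right after the lemma, $C(n,S) = \Hom(\Delta^{\circ}_n, \Delta^{\circ,s}_S)$ (maps \emph{into} the $\FinSet$-extended object), while ${}^t\!C(S,m) = \Hom(\Delta^{\circ,s}_S, \Delta^{\circ}_m)$ (maps \emph{out of} it) --- these are Homs in opposite directions, not one bifunctor read backwards. So there is no tautological unit: producing $\id_{\FinSet} \Rightarrow {}^t\!C\, C$ means exhibiting, for every map of finite sets, a canonical element of a coend of such opposite-direction Homs, and composition alone does not do this. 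Since the adjunction is itself part of the statement ("with $C$ left adjoint") and your derivation of $L_{\mathcal{C}}(({}^t\!C)^{\op}) = R_{\mathcal{C}}(C^{\op})$ is routed through it, the proposal as written establishes neither.

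The way out (and the paper's route) is to reverse the logical order: identify $\Cfrak$ with \emph{both} sides directly and let the adjunction fall out. Concretely, $\Cfrak(X)_n = \mathcal{HOM}(\Delta^{\circ,s}_n, \int^m \Delta^{\circ}_m \otimes X_m)$, and since $\mathcal{HOM}(\Delta^{\circ}_n, -)$ commutes with colimits and with $\otimes$ (fact (ii) in the proof of Theorem~\ref{SATZDOLDKANII}), this equals $\int^m \Hom(\Delta^{\circ,s}_n, \Delta^{\circ}_m) \otimes X_m = L_{\mathcal{C}}(({}^t\!C)^{\op})(X)$; separately, the end/density formula gives $\Cfrak(X)_n = \int_m \mathcal{HOM}(\Hom(\Delta^{\circ}_m, \Delta^{\circ,s}_n), X_m) = R_{\mathcal{C}}(C^{\op})(X)$ (your first step is essentially this, though "$R_{\Delta^{\circ}}(X) \cong X$ by co-Yoneda" is too quick --- $R(X)$ is a complex and $X$ a simplicial object, and the identification is Dold--Kan density/full-faithfulness of $\Z[\Delta^{\circ}]$, not bare co-Yoneda). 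Taking $\mathcal{C} = \Ab$ and using that $L(\gamma) \dashv R(\gamma)$ for any pro-functor, one gets $L_{\Ab}(C^{\op}) \dashv L_{\Ab}(({}^t\!C)^{\op})$, and fullness of $L_{\Ab}$ then yields $C \dashv {}^t\!C$ with no unit/counit construction by hand. Your verification of $C\,\iota \cong \id$ via co-Yoneda, $\Delta^{\circ,s}_{\iota m} = \Delta^{\circ}_m$ and Corollary~\ref{KORDOLDKANFF} is fine, and is in fact more explicit than what the paper records.
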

In the definition of $C$ (resp.\@ ${}^t\!C$) $\Delta^{\circ}$ in the right slot (resp.\@ left slot) is considered as functor on $\FinSet$ as defined by Lemma~\ref{LEMMAFINSET}. 
\begin{proof}
Recall that $N_{\Delta_n^{\circ}}$ commutes with colimits and $\otimes$ (see proof of Theorem~\ref{SATZDOLDKANII}).
Hence we have
\[\Cfrak(X)_n = \Hom(\Delta_n^{\circ}, \int^m \Delta_m^{\circ} \otimes X_m) = \int^m \Hom(\Delta_n^{\circ},  \Delta_m^{\circ}) \otimes X_m = L_{\mathcal{C}}(({}^t\!C)^{\op})(X)   \]
(which is isomorphic to $X_n$ as object in $\mathcal{C}$, of course, but now gives a $\FinSet^{\op}$-valued functor).

We also have: 
\[ \Cfrak(X)_n = \int_m \mathcal{HOM}(\Hom(\Delta_m^{\circ},  \Delta_n^{\circ}), X_m) = R_{\mathcal{C}}(C^{\op})(X) \]
which is also isomorphic to $X_n$. Applying this to $\mathcal{C} = \Ab$, we see that $L_{\Ab}(C^{\op})$ is left adjoint to $L_{\Ab}(({}^t\!C)^{\op})$, thus
$C^{\op}$ right adjoint to $({}^t\!C)^{\op}$ and thus $C$ left adjoint to ${}^t\!C$.
\end{proof}
This shows also that there is a similar operator: 
\[ \Cfrak^n: \mathcal{C}^{\Delta^{\op} \times \cdots \times \Delta^{\op}} \to \mathcal{C}^{\FinSet^{\op} \times \cdots \times \FinSet^{\op}} \]
by applying the $\Ab$-enriched pro-functor $C \times \cdots \times C$.

For {\em general} symmetric $\mathcal{C}^{\Delta^{\op}}$, we have something weaker: 

\begin{LEMMA}\label{LEMMAC2} Let $\mathcal{C}^{\Delta^{\op}}$ be symmetric. 
There is an  operator\footnote{and similar for higher powers, which will not be needed. }
\[ \Cfrak^2: \mathcal{C}^{\Delta^{\op} \times \Delta^{\op}} \to \mathcal{C}^{\FinSet^{\op} \times \FinSet^{\op}}  \]
with a natural transformation
\[ \id \to (\iota,\iota)^* \Cfrak^2 \]
which is a composition of natural transformations which are point-wise in one or the other direction weak equivalences. 
\end{LEMMA}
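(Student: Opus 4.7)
The plan is to obtain $\Cfrak^2$ by iterating the operator $\Cfrak$, applying it once in each simplicial variable. First I would observe that the constructions of $\Cfrak$ given in the proof of Lemma~\ref{LEMMASYM} are ``enriched'' in the target: $\Cfrak = N_{\Delta^{T,s}} R_{\Delta^T}$ in the simplicial case and $\Cfrak = L_{\mathcal{C}}(({}^t\!C)^{\op})$ in the Abelian case (Lemma~\ref{LEMMACPROFUNCTOR}) are both defined by (co)ends against a fixed cosimplicial/combinatorial datum and therefore make sense verbatim with $\mathcal{C}$ replaced by any suitably (co)complete category $\mathcal{D}$, producing $\Cfrak_{\mathcal{D}}: \mathcal{D}^{\Delta^{\op}} \to \mathcal{D}^{\FinSet^{\op}}$ together with a natural transformation $\eta_{\mathcal{D}}: \id \to \iota^* \Cfrak_{\mathcal{D}}$, whose values at each object $Y \in \mathcal{D}^{\Delta^{\op}}$ agree (under the evident evaluation-at-a-parameter functors, when $\mathcal{D}$ is itself a diagram category) with the original $\eta$ applied to the corresponding object of $\mathcal{C}^{\Delta^{\op}}$.

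Applying this naturality with $\mathcal{D} := \mathcal{C}^{\Delta^{\op}}$ and with $\mathcal{D} := \mathcal{C}^{\FinSet^{\op}}$, and using the currying identifications $\mathcal{C}^{\Delta^{\op} \times J} \cong (\mathcal{C}^{J})^{\Delta^{\op}}$, yields functors
\[
\Cfrak_1 : \mathcal{C}^{\Delta^{\op} \times \Delta^{\op}} \to \mathcal{C}^{\FinSet^{\op} \times \Delta^{\op}}, \qquad \Cfrak_2 : \mathcal{C}^{\FinSet^{\op} \times \Delta^{\op}} \to \mathcal{C}^{\FinSet^{\op} \times \FinSet^{\op}},
\]
realising $\Cfrak$ in the first and second variable, respectively. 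Set $\Cfrak^2 := \Cfrak_2 \circ \Cfrak_1$.

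For the natural transformation, I would compose the two units: $\eta_1 := \eta_{\mathcal{C}^{\Delta^{\op}}}$ and $\eta_2 := \eta_{\mathcal{C}^{\FinSet^{\op}}}$ give
\[
X \;\xrightarrow{\;\eta_1\;}\; (\iota,\id)^* \Cfrak_1 X \;\xrightarrow{\;(\iota,\id)^*\eta_2\;}\; (\iota,\iota)^* \Cfrak_2 \Cfrak_1 X .
\]
Evaluated at any fixed $m$ in the second variable, the first arrow is the defining weak equivalence of symmetry of $\mathcal{C}^{\Delta^{\op}}$ applied to $X(-,m)$, hence it is point-wise in the second variable a weak equivalence. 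Evaluated at any fixed $n \in \FinSet^{\op}$ in the first variable (and then restricted along $\iota$), the second arrow is the defining weak equivalence applied to $\Cfrak_1 X(n,-) \in \mathcal{C}^{\Delta^{\op}}$, hence point-wise in the first variable a weak equivalence. This exhibits $\id \to (\iota,\iota)^*\Cfrak^2$ as a composition of the required form.

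The only delicate point I foresee is verifying that $\eta_{\mathcal{D}}$ restricts correctly under the evaluation functors, i.e.\@ that applying $\Cfrak_{\mathcal{C}^{\Delta^{\op}}}$ to $X$ regarded as a $\mathcal{C}^{\Delta^{\op}}$-valued simplicial object and then evaluating at $m$ agrees with applying $\Cfrak$ to $X(-,m)$; this is where the pointwise computation of Kan extensions in diagram categories (respectively the naturality of the pro-functor $({}^t\!C)^{\op}$ in the target) is used, and it is routine in both the simplicial and the Abelian model. Everything else in the argument is formal manipulation of natural transformations.
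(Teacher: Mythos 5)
Your overall strategy---apply the symmetry operator in one simplicial variable at a time and compose the two units, observing that the first is point-wise in the second variable a weak equivalence and the second point-wise in the first---is exactly the paper's proof; the only cosmetic difference is the order (the paper takes $\Cfrak^2 = (\Cfrak,\id)(\id,\Cfrak)$, i.e.\ second variable first, you do the opposite; as the paper remarks the two factors do not commute in general, so the resulting $\Cfrak^2$ differs, but either choice proves the lemma).

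The one genuine weak point is how you manufacture $\Cfrak_1$ and $\Cfrak_2$. You curry as $\mathcal{C}^{\Delta^{\op}\times J}\cong(\mathcal{C}^J)^{\Delta^{\op}}$ and then need a symmetry operator $\Cfrak_{\mathcal{C}^J}$ for $\mathcal{C}^J$-valued simplicial objects, which you obtain by re-running the explicit (co)end constructions of Lemma~\ref{LEMMASYM} and Lemma~\ref{LEMMACPROFUNCTOR} over the new base. But the hypothesis of the lemma is only that $\mathcal{C}^{\Delta^{\op}}$ is symmetric in the sense of Definition~\ref{DEFSYMMETRIC}, i.e.\ that \emph{some} functor $\Cfrak$ with a unit $\id\to\iota^*\Cfrak$ exists; it need not be given by those formulas, and $\mathcal{C}$ need not have the (co)limits your re-instantiation requires, so as written your argument proves the lemma only for the canonical models rather than under its stated hypothesis. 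The repair is to curry the other way: $\mathcal{C}^{\Delta^{\op}\times J}\cong(\mathcal{C}^{\Delta^{\op}})^J$, and define the one-variable operators as post-composition with the given $\Cfrak$, with the unit applied object-wise---this is precisely the paper's ``point-wise application of $\Cfrak$''. It needs no formula and no (co)completeness, and your flagged delicate point (that $\Cfrak_{\mathcal{D}}$ evaluated at a parameter agrees with $\Cfrak$ of the evaluation) disappears, because there is nothing to compare. With that replacement your composite $X\to(\iota,\id)^*\Cfrak_1X\to(\iota,\iota)^*\Cfrak_2\Cfrak_1X$ is literally the paper's argument with the roles of the two variables exchanged; note that in your ordering one does not even need the paper's auxiliary observation that $(\id,\iota)^*$ commutes with $(\Cfrak,\id)$, since you restrict along $(\iota,\id)$ only after applying the second unit.
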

\begin{proof}
For $X \in \{\Delta^{\op}, \FinSet^{\op}\}$ denote by
\[ (\Cfrak, \id): \mathcal{C}^{\Delta^{\op} \times X} \to \mathcal{C}^{\FinSet^{\op} \times X}   \]
the point-wise application of $\Cfrak$ and similarly for $(\id, \Cfrak)$.
We then set $\Cfrak^2 = (\Cfrak, \id)(\id, \Cfrak)$, noting that the factors  do not commute in general. 
However, $(\id, \iota)^*$ commutes with $(\Cfrak, \id)$. And thus
\[ (\iota,\iota)^* \Cfrak^2 = (\iota, \id)^* (\Cfrak, \id) (\id, \iota)^* (\id, \Cfrak)    \]
and there is a morphism
\[ \id \to (\iota,\iota)^* \Cfrak^2 \]
defined as the compositon
\[ \id \to  (\id, \iota)^* (\id, \Cfrak)  \to   (\iota, \id)^* (\Cfrak, \id) (\id, \iota)^* (\id, \Cfrak) = (\iota,\iota)^* \Cfrak^2    \]
The two morphisms are point-wise in the first (resp.\@ second variable a weak equivalence). 
\end{proof}

\begin{PROP}\label{PROPIOTA}
For any cocomplete category $\mathcal{C}$, the composition
\[ \xymatrix{ \iota^* \iota_! \iota^* \ar[r]^-{\iota^* c} \ar[r] & \iota^* \ar[r]^-{u \iota^*} & \iota^* \iota_! \iota^*   } \]
of functors $\mathcal{C}^{\FinSet^{\op}} \to \mathcal{C}^{\Delta^{\op}}$
is homotopic to the identity. 
\end{PROP}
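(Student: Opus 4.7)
The plan is to construct an explicit natural homotopy $h \colon \Delta_1 \otimes G \Rightarrow G$ of functors $\mathcal{C}^{\FinSet^{\op}} \to \mathcal{C}^{\Delta^{\op}}$, where $G := \iota^* \iota_! \iota^*$, interpolating between $\id_G$ and the idempotent $\eta := u\iota^* \circ \iota^* c$. The key idea is that $\eta$ differs from the identity only in a ``labelling'' datum parametrised by $\FinSet^{\op}$-functoriality, which can be continuously contracted using the extra functoriality of $Y \in \mathcal{C}^{\FinSet^{\op}}$ along non-order-preserving maps --- in the same spirit as the symmetries exploited in Section~\ref{SECTSYMM}.

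By Kan's formula, $G(Y)([n]) = \int^{[k] \in \Delta} \Hom_{\FinSet}([n], [k]) \cdot Y([k])$; for $\mathcal{C} = \Set$ (and in general via the universal property of the coend), an $n$-simplex is a class $[f, y]$ of pairs $f \colon [n] \to [k]$ in $\FinSet$ and $y \in Y([k])$, subject to $[gf, y] = [f, Y(g)y]$ whenever $g$ lies in $\Delta$, and one computes $\eta_Y [f, y] = [\id_{[n]}, Y(f) y]$. For each $\phi \colon [n] \to [1]$ and each representative $(f, y)$, I would put $[k'] := [k] \sqcup [n]$, ordered with $[k]$ before $[n]$, and define
\[ f'(j) := \begin{cases} f(j) \in [k] \subset [k'] & \text{if } \phi(j) = 0, \\ j \in [n] \subset [k'] & \text{if } \phi(j) = 1. \end{cases} \]
Let $p \colon [k'] \to [k]$ be the (generally not order-preserving) $\FinSet$-morphism with $p|_{[k]} = \id_{[k]}$ and $p|_{[n]} = f$, and set $h_{n,Y}(\phi, [f, y]) := [f', Y(p) y]$. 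The map $p$ is only admissible thanks to the $\FinSet^{\op}$-structure on $Y$; this is the entire content of the proof.

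The rest is bookkeeping in four routine checks, each of which reduces to writing down an order-preserving map $[k_1] \sqcup [n_1] \to [k_2] \sqcup [n_2]$ of the form $g \sqcup \alpha$ with $g, \alpha \in \Delta$ and applying the coend relation: (a) well-definedness under $(gf, y) \sim (f, Y(g)y)$ for $g$ in $\Delta$, via $g \sqcup \id_{[n]}$; (b) simplicial naturality in $[n]$ for $\alpha \colon [m] \to [n]$, via $\id_{[k]} \sqcup \alpha$; (c) naturality in $Y$, immediate since $Y$ enters only through the single morphism $p$; (d) the endpoint values: for $\phi \equiv 0$, $f'$ equals $f$ followed by the order-preserving inclusion $j_k \colon [k] \hookrightarrow [k']$, and $p \circ j_k = \id_{[k]}$, so $h(0, [f, y]) = [f, y]$; for $\phi \equiv 1$, $f'$ equals the order-preserving inclusion $j_n \colon [n] \hookrightarrow [k']$, and $p \circ j_n = f$, so $h(1, [f, y]) = [\id_{[n]}, Y(f) y] = \eta_Y[f, y]$. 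The main obstacle is merely to choose the ordering on $[k'] = [k] \sqcup [n]$ asymmetrically so that all the comparison maps $g \sqcup \alpha$ needed for (a) and (b) genuinely land in $\Delta$.
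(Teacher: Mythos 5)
Your construction is correct: the endpoint computations, the dinaturality check via $g \sqcup \id_{[n]}$, the simplicial naturality via $\id_{[k]} \sqcup \alpha$, and naturality in $Y$ all go through, and since the formula uses only $\FinSet$-morphisms applied to $Y$ together with re-indexing of coend components, it defines a homotopy $\Delta_1 \otimes \iota^*\iota_!\iota^* \Rightarrow \iota^*\iota_!\iota^*$ for any cocomplete $\mathcal{C}$, which is exactly what the downstream applications (via (W1)) need. But this is a genuinely different route from the paper's: there, no homotopy is written down at all. The paper observes that both the identity and $u\iota^*\circ\iota^*c$ are induced by morphisms of $\Set$-valued pro-functors $\iota\,{}^{t}\iota\,\iota \Rightarrow \iota\,{}^{t}\iota\,\iota$, that $\iota\,{}^{t}\iota\,\iota$ admits a lax extension $\iota_{\dec}\,{}^{t}\iota_{\dec}\,\iota_{\dec}$ and an oplax extension $\iota_{\delta}\,{}^{t}\iota_{\delta}\,\iota_{\dec}$ compatible with the concatenation product $\ast$ and pseudo on $p_{\emptyset}$ (this last point being the $1$-cofinality of $\iota$, Lemma~\ref{LEMMAIOTAFINAL}), and then invokes the acyclicity result Proposition~\ref{PROPCREATIONHOMOTOPY}: the mapping space $\uHom(\iota\,{}^{t}\iota\,\iota,\iota\,{}^{t}\iota\,\iota)$ is nonempty, hence contractible, so any two transformations arising from pro-functor morphisms are homotopic. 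That abstract route buys more than the statement: contractibility of the whole space of such transformations (higher coherence), a homotopy already at the pro-functor level (hence uniform in $\mathcal{C}$), and reuse of the very mechanism that later produces the Shih operator. Your route buys an explicit, elementary, self-contained formula needing only Kan's pointwise formula for $\iota_!$; it is also worth noticing that your block sum $[k]\sqcup[n]$ with the maps $p$ and $f'$ is precisely the concatenation-product trick underlying Proposition~\ref{PROPCREATIONHOMOTOPY}, made concrete, so the two proofs are ultimately powered by the same structural fact about $(\Delta,\ast)$ and $(\FinSet,\coprod)$.
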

\begin{proof}[Proof.]
Recall diagram
\[ \xymatrix{   
& (\FinSet, \coprod)^{\op} & \\
(\Delta, \coprod)^{\op} \ar[ru]^{\iota_{\delta}} &  & (\Delta, \ast)^{\op} \ar[lu]_{\iota_{\dec}}  \ar[ll]^{\mathrm{forget}}
}\]

All cooperads (being exponential) translate to functors 
\[ \Delta_{\emptyset}(\cong \Delta^{\op}_{\act}) \to \Cat^{\PF} \]
which are compatible with products, $p_{\emptyset}$ is mapped to  ${}^t \pi$, 
and the oplax transformations $\iota_{\delta}$ and $\iota_{\dec}$ (also compatible with products) have lax mates (consisting point-wise of ${}^{t}\!\iota$) denoted
${}^{t}\!\iota_{\delta}$ and ${}^{t}\!\iota_{\dec}$. Since $\iota_{\delta}$ is Cartesian (\ref{PARIOTACOCART}), ${}^{t}\!\iota_{\delta}$ is natural, and since  $\iota_{\dec}$ is coCartesian (\ref{PARIOTACOCART}), $\iota_{\dec}$ is natural. Therefore, the following are lax and oplax extensions, respectively, of of $\iota\, {}^t\! \iota\, \iota$:
\[ \iota_{\dec} \,{}^t\! \iota_{\dec} \, \iota_{\dec} \qquad  \iota_{\delta}  \,{}^t\!  \iota_{\delta} \, \iota_{\dec}. \]

Thus Proposition~\ref{PROPCREATIONHOMOTOPY} shows that $\uHom(\iota \,{}^t\! \iota\, \iota, \iota \,{}^t\! \iota\, \iota)$ is connected and thus
any two natural transformations $\iota^* \iota_! \iota^* \Rightarrow \iota^* \iota_! \iota^*$ given by morphisms of pro-functors are homotopic. 
Notice that the above being ``pseudo on $p_{\emptyset}$'' translates to the 1-cofinality of $\iota$ (Lemma~\ref{LEMMAIOTAFINAL}).
\end{proof}

\begin{KOR}
If $\mathcal{W}$ is a class of weak equivalences such that \ref{PARAXIOMSW}, 1.\@ holds then
the unit $\id \to \iota^* \iota_!$ is a weak equivalence on objects in the image of $\iota^*$. 
Furthermore
\[ \id  \to (\iota, \iota)^* (\iota, \iota)_!  \]
is a composition of morphisms which are point-wise in one or the other direction weak equivalences, on objects in the image of $(\iota, \iota)^*$.
\end{KOR}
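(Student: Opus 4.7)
For the first statement, the strategy is to assemble a simplicial homotopy inverse of $\eta_{\iota^* Y}$ from the data of Proposition~\ref{PROPIOTA}. Write $u \colon \id \Rightarrow \iota^*\iota_!$ for the unit and $c \colon \iota_!\iota^* \Rightarrow \id$ for the counit of the adjunction $\iota_! \dashv \iota^*$. The triangle identity $\iota^* c \cdot u\iota^* = \id_{\iota^*}$ exhibits, at every $Y$, the map $(\iota^* c)_Y$ as a one-sided inverse to $\eta_{\iota^* Y} = (u\iota^*)_Y$, while Proposition~\ref{PROPIOTA} gives a simplicial homotopy $u\iota^* \cdot \iota^* c \simeq \id_{\iota^*\iota_!\iota^*}$, which evaluated at $Y$ provides the other one up to homotopy. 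Under axiom (W1) from~\ref{PARAXIOMSW}, Lemma~\ref{LEMMAW1} turns simplicial homotopy equivalences into weak equivalences, concluding the first part.

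For the second statement, the plan is to factor the diagonal inclusion $(\iota,\iota) = (\id,\iota)\circ(\iota,\id)$ and decompose the unit $\eta_{(\iota,\iota)}$ of $(\iota,\iota)_! \dashv (\iota,\iota)^*$ accordingly. Because left Kan extensions in disjoint variables commute (Beck--Chevalley holds trivially for the corresponding Cartesian square of projections), the standard formula for the unit of a composite adjunction presents $\eta_{(\iota,\iota)}$ as the two-step composition
\[
\id \xrightarrow{\eta^A} (\id,\iota)^*(\id,\iota)_! \xrightarrow{\eta^B} (\iota,\id)^*(\iota,\id)_!\,(\id,\iota)^*(\id,\iota)_! \cong (\iota,\iota)^*(\iota,\iota)_!,
\]
where $\eta^B := (\id,\iota)^*\,\eta_{(\iota,\id)}\,(\id,\iota)_!$, and each factor only extends in one variable.

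Evaluated at $X = (\iota,\iota)^*Y$, each step reduces, in one variable at a time, to an application of the first part. For $\eta^A_X$, fixing $m\in\Delta^{\op}$, the map in the remaining variable is $\eta^\iota_{X(m,-)}$; since $X(m,-) = Y(\iota m, \iota\, -) = \iota^*(Y(\iota m, -))$ lies in the image of $\iota^*$, it is a weak equivalence in $\mathcal{C}^{\Delta^{\op}}$ by the first part. For $\eta^B_X$, one has to verify that $Z := (\id,\iota)^*(\id,\iota)_! X$ still lies in the image of $\iota^*$ in the \emph{first} variable: defining $\widetilde{Z}(k,n) := \iota^*\iota_!(Y(k,\iota\, -))(n)$ for $k\in\FinSet^{\op}$, one checks $Z(m,n) = \widetilde{Z}(\iota m, n)$, so $Z(-,n) = \iota^*\widetilde{Z}(-,n)$ and the first part applies in the remaining variable. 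The two maps $\eta^A_X$ and $\eta^B_X$ are therefore pointwise weak equivalences in the second and first variable, respectively, exactly as in the model statement Lemma~\ref{LEMMAC2}.

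The main obstacle I expect is the last bookkeeping check: that applying $\iota^*\iota_!$ in the second variable does not spoil the ``first variable lifts to $\FinSet^{\op}$'' property of $X$. This is really the only content beyond unpacking definitions; it is also precisely the point at which a naive extension to higher powers $(\iota,\dots,\iota)^*(\iota,\dots,\iota)_!$ would need the commutation iterated and a correspondingly longer composition.
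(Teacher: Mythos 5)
Your proof is correct and follows essentially the same route as the paper: the first statement is exactly the intended consequence of Proposition~\ref{PROPIOTA} (triangle identity plus the homotopy, converted to a weak equivalence via (W1)/Lemma~\ref{LEMMAW1}), and your one-variable-at-a-time factorization of the unit, including the check that applying $\iota^*\iota_!$ in one variable preserves ``being in the image of $\iota^*$'' in the other, is precisely the reasoning of Lemma~\ref{LEMMAC2} that the paper invokes for the second statement.
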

\begin{proof}
The first statement follows directly from Proposition~\ref{PROPIOTA}, while the second follows from a similar reasoning as in the proof of Lemma~\ref{LEMMAC2}.
\end{proof}

\subsection{The non-Abelian Eilenberg-Zilber theorem}

\begin{PAR}\label{PARCONJ}
There is a diagram
\[ \xymatrix{ \Delta \ar[d]_{\iota} \ar@<3pt>[r]^-{\delta} & \ar@<3pt>[l]^-{\dec} \ar[d]^{(\iota, \iota)} \Delta \times \Delta  \\
\FinSet  \ar@<3pt>[r]^-{\delta_s} & \ar@<3pt>[l]^-{\dec_s}  \FinSet \times \FinSet }
 \]
 in which the corresponding squares commute. The functors $\dec_s = \coprod$  and $\delta_s$ are adjoint on $\FinSet$ (as the coproduct is always adjoint to the diagonal by definition) with unit 
 \[ u: \id \Rightarrow \delta_s \dec_s    \]
 \[  ([n], [m]) \to ([n] \ast [m],[n] \ast [m])  \]
 given by the obvious face maps and counit
 \[ c:   \dec_s \delta_s \Rightarrow  \id \]
 \[ [n] \ast [n]  \to [n]    \]
 given by (the opposite of) the diagonal.
 Notice that the morphism $u$ is point-wise in $\Delta$ whereas $c$ is not, reflecting the fact that $\dec$ is not a coproduct in $\Delta$.
 
 For $\FinSet^{\op}$-diagrams we get, in particular, an isomorphism $\delta_s^* \cong \dec_{s,*}$ (or equivalently $\delta_{s,!} \cong \dec_s^*$). 
 Assume $\mathcal{C}^{\Delta^{\op}}$ is symmetric w.r.t.\@ a class of weak equivalences $\mathcal{W}$ satisfying \ref{PARAXIOMSW}. 
 While $c$ is not a morphism in $\Delta$ we may may conjugate the morphism $c^{\op}: \id \to \delta^* \dec^*$ to a morphism $\mlq c^{\op} \mrq$
 in $\Fun(\mathcal{C}^{\Delta^{\op}}, \mathcal{C}^{\Delta^{\op}})[\mathcal{W}^{-1}]$ by means of the commutative diagram
\[ \xymatrix{ \id \ar[d]_{\in \mathcal{W}} \ar@{-->}[rr]^-{\mlq c^{\op} \mrq } & & \delta^*\dec^*   \ar[d]^{\in \mathcal{W}}  \\
 \iota^* \Cfrak  \ar[rr]_-{\iota^* c^{\op} \Cfrak } & &  \iota^* \delta_s^*\dec_s^* \Cfrak = \delta^*\dec^* \iota^* \Cfrak    }\] 
(using that $\delta^* \dec^*$ preserves weak equivalences by Lemma~\ref{LEMMADELTADEC}\footnote{of course not needed  in the strongly symmetric case}). If  $\mathcal{C}^{\Delta^{\op}}$ is strongly symmetric, e.g.\@ if $\mathcal{C}$ is Abelian, 
then $\mlq c^{\op} \mrq$ is an honest morphism (without need to invert weak equivalences). 
\end{PAR}
\begin{DEF}[Alexander-Whitney and Eilenberg-Zilber morphism]  \label{DEFAWEZ}
\begin{enumerate}
\item The morphism
 \[ \Awfrak: \delta^* \to \dec_* \]
is defined as the composition
 \[ \xymatrix{ \delta^* \ar[r] & \dec_*  \dec^*  \delta^*  \ar[rr]^-{\dec_* u^{\op}}  & & \dec_* }. \]
 \item Assume $\mathcal{C}^{\op}$ is symmetric w.r.t.\@ a class of weak equivalences $\mathcal{W}$ satisfying \ref{PARAXIOMSW}. 
 The morphism
  \[ \Ezfrak: \dec_* \to  \delta^* \]
  in $\Fun(\mathcal{C}^{\Delta^{\op} \times \Delta^{\op}}, \mathcal{C}^{\Delta^{\op}})[\mathcal{W}^{-1}]$\footnote{By abuse of notation, here $\mathcal{W}$ denotes the class of morphisms that are object-wise weak equivalences.} is defined as the composition (cf.\@ \ref{PARCONJ}):
\[  \xymatrix{ \dec_*  \ar@{->}[rr]^-{\mlq c^{\op} \mrq \dec_*} & &  \delta^*\dec^*  \dec_* \ar[r] &   \delta^*. } \]
If $\mathcal{C}^{\Delta^{\op}}$ is strongly symmetric, e.g.\@ if $\mathcal{C}$ is Abelian, then we understand $\Ezfrak \in \Fun(\mathcal{C}^{\Delta^{\op} \times \Delta^{\op}}, \mathcal{C}^{\Delta^{\op}})$ 
as an honest natural transformation (no need to  invert anything).
 \end{enumerate}
\end{DEF}
If $\mathcal{C}^{\Delta^{\op}}$ is strongly symmetric then $\Ezfrak$ is still not inverse to $\Awfrak$.
If $\mathcal{C}$ is Abelian\footnote{and provided the symmetry operator $\Cfrak$ is the canonical one given by Lemma~\ref{LEMMASYM}}, then $\Awfrak$ is the familiar Alexander-Whitney morphism, and $\Ezfrak$ is  the familiar Eilenberg-Zilber morphism usually defined in terms of shuffles, see Proposition~\ref{PROPAWEZAB}.

\begin{PAR}
In the strongly symmetric case, $\Awfrak$ and $\Ezfrak$ are in morphisms in``standard form'' (\ref{PARSTANDARD}), image of the morphisms of correspondences 
\[ 
\Awfrak: \vcenter{\xymatrix{ 
& & \ar[dddd]_{\dec^*} \\
&&&&\\
\ar@{=}[ruru]^{} \ar[rdrd]_{\dec^*}  \ar@{}[rr]|{=} && \ar@{}[rr]|{\Rightarrow^{u^{\op}}} &  & \ar[lulu]_{\delta^*} \ar@{=}[ldld]^{} \\
&&&&\\
& & 
}} \qquad
\Ezfrak: \vcenter{\xymatrix{ 
& & \ar[dddd]^{\delta^*} \\
&&&&\\
\ar[ruru]^{\dec^*} \ar@{=}[rdrd]  \ar@{}[rr]|{\Rightarrow^{\mlq c^{\op} \mrq}} && \ar@{}[rr]|= &  & \ar@{=}[lulu] \ar[ldld]^{\delta^*} \\
&&&&\\
& & 
}}\]
a fact that will be convenient to prove all kinds of compatibilities using compositions in the 2-categories $\Cor(-, -)$ (\ref{PARSTANDARD}). 
\end{PAR}

The goal of this section is to prove
\begin{SATZ}[Eilenberg-Zilber, general version]\label{SATZEZ}
Let $\mathcal{C}^{\Delta^{\op}}$ be symmetric with weak equivalences $\mathcal{W}$ satisfying \ref{PARAXIOMSW}, for example $\mathcal{C}$ Abelian (with $\mathcal{W}$ the quasi-isomorphisms), or $\mathcal{C} = \Set$ (with $\mathcal{W}$ the usual weak equivalences)\footnote{If $\mathcal{C}^{\Delta^{\op}}$ is strongly symmetric (W2) is not needed.}.  
Then $\Awfrak$ and $\Ezfrak$ are mutually inverse isomorphisms
\[ \dec_* \cong \delta^* \]
in $\Fun(\mathcal{C}^{\Delta^{\op} \times \Delta^{\op}}, \mathcal{C}^{\Delta^{\op}})[\mathcal{W}^{-1}]$. 
\end{SATZ}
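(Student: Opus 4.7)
The plan is to reduce both compositions $\Ezfrak \circ \Awfrak$ and $\Awfrak \circ \Ezfrak$ to triangle identities for a (would-be) adjunction $\dec^* \dashv \delta^*$, which hold strictly on $\FinSet^{\op}$-diagrams thanks to the coproduct adjunction $\dec_s \dashv \delta_s$ in $\FinSet$, and then to transport these identities back to $\Delta^{\op}$-diagrams via the symmetry operator $\Cfrak$.

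The first step is a formula manipulation. Since by construction $\Awfrak$ is the mate of $u^{\op}: \dec^* \delta^* \Rightarrow \id$ under $\dec^* \dashv \dec_*$, the triangle identity gives $\epsilon \circ \dec^* \Awfrak = u^{\op}$, where $\epsilon: \dec^* \dec_* \Rightarrow \id$ is the counit. Combining this with the naturality of $\mlq c^{\op} \mrq: \id \Rightarrow \delta^* \dec^*$ applied to $\Awfrak$ yields, after a short chase,
\begin{equation*}
\Ezfrak \circ \Awfrak \;=\; (\delta^* u^{\op}) \circ (\mlq c^{\op} \mrq\, \delta^*)\colon\, \delta^* \Rightarrow \delta^*\dec^*\delta^* \Rightarrow \delta^*.
\end{equation*}
A parallel manipulation, using both triangle identities of $\dec^* \dashv \dec_*$ together with the naturality of $\mlq c^{\op} \mrq$, $\eta$, and $u^{\op}$, reduces $\Awfrak \circ \Ezfrak$ to $\dec_*\epsilon \circ \dec_* \psi \dec_* \circ \eta \dec_*$, where $\psi := (u^{\op}\dec^*) \circ (\dec^* \mlq c^{\op} \mrq)\colon \dec^* \Rightarrow \dec^*$. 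Both $(\delta^*u^{\op}) \circ (\mlq c^{\op} \mrq\,\delta^*)$ and $\psi$ are exactly the two triangle-identity compositions for a hypothetical adjunction $\dec^* \dashv \delta^*$ with unit $\mlq c^{\op} \mrq$ and counit $u^{\op}$; moreover $\psi = \id_{\dec^*}$ would reduce $\Awfrak \circ \Ezfrak$ to $\dec_*\epsilon \circ \eta\dec_* = \id_{\dec_*}$ by the triangle identity of $\dec^* \dashv \dec_*$.

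The second and third steps form the model case and its transport. On $\FinSet^{\op}$-diagrams the coproduct adjunction $\dec_s \dashv \delta_s$ in $\FinSet$ yields a genuine adjunction $\dec^* \dashv \delta^*$ whose unit and counit are precisely $c^{\op}$ and $u^{\op}$, so the triangle identities $(\delta_s^* u^{\op}) \circ (c^{\op}\,\delta_s^*) = \id_{\delta_s^*}$ and $(u^{\op}\,\dec_s^*) \circ (\dec_s^*\,c^{\op}) = \id_{\dec_s^*}$ hold as strict equalities. To transport back, use the strict equalities $\delta^*\iota^* = \iota^*\delta_s^*$ and $\dec^*\iota^* = (\iota,\iota)^*\dec_s^*$ (from $\iota\delta=\delta_s\iota$ and $\iota\dec=\dec_s(\iota,\iota)$) together with the defining homotopy-commutative square of $\mlq c^{\op}\mrq$ in \ref{PARCONJ}, which identifies $\mlq c^{\op}\mrq$ in the localization with the conjugate of $\iota^*c^{\op}\Cfrak$ across the weak equivalence $\id \simeq \iota^*\Cfrak$ of Lemma~\ref{LEMMASYM}. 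Pasting this identification into the two formulas of the first step carries both compositions, in the localization, to the $\iota^*$-pullback (resp.\@ $(\iota,\iota)^*$-pullback), whiskered with $\Cfrak$, of the $\FinSet^{\op}$ triangle identities, hence to the identity. Lemma~\ref{LEMMADELTADEC} (whose proof uses (W2)) guarantees that $\delta^* \dec^*$ preserves weak equivalences, so the conjugation across $\id \simeq \iota^*\Cfrak$ is well-defined in the localization; in the strongly symmetric case this step is superfluous and one obtains strict equalities.

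The main technical obstacle is the non-strict compatibility of $\dec_*$ with pullback along $\iota$: the canonical mate $\iota^*\dec_{s,*} \to \dec_*(\iota,\iota)^*$ is not in general a weak equivalence, so a naïve transport of the $\FinSet^{\op}$ identities would fail. The formula manipulation of the first step is designed precisely to circumvent this: it eliminates $\dec_*$ from both compositions and leaves only $\delta^*, \dec^*, u^{\op}$, and $\mlq c^{\op}\mrq$, each of which interacts correctly—strictly in the first three cases, and up to the controlled weak equivalence of Lemmas~\ref{LEMMASYM} and \ref{LEMMAC2} for $\mlq c^{\op}\mrq$—with the pullbacks along $\iota$ and $(\iota,\iota)$.
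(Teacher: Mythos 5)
Your proposal is correct and follows essentially the same route as the paper: your ``formula manipulation'' step is exactly Lemma~\ref{LEMMAALTAWEZ} (rewriting $\Ezfrak\Awfrak$ as $(\delta^*u^{\op})\circ(\mlq c^{\op}\mrq\,\delta^*)$ and $\Awfrak\Ezfrak$ as a conjugate of $\dec_*\psi\dec_*$), and your model-case-plus-transport step is Proposition~\ref{PROPEZ}, which establishes the two triangle-identity composites for the would-be adjunction $(\dec^*,\delta^*,\mlq c^{\op}\mrq,u^{\op})$ in the localization by passing to the strict adjunction $\dec_s\dashv\delta_s$ on $\FinSet^{\op}$-diagrams via $\Cfrak$, $\Cfrak^2$, and Lemma~\ref{LEMMADELTADEC}. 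The only detail your sketch compresses is the use of Proposition~\ref{PROPIOTA} (and the $\iota_!$, $(\iota,\iota)_!$ comparison maps) to handle the $\delta^*$-side identity on bisimplicial objects not in the image of $(\iota,\iota)^*$, which is exactly the work the paper's proof of Proposition~\ref{PROPEZ} carries out.
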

For $\mathcal{C}=\Set$ there are several proofs in the literature, see \cite[Theorem 1.1]{CR05}, \cite[Theorem~1.1]{Ste12}, \cite{Zis15}.

In the non-strongly symmetric case, it follows also that $\dec_*$ preserves morphisms which are weak equivalences point-wise in one direction, because $\delta^*$ does so by assumption. 
For Abelian $\mathcal{C}$ we will make the Theorem more precise, and coherent, with a simpler proof, in section~\ref{SECTEZAB}.

\begin{PAR}
This theorem automatically implies a coherent version: The forgetful morphism of cooperads
\[ \AW:=\mathrm{forget}: (\Delta, \ast)^{\op} \Rightarrow (\Delta, \coprod)^{\op}   \]
can be seen as an oplax transformation in $\Hom^{1-\oplax}(\Delta_{\emptyset}, \Cat^{\PF})$ and induces as oplaxness constraint on $p_1$ a
morphism $\Aw: \dec \Rightarrow {}^t\!\delta$ such that $R(\Aw)$ is the map $\Awfrak$ above (cf.\@ also the proof of Theorem~\ref{SATZCOHEZ}).
If $(\mathcal{C}, \otimes)$ is monoidal,
on Day convolutions ``$\mathrm{forget}$'' induces 
\[ \AWfrak:=R(\mathrm{forget}): D((\Delta, \coprod)^{\op}, (\mathcal{C}, \otimes)^{\vee}) \rightarrow D((\Delta, \ast)^{\op}, (\mathcal{C}, \otimes)^{\vee})  \]
which is nothing but
\[ \AWfrak: (\mathcal{C}^{\Delta^{\op}}, \otimes)^{\vee} \rightarrow  (\mathcal{C}^{\Delta^{\op}}, \tildeotimes)^{\vee}  \]
with $\otimes = \delta^* (- \boxtimes - )$ the point-wise product and $\tildeotimes = \dec_* (- \boxtimes - )$ and the morphism on monoidal products is induced by the map $\Awfrak$.
If $\otimes$ (with a constant object) preserves weak equivalences, Theorem~\ref{SATZEZ} implies thus that these yield monoidal cooperads
\[ \AWfrak[\mathcal{W}^{-1}]: (\mathcal{C}^{\Delta^{\op}}[\mathcal{W}^{-1}], \otimes)^{\vee} \rightarrow  (\mathcal{C}^{\Delta^{\op}}[\mathcal{W}^{-1}], \tildeotimes)^{\vee}  \]
and the functor is an equivalence. If $\otimes$ was the Cartesian product $\times$, this is actually trivial, cf.\@ Theorem~\ref{SATZEZTRIVIAL} below.
\end{PAR}

The following theorem puts this into perspective, but does not imply Theorem~\ref{SATZEZ} directly. 
\begin{SATZ}\label{SATZFINSETEQ}
Let $\mathcal{C}^{\Delta^{\op}}$ be  symmetric with weak equivalences $\mathcal{W}$ satisfying \ref{PARAXIOMSW}. Set $\mathcal{W}_{s} := (\iota^*)^{-1}\mathcal{W}$.
Then the functor induced on the $\infty$-categorical localizations
\[ \iota^*: \mathcal{C}^{\FinSet^{\op}}[\mathcal{W}_{s}^{-1}] \to \mathcal{C}^{\Delta^{\op}}[\mathcal{W}^{-1}] \]
is an equivalence of $\infty$-categories.
\end{SATZ}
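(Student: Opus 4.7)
The plan is to exhibit $\overline{\Cfrak}: \mathcal{C}^{\Delta^{\op}}[\mathcal{W}^{-1}] \to \mathcal{C}^{\FinSet^{\op}}[\mathcal{W}_s^{-1}]$ as a quasi-inverse of $\overline{\iota^*}$. First, I would verify that $\Cfrak$ sends $\mathcal{W}$ into $\mathcal{W}_s$: for $f \in \mathcal{W}$, the naturality square of the symmetry transformation $\eta_s: \id \Rightarrow \iota^*\Cfrak$ has both vertical edges in $\mathcal{W}$, so the two-out-of-three property forces $\iota^*\Cfrak f \in \mathcal{W}$, whence $\Cfrak f \in \mathcal{W}_s$ by the very definition $\mathcal{W}_s = (\iota^*)^{-1}\mathcal{W}$. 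Thus both $\iota^*$ and $\Cfrak$ descend to $\infty$-functors between the localizations, and $\eta_s$ descends to a natural isomorphism $\id \simeq \overline{\iota^*}\,\overline{\Cfrak}$. In particular, $\overline{\iota^*}$ is essentially surjective and $\overline{\Cfrak}$ is a section of it; also, $\overline{\iota^*}$ is conservative by the choice of $\mathcal{W}_s$.

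For the converse natural isomorphism $\overline{\Cfrak}\,\overline{\iota^*} \simeq \id$, I would invoke the $1$-categorical adjunction $\iota_! \dashv \iota^*$ (available since $\mathcal{C}$ is cocomplete) together with Proposition~\ref{PROPIOTA}. That proposition, combined with the triangle identity $(\iota^*c)(u\iota^*) = \id_{\iota^*}$, exhibits $u\iota^*$ and $\iota^*c$ as mutually inverse natural homotopy equivalences between $\iota^*$ and $\iota^*\iota_!\iota^*$; by axiom (W1) of \ref{PARAXIOMSW} these descend to weak equivalences. Hence for every $X \in \mathcal{C}^{\FinSet^{\op}}$ the counit $c_X: \iota_!\iota^*X \to X$ becomes an isomorphism in $\mathcal{C}^{\FinSet^{\op}}[\mathcal{W}_s^{-1}]$, since its image under $\iota^*$ is the homotopy equivalence $\iota^*c_X$, which lies in $\mathcal{W}$.

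The crucial remaining step is to argue that $\iota_!$ itself descends to the localization and agrees there with $\overline{\Cfrak}$. On the image of $\iota^*$ the natural isomorphism $\overline{\iota^*\iota_!\iota^*} \simeq \overline{\iota^*}$ provided by Proposition~\ref{PROPIOTA} yields $\overline{\iota_!\iota^*X} \simeq \overline{\Cfrak\iota^*X}$, since both compute a single object of the localization, namely $\overline{X}$, after identifying it with $\overline{\iota^*X}$ through $\eta_s$. By the essential surjectivity established above, every object of $\mathcal{C}^{\Delta^{\op}}[\mathcal{W}^{-1}]$ lies in this image, and the identification propagates to define $\overline{\iota_!} \simeq \overline{\Cfrak}$. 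The counit then gives $\overline{\Cfrak}\,\overline{\iota^*} \simeq \overline{\iota_!}\,\overline{\iota^*} \simeq \id$, finishing the proof.

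The main obstacle is precisely the honest descent of $\iota_!$ to a well-defined $\infty$-functor between the localizations, because $\iota_!$ is not a priori known to preserve weak equivalences at the $1$-categorical level. One cannot conclude this by a naive two-out-of-three argument applied to the unit $\id \to \iota^*\iota_!$ together with the symmetry, since that would presuppose the very statement to be proved. The subtle point is to interleave Proposition~\ref{PROPIOTA}---which controls $\iota^*\iota_!$ only on the image of $\iota^*$---with the symmetry equivalence, which brings every object into this image; the resulting construction produces the descent and the identification with $\overline{\Cfrak}$ simultaneously, avoiding the circularity.
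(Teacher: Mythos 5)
Your first paragraph and the treatment of the counit are sound and match the paper's ingredients: the symmetry transformation $\id \Rightarrow \iota^*\Cfrak$ shows that $\Cfrak$ carries $\mathcal{W}$ into $\mathcal{W}_s$ and descends to an identification of $\iota^*\circ\Cfrak$ with the identity after localization, and your combination of Proposition~\ref{PROPIOTA} with the triangle identity, showing that $\iota^*c_X$ is a homotopy equivalence and hence (by (W1)) that the counit $c_X\colon \iota_!\iota^*X\to X$ lies in $\mathcal{W}_s$, is exactly the corollary to that proposition which the paper also uses.

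The gap is in your ``crucial remaining step''. You identify $\Cfrak\iota^*X$ with $X$ in $\mathcal{C}^{\FinSet^{\op}}[\mathcal{W}_s^{-1}]$ by observing that their images under $\iota^*$ agree in $\mathcal{C}^{\Delta^{\op}}[\mathcal{W}^{-1}]$ (via $\eta_s$ and the unit) and then transporting this back. Transporting an isomorphism of images back requires the induced $\iota^*$ to be fully faithful (reflecting isomorphy needs fullness, not just conservativity), which is the theorem being proved, so the step is circular; and naturality in $X$ would still be missing even granting it. The side claim that conservativity holds ``by the choice of $\mathcal{W}_s$'' is likewise unjustified ($\mathcal{W}_s=(\iota^*)^{-1}\mathcal{W}$ only controls morphisms of the underlying $1$-category, not zig-zags in the localization), and in any case a conservative functor with a section need not be an equivalence (consider $BG\to\ast$). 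Finally, the ``descent of $\iota_!$'' is a red herring: no descent is needed. What is needed, and what the paper supplies, is a zig-zag of honest natural transformations upstairs in $\mathcal{C}^{\FinSet^{\op}}$,
\[ \Cfrak\,\iota^* \Leftarrow \iota_!\,\iota^*\,\Cfrak\,\iota^* \Leftarrow \iota_!\,\iota^* \Rightarrow \id, \]
given by the counit at $\Cfrak\iota^*X$, by $\iota_!$ applied to $\eta_s$ at $\iota^*X$, and by the counit at $X$; each is checked to be objectwise in $\mathcal{W}_s$ by applying $\iota^*$ and invoking Proposition~\ref{PROPIOTA} (noting that $\iota^*\Cfrak\iota^*X$ is again in the image of $\iota^*$), the symmetry weak equivalence, and two-out-of-three. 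Composed with the localization functor this zig-zag yields the natural isomorphism $\Cfrak\iota^*\simeq\id$ on the localization by its universal property, without ever asking $\iota_!\iota^*$ or $\iota_!$ to descend. Your ingredients are the right ones; the missing move is to build the comparison naturally upstairs rather than object-by-object downstairs.
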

\begin{proof}
We show that $\iota^*$ and $\Cfrak$ constitute inverse functors up to a chain of point-wise weak equivalences. 
We have a weak equivalence
\[ \id \Rightarrow \iota^* \Cfrak \]
by definition. This also shows that $\Cfrak$ preserves weak equivalences. Furthermore, we have a chain of natural transformations 
\[  \Cfrak \iota^* \Leftarrow \iota_! \iota^* \Cfrak \iota^* \Leftarrow \iota_! \iota^* \Rightarrow \id \]
which are all point-wise in $\mathcal{W}_s$, for  apply $\iota^*$ and extend to a commutative diagram
\[ \xymatrix{ &  \iota^*\Cfrak \iota^* \ar[d]_{\mathcircled{1}}  & \ar[l]_{\mathcircled{5}} \iota^*   \ar[d]^{\mathcircled{3}} & \\ 
\iota^* \Cfrak \iota^* & \ar[l]^{\mathcircled{2}} \iota^*  \iota_! \iota^* \Cfrak  \iota^* & \ar[l] \iota^*  \iota_! \iota^* \ar[r]_{\mathcircled{4}} &  \iota^*} \]
The natural transformations $\mathcircled{1}$--$\mathcircled{4}$  are (point-wise) weak equivalences by Proposition~\ref{PROPIOTA} and map $\mathcircled{5}$ is a weak equivalence by definition of ``symmetric''. 
\end{proof}

\begin{PROP}\label{PROPEZ}
Assume $\mathcal{C}^{\Delta^{\op}}$ is symmetric w.r.t.\@ a class of weak equivalences $\mathcal{W}$ satisfying \ref{PARAXIOMSW}\footnote{If $\mathcal{C}^{\Delta^{\op}}$ is strongly symmetric \ref{PARAXIOMSW}, 2.\@ is not needed.}. 
Then the compositions\footnote{Note that $\dec_* \dec^*$ preserves weak equivalences, being isomorphic to $\mathcal{HOM}(\Delta_1, -)$ by Proposition~\ref{PROPTWOENRICHMENTS}.}
\[ \xymatrix{ \dec_* \dec^*  \ar[rr]^-{\dec_*  \dec^* \mlq c^{\op} \mrq } & & \dec_*  \dec^* \delta^* \dec^*   \ar[rr]^-{\dec_* u^{\op} \dec^*} & & \dec_* \dec^* } \]
and
\[ \xymatrix{ \delta^*  \ar[rr]^-{\mlq c^{\op} \mrq \delta^*} & & \delta^* \dec^* \delta^*   \ar[rr]^-{\delta^* u^{\op}} & & \delta^* } \]
are the identity in $\Fun(\mathcal{C}^{\Delta^{\op}}, \mathcal{C}^{\Delta^{\op}})[\mathcal{W}^{-1}]$, and in $\Fun(\mathcal{C}^{\Delta^{\op} \times \Delta^{\op}}, \mathcal{C}^{\Delta^{\op}})[\mathcal{W}^{-1}]$,
respectively. 

If $\mathcal{C}^{\Delta^{\op}}$ is strongly symmetric, then 
\[ \xymatrix{  \dec^*  \ar[rr]^-{ \dec^* \mlq c^{\op} \mrq } & &  \dec^* \delta^* \dec^*   \ar[rr]^-{ u^{\op} \dec^*} & & \dec^* } \]
(which makes sense now, because $\mlq c^{\op} \mrq$ is an honest morphism) is even the identity without inverting weak equivalences. 
\end{PROP}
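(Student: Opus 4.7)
The plan is to recognize each of the three compositions as the image under $\iota^*$ (and the symmetry operator $\Cfrak$) of a triangle identity for the adjunction $\dec_s^* \dashv \delta_s^*$ on $\mathcal{C}^{\FinSet^{\op}}$, which is available because $\dec_s \dashv \delta_s$ in $\FinSet$ (cf.\@ \ref{PARCONJ}). That adjunction has unit $c^{\op} : \id \to \delta_s^* \dec_s^*$ and counit $u^{\op} : \dec_s^* \delta_s^* \to \id$, giving the two honest triangle identities
\[ (u^{\op} \dec_s^*) \circ (\dec_s^* c^{\op}) = \id_{\dec_s^*}, \qquad (\delta_s^* u^{\op}) \circ (c^{\op} \delta_s^*) = \id_{\delta_s^*}. \]
The key compatibilities are the base-change identities $\dec^* \iota^* = (\iota, \iota)^* \dec_s^*$ and $\delta^* (\iota, \iota)^* = \iota^* \delta_s^*$, together with $u^{\op} (\iota, \iota)^* = (\iota, \iota)^* u^{\op}$, which reflects the fact (from \ref{PARCONJ}) that $\iota u = u_s$ because $u$ is point-wise in $\Delta$.

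First I would dispatch composition 3 in the strongly symmetric case. The commutative square defining $\mlq c^{\op} \mrq$ in \ref{PARCONJ} has invertible vertical maps $W : \id \to \iota^* \Cfrak$, yielding
\[ \mlq c^{\op} \mrq_Y = (\delta^* \dec^* W_Y)^{-1} \circ \iota^* c^{\op}_{\Cfrak Y} \circ W_Y. \]
I would substitute $Y = \dec^* X$, whisker with $\dec^*$, and post-compose with $u^{\op}_{\dec^* X}$. The naturality square for $u^{\op}$ at $W_X : X \to \iota^* \Cfrak X$ slides the factor $(\dec^* \delta^* \dec^* W_X)^{-1}$ to the far left, leaving in the middle the expression $u^{\op}_{\dec^* \iota^* \Cfrak X} \circ \dec^* \iota^* c^{\op}_{\Cfrak X}$. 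The base-change identities rewrite this as $(\iota, \iota)^*$ applied to the $\FinSet^{\op}$ triangle identity $(u^{\op} \dec_s^*) \circ (\dec_s^* c^{\op})$ at $\Cfrak X$, which is $\id_{\dec_s^* \Cfrak X}$; the remaining $(\dec^* W_X)^{-1} \circ \dec^* W_X$ cancels, yielding $\id_{\dec^* X}$.

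Running the same computation in the localization gives compositions~2 and~3 in the general symmetric case, since the $W$'s become invertible in $\Fun(\mathcal{C}^{\Delta^{\op}}, \mathcal{C}^{\Delta^{\op}})[\mathcal{W}^{-1}]$. For composition 2 the input $X$ lives in $\mathcal{C}^{\Delta^{\op}\times\Delta^{\op}}$, so the counit $u^{\op}_X$ must be conjugated via the two-variable symmetry $\Cfrak^2$ of Lemma~\ref{LEMMAC2}: the chain $\id \Rightarrow (\iota, \iota)^* \Cfrak^2$ is a composition of maps that are point-wise weak equivalences in one variable, hence an isomorphism in the localization, and the same reduction identifies the composition with $\iota^*$ of the second $\FinSet^{\op}$ triangle identity $(\delta_s^* u^{\op}) \circ (c^{\op} \delta_s^*) = \id$.

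Finally, composition 1 follows by whiskering: at $X$ it equals $\dec_*$ applied to composition 3 at $X$. Since $\dec_* \dec^* \cong \mathcal{HOM}(\Delta_1, -)$ preserves weak equivalences (the footnote to the statement, via Proposition~\ref{PROPTWOENRICHMENTS}), the identity of composition 3 in $\End(\dec^*)[\mathcal{W}^{-1}]$ transports through $\dec_*$ to the identity of composition 1 in $\End(\dec_*\dec^*)[\mathcal{W}^{-1}]$. The main obstacle I expect is the careful bookkeeping of whiskerings and, for composition 2, the verification that the two-variable symmetry $\Cfrak^2$ interacts correctly with $\delta^*$ and $\dec^*$ in the localization (so that $\Cfrak(\delta^* X)$ and $\delta_s^* \Cfrak^2 X$ agree up to $\mathcal{W}_s$); the underlying adjunction arithmetic is then routine.
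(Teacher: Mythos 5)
Your handling of the third composition in the strongly symmetric case, and (once $\dec_*$ is kept in place throughout) of the first composition, is essentially the paper's argument: substitute the defining square of $\mlq c^{\op} \mrq$, use that $u$ lives over $\Delta$ and base-change along $\iota$, and reduce to a triangle identity for $\dec_s \dashv \delta_s$ in $\FinSet$. One caveat already here: ``composition 3 in the general symmetric case'' is not defined, because $\dec^*$ does \emph{not} preserve weak equivalences (each row of $\dec^* X$ is homotopy equivalent to the discrete object $X_n$, cf.\@ Lemma~\ref{LEMMADECCONTRACTIBLE}), so the whiskering $\dec^* \mlq c^{\op} \mrq$ does not exist in the localization; this is exactly why the statement asserts it only in the strongly symmetric case, and why the paper's diagram for the first composition carries $\dec_*$ along from the start, using that $\dec_* \dec^* \cong \mathcal{HOM}(\Delta_1, -)$ preserves weak equivalences. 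Since you only invoke the general ``composition 3'' to deduce composition 1, this part is repairable.

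The genuine gap is in composition 2, and it is precisely the step you defer as bookkeeping. First, the chain $\id \Rightarrow (\iota,\iota)^* \Cfrak^2$ of Lemma~\ref{LEMMAC2} consists only of maps that are point-wise weak equivalences in one or the other variable; such maps are \emph{not} invertible in $\Fun(\mathcal{C}^{\Delta^{\op}\times\Delta^{\op}}, \mathcal{C}^{\Delta^{\op}})[\mathcal{W}^{-1}]$ until $\delta^*$ has been applied to them --- this is exactly where axiom (W2) of \ref{PARAXIOMSW} enters --- so ``hence an isomorphism in the localization'' is too quick. Second, and more seriously, conjugating $u^{\op}_X$ by $\Cfrak^2 X$ while $\mlq c^{\op}\mrq \delta^*$ is built from the one-variable symmetry $\Cfrak(\delta^* X)$ requires a compatibility of the form $\Cfrak(\delta^* X) \simeq \delta_s^* \Cfrak^2 X$ over the weak equivalence $\delta^* X \to \iota^*\Cfrak\delta^* X$; the symmetry operator comes with no compatibility with $\delta$ or $\dec$ beyond its defining property, and the paper neither proves nor uses such a statement. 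Instead it argues as follows: by Lemma~\ref{LEMMAC2} and (W2) it suffices to prove the identity on objects in the image of $(\iota,\iota)^*$, and on such objects the comparison maps between $\delta^*$, $\iota^*\Cfrak\delta^*$, $\iota^*\iota_!\delta^*$ and $\iota^*\delta_s^*(\iota,\iota)_!$ are shown to be weak equivalences using the left Kan extension $\iota_!$ together with Proposition~\ref{PROPIOTA}; only then does the FinSet triangle identity conclude. Without this detour (or an actual proof of your asserted comparison), the identification of composition 2 with the pullback of the second $\FinSet$ triangle identity is not established.
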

\begin{proof}
We have the following commutative diagram 
\[ \footnotesize \xymatrix{ 
\dec_* \iota^*  \dec^*_s   \Cfrak  \ar@{=}[r] \ar[d]^{c^{\op}} & \dec_*  \dec^*  \iota^* \Cfrak  \ar@{<-}[rr]^-\sim \ar[d]^{c^{\op}} & & \dec_* \dec^* \ar@{-->}[d]^{\mlq c^{\op} \mrq}   \\
\dec_* \iota^*   \dec^*_s \delta^*_s \dec^*_s  \Cfrak  \ar[d]^{u^{\op}}  \ar@{=}[r]  & \dec_*  \dec^* \iota^* \delta^*_s \dec^*_s  \Cfrak  \ar@{=}[r]   & \dec_*  \dec^* \delta^* \dec^*  \iota^* \Cfrak  \ar@{<-}[r]^-\sim  \ar[d]^{u^{\op}} & \dec_*  \dec^* \delta^* \dec^* \ar[d]^{u^{\op}}   \\
\dec_* \iota^* \dec^*_s   \Cfrak \ar@{=}[rr]  & & \dec_*  \dec^* \iota^*  \Cfrak   \ar@{<-}[r]^{\sim}   & \dec_* \dec^*  \\
}\]
in which the left vertical composition is the identity and the upper horizontal and lower horizontal compositions are equal. This treats the first composition. In the strongly symmetric case, we can omit the  $\dec_*$ and get a similar diagram with isomorphisms instead of weak equivalences. 

We also have the following commutative diagram:
\[ \xymatrix{ 
  \iota^* \delta^*_s    (\iota,\iota)_!   \ar[d]_{c^{\op}} & \ar[l]_{\mathcircled{3}} \iota^* \iota_!  \delta^* \ar[d]^{c^{\op}} \ar[r]  &\iota^* \Cfrak  \delta^* \ar[d]^{c^{\op}} & \delta^* \ar@{-->}[dd]^{\mlq c^{\op} \mrq} \ar[l]^-{\sim}  \ar@/_10pt/[ll]_{\mathcircled{1}}\\
 \iota^* \delta^*_s\dec^*_s   \delta^*_s   (\iota,\iota)_!   \ar[dd]_{u^{\op}} & \ar[l] \iota^*  \delta^*_s \dec^*_s \iota_! \delta^* \ar[r] \ar@{=}[d] &  \iota^*  \delta^*_s \dec^*_s \Cfrak \delta^*  \ar@{=}[d]& \\
  & \delta^*\dec^*  \iota^* \iota_!  \delta^*  \ar[r] & \delta^*\dec^*  \iota^* \Cfrak \delta^* & \delta^* \dec^* \delta^* \ar@/^10pt/[ll]^{\mathcircled{2}} \ar[d]^{u^{\op}} \ar[l]_-{\sim} \\ 
  \iota^* \delta^*_s    (\iota,\iota)_!   & \ar[l]^{\mathcircled{3}} \iota^* \iota_!  \delta^* &  & \delta^*  \ar[ll]^{\mathcircled{1}}
}\]
in which the left vertical composition is the identity.
By Lemma~\ref{LEMMAC2}, and since we assume that $\delta^*$ maps morphisms which are point-wise in either direction a weak equivalence to weak equivalences, it suffices to see that 
\[ \xymatrix{ \delta^*  \ar[rr]^-{\mlq c^{\op} \mrq \delta^*} & & \delta^* \dec^* \delta^*   \ar[rr]^-{\delta^* u^{\op}} & & \delta^* } \]
is the identity {\em on objects in the image of $(\iota, \iota)^*$}.  Then by Proposition~\ref{PROPIOTA}
$\mathcircled{1}$ and $\mathcircled{2}$ are weak equivalences on objects in the image in $(\iota, \iota)^*$, noting that $\delta^*$ of such is in the image of $\iota^*$. 
We are left to show that $\mathcircled{3}$ is a weak equivalence. However the following is commutative: 
\[ \xymatrix{
 \delta^* \ar[d]_{\mathcircled{5}} \ar[r]^{\mathcircled{4}} & \iota^* \iota_!  \delta^* \ar[d]^{\mathcircled{3}} \\
 \delta^*   (\iota, \iota)^* (\iota,\iota)_!  \ar@{=}[r] & \iota^* \delta_s^*    (\iota,\iota)_!    
} \]
in which $\mathcircled{4}$ and $\mathcircled{5}$ are weak equivalences on objects in the image of $(\iota, \iota)^*$ by Proposition~\ref{PROPIOTA}.
\end{proof}

\begin{LEMMA}\label{LEMMAALTAWEZ}
The following diagram is commutative
\[ \xymatrix{ \dec_*  \ar[rr]^-{\Ezfrak}   \ar[d]_{\in \mathcal{W}} & & \delta^* \ar[rr]^{\Awfrak}  & &  \dec_* \\
\dec_* \dec^* \dec_* \ar[rr]_-{\dec_* \dec^*  \mlq c^{\op} \mrq \dec_*}& & \dec_* \dec^* \delta^*\dec^*  \dec_* \ar[rr]_-{\dec_*u^{\op}\dec^* \dec_*} &&  \dec_* \dec^* \dec_*   \ar[u]_{\in \mathcal{W}}  } \] 
in which the indicated  morphisms are point-wise weak equivalences and inverse to each other, and the following diagram is commutative:
\[ \xymatrix{ \delta^*  \ar[r]^-{\Awfrak} \ar[d]_{\mlq c^{\op} \mrq \delta^*} & \dec_* \ar[d]^{\Ezfrak} \\
\delta^* \dec^* \delta^* \ar[r]_-{\delta^* u^{\op}} & \delta^*  } \]  
\end{LEMMA}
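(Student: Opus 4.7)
The plan is to prove both diagrams directly by unfolding Definition~\ref{DEFAWEZ} and manipulating via naturality of the unit $\eta\colon\id\Rightarrow\dec_*\dec^*$ and counit $\epsilon\colon\dec^*\dec_*\Rightarrow\id$ of the adjunction $\dec^*\dashv\dec_*$, naturality of $u^{\op}\colon\dec^*\delta^*\Rightarrow\id$ and of $\mlq c^{\op}\mrq\colon\id\Rightarrow\delta^*\dec^*$, together with the triangle identity $\epsilon\dec^*\cdot\dec^*\eta=\id_{\dec^*}$. In the first diagram the canonical choices of left and right vertical weak equivalences are $\eta\dec_*$ and $\dec_*\epsilon$, which lie in $\mathcal{W}$ because $\dec_*\dec^*(-)$ is isomorphic to $\uHom(\Delta_1,-)$ by Proposition~\ref{PROPTWOENRICHMENTS}, to which (W1) of \ref{PARAXIOMSW} applies. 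Combined with Proposition~\ref{PROPEZ}, which identifies each bottom path with the identity in the appropriate localization, the two commutativities yield Theorem~\ref{SATZEZ}, namely that $\Awfrak$ and $\Ezfrak$ are mutually inverse isomorphisms in the localization.

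For the second diagram I would expand
\[ \Ezfrak\circ\Awfrak = (\delta^*\epsilon)\cdot(\mlq c^{\op}\mrq\dec_*)\cdot(\dec_*u^{\op})\cdot(\eta\delta^*) \]
and apply three naturality moves. Naturality of $\mlq c^{\op}\mrq$ at $\dec_*u^{\op}$ slides $\mlq c^{\op}\mrq$ past $\dec_*u^{\op}$; naturality of $\epsilon$ at $u^{\op}$ rewrites $(\delta^*\epsilon)\cdot(\delta^*\dec^*\dec_*u^{\op})$ as $(\delta^*u^{\op})\cdot(\delta^*\epsilon\dec^*\delta^*)$; naturality of $\mlq c^{\op}\mrq$ at $\eta\delta^*$ rewrites $(\mlq c^{\op}\mrq\dec_*\dec^*\delta^*)\cdot(\eta\delta^*)$ as $(\delta^*\dec^*\eta\delta^*)\cdot(\mlq c^{\op}\mrq\delta^*)$. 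The resulting middle composite $(\delta^*\epsilon\dec^*\delta^*)\cdot(\delta^*\dec^*\eta\delta^*)$ collapses to $\id_{\delta^*\dec^*\delta^*}$ by the triangle identity sandwiched between the outer $\delta^*$'s, leaving precisely the bottom path $(\delta^*u^{\op})\cdot(\mlq c^{\op}\mrq\delta^*)$.

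The first diagram follows analogously. Expanding
\[ \Awfrak\circ\Ezfrak = (\dec_*u^{\op})\cdot(\eta\delta^*)\cdot(\delta^*\epsilon)\cdot(\mlq c^{\op}\mrq\dec_*), \]
naturality of $\eta$ applied first at $\delta^*\epsilon$ and then at $\mlq c^{\op}\mrq\dec_*$ slides $\eta$ all the way to the right, producing $(\dec_*u^{\op})\cdot(\dec_*\dec^*\delta^*\epsilon)\cdot(\dec_*\dec^*\mlq c^{\op}\mrq\dec_*)\cdot(\eta\dec_*)$; then naturality of $u^{\op}$ at $\epsilon$ rewrites the leftmost two factors as $(\dec_*\epsilon)\cdot(\dec_*u^{\op}\dec^*\dec_*)$, giving exactly the bottom path composite. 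I expect no serious obstacle beyond careful bookkeeping of whiskerings; the only genuine subtlety is that in the non-strongly symmetric case $\mlq c^{\op}\mrq$ exists only as a morphism in the localization, but its naturality there follows automatically from its construction in \ref{PARCONJ} as a zig-zag through the honest natural transformation $\iota^*c^{\op}\Cfrak$, both connecting maps being natural weak equivalences. In the strongly symmetric case (in particular when $\mathcal{C}$ is Abelian) no inversion is needed and the computation takes place entirely in the 1-categorical functor category.
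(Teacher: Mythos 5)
Your proposal is correct and follows essentially the same route as the paper: unfold Definition~\ref{DEFAWEZ} into the unit/counit of $\dec^*\dashv\dec_*$ together with $u^{\op}$ and $\mlq c^{\op}\mrq$, then use naturality/interchange (valid for $\mlq c^{\op}\mrq$ via its zig-zag construction in \ref{PARCONJ}) plus a triangle identity — the paper presents exactly these cells as two pasting diagrams rather than as your equational manipulations. The only cosmetic difference is that the paper leaves the weak-equivalence claim for the whiskered unit/counit at the same level of brevity as you do, so nothing is missing relative to the paper's own argument.
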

\begin{proof}
Follows from the commutativity of the following diagrams where the unlabeled morphisms are the various units and counits:
\[ \xymatrix{ \dec_*  \ar[rr]^-{\Ezfrak}   \ar[d]^{\sim}  & & \delta^*  \ar[d]^{\sim}  \ar[rrd]^{\Awfrak}     \\
\dec_* \dec^* \dec_* \ar[rrd]_-{\dec_* \dec^*  \mlq c^{\op} \mrq \dec_*} \ar[rr]^{\dec_* \dec^* \Ezfrak}  & &\dec_* \dec^* \delta^*   \ar[rr]^{\dec_* u^{\op} } & & \dec_* \\
 & & \dec_* \dec^* \delta^*\dec^*  \dec_* \ar[u] \ar[rr]_-{\dec_*u^{\op}\dec^* \dec_*} & & \dec_* \dec^* \dec_*   \ar[u]^{\sim}  } \] 
\[ \xymatrix{ & & \delta^* \ar@/_40pt/[lldd]_{\Awfrak} \ar[rd]^{\mlq c^{\op} \mrq \delta^*} \ar[ld] & \\
& \dec_* \dec^* \delta^* \ar[ld]^{\dec_* u^{\op}} \ar[rd]^{ \mlq c^{\op} \mrq \dec_* \dec^*  \delta^* } & &  \delta^* \dec^*  \delta^* \ar[ld] \ar@{=}[dd] \\
\dec_*  \ar@/_40pt/[rrdd]_{\Ezfrak} \ar[rd]^{\mlq c^{\op} \mrq \dec_*} & &  \delta^* \dec^* \dec_* \dec^* \delta^* \ar[ld]^{\delta^* \dec^* \dec_*  u^{\op}}  \ar[rd] \\
 & \delta^* \dec^* \dec_* \ar[rd] & & \delta^* \dec^* \delta^* \ar[ld]^{\delta^*   u^{\op}}  \\
& & \delta^* \\ } \]
\end{proof}

\begin{proof}[Proof of Theorem~\ref{SATZEZ}.]
Lemma~\ref{LEMMAALTAWEZ} shows that Proposition~\ref{PROPEZ} implies Theorem~\ref{SATZEZ}.
\end{proof}

\begin{SATZ}[Trivial Eilenberg-Zilber] \label{SATZEZTRIVIAL}
If $(\mathcal{C}, \times)$ is a {\em Cartesian} monodial ($\infty$-)category then the functor
\[ \AWfrak:=R(\mathrm{forget}): (\mathcal{C}^{\Delta^{\op}}, \times)^{\vee} \rightarrow  (\mathcal{C}^{\Delta^{\op}}, \widetilde{\times})^{\vee}  \]
is an equivalence (i.e.\@ no need to invert weak equivalences). 
\end{SATZ}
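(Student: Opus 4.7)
The plan is to deduce the equivalence via a Yoneda argument combining the universal property of Day convolution with the universal property of the cooperad of a Cartesian monoidal $(\infty$-)category. The key observation is that both $(\Delta, \ast)^{\op}$ and $(\Delta, \coprod)^{\op}$ are $\infty$-exponential fibrations of cooperads over $\OOO^{\op}$ (the first by Lemma~\ref{LEMMAIOTACART}, the second because $(\Delta,\coprod)$ is a fibration of operads), both sharing the same fiber $\Delta^{\op}$ over $[1] \in \OOO^{\op}$, and that the forgetful morphism $(\Delta, \ast)^{\op} \to (\Delta, \coprod)^{\op}$ restricts to the identity on these $[1]$-fibers.

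First I would apply Proposition~\ref{PROPDAY} to get, for any test cooperad $\mathcal{E} \to \OOO^{\op}$ and any $J \in \{(\Delta, \ast)^{\op}, (\Delta, \coprod)^{\op}\}$, a natural equivalence
\[ \Hom_{\mathrm{coOp}_{\infty}/\OOO^{\op}}(\mathcal{E}, D(J, (\mathcal{C}, \times)^{\vee})) \simeq \Hom_{\mathrm{coOp}_{\infty}/\OOO^{\op}}(\mathcal{E} \times_{\OOO^{\op}} J, (\mathcal{C}, \times)^{\vee}). \]
This reduces the question of $\AWfrak$ being an equivalence to a question about mapping cooperads into the fixed target $(\mathcal{C}, \times)^{\vee}$.

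The main step is then to establish the universal property of the Cartesian cooperad:
\[ \Hom_{\mathrm{coOp}_{\infty}/\OOO^{\op}}(\mathcal{K}, (\mathcal{C}, \times)^{\vee}) \simeq \Fun(\mathcal{K}_{[1]}, \mathcal{C}) \]
for any cooperad $\mathcal{K} \to \OOO^{\op}$. This expresses the fact that a functor of cooperads into a Cartesian cooperad is fully determined by its underlying functor on $[1]$-fibers, with compatibility on multi-morphisms automatic since $\Hom_{(\mathcal{C},\times)^{\vee}}(X; Y_1, \dots, Y_n) = \prod_i \Hom_{\mathcal{C}}(X, Y_i)$. In the 1-categorical case this is a straightforward verification; in the $\infty$-categorical case it is the (dual of the) standard universal property of Cartesian symmetric monoidal $\infty$-categories. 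Applying this with $\mathcal{K} = \mathcal{E} \times_{\OOO^{\op}} J$ and using $(\mathcal{E} \times_{\OOO^{\op}} J)_{[1]} = \mathcal{E}_{[1]} \times J_{[1]} = \mathcal{E}_{[1]} \times \Delta^{\op}$, the right-hand side becomes $\Fun(\mathcal{E}_{[1]}, \mathcal{C}^{\Delta^{\op}})$ in both cases.

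Finally, the functor $\AWfrak = R(\mathrm{forget})$ arises, via Proposition~\ref{PROPDAYLAX}, from the forgetful morphism of cooperads $(\Delta, \ast)^{\op} \to (\Delta, \coprod)^{\op}$, which on $[1]$-fibers is the identity $\Delta^{\op} \to \Delta^{\op}$. Under the identifications above, $\AWfrak$ therefore acts as the identity on all mapping $\infty$-groupoids, so by the $(2,1)$-categorical Yoneda lemma $\AWfrak$ itself is an equivalence in $\mathrm{coOp}_{\infty}/\OOO^{\op}$. The only technical obstacle I anticipate is the clean invocation of the universal property of the Cartesian cooperad in the $\infty$-setting within this paper's framework; once granted, the proof is pure adjunction calculus and contains no further content.
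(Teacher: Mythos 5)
Your first reduction, via the Day convolution adjunction of Proposition~\ref{PROPDAY}, is fine. The proof breaks at the second step: the asserted ``universal property of the Cartesian cooperad'', namely $\Hom_{\Coop_{\infty}/\OOO^{\op}}(\mathcal{K},(\mathcal{C},\times)^{\vee})\simeq \Fun(\mathcal{K}_{[1]},\mathcal{C})$ for an \emph{arbitrary} cooperad $\mathcal{K}$, is false. Knowing that $\Hom_{(\mathcal{C},\times)^{\vee}}(X;Y_1,\dots,Y_n)=\prod_i\Hom_{\mathcal{C}}(X,Y_i)$ tells you what a multimorphism in the target is, but it does not make the value of a cooperad map on the co-operations of $\mathcal{K}$ automatic: those components are extra data unless every slot of a co-operation can be filled with a counit. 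Concretely, let $\mathcal{K}$ be the planar cooperad with a single object $x$ over $[1]$, freely generated by one co-binary operation $x\to(x,x)$ and with empty $0$-ary co-operation spaces. A cooperad map $\mathcal{K}\to(\mathcal{C},\times)^{\vee}$ is an object $A$ together with an \emph{arbitrary} pair of endomorphisms of $A$, whereas $\Fun(\mathcal{K}_{[1]},\mathcal{C})=\mathcal{C}$; it is precisely a counit (contractible $\Hom_{\mathcal{K}}(x;\,)$, the dual of the unitality hypothesis in the analogous statement for coCartesian monoidal structures) that would force these components to be identities. Since your Yoneda argument has to range over all test cooperads $\mathcal{E}$, and $\mathcal{E}\times_{\OOO^{\op}}J$ is not counital for general $\mathcal{E}$, the identification of both mapping categories with $\Fun(\mathcal{E}_{[1]},\mathcal{C}^{\Delta^{\op}})$ fails, and $\AWfrak$ cannot be recognized as an equivalence this way.

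A symptom of the gap is that your argument uses nothing about $\dec$ beyond the fact that the $[1]$-fiber of $(\Delta,\ast)^{\op}$ is $\Delta^{\op}$, while the statement genuinely depends on a property of $\dec$: for a general exponential fibration $J'$ over $\OOO^{\op}$ with $[1]$-fiber $\Delta^{\op}$ and a map to $(\Delta,\coprod)^{\op}$ that is the identity on $[1]$-fibers, the conclusion would be wrong. The paper's alternative proof isolates exactly where the content sits: since $(\mathcal{C},\times)^{\vee}\to\OOO^{\op}$ is a \emph{cofibration} of cooperads via the diagonal, the Day convolution only sees the projections (in the sense of \ref{PARPROJECTION}, via Proposition~\ref{PROPDAYCOFIB}) of the pro-functors $\dec$ and ${}^t\!\delta$, and these projections are the identity --- for $\dec$ this is Lemma~\ref{LEMMAEXACT3}, i.e.\@ $\dec\,{}^t\!\pr_i\cong\id$ ($\infty$-cofinality of $\dec$), which is the one nontrivial ingredient and is entirely absent from your argument (the paper's first proof is a direct computation that likewise terminates at Lemma~\ref{LEMMAEXACT3}). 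If you want to salvage a mapping-in argument, you would have to prove the counital version of your key lemma and explain why counital test objects suffice; as written, the proposal does not establish the theorem.
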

\begin{proof}Consider the diagram
\[ \xymatrix{
( \Delta^{\op} \times \Delta^{\op} ) \coprod  ( \Delta^{\op} \times \Delta^{\op} )  \ar@/_7pt/[rr]_-{\pr_1 \coprod \pr_2} \ar@/^7pt/[rr]^-{ \dec \coprod \dec}_-{\Downarrow} \ar[d]_{d} & &   \Delta^{\op} \coprod \Delta^{\op} \ar[d]^{d} \\ 
 \Delta^{\op} \times \Delta^{\op}  \ar[rr]_{\dec} & &  \Delta^{\op} 
} \]
in which the square with the upper horizontal morphism commutes. 
We have 
\[  A \times B = \delta^* d_*  (\pr_1 \coprod \pr_2)^* (A,B)  = d_* (A,B)\]
and
\[  \dec_* A \boxtimes B =  \dec_* d_*  (\pr_1 \coprod \pr_2)^* (A,B). \]
The morphism $\AWfrak$ is induced by the composition
\[ d_*  \to d_* (\dec \coprod \dec)_* (\dec \coprod \dec)^* \to d_* (\dec \coprod \dec)_* (\pr_1 \coprod \pr_2)^*.   \]
This however induces the {\em isomorphisms} $\id \cong \dec_* \pr_{1/2}^* $ (Lemma~\ref{LEMMAEXACT3}).

Alternative proof using Proposition~\ref{PROPDAY}. $(\mathcal{C}, \times)^{\vee} \to \OOO^{\op}$ is a {\em cofibration} of cooperads via the diagonal
$\mathcal{C} \to \mathcal{C} \times \mathcal{C}$. The Day convolution thus sees only the projections of
the pro-functors $\Delta^{\op} \times \Delta^{\op} \to \Delta^{\op}$ given by $\dec$ and ${}^{t} \delta$. Those projections are the identities in both cases by Lemma~\ref{LEMMAEXACT3}.
\end{proof}
The second proof shows that the statement holds more generally, if $\mathcal{C} \to \OOO$ is a {\em fibration} of operads.

\subsection{Coherent transformations}\label{SECTCOHTRANS}

\begin{PAR} 
For $F, G: \mathcal{C} \to \mathcal{D}$ usual functors between 1-categories, we have:
\[ \Hom(F, G) = \int_c \Hom(F(c), G(c))  = \lim_{\tw \mathcal{C}} \Hom(F(c'), G(c)) .  \]
We start by extending this formula to a relative setting of (co)fibrations of (co)operads and then to the enriched setting.  
\end{PAR}

\begin{DEF}\label{DEFCOPC} Let $p: \mathcal{C} \to I$ be a morphism of operads. The operad
 \[ ( \mathcal{C}^{\op} \times \mathcal{C} )_{/I}. \]
  consists of objects $(c',c,\nu)$ where $\nu:p(c) \to p(c')$ is an active morphism.
 Morphisms $(c',c,\nu) \to (e',e,\mu)$  are morphisms  in $\tw I$
\begin{equation} \label{eqmortwi} \vcenter{ \xymatrix{ p(c') \ar[d]_{\iota'}  \ar[r]^{\nu} & p(c) \ar[d]^{\iota} \\
x \ar[r] & y \ar[d]^{\alpha} \\
p(e') \ar[r]^{\mu} \ar[u]^{\alpha'} & p(e)  } } \end{equation}
 together with a lift $\iota_\bullet c \to e$ (over $\alpha$) and $e' \to \iota'_\bullet c'$ (over $\alpha'$). 
 (Notice that a push-forward along inert morphisms exists by the definition of operad. )
\end{DEF}
One checks that $( \mathcal{C}^{\op} \times \mathcal{C} )_{/I} \to \mathcal{C}$ is a cofibration and that $( \mathcal{C}^{\op} \times \mathcal{C} )_{/I}$ is an operad. 
If $I$ is a category then more simply
\[( \mathcal{C}^{\op} \times \mathcal{C} )_{/I} = (\mathcal{C}^{\op} \times \mathcal{C}) \times_{/I^{\op} \times I} \tw I.  \]
In any case there is a functor
\[ \iota: \tw \mathcal{C} \to (\mathcal{C}^{\op} \times \mathcal{C} )_{/I} \] 
which is a discrete cofibration of operads. It is classified by a functor of operads
\begin{equation} \label{eqhomoperads} \Hom: (\mathcal{C}^{\op} \times \mathcal{C} )_{/I} \to (\Set, \times) \end{equation}
such that we have
\[ \Hom(X, Y, \tau)^{\times} = \Hom_{\tau}(X, Y)  \]
where the RHS is the set of morphisms mapping to $\tau$ in $I$ and $(-)^{\times}$ is the obvious functor $\Set^{\times} \to \Set$ where $\Set^{\times}$ is the underlying category (of operators) of $(\Set, \times)$, i.e.\@ with $\Set^{\times}_{[n]} = \Set^n$.

\begin{DEF} 
Let 
\[ F: (\mathcal{C}^{\op} \times \mathcal{C} )_{/I} \to (\mathcal{D}, \times) \]
\[ c', c, \nu \mapsto F_{\nu} (c', c) \]
be a functor of operads.
Then the {\bf relative end} of $F$ is defined as
\[ \int_{\mathcal{C}/I} F := \lim_{\tw \mathcal{C}} (F \circ \iota)^{\times}. \]
 For $I = \OOO$  it is also called the {\bf operadic end}. Recall that $(-)^\times: \mathcal{D}^\times \to \mathcal{D}$ is the functor on
the category of operators given by $\times$. In the limit expression $\tw \mathcal{C}$ is just considered as category of operators. 
\end{DEF}

\begin{BEM}
By Lemma~\ref{LEMMAOPTWIST} there is a localization $\mathrm{tw}^{\op}\,\mathcal{C} \to \tw \mathcal{C}$. Therefore it is also the same as just the limit over the dual twisted arrow category of the 
category of operators. 
\end{BEM}

\begin{LEMMA}
We have
\[ \xymatrix{ & \int_{\mathcal{C}/I} F  \cong \lim ( \prod_{c \in \mathcal{C}_{[1]}} F_{\id_{p(c)}}(c, c) \\
\ar@<3pt>[r] \ar@<-3pt>[r] & \prod_{c' \in \mathcal{C}, c \in \mathcal{C}_{[1]}, \nu: p(c') \to p(c)} \Hom(\Hom_{\nu}(c', c), F_{\nu}(c', c))  )   } \]
where $\nu$ is an active morphism in $I$, where one of the arrows is induced by
\begin{gather*}
 \prod_{i} F_{\id_{p(c_i)}}(c_i, c_i) \times \Hom_{\nu}(c, c') \to F_{\id_{p(c)}}(c, c)^{\times} \times \Hom_{\nu}(c, c') \\
 \to  F_{\nu}(c, c') 
 \end{gather*}
where the $c_i$ are the components of $c$ and the isomorphism
\[ \prod_{i} F_{\id_{p(c_i)}}(c_i, c_i) \cong F_{\id_{p(c)}}(c, c)^{\times} \] 
follows because $F$ is a functor of operads. 
\end{LEMMA}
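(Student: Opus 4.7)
The plan is to unpack the defining formula $\int_{\mathcal{C}/I} F = \lim_{\tw \mathcal{C}} (F\iota)^\times$ by combining the classical equalizer presentation of $1$-categorical limits with the operadic structure on $\tw \mathcal{C}$. First I would apply the dual of Example \ref{PARFINALCOEND} to rewrite the limit as an equalizer indexed by objects and morphisms of the category of operators of $\tw \mathcal{C}$, using the freedom to restrict both indexing sets to any generating subsystem (which is valid because the equalizer condition on compositions follows from that on the generators).

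Next I would exploit the operad structure. Every object of the category of operators of $\tw \mathcal{C}$ is a tuple of objects of $(\tw \mathcal{C})_{[1]}$, and $(F\iota)^\times$ sends such a tuple to the product of its values on the components, since $F$ and $\iota$ are functors of operads and $(-)^\times$ distributes over them. Because products commute with limits, the left-hand product in the equalizer may be restricted to $(\tw \mathcal{C})_{[1]}$, whose objects are the active morphisms $c' \to c$ in $\mathcal{C}$ with $c \in \mathcal{C}_{[1]}$. Among these, the identities $\id_c$ for $c \in \mathcal{C}_{[1]}$ are cofinal: every other object $(\nu : c' \to c)$ receives a canonical morphism from $\id_c$ in $\tw \mathcal{C}$, coming from $(c, c, \id_{p(c)}) \to (c', c, \nu)$ in $(\mathcal{C}^{\op} \times \mathcal{C})_{/I}$ determined by any $f \in \Hom_\nu(c', c)$. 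This reduces the left-hand product to $\prod_{c \in \mathcal{C}_{[1]}} F_{\id_{p(c)}}(c, c)$.

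For the right-hand product, the generating morphisms needed are precisely those canonical $\id_c \to (c' \to c)$ just described, indexed by tuples $(c' \in \mathcal{C},\ c \in \mathcal{C}_{[1]},\ \nu : p(c') \to p(c),\ f \in \Hom_\nu(c', c))$. Each such choice contributes one copy of $(F\iota)^\times(c', c, \nu) = F_\nu(c', c)$, so the entire right-hand product reassembles as
\[
  \prod_{c', c, \nu} F_\nu(c', c)^{\Hom_\nu(c', c)} \;=\; \prod_{c', c, \nu} \Hom\bigl(\Hom_\nu(c', c),\, F_\nu(c', c)\bigr),
\]
matching the statement. The two parallel arrows of the equalizer correspond to the two natural ways to land in $F_\nu(c', c)$ from the diagonal data: one from functoriality of $F$ in the target variable (post-composition with $f$, viewed as $(c, c, \id) \to (c', c, \nu)$), and one from the operadic action of $F$ on the multi-morphism $(\id_{c'_1}, \dots, \id_{c'_n}) \to (c', c, \nu)$ when $c' \in \mathcal{C}_{[n]}$, using that $F$ preserves operadic composition.

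The main obstacle is the combinatorial bookkeeping of step two: verifying that the canonical morphisms $\id_c \to (c' \to c)$, together with the operadic multi-morphisms, really do generate all morphisms in $(\tw \mathcal{C})_{[1]}$ (and in the operatorial enlargement) in a way that makes the restricted equalizer coincide with the full one. Once the inert--active factorization is used to reduce every morphism to these basic pieces, and one checks that $F$ maps inert type-$1$ morphisms to isomorphisms (so passing from $\mathrm{tw}^{\op}\mathcal{C}$ to $\tw \mathcal{C}$, cf.\@ Lemma~\ref{LEMMAOPTWIST}, is harmless), the identification with the claimed equalizer is purely formal.
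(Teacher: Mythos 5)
The paper states this lemma without proof, and your outline is essentially the intended argument: unwind the definition $\int_{\mathcal{C}/I}F=\lim_{\tw\mathcal{C}}(F\iota)^{\times}$, use the inert morphisms and the operadic product decomposition to reduce to objects of $(\tw\mathcal{C})_{[1]}$ (i.e.\ active $\nu\colon c'\to c$ with $c\in\mathcal{C}_{[1]}$), regroup the product over such objects as $\prod_{(c',c,\nu)}\Hom(\Hom_{\nu}(c',c),F_{\nu}(c',c))$, and identify the two parallel arrows with the two canonical legs $\id_c\to\nu$ and $(\id_{c'_1},\dots,\id_{c'_n})\to\nu$, exactly as in the statement.

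One caveat on your justification: the identities $\{\id_c\}_{c\in\mathcal{C}_{[1]}}$ are \emph{not} a final subcategory of $(\tw\mathcal{C})_{[1]}$, so ``cofinality'' is the wrong reason for dropping the other objects from the left-hand product. A morphism $\id_d\to(\nu\colon c'\to c)$ in the twisted arrow (co)operad corresponds to a factorization of $\nu$ through $d$, and the relevant comma categories are in general not connected (already for a category with two parallel arrows), so the limit is \emph{not} computed on the identity objects alone. What actually makes your reduction work is the standard end-style equalizer mechanism: a compatible family $(x_g)$ over all of $\tw\mathcal{C}$ is reconstructed from its identity components by $x_g:=F(g,\id)(x_c)$ (and by products over inerts for tuples), and the equalizer condition --- agreement of the target-leg map $f\mapsto F(f,\id)(x_c)$ with the source-leg map $f\mapsto F(\id,f)(\prod_i x_{c'_i})$ --- encodes precisely the remaining compatibilities; conversely every morphism of $\tw\mathcal{C}$ factors, via the inert--active factorization, into pieces for which compatibility follows from these two conditions and functoriality of $F$. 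That verification is the ``bookkeeping'' you defer, and it is the entire content of the lemma; with the cofinality language replaced by this reconstruction argument, your proof goes through and coincides with the argument the paper leaves implicit.
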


\begin{PROP} 
Let $F, G: \mathcal{C} \to \mathcal{D}$ be a functor between operads over an operad $I$.
Then 
\[ \Hom_{/I}(F, G) \cong \int_{\mathcal{C}/I} \Hom_{\nu}(F(c'), G(c)).  \]
\end{PROP}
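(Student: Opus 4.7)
The plan is to unwind both sides to concrete combinatorial data indexed by $\mathcal{C}_{[1]}$ and to check that the compatibility conditions match literally.

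First I would unwind the left-hand side. A morphism $F \Rightarrow G$ in the $(2,1)$-category of operads over $I$ is, concretely, a family of morphisms $\eta_{c}\colon F(c) \to G(c)$ in $\mathcal{D}$ lying over $\id_{p(c)}$, one for every $c \in \mathcal{C}$, satisfying naturality with respect to every morphism of $\mathcal{C}$. Because $F$ and $G$ are functors of operads, the object $F(c)$ for $c = (c_1,\dots,c_n) \in \mathcal{C}_{[n]}$ is canonically identified with the tuple $(F(c_1),\dots,F(c_n))$, and similarly for $G$; and, by the defining property of an operad (Definition~\ref{DEFPLANAR}), $\eta_c$ is then forced to be the tuple $(\eta_{c_1},\dots,\eta_{c_n})$. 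Hence the data of $\eta$ is equivalent to the family $(\eta_c)_{c \in \mathcal{C}_{[1]}}$ with $\eta_c \in \Hom_{\id_{p(c)}}(F(c),G(c))$.

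Next I would unwind the naturality condition. Any morphism $\phi$ in $\mathcal{C}$ with target in $\mathcal{C}_{[n]}$ factors uniquely into an active morphism to a multi-object followed by inert projections to each $\mathcal{C}_{[1]}$-component, and since $F,G$ and the $\eta_{c_i}$ are all compatible with inert morphisms, naturality for arbitrary $\phi$ reduces to naturality for those $\phi\colon c' \to c$ with $c \in \mathcal{C}_{[1]}$, $c' \in \mathcal{C}$, lying over an active $\nu \colon p(c') \to p(c)$ in $I$. The naturality square
\[
G(\phi)\circ (\eta_{c'_1}\otimes\cdots\otimes \eta_{c'_m}) \;=\; \eta_c\circ F(\phi)
\]
is precisely the condition that pairs $(\eta_c)_{c\in \mathcal{C}_{[1]}}$ are to satisfy.

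Now I turn to the right-hand side and apply the Lemma that describes the relative end as an equalizer. Plugging in $F_\nu(c',c) := \Hom_\nu(F(c'),G(c))$, the first product $\prod_{c \in \mathcal{C}_{[1]}} \Hom_{\id_{p(c)}}(F(c),G(c))$ yields exactly the data of a family $(\eta_c)_{c \in \mathcal{C}_{[1]}}$. The two maps into $\prod_{(c',c,\nu)} \Hom(\Hom_\nu(c',c),\Hom_\nu(F(c'),G(c)))$ send such a family to the two assignments
\[
\phi \mapsto G(\phi)\circ (\eta_{c'_1}\otimes\cdots\otimes\eta_{c'_m})
\quad\text{and}\quad
\phi \mapsto \eta_c \circ F(\phi)
\]
respectively; here the isomorphism $\Hom_{\id_{p(c')}}(F(c'),G(c'))^{\times} \cong \prod_i \Hom_{\id_{p(c'_i)}}(F(c'_i),G(c'_i))$ (operad axiom) is used to express the ``act on the source'' map in terms of the component $\eta_{c'_i}$'s. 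The equalizer of these two maps is thus the set of families satisfying the naturality condition identified in the second paragraph, which is the description of $\Hom_{/I}(F,G)$.

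The mildly delicate point — the only place that is not pure bookkeeping — is the compatibility with the operadic structure: one must verify that the ``$\times$'' in $(F\circ\iota)^{\times}$ and the evaluation maps in the equalizer really encode the tensor factorization $\eta_{c'} = \eta_{c'_1}\otimes\cdots\otimes \eta_{c'_m}$. This follows from the fact that $(\mathcal{C}^{\op}\times\mathcal{C})_{/I} \to \mathcal{C}$ is a cofibration of operads together with the description of the classifying functor $\Hom$ in~(\ref{eqhomoperads}), but unwinding all the indices carefully is the place where a reader is most likely to get confused.
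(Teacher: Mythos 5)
Your proof is correct and is exactly the intended argument: the paper states this Proposition without a written proof, and the unwinding you perform — identifying $\Hom_{/I}(F,G)$ with families $(\eta_c)_{c\in\mathcal{C}_{[1]}}$ subject to naturality along active morphisms into $\mathcal{C}_{[1]}$, and matching this against the equalizer description of $\int_{\mathcal{C}/I}$ from the preceding Lemma — is precisely the route the surrounding text sets up. One cosmetic remark: the identification $\eta_c=(\eta_{c_1},\dots,\eta_{c_n})$ for $c\in\mathcal{C}_{[n]}$ is forced by naturality along the inert (coCartesian) morphisms $c\to c_i$ combined with the operad axiom $\Hom_{\id}(F(c),G(c))\cong\prod_i\Hom_{\rho_i}(F(c),G(c_i))$, not by the axiom alone, so it is cleaner to record inert-naturality first and only then invoke the inert/active factorization to reduce everything to the active-into-$\mathcal{C}_{[1]}$ squares.
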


We also have a relative Yoneda:

\begin{PROP}\label{PROPYONEDA}
Given a functor of operads $F_{\mu}(-,-): (\mathcal{C}^{\op} \times \mathcal{C} )_{/I} \to (\Set, \times)$ we have:
\begin{enumerate}
\item $\int_{c} \Hom_{\pi(\tau)}(\Hom_{\nu}(c, e), F_{\mu}(c',f)) = F_{\tau}(e,f)$
in which the end is a limit over 
\[ \xymatrix{ {}^{\uparrow} c \ar[d]  &  p(c) \ar[r]^{\mu} \ar[d]   & p(f)    \\
 {}^{\downarrow}  c'     & p(c') \ar[r]^{\nu} & p(e) \ar[u]^{\tau}   } \]
(covariantly in $c'$ and contravariantly in $c$)
\item $\int^{c} \Hom_{\nu}(e, c) \times F_{\mu}(c',f) = F_{\tau}(e,f)$
in which the coend is a colimit over
\[ \xymatrix{ {}^{\uparrow} c  & p(c) \ar[r]^{\mu}   & p(f)  \\
{}^{\downarrow} c'  \ar[u]    & p(c')  \ar[u] & p(e) \ar[u]^{\tau}  \ar[l]^{\nu}  } \]
(contavariantly in $c'$ and covariantly in $c$) and $\Hom_{\nu}(e, c') \times F_{\mu}(c,f)$ is shorthand for the collection of sets $\Hom_{\nu_i}(e_i, c'_i)^{\times} \times F_{\mu_i}(c_i,f_i)$, 
where the $f_i$ are the components of $f$.
\end{enumerate}
\end{PROP}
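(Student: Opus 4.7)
The plan is to treat both statements as relative (operadic) versions of the classical Yoneda lemma in its end form $F(e) = \int_c \Hom(\Hom(c,e), F(c))$ and its coend (density) form $F(e) = \int^c \Hom(e,c) \times F(c)$. In each case I would first construct a natural comparison map and then show it is an isomorphism by reducing to classical (co)Yoneda fiberwise over $I$.

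For (1), I would send $x \in F_\tau(e,f)$ to the coherent family whose component at $(c, c', \nu, \mu)$ assigns to $\phi \in \Hom_\nu(c,e)$ the element $F(\phi, \id_f)(x) \in F_\mu(c', f)$; the commutativity condition in diagram (\ref{eqmortwi}) forces $\mu$ to be the operadic composite $\tau \circ \nu$, so this is well defined. Compatibility with the indexing morphisms is automatic from functoriality of $F$ on $(\mathcal{C}^{\op} \times \mathcal{C})_{/I}$. The candidate inverse is evaluation at the identity boundary $(c, c', \nu) = (e, e, \id_{p(e)})$ applied to $\id_e \in \Hom_{\id}(e, e)$. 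One composition is the identity by direct inspection, and the other expresses the Yoneda principle that a coherent family is determined by its value at the identity. Part (2) is entirely dual: I would define the map out of the coend by $[\phi, x] \mapsto F(\phi, \id_f)(x)$, and show that every class admits a canonical representative at $(e, e, \id_{p(e)})$ by using the coend relation to absorb $\phi$ into the $F$-variable.

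The main obstacle is to identify the indexing diagrams of these operadic (co)ends cleanly enough to reduce to the classical density argument. Concretely, one must check that the diagrams displayed in the statement are precisely those associated with the discrete cofibration $\iota: \tw \mathcal{C} \to (\mathcal{C}^{\op} \times \mathcal{C})_{/I}$ of Definition~\ref{DEFCOPC}, and that the locus $\{c = c' = e,\ \nu = \id_{p(e)}\}$ is terminal (resp.\ initial) in the end (resp.\ coend) diagram in the strong sense that every other object admits a unique zig-zag from (resp.\ to) it. Once this is done, the usual density argument applies. One further subtlety beyond the classical proof is handling the target operad $(\Set, \times)$: the $\pi(\tau)$-ary structure on the right-hand side of (1) records that $F_\mu(c', f)$ is a tuple indexed by the components of $f$, and the natural map is compatible with this product decomposition because both $F$ and $\Hom$ are functors of operads, i.e.\@ they convert multi-morphisms in $I$ into the correct $\pi(\tau)$-fold products in the target.
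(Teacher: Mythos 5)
The paper records this Proposition without proof, treating it as the routine relative (operadic) form of the Yoneda lemma, and your unit/counit strategy is exactly the intended argument: send $x\in F_{\tau}(e,f)$ to the tautological family $\phi\mapsto F(\phi,\id_f)(x)$, send a compatible family to its value at $(e,e,\id_{p(e)})$ on $\id_e$, check one composite is the identity by inspection and the other by the compatibility conditions defining the (relative) end. This does go through in the operadic setting because $F$ and $\Hom$ are functors of operads, so the morphism of $(\mathcal{C}^{\op}\times\mathcal{C})_{/I}$ with $\phi$ in the contravariant slot, the identity in the covariant slot and trivial inert data is available, and the $(\Set,\times)$-structure takes care of the componentwise decomposition over the components of $f$. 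Do fix the $c$/$c'$ bookkeeping: $\phi$ must live in the $\Hom$ whose $\mathcal{C}$-variable is the contravariant slot of $F$, and the label $\mu$ is then forced to be the composite of $\tau$, $\nu$ and the connecting $I$-morphism between $p(c)$ and $p(c')$; the statement in the paper itself slips between $c$ and $c'$, so this is a matter of writing the variances consistently rather than a mathematical issue.

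The one genuinely problematic step is your proposed reduction at the end: it is \emph{not} true that the locus $\{c=c'=e,\ \nu=\id_{p(e)}\}$ is terminal (resp.\ initial) in the end (resp.\ coend) diagram, let alone with unique zig-zags --- already classically, $\id_e$ is neither initial nor terminal in $\tw\mathcal{C}$, and Yoneda is not a cofinality statement about the indexing category (if it were, the end would simply be the value at that object). What your first paragraph actually uses, and what is correct, is either (a) the direct argument: the end/coend compatibility (dinaturality) condition applied to the morphism induced by $\phi$ itself shows that a compatible family is determined by its components at identities, and dually that every class in the coend has a canonical representative at $(e,e,\id_{p(e)})$; or (b) for the coend form only, the observation that after the Grothendieck construction of the representable $\Hom$-factor the resulting category of elements has an initial object given by $\id_e$, so the colimit collapses. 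Also be a little careful with your identification of the indexing: the end of the Proposition is not literally indexed by the discrete cofibration $\iota\colon\tw\mathcal{C}\to(\mathcal{C}^{\op}\times\mathcal{C})_{/I}$ (that cofibration defines the relative end $\int_{\mathcal{C}/I}$); here the two variables $c$, $c'$ are joined only by $I$-data, i.e.\ the indexing is built from objects of $(\mathcal{C}^{\op}\times\mathcal{C})_{/I}$ itself together with the fixed maps to $p(e)$ and $p(f)$, and the limit is taken over the associated twisted arrow category as in the paper's definition of the relative end. With the initiality claim replaced by (a) or (b) and this indexing spelled out, your outline is a correct proof.
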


\begin{PAR}
From the Proposition the following formula follows:
\begin{equation}\label{eqyoneda} \int_{\mathcal{C}/I} F_{\nu}(c',c) \cong \int_{((\mathcal{C}^{\op} \times \mathcal{C})/I)/\OOO} \Hom_{\tau}(\Hom_{\nu'}(c', d'), F_{\nu}(c,d)).    \end{equation}
\end{PAR}

\begin{PAR}
Given simplicially enriched functors between simplicially enriched operads $F, G: \mathcal{C} \to \mathcal{D}$, we first define the structure of
simplicially enriched operad on $(\mathcal{C}^{\op} \times \mathcal{C})_{/I}$ defining with the notation of (\ref{eqmortwi})
\[ \uHom((c',c,\nu), (e',e,\mu)) := \prod_{\Hom_{ \tw I}(\nu, \mu) } \uHom_{\alpha}(\iota_\bullet c, e) \times \uHom_{\alpha'}(e', \iota'_\bullet c').  \] 
It is immediate that the functor
\[ \Hom: (\mathcal{C}^{\op} \times \mathcal{C})_{/I} \to (\Set^{\Delta^{\op}}, \times) \]
\[ c, c', \nu \mapsto \uHom_{\nu_0}(c'_0, c_0), \dots, \uHom_{\nu_n}(c'_n, c_n) \]
becomes simplicially enriched via composition in $\mathcal{C}$.
For a simplicially enriched functor
\begin{equation} \label{eqf} F: (\mathcal{C}^{\op} \times \mathcal{C})_{/I} \to (\Set^{\Delta^{\op}}, \times) \end{equation}
we define the {\bf relative enriched end} as 
\[ \xymatrix{ & \int_{\mathcal{C}/I} F  := \lim ( \prod_{c \in \mathcal{C}_{[1]}} F_{\id_{p(c)}}(c, c) \\
\ar@<3pt>[r] \ar@<-3pt>[r] & \prod_{c \in \mathcal{C}, c' \in \mathcal{C}_{[1]}, \nu: p(c) \to p(c')} \uHom(\uHom_{\nu}(c, c'), F_{\nu}(c, c'))  ) .  } \]

We also have 
 for the enriched natural transformations
\[ \uHom_I(F, G) \cong  \int_{ \mathcal{C}/I }  \uHom_{\nu}(F(c'), G(c)) \]
with the {\em relative enriched} end. 
\end{PAR}

The set of coherent transformations (Cordier-Porter) $\uCoh_I(F, G)$ is a coherent enhancement of this (cf.\@ \cite{CP97} for the case of simplicially enriched {\em categories}).
\begin{DEF}\label{DEFHOMHAT}
We define a simplicially enriched functor of operads
\[ \widehat{\uHom}: (\mathcal{C}^{\op} \times \mathcal{C})_{/I} \to (\Set^{\Delta^{\op}}, \times)  \]
specified for $d \in \mathcal{C}_{[1]}$ by\footnote{The $(...)_{\nu}$ means the fiber over $\nu$ of the morphism 
\[ \uHom_{\mathcal{C}_{\act}}(c, c_0)_{[n]} \times \uHom_{\mathcal{C}_{\act}}(c_0, c_1)_{[n]} \times \dots \times \uHom_{\mathcal{C}_{\act}}(c_n, d)_{[n]} \to \Hom_{I_{\act}}(p(c), p(d)) \]
induced by $p$ and composition.}:
\[ c, d, \nu, [n] \mapsto \prod_{c_0, \dots, c_n} ( \uHom_{\mathcal{C}_{\act}}(c, c_0)_{[n]} \times \uHom_{\mathcal{C}_{\act}}(c_0, c_1)_{[n]} \times \dots \times \uHom_{\mathcal{C}_{\act}}(c_n, d)_{[n]}  )_\nu \]
\end{DEF}

\begin{DEF}
Let $F$ be as in (\ref{eqf}). We define the {\bf relative coherent end} as
\[ \oint_{\mathcal{C}/I} F := \int_{((\mathcal{C}^{\op} \times \mathcal{C})/I)/\OOO} \uHom_{\tau}(\widehat{\uHom}_{\nu'}(c', d'), F_{\nu}(c,d))  \]
where the RHS is the operadic enriched end. 
\end{DEF}

\begin{DEF}
Given simplicially enriched functors between simplicially enriched operads $F, G: \mathcal{C} \to \mathcal{D}$ over $I$, define the simplicial set of {\bf coherent transformations} over $I$ as
\[ \uCoh_{I}(F, G) := \oint_{ \mathcal{C}/I} \uHom_{\nu}(F(c'), G(c))   \]
using the relative coherent end. 
\end{DEF}

\begin{PAR}
We will be interested mainly in the case in which the structure on $\mathcal{C}$ is {\em discrete} in which case 
\[ \widehat{\uHom}_{\nu}(c', c) = N(c' \times_{/\mathcal{C}_{\act}} \mathcal{C}_{\act} \times_{/\mathcal{C}_{\act}} c)_{\nu} \]
i.e.\@ the fiber of the nerve of the over-/undercategory associated with $c'$ and $c$ in $\mathcal{C}_{\act}$ over $\nu$, 
and the coherent end is defined in terms of the usual operadic end (not enriched). In this case a coherent transformation $F \Rightarrow G$ is given by
morphisms $F(c) \to G(c)$ for all objects $c$ and for each (active) morphism $c \to c'$ 1-simplices 
\[ \xymatrix{ F(c) \ar[r] \ar[d] \ar@{}[rd]|{\Rightarrow} & F(c') \ar[d] \\
G(c) \ar[r] & G(c') 
 } \] 
 connecting the two compositions and then for each composition $c \to c' \to c''$ a 2-simplex connecting appropriate compositions of the
 0-simplices associated with $c$, $c'$ and $c''$, and 1-simplices associated with $c \to c'$, $c' \to c''$ and $c \to c''$, and so forth. This is very similar to a lax natural transformation which is indeed the special case in which
 the enrichment is in 1-categories (considered as simplicial sets).  
\end{PAR}

\begin{LEMMA}
There is an injection 
\[ \uHom_{I}(F, G) \to \uCoh_{I}(F, G) \]
\end{LEMMA}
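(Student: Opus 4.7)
The plan is to construct a natural ``composition'' morphism of simplicially enriched functors of operads
\[ \gamma: \widehat{\uHom} \Rightarrow \uHom \colon (\mathcal{C}^{\op} \times \mathcal{C})_{/I} \to (\Set^{\Delta^{\op}}, \times), \]
to check that it is a degreewise surjection, and to extract the desired injection via the relative Yoneda formula~(\ref{eqyoneda}).

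First I would define $\gamma_{c,d,\nu}$ on an $n$-simplex of $\widehat{\uHom}_\nu(c,d)_{[n]}$ --- a choice of intermediate objects $c_0,\dots,c_n$ together with simplicial arrows $f_0 \in \uHom(c,c_0)_{[n]}$, $f_i \in \uHom(c_{i-1},c_i)_{[n]}$ and $f_{n+1} \in \uHom(c_n,d)_{[n]}$ composing over $\nu$ in $I$ --- as the iterated composite $f_{n+1}\circ\cdots\circ f_0$ taken in the simplicial enrichment on $\mathcal{C}$. Compatibility with faces, degeneracies, with the $I$-decoration, and with the operadic product structure transported from $\mathcal{C}_{[n]}\simeq \mathcal{C}_{[1]}^n$ follows from associativity and functoriality of composition in $\mathcal{C}$.

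Second, $\gamma$ is surjective in each simplicial degree: given $f \in \uHom_\nu(c,d)_{[n]}$, take $c_0=\cdots=c_n=c$, fill the first $n+1$ slots by the $n$-fold degeneracy of $\id_c$, and set $f_{n+1}:=f$; this tuple composes to $f$. Precomposition along a degreewise surjection $Y \twoheadrightarrow Y'$ of simplicial sets yields, for any $Z$, a degreewise injection $\uHom(Y',Z) \hookrightarrow \uHom(Y,Z)$ --- two $n$-simplices on the left are maps $Y'\times\Delta_n \to Z$ that must agree if they agree on the image of $Y\times\Delta_n$. Applied pointwise over $((\mathcal{C}^{\op}\times\mathcal{C})_{/I})_{/\OOO}$ with $Z=\uHom_\nu(F(c'),G(c))$, this gives an injection of the two integrands.

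Finally, the relative operadic end is a limit and therefore preserves degreewise injections of simplicial sets. Passing to ends and using the Yoneda identification
\[ \int_{((\mathcal{C}^{\op}\times\mathcal{C})_{/I})_{/\OOO}} \uHom_\tau(\uHom_{\nu'}(c',d'),\uHom_\nu(F(c'),G(c))) \;\cong\; \uHom_I(F,G) \]
from (\ref{eqyoneda}) yields the desired injection into $\uCoh_I(F,G)$. The main technical obstacle I anticipate is bookkeeping the operad-theoretic compatibilities of $\gamma$: matching the $(\mathcal{C}^{\op}\times\mathcal{C})_{/I}$-operad structure on both $\widehat{\uHom}$ and $\uHom$, and extending from $\mathcal{C}_{[1]}$-inputs to general ones via the factorization axioms. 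This is a notationally heavy but essentially formal diagram chase that reduces entirely to associativity of composition in $\mathcal{C}$ and the definition of $\widehat{\uHom}$ on non-unary arities.
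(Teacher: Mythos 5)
Your proposal follows essentially the same route as the paper: the map is induced by the ``composition'' transformation $\widehat{\uHom}_{\nu}(-,-) \to \uHom_{\nu}(-,-)$ together with the relative Yoneda identification~(\ref{eqyoneda}) of $\uHom_I(F,G)$ as an end with $\uHom_{\nu'}(c',d')$ in the contravariant slot. Your extra step --- checking that this transformation is degreewise surjective (split by chains of degenerate identities) so that precomposition is injective on mapping objects and hence on ends --- is exactly the justification of injectivity that the paper leaves implicit, so it is a welcome but not divergent addition.
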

\begin{proof}
The morphism is induced by the obvious simplicially enriched natural transformation
\[ \widehat{\uHom}_{\nu}(-,-) \to \uHom_{\nu}(-,-) \]
given by composition and (\ref{eqyoneda}).
\end{proof}

There is the following important relation between coherent transformations and the naive deformations (Definition~\ref{DEFNAIVE}): 

\begin{PROP}\label{PROPCOHERENT}
Consider simplicially enriched operads $\mathcal{C}, \mathcal{D}, I$ with discrete enrichment on $I$.
Let $\mathcal{C} \to I$ and $\mathcal{D} \to I$ be cofibrations of simplicially enriched operads. 
Consider $Y: \mathcal{C}^{\flat} \to \mathcal{D}^{\flat}$  and two $Y_0, Y_1: \mathcal{C} \to \mathcal{D}$ over $I$ that extend $Y$ as in \ref{PARNAIVE1}.

There is a natural morphism
\[ \exp: \uHom(\Delta_1, \uDef_Y(\mathcal{C}, \mathcal{D}))_{Y_0, Y_1} \to \uCoh_I(Y_0, Y_1). \]
\end{PROP}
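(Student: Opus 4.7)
The plan is to construct $\exp$ via the Cordier--Porter recipe of iterated composition along a staircase embedding $\Delta_k \hookrightarrow \Delta_1^{k+2}$. By Definition~\ref{DEFNAIVE}, an $n$-simplex of $\uHom(\Delta_1, \uDef_Y(\mathcal{C}, \mathcal{D}))_{Y_0, Y_1}$ is a simplicially enriched functor of operads $\Psi: \mathcal{C} \to \mathcal{D}_{\Delta_1 \times \Delta_n}$ over $I$, extending $Y$ on fibres and restricting along the two vertices of $\Delta_1$ to $Y_0$ and $Y_1$. Unpacked, this provides, for each (active) morphism $f: c \to c'$ of $\mathcal{C}$ over $\nu = p(f)$, a map $\Psi_f: \Delta_1 \times \Delta_n \to \uHom_{\mathcal{D}, \nu}(Y(c), Y(c'))$ going from $Y_0(f)$ to $Y_1(f)$ along $\Delta_1$, together with the lax compatibility with composition coming from the diagonals $\Delta_1 \to \Delta_1^2$ and $\Delta_n \to \Delta_n^2$.

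By Definition~\ref{DEFHOMHAT} and the definition of the relative coherent end, an $n$-simplex of the target $\uCoh_I(Y_0, Y_1)$ consists of a family of simplicial maps $\widehat{\uHom}_{\nu}(c, d) \times \Delta_n \to \uHom_{\nu}(Y_0(c), Y_1(d))$, natural in $(c, d, \nu)$ and compatible with the operad structure encoded in the end over $((\mathcal{C}^{\op} \times \mathcal{C})/I)/\OOO$. On the simplicially discrete $\mathcal{C}$, a $k$-simplex of $\widehat{\uHom}_{\nu}(c, d)$ is a chain $c \to c_0 \to \cdots \to c_k \to d$ in $\mathcal{C}_{\act}$ composing to $\nu$. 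Given such a chain with successive arrows $f_0, \ldots, f_{k+1}$, assign to it the composition
\[ \Delta_k \times \Delta_n \xrightarrow{\sigma \times \mathrm{diag}} \Delta_1^{k+2} \times \Delta_n^{k+2} \cong (\Delta_1 \times \Delta_n)^{k+2} \xrightarrow{\prod_i \Psi_{f_i}} \prod_{i=0}^{k+1} \uHom_{\mathcal{D}}(Y(c_i), Y(c_{i+1})) \xrightarrow{\ \circ \ } \uHom_{\nu}(Y(c), Y(d)), \]
where $\sigma: \Delta_k \hookrightarrow \Delta_1^{k+2}$ is a staircase sending the $j$-th vertex of $\Delta_k$ to $(0, \underbrace{1, \ldots, 1}_{j}, \underbrace{0, \ldots, 0}_{k-j}, 1)$, pinning the first morphism $f_0$ to its $Y_0$-value and the last morphism $f_{k+1}$ to its $Y_1$-value while interpolating the interior morphisms along the staircase, and $\mathrm{diag}: \Delta_n \to \Delta_n^{k+2}$ is the diagonal. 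The image lies over $\nu$ because each $\Psi_{f_i}$ does so over $p(f_i)$ and iterated composition in $\mathcal{D}$ is operadic over $I$.

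The main obstacle is showing that this assignment is coherent enough to land in the relative coherent end. This reduces to three compatibilities: (i) the face and degeneracy structure on $\widehat{\uHom}$ must translate correctly under $\sigma$, with face maps that collapse two adjacent objects $c_i = c_{i+1}$ (inserting an identity) corresponding to a degeneracy repeating a $\Delta_1$-coordinate, which is absorbed by the simplicial enrichment of $\Psi$; (ii) the dependence on the endpoints $(c, d)$ under pre- and post-composition in $\mathcal{C}_{\act}$ must match the constraints of $Y_0$ on the $f_0$-end and of $Y_1$ on the $f_{k+1}$-end, which holds because the endpoints of the staircase pin those arrows to their fixed endpoint values; and (iii) operadic compatibility with the multi-morphism structure in the end over $((\mathcal{C}^{\op} \times \mathcal{C})/I)/\OOO$. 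The last point, (iii), is the most delicate and uses essentially that $\Psi$ is a functor of simplicially enriched operads and not merely of the underlying enriched categories, so that the iterated composition and the product structure of $\sigma \times \mathrm{diag}$ distribute correctly over the operadic multi-products inherent in $\widehat{\uHom}$.
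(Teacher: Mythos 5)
Your overall strategy is the paper's: evaluate the deformation on each arrow of a chain in $\widehat{\uHom}$ at a monotone parameter $\Delta_k\to\Delta_1$, compose in $\mathcal{D}$, and use the diagonal/composition compatibility built into $\uDef_Y$ to get the required coherences. The gap is in your staircase, and it sits exactly at your point (ii), which you assert rather than check. In the nerve $\widehat{\uHom}_\nu(c,d)=N(c\times_{/\mathcal{C}_{\act}}\mathcal{C}_{\act}\times_{/\mathcal{C}_{\act}}d)_\nu$ there is \emph{no} face that merely forgets an outer arrow: every face $d_j$, $j=0,\dots,k$, composes the adjacent arrows $f_j$ and $f_{j+1}$, including the two outer ones. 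Since your formula feeds arrow $f_i$ the $i$-th coordinate map $\phi_i\colon\Delta_k\to\Delta_1$, and the only way to rewrite $\Psi_{f_{j+1}}(\phi_{j+1}\delta_j)\circ\Psi_{f_j}(\phi_j\delta_j)$ as $\Psi_{f_{j+1}f_j}(-)$ is via the diagonal, i.e.\ when $\phi_j\delta_j=\phi_{j+1}\delta_j$, simpliciality in the $\widehat{\uHom}$-variable forces $\{\phi_j,\phi_{j+1}\}$ to be identified by $\delta_j$ for \emph{every} $j$, and the restricted list must again be the rule one level down. Writing $\chi_t\colon\Delta_k\to\Delta_1$ for the map with $\chi_t(v)=1$ iff $v\ge t$, one has $\chi_t\delta_j=\chi_t$ for $t\le j$ and $=\chi_{t-1}$ for $t>j$, so the unique consistent uniform assignment is $\phi_i=\chi_i$ for $i=0,\dots,k+1$; in particular all $k+2$ monotone maps occur, with the two constants at the ends but oriented oppositely to your pinning. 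Concretely, your list $(\mathrm{const}\,0,\chi_1,\dots,\chi_k,\mathrm{const}\,1)$ already fails both outer faces for a $1$-simplex $c\xrightarrow{f_0}c_0\xrightarrow{f_1}c_1\xrightarrow{f_2}d$: restricting your value along $\delta_0$ gives $Y_1(f_2)\,Y_1(f_1)\,Y_0(f_0)$, while your rule applied to the face chain $(f_1f_0,\,f_2)$ gives $Y_1(f_2)\,Y_0(f_1)\,Y_0(f_0)$; these differ in general, so your assignment is not a simplicial map into $\uHom_\nu(Y_0(c),Y_1(d))$ and does not define an element of the coherent end.

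The endpoint behaviour you invoke in (ii) is not governed by the faces at all but by the end relations over $((\mathcal{C}^{\op}\times\mathcal{C})/I)/\OOO$: pre-/post-composition at the two ends of the chain is matched against $Y_0$, resp.\ $Y_1$, acting on $\uHom(Y_0(c),Y_1(d))$, and this is where the constancy of the outer parameters (hence functoriality of $Y_0$, $Y_1$) is actually used. This is precisely what the paper's proof does: it uses all $m+2$ monotone maps $\Delta_m\to\Delta_1$ ``in their natural order, the first constant $0$ and the last constant $1$'', where ``first/last'' refer to the $\Delta_1$-coordinates produced by the iterated composition, which come in the \emph{reverse} order of the arrows --- in terms of arrows this is exactly $\phi_i=\chi_i$ above --- and then the face compatibility is verified by an explicit diagram using the diagonal constraint of $\uDef_Y$. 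Your points (i) and (iii) are likewise only asserted; (iii) in particular (compatibility with the operadic end relations and with the $\inert$/multi-morphism structure) needs the same kind of explicit check, which becomes straightforward once the staircase is corrected.
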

\begin{proof}
An element $\mu \in \Delta_n \times \Delta_1 \to \underline{\mathrm{Def}}_Y(\mathcal{C}, \mathcal{D})$  is given by morphisms  
\begin{equation} \label{eqinput} \mu_{c,c'}:  \uHom_{\mathcal{C}}(c, c') \to  \uHom_{\mathcal{D}}(\Delta_1 \times \Delta_n \times Y(c), Y(c'))  \end{equation}
compatible with their projections to $\Hom_I(p(c), p(c'))$ and constant equal to $Y$ over identities, 
compatible with composition (via the diagonals $\Delta_1 \times \Delta_n \to (\Delta_1 \times \Delta_n)^2$),
and specialising to $Y_0$, resp.\@ $Y_1$ when restricted to $\{0\}$ and $\{1\}$ (constant in $\Delta_n$).

It is mapped to the following element $\exp(\mu) \in \uCoh_I(Y_0, Y_1)_{[n]}$ of the (enriched) end: It maps the collection
\begin{equation}\label{eqinput2} \Delta_m \times c \to c_0, \Delta_m \times c_0 \to c_1, \dots,  \Delta_m \times c_m \to c' \end{equation}
of morphisms in $\mathcal{C}$
 to the following morphism
\[ \Delta_n \times \Delta_m \times Y_0(c)=Y(c) \to Y_1(c')=Y(c'). \]
Inserting the morphisms (\ref{eqinput2}) into (\ref{eqinput}) gives rise to a collection
\[ \Delta_1 \times \Delta_n \times \Delta_m \times Y(c_i) \to Y(c_{i+1}) \]
Composing gives a morphism
\[ (\Delta_1)^{m+2} \times (\Delta_n)^{m+2} \times (\Delta_m)^{m+2} \times Y(c) \to Y(c'). \]
This yields a morphism
\[ \Delta_n \times \Delta_m \times Y(c) \to Y(c') \]
mapping $\Delta_n$ to $(\Delta_n)^{m+2}$ diagonally, $\Delta_m$ to $(\Delta_m)^{m+2}$ diagonally
and to $(\Delta_1)^{m+2}$ via the $m+2$ morphisms $\Delta_m \to \Delta_1$ in their natural order, the first being constant 0 and the last constant 1. 

We have to show that this is compatible with face maps. 
We have a commutative diagram for $j=0, \dots, m$ (omitting for simplicity the $\Delta_n$-factor):
\begin{gather*} \xymatrix{ 
\Delta_{m-1} \times G(c) \ar@{^{(}->}[d]^{\delta_{j}} \ar[r] &   \Delta_1^{j} \times \Delta_1 \times \Delta_1^{m-j} \times G(c) \ar@{^{(}->}[d]^{\delta}  \ar[r] &  \cdots \ar[r] &  \\
\Delta_{m} \times G(c) \ar[r] & \Delta_1^{j} \times \Delta_1^2 \times \Delta_1^{m-j}    \times G(c) \ar[r] & \cdots \ar[r] & \\
} \\
 \footnotesize \xymatrix{ 
 \Delta_1^{j} \times  \Delta_1 \times G(c_{m-j-1})  \ar@{^{(}->}[d]^{\delta} \ar@{}[rrd]|{\mathcircled{1}} \ar[rr] &  & \Delta_1^{j}  \times G(c_{m-j+1}) \ar@{=}[d] \ar[r] & \cdots \ar[r] & G(c') \ar@{=}[d]  \\
  \Delta_1^{j}  \times \Delta_1^2 \times G(c_{m-j-1})  \ar[r] & \Delta_1^{j}  \times \Delta_1 \times G(c_{m-j})  \ar[r] & \Delta_1^{j} \times G(c_{m-j+1})  \ar[r] & \cdots \ar[r] & G(c') \\
} 
\end{gather*}
with the diagonal $\delta: \Delta_1 \to \Delta_1^2$, setting $c_{-1} = c$ and $c_{n+1} = c'$, in which the square $\mathcircled{1}$ is commutative by definition of $\underline{\mathrm{Def}}_Y$ (cf.\@ Definition~\ref{DEFNAIVE}). 
It is clear that the maps fulfill the defining compatibilies of the enriched end.
\end{proof}

\subsection{Composition of coherent transformations}\label{SECTCOMPCOH}

\begin{PAR}\label{PARCOMPCOH}
First we discuss the composition of coherent transformation --- for simplicity --- in the case in which the simplicial structure on the source is discrete. 
In this case two coherent transformation $f: X \Rightarrow Y$ and $g: Y \Rightarrow Z$ are determined by maps that associate to an $n$-simplex of the nerve of the following form
\[ \mu: i = i_0 \to \dots \to i_n = i' \]
elements
\[ f(\mu) \in \uHom(X(i), Y(i'))_{[n]}  \qquad   g(\mu) \in \uHom(Y(i), Z(i'))_{[n]}  \]
and it is not immediately possible to compose them. This is different after applying the AW components $\delta_l: [i] \to [n]$ and $\delta_r: [{n-i}] \to [{n}]$ which yields pairs:
\[ \delta_l \mu = (i=i_0 \to \cdots \to i_i \to i')  \qquad \delta_r \mu = (i \to i_i \to \cdots \to i_n = i')   \]
so, by the relations in the relative end, thus
\[ f(\mu)_i = f(\mu_i) \circ X(i_i \to i') \qquad g(\mu)_j = Y(i \to i_i) \circ g(\mu_j)  \]
where $\mu_i = i=i_0 \to \cdots \to  i_i = i_i$ and $\mu_j = i_i \to i_i \to \cdots \to i_n = i'$.
Now $f(\mu_i)$ and $g(\mu_j)$ are elements in $\uHom(X(i), Y(i_i))_{[i]}$, and $\uHom(Y(i_i), Z(i'))_{[n-i]}$, respectively.
 In the Abelian case, we can
compose with the Eilenberg-Zilber map and then compose
\begin{gather*} \Hom(X(i), Y(i_i))_i \otimes \Hom(Y(i_i), Z(i'))_j \\ \to 
 \Hom(X(i), Y(i_i))_n \otimes \Hom(Y(i_i), Z(i'))_n \to \Hom(X(i), Y(i'))_n. 
\end{gather*}
(Note that in case of the enrichment $F \Aw \Hom_{(\mathcal{C}^{\Delta^{\op}}, \tildeotimes)^{\vee}}^{\tildeotimes}(X, Y)$  the composition morphism even factors over $\Hom(X(i_0), Y(i_i))_i \times \Hom(Y(i_i), Z(i_n))_j$,  cf.\@ Lemma~\ref{LEMMAYONEDA}).
\end{PAR}

To discuss this a bit more conceptually, and without the assumption that simplicial structure on the source is discrete, first a Proposition:

\begin{PROP}\label{PROPCOMPCOHERENT}
The simplicial object $\widehat{\uHom}_{\nu}(c, d)$ (cf.\@ Definition~\ref{DEFHOMHAT}) has the following property: There is a morphism
\[ \rho:  \dec^* \widehat{\uHom}_{\tau}(c, e)  \to \int^d  \widehat{\uHom}_{\nu}(c, d') \boxtimes \widehat{\uHom}_{\mu}(d, e)   \]
(where the coend is the enriched coend) which is an isomorphism, if the structure is discrete.  Here the RHS is an enriched colimit over: 
\[ \xymatrix{ {}^{\uparrow} d  & p(d) \ar[r]^{\mu}   & p(e)     \\
{}^{\downarrow} d'  \ar[u]   & p(d')  \ar[u] & p(c) \ar[u]^{\tau}  \ar[l]^{\nu}  } \]

\end{PROP}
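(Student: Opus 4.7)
The plan is to construct $\rho$ by a canonical splitting of strings of active morphisms at a distinguished position, with the ``extra'' morphism produced by $\dec$ playing the role of the bridge $d \to d'$ that indexes the coend.

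First I would unravel what $\dec^*\widehat{\uHom}_\tau(c,e)$ looks like at bidegree $([i],[j])$: since $\dec([i],[j]) = [i]\ast[j] = [i+j+1]$, it is $\widehat{\uHom}_\tau(c,e)_{[i+j+1]}$, i.e., (using Definition~\ref{DEFHOMHAT}) the collection of tuples of intermediate objects $c_0,\dots,c_{i+j+1}$ together with $[i+j+1]$-simplices of $\uHom_{\mathcal{C}_{\act}}(c,c_0), \uHom_{\mathcal{C}_{\act}}(c_0,c_1), \dots, \uHom_{\mathcal{C}_{\act}}(c_{i+j+1},e)$ whose composite lies over $\tau$. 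I would then split this tuple at position $i+1$: setting $d':= c_{i+1}$ and $d:= c_i$, pulling the first $i+2$ simplices back along the front face $[i]\hookrightarrow[i+j+1]$ gives an element of $\widehat{\uHom}_{\nu}(c,d')_{[i]}$ for some active $\nu$; pulling the last $j+2$ simplices back along the back face $[j]\hookrightarrow [i+j+1]$ gives an element of $\widehat{\uHom}_{\mu}(d,e)_{[j]}$ for some active $\mu$; and the bridge simplex $c_i\to c_{i+1}$ at full level $[i+j+1]$ produces a morphism $d\to d'$ in $\mathcal{C}$ compatible with $\nu,\mu,\tau$, which is exactly an object of the indexing category of the enriched coend (as drawn in the statement). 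This assembles to the desired $\rho$; naturality in the Day-type structure of $\widehat{\uHom}$ and the compatibility with the coend relations follow formally from the functoriality of the face maps $[i],[j]\hookrightarrow [i+j+1]$.

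For the isomorphism claim in the discrete case, I would write down an inverse. In the discrete setting, $\widehat{\uHom}_\nu(c,d')_{[i]}$ is just the set of strings $c\to c_0^L\to\cdots\to c_i^L\to d'$ of active morphisms composing to $\nu$, and similarly for $\widehat{\uHom}_\mu(d,e)_{[j]}$. Given a representative $(\varphi,\psi,\alpha\colon d\to d')$ of a coend class, concatenate to form the string
$c\to c_0^L\to\cdots\to c_i^L\to d' = c_{i+1}$ and $d = c_i\to c_0^R\to\cdots\to c_j^R\to e$ by identifying $c_i := d$, $c_{i+1}:=d'$, and taking the middle morphism $c_i\to c_{i+1}$ to be $\alpha$. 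One then verifies that the coend relations — which identify $(\varphi\circ\beta,\psi,\alpha')$ with $(\varphi,\beta\circ\psi,\alpha)$ for a compatible morphism $\beta$ — translate exactly into the freedom of choosing $(c_i,c_{i+1})$ and the bridge morphism in the reconstructed string, so that the inverse is well-defined on coend classes. A direct check then shows $\rho$ and this assignment are mutually inverse.

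The only subtle point is the well-definedness on coend classes and, in the general (enriched) case, the fact that $\rho$ lands in (rather than being an isomorphism onto) the enriched coend: I expect the main obstacle to be articulating cleanly why the morphism $\rho$ is compatible with the enriched coend structure in a general simplicially enriched category, since the $\uHom_{\mathcal{C}_{\act}}$ factors at level $[i+j+1]$ cannot be split as products of factors at levels $[i]$ and $[j]$ — this precisely explains why $\rho$ is only an isomorphism in the discrete case. In the enriched case, the construction still produces a canonical map (by the universal property of the coend together with the face-map restrictions), but identifying its image requires no more than the existence statement.
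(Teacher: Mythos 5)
Your overall strategy (evaluate $\dec^*$ at bidegree $([i],[j])$, split the chain of composable Hom-factors, restrict along the two extremal faces, and observe that the restriction is the only non-invertible step) is close in spirit to the paper's, but the central step — actually producing an element of the coend — does not work as you describe. A simple count shows why: an element of $\widehat{\uHom}_\tau(c,e)_{[i+j+1]}$ involves $i+j+3$ composable Hom-factors, while a pair in $\widehat{\uHom}_\nu(c,d')_{[i]}\times\widehat{\uHom}_\mu(d,e)_{[j]}$ involves $(i+2)+(j+2)=i+j+4$ of them, so no overlap-free splitting exists; your fix is to let the middle factor $\uHom(c_i,c_{i+1})$ serve in both halves and to use it as the ``bridge''. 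But the data you obtain has the wrong shape for the coend: at an arrow $\alpha\colon d'\to d$ of the indexing category in the statement, the functor's value is $\widehat{\uHom}_\nu(c,d')\boxtimes\widehat{\uHom}_\mu(d,e)$, i.e.\@ the first half must \emph{end at the source} of the bridge and the second half must \emph{start at its target}; your first half ends at $c_{i+1}$ and your second half starts at $c_i$, while the only available morphism goes $c_i\to c_{i+1}$ — exactly backwards. Consequently your triple $(\varphi,\psi,\alpha\colon d\to d')$ is not an element of any term of the colimit diagram, your proposed inverse does not splice (the reconstructed string would need a morphism out of $d'=c_{i+1}$, and inserting $\alpha$ as an extra middle arrow produces strings of the wrong length $i+j+4$), and the ``coend relations'' you invoke are not the actual ones, which identify $(\alpha_*x,y)$ with $(x,\alpha^*y)$. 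Also, in the enriched case an $[i+j+1]$-simplex of $\uHom(c_i,c_{i+1})$ is not a morphism of $\mathcal{C}$, and appealing to the universal property of the coend cannot help you map \emph{into} it — that is precisely where well-definedness must be proved.

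The paper resolves exactly this point with the relative co-Yoneda lemma (Proposition~\ref{PROPYONEDA}, 2): it rewrites the middle factor $\uHom(c_i,d_0)$ as $\int^d\uHom(c_i,d)\times\uHom(d,d_0)$, introducing a genuinely new coend variable $d$ through which the chain is split, so that both halves share the object $d$ (a diagonal representative), the one-factor deficit in the count is supplied, and well-definedness is automatic because this step is an isomorphism. Only then does it apply the Alexander--Whitney face restrictions $\delta_l,\delta_r$ lowering the simplicial levels from $[i+j+1]$ to $[i]$ and $[j]$; this restriction is the map $\rho$, and it is an isomorphism in the discrete case (your intuition for why the enriched case fails to be an isomorphism is correct and agrees with the paper). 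If you want to salvage your picture, note that your splitting corresponds to the co-Yoneda representative in which the middle morphism is paired with an identity; making that precise is exactly the co-Yoneda step you omitted, together with a verification (or, better, the formal isomorphism) that the different paddings at $d=c_i$ and $d=c_{i+1}$ give the same coend class.
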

\begin{proof}
The LHS at $i, j$ is a union of
\begin{gather*} \cong \coprod_{c_0, \dots, c_i, d_0, \dots, d_j} \uHom_{\tau_0}(c, c_0)_n \times \uHom_{\tau_1}(c_1, c_2)_n \times \cdots \\
 \times \uHom_{\tau_i}(c_i, d_0)_n \times  \uHom_{\tau_{i+1}}(d_1, d_2)_n \times \cdots \times \uHom_{\tau_n}(d_n, e)_n      
\end{gather*} 
over all decompositions of $\tau$, which is
\begin{gather*} = \int^d  \coprod_{c_0, \dots, c_i, d_0, \dots, d_j} \uHom_{\nu_0}(c, c_0)_n \times \uHom_{\nu_1}(c_1, c_2)_n \times \cdots \\ \times \uHom_{\nu_i}(c_i, d)_n \times \uHom_{\mu_{0}}(d, d_0)_n \times \uHom_{\mu_{1}}(d_1, d_2)_n \times \cdots \times \uHom_{\mu_{j}}(d_n, e)_n 
\end{gather*}
by relative Yoneda, Proposition~\ref{PROPYONEDA}, 2. 
Applying the AW components $\delta_l, \delta_r$ (which is obviously an isomorphism  in the discrete case) gives:
\begin{gather*} \to \int^d \coprod_{c_0, \dots, c_i} \uHom_{\nu_0}(c, c_0)_i \times \uHom_{\nu_1}(c_1, c_2)_i  \times \cdots   \\
 \times \uHom_{\nu_i}(c_i, d)_i \times \coprod_{d_0, \dots, d_i}  \uHom_{\mu_{0}}(d, d_0)_j \times \uHom_{\mu_{1}}(d_1, d_2)_j \times \cdots \times 
\uHom_{\mu_j}(d_n, e)_j     \end{gather*}
Taking the union one arrives at
\[ \int^d  \widehat{\uHom}_{\nu}(c, d') \boxtimes \widehat{\uHom}_{\mu}(d, e) \tag*{\qedhere}  \]
\end{proof}

\begin{PAR}
Now we are in a position to formalize the composition discussed in \ref{PARCOMPCOH}: 
\begin{gather*} \uCoh_{I}(F, G) \times \uCoh_{I}(G, H) = \int_{c, d, e, f} \uHom_{\pi(\tau)}(\widehat{\uHom}_{\nu'}(c', d'), \uHom_{\nu}(F(c),G(d))) \times  \\
  \uHom_{\pi(\tau')}(\widehat{\uHom}_{\mu'}(e', f'), \uHom_{\mu}(G(e),H(f))) \\
\end{gather*}
where the end is a relative enriched end, an enriched limit over: 
\[ \xymatrix{ c \ar[d]   & d  & e \ar[d]  & f  & p(c) \ar[r]^{\nu} \ar[d]   & p(d)  & p(e) \ar[r]^{\mu} \ar[d]  & p(f)   \\
c'  & d' \ar[u] & e' & f'  \ar[u]  & p(c') \ar[r]^{\nu'} & p(d') \ar[u]^{\tau} & p(e') \ar[r]^{\mu'} & p(f')  \ar[u]^{\tau'}   } \]
This can be extended to an enriched limit over:
\[ \xymatrix{ c \ar[d]   & d  \ar[r] & e \ar[d]  & f   & p(c) \ar[r]^{\nu} \ar[d]   & p(d)   \ar[r]^{\rho} & p(e) \ar[r]^{\mu} \ar[d]  & p(f)   \\
c'  & d' \ar[u]  \ar[r] & e' & f'  \ar[u]  & p(c') \ar[r]^{\nu'} & p(d') \ar[u]^{\tau}  \ar[r]^{\rho'} & p(e') \ar[r]^{\mu'} & p(f')  \ar[u]^{\tau'}  } \]
And the map to 
\[ \xymatrix{ c \ar[d]   &  f   \\
c'  &  f'  \ar[u]   } \qquad 
\xymatrix{ p(c) \ar[r] \ar[d]  & p(f)   \\
p(c') \ar[r] & p(f')  \ar[u]   } \]
is a fibration.

We arrive at an end over 
\begin{gather*} 
 \to  \int_{c, d \to e, f} \uHom_{\pi(\tau)}(\widehat{\uHom}_{\nu'}(c', d') \times \widehat{\uHom}_{\mu'}(e', f'),\\
  \uHom_{\nu}(F(c),G(d)) \times \uHom_{\mu}(G(e),H(f)))  
\end{gather*}
where in the products $\times$ the factors are assembled using the new maps $\rho$ and $\rho'$. Using the composition in the enrichment of $\mathcal{D}$ this maps to:
\begin{gather*} 
\to \int_{c, f} \uHom_{\pi(\tau)}( \int^d \widehat{\uHom}_{\nu'}(c', d') \times \widehat{\uHom}_{\mu'}(d, f'),  \Hom_{\mu}(F(c),H(f))).
\end{gather*}
Composing with $\delta^* $ applied to the map
\[ \rho: \dec^* \widehat{\uHom}_{\nu'}(c, d') \to \int^d \widehat{\uHom}_{\nu'}(c, d') \boxtimes \widehat{\uHom}_{\mu'}(d, e) \]
of Proposition~\ref{PROPCOMPCOHERENT} we have a map to 
\begin{gather*} 
\to \int_{c, f} \uHom_{\pi(\tau)}( \delta^* \dec^* \widehat{\uHom}_{\nu'}(c', f'),  \uHom_{\mu}(F(c),H(f))).
\end{gather*}

Defining a cosimplicial simplicial  set by
\[ X_{n,m} :=  \int_{c, f} \Hom_{\pi(\tau)}( \widehat{\uHom}_{\nu'}(c', f')_n,   \uHom(F(c),H(f))_m )  \]
this can be writen as 
\[  \int_n \uHom(\delta^* \dec^* \Delta_n, X_{n,\bullet})  \]
(This is also
\[ = \int_n X_{\dec \delta n, m} \]
in degree 0. )

Hence to define a composition one must specify a morphism
\begin{equation} \label{eqdeltadec}  \int_n \uHom(\delta^* \dec^* \Delta_n, X_{n,\bullet})  \to \int_n \uHom(\Delta_n, X_{n,\bullet}).  \end{equation}

\begin{enumerate}
\item Case $\uHom = F \Aw \uHom^{\tildeotimes}_{\mathcal{C}^{\Delta^{\op}}, \tildeotimes}$ with $\mathcal{C}$ Abelian (i.e.\@ the usual simplicial enrichment of complexes):

Here $X_{n,\bullet}$ is of the form $F X_{n,\bullet}'$  where $X_{n,\bullet}'$ is a cosimplicial object in complexes of Abelian groups, and
\[ \int_n \uHom(\delta^* \dec^* \Delta_n, F X_{n,\bullet}')  = \int_n \uHom(\delta^* \dec^* \Z[ \Delta_n ], X_{n,\bullet}')  \]
and we may just compose with the ``EZ-counit'' (cf.\@ \ref{PARCONJ} and Definition~\ref{DEFAWEZ}): $\mlq c^{\op} \mrq: \delta^* \dec^* \Z[ \Delta_n ] \to \Z[ \Delta_n ]$.

This composition can be described differently.
In this case, we arrive at an end over 
\begin{gather*} 
 \to  \int_{c, d \to e, f} \uHom_{\pi(\tau)}(\Z[\widehat{\uHom}_{\nu'}(c', d')] \boxtimes \Z[\widehat{\uHom}_{\mu'}(e', f')], \\
 \uHom_{\nu}^{\tildeotimes}(F(c),G(d)) \boxtimes \uHom_{\mu}^{\tildeotimes}(G(e),H(f)))
\end{gather*}
and can apply $\dec_*$ to arrive at
\begin{gather*} 
 \to  \int_{c, d \to e, f} \uHom_{\pi(\tau)}(\dec_* \Z[\widehat{\uHom}_{\nu'}(c', d')] \boxtimes \Z[\widehat{\uHom}_{\mu'}(e', f')], \\ \uHom_{\nu}^{\tildeotimes}(F(c),G(d)) \tildeotimes \uHom_{\mu}^{\tildeotimes}(G(e),H(f)))
\end{gather*}
Now apply the composition in $\uHom^{\tildeotimes}_{\mathcal{C}^{\Delta^{\op}}, \tildeotimes}$ directly to arrive at
\begin{gather*} 
 \to  \int_{c, e} \uHom_{\pi(\tau)}(\int^d \dec_* \Z[\widehat{\uHom}_{\nu'}(c', d)] \boxtimes \Z[\widehat{\uHom}_{\mu'}(d', e')], \\
 \uHom_{\nu}^{\tildeotimes}(F(c),H(e)))
\end{gather*}
Then, as $\dec_* \cong \tot$ clearly commutes with colimits in the category of complexes of Abelian groups, using Proposition~\ref{PROPCOMPCOHERENT}, this maps to
\begin{gather*} 
 \to  \int_{c, e} \uHom_{\pi(\tau)}(\dec_* \dec^* \Z[\widehat{\uHom}_{\nu'}(c', e')], \uHom_{\nu}^{\tildeotimes}(F(c),H(e)))
\end{gather*}
and after compositing with the unit at
\begin{gather*} 
 \to  \int_{c, e} \uHom_{\pi(\tau)}(\Z[\widehat{\uHom}_{\nu'}(c', e')], \uHom_{\nu}^{\tildeotimes}(F(c),H(e)))
\end{gather*}
It is an exercise to see that this yields the same composition, and also that it is the abstract description of the construction in \ref{PARCOMPCOH}. 

\item General case. 
In  \cite[Theorem~4.4]{CP97} you may find a construction of a map (\ref{eqdeltadec}), if $X_{n,\bullet}$ is (point-wise in $n$) a weak Kan complex (i.e.\@ a quasi-category). 
Since the ``EZ-counit'' always exists (without passing to the homotopy category) {\em on symmetric objects}, i.e.\@ in objects in the image of $\iota^*$, one could try to proceed as follows:
Resolve the canonical cosimplicial object $\Delta_n \hookrightarrow \iota^* \Delta_{n,s}$ into something symmetric, e.g.\@ $\Delta_n \hookrightarrow \iota^* \mathfrak{C} \Delta_n$ (Lemma~\ref{LEMMASYM}).
If this can be done in such a way that there is a lift (functorial in the cosimplicial direction)
\[ \xymatrix{ \delta^* \dec^* \Delta_n \ar[r]  \ar@{^{(}->}[d] & X_{n,\bullet} \\
\delta^* \dec^* \iota^* \Delta_{n,s} \ar@{.>}[ru]
 } \]
 assuming, say,  that $X_{n,\bullet}$ is a (weak) Kan complex, then we can compose the lifted morphism with the  ``EZ-counit'' also here
 \[ \xymatrix{  \Delta_n \ar@{^{(}->}[r] & \iota^* \Delta_{n,s} \ar[r]_-c \ar@/^20pt/[rr]^-{\mlq c \mrq}  & \iota^* \delta_s^* \dec_s^*  \Delta_{n,s} \ar[r] & \delta^* \dec^* \iota^* \Delta_{n,s} }  \]
  to get the morphism 
 \[  \int_n \uHom(\delta^* \dec^* \Delta_n, X_{n,\bullet})  \to \int_n \uHom(\Delta_n, X_{n,\bullet}).  \]
 I ignore whether this is possible.  Notice that {\em point-wise (fixing $n$)} such a lift exists, if we take $\Delta_{n,s} := \mathfrak{C} \Delta_n$ (Lemma~\ref{LEMMASYM}) and if $X_{n,\bullet}$ is a Kan complex, for the morphism $\delta^* \dec^* \Delta_n \to \delta^* \dec^* \iota^* \Delta_{n,s}$ is clearly injective and a weak equivalence because $\delta^* \dec^*$ preserves weak equivalences by Lemma~\ref{LEMMADELTADEC}. It is therefore a trivial cofibration. 
\end{enumerate}
\end{PAR}

\subsection{Explicit formul\ae}

The following gives explicit formulas for $\dec_!$ and $\dec_*$ in the general case. 
If $\mathcal{C}$ is Abelian, they can be simplified, cf.\@ Proposition~\ref{PROPEXPLICITAB}.

\begin{PROP}\label{PROPEXPLICIT}
Let $\mathcal{C}$ be a ($\infty$-)category. 
\begin{enumerate}
\item Assume that $\mathcal{C}$ is finite cocomplete. Then a left  adjoint $\dec_!$ of $\dec^*$ exists and is given point-wise by:
\[ (\dec_! X)_n = \coprod_{\substack{[i] \ast [j] = [n] \\ [i],[j] \in \Delta^{\op}_{\emptyset}}} (i_! X)_{i,j} \]
where $(i_! X)_{i,j} = X_{i,j}$ for $[i] \not= \emptyset$ and $[j] \not= \emptyset$ and $(i_! X)_{i,\emptyset} = \colim X_{i,-}$ and the same switched.
\item Assume that $\mathcal{C}$ is finite complete. Then a right adjoint $\dec_*$ of $\dec^*$, which is often called the {\bf Artin-Mazur codiagonal} or, when $\mathcal{C}=\Set$, the {\bf total simplicial set}, exists, and is point-wise given by:
\begin{gather*} (\dec_* X)_{[n]} = \\ 
 \lim \left( \vcenter{ 
\xymatrix@C=1.5pc{
X_{[n],[0]} \ar[rd] & & \ar[ld] X_{[n-1],[1]}   \ar[rd] & \ \cdots \  &   X_{[0],[n]}  \ar[ld] \\
& X_{[n-1],[0]} &  &\  \cdots \  
}
} \right)  \end{gather*}
where the maps are $(\id, \delta_0): X_{[i],[n-i]} \to X_{[i],[n-i-1]}$ and $(\delta_{i+1}, \id): X_{[i+1],[n-i-1]} \to X_{[i],[n-i-1]}$.
\end{enumerate}
\end{PROP}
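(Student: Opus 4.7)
Both parts apply Kan's pointwise formula along $\dec\colon \Delta^{\op}\times\Delta^{\op} \to \Delta^{\op}$, so the work is the analysis of the comma categories $\dec \downarrow [n]$ (for $\dec_!$) and $[n]/\dec$ (for $\dec_*$), whose objects are triples $(i, j, \alpha)$ with $\alpha$ order-preserving in $\Delta$ (in opposite directions for the two parts).

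For (1), I decompose $\dec \downarrow [n]$ using the \emph{splitting value} $a := \max\{k \in \{-1, \ldots, n\} : \alpha(k) \le i\}$ (with the convention $\alpha(-1) := -\infty$). Since every morphism in this comma category is given by $(\beta_1, \beta_2)$ in $\Delta$ with $\alpha = (\beta_1 \ast \beta_2) \circ \alpha'$, and $\beta_1 \ast \beta_2$ respects the partition of $[i+j+1]$ into $[i]$ and the shifted $[j]$, the splitting value is invariant along morphisms, yielding $\dec\downarrow[n] = \coprod_{a=-1}^{n} \mathcal{F}_a$. For $a \in \{0, \ldots, n-1\}$ the triple $(a, n-a-1, \id_{[n]})$ is initial in $\mathcal{F}_a$ (using $[n] = [a] \ast [n-a-1]$), contributing $X_{[a],[n-a-1]}$. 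For $a = -1$ (resp.\@ $a = n$) the assignment $i' \mapsto (i', n, k \mapsto k + i' + 1)$ exhibits an $\infty$-cofinal $\Delta^{\op}$-shaped subdiagram in $\mathcal{F}_{-1}$, contributing $\colim_{\Delta^{\op}} X_{-,n} = (i_!X)_{-1,n}$, and symmetrically for $a = n$. Summing gives the stated coproduct.

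For (2), the subtle point is that the natural skeleton $\mathrm{sk} \subseteq [n]/\dec$ whose objects $O_a := (a, n-a, s_a)$ and $E_a := (a, n-a-1, \id_{[n]})$ reproduce exactly the displayed zigzag (with $O_a \to E_a$ via $(\id,\delta_0)$ and $O_{a+1} \to E_a$ via $(\delta_{a+1}, \id)$) is \emph{not} initial in $[n]/\dec$: a morphism from a general $(i, j, \gamma)$ into $\mathrm{sk}$ forces $\gamma$ to be surjective. I therefore exhibit the restriction $\rho\colon \lim_{[n]/\dec} X \to \lim_{\mathrm{sk}} X$ as a bijection directly. The key combinatorial input is: for every $(i, j, \gamma)$ and every $a \in [\gamma(i), \gamma(i+1)]$ there is a \emph{unique} morphism $O_a \to (i, j, \gamma)$ in $[n]/\dec$, given by $\beta_1 = \gamma|_{[i]}$ and $\beta_2(l) = \gamma(l+i+1) - a$; similarly a unique $E_a \to (i, j, \gamma)$ for $a \in [\gamma(i), \gamma(i+1) - 1]$, and these fit into the evident commutative triangles with the skeleton morphisms. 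Injectivity of $\rho$ is then forced, as $\xi_{(i,j,\gamma)}$ must equal $X(\beta_1, \beta_2)(\xi_{O_a})$ for any such $a$.

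For surjectivity, given a zigzag-compatible tuple $(x_a \in X_{[a],[n-a]},\, y_a \in X_{[a],[n-a-1]})$ I set $\xi_{(i,j,\gamma)} := X(\beta_1, \beta_2)(x_{\gamma(i)})$ and verify: (a) independence of $a \in [\gamma(i), \gamma(i+1)]$, which reduces to the identity $X((\id,\delta_0))(x_a) = y_a = X((\delta_{a+1}, \id))(x_{a+1})$ via the factorizations $O_a, O_{a+1} \to E_a \to (i, j, \gamma)$; and (b) compatibility with a general $\phi\colon (i, j, \gamma) \to (i', j', \gamma')$ with underlying $(\tau_1, \tau_2)$, where the relation $\gamma' = \gamma \circ (\tau_1 \ast \tau_2)$ and order-preservation of $\tau_1, \tau_2, \gamma$ give $\gamma'(i') \le \gamma(i) \le \gamma'(i' + 1)$, so the composition $O_{\gamma(i)} \to (i, j, \gamma) \xrightarrow{\phi} (i', j', \gamma')$ is itself a canonical morphism of the same form and, by (a), pushes $x_{\gamma(i)}$ to $\xi_{(i', j', \gamma')}$. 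The main obstacle is the combinatorial bookkeeping establishing the uniqueness and explicit formulas of the canonical morphisms $O_a, E_a \to (i, j, \gamma)$ together with the boundary cases $a \in \{-1, n\}$ in part (1); once those formulas are in hand, everything else is forced.
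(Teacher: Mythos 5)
Your proof is correct in substance for the 1-categorical statement, but it takes a different route from the paper, and as written it only covers part of the claim. For (1) the paper argues abstractly: it writes $\dec_! = \iota^* \dec_{\emptyset,!} (\iota,\iota)_!$ (Lemma~\ref{LEMMAEXACT4}) and uses that $\dec_{\emptyset}$ is a cofibration with \emph{discrete} fibers (Lemma~\ref{LEMMACOF}), so that $\dec_{\emptyset,!}$ is a fiberwise coproduct indexed by the decompositions $[n]=[i]\ast[j]$ with $[i],[j]\in\Delta_{\emptyset}$. Your splitting-value decomposition of $\dec\downarrow[n]$ is exactly this discreteness unpacked by hand, so the two proofs buy the same thing; yours is more elementary, the paper's reuses general lemmas. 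Two small flags: under the standard orientation of the comma category for a \emph{left} Kan extension, the distinguished object $([a],[n-a-1],\id_{[n]})$ of $\mathcal{F}_a$ is \emph{terminal} (every object admits a unique map \emph{to} it, given by the splitting of $\alpha$), not initial, and terminality is what you need for $\colim_{\mathcal{F}_a}X \cong X_{[a],[n-a-1]}$; and the $\infty$-cofinality of the $\Delta^{\op}$-strand in $\mathcal{F}_{-1}$, $\mathcal{F}_{n}$ is asserted rather than proved (it is true, since e.g.\@ $\mathcal{F}_{-1}\cong \Delta^{\op}\times\mathcal{G}$ with $\mathcal{G}$ possessing a terminal object, because the $\beta_1$-component of a morphism in $\mathcal{F}_{-1}$ is unconstrained).

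For (2) the genuine issue is the $\infty$-categorical half of the statement. The paper proves that the zigzag maps \emph{$\infty$-finally} to the comma category $[n]\times_{/\Delta^{\op}}(\Delta^{\op})^2$ (factoring through the poset of those $[n]\leftarrow[i]\ast[j]$ restricting to faces on both blocks, whose inclusion has a right adjoint via epi--mono factorization, and then checking the remaining comma posets are connected, hence contractible, intervals of the zigzag). Your argument instead produces an explicit bijection $\lim_{[n]/\dec}X\to\lim_{\mathrm{sk}}X$ on (generalized) elements; this is a valid and complete proof for finitely complete 1-categories, but a bijection of $\Set$-valued limits does not establish $\infty$-finality, so it does not prove the proposition as stated for $\infty$-categories. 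The good news is that your key combinatorial input already contains the missing step: the uniqueness and existence ranges of the canonical morphisms $O_a\to(i,j,\gamma)$ for $a\in[\gamma(i),\gamma(i+1)]$ and $E_a\to(i,j,\gamma)$ for $a\in[\gamma(i),\gamma(i+1)-1]$ say precisely that the comma category of the zigzag over $(i,j,\gamma)$ is the zigzag interval between positions $\gamma(i)$ and $\gamma(i+1)$, which is contractible; recording that observation (instead of, or in addition to, the hand-made inverse) upgrades your proof to the $\infty$-case and essentially recovers the paper's finality argument.
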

\begin{proof}
1.\@ follows from $\dec_{\emptyset}$ being a cofibration with discrete fibers (cf.\@ Lemma~\ref{LEMMACOF}) and the formula $\dec_! = \iota^* \dec_{\emptyset,!} (\iota, \iota)_!$ (Lemma~\ref{LEMMAEXACT4}).  
For 2.\@ we have to establish a final functor
\[     Z \to  [n]\times_{/\Delta^{\op}}  (\Delta^{\op})^2    \]
where $Z$ is the zig-zag shaped category depicted in the statement. This can be done by mapping
$(i,j), i+j=n$ to the obvious composition $[n] \cong [i] \ast' [j] \twoheadleftarrow [i] \ast [j]$ and
 $(i,j), i+j=n-1$ to the isomorphism $[n] \cong [i] \ast [j]$. Morphisms are mapped to the obvious face maps. To 
see that it is final, factor
\[ Z \to ([n] \times_{/\Delta^{\op}} (\Delta^{\op})^2 )' \to [n] \times_{/\Delta^{\op}} (\Delta^{\op})^2 \]
where $([n] \times_{/\Delta^{\op}} (\Delta^{\op})^2 )'$ is the full subcategory of those morphisms $\alpha: [n] \leftarrow [i] \ast [j]$ such that the composition with the faces $[i] \ast [j] \hookleftarrow [i]$ and
$[i] \ast [j] \hookleftarrow [j]$ is a face. Notice that this is a poset. 
This inclusion has a right adjoint given by factoring $[n] \leftarrow [i] \ast [j]$ into $[n] \hookleftarrow [i'] \ast [j'] \twoheadleftarrow [i] \ast [j]$, the  epi-mono factorization, and is thus $\infty$-final. 
The posets
\[ \alpha  \times_{ / ([n] \times_{/\Delta^{\op}} (\Delta^{\op})^2 )' } Z   \]
are all isomorphic to connected subposets of $Z$ which are obviously contractible. 
 \end{proof}

\section{Abelian foundations}


\subsection{Dold-Kan}\label{SECTDOLDKAN}

\begin{SATZ}[Dold-Kan]
For an Abelian category $\mathcal{C}$, there is an equivalence of categories
\[ \Ch_{\ge 0}(\mathcal{C}) \cong \mathcal{C}^{\Delta^{\op}} . \]
\end{SATZ}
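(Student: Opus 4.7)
The plan is to construct mutually quasi-inverse additive functors
\[ N : \mathcal{C}^{\Delta^{\op}} \longleftrightarrow \Ch_{\ge 0}(\mathcal{C}) : K, \]
verifying the equivalence with the Eilenberg--Zilber decomposition of Lemma~\ref{LEMMAEZ2} doing most of the combinatorial work. In this way the only real input is the non-abelian Eilenberg--Zilber lemma already in hand, and essentially no alternating-sum manipulations are needed.

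For the construction of $N$ (the \emph{normalized Moore complex}) I would set $N(X)_n := X_n^{\nd} = X_n/F^{n-1}X_n$ in the notation of \ref{PARFILT}. Modulo $F^{n-1}$, all face maps $X(\delta_i)$ with $i < n$ factor through a degeneracy by the simplicial identities and so vanish; only $X(\delta_n)$ survives, and this is the required differential. The identity $d^2 = 0$ drops out automatically and no signs are needed. (Equivalently one may use $N(X)_n \cong \bigcap_{i<n}\ker X(\delta_i)$, the two being naturally isomorphic via inclusion-then-projection.) For $K$ I would take the Dold--Puppe formula
\[ K(A)_n := \bigoplus_{s:[n]\twoheadrightarrow[k]} A_k, \]
equipped with the following simplicial structure: for $\alpha : [m] \to [n]$ and the summand indexed by $s$, take the unique epi-mono factorization $s\alpha = \nu s'$ in $\Delta$, and map this summand to the summand indexed by $s'$ via $\id$ if $\nu = \id$, via $d$ if $\nu$ is the face $\delta_k$ omitting the top vertex, and via $0$ otherwise. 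The simplicial identities for $K(A)$ then follow from an elementary compatibility of epi-mono factorizations.

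That $N \circ K \cong \id$ is then immediate, since every summand of $K(A)_n$ except the one indexed by $\id_{[n]}$ lies in $F^{n-1}$, and the surviving differential is $d$ by construction. For $K \circ N \cong \id$, Lemma~\ref{LEMMAEZ2} already supplies a canonical natural isomorphism of underlying objects
\[ X_n \;\cong\; \bigoplus_{s:[n]\twoheadrightarrow[k]} X_k^{\nd} \;=\; \bigoplus_{s} N(X)_k, \]
so the only remaining task is to check that the simplicial structure on $X$ translates into the Dold--Puppe recipe under this isomorphism.

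The main obstacle is precisely this last compatibility. It suffices by the simplicial identities to handle a single face $\alpha = \delta_i$ and a non-degenerate summand indexed by $s:[n]\twoheadrightarrow[k]$; the image under $X(\delta_i)$ must be decomposed back into non-degenerate pieces in $X_{n-1}$, and by the uniqueness of the epi-mono factorization of $s\delta_i = \nu s'$ (together with Lemma~\ref{LEMMAEZ1}, which guarantees that sections of degeneracies detect the filtration $F^{\bullet}$) there are exactly three possibilities for $\nu$: the identity (giving the $\id$ case of $K$), the top face $\delta_k$ (giving the differential case), or a face picking up a new degeneracy (giving the zero case). Verifying this trichotomy by direct inspection of factorizations in $\Delta$ is the only genuine calculation required, and once done the whole theorem reduces to the single combinatorial statement of Lemma~\ref{LEMMAEZ2}.
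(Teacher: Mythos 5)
Your route is the classical Dold--Puppe one (normalized Moore complex $N$ plus an explicitly constructed inverse $K$), which is genuinely different from the paper's proof: the paper never builds an inverse by hand, but realizes $\Z[\Delta^{\op}]$ and $\mathcal{CH}_{\ge 0}$ inside $\Ch_{\ge 0}(\Ab^{\fg})$ via the cosimplicial object $\Delta^{\circ}$, proves $\Delta^{\circ,\nd}_n \cong D_n$, and concludes by Karoubi closure resp.\ nerve--realization density, precisely to avoid the simplicial-identity bookkeeping your plan requires. As written, however, two of your steps do not hold. First, the description of the differential on $N(X)_n := X_n/F^{n-1}X_n$ is wrong: it is not true that the faces $X(\delta_i)$, $i<n$, vanish modulo $F^{n-1}$, and in fact no single face even descends to the quotient, since $X(\delta_i)X(s_i) = \id$ shows $X(\delta_i)$ does not carry $F^{n-1}X_n$ into $F^{n-2}X_{n-1}$. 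Concretely, for $X = \Z[\Delta_1]$ the face $X(\delta_0)$ sends the non-degenerate $1$-simplex to a non-degenerate $0$-simplex. What descends is the alternating sum $\sum_i (-1)^i X(\delta_i)$ (this is exactly the paper's functor $\Gamma$); if you want a single surviving face you must instead use the subcomplex $\bigcap_{i<n}\ker X(\delta_i)$, where the claim is true by definition, and then the comparison isomorphism with the quotient reintroduces the sign bookkeeping you claim to avoid.

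Second, for $K N \cong \id$ you need strictly more than Lemma~\ref{LEMMAEZ2}. That lemma only identifies the associated graded $\gr^{\bullet} X_n$ of the skeletal filtration; it does not produce a splitting of $X_n$, let alone a natural one, and in a general Abelian category such a filtration has no canonical splitting (the paper itself calls the resulting decomposition of $\Delta^{\circ}_m$ non-canonical and has to invoke projectivity of the $D_n$ there). What your argument actually requires is the classical decomposition lemma: the maps $X(s)$, applied to the subcomplex version of $N(X)_k$, assemble to a \emph{natural} isomorphism $\bigoplus_{s:[n]\twoheadrightarrow[k]} N(X)_k \to X_n$. Naturality is essential, since $KN \cong \id$ must be an isomorphism of functors, and it is exactly through this specific splitting that your epi-mono trichotomy for the simplicial structure can be checked. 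This statement is provable by an induction using retractions as in Lemma~\ref{LEMMAEZ1}, but it is the genuine core of the classical proof and is not ``already supplied'' by Lemma~\ref{LEMMAEZ2}; with it in place (and the corrected differential) the rest of your argument does go through.
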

One elegant way of formulating this starts with the observation that functors
\[\Delta^{\op} \to \mathcal{C} \]
are the same as additive functors
\[ \Z[\Delta^{\op}] \to \mathcal{C} \]
and, since in $\mathcal{C}$ all idempotents split, the same as additive functors
\[ \Z[\Delta^{\op}]^{\Kar} \to \mathcal{C} \]
where $\Z[\Delta^{\op}]^{\Kar}$ is the Karoubi closure, in which splittings for all idempotents are formally adjoined to $\Z[\Delta^{\op}]$.
Similarly, $\Ch_{\ge 0}(\mathcal{C})$ is the same as the category of additive functors
\[ \mathcal{CH}_{\ge 0} \to \mathcal{C} \]
where $\mathcal{CH}_{\ge 0}$ is the additive category consisting of finite coproducts (in fact biproducts) of objects $D_i$, $i \ge 0$ with 
\begin{eqnarray} \label{eqd} \Hom(D_i, D_j) = \begin{cases} \Z & \text{if } i \in \{ j, j+1\} \\ 0 & \text{else} \end{cases}.  \end{eqnarray}
One can therefore state the Dold-Kan theorem as
\begin{SATZ}[Dold-Kan (variant)]There is an equivalence of categories: 
\[ \Z[\Delta^{\op}]^{\Kar} \cong \mathcal{CH}_{\ge 0}.   \]
\end{SATZ}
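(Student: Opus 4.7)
The plan is to present $\Z[\Delta^{\op}]^{\Kar}$ by generators $[n]^{\nd}$ and relations that match exactly the presentation of $\mathcal{CH}_{\ge 0}$ encoded in (\ref{eqd}), and then conclude by the universal property of the latter as a free additive category on generators and relations.

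First I would construct, for each $n \ge 0$, an idempotent $e_n \in \End_{\Z[\Delta^{\op}]}([n])$ whose action on any simplicial object of an Abelian category is the projection onto the normalised (Moore) part, killing every degeneracy. Such $e_n$ can be written as a polynomial in face and degeneracy operators built inductively from $\prod_i (\id - s_i d_i)$-type factors, and its key property --- $e_n \circ s = 0$ for every degeneracy $s: [m] \to [n]$ in $\Z[\Delta^{\op}]$ with $m < n$, together with $e_n^2 = e_n$ --- is a direct consequence of the combinatorial content of Lemma~\ref{LEMMAEZ1}. Splitting $e_n$ in the Karoubi envelope yields objects $[n]^{\nd}$ with structural maps $i_n: [n]^{\nd} \rightarrowtail [n]$ and $p_n: [n] \twoheadrightarrow [n]^{\nd}$ satisfying $i_n p_n = e_n$, $p_n i_n = \id$. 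Applying Lemma~\ref{LEMMAEZ2} Yoneda-style to the tautological simplicial object $[-] \in (\Z[\Delta^{\op}]^{\Kar})^{\Delta^{\op}}$ then upgrades the Eilenberg--Zilber splitting to a canonical isomorphism
\[ [n] \;\cong\; \bigoplus_{s: [m] \twoheadleftarrow [n]} [m]^{\nd} \]
in $\Z[\Delta^{\op}]^{\Kar}$, with the summand inclusion $[m]^{\nd} \hookrightarrow [n]$ given by $s \circ i_m$. In particular, the full additive subcategory of $\Z[\Delta^{\op}]^{\Kar}$ on the $[n]^{\nd}$ is all of $\Z[\Delta^{\op}]^{\Kar}$, so essential surjectivity of the eventual functor will be automatic.

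The remaining --- and main --- task is to compute $\Hom_{\Z[\Delta^{\op}]^{\Kar}}([i]^{\nd}, [j]^{\nd}) = e_j \cdot \Hom_{\Z[\Delta^{\op}]}([i],[j]) \cdot e_i$, where $\Hom_{\Z[\Delta^{\op}]}([i],[j]) = \Z[\Hom_{\Delta}([j],[i])]$, and to match the result with (\ref{eqd}). The killing property of $e_j$ annihilates every basis element whose underlying $\Delta$-map $[j] \to [i]$ fails to be injective (such a map factors through a proper degeneracy into $[j]$, which is killed by $e_j$ from the left), so only injective $\Delta$-maps $[j] \hookrightarrow [i]$ contribute and we must have $j \le i$. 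The case $i = j$ yields the single generator $\id$, giving $\Hom([j]^{\nd},[j]^{\nd}) \cong \Z$; the case $i = j+1$ yields the $j+2$ faces $d_0, \ldots, d_{j+1}$, and the main obstacle of the proof is to show that after applying $e_j$ and $e_{j+1}$ this group collapses to the rank-one subgroup generated by the alternating sum $\sum_{k=0}^{j+1}(-1)^k d_k$, while for $i \ge j + 2$ the group vanishes entirely (this last vanishing amounts to $d \circ d = 0$ on the normalised complex). Both statements are classical consequences of the simplicial identities combined with the defining property of the $e_n$; the combinatorics of Lemma~\ref{LEMMAEZ1} again provides the bookkeeping for the required cancellations. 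Once these morphism computations are in hand, the assignment $D_n \mapsto [n]^{\nd}$, sending the generator of $\Hom(D_{n+1}, D_n)$ to $p_n \bigl(\sum_k (-1)^k d_k\bigr) i_{n+1}$, extends uniquely to an additive functor $\mathcal{CH}_{\ge 0} \to \Z[\Delta^{\op}]^{\Kar}$, which is fully faithful by the computation and essentially surjective by the decomposition above, hence an equivalence.
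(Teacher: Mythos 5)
Your strategy is sound, and it is a genuinely different route from the paper's. The paper never works inside $\Z[\Delta^{\op}]^{\Kar}$ directly: it realizes the dual categories $\Z[\Delta]$ and $\mathcal{CH}_{\ge 0}^{\op}$ inside $\Ch_{\ge 0}(\Ab^{\fg})$ via the cosimplicial object $\Delta^{\circ}=\Gamma(\Z[\Delta_\bullet])$, computes $\Delta^{\circ,\nd}_n \cong D_n$ (Key Lemma~\ref{LEMMAKEY}), deduces full faithfulness of $\Z[\Delta^{\circ}]$ (Corollary~\ref{KORDOLDKANFF}), and then gets the (non-canonical) decomposition (\ref{eqdoldkandecomp}) from Lemma~\ref{LEMMAEZ2} together with projectivity of the $D_n$, i.e.\ (\ref{eqtrivial}); the equivalence is transported back through the fully faithful embedding. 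You instead stay intrinsic: split normalization idempotents $e_n$, and match the full additive subcategory on the $[n]^{\nd}$ against the presentation (\ref{eqd}) by a direct computation of hom-groups. Your route is more elementary and makes the equivalence explicit on generators; the paper's route concentrates all the work in one computation inside an abelian ambient category, and the objects $\Delta^{\circ}_n$, $D_n$ it produces are reused later (the explicit functors $N$, $R$ of the second variant, the symmetry operator, the explicit AW/EZ formul\ae). Do note that the two collapses you rightly call the main obstacle ($i=j+1$ giving rank one, $i\ge j+2$ giving zero) are asserted rather than proved; they do follow from the simplicial identities together with the defining property of $e_n$ (e.g.\ with $e_n$ the projection onto $\bigcap_{k\ge 1}\ker d_k$ along the degenerate part one has $d_k e_n=0$ for $k\ge 1$ and $d_0d_0=d_0d_1$), but that is where the actual content of your proof sits.

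One step would fail as justified: the decomposition $[n]\cong\bigoplus_{s}[m]^{\nd}$ cannot be obtained by ``applying Lemma~\ref{LEMMAEZ2} Yoneda-style'' to the tautological simplicial object in $(\Z[\Delta^{\op}]^{\Kar})^{\Delta^{\op}}$. Lemma~\ref{LEMMAEZ2} is stated and proved for Barr-exact (in practice Abelian) $\mathcal{C}$ with zero object, whereas the Karoubi envelope is merely additive and idempotent-complete, so the skeletal filtration by subobjects used there is not even available; and even where the lemma applies, it only identifies the associated graded $\gr^i X_n$, not a splitting of $X_n$, let alone the canonical one with inclusions $s\circ i_m$ that you claim. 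Some extra argument is genuinely needed here (a non-canonical splitting would already suffice for essential surjectivity): either an intrinsic triangularity argument for the map $\bigoplus_s s\circ i_m\colon \bigoplus_s [m]^{\nd}\to [n]$, ordering the degeneracies and using the sections provided by Lemma~\ref{LEMMAEZ1}, or a transport along an embedding into an abelian category where the filtration splits for projectivity reasons --- which is exactly the role of (\ref{eqtrivial}) and (\ref{eqdoldkandecomp}) in the paper's proof. Since your essential-surjectivity claim rests entirely on this decomposition (plus the idempotent-completeness of $\mathcal{CH}_{\ge 0}$, which, to be fair, the paper also leaves implicit), this is the one place where your outline needs repair rather than mere elaboration.
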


Instead of trying to prove this directly, it is, however, easier to realize the dual categories $\Delta$ and $\mathcal{CH}^{\op}_{\ge 0}$ inside
$\Ch_{\ge 0}(\Ab^{\fg})$. This also gives explicit functors. 
\begin{PAR}We first define
\[ \Gamma: \mathcal{C}^{\Delta^{\op}} \to \Ch_{\ge 0}(\mathcal{C})  \]
called functor of {\bf normalized chains}, 
 setting: $\Gamma(C)_n := C^{\nd}_n$ and 
$\dd:= \sum_{i=0}^n (-1)^i \delta_i$. It is an easy verification that $\dd^2 = 0$ and that $\dd$ descends to a map $C^{\nd}_n \to C^{\nd}_{n-1}$. 
The functoriality in $C$ is clear. 
\end{PAR}

This allows us to define the following cosimplicial object in $\Ch_{\ge 0}(\mathcal{\Ab^{\fg}})$, applying $\Gamma$ w.r.t.\@ $\mathcal{C}=\Ab^{\fg}$:
\begin{DEF}\label{DEFDELTACIRC}
\[ \Delta^{\circ}_n := \Gamma(\Z[\Delta_n]). \]
\end{DEF}

The following follows directly from the definition: 
\begin{LEMMA}\label{LEMMADELTACIRC}
We have explicitly
\[ (\Delta^{\circ}_n)_m = \Z[\Hom_{\Delta}^{\inj}([m], [n])] \]
whose basis can be identified with subsets $S \subset \{0, \dots, n\}$ of Cardinality $m+1$.
The differential is given on basis elements by
\[ \dd: [S] \mapsto \sum_{i} (-1)^i [S \setminus \{x_i\}] \]
where $S= \{x_0, \dots, x_m\}$ with $x_0< \dots< x_m$. 
\end{LEMMA}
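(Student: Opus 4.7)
The plan is to unfold each ingredient of the definition $\Delta^{\circ}_n = \Gamma(\Z[\Delta_n])$ in turn, starting from the identification of the underlying groups and then computing the differential explicitly on a basis.

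First, by the very definition of $\Gamma$ one has $(\Delta^{\circ}_n)_m = \Z[\Delta_n]_m^{\nd}$. To describe this group explicitly, I would appeal to Lemma~\ref{LEMMAEZ2} applied to $\Z[\Delta_n] \in \Ab^{\Delta^{\op}}$, which gives a direct sum decomposition
\[ \Z[\Delta_n]_m \;\cong\; \bigoplus_{s:[m] \twoheadleftarrow [k]} \Z[\Delta_n]_k^{\nd}, \]
so that $\Z[\Delta_n]_m^{\nd}$ is the summand indexed by the identity degeneracy. On the other hand, every morphism $[m] \to [n]$ in $\Delta$ admits a unique epi–mono factorization as a degeneracy followed by an injection (a face). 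Matching this factorization against the Eilenberg–Zilber decomposition shows that the summand indexed by the identity degeneracy is freely generated precisely by $\Hom_{\Delta}^{\inj}([m],[n])$. The bijection with subsets $S \subset \{0,\dots,n\}$ of cardinality $m+1$ is then simply given by taking the image of an order-preserving injection.

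Next, for the differential I would compute directly on a basis element. By the definition of $\Gamma$, $\dd = \sum_{i=0}^m (-1)^i \delta_i$, where each $\delta_i$ acts on a simplex $f\colon [m] \to [n]$ by precomposition with the face $\delta_i\colon [m-1] \hookrightarrow [m]$ that skips the value $i$. If $f$ is injective with image $S = \{x_0 < \dots < x_m\}$, then $f \circ \delta_i$ is again injective, with image $S \setminus \{x_i\}$. In particular $f\circ \delta_i$ is already non-degenerate, so the projection onto $\Z[\Delta_n]_{m-1}^{\nd}$ is the identity on this element, and one reads off
\[ \dd[S] = \sum_{i=0}^m (-1)^i [S \setminus \{x_i\}], \]
as claimed.

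The argument is essentially combinatorial, and the only obstacle worth watching is bookkeeping: one must keep straight the distinction between the index $i \in [m]$ (which controls the sign and selects the face map) and the element $x_i \in S \subset \{0,\dots,n\}$ (which is what gets deleted), and confirm that the standard sign convention on $\Gamma$ matches the stated formula. No subtle input beyond Lemma~\ref{LEMMAEZ2} and the unique epi–mono factorization in $\Delta$ is required.
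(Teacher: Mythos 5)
Your proof is correct and takes essentially the same route as the paper, which simply records that the lemma follows directly from the definition of $\Gamma(\Z[\Delta_n])$ — exactly the unfolding you carry out. The only cosmetic difference is that your appeal to Lemma~\ref{LEMMAEZ2} is more machinery than needed: since a basis morphism of $\Z[\Hom_{\Delta}([m],[n])]$ is either injective or factors through a proper degeneracy (epi--mono factorization), the non-degenerate quotient is identified with $\Z[\Hom_{\Delta}^{\inj}([m],[n])]$ immediately, and the computation of $\dd$ on injective simplices then goes through exactly as you wrote.
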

This not only defines a cosimplicial object but even 
\begin{LEMMA}\label{LEMMAFINSET}
The association (\ref{DEFDELTACIRC}), a priori a functor $\Delta \to \Ch_{\ge 0}(\Ab^{\fg})$, extends to a functor
\[ \Delta^{\circ}: \FinSet \to \Ch_{\ge 0}(\Ab^{\fg}). \]
\end{LEMMA}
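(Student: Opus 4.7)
The plan is to give an explicit description of $\Delta^{\circ}_T$ for arbitrary $T \in \FinSet$ and check that it extends Definition~\ref{DEFDELTACIRC}. Concretely, for a finite set $T$, I will set
\[ (\Delta^{\circ}_T)_m \ := \ \Z\bigl[\Hom^{\inj}_{\Set}([m], T)\bigr] \big/ \bigl\langle\, [g \circ \sigma] - \sgn(\sigma)[g] \ : \ \sigma \in S_{m+1} \bigr\rangle \]
(the sign-twisted coinvariants of the $S_{m+1}$-action on injections $[m]\hookrightarrow T$ by precomposition), with differential $\dd[g] := \sum_{i=0}^m (-1)^i [g \circ d_i]$, where $d_i : [m-1] \hookrightarrow [m]$ is the $i$-th standard face. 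A morphism $f : T \to T'$ in $\FinSet$ will act by $f_*[g] := [f \circ g]$ if $f \circ g$ is injective, and $0$ otherwise; this is manifestly functorial in $f$.

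Next I verify a handful of routine compatibilities. First, $\dd$ descends to the quotient: since adjacent transpositions generate $S_{m+1}$, it suffices to check the case $\sigma = (i, i{+}1)$, and here a direct computation using the identities $\sigma \circ d_i = d_{i+1}$, $\sigma \circ d_{i+1} = d_i$, and, for $j \notin \{i, i{+}1\}$, $\sigma \circ d_j = d_j \circ \sigma_j$ for an adjacent transposition $\sigma_j$ of $[m-1]$, yields $\dd[g\sigma] = -\dd[g]$ in the quotient as required. The relation $\dd^2 = 0$ is then the usual simplicial identity. Second, $f_*$ is a chain map: when $f \circ g$ is already injective this is immediate, and otherwise I need to see that the nonzero terms in $\sum_i (-1)^i f_*[g \circ d_i]$ cancel. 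Generically $f \circ g$ fails to be injective at a single pair $a < b$ with $f(g(a)) = f(g(b))$; then only the indices $i = a, b$ contribute, the two maps $f \circ g \circ d_a$ and $f \circ g \circ d_b$ differ by the $(b-a)$-cycle of sign $(-1)^{b-a-1}$, and the arithmetic $(-1)^a(-1)^{b-a-1} + (-1)^b = 0$ finishes the cancellation in the quotient. In the non-generic cases (multiple independent collisions, or three-fold collisions) no terms survive in the first place.

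Finally, I identify this construction with Definition~\ref{DEFDELTACIRC} when $T = [n]$ carries its standard order. Every equivalence class $[g]$ has a unique monotone representative $g_S : [m] \hookrightarrow [n]$, indexed by its image $S \subset \{0,\dots,n\}$ of size $m+1$, so the basis coincides with that of Lemma~\ref{LEMMADELTACIRC}, and $\dd[g_S] = \sum_i (-1)^i [g_{S \setminus \{x_i\}}]$ matches the formula given there. A monotone $f : [n] \to [n']$ sends $g_S$ to $g_{f(S)}$ with trivial sign, so the restriction of the $\FinSet$-functor constructed above to $\Delta$ really is the original cosimplicial object $\Delta^{\circ}$.

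The main obstacle is simply the signed bookkeeping in verifying $f_* \dd = \dd f_*$ for non-monotone $f$; no genuine structural obstruction appears, since the sign-twisted coinvariant construction precisely builds in all the relations in $\FinSet$ beyond those of $\Delta$.
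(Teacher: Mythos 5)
Your construction is correct, and it is the same underlying object as in the paper: your sign-twisted coinvariants of injections $[m]\hookrightarrow T$ have the monotone injections (equivalently, subsets $S$) as a basis, and your rule ``$[f\circ g]$ if injective, else $0$'' is exactly the paper's formula $[S]\mapsto \pm[\alpha(S)]$ or $0$ with the sign of the reordering permutation. Where you genuinely diverge is in how the chain-map property is verified. The paper works directly with the basis $[S]$, observes that compatibility with $\dd$ holds by construction for order-preserving maps, and then handles an adjacent transposition $(i\ i{+}1)$ by checking on basis elements the identity $(i\ i{+}1) = -\id + d_is_i + d_{i+1}s_i$, which exhibits the transposition's action as a $\Z$-linear combination of maps already known to be chain maps; since morphisms of $\FinSet$ are generated by monotone maps and adjacent transpositions, the general case follows. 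You instead verify $f_*\dd = \dd f_*$ for an arbitrary $f$ by a direct cancellation analysis of the collisions of $f\circ g$ (the single-collision case with the $(b-a)$-cycle sign $(-1)^{b-a-1}$, all other cases vanishing), and you additionally must check that $\dd$ descends to your quotient presentation — a step the paper's basis-level formulation sidesteps. Both routes are sound; your direct verification avoids any appeal to generators of $\FinSet$, while the paper's route is shorter at this point and, more importantly, isolates the identity $(i\ i{+}1) = -\id + d_is_i + d_{i+1}s_i$, which is reused later (in the proof of the Key Lemma on $\Delta^{\circ,\nd}_n \cong D_n$, where it shows $S_n$ acts by the sign character on the non-codegenerate part), so the paper's argument buys a byproduct that your computation does not record. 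Minor point: you should state explicitly (it is immediate) that $f_*$ respects the defining relations of your quotient, not only that it is functorial.
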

\begin{proof}
An arbitrary map $\alpha: \{0, \dots, n\} \to \{0, \dots, n'\}$ induces
\[ (\Delta^{\circ}_n)_m \to (\Delta^{\circ}_{n'})_m\]
mapping (a basis element identified as in Lemma~\ref{LEMMADELTACIRC})
\[ [S] \mapsto \begin{cases} [\alpha(S)] &  \text{if $\#\alpha(S) = \#S$ and the induced permutation is even,} \\ -[\alpha(S)] & \text{if $\#\alpha(S) = \#S$ and the induced permutation is odd,} \\ 0 & \text{otherwise.} \end{cases} \] 
(The ``induced permutation'' is the permutation of $\alpha(S)$ bringing the elements $\alpha(x_0), \dots, \alpha(x_n)$ into the correct order). 
One has to check that this respects the differential. This holds by construction if $\alpha$ is order preserving, i.e.\@ a morphism of $\Delta$. Furthermore,  for the neighboring transposition $(i\ i+1)$ the formula
\begin{equation}\label{eqfinset} (i\ i+1) = -\id + d_is_i + d_{i+1}s_i \end{equation}
holds true --- which is easily checked on a basis ---
and morphisms in $\FinSet$ are obviously generated by morphisms in $\Delta$ and neighboring transpositions. Therefore this respects the differential in any case. 
\end{proof}
\begin{PAR}
Similarly, also $\mathcal{CH}_{\ge 0}^{\op}$ can be realized inside $\Ch_{\ge 0}(\Ab^{\fg})$, defining
\[ D_i  := \left(  \cdots \to 0 \to \Z \to \Z \to 0 \to \cdots \to 0 \right)  \]
where the $\Z$ are in degree $i$ and (for $i>0$) $i-1$. The equality (\ref{eqd}) is obvious. Note that 
\begin{equation}  \label{eqtrivial} \Hom(D_i, C) \cong C_i  \end{equation}
 for all $i\ge 0$ and $C \in \Ch_{\ge 0}(\Ab)$.
\end{PAR}

To prove Dold-Kan using $\Delta^{\circ}$, the following is crucial: 
\begin{KEYLEMMA}\label{LEMMAKEY} We have\footnote{$\Delta^{\circ}$ is a cosimplicial object. 
The non-degenerate part, and also references to Lemma~\ref{LEMMAEZ2}, are meant w.r.t.\@  the {\em simplicial} object $(\Delta^{\circ})^{\op}: \Delta^{\op} \to \Ch_{\ge 0}(\Ab^{\fg})^{\op}.$}
\[ \Delta^{\circ,\nd}_n \cong D_n. \]
\end{KEYLEMMA}
\begin{proof}
It follows from formula (\ref{eqfinset}) that, on the non-codegenerate part $\Delta^{\circ,\nd}_n$, i.e.\@ the joint kernel of all the degeneracies, $S_n$ acts by the sign character. 
$(\Delta^{\circ}_n)_n$ is one dimensional, generated by $[\{0, \dots, n\}]$ which does transform according to the sign character. 
$(\Delta^{\circ}_n)_{n-1}$ is generated by $[\{0, \dots, \widehat{i}, \dots, n\}]$, $i=0, \dots, n$. Applying the transposition $(i\ j)$ we see that (for an element transforming according to the sign character) the coefficient 
of $[\{0, \dots, \widehat{i}, \dots, n\}]$ must be $(-1)^{i-j}$ times the coefficient of $[\{0, \dots, \widehat{j}, \dots, n\}]$. Hence $(\Delta^{\circ, \nd}_n)_{n-1}$ is one dimensional as well, generated by
$\dd [\{0, \dots, n\}]$. The basis element $[S]$ for any $S \subset \{0, \dots, n\}$ with at least two elements $i, j$ missing is fixed by the transposition $(i\ j)$. Hence
$(\Delta^{\circ, \nd}_n)_{m} = 0$ for $m < n-1$.
\end{proof}

\begin{KOR}\label{KORDOLDKANFF}
The induced functor
\[ \Z[\Delta^{\circ}]: \Z[\Delta] \to \Ch_{\ge 0}(\Ab) \]
is fully-faithful. 
\end{KOR}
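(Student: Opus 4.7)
The goal is to show that for all $n, n' \ge 0$, the natural map
\[ \Phi: \Z[\Hom_{\Delta}([n], [n'])] \to \Hom_{\Ch_{\ge 0}(\Ab)}(\Delta^{\circ}_n, \Delta^{\circ}_{n'}), \qquad \alpha \mapsto \Delta^{\circ}(\alpha) \]
is an isomorphism. The plan is to compute the right-hand side explicitly via the Eilenberg-Zilber decomposition, identify its canonical basis with $\Hom_{\Delta}([n], [n'])$ via the epi-mono factorisation, and check that $\Phi$ realises this identification up to a strictly upper-triangular correction.

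I first apply Lemma~\ref{LEMMAEZ2} to the simplicial object $(\Delta^{\circ})^{\op}: \Delta^{\op} \to \Ch_{\ge 0}(\Ab^{\fg})^{\op}$ (Barr-exact with zero object, being the opposite of an abelian category). Combined with the Key Lemma~\ref{LEMMAKEY} identifying the non-degenerate part at level $m$ with $D_m$, this yields a direct sum decomposition in $\Ch_{\ge 0}(\Ab^{\fg})$
\[ \Delta^{\circ}_n \;\cong\; \bigoplus_{s:\,[n] \twoheadrightarrow [m]} D_m, \]
indexed by order-preserving surjections $s$ in $\Delta$; the projection $\pi_s$ onto the $s$-summand factors as $\Delta^{\circ}_n \xrightarrow{\Delta^{\circ}(s)} \Delta^{\circ}_m \twoheadrightarrow D_m$ via the corresponding projection associated with $\id_{[m]}$. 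Using (\ref{eqtrivial}) and $(\Delta^{\circ}_{n'})_m = \Z[\Hom^{\inj}_{\Delta}([m], [n'])]$ we obtain
\[ \Hom_{\Ch_{\ge 0}(\Ab)}(\Delta^{\circ}_n, \Delta^{\circ}_{n'}) \;\cong\; \bigoplus_{s:\,[n] \twoheadrightarrow [m]} \Z[\Hom^{\inj}_{\Delta}([m], [n'])], \]
a free abelian group with canonical basis $\{b_{(s,t)}\}$ indexed by pairs of a surjection $s: [n] \twoheadrightarrow [m]$ and an injection $t: [m] \hookrightarrow [n']$. By unique epi-mono factorisation in $\Delta$, the assignment $(s, t) \mapsto t \circ s$ bijects such pairs with $\Hom_{\Delta}([n], [n'])$, so both sides of $\Phi$ are free abelian of the same finite rank.

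It therefore suffices to establish the triangularity
\[ \Phi(t \circ s) \;=\; b_{(s, t)} \;+\; \sum_{m' < m} c_{(s', t')} \, b_{(s', t')}, \]
from which surjectivity (and hence bijectivity) of $\Phi$ follows by induction on $m$: the base case $m = 0$ has no correction terms, and the inductive step recovers $b_{(s,t)}$ from $\Phi(t \circ s)$ and previously reached basis vectors. For the diagonal entry, the orthogonality $\pi_s \circ \iota_s = \id_{D_m}$ combined with $\pi_s = \pi_{\id} \circ \Delta^{\circ}(s)$ forces $\Delta^{\circ}(s) \circ \iota_s$ to coincide with the canonical inclusion $D_m \hookrightarrow \Delta^{\circ}_m$, so $\Delta^{\circ}(t \circ s) \circ \iota_s = \Delta^{\circ}(t)|_{D_m}$ corresponds under (\ref{eqtrivial}) to $t$, i.e.\@ to $b_{(s,t)}$. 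For the vanishing of terms with $m' \ge m$: if $m' > m$ then $\Delta^{\circ}(t \circ s)$ factors through $\Delta^{\circ}_m$ which is concentrated in degrees $\le m$, so the map out of $D_{m'}$ (determined by its value in top degree $m'$) must vanish; if $m' = m$ and $s' \ne s$, then $\pi_{\id} \circ \Delta^{\circ}(s) \circ \iota_{s'} = \pi_s \circ \iota_{s'} = 0$ forces $\Delta^{\circ}(s) \circ \iota_{s'}$ into $\ker(\pi_{\id})$, which again vanishes in the top degree. The main subtlety is that the EZ decomposition depends on a non-canonical splitting; the above argument must be carried out for some fixed consistent choice, and a routine sign check confirms the diagonal coefficient is $+1$ rather than $\pm 1$.
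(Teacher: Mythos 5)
Your route differs from the paper's and could be made to work, but as written it has a genuine gap at its central step. Everything hinges on the claim that the splitting $\Delta^{\circ}_n \cong \bigoplus_{s:[n]\twoheadrightarrow[m]} D_m$ of (\ref{eqdoldkandecomp}) can be chosen so that the projection onto the $s$-summand is $\pi_{\id}\circ \Delta^{\circ}(s)$ (and, relatedly, that $\Delta^{\circ}(s)\circ\iota_s$ is the canonical inclusion $D_m \subseteq \Delta^{\circ}_m$). The decomposition you cite comes from Lemma~\ref{LEMMAEZ2}, Lemma~\ref{LEMMAKEY} and projectivity of the $D_m$, which produce only an \emph{unstructured, non-canonical} splitting of the skeletal filtration; nothing there makes its projections factor through the codegeneracies as you use. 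A splitting with that property does exist (it is the canonical cosimplicial Dold--Kan decomposition, dual to $A_n \cong \bigoplus_{\sigma}\sigma_{*}NA_m$ for simplicial abelian groups), but establishing it with its structure maps is additional work of roughly the same substance as the corollary itself, and you invoke it only via ``some fixed consistent choice''. Both the diagonal computation and the vanishing for $m'=m$, $s'\neq s$ rest on it. Moreover, your stated justification for the diagonal is a non sequitur: $\pi_{\id}\circ(\Delta^{\circ}(s)\iota_s)=\id$ only says $\Delta^{\circ}(s)\iota_s$ is \emph{some} section of $\pi_{\id}$, not the canonical inclusion. What saves the conclusion is weaker: since a map out of $D_m$ is determined by its degree-$m$ component via (\ref{eqtrivial}), and any section of $\pi_{\id}$ agrees with the canonical inclusion in degree $m$ up to sign, the diagonal entry is a unit $\pm 1$, which already suffices for triangular invertibility — so the advertised ``routine sign check'' is both unjustified as stated and unnecessary.

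For contrast, the paper avoids choosing any splitting: fix the target $[n]$, regard both $\Z[\Hom_{\Delta}([m],[n])]$ and $\Hom(\Delta^{\circ}_m,\Delta^{\circ}_n)$ as simplicial abelian groups in the variable $m$, and use Lemma~\ref{LEMMAEZ2} only to reduce to checking an isomorphism on non-degenerate quotients, where the statement becomes $\Z[\Hom^{\inj}_{\Delta}([m],[n])]\cong \Hom(\Delta^{\circ,\nd}_m,\Delta^{\circ}_n)=\Hom(D_m,\Delta^{\circ}_n)=(\Delta^{\circ}_n)_m$ by Lemma~\ref{LEMMAKEY} and (\ref{eqtrivial}). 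If you want to keep your basis-and-triangularity formulation, the cleanest repair is either to prove the dual Dold--Kan decomposition with projections given by ``codegeneracy followed by the canonical quotient'', or to first pass to non-degenerate quotients as the paper does and only then count basis elements.
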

\begin{proof}
Fix $n$ and consider the morphism of simplicial Abelian groups
\[  \Z[\Hom_{\Delta}([m], [n])] \to \Hom(\Delta^{\circ}_m, \Delta^{\circ}_n). \]
By Lemma~\ref{LEMMAEZ2} it suffices to show that it is an isomorphism for the non-degenerate quotient, i.e.\@
\[    \Z[\Hom^{\inj}_{\Delta}([m], [n])] \cong \Hom(\Delta^{\circ, \nd}_m, \Delta^{\circ}_n). \]
Since $\Delta^{\circ, \nd}_m \cong D_m$ by  Lemma~\ref{LEMMAKEY}, this follows directly from the Definition of $\Delta^{\circ}_n$ (cf.\@ Lemma~\ref{LEMMADELTACIRC}).
\end{proof}

 Lemma~\ref{LEMMAKEY} together with Lemma~\ref{LEMMAEZ2} (applied to the {\em simplicial} object $(\Delta^{\circ})^{\op}: \Delta^{\op} \to \Ch_{\ge 0}(\Ab^{\fg})^{\op}$)  gives an isomorphism
\[ \gr \Delta^{\circ}_m \cong \bigoplus_{\Delta_m \twoheadrightarrow \Delta_n} D_n \]
and since the $D_n$ are obviously projective by (\ref{eqtrivial}), we have {\em non-canonically}
\begin{eqnarray}\label{eqdoldkandecomp} \Delta^{\circ}_m \cong \bigoplus_{\Delta_m \twoheadrightarrow \Delta_n} D_n. 
\end{eqnarray}
This gives a proof of Dold-Kan in the first variant because this decomposition exists thus in $\Z[\Delta]^{\Kar}$ by Corollary~\ref{KORDOLDKANFF}.

\begin{PAR}\label{PARSKELETAL1}
As a complex $C= \Delta_n^{\circ}$ has also the canonical filtration: 
\[ F^m C := \left( \cdots\to C_{m+2} \to C_{m+1} \to \dd(C_{m+1}) \to 0 \to \cdots \to 0. \right) \]
Isomorphism~(\ref{eqdoldkandecomp}) shows a posteriori that on $\Delta^\circ_n$, the filtration by codegeneracy degree and the canonical filtration agree. 
\end{PAR}

We now proceed to give a second, more explicit, proof of Dold-Kan in the following variant
\begin{SATZ}[Dold-Kan (2nd variant)] \label{SATZDOLDKANII}
The functors
\[  \xymatrix{ \mathcal{C}^{\Delta^{\op}} \ar@<3pt>[r]^-{R} & \ar@<3pt>[l]^-{N} \Ch_{\ge 0}(\mathcal{C}) } \]
given by
\[ N(C): \Delta_n \mapsto \mathcal{HOM}_r(\Delta_n^{\circ}, C) \]
and
\[ R(A):= \int^n \Delta_n^{\circ} \otimes A_{[n]}\]
constitute an adjoint equivalence of categories. 
\end{SATZ}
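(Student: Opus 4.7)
The plan is to establish $R \dashv N$ by a standard coend/end manipulation, verify the unit $A \to NR(A)$ is an isomorphism by an explicit calculation using Corollary~\ref{KORDOLDKANFF} together with the decomposition~(\ref{eqdoldkandecomp}), and then deduce the counit is an isomorphism by a conservativity argument for $N$.

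First I would set up $R \dashv N$. Since $\mathcal{C}$ is Abelian it is cotensored over $\Ch_{\ge 0}(\Ab^{\fg})$, giving the tensor–cotensor adjunction between $\Delta_n^\circ \otimes (-)$ and $\mathcal{HOM}_r(\Delta_n^\circ, -)$; combining this with $\Hom(\int^n F_n, C) \cong \int_n \Hom(F_n, C)$ yields at once
\[
 \Hom_{\Ch}(R(A), C) \;\cong\; \int_n \Hom_{\mathcal{C}}(A_{[n]}, \mathcal{HOM}_r(\Delta_n^\circ, C)) \;=\; \Hom(A, N(C)).
\]

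The key observation --- doing most of the real work for both the unit and for the conservativity of $N$ below --- is that $\mathcal{HOM}_r(\Delta_n^\circ, -)$ commutes with all colimits and with tensoring by objects of $\mathcal{C}$: the decomposition $\Delta_n^\circ \cong \bigoplus_{[n] \twoheadrightarrow [m]} D_m$ from~(\ref{eqdoldkandecomp}) together with $\mathcal{HOM}_r(D_m, -) = (-)_m$ from~(\ref{eqtrivial}) exhibits the functor as a finite biproduct of evaluations, each of which preserves arbitrary colimits and tensoring; non-canonicity of the decomposition is harmless since preservation of colimits is a property rather than structure. With this in hand,
\[
 NR(A)_{[n]} \;\cong\; \int^m \mathcal{HOM}_r(\Delta_n^\circ, \Delta_m^\circ) \otimes A_{[m]} \;\cong\; \int^m \Z[\Hom_\Delta([n], [m])] \otimes A_{[m]} \;\cong\; A_{[n]},
\]
using Corollary~\ref{KORDOLDKANFF} for the second isomorphism and the co-Yoneda lemma for the third; a brief unwinding identifies this with the unit, so $R$ is fully faithful.

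For the counit $\epsilon_C : RN(C) \to C$ I would argue via conservativity of $N$. The observation just made shows $N$ preserves colimits, and it preserves limits as a right adjoint, so $N$ is exact and conservativity reduces to the statement $N(C) = 0 \Rightarrow C = 0$. But $N(C) = 0$ forces $\mathcal{HOM}_r(\Delta_n^\circ, C) = 0$ for every $n$, and since $D_n$ is a direct summand of $\Delta_n^\circ$ (Key Lemma~\ref{LEMMAKEY} combined with~(\ref{eqdoldkandecomp})), this gives $C_n = \mathcal{HOM}_r(D_n, C) = 0$ for every $n$, i.e.\@ $C = 0$. The triangle identity $N(\epsilon_C) \circ \eta_{N(C)} = \id_{N(C)}$ combined with $\eta$ being an iso then makes $N(\epsilon_C)$ an iso for all $C$, and conservativity promotes this to $\epsilon_C$ itself being an iso. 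The one genuine subtlety, already flagged, is the canonicity of the colimit-preservation of $\mathcal{HOM}_r(\Delta_n^\circ, -)$; apart from that no real calculation is needed beyond the Key Lemma and its immediate decomposition.
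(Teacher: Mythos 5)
Your proposal is correct and follows essentially the same route as the paper: the same formal adjunction, the same three inputs (conservativity of $N$ via the split surjection $\Delta_n^{\circ}\to D_n$, commutation of $\mathcal{HOM}_r(\Delta_n^{\circ},-)$ with $\otimes$ and colimits via the decomposition~(\ref{eqdoldkandecomp}), and Corollary~\ref{KORDOLDKANFF}), and the same coend/co-Yoneda computation showing the unit is an isomorphism, with the counit handled by conservativity. The only cosmetic difference is that you phrase conservativity through exactness of $N$ and reflection of zero objects, whereas the paper deduces it directly from joint conservativity of the evaluations $\mathcal{HOM}_r(D_n,-)$; both rest on the same retract argument.
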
 
\begin{PAR}\label{PAR3ADJUNCTION}
Here the following is used. We have an obvious functor in two variables
\[ \mathcal{HOM}_l:   \mathcal{C}^{\op} \times  \Ch_{\ge 0}(\mathcal{C})   \to \Ch_{\ge 0}(\Ab) \]
(induced by the canonical enrichment $\mathcal{C}^{\op}  \times \mathcal{C} \to \Ab$) which has a (partial) left adjoint
\[ \otimes:   \Ch_{\ge 0}(\Ab^{\fg}) \times \mathcal{C}  \to  \Ch_{\ge 0}(\mathcal{C})   \]
(induced by the tensoring $\mathcal{C} \times \Ab^{\fg} \to \mathcal{C}$) which has another right adjoint
\[ \mathcal{HOM}_r:   \Ch_{\ge 0}(\Ab^{\fg})^{\op} \times \Ch_{\ge 0}(\mathcal{C})  \to \mathcal{C}. \]
(if $\mathcal{C}=\Ab^{\fg}$, then this is just the usual group of morphisms of complexes). 
\end{PAR}

\begin{proof}
The fact that $R$ (realization) is left adjoint to $N$ (nerve) holds in much greater generality and nothing has to be assumed about the cosimplicial object $\Delta^{\circ}: \Delta \to \Ch_{\ge 0 }(\Ab)$.
The equivalence needs more assumptions. In the more general setting, usually $\Delta^{\circ}$ is called {\em dense}, if $N$ is fully-faithful.

Here the statement follows immediately from the following three facts:
\begin{enumerate}
\item[(i)] $N$ is conservative.
\item[(ii)] $\mathcal{HOM}_r(\Delta^{\circ}_n, -)$ commutes with $\otimes$\footnote{in the sense that $\mathcal{HOM}_r(\Delta^{\circ}_n, A \otimes C) \cong \mathcal{HOM}(\Delta^{\circ}_n, A) \otimes C$ via the canonical morphism, for $A \in \Ch_{\ge 0}(\mathcal{AB}) $ and  $C \in \Ch_{\ge 0}(\mathcal{C}) $} and colimits. 
\item[(iii)] $\Z[\Delta^{\circ}]: \Z[\Delta] \to \Ch_{\ge 0 }(\Ab^{\fg})$ is fully-faithful. 
\end{enumerate}
Proof of the facts: (i) means that the functors $\mathcal{HOM}_r(\Delta_n^{\circ}, -)$ are jointly conservative. This is
certainly the case for the functors  $\mathcal{HOM}_r(D_n, -)$. However, by (\ref{eqdoldkandecomp}), there are split epimorphisms $\Delta_n^{\circ} \to D_n$.
(ii) follows also immediately from (\ref{eqdoldkandecomp}) because obviously $\mathcal{HOM}_r(D_n, -)$ commutes with colimits and $\otimes$.
(iii) is Corollary~\ref{KORDOLDKANFF}. 

To prove the equivalence, it suffices to see that (ii) and (iii) imply that $R$ is fully-faithful: We have (functorially in $n$):
\begin{align*} 
& \mathcal{HOM}_r(\Delta_n^{\circ}, \int^m \Delta_m^{\circ} \otimes C_{[m]})     \\
 \cong &\int^m \mathcal{HOM}(\Delta_n^{\circ}, \Delta_m^{\circ}) \otimes C_{[m]}  & \text{(by ii)} \\
 \cong &\int^m \Z[\Hom_{\Delta}([n], [m])] \otimes C_{[m]} & \text{(by iii)} \\
 \cong &\int^m \Hom_{\Delta}([n], [m]) \times C_{[m]} \\
 \cong & \, C_{[n]} & \text{(coend Yoneda)} &  \qedhere
 \end{align*}
\end{proof}

\begin{BEM}
The functor $R$ (realization) is nothing else then the functor $\Gamma$ introduced earlier. In fact we have proven that
\[ \mathcal{HOM}(\Delta_n^{\circ}, R(C)) = C_{[n]} \]
functorially in $[n]$ and $D_n = \ker(\Delta_n^{\circ} \to \Delta_k^{\circ})$ joint kernel of all
degeneracies $\Delta_n \twoheadrightarrow \Delta_k$. 
Therefore  
\[ R(C)_n = \Hom(D_n, R(C)) = C_{[n]}^{\nd} \]
and the differential, which comes from the morphism $D_n \to D_{n+1}$, is given by the alternating sum of the $\delta_i$. 
\end{BEM}

\subsection{The coherent Eilenberg-Zilber Theorem for Abelian categories}\label{SECTEZAB}

We have seen in Lemma~\ref{LEMMACPROFUNCTOR} that the symmetry operator $\mathfrak{C}$ (cf.\@ Lemma~\ref{LEMMASYM}) can, in the Abelian case, be given by an Abelian pro-functor (in two ways). Likewise, the natural transformations $\Awfrak$ and $\Ezfrak$  can be given by morphisms of Abelian pro-functors
where, by abuse of notation, $\dec$ denotes $\Z[\dec]: \Z[\Delta^{\op}] \otimes \Z[\Delta^{\op}] \to \Z[\Delta^{\op}]$ and $\delta$ denotes $\Z[\delta]: \Z[\Delta^{\op}] \to \Z[\Delta^{\op}] \otimes \Z[\Delta^{\op}]$:
\begin{eqnarray} \label{eqawpro}
\Aw: && \xymatrix{  \dec  \ar[r]^-{u^{\op}} & \dec \,{}^t\! \dec\, \,{}^t\!\delta    \ar[r]^-{} &  {}^t\!\delta }  \\
 \label{eqezpro}
 \Ez:&& \xymatrix{  \,{}^t\!\delta  \ar[r]  &  \ar[d]^{\sim}  {}^t\!\delta \,{}^t\!\dec \, \dec     &\ar[d]^{\sim}  \dec  \\
& \,{}^t\!\delta  \,{}^t\!\dec \,  {}^t\!\iota  \,   C^{\op}\,   \dec    =  \,{}^t\!\iota     \,{}^t\!\delta_s  \,{}^t\!\dec_s \,  C^{\op}\,     \dec    \ar[r]^-{c^{\op}} &  \,{}^t\!\iota \, C^{\op}\, \dec   } \end{eqnarray}
It is immediate that for an Abelian category $\mathcal{C}$, $R_{\mathcal{C}}(\Aw)$ is the map $\Awfrak$, and $R_{\mathcal{C}}(\Ez)$ is the map $\Ezfrak$, both defined in Definition~\ref{DEFAWEZ}. 
We will prove later (cf.\@ Proposition~\ref{PROPAWEZAB}) that the latter are nothing but the classical Alexander-Whitney and Eilenberg-Zilber maps.

We have $\Ez\Aw = \id$ already on the level of profunctors and:
\begin{DEFLEMMA}\label{DEFSHIH}The following map $\Xi_1 \in \uHom(\delta, \delta)_{[1]}$ (\ref{PARHOMOTOPYPOINT})
\begin{eqnarray} \label{eqshih}
\xymatrix{  \delta \dec \ar[r]^-{\sim} & \dec_{13,24} \delta_{12,34} \ar[rrr]^{\dec_{13,24} \circ (\id, {}^t\!(\Aw \Ez))} & &  &   \dec_{13,24} \delta_{12,34} \ar[r]^-{\sim} &  \delta \dec   }  
\end{eqnarray}
is called {the \bf Shih-operator} and constitutes a homotopy  between ${}^t\!(\Aw \Ez)$ and $\id$.
\end{DEFLEMMA} 
The fact that this yields a homotopy as claimed follows from (the proof of) Proposition~\ref{PROPCREATIONHOMOTOPY}. It will be reproven in the following Theorem which yields a coherent version. Later we will derive an explicit formula (Proposition~\ref{PROPSHIHEXPL})

\begin{SATZ}\label{SATZCOHEZ}
The (appropriate part of the) diagram of operads (\ref{eqfundamentalop}) extends to a diagram of $\Ab$-enriched operads and oplax pro-functors (cf.\@ \ref{PARCONVOLUTION})
\[ \xymatrix{ 
& \Z[(\FinSet, \coprod)] & \\
\Z[(\Delta, \coprod)] \ar[ru]^{\iota_{\delta}} &  & \Z[(\Delta, \ast)] \ar@<3pt>[lu]^{\iota_{\dec}}  \ar@{<-}@<-3pt>[lu]_{C}  \ar[ll]^{\mathrm{forget}}
}\]
with $C\, \iota_{\dec} \cong \id$ and setting
\[ \AW := \mathrm{forget}^{\op} \qquad \EZ := (C\, \iota_{\delta})^{\op} \]
we have:
\begin{enumerate}
\item Both morphisms are the identity on objects.
\item Over $\delta_{1}: [2] \to [1]$, we reobtain the maps $\Aw$ (\ref{eqawpro}) and $\Ez$ (\ref{eqezpro}) as the oplaxness constraint.
\item We have $\EZ \AW = \id$ and there is a homotopy (see \ref{PARFUNCT} for the Definition of $\uDef$ on the level of pro-functors):
\[ \Xi \in \uDef(\Z[(\Delta^{\op}, \coprod)], \Z[(\Delta^{\op}, \coprod)])_{\id,[1]} \]
 connecting $(\EZ \AW)^{\vee}$ and $\id$, which yields over $\delta_1$ the Shih-operator defined in Definition~\ref{DEFSHIH}. There is also a homotopy 
 \[ \Xi^{\vee} \in \uDef(\Z[(\Delta, \coprod)^{\op}], \Z[(\Delta, \coprod)^{\op}])_{\id,[1]} \] 
 connecting $\EZ \AW$ and $\id$. (In both cases the simplicial structure comes from Proposition~\ref{PROPSIMPLICIALOPERAD}.)
\end{enumerate}
\end{SATZ}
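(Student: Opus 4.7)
The plan is to build the extended diagram by applying $\Z[-]\colon \Cat^{\PF}\to \AbCat^{\PF}$ to the 1-categorical diagram~(\ref{eqfundamentalop}); being lax monoidal and compatible with pro-functor composition, this turns $\iota_{\delta}, \iota_{\dec}, \mathrm{forget}$ into $\Ab$-enriched Cartesian morphisms of exponential operads. The genuinely new arrow $C$ is provided by the $\Ab$-enriched pro-functor of Lemma~\ref{LEMMACPROFUNCTOR}, whose defining property $\iota^*\Cfrak\cong \id$ yields the stated relation $C\iota_{\dec}\cong\id$: unwinding the explicit description of $C$ in that lemma, the restriction along $\iota_{\dec}\colon(\Delta,\ast)\hookrightarrow(\FinSet,\coprod)$ gives the identity pro-functor on $(\Delta,\ast)$. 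Part~(1) of the theorem is then immediate from the fact that all the Cartesian morphisms in question act as the identity on the $\Ab$-enriched objects.

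Next, I would promote $C$ to an oplax pro-functor of operads by specifying laxness constraints on active multi-morphisms. Over the fundamental active morphism $\delta_1\colon[2]\to[1]$ this constraint is essentially forced: composing with $\iota_{\delta}$ and applying $R_{\mathcal{C}}$ it must produce the composite~(\ref{eqezpro}) defining $\Ez$, which is already written in ``standard form'' via the counit $c^{\op}$ of the adjunction $\dec_s\dashv\delta_s$ on $\FinSet$. Dually, the constraint for $\mathrm{forget}^{\op}$ over $\delta_1$ is~(\ref{eqawpro}), built from the unit $u^{\op}$. For higher arities the laxness constraints are determined by iterating these binary ones, using associativity of $\coprod$ on $\FinSet$. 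The identity $\EZ\AW=\id$ then holds already on $\Ab$-enriched pro-functors, as a telescoping of units and counits---it is the ``standard form'' counterpart of the identity in Proposition~\ref{PROPEZ} specialised to the strongly symmetric case. Part~(2) follows by reading off these constraints.

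For the homotopy~$\Xi$ in part~(3), I would invoke Proposition~\ref{PROPCREATIONHOMOTOPYFUNCT} with $I=\OOO^{\op}$ and a pair of functors $D,C'\colon \Delta_{\emptyset}\to \Hom^{1\text{-}\oplax,\,1\text{-}\inert\text{-}\pseudo}(\OOO,(\Cat^{\PF},\times))$ encoding, respectively, the identity on $\Z[(\Delta^{\op},\coprod)]$ and the composite $(\EZ\AW)^{\vee}$, both extended to $\Delta_{\emptyset}$ via the coherent diagram built in the previous paragraphs. The natural transformation $F$ between their restrictions to the underlying set of objects is the identity and is product-compatible. Pseudoness on $p_{\emptyset}$ is guaranteed by the $1$-cofinality of $\iota$ (Lemma~\ref{LEMMAIOTAFINAL}), so the canonical element $C(p_{\emptyset})$ lies in $\uDef_{\id}(D,C')_{[-1]}$. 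Since $\id$ and $(\EZ\AW)^{\vee}$ are both $0$-simplices over this canonical element, the Proposition produces a connecting $1$-simplex $\Xi$, and the explicit concatenation-pasting construction in its proof, specialised to $\delta_1$, reproduces~(\ref{eqshih}). The dual homotopy $\Xi^{\vee}$ is produced by the dual argument, interchanging the roles of operad and cooperad.

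The main obstacle will be the coherence verification in the second paragraph: extending $C$ to an oplax pro-functor of operads at \emph{all} arities---not just $n=2$---is a genuine two-dimensional compatibility check. It amounts to the statement that $\Cfrak$ applied to an iterated $\ast$-concatenation agrees with the iterated point-wise product of $\Cfrak$'s via the natural comparison, a higher-variable coherence for $\Cfrak$ reminiscent of the Eilenberg--Zilber theorem itself. I expect to dispatch this by induction on arity from the binary case using associativity of $\coprod$ in $\FinSet$, but bookkeeping the resulting diagrams of $\Ab$-enriched pro-functors will be the non-trivial part of the proof.
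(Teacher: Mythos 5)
Your proposal is correct and follows essentially the same route as the paper: the $\Ab$-enriched pro-functor $C$ of Lemma~\ref{LEMMACPROFUNCTOR}, promoted to an oplax pro-functor of operads with $C\,\iota_{\dec}\cong\id$ (this is Lemma~\ref{LEMMACCOH}), the constraints over $\delta_1$ read off as $\Aw$ and $\Ez$, and Proposition~\ref{PROPCREATIONHOMOTOPYFUNCT} applied with the product-compatible $\Delta_{\emptyset}$-structure of Proposition~\ref{PROPSIMPLICIALOPERAD} to produce $\Xi$ and $\Xi^{\vee}$.

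Three remarks where the paper's formalism spares you work you anticipate. First, the \emph{main obstacle} you flag --- higher-arity coherence for $C$ --- requires no induction on arity: since all operads in the diagram are exponential fibrations over $\OOO$, they are classified by monoidal functors $\Delta_{\emptyset}\to(\AbCat^{\PF},\times)$, so the whole oplax structure is already encoded by the single natural isomorphism $\dec\, C\cong C\,\dec_s$ (coming from $\dec_!\,\Delta_i^{\circ}\boxtimes\Delta_j^{\circ}\cong\Delta_{i+j+1}^{\circ}$ and its $\FinSet\times\FinSet$-equivariance) together with $\pi\, C\cong\pi$; this is exactly the content of Lemma~\ref{LEMMACCOH}. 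Second, $\EZ\,\AW=\id$ needs no telescoping of units and counits: it is immediate from $\iota_{\delta}\circ\mathrm{forget}=\iota_{\dec}$ and $C\,\iota_{\dec}\cong\id$. Third, in the application of Proposition~\ref{PROPCREATIONHOMOTOPYFUNCT} both slots of $\uDef$ should be $\Z[(\Delta^{\op},\coprod)]$ itself (with its $\Delta_{\emptyset}$-extension from Proposition~\ref{PROPSIMPLICIALOPERAD}), with $F=\id$; the transformations $\id$ and $(\EZ\,\AW)^{\vee}$ are then two $0$-simplices lying --- vacuously, since all oplaxness constraints of that extension are isomorphisms --- over the canonical $[-1]$-element, and contractibility of that fiber supplies $\Xi$, whose mate in the $\delta$-direction is $\Xi^{\vee}$; your phrasing in which $D$ and $C'$ \emph{encode} the two transformations should be adjusted accordingly (that encoding is the $I=[1]$ special case of \ref{PARFUNCT}, not what the statement of the theorem asks for).
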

\begin{proof}
All operads, being exponential, are given by functors
\[ \OOO = \Delta^{\op} \to (\AbCat^{\PF}, \times) \]
mapping $\delta_1$ to $\delta$, ${}^t\! \dec$ and $\delta_s = {}^t\! \dec_s$ (on $\FinSet$).
The identification for the pro-functors associated with $\delta_1$ in the bottom row are more precisely:
\begin{align*}
 \iota(\delta): [n],[m];[k] \mapsto &\Hom_{(\Delta, \coprod)}([n],[m];[k])  \\
\cong & \Hom([n],[k]) \times \Hom([m],[k]) \cong \Hom(([n],[m]);\delta([k])) \\
  {}^t\!\iota(\dec):  [n],[m];[k] \mapsto &\Hom_{(\Delta, \ast)}([n],[m];[k]) \cong \Hom(\underbrace{[n] \ast [m]}_{=:\dec([n],[m])};[k])
\end{align*}
and the morphism ``forget'' is induced by the inclusions $[n] \hookrightarrow [n] \ast [m]$ and $[m] \hookrightarrow [n] \ast [m]$ (morphisms in $\Delta$). 
The morphism (\ref{eqawpro}) is the same map.

The existence of $C$ and the isomorphism $C \iota_{\dec} \cong \id$ will be shown in Lemma~\ref{LEMMACCOH} below.
In the following diagram the left hand side composition is the laxness constraint from $C\, \iota_{\delta}$ and the right hand side composition is the previous map (\ref{eqezpro}):
\[ \xymatrix{  
&  {}^t\!\delta  \ar[d]  \ar[ld]   \ar@{}[dddl]|{\mathcircled{1}}  \\
 {}^t\!\delta\,   {}^t\!\iota\,  C^{\op}  \ar@/_80pt/[dddd]_{\iota_{\delta}}^{\mathcircled{2}}  \ar[dd]  &  {}^t\!\delta\, {}^t\!\dec\,  \dec  \ar[d] \\
&   {}^t\!\delta\,   {}^t\!\dec\, {}^t\!\iota\,  C^{\op}  \dec  \ar[d] \\ 
{}^t\!\delta\,  {}^t\!\iota\,  {}^t\!\dec_s\,  \dec_s\, C^{\op}\, \ar[d]  \ar[r] &   {}^t\!\delta\,   {}^t\!\iota\,  {}^t\!\dec_s\,  C^{\op}\, \dec  \ar[d] \\
 {}^t\!\iota\,  {}^t\!\delta_s\, {}^t\!\dec_s\,   \dec_s\,  C^{\op}   \ar[d] \ar[r] & {}^t\!\iota\, {}^t\!\delta_s\,   {}^t\!\dec_s\, C^{\op}   \dec   \ar[d] \\
 {}^{t}\!\iota\, \dec_s\,  C^{\op} \ar[r]  &  {}^t\!\iota\, C^{\op}\,  \dec\,   \ar[d] \\
& \dec } \]
Here diagram $\mathcircled{1}$ commutes because of relation $C \,\iota_{\dec} = \id$ and diagram $\mathcircled{2}$ by definition of the constraint in $\iota_{\delta}$ (notice that ${}^t\!\delta_s = \dec_s$).
The assertion $\EZ \AW = \id$ is clear because we have $\iota_{\delta}\, \mathrm{forget} = \iota_{\dec}$. The operator $\Xi$ is constructed applying Proposition~\ref{PROPCREATIONHOMOTOPYFUNCT}.

First, we have seen in Proposition~\ref{PROPSIMPLICIALOPERAD} that the ``simplicially enriched structure'' on $\Delta^{\op}$ extends to the exponential operad $(\Delta^{\op}, \coprod)$ by means of a functor
\[ \Delta_{\emptyset} \to \Hom^{1-\oplax, 1-\inert-\pseudo}(\OOO^{\op}, (\Cat^{\PF}_{}, \times)^{\vee})  \]
such that  $p_{\emptyset}$ is mapped to ${}^t\! \pi$ (point-wise) and all oplaxness constraints are actually isomorphisms, to the extent that the condition that 
$(\EZ \AW)^{\vee}$ (i.e.\@ the mate of $\EZ\AW$ passing to the right adjoints in the $\delta$-direction) maps to the canonical element is vacuous. By Proposition~\ref{PROPCREATIONHOMOTOPYFUNCT} there exists therefore 
\[ \Xi \in \uDef(\Z[(\Delta^{\op}, \coprod)], \Z[(\Delta^{\op}, \coprod)])_{\id,[1]} \]
connecting $\id$ and $(\EZ \AW)^{\vee}$. By construction it yields the operator defined in Definition~\ref{DEFSHIH} as oplaxness-constraint for $\delta_1: [2] \hookleftarrow [1]$. 
Similarly, the ``simplicially enriched structure'' on $\Delta^{\op}$ extends to the exponential cooperad $(\Delta, \coprod)^{\op}$ by means of a functor
\[ \Delta_{\emptyset} \to \Hom^{1-\oplax, 1-\inert-\pseudo}(\OOO, (\Cat^{\PF}_{}, \times))  \]
such that  $p_{\emptyset}$ is mapped to ${}^t\! \pi$ (point-wise) and the oplaxness constraints {\em for $p_{\emptyset}$} are isomorphisms.
There exists therefore  
\[ \Xi^{\vee} \in \uDef(\Z[(\Delta, \coprod)^{\op}], \Z[(\Delta, \coprod)^{\op}])_{\id,[1]} \] 
connecting $\id$ and $\EZ \AW$. (Actually, this is just the mate of $\Xi$ passing to the right adjoints in the $\delta$ direction).
\end{proof}

Let $(\mathcal{C}, \otimes)$ be a tensor Abelian category. Applying Day convolution (cf.\@ also \ref{EXDAY2}) to the cooperad $(\mathcal{C}, \otimes)^{\vee}$, we get:
\begin{KOR}[Coherent Eilenberg-Zilber for Abelian categories] \label{KOREZAB}
The morphisms of cooperads\footnote{Note that $\AW$ and $\EZ$, being the identity on objects, have trivial lax mates ${}^t\! \AW$ and ${}^t\! \EZ$ (with {\em the same} constraints) and the equality of $L$ and $R$ has been discussed in (\ref{PARCONVOLUTION}). It does not involve forming ${}^t\! \Aw$ for the transformation $\Aw: {}^t \delta \Rightarrow \dec$ where the operation would not make sense! } $\AWfrak:=R_{(\mathcal{C},\otimes)^{\vee}}(\AW) = L_{(\mathcal{C},\otimes)^{\vee}}({}^t\!\AW)$ and $\EZfrak:=R_{(\mathcal{C},\otimes)^{\vee}}(\EZ) = L_{(\mathcal{C},\otimes)^{\vee}}({}^t\!\EZ)$ 
\[ \xymatrix{   (\mathcal{C}^{\Delta^{\op}}, \otimes)^{\vee} \ar@/^10pt/[rr]^{\AWfrak} & & \ar@/^10pt/[ll]^{\EZfrak} (\mathcal{C}^{\Delta^{\op}}, \tildeotimes)^{\vee} }\]
 are the identity on objects, and satisfy $\AWfrak\,  \EZfrak=\id$ and 
\[ L_{(\mathcal{C},\otimes)^{\vee}}(\Xi^{\vee}) \in \uDef((\mathcal{C}^{\Delta^{\op}}, \otimes)^{\vee}, (\mathcal{C}^{\Delta^{\op}}, \otimes)^{\vee})_{\id, [1]} \]
constitutes a homotopy\footnote{The simplicially enriched structure to define $\uDef$ is the {\em discrete one on the source} and the {\em point-wise tensoring on the target} (cf.\@ also Proposition~\ref{PROPSIMPLICIALOPERAD}).
While this implies thus a certain coherence, it does not say that $\EZfrak\, \AWfrak$, let along $\Xi$, is a simplicially enriched natural transformation w.r.t.\@ the latter enrichment, which it is certainly not. }
 between $\EZfrak\, \AWfrak$ and $\id$. 
Likewise
\[ L_{(\mathcal{C},\otimes)}(\Xi) \in \uDef((\mathcal{C}^{\Delta^{\op}}, \otimes), (\mathcal{C}^{\Delta^{\op}}, \otimes))_{\id, [1]} \]
constitutes the same element by means of interpreting the deformations as deformations of $\otimes$. 
\end{KOR}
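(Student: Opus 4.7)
The plan is to derive the corollary from Theorem~\ref{SATZCOHEZ} by feeding all of its structure through the Day convolution 2-functor. First I would apply Proposition~\ref{PROPDAYLAX} to the $\Ab$-enriched (op)lax pro-functors $\AW$, $\EZ$ and their transposes, using the Day-convolution identifications $D_{\OOO^{\op}}((\Delta,\coprod)^{\op}, (\mathcal{C},\otimes)^{\vee}) \cong (\mathcal{C}^{\Delta^{\op}}, \otimes)^{\vee}$ and $D_{\OOO^{\op}}((\Delta,\ast)^{\op}, (\mathcal{C},\otimes)^{\vee}) \cong (\mathcal{C}^{\Delta^{\op}}, \tildeotimes)^{\vee}$ from Example~\ref{EXDAY2}. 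The equality $R_{(\mathcal{C},\otimes)^{\vee}}(\AW) = L_{(\mathcal{C},\otimes)^{\vee}}({}^t\!\AW)$ is the general identity from \ref{PARCONVOLUTION}, which is available because $\AW$ (being the identity on objects, by Theorem~\ref{SATZCOHEZ}, 1) has a pointwise right adjoint; the same applies to $\EZ$. This yields $\AWfrak$ and $\EZfrak$ as morphisms of cooperads in both incarnations.

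The identity-on-objects claim for $\AWfrak$ and $\EZfrak$ is then immediate, since on objects the Day convolution depends only on the fiber of $(\Delta,\coprod)^{\op}$ and $(\Delta,\ast)^{\op}$ over $[1] \in \OOO^{\op}$ — which is $\Delta^{\op}$ in both cases — and $\AW$, $\EZ$ act as the identity on this fiber. The equation $\AWfrak \circ \EZfrak = \id$ follows by 2-functoriality of Day convolution applied to the strict identity $\EZ \circ \AW = \id$ of Theorem~\ref{SATZCOHEZ}, 3.

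For the homotopy, I would use the map of simplicial sets constructed in \ref{PARDAYDEF}, which transports elements of $\uDef$ at the $\Ab$-enriched pro-functor level to elements of $\uDef$ for the corresponding Day convolutions. The inputs are the simplicial-operad and simplicial-cooperad extensions of $\Z[(\Delta^{\op}, \coprod)]$ provided by Proposition~\ref{PROPSIMPLICIALOPERAD}, together with the 1-simplices $\Xi$ and $\Xi^{\vee}$ of Theorem~\ref{SATZCOHEZ}, 3. Transporting $\Xi^{\vee}$ through $L_{(\mathcal{C},\otimes)^{\vee}}$ then produces a 1-simplex in $\uDef((\mathcal{C}^{\Delta^{\op}}, \otimes)^{\vee}, (\mathcal{C}^{\Delta^{\op}}, \otimes)^{\vee})_{\id, [1]}$ whose two faces are $\id$ and $\EZfrak \circ \AWfrak$ by naturality of the transport map, and similarly for the operadic version obtained from $\Xi$.

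The only substantive check, and therefore the main potential obstacle, is verifying that the simplicial enrichment on the Day convolution produced by transporting $\Xi^{\vee}_{\bullet}$ through $L_{(\mathcal{C},\otimes)^{\vee}}$ coincides with the point-wise-tensor simplicial enrichment implicit on the target side of the map of \ref{PARDAYDEF}. This is essentially the content of Proposition~\ref{PROPSIMPLICIALOPERAD} — that Day convolution of precisely this cosimplicial (co)operad reproduces the point-wise simplicial structure on $\mathcal{C}^{\Delta^{\op}}$ — so the verification reduces to unwinding the two constructions and matching them term by term.
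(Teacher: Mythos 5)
Your proposal is correct and follows essentially the same route the paper intends: the corollary is stated without a separate proof precisely because it is the direct image of Theorem~\ref{SATZCOHEZ} under the Day-convolution machinery, i.e.\ Proposition~\ref{PROPDAYLAX} (together with the identity $L({}^t\!-)=R(-)$ from \ref{PARCONVOLUTION}) for the morphisms $\AWfrak$, $\EZfrak$, and the transport map of \ref{PARDAYDEF} for the deformations $\Xi$, $\Xi^{\vee}$, with Proposition~\ref{PROPSIMPLICIALOPERAD} identifying the resulting enrichment with the point-wise tensoring exactly as you indicate in your final paragraph.
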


In the proof of Theorem~\ref{SATZCOHEZ} the following was used: 

\begin{LEMMA}\label{LEMMACCOH}
The point-wise application of $C$ yields an  oplax pro-functor (cf.\@ \ref{PARCONVOLUTION}) of $\Ab$-enriched operads
\[  C: \Z[(\FinSet, \coprod)]  \to \Z[(\Delta, \ast)]  \]
and the point-wise application of ${}^t\!C$ a {\em coCartesian} lax pro-functor,
satisfying:
\begin{equation}\label{eqciota}   C \, \iota_{\dec}  \cong \id.  \end{equation}
\end{LEMMA}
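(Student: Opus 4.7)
The 1-ary underlying pro-functor $C: \FinSet \to \Delta$, its right adjoint ${}^t C$, and the identity $C\,\iota \cong \id$ (respectively ${}^t\!\iota\, {}^t\!C \cong \id$) are all supplied by Lemma~\ref{LEMMACPROFUNCTOR}. What must be added is the operadic content. The plan is to handle ${}^t C$ first, because the coCartesian condition asserts its lax constraints are isomorphisms and hence amount to genuine operadic compatibility of $\Delta^\circ$ with concatenation; the oplax structure on $C$ will then be obtained from the point-wise mate construction of \ref{PARCONVOLUTION}.

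First I would record the combinatorial identification at the heart of the lemma. For any tuple $(y_1, \dots, y_n) \in \Delta^n$, the concatenation $y_1 \ast \cdots \ast y_n \in \Delta$ and the coproduct $y_1 \coprod \cdots \coprod y_n \in \FinSet$ share the same underlying finite set, and the total ordering supplied by $\ast$ is precisely the ``concatenation'' ordering that $\coprod$ carries when the $y_i$ are viewed as objects of $\FinSet$. Applying the $\FinSet$-extension of $\Delta^\circ$ from Lemma~\ref{LEMMAFINSET}, this yields a natural isomorphism
\[ \Delta^\circ_{y_1 \ast \cdots \ast y_n} \ \cong \ \Delta^\circ_{y_1 \coprod \cdots \coprod y_n} \]
in $\Ch_{\ge 0}(\Ab^{\fg})$, natural in the $y_i$ with respect to both $\Delta$- and $\FinSet$-morphisms.

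Next I would verify, for each active multi-morphism $\alpha: [1] \to [n]$ in $\OOO$, that the natural candidate for the coCartesian lax constraint of ${}^t C$ is an isomorphism. Both composite pro-functors have source $\Delta^n$ and target $\FinSet$; evaluated at $((y_i), x) \in (\Delta^n)^{\op} \times \FinSet$, the ``right then down'' composite ${}^t C \circ \alpha^\bullet_{\Z[(\Delta, \ast)]}$ reduces by coend-Yoneda together with Corollary~\ref{KORDOLDKANFF} to
\[ \int^{z \in \Delta} \Hom_\Delta(z,\, y_1 \ast \cdots \ast y_n) \otimes \Hom(\Delta^\circ_x, \Delta^\circ_z) \ \cong \ \Hom(\Delta^\circ_x, \Delta^\circ_{y_1 \ast \cdots \ast y_n}). \]
The ``down then right'' composite $\alpha^\bullet_{\Z[(\FinSet, \coprod)]} \circ ({}^t C)^{\times n}$, after decomposing $\Hom_\FinSet(x, \coprod w_i)$ as a sum over set-partitions of $x$ and applying co-Yoneda in each $w_i$ separately against the $\FinSet$-contravariance of $\Delta^\circ$, collapses to
\[ \Hom(\Delta^\circ_x, \Delta^\circ_{y_1 \coprod \cdots \coprod y_n}), \]
which matches the previous side via the identification above. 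Naturality in all slots and compatibility with composition of active morphisms then reduce to the $\FinSet$-functoriality of $\Delta^\circ$ and to the strict associativities of $\ast$ and $\coprod$; this equips ${}^t C$ with the claimed coCartesian lax operadic structure.

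Taking mates using the point-wise adjunction $C \dashv {}^t C$ produces the advertised oplax structure on $C$; its constraints need not be invertible, since the mate of a coCartesian lax transformation in the 2-category of pro-functors is generally only oplax. Finally, for the equation $C\,\iota_\dec \cong \id$: its mate is ${}^t\iota_\dec \circ {}^t C$, a composition of two coCartesian lax pro-functors of operads, hence again coCartesian lax; but a coCartesian lax pro-functor is determined up to canonical isomorphism by its 1-ary underlying pro-functor (the constraints are forced to be the operadic ones), and here that underlying pro-functor is ${}^t\iota \circ {}^t C \cong \id$ by Lemma~\ref{LEMMACPROFUNCTOR}. Thus ${}^t\iota_\dec \circ {}^t C \cong \id$ operadically, and mating back yields $C\,\iota_\dec \cong \id$ as oplax operadic pro-functors.

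The main obstacle will be the coend-Yoneda reduction of the ``down then right'' composite: unpacking $\Hom_\FinSet(x, \coprod w_i)$ as a sum over set-partitions of $x$ and iteratively applying co-Yoneda against the $\FinSet$-contravariance of $\Delta^\circ$ is routine, but signs and naturality against the non-order-preserving morphisms of $\FinSet$ must be traced carefully. A secondary subtlety is that the mate of the (coCartesian) lax structure on ${}^t C$ may genuinely fail to be coCartesian as an oplax structure on $C$, which is why the lemma asserts only ``oplax'' for $C$.
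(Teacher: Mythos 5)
Your positive core coincides with the paper's own argument: the single piece of content behind the binary (and higher active) constraints is the $\FinSet\times\FinSet$-equivariance of the canonical isomorphism $\dec_!\,\Delta_i^{\circ}\boxtimes\Delta_j^{\circ}\cong\Delta_{i+j+1}^{\circ}$ --- your ``concatenation versus coproduct'' identification --- and placing the invertible constraint on the ${}^t\!C$ side and obtaining the merely oplax structure on $C$ by mating is exactly the paper's route, which verifies the isomorphism as $\dec_s^*\,\mathfrak{C}\cong\mathfrak{C}\,\dec^*$ rather than by unwinding coends as you do. However, your uniform treatment ``for each active $\alpha\colon[1]\to[n]$'' silently omits the $0$-ary case (the degeneracy $s_0$), where your formulas would involve an empty concatenation in $\Delta$ and an empty coproduct in $\FinSet$, neither of which exists since these categories lack units. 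The constraint over this morphism is genuinely part of the operadic data: the paper constructs $\pi\,C\cong\pi$ and shows that its mate ${}^t\!C\,\pi\cong\pi$ is invertible using the $1$-cofinality of $\iota$ (Lemma~\ref{LEMMAIOTAFINAL}), an input your argument never touches and which does not follow from the concatenation identification alone.

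The more serious gap is your derivation of $C\,\iota_{\dec}\cong\id$. The principle you invoke --- that a coCartesian lax pro-functor of operads is determined up to canonical isomorphism by its underlying $1$-ary pro-functor, ``the constraints being forced'' --- is false: a coCartesian (i.e.\@ pseudo-natural) transformation carries its invertible constraint $2$-cells as genuine data, and two such transformations with the same $1$-cell components need not be isomorphic (already a monoidal endofunctor whose underlying functor is the identity can carry a nontrivial constraint and fail to be monoidally isomorphic to the identity). So knowing ${}^t\!\iota\,{}^t\!C\cong\id$ from Lemma~\ref{LEMMACPROFUNCTOR} does not by itself yield the operadic isomorphism, and passing to mates does not repair this. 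What is actually required --- and what the paper checks --- is that the pointwise isomorphism $C\,\iota\cong\id$ is compatible with the (op)laxness constraints, i.e.\@ that the square comparing $\dec^*\cong\iota^*\mathfrak{C}\,\dec^*$ followed by $\rho$ with the identification $\dec^*\iota^*\mathfrak{C}=\iota^*\dec_s^*\mathfrak{C}$ commutes; this holds by construction of $\rho$, but it is an explicit verification, not a formal consequence of the underlying pro-functors agreeing. Your conclusion is correct, but this step needs that check in place of the quoted general principle.
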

\begin{proof}
It suffices to construct natural isomorphisms
\[   \dec\, C \cong C\, \dec_s  \]
(the constraint for $C$ is then the mate $C\, {}^t\! \dec_s \rightarrow {}^t\! \dec\, C$, whose mate in turn is the adjoint 
$ {}^t\!\dec_s\, {}^t\!C\ \cong {}^t\!C\, {}^t\!\dec $)
and natural isomorphisms
\[      \pi \, C\cong\pi.  \]
Equivalently, we may construct:
\begin{eqnarray*} L_{\Ab}(({}^t\!\dec)^{\op}) L_{\Ab}(({}^t\!C)^{\op}) &\cong& L_{\Ab}(({}^t\!C)^{\op})   L_{\Ab}(({}^t\!\dec)^{\op}) \\
 \rho: \dec^*_s \mathfrak{C} &\cong& \mathfrak{C} \dec^*  
\end{eqnarray*}
and
 where $\dec$ now denotes $\Delta^{\op} \times \Delta^{\op} \to \Delta^{\op}$ and $\dec_s$ denotes $\FinSet^{\op} \times \FinSet^{\op} \to \FinSet^{\op}$ (we omit the $\op$).

Given a complex $X$, we get the object $\mathfrak{C} X$ in $\Ab^{\FinSet^{\mathrm{op}}}$ as 
\[ (\mathfrak{C} X)_{[n]} = \Hom(\Delta_{n}^\circ, X) \]
with the natural action of $\FinSet$ on $\Delta_{n}^\circ$. We have
\[ (\mathfrak{C} \dec^* X)_{[i],[j]} = \Hom(\Delta_{i}^\circ \boxtimes \Delta_{j}^\circ, \dec^* X) = \Hom(\dec_! \Delta_{i}^\circ \boxtimes \Delta_{j}^\circ, X) \]
with the natural action of $\FinSet \times \FinSet$ on $\Delta_{i}^\circ \boxtimes \Delta_{j}^\circ$. On the other hand, we have
\[ (\dec^*_s \mathfrak{C} X)_{[i],[j]} = \Hom(\Delta_{i+j+1}^\circ, X) \]
with the action of $\FinSet \times \FinSet$ on $\Delta_{i+j+1}^\circ$ via $\dec_s$.
There is a canonical isomorphism 
\[ \dec_! \Delta_{i}^\circ \boxtimes \Delta_{j}^\circ \cong \Delta_{i+j+1}^\circ \]
and the assertion boils down to its compatibility with the actions of $\FinSet \times \FinSet$.

Furthermore, we have
\[  \pi \cong \pi \, \iota \, C  \cong   \pi \, C  \]
This yields the morphism. 
It has a mate which can also be described as
\[  \pi \cong   {}^{t}\! C\, {}^t\!\iota \,\pi  \cong     {}^{t}\! C\, \pi \]
It is also an isomorphism due to the finality of $\iota^{\op}$.
We leave to the reader to check the compatibilities. 

(\ref{eqciota}) holds point-wise, and the compatibility of oplaxness constraints translates to the commutativity of
\[ \xymatrix{ 
\dec^* \ar[d]^{\sim} \ar[r]^{\sim} &  \iota^* \mathfrak{C} \dec^* \ar[d]^{\rho} \\
 \dec^* \iota^* \mathfrak{C} \ar@{=}[r] &  \iota^* \dec^*_s \mathfrak{C}  \\
} \]
which holds by construction. 
\end{proof}

\subsection{The higher Shih operators}

Theorem~\ref{SATZCOHEZ} states that the value of
\[ L_{(\mathcal{C}, \otimes)^{\vee}}(\Xi^{\vee}) \in  \uDef((\mathcal{C}^{\Delta^{\op}}, \otimes)^{\vee}, (\mathcal{C}^{\Delta^{\op}}, \otimes)^{\vee})_{\id,[1]} \]
at $\delta_1: [2] \to [1]$ is the Shih operator defined in Definition~\ref{DEFSHIH}. 
We will also discuss briefly the ``higher information'' contained in $\Xi^{\vee}$. The most 
useful form is to map the transformation of $L_{(\mathcal{C}, \otimes)^{\vee}}(\Xi^{\vee})$ as follows:

\begin{PAR}\label{PARHIGHERSHIH}
Recall (Corollary~\ref{KOREZAB}) that the two morphisms of cooperads
\[ \xymatrix{ (\mathcal{C}^{\Delta^{\op}}, \otimes)^{\vee} \ar@<3pt>[r]^{\id}_{}  \ar@<-3pt>[r]_{\EZfrak \circ \AWfrak} & (\mathcal{C}^{\Delta^{\op}}, \otimes)^{\vee}  } \]
are connected by a naive homotopy
\[ L_{(\mathcal{C},\otimes)^{\vee}}(\Xi^{\vee}) \in \uDef((\mathcal{C}^{\Delta^{\op}}, \otimes)^{\vee}, (\mathcal{C}^{\Delta^{\op}}, \otimes)^{\vee})_{\id, [1]} \]
(w.r.t.\@ the trivial enrichment on the left and $F \uHom^{\otimes}$ on the right).
This is mapped via (\ref{PROPCOHERENT}) to a coherent transformation
\[ H := \exp(L_{(\mathcal{C},\otimes)^{\vee}}(\Xi^{\vee})) \in \uCoh(\id, \EZfrak \circ \AWfrak)_{[0]} \]
(with the same enrichments). To this coherent transformation we may apply $\Ezfrak^*$ (Proposition~\ref{PROPSIMPLICIALLYENRICHED}, 1.\@)  to change the enrichment to $F \uHom^{\tildeotimes, \otimes}$ which is
a weak enrichment. The transport works because  the trivial enrichment is chosen on the source.

We ignore units, that is, consider the cooperads as cooperads over $\OOO^{\circ, \op}$ forgetting the structure involving the counit.  
The components of this coherent tranformation are thus determined in dimension $[k]$ by a map which sends an element 
\[ \xymatrix{  [k+1] \ar@{=}[r]  & [k+1] \ar[r]^{\delta_{i_0+1}} &  [k] \ar[r] & \cdots  \ar[r]^{\delta_{i_{k-1}+1}} & [1]   \ar@{=}[r] &  [1]  } \]
 in $N([k+1] \times_{/\Delta_{\act}^{\circ, \op}} \Delta_{\act}^{\circ, \op} \times_{/\Delta_{\act}^{\circ, \op}} [1])_{[k]}$
--- which we identify with the vector $\underline{i} = (i_0, \dots, i_{k-1})$ ---
 to a map
\[ (\Ezfrak^* H)_{\underline{i}}: \Delta_{k+1} \tildeotimes X^{\otimes k} \to X^{\otimes k}  \]
which corresponds (via \ref{BEMHOMCLASSICAL}) to a degree $k+1$-map between complexes:
\[ (\Ezfrak^* H)_{\underline{i}}^n:  (X^{\otimes k})_{n-k} \to (X^{\otimes k})_{n}.  \]
\end{PAR}

\begin{PAR}[Higher Shih operators]\label{HIGHERSHIH}
For a sequence $b_0 > b_1 > \cdots >  b_{k-1} \subset \{0, \dots, n-1\}$, and $i_0, \dots, i_{k-1}$, with $0 \le i_j \le k-j-1$, we would like to iterate the morphism
 $H^n_{(b),(0)} :=  (\id_{[b]} \ast \Ezfrak \, \Awfrak) s_b$.
Define\footnote{we have the factorization $\Delta^{\op} \to (\Delta^{\op})^{k-1} \to (\Delta^{\op})^{k}$ where the second map doubles the $i_0$-th entry. 
This gives a transport
\[ (- \circ \delta_{i_0}^*): \Hom(\delta_{k-1}^*, \delta_{k-1}^*) \to \Hom(\delta_{k}^*, \delta_{k}^*).  \]} inductively: 
\begin{align}  H^n_{\underline{b}, \underline{i}}: & (\delta_k^* X)_{[n-k]} \to (\delta_k^* X)_{[n]} \nonumber \\
\label{highershih} := & ( \underbrace{1}_{i_{0} \text{ times}}, (\id_{[b_{0}]} \ast \Ezfrak \, \Awfrak), \underbrace{1}_{k-i_{0}-1 \text{ times}} )s_{b_0} (- \circ \delta_{i_0}^*)  H^{n-1}_{\underline{b}', \underline{i}'}   
\end{align}
where $\underline{b}' = (b_1, \dots, b_{k-1})$ and $ \underline{i}' := (i_1, \dots, i_{k-1})$. 
\end{PAR}

\begin{PAR}\label{PARSHUFFLEB}
Each such sequence $\underline{b} = (b_0, \dots, b_{k-1})$ determines a shuffle $\sigma_{\underline{b}}: [k] \to [n]$, $\tau_{\underline{b}}: [n-k] \to [n]$ such that
$\tau_{\underline{b}} = s_{b_0}s_{b_1} \cdots s_{b_k-1}$ is degenerate precisely at the intervals $b_i+1$ and $\sigma_{\underline{b}}$ is degenerate precisely at the other intervals in $\{1, \dots, n\}$.
\end{PAR}

\begin{PROP}\label{PROPHIGHERSHIH}
We have
\[ \boxed{ (\Ezfrak^* H)_{\underline{i}}^n = \sum_{\underline{b}}\sgn(\sigma_{\underline{b}}) H_{\underline{b}, \underline{i}}^n.  } \]
\end{PROP}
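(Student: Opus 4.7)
The proof plan has several stages, each unpacking one of the constructions that enters into the left-hand side.

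First, I would unravel $H := \exp(L_{(\mathcal{C},\otimes)^{\vee}}(\Xi^{\vee}))$ using the explicit formula for $\exp$ given in the proof of Proposition~\ref{PROPCOHERENT}. Evaluated on a chain $\underline{i} = (i_0,\dots,i_{k-1})$ --- interpreted as a $k$-simplex of the nerve $N([k+1] \times_{/\Delta_{\act}^{\circ,\op}} \Delta_{\act}^{\circ,\op} \times_{/\Delta_{\act}^{\circ,\op}} [1])$ --- the coherent transformation $H_{\underline{i}}$ is built by inserting the $1$-simplex $\Xi^{\vee}$ at each step of the chain and composing the resulting $\Delta_1^k$-parametrized maps, which after restriction to the diagonal picks up exactly the iterated face-maps $\delta_{i_j+1}$ in the correct slots. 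This shows that $H_{\underline{i}}$ is obtained by iteratively applying, at position $i_j$ in the $k-j$-fold tensor product, the oplaxness constraint of $\Xi^{\vee}$ on the particular active morphism under consideration.

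Second, I would trace back the construction of $\Xi^{\vee}$ itself. By Theorem~\ref{SATZCOHEZ} and the proof of Proposition~\ref{PROPCREATIONHOMOTOPYFUNCT}, the value of $\Xi^{\vee}$ on a concatenation $\delta_1 : [n+m+1] \to [n] \ast [m]$ is obtained by pasting together the values at $[n]$ and $[m]$ using the product structure, so that $(\id_{[b]} \ast \Ezfrak\Awfrak) s_b$ at a chosen position $b$ is precisely the building block. Iterating this structural recipe along the $k$ face-maps in $\underline{i}$ yields formally the right-hand sides $H_{\underline{b}, \underline{i}}^n$ of the recursive definition (\ref{highershih}), with $\underline{b}$ recording at each step where in the tensor product the basic Shih operator is applied.

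Third, I would apply $\Ezfrak^*$ to transfer the enrichment from $F\uHom^{\otimes}$ to $F\uHom^{\tildeotimes,\otimes}$, as in Proposition~\ref{PROPSIMPLICIALLYENRICHED}, 1.\@. A morphism $\Delta_{k+1} \otimes X^{\otimes k} \to X^{\otimes k}$ is transported to a morphism $\Delta_{k+1} \tildeotimes X^{\otimes k} \to X^{\otimes k}$ by pre-composing with $\EZfrak$ applied to $\Z[\Delta_{k+1}] \tildeotimes X^{\otimes k} \to \Z[\Delta_{k+1}] \otimes X^{\otimes k}$. The classical formula for $\EZfrak$ as a signed sum over $(k,n-k)$-shuffles (a fact established in the forthcoming Proposition~\ref{PROPAWEZAB}) expresses this pre-composition exactly as the signed sum $\sum_{\underline{b}} \sgn(\sigma_{\underline{b}})(-)$ indexed by the shuffle data $\underline{b}$ described in \ref{PARSHUFFLEB}. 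Each summand, translated back through the correspondence between $\Delta_{k+1}$-parametrized maps and degree $k{+}1$ chain maps in \ref{BEMHOMCLASSICAL}, recovers one iteration tower in the inductive definition (\ref{highershih}).

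The main obstacle will be the sign and bookkeeping: verifying that the particular combination of the interleaving from $\exp$ (which distributes $\Xi^{\vee}$ along the chain $\underline{i}$) and the shuffle expansion from $\EZfrak$ (which produces the sum over $\underline{b}$) reassembles precisely into the nested form (\ref{highershih}), with no double counting and with every shuffle sign $\sgn(\sigma_{\underline{b}})$ accounted for. Concretely, one must check that the naive homotopy $\Xi^{\vee}$, when composed with the multiplicativity of $C$ used in the proof of Lemma~\ref{LEMMACCOH}, produces the factor $(\id_{[b_0]} \ast \EZfrak\AWfrak) s_{b_0}$ at the outer level, and that the remaining data restricts, under $(-\circ \delta_{i_0}^*)$, to the same construction in one lower dimension --- so that the identification $H_{\underline{b},\underline{i}}^n$ from (\ref{highershih}) matches the component of $\Ezfrak^* H$ extracted in step one. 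Once this inductive matching of terms is established, the stated equality follows term-by-term.
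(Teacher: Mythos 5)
Your proposal invokes exactly the two ingredients on which the paper's (very brief) proof rests: the explicit construction of $\exp$ from Proposition~\ref{PROPCOHERENT}, which distributes the deformation $\Xi^{\vee}$ along the chain $\underline{i}$, and the classical shuffle formula for $\Ezfrak$ from Proposition~\ref{PROPAWEZAB}, which produces the signed sum over $\underline{b}$ with signs $\sgn(\sigma_{\underline{b}})$. So this is essentially the same argument as the paper's, just written out in more detail; the term-by-term matching you describe as the remaining bookkeeping is precisely what the paper leaves implicit.
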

\begin{proof}
This follows from the construction of $\exp$ (cf.\@ Proposition~\ref{PROPCOHERENT}) and from the fact that $\Ezfrak$ is the classical Eilenberg-Zilber map defined in terms of shuffles (Proposition~\ref{PROPAWEZAB}).
\end{proof}

We will derive explicit formulas for the $H_{\underline{b}, \underline{i}}^n$ in Proposition~\ref{PROPHIGHERSHIHEXPL} and relate them to the classical Szczarba operators.

\subsection{The different simplicial (weak) enrichments on complexes}

Let $(\mathcal{M}, \otimes)$ be a symmetric monoidal category. Recall the definition of {\bf operad enriched in $(\mathcal{M}, \otimes)$}.
Each enriched operad $\mathcal{C}$ has an underlying usual operad with $\Hom$-sets being $\Hom(1, \uHom(X,Y))$ for $Y \in \mathcal{C}_{[1]}$ and it induces a functor
\[ \uHom: (\mathcal{C}^{\op} \times \mathcal{C})_{/\OOO} \to (\mathcal{M}, \otimes) \]
(for the definition of $(\mathcal{C}^{\op} \times \mathcal{C})_{/\OOO} $, see Definition~\ref{DEFCOPC}). An enriched operad such that the underlying operad comes from
a monoidal category is not necessarily cofibered as enriched category over $\OOO$. This is the case if and only if it is classified by a functor of operads
\[ \OOO \to (\SCat, \times) \]
in which case the monoidal product has the structure of simplicially enriched functor. 

\begin{DEF}We say that an operad $\mathcal{C}$ is {\bf weakly enriched} over $(\mathcal{M}, \otimes)$, if there is a functor of operads
\[ \uHom: (\mathcal{C}^{\op} \times \mathcal{C})_{/\OOO} \to (\mathcal{M}, \otimes) \]
such that the composition with $\Hom(1, -)$ gives back the usual operadic $\Hom$ (\ref{eqhomoperads}).
\end{DEF}
Equivalently, it may be seen as a collection of $\uHom(X, Y)$ for pairs of objects with $Y \in \mathcal{C}_{[1]}$, but where composition is only defined partially with 
``constant morphisms'' for $Z \in \mathcal{C}_{[1]}$:
\begin{align} \label{eqenrich1} \Hom(1, \uHom(X_1, Y_1)) \times \cdots \times  \Hom(1, \uHom(X_n, Y_n))  \times \uHom(Y, Z) &\to \uHom(X, Z) \\
 \label{eqenrich2} \uHom(X_1, Y_1) \otimes \cdots \otimes  \uHom(X_n, Y_n)  \times \Hom(1, \uHom(Y, Z)) &\to \uHom(X, Z)  
  \end{align}
  
Notice that, for a weak enrichment on a {\em category} $\mathcal{C}$, the monoidal structure on $\mathcal{M}$ is irrelevant (except for the unit). 
However, this
is not true for weak enrichments of {\em operads}, in (\ref{eqenrich2}) $\otimes$ is the tensor product w.r.t.\@ which the weak enrichment is defined. 
Similarly, there is the notion of {\bf weakly enriched functor}. 

\begin{PAR}For a morphism of operads $G: (\mathcal{M}, \otimes) \to (\mathcal{M}', \otimes')$ (i.e.\@ a lax monoidal functor) and an operad $\mathcal{C}$ (weakly) $(\mathcal{M}, \otimes)$-enriched  
there is a (weak) $(\mathcal{M}', \otimes')$-enrichment given by
\[ G\uHom(X, Y). \]
For a functor $F: \mathcal{C} \to \mathcal{C}'$ of cooperads (weakly) enriched over $(\mathcal{M}, \otimes)$ there is an induced functor $GF$ of $(\mathcal{M}', \otimes')$-enriched (co)operads. 
\end{PAR}

\begin{PAR}
Everything  has a dual counterpart for cooperads. For example a weak enrichment on a cooperad $\mathcal{C}$ is a functor of {\em operads}: 
\[ \uHom: (\mathcal{C} \times \mathcal{C}^{\op})_{/\OOO} \to (\mathcal{M}, \otimes). \]
Equivalently, it may be seen as a collection of $\uHom(X, Y)$ for pairs of objects with $X \in \mathcal{C}_{[1]}$, but where composition is only defined partially with 
``constant morphisms'':
\begin{align*}    \uHom(X, Y) \times \Hom(1, \uHom(Y_1, Z_1)) \times \cdots \times  \Hom(1, \uHom(Y_n, Z_n))  &\to \uHom(X, Z) \\
    \Hom(1, \uHom(X, Y)) \times  \uHom(Y_1,  Z_1) \otimes \cdots \otimes  \uHom(Y_n, Z_n) &\to \uHom(X, Z)  
  \end{align*}
\end{PAR}

\begin{PAR}
Let $(\mathcal{C}, \otimes)$ be an Abelian tensor category. Recall the Day convolutions
\begin{align}
(\mathcal{C}^{\Delta^{\op}}, \otimes)^{\vee} & =  D((\Delta, \coprod)^{\op}, (\mathcal{C}, \otimes)^{\vee}) \\
(\mathcal{C}^{\Delta^{\op}}, \tildeotimes)^{\vee} & = D((\Delta, \ast)^{\op}, (\mathcal{C}, \otimes)^{\vee}) 
\end{align}
(i.e.\@ $\otimes := \delta^* - \boxtimes -$ is the point-wise product of simplicial objects, and $\tildeotimes := \dec_* - \boxtimes -$, where $\dec_* \cong \tot$, is the usual tensor product of complexes).
We will define natural (weak) $(\Ab^{\Delta^{\op}}, \otimes)$-enrichments, as well as $(\Ab^{\Delta^{\op}}, \tildeotimes)$-enrichments on these cooperads: 
\end{PAR}

\begin{DEF}
\label{DEFENRICHCOMPOSE}
Let $(\mathcal{C}, \otimes)$ be an Abelian tensor category. 

For objects $X \in \mathcal{C}^{\Delta^{\op}}_{[1]}$ and  $Y \in \mathcal{C}^{\Delta^{\op}}_{[m]}$ we define the following objects in $\Ab^{\Delta^{\op}}$ whose $n$-simplices are given by 
\begin{align}
 \uHom^{\otimes}_{(\mathcal{C}^{\Delta^{\op}}, \otimes)^{\vee}}(X, Y)_{[n]} &= \Hom(\Delta_n \otimes X, Y_1 \otimes \cdots \otimes Y_m)  \\
 \uHom^{\tildeotimes}_{(\mathcal{C}^{\Delta^{\op}}, \otimes)^{\vee}}(X, Y)_{[n]} &= \Hom(\Delta_n \tildeotimes X, Y_1 \otimes \cdots \otimes Y_m)  \\
 \uHom^{\tildeotimes}_{(\mathcal{C}^{\Delta^{\op}}, \tildeotimes)^{\vee}}(X, Y)_{[n]} &= \Hom(\Delta_n \tildeotimes X, Y_1 \tildeotimes \cdots \tildeotimes Y_m) 
\end{align}

We define a composition (in $(\Ab^{\Delta^{\op}}, \otimes)$ ) given\footnote{for simplicity, for $1$-ary compositions on the left, the others are defined completely analogously} for $f \in \uHom^{\ast}_{\ast}(X;Y_1, Y_2)_{[n]}$ and
$g_1 \in \uHom^{\ast}_{\ast}(Y_1;Z_1)_{[n]}$ and $g_2 \in \uHom^{\ast}_{\ast}(Y_2;Z_2)_{[n]}$ by 
\begin{align}
(g_1, g_2) \circ f: &\ \Delta_n \otimes X \to \Delta_n \otimes \Delta_n \otimes \Delta_n \otimes X \to \Delta_n \otimes \Delta_n \otimes Y_1 \otimes Y_2 \nonumber \\
\to &\ \Delta_n \otimes   Y_1 \otimes \Delta_n \otimes Y_2 \to Z_1 \otimes Z_2 \label{eqotimescompose}   \\
(g_1, g_2) \circ f: &\ \Delta_n \tildeotimes X \to (\Delta_n \otimes \Delta_n) \tildeotimes \Delta_n \tildeotimes X \to (\Delta_n \otimes \Delta_n) \tildeotimes (Y_1 \otimes Y_2)\nonumber \\
 \to &\ (\Delta_n \tildeotimes   Y_1) \otimes (\Delta_n \tildeotimes Y_2) \to Z_1 \otimes Z_2  \label{eqtildeotimesotimescompose} \\
(g_1, g_2) \circ f: &\ \Delta_n \tildeotimes X \to \Delta_n \tildeotimes \Delta_n \tildeotimes \Delta_n \tildeotimes X \to \Delta_n \tildeotimes \Delta_n \tildeotimes Y_1 \tildeotimes Y_2\nonumber \\
 \to &\ \Delta_n \tildeotimes   Y_1 \tildeotimes \Delta_n \tildeotimes Y_2 \to Z_1 \tildeotimes Z_2   \label{eqtildeotimescompose}
\end{align}
where in (\ref{eqtildeotimesotimescompose}) the morphism 
\[ \switch: (\Delta_n \otimes \Delta_n) \tildeotimes (Y_1 \otimes Y_2) \to (\Delta_n \tildeotimes   Y_1) \otimes (\Delta_n \tildeotimes Y_2) \]
is defined in Definition~\ref{DEFSWITCH} and the morphisms $\Delta_n \to \Delta_n \otimes \Delta_n$ (resp.\@ $\Delta_n \to \Delta_n \tildeotimes \Delta_n$) are the diagonal (resp.\@ AW-diagonal).
The maps (\ref{eqotimescompose}--\ref{eqtildeotimescompose}) are bilinear in $f$, $g_1$, and $g_2$, and thus extend to the tensor product. 
\end{DEF}
There is an operad version of these enrichement as well, defined in the same way. 
Notice that for $n=0$ we get back the usual Abelian groups $\Hom$ in $(\mathcal{C}^{\Delta^{\op}}, \otimes)^{\vee}$ resp.\@ $(\mathcal{C}^{\Delta^{\op}}, \tildeotimes)^{\vee}$.

\begin{LEMMA}
The compositions (\ref{eqotimescompose}) and (\ref{eqtildeotimescompose}) are associative
and define an enrichment, denoted 
$\uHom^{\otimes}_{(\mathcal{C}^{\Delta^{\op}}, \otimes)^{\vee}}(X, Y)$, and\footnote{The decoration $'$ will become apparent shortly.} $\uHom^{\tildeotimes}_{(\mathcal{C}^{\Delta^{\op}}, \tildeotimes)^{\vee}}(X, Y)'$, respectively, 
of the cooperads $(\mathcal{C}^{\Delta^{\op}}, \otimes)^{\vee}$, and $(\mathcal{C}^{\Delta^{\op}}, \tildeotimes)^{\vee}$, respectively, such that the enrichments are again monoidal (i.e.\@ turn the tensor-product into a simplicially enriched functor).
The composition in $\uHom^{\tildeotimes}_{(\mathcal{C}^{\Delta^{\op}}, \otimes)^{\vee}}(X, Y)$ (\ref{eqtildeotimesotimescompose}) is {\em not} associative, in general, but defines a weak enrichment. 
\end{LEMMA}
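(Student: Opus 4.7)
The plan is to reduce the associativity of each composition to a coassociativity property of the diagonal appearing in it, and to handle the monoidality as a consequence of the same coherence that makes the diagonal compatible with $\otimes$, respectively $\tildeotimes$. In all three cases the composition has the same shape: apply some diagonal to $\Delta_n$ to produce enough copies to feed to the various $g_i$'s, and then assemble their outputs using the target monoidal product. Verifying associativity therefore amounts to showing that both bracketings of a triple composition reorganize the tensor factors of $\Delta_n^{\otimes r}$ (resp.\ $\Delta_n^{\tildeotimes r}$) and of the $Y_i$'s in the same way, modulo the commutativity/associativity constraints of the target.

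For (\ref{eqotimescompose}), the diagonal $\Delta_n \to \Delta_n \otimes \Delta_n$ is the point-wise set-theoretic diagonal, which is coassociative and cocommutative. Hence the two bracketings of a triple composition produce identical maps after using the symmetry of $\otimes$, and associativity is immediate. The monoidality statement --- that $\otimes$ on $(\mathcal{C}^{\Delta^{\op}}, \otimes)^{\vee}$ becomes a simplicially enriched functor --- follows from the same cocommutative/coassociative diagonal, since it says precisely that one may interchange the tensor product of composite morphisms with the composition of tensor-product morphisms.

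For (\ref{eqtildeotimescompose}), the diagonal is the Alexander--Whitney diagonal $\Delta_n \to \Delta_n \tildeotimes \Delta_n$, which is coassociative (though not cocommutative). This is the standard fact underlying the dg-coalgebra structure on normalized chains of a simplicial set; in our framework it is a direct consequence of the fact, established in Theorem~\ref{SATZCOHEZ}, that $\AW$ is a genuine morphism of cooperads (in particular, the oplaxness constraint for $\delta_{1}\colon [2]\to [1]$ is itself coassociative in the operadic sense). Combined with associativity of $\tildeotimes$ this gives associativity of (\ref{eqtildeotimescompose}), and the monoidality is essentially tautological because the enrichment is defined using the very same $\tildeotimes$.

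The mixed composition (\ref{eqtildeotimesotimescompose}) is the delicate case. Here the switch
\[ \switch\colon (\Delta_n \otimes \Delta_n) \tildeotimes (Y_1 \otimes Y_2) \to (\Delta_n \tildeotimes Y_1) \otimes (\Delta_n \tildeotimes Y_2) \]
of Definition~\ref{DEFSWITCH} is a genuine morphism, not the component of a coherent monoidal isomorphism; iterating it in the two bracketings of a triple composition produces two maps into differently assembled tensor products, and these are not equal in general. Hence full associativity fails. However, weak enrichment only requires the partial compositions (\ref{eqenrich1}) and (\ref{eqenrich2}), in which all but one argument factors through the unit $1 = \Delta_0$. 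For any such constant argument the corresponding $\Delta_n$-factor in the iterated switch collapses to $\Delta_0$ and $\switch$ becomes an identity, so what remains is an instance of (\ref{eqtildeotimescompose}) (or (\ref{eqotimescompose})) with trivial inputs, which is associative by the cases already handled. The main technical obstacle will be bookkeeping: one needs to display the triple composition (\ref{eqtildeotimesotimescompose}) explicitly as a string of tensor-factor permutations, diagonals, and switches, and then verify that in each of the two bracketings the switch operations involving a constant slot rigidify in compatible ways. This is a diagram chase rather than a conceptual point, and once it is carried out the weak enrichment axioms --- including the compatibility with the monoidal product on the target --- follow as in the strict cases above.
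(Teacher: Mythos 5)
Your treatment of (\ref{eqotimescompose}) is fine, and your idea for the weak enrichment of (\ref{eqtildeotimesotimescompose}) (constant arguments only see the augmentation, so the problematic interchange never involves two genuinely simplicial inputs) is in the right spirit, even if left as bookkeeping. The gap is in the $\tildeotimes$-case: coassociativity of the Alexander--Whitney diagonal together with associativity of $\tildeotimes$ is \emph{not} enough for the $n$-ary (operadic) associativity. Write out a triple composite $\gamma(\gamma(f;g_1,g_2);h_1,h_2)$ versus $\gamma(f;h_1\circ g_1,h_2\circ g_2)$ using (\ref{eqtildeotimescompose}): both are evaluations of the same iterated AW-comultiplication of $\Delta_n$, but the two bracketings feed its legs to $h_1,g_1,h_2,g_2,f$ in \emph{different orders} (in one bracketing the order along the comultiplication string is $h_1,g_1,h_2,g_2,f$, in the other $h_1,h_2,g_1,g_2,f$). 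Identifying the two therefore requires an interchange, not just coassociativity, and since AW is not cocommutative this is exactly the delicate point. If one insists on comparing same-degree elements with the sign-free swap of the point-wise product, the two composites genuinely differ (a small computation with $\Delta_1$ and maps into the disc complex $D_1$ already detects this); the associativity that the Lemma asserts is the one formulated via the symmetry of $(\Ch_{\ge 0},\tildeotimes)$ with its Koszul sign --- this is what the paper's hint means when it says that the symmetry, given explicitly by Lemma~\ref{LEMMADEC1}, is used to \emph{formulate} associativity of the $n$-ary compositions. Concretely, the intended verification identifies elements of $\uHom^{\tildeotimes}$ with graded maps (Remark~\ref{BEMHOMCLASSICAL}) and the composition with the classical one $(g_1\tildeotimes g_2)\circ f$ (Lemma~\ref{LEMMAYONEDA}); associativity is then the graded interchange law $(h_1\tildeotimes h_2)\circ(g_1\tildeotimes g_2)=(-1)^{|h_2||g_1|}(h_1\circ g_1)\tildeotimes(h_2\circ g_2)$, whose sign is exactly the one produced by Lemma~\ref{LEMMADEC1}. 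None of this appears in your argument, and the same omission affects your monoidality claim, which you call ``essentially tautological''.

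A symptom of the gap: your stated reason why (\ref{eqtildeotimesotimescompose}) fails to be associative --- that $\switch$ (Definition~\ref{DEFSWITCH}) is not a coherent constraint --- applies in spirit just as well to the non-cocommutativity of AW in the $\tildeotimes$-case, so your argument as written does not actually distinguish the case where associativity holds (with the Koszul-signed symmetry) from the case where it fails. To repair the proof, prove the $\tildeotimes$-case by reduction to the classical graded composition as above (or, equivalently, check the $n$-ary associativity diagrams in the enriching category $(\Ab^{\Delta^{\op}},\tildeotimes)$, where the factor permutations carry the signs of Lemma~\ref{LEMMADEC1}), and only then explain that the mixed composition admits no such identification because $\Awfrak$ and $\Ezfrak$ are not compatible with the symmetry, which is the true source of the non-associativity of (\ref{eqtildeotimesotimescompose}).
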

\begin{proof}Left to the reader. Note that the symmetry of $(\Ch_{\ge 0}(\Ab), \otimes)$ and $(\Ch_{\ge 0}(\Ab), \tildeotimes)$ is used to formulate associativity for $n$-ary compositions. 
In the latter case, it is given explicitly by Lemma~\ref{LEMMADEC1}.
\end{proof}

\begin{BEM}\label{BEMHOMCLASSICAL}
In case of the enrichment $\uHom^{\tildeotimes}_{(\mathcal{C}^{\Delta^{\op}}, \tildeotimes)^{\vee}}$, the Hom-objects identify via Dold-Kan with the usual Hom-complexes identifying 
an element 
$f: \Delta_n \tildeotimes X \to Y_1 \tildeotimes \cdots \tildeotimes Y_k$  with its restriction
\begin{equation} \label{eqhomotopy} \widetilde{f}: \{0, \dots, n\} \tildeotimes X \to Y_1 \tildeotimes \cdots \tildeotimes Y_k  \end{equation}
which is a degree $n$ morphism of complexes. We have (by definition) a commutative diagram
 \[ \xymatrix{ 
\Hom(\Delta_n \tildeotimes X, Y) \ar[rrr]^-{\dd_l^* = \sum (-1)^i \circ (\delta_i \otimes \id)^* } \ar[d] & & &  \Hom(\Delta_{n-1} \tildeotimes X, Y) \ar[d] \\
\uHom^{\tildeotimes}(X, Y)_n \ar[rrr]^-{\partial} & & &  \uHom^{\tildeotimes}(X, Y)_{n-1} 
 } \]
 where $\partial$ is the differential in the Hom-complex, and also 
 \begin{equation} \label{eqhomotopyd} \boxed{  \partial \widetilde{f} =  \mathrm{d} \circ \widetilde{f} - (-1)^n \widetilde{f} \circ \mathrm{d} .}    \end{equation}
 which follows from the fact that (\ref{eqhomotopy}) is a morphism of complexes itself, i.e.\@
$f \dd_l + (-1)^n  f \dd_r = \dd f$ which translates into (\ref{eqhomotopyd}).
\end{BEM}

\begin{LEMMA}[Yoneda product] \label{LEMMAYONEDA}
The composition (\ref{eqtildeotimescompose}) 
defined  for $\uHom^{\tildeotimes}_{(\mathcal{C}^{\Delta^{\op}}, \tildeotimes)^{\vee}}$ (resp.\@ $\uHom^{\tildeotimes}_{(\mathcal{C}^{\Delta^{\op}}, \tildeotimes)}$)  factors through $\Aw$ in the sense that 
\[\xymatrix{
\underline{\Hom}^{\tildeotimes}(B, C) \tildeotimes  \underline{\Hom}^{\tildeotimes}(A, B)  \ar[rr]^-{\underline{\Hom}^{\tildeotimes}} &  & \underline{\Hom}^{\tildeotimes}(A, C) \ar@{=}[d]  \\
\underline{\Hom}^{\tildeotimes}(B, C) \otimes  \underline{\Hom}^{\tildeotimes}(A, B)  \ar[u]^{\Aw} \ar[rr]^-{(\underline{\Hom}^{\tildeotimes})'} &  & \underline{\Hom}^{\tildeotimes}(A, C)  
}\]
is commutative, where 
\begin{enumerate}
\item for cooperads: 
\[  \uHom^{\tildeotimes}:   \uHom^{\tildeotimes}(Y_1, Z_1) \tildeotimes \cdots \tildeotimes \uHom^{\tildeotimes}(Y_m, Z_m) \tildeotimes \uHom^{\tildeotimes}(X, Y)  \to \uHom^{\tildeotimes}(X, Z) \]
is a morphism of complexes defined as follows (for $m=2$ for simplicity). A pure tensor in complex degree $n$ on the left can be given as $i+j+k=n$ and  $f: Y_1 \to Z_1^{\tildeotimes}$ of degree $i$,  $g: Y_2 \to Z_2^{\tildeotimes}$ of degree $j$, $h: X \to Y_1 \tildeotimes Y_2$ of degree $k$. Then the composition in $\uHom^{\tildeotimes}(X, Z)_n$ is given by
\[ (f \tildeotimes g) \circ h  \]

\item for operads:
\[ \uHom^{\tildeotimes}: \uHom^{\tildeotimes}(Y; Z) \tildeotimes  \uHom^{\tildeotimes}(X_1, Y_1) \tildeotimes \cdots \tildeotimes \uHom^{\tildeotimes}(X_m, Y_m)    \to \uHom^{\tildeotimes}(X, Z) \]
is a morphism of complexes defined as follows (for $m=2$ for simplicity). A pure tensor in complex degree $n$ on the left can be given as $i+j+k=n$ and  $h: Y_1 \tildeotimes Y_2 \to Z$ of degree $i$,  $f: X_1^{\tildeotimes} \to Y_1$ of degree $j$, $g: X_2^{\tildeotimes} \to Y_2$ of degree $k$. Then the composition in $\uHom^{\tildeotimes}(X, Z)_n$ is given by
\[ h \circ (f \tildeotimes g) \]
\end{enumerate}
where in both cases $(f \tildeotimes g)$ is defined by (Koszul convention, cf.\@ also \ref{PARKOSZUL} below):
\[ (f \tildeotimes g)(x \tildeotimes y) := (-1)^{\deg(g) \deg(x)} f(x) \tildeotimes g(y). \]
\end{LEMMA}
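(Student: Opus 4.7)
The plan is to unravel both sides of the claimed commutative diagram into explicit formulas in terms of the Koszul-graded $\Hom$-complex identification from Remark~\ref{BEMHOMCLASSICAL}. Via Dold–Kan, $\uHom^{\tildeotimes}(X,Y)_{[n]}$ corresponds to degree-$n$ morphisms $\widetilde{f}\colon X \to Y$ (in the sense of graded morphisms shifted by $n$), so it suffices to establish the identity on the level of these graded morphisms for each pure tensor $g \otimes f$ (or $h \otimes f \otimes g$ in the operad case).

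First I would dispose of the only genuinely geometric ingredient: the identification of the AW-diagonal on $\Delta_n$. By construction, it is $\Awfrak$ applied to the point-wise diagonal $\Delta_n \to \Delta_n \otimes \Delta_n$, and on normalized chains this is exactly the classical front-face/back-face decomposition
\[ [0,\dots,n] \mapsto \sum_{i+j=n} [0,\dots,i] \tildeotimes [i,\dots,n]. \]
This will provide the key ``splitting at index $i$'' underlying the Yoneda product.

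Next I would chase a pure tensor $g \otimes f$ (degrees $j$ and $i$, $i+j=n$) through the composition (\ref{eqtildeotimescompose}) in the cooperad case. Using Proposition~\ref{PROPSIMPLICIALOPERAD} to identify the AW-diagonal on $\Delta_n$ with its three-fold iterate, the composition becomes an iterated application of the front/back splitting followed by $\switch$ (Definition~\ref{DEFSWITCH}) and the evaluations $f$ and $g$. By the explicit formula for $\switch$, each crossing of a degree-$k$ element past a degree-$\ell$ element contributes a sign $(-1)^{k\ell}$. Tracking these signs, the only surviving contribution comes from the diagonal term where the splitting is at index $i$ (matching the degree of $f$); all other splittings give zero because either the degree-$i$ input of $f$ or the degree-$j$ input of $g$ is applied to a chain of the wrong length (the normalized complex of the corresponding face/degenerate part vanishes). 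What remains is exactly $(f \tildeotimes g) \circ h$ with the Koszul sign $(-1)^{j\deg(h_1)}$ appearing precisely from the single surviving $\switch$, matching the definition given in the statement of the Lemma.

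Finally I would verify that this chased-through value coincides with the evaluation of the RHS: by definition $(\uHom^{\tildeotimes})'$ is the pairing induced by first applying $\Aw$ to $g \otimes f$, and $\Aw$ applied to an element of $\uHom^{\tildeotimes}(B,C) \otimes \uHom^{\tildeotimes}(A,B)$ (whose underlying chain is the point-wise tensor of simplicial Abelian groups, with a bidegree decomposition) picks out exactly the components of matching bidegree $(j,i)$; these are then pairwise composed via $f \tildeotimes g$ with the stated Koszul sign. The operad case is verbatim the same argument with the roles of source and target flipped. The main obstacle I anticipate is purely bookkeeping: tracking the Koszul signs produced by the iterated $\switch$ together with the signs already inherent in the AW-diagonal decomposition, and checking that they conspire to yield exactly the single sign $(-1)^{\deg(g)\deg(x)}$. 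Once that sign calculation is carried out cleanly — most efficiently by reducing to the case of pure shuffle tensors and comparing with the classical Eilenberg–Zilber sign convention recalled in Proposition~\ref{PROPAWEZAB} — the statement follows.
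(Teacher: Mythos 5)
Your proposal is correct and follows essentially the same route as the paper, whose proof simply observes that the composition (\ref{eqtildeotimescompose}) was \emph{defined} via the Alexander--Whitney diagonal on $\Delta_n$ (whose normalized-chain formula is the front-face/back-face splitting) and leaves the resulting degree and sign bookkeeping to the reader --- exactly the unraveling you carry out. One small slip to fix when writing it up: the exchange step in (\ref{eqtildeotimescompose}) is the symmetry of $\tildeotimes$ (contributing the Koszul sign as in Lemma~\ref{LEMMADEC1}), not the switch map of Definition~\ref{DEFSWITCH}, which only enters the mixed composition (\ref{eqtildeotimesotimescompose}); the signs you track are the same, so the argument is unaffected.
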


\begin{proof}
The reason is that we {\em defined} the composition (\ref{eqtildeotimescompose}) using the Alexander-Whitney diagonal. Details are left to the reader. 
\end{proof}

From the Lemma follows that also 
\[\xymatrix{
\underline{\Hom}^{\tildeotimes}(B, C) \tildeotimes  \underline{\Hom}^{\tildeotimes}(A, B)  \ar[d]_{\Ez}  \ar[rr]^-{\underline{\Hom}^{\tildeotimes}} &  & \underline{\Hom}^{\tildeotimes}(A, C) \ar@{=}[d]  \\
\underline{\Hom}^{\tildeotimes}(B, C) \otimes  \underline{\Hom}^{\tildeotimes}(A, B)   \ar[rr]^-{(\underline{\Hom}^{\tildeotimes})'} &  & \underline{\Hom}^{\tildeotimes}(A, C)  
}\]
is commutative or, in other words, $\underline{\Hom}^{\tildeotimes} = \Ez^* (\underline{\Hom}^{\tildeotimes})'$.

\begin{PAR}\label{PARENRICHMENTS}
By transport via $F, \Aw, \Ez$ we thus get thus six enrichments
\[ \begin{array}{l|lll}
\hbox{\diagbox{\text{cooperad}}{\text{enriched in}}}&(\Set^{\Delta^{\op}}, \times) & (\Ab^{\Delta^{\op}}, \otimes) & (\Ab^{\Delta^{\op}}, \tildeotimes) \\
\hline
(\mathcal{C}^{\Delta^{\op}}, \otimes)^{\vee} &F\underline{\Hom}^{\otimes}&\underline{\Hom}^{\otimes} & \Ez^* \underline{\Hom}^{\otimes} \\ 
(\mathcal{C}^{\Delta^{\op}}, \tildeotimes)^{\vee} &F\Aw^*\underline{\Hom}^{\tildeotimes} & \Aw^*\underline{\Hom}^{\tildeotimes} & \underline{\Hom}^{\tildeotimes} 
\end{array}\]
and three weak enrichments
\[ \begin{array}{l|lll}
\hbox{\diagbox{\text{cooperad}}{\text{weakly enriched in}}}&(\Set^{\Delta^{\op}}, \times) & (\Ab^{\Delta^{\op}}, \otimes) & (\Ab^{\Delta^{\op}}, \tildeotimes) \\
\hline
(\mathcal{C}^{\Delta^{\op}}, \otimes)^{\vee}&F\underline{\Hom}^{\tildeotimes} & \underline{\Hom}^{\tildeotimes} & \Ez^* \underline{\Hom}^{\tildeotimes} 
\end{array}\]
where $F: (\Ab^{\Delta^{\op}}, \otimes) \to (\Set^{\Delta^{\op}}, \times)$ is the forgetful functor, which is lax monoidal:
\[  F(A) \times F(B) \to F(A \otimes B). \]
\end{PAR}

\begin{PROP}\label{PROPSIMPLICIALLYENRICHED}
\begin{enumerate}
\item There is a functor of weakly $(\Ab^{\Delta^{\op}}, \otimes)$-enriched cooperads, which is the identity on objects and underlying morphism-sets: 
\begin{equation} \label{eqaw1} \Ezfrak^*: \uHom^{\otimes}_{(\mathcal{C}^{\Delta^{\op}}, \otimes)^{\vee}}(X, Y) \to  \uHom^{\tildeotimes}_{(\mathcal{C}^{\Delta^{\op}}, \otimes)^{\vee}}(X, Y).  \end{equation}
\item There is a functor of weakly $(\Ab^{\Delta^{\op}}, \otimes)$-enriched cooperads, which is the identity on objects and induces the $\Awfrak$-morphism on underlying morphism-sets.
\begin{equation}  \label{eqaw2} \Awfrak: \uHom^{\tildeotimes}_{(\mathcal{C}^{\Delta^{\op}}, \otimes)^{\vee}}(X, Y) \to  \Aw^* \uHom^{\tildeotimes}_{(\mathcal{C}^{\Delta^{\op}}, \tildeotimes)^{\vee}}(X, Y)  \end{equation}
\item $\Ez^*$ applied to the composition is a  functor of $(\Ab^{\Delta^{\op}}, \tildeotimes)$-enriched cooperads 
\[ \Ez^*(\Awfrak): \Ez^* \uHom^{\otimes}_{(\mathcal{C}^{\Delta^{\op}}, \otimes)^{\vee}}(X, Y) \to \uHom^{\tildeotimes}_{(\mathcal{C}^{\Delta^{\op}}, \tildeotimes)^{\vee}}(X, Y) \]
(but the composition $\uHom^{\otimes}_{(\mathcal{C}^{\Delta^{\op}}, \otimes)^{\vee}}(X, Y) \to \Aw^*\uHom^{\tildeotimes}_{(\mathcal{C}^{\Delta^{\op}}, \tildeotimes)^{\vee}}(X, Y)$ is only a weakly $(\Ab^{\Delta^{\op}}, \otimes)$-enriched functor)
\end{enumerate}
\end{PROP}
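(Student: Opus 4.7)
I would define the three maps by the evident pre- or post-composition: $\Ezfrak^*$ is precomposition of $f\colon \Delta_n \otimes X \to Y_1 \otimes \cdots \otimes Y_m$ with $\Ezfrak \otimes \id_X \colon \Delta_n \tildeotimes X \to \Delta_n \otimes X$; $\Awfrak$ is postcomposition with $\Awfrak \colon Y_1 \otimes \cdots \otimes Y_m \to Y_1 \tildeotimes \cdots \tildeotimes Y_m$; and $\Ez^*(\Awfrak)$ is literally the composite $\Awfrak \circ \Ezfrak^*$ as a map of underlying simplicial Abelian groups, but reinterpreted after re-enriching the source through $\Ezfrak\colon \tildeotimes \to \otimes$. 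The identifications on underlying morphism sets are immediate: at $n=0$ the map $\Ezfrak$ is the identity on $1 \otimes X = 1 \tildeotimes X$, so $\Ezfrak^*$ is the identity there, and the effect of $\Awfrak$ on $n=0$ sections is tautologically postcomposition with the $\Awfrak$-morphism. The real content is checking compatibility with the (weak) compositions of Definition~\ref{DEFENRICHCOMPOSE}.

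\textbf{Parts (1) and (2).} For (1), weak compatibility means checking that for fixed unit-valued composands on one side, $\Ezfrak^*$ intertwines the composition (\ref{eqotimescompose}) with (\ref{eqtildeotimesotimescompose}). Unraveling, this reduces to the commutativity of a coherence square relating $\Ezfrak$ to the $\otimes$-diagonal $\Delta_n \to \Delta_n \otimes \Delta_n$, the AW-coassociator $\Delta_n \to \Delta_n \tildeotimes \Delta_n$, and the $\switch$ morphism of Lemma~\ref{LEMMADEC1}. This holds because $\Ezfrak$ is a morphism of cooperads $(\mathcal{C}^{\Delta^{\op}}, \tildeotimes)^\vee \to (\mathcal{C}^{\Delta^{\op}}, \otimes)^\vee$ by Corollary~\ref{KOREZAB}, applied to the canonical coalgebra structure on $\Delta_n$. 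Part (2) is strictly parallel: one checks that $\Awfrak$-postcomposition intertwines the partial compositions, which follows from the coherence of $\Awfrak$ as a morphism of cooperads, again from Corollary~\ref{KOREZAB}. The weakness in both cases reflects that the composition (\ref{eqtildeotimesotimescompose}) mixes $\otimes$ and $\tildeotimes$ through the switch, so compatibility with iterated composition survives only for constant factors.

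\textbf{Part (3).} The key observation is Lemma~\ref{LEMMAYONEDA}: the $\tildeotimes$-composition $\circ_\tildeotimes$ on $\uHom^\tildeotimes_{\tildeotimes^\vee}$ factors as $(\uHom^\tildeotimes)' \circ \Aw$, where $\Aw \colon \otimes \to \tildeotimes$ on simplicial Abelian groups is the same Alexander-Whitney map. On the other hand, the $(\Ab^{\Delta^{\op}}, \tildeotimes)$-enriched composition on $\Ez^*\uHom^\otimes$ is, by definition, the $\otimes$-composition preceded by $\Ez \colon \tildeotimes \to \otimes$ on the tensor product of Hom objects. In the coherence square to verify, $\Ez$ appears on the source and $\Aw$ appears on the target, and they pair up via the relation $\Awfrak \circ \Ezfrak = \id$ of Corollary~\ref{KOREZAB}. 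Chasing the diagram, both compositions reduce to the same formula: apply $\Ezfrak^*$ to pull back along $\Ezfrak$, then compose via the original $\otimes$-composition formula (\ref{eqotimescompose}), then post-compose with $\Awfrak$. The cancellation between source-side $\Ez$ and target-side $\Aw$ in the Yoneda-factorization is precisely what upgrades the weak compatibility of parts (1)(2) to a strict $(\Ab^{\Delta^{\op}}, \tildeotimes)$-enrichment here.

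\textbf{Main obstacle.} The real difficulty is bookkeeping. The coherence diagrams in parts (1) and (2) involve several interlocking compatibilities --- two different diagonals on $\Delta_n$, the $\switch$ morphism, and the coproduct structure on the $Y_i$ --- and each square must be assembled from the cooperad-morphism data of $\Awfrak$, $\Ezfrak$ packaged in Theorem~\ref{SATZCOHEZ}. Once these are in place, the passage in (3) from weak to strict compatibility is abstractly forced by Corollary~\ref{KOREZAB}, but making the cancellation transparent at the level of explicit chain-level formulas would be tedious; the cleanest presentation is the diagrammatic one outlined above, applied to $\Delta_n$ as a coalgebra in $(\Ab^{\Delta^{\op}}, \otimes)$ and in $(\Ab^{\Delta^{\op}}, \tildeotimes)$ respectively.
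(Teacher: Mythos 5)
Your definitions of the three maps (precompose with the Eilenberg--Zilber map $\Delta_n \tildeotimes X \to \Delta_n \otimes X$, postcompose with $\Awfrak$, and reinterpret the composite after re-enriching through $\Ezfrak$) are exactly the ones the paper uses, and so is the reduction of the verification to compatibility with a single $2$-ary composition. The sketch for part (3) via Lemma~\ref{LEMMAYONEDA} and $\Awfrak\,\Ezfrak=\id$ is also in the right spirit (the paper leaves that part to the reader).

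The gap is in how you discharge the key coherence square in parts (1) and (2). You justify it by saying that $\Ezfrak$ (resp.\ $\Awfrak$) is a morphism of cooperads by Corollary~\ref{KOREZAB} ``applied to the canonical coalgebra structure on $\Delta_n$''. That cannot suffice: the cooperads $(\mathcal{C}^{\Delta^{\op}},\otimes)^{\vee}$ and $(\mathcal{C}^{\Delta^{\op}},\tildeotimes)^{\vee}$ are \emph{planar}, so the functoriality packaged in Corollary~\ref{KOREZAB} and Theorem~\ref{SATZCOHEZ} carries no information whatsoever about the symmetry $\sigma$ (resp.\ the Koszul symmetry $\widetilde\sigma$), and the square you must check is precisely a compatibility of $\Ezfrak$ (resp.\ $\Awfrak$) with the $\switch$ map of Definition~\ref{DEFSWITCH} --- which is built out of $\Ezfrak$, $\sigma$ and $\Awfrak$ simultaneously. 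These are the separate, substantive statements Lemma~\ref{LEMMABATEAU2} and Lemma~\ref{LEMMABATEAU3}, which the paper proves by standard-form reductions to explicit diagrams in $\Delta$ and $\FinSet$, resting on Lemma~\ref{LEMMABATEAU1} (Fresse); note also that $(\Ezfrak\otimes\Ezfrak)\circ\switch$ unwinds to $(\Ezfrak\Awfrak\otimes\Ezfrak\Awfrak)\circ\sigma\circ\Ezfrak$, and $\Ezfrak\Awfrak\neq\id$, so there is genuinely something to prove. Since $\Awfrak$ is well known \emph{not} to be compatible with the symmetry in general (cf.\ also Remark~\ref{BEMDECNOTCOMM}), such compatibilities cannot be formal consequences of ``being a morphism of cooperads''; without Lemmas~\ref{LEMMABATEAU2} and \ref{LEMMABATEAU3} (or an equivalent symmetric-compatibility input) your argument does not close. (Minor point: the $\switch$ map is Definition~\ref{DEFSWITCH}, not Lemma~\ref{LEMMADEC1}; the latter only records the sign $(-1)^{ij}$ for the symmetry of $\dec_*$.)
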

\begin{proof} 
1.\@ Morphism (\ref{eqaw1}) is given by mapping $\Delta_n \otimes X \to Y_1 \otimes \cdots \otimes Y_n$ to the pre-composition
\[ \Delta_n \tildeotimes X \to \Delta_n \otimes X \to Y_1 \otimes \cdots \otimes Y_n \]
with the Eilenberg-Zilber morphism. The only non-trivial case to see that this a morphism of weak enrichments is the composition on the left with a 2-ary morphism in $\Hom(X; Y, Z)$, i.e.\@ to show that the following diagram
\[ \xymatrix{
X \tildeotimes \Delta_n \ar[d] \ar[rr]^{\Ezfrak}  & & X \otimes \Delta_n \ar[d] \\
(Y \otimes Z) \tildeotimes (\Delta_n \otimes \Delta_n) \ar[rr]^{\Ezfrak} \ar[d]^{\switch}  & &  Y \otimes Z \otimes \Delta_n \otimes \Delta_n \ar[d]^{\sigma} \\
( Y \tildeotimes  \Delta_n) \otimes (Z \tildeotimes \Delta_n) \ar[d] \ar[rr]^{\Ezfrak \otimes \Ezfrak} & &  Y \otimes \Delta_n \otimes Z \otimes \Delta_n \ar[d]  \\
Y' \otimes Z' \ar@{=}[rr] & & Y' \otimes Z'
}\]
is commutative. The commutativity of the middle square is Lemma~\ref{LEMMABATEAU2} below.

2.\@ Morphism (\ref{eqaw2}) is given by mapping $\Delta_n \tildeotimes X \to Y_1 \otimes \cdots \otimes Y_n$
to the composition
\[ \Delta_n \tildeotimes X \to Y_1 \otimes \cdots \otimes Y_n \to Y_1 \tildeotimes \cdots \tildeotimes Y_n \]
with the Alexander-Whitney morphism. 
The only non-trivial case to see that this a morphism of weak enrichments is to show that the following diagram
\[ \xymatrix{
X \tildeotimes \Delta_n \ar[d] \ar@{=}[rr] & & X \tildeotimes \Delta_n \ar[d] \\
(Y \otimes Z) \tildeotimes (\Delta_n \otimes \Delta_n) \ar[rr]^-{\Awfrak \tildeotimes \Awfrak} \ar[d]^{\switch} & & Y \tildeotimes Z \tildeotimes \Delta_n \tildeotimes \Delta_n \ar[d]^{\widetilde{\sigma}} \\
( Y \tildeotimes  \Delta_n) \otimes (Z \tildeotimes \Delta_n) \ar[d] \ar[rr]^{\Awfrak} & & Y \tildeotimes \Delta_n \tildeotimes Z \tildeotimes \Delta_n \ar[d]  \\
Y' \otimes Z' \ar[rr]^-{\Awfrak} & &Y' \tildeotimes Z'
}\]
is commutative. The commutativity of the middle square is Lemma~\ref{LEMMABATEAU3} below.

3.\@ is left to the reader. 
\end{proof}

\begin{LEMMA}[{\cite[Proposition 2.2.1]{Fra01}}]\label{LEMMABATEAU1}
The following is commutative: 
\begin{equation} \label{eqbateau} \vcenter{  \xymatrix{ &  \delta^* \delta_{12,34}^*   \ar[r]^{\sigma} &  \delta^* \delta_{13,24}^* \ar[rd]^{\Aw}  &. \\
\dec_* \delta_{12,34}^*  \ar[ru]^{\Ez} \ar[rd]_{\Aw_{12,34}} & & &\dec_* \delta_{13,24}^*   \\
&  \dec_* \dec_{12,34,*} \ar[r]_{\widetilde{\sigma}} &  \dec_* \dec_{13,24,*} \ar[ru]_{\Ez_{13,24}} } } \end{equation}
and hence for objects $A, B, C, D$ with an isomorphism
\[  A \boxtimes B \boxtimes C \boxtimes D \cong (23)^* A \boxtimes C \boxtimes B \boxtimes D  \]
the diagram 
\[ \xymatrix{ &   A \otimes B \otimes C \otimes D \ar[r]^{\sigma} &  A \otimes  C \otimes B \otimes D \ar[rd]^{\Awfrak}   \\
 (A \otimes B) \tildeotimes  (C \otimes D) \ar[ru]^{\Ezfrak} \ar[rd]_{\Awfrak \tildeotimes \Awfrak} & & & (A \otimes  C) \tildeotimes  (B  \otimes D) \\
&  A \tildeotimes B \tildeotimes  C \tildeotimes D \ar[r]_{\widetilde{\sigma}} &  A \tildeotimes  D \tildeotimes B \tildeotimes C \ar[ru]_{\Ezfrak \tildeotimes \Ezfrak} }\]
is commutative. 
\end{LEMMA}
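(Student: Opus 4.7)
The plan is to derive (\ref{eqbateau}) from the coherent refinement of the Eilenberg--Zilber theorem (Theorem~\ref{SATZCOHEZ}), interpreting both composites as instances of the oplaxness constraints of the morphisms of $\Ab$-enriched cooperads $\AW = \mathrm{forget}^{\op}$ and $\EZ = (C\,\iota_\delta)^{\op}$ applied to 4-ary operations. The top composite $\Aw\circ\sigma\circ\Ez$ uses the constraints on the \emph{outer} $\delta_1:[2]\to[1]$, while the bottom composite $\Ez_{13,24}\circ\widetilde\sigma\circ\Aw_{12,34}$ uses them on the two \emph{inner} copies of $\delta_1$. Each routing gives, by the coherence embedded in Theorem~\ref{SATZCOHEZ}, a natural transformation between the two 4-ary operations $\delta\,\delta_{12,34}$ of $(\Delta,\coprod)^{\op}$ and $\dec\,\dec_{12,34}$ of $(\Delta,\ast)^{\op}$, related by the middle-factor interchange.

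First I would unwind both composites into morphisms of pro-functors, using the definitions (\ref{eqawpro}) of $\Aw$ and (\ref{eqezpro}) of $\Ez$ in the 4-variable setting. Both composites then decompose into three steps: (1) lift to a $\FinSet^{\op}$-picture via the unit $\mathrm{id}\Rightarrow\iota^*\Cfrak$ (specifically $\Cfrak^4$, cf.\ Lemma~\ref{LEMMAC2}); (2) perform a purely $\FinSet^{\op}$-theoretic manipulation involving only the unit $u$ and counit $c$ of the adjunction $\dec_s\dashv\delta_s$, together with the swap of the middle two factors $\dec_s\dec_{s,12,34}\cong\dec_s\dec_{s,13,24}$; and (3) project back via $\iota^*$. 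On the $\FinSet^{\op}$-side, the corresponding ``$\Aw_s\circ\sigma_s\circ\Ez_s = \Ez_{s,13,24}\circ\widetilde\sigma_s\circ\Aw_{s,12,34}$'' diagram commutes strictly: $\dec_s$ is left adjoint to $\delta_s$ in $\FinSet^{\op}$, so the swap of inner factors is tautologically mated to the swap of outer factors, and the only remaining content is the zig-zag identity for the unit/counit of this adjunction.

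The main obstacle is verifying that the two symmetries---$\sigma$ on the $\delta$-side and $\widetilde\sigma$ on the $\dec_*$-side---lift to the \emph{same} middle-factor interchange in $\FinSet^{\op}$. For this I would use Lemma~\ref{LEMMACCOH}, in particular the natural isomorphism $\dec\,C\cong C\,\dec_s$, whose 4-fold iteration identifies $\dec\,\dec_{12,34}\,C^4$ with $C^2\,\dec_s\,\dec_{s,12,34}$ and similarly for $13,24$. Combined with the fact that $\Cfrak^n$ is built from the pro-functor ${}^t C$ (Lemma~\ref{LEMMACPROFUNCTOR}), which is manifestly $S_n$-equivariant on the source, this reduces the compatibility of $\sigma$ and $\widetilde\sigma$ to the strict commutativity already observed in $\FinSet^{\op}$. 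The remaining pieces are naturality of $u$ and $c$, plus the equality $\Ez\,\Aw=\mathrm{id}$ already established at the pro-functor level.

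Finally, the second part of the lemma---the diagram on four objects $A,B,C,D$ equipped with the interchange $(23)^*A\boxtimes C\boxtimes B\boxtimes D \cong A\boxtimes B\boxtimes C\boxtimes D$---follows by applying $R_{\mathcal{C}}$ (for the $\dec_*$ parts) and $L_{\mathcal{C}}$ (for the $\delta^*$ parts) to (\ref{eqbateau}), using $R_{\mathcal{C}}(\Aw)=\Awfrak$ and $R_{\mathcal{C}}(\Ez)=\Ezfrak$ from the discussion preceding Definition~\ref{DEFAWEZ}, and observing that the induced maps on $A\otimes B\otimes C\otimes D$ and $(A\otimes B)\tildeotimes(C\otimes D)$ are precisely the Koszul braidings $\sigma$ and $\widetilde\sigma$ of the two tensor structures on $\mathcal{C}^{\Delta^{\op}}$.
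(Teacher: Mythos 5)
Your route is genuinely different from the paper's. The paper observes that every arrow in (\ref{eqbateau}) is in ``standard form'' (\ref{PARSTANDARD}), i.e.\ the image under $\can$ of a correspondence, so by the $\Cor$-calculus it suffices to check two small squares involving only pullbacks: one pairing the EZ-counit $\mlq c^{\op}\mrq$ with $\widetilde{\sigma}$, which (using that $\dec^*$ commutes with $C$, Lemma~\ref{LEMMACCOH}) becomes an elementary diagram of functors $\FinSet^2\to\FinSet$, and one pairing $u^{\op}$ with $\sigma$, which is a purely simplicial diagram $\Delta^2\to\Delta^4$ with no symmetry operator involved at all. Your plan instead conjugates the whole identity by the symmetry operator and verifies it on the $\FinSet$-side, where $\Aw_s$ and $\Ez_s$ are the inverse isomorphisms attached to $\dec_s\dashv\delta_s$. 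In the strongly symmetric (Abelian) setting, where $\id\Rightarrow\iota^*\Cfrak$ is an isomorphism, this conjugation is legitimate, and your identification of the crucial compatibility --- matching $\sigma$ with $\widetilde{\sigma}$ via $\dec\,C\cong C\,\dec_s$ --- is exactly the ingredient the paper also invokes. What the paper's reduction buys is that the $\Aw$-half never needs the $\FinSet$ lift (the unit $u$ lives in $\Delta$), and that the right adjoints $\dec_*$ disappear entirely before any computation; your version carries them along and must additionally record that $\Aw$ is compatible with the $C$-conjugation, which is true but unstated.

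Two cautions. First, the opening appeal to Theorem~\ref{SATZCOHEZ} is misplaced: the square mixes the constraints of the two different cooperad morphisms $\AW$ and $\EZ$ (together with the symmetry), so it is not an instance of the coherence of either one; fortunately the rest of your argument does not actually use this. Second, ``the only remaining content is the zig-zag identity'' undersells the verification: after the lift you still owe the finite combinatorial checks that the $\FinSet$-swap interacts correctly with $c_s$, $\dec_s$ and with the (Koszul-signed) $\widetilde{\sigma}$, and that $u$ is compatible with the equality $\sigma$ --- these are precisely the two squares the paper writes down and checks, so they should appear explicitly rather than be absorbed into naturality. The final paragraph is essentially right but garbled in its bookkeeping: one simply applies $R_{\mathcal{C}}$ throughout (so $R_{\mathcal{C}}(\Aw)=\Awfrak$, $R_{\mathcal{C}}(\Ez)=\Ezfrak$) and evaluates on $A\boxtimes B\boxtimes C\boxtimes D$ equipped with the given interchange isomorphism.
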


\begin{proof}
(\ref{eqbateau}) is in standard form (cf.\@ \ref{PARSTANDARD}), hence it suffices to see that 
\[ \vcenter{ \xymatrix{ \dec^* \delta^* \dec^* \ar@{=}[r] \ar@{<-}[d]^{\dec^* \mlq c^{\op} \mrq} & \delta_{13,24}^* \dec^*_{12,34} \dec^* \ar@{<-}[d]^{\widetilde{\sigma}} \\
\dec^* \ar[r]_-{\mlq c_{13,24}^{\op}\mrq \dec^*} & \delta_{13,24}^* \dec_{13,24}^* \dec^* 
  }}\ \text{
  and } \ 
 \vcenter{\xymatrix{ \dec^* \delta^* \delta_{12,34}^* \ar@{=}[r] \ar@{=}[d]^{\sigma} & \delta_{13,24}^* \dec^*_{12,34} \delta_{12,34}^* \ar[d]^{\delta_{13,24}^* u_{12,34}^{\op}} \\
\dec^* \delta^* \delta_{13,24}^* \ar[r]_{u^{\op} \delta_{13,24}^*} & \delta_{13,24}^*
  }} \]
  commute. This translates into the following commutative diagrams of functors $\FinSet^2 \to \FinSet$ and $\Delta^2 \to \Delta^4$, respectively, noticing that $\dec^*$ commutes with $C$ (Lemma~\ref{LEMMACCOH}):
\[ \xymatrix{ \dec_s \delta_s \dec_s \ar@{=}[r] \ar[d]^{c \dec_s} &   \dec_s \dec_{s,12,34} \delta_{s,13,24} \ar[d]^{\widetilde{\sigma}} \\
\dec_s \ar@{<-}[r]_-{\dec_s c_{13,24}} &  \dec_s \dec_{s,13,24}  \delta_{s,13,24}  } \quad 
 \xymatrix{ \delta_{12,34} \delta  \dec \ar@{=}[r] \ar@{=}[d]^{\sigma} &   \delta_{12,34} \dec_{12,34} \delta_{13,24}  \ar@{<-}[d]^{u_{12,34} \delta_{13,24}} \\
\delta_{13,24} \delta \dec  \ar@{<-}[r]_-{\delta_{13,24} u} & \delta_{13,24}
  } \]
  whose commutativity is checked straightforwardly. 
\end{proof}

\begin{DEF}\label{DEFSWITCH}
We define the {\bf switch} map as the following composite
\[ \mathrm{switch}: \xymatrix{ \dec_* \delta_{12,34}^* \ar[r]^-{\Ez \delta_{12,34}^* } &  \delta^* \delta_{12,34}^*  \ar@{=}[r]^{\sigma} &  \delta^* \delta_{13,24}^* \ar[rr]^-{\delta^* \Aw_{13,24}} &  & \delta^* \dec_{13,24,*}  \\
 }\]
and hence for an appropriately symmetric collection of objects: 
\[ \tiny \xymatrix{ (A \otimes B) \tildeotimes  (C \otimes D) \ar[r]^{\Ezfrak} & A \otimes B \otimes C \otimes D  \ar[r]^{\sigma} & A \otimes C \otimes B \otimes D  \ar[rr]^-{\Awfrak \tildeotimes \Awfrak} & & (A \tildeotimes  C) \otimes  (B  \tildeotimes D) }   \\
\]
\end{DEF}

\begin{LEMMA}\label{LEMMABATEAU2}
The following is commutative: 
\[ \xymatrix{ \dec_* \delta_{12,34}^* \ar[d]_{\Ez \delta_{12,34}^*} \ar[r]^{\mathrm{switch}} &  \delta^* \dec_{13,24}^* \ar[d]^{\delta^* \Ez_{13,24}} \\
 \delta^* \delta_{12,34}^* \ar@{=}[r]_{\sigma} &\delta^* \delta_{13,24}^*   }\]
and hence for an appropriately symmetric object 
\[ \xymatrix{ (A \otimes B) \tildeotimes  (C \otimes D) \ar[d]^{\Ezfrak} \ar[r]^{\mathrm{switch}} &   (A \tildeotimes C) \otimes  (B \tildeotimes D) \ar[d]^{\Ezfrak \otimes \Ezfrak} \\
 A \otimes B \otimes  C \otimes D  \ar[r]^{\sigma} & A \otimes C \otimes  B \otimes D
 }\]
 is commutative. 
\end{LEMMA}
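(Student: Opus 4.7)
The plan is to deduce Lemma~\ref{LEMMABATEAU2} from Lemma~\ref{LEMMABATEAU1} together with the interchange law for 2-morphisms, the naturality of $\sigma$, and the identity $\Awfrak \circ \Ezfrak = \id$ supplied by Corollary~\ref{KOREZAB} (which in particular implies $\Awfrak_{13,24} \circ \Ezfrak_{13,24} = \id$ at the $13,24$-index pattern).

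First I would post-compose the identity of Lemma~\ref{LEMMABATEAU1} (top path equals bottom path, both as natural transformations $\dec_* \delta^*_{12,34} \Rightarrow \dec_* \delta^*_{13,24}$) with $\Ezfrak\, \delta^*_{13,24}\colon \dec_* \delta^*_{13,24} \Rightarrow \delta^* \delta^*_{13,24}$. Apply the interchange law of 2-categories repeatedly to commute the outer $\Ezfrak$ past the successive inner transformations $\dec_* \Ezfrak_{13,24}$, $\dec_* \widetilde\sigma$, and $\dec_* \Awfrak_{12,34}$, using at each step that an outer 2-morphism and an inner 2-morphism commute up to reshuffling the factor $\dec_*$ into $\delta^*$. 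This rewrites the bottom-path composite as $\delta^*(\Ezfrak_{13,24}\, \widetilde\sigma\, \Awfrak_{12,34}) \circ \Ezfrak\, \delta^*_{12,34}$, while the top-path composite becomes $(\Ezfrak \Awfrak)\,\delta^*_{13,24} \circ \sigma \circ \Ezfrak\, \delta^*_{12,34}$. Then, using the naturality of $\sigma$ to exchange $(\Ezfrak \Awfrak)\,\delta^*_{13,24} \circ \sigma$ with $\sigma \circ (\Ezfrak \Awfrak)\,\delta^*_{12,34}$, and invoking Corollary~\ref{KOREZAB} to collapse $\Ezfrak \circ \Awfrak \circ \Ezfrak = \Ezfrak$, one arrives at the intermediate identity
\[
\sigma \circ \Ezfrak\, \delta^*_{12,34} \;=\; \delta^*(\Ezfrak_{13,24}\, \widetilde\sigma\, \Awfrak_{12,34}) \circ \Ezfrak\, \delta^*_{12,34}. \qquad (\star)
\]

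With $(\star)$ in hand, the asserted identity of Lemma~\ref{LEMMABATEAU2} is immediate. Substituting $(\star)$ into its right-hand side gives the RHS of $(\star)$ directly. Substituting $(\star)$ into the $\sigma \circ \Ezfrak\, \delta^*_{12,34}$ appearing in the left-hand side of Lemma~\ref{LEMMABATEAU2} yields $\delta^*(\Ezfrak_{13,24} \circ \Awfrak_{13,24} \circ \Ezfrak_{13,24} \circ \widetilde\sigma \circ \Awfrak_{12,34}) \circ \Ezfrak\, \delta^*_{12,34}$; invoking Corollary~\ref{KOREZAB} this time at the $13,24$-index pattern to simplify $\Awfrak_{13,24} \circ \Ezfrak_{13,24} = \id$, the left-hand side collapses to $\delta^*(\Ezfrak_{13,24}\, \widetilde\sigma\, \Awfrak_{12,34}) \circ \Ezfrak\, \delta^*_{12,34}$, which matches the right-hand side via $(\star)$. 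The concrete version for appropriately symmetric $A, B, C, D$ follows at once by applying $L_{(\mathcal{C}, \otimes)^\vee}$ and unwinding the Day convolution.

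The main obstacle is the delicate asymmetry between the two composites: only $\Awfrak \circ \Ezfrak = \id$ holds on the nose, whereas $\Ezfrak \circ \Awfrak$ is in general only homotopic to the identity (via the Shih operator of Definition/Lemma~\ref{DEFSHIH}). The interchange-and-collapse maneuver must be set up so that it is always the former identity that is invoked -- once at the outer level when deriving $(\star)$, and once at the inner $13,24$-level in the final collapse. Careful bookkeeping of which 2-morphisms act on the outer versus inner structure is required throughout, but each individual interchange step is a formal consequence of 2-categorical naturality.
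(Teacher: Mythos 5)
Your interchange computations are fine, and so is the final collapse using the inner identity $\Awfrak_{13,24}\,\Ezfrak_{13,24}=\id$; the derivation of your identity $(\ast)$ from Lemma~\ref{LEMMABATEAU1} by post-composing with the outer $\Ezfrak\,\delta^*_{13,24}$ and whiskering interchange is also correct. The gap is the step labelled ``naturality of $\sigma$''. The two inner functors $\delta^*_{12,34}$ and $\delta^*_{13,24}$ are \emph{not} isomorphic as functors to $\mathcal{C}^{\Delta^{\op}\times\Delta^{\op}}$; the identification $\sigma$ only exists after composing with the \emph{outer} $\delta^*$ (or on an appropriately symmetric object). Consequently $(\Ezfrak\Awfrak)\,\delta^*_{13,24}$ and $(\Ezfrak\Awfrak)\,\delta^*_{12,34}$ are endomorphisms of the same total-diagonal object, but they are computed with respect to two \emph{different} bisimplicial refinements of it (directions grouped $12|34$ versus $13|24$), and $\Ezfrak\Awfrak$ genuinely depends on that refinement (e.g.\ which factors receive the front face and which the back face in $\Awfrak$). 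So the exchange $(\Ezfrak\Awfrak)\,\delta^*_{13,24}\circ\sigma=\sigma\circ(\Ezfrak\Awfrak)\,\delta^*_{12,34}$ is not an instance of naturality and is false in general as an identity of natural transformations; it can at best hold after precomposition with $\sigma\circ\Ezfrak\,\delta^*_{12,34}$ --- but, given your $(\ast)$, that restricted statement is exactly equivalent to $(\star)$, i.e.\ to the lemma you are trying to prove. Your argument is therefore circular precisely at its only non-formal step: the entire content of the lemma (that the outer $\Ezfrak\Awfrak$ for the regrouped pairing acts as the identity on the image of $\sigma\circ\Ezfrak$) has been smuggled into that slide.

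For comparison, the paper does not try to deduce this from Lemma~\ref{LEMMABATEAU1} formally. It first reduces to checking the identity after precomposition with $\Awfrak$ (legitimate because $\Awfrak\,\Ezfrak=\id$ makes $\Awfrak$ a split epimorphism), then rewrites $\Ezfrak\,\Awfrak$ via Lemma~\ref{LEMMAALTAWEZ} in terms of the (co)unit maps $\mlq c^{\op}\mrq$ and $u^{\op}$, putting everything in standard form, and finally verifies an explicit commutative square of functors $\Delta\to\Delta^4$ (no $\FinSet$-symmetry needed). Some such hands-on verification at the level of $\Delta$ (or of shuffles) is unavoidable here; if you want to salvage your route you must prove the restricted exchange identity directly, and that is essentially the same combinatorial check the paper performs.
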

\begin{proof}

 It suffices to show the equality after precomposition with $\Aw$, i.e.\@ that
 \[ \tiny  \xymatrix{ \delta^* \delta_{12,34}^* \ar[d]_{\Ez \Aw \delta_{12,34}^* } \ar[rr]^-{\Ez \Aw \delta_{12,34}^* }  & &  \delta^* \delta_{12,34}^*  \ar@{=}[r]^{\sigma} &  \delta^* \delta_{13,24}^* \ar[rr]^-{\delta^* \Aw_{13,24}} &  &  \delta^* \dec_{13,24}^* \ar[d]^{\delta^* \Ez_{13,24}} \\
 \delta^* \delta_{12,34}^* \ar@{=}[rrrrr]_{\sigma} &&& & & \delta^* \delta_{13,24}^*   }\]
is commutative. Using Lemma~\ref{LEMMAALTAWEZ} this amount to the commutativity of the outer diagram in
\[ \footnotesize \xymatrix{ \delta^* \delta_{12,34}^* \ar[r]^-{\mlq c^{\op} \mrq} \ar[d]^-{\mlq c^{\op} \mrq} & \delta^* \dec^* \delta^*  \delta_{12,34}^* \ar@{=}[d]  \ar[r]^-{u^{\op}} & \delta^* \delta_{12,34}^* \ar@{=}[r] & \delta^* \delta_{13,24}^* \ar[d]^-{\mlq c^{\op} \mrq_{13,24}} \\
\delta^* \dec^* \delta^*  \delta_{12,34}^*  \ar[d]^{u^{\op}} \ar@{=}[r] & \delta^* \delta_{13,24}^* \dec_{13,24}^* \delta_{13,24}^* \ar@{}[ru]|-{\mathcircled{A}}   \ar@{=}[rr] \ar[rru]_{u^{\op}_{13,24}} &   &  \delta^* \delta_{13,24}^* \dec_{13,24}^* \delta_{13,24}^* \ar[d]^{u^{\op}_{13,24}} \\
\delta^* \delta_{12,34}^* \ar@{=}[rrr] \ar@{}[rrru]|-{\mathcircled{B}} &  & & \delta^* \delta_{13,24}^*  }
        \]
       Here all maps denoted by = are the canonical identifications. It suffices to see that A and B are commutative. Actually these are the same diagram. 
       We have to see that
\[ \xymatrix{ 
\delta_{12,34}\delta \dec \delta  \ar@{<-}[r] \ar@{<-}[d]_{\delta_{12,34} u \delta} & \delta_{13,24}\dec_{13,24} \delta_{13,24} \delta \ar@{<-}[d]^{u_{13,24} \delta_{13,24} \delta} \\
\delta_{12,34}\delta \ar@{<-}[r] &    \delta_{13,24} \delta
}\]
commutes as diagram of functors $\Delta \to \Delta^4$ (no FinSet symmetry is involved here). 
This is checked straightforwardly. 
\end{proof}

\begin{LEMMA}\label{LEMMABATEAU3}
The following is commutative: 
\[ \xymatrix{ \dec_* \delta_{12,34}^* \ar[d]_{\dec_* \Aw_{13,24} } \ar[r]^{\mathrm{switch}} &  \delta^* \dec_{13,24}^* \ar[d]^{\Aw \dec_{13,24}^* } \\
 \dec_* \dec_{12,34,*} \ar[r]_{\widetilde{\sigma}} & \dec_* \dec_{13,24,*}   }\]
and hence for an appropriately symmetric object 
\[ \xymatrix{ (A \otimes B) \tildeotimes  (C \otimes D) \ar[d]_{\Awfrak \tildeotimes \Awfrak} \ar[r]^{\mathrm{switch}} &   (A \tildeotimes C) \otimes  (B \tildeotimes D) \ar[d]^{\Awfrak} \\
 A \tildeotimes B \tildeotimes  C \tildeotimes D  \ar[r]_{\widetilde{\sigma}} & A \tildeotimes C \tildeotimes  B \tildeotimes D
 }\]
 is commutative. 
\end{LEMMA}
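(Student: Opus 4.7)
The plan is to derive the commutativity directly from the already-established hexagon of Lemma~\ref{LEMMABATEAU1}, using only two auxiliary ingredients: the identity $\Awfrak \circ \Ezfrak = \id$ proved in Corollary~\ref{KOREZAB} (a direct translation of the cooperad identity $\EZ \circ \AW = \id$ from Theorem~\ref{SATZCOHEZ}), and the naturality of the outer transformation $\Awfrak: \delta^* \Rightarrow \dec_*$ with respect to inner natural transformations living on independent layers of the pro-functor composition.

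Concretely, I would start from the commutative hexagon of Lemma~\ref{LEMMABATEAU1}, whose top path is $\Aw\, \delta_{13,24}^* \circ \sigma \circ \Ez\, \delta_{12,34}^*$ and whose bottom path is $\dec_* \Ez_{13,24} \circ \widetilde{\sigma} \circ \dec_* \Aw_{12,34}$, and postcompose both with
\[ \dec_* \Aw_{13,24}\colon \dec_* \delta_{13,24}^* \to \dec_* \dec_{13,24,*}. \]
On the bottom this simplifies immediately, for
\[ \dec_* \Aw_{13,24} \circ \dec_* \Ez_{13,24} = \dec_*\bigl(\Awfrak \circ \Ezfrak\bigr)_{13,24} = \dec_*(\id) = \id, \]
leaving precisely $\widetilde{\sigma} \circ \dec_* \Aw_{12,34}$, i.e.\@ the right-hand side of the square we wish to prove commutes.

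On the top the situation is slightly more delicate. I would invoke the naturality square
\[ \xymatrix{ \delta^* \delta_{13,24}^* \ar[r]^{\delta^* \Aw_{13,24}} \ar[d]_{\Aw\, \delta_{13,24}^*} & \delta^* \dec_{13,24,*} \ar[d]^{\Aw\, \dec_{13,24,*}} \\ \dec_* \delta_{13,24}^* \ar[r]_{\dec_* \Aw_{13,24}} & \dec_* \dec_{13,24,*} } \]
which expresses the bifunctoriality of $\Awfrak$: it is a natural transformation between functors $\mathcal{C}^{(\Delta^{\op})^2} \to \mathcal{C}^{\Delta^{\op}}$, applied to the morphism $\Awfrak_{13,24}: \delta_{13,24}^* \to \dec_{13,24,*}$ between functors $\mathcal{C}^{(\Delta^{\op})^4} \to \mathcal{C}^{(\Delta^{\op})^2}$. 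Using this square, the postcomposed top path rewrites as $\Aw\, \dec_{13,24,*} \circ \delta^* \Aw_{13,24} \circ \sigma \circ \Ez\, \delta_{12,34}^*$, which by Definition~\ref{DEFSWITCH} is nothing other than $\Aw\, \dec_{13,24,*} \circ \switch$ --- the left-hand side of the square.

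There is no genuine obstacle here; once Lemma~\ref{LEMMABATEAU1} is granted, the argument is essentially bookkeeping. The only point that might give pause is the naturality square above, but this is simply the Godement interchange for two natural transformations living on independent slots of the pro-functor composition (the ``outer'' $\Awfrak$ on the outer $\dec/\delta$ and the ``inner'' $\Awfrak_{13,24}$ on the inner $\dec_{13,24}/\delta_{13,24}$), and therefore commutes automatically.
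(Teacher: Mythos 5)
Your argument is exactly the paper's: the paper proves Lemma~\ref{LEMMABATEAU3} by pasting the hexagon of Lemma~\ref{LEMMABATEAU1} with the interchange square for the outer $\Awfrak$ against the inner $\Awfrak_{13,24}$ and the curved identity expressing $\dec_*\Aw_{13,24}\circ\dec_*\Ez_{13,24}=\id$ (Corollary~\ref{KOREZAB}), which is precisely your postcomposition-and-cancellation bookkeeping. So the proposal is correct and takes essentially the same route, just written out as successive rewritings instead of one pasted diagram.
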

\begin{proof}
Follows from the following commutative diagram in which the hexagon is in Lemma~\ref{LEMMABATEAU1}:
\begin{equation*} \tiny  \vcenter{  \xymatrix{ 
&&&\delta^* \dec_{13,24}^* \ar[rd]^{\Aw \dec_{13,24}^*} \\
&  \delta^* \delta_{12,34}^*   \ar[r]^{\sigma} &  \delta^* \delta_{13,24}^* \ar[rd]_{\Aw \delta_{13,24}^*} \ar[ru]^{\delta^* \Aw_{13,24}}  &.&   \dec_* \dec_{13,24,*}  \\
\dec_* \delta_{12,34}^*  \ar[ru]_{\Ez \delta_{12,34}^*} \ar[rd]_{\dec_* \Aw_{12,34}} \ar@/^50pt/[rrruu]^{\switch} & & &\dec_* \delta_{13,24}^*  \ar[ru]^{\dec_* \Aw_{13,24}}  \\
&  \dec_* \dec_{12,34,*} \ar[r]^{\widetilde{\sigma}} &  \dec_* \dec_{13,24,*} \ar[ru]_{\dec_* \Ez_{13,24}} \ar@/_30pt/@{=}[rruu] & & & 
 } } \end{equation*}
\end{proof}

 \begin{DEF}\label{DEFTRUNC} For later applications we will need truncated versions of the (weak) simplicial enrichments discussed so far:
 We define truncated (weak) $(\Ab^{\Delta^{\op}}, \otimes)$-enrichments for the cooperads (in which the counits have been discarded):
\[ (\mathcal{C}^{\Delta^{\op}}, \otimes)^{\circ, \vee} \text{ and } (\mathcal{C}^{\Delta^{\op}}, \tildeotimes)^{\circ, \vee}   \]
setting:  
\begin{align*}
 (\uHom^{\otimes, t}_{(\mathcal{C}^{\Delta^{\op}}, \otimes)^{\circ, \vee}})_k(X; Y_1, \dots, Y_n) &:=  \begin{cases} (\uHom^{\otimes}_{(\mathcal{C}^{\Delta^{\op}}, \otimes)^{\circ, \vee}})_k(X; Y_1, \dots, Y_n) & k < n \\ 0 & \text{otherwise } \end{cases} \\
 (\uHom^{\tildeotimes, t}_{(\mathcal{C}^{\Delta^{\op}}, \otimes)^{\circ, \vee}})_k(X; Y_1, \dots, Y_n) &:=  \begin{cases} (\uHom^{\tildeotimes}_{(\mathcal{C}^{\Delta^{\op}}, \otimes)^{\circ, \vee}})_k(X; Y_1, \dots, Y_n) & k < n \\ 0 & \text{otherwise } \end{cases} 
 \end{align*}
with composition defined as for the untruncated case
and a truncated $(\Ab^{\Delta^{\op}}, \tildeotimes)$-enrichment for the cooperad $(\mathcal{C}^{\Delta^{\op}}, \tildeotimes)^{\circ, \vee}$ (in which the counits have been discarded):
\begin{align*}
 (\uHom^{\tildeotimes, t}_{(\mathcal{C}^{\Delta^{\op}}, \tildeotimes)^{\circ, \vee}})_k(X; Y_1, \dots, Y_n) &:=  \begin{cases} (\uHom^{\tildeotimes}_{(\mathcal{C}^{\Delta^{\op}}, \tildeotimes)^{\circ, \vee}})_k(X; Y_1, \dots, Y_n) & k < n \\ 0 & \text{otherwise } \end{cases} 
 \end{align*}
with composition defined as for the untruncated case. 
Note that this works only because we have discarded the counits (no map in $\Delta^{\circ, \op}$ decreases the arity).
\end{DEF}

\begin{LEMMA}
\begin{enumerate}
\item $\uHom^{\otimes, t}_{(\mathcal{C}^{\Delta^{\op}}, \otimes)^{\circ, \vee}}$ defines a $(\Ab^{\Delta^{\op}}, \otimes)$-enrichment;
\item $\uHom^{\tildeotimes, t}_{(\mathcal{C}^{\Delta^{\op}}, \otimes)^{\circ, \vee}}$ defines a weak $(\Ab^{\Delta^{\op}}, \otimes)$-enrichment;
\item $\uHom^{\tildeotimes, t}_{(\mathcal{C}^{\Delta^{\op}}, \tildeotimes)^{\circ, \vee}}$ defines a $(\Ab^{\Delta^{\op}}, \tildeotimes)$-enrichment.
\end{enumerate}
\end{LEMMA}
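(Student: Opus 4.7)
All three assertions can be deduced from a single arity-bound observation, and otherwise inherit their structure verbatim from the (weak) enrichments already constructed earlier in the section. The plan is therefore to verify only that the truncation functor $(-)^t$ commutes with the compositions of Definition~\ref{DEFENRICHCOMPOSE}; associativity (in cases 1 and 3) and its weak analogue (case 2) then follow because both sides of the relevant coherence diagrams, being restrictions of the corresponding diagrams for the untruncated $\uHom^{\otimes}$, $\uHom^{\tildeotimes}$, still commute.

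The key observation is that after the counits have been discarded, every multi-morphism in $(\mathcal{C}^{\Delta^{\op}},\otimes)^{\circ,\vee}$ and in $(\mathcal{C}^{\Delta^{\op}},\tildeotimes)^{\circ,\vee}$ has arity $\geq 1$. Hence in any composition
\[ f \circ (g_1,\dots,g_n) \in \uHom(X; Z_{1,1},\dots,Z_{n,m_n}) \]
with $f\in\uHom(X;Y_1,\dots,Y_n)$ and $g_i\in\uHom(Y_i;Z_{i,1},\dots,Z_{i,m_i})$, the output arity equals $\sum_{i=1}^n m_i \geq n$. Since the formulas (\ref{eqotimescompose})--(\ref{eqtildeotimescompose}) are defined simplicial-degree by simplicial-degree, the composition of a degree-$k$ element $f$ with degree-$k$ elements $g_i$ again lies in simplicial degree $k$. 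I would then conclude that if $k<n$ (the truncation condition for $f$) and $k<m_i$ (the truncation condition for each $g_i$), the output satisfies $k<n\leq\sum m_i$, which is precisely the truncation condition for an arity-$\sum m_i$ morphism. The unit $\id_X\in\uHom_0(X;X)$ has arity $1$ and degree $0$, so it too survives.

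The main --- and essentially only --- subtlety lies in case (2), because the weak enrichment $\uHom^{\tildeotimes}_{(\mathcal{C}^{\Delta^{\op}},\otimes)^{\circ,\vee}}$ only has partially defined compositions, namely those in which all but one slot is filled by an element of $\Hom(1,-)$, i.e.\@ by a degree-$0$ morphism. Here one of the two cases to check becomes trivial (a degree-$0$ factor imposes no new truncation constraint), and the other reduces again to the arity bound $k<n\leq\sum m_i$. No symmetry or Eilenberg--Zilber calculation is needed at this stage: the switch map and its compatibility with $\Awfrak,\Ezfrak$ (Lemmas~\ref{LEMMABATEAU1}--\ref{LEMMABATEAU3}) were used to establish the untruncated enrichments, and the truncation just cuts these constructions off uniformly. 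I expect no real obstacle, but the bookkeeping of ``arity versus simplicial degree'' should be stated once and for all before the three cases are dispatched.
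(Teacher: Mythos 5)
There is a genuine gap in the central degree argument. The truncation in Definition~\ref{DEFTRUNC} kills the Hom-objects in \emph{Dold--Kan (normalized/complex) degrees} $\ge n$ --- it cannot be read simplicial-degree-wise, since deleting the top simplicial degrees of a simplicial abelian group violates $d_is_i=\id$. Your argument, however, treats it as a simplicial-degree-wise condition: you assert that the composition pairs degree-$k$ elements with degree-$k$ elements and stays in degree $k$, and then deduce the truncation bound from ``$k<n\le\sum m_i$''. In the normalized grading this intermediate claim is false: for the pointwise base $(\Ab^{\Delta^{\op}},\otimes)$ the normalized part of a tensor product of truncated Hom-objects lives in degrees up to the \emph{sum} of the individual bounds (not their minimum), and for case (3) the enrichment tensor is $\tildeotimes$, where the Yoneda-type composition (Lemma~\ref{LEMMAYONEDA}) pairs a degree-$i$ element with degree-$j$ elements into degree $i+j+\cdots$, so ``same degree in, same degree out'' is not even the shape of the composition. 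Your explicit remark that no Eilenberg--Zilber-type input is needed is exactly where the argument breaks: the statement that the pointwise product of two simplicial abelian groups whose normalized chains vanish above degrees $n$ and $m$ has normalized chains vanishing above $n+m$ is an Eilenberg--Zilber-lemma fact (Proposition~\ref{PROPEXPLICITAB}), and it is the whole content of the proof.

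The repair is short and is the paper's actual proof: use that degrees \emph{add} under both $\otimes$ and $\tildeotimes$ (Proposition~\ref{PROPEXPLICITAB}), while arities also add and, counits having been discarded, every arity is $\ge 1$. Then the source of the composition map, built from factors concentrated in degrees $\le n-1$ and $\le m_i-1$, is concentrated in degrees $\le (n-1)+\sum_i(m_i-1)=\sum_i m_i-1<\sum_i m_i$, so the composite lands in the truncation of the arity-$\sum_i m_i$ target; this single additive estimate handles all three cases (with the weak compositions of case (2) being special instances). Your structural reductions --- only truncation-compatibility needs checking, the unit survives, and discarding counits is what makes the arity inequality available --- are correct, but the inequality itself must be derived from the additive degree bound rather than from the min-type, fixed-degree reasoning you give.
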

\begin{proof}
This follows from the fact (cf.\@ Proposition~\ref{PROPEXPLICITAB}) that if $X$ and $Y$ are such that $Y_i = 0$ for $i > n$ and $Y_i = 0$ for $i > m$ then $(X \otimes Y)_i = 0$ and $(X \tildeotimes Y)_i = 0$ for $i > n+m$. 
\end{proof}

\subsection{Explicit formul\ae}

Let $\mathcal{C}$ be an Abelian category.
We  identify $\mathcal{C}^{\Delta^{\op}}$ with $\Ch(\mathcal{C})_{\ge 0}$ via the adjoint equivalences $N$ and $R = \Gamma$ (Dold-Kan Theorem~\ref{SATZDOLDKANII}) and consider the functors
$\delta^*$, $\dec^*$,  $\dec_*$ and $\dec_!$ as functors between complexes and double complexes. 

\begin{PAR}\label{PARSKELETAL2}
Recall the skeletal filtration (\ref{PARFILT}) with 
\[ (F^k X)_{[n]} = \sum_{\sigma: [d] \twoheadleftarrow [n]} \sigma(X_{[d]}) \]
where the sum is over the degeneracies with $k \ge d$. It follows from the dual assertion (\ref{PARSKELETAL1}) that, considering $X$ as a complex, this coincides with the truncation:
\[ F^k X =  \{ \cdots \to 0 \to X_k \to X_{k-1} \to \cdots \to X_0 \}. \]
\end{PAR}

\begin{PROP}\label{PROPEXPLICITAB}
\begin{enumerate}
\item  
$(\dec_* X)_n \cong \tot (X)_{n} := \bigoplus_{i+j=n}X_{i,j}$
with the differential given by:  
\begin{equation}\label{eqtot} \mathrm{d} = \mathrm{d}_l + (-1)^i \mathrm{d}_r. \end{equation}
\item 
$(\dec^* A)_{i,j} \cong A_{i+j+1} \oplus A_{i+j}$ 
forming the double complex:
  \[ \footnotesize \xymatrix{   & j & &  j-1 \\
 i &  A_{i+j+1} \oplus A_{i+j} \ar[rr]^-{\mathrm{d}_r = \Mat{0 & 1 \\ 0 & 0}} \ar[dd]_-{\mathrm{d}_l = \Mat{\mathrm{d} & (-1)^i  \\ 0 & \mathrm{d}}} & & A_{i+j} \oplus A_{i+j-1} \ar[dd]^-{\mathrm{d}_l = \Mat{\mathrm{d} & (-1)^i \\ 0 & \mathrm{d}}} \\ 
 \\
i-1  & A_{i+j} \oplus A_{i+j-1} \ar[rr]_-{\mathrm{d}_r = \Mat{0 & 1 \\ 0 & 0}} & &  A_{i+j-1} \oplus A_{i+j-2}
 }\]
 \item 
 Let $X$ be a double complex and denote by $\widetilde{X}$ the complex extended to $-1,\N_0$ and $\N_0,-1$ by taking the cokernel of the last differential. 
 Then 
 \[ \dec_! X \cong \widetilde{\tot} (X)_{n} := \bigoplus_{\substack{i+j=n-1 \\ i,j \ge -1}} \widetilde{X}_{i,j}. \]
\item For the natural two-dimensional filtration $F^{i,j}\delta^* X := \delta^* F^{i,j} X$ (cf.\@ \ref{PARSKELETAL2}) we have
\[ (\gr^{i,j} \delta^* X)_n \cong \bigoplus_{\sigma, \tau} X_{i,j} \]
where $\sigma: [n] \twoheadrightarrow [i], \tau: [n] \twoheadrightarrow [j]$ runs through the {\em jointly injective pairs of surjections}.
In particular: $(F^{i,j}\delta^* X)_n = (\delta^* X)_n$ for $i \ge n$ and $j \ge n$ and $(F^{i,j}\delta^* X)_n = 0$ for $i+j<n$ and 
\[ (F^{i,n-i}\delta^* X)_n \cong \bigoplus_{\sigma, \tau} X_{i,n-i} \  \qquad (\gr^{n,n} \delta^* X)_n \cong X_{n,n} . \]
In this case ($i+j=n$) the jointly injective pairs of surjections $\sigma: [n] \twoheadrightarrow [i], \tau: [n] \twoheadrightarrow [j]$ are called $i,j$-{\bf shuffles}. 
The isomorphisms are determined by requiring that
 \[ \xymatrix{ (\delta^*X)_{[n]} = X_{[n],[n]} \ar@{<-}[r]^-{\sigma, \tau} \ar@{->>}[d] & X_{[i],[j]} \ar@{->>}[d] \\
 (\delta^*X)_n \ar@{<-^{)}}[r] & X_{i,j} }\]
commutes modulo $F^{i,j-1}(\delta^*X)_n + F^{i-1,j}(\delta^*X)_n$ (which is zero if $i+j=n$), where the vertical morphisms are the projections onto non-degenerate elements.
\end{enumerate}
\end{PROP}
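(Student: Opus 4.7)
My plan is to prove the four parts by direct Dold-Kan computation, using the Eilenberg-Zilber Lemma~\ref{LEMMAEZ2} as the main combinatorial tool, with the non-Abelian formulas from Proposition~\ref{PROPEXPLICIT} providing the starting points.

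I would begin with part (2), which is the most intricate and effectively determines the others via adjointness. Let $A^{\mathrm{simp}}$ denote the simplicial object associated to the complex $A$ under Dold-Kan, so that by Eilenberg-Zilber $A^{\mathrm{simp}}_{[n]} = \bigoplus_{\sigma:[n]\twoheadrightarrow [k]} A_k$. The bisimplicial object $(\dec^* A^{\mathrm{simp}})_{i,j} = A^{\mathrm{simp}}_{[i+j+1]}$ has vertical degeneracies induced by $s_k$ with $k \in \{i+1,\ldots,i+j\}$ and horizontal degeneracies by $s_k$ with $k \in \{0,\ldots,i-1\}$. The normalised double complex at $(i,j)$ is spanned by those summands of the Eilenberg-Zilber decomposition where $\sigma$ collapses none of the listed pairs, i.e. where $\sigma$ is injective on each of $\{0,\ldots,i\}$ and $\{i+1,\ldots,i+j+1\}$ \emph{separately}. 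By order-preservation, such a $\sigma$ can only identify $i$ with $i+1$, leaving exactly two contributions: $\sigma = \mathrm{id}$ (giving $A_{i+j+1}$) and $\sigma = s_i$ (giving $A_{i+j}$). I would then compute the alternating sum of face maps on $A^{\mathrm{simp}}_{[i+j+1]}$, tracking which faces preserve the two-block structure: only the face $d_{i+1}$, adjacent to the extra degeneracy, mixes the two summands, contributing the off-diagonal $(-1)^i$ entry of $\dd_l$ and the $1$ entry of $\dd_r$, the sign $(-1)^i$ arising from its position in the alternating sum.

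Parts (1) and (3) would then follow mostly formally. For part (1), I would either use the zigzag limit formula of Proposition~\ref{PROPEXPLICIT}(2) and extract normalised chains by the same Eilenberg-Zilber decomposition, or more elegantly invoke the adjunction $\dec^*\dashv\dec_*$ together with part (2): the identity $\Hom(A,\tot X)\cong\Hom(\dec^* A, X)$ on the level of complexes then forces $\dec_* X \cong \tot X$, with the sign $(-1)^i$ in the differential built in from the Koszul-convention compatibility with Eilenberg-Zilber (Section~\ref{SECTEZAB}). For part (3), I would apply $\dec_! = \iota^* \dec_{\emptyset,!} (\iota,\iota)_!$ from Lemma~\ref{LEMMAEXACT4} to the coproduct formula of Proposition~\ref{PROPEXPLICIT}(1). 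The extension $(\iota,\iota)_!$ produces at the boundary indices $(-1,j)$ and $(i,-1)$ precisely the cokernels of the outermost differentials (since $\iota_!$ at $[-1]$ is the cokernel functor in the Abelian setting), yielding exactly the augmented $\widetilde{X}$.

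Part (4) follows from a direct application of the Eilenberg-Zilber Lemma to the bisimplicial skeletal filtration of $\delta^* X$, using the identification $(\delta^* X)_{[n]} = X_{[n],[n]}$ and the fact that filtering by codegeneracy degree in each variable separately yields the two-dimensional filtration $F^{i,j}$. Jointly injective pairs of surjections $\sigma:[n]\twoheadrightarrow [i]$, $\tau:[n]\twoheadrightarrow [j]$ index the contributions in graded pieces, and the shuffle description is just the combinatorial count of such pairs when $i+j = n$. The hardest step of the entire proof is part (2): the unexpected extra $A_{i+j}$ summand and the precise entries of the matrix differentials encode the essential structure of the decalage under Dold-Kan, and once part (2) is in hand the remaining parts reduce to adjunction and combinatorics.
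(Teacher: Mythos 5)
Your route for parts (1), (3) and (4) is sound and close to the paper's: deducing (1) from (2) via the adjunction $\dec^*\dashv\dec_*$ is exactly the paper's argument, (4) is the same graded-piece count of jointly injective pairs (the paper performs it on the representing objects $\delta_!D_m$), and your variant for (3) through Proposition~\ref{PROPEXPLICIT}(1), with $\colim_{\Delta^{\op}}$ computed as the cokernel of the last differential, is a harmless alternative to the paper's ``same as (1)'' adjunction argument.

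The gap is in part (2), precisely at the step you identify as the crux. The two surviving summands ($\sigma=\id$ and $\sigma=s_i$) are correct, but the displayed matrices are not intrinsic to that decomposition: they depend on a choice of splitting of the canonical extension $0\to A_{i+j}\xrightarrow{s_i}(\dec^*A)_{i,j}\to A_{i+j+1}\to 0$, and the paper's proof consists essentially in choosing the particular splitting $r$ (retraction through $\dec^*(\dd_r)$) and recording that the other natural splitting $l$ yields visibly different matrices, with the copies of $\dd$ appearing in $\dd_r$ instead of $\dd_l$. Your ``canonical'' complement --- the nondegenerate Dold--Kan summand --- is not automatically the splitting $r$: in the standard conventions, where a nondegenerate element of $\Gamma(A)$ is annihilated by all faces except one extreme one, the vertical differential does \emph{not} vanish on the $A_{i+j+1}$-summand, and honest face-tracking then produces the $l$-type matrices rather than the displayed ones; you would still need a change of basis (the paper's $\Omega$-style trick elsewhere), or you must define the complement from the start as the kernel of the $0$-th vertical face retraction. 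Moreover the attribution in your sketch is off: $d_{i+1}$ is a face of the second simplicial direction, so it can only contribute to $\dd_r$; the off-diagonal $(-1)^i$ of $\dd_l$ comes from $d_i$ (note $d_is_i=\id=d_{i+1}s_i$, so both faces retract the extra degeneracy, one feeding each differential), and the assertion that ``only $d_{i+1}$ mixes the summands'' leaves unverified exactly the entries that make the statement precise, namely that the $A_{i+j+1}$-column of $\dd_r$ is $0$ while that of $\dd_l$ is $\dd$. Since these explicit matrices are used later for sign computations (e.g.\@ in Proposition~\ref{PROPCOBARABDEC} and the Szczarba formul\ae), this identification has to be pinned down rather than asserted.
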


We start by discussing the translation of the functors
\begin{align*} \dec^*:  \mathcal{C}^{\Delta^{\op}} &\to \mathcal{C}^{\Delta^{\op} \times \Delta^{\op}} \\
\delta^*:  \mathcal{C}^{\Delta^{\op} \times \Delta^{\op}}  &\to \mathcal{C}^{\Delta^{\op}}
\end{align*}
via Dold-Kan.  We have
\begin{align*} (\dec^* C)_{m,n} &= \Hom(D_{m, n}, \dec^* C) = \Hom(\dec_! D_{m,n}, C) \\
 (\delta^* C)_m &= \Hom(D_m, \delta^* C) = \Hom(\delta_! D_m, C) 
\end{align*}
Furthermore, by Yoneda and the commuation of $\Z[-]$ with colimits: 
\begin{align*}  \delta_! \Delta_m^{\circ} &= \Delta_{m,m}^{\circ} \\
 \dec_! \Delta_{m,n}^{\circ} &= \Delta_{m+n+1}^{\circ}   
\end{align*}
such that
\begin{align*}
 \delta_! D_m &= \ker( \Delta_{m,m}^{\circ} \to \Delta_{m',m'}^{\circ} )  \\
 \dec_! D_{m, n}  &= \ker( \Delta_{m+n+1}^{\circ}  \to \Delta_{m'+n'+1}^{\circ} )   
\end{align*}
where the kernel is joint over all degeneracies $[n] \twoheadrightarrow [n']$, resp.\@ $[m] \twoheadrightarrow [m']$. The filtrations on  $\Delta_{m,m}^{\circ}$ and $\Delta_{m+n+1}^{\circ}$ 
are respected by these morphisms in such a way to induce filtrations:
\begin{align*}
 F^{k,l}\delta_! D_m &= \ker( F^{k,l}\Delta_{m,m}^{\circ} \to F^{k,l}\Delta_{m',m'}^{\circ} )  \\
 F^k \dec_! D_{m, n}  &=  \ker( F^k \Delta_{m+n+1}^{\circ}  \to F^{k} \Delta_{m'+n'+1}^{\circ} )   
\end{align*}

We get
\begin{align*}
  \gr \delta_! D_m &= \bigoplus_{\substack{\{[m] \twoheadrightarrow [k], [m] \twoheadrightarrow [l] \}  \\ \text{that do not factor over $\im(\delta^*)$ }}} D_{k,l}  \\
  \gr \dec_! D_{m,n} &= \bigoplus_{\substack{ \{ [m+n+1| \twoheadrightarrow [k]\} \\ \text{that do not factor over $\im(\dec^*)$ }}} D_{k} 
\end{align*}
In the first case the sum goes over pairs $[m] \twoheadrightarrow [k]$ and $[m] \twoheadrightarrow [l]$ that are {\em jointly injective} and in the second case the sum has {\em only two} summands corresponding to
$\id: [m+n+1] \to [m+n+1]$ and the canonical degeneracy $s_{\can}: [m+n+1] \to [m+n]$ identifying the maximum of $[m]$ with the minimum of $[n]$.

\begin{proof}[Proof of Proposition~\ref{PROPEXPLICITAB}]
2.\@ We get an exact sequence
\[ 0 \to D_{n+m+1} \to \dec_! D_{m,n} \to D_{n+m} \to 0 \]
and it is convenient to choose the following explicit splittings $l, r$ of the surjective map induced by 
\[ (-1)^i \dec^*(\dd_l), \dec^*(\dd_r): \Delta_{m+n}^{\circ} \to \Delta_{m+n+1}^{\circ} \]
Notice that $(-1)^i \dec^*(\dd_l) s \cong \id$ and $\dec^*(\dd_r) s \cong \id$ modulo degeneracies. 

Dually for a complex $A$, we get a diagram: 
\[ \xymatrix{ & A_{[i+j]} \ar[r]^-{s_{\can}}  \ar@{->>}[d]   & A_{[i+j+1]} \ar@/_10pt/[l]_{(-1)^i \dec^*d_l,\dec^*d_r}  \ar@{->>}[d] \ar@{->>}[rd] \\
0 \ar[r] & A_{i+j} \ar[r]  & (\dec^*A)_{i,j} \ar@/_10pt/[l]_{l,r}  \ar[r] & A_{i+j+1}  \ar[r] & 0 } \]

The splitting $r$ induces  isomorphisms: $(\dec^*A)_{i,j}  \cong A_{i+j}  \oplus A_{i+j+1}$ with differentials identified with
  \[  \footnotesize \xymatrix{  
 A_{i+j+1} \oplus A_{i+j} \ar[rr]^-{\mathrm{d}_r = \Mat{0 & 1 \\ 0 & 0}} \ar[dd]_-{\mathrm{d}_l = \Mat{d & (-1)^i  \\ 0 & d}} & & A_{i+j} \oplus A_{i+j-1} \ar[dd]^-{\mathrm{d}_l = \Mat{d & (-1)^i \\ 0 & d}} \\ 
 \\
 A_{i+j} \oplus A_{i+j-1} \ar[rr]_-{\mathrm{d}_r = \Mat{0 & 1 \\ 0 & 0}} & &  A_{i+j-1} \oplus A_{i+j-2}
 }\]

The splitting $l$ induces  isomorphisms: $(\dec^*A)_{i,j}  \cong A_{i+j}  \oplus A_{i+j+1}$ with differentials identified with
  \[ \footnotesize \xymatrix{  
 A_{i+j+1} \oplus A_{i+j} \ar[rr]^-{\mathrm{d}_r = \Mat{(-1)^i\dd & -1 \\ 0 & \dd}} \ar[dd]_-{\mathrm{d}_l = \Mat{0 & (-1)^i  \\ 0 & 0}} & & A_{i+j} \oplus A_{i+j-1} \ar[dd]^-{\mathrm{d}_l = \Mat{0 & (-1)^i \\ 0 & 0}} \\ 
 \\
 A_{i+j} \oplus A_{i+j-1} \ar[rr]_-{\mathrm{d}_r = \Mat{(-1)^i \dd & -1 \\ 0 & \dd}} & &  A_{i+j-1} \oplus A_{i+j-2}
 }\]

In these notes, when interested in explicit formulas, we usually work with the splitting $r$ because it induces the more common convention on the differential on the total complex.

1.\@ Assertion 2.\@ shows that a morphism
\[ \Hom(\dec^* X, Y) \]
is given by morphisms
\[ \xymatrix{ X_{i+j+1} \oplus X_{i+j} \ar[rrr]^-{(\mathrm{d}_r \alpha_{i,j+1},  \alpha_{i,j})} & &  &   Y_{i,j} } \]
satisfying 
\[  \alpha_{i-1,j} \mathrm{d}  = \mathrm{d}_l \alpha_{i,j} + (-1)^{i-1}  \mathrm{d}_r \alpha_{i-1,j+1}   \]
which is the same as a morphism of complexes
\[  X \to \tot Y. \]
where $\tot X$ is equipped with the differential (\ref{eqtot}). 

3.\@ is shown the same way as 1.

4.\@ has been discussed above. 
Note that, for $\delta^*$, the lowest filtration steps are given by those pairs $\sigma: [m] \twoheadrightarrow [i]$ and $\tau: [m] \twoheadrightarrow [j]$ that are jointly injective and such that $i+j=n$ (if $i+j<n$, obviously joint injectivity cannot be achieved).
\end{proof}

\begin{PAR}\label{PARSHUFFLE}
For surjections $\sigma: [n] \twoheadrightarrow [i]$ and $\tau: [n] \twoheadrightarrow [j]$ that are jointly injective, and such that $i+j=n$, i.e.\@ in the case of $i,j$-shuffles, 
 $\sigma$ and $\tau$ determine each other, because we can index degeneracies by non-empty subsets $I, J \subset \{1, \dots, n\}$ (the intervals contracted by $\tau$, and $\sigma$, respectively) and under the two conditions, $\sigma$ and $\tau$ must correspond to complementary sets. 
In other words
\[ \sigma = s_{\sigma_j}  \cdots s_{\sigma_1} \qquad \tau = s_{\tau_i} \cdots s_{\tau_1} \]
for $J = \{\sigma_1+1, \dots, \sigma_j+1\}$, and $I = \{\tau_1+1, \dots, \tau_i+1\}$, respectively.  
 
  In particular, we get for $i, j$ with $i+j=n$ surjections
\[ p_{i,j}: \delta_! D_{n} \to D_{i} \boxtimes D_{j}  \]
induced by $s^l: [m] \twoheadrightarrow [i], s^r: [m] \twoheadrightarrow [j]$ the extremal degeneracies corresponding to the subsets $\{i+1, \dots, n\}$ and $\{1, \dots, i\}$, respectively.
\end{PAR}

\begin{LEMMA}
There are splittings
\[ s_{i,j}: D_{i} \boxtimes D_{j}  \to \delta_! D_{n} \]
(of the maps $p_{i,j}$) induced by $\delta_{l,i} \times \delta_{r,j}: [{i}] \times [j] \to [{n}] \times [{n}]$, the extremal faces. 
Every other map
\[ \delta_! D_{n}  \to D_{i'} \boxtimes D_{j'}  \]
given by pairs $\sigma: [{m}] \twoheadrightarrow [{i'}]$ and $\tau: [{m}] \twoheadrightarrow [{j'}]$ that are jointly injective and such that $l+k=m$, composes to zero with $s_{i,j}$.
\end{LEMMA}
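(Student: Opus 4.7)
I would proceed in three steps: (i) verify that $\delta_{l,i}\boxtimes\delta_{r,j}:\Delta_i^{\circ}\boxtimes\Delta_j^{\circ}\to\Delta_{n,n}^{\circ}$ restricts to a map $s_{i,j}:D_i\boxtimes D_j\to\delta_!D_n$; (ii) verify $p_{i,j}\circ s_{i,j}=\id$; (iii) verify $p_{i',j'}\circ s_{i,j}=0$ for every other jointly injective pair $(t^l,t^r)=([n]\twoheadrightarrow[i'],\,[n]\twoheadrightarrow[j'])$ with $i'+j'=n$. The guiding observation throughout is that any non-injective order-preserving map $[k]\to[k']$ factors through a codegeneracy in $\Delta$, and hence annihilates the subobject $D_k\hookrightarrow\Delta_k^{\circ}$ (defined as the joint kernel of all codegeneracies $\Delta_k^{\circ}\to\Delta_{k-1}^{\circ}$).

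For (i), the description of $\delta_!D_n$ recalled just above the lemma identifies it with the subobject of $\Delta_{n,n}^{\circ}$ cut out as the joint kernel of all diagonal codegeneracies $s\boxtimes s:\Delta_{n,n}^{\circ}\to\Delta_{n',n'}^{\circ}$ for $s:[n]\twoheadrightarrow[n']$. Thus it suffices to check that $(s\circ\delta_{l,i})\boxtimes(s\circ\delta_{r,j})$ vanishes on $D_i\boxtimes D_j$ for every non-identity $s$, which follows as soon as at least one of $s\circ\delta_{l,i}$, $s\circ\delta_{r,j}$ is non-injective. Combinatorially this is immediate: any elementary collapsed pair $\{a,a+1\}$ of $s$ lies entirely in the image $\{0,\dots,i\}$ of $\delta_{l,i}$ (if $a<i$) or entirely in the image $\{i,\dots,n\}$ of $\delta_{r,j}$ (if $a\ge i$), and hence makes the corresponding composition non-injective.

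Step (ii) is a direct computation: the extremal degeneracies $s^l:[n]\twoheadrightarrow[i]$ (collapsing $\{i+1,\dots,n\}$) and $s^r:[n]\twoheadrightarrow[j]$ (collapsing $\{1,\dots,i\}$) are retractions of $\delta_{l,i}$ and $\delta_{r,j}$ respectively, so $(s^l\boxtimes s^r)\circ(\delta_{l,i}\boxtimes\delta_{r,j})=\id$ already on $\Delta_i^{\circ}\boxtimes\Delta_j^{\circ}$. For step (iii), either $(i',j')\ne(i,j)$, say $i'<i$, in which case $t^l\circ\delta_{l,i}:[i]\to[i']$ fails injectivity for dimension reasons; or $(i',j')=(i,j)$ but the pair differs from the extremal one, in which case the collapse set $I'\subset\{1,\dots,n\}$ of $t^l$ has cardinality $n-i$ yet $I'\ne\{i+1,\dots,n\}$, so $I'\cap\{1,\dots,i\}\ne\emptyset$; picking $a+1$ in this intersection produces an elementary collapse $\{a,a+1\}$ contained in $\{0,\dots,i\}$ and forces $t^l\circ\delta_{l,i}$ to be non-injective. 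Either way the resulting map $(t^l\circ\delta_{l,i})\boxtimes(t^r\circ\delta_{r,j})$ annihilates $D_i\boxtimes D_j$.

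No step is expected to pose a serious obstacle: the whole lemma amounts to combinatorial bookkeeping, the sole non-trivial ingredient being the Yoneda-style identification of $\delta_!D_n$ as the joint-kernel subobject of $\Delta_{n,n}^{\circ}$, together with the corresponding description of $p_{i,j}$ as the ``extremal projection'' associated to the extremal $(i,j)$-shuffle.
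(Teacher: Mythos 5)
Your proposal is correct and follows essentially the same route as the paper: both arguments rest on the joint-kernel descriptions of $\delta_!D_n$ and $D_i$, the fact that a non-injective map in $\Delta$ factors through a degeneracy and hence annihilates the corresponding $D$-summand, and the cardinality count that singles out the extremal pair $(s^l,s^r)$. Your steps (ii) and (iii) merely spell out explicitly what the paper's one-line counting argument ("if $\#S+\#T=n$ we must have equality") leaves implicit.
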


\begin{proof}$\delta_{l,i} \times \delta_{r,j}$ composed with a diagonal degeneracy factors through a degeneracy of either $[i]$ or $[j]$, hence there is an induced map as indicated. 
The composition with any pair of surjective maps $[n] \times [n] \to [k] \times [l]$, jointly injective, corresponding to subsets $S$ and $T$, is a degeneracy unless $S \subseteq \{1, \dots, i\}$ and
$T \subseteq \{i+1, \dots, n\}$. If $\# S + \# T = n$ we must have equality.  
\end{proof}

\begin{PROP}[Alexander-Whitney and Eilenberg-Zilber explicit]\label{PROPAWEZAB}
For a double complex $X$, we have commutative diagrams (where $\Awfrak$ and $\Ezfrak$ were defined in Definition~\ref{DEFAWEZ}\footnote{And the operator $C$ used to define $\Ezfrak$ is understood to be the canonical one given by Lemma~\ref{LEMMASYM} for Abelian categories.}):
\begin{enumerate}
\item
\begin{gather*}
 \xymatrix{ 
 (\delta^* X)_{[n]} = X_{[n],[n]}  \ar[d] \ar[rr]^-{\sum_{i+j=n} (\delta_{i,l}, \delta_{j,r}) }  &  & \ar[d]  \bigoplus_{i+j=n} X_{[i],[j]}    \\
 (\delta^* X)_n  \ar[rr]^-{\Awfrak} & & (\dec_* X)_n = \bigoplus_{i+j=n} X_{i,j}  
 }  
  \end{gather*}
  On the smallest filtration step and on the factor $X_{i,j}$ corresponding to a shuffle $\sigma, \tau$ the map $\Awfrak$ is zero unless $\sigma = s^l$ and $\tau = s^r$. 
\item
\begin{gather*}
  \xymatrix{ 
 \bigoplus_{i+j=n} X_{[i],[j]} \ar[d] \ar[rr]^-{\sum_{\sigma, \tau} \mathrm{sgn}(\sigma, \tau)(\sigma, \tau) }  &  & \ar[d] (\delta^* X)_{[n]} = X_{[n],[n]}    \\
 (\dec_* X)_n = \bigoplus_{i+j=n} X_{i,j}   \ar[rr]^-{\Ezfrak} & & (\delta^* X)_n
 } 
 \end{gather*}
 where the sum runs over all shuffles $\sigma: [n] \twoheadrightarrow [i]$, $\tau: [n] \twoheadrightarrow [j]$ (i.e.\@ jointly injective with $i+j=n$).
 The morphism $\Ezfrak$ has image in the smallest filtration step and on the factor $X_{i,j}$ corresponding to the shuffle $\sigma, \tau$, the map is thus given by $\mathrm{sgn}(\sigma, \tau)$.
\end{enumerate}
\end{PROP}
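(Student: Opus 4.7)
The strategy is to reduce both parts to unraveling the abstract definitions of $\Awfrak$ and $\Ezfrak$ (Definition~\ref{DEFAWEZ}) against the explicit descriptions of $\delta^*$ and $\dec_*$ from Proposition~\ref{PROPEXPLICITAB}, part 1 (already established independently), together with the filtration analysis of Proposition~\ref{PROPEXPLICITAB}, part 4. Both parts amount to verifying that an abstractly-defined natural transformation agrees, after Dold--Kan identification, with a classical explicit formula.

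For part (1), $\Awfrak: \delta^* X \to \dec_* X$ is by definition $(\dec_* u^{\op}) \circ \eta$, where $\eta$ is the unit of $\dec^* \dashv \dec_*$, so it is uniquely characterized as the map whose $\dec^*$-adjoint is $u^{\op}: \dec^* \delta^* X \Rightarrow X$. At position $([i],[j])$ the component of $u^{\op}$ is $(\delta_{i,l}, \delta_{j,r})^*: X_{[i+j+1],[i+j+1]} \to X_{[i],[j]}$ induced by the canonical face inclusions $[i],[j] \hookrightarrow [i]\ast[j] = [i+j+1]$ in $\Delta$. Using the zig-zag model of $(\dec_* X)_{[n]}$ from Proposition~\ref{PROPEXPLICIT}, 2, the $X_{[i],[n-i]}$-projection of $\Awfrak_{[n]}: X_{[n],[n]} \to (\dec_* X)_{[n]}$ reads off as the face-pair map $(\delta_{i,l}, \delta_{j,r})^*$ with $i+j = n$, which is exactly the top horizontal map of the diagram in part (1). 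Projecting to normalized chains yields the stated formula, since the identification $(\dec_* X)_n \cong \bigoplus_{i+j=n} X_{i,j}$ from Proposition~\ref{PROPEXPLICITAB}, 1 is given by the zig-zag ``vertex'' projections $(\dec_* X)_{[n]} \to X_{[i],[n-i]}$ followed by non-degenerate quotient. The final assertion of part (1)---that on the lowest filtration step only the extremal shuffle $(s^l, s^r)$ survives---is immediate: for any other $(i,j)$-shuffle $(\sigma,\tau)$ at least one of the compositions $\sigma \delta_{i,l}$ or $\tau \delta_{j,r}$ in $\Delta$ factors through a non-trivial degeneracy, which dies in the non-degenerate quotient.

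For part (2), $\Ezfrak$ is defined as the composition $\dec_* \xrightarrow{\mlq c^{\op}\mrq \dec_*} \delta^* \dec^* \dec_* \to \delta^*$. In the Abelian (strongly-symmetric) case the vertical maps in the conjugation square defining $\mlq c^{\op}\mrq$ are isomorphisms, and by Lemma~\ref{LEMMACPROFUNCTOR} the symmetry operator $\mathfrak{C}$ is represented by the $\Ab$-enriched pro-functor $\Delta^\circ$ together with its $\FinSet$-action from Lemma~\ref{LEMMAFINSET}. Under this identification, $\mlq c^{\op}\mrq$ is Dold--Kan-dual to the $\FinSet$-morphism $c: [n]\ast[n] \to [n]$ acting on $\Delta^\circ_n$, which after unpacking formula (\ref{eqfinset}) expresses the generator $[\{0,\ldots,n\}] \in (\Delta^\circ_n)_n$ as a signed sum over all $(i,j)$-shuffles---jointly-injective surjections $(\sigma,\tau): [n] \twoheadrightarrow [i] \times [j]$ with $i+j = n$---each weighted by $\mathrm{sgn}(\sigma,\tau)$. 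Post-composing with the counit $\dec^* \dec_* \to \id$ (which in the zig-zag model of $\dec_*$ projects the $(i,j)$-summand to $X_{[i],[j]}$) then yields exactly the stated signed shuffle formula on $X_{i,j} \subset (\dec_* X)_n$. The image lying in the smallest filtration step $(F^{i,n-i}\delta^*X)_n$ follows directly from Proposition~\ref{PROPEXPLICITAB}, part 4, since shuffles are precisely jointly-injective pairs of surjections with $i+j = n$.

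The main obstacle will be the sign book-keeping in part (2): identifying the abstract counit $c: [n]\ast[n] \to [n]$ in $\FinSet$---which is not order-preserving---with the classical signed shuffle formula requires decomposing $c$ via the $\FinSet$-action on $\Delta^\circ$ (formula~(\ref{eqfinset}): $(i\ i+1) = -\id + d_i s_i + d_{i+1} s_i$), tracking the cancellations among the resulting face/degeneracy terms, and verifying that the surviving contributions in the non-degenerate quotient reassemble with precisely the shuffle signs. By contrast, part (1) involves only a routine adjunction-and-diagram chase using the $\dec^* \dashv \dec_*$ adjunction and the zig-zag formula.
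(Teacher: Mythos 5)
Your part (1) is essentially the paper's argument and is fine: the paper runs the same computation on the representing objects (a map $\bigoplus_{i+j=n} D_i\boxtimes D_j\to \delta_!D_n$ compared with $\Awfrak$, plus the small lemma that the splitting $s_{i,j}$ composes to zero with every other shuffle projection), while you run it on $X$ itself via the unit of $\dec^*\dashv\dec_*$ and the zig-zag formula; the content, including the final ``only the extremal shuffle survives'' claim, is the same, modulo the routine remark that the components landing in $X_{k,l}$ with $(k,l)\neq(i,j)$ also vanish because the relevant composite in $\Delta$ is then non-injective.

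For part (2), however, there is a genuine gap, and it sits exactly where you stop. Your reduction of $\Ezfrak$ to the fold map $c\colon [n]\ast[n]\to[n]$ acting through the $\FinSet$-structure of $\Delta^{\circ}$ (Lemma~\ref{LEMMAFINSET}, Lemma~\ref{LEMMACPROFUNCTOR}) matches the paper, but the sentence ``the surviving contributions in the non-degenerate quotient reassemble with precisely the shuffle signs'' asserts the conclusion rather than proving it, and the proposed mechanism is not the one that actually operates. After reducing to the representing objects one must compare two maps $(\delta_!\Delta^{\circ}_n)_{i,j}\to(\Delta^{\circ}_n)_{i+j}$ in bidegree $i+j=n$, where $(\dec^*\Delta^{\circ}_n)_{i,j}\cong(\Delta^{\circ}_n)_{i+j}$ via $c\circ \dd_r$ --- the extra $\dd_r$ is where the subtlety enters. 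On a basis pair $[I]\otimes[J]$ the $\Ezfrak$-side gives $\pm[\{0,\dots,n\}]$ when $\#(I\cap J)=1$ and $0$ otherwise, while the signed-shuffle side produces one \emph{non-degenerate} term for every shuffle $(\sigma,\tau)$ that is injective on $I$ and on $J$; nothing is killed by passing to non-degenerate quotients. What must be shown is: (i) if $I\cap J=\{x\}$, exactly one shuffle contributes and its sign equals the $\FinSet$-sign $(-1)^{k-1}\sgn(\kappa)$ coming from $c\circ\mathrm{EZ}\circ\dd_r$; and (ii) if $\#(I\cap J)\ge 2$, the contributing shuffles cancel \emph{in pairs} (the ``free'' interval can be distributed to either factor, with opposite signs). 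This is precisely the content of Lemma~\ref{LEMMACOMB} in the paper, together with the preliminary check that the signed shuffle map is well defined on $\delta_!D_n$ (compatibility with diagonal degeneracies) and the observation that $\delta_!D_n\subset F^{n-1}$ so that only bidegree $(i,j)$ with $i+j=n$ matters. You yourself flag this sign bookkeeping as the ``main obstacle,'' so as it stands your argument for (2) is a plan for the proof rather than a proof.
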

\begin{proof}
1.\@ It suffices to see that the following commutes\footnote{where $s_{\can}$ is on each summand the adjoint of the restriction of the homonymous map (canonical degeneracy) $s_{\can}: \dec_! \Delta_i^{\circ} \boxtimes \Delta_j^{\circ} = \Delta_{i+j+1}^{\circ} \to \Delta_n^{\circ}$.}:
\[ \xymatrix{ 
\delta_!(D_n)  \ar@{<-}[rr]^-{\sum_i (\delta_{l,i}, \delta_{r,j})} \ar@{<-}[rrd]_{\Awfrak} & &  \bigoplus_{i+j=n} D_i \boxtimes D_j \ar[d]^{s_{\can}}_{\sim} \\
& &  \dec^*(D_n) }\]

First, observe that the morphisms $\delta_{l,i}$ and $\delta_{r,j}$ really induce a morphism
\[   D_i \boxtimes D_j \to \delta_!(D_n) \]
because for every diagonal degeneracy $[n] \times [n] \twoheadrightarrow [n'] \times [n']$ 
 there  are  degeneracies $[i] \twoheadrightarrow [i']$ and $[j] \twoheadrightarrow [j']$  such that we have a factorization
\[ \xymatrix{ [i] \times [j] \ar[rr]^{(\delta_{i,l}, \delta_{j,r})} \ar@{->>}[d] & &   [n] \times [n] \ar@{->>}[d] \\  
[i'] \times [j'] \ar[rr] & &  [n'] \times [n'] } \]

We are left to show that for all $i$ the outer shape commutes in the diagram
\[ \xymatrix{ 
\delta_!\Delta_n^{\circ}  \ar@{<-}@/^20pt/[rr]^-{(\delta_{l,i}, \delta_{r,j})} \ar@{<-}[rd]_-{\mathrm{counit}} \ar@{<-}[r]^-x & \delta_!\dec_! (\Delta_i^{\circ} \boxtimes \Delta_j^{\circ}) \ar[d]^{s_{\can}} \ar@{<-}[r]^-{u_!^{\op}} &   \Delta_{i}^{\circ} \boxtimes \Delta_{j}^{\circ}  \ar[d]^{s_{\can}} \\
& \delta_! \dec_! \dec^*(\Delta_n^{\circ}) \ar@{<-}[r]_-{u_!^{\op}} &  \dec^*(\Delta_n^{\circ}) }\]
commutes, where the map $x$ is $\delta_!$ of the canonical degeneracy, seen as a morphism $\dec_! \Delta_i^{\circ} \boxtimes \Delta_j^{\circ} = \Delta_{i+j+1}^{\circ} \to \Delta_n^{\circ}$. 
Here every other shape clearly commutes. For the upper triangle notice that this is the image under $\Z[-]$ of the following diagram of bisimplicial set maps
\[ \xymatrix{ 
\Delta_n \boxtimes \Delta_n  \ar@{<-}@/^20pt/[rrrr]^-{\delta_{l,i} \boxtimes \delta_{r,j}} \ar@{<-}[rr]_-{s_{\can} \boxtimes s_{\can}}  & & \Delta_{i+j+1} \boxtimes \Delta_{i+j+1}  \ar@{<-}[rr]_-{u^{\op}=\delta_{l,i} \boxtimes \delta_{r,j}} & &   \Delta_{i} \boxtimes \Delta_{j}. }\]

2.\@ It suffices to see that the following commutes:
\[ \xymatrix{ 
\delta_!(D_n)  \ar[rrr]^-{\sum_{\sigma, \tau} \mathrm{sgn}(\sigma, \tau)\cdot (\sigma, \tau)} \ar[rrrd]_{\Ezfrak} &  & &  \bigoplus_{i+j=n} D_i \boxtimes D_j \ar[d]^{s_{\can}}_{\sim} \\
& & &  \dec^*(D_n) }\]

First, observe that the morphism $\sigma, \tau: \Delta_n^{\circ} \boxtimes \Delta_n^{\circ} \to  \Delta_i^{\circ} \boxtimes \Delta_j^{\circ}$ ($i+j=n$) really induces a map
\[ \delta_!(D_n) \to D_i \boxtimes D_j \]
because for every degeneracy $[i] \times [j] \twoheadrightarrow [{i'}] \times [{j'}]$ (for $i+j=n$, and where either $i'<i$ {\em or} $j'<j$) there is a  degeneracy $[n] \twoheadrightarrow [{n'}]$ such that
we have a factorization
\[ \xymatrix{ [n] \times [n] \ar[r] \ar@{->>}[d] &  [i] \times [j] \ar@{->>}[d] \\  
[{n'}] \times [{n'}] \ar[r] & [{i'}] \times [{j'}] } \]

First observe that
\[ \delta_! (D_n)  \subset F^{n-1} \Delta_n^{\circ} \boxtimes \Delta_n^{\circ} \] 
because every pair of degeneracies $[n] \twoheadrightarrow [i]$ and $[n] \twoheadrightarrow [j]$ with $i+j\le n-1$ must factor through a diagonal degeneracy. 

Since the filtration by codegeneracy is the same as the canonical complex filtration (cf.\@ \ref{PARSKELETAL1}), it suffices to show therefore that 
\[ \xymatrix{ 
\Delta_n^{\circ} \boxtimes \Delta_n^{\circ}  \ar[rr]^-{\sum_{\sigma, \tau} \mathrm{sgn}(\sigma, \tau) \cdot (\sigma, \tau)} \ar[rrd]_{\Ezfrak} & &  \bigoplus_{i+j=n} \Delta_i^{\circ} \boxtimes \Delta_j^{\circ} \ar[d]^{s_{\can}} \\
& &  \dec^*(\Delta_n^{\circ}) }\]
commutes in double complex degree $\ge n$\footnote{ Contrary to the situation with the Alexander-Whitney map it is not commutative in general!}. 
Since $(\Delta_n^{\circ})_{i+j+1} = 0$ for $i+j=n$ we actually have
\begin{equation} \label{eqdr} (\dec^* \Delta_n^{\circ})_{i,j} \cong (\Delta_n^{\circ})_{i+j} \end{equation}
for $i+j=n$ and 0 for $i+j>n$, and thus we are left to show commutativity in complex degree $i,j$ with $i+j=n$. 

The map (\ref{eqdr}) may be described as $c \circ \dd_r$ (see the proof of Proposition~\ref{PROPEXPLICITAB}, 2.) where $c$ is the morphism fitting into
\[ \xymatrix{ (\dec^* \Delta_n^{\circ})_{[i],[j-1]} \ar@{=}[d] \ar@{->>}[r] &  (\dec^* \Delta_n^{\circ})_{i,j-1}  \ar[d]^c  \\
(\Delta_n^{\circ})_{[i+j]} \ar@{->>}[r]  & (\Delta_n^{\circ})_{i+j}  } \]
Thus we have to see that the composition
\begin{equation}\label{eqcomp1} c \circ \mathrm{EZ}  \circ \dd_r  : (\delta_! \Delta_n^{\circ})_{i,j} \to (\delta_! \Delta_n^{\circ})_{i,j-1} \to (\dec^* \Delta_n^{\circ})_{i,j-1} \to (\Delta_n^{\circ})_{i+j}.   \end{equation}
is equal to the composition
\begin{gather}\label{eqcomp2} c \circ s_{\can} \circ \dd_r \circ (\sum_{\sigma, \tau} \mathrm{sgn}(\sigma, \tau)\cdot (\sigma, \tau)) : \\
\nonumber (\delta_! \Delta_n^{\circ})_{i,j} \to  (\Delta_i^{\circ})_i \times (\Delta_j^{\circ})_j \to (\Delta_i^{\circ})_i \times (\Delta_j^{\circ})_{j-1}  \to (\dec^* \Delta_n^{\circ})_{i,j-1} \to (\Delta_n^{\circ})_{i+j}.   \end{gather}

The composition of the last two morphisms in (\ref{eqcomp1}) is by definition (cf.\@ Lemma~\ref{LEMMAFINSET}):
\begin{eqnarray*}
 \Z\left[ \substack{ \text{subsets of $\{0, \dots, n\}$} \\ \text{of cardinality $i+1$ and $j$} } \right]  & \to & \Z\left[\substack{ \text{subsets of $\{0, \dots, n\}$} \\ \text{ of cardinality $i+j+1$} } \right]    \\
{ [S] \otimes [T]}  & \mapsto &  \begin{cases}  \pm [S \cup T] & \text{if $S \cap T = \emptyset$}  \\ 0 & \text{otherwise} \end{cases}   
 \end{eqnarray*}
where the sign is determined by the parity of the permutation bringing $S \ast T$ into the order of $S \cup T$.
Given subsets  $I, J \subset \{0, \dots, n\}$ of cardinality $i+1$ and $j+1$ with $I \cap J = \{x\}$,
 $c \circ \mathrm{EZ}  \circ \dd_r $ maps $[I] \otimes [J]$ thus to $[\{0,\dots,n\}]$ with sign $(-1)^{k-1} \mathrm{sgn}(\kappa)$,
 where $\kappa$ is the permutation that brings $I \coprod (J \setminus \{x\})$ into the correct order, and $x$ is the $k$-th element of $J$.
 It maps subsets with larger intersection to zero.

The composition of the last three morphisms in  (\ref{eqcomp2}) maps the generator $[\{0, \dots, i\}] \otimes [\{0, \dots, j\}]$ to the generator $[\{0, \dots, n\}]$. By Lemma~\ref{LEMMACOMB} below, the first morphism maps
a pair $I$, $J$ either to zero, or, if $I \cap J = \{x\}$ , there is exactly one shuffle that maps it to $[\{0, \dots, i\}] \otimes [\{0, \dots, j\}]$ with the same sign $(-1)^{k-1} \mathrm{sgn}(\kappa)$ and all others map it to zero. 
\end{proof}

\begin{LEMMA}\label{LEMMACOMB}
Let $I, J \subset \{0, \dots, n\}$ be two subsets of Cardinality $i+1$ and $j+1$ with  $i+j = n$. 
\begin{enumerate}
\item If $I \cap J = \{x\}$ there
is exactly one $i,j$-shuffle $\sigma, \tau$ such that $\sigma([I]) = [\{0, \dots, i\}]$ and $\tau([J]) = [\{0, \dots, j\}]$. (All other shuffles satisfy either $\sigma([I]) = 0$ or $\tau([J])=0$.)
Then 
\[ \mathrm{sgn}(\sigma, \tau) = (-1)^{k-1} \mathrm{sgn}(\kappa), \]
 where $\kappa$ is the permutation that brings $I \coprod (J \setminus \{x\})$ into the correct order, and $x$ is the $k$-th element of $J$. 

\item If $\{x, y\} \subset I \cap J$ then 
\[ \sum_{\substack{\sigma,\tau \\ \sigma([I]) = [\{0, \dots, i\}] \\ \tau([J]) = [\{0, \dots, j\}]}} \mathrm{sgn}(\sigma, \tau) = 0.  \] 
\end{enumerate}
\end{LEMMA}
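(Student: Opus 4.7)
The plan is to model each $(i,j)$-shuffle $(\sigma,\tau)$ as a lattice path from $(0,0)$ to $(i,j)$: gap $m\in\{0,\dots,n-1\}$ is an up-step when $m\in A$ (i.e.\ $\sigma(m)=\sigma(m+1)$) and a right-step when $m\in B$, so that $P_m=(\sigma(m),\tau(m))$ traces the path. Since $\sigma,\tau$ are order-preserving, $\sigma|_I$ (resp.\ $\tau|_J$) is a bijection onto $\{0,\dots,i\}$ (resp.\ $\{0,\dots,j\}$) precisely when each column of the path contains exactly one element of $I$ and each row contains exactly one element of $J$; under this condition the $\FinSet$-action of $(\sigma,\tau)$ on $[I]\otimes[J]$ contributes sign $+1$ (the restrictions being order-preserving), so the sign of the shuffle acting on $[I]\otimes[J]$ equals $\mathrm{sgn}(\sigma,\tau)$.

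For Part 1, the hypothesis $|I\cap J|=1$ forces $|I\cup J|=n+1$, so $I\cup J=\{0,\dots,n\}$ and every position $m$ carries a label $\ell(m)\in\{I,J,x\}$. I would analyze each gap by the pair $(\ell(m),\ell(m+1))$: pairs with both labels in $\{I,x\}$ force gap $m$ to be $B$ (two $I$-elements cannot share a column), pairs in $\{J,x\}$ force $A$, and only the mixed pairs $(I,J)$ and $(J,I)$ are a priori free. The free gaps are then uniquely determined by the global counts $|A|=j$, $|B|=i$ together with the local requirements that each interval $[a_r,a_{r+1}-1]$ (resp.\ $[b_s,b_{s+1}-1]$) between consecutive elements of $I$ (resp.\ $J$) contain exactly one $B$-gap (resp.\ $A$-gap): propagation from the forced gaps alternately fixes each free gap. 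For the sign, the unique shuffle is identified with the permutation that interleaves $I$ with $J\setminus\{x\}$ in the natural order of $\{0,\dots,n\}$; the factor $(-1)^{k-1}$ records the transposition moving $x$ into the $k$-th slot of $J$, and $\mathrm{sgn}(\kappa)$ records the remaining interleaving.

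For Part 2, a direct counting argument --- comparing the sub-path length $c_{k+1}-c_k$ with $(\sigma(c_{k+1})-\sigma(c_k))+(\tau(c_{k+1})-\tau(c_k))$ for consecutive elements $c_k<c_{k+1}$ of $I\cap J$, and using that each intermediate column (resp.\ row) of the sub-path is accounted for by exactly one interior $I$-label (resp.\ $J$-label) --- shows that there lies exactly one ``empty'' position $e_k\in\{0,\dots,n\}\setminus(I\cup J)$ between $c_k$ and $c_{k+1}$ in every valid shuffle. Let $e_0$ be the first such empty position. The involution I propose is to swap gaps $e_0-1$ and $e_0$: this changes only the lattice point $P_{e_0}$, which carries the empty label so the $I/J$-counts in columns and rows are unaffected; it is visibly self-inverse; and it flips $\mathrm{sgn}(\sigma,\tau)$ as a transposition of two adjacent letters in the shuffle word. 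The crux is that gaps $e_0-1$ and $e_0$ are always of opposite type: if both were $A$, the positions $e_0-1,e_0,e_0+1$ all share a column, and a case analysis on $(\ell(e_0-1),\ell(e_0+1))$ gives a contradiction --- the cases ``both in $\{I,x\}$'' and ``both in $\{J,x\}$'' violate the $I$-count directly, while the mixed cases force $\tau(e_0+1)-\tau(e_0-1)=2$, incompatible with the monotonicity bound $\tau(e_0-1)\geq\tau(p)$ where $p$ is the largest $J$-element $\leq e_0-1$ (since $\tau(e_0+1)=\tau(p)+1$). The case ``both $B$'' is symmetric.

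The main obstacle I expect is Part 2: the verification that gaps $e_0-1$ and $e_0$ always differ hinges on the interplay between path monotonicity and the indexing of $J$- and $I$-elements. If this failed, the ``involution'' would have fixed points and the argument would collapse; Part 1 by contrast is routine combinatorial bookkeeping.
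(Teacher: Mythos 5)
Your proof is correct and essentially the paper's: part 1 is the same forcing/propagation argument that pins down the unique shuffle, and part 2 realizes the same sign-reversing involution (the paper packages the swap of the two steps flanking an element $z\notin I\cup J$ as composition with $\delta_z$, but it is the same pairing). One small repair to your crux: in the both-up case with both neighbours in $\{J,x\}$ it is not the $I$-count that fails but the $J$-count --- the fibre $\tau^{-1}(\tau(e_0))=\{e_0\}$ contains no element of $J$ --- and this one-line fibre observation in fact disposes of all subcases (and the both-right case) at once.
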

\begin{proof}
(i) In this case, the subsets are covering.
If $x\not=0$, we have $x-1 \in I$ and $x-1 \not \in J$ or vice versa. Thus $\sigma$ must not contract $x-1< x$ and hence $\tau$ must do so (or vice versa). By induction to the left, $\sigma$ and $\tau$ are determined for $i \le x$. 
If $x \not= n$, we have $x+1 \in I$ and $x+1 \not \in J$ or vice versa. Thus $\sigma$ must not contract $x<x+1$ and hence $\tau$ must do so (or vice versa). By induction to the right, $\sigma$ and $\tau$ are determined for $i \ge x$. 
The equality of the sign expressions follows from the definition of the sign of a shuffle\footnote{The shuffle corresponds to complementary subsets $I'$ (contracting intervals for $\tau$) and $J'$ (contracting intervals for $\sigma$) of $\{1, \dots, n\}$ and the sign is determined by the permutation $\kappa'$ that 
brings $I' \cup J'$ into the correct order. The procedure just described determines order-preserving isomorphisms $I' \cong I \setminus \{x\}$ and $J' \cong J \setminus \{x\}$ in such a way that the transposed permutation brings $I \setminus \{x\} \cup J \setminus \{x\}$ into the correct order. If $x$ was the $l$-th element of $I$ and $k$-th element of $J$, it has to move from position $l$ to $l+k-1$, hence an additional sign of $(-1)^{k-1}$.}. 

(ii) In this case, there is an element $z \in \{0, \dots, n\} \setminus  (I  \cup J)$.
The shuffles correspond to complementary subsets $I', J' \subset \{1, \dots, n\}$ of cardinality $i$ and $j$ (the contracting intervals of $\tau$ and $\sigma$ respectively). 
The shuffles satisfying the property from the sum induce jointly injective surjections $\sigma' = \sigma \delta_z: [{n-1}] \twoheadrightarrow [{i}]$ and $\tau' = \tau \delta_z: [{n-1}] \twoheadrightarrow [{j}]$. As such they correspond to subsets $I'', J'' \subset \{1, \dots, n-1\}$ of cardinality $i-1$ and $j-1$ with $I'' \cap J'' = \emptyset$. 

The interval $q$ of $\Delta_{n-1}$ which is neither in $I''$ nor $J''$ thus must correspond to a new interval that is not mapped to an interval by $\delta_z$ (in particular, $z \not= 0$ and $z \not= n$) because all others {\em are} contracted by one of the compositions. To each of these pairs $I''$, $J''$ thus correspond precisely two pairs $I'$ and $J'$ where the two intervals that $\delta_z(q)$ is composed of, are distributed evenly into $I'$ and $J'$. 
The corresponding shuffles have opposite sign. 
\end{proof}

\begin{PAR} 
Recall the {\em Shih-operator} $\Xi_1$ from \ref{DEFSHIH} (cf.\@ also Corollary~\ref{KOREZAB}). Applying $L_{\mathcal{C}}$, it induces 
\[ L_{\mathcal{C}}(\Xi):   \xymatrix{ \dec^*  \delta^* \ar@{=>}[r]^-{\sim} &  (\delta, \delta)^* \dec_{13,24}^*  \ar@{=>}[rr]^{(\id, \Ezfrak\, \Awfrak)} & &   (\delta, \delta)^* \dec_{13,24}^* \ar@{=>}[r]^-{\sim} &   \dec^*  \delta^*     }  \]
Its adjoint $L(\Xi_1)': \Delta_1 \otimes \delta^* \cong \dec_! \dec^* \delta^* \Rightarrow \delta^*$ (cf.\@ Corollary~\ref{KORHOMOTOPY}) yields via composition with $\Ezfrak: \Delta_1 \tildeotimes \delta^* \to \Delta_1 \otimes \delta^*$ a morphism
\[ \Ezfrak^* L(\Xi_1)' :  \Delta_1 \tildeotimes \delta^* \Rightarrow  \delta^* \]
which gives a homotopy in the usual sense between  $\Ezfrak \, \Awfrak$ and $\id$. (Notice the two very different instances of $\Ezfrak$ here!)
\end{PAR}

\begin{PAR} 
Define $\mathcal{Q}^n \in \Z[\Hom([{n}], [{n}])] \otimes \Z[\Hom([{n}], [{n}])]$
by the formula
\[ \boxed{ \mathcal{Q}^n := \sum_{\substack{ p+q=n \\ \sigma, \tau }} \mathrm{sgn}(\sigma, \tau) (\sigma \delta^{n}_{0,p} \otimes \tau \delta^n_{p,n}) }  \]
where $\sigma, \tau$ run through the $p,q$-shuffles. 
By the formula in Proposition~\ref{PROPAWEZAB} we have that $(\Ezfrak \, \Awfrak)_n \equiv \mathcal{Q}^n$ modulo degeneracies, i.e.\@ for $X \in \mathcal{C}^{\Delta^{\op} \times \Delta^{\op}}$
\[ \xymatrix{  (\delta^* X)_{[n]} = X_{[n],[n]} \ar@{->>}[d]   \ar[r]^-{\mathcal{Q}^n} &  (\delta^* X)_{[n]} = X_{[n],[n]} \ar@{->>}[d] \\
(\delta^* X)_{n} \ar[r]^-{  \Ezfrak \, \Awfrak} & (\delta^* X)_n
 } \]
 commutes. 
Define $H^n_b:= (\id_{[b]} \ast \Ezfrak \, \Awfrak) s_{\can}$. We have
\[ H_b^n s_i  = \begin{cases} s_i H_{b-1}^{n-1}  & i < b  \\ s_{i+1} H_b^{n-1}  &  i \ge b \end{cases}  \]
hence $H_b^n$ maps degenerate elements to degenerate elements.

Defining 
$\mathcal{H}^n_i \in \Z[\Hom([{n}], [{n-1}])] \otimes \Z[\Hom([{n}], [{n-1}])]$
by the formula
\[ \boxed{ \mathcal{H}^n_b:=(\id_{[b]} \ast \mathcal{Q}^{n-b-1}) s_{\can} = \sum_{\substack{p+q=n-b-1 \\ \sigma, \tau}} (\id_{[b]} \ast \, \sigma \delta_{0,p}^{n-b-1}) s_{\can} \otimes (\id_{[b]} \ast \,\tau \delta_{p,n-b-1}^{n-b-1}) s_{\can} }   \]
we have the commutative diagram for $X \in \mathcal{C}^{\Delta^{\op} \times \Delta^{\op}}$:
\[ \xymatrix{  (\delta^* X)_{[n-1]} = X_{[n-1],[n-1]} \ar@{->>}[d]   \ar[r]^-{\mathcal{H}^n_b} &  (\delta^* X)_{[n]} = X_{[n],[n]} \ar@{->>}[d] \\
(\delta^* X)_{n-1} \ar[r]^-{  H^n_b } & (\delta^* X)_n
 } \]
Note, however, that the lower map does not constitute a morphism of complexes!
\end{PAR}

\begin{PROP}[Shih's formula]\label{PROPSHIHEXPL}The following commutes for $X \in \mathcal{C}^{\Delta^{\op} \times \Delta^{\op}}$:
\[ \xymatrix{  (\delta^* X)_{[n-1]} = X_{[n-1],[n-1]} \ar@{->>}[d]   \ar[rr]^-{\sum_i (-1)^b \mathcal{H}^n_b } & & (\delta^* X)_{[n]} = X_{[n],[n]} \ar@{->>}[d] \\
\{0,1\} \tildeotimes (\delta^* X)_{n-1} \ar[rr]^-{  \Ezfrak^* L_{\mathcal{C}}(\Xi_1)'} & & (\delta^* X)_n
 } \]
\end{PROP}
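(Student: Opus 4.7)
The plan is to unpack the composite $\Ezfrak^* L_{\mathcal{C}}(\Xi_1)'$ in two stages and then apply Proposition~\ref{PROPAWEZAB} once more to reduce to the shuffle formula defining $\mathcal{Q}^{n-b-1}$.

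First, combining Corollary~\ref{KORHOMOTOPY} with the explicit conversion $G_{[n],b} = F_{[b],[n-b-1]}$ of \ref{PARDECEXPLICIT}, the adjoint $L_{\mathcal{C}}(\Xi_1)' \colon \Delta_1 \otimes \delta^* X \to \delta^* X$ is determined at simplicial level $[n]$, on the $b$-th 1-cell $\sigma_b \colon [n] \to [1]$ (for $b = 0, \ldots, n-1$), by the evaluation of the natural transformation $L_{\mathcal{C}}(\Xi_1) \colon \dec^* \delta^* X \to \dec^* \delta^* X$ at bi-index $([b],[n-b-1])$. Tracing Definition~\ref{DEFSHIH}, the natural isomorphism $\delta\,\dec \cong \dec_{13,24}\,\delta_{12,34}$ identifies the value of $\dec^*\delta^* X$ at $([b],[n-b-1])$, namely $X_{[n],[n]}$, with $X_{[b]\ast[n-b-1],\,[b]\ast[n-b-1]}$; under this identification the middle arrow $(\id,{}^t(\Aw\,\Ez))$ translates into $\id_{[b]} \ast (\Ez\,\Aw)$, where $\Ez\,\Aw$ acts on the $[n-b-1]$-subinterval of each bisimplicial coordinate via its definition as an endomorphism of $\delta^*$.

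For the precomposition with $\Ezfrak \colon \Delta_1 \tildeotimes \delta^* X \to \Delta_1 \otimes \delta^* X$, note that $\Delta_1$ has a single non-degenerate 1-cell $[0,1]$, so the summand $[0,1] \tildeotimes (\delta^* X)_{n-1}$ of $(\Delta_1 \tildeotimes \delta^* X)_n$ is the only one relevant to the homotopy direction. By Proposition~\ref{PROPAWEZAB}.2, $\Ezfrak$ sends an element $[0,1] \tildeotimes x$ to the signed sum over $(1,n-1)$-shuffles $(\sigma,\tau)$ of $\sgn(\sigma,\tau)\cdot (\sigma^*[0,1]) \otimes (\tau^*x)$. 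These shuffles are parametrized by $b \in \{0,\ldots,n-1\}$: the unique non-contracted interval of $\sigma$ is $\{b,b+1\}$, so $\sigma = \sigma_b$, while $\tau = s_b \colon [n] \twoheadrightarrow [n-1]$ is precisely the canonical degeneracy $s_{\can}$ associated with the splitting $[n] = [b] \ast [n-b-1]$ and $[n-1] = [b] \ast' [n-b-1]$; the shuffle sign is $(-1)^b$.

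Combining both steps, $\Ezfrak^* L_{\mathcal{C}}(\Xi_1)'$ applied to $[0,1] \tildeotimes x$ equals
\[ \sum_{b=0}^{n-1} (-1)^b \bigl(\id_{[b]} \ast (\Ez\,\Aw)\bigr) \circ s_{\can}^*(x). \]
Projecting to non-degenerate elements and replacing each $\Ez\,\Aw$ factor by $\mathcal{Q}^{n-b-1}$ modulo degeneracies (which is exactly the content of Proposition~\ref{PROPAWEZAB}) yields $\sum_{b=0}^{n-1} (-1)^b \mathcal{H}^n_b(x)$, as required. The main obstacle is the first stage: extracting a concrete bisimplicial formula from the pro-functor definition of $\Xi_1$, and in particular matching the transposed natural transformation ${}^t(\Aw\,\Ez)$ acting on the second factor of $\delta_{12,34}$ to the bisimplicial endomorphism $\id_{[b]} \ast (\Ez\,\Aw)$ of $X_{[n],[n]}$; everything downstream (shuffle signs, identification $s_b = s_{\can}$, passage to non-degenerate chains) is a direct transcription of the already-established formulas.
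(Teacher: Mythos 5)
Your proposal is correct and follows essentially the same route as the paper: compute the bidegree components of $L_{\mathcal{C}}(\Xi_1)$ from Definition~\ref{DEFSHIH} together with Proposition~\ref{PROPAWEZAB}, and then convert to the classical homotopy via the adjunction and precomposition with $\Ezfrak$, which is exactly the recipe recorded in \ref{PARDECEXPLICIT2} (adjoint correspondence $G_{[n],b}=F_{[b],[n-b-1]}$ plus the $(1,n-1)$-shuffle signs $(-1)^b$ and degeneracies $s_b$). The step you flag as the main obstacle is handled in the paper in precisely the way you sketch, the only additional care being the footnoted observation that $s_b=s_{\can}$ carries simplicial degeneracies into bisimplicial ones, so the ``modulo degenerates'' identification of $\Ez\,\Aw$ with $\mathcal{Q}^{n-b-1}$ survives the composition.
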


\begin{PAR}\label{PARDECEXPLICIT2}Before discussing the proof, recall the explicit description of homotopies obtained via $\dec^*$ from \ref{PARDECEXPLICIT}.
A homotopy in the classical sense is extracted by the element
\[ \{0,1\} \tildeotimes X \to \Delta_1^{\circ} \tildeotimes X \to \Delta_1^{\circ} \otimes X   \]
where the second morphism is the EZ morphism. In degree $n$ the corresponding map $\{0,1\} \tildeotimes X \to Y$  is given, with the notation of \ref{PARDECEXPLICIT}, by (sum over $(1, n-1)$-shuffles)
\[ \{0, 1\} \otimes x_{n-1} \mapsto \alpha(x_{n-1}) := \sum_{i=0}^{n-1} (-1)^i G_{[n],i} (s_i x_{n-1}) =  \sum_{i=0}^{n-1} (-1)^i F_{[i], [n-i-1]} (s_i x_{n-1})  \]
where $x_{n-1} \in X_{[n-1]}$ denotes a representing element.  
\end{PAR}

\begin{proof}[Proof of Proposition~\ref{PROPSHIHEXPL}]
By Proposition~\ref{PROPAWEZAB} and the Definition~\ref{DEFSHIH}, we have for $r+s+1=n$
\begin{eqnarray*}
 L_{\mathcal{C}}(\Xi)_{[r],[s]}: (\dec^* \delta^* X)_{[r],[s]} = X_{[r]\ast[s],[r]\ast[s]} &\to& X_{[r]\ast[s],[r]\ast[s]} = (\dec^* \delta^* X)  \\
 x_{n} &\mapsto& \sum_{\substack{p+q = s \\ \sigma, \tau}} \mathrm{sgn}(\sigma, \tau) ( \sigma d_l  , \tau d_r  ) x_{n} 
\end{eqnarray*}
where $\sigma, \tau, \delta_l, \delta_r$ are intended w.r.t.\@ the $[s]$-variable and the $\sigma, \tau$ run through all $p,q$-shuffles. 

In view of the discussion in \ref{PARDECEXPLICIT2} this translates to\footnote{The formula for $L_{\mathcal{C}}(\Xi_1)$ is modulo degenerate elements. However, $s_b = s_{\can}$ translates simplicial degeneracies into degeneracies as element (in degree $(b,n-b-1)$) in the bisimplicial object $\dec^*X$.} 
\begin{gather*} 
\{0, 1\} \otimes x_{n-1} \mapsto   \sum_{b=0}^{n-1} (-1)^b L_{\mathcal{C}}(\Xi_1)_{[b], [n-b-1]} (s_b x_{n-1}) \\
= \sum_{b=0}^{n-1} (-1)^b \sum_{\substack{p+q = n-b-1 \\ \sigma, \tau}} \mathrm{sgn}(\sigma, \tau)  ((\id_{[b]} \ast \, \sigma \delta_{0,p}^{n-b-1}) s_{b}, (\id_{[b]} \ast \,\tau \delta_{p,n-b-1}^{n-b-1}) s_{b}) x_{n-1} 
 \end{gather*} 
\end{proof}

\begin{PAR}The formula for $\mathcal{Q}^n$ can be given in a different form: 
The summands of $\mathcal{Q}^n$ run over  $p,q$-shuffles with $p+q=n$.
Those are determined by subsets $Z \subset \{1, \dots, n\}$ giving the intervals where $\sigma$ is degenerate and thus $\tau$ is not. 
Equivalently, they are given by sequences $\underline{y} = (z_0, \dots, z_{n-1}) \in \{0,1\}^n$ and we have
\[ \sigma = s_0^{z_0} \ast' \cdots \ast' s_{0}^{z_{n-1}}  \]
where $s_0^0 = \id_{[1]}$ and $s_0^1: [1] \twoheadrightarrow [0]$ is the usual degeneracy.  
From this we get
\[ \sgn(\sigma) = (-1)^{\sum_{i<j} z_j (z_{i}+1) } \]
which may be derived from the formula
\begin{equation}\label{eqsignshuffle}
 \sgn(\sigma_0 \ast' \cdots \ast' \sigma_{n-1}) = \prod_i \sgn(\sigma_i) \prod_{i<j} (-1)^{|\sigma_j| \cdot |\overline{\sigma_i}|}  
 \end{equation}
where $\sigma_i: [n_i] \to [|\sigma_i|]$ are degeneracies and $\overline{\sigma}$ is the complementary shuffle. 
We also have for $\underline{b} = (b_0, \dots, b_{k-1}) \subset \{0, \dots, n-1\}$, with $b_0 > b_1 > \cdots > b_{k-1}$ the formula 
\begin{equation}\label{eqsignshuffle2}
   \sgn(\sigma_{\underline{b}}) = (-1)^{ \sum_{i=0}^{k-1} b_i-(k-1-i)} \qquad \sgn(\tau_{\underline{b}}) = (-1)^{ \sum_{i=0}^{k-1} n-k-i-b_i} 
\end{equation}
where $\sigma_{\underline{b}}, \tau_{\underline{b}}$ is the associated shuffle (\ref{PARSHUFFLEB}). 
\end{PAR}

\begin{LEMMA}\label{LEMMAH}
Let $n$ be a positive integer. Consider the set of $\underline{y} = (y_0, \dots, y_{n-1})$ with $y_i \in \{0, 1\}$.
We have
\begin{equation}\label{eqalt} \mathcal{Q}^{n} = \sum_{\underline{y}} (-1)^{\sum_{i<j} y_j (y_{i}+1) } (B_{\underline{y}}^0 \otimes B_{\underline{y}}^1) \end{equation}
with $B_{\underline{y}}^0 : [n+1] \to [m]$ for some $m$, and $B_{\underline{y}}^1 : [n+1] \to [n+1]$  recursively defined by:
$B_{()}^0 = B_{()}^1 = \id_{[0]}$ and
\begin{equation}\label{eqb}
 B_{\underline{y}}^j = \begin{cases} s_{n-1}B_{\underline{y}'}^j & j < y_0  \\
 B_{\underline{y}'}^j \ast \id_{[0]} & j = y_0  \\
s_{n-1} B_{\underline{y}'}^j  \delta_{0} & j > y_0 \end{cases}  
\end{equation}
Each summand is degenerate on the left at $i$, if $y_i = 0$ and degenerate on the right at $i$, if $y_i = 1$. 
Implicitly in the formula (\ref{eqalt}) the $B_{\underline{y}}^0$ are composed with the inclusion $[m] \hookrightarrow [n+1]$. 
\end{LEMMA}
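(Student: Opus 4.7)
The plan is to prove the identity by induction on $n$, using the standard bijection (cf.\@ \ref{PARSHUFFLE}) between binary sequences $\underline{y} \in \{0,1\}^n$ and pairs $(p+q=n,\,(\sigma,\tau))$ where $(\sigma,\tau)$ is a $p,q$-shuffle. Concretely, $y_i = 0$ means the $i$-th interval is contracted by $\tau$ (so $B^0 = \sigma \delta^n_{0,p}$ has a degeneracy there) and $y_i = 1$ means it is contracted by $\sigma$; thus $p = \#\{i : y_i = 0\}$ and $q = \#\{i : y_i = 1\}$. Under this bijection, I claim $\sigma \delta^n_{0,p}$ is exactly $B^0_{\underline{y}}$ and $\tau \delta^n_{p,n}$ is exactly $B^1_{\underline{y}}$, which immediately entails the degeneracy pattern stated in the Lemma.

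The base $n=0$ (empty $\underline{y}$) is tautological: $\mathcal{Q}^0 = \id_{[0]} \otimes \id_{[0]}$. For the inductive step, I would write $\underline{y} = (y_0,\underline{y}')$ with $\underline{y}'$ of length $n-1$, and match the three branches of (\ref{eqb}) to the three ways adding the new $0$-th atom modifies $(\sigma\delta^n_{0,p},\tau\delta^n_{p,n})$: the branch $j = y_0$ corresponds to this atom lying in the $j$-th shuffle component, hence the corresponding face $\delta^\bullet_{0,p}$ (resp.\@ $\delta^\bullet_{p,n}$) is extended by concatenation with $\id_{[0]}$; the branch $j < y_0$ (so $y_0 = 1$, $j = 0$) places a new degeneracy of $\sigma$ at the outermost position $n-1$, hence the extra factor $s_{n-1}$; the branch $j > y_0$ (so $y_0 = 0$, $j = 1$) again introduces a new degeneracy of $\tau$ at position $n-1$, but because the face $\delta^n_{p,n}$ embeds the right block starting at $p$, one must additionally pre-compose with $\delta_0$ to account for the shifted embedding. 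The verification of each case reduces to a simplicial identity of the shape $\delta^{n+1}_{p,n+1} = s_{n-1}\,\delta^n_{p,n}\,\delta_0$ (or its analogue), which one checks directly.

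The sign is handled separately. Applying the product formula (\ref{eqsignshuffle}) to $\sigma = \sigma_0 \ast' \cdots \ast' \sigma_{n-1}$ and $\tau = \tau_0 \ast' \cdots \ast' \tau_{n-1}$ with $(\sigma_i,\tau_i) = (\id_{[1]}, s_0)$ if $y_i = 0$ and $(s_0,\id_{[1]})$ if $y_i = 1$, the factors $\sgn(\sigma_i),\sgn(\tau_i)$ are all $+1$, so
\[
\sgn(\sigma,\tau) \;=\; \prod_{i<j} (-1)^{|\sigma_j|\,|\overline{\sigma_i}| \,+\, |\tau_j|\,|\overline{\tau_i}|}.
\]
Substituting $|\sigma_i| = 1 - y_i$, $|\overline{\sigma_i}| = 1$ (and symmetrically for $\tau$), the exponent collapses modulo $2$ to $\sum_{i<j} y_j(y_i+1)$, as asserted. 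Alternatively, one can verify this by the same induction, checking that prepending $y_0$ multiplies the previous sign by the expected factor $(-1)^{\sum_{j>0} y_j(y_0+1)}$ computed from (\ref{eqsignshuffle2}).

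The main obstacle is the bookkeeping in the inductive operator-matching: the three cases of (\ref{eqb}) must be aligned with the three distinct combinatorial effects of inserting the new atom, while consistently tracking the dimensions of source and target of $B^j_{\underline{y}'}$ (which jump by $0$ or $1$ depending on the case) and verifying the simplicial identities involving $s_{n-1}$, $\delta_0$, and concatenation with $\id_{[0]}$. Once that alignment is established, the sign comparison is routine.
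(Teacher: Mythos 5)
The paper itself disposes of this lemma with ``an instructive exercise,'' so there is no recorded argument to compare against; judged on its own terms, your strategy --- induction on $n$ through the bijection, set up in the paragraph immediately preceding the lemma, between sequences $\underline{y}\in\{0,1\}^n$ and $p,q$-shuffles --- is the natural one, but as written the proposal is not yet a proof, and it contains concrete errors. The first is that your dictionary is internally inconsistent: you declare that $y_i=0$ means the $i$-th interval is contracted by $\tau$, and in the same sentence conclude that $B^0_{\underline{y}}=\sigma\delta^n_{0,p}$ ``has a degeneracy there.'' Composing with an injective face creates no degeneracies, so $\sigma\delta^n_{0,p}$ is degenerate exactly where $\sigma$ is, i.e.\@ (with your convention) where $y_i=1$; that contradicts the degeneracy pattern asserted in the lemma (left factor degenerate at $i$ iff $y_i=0$). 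So either your convention for $\underline{y}$ or your assignment of $\sigma\delta^n_{0,p},\tau\delta^n_{p,n}$ to $B^0,B^1$ must be flipped, and the choice is not cosmetic: the exponent $\sum_{i<j}y_j(y_i+1)$ is not invariant under $\underline{y}\mapsto 1-\underline{y}$ (the two differ by $(n-1)\sum_i y_i$ mod $2$), so the final sign depends on getting this straight.

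The second problem is the sign computation. For the atomic decomposition $\sigma=\sigma_0\ast'\cdots\ast'\sigma_{n-1}$ the complementary shuffle of the atom $\sigma_i$ is $\tau_i$, so $|\overline{\sigma_i}|=y_i$, not $1$ as you substitute; moreover combining the $\sigma$- and $\tau$-contributions of (\ref{eqsignshuffle}) symmetrically, as in your displayed formula, produces a quantity independent of the particular shuffle (with the correct substitution it is $(-1)^{(n-1)q}$), which cannot be the shuffle sign, and the claimed ``collapse'' to $\sum_{i<j}y_j(y_i+1)$ does not follow from the substitutions you wrote. In fact the paragraph before the lemma already derives $\sgn(\sigma)=(-1)^{\sum_{i<j}y_j(y_i+1)}$ from (\ref{eqsignshuffle}); the genuinely new content of the lemma is the identification of the recursively defined maps $B^0_{\underline{y}},B^1_{\underline{y}}$ with the composites $\sigma\delta^n_{0,p},\tau\delta^n_{p,n}$, and this is exactly what you defer as ``bookkeeping.'' Your case-matching narrative does not yet work: the recursion (\ref{eqb}) prepends $y_0$, i.e.\@ modifies the bottom interval, while the operations $s_{n-1}$ and $-\ast\id_{[0]}$ act at the top end, so ``the new degeneracy sits at the outermost position $n-1$'' does not describe prepending an atom without an argument relating the two ends; and the identity you offer as the key verification, $\delta^{n+1}_{p,n+1}=s_{n-1}\,\delta^n_{p,n}\,\delta_0$, is dimensionally inconsistent as stated (an injective map set equal to a composite containing a genuine degeneracy, with mismatched sources and targets under either composition convention). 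To complete the proof you must fix the convention, track the sources and targets of $B^0_{\underline{y}},B^1_{\underline{y}}$ through each of the three branches of (\ref{eqb}), read off the degeneracy pattern from the recursion, and only then match summand-by-summand (including the implicit postcomposition with $[m]\hookrightarrow[n+1]$) against $\mathcal{Q}^n$; that bookkeeping is precisely the exercise, and it is the part the proposal leaves undone.
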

\begin{proof}
An instructive exercise.
\end{proof}

\begin{PAR}
We would like an explicit formula also for the higher Shih operators \ref{HIGHERSHIH}, however, defining $\mathcal{H}^n_{\underline{b}, \underline{i}}$ in the same way as in (\ref{highershih}) would not be correct because the equality of
$H^n_{b}$ with $\mathcal{H}^n_b$ holds only up to degenerate elements. However we can define (for any $m$):
\[ \mathcal{H}^n_b: \Z[\Hom([n-1], [m])] \otimes \Z[\Hom([{n-1}], [m])]  \to \Z[\Hom([n], [m])] \otimes \Z[\Hom([n], [m])]   \]
by writing  generators as
$\tau \otimes \kappa = s_{i_1} \cdots s_{i_k}  \cdot (\tau' \otimes \kappa')$ with $(\tau' \otimes \kappa')$ non-degenerate and setting: 
\[ \mathcal{H}_b^n ( \tau \otimes \kappa ) :=  s_{i_1'} \cdots s_{i_k'}  \cdot \mathcal{H}_{b'}^{n-k'}  \cdot (\tau' \otimes \kappa')     \]
where the $i$ and $b$ have been transformed by the rule: 
\[ \mathcal{H}_b^n s_i  := \begin{cases} s_i \mathcal{H}_{b-1}^{n-1}  & i < b,  \\ s_{i+1} \mathcal{H}_b^{n-1}  &  i \ge b. \end{cases} \]
Defining then $\mathcal{H}^n_{\underline{b}, \underline{i}}$ by the same recursive formula as (\ref{highershih}) 
\begin{align*}  \mathcal{H}^n_{\underline{b}, \underline{i}}: & (\delta_k^* X)_{[n-k]} \to (\delta_k^* X)_{[n]}  \\
 := & ( \underbrace{s_{b_0}, \dots, s_{b_0}}_{i_{0} \text{ times}}, \mathcal{H}_{b_0}^n, \underbrace{s_{b_0}, \dots, s_{b_0}}_{k-i_{0}-1 \text{ times}} ) (- \circ \delta_{i_0}^*)  \mathcal{H}^{n-1}_{\underline{b}', \underline{i}'}. 
\end{align*}
with $\underline{b}' = (b_1, \dots, b_{k-1})$ and $\underline{i}' = (i_1, \dots, i_{k-1})$, 
we arrive at: 
\end{PAR}

\begin{PROP}\label{PROPHIGHERSHIHEXPL}
The following diagram is commutative
\[ \xymatrix{  (\delta^*_k X)_{[n-k]} = X_{[n-k],[n-k]} \ar@{->>}[d]   \ar[r]^-{\mathcal{H}^n_{\underline{b}, \underline{i}}} &  (\delta_k^* X)_{[n]} = X_{[n],[n]} \ar@{->>}[d] \\
(\delta^*_k X)_{n-k} \ar[r]^-{  H^n_{\underline{b}, \underline{i}} } & (\delta_k^* X)_n
 } \]
 Notice that, again, the lower map is not a morphism of complexes!
\end{PROP}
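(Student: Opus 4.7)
The proof will proceed by induction on $k$, the common length of the sequences $\underline{b}$ and $\underline{i}$. The base case $k=1$ is essentially a restatement of Proposition~\ref{PROPSHIHEXPL}: for $k=1$ the sequence $\underline{i}$ is empty, the residual sequences $(\underline{b}',\underline{i}')$ have length zero so the inner operators $\mathcal{H}^{n-1}_{(),()}$ and $H^{n-1}_{(),()}$ are the identity, the transport $(- \circ \delta_{i_0}^*)$ is trivial, and the recursive definitions collapse to $\mathcal{H}^n_{b_0}$ and $H^n_{b_0}$, whose compatibility modulo degenerate elements was established there.

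For the inductive step, assume the statement for sequences of length $k-1$, and stack the two recursive definitions into a diagram with three successive squares: (i) on the left, $\mathcal{H}^{n-1}_{\underline{b}',\underline{i}'}$ on the simplicial side against $H^{n-1}_{\underline{b}',\underline{i}'}$ on the complex side, which commutes by the inductive hypothesis; (ii) in the middle, the transport $(- \circ \delta_{i_0}^*)$, which is the pullback along the morphism $(\Delta^{\op})^{k-1} \to (\Delta^{\op})^{k}$ doubling the $i_0$-th entry: being induced by a morphism in $\Delta$ it is functorial and preserves the skeletal filtration (\ref{PARSKELETAL2}), hence commutes with passage to the non-degenerate quotient; and (iii) on the right, the outer operator, which on the $i_0$-th slot is $\mathcal{H}^n_{b_0}$ (resp.\@ $(\id_{[b_0]} \ast \Ezfrak\Awfrak)\, s_{b_0}$ on the complex side), while the remaining $k-1$ slots carry simplicial degeneracies $s_{b_0}$ on the simplicial side and identities on the complex side. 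The square (iii) reduces, slot by slot, to Proposition~\ref{PROPSHIHEXPL} in the $i_0$-th position together with the observation that the projection to non-degenerate elements in the other slots kills the difference between a simplicial degeneracy $s_{b_0}$ and the identity, as described by Proposition~\ref{PROPEXPLICITAB}, 4. Composing the three commuting squares produces the required identity between $\mathcal{H}^n_{\underline{b},\underline{i}}$ and $H^n_{\underline{b},\underline{i}}$ modulo degenerate elements.

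The main obstacle is the bookkeeping in (iii): after stages (i) and (ii) the element in $X_{[n-1],[n-1]}$ (in each of the $k$ factors) need not be non-degenerate, so the outer $\mathcal{H}^n_{b_0}$ must be evaluated through the degeneracy-extension rule $\mathcal{H}^n_b s_i = s_i\, \mathcal{H}^{n-1}_{b-1}$ for $i<b$ and $s_{i+1}\, \mathcal{H}^{n-1}_b$ for $i\ge b$. One must verify that this rule, together with the slot-wise action of the remaining $s_{b_0}$, is well-defined modulo degenerate elements, so that the class of $\mathcal{H}^n_{\underline{b},\underline{i}}$ applied to the input depends only on its non-degenerate quotient. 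This reduces to a simplicial identity comparing the action of $s_{b_0}$ shifted through the degeneracies with the effect of the shuffle sum that explicitly represents $\Ezfrak\Awfrak$ via Proposition~\ref{PROPAWEZAB}, a purely combinatorial check that I expect to handle by induction on the number of degeneracies in the chosen representative, using the explicit shuffle expansion of $\mathcal{Q}^n$ from Lemma~\ref{LEMMAH}.
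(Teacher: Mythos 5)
Your overall strategy (induction on $k$, splitting the recursion into the inner operator, the transport along $\delta_{i_0}^*$, and the outer stage) is the natural one, and indeed the paper offers no written proof — the statement is presented as a consequence of the way $\mathcal{H}^n_b$ was extended to degenerate elements. But your treatment of the outer square (iii) contains a genuine gap, in two respects. First, there is no ``$s_{b_0}$ versus identity'' discrepancy in the non-$i_0$ slots: in the recursion (\ref{highershih}) the degeneracy $s_{b_0}$ is the \emph{diagonal} degeneracy of $\delta_k^*X$, applied in every slot before the slot-wise operator $(1,\dots,\id_{[b_0]}\ast\Ezfrak\Awfrak,\dots,1)$, so slot-wise the complex-level stage reads $(s_{b_0},\dots,H^n_{b_0},\dots,s_{b_0})$ and agrees with the simplicial-level stage $(s_{b_0},\dots,\mathcal{H}^n_{b_0},\dots,s_{b_0})$ on the nose outside the $i_0$-th slot; the cited Proposition~\ref{PROPEXPLICITAB}, 4.\@ plays no role there. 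Second, and more seriously, the proposed ``slot by slot'' reduction is not available, because the vertical projections in the statement are onto the normalized chains of the \emph{total diagonal} $\delta_k^*X$: an element that is degenerate in one slot (or degenerate in different slots at different indices) is in general \emph{not} killed by this projection — that is precisely why shuffle terms are non-degenerate. Concretely, the only discrepancy in the outer stage sits in the $i_0$-th slot, where $\mathcal{H}^n_{b_0}-H^n_{b_0}$ (via $\mathcal{Q}\equiv\Ezfrak\Awfrak$ modulo degeneracies and the transport rule) is only degenerate \emph{for that pair}, and typically at indices $>b_0$, while the other slots carry elements degenerate at $b_0$; such a tensor need not be degenerate for the total diagonal, so it does not automatically vanish under the right-hand projection.

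What is missing, therefore, is control over \emph{which} indices the other slots are degenerate at, so that the pair-degenerate error terms produced in the $i_0$-th slot become degenerate for the full diagonal (or cancel). This requires a structural analysis of the summands of $\mathcal{H}^{n-1}_{\underline{b}',\underline{i}'}$ of the kind carried out in the appendix (compare Lemmas~\ref{LEMMASHIH0} and~\ref{LEMMASHIH1}, which record exactly such joint degeneracy patterns across slots), and neither your step (iii) nor the closing ``induction on the number of degeneracies using Lemma~\ref{LEMMAH}'' supplies it. Your base case is also mis-referenced: for $k=1$ one has $\underline{i}=(0)$ (only $\underline{b}',\underline{i}'$ are empty), and the needed compatibility of $\mathcal{H}^n_{b_0}$ with $H^n_{b_0}$ is the displayed diagram in the paragraph preceding Proposition~\ref{PROPSHIHEXPL} (resting on Proposition~\ref{PROPAWEZAB}), not Proposition~\ref{PROPSHIHEXPL} itself, which concerns the alternating sum $\sum_b(-1)^b\mathcal{H}^n_b$. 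The inductive skeleton can be kept, but the heart of the proof is the multi-slot degeneracy bookkeeping, and that is exactly the part your proposal leaves unproved.
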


\begin{PAR}
There is a relation between the higher Shih operators $H_{\underline{b}, \underline{i}}^n$ and the Szczarba operators defined in \cite{Szc61} (cf.\@ also \cite{HT10}) as we will now explain.
The conceptual relation will be clarified by the discussion in Section~\ref{SECTFUNCT}. Cf.\@ also \cite{Fra21} for a different discussion of this relation. 
\end{PAR}

\begin{PAR}\label{PARP}
We say that an element in $\Hom([{n}], [m])^{k+1}$, considered as generating summand of $\Z[\Hom([n], [m])]^{\otimes (k+1)}$
is {\bf $i$-degenerate at $j$}, if it is of the form
\[ (\tau_0, \dots, s_i \tau_j, \dots, \tau_k). \]
Define an idempotent
\[ \mathcal{P}_i: \Z[\Hom([{n}], [m])] \to \Z[\Hom([{n}], [m])] \]
defined on a basis by
\[ \tau \mapsto \begin{cases} 0 & \text{ if $\tau$ is $i$-degenerate, } \\ \tau & \text{otherwise.} \end{cases}  \]
Define also 
\[ \mathcal{P}: \Z[\Hom([{n}], [m])]^{\otimes (k+1)} \to \Z[\Hom([n], [m])]^{\otimes (k+1)} \]
by
\[ \mathcal{P} = \prod_{i=0}^{k} 1^{\otimes i} \otimes \mathcal{P}_{n-k-1+i} \otimes 1^{\otimes k-i}  \]
i.e.\@ it throws away all summands that are $n-k-1+i$ degenerate at $i$ for $i=0, \dots, k$.
\end{PAR}

\begin{DEF}\label{DEFSZCZARBA}
Define an active morphism $\Sz^j_{\underline{i}}: [k] \to [j]$  (i.e.\@ $\Sz^j_{\underline{i}}(0) = 0$ and 
$\Sz^j_{\underline{i}}(k) = j$) by $\Sz^0_{()} := \id_{[0]}$, and the recursion
\begin{equation}\label{eqszczarba} \Sz^j_{\underline{i}} := \begin{cases} 
(\Sz^{j}_{\underline{i}'}  \ast \id_{[0]})  s_j  & j <  i_{0}+1\\ 
(\Sz^{j-1}_{\underline{i}'}  \ast \id_{[0]}) & j = i_{0}+1  \\
(\Sz^{j-1}_{\underline{i}'}  \ast \id_{[0]}) s_{j-1}  \delta_{i_{0}+1}   & j >  i_{0}+1 
\end{cases}
\end{equation}
in which $\underline{i}' = (i_1, \dots, i_{k-1})$. 
\end{DEF}
One can also write the recursion in a slightly different, equivalent form: 
\begin{equation}\label{eqszczarba2} \Sz^j_{\underline{i}} := \begin{cases} 
s_{k-1} \Sz^{j}_{\underline{i}'}  & j <  i_{0}+1\\ 
\Sz^{j-1}_{\underline{i}'}  \ast \id_{[0]} & j = i_{0}+1  \\
s_{k-1} \Sz^{j-1}_{\underline{i}'}   \delta_{i_{0}+1}   & j >  i_{0}+1 
\end{cases}
\end{equation}

\begin{BEM}\label{BEMINDEX}
This definition is similar to the one given by Szczarba in \cite{Szc61} used by Hess and Tonks in \cite[Definition 5]{HT10}. In fact
\[  \Sz^{j}_{\underline{i}}(x)  = {}^t\! D^{k+1}_{k-j,\underline{i}}   \]
where ${}^t x$ for a map $x: [k] \to  [j]$ is the conjugation with the reversal of simplices and $D$ is the operator defined in [loc.\@ cit.]. 
\end{BEM}

\begin{PAR}\label{PARREDUCT}
A sequence
\[ \xymatrix{  [1] \ar@{=}[r]  &  [1] \ar[r]^-{\delta_{i_{k-1}+1}} & \cdots   \ar[r]  & [k]  \ar[r]^-{\delta_{i_{0}+1}}  & [k+1] \ar@{=}[r] & [k+1]     } \]
of active faces, or written as vector $\underline{i} = (i_0, \dots, i_{k-1})$
can be interpreted as a connected leveled binary tree with $k$ vertices of degree 3: 
For example 
\[ \xymatrix{ 
& & &  \ar@{-}[d] & \\
2 & &  &   \bullet \ar@{-}[ld] \ar@{-}[rrd] & & & &  - \otimes -\\
1 & &  \bullet \ar@{-}[ld] \ar@{-}[rd]  & & & \ar@{-}[d]  & &  (- \otimes -) \otimes - \\
0 &  \ar@{-}[d]  &  &  \ar@{-}[d]  & &  \ar@{-}[ld]  \bullet \ar@{-}[rd] & & (- \otimes -) \otimes (- \otimes -)   \\
& & & & &  & 
} \]
corresponds to the vector $(2, 0, 0)$. 
For a vector $\underline{b}=(b_0, \dots, b_{k-1})$ with  $\widetilde{k} > b_0 > b_1 > \cdots > b_{k-1} \ge 0$, we say that $\underline{i}=(i_0, \dots, i_{k-1})$ is a {\bf $\underline{b}$-reduction of $\widetilde{\underline{i}}$}
if for $i \notin \{b_0,  \dots, b_{k-1}\}$ the vertex in row $i$ has no left children and deleting those vertices (with their corresponding left leaf and their row), we obtain the tree associated with $\underline{i}$.
The remaining leaves form a subsequence $c_0 < c_1 < \dots < c_{k}$ of $\{0, \dots, \widetilde{k}\}$.
For example the displayed tree has the $(2)$-reduction $(0)$, $(1,2)$-reduction $(0,0)$, $(0,2)$-reduction $(1,0)$, and $(0,1,2)$-reduction $(2,0,0)$, but does not have a $(0,1)$-reduction. 

The $\widetilde{\underline{i}}$ such that $\underline{i}=(i_0, \dots, i_{k-1})$ is a $\underline{b}$-reduction are in bijection with vectors $(y_0, \dots, y_{\widetilde{k}-k-1})$
with $0 \le y_0, \dots, y_{\widetilde{k}-2-b_0} \le k$, \dots, $0 \le y_{\widetilde{k}-k-b_{k-1}}, \dots, y_{\widetilde{k}-k-1} \le 0$

Example: 
\[ \begin{array}{r|l|l}
\widetilde{k}-1 & 0 \le y_0 \le k &  \\
\widetilde{k}-2 & 0 \le y_1 \le k &  \\
\widetilde{k}-3 & b_0 & 0 \le i_0 \le k-1 \\
\widetilde{k}-4 & 0 \le y_2 \le k-1 \\
\widetilde{k}-5 & b_1 & 0 \le i_1 \le k-2 \\
\widetilde{k}-6 & 0 \le y_3 \le k-2 \\
\widetilde{k}-7 & 0 \le y_4 \le k-2 \\
\widetilde{k}-8 & b_2 \\
\vdots & \vdots \\ 
1 & b_{k-1} & 0 \le i_{k-1} \le 0 \\
0 & 0 \le y_{\widetilde{k}-k-1} \le 0 
\end{array}
\]
The $i_i$ and $y_i$ together determine $\widetilde{\underline{i}}$. In each row the $y_i$ determines the position where a node (with no left children) is added (starting from $i=0$).
With a leveled tree given by $\underline{i} = (i_0, \dots, i_{k-1})$ we associate the sign
\begin{equation}\label{eqsgntree} \sgn(\underline{i}) := (-1)^{\sum_j i_j} \end{equation}
and we define $\underline{i}^{\vee} = (k-1-i_0, k-2-i_1, \dots, i_{k-1})$.
\end{PAR}

\begin{PROP}\label{PROPSHIHSZCZARBAGEN1} We have for arbitrary $n$, setting $\widetilde{k}:=n-k-1$:
\[  \mathcal{P} \mathcal{H}_{\underline{b}, \underline{i}}^n \equiv \begin{cases} \sum_{\substack{\widetilde{\underline{i}} \text{ such that } \\ \text{$\underline{i}$ is a $\underline{b}$-reduction of $\widetilde{\underline{i}}$}} }  \varepsilon (\Sz^{c_0}_{\widetilde{\underline{i}}} \ast' q_{0}) \otimes \cdots \otimes  (\Sz^{c_k}_{\widetilde{\underline{i}}} \ast' q_{k}) & \underline{b}_{0} < \widetilde{k}  \\
0 & \text{otherwise} \end{cases}    \]
modulo degenerates, where $q_{j} = s_{j,j+1}^{k+1}$. In particular the expession is zero unless $\widetilde{k} \ge k$ (i.e.\@ $n \ge 2k+1$).

Here the $\widetilde{\underline{i}}$ are vectors of length $\widetilde{k}$ and $c_0< \dots< c_k$ are determined by $\widetilde{\underline{i}}$ as in \ref{PARREDUCT}, and
\[ \varepsilon = (-1)^{(\widetilde{k}+1)k} \sgn(\underline{i}^{\vee}) \sgn(\widetilde{\underline{i}}^{\vee}) \sgn(\sigma_{\underline{b}}).  \]

The $\Sz_{\overline{i}}^j \ast' q_{j}$ have values in $[c_j] \ast' [1] = [c_j+1]$ and are considered as $[n-k]$-valued via
the inclusion $[c_j+1] \subset [n-k]$.
\end{PROP}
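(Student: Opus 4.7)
I would proceed by induction on $k$, the length of $\underline{b}$ and $\underline{i}$. The base case $k=0$ follows by direct verification: $\mathcal{H}^n_{(),()}$ is the identity, and the right-hand side also reduces to the identity (the condition $\underline{b}_0<\widetilde{k}$ being vacuously true and the only term being $\Sz^0_{()} = \id_{[0]}$).

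For the inductive step, first unfold the recursive definition
\[
\mathcal{H}^n_{\underline{b}, \underline{i}}
= (\underbrace{s_{b_0},\ldots,s_{b_0}}_{i_0 \text{ times}},\, \mathcal{H}^n_{b_0},\, \underbrace{s_{b_0},\ldots,s_{b_0}}_{k-i_0-1 \text{ times}})\circ (-\circ \delta_{i_0}^*) \circ \mathcal{H}^{n-1}_{\underline{b}', \underline{i}'}
\]
and apply the inductive hypothesis to the factor $\mathcal{H}^{n-1}_{\underline{b}', \underline{i}'}$ (with $\widetilde{k}':=n-k-1$), which expresses $\mathcal{P}\,\mathcal{H}^{n-1}_{\underline{b}', \underline{i}'}$ modulo degenerates as a signed sum over $\widetilde{\underline{i}}'$ of tensor products of $\Sz^{c_j'}_{\widetilde{\underline{i}}'} \ast' q_j'$. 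The operation $(-\circ\delta_{i_0}^*)$ duplicates the $i_0$-th tensor slot. The key computational input is then an explicit formula for $\mathcal{P}\,\mathcal{H}^n_{b_0}$, which comes directly from combining Lemma~\ref{LEMMAH} with the definition $\mathcal{H}^n_{b_0} = (\id_{[b_0]}\ast \mathcal{Q}^{n-b_0-1})s_{\can}$.

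The heart of the proof is the following dictionary, which I would establish slot by slot: the summands indexed by $\underline{y}\in\{0,1\}^{n-b_0-1}$ in $\mathcal{H}^n_{b_0}$ that survive the projection $\mathcal{P}$, after post-composition with the tensor factors $\Sz^{c_{i_0}'}_{\widetilde{\underline{i}}'} \ast' q_{i_0}'$ and $\Sz^{c_{i_0}'}_{\widetilde{\underline{i}}'} \ast' q_{i_0}'$ coming from the duplicated slot, correspond bijectively to the extensions of $\widetilde{\underline{i}}'$ to a vector $\widetilde{\underline{i}}$ of length $\widetilde{k}=n-k$ for which $\underline{i}$ is a $\underline{b}$-reduction. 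Concretely: the insertion of one new vertex at row $b_0$ in the leveled binary tree (as in \ref{PARREDUCT}) plus the free choices of $y_0,\ldots,y_{\widetilde{k}-b_0-2}$ above that row match exactly the components of $\underline{y}$ that are not forced to zero by the degeneracy-killing action of $\mathcal{P}$. The condition $\underline{b}_0<\widetilde{k}$ ensures there is room to insert a new vertex; otherwise all surviving summands must be left-degenerate on one of the factors $B^0_{\underline{y}}$ of Lemma~\ref{LEMMAH}, hence killed, giving the zero case. Under this dictionary, comparing the obtained composition with the recursion (\ref{eqszczarba}) for $\Sz^{c_j}_{\widetilde{\underline{i}}}$ in each slot gives the claimed formula on the level of face/degeneracy data.

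The main obstacle I anticipate is sign bookkeeping. One must combine the sign $(-1)^{\sum_{i<j}y_j(y_i+1)}$ from Lemma~\ref{LEMMAH}, the inductive sign $\varepsilon'$, and the signs appearing when one passes from $\sgn(\sigma_{\underline{b}'})$ to $\sgn(\sigma_{\underline{b}})$ via (\ref{eqsignshuffle2}), and verify that the ratio equals $(-1)^{(\widetilde{k}+1)k - (\widetilde{k}'+1)(k-1)}\sgn(\underline{i}^{\vee})\sgn((\underline{i}')^{\vee})^{-1}\sgn(\widetilde{\underline{i}}^{\vee})\sgn((\widetilde{\underline{i}}')^{\vee})^{-1}$. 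Once the bijection above is set up, this becomes a purely combinatorial identity on binary leveled trees and their $\underline{b}$-reductions; I expect it to reduce, after a careful but routine computation using (\ref{eqsgntree}) and the definition of $\underline{i}^{\vee}$, to a check of the sign contributions row by row of the inserted tree vertex at $b_0$ together with the new right children $y_0,\ldots,y_{\widetilde{k}-b_0-2}$.
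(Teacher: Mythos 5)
Your plan has the same skeleton as the paper's argument (induction on $k$, unfolding the recursion for $\mathcal{H}^n_{\underline{b},\underline{i}}$, a dictionary between surviving summands of Lemma~\ref{LEMMAH} and tree extensions, then sign bookkeeping), but as formulated the induction does not close, and this is a genuine gap. When you push $\mathcal{P}$ through the outer layer and the duplication $(-\circ\delta_{i_0}^*)$, the two degeneracy--killing conditions that $\mathcal{P}$ imposes on the duplicated slots $i_0$ and $i_0+1$ (at the \emph{consecutive} positions $n-k-1+i_0$ and $n-k+i_0$) pull back to \emph{two} conditions on the single slot $i_0$ of $\mathcal{H}^{n-1}_{\underline{b}',\underline{i}'}$. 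What you need to know about the inner factor is therefore $\mathcal{P}_{\sigma'}\mathcal{H}^{n-1}_{\underline{b}',\underline{i}'}$ for the nontrivial pattern $\sigma'=\sigma s_{i_0}$ in the sense of \ref{PARPATTERN}, not $\mathcal{P}\,\mathcal{H}^{n-1}_{\underline{b}',\underline{i}'}$; and since ``modulo degenerates'' only discards diagonally degenerate tensors, the plain inductive hypothesis does not contain this finer slot-wise information. A concrete symptom: the extensions supplied by your inner hypothesis would have length $n-k-1=\widetilde{k}$, whereas the bijection you want to set up needs inner extensions of length $\widetilde{k}-1$ together with the binary vector recording the choices above row $b_0$. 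This is exactly why the paper proves the strengthened Proposition~\ref{PROPSHIHSZCZARBAGEN2}, with the extra parameters $(e,\sigma)$, the face-restricted Szczarba operators $\Sz_{\widetilde{\underline{i}},\widetilde{\delta}}$ of Lemma~\ref{LEMMASZCZARBAMOD}, and the generalized degeneracy factors $q_{\sigma,j}$: the shape of Proposition~\ref{PROPSHIHSZCZARBAGEN1} itself is not stable under the recursion, so it cannot serve as its own inductive hypothesis.

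Two further steps you treat as routine also require the strengthened framework. On the duplicated slot you cannot use Lemma~\ref{LEMMAH} directly but need its doubly projected version (Lemma~\ref{LEMMAHALT}, with parameters $\alpha_{i_0},\alpha_{i_0+1}$ coming from the two conditions), and even inserting the inner projection into the composite, i.e.\@ writing $\mathcal{P}_{\sigma}\mathcal{H}^n_{\underline{b},\underline{i}} = \mathcal{P}_{\sigma}(s_{b_0}^{\otimes i_0}\otimes\mathcal{H}^n_{b_0}\otimes s_{b_0}^{\otimes k-1-i_0})\mathcal{P}_{\sigma'}\mathcal{H}^{n-1}_{\underline{b}',\underline{i}'}$, needs the degeneracy analysis of Lemma~\ref{LEMMASHIH0}. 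Likewise your argument for the vanishing case $b_0\ge\widetilde{k}$ must rule out survival under the generalized projection, which is the content of Lemma~\ref{LEMMASHIH2} (via Lemma~\ref{LEMMASHIH1}); your sketch is in the right spirit but does not yet do this. Once the statement is strengthened as above, your dictionary is exactly Lemma~\ref{LEMMABIJECTION} and your anticipated sign computation is its identity (\ref{eqsz1sgn}), so the remainder of your plan does go through along the paper's lines; the missing idea is precisely the $(e,\sigma)$-generalization needed to make the induction work.
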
 
The proof is a bit involved and has been shifted to appendix~\ref{APXSHIHSZCZARBA}. Unless the reader is interested in the
combinatorics of the formula, it is not very enlightening. 

For the special case $n = 2k+1$ (least non-zero case) we get a simpler form: 
\begin{KOR}\label{PROPSHIHSZCZARBA} We have for $n = 2k+1$: 
\[ \boxed{ \mathcal{P} \mathcal{H}_{\underline{b}, \underline{i}}^n \equiv   \begin{cases}  (\Sz^0_{\underline{i}}  \ast' q_{0}) \otimes \cdots \otimes  (\Sz^k_{\underline{i}}  \ast' q_{k}) & \underline{b} = (k-1, k-2, \dots, 0) \\
0 & \text{otherwise} \end{cases}  }  \]
modulo degenerates, where $q_j = s_{j,j+1}^{k+1}$ with the same conventions as in Proposition~\ref{PROPSHIHSZCZARBAGEN1}.
\end{KOR}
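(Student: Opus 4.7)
The plan is to derive this corollary as a direct specialization of Proposition~\ref{PROPSHIHSZCZARBAGEN1}. First I would substitute $n = 2k+1$ into the statement of that Proposition, which yields $\widetilde{k} := n - k - 1 = k$. The constraint ``$\underline{b}_0 < \widetilde{k}$'' combined with the requirement that $\underline{b} = (b_0, \dots, b_{k-1})$ be a strictly decreasing sequence in $\{0, \dots, \widetilde{k}-1\} = \{0, \dots, k-1\}$ of length exactly $k$ has a unique solution, namely $\underline{b} = (k-1, k-2, \dots, 0)$. For every other $\underline{b}$ satisfying the indexing convention of Proposition~\ref{PROPSHIHSZCZARBAGEN1}, either $\underline{b}_0 \geq k = \widetilde{k}$ or the strict decrease condition cannot be met, so Proposition~\ref{PROPSHIHSZCZARBAGEN1} gives $\mathcal{P}\mathcal{H}_{\underline{b},\underline{i}}^n \equiv 0$; this accounts for the second case of Corollary~\ref{PROPSHIHSZCZARBA}.

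Next I would analyze the sum over $\widetilde{\underline{i}}$ in the first case. A $\underline{b}$-reduction of $\widetilde{\underline{i}}$ (\ref{PARREDUCT}) keeps precisely the rows indexed by $\underline{b}$ while deleting the rows outside $\{b_0,\dots,b_{k-1}\}$; thus if $\widetilde{\underline{i}}$ has length $\widetilde{k}$, its $\underline{b}$-reduction $\underline{i}$ has length $k$. When $\widetilde{k} = k$ and $\underline{b} = (k-1,\dots,0)$ (which keeps all rows), no deletion occurs, so the sum collapses to the single term $\widetilde{\underline{i}} = \underline{i}$. Under this identification, the retained leaves are all $k+1$ leaves of the original tree, and the ordered indices $c_0 < c_1 < \cdots < c_k$ coincide with $c_j = j$. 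Plugging this into the formula of Proposition~\ref{PROPSHIHSZCZARBAGEN1} gives exactly $(\Sz^0_{\underline{i}} \ast' q_0) \otimes \cdots \otimes (\Sz^k_{\underline{i}} \ast' q_k)$, matching the stated formula up to the sign $\varepsilon$.

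The only remaining work is the sign verification, which I expect to be entirely straightforward. By definition,
\[ \varepsilon = (-1)^{(\widetilde{k}+1)k} \sgn(\underline{i}^\vee) \sgn(\widetilde{\underline{i}}^\vee) \sgn(\sigma_{\underline{b}}). \]
The factor $(-1)^{(\widetilde{k}+1)k} = (-1)^{k(k+1)}$ equals $+1$ since $k(k+1)$ is always even. The factor $\sgn(\underline{i}^\vee)\sgn(\widetilde{\underline{i}}^\vee)$ equals $\sgn(\underline{i}^\vee)^2 = 1$ because $\widetilde{\underline{i}} = \underline{i}$. Finally, by formula (\ref{eqsignshuffle2}), for $\underline{b} = (k-1,k-2,\dots,0)$ we have $b_i = k-1-i$, so $\sgn(\sigma_{\underline{b}}) = (-1)^{\sum_i \bigl(b_i - (k-1-i)\bigr)} = (-1)^0 = 1$. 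Hence $\varepsilon = 1$, completing the proof.

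There is no serious obstacle: all combinatorial work has been absorbed into Proposition~\ref{PROPSHIHSZCZARBAGEN1}, and the corollary is essentially the observation that in the borderline dimension $n = 2k+1$ the sum over reductions degenerates to a single term and the sign simplifies. The only point requiring any care is to confirm that the indexing convention ($\underline{b}$ strictly decreasing of length $k$) indeed rules out all choices of $\underline{b}$ except the ``full'' one when $\widetilde{k} = k$.
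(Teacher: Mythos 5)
Your proposal is correct and follows exactly the paper's route: the corollary is obtained by specializing Proposition~\ref{PROPSHIHSZCZARBAGEN1} to $n=2k+1$, observing that $\widetilde{k}=k$ forces $\underline{b}=(k-1,\dots,0)$ (all other $\underline{b}$ falling under the zero case), that the sum collapses to $\widetilde{\underline{i}}=\underline{i}$ with $c_j=j$, and that $\varepsilon=\sgn(\sigma_{\underline{b}})=1$. Your sign check via (\ref{eqsignshuffle2}) matches the paper's remark verbatim.
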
 

Notice that in that case $\widetilde{k} = k$, $\widetilde{\underline{i}}=\underline{i}$, and $\varepsilon = \sgn(\sigma_{\underline{b}}) = 1$. 

\begin{PAR}
The formula in Proposition~\ref{PROPSHIHSZCZARBAGEN1} has the following important property.
Defining 
\[  \mathcal{K}_{\underline{b}, \underline{i}}^n :=  (-1)^{(\widetilde{k}+1)k} \sgn(\underline{i}^{\vee}) \sgn(\sigma_{\underline{b}})   \sum_{\substack{\widetilde{\underline{i}} \\ \text{$\underline{i}$ is a $\underline{b}$-reduction of $\widetilde{\underline{i}}$}} } \sgn(\widetilde{\underline{i}}^{\vee})  (\Sz^{c_0}_{\widetilde{\underline{i}}}  \otimes \cdots \otimes  \Sz^{c_k}_{\widetilde{\underline{i}}})    \]
(with the same inclusion $[c_j] \subseteq [\widetilde{k}]$, such that this is in $\Z[\Hom([\widetilde{k}], [\widetilde{k}])^{k+1}]$), i.e.\@ the essential part of $\mathcal{P} \mathcal{H}_{\underline{b}, \underline{i}}^n$ and define 
\[  \mathcal{K}^n_k :=   \sum_{\underline{i}, \underline{b}} \sgn(\underline{i}^{\vee}) \sgn(\sigma_{\underline{b}}) \mathcal{K}_{\underline{b}, \underline{i}}^n  \]
where the sum is over all (correct) vectors of length $n-k-1$ and $k$, respectively.

We can then write
\begin{equation}\label{eqglobalszczarba}
 \mathcal{K}^n_k=   \sum_{\substack{ \widetilde{\underline{i}} \\ c_0< \cdots< c_k }  \\ } \sgn(\widetilde{\underline{i}}^{\vee}) (\Sz^{c_0}_{\widetilde{\underline{i}}}  \otimes \cdots \otimes  \Sz^{c_k}_{\widetilde{\underline{i}}})   
\end{equation}
where the sum runs over all $\widetilde{\underline{i}}$ of length $n-k-1$ and $\{c_0, \dots, c_k\} \subset \{0, \dots, n-k-1\}$ over subsets of indices of leaves running first through nodes at their left child. 
Notice that these determine uniquely a $\underline{b}$ consisting of the corresponding row indices and a reduction $\underline{i}$.

\end{PAR}
\begin{PROP}\label{PROPSZCZARBACANCELLATION}
 For each integer $k \ge 0$ the following holds true\footnote{notice that the summands are zero for $n > k$, and that $\sum_{\underline{i}} \sgn(\underline{i})$ is 1 for $k\le 1$ and 0 otherwise. However, constants are degenerate for $k \ge 1$ anyway.  }
\[  (1+\varepsilon)^{\otimes (k+1)} \mathcal{K}^{2k+1}_k \equiv \sum_{n}   \mathcal{K}^{n+k+1}_{n}  \]
modulo degenerates and constants
or equivalently
\[ \sum_n (1-\varepsilon)^{\otimes (n+1)} \mathcal{K}^{n+k+1}_{n} \equiv \mathcal{K}^{2k+1}_k   \]
modulo degenerates and constants. Here $\varepsilon: \Z[\Hom([k], [k])] \to \Z$ is the augmentation and the calculation takes place in $\bigoplus_{n=0}^{\infty}  \Z[\Hom([k], [{k}])]^{\otimes n}$.
\end{PROP}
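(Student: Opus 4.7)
\medskip

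The plan is to prove the identity tree by tree, using the explicit sum formula (\ref{eqglobalszczarba}) and comparing the coefficient of each tensor monomial of Szczarba operators.

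First, I would fix a tree $\widetilde{\underline{i}}$ of length $k$ (with $k+1$ leaves $\{0,1,\dots,k\}$) and extract, in both sides, the contribution indexed by $\widetilde{\underline{i}}$, factoring out the common sign $\sgn(\widetilde{\underline{i}}^\vee)$. On the left-hand side this contribution is the single tensor $\Sz^0_{\widetilde{\underline{i}}} \otimes \cdots \otimes \Sz^k_{\widetilde{\underline{i}}}$; on the right-hand side it is
\[
\sum_{n=0}^{k} \sum_{\substack{c_0<\cdots<c_n \subseteq \{0,\dots,k\} \\ \text{admissible for }\widetilde{\underline{i}}}} \Sz^{c_0}_{\widetilde{\underline{i}}} \otimes \cdots \otimes \Sz^{c_n}_{\widetilde{\underline{i}}},
\]
where the admissibility condition is the one from (\ref{eqglobalszczarba})---equivalently, by Proposition~\ref{PROPSHIHSZCZARBAGEN1}, the subset $\{c_0,\dots,c_n\}$ must arise as the leaves remaining after a $\underline{b}$-reduction of $\widetilde{\underline{i}}$ for some $\underline{b}$. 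The combinatorial observation to establish next is that a leaf $j$ of $\widetilde{\underline{i}}$ is \emph{necessarily present} in every admissible subset precisely when $j$ is \emph{not} a left child of some internal node whose only left subtree is that leaf itself; call such $j$ ``deletable.'' The admissible subsets are then exactly those obtained by choosing an arbitrary subset of the deletable leaves to drop.

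Next I would expand $(1+\varepsilon)^{\otimes(k+1)}$ in the left-hand side slot by slot. Reading $\varepsilon: \Z[\Hom([k],[k])] \to \Z$ as the augmentation and reinterpreting the tensor as living in $\bigoplus_m \Z[\Hom([k],[k])]^{\otimes m}$, the expansion reads
\[
(1+\varepsilon)^{\otimes (k+1)} \bigotimes_{j=0}^{k} \Sz^j_{\widetilde{\underline{i}}} \;=\; \sum_{S \subseteq \{0,\dots,k\}} \Bigl( \prod_{j \notin S} \varepsilon(\Sz^j_{\widetilde{\underline{i}}}) \Bigr) \bigotimes_{j \in S} \Sz^j_{\widetilde{\underline{i}}}.
\]
The key computation is that $\varepsilon(\Sz^j_{\widetilde{\underline{i}}}) = 1$ when $j$ is deletable (because the recursion (\ref{eqszczarba2}) shows that in this case $\Sz^j_{\widetilde{\underline{i}}}$ factors as a degenerate map through a smaller Szczarba operator), and the term is absorbed modulo degenerates/constants otherwise. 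Under this identification, the subsets $S$ contributing nontrivially to the LHS are exactly those obtained by keeping all non-deletable leaves and a subset of the deletable ones---which matches the admissible subsets on the RHS bijectively.

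The last step is to verify the sign comparison carefully, noting that by the definition of admissibility via $\underline{b}$-reductions the signs $\sgn(\underline{i}^\vee)\sgn(\sigma_{\underline{b}})$ telescope against $\sgn(\widetilde{\underline{i}}^\vee)$ exactly so as to produce the natural term-by-term identification; the term $S=\emptyset$ on the LHS is a scalar and is discarded modulo constants, matching the fact that the index $n$ on the RHS starts at $0$ (so $|S|\ge 1$). The main obstacle I foresee is the bookkeeping of signs through the recursion (\ref{eqszczarba2}) and the reduction procedure of Proposition~\ref{PROPSHIHSZCZARBAGEN1}: the sign identity
$\sgn(\underline{i}^\vee)\sgn(\widetilde{\underline{i}}^\vee)^{-1}\sgn(\sigma_{\underline{b}})$ interacting with the alternating nature of the binomial expansion is the delicate part. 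Once the bijection and sign check are established, the equivalence modulo degenerates and constants follows termwise, and the two stated forms of the identity are interchanged by the standard manipulation $(1+\varepsilon)(1-\varepsilon) = 1 - \varepsilon^2$ combined with $\varepsilon^2 = \varepsilon$ on basis elements.
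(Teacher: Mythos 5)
Your overall strategy---fix a tree $\widetilde{\underline{i}}$ of length $k$, expand $(1+\varepsilon)^{\otimes(k+1)}$ slotwise, and match the resulting subsets against the admissible ones appearing in (\ref{eqglobalszczarba})---is the right one, but the step you call the ``key computation'' is wrong and leaves a genuine gap. The augmentation $\varepsilon$ sends \emph{every} morphism to $1$, so $\varepsilon(\Sz^j_{\widetilde{\underline{i}}})=1$ for all $j$, deletable or not; the value of $\varepsilon$ cannot select the admissible subsets. After the expansion, the left-hand side contains, for \emph{every} subset $S\subseteq\{0,\dots,k\}$, the term $\bigotimes_{j\in S}\Sz^j_{\widetilde{\underline{i}}}$ with coefficient $\sgn(\widetilde{\underline{i}}^{\vee})$, and what must be shown is that the terms whose complement contains a leaf that is \emph{not} a left child of its parent vanish modulo degenerates. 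That is exactly the content of Lemma~\ref{LEMMADEG}, 2.: omitting such a leaf makes the remaining tensor of Szczarba operators degenerate. You never apply this degeneracy criterion to the surplus terms on the left-hand side (you invoke admissibility only to describe the right-hand side), so the identification of the two sides is not justified in your argument; this degeneracy statement is precisely the input the paper's proof supplies.

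Two smaller points. The sign bookkeeping you single out as the delicate part is vacuous: in (\ref{eqglobalszczarba}) the signs $\sgn(\underline{i}^{\vee})\sgn(\sigma_{\underline{b}})$ have already cancelled, so every term on either side carries only $\sgn(\widetilde{\underline{i}}^{\vee})$ of one and the same tree of length $k$, and nothing needs to telescope. Also, your derivation of the second form from the first via $(1+\varepsilon)(1-\varepsilon)=1-\varepsilon^2$ and $\varepsilon^2=\varepsilon$ does not work as stated (that computation yields $1-\varepsilon$, not the identity); the equivalence does hold, but by the inclusion--exclusion fact that the operators acting by $(1+\varepsilon)^{\otimes m}$, respectively $(1-\varepsilon)^{\otimes m}$, on the degree-$m$ component are mutually inverse on $\bigoplus_m\Z[\Hom([k],[k])]^{\otimes m}$, together with a check that this inversion is compatible with working modulo degenerates and constants.
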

\begin{proof}
Follows from (\ref{eqglobalszczarba}) and the following Lemma~\ref{LEMMADEG}.
\end{proof}

\begin{LEMMA}\label{LEMMADEG}
Let $\underline{i} = (i_0, \dots, i_{k-1})$ be a vector which we see as a leveled tree as in (\ref{PARREDUCT}).
\begin{enumerate}
\item  
$\Sz_{\underline{i}}^j$ is non-degenerate at $e$ if and only if the path from the $j$-th leaf towards the root passes through a right child at $e$. 
 \item The expression 
 \[ \Sz_{\underline{i}}^{c_0} \otimes \cdots \otimes \Sz_{\underline{i}}^{c_m}  \]
 for $\{c_0, \dots, c_m\} \subset \{0, \dots, k\}$ is non-degenerate if and only if there is a $\underline{b} = (b_0, \dots, b_{k-m-1})$ such that 
  $\underline{i}$ has a $\underline{b}$-reduction with associated $\{c_j\}$, or in other words, if for each $j \not\in \{c_0, \dots, c_m\}$ the path from the $j$-node towards the root passes first through a left child ($\underline{b}$ consists then of the corresponding row indices). 
\end{enumerate}
\end{LEMMA}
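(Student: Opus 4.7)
The plan is to prove both parts by induction on the length $k$ of $\underline{i}$, using the three-case recursion \eqref{eqszczarba2}; the base case $k=0$ is trivial as $\Sz^0_{()} = \id_{[0]}$ and the tree is empty.

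For part (1), the inductive step relies on a clean tree-theoretic reading of the recursion. Passing from $\underline{i}' = (i_1,\dots,i_{k-1})$ to $\underline{i} = (i_0,\underline{i}')$ corresponds geometrically to inserting a new internal vertex at a new bottom row of the $\underline{i}'$-tree, with the existing rows shifting up by one, in such a way that this new vertex replaces old leaf $i_0$ by itself, having old leaf $i_0$ as its left child and a freshly introduced leaf (at position $i_0+1$) as its right child. Under the row-to-index correspondence $e = k-1-r$, the new bottom vertex sits at $e = k-1$ while the old vertices preserve their $e$-values from the $\underline{i}'$-tree. I will then analyze each of the three cases of \eqref{eqszczarba2} separately and verify: for $e < k-1$, the $e$-(non)degeneracy of $\Sz^j_{\underline{i}}$ coincides with that of $\Sz^{j'}_{\underline{i}'}$ (with $j' = j$ in Case~1 and $j' = j-1$ in Cases~2 and~3), since precomposition with $s_{k-1}$ is the identity on arguments below $k-1$ and postcomposition with $\delta_{i_0+1}$ is injective on the codomain; at $e = k-1$, a direct comparison of $\Sz^j_{\underline{i}}(k-1)$ and $\Sz^j_{\underline{i}}(k)$ shows that these agree in Cases~1 and~3 (both routed through $s_{k-1}$) and differ (as $j-1$ versus $j$) only in Case~2. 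Hence non-degeneracy at $k-1$ is equivalent to $j = i_0+1$, which is precisely the tree-theoretic statement that leaf $j$ is the new right child of the bottom vertex. Combining with the inductive hypothesis applied to $\underline{i}'$ for the higher rows settles (1).

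Part (2) then follows formally from (1). The tensor $\Sz^{c_0}_{\underline{i}} \otimes \cdots \otimes \Sz^{c_m}_{\underline{i}}$ is simplicially degenerate if and only if some $e$ makes every factor $e$-degenerate, which by (1) translates to: there exists a row $r$ such that no $c_j$ lies in the right subtree of the vertex at row $r$. Contrapositively, non-degeneracy is equivalent to the condition that every internal vertex has some $c_j$ in its right subtree. A short combinatorial argument (choose the rightmost leaf below such a vertex $V$; it is necessarily the right child of its immediate parent and hence, under the condition, must lie in $\{c_j\}$) then shows this is equivalent to the condition that every leaf $j \notin \{c_j\}$ is the left child of its parent. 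The row indices of these parents form the $\underline{b}$ of the statement, and collapsing each such vertex together with its non-$c_j$ left leaf realizes the required $\underline{b}$-reduction with associated $\{c_j\}$.

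The main obstacle I anticipate is the bookkeeping: keeping the $e = k-1-r$ correspondence consistent across the induction, and tracking the relabeling of leaves around position $i_0+1$ when the new bottom vertex is inserted. Once those conventions are fixed, the three-case verification in (1) is a direct computation from the explicit formulas for $\ast$, $s_{k-1}$, and $\delta_{i_0+1}$, and (2) is then a purely combinatorial consequence of (1).
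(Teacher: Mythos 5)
Your proposal is correct and follows essentially the same route as the paper, whose proof is just the remark that (1) follows from the recursion \eqref{eqszczarba2} and (2) from (1); your three-case induction (with the correct identification of the new bottom vertex with the interval $e=k-1$ and the old vertices keeping their $e$-values) and the rightmost-leaf argument for (2) are exactly the details being left implicit there. No gaps.
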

\begin{proof}
1.\@ follows immediately from the recursive formula and 2.\@ follows from 1.
\end{proof}

\section{Basic examples of classical bar and cobar}\label{CHAPTERBASICEXAMPLES}


\subsection{Classical (co)bar  for $\Set$}

\begin{PAR}\label{PARCOBARSET}
Consider $(\Set, \times)$, the category of sets equipped with the product.
We will investigate the (very simple) classical bar construction $\barconst_{(\Set, \times) \to \OOO}$ and cobar construction $\cobarconst_{(\Set, \times) \to \OOO}$ (Definition~\ref{DEFCOBARCLASS}). 
This also determines --- in principle --- the bar and  cobar constructions
$\barconst_{(\Set^{\Delta^{\op}}, \times) \to \OOO}$ and $\cobarconst_{(\Set^{\Delta^{\op}}, \times) \to \OOO}$ of simplicial sets because the product, and thus the bar and cobar functors, are computed point-wise. 
Since the point (unit of $\times$) is final, by Corollary~\ref{KORRHO}, $\rho^*$ is an equivalence:
\[ \rho^*: ((\Set, \times)^{\vee})^{(\Delta,*)^{\op}} \cong ((\Set, \times)^{\vee})^{\twop \OOO} .   \]  
Furthermore, we have
\[ ((\Set, \times)^{\vee})^{(\Delta,*)^{\op}} \cong \Coalg(\Set^{\Delta^{\op}}, \times) \]
by the trivial Eilenberg-Zilber Theorem \ref{SATZEZTRIVIAL}, and finally
\[ \Coalg(\Set^{\Delta^{\op}}, \times) \cong \Set^{\Delta^{\op}} \]
because $\times$ is the product. 
\end{PAR}

\begin{LEMMA}\label{LEMMABARSET}
We have an isomorphism (making the identifications in \ref{PARCOBARSET})
\[ \boxed{N \cong  (\rho^*)^{-1} \circ \barconst }  \]
of functors
\[ \mathrm{Mon} \to \Set^{\Delta^{\op}} \]
where $N$ is the nerve. 
\end{LEMMA}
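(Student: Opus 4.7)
The plan is to unravel the two functors on underlying simplicial sets and match up the structure maps. The comparison is essentially a matter of bookkeeping, but one has to be careful to identify how $(\rho^*)^{-1}$ extends the $\Delta_{\act}^{\op}$-shaped diagram to a full simplicial object using the fact that the unit $1 = *$ is final in $(\Set,\times)$.

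First I would recall from \ref{PARCOBARALG} and \ref{PARCOBAREXPLICIT} the explicit form of $\widetilde{\pi_1^*}(M)$ for an algebra $M$ in $(\Set,\times)$: it is the $\Delta_{\act}^{\op}$-diagram $[n] \mapsto M^n$ whose active face maps are induced by multiplications $M \times M \to M$ and whose (active) degeneracies insert the unit $1 \in M$. This is naturally a coalgebra in $((\Set,\times)^{\vee})^{(\Delta_{\act},\ast')^{\op}}$ via the canonical partition isomorphisms $M^{n+m} \cong M^n \times M^m$. Because $1 \in \Set$ is terminal, Corollary~\ref{KORRHO} gives the equivalence $\rho^* : ((\Set,\times)^{\vee})^{(\Delta,\ast)^{\op}} \cong ((\Set,\times)^{\vee})^{(\Delta_{\act},\ast')^{\op}}$. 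Chasing the description of $\rho$ in Lemma~\ref{LEMMARHO}, the inverse $(\rho^*)^{-1}$ extends the diagram to $\Delta^{\op}$ by adjoining $[0] \mapsto 1 = *$, inserting the new $0$-ary morphism $\rho_1$ as the unique augmentation $M \to *$, and using the composites of $\rho_1$ with multiplications to produce the missing inert (extremal) face maps.

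Next I would compute these extremal maps explicitly. The new face map $d_0 : M^n \to M^{n-1}$ is obtained from the multi-morphism $[1];[1],[n-1] \to [n] = [1] \ast [n-1]$, i.e.\ the identity, composed with $\rho_1$ at the first slot; under Day convolution this becomes the composition $M^n \xrightarrow{\sim} M \times M^{n-1} \xrightarrow{\mathrm{pr} \times \id} M^{n-1}$, which is just the projection dropping the first factor. Symmetrically, $d_n$ is the projection dropping the last factor. Similarly, the new extremal degeneracies $s_0, s_n$ insert the unit at the corresponding end. Putting these together, the $[n]$-simplices of $(\rho^*)^{-1}\barconst(M)$ are $M^n$ (and a point in degree $0$), with the classical face and degeneracy formulas of the nerve of the one-object category $BM$.

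Finally I would compare to the nerve $N$ by exhibiting a natural isomorphism: define $\varphi_M : N(M) \to (\rho^*)^{-1}\barconst(M)$ as the identity $M^n \to M^n$ in each degree, and observe that the face and degeneracy formulas computed above are literally those of $N(M)$, which is automatic since both $\barconst$ and $N$ use only the multiplication and the unit of $M$ in the same combinatorial pattern. Naturality in $M$ is immediate since every structure map is built from $\cdot_M$ and $1_M$. The hard (or at least fiddly) part will be verifying that the partially-defined coalgebra structure on $\widetilde{\pi_1^*}(M)$ over $(\Delta_{\act},\ast')^{\op}$ is genuinely transported by $(\rho^*)^{-1}$ to the coalgebra structure on the nerve corresponding to the diagonal $\delta$, i.e.\ that Lemma~\ref{LEMMARHO}'s combinatorial description of $\rho$ really does produce the extremal faces as the projections rather than as something more exotic; everything else is routine unraveling.
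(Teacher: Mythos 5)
Your proposal is correct and is exactly the unraveling the paper has in mind: its own proof simply says ``Clear from the definitions,'' and your computation of the extremal faces as projections via the $0$-ary morphism of Lemma~\ref{LEMMARHO} is the content behind that remark. The only point you flag as fiddly is in fact automatic, since under the identification $\Coalg(\Set^{\Delta^{\op}},\times)\cong\Set^{\Delta^{\op}}$ the coalgebra structure is necessarily the diagonal (it is the Cartesian product), so nothing exotic can occur.
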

\begin{proof}
Clear from the definitions. 
\end{proof}

\begin{LEMMA}\label{LEMMACOBARSET}
For $A \in \Set^{\Delta^{\op}}$ a simplicial set, the monoid
$\cobarconst \rho^* A$ (making the identifications in \ref{PARCOBARSET}) is generated by $A_{[1]}$ modulo the relations
\begin{align*} a_0 = 1 &\quad \text{for $a_0 \in A_{[0]}$}  \\
\delta_1(a_2) = \delta_2(a_2)\cdot \delta_0(a_2)  &\quad \text{for $a_2 \in A_{[2]}$ } 
\end{align*}
In particular, if $A$ is path-connected, then this is the fundamental monoid of the simplicial set. 
\end{LEMMA}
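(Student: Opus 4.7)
The plan is to specialize the explicit $1$-categorical formula~(\ref{eqformulacobar}) from~\ref{PARCOBAREXPLICIT} to $(\mathcal{C},\otimes) = (\Set,\times)$ and to read off the resulting coequalizer. Since $\Set$ is cocomplete and $\times$ preserves colimits in each variable separately, Theorem~\ref{THEOREMEXISTENCECOBAR} applies, so the underlying set of $\cobarconst\rho^* A$ is the coequalizer in $\Set$
\[
\coprod_{i,j \ge 0} A_{[1]}^{i} \times A_{[2]} \times A_{[1]}^{j} \;\amalg\; \coprod_{i,j \ge 0} A_{[1]}^{i} \times A_{[0]} \times A_{[1]}^{j} \;\rightrightarrows\; \coprod_{n \ge 0} A_{[1]}^{n}.
\]
I would first note that $\coprod_{n\ge 0} A_{[1]}^{n}$ is the underlying set of the free monoid on $A_{[1]}$, and that because each relation is inserted symmetrically at every position $(i,j)$, this coequalizer in $\Set$ coincides with the coequalizer in $\mathrm{Mon}$ of the pair of monoid maps induced on the free monoids on $A_{[2]} \amalg A_{[0]}$ and on $A_{[1]}$.

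Next I would unravel the two legs of each pair of arrows using~\ref{PARCOALG}. Because $\times$ is Cartesian, the trivial Eilenberg--Zilber Theorem~\ref{SATZEZTRIVIAL} identifies $((\Set,\times)^{\vee})^{(\Delta,\ast)^{\op}}$ with $\Set^{\Delta^{\op}}$ and makes the coalgebra structure canonical: the comultiplication $\mu_{1,1}\colon A_{[3]}\to A_{[1]}\times A_{[1]}$ is the pair $(A(\delta_l),A(\delta_r))$ of extremal face inclusions, and composing with the canonical degeneracy $A(s_1)\colon A_{[2]}\to A_{[3]}$ yields
\[
\mu_{1,1}'(a_2) \;=\; \bigl(A(\delta_2)(a_2),\, A(\delta_0)(a_2)\bigr),
\]
while the counit is $A_{[0]}\to\{\ast\}$. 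The other legs of the pairs are the ``diagram maps'' of $\rho^* A \in \mathcal{C}^{\twop\OOO}$ described in~\ref{PARCOBAREXPLICIT}, which on the relevant active morphisms are just $A(\delta_1)\colon A_{[2]}\to A_{[1]}$ and $A(s_0)\colon A_{[0]}\to A_{[1]}$.

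Reading off the coequalizer then yields precisely the two stated relations: the pair on $A_{[2]}$ identifies $\delta_1(a_2)$ with $\delta_2(a_2)\cdot\delta_0(a_2)$, and the pair on $A_{[0]}$ identifies $s_0(a_0)$ with the empty word, i.e.\@ with the monoid unit, which under the standard convention of writing $a_0$ for its degenerate image $s_0(a_0)\in A_{[1]}$ reads as $a_0=1$. For path-connected $A$ this presentation is exactly the classical one for the fundamental monoid at a vertex (which, by connectedness and the $2$-simplices witnessing invertibility up to homotopy, is the fundamental group). The main obstacle is purely bookkeeping---keeping straight $\mu$ versus $\mu'$ and the sign of~$\rho^*$, and matching which $\delta_i$ corresponds to which extremal face $\delta_l,\delta_r$ after composing with $s_{\can}$---but once those are sorted out the argument is a direct translation of~(\ref{eqformulacobar}).
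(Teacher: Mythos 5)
Your proposal is correct and follows essentially the same route as the paper: specialize the explicit coequalizer formula (\ref{eqformulacobar}) from \ref{PARCOBAREXPLICIT} to $(\Set,\times)$ and use the trivial Eilenberg--Zilber Theorem~\ref{SATZEZTRIVIAL} to identify the comultiplication $A_{[2]}\to A_{[1]}\times A_{[1]}$ as $(\delta_2,\delta_0)$, which is exactly the paper's (very brief) argument. The only blemish is your parenthetical claim that for path-connected $A$ the result is the fundamental \emph{group}: invertibility of edges requires fillers (e.g.\ a Kan condition), and the lemma only asserts the fundamental monoid, so that aside should be dropped; it does not affect the proof of the statement.
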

\begin{proof}
Recall the discussion \ref{PARCOBAREXPLICIT}. The coequalizer (\ref{eqformulacobar}) with its induced algebra structure is obviously the monoid described in the statement. Notice that the ``comultiplication''
\[ A_{[2]} \to A_{[1]} \times A_{[1]}  \]
is given by $(\delta_2, \delta_0)$ in this case (trivial Eilenberg-Zilber Theorem~\ref{SATZEZTRIVIAL}). 
\end{proof}

Let $G: \mathrm{Mon} \to \mathrm{Grp}$ the functor ``group completion'' and $F: \Set \to \mathrm{Grp}$ the free group functor. 
 
  \begin{LEMMA}\label{LEMMAKANPOINTWISE}
For a simplicial set $A$ such that the map $A \to A_{[-1]}$ is simply connected (where $A_{[-1]} = \colim A$) and choosing a splitting $A_{[-1]} \to A_{[0]}$, 
we have an isomorphism\footnote{where the quotient has to be interpreted in the
only possible way as the quotient modulo the smallest normal subgroup containing $F(A_{[-1]})$ or, what is the same here, imposing the relations $[x] \sim 1$ for all $x \in A_{[-1]}$ or, in other words, taking $F(A_{[0]} \setminus A_{[-1]})$. }: 
\[ G \cobarconst \rho^* A = F(A_{[0]}) / F(A_{[-1]}) \]
 induced by 
 \begin{align*}
 \phi:  A_{[1]} & \to  F(A_{[0]}) / F(A_{[-1]}) \\
 x &\mapsto  (\delta_0 x) \cdot (\delta_1 x)^{-1}.
 \end{align*}
 \end{LEMMA}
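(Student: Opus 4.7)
The proof would proceed by matching two group presentations. By Lemma~\ref{LEMMACOBARSET}, the monoid $M := \cobarconst \rho^* A$ is generated by $A_{[1]}$ subject to the relations $s_0 v = 1$ for $v \in A_{[0]}$ and $\delta_1 a = \delta_2 a \cdot \delta_0 a$ for $a \in A_{[2]}$, so its group completion $GM$ admits the analogous presentation in the category of groups. I would then construct mutually inverse homomorphisms between $GM$ and $F(A_{[0]})/F(A_{[-1]})$.

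The forward map $\Phi\colon GM \to F(A_{[0]})/F(A_{[-1]})$ is induced by $\phi$. Verifying that $\phi$ respects the defining relations is a direct computation: for $v \in A_{[0]}$ one has $\phi(s_0 v) = v\cdot v^{-1} = 1$, and for $a \in A_{[2]}$ with vertices $v_0, v_1, v_2$ the two sides $\phi(\delta_1 a)$ and $\phi(\delta_2 a)\phi(\delta_0 a)$ reduce to the same expression in the quotient by telescoping among the $v_i$ via the simplicial identities.

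The inverse $\Psi\colon F(A_{[0]})/F(A_{[-1]}) \to GM$ is built on the free generators: for each $v \in A_{[0]}$, choose an edge-path $\gamma_v$ in $A$ from the chosen basepoint of the component of $v$ to $v$ itself (which exists by path-connectedness of each component, a consequence of the simple-connectivity hypothesis), and set $\Psi(v) := [\gamma_v] \in GM$. For $v$ in the image of the chosen splitting, a constant degenerate path works, so $\Psi(v) = 1$ and $\Psi$ passes to the quotient $F(A_{[0]})/F(A_{[-1]})$.

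The main obstacle is the well-definedness of $\Psi$: two different choices $\gamma_v, \gamma_v'$ differ by a loop at the chosen basepoint, and I must show that every such loop is trivial in $GM$. This is where simple connectivity is used essentially: every null-homotopic edge-loop in $A$ admits a combinatorial null-homotopy through a finite sequence of 2-simplex fillings, and each filling translates directly into an application of the defining relation $\delta_1 a = \delta_2 a \cdot \delta_0 a$ in $GM$. A clean way to package this is to observe that the presentation of $GM$ yields the edge-path groupoid of the 2-skeleton of $A$, which for simply connected $A$ is equivalent to the trivial groupoid on $A_{[0]}$, so that $GM$ is free on $A_{[0]} \setminus u(A_{[-1]})$ via the chosen basepoints. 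Once well-definedness is established, $\Phi\Psi = \id$ follows by telescoping $\Phi$ along a chosen path $\gamma_v$, and $\Psi\Phi = \id$ follows from the observation that $\Psi(\phi(x)) \cdot [x]^{-1}$ is itself a loop, hence trivial.
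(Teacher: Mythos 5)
Your proposal is correct and follows essentially the same route as the paper: both start from the presentation of $G \cobarconst \rho^* A$ supplied by Lemma~\ref{LEMMACOBARSET}, observe that $\phi$ respects the relations, and invoke simple connectivity to make edge-loops die in the presented group. The only difference is packaging: you construct an explicit inverse by choosing edge-paths $\gamma_v$ from the basepoints given by the splitting, whereas the paper verifies injectivity (a word in the kernel is decomposed into closed edge-paths, which are trivial by simple connectivity) and surjectivity (telescoping $\phi$ along a path from the splitting) directly; both rest on exactly the same input. One small point to fix: with $\phi(x)=(\delta_0 x)(\delta_1 x)^{-1}$ the 2-simplex relation is respected only in the order $\phi(\delta_1 a)=\phi(\delta_0 a)\,\phi(\delta_2 a)$, which is then an identity already in the free group; in the order $\phi(\delta_2 a)\,\phi(\delta_0 a)$ that you wrote the product does not telescope, and passing to the quotient by $F(A_{[-1]})$ does not help since only the basepoint vertices are killed --- the discrepancy originates in the paper itself, which states the relation in one order in Lemma~\ref{LEMMACOBARSET} and uses the other in the proof of this lemma.
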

 
  \begin{proof}
 Both sides commute with coproducts and thus it suffices to see the simply-connected case.
By Lemma~\ref{LEMMACOBARSET} the group $G \cobarconst \rho^* A$ is generated  by 
$A_{[1]}$ modulo the relations
\begin{align*} a_0 = 1 &\quad \text{for $a_0 \in A_{[0]}$}  \\
\delta_1(a_2) = \delta_0(a_2)\cdot \delta_2(a_2)  &\quad \text{for $a_2 \in A_{[2]}$ } 
\end{align*}
The morphism $\phi$ obviously respects these relations. 
We have to see that it is bijective.

If $\phi(x)=1$ then $x$ is a product of products $x_1^{\pm 1} \dots x_n^{\pm 1}$ with the property that $\delta_1(x_1^{\pm 1}) = \delta_0(x_n^{\pm 1})$ and $\delta_0(x_i^{\pm 1})=\delta_1(x_{i+1}^{\pm 1})$ where we understood $\delta_i(x^{-1}) = \delta_{1-i}(x)$. The $x_1^{\pm 1}, \dots, x_n^{\pm 1}$ thus constitute closed paths which means that $x_1^{\pm 1}  \cdots x_n^{\pm 1} \equiv 1$ modulo the relations by assumption (simple connectivity). 
 
For $x \in X_{[0]}$ there is a path $x_1^{\pm 1} \dots x_n^{\pm 1}$ from an element in the image of $X_{[-1]}$ to $x$. It follows then that 
 $x_1^{\pm 1} \dots x_n^{\pm 1}$ is mapped to $x$ under the map $\phi$. 
 \end{proof}

 \begin{PAR}\label{PARKANEXPLICIT}
By Lemma~\ref{LEMMADECCONTRACTIBLE}, for $A = \dec^*(X)_{[k],\bullet}$, we have $A_{[-1]} = X_{[k]}$, and the map $A \to A_{[-1]}$ is simply connected.
However, in this case, there are extra degeneracies: $s_{\can} = s_{k}:  A_{[-1]} \to A_{[0]}$ (which we take as splitting),  $s_{k}:  A_{[0]} \to A_{[1]}$,   $s_{k}:  A_{[1]} \to A_{[2]}$, and
 we can make this thus more explicit: 
Since for $x \in A_{[1]}$, 
\[ \underbrace{\delta_{k+3}}_{\delta_2^A}(s_{\can}x) = s_{\can}( \underbrace{\delta_{k+2}}_{\delta_1^A} x), \underbrace{\delta_{k+2}}_{\delta_1^A}(s_{\can}x) = s_{\can}(\underbrace{\delta_{k+1}}_{\delta_{0}^A} x), \text{ and } \underbrace{\delta_{k+1}}_{\delta_0^A}(s_{\can}x) = x, \]  
we have the relations
 \[ x = s_{\can}(\delta_0 x)  s_{\can}(\delta_1 x)^{-1}  \quad  x^{-1} = s_{\can}(\delta_0 x) s_{\can}(\delta_1 x)^{-1}   \]
 in $G \cobarconst \rho^* A$. Hence we see directly that $x_1^{\pm 1} \dots x_n^{\pm 1} = 1$ in $G \cobarconst \rho^* A$ (for $x_1, \dots, x_n$ as in the proof). Hence $\phi$ is injective.
 Furthermore, for $x \in A_{[0]}$, we have $\phi(s_{\can}x) = x \cdot (s_{\can}(\delta_{0}x))^{-1}$ where the second factor is in $F(A_{[-1]})$. Hence $\phi$ is surjective. 
 Furthermore
 \begin{align*} 
 F(A_{[0]})/F(A_{[-1]}) &\to G(\coprod_n A_{[1]}^{n} / \cdots) \\
 [x] &\mapsto s_{\can}x
 \end{align*}
 is an inverse of $\phi$.
\end{PAR}

\subsection{Classical (co)bar  for $\Set^{\Delta^{\op}}$}\label{SECTCOBARSSET}

\begin{PAR}\label{PARADJCOBAR}
We get an adjunction
\[  \xymatrix{  \Set^{\Delta^{\op}} \ar@<3pt>[rrr]^-{\cobarconst \circ \rho^* \circ \dec^*} & & &  \ar@<3pt>[lll]^-{ \dec_* \circ (\rho^*)^{-1} \circ \barconst} \mathrm{Mon}^{\Delta^{\op}} } \]
where now $\cobarconst = \cobarconst_{(\Set^{\Delta^{\op}}, \times) \to \OOO}$ and $\barconst = \barconst_{(\Set^{\Delta^{\op}}, \times) \to \OOO}$ are computed point-wise because $\times$ has this property. We will investigate this adjunction explicitly in this section. 
\end{PAR}

\begin{DEF}
The functor 
\[ M^{\mathrm{Kan}}:=\cobarconst \circ \rho^* \circ \dec^*: \Set^{\Delta^{\op}} \to \mathrm{Mon}^{\Delta^{\op}}   \]
 is called the {\bf geometric cobar construction}. 
\end{DEF}
The functor is closely related to Kan's loop group functor hence the notation. 
Let $F: \Set \to \mathrm{Grp}$ the free group functor which we denote by the same letter on diagrams.

\begin{DEF}
\begin{enumerate}
\item 
Let $X \in \Set^{\Delta^{\op}}$ be simplicial set. Define a simplicial group called {\bf Kan's loop group} by
\[ G^{\mathrm{Kan}} (X)_{[k]} := F(X_{[k+1]}) / F(X_{[k]})  \]
with the simplicial structure given by
\[ \delta_i'(x) = \begin{cases} (\delta_{k} x)(\delta_{k+1} x)^{-1} & i=k; \\ \delta_{i} x & i < k. \end{cases}    \]

\item Let $X \in \mathrm{Mon}^{\Delta^{\op}}$ be simplicial monoid. Define a simplicial set called the {\bf classifying space} by
\[ (\overline{W} X)_{[n]} := X_{[n-1]} \times X_{[n-2]} \times \cdots \times X_{[0]}  \]
where the simiplicial structure is given by
\begin{eqnarray*} \delta_i(x_{n-1}, \dots, x_0) &=& \begin{cases}(x_{n-2}, \dots,  x_0)  & i=0, \\
 \delta_i(x_{n-1}), \dots, \delta_1(x_{n-i-1}), x_{n-i-1}\cdot \delta_0(x_{n-i}), x_{n-i-2}, \dots, x_0)   & 1 \le i<n,
 \end{cases}    \\
s_i(x_{n-1}, \dots, x_0)  &=& \begin{cases}
 (1, x_{n-1}, \dots, x_0) & i=0, \\
 (s_{i-1}(x_{n-1}),\dots, s_{0} (x_{n-i}), 1, x_{n-i-1}, \dots, x_0) & 1 \le i<n.
  \end{cases}   
\end{eqnarray*}
\end{enumerate}
\end{DEF}

Let $G: \mathrm{Mon} \to \mathrm{Grp}$ be the point-wise group completion functor which we denote by the same letter on diagrams. 
\begin{PROP}We have for $\cobarconst = \cobarconst_{(\Set^{\Delta^{\op}}, \times) \to \OOO}$ an isomorphism
\[ \boxed{G^{\mathrm{Kan}} \cong G \circ \cobarconst \circ \rho^* \circ \dec^* } \]
of functors
\[  \Set^{\Delta^{\op}} \to \mathrm{Grp}^{\Delta^{\op}} \]
\end{PROP}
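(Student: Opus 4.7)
The plan is to compute both sides levelwise and then check that the simplicial structures agree. Since $\times$ on $\Set^{\Delta^{\op}}$ is the pointwise product, the cofibration of operads $(\Set^{\Delta^{\op}}, \times) \to \OOO$ is obtained by applying $\Xi_{(\Set,\times)}$ pointwise, and consequently both $\barconst$ and $\cobarconst$ (as well as $\rho^*$ and $\dec^*$) can be computed pointwise in the simplicial direction. Group completion $G$ is also applied pointwise. Therefore, for each $[k]$, we have
\[ (G \circ \cobarconst \circ \rho^* \circ \dec^* X)_{[k]} \;\cong\; G\,\cobarconst\,\rho^*\!\bigl( (\dec^* X)_{[k],\bullet} \bigr). \]

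Now I would apply Lemma~\ref{LEMMAKANPOINTWISE} to $A := (\dec^* X)_{[k],\bullet}$. By Lemma~\ref{LEMMADECCONTRACTIBLE}, $A \to A_{[-1]}$ is a (point-wise) homotopy equivalence with $A_{[-1]} = X_{[k]}$, and in particular simply connected; moreover there is a canonical splitting $s_{\can}: A_{[-1]} = X_{[k]} \to A_{[0]} = X_{[k+1]}$ given by the extra degeneracy. Lemma~\ref{LEMMAKANPOINTWISE} then yields
\[ G\,\cobarconst\,\rho^* A \;\cong\; F(A_{[0]})/F(A_{[-1]}) \;=\; F(X_{[k+1]})/F(X_{[k]}) \;=\; G^{\mathrm{Kan}}(X)_{[k]}, \]
so the isomorphism holds levelwise. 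The explicit inverse is given in \ref{PARKANEXPLICIT}: on generators, an element $x \in X_{[k+1]}$ is sent to the class of $s_{\can}(x)$.

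What remains --- and this is the main obstacle --- is to verify that these levelwise isomorphisms are compatible with the simplicial structure. The simplicial functoriality of the left-hand side comes from the functoriality of the composition $G \circ \cobarconst \circ \rho^* \circ \dec^*$ in the ``outer'' $\Delta^{\op}$-variable, which in turn is induced by the face and degeneracy operators acting on $\dec^* X$ via the non-extremal simplicial operators. On the right-hand side the structure is given by the explicit formulae $\delta_i' x = \delta_i x$ for $i < k$ and $\delta_k' x = (\delta_k x)(\delta_{k+1} x)^{-1}$, together with the analogous formulae for the degeneracies. The non-extremal operators on the $[k+1]$-simplices of $X$ correspond precisely to the outer face/degeneracy maps of $\dec^* X$ at level $k$, which gives $\delta_i'$ and $s_i'$ for $i<k$ immediately.

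The essential content is the identification of $\delta_k'$. Translating via the formula $[x] \mapsto s_{\can}(x)$ of \ref{PARKANEXPLICIT}, applying the extremal face $\delta_{k+1}^{\mathrm{outer}}$ corresponds (via the equality $\delta_{k+2} \circ s_{\can} = \mathrm{id}$ and the identity $x = s_{\can}(\delta_0 x)\cdot s_{\can}(\delta_1 x)^{-1}$ already derived in \ref{PARKANEXPLICIT}) to multiplication $s_{\can}(\delta_k x) \cdot s_{\can}(\delta_{k+1} x)^{-1}$. Under the inverse isomorphism this reads $(\delta_k x)(\delta_{k+1} x)^{-1}$, which is precisely $\delta_k'$. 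The verification of the degeneracy $s_k'$ and the extremal degeneracy on $\overline{W}$-like data proceeds analogously using the remaining identities $\delta_{k+1} s_{\can} = s_{\can}\delta_0$ and $\delta_{k+2} s_{\can} = s_{\can}\delta_1$ from \ref{PARKANEXPLICIT}. Assembling these identifications into a natural transformation $G^{\mathrm{Kan}} \Rightarrow G \circ \cobarconst \circ \rho^* \circ \dec^*$ and observing that it is a levelwise isomorphism concludes the proof.
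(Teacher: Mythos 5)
Your proof is correct and follows essentially the same route as the paper: a levelwise identification via Lemma~\ref{LEMMAKANPOINTWISE} (with the explicit inverse $x\mapsto s_{\can}x$ from \ref{PARKANEXPLICIT}), followed by checking that the outer face maps give $\delta_i$ for $i<k$ and $(\delta_k x)(\delta_{k+1}x)^{-1}$ for $i=k$, which is exactly how the paper reads off the simplicial structure. Only beware a couple of index slips in your compatibility check (at outer level $k$ the outer faces are $\delta_0,\dots,\delta_k$, and with the paper's choice $s_{\can}=s_k$ the relevant identities are $\delta_k s_k=\delta_{k+1}s_k=\id$), which do not affect the argument.
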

See also \cite[Proposition 5.3]{Ste12}.
\begin{proof}
This follows from Lemma~\ref{LEMMAKANPOINTWISE} (cf.\@ also \ref{PARKANEXPLICIT}). The simplicial structure can be (for $i=0, \dots, k$) read off from:  
\[ \raisebox{\dimexpr\depth-3\fboxsep}{\xymatrix{ G\coprod (\dec^*X)_{[k],[1]}^n  \ar[d]^{\delta_i} & &  \ar[ll]_-{s_{k}} F(X_{[k+1]})/F(X_{[k]}) \ar@{-->}[d]  \\ 
G\coprod (\dec^*X)_{[k-1],[1]}^n \ar[rr]_-{\delta_{k} \cdot \delta_{k+1}^{-1}}  & &  F(X_{[k]})/F(X_{[k-1]})
 }} \tag*{\qedhere} \]
\end{proof}

\begin{PROP}[Duskin \cite{Dus75}]We have for $\barconst = \barconst_{(\Set^{\Delta^{\op}}, \times) \to \OOO}$ an isomorphism
\[ \boxed{ \overline{W} \cong \dec_* \circ (\rho^*)^{-1} \circ \barconst } \]
of functors
\[  \mathrm{Mon}^{\Delta^{\op}} \to  \Set^{\Delta^{\op}} . \]
\end{PROP}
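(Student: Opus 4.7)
The plan is to unfold $\dec_* \circ (\rho^*)^{-1} \circ \barconst$ applied to a simplicial monoid $A \in \mathrm{Mon}^{\Delta^{\op}}$ and identify it with $\overline{W}(A)$ using the explicit formula in Proposition~\ref{PROPEXPLICIT}, 2. Since the monoidal product $\times$ on $\Set^{\Delta^{\op}}$ is the pointwise Cartesian product, the classical bar construction is computed pointwise in the intrinsic simplicial direction of $A$; the same is true of $(\rho^*)^{-1}$, which only concerns the $\OOO$-direction. Hence Lemma~\ref{LEMMABARSET} applied at each $[k]$ identifies $(\rho^*)^{-1} \circ \barconst(A)$ with the bisimplicial set
\[ N(A) \in \Set^{\Delta^{\op} \times \Delta^{\op}}, \qquad N(A)_{[n],[k]} = A_{[k]}^n, \]
whose first variable is the nerve of the monoid $A_{[k]}$ and whose second is the given simplicial structure of $A$.

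Next I compute $\dec_* N(A)$. Setting $X_{[i],[j]} := A_{[j]}^i$ in Proposition~\ref{PROPEXPLICIT}, 2, the zigzag becomes
\[ A_{[0]}^n \longrightarrow A_{[0]}^{n-1} \longleftarrow A_{[1]}^{n-1} \longrightarrow A_{[1]}^{n-2} \longleftarrow \cdots \longleftarrow A_{[n]}^0, \]
in which the rightward arrows are the top nerve face $\delta_{i+1} \colon A_{[j]}^{i+1} \to A_{[j]}^i$ (which simply drops the last coordinate, being the outermost face of the nerve) and the leftward arrows apply $\delta_0 \colon A_{[j]} \to A_{[j-1]}$ coordinate-wise. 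An element of the limit is thus a family $(a^{(k)}_j)_{0 \le k \le n-1,\, 1 \le j \le n-k}$ with $a^{(k)}_j \in A_{[k]}$ satisfying $a^{(k)}_j = \delta_0 \, a^{(k+1)}_j$ whenever $j \le n-k-1$. Projection onto the last entry at each level then yields a natural bijection
\[ (\dec_* N(A))_{[n]} \;\iso\; A_{[n-1]} \times A_{[n-2]} \times \cdots \times A_{[0]}, \qquad (a^{(k)}_j) \longmapsto (a^{(n-1)}_1, a^{(n-2)}_2, \ldots, a^{(0)}_n), \]
with inverse sending $(x_{n-1},\ldots,x_0)$ to the family $a^{(k)}_j := \delta_0^{\,n-k-j}(x_{n-j})$. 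This matches $\overline{W}(A)_{[n]}$ on underlying sets.

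It remains to check that the simplicial operators on $\dec_* N(A)$ induced by functoriality of $\dec_*$ agree with the prescribed formulas for $\overline{W}$. A face $\delta_i \colon [n-1] \to [n]$ induces a morphism between the zigzags of Proposition~\ref{PROPEXPLICIT}, 2, whose effect on the last-entry coordinates can be read off directly: for $i=0$ one discards the top-level entry $a^{(n-1)}_1$, yielding $(x_{n-2},\ldots,x_0)$; for $1 \le i < n$ the re-indexing forces the nerve face $\delta_i$ to act at the relevant level of the zigzag, which under the identification above multiplies $x_{n-i-1}$ by $\delta_0 x_{n-i}$, while the strictly higher entries $x_l$ for $l > n-i$ undergo the appropriate face operators of $A$. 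Degeneracies are handled analogously, inserting the unit at the designated slot. The main obstacle is this last step of bookkeeping: one must carefully trace how a simplicial operator in $\Delta$ acts on the indices of the zigzag and then pass to the chosen last-entry coordinates. The required identities are nevertheless purely formal consequences of the simplicial identities in $A$ and of the nerve formulas; alternatively, one may recognise this identification as Duskin's classical decomposition~\cite{Dus75} of the codiagonal of a bisimplicial nerve.
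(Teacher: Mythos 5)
Your proof is correct and takes essentially the same route as the paper: you identify $(\rho^*)^{-1}\circ\barconst(A)$ levelwise via Lemma~\ref{LEMMABARSET} with the bisimplicial nerve, compute $\dec_*$ by the zigzag limit of Proposition~\ref{PROPEXPLICIT}, and your inverse family $a^{(k)}_j=\delta_0^{\,n-k-j}(x_{n-j})$ is precisely the paper's comparison maps $\alpha_i$, with the simplicial-structure check sketched at the same level of detail as in the paper. The only point worth a word is that you place the nerve direction in the first variable while the paper (consistently with its other sections) puts the intrinsic direction first; since $\dec$ is the non-symmetric ordinal sum this convention should be fixed explicitly, but it does not affect the substance of the argument.
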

\begin{proof}
Let $X$ be a simplicial monoid. We have by Lemma~\ref{LEMMABARSET} applied point-wise that $Y = (\rho^*)^{-1} \circ \barconst (X)$ is
the bisimplicial set  $Y_{[n],[m]} = N(X_{[n]})_{[m]}$. By Proposition~\ref{PROPEXPLICIT}
\begin{gather*} (\dec_* Y)_{[n]} \cong 
 \lim \left( \vcenter{ 
\xymatrix@C=1.5pc{
Y_{[n],[0]} \ar[rd] & & \ar[ld] Y_{[n-1],[1]}   \ar[rd] & \ \cdots \  &   Y_{[0],[n]}  \ar[ld] \\
& Y_{[n-1],[0]} &  &\  \cdots \  
}
} \right)  \end{gather*}
and there are maps
\begin{eqnarray*}
 \alpha_i: (\overline{W}(X))_{[n]} &\to& Y_{[n-i],[i]} \\
(g_{n-1}, \dots, g_0) &\mapsto& (\delta_0^{i-1}(g_{n-1}), \delta_0^{i-2}(g_{n-2}), \dots, \delta_0(g_{n-i+1}), g_{n-i})   
\end{eqnarray*}
which yield bijections $(\overline{W}X)_{[n]} \cong (\dec_* Y)_{[n]}$ compatible with the simplicial structure. 
\end{proof}

\begin{KOR}
There is are adjunctions
\[ \xymatrix{ \mathrm{Grp}^{\Delta^{\op}} \ar@<3pt>@{^{(}->}[rr]^-{} && \ar@<3pt>[ll]^-{G} \mathrm{Mon}^{\Delta^{\op}} \ar@<3pt>[rr]^-{\overline{W}} && \ar@<3pt>[ll]^-{M^{\mathrm{Kan}}} \ar@/^20pt/[llll]^-{G^{\mathrm{Kan}}}  \mathrm{Set}^{\Delta^{\op}}  } \]
with $\overline{W}$ right adjoint. 
\end{KOR}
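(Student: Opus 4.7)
The plan is to obtain both adjunctions by composing adjunctions already established in the excerpt, with no new work required beyond verifying that the hypotheses of each abstract theorem hold in the present setting.

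First I would handle the middle adjunction $M^{\mathrm{Kan}} \dashv \overline{W}$. By the preceding two propositions one has, on the nose or up to canonical isomorphism, the factorizations
\[ M^{\mathrm{Kan}} = \cobarconst \circ \rho^* \circ \dec^*, \qquad \overline{W} \cong \dec_* \circ (\rho^*)^{-1} \circ \barconst, \]
so it suffices to exhibit each of the three functors on the left as a left adjoint of the corresponding functor on the right. The pair $\dec^* \dashv \dec_*$ is the standard $\infty$-exponential adjunction (Lemma~\ref{LEMMAIOTACART} gives the existence of $\dec_*$; for $\mathcal{C} = \Set^{\Delta^{\op}}$ all relevant (co)limits exist). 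The pair $\rho^* \dashv (\rho^*)^{-1}$ is trivial once one knows that $\rho^*$ is an equivalence, and this is Corollary~\ref{KORRHO} applied to the Cartesian monoidal category $(\Set^{\Delta^{\op}}, \times)$, whose unit (the point) is a final object. Finally, $\cobarconst \dashv \barconst$ is Theorem~\ref{THEOREMEXISTENCECOBAR}, whose hypothesis of $L$-admissibility is satisfied because $\Set^{\Delta^{\op}}$ is cocomplete and $\times$ commutes with colimits in each variable separately. Composing the three adjunctions yields $M^{\mathrm{Kan}} \dashv \overline{W}$.

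Next I would address the left adjunction $G \dashv (\mathrm{Grp}^{\Delta^{\op}} \hookrightarrow \mathrm{Mon}^{\Delta^{\op}})$. This is immediate from the classical fact that group completion $G : \mathrm{Mon} \to \mathrm{Grp}$ is left adjoint to the inclusion; since $G$ is defined point-wise on simplicial objects and both pre-composition with $\Delta^{\op}$ preserves adjunctions, the statement transfers verbatim to the simplicial level.

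For the composite long adjunction $G^{\mathrm{Kan}} \dashv (\overline{W} \circ \mathrm{inclusion})$, I would invoke the isomorphism $G^{\mathrm{Kan}} \cong G \circ M^{\mathrm{Kan}}$ established in the preceding proposition and compose the two adjunctions above. The commutativity of the upper semi-circular arrow in the diagram is precisely this $G^{\mathrm{Kan}} = G \circ M^{\mathrm{Kan}}$ identity. No hard step is expected: the only points that could cause friction are the verifications that the hypotheses ``unit is final'' and ``$L$-admissibility'' hold in $(\Set^{\Delta^{\op}}, \times)$, both of which are essentially tautological, so the corollary is purely a matter of concatenating previously constructed adjoint pairs.
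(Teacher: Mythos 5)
Your proposal is correct and follows essentially the same route as the paper: the paper's proof simply cites the adjunction of \ref{PARADJCOBAR} (which is exactly the composite $\cobarconst\dashv\barconst$, $\rho^*$-equivalence, $\dec^*\dashv\dec_*$ you spell out, with the Cartesian/trivial Eilenberg--Zilber identifications making coalgebras invisible) together with the point-wise group completion adjunction, and the long arrow is $G^{\mathrm{Kan}}\cong G\circ M^{\mathrm{Kan}}$. The only nitpick is the citation of Lemma~\ref{LEMMAIOTACART} for the existence of $\dec_*$ — that lemma concerns the exponential-fibration structure of $(\Delta,\ast)$; the right adjoint $\dec_*$ exists simply because $\Set$ is complete (cf.\ Proposition~\ref{PROPEXPLICIT}) — but this does not affect the argument.
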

\begin{proof}The right hand side is the adjunction~\ref{PARADJCOBAR} and the left hand side the group completion adjunction.
\end{proof}

\begin{BEM}
From the non-Abelian Eilenberg-Zilber Theorem~\ref{SATZEZ} follows that for a simplicial monoid  $X$ there is a weak equivalence 
\[ \overline{W} X \cong \delta^* N X. \]
\end{BEM}

\subsection{$A_{\infty}$-algebras and coalgebras}

Let $(\mathcal{C}, \otimes)$ be an  Abelian tensor category. 
The classical bar and cobar constructions for $(\mathcal{C}, \otimes)$ are intimately related to the notions of $A_{\infty}$-algebra and -coalgebra. Hence we will pause the discussion of (co)bar to briefly discuss the latter. 
For an extensive introduction we refer to \cite{Kel01}.

Embed $\mathcal{C}^{\Delta^{\op}}$ as $\Ch(\mathcal{C})_{\ge 0}$ (via Dold-Kan) into $\Ch(\mathcal{C})$, the category of unbounded complexes which we see here
as $\Z$-graded objects $X$ in $\mathcal{C}$ with a differential, i.e.\@ a morphism
\[ \mathrm{d}: X \to X \]
of degree -1 that satisfies $\mathrm{d}^2 = 0$. We define, in the usual way, shift operators
\[ s, s^{-1}: \Ch(\mathcal{C}) \to \Ch(\mathcal{C}) \]
with $(sX)_{i} := X_{i-1}$.
Furthermore there are the adjunctions
\[ \xymatrix{   \Ch_{\le 0}(\mathcal{C}) \ar@{^{(}->}@<3pt>[r]  & \ar@<3pt>[l]^-{\tau_{\le 0}} \Ch(\mathcal{C}) }  \]
with $\tau_{\le 0}$ (truncation) right adjoint
and
\[ \xymatrix{   \Ch_{\ge 0}(\mathcal{C}) \ar@{^{(}->}@<3pt>[r]  & \ar@<3pt>[l]^-{\tau_{\ge 0}} \Ch(\mathcal{C}) }  \]
with $\tau_{\ge 0}$ (truncation) left adjoint. The inclusions also have the other adjoints which will not be used in the sequel. 

We let $\widetilde{\otimes}$ be the tensor product $\tot - \boxtimes -$ on $\Ch(\mathcal{C})$, i.e.\@ the usual tensor product of complexes. 
Motivated by Proposition~\ref{PROPEXPLICITAB}, explicitly normalize $A \tildeotimes B$
as the following complex 
\[ \bigoplus_{i+j=n} A_i \otimes B_j  \]
with differential
\begin{equation}\label{eqtot} \mathrm{d} = \mathrm{d}_l + (-1)^i \mathrm{d}_r.  \end{equation}
This will hardly ever be an infinite sum in our applications. Other-wise one should of course be more careful about whether to choose the product or coproduct (or a mixture) and assume that $\mathcal{C}$ has those. 

\begin{PAR}\label{PARKOSZUL} 
We will always use the {\bf Koszul sign rule} (cf.\@ also the Yoneda product Lemma~\ref{LEMMAYONEDA}, and how this is related to the realization of complexes as objects in $\mathcal{C}^{\FinSet^{\op}}$). For homogenous morphisms $f: A \to C$, $g: B \to D$ we let 
$f \otimes g: A \otimes B \to C \otimes D$ be the morphism, defined on graded pieces as $x \otimes y \mapsto (-1)^{\deg(x)\deg(g)} f(x) \otimes g(y)$ and similarly for higher tensors. 
For example the differential (\ref{eqtot}) on $A \tildeotimes B$ is expessed as $1 \otimes \dd + \dd \otimes 1$.
\end{PAR}

\begin{PAR}
An algebra object in $\Ch(\mathcal{C})$, i.e.\@ an object in $(\Ch(\mathcal{C}), \widetilde{\otimes})^{\OOO}$ can be seen as an algebra object in 
graded objects such that 
\[ \xymatrix{ A \otimes A \ar[d]_{\dd \otimes 1 + 1 \otimes \dd} \ar[r]^-{m} & A \ar[d]^{\dd}  \\
A \otimes A  \ar[r]_-{m} & A
} \]
commutes (with Koszul sign rule). Similarly for coalgebra objects.
Expressed with elements this reads: 
\[ \mathrm{d} (a \cdot b) = (\mathrm{d} a) \cdot b + (-1)^{\deg a} a \cdot  (\mathrm{d} b).  \]
\end{PAR}

\begin{PAR}
For $X$ a graded object, we let 
\[ T^{\coprod}(X) \quad T^{\prod}(X)  \]
be the tensor (co)algebras. To avoid any problems during the abstract discussion for now with infinite direct sums or products, in particular, with their existence and commutation with $\otimes$, 
we consider $\mlq T^{\coprod} \mrq(X) \in \Ind-\mathcal{C}$ and $\mlq T^{\prod} \mrq(X)  \in \Pro-\mathcal{C}$.

$\mlq T^{\prod} \mrq(X)$ is considered as algebra with  
\[ (x_1 \otimes \cdots \otimes x_i) \boxtimes (x_{i+1} \otimes \cdots \otimes x_n) \mapsto x_1 \otimes \cdots \otimes x_n     \]
$\mlq T^{\coprod} \mrq(X)$ is considered as coalgebra with the ``deconcatenation'' coproduct. 
\[ x_1 \otimes \cdots \otimes x_n \mapsto \sum_{i=0}^n (x_1 \otimes \cdots \otimes x_i) \boxtimes (x_{i+1} \otimes \cdots \otimes x_n)    \]
Actually, these constructions are precisely dual to each other. 

Denote by $\mlq T^{\prod,+} \mrq(X)$ the positive part (i.e.\@ without the unit object $1$).  
\end{PAR}

\begin{LEMMA} \label{LEMMATENSORCOALG}
\begin{enumerate}
\item 
Any homomorphism $\dd_1: X \to \mlq T^{\prod} \mrq(X)$ of degree -1 can be extended uniquely to a morphism
 $\dd: \mlq T^{\prod}\mrq(X) \to \mlq T^{\prod} \mrq(X)$ of degree -1 satisfying the graded Leibniz rule (called a {\bf derivation}). 
\item 
 Any $\alpha_1: X \to \mlq T^{\prod,+} \mrq(X)$ of degree 0 can be extended uniquely to an algebra homomorphism
 \[ \alpha: \mlq T^{\prod} \mrq(X) \to \mlq T^{\prod}\mrq (X). \]
\end{enumerate}
\end{LEMMA}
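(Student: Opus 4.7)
My plan is to prove each part by writing down the unique formula forced by the graded Leibniz rule (part 1) or the algebra-homomorphism property (part 2), and then verifying that the formula defines a morphism in $\Pro\text{-}\mathcal{C}$. Viewing $x_1 \otimes \cdots \otimes x_n$ as the product $x_1 \cdot x_2 \cdots x_n$ of elements of $X \subset \mlq T^\prod \mrq(X)$, iterated Leibniz forces
\[ \dd(x_1 \otimes \cdots \otimes x_n) = \sum_{i=1}^n (-1)^{|x_1|+\cdots+|x_{i-1}|}\, x_1 \otimes \cdots \otimes \dd_1(x_i) \otimes \cdots \otimes x_n, \]
where the $i$-th summand is interpreted by expanding $\dd_1(x_i) \in \mlq T^\prod \mrq(X)$ into its tensor components and concatenating; similarly multiplicativity of $\alpha$ forces
\[ \alpha(x_1 \otimes \cdots \otimes x_n) = \alpha_1(x_1) \cdot \alpha_1(x_2) \cdots \alpha_1(x_n). \]
Uniqueness is then immediate, since a morphism out of $\mlq T^\prod \mrq(X)$ in $\Pro\text{-}\mathcal{C}$ is determined by its restriction to each finite truncation $\bigoplus_{n \le N} X^{\otimes n}$.

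The technical heart of the proof is to check these formulas really define $\Pro\text{-}\mathcal{C}$ morphisms. Fix a target truncation $M$ (mapping into $\bigoplus_{m \le M} X^{\otimes m}$); we must exhibit a source truncation $N(M)$ through which the formula factors. For part 1, a Leibniz summand whose $\dd_1(x_i)$ component has internal tensor length $\ell$ lands in $X^{\otimes (n - 1 + \ell)}$, which meets the target truncation only when $n + \ell - 1 \le M$; hence only $n \le M + 1$ contributes, and for each such $n$ only finitely many tensor components of $\dd_1$ are needed, so the formula factors through $\bigoplus_{n \le M+1} X^{\otimes n}$. For part 2 the positivity assumption $\alpha_1 \colon X \to \mlq T^{\prod,+} \mrq(X)$ is essential: it guarantees each $\alpha_1(x_i)$ has tensor length at least $1$, so $\alpha_1(x_1) \cdots \alpha_1(x_n)$ has length at least $n$, meaning only $n \le M$ contributes and the formula factors through $\bigoplus_{n \le M} X^{\otimes n}$. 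Without the $+$, a nonzero length-$0$ component of $\alpha_1$ would produce contributions from arbitrarily large $n$ at every target truncation and no finite source truncation would suffice.

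The remaining step is to verify that the extended $\dd$ actually satisfies the graded Leibniz rule with respect to the concatenation product and that $\alpha$ is actually multiplicative. For $\dd$ this is pure sign bookkeeping using the Koszul convention \ref{PARKOSZUL}: applying the formula to $(x_1 \otimes \cdots \otimes x_p) \cdot (y_1 \otimes \cdots \otimes y_q) = x_1 \otimes \cdots \otimes x_p \otimes y_1 \otimes \cdots \otimes y_q$ and splitting the resulting sum into the contributions from the $x$-block and from the $y$-block yields exactly $\dd(x_1 \otimes \cdots \otimes x_p) \cdot (y_1 \otimes \cdots \otimes y_q) + (-1)^{|x_1|+\cdots+|x_p|} (x_1 \otimes \cdots \otimes x_p) \cdot \dd(y_1 \otimes \cdots \otimes y_q)$. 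Multiplicativity of $\alpha$ is tautological from its definition as a concatenation of $\alpha_1$-values. I expect the main (relatively minor) obstacle to be the combination of sign bookkeeping with the $\Pro\text{-}\mathcal{C}$ formalism; conceptually, the lemma is just the universal property of the concatenation-algebra structure on the pro-object $\mlq T^\prod \mrq(X)$, with the positivity hypothesis in (2) being the algebraic shadow of the finite source-truncation demanded by pro-continuity.
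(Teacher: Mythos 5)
Your proposal is correct and follows essentially the same route as the paper's proof: write down the formula for the components forced by the Leibniz rule (resp.\@ multiplicativity) with Koszul signs, and observe that the composite components $X^{\otimes n} \to X^{\otimes m}$ vanish for $n$ large relative to $m$ (using the positivity of $\alpha_1$ in part 2), so the formulas define morphisms of pro-objects. Your additional explicit verification of uniqueness and of the Leibniz/multiplicativity property is a harmless elaboration of what the paper leaves implicit.
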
 
\begin{proof}
1.\@ The components 
\[ \dd_n: X^{\otimes n} \to  \mlq T^{\prod} \mrq(X) \]
of the extension  
are given by
\[ x_1 \otimes \cdots  \otimes x_n \mapsto \sum_{i=1}^n (1^{\otimes i}  \otimes \dd   \otimes 1^{\otimes n-i-1}) (x_1 \otimes \cdots \otimes x_n) \]
with the Koszul sign convention (\ref{PARKOSZUL}).
To see that this is well-defined, observe that the components
$X^{\otimes n} \to  X^{\otimes m}$ are zero for $n \gg m$. 

2.\@ The components 
\[ \alpha_n: X^{\otimes n} \to  \mlq T^{\prod} \mrq(X) \]
of the extension 
are given by
\[ x_1 \otimes \cdots  \otimes x_n \mapsto \alpha(x_1) \otimes \cdots \otimes \alpha(x_n).  \]
To see that this is well-defined, observe that the components
$X^{\otimes n} \to  X^{\otimes m}$ are zero for $n > m$. 
\end{proof}
There is a corresponding dual version for the coalgebra $\mlq T^{\coprod} \mrq (X)$.

If $\mathcal{C}$ is complete\footnote{That $\mathcal{C}$ has countable products is enough (and sometimes even no restriction because we work with graded objects) for what we are doing. We leave it to the reader to make the necessary precisions.}, we have a lax monoidal functor
\[ \lim: \Pro-\mathcal{C} \to \mathcal{C} \]
and thus $\mlq T^{\prod} \mrq(X)$ gives rise to an algebra $T^{\prod} := (\lim \mlq T^{\prod} \mrq)(X)$. Similarly, if $\mathcal{C}$ is cocomplete, we have an oplax monoidal functor
\[ \colim: \Ind-\mathcal{C} \to \mathcal{C} \]
and thus $\mlq T^{\coprod} \mrq(X)$ gives rise to a coalgebra $ T^{\coprod}  := (\colim \mlq T^{\coprod} \mrq)(X)$. 

\begin{DEF}Let $(\mathcal{C}, \otimes)$ be an Abelian tensor category. 
\begin{enumerate}
\item
A graded object $X$ in $\mathcal{C}$ together with a differential $\dd$ on the graded coalgebra $\mlq T^{\coprod} \mrq (s X)$ 
is called {\bf an $A_{\infty}$-algebra. }
\item
Let $X, Y$ be $A_{\infty}$-algebras. A {\bf morphism of $A_{\infty}$-algebras $X \to Y$} is a morphism of (non-counital) dg-coalgebras $\mlq T^{\coprod,+} \mrq (s X) \to \mlq T^{\coprod,+} \mrq (s Y)$.
\item
A graded object $X$ in $\mathcal{C}$ together with a differential\footnote{i.e.\@ a derivation, satisfying $\dd^2 = 0$} $\dd$ on the graded algebra $\mlq T^{\prod} \mrq (s^{-1 }X)$ 
is called {\bf an $A_{\infty}$-coalgebra. }
\item
Let $X, Y$ be $A_{\infty}$-coalgebras. A {\bf morphism of $A_{\infty}$-coalgebras $X \to Y$} is a morphism of dg-algebras $\mlq T^{\prod,+} \mrq (s^{-1}X) \to \mlq T^{\prod,+} \mrq (s^{-1}Y)$.
\end{enumerate}
\end{DEF}
There are notions of (co)units for $A_{\infty}$-(co)algebras which will not be considered here.

\begin{PAR}\label{PARMYSTERIOUSSIGN}
Translating via Lemma~\ref{LEMMATENSORCOALG}, 1., an $A_{\infty}$-algebra is thus determined by a collection $\dd_i: sX^{\otimes i} \to sX$  of degree -1 satisfying 
\[ \sum_{n=r+s+t}  \dd_{r+1+t} (1^{\otimes r}\otimes \dd_s \otimes 1^{\otimes t}) = 0. \] 
 It is convenient to define maps $m_i: X^{\otimes i} \to X$ of degree $i-2$ by means of the commutative diagram (with Koszul sign rule \ref{PARKOSZUL})
\[ \xymatrix{  (sX)^{\otimes i} \ar[r]^{\dd_i}  & sX  \\
X^{\otimes i} \ar[u]^{s^{\otimes i}} \ar[r]^{m_i} & X \ar[u]_{s}
 } \]
 where $s$ is the canonical map of degree 1\footnote{ Explicitly $m_i(x_1, \dots, x_n) = (-1)^{\sum (n-i)\deg(x_i)} s^{-1} \dd_i(sx_1, \dots, sx_n) $}. Equivalently, an $A_{\infty}$-algebra is thus given by a family $m_i$, satisfying
the {\bf Stasheff identities}:
\[ \sum_{n=r+s+t} (-1)^{r+st} m_{r+1+t} (1^{\otimes r}\otimes m_s \otimes 1^{\otimes t}) = 0 \] 
 and components $\alpha_i: X^{\otimes i}  \to X$ give a morphism of  $A_{\infty}$-algebras, if and only if
\[ \sum_{n=r+s+t} (-1)^{r+st} \alpha_{r+1+t} (1^{\otimes r}\otimes m_s \otimes 1^{\otimes t}) = \sum_{\substack{1 \le r \le n \\ n=i_1+\cdots+i_r}} (-1)^s m_r (\alpha_{i_1} \otimes  \cdots \otimes \alpha_{i_r})  \] 
with $s = \sum_{j=1}^r (r-j)(i_j-1)$. 
\end{PAR}

\begin{LEMMA}\label{LEMMAAINFTYCORRESPONDENCE}
There are functors
\begin{align*} A_{\infty}: \Alg^{\circ}(\Ch(\mathcal{A}), \tildeotimes) &\to \Alg^{A_{\infty}}(\Ch(\mathcal{A}), \tildeotimes)    \\
  A_{\infty}:  \Coalg^{\circ}(\Ch(\mathcal{A}), \tildeotimes) &\to \Coalg^{A_{\infty}}(\Ch(\mathcal{A}), \tildeotimes)   
\end{align*}
giving as the identity on the underlying graded object and setting for a (co)algebra $(X, m)$ 
\[ m_1 := -\dd \qquad m_2 := m \qquad m_j := 0 \ (j \ge 2) \]
They are faithful, the image on morphisms are precisely those morphisms $\alpha: sX \to T^{\coprod}(sY)$ (resp.\@ $\alpha: T^{\prod}(s^{-1}X) \to s^{-1}Y$) that factor through $sX \to sY$ (resp.\@ $s^{-1}X \to s^{-1}Y$). 
Fixing a graded object $X$, the image is given precisely by those structures on $X$ that satisfy $m_j = 0$ for $j \ge 2$. 
\end{LEMMA}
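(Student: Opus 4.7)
The plan is to reduce the Stasheff identities in this particular sparse case to the ordinary dg-(co)algebra axioms, then verify functoriality and characterize the image. I treat the algebra case; the coalgebra case is formally dual (replace $\mlq T^{\coprod} \mrq(sX)$ by $\mlq T^{\prod} \mrq(s^{-1}X)$ and interchange the roles of algebra and coalgebra throughout).

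First I would check that the prescription $m_1 := -\dd$, $m_2 := m$, $m_j := 0$ for $j \ge 3$ satisfies Stasheff's identities (\ref{PARMYSTERIOUSSIGN}). The identity for a given $n$,
\[ \sum_{n = r+s+t} (-1)^{r+st}\, m_{r+1+t}(1^{\otimes r} \otimes m_s \otimes 1^{\otimes t}) = 0, \]
is trivial for $n \ge 4$ because every summand contains at least one factor $m_k$ with $k \ge 3$. For $n=1$ it is $m_1^2 = 0$, i.e.\ $\dd^2 = 0$. For $n=2$ only the three terms $(r,s,t)\in\{(0,1,1),(0,2,0),(1,1,0)\}$ survive and give $m_1 m_2 = m_2(m_1 \otimes 1) + m_2(1 \otimes m_1)$, which under $m_1 = -\dd$ becomes precisely the graded Leibniz rule $\dd\circ m = m\circ(\dd\otimes 1 + 1\otimes \dd)$ on $X \tildeotimes X$ (the sign flip $m_1 = -\dd$ is exactly what is needed to reconcile the Koszul signs in the differential on $X \tildeotimes X$ with the signs in the Stasheff identity). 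For $n=3$ only $(0,2,1)$ and $(1,2,0)$ survive and give associativity $m_2(m_2 \otimes 1) = m_2(1 \otimes m_2)$. Conversely, an $A_\infty$-structure with $m_j = 0$ for $j \ge 3$ has exactly these three non-vacuous Stasheff identities, so it is precisely a dg-algebra; this already yields the claimed characterization of the image on objects (for fixed graded $X$).

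Next I would verify functoriality on morphisms. Given a morphism $f: X \to Y$ of dg-algebras, set $\alpha_1 := f$ and $\alpha_i := 0$ for $i \ge 2$; I check this defines a morphism of the associated $A_\infty$-algebras, i.e.\ satisfies
\[ \sum_{n=r+s+t} (-1)^{r+st}\, \alpha_{r+1+t}(1^{\otimes r} \otimes m_s \otimes 1^{\otimes t}) \;=\; \sum_{\substack{1 \le r \le n\\ n = i_1+\cdots+i_r}} (-1)^s\, m_r(\alpha_{i_1} \otimes \cdots \otimes \alpha_{i_r}) \]
for all $n$. For $n=1$ this is $\alpha_1 m_1 = m_1 \alpha_1$, i.e.\ compatibility with differentials. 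For $n=2$, only $\alpha_1 m_2$ on the left and $m_2(\alpha_1 \otimes \alpha_1)$ on the right survive, giving multiplicativity. For $n \ge 3$ every term on the left contains either some $\alpha_i$ with $i \ge 2$ or $m_j$ with $j \ge 3$, and on the right every $r$-ary term either is $m_r$ with $r \ge 3$, or has $r = 1, 2$ with some $\alpha_{i_j}$ of arity $\ge 2$; all such terms vanish. Thus the identities reduce exactly to $f$ being a morphism of dg-algebras. Preservation of identity and composition is immediate from the formula $\alpha_1 = f$.

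Finally, faithfulness follows because the original dg-algebra morphism $f$ is recovered as $\alpha_1$. The characterization of the image on morphisms between $A_\infty$-algebras in the image of the functor is a direct consequence of the calculation just done: an $A_\infty$-morphism $\alpha$ lies in the image precisely when $\alpha_i = 0$ for $i \ge 2$, which by the universal property of the cofree coalgebra $\mlq T^{\coprod,+} \mrq(sY)$ is the same as saying that the classifying map $sX \to \mlq T^{\coprod,+}\mrq(sY)$ (determined by $\alpha$ via the coalgebra structure) factors through the cogenerator inclusion $sY \hookrightarrow \mlq T^{\coprod,+}\mrq(sY)$. No real obstacle is expected; the entire proof is a careful sign chase, and the only delicate point is the convention $m_1 = -\dd$ in step one, which is forced by the Koszul signs on $X \tildeotimes X$.
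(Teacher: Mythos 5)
Your proof is correct and is essentially the paper's own argument: the paper verifies $\dd^2=0$ arity by arity on the tensor (co)algebra ($(\dd_2)^2=0$ $\Leftrightarrow$ (co)associativity, $\dd_1\dd_2=-\dd_2\dd_1$ $\Leftrightarrow$ compatibility of $m$ with the differential), which is exactly your Stasheff-identity check written in the unshifted coordinates $m_i$; you additionally spell out the morphism, faithfulness and image claims, which the paper's one-line proof leaves implicit, and that part is fine as well. One small inaccuracy: the sign in $m_1=-\dd$ is not ``forced'' by the $n=2$ identity (which is invariant under $m_1\mapsto -m_1$); it is a convention coming from the Koszul sign of the shift $s$.
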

\begin{proof}
The (co)associativity of $m$ is equivalent to the equation $(\mathrm{d}_2)^2 = 0$ and the equation
$ \mathrm{d}_1 \mathrm{d}_2 = - \mathrm{d}_2\mathrm{d}_1 $
is equivalent to the compatibility of $m$ with the differential. 
\end{proof}

\subsection{The Eilenberg-MacLane bar and Adams cobar construction}

\begin{PAR}\label{PARSYMMETRYBREAK}
This section discusses the classical Adams cobar \cite{Ada56} and Eilenberg-MacLane bar constructions \cite{EM53}.
These are {\em not anymore dual to each other}, i.e.\@ the Eilenberg-MacLane bar construction applied to $\mathcal{C}^{\op}$ does not give the Adams cobar construction!
We assume that $\otimes$ commutes with (countable) coproducts.
Let $X$ be a dg-coalgebra (not necessarily with counit). Then there is a sub-dg-algebra $T^{\oplus}(s^{-1}X) \subset T^{\prod}(s^{-1}X)$ whose underlying graded is the free algebra on $s^{-1}X$ and the differential is the restriction,   
giving a functor
\[ T^{\oplus}(s^{-1} -):  \Coalg^{\circ}(\Ch(\mathcal{A}), \tildeotimes) \to \Alg(\Ch(\mathcal{A}), \tildeotimes).  \]
Note that one can get back $T^{\prod}(s^{-1}X)$ as a completion of $T^{\oplus}(s^{-1}X)$.

However, observe that the functor {\em does not extend to $A_{\infty}$-coalgebras and their morphisms} --- their data, in contrast to the ones coming from usual dg-coalgebras, do not restrict to $T^{\oplus}(s^{-1}X)$!
\end{PAR}

Let $X \in \Ch_{\ge 0}(\mathcal{C})$ with $\alpha: X \to 1$ be an augmented dg-algebra. Denote $\overline{X} := \ker(\alpha)$ and
let $Y \in \Ch_{\ge 0}(\mathcal{C})$ with $\beta: 1 \to Y$ be an coaugmented dg-coalgebra. Denote $\overline{Y} = \coker(\beta)$. 
$\overline{X}$ and $\overline{Y}$ are again dg-(co)algebras (without (co)unit).

\begin{DEF}\label{DEFCOBARADAMS}
Assume that (countable) coproducts exist in $\mathcal{C}$.
\begin{enumerate}
\item 
The coaugmented dg-coalgebra\footnote{$T^{\coprod} = \colim \mlq T^{\coprod} \mrq$ always exists as graded object because it involves only finite coproducts in each degree.}
\[ \barconst^{\mathrm{EM}} := T^{\coprod}(s \overline{X}) \] 
with the differential from Lemma~\ref{LEMMAAINFTYCORRESPONDENCE} 
is called the {\bf Eilenberg-MacLane bar construction} of the augmented algebra $X$.
It comes equipped with a natural coaugmentation given by  $1 \to T^{\coprod}(s^{-1} \overline{X})$. 
\item 
 Asumme that $\otimes$ commutes with countable coproducts.
The augmented dg-algebra 
\[ \cobarconst^{\mathrm{Adams}} := T^{\oplus}(s^{-1} \overline{Y}) \] 
with the restriction of the differential from Lemma~\ref{LEMMAAINFTYCORRESPONDENCE} (cf.\@ \ref{PARSYMMETRYBREAK})
 is called the {\bf Adams cobar construction} of the coaugmented coalgebra $Y$. 
It comes equipped with a natural augmentation given by the projection $T^{\oplus}(s^{-1} \overline{Y}) \to 1$.
If $Y$ is connected, then it is again an object in $\Ch_{\ge 0}(\mathcal{C})$.
\end{enumerate}
\end{DEF}

For coalgebras, we have the functor ``connected cover'':
\[ P: \Coalg(\Ch(\mathcal{C}), \tildeotimes) \to \Coalg_{\mathrm{conn}}(\Ch(\mathcal{C})_{\ge 0}, \tildeotimes)  \]
which is left adjoint to the inclusion,  where connected means $X_i \cong 0$ for $i < 0$, and that the counit induces an isomorphism $X_0 \to 1$. 
It is defined by
\[ P(X)_i = \begin{cases} X_i & i >0 \\ 1 & i=0 \\ 0 & i < 0 \end{cases} \]
with obvious comultiplication, counit, and differential. 

Note that connected coalgebras are {\em canonically} coaugmented. We will only consider the Adams cobar construction on such connected objects where  $s^{-1} \overline{X}$ is just
 $\tau_{\ge 0} (s^{-1}X)$.

\subsection{Classical (co)bar  for Abelian categories}

Let $(\mathcal{C}, \otimes)$ be an Abelian tensor category. 
We will investigate the classical bar construction $\barconst_{(\mathcal{C}, \otimes) \to \OOO}$ and cobar construction $\cobarconst_{(\mathcal{C}, \otimes) \to \OOO}$ (cf.\@ \ref{DEFCOBARCLASS}).

\begin{LEMMA}\label{LEMMABARAB}
Let $(\mathcal{C}, \otimes)$ be an Abelian tensor category and $A \in  \mathcal{C}$ an algebra with augmentation $\alpha: A \to 1$. Denote $\overline{A}:= \ker \alpha$.
 Then we have an isomorphism of dg-coalgebras
\[ \xymatrix{ T^{\coprod}(s\overline{A}) \ar@<3pt>@{^{(}->}[rr] & &  \ar@<3pt>@{->}[ll]^-{a - \alpha(a) \cdot 1} (\rho^*)^{-1} \barconst(A)  }  \]
where $\barconst = \barconst_{(\mathcal{C}_{/1}, \otimes) \to \OOO}$ is the classical bar construction (Definition~\ref{DEFCOBARCLASS}) {\em in the category of augmented objects in $\mathcal{C}$} and
where $T^{\coprod}(s\overline{A})$ (with $\overline{A}$ considered as a complex concentrated in degree 0) 
is equipped with the differential $m: \overline{A}^{\otimes 2} \to \overline{A}$ extended to $T^{\coprod}(s\overline{A})$ by means of (the dual of) Lemma~\ref{LEMMATENSORCOALG}. 
\end{LEMMA}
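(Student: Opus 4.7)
The plan is to unravel both dg-coalgebras explicitly and exhibit the isomorphism in three structural steps.

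First, by Theorem~\ref{THEOREMLURIE} and the discussion in~\ref{PARCOBARALG}, the object $(\rho^*)^{-1}\barconst(A)$ is the simplicial object $X$ with $X_{[n]}=A^{\otimes n}$, whose inner face maps $\partial_i$ (for $0<i<n$) multiply the $i$-th and $(i{+}1)$-st tensor factors, whose outer faces $\partial_0,\partial_n$ apply the augmentation to the first or last factor, and whose degeneracies insert the unit. Its $\tildeotimes$-coalgebra structure comes from the (co)operad product $\ast$ on $(\Delta,\ast)^{\op}$ and comultiplies $X_{[p+q]}\to X_{[p]}\otimes X_{[q]}$ by the canonical splitting $A^{\otimes p+q}\cong A^{\otimes p}\otimes A^{\otimes q}$.

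Second, I pass to the normalized complex via Dold--Kan (Theorem~\ref{SATZDOLDKANII}). The augmentation provides a splitting $a\mapsto a-\alpha(a)\cdot 1$ exhibiting $A\cong 1\oplus\overline{A}$, whence $A^{\otimes n}\cong\bigoplus_{I\subseteq\{1,\dots,n\}}\overline{A}^{\otimes I}$, where the summand indexed by $I$ places the unit in every slot outside $I$. A direct check against the definition of the skeletal filtration in~\ref{PARFILT} shows that $F^{n-1}X_{[n]}$ is exactly the sum of the summands with $I\subsetneq\{1,\dots,n\}$, so that the non-degenerate quotient $X_{[n]}^{\nd}$ is canonically $\overline{A}^{\otimes n}$ and the projection $X_{[n]}\twoheadrightarrow X_{[n]}^{\nd}$ is the tensor power of $a\mapsto a-\alpha(a)\cdot 1$. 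On this quotient the outer faces $\bar\partial_0,\bar\partial_n$ vanish since $\alpha|_{\overline{A}}=0$, and the inner faces restrict to multiplication because $\overline{A}\subset A$ is a two-sided ideal.

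Third, I identify the result with $T^{\coprod}(s\overline{A})$. The dual of Lemma~\ref{LEMMATENSORCOALG} characterises the bar differential on $T^{\coprod}(s\overline{A})$ as the unique coderivation whose arity-$2$ component is $m$; transporting through $s$ by the Koszul rule yields precisely the alternating sum $\sum_{i=1}^{n-1}(-1)^i \id^{\otimes i-1}\otimes m\otimes\id^{\otimes n-i-1}$ obtained in the previous step. Likewise, the deconcatenation coproduct on $T^{\coprod}(s\overline{A})$ matches the splitting $\overline{A}^{\otimes p+q}\to \overline{A}^{\otimes p}\otimes\overline{A}^{\otimes q}$, which is the restriction of the $\tildeotimes$-comultiplication to normalized quotients; here I invoke Proposition~\ref{PROPEXPLICITAB} and the coherent Eilenberg--Zilber Corollary~\ref{KOREZAB} to know that Dold--Kan identifies the $\tildeotimes$-cooperad on $\mathcal{C}^{\Delta^{\op}}$ with the usual tensor product of complexes compatibly with coalgebra structures.

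The main obstacle will be the sign bookkeeping: verifying that the alternating simplicial signs from the Dold--Kan differential, the Koszul signs produced by the shift $s$, and the implicit signs in the normalized $\tildeotimes$-comultiplication conspire so as to produce an honest isomorphism of dg-coalgebras rather than one twisted by a sign. Apart from this routine (but genuinely tedious) verification, the argument is a direct structural unraveling of $\barconst$ in the augmented setting.
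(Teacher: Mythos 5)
Your proposal is correct and follows essentially the same route as the paper's proof: unravel $\barconst$ in the augmented setting, observe that all tensors containing a unit factor are degenerate so the normalized object is $\overline{A}^{\otimes n}$ in degree $n$ with the maps $a\mapsto a-\alpha(a)\cdot 1$ giving the isomorphism, and defer the sign/differential check (which the paper also leaves to the reader). The only cosmetic difference is that you route the identification of $\tildeotimes$ through Corollary~\ref{KOREZAB}, which is not needed — $\tildeotimes=\dec_*(-\boxtimes-)$ is the usual tensor product of complexes directly by Proposition~\ref{PROPEXPLICITAB}, no Eilenberg--Zilber comparison with $\otimes$ enters here.
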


\begin{proof}
The composition
\[ \xymatrix{  (\mathcal{C}_{/1}, \otimes)^{\OOO} \ar[rr]^-{\barconst = \widetilde{\pi_1^*}} && ((\mathcal{C}_{/1}, \otimes)^{\vee})^{\twop \OOO}  & \ar[l]_-{\rho^*}^-{\sim}  ((\mathcal{C}_{/1}, \otimes)^{\vee})^{(\Delta, \ast)^{\op}} = \Coalg(\mathcal{C}^{\Delta^{\op}}_{/1}, \tildeotimes) }  \]
can be described as follows. The functor $\barconst$ maps $A$ to object $B$ with
\[ B_{[i]} = A^{\otimes i} \]
with $A^{\otimes 0} = 1$ understood, with transition morphisms in $\Delta^{\op}_{\act}$ given by the corresponding algebra structure maps, 
with comultiplication
\[ B_{[i] \ast' [j]} \to B_{[i]} \otimes B_{[j]}  \]
being the obvious isomorphism, and with the counit being the isomorphism $B_{[0]} \to 1$. If 1 is final, as it is in the category of augmented objects, then by Corollary~\ref{KORRHO} this extends to an object 
\[ (\rho^*)^{-1} B \in ((\mathcal{C}, \otimes)^{\vee})^{(\Delta, \ast)^{\op}} = \Coalg(\mathcal{C}^{\Delta^{\op}}, \tildeotimes), \]
i.e.\@ to a dg-coalgebra. Inert morphisms go to the corresponding
\[ A^{\otimes i}  \to A^{\otimes j} \]
given by applying the augmentation to the appropriate slots and the comultiplication factors through the canonical degeneracy (this follows from Lemma~\ref{LEMMARHO}):
\[ B_{[i] \ast' [j]} \to B_{[i] \ast [j]} \to  B_{[i]} \otimes B_{[j]}  \]
yielding the structure of dg-coalgebra on $\rho^* B$.
Let $\overline{A}$ be the kernel of the augmentation (considered as graded object in degree 0). Then we  have an isomorphisms of dg-coalgebras
\[ \xymatrix{ T^{\coprod}(s \overline{A}) \ar@<3pt>@{^{(}->}[rr] & &  \ar@<3pt>@{->}[ll]^-{a - \alpha(a) \cdot 1} (\rho^*)^{-1} \barconst(A)  }  \]
for one composition is clearly the identity, whereas all tensors of the form $a_1 \otimes \cdots \otimes (\alpha(a_i) \cdot 1) \otimes \cdots \otimes a_n$ are degenerate. 
The identification of the differential is left to the reader. 
\end{proof}

 \begin{LEMMA}\label{LEMMACOBARABELIAN}
 Let $(\mathcal{C}, \otimes)$ be an Abelian tensor category with countable colimits such that $\otimes$ commutes with them. 
  Let $A \in  \Coalg(\Ch_{\ge 0}(\mathcal{C}), \tildeotimes)$ be a dg-coalgebra. Then for the classical cobar construction (Definition~\ref{DEFCOBARCLASS})
 $\cobarconst = \cobarconst_{(\mathcal{C}, \otimes) \to \OOO}$: 
 \[ \cobarconst \rho^* A  \cong T^{\oplus}(A_{1}) / I \] 
 where $I$ is the ideal generated by $A_{2}$ under $m_{1,1} + \mathrm{d}$, where $m_{1,1}: A_2 \to A_1 \otimes A_1$ is the 1,1-component of the comultiplication. The induced algebra structure is given by (the quotient of) the product structure in $T^{\oplus}(A_{1})$. If $A \in ((\mathcal{C}_{/1}, \otimes)^{\vee})^{(\Delta,*)^{\op}}$ is an augmented coalgebra then the same formula is true and the augmentation of the result coincides with the projection $T^{\oplus}(A_{1}) / I \to 1$.
 
 Note that the indices refer to the indices under Dold-Kan, i.e.\@ $A_1 = A_{[1]} / A_{[0]}$ and  $A_2 = A_{[2]} / A_{[2]}^{\mathrm{deg}}$.
 \end{LEMMA}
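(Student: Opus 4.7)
The plan is to compute the coequalizer (\ref{eqformulacobar}) from \ref{PARCOBAREXPLICIT} in the Abelian setting, where it becomes the cokernel of the difference of the two evident morphisms into $\bigoplus_{n \ge 0} A_{[1]}^{\otimes n}$. I will do this in two stages: first quotient by the $A_{[0]}$-relations, then by the $A_{[2]}$-relations.

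For the first stage, the simplicial identity $\delta_0 s_0 = \id$ yields in an Abelian category a canonical splitting $A_{[1]} = A_1 \oplus s_0(A_{[0]})$ with $A_1 = A_{[1]}/s_0(A_{[0]})$ the Dold-Kan normalization. Decomposing each $A_{[1]}^{\otimes n}$ according to which slots lie in $s_0(A_{[0]})$ and iteratively using the relation identifying $s_0(a)$ in a slot with $\epsilon(a)$ times the tensor with that slot removed, I identify the first quotient with $T^{\oplus}(A_1)$ by exhibiting mutually inverse maps: the forward map is induced by the inclusion $A_1 \hookrightarrow A_{[1]}$, and the inverse comes from $\pi := (\pr_{A_1}, \epsilon \circ s_0^{-1}): A_{[1]} \to A_1 \oplus 1$ by applying $\pi^{\otimes n}$ and distributing $1$-factors out of the tensor product.

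For the second stage, I use the Dold-Kan decomposition $A_{[2]} = A_2 \oplus s_0 A_{[1]} \oplus s_1 A_{[1]} \oplus s_0 s_0 A_{[0]}$ from Lemma~\ref{LEMMAEZ2} and analyze the descended $A_{[2]}$-relations on $T^{\oplus}(A_1)$ summand by summand. For $a \in A_2$, the construction of $\rho^*$ gives $\mu'_{1,1}(a) = \mu_{1,1}(s_{\can}(a))$ with $s_{\can}: [3] \to [2]$; unwinding via the total-complex formula for $\tildeotimes$ in Proposition~\ref{PROPEXPLICITAB}, the $(1,1)$-component of the comultiplication on the normalized complex is precisely $m_{1,1}: A_2 \to A_1 \otimes A_1$, while $(\delta_1)^*$ restricts on $A_2$ to a signed component of the Dold-Kan differential that I identify (up to sign) with $\dd$. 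Hence the $A_2$-contribution is the relation $m_{1,1}(a) + \dd(a) = 0$ in every slot. For the degenerate summands, using the simplicial identities and the compatibility of the coalgebra structure with degeneracies, I verify that both $\mu'_{1,1}$ and $(\delta_1)^*$ produce elements whose images under $\pi^{\otimes \bullet}$ agree in $T^{\oplus}(A_1)$, giving no new relation. Imposing the $A_2$-relation at every slot of every tensor length produces the two-sided ideal $I$ generated by $\im(m_{1,1}+\dd)$, so $\cobarconst \rho^* A \cong T^{\oplus}(A_1)/I$.

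The algebra structure on $\cobarconst \rho^* A$ from Theorem~\ref{THEOREMEXISTENCECOBAR} descends from the monoidal product $\ast'$ on $\twcop \OOO$, which on the fiber over $[1]$ is concatenation of tensors, yielding the tensor-algebra product on the quotient. In the augmented case, the coaugmentation $1 \to A$ provides a distinguished element in $A_0$ whose image under $s_0$ is a unit in $A_{[1]}$, and the projection $T^{\oplus}(A_1)/I \to 1$ killing all positive-arity tensor factors gives the resulting augmentation. The main obstacle I expect is verifying in the second stage that the degenerate summands of $A_{[2]}$ contribute no new relation after the first stage: this requires combining Proposition~\ref{PROPEXPLICITAB} with a precise description of how $s_{\can}$ interacts with $s_0$ and $s_1$ under the coalgebra structure, essentially a sign- and index-bookkeeping exercise in the Dold-Kan equivalence.
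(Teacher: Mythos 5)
Your overall architecture (compute the explicit coequalizer of \ref{PARCOBAREXPLICIT} in two stages, identify the first quotient with $T^{\oplus}(A_1)$ via a unit--counit splitting of $A_{[1]}$, then descend the $A_{[2]}$-relations) is the same as the paper's, where your map $\pi^{\otimes\bullet}$ plays the role of the automorphism $\Omega$ of $T^{\oplus}(A_{[1]})$, $a\mapsto a-\varepsilon(a)$. But your Stage 2 misidentifies where the differential comes from, and as stated that step fails. You claim that the comultiplication term contributes exactly $m_{1,1}$ and that $(\delta_1)^*$ restricted to $A_2$ is (up to sign) the Dold--Kan differential $\dd$. Neither is true with the normalization used in the statement ($A_2=A_{[2]}/A_{[2]}^{\mathrm{deg}}$, $A_1=A_{[1]}/A_{[0]}$): a single face map does not even descend to these quotients (e.g.\ $\delta_0 s_0=\id$), only the alternating sum $\delta_0-\delta_1+\delta_2$ does, so ``$\delta_1$ gives $\dd$'' is not well-formed; and after distributing the unit factors in Stage 1 (i.e.\ after conjugating by $\pi^{\otimes\bullet}$, alias $\Omega^{-1}: a\mapsto a+\varepsilon(a)$), the comultiplication term expands as
\[
\mu'_{1,1}(a)\ \longmapsto\ m_{1,1}(a)+\textstyle\sum_i \varepsilon(a_i)b_i+\sum_i a_i\varepsilon(b_i)+(\text{unit term}),
\]
and the counit axiom for the $\tildeotimes$-coalgebra structure identifies $\sum_i\varepsilon(a_i)b_i=\delta_0(a)$ and $\sum_i a_i\varepsilon(b_i)=\delta_2(a)$. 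So the comultiplication side contributes $m_{1,1}+\delta_0+\delta_2$, the structure-map side contributes $-\delta_1$ (and a cancelling unit term), and only the \emph{combination} equals $m_{1,1}+\dd$. If you run the computation with your attribution, the outer faces $\delta_0+\delta_2$ produced by the counit components are unaccounted for and the relation you obtain is not the one in the statement.

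This is exactly the point the paper's proof makes precise via the commutative square comparing $\mu'_{1,1}-\delta_1$ on $A_{[2]}$ (after the $\Omega^{-1}$-twist) with $m_{1,1}+\dd$ on $A_2$; once the relation is written in that form, the degenerate summands of $A_{[2]}$, which you flag as the main expected obstacle, cause no trouble because they are killed in $T^{\oplus}(A_1)$. So the real bookkeeping lives in the interaction of your Stage 1 splitting with the counit components of the comultiplication, not in the $s_{\can}$-versus-degeneracies analysis you propose; to repair the argument, replace your Stage 2 identification by the counit computation above.
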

 \begin{proof}
By \ref{PARCOBAREXPLICIT}, we have
\begin{gather*} 
  \cobarconst \rho^* A \cong \\
\coker( \bigoplus A_{[1]} \otimes \cdots \otimes A_{[2]} \otimes \cdots \otimes A_{[1]}  \oplus  \bigoplus A_{[1]} \otimes \cdots \otimes A_{[0]} \otimes \cdots \otimes A_{[1]}  \rightarrow \\
  \bigoplus_{n=0}^{\infty} A_{[1]}^{\otimes n} = T^{\oplus}(A_{[1]}))
 \end{gather*}
with the morphism
\[ \id- \varepsilon: \bigoplus A_{[1]} \otimes \cdots \otimes A_{[0]} \otimes \cdots \otimes A_{[1]}  \to  T^{\oplus}(A_{[1]})  \]
with $A_{[0]} \subset A_{[1]}$ via the (only) degeneracy understood and $\varepsilon: A_{[1]} \to 1$ is the counit. 
We have an exact sequence (using that $\otimes$ commutes with quotients)
\[ \xymatrix{  \bigoplus A_{[1]} \otimes \cdots \otimes A_{[0]} \otimes \cdots \otimes A_{[1]} \ar[r]^{} & T^{\oplus}(A_{[1]})  \ar[r]^{} & T(A_1) \ar[r] & 0. } \]
We have the automorphism
\[ \Omega: T^{\oplus}(A_{[1]}) \to T^{\oplus}(A_{[1]})   \]
given on $A_{[1]}$ by $a \mapsto a - \varepsilon(a)$. This yields an exact sequence
\[ \xymatrix{  \bigoplus A_{[1]} \otimes \cdots \otimes A_{[0]} \otimes \cdots \otimes A_{[1]} \ar[r]^-{\id-\varepsilon} & T(A_{[1]})  \ar[r]^{\Omega^{-1}} & T^{\oplus}(A_1) \ar[r] & 0 } \]
and the ideal $\Omega(J)$ coincides with the image of $X$. The morphism
\[ \bigoplus A_{[1]} \otimes \cdots \otimes A_{[2]} \otimes \cdots \otimes A_{[1]}  \to  T^{\oplus}(A_{[1]})  \]
is induced by the map $A_{[2]} \to T^{\oplus}(A_{[1]})$ given by $\mu_{1,1}' - \delta_1$, where
 $\mu_{1,1}'$ is the composition $A_{[2]} \to A_{[3]} \to A_{[1]} \otimes A_{[1]}$ (composition of canonical degeneracy and structure map of the coalgebra, cf.\@ the discussion in \ref{PARCOALG}). 

We claim that the following is commutative
\[ \xymatrix{  A_{[2]} \ar[r]^-{\mu_{1,1}'-\delta_1} \ar@{->>}[d]  & T(A_{[1]})  \ar@{->>}[d]^{\Omega^{-1}}  \\
A_2 \ar[r]^-{m_{1,1} + \mathrm{d}} & T(A_1) } \]

The upper composition is given by, writing $\mu(a) = \sum a_i \otimes b_i$ where $a \in A_{[2]}$ and $a_i,b_i \in A_{[1]}$:
\[ a \mapsto \mu(a) + \sum \varepsilon(a_i)b_i + \sum \varepsilon(b_i)a_i + \sum \varepsilon(a_i)\varepsilon(b_j)  - \delta_1(a) - \varepsilon(\delta_1(a)).    \]
 We have $\sum \varepsilon(a_i)b_i = \delta_0(a)$ and $\sum a_i\varepsilon(b_i) = \delta_2(a)$ and $\sum \varepsilon(a_i)\varepsilon(b_j) = \varepsilon(a)$  because of the coalgebra structure.   Obviously $\varepsilon(\delta_1(a)) = \varepsilon(a)$. Hence we are left with
 \[a \mapsto \mu(a) + \mathrm{d}(a) \]
 where $\mathrm{d}$ is the alternating face map. In $T(A_1)$ indeed degenerate elements are mapped to 0 and, because of
the commutative diagram
\[ \xymatrix{  A_{[2]} \ar[r]^-{\mu_{1,1}'} \ar@{->>}[d]  & A_{[1]} \otimes A_{[1]}   \ar@{->>}[d]  \\
A_2 \ar[r]_-{m_{1,1}} & A_1 \otimes A_1 } \]
where  $m_{1,1}$ in the bottom line is the 1,1-component of the multiplication in $A$ considered as coalgebra w.r.t.\@ $\tildeotimes$, 
 the induced map on $A_2$ is given by $m_{1,1} + \mathrm{d}$. 
 \end{proof}

 \begin{SATZ}\label{SATZCOBARADAMS1}
Let $(\mathcal{C}, \otimes)$ be an Abelian tensor category with countable colimits such that $\otimes$ commutes with them. 

There is an isomorphism 
 \[ \boxed{ \cobarconst \circ \rho^* \cong H_0 \circ \cobarconst^{\mathrm{Adams}} \circ P } \]
of functors\footnote{the RHS is the category of augmented algebras in $(\Ch_{\ge 0}(\mathcal{C}), \tildeotimes)$ which is the same as algebra objects in augmented objects.}
\[ \Coalg(\Ch_{\ge 0}(\mathcal{C}), \tildeotimes)  \to \Alg(\mathcal{C}_{/1}, \otimes)   \]  
where  
$\cobarconst = \cobarconst_{(\mathcal{C}, \otimes) \to \OOO}$ is the classical cobar construction (Definition~\ref{DEFCOBARCLASS}), and where $P$ is the functor ``connected cover'' with its canonical coaugmentation.
\end{SATZ}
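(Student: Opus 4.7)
The plan is to compute both sides explicitly and verify that they coincide. The heart of the argument is the formula for $\cobarconst \rho^* A$ already established in Lemma~\ref{LEMMACOBARABELIAN}: it exhibits the classical cobar of $A$ as the quotient $T^{\oplus}(A_1)/I$, where $I$ is the two-sided ideal generated by the image of $(\mathrm{d} + m_{1,1}): A_2 \to T^{\oplus}(A_1)$, with algebra structure inherited from the tensor algebra. I would therefore reduce the theorem to computing $H_0(\cobarconst^{\mathrm{Adams}}(P(A)))$ by the same presentation.

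First, I would unwind the right-hand side at the level of underlying graded objects. Since $P(A)$ is connected with $P(A)_0 = 1$ and the coaugmentation $1 \to P(A)_0$ is an isomorphism, the reduced coalgebra $\overline{P(A)}$ vanishes in degree $0$ and agrees with $A$ (under Dold-Kan) in positive degrees, so $s^{-1}\overline{P(A)}$ is the graded object with $A_{i+1}$ placed in degree $i$, for $i \ge 0$. Consequently, the degree-$0$ piece of $\cobarconst^{\mathrm{Adams}}(P(A)) = T^{\oplus}(s^{-1}\overline{P(A)})$ is exactly $T^{\oplus}(A_1)$, while degree $1$ is spanned by pure tensors with one factor in $s^{-1}A_2$ and the remaining factors in $s^{-1}A_1$.

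Next, I would identify the boundary $\dd : (\cobarconst^{\mathrm{Adams}})_1 \to (\cobarconst^{\mathrm{Adams}})_0$. By Lemma~\ref{LEMMAAINFTYCORRESPONDENCE} the $A_\infty$-coalgebra associated to the dg-coalgebra $P(A)$ has structure maps $m_1 = -\mathrm{d}$, $m_2 = m$, and $m_j = 0$ for $j \ge 3$; by (the dual of) Lemma~\ref{LEMMATENSORCOALG} the Adams differential is the unique derivation on $T^{\oplus}(s^{-1}\overline{P(A)})$ extending $\dd_1 + \dd_2$, the transport of $m_1 + m_{1,1}$ through the suspension. Applying the Leibniz rule to a pure tensor of degree $1$, only the unique degree-$1$ factor contributes a term landing in degree $0$ (all other factors have degree $0$, and $\dd$ lowers degree). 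That contribution is the tensor with the distinguished factor $b \in s^{-1}A_2$ replaced by $(m_{1,1} \pm \mathrm{d})(b) \in T^{\oplus}(A_1)$, up to Koszul signs. As $b$, and the flanking tensors in $A_1$, vary, one obtains exactly the two-sided ideal $I$ generated by the image of $(\mathrm{d} + m_{1,1})(A_2)$, so $H_0(\cobarconst^{\mathrm{Adams}}(P(A))) = T^{\oplus}(A_1)/I$ as algebras, with unit inherited from the tensor algebra.

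Finally, I would match this with the formula of Lemma~\ref{LEMMACOBARABELIAN} and check that the identification is natural in $A$ and compatible with the canonical augmentations: both sides are augmented over $1$ via the projection killing $T^{\oplus,+}$, and both constructions are manifestly functorial in the dg-coalgebra $A$ through its data $(A_1, A_2, \mathrm{d}, m_{1,1})$, which is all that enters. The principal obstacle I anticipate is bookkeeping of the Koszul signs: the suspension $s^{-1}$ introduces signs when translating $m_1, m_2$ into $\dd_1, \dd_2$ and again when spreading $\dd$ across a pure tensor via Leibniz, and one must verify that after these sign manipulations the two-sided ideal one obtains really is generated by $\mathrm{d} + m_{1,1}$ and not by some sign-twisted variant (though this only changes the generators by an overall sign on each summand of $A_2$, hence leaves $I$ unchanged). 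Modulo this verification, the theorem follows immediately from the two computations.
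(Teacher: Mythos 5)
Your proposal is correct and follows essentially the same route as the paper: the paper's (very terse) proof likewise computes $H_0(\cobarconst^{\mathrm{Adams}}(P(A))) \cong T^{\oplus}(A_1)/I$ with $I$ generated by $\im(\dd + m_{1,1})$ and then invokes Lemma~\ref{LEMMACOBARABELIAN}. Your write-up just makes explicit the degree bookkeeping and sign verification that the paper leaves implicit.
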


\begin{proof}
For $A \in \Coalg(\Ch_{\ge 0}, \tildeotimes)$ a dg-coalgebra, we have 
\[ (H_0 \circ \cobarconst^{\mathrm{Adams}} \circ P) (A) \cong T^{\oplus}(A_1)/I,  \]
where $I$ is the ideal generated by $\im(\dd + m_{1,1})$. Therefore this follows from Lemma~\ref{LEMMACOBARABELIAN}.
\end{proof}

By definition, $\cobarconst^{\mathrm{Adams}}(PA) = T^{\oplus}(s^{-1}\overline{PA})$, 
and $s^{-1}(\overline{PA})$ is the same as the truncation $\tau_{\ge 0} (s^{-1} A) = s^{-1} (\tau_{\ge 1} A)$. Furthermore $\tau_{\ge 1}$ is oplax monoidal, neglecting the unit, i.e.\@ a functor of cooperads
\[ \tau_{\ge 1}: ((\Ch(\mathcal{C}), \tildeotimes)^{\circ,\vee})  \to ((\Ch_{\ge 1}(\mathcal{C}), \tildeotimes)^{\circ, \vee}) \]
 hence induces a functor
\[ \tau_{\ge 1}: \Coalg^{\circ}(\Ch(\mathcal{C}), \tildeotimes) \to \Coalg^{\circ}(\Ch(\mathcal{C}), \tildeotimes). \]
For a morphism $A \to B$ of dg-coalgebras, we obtain a commutative diagram of dg-algebras
\[ \xymatrix{
\mlq T^{\prod} \mrq(s^{-1}A)  \ar[d] \ar[r] & \mlq T^{\prod} \mrq(s^{-1}B) \ar[d] \\
\mlq T^{\prod} \mrq (s^{-1}\overline{PA}) \ar[r] & \mlq T^{\prod}\mrq(s^{-1}\overline{PB})
}\]

More generally, any morphism of dg-algebras $\mlq T^{\prod} \mrq(s^{-1}A) \to \mlq T^{\prod} \mrq(s^{-1}B)$, i.e.\@ in particular an $A_{\infty}$-morphism of ($A_{\infty}$-)coalgebras $A \to B$, induces a unique morphism 
$\mlq T^{\prod} \mrq(s^{-1}\overline{PA}) \to \mlq T^{\prod} \mrq(s^{-1}\overline{PA})$, such that the analogous diagram commutes, but does not necessarily induce a morphism
$T^{\oplus}(s^{-1}\overline{PA}) \to T^{\oplus}(s^{-1}\overline{PA})$.

We define thus
\begin{DEF}\label{DEFCOBARHAT}
Assume that countable products exist in $\mathcal{C}$. Let $A$ be a  dg-coalgebra with coaugmention $1 \to A$.
\[ \widehat{\cobarconst}^{\mathrm{Adams}}(A) := T^{\prod}(s \overline{A}) \]
equipped with its natural algebra structure and with the differential from Lemma~\ref{LEMMAAINFTYCORRESPONDENCE}.
\begin{gather*}
 \widehat{\cobarconst}(A) := \coker\left ( 
  \substack{ \prod A_{[1]} \otimes \cdots \otimes A_{[2]} \otimes \cdots \otimes A_{[1]} \\ \oplus \\  \prod A_{[1]} \otimes \cdots \otimes A_{[0]} \otimes \cdots \otimes A_{[1]} } \rightarrow  \prod_{n=0}^{\infty} A_{[1]}^{\otimes n} \right).  
  \end{gather*}
\end{DEF}
We have then still
\begin{PROP}
 \[ \widehat{\cobarconst} \rho^* A  \cong T^{\prod}(A_{1}) / \widehat{I} \cong H_0 \circ \widehat{\cobarconst}^{\mathrm{Adams}} \circ P  \]
 where $\widehat{I}$ is the closure of the ideal $I$ of Lemma~\ref{LEMMACOBARABELIAN} and this is functorial in morphisms of $A_{\infty}$-coalgebras. 
\end{PROP}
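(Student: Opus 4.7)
The plan is to prove both isomorphisms and the functoriality in three essentially independent steps, and then to observe that the first two are straightforward adaptations of the unhatted case whereas the functoriality is where the completion is essential.

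First I would establish $\widehat{\cobarconst}\,\rho^* A \cong T^{\prod}(A_1)/\widehat{I}$ by rerunning the argument of Lemma~\ref{LEMMACOBARABELIAN} with every $\bigoplus$ replaced by $\prod$. The explicit description of $\widehat{\cobarconst}\rho^*A$ from Definition~\ref{DEFCOBARHAT} is a cokernel whose defining maps are componentwise the same as those in the coequalizer~(\ref{eqformulacobar}); the automorphism $\Omega$ of $T^{\oplus}(A_{[1]})$ used in the proof of Lemma~\ref{LEMMACOBARABELIAN}, being defined degree by degree on the tensor powers, acts equally well on $T^{\prod}(A_{[1]})$, and transports the image of the ``augmentation relations'' into the ``counit kernels''. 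The ideal $\widehat{I}$ is by definition the closure of $I$, i.e.\@ the product of the degree-$n$ components of $I$, and these are exactly the images of the $A_{[2]}$-relations degree by degree, via $m_{1,1}+\mathrm{d}$ as in the unhatted case.

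The second isomorphism $T^{\prod}(A_1)/\widehat{I} \cong H_0 \circ \widehat{\cobarconst}^{\mathrm{Adams}} \circ P$ is then immediate from Definition~\ref{DEFCOBARHAT} and the definition of $P$: we have $\widehat{\cobarconst}^{\mathrm{Adams}}(PA) = T^{\prod}(s^{-1}\overline{PA})$ with the differential from Lemma~\ref{LEMMAAINFTYCORRESPONDENCE}, and its $0$-th homology is the quotient of $T^{\prod}(A_1)$ by the closure of the ideal generated by $\mathrm{d}+m_{1,1}$ applied to $A_2$, which is exactly $\widehat{I}$.

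For the functoriality in $A_{\infty}$-morphisms, which is the point of introducing $\widehat{\cobarconst}$ in the first place, I would use the characterization recalled in the text: an $A_{\infty}$-morphism of coalgebras $A \to B$ is, by definition, a morphism of dg-algebras $\mlq T^{\prod,+}\mrq(s^{-1}A) \to \mlq T^{\prod,+}\mrq(s^{-1}B)$. The functor $\tau_{\ge 1}$, being a morphism of cooperads $(\Ch(\mathcal{C}),\tildeotimes)^{\circ,\vee} \to (\Ch_{\ge 1}(\mathcal{C}),\tildeotimes)^{\circ,\vee}$, induces a commutative diagram of dg-algebras (displayed just before Definition~\ref{DEFCOBARHAT}) relating the $\mlq T^{\prod}\mrq(s^{-1}-)$ on $A,B$ with those on $\overline{PA},\overline{PB}$. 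Applying the lax monoidal functor $\lim \colon \Pro\text{-}\mathcal{C}\to\mathcal{C}$ turns this into a morphism of actual dg-algebras $T^{\prod}(s^{-1}\overline{PA}) \to T^{\prod}(s^{-1}\overline{PB})$, and passing to $H_0$ gives the functoriality on $T^{\prod}(A_1)/\widehat{I}$.

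The main obstacle I expect is not any of these steps individually but the verification that the completion interacts correctly with the coequalizer defining classical cobar: replacing $\bigoplus$ by $\prod$ commutes with cokernels only under care because $\prod$ is not exact on the right, so one has to show directly that the quotient $T^{\prod}(A_1)/\widehat{I}$ really is (isomorphic to) the cokernel in Definition~\ref{DEFCOBARHAT}. This is where taking $\widehat{I}$ to be the \emph{closure} of $I$ (rather than the ideal generated by the image of $A_2$, which would be strictly smaller in general) is essential, and one has to check that the $\Omega$-trick of Lemma~\ref{LEMMACOBARABELIAN} identifies precisely this closure with the image of the relations map.
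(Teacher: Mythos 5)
The paper states this proposition without giving a proof, so there is nothing to compare against line by line; judged on its own, your argument has a genuine gap at its first and central step. The proof of Lemma~\ref{LEMMACOBARABELIAN} hinges on the algebra automorphism $\Omega$ of $T^{\oplus}(A_{[1]})$, $a \mapsto a - \varepsilon(a)$, and you assert that, being ``defined degree by degree on the tensor powers'', it acts equally well on $T^{\prod}(A_{[1]})$. It does not: $\Omega$ strictly lowers word length (a word of tensor length $n$ is sent to a sum of words of lengths $0,\dots,n$), so the would-be extension to the product has, in each fixed word length $m$, contributions from all input lengths $n \ge m$ --- an undefined infinite sum in a general Abelian category; nor is $\Omega$ continuous for the word-length filtration, so it does not pass to the completion either. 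Hence ``rerunning Lemma~\ref{LEMMACOBARABELIAN} with $\bigoplus$ replaced by $\prod$'' breaks exactly at the step that does all the work, and your closing remark (check that the $\Omega$-trick identifies the closure with the image) presupposes the map that fails to exist. Incidentally, describing $\widehat{I}$ as ``the product of the degree-$n$ components of $I$'' is also off, since $I$ is not homogeneous for word length.

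The failure is not a formality: once degreewise-finite infinite combinations of relations are allowed, the $A_{[0]}$-relations behave qualitatively differently. Already for $\mathcal{C}=\Ab$ and $A$ the unit coalgebra $\Z$ (so $A_1=0$ and $T^{\prod}(A_1)/\widehat{I} \cong H_0 \widehat{\cobarconst}^{\mathrm{Adams}} P A \cong \Z$), the element of the source of the cokernel in Definition~\ref{DEFCOBARHAT} which is $-1$ in every factor $A_{[1]}^{\otimes k}\otimes A_{[0]}$ maps to the telescoping sum $\sum_{k\ge 0}\bigl(1^{\otimes k} - 1^{\otimes(k+1)}\bigr)$, whose degreewise value is the unit of $T^{\prod}(A_{[1]})$; so the literal cokernel collapses and cannot be computed by transporting the unhatted presentation degree by degree. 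Any correct treatment must either interpret $\widehat{\cobarconst}$ as a completion of $\cobarconst\rho^*A$ along the word-length filtration (and then show this filtration is compatible with the isomorphism of Lemma~\ref{LEMMACOBARABELIAN}, which is not obvious, since $\Omega^{-1}$ does not respect it) or confront this interaction between the coaugmentation relations and infinite sums directly --- a point on which the statement itself, left unproved in the paper, deserves scrutiny. Your second isomorphism and the functoriality argument (the dg-algebra map $\mlq T^{\prod} \mrq(s^{-1}\overline{PA}) \to \mlq T^{\prod} \mrq(s^{-1}\overline{PB})$ induced by an $A_{\infty}$-morphism, then $\lim$ and $H_0$) do follow the route the paper sets up before Definition~\ref{DEFCOBARHAT} and are fine modulo the same image-versus-closure care, but they rest on the first step.
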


\subsection{Classical (co)bar for complexes in Abelian categories}\label{SECTCOBARDG}

\begin{PAR}\label{PARADJCOBARAB}
Let $(\mathcal{C}, \otimes)$ be an Abelian tensor category. Apply the last section to $(\Ch_{\ge 0}(\mathcal{C}), \tildeotimes)$.
Composing with the adjunction \ref{PARDEC} we get an adjunction
\[  \xymatrix{  \Alg(\Ch_{\ge 0}(\mathcal{C})_{/1}, \tildeotimes) \ar@<3pt>[rrr]^-{\cobarconst \circ \rho^*  \circ \dec^*_{\tildeotimes}} & & &  \ar@<3pt>[lll]^-{ \dec_{\tildeotimes,*} \circ (\rho^*)^{-1} \circ \barconst} \Coalg(\Ch_{\ge 0}(\mathcal{C}), \tildeotimes)  } \]
where $\barconst$ and $\cobarconst$ are understood w.r.t.\@ the cofibration $(\Ch_{\ge 0}(\mathcal{C}), \tildeotimes) \to \OOO$.
Here $\dec^*_{\tildeotimes}$ is the decalage $\dec^*$ turned into a lax monoidal functor (=functor of cooperads) as described in Section~\ref{SECTDECTILDEOTIMES}.

The adjoint functors will be identified in this section in terms of the Eilenberg-MacLane bar and Adams cobar construction. 
\end{PAR}

\begin{SATZ}\label{SATZBAREM}
There is an isomorphism 
\[ \boxed{ \barconst^{EM} \cong \dec_{\tildeotimes, *} \circ (\rho^*)^{-1} \circ \barconst }   \] 
of functors\footnote{the LHS is the category of augmented algebras in $(\Ch_{\ge 0}(\mathcal{C}), \tildeotimes)$ which is the same as algebra objects in augmented objects.}
\[ \Alg(\Ch_{\ge 0}(\mathcal{C})_{/1}, \tildeotimes) \to \Coalg_{\mathrm{conn}}(\Ch_{\ge 0}(\mathcal{C}), \tildeotimes)  \]  
where $\barconst = \barconst_{(\Ch_{\ge 0}(\mathcal{C})_{/1}, \tildeotimes) \to \OOO}$ is the classical bar construction (Definition~\ref{DEFCOBARCLASS})\footnote{``$\barconst$'' commutes with the forgetful functor forgetting the augmentation (in the obvious sense), hence we didn't mention the augmentation in the index of ``$\barconst$'', but it is important to keep the augmentation along to be a able to interpret the result via $(\rho^*)^{-1}$ as a double complex. (Without the augmentation it is only a diagram of shape $\Delta^{\op} \times \Delta^{\op}_{\act}$).},
and where $\dec_* = \tot$ is the total complex functor considered as a monoidal functor as in \ref{PARDEC}.
Both sides are connected and hence canonically coaugmented coalgebras. 
\end{SATZ}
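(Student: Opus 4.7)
My plan is to proceed by first reducing the problem, via Lemma~\ref{LEMMABARAB} applied in a relative setting, to a computation of the total complex of a bicomplex. Concretely, I would apply Lemma~\ref{LEMMABARAB} to the Abelian tensor category $(\Ch_{\ge 0}(\mathcal{C})_{/1}, \tildeotimes)$ itself. This yields a natural isomorphism
\[ (\rho^*)^{-1} \barconst(A) \cong T^{\coprod}(s\overline{A}) \]
of coalgebras in $\Coalg((\Ch_{\ge 0}(\mathcal{C})_{/1})^{\Delta^{\op}}, \tildeotimes_{\mathrm{Day}})$, the Day convolution target. Here $\overline{A}$ is placed in simplicial degree $0$ and $s$ denotes the shift in this \emph{new} simplicial direction, while the differential is the coderivation extension (dual of Lemma~\ref{LEMMATENSORCOALG}) of the multiplication $m \colon \overline{A}^{\tildeotimes 2} \to \overline{A}$.

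Next, using Dold--Kan (Theorem~\ref{SATZDOLDKANII}), I would identify $\Ch_{\ge 0}(\mathcal{C})^{\Delta^{\op}}$ with bicomplexes, and identify the Day convolution $\tildeotimes_{\mathrm{Day}}$ (as in Example~\ref{EXDAY2}) with the total tensor product of bicomplexes. Under this identification, $T^{\coprod}(s\overline{A})$ becomes the bicomplex $\bigoplus_n s^n \overline{A}^{\tildeotimes n}$, with $s^n$ the shift in the new direction. Then Proposition~\ref{PROPEXPLICITAB}(1) computes $\dec_*$ as the total complex functor, whence
\[ \dec_*\Bigl(\bigoplus_n s^n \overline{A}^{\tildeotimes n}\Bigr) \;=\; \bigoplus_n s^n \overline{A}^{\tildeotimes n} \]
now with $s^n$ reinterpreted as a degree shift of complexes — which is exactly the underlying graded of $\barconst^{\mathrm{EM}}(A)$. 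The monoidal structure on $\dec_{\tildeotimes,*}$ (coming from the Eilenberg--Zilber theorem of Section~\ref{SECTEZAB}) transports the deconcatenation coproduct on the left to the deconcatenation coproduct of Definition~\ref{DEFCOBARADAMS} on the right.

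The main obstacle will be matching the differentials. The total complex differential has the shape $\dd_l + (-1)^i \dd_r$ of Proposition~\ref{PROPEXPLICITAB}(1) and receives two contributions: (a) the internal differential on $\overline{A}^{\tildeotimes n}$, which under Koszul signs and the shift $s^n$ produces precisely the $m_1 = -\dd$ term of the coderivation extension (cf.\@ Lemma~\ref{LEMMAAINFTYCORRESPONDENCE} and \ref{PARMYSTERIOUSSIGN}); and (b) the simplicial-direction differential which, by the description of the coalgebra structure in Lemma~\ref{LEMMABARAB}, is the alternating sum of the multiplications $\mu_i \colon \overline{A}^{\tildeotimes n} \to \overline{A}^{\tildeotimes n-1}$ at internal positions, giving the $m_2 = m$ term. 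The extreme face maps $\delta_0$ and $\delta_n$ vanish because they would apply the augmentation, which is zero on $\overline{A}$. The hard part is bookkeeping of signs: the $(-1)^i$ in the total complex formula, the Koszul rule for tensor products of shifted complexes, and the $(-1)^{r+st}$ signs in the Stasheff identities of \ref{PARMYSTERIOUSSIGN} must all conspire so that the resulting differential on $\bigoplus_n s^n \overline{A}^{\tildeotimes n}$ is precisely the coderivation extension of $m_1 + m_2$, i.e.\@ the Eilenberg--MacLane differential. This is essentially the classical check that the bar construction encodes the $A_{\infty}$-structure with the right sign conventions; once carried out, it completes the natural isomorphism of coaugmented dg-coalgebras.
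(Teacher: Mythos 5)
Your proposal follows essentially the same route as the paper: apply Lemma~\ref{LEMMABARAB} to $(\Ch_{\ge 0}(\mathcal{C})_{/1},\tildeotimes)$ to identify $(\rho^*)^{-1}\barconst(A)$ with the tensor coalgebra $T^{\coprod}(s_r\overline{A})$ as a double complex, then apply $\dec_*=\tot$ and match underlying gradeds, coproducts, and differentials. The only point you defer that the paper makes explicit is that the naive degreewise identification is \emph{not} a coalgebra map because of the Koszul sign in the monoidal structure on $\dec_{\tildeotimes,*}$ (\ref{PARDEC}); one must instead take the unique coalgebra isomorphism extending the projection onto $s\overline{A}$, which introduces the sign $(-1)^{\sum_k (m-k)\deg i_{m-k}}$ on the summand $A_{i_1}\otimes\cdots\otimes A_{i_m}$, after which the differential check you outline goes through.
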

\begin{proof}
Let $A \in \mathcal{C}^{\Delta^{\op}}$ be an algebra w.r.t.\@ $\tildeotimes$, i.e.\@ a dg-algebra, with augmentation $A \to 1$.
By Lemma~\ref{LEMMABARAB}  applied to $(\mathcal{C}^{\Delta^{\op}}, \tildeotimes)$ there is an isomorphism
\[ (\rho^*)^{-1} \barconst(A) = T^{\coprod}_{(\mathcal{C}^{\Delta^{\op}}, \tildeotimes)}(s_r \overline{A}) \]
of double complexes (with rows $\overline{A} \tildeotimes \cdots \tildeotimes \overline{A}$), where $s_r$ is the shift w.r.t.\@ the second (column) index. 
We have to see that there is an isomorphism of dg-algebras: 
\[ \dec_* T^{\coprod}_{(\mathcal{C}^{\Delta^{\op}}, \tildeotimes)}(s_r \overline{A})  \cong  T^{\coprod}_{(\mathcal{C}, \otimes)}(s \overline{A}) \]
where $T^{\coprod}_{(\mathcal{C}, \otimes)}$ considers $s \overline{A}$ as a graded object and is equipped with the differential
\[ \dd_1 + \dd_2  \]
from Lemma~\ref{LEMMAAINFTYCORRESPONDENCE} and where $\dec_*$ is considered a monoidal functor as described in \ref{PARDEC}. 
Both sides are in a natural way direct sums of $A_{i_1} \otimes \cdots \otimes A_{i_m}$ in degree $n=m+\sum_k i_k$.
However, because of the sign in the coalgebra structure on the left hand side (cf.\@ \ref{PARDEC}), the identity would not be compatible with the coalgebra structures,
but we have an epimorphism of graded objects  
\[ p:  \dec_* T^{\coprod}_{(\mathcal{C}^{\Delta^{\op}}, \tildeotimes)}(s_l \overline{A})  \to s \overline{A} \]
by the obvious projection. Like in (the dual of) Lemma~\ref{LEMMATENSORCOALG} it extends to an isomorphism  
\[ p':  \dec_* T^{\coprod}_{(\mathcal{C}^{\Delta^{\op}}, \tildeotimes)}(s_l \overline{A})  \to T^{\coprod}(s \overline{A}) \]
of graded coalgebras, introducing the sign $(-1)^{\sum_k (m-k)\deg i_{m-k}}$ on the summand $A_{i_1} \otimes \cdots \otimes A_{i_m}$.
Finally observe that, for $A_n$ in complex bidegree $(n, 1)$ resp.\@ in degree $n+1$, the following commutes:
\[ \xymatrix{  A_{n} \ar@{=}[r] \ar[d]_{(-1)^{n} m_{i_1,i_2}} & A_{n} \ar[d]^{ \dd_{2,i_1-1,i_2-1}}  \\
A_{i_1} \otimes A_{i_2} \ar[r]_{(-1)^{i_2}} & A_{i_1} \otimes A_{i_2}
} \]
where the $(-1)^{n} m_{i_1,i_2}$ is the second summand of the differential on the total complex. 
\end{proof}

\begin{PAR}\label{PARPREPCOBARADAMS}
Let $B \in \Coalg(\Ch_{\ge 0}(\mathcal{C}), \tildeotimes)$. Apply Lemma~\ref{LEMMACOBARABELIAN} to the double complex $A:=\dec^* B$ considered as object
 in $\Coalg(\mathcal{D}^{\Delta^{\op}}, \tildeotimes)$ with $\mathcal{D} = \Ch_{\ge 0}(\mathcal{C})$.
 We have by Proposition~\ref{PROPEXPLICITAB}, 2.:
 \[ A_1 = s^{-2}B \oplus s^{-1}B \quad A_{2} = s^{-3}B \oplus s^{-2}B \]
(not a direct sum as complexes!), where $s^{-1}$ is considered as endomorphism of $\Ch_{\ge 0}(\mathcal{C})$ (i.e.\@ really $\tau_{\ge 0} s^{-1}$) and 
the differential (between the columns) is 
 \[  \xymatrix{ s^{-3}B \oplus s^{-2}B \ar[rr]^{\Mat{0 & 1 \\ 0 & 0}} &&  s^{-2}B \oplus s^{-1}B } \]
 and the 1,1-component of the comultiplication is 
 \[  \xymatrix{ s^{-3}B \oplus s^{-2}B  \ar[rr]^{} &&  (s^{-2}B \oplus s^{-1}B) \otimes (s^{-2}B \oplus s^{-1}B)  } \]
 whose components are by Lemma~\ref{LEMMADEC2}, 3.\@ given by: 
 \begin{align*}  m_{1,1}:  B_{i+j+3} \oplus B_{i+j+2} &\to  (B_{i+2}  \otimes B_{j+2})   \oplus (B_{i+2} \otimes B_{j+1})  
\oplus (B_{i+1} \otimes B_{j+2} ) \oplus ( B_{i+1} \otimes  B_{j+1})   \\
   (b_{n+3},b_{n+2}) &\mapsto  (0,\beta_{i+2,j+1}(b_{n+3}), (-1)^{j+1}  \beta_{i+1,j+2}(b_{n+3}), (-1)^{j}  \beta_{i+1,j+1}(b_{n+2})).
 \end{align*}
 \end{PAR}

\begin{SATZ}\label{SATZCOBARADAMS2}
Let $(\mathcal{C}, \otimes)$ be an Abelian tensor category with countable colimits such that $\otimes$ commutes with them. 
There is an isomorphism 
 \[ \boxed{ \cobarconst^{\mathrm{Adams}} \circ P  \cong \cobarconst \circ \rho^* \circ \dec^*_{\tildeotimes} } \]
of functors\footnote{the RHS is the category of augmented algebras in $(\Ch_{\ge 0}(\mathcal{C}), \tildeotimes)$ which is the same as algebra objects in augmented objects.}
\[ \Coalg(\Ch_{\ge 0}(\mathcal{C}), \tildeotimes)  \to \Alg(\Ch_{\ge 0}(\mathcal{C})_{/1}, \tildeotimes)   \]  
where $\cobarconst = \cobarconst_{(\mathcal{C}^{\Delta^{\op}}, \tildeotimes) \to \OOO}$ is the classical cobar construction (Definition~\ref{DEFCOBARCLASS}), and where
 $\dec^*_{\tildeotimes}$ is $\dec^*$ considered as a lax monoidal functor (= functor of cooperads) as in Section~\ref{SECTDECTILDEOTIMES}. 
\end{SATZ}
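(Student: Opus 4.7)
The plan is to mimic the proof of Theorem~\ref{SATZCOBARADAMS1}: apply Lemma~\ref{LEMMACOBARABELIAN} verbatim but with $(\mathcal{C},\otimes)$ replaced by $(\Ch_{\ge 0}(\mathcal{C}),\tildeotimes)$, then use the explicit description of $\dec^*_{\tildeotimes}B$ to identify the resulting presentation with $T^{\oplus}(s^{-1}\overline{PB})$ equipped with the Adams differential. Throughout, set $\mathcal{D}:=\Ch_{\ge 0}(\mathcal{C})$, and fix $B\in\Coalg(\mathcal{D},\tildeotimes)$.

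First, set $A:=\rho^*\dec^*_{\tildeotimes}B\in ((\mathcal{D}^{\Delta^{\op}},\tildeotimes)^\vee)^{(\Delta,\ast)^{\op}}$. By Lemma~\ref{LEMMACOBARABELIAN} applied to $(\mathcal{D},\tildeotimes)$ (which is an $L$-admissible Abelian tensor category whenever $(\mathcal{C},\otimes)$ is, since $\tildeotimes$ preserves colimits in each variable) we obtain
\[
\cobarconst(A)\;\cong\;T^{\oplus}_{\tildeotimes}(A_1)\big/I,
\]
where $A_1$, $A_2$ denote the Dold--Kan-normalized pieces of $\dec^*_{\tildeotimes}B$ in $\mathcal{D}$, and $I$ is the two-sided (differential) ideal generated by the image of $m_{1,1}+\dd\colon A_2\to A_1\tildeotimes A_1\oplus A_1$.

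Second, I will unravel $A_1$ and $A_2$ using Proposition~\ref{PROPEXPLICIT}, 2.\@ and the computation carried out in \ref{PARPREPCOBARADAMS}. There we already have $A_1\cong s^{-2}B\oplus s^{-1}B$ and $A_2\cong s^{-3}B\oplus s^{-2}B$ as graded objects, with the off-diagonal internal differential $\bigl(\begin{smallmatrix}0&1\\0&0\end{smallmatrix}\bigr)$, and with $m_{1,1}$ given explicitly in terms of the AW-components $\beta_{i,j}$ of the comultiplication of $B$. The essential point is that the $(1,1)$-component of $m_{1,1}+\dd$, read on $s^{-2}B$, is an \emph{isomorphism} onto the $s^{-2}B$-summand of $A_1$; consequently, modding out $T^{\oplus}_{\tildeotimes}(A_1)$ by $I$ eliminates the $s^{-2}B$ generators, producing a canonical surjection
\[
T^{\oplus}_{\tildeotimes}(A_1)\big/I\;\twoheadrightarrow\;T^{\oplus}_{\tildeotimes}(s^{-1}B)\big/J
\]
where $J$ is the image of the remaining relations coming from $A_2$ via $m_{1,1}+\dd$. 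Since the $s^{-2}B$ summands of $A_2$ and $A_1$ have been used up in making $A_1$ acyclic, the residual relations encoded by $J$ collapse onto $s^{-1}B$ and, by construction, recover exactly the Adams differential $\dd_1$ (built from $m_1=-\dd$ and the $m_{k,\ell}$) of Lemma~\ref{LEMMAAINFTYCORRESPONDENCE}. Connectivity of $PB$ is used here: the shift $s^{-1}$ lands in $\Ch_{\ge 0}(\mathcal{C})$ only after passing to $\overline{PB}$, and the summand $s^{-1}B_0$ is killed by the counit-side relations in $I$ (those coming from $A_{[0]}$ in \ref{PARCOBAREXPLICIT}), matching the coaugmentation of $B$ with the augmentation of $\cobarconst^{\mathrm{Adams}}(PB)$.

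Third, having produced a morphism of augmented dg-algebras $\cobarconst\rho^*\dec^*_{\tildeotimes}B\to\cobarconst^{\mathrm{Adams}}(PB)$, I will show it is an isomorphism by constructing the inverse directly from the universal property of the free algebra $T^{\oplus}(s^{-1}\overline{PB})$: the composite $s^{-1}\overline{PB}\hookrightarrow A_1\hookrightarrow T^{\oplus}_{\tildeotimes}(A_1)/I$ is checked to be a map of graded objects compatible with $\dd_1$, hence extends uniquely to a dg-algebra map; the two composites are then identities by the same universal property. Naturality in $B$ is automatic from the naturality of Lemma~\ref{LEMMACOBARABELIAN}, of the presentation in \ref{PARPREPCOBARADAMS}, and of $\rho^*$, $\dec^*_{\tildeotimes}$, $P$.

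The hard step is the middle one: carefully tracking signs through the Koszul rule and through the monoidal structure on $\dec^*_{\tildeotimes}$ (Section~\ref{SECTDECTILDEOTIMES}) to verify that the relation $m_{1,1}+\dd$ on $A_2$, transported across the identification $A_1\cong s^{-1}\overline{PB}$ (modulo acyclic summands), produces \emph{exactly} the derivation $\dd$ on $T^{\oplus}(s^{-1}\overline{PB})$ defined by Lemma~\ref{LEMMAAINFTYCORRESPONDENCE}, including the $m_{k,\ell}$ components coming from all AW-summands $\beta_{k,\ell}$ of the comultiplication. This is largely parallel to the sign bookkeeping already carried out in the proof of Theorem~\ref{SATZBAREM}, and the analogous matching of the cocoalgebra differential there gives a template for the present algebra computation.
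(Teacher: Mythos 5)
Your proposal follows essentially the same route as the paper: there Theorem~\ref{SATZCOBARADAMS2} is deduced from Lemma~\ref{LEMMACOBARABELIAN} (applied to $(\Ch_{\ge 0}(\mathcal{C}),\tildeotimes)$) together with the computation in \ref{PARPREPCOBARADAMS} and Proposition~\ref{PROPCOBARABDEC}, whose explicit maps $\phi(b_{n+2},b_{n+1})=(-1)^{n}s^{-1}(b_{n+1})+\dd_2 s^{-1}(b_{n+2})$ and $\psi(b_{n+1})=(0,(-1)^n b_{n+1})$ implement precisely the elimination of the $s^{-2}B$-generators you describe, and the sign bookkeeping you defer as the ``hard step'' is exactly the content of the paper's proof of that proposition. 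Two cosmetic corrections: the transported differential is the full Adams differential $\dd_1+\dd_2$ (your parenthetical suggests you intend this), and $B_0$ never enters because the normalized generators are already $\tau_{\ge 0}s^{-1}B$ — it is not removed by extra counit relations in $I$.
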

\begin{proof}
This follows from Proposition~\ref{PROPCOBARABDEC} below.
\end{proof}

\begin{PROP}\label{PROPCOBARABDEC}
Let $\mathcal{C}$ be an Abelian tensor category with countable colimits such that $\otimes$ commutes with them.
 In the situation of Lemma~\ref{LEMMACOBARABELIAN} for $\mathcal{C}^{\Delta^{\op}}$ with tensor product $\tildeotimes = \dec_* - \boxtimes -$ (i.e.\@ under Dold-Kan the usual tensor product on complexes) there is an isomorphism of dg-algebras
\begin{eqnarray*} \phi:  T^{\oplus}((\dec^*B)_1)/ I  &\rightarrow& T^{\oplus}(s^{-1}B) \\
 b_{n+2},b_{n+1} &\mapsto& (-1)^{n} s^{-1}(b_{n+1}) + \dd_2 s^{-1}(b_{n+2}) 
\end{eqnarray*} 
where $I$ is the ideal from Lemma~\ref{LEMMACOBARABELIAN} and where $T^{\oplus}(s^{-1}B)$ is equipped with the restriction of the differential $\dd = \dd_1 + \dd_2$ induced by the coalgebra structure on $B$ as described in Lemma~\ref{LEMMAAINFTYCORRESPONDENCE} and where $s^{-1}$ is the endomorphism of $\Ch(\mathcal{C}_{\ge 0})$ (i.e.\@ really $\tau_{\ge 0}s^{-1})$. 
\end{PROP}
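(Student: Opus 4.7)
First I would unpack the setup. Specialising Lemma~\ref{LEMMACOBARABELIAN} to the Abelian tensor category $(\mathcal{C}^{\Delta^{\op}}, \tildeotimes)$ and to $A:=\dec^{*}B$ identifies $\cobarconst\rho^{*}\dec^{*}B$ with $T^{\oplus}(A_{1})/I$, and~\ref{PARPREPCOBARADAMS} already records $A_{1}=s^{-2}B\oplus s^{-1}B$, $A_{2}=s^{-3}B\oplus s^{-2}B$, the internal differential $\bigl(\begin{smallmatrix}\dd&-1\\0&\dd\end{smallmatrix}\bigr)$ on $A_{1}$ obtained from Proposition~\ref{PROPEXPLICITAB} at $i=1$, the connecting map $\dd\colon A_{2}\to A_{1}$, $(b_{n+3},b_{n+2})\mapsto(b_{n+2},0)$, and the four components of $m_{1,1}\colon A_{2}\to A_{1}\otimes A_{1}$. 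The formula for $\phi$ on the generators $A_{1}$ extends uniquely by freeness of the tensor algebra to an algebra homomorphism $\widetilde{\phi}\colon T^{\oplus}(A_{1})\to T^{\oplus}(s^{-1}B)$; what remains is to show (i) $\widetilde{\phi}$ kills the ideal $I=\langle(m_{1,1}+\dd)(A_{2})\rangle$, (ii) the induced $\phi$ is bijective, and (iii) $\phi$ commutes with the differentials.

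For (i) I would evaluate $\widetilde{\phi}((m_{1,1}+\dd)(b_{n+3},b_{n+2}))$ in $T^{\oplus}(s^{-1}B)$. Its $(s^{-1}B)^{\otimes 2}$-part consists of $\dd_{2}s^{-1}(b_{n+2})$ coming from $\phi\circ\dd$ and the image of the $(B_{i+1}\otimes B_{j+1})$-component $(-1)^{j}\beta_{i+1,j+1}(b_{n+2})$ of $m_{1,1}$ under $\phi(0,-)\cdot\phi(0,-)=(-1)^{i+j}\,s^{-1}(-)\otimes s^{-1}(-)$; these cancel summand-wise once one uses the Koszul convention $\dd_{2}s^{-1}(b)=\sum(-1)^{\deg b'}\,s^{-1}b'\otimes s^{-1}b''$ dictated by the $A_{\infty}$-coalgebra structure. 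Its $(s^{-1}B)^{\otimes 3}$-part comes from the two middle components of $m_{1,1}$ (those involving $b_{n+3}$); their sum vanishes by the Stasheff relation $\dd_{2}^{2}=0$, i.e.\ coassociativity of $\Delta$, after a standard sign matching.

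For (ii) I would build the inverse $\psi\colon T^{\oplus}(s^{-1}B)\to T^{\oplus}(A_{1})/I$ as the unique algebra map sending $s^{-1}b_{n+1}\in(s^{-1}B)_{n}$ to the class of $(-1)^{n}(0,b_{n+1})$. Then $\phi\psi=\id$ holds on generators by direct inspection, while $\psi\phi(b_{n+2},b_{n+1})=(0,b_{n+1})+\psi(\dd_{2}s^{-1}(b_{n+2}))$; running the sign computation of (i) backwards and invoking the congruence $(b_{n+2},0)\equiv-m_{1,1}(0,b_{n+2})\pmod I$ that the ideal imposes, the second summand is identified with $(b_{n+2},0)$ modulo~$I$, so $\psi\phi=\id$.

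Finally, for (iii) one first checks that $I$ is a dg-ideal for the derivation induced by $\dd_{A_{1}}$ on $T^{\oplus}(A_{1})$, using that the full comultiplication of the dg-coalgebra $A=\dec^{*}B$ commutes with the differential so that the components $m_{i,j}$ satisfy the expected mixed-term compatibilities. The intertwining $\dd_{T^{\oplus}(s^{-1}B)}\circ\phi=\phi\circ\dd_{A_{1}}$ is then checked on generators: after applying $\dd_{1}s^{-1}=-s^{-1}\dd$, the Stasheff identities $\dd_{2}^{2}=0$ and $\dd_{1}\dd_{2}+\dd_{2}\dd_{1}=0$, and pulling the residual $\dd_{2}s^{-1}(b_{n+1})$ term through the ideal relation $\dd+m_{1,1}\equiv 0$ exactly as in step~(ii), one arrives at the required equality. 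The main obstacle throughout is the cumulative sign and Koszul bookkeeping: no individual reduction is deep, but aligning the conventions for the shift $s^{-1}$, the $A_{\infty}$-structure on $B$, the splitting~$r$ of Proposition~\ref{PROPEXPLICITAB}, and the comultiplication components spelled out in~\ref{PARPREPCOBARADAMS} requires genuine care.
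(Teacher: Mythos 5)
Your proposal is correct and follows essentially the same route as the paper's proof: the same formula for $\phi$ extended by freeness, the same check that the generators of $I$ die (quadratic terms cancelling pairwise against $\phi(\dd(A_2))$, cubic terms by coassociativity, i.e.\ $\dd_2^2=0$), and the same explicit inverse $\psi\colon s^{-1}b_{n+1}\mapsto (0,(-1)^n b_{n+1})$ with $\phi\psi=\id$ and $\psi\phi\equiv\id$ modulo $I$ via the congruence $(b_{n+2},0)\equiv -m_{1,1}(0,b_{n+2})$.

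One small correction to your step (iii): no ideal relation is needed, nor is one available, since the target $T^{\oplus}(s^{-1}B)$ is a free algebra and not a quotient, so the relation $\dd+m_{1,1}\equiv 0$ cannot be invoked on that side. The term $\dd_2 s^{-1}(b_{n+1})$ you call \emph{residual} arises directly as $\phi$ applied to the off-diagonal entry $(-1)^n b_{n+1}$ of the differential $\Mat{\dd & (-1)^n \\ 0 & \dd}$ of the complex $(\dec^*B)_1$, and the intertwining $\phi\circ\dd=(\dd_1+\dd_2)\circ\phi$ then holds on the nose using only $\dd_1=-\dd$, $\dd_2^2=0$ and $\dd_1\dd_2=-\dd_2\dd_1$, exactly as in the paper; this slip does not affect the validity of your argument once the generator-level computation is carried out.
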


\begin{proof}
Recall that $\dd_1 = -\dd$ and $\dd_2: s^{-1}B \to s^{-1}B \tildeotimes s^{-1}B$ is the comultiplication twisted with sign: $\dd_{2,i+1,j+1} = (-1)^i \beta_{i,j}$ (both extended to $T(s^{-1}B)$ using the graded Leibniz rule). 
We first check that $\phi$ is compatible with the differential and maps $I$ to zero. We omit the $s^{-1}$ for better readability. 

$I$ is  generated by elements of the form 
\[   (b_{n+2},0) + \sum_{i+j=n}(0,\beta_{i+2,j+1}(b_{n+3}), (-1)^{j+1}  \beta_{i+1,j+2}(b_{n+3}), (-1)^{j}  \beta_{i+1,j+1}(b_{n+2})) \]
which are mapped by $\phi$ to 
\begin{gather*}
  \sum_{i+j=n} \left( (-1)^{i+1}  \beta_{i+1,j+1}(b_{n+2})+ (-1)^{n+j}  \beta_{i+1,j+1}(b_{n+2})  \right)  \\
+ \sum_{k+l+m+2=n} (-1)^{m} \left(((-1)^{k+1} \beta_{k+1,l+1} \otimes 1)\beta_{k+l+2,m+1}(b_{n+3}) \right. \\
+ \left. (-1)^{k}((-1)^{l+1}  1 \otimes \beta_{l+1,m+1} ) (-1)^{l+m+1} \beta_{k+1,l+m+2}(b_{n+3}) \right) = 0. 
\end{gather*}

We have to check the commutativity of
\[ \xymatrix{
B_{n+2} \oplus B_{n+1}  \ar[r]^-{\phi} \ar[d]_{\Mat{\dd& (-1)^n \\ & \dd}} & T^{\oplus}(s^{-1}B)_{n+2} \ar[d]^{\dd_1+\dd_2}  \\
B_{n+1} \oplus B_{n}  \ar[r]^-{\phi} & T^{\oplus}(s^{-1}B)_{n+1}  \\
} \]
The upper composition maps  a pair $(b_{n+2},b_{n+1})$  to 
\[ -\dd (-1)^{n} b_{n+1} - \dd \dd_2 b_{n+2} + \dd_2 (-1)^{n} b_{n+1} +  (\dd_2)^2 b_{n+2}. \]
The lower composition maps it  to 
\[  (-1)^{n-1} \dd b_{n+1}  +  \dd_2 (\dd b_{n+2}+(-1)^n b_{n+1}). \]
This is the same using $(\dd_2)^2 = 0$ and $\dd \dd_2 = -\dd_2 \dd$.
Defining
\begin{eqnarray*} \psi: T^{\oplus}(s^{-1}B)   &\rightarrow& T^{\oplus}((\dec^*B)_1)/ I  \\
 b_{n+1} &\mapsto& (0,(-1)^n b_{n+1})
\end{eqnarray*} 
we have $\phi \psi = \id$ and 
\begin{gather*}
 (\psi \phi - \id)(b_{n+2},b_{n+1})  = (-b_{n+2},0) +  \psi \dd_2 b_{n+2} \\
=  (-b_{n+2},0) + (-1)^n\sum_{i+j=n}\left(0,0,0,(-1)^{i+1}\beta_{i+1,j+1}(b_{n+2})\right)  
\end{gather*}
with the same decomposition in tensor-degree two as above. This is obviously in $I$.
\end{proof}

\begin{KOR}
Let $(\mathcal{C}, \otimes)$ be an Abelian tensor category with countable colimits such that $\otimes$ commutes with them. 
Then there is an adjunction
\[ \xymatrix{ \Alg(\Ch_{\ge 0}(\mathcal{C})_{/1}, \tildeotimes) \ar@<3pt>[rr]^-{\barconst^{EM}} && \ar@<3pt>[ll]^-{\cobarconst^{\mathrm{Adams}}} \Coalg_{\mathrm{conn}}(\Ch_{\ge 0}(\mathcal{C}), \tildeotimes).   } \]
\end{KOR}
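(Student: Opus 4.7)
The plan is to obtain the stated adjunction as a purely formal consequence of several adjunctions already established in the paper, followed by a reflective-subcategory restriction. First I would apply Theorem~\ref{THEOREMEXISTENCECOBAR} to the cofibration of operads $(\Ch_{\ge 0}(\mathcal{C})_{/1}, \tildeotimes) \to \OOO$ to produce the classical adjunction $\cobarconst \dashv \barconst$. The required $L$-admissibility follows from our hypotheses: $\mathcal{C}$ (and hence $\Ch_{\ge 0}(\mathcal{C})_{/1}$) has countable colimits, and the product $\tildeotimes = \dec_* (- \boxtimes -)$ preserves them entry-wise because both $\boxtimes$ (built from $\otimes$) and $\dec_* \cong \tot$ do.

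Next I would chain this adjunction with the adjoint equivalence $\rho^* \dashv (\rho^*)^{-1}$ from Corollary~\ref{KORRHO} (applied inside augmented objects, where the unit is final) and the lax/oplax decalage adjunction $\dec^*_{\tildeotimes} \dashv \dec_{\tildeotimes,*}$ developed in Section~\ref{SECTDECTILDEOTIMES}. Composing the three adjunctions yields
\[ \cobarconst \circ \rho^* \circ \dec^*_{\tildeotimes}\ \dashv\ \dec_{\tildeotimes,*} \circ (\rho^*)^{-1} \circ \barconst \]
as a pair of adjoint functors between $\Coalg(\Ch_{\ge 0}(\mathcal{C}), \tildeotimes)$ and $\Alg(\Ch_{\ge 0}(\mathcal{C})_{/1}, \tildeotimes)$. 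Invoking Theorem~\ref{SATZCOBARADAMS2} the left-hand side becomes $\cobarconst^{\mathrm{Adams}} \circ P$, and by Theorem~\ref{SATZBAREM} the right-hand side becomes $\barconst^{\mathrm{EM}}$. So at this stage we already have $\cobarconst^{\mathrm{Adams}} \circ P \dashv \barconst^{\mathrm{EM}}$ as an adjunction $\Coalg \leftrightarrows \Alg$.

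To cut down the domain to $\Coalg_{\mathrm{conn}}$ I would use the reflective adjunction $P \dashv i$ between the connected cover and the inclusion $i \colon \Coalg_{\mathrm{conn}} \hookrightarrow \Coalg$. For $B \in \Coalg_{\mathrm{conn}}$ one has $P(iB) \cong B$, so $\cobarconst^{\mathrm{Adams}}(B) \cong (\cobarconst^{\mathrm{Adams}} \circ P)(iB)$. Moreover $\barconst^{\mathrm{EM}}(A) = T^{\coprod}(s\overline{A})$ is always connected (degree $0$ is the unit $1$, higher degrees live in $s\overline{A}^{\otimes k}$), so it already lies in the essential image of $i$. Combining these, for $A \in \Alg(\Ch_{\ge 0}(\mathcal{C})_{/1}, \tildeotimes)$,
\[ \Hom_{\Alg}\!\bigl(\cobarconst^{\mathrm{Adams}}(B),A\bigr) \cong \Hom_{\Coalg}\!\bigl(iB,\barconst^{\mathrm{EM}}(A)\bigr) \cong \Hom_{\Coalg_{\mathrm{conn}}}\!\bigl(B,\barconst^{\mathrm{EM}}(A)\bigr), \]
using fully faithfulness of $i$ in the last step, which is the asserted adjunction.

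There is no serious conceptual obstacle: the whole argument is a bookkeeping exercise in composing adjunctions. The main points requiring a small amount of care are verifying the $L$-admissibility precondition for Theorem~\ref{THEOREMEXISTENCECOBAR} in the $\tildeotimes$-setting, and checking that the identifications supplied by Theorems~\ref{SATZBAREM} and~\ref{SATZCOBARADAMS2} are genuinely natural isomorphisms of functors (which is how they are stated, but one must not confuse $\cobarconst^{\mathrm{Adams}}$ with $\cobarconst^{\mathrm{Adams}} \circ P$ before restricting). Everything else is formal.
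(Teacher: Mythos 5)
Your proposal is correct and follows essentially the paper's own route: the corollary is precisely the composite adjunction of \ref{PARADJCOBARAB} (the classical $\cobarconst \dashv \barconst$ adjunction composed with the equivalence $\rho^*$ and the decalage adjunction $\dec^*_{\tildeotimes} \dashv \dec_{\tildeotimes,*}$), with the two composites identified as $\cobarconst^{\mathrm{Adams}} \circ P$ and $\barconst^{\mathrm{EM}}$ via Theorems \ref{SATZCOBARADAMS2} and \ref{SATZBAREM}. Your final restriction to $\Coalg_{\mathrm{conn}}$ via $P \dashv i$ and the connectedness of $\barconst^{\mathrm{EM}}$ just makes explicit a step the paper leaves implicit.
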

\begin{proof}
This is the adjunction~\ref{PARADJCOBARAB}.
\end{proof}

\subsection{Comparison of classical and derived (co)bar}\label{SECTCOMPLURIE}

As we have seen, the bar constructions $\overline{W}$ and $\barconst^{EM}$ are given by our abstract ``classical bar construction'' for $I = \mathcal{S} = \OOO$ followed by the (totalization) functor $\dec_*$.
By definition\footnote{Lurie defines them differently, but we have seen in Corollary~\ref{KORCOBAR} that the definitions agree} the derived bar construction is given by this ``classical bar construction''
followed by a relative left Kan extension which (in the augmented case) can be computed as colimit over $\Delta^{\op}$. Thus, simply put, $\overline{W}$ and $\barconst^{EM}$ give Lurie's bar construction because
$\dec_*$ computes the (derived) colimit over $\Delta^{\op}$ (this is true essentially because of the Eilenberg-Zilber theorem, cf.\@ Section~\ref{SECTGEOMREAL}). This section is devoted to an elaboration of this explanation. 
 
Denote $\mathrm{S} = \Set^{\Delta^{\op}}[\mathcal{W}^{-1}] \cong \Gpd_{\infty}$ (localizations always as $\infty$-categories).

\begin{PROP}\label{PROPCOMPLURIE1}
We have a commutative diagram
\[ \xymatrix{ 
\mathrm{Mon}^{\Delta^{\op}} \ar[r] \ar[d]_{(\rho^*)^{-1} \circ \barconst_{(\Set^{\Delta^{\op}}, \times) \to \OOO}} & \Alg(\mathrm{S}, \times) \ar[d]^{(\rho^*)^{-1} \circ \barconst_{(\mathrm{S}, \times) \to \OOO}} \\
 \Set^{\Delta^{\op} \times \Delta^{\op}} \ar[r] \ar[d]_{\dec_*} & \mathrm{S}^{\Delta^{\op}} \ar[d]^{\colim_{\Delta^{\op}}}  \\
 \Set^{\Delta^{\op}} \ar[r] & \mathcal{S}
}\]
\end{PROP}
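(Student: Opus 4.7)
The plan is to decompose the diagram into its two constituent squares and verify each separately: the top square expresses naturality of the classical bar construction in monoidal functors, while the bottom square is a consequence of the Eilenberg--Zilber theorem together with the fact that $\delta^*$ computes geometric realization.

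First I would treat the \emph{bottom square}. By the non-Abelian Eilenberg--Zilber Theorem~\ref{SATZEZ}, applied to $\mathcal{C}=\Set$ (which is symmetric by Lemma~\ref{LEMMASYM} and satisfies \ref{PARAXIOMSW} by Lemma~\ref{LEMMAW2}), there is a natural isomorphism $\dec_* \cong \delta^*$ in $\Fun(\Set^{\Delta^{\op}\times\Delta^{\op}}, \Set^{\Delta^{\op}})[\mathcal{W}^{-1}]$. By Proposition~\ref{PROPHOCOLIM}, $\delta^*$ descends after localization to the geometric realization functor $\colim_{\Delta^{\op}}: \mathrm{S}^{\Delta^{\op}} \to \mathrm{S}$. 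Combining these identifications yields commutativity of the bottom square.

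Next I would treat the \emph{top square}. The key observation is that the localization $L: \Set^{\Delta^{\op}} \to \mathrm{S}$ is Cartesian-monoidal: every simplicial set is cofibrant in the Kan--Quillen model structure, so the Cartesian product on $\Set^{\Delta^{\op}}$ models the Cartesian product in $\mathrm{S}$, making $L$ a morphism of $\infty$-operads over $\OOO$. The classical bar construction $\barconst = \widetilde{\pi_1^*}$ together with the equivalence $(\rho^*)^{-1}$ is defined purely from the abstract data of the cofibration $(\mathcal{C},\times) \to \OOO$ (Definition~\ref{DEFCOBARCLASS} and Lemma~\ref{LEMMARHO}) and is therefore natural in morphisms of such cofibrations. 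Concretely, on underlying objects $(\rho^*)^{-1}\circ\barconst(A)$ is the simplicial object $[n] \mapsto A^{\times n}$ with its natural simplicial structure (as described in \ref{PARCOBARALG}), and this formula is manifestly preserved by $L$ up to the canonical equivalences $L(A^{\times n}) \simeq L(A)^{\times n}$.

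The main obstacle is making the $\infty$-categorical naturality in the top square precise. On the underlying $1$-categorical level, commutativity is immediate from the explicit formula $[n]\mapsto A^{\times n}$, but one must upgrade this to a coherent equivalence of $\infty$-categories. The cleanest implementation goes through the $2$-functoriality of Day convolution (Proposition~\ref{PROPDAYLAX}): since $L$ corresponds to a strong monoidal (and hence $\inert$-pseudo) natural transformation between the classifying functors $\OOO \to (\Cat^{\PF}_\infty, \times)$, it induces a natural transformation between the two classical bar constructions, and this transformation is an equivalence because $L$ preserves Cartesian products. An alternative, which sidesteps operadic coherence, is to invoke the universal property in Proposition~\ref{PROPCOBAR} directly on both sides, so that the commutativity becomes the statement that $L$ intertwines the pairings $\rho_1 \times \rho_2$.
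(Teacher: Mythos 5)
Your proposal is correct and follows essentially the same route as the paper: the bottom square is handled exactly as in the paper's proof, via the non-Abelian Eilenberg--Zilber Theorem~\ref{SATZEZ} giving $\dec_*\cong\delta^*$ after localization together with Proposition~\ref{PROPHOCOLIM} identifying $\delta^*$ with $\colim_{\Delta^{\op}}$. For the top square the paper simply declares commutativity "clear"; your more detailed justification (the localization $\Set^{\Delta^{\op}}\to\mathrm{S}$ preserves finite products, hence intertwines the bar constructions, coherently via the operadic/pairing formulation) is a legitimate filling-in of that step rather than a different argument.
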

\begin{proof}
The commutation of the upper diagram is clear, and the lower follows from the isomorphism $\dec_* \cong \delta^*$ up to coherent weak equivalences (non-Abelian Eilenberg-Zilber Theorem~\ref{SATZEZ}) and the fact that $\delta^*$ computes the colimit in the localization (i.e.\@ the homotopy colimit) by Proposition~\ref{PROPHOCOLIM}.
\end{proof}

\begin{KOR}
We have
\[ \boxed{ \barlurie \cong  \dec_* \circ (\rho^*)^{-1} \circ \barconst  \cong \overline{W} }  \]
as functors
\[ \mathrm{Mon}^{\Delta^{\op}}  \to \mathrm{S} \]
where $\barlurie = \barlurie_{(\mathrm{S}, \times) \to \OOO}$ is Lurie's Bar construction (Definition~\ref{DEFCOBARLURIE}). 
\end{KOR}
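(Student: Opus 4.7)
The plan is to combine three ingredients that are essentially already assembled in the excerpt: the abstract formula for $\barlurie$ as a colimit over $\Delta^{\op}$ of the twisted pullback, the fact that $\dec_*$ computes this colimit in the localization, and Duskin's identification of $\dec_* \circ (\rho^*)^{-1} \circ \barconst$ with $\overline{W}$.

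First, I would instantiate the setting. The cofibration of operads is $(\mathrm{S},\times) \to \OOO$, which is $L$-admissible (products in an $\infty$-topos commute with colimits), and the unit $\cdot \in \mathrm{S}$ is final. Hence the hypotheses of Theorem~\ref{THEOREMLURIE} are satisfied and
\[ \barlurie_{(\mathrm{S},\times)\to\OOO} \cong \colim_{\Delta^{\op}} \circ (\rho^*)^{-1} \circ \barconst_{(\mathrm{S},\times)\to\OOO}. \]
Next I would invoke Proposition~\ref{PROPCOMPLURIE1}: its upper square commutes on the nose (the classical bar is computed pointwise since $\times$ is pointwise in $\mathrm{Mon}^{\Delta^{\op}}$), and its lower square expresses precisely the comparison $\dec_* \cong \colim_{\Delta^{\op}}$ after localization at $\mathcal{W}$, which is justified by the non-Abelian Eilenberg-Zilber Theorem~\ref{SATZEZ} together with Proposition~\ref{PROPHOCOLIM}. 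Pasting the two squares yields the commutative outer rectangle
\[ \barlurie \circ (\mathrm{Mon}^{\Delta^{\op}} \to \Alg(\mathrm{S},\times)) \;\cong\; (\Set^{\Delta^{\op}} \to \mathrm{S}) \circ \dec_* \circ (\rho^*)^{-1} \circ \barconst, \]
i.e.\ the first isomorphism $\barlurie \cong \dec_* \circ (\rho^*)^{-1} \circ \barconst$ in $\mathrm{S}$.

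The second isomorphism is Duskin's identification $\overline{W} \cong \dec_* \circ (\rho^*)^{-1} \circ \barconst$, which is Proposition~\ref{PARADJCOBAR}ff.\ recorded in the section on classical (co)bar for $\Set^{\Delta^{\op}}$. Composing both gives the boxed chain.

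The only genuine subtlety, and thus the main obstacle, is book-keeping with localizations: the right-hand side $\dec_* \circ (\rho^*)^{-1} \circ \barconst$ is a priori a $1$-categorical construction in $\Set^{\Delta^{\op}}$, while $\barlurie$ is $\infty$-categorical. I would therefore be careful to check that the $1$-categorical composite descends to the localization (which follows because $\dec_* \cong \tot$ preserves weak equivalences of bisimplicial sets, in turn a consequence of (W2) in \ref{PARAXIOMSW} and Lemma~\ref{LEMMAW2}), and that the constant-$\infty$-groupoid functor $\Set^{\Delta^{\op}} \to \mathrm{S}$ intertwines the relevant left Kan extensions. Once this is verified the diagram of Proposition~\ref{PROPCOMPLURIE1} is a diagram of $\infty$-functors and the corollary is immediate.
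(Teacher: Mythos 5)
Your proposal is correct and follows essentially the same route as the paper: the formula $\barlurie \cong \colim_{\Delta^{\op}} \circ (\rho^*)^{-1} \circ \barconst$ (from \ref{PARCOBARALG}/Theorem~\ref{THEOREMLURIE}), pasted with Proposition~\ref{PROPCOMPLURIE1}, and then Duskin's identification of $\dec_* \circ (\rho^*)^{-1} \circ \barconst$ with $\overline{W}$. The localization book-keeping you flag is exactly what Proposition~\ref{PROPCOMPLURIE1} (via Theorem~\ref{SATZEZ} and Proposition~\ref{PROPHOCOLIM}) already encapsulates, so no further argument is needed.
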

\begin{proof}
We have seen in \ref{PARCOBARALG} that
\[ \barlurie = \colim_{\Delta^{\op}} \circ (\rho^*)^{-1} \circ \barconst \]
(both bar constructions w.r.t.\@ the cofibration $(\mathrm{S}, \times) \to \OOO$). Therefore this follows from  Proposition~\ref{PROPCOMPLURIE1}. 
\end{proof}

\begin{PAR}
Let $(\mathcal{C}, \otimes)$ be an Abelian tensor category.
Denote by $\mathcal{W}$ the  quasi-isomorphisms in $\Ch_{+}(\mathcal{C})$ (bounded below complexes). Assume that $\tildeotimes$ has
a left derived functor $\tildeotimes^{L}$ which is again associative. 
Denote $\mathcal{D}_+(\mathcal{C}) := \Ch_{+}(\mathcal{C})[\mathcal{W}^{-1}]$ the localization as $\infty$-category.
We assume the very mild conditions of \cite[7.9.1]{Cis19} ensuring that localization and taking functor categories commute (i.e.\@ homotopy colimits become $\infty$-categorical colimits). Those conditions  are 
for example satisfied, if $(\Ch_{+}(\mathcal{C}), \mathcal{W})$ enhances to a model category structure. This is true under very mild assumptions on $\mathcal{C}$.

Assume that there is a full subcategory $\Ch_{+}(\mathcal{C})^{\mathrm{split}} \subset \Ch_{+}(\mathcal{C})$ (closed under biproducts, $\otimes$, and containing 1) such that the restriction of the inclusion gives rise to a
{\em monoidal} functor
\begin{equation}\label{eqderived}  
(\Ch_{+}(\mathcal{C})^{\mathrm{split}}, \tildeotimes) \to  (\mathcal{D}_+(\mathcal{C}), \tildeotimes^L).
\end{equation}
It thus induces a functor of cooperads
\[  (\Ch_{+}(\mathcal{C})^{\mathrm{split}}, \tildeotimes)^{\vee} \to  (\mathcal{D}_+(\mathcal{C}), \tildeotimes^L)^{\vee}  \]
and finally a functor
\[  \Coalg(\Ch_{+}(\mathcal{C})^{\mathrm{split}}, \tildeotimes) \to \Coalg(\mathcal{D}_+(\mathcal{C}), \tildeotimes^L).  \]
 \end{PAR}

\begin{PROP}\label{PROPCOMPLURIE2}
We have a commutative diagram
\[ \xymatrix{ 
\Alg(\Ch_{+}(\mathcal{C})^{\mathrm{split}}_{/1}, \tildeotimes) \ar[r] \ar[d]_{(\rho^*)^{-1} \circ \barconst_{(\Ch_{+}(\mathcal{C}), \tildeotimes) \to \OOO}} & \Alg(\mathcal{D}_+(\mathcal{C})_{/1}, \tildeotimes^L) \ar[d]^{(\rho^*)^{-1} \circ \barconst_{(\mathcal{D}_+(\mathcal{C}), \tildeotimes^L) \to \OOO}} \\
 \Coalg((\Ch_{+}(\mathcal{C})^{\mathrm{split}}_{/1})^{\Delta^{\op}}, \tildeotimes) \ar[r] \ar[d]_{\dec_{\tildeotimes,*}} & \Coalg(\mathcal{D}_+(\mathcal{C})^{\Delta^{\op}}_{/1}, \dec_* -\tildeboxtimes^L -) \ar[d]^{\colim_{\Delta^{\op}}}  \\
 \Coalg(\Ch_{+}(\mathcal{C})^{\mathrm{split}}_{/1}, \tildeotimes) \ar[r] & \Coalg(\mathcal{D}_+(\mathcal{C})_{/1}, \tildeotimes^L)
}\]
\end{PROP}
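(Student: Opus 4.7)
The plan is to verify the three squares of the diagram separately, all of which ultimately rest on functoriality of the abstract classical bar construction and on the identification of $\dec_*$ with the homotopy colimit (Proposition~\ref{PROPHOCOLIM}) in the derived setting.

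For the top square, the key observation is that the monoidal functor (\ref{eqderived}) is, by construction, a morphism of cofibrations of operads $(\Ch_+(\mathcal{C})^{\mathrm{split}}, \tildeotimes)\to(\mathcal{D}_+(\mathcal{C}), \tildeotimes^L)$ over $\OOO$. Hence by the abstract functoriality of the classical bar construction $\barconst = \widetilde{\pi_1^*}$ (which is visibly natural in morphisms of cofibrations of operads, since it is defined entirely in terms of pullback along the cooperad morphisms $\pi_1, \pi_2$), this square commutes. The transport $(\rho^*)^{-1}$ is purely at the level of indexing $\infty$-operads and thus commutes with any functor induced from the coefficients.

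For the middle square, recall from \ref{EXDAY2} that the cooperad $(\mathcal{C}^{\Delta^{\op}}, \tildeotimes)^{\vee}$ is precisely the Day convolution $D((\Delta,\ast)^{\op}, (\mathcal{C}, \otimes)^{\vee})$; and the same statement applies in the derived setting, using $\tildeotimes^L$ as tensor product and $\dec_*-\tildeboxtimes^L-$ as the resulting Day convolution product on simplicial objects. Because Day convolution is functorial in monoidal functors of cooperads (Proposition~\ref{PROPDAYLAX}), the monoidal functor (\ref{eqderived}) induces, upon passing to Day convolutions and then to $\Coalg$, the horizontal arrow of the middle square. This exhibits the middle square as an instance of the naturality of Day convolution with respect to morphisms of cooperads, hence it commutes.

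For the bottom square, the left vertical $\dec_{\tildeotimes,*}$ is a functor of cooperads (Section~\ref{SECTDECTILDEOTIMES}), while on the right, the functor $\colim_{\Delta^{\op}}$ extends to coalgebras because $\dec_*-\tildeboxtimes^L-$ is, by the dual of Theorem~\ref{THEOREMLURIE} (applied to $\mathcal{D}_+(\mathcal{C})$ which satisfies the dual of \ref{PARAUG} for coalgebras), designed so that its colimit is the $\infty$-categorical tensor $\tildeotimes^L$. On underlying objects, the identification follows from Proposition~\ref{PROPHOCOLIM} and the Eilenberg-Zilber Theorem~\ref{SATZEZ}: $\dec_* \cong \delta^*$ up to a natural chain of quasi-isomorphisms, and $\delta^* = $ geometric realization in the localization at quasi-isomorphisms, i.e.\@ it is the $\infty$-categorical colimit over $\Delta^{\op}$. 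The mild assumptions on $(\Ch_{+}(\mathcal{C}), \mathcal{W})$ (à la \cite{Cis19}) ensure that this holds functorially in the diagram category.

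The main obstacle is to check that the identification $\dec_* \simeq \colim_{\Delta^{\op}}$ of the bottom square respects the coalgebra structures, not merely the underlying objects. The conceptual reason is that both sides, as morphisms of cooperads, correspond under Day convolution to essentially the same universal construction: $\dec_{\tildeotimes,*}$ is the monoidal extension of $\dec_*$, and $\colim_{\Delta^{\op}}$ on the augmented setting is (by the $\rho^*$-trick of \ref{PARCOBARALG}) the unique colimit-preserving left adjoint compatible with the cooperadic structure. Both are thus determined by their behavior on the underlying category, where they agree by Proposition~\ref{PROPHOCOLIM}. Making this rigorous amounts to upgrading Proposition~\ref{PROPHOCOLIM} to an equivalence of functors of cooperads, which follows because the Eilenberg-Zilber comparison $\dec_* \Rightarrow \delta^*$ is compatible with the cooperad structure (cf.\@ Corollary~\ref{KOREZAB}). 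Concatenating the three commutative squares gives the claim.
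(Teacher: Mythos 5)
The top square is fine (and matches the paper: the restriction of the localization (\ref{eqderived}) is monoidal by assumption, so it is compatible with $\barconst$ and $\rho^*$). Note, however, that the diagram has only two squares, not three: your ``middle square'' discussion really just describes how the middle horizontal arrow is induced, which is not a separate commutativity statement.

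The genuine gap is in your bottom square. Your strategy is to identify $\dec_{\tildeotimes,*}$ with $\colim_{\Delta^{\op}}$ on underlying objects via Theorem~\ref{SATZEZ} and Proposition~\ref{PROPHOCOLIM}, and then to argue that the identification automatically respects the coalgebra structures because ``both sides are determined by their behavior on the underlying category.'' This is not an argument: a (co)operad-compatible structure on a functor is extra data, not a property reflected from the underlying functor, and before you can ``check'' anything you must first \emph{construct} a comparison transformation at the level of $\Coalg$ between the 1-categorical functor $\dec_{\tildeotimes,*}$ and the relative Kan extension $\colim_{\Delta^{\op}}$, which only exists after localization. The citation of Corollary~\ref{KOREZAB} does not supply this: that corollary concerns the $\AWfrak$/$\EZfrak$ comparison between the Day convolution products $\otimes$ and $\tildeotimes$ on simplicial objects, not a cooperad-compatible comparison of $\dec_*$ with $\delta^*$ (or with $\colim_{\Delta^{\op}}$) on bisimplicial coalgebras; and Remark~\ref{BEMDECNOTCOMM} shows that such compatibilities are genuinely delicate. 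The paper avoids Eilenberg--Zilber here entirely and instead passes to right adjoints: $\dec_{\tildeotimes,*}$ has a further right adjoint $\dec_{\tildeotimes}^?$ which is a functor of (co)operads (Lemma~\ref{LEMMADECQMTILDEOTIMES}), the canonical map $\pi^*\Rightarrow \dec^?$ is an operad-compatible point-wise quasi-isomorphism (Lemma~\ref{LEMMADECQM}), both $\dec_*$ and $\dec^?$ preserve quasi-isomorphisms so the adjunction descends to the localizations (via \cite[Proposition~7.9.1]{Cis19}), the descended $\dec^?$ is identified with $\pi^*$, hence the descended left adjoints agree; the resulting mate square at the coalgebra level is then checked to be an isomorphism after forgetting the coalgebra structure. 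If you want to keep your route, you would have to upgrade Proposition~\ref{PROPHOCOLIM} to a statement about functors of cooperads yourself and produce the comparison map on coalgebras explicitly --- precisely the step you have skipped --- so as written the proof of the bottom square is incomplete.
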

Recall from the proof of Theorem~\ref{THEOREMLURIE} that the  functor denoted $\colim_{\Delta^{\op}}$ is in fact a relative left Kan extension along the functor $(\Delta, \ast)^{\op} \to \OOO^{\op}$ of cooperads, which exists, and is computed fiber-wise (i.e.\@ here commutes with forgetting the coalgebra structure) because this is an exponential fibration of cooperads that is $\infty$-coCartesian. 
\begin{proof}
The upper diagram commutes because the restriction of the localization (\ref{eqderived}) is monoidal by assumption. 

Consider the diagram
\[ \xymatrix{ 
 \Coalg(\Ch_{+}(\mathcal{C})^{\mathrm{split}}_{/1}, \tildeotimes) \ar[r] \ar@<-3pt>[d]_{\dec_{\tildeotimes}^?} \ar@<3pt>[d]^{\pi^*} & \Coalg(\Ch_{\ge 0}(\mathcal{C})[\mathcal{W}^{-1}], \tildeotimes^L)  \ar[d]^{\pi^*}  \\
 \Coalg((\Ch_{+}(\mathcal{C})^{\mathrm{split}}_{/1})^{\Delta^{\op}}, \tildeotimes) \ar[r]  & \Coalg(\mathcal{D}_+(\mathcal{C})^{\Delta^{\op}}, \dec_* -\tildeboxtimes^L -) 
}\]
in which the square involving the $\pi^*$ is clearly commutative. Furthermore, also $\dec_{\tildeotimes}^?$ is a functor of cooperads (Lemma~\ref{LEMMADECQMTILDEOTIMES}) and there is a natural transformation
\[ \pi^* \Rightarrow \dec_{\tildeotimes}^?  \]
which is point-wise a quasi-isomorphism  (Lemma~\ref{LEMMADECQM}). Therefore there is also a commutative square involving $\dec_{\tildeotimes}^?$. Now consider the mate (passing to the left adjoints vertically):
\[ \xymatrix{ 
 \Coalg((\Ch_{+}(\mathcal{C})^{\mathrm{split}}_{/1})^{\Delta^{\op}}, \tildeotimes) \ar[r] \ar[d]_{\dec_{\tildeotimes,*}} \ar@{}[rd]|{\Swarrow} & \Coalg(\mathcal{D}_+(\mathcal{C})^{\Delta^{\op}}, \dec_* -\tildeboxtimes^L -)  \ar[d]^{\colim_{\Delta^{\op}}} \\
 \Coalg(\Ch_{+}(\mathcal{C})^{\mathrm{split}}_{/1}, \tildeotimes) \ar[r]    & \Coalg(\mathcal{D}_+(\mathcal{C}), \tildeotimes^L)  
}\]
We have to show that the natural transformation is an isomorphism. However, this can be shown forgetting the coalgebra structure, i.e.\@ we have to show that 
\[ \xymatrix{ 
 (\Ch_{+}(\mathcal{C})^{\mathrm{split}}_{/1})^{\Delta^{\op}} \ar[r] \ar[d]_{\dec_{*}} \ar@{}[rd]|{\Swarrow} & \mathcal{D}_+(\mathcal{C})^{\Delta^{\op}}  \ar[d]^{\colim_{\Delta^{\op}}} \\
 \Ch_{+}(\mathcal{C})^{\mathrm{split}}_{/1} \ar[r]   & \mathcal{D}_+(\mathcal{C})   
}\]
commutes. However this is the restriction of 
\[ \xymatrix{ 
 (\Ch_{+}(\mathcal{C})_{/1})^{\Delta^{\op}} \ar[r] \ar[d]_{\dec_{*}} \ar@{}[rd]|{\Swarrow} & \mathcal{D}_+(\mathcal{C})^{\Delta^{\op}}  \ar[d]^{\colim_{\Delta^{\op}}} \\
 \Ch_{+}(\mathcal{C})_{/1} \ar[r]   & \mathcal{D}_+(\mathcal{C})   
}\]
where the RHS is a localization of the LHS (the first line at the object-wise quasi-isomorphisms, see \cite[Proposition~7.9.1]{Cis19}) and in the adjunction $\dec_{*}, \dec^?$ both functors preserve (object-wise) quasi-isomorphisms.
Therefore it descends to an adjunction between the localizations and (the descended) $\dec^?$ is isomorphic to $\pi^*$ --- thus also the descended $\dec_*$ is isomorphic to $\colim_{\Delta^{\op}}$.
\end{proof}

\begin{KOR}
We have
\[ \boxed{ \barlurie \cong  \dec_* \circ (\rho^*)^{-1} \circ \barconst  \cong \barconst^{EM} }  \]
as functors
\[ \Alg(\Ch_{+}(\mathcal{C}^{\mathrm{split}})_{/1}, \tildeotimes) \to \Coalg(\mathcal{D}_{+}(\mathcal{C}), \tildeotimes^L) \]
where $\barlurie = \barlurie_{(\mathcal{D}_{+}(\mathcal{C})_{/1}, \tildeotimes^L) \to \OOO}$ is Lurie's Bar construction (Definition~\ref{DEFCOBARLURIE}). 
\end{KOR}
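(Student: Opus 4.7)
The plan is to mirror, in the abelian setting, the argument that was just given for simplicial monoids, exploiting the two pieces of heavy machinery that have already been assembled: Theorem~\ref{SATZBAREM} identifying $\dec_{\tildeotimes,*} \circ (\rho^*)^{-1} \circ \barconst$ with $\barconst^{EM}$, and Proposition~\ref{PROPCOMPLURIE2} which is designed precisely to compare the derived-colimit description of $\barlurie$ with the $\dec_*$ description after localization.

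First, I would handle the second isomorphism $\dec_* \circ (\rho^*)^{-1} \circ \barconst \cong \barconst^{EM}$. For an augmented dg-algebra $A \in \Alg(\Ch_{+}(\mathcal{C}^{\mathrm{split}})_{/1}, \tildeotimes)$, Theorem~\ref{SATZBAREM} gives an isomorphism at the level of $\Ch_{\ge 0}$-valued cooperads (recalling that $\dec_*$ is there turned into a lax monoidal functor in the sense of Section~\ref{SECTDECTILDEOTIMES}). Post-composing with the monoidal localization functor $(\Ch_{+}(\mathcal{C})^{\mathrm{split}}, \tildeotimes) \to (\mathcal{D}_+(\mathcal{C}), \tildeotimes^L)$ (whose existence is a standing hypothesis for this corollary) produces the desired isomorphism of coalgebras in $\mathcal{D}_+(\mathcal{C})$. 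Nothing here needs to be derived further; it is just functoriality of the classical bar in a monoidal functor of cooperads.

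Second, I would combine this with Proposition~\ref{PROPCOMPLURIE2} to recover $\barlurie$. Recall from \ref{PARCOBARALG} and Theorem~\ref{THEOREMLURIE} that, for the cofibration of operads $(\mathcal{D}_+(\mathcal{C})_{/1}, \tildeotimes^L) \to \OOO$ (whose unit is final in the augmented category), Lurie's bar construction is given as
\[ \barlurie \;\cong\; \colim_{\Delta^{\op}} \circ\, (\rho^*)^{-1} \circ \barconst_{(\mathcal{D}_+(\mathcal{C})_{/1}, \tildeotimes^L) \to \OOO}, \]
where $\colim_{\Delta^{\op}}$ is the fiber-wise relative left Kan extension along $(\Delta, \ast)^{\op} \to \OOO^{\op}$. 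The lower half of the commutative diagram in Proposition~\ref{PROPCOMPLURIE2} says exactly that this composite agrees, after passing via the monoidal localization, with $\dec_{\tildeotimes,*} \circ (\rho^*)^{-1} \circ \barconst_{(\Ch_{+}(\mathcal{C})^{\mathrm{split}}, \tildeotimes) \to \OOO}$; the upper half ensures that the classical bar on the split level is compatible with the classical bar on the derived level. Chaining these two squares yields the first isomorphism $\barlurie \cong \dec_* \circ (\rho^*)^{-1} \circ \barconst$, and combining with the first step gives $\barlurie \cong \barconst^{EM}$.

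Essentially all substance has been absorbed into the two cited results; the only thing to verify carefully is that we are applying Proposition~\ref{PROPCOMPLURIE2} to the correct augmented variants and that the $\rho^*$ equivalence from Corollary~\ref{KORRHO} is available on both sides --- which is the case, since in $\mathcal{D}_+(\mathcal{C})_{/1}$ and in $\Ch_{+}(\mathcal{C})^{\mathrm{split}}_{/1}$ the unit $1$ is final. The mildly delicate point --- and what I would regard as the main obstacle, although it is by now routine given the setup --- is to ensure that the functor of cooperads $(\Ch_{+}(\mathcal{C})^{\mathrm{split}}, \tildeotimes)^\vee \to (\mathcal{D}_+(\mathcal{C}), \tildeotimes^L)^\vee$ really does commute with the formation of $(\rho^*)^{-1}\circ\barconst$, i.e.\ that both the augmentation and the coalgebra structure in $\mathcal{C}^{\Delta^{\op}}$ transport through the monoidal localization unambiguously; this is exactly what the commutativity of the upper square in Proposition~\ref{PROPCOMPLURIE2} provides, so no new argument is needed.
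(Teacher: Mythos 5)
Your argument is correct and follows the paper's own route: it uses the identity $\barlurie \cong \colim_{\Delta^{\op}} \circ (\rho^*)^{-1} \circ \barconst$ from \ref{PARCOBARALG} together with the two commutative squares of Proposition~\ref{PROPCOMPLURIE2}, and Theorem~\ref{SATZBAREM} for the Eilenberg--MacLane identification. The only point you pass over silently is that Theorem~\ref{SATZBAREM} is stated for $\Ch_{\ge 0}(\mathcal{C})$ and must be (slightly, as the paper notes) generalized to bounded-below complexes $\Ch_{+}(\mathcal{C})$ before it applies here.
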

\begin{proof}
We have seen in \ref{PARCOBARALG} that
\[ \barlurie = \colim_{\Delta^{\op}} \circ (\rho^*)^{-1} \circ \barconst \]
(both bar constructions w.r.t.\@ the cofibration $(\mathcal{D}_{+}(\mathcal{C}_{/1}, \tildeotimes^L) \to \OOO$). Therefore this follows from Proposition~\ref{PROPCOMPLURIE2}. 
The  isomorphism with $\barconst^{EM}$ is a slight generalization of Theorem~\ref{SATZBAREM} to bounded below complexes. 
\end{proof}

\subsection{An intermediate cobar construction}

Let $(\mathcal{C}, \otimes)$ be an Abelian tensor category with countable colimits such that $\otimes$ commutes with them.
We may also compute the classical cobar construction w.r.t.\@ the cofibration of operads $(\mathcal{C}^{\Delta^{\op}}, \otimes) \to \OOO$ (not $\tildeotimes$ !).
$(\mathcal{C}^{\Delta^{\op}}, \otimes)$ is again an Abelian tensor category and thus Lemma~\ref{LEMMACOBARABELIAN} applies giving
 \[ \cobarconst \circ \rho^* \circ \dec^*_{\otimes} A = T^{\oplus}_{(\mathcal{C}^{\Delta^{\op}}, \otimes)}((\dec^* A)_{1}) / I, \]
 where $\dec_{\otimes} = \mathfrak{AW}_2 \dec^*$ is the lax monoidal extension of $\dec^*$  
\[ (\mathcal{C}^{\Delta^{\op}}, \otimes)^{\vee} \rightarrow (\mathcal{C}^{\Delta^{\op} \times \Delta^{\op}}, \utildeotimes)^{\vee}  \]
 discussed in \ref{SECTDECOTIMES}, where $\utildeotimes := \dec_* - \boxtimes -$, and $\boxtimes$ is $\otimes$ applied point-wise in the first variable. 
 We will not calculate this further in any explicit manner, because $\otimes$ is complicated in terms of complexes. It will turn up as an
 intermediate step in comparing the Adams cobar construction and the geometric cobar construction in section \ref{SECTFUNCT}. 
 Actually, it compares very easily to the geometric cobar construction: 
  
\begin{LEMMA}\label{LEMMAINTERMEDIATECOBAR}
The following commutes
\[ \xymatrix{ \Set^{\Delta^{\op}} \ar[d]_{\cobarconst \circ \rho^* \circ \dec^*} \ar[r]^-{\Z[-]} &  \Coalg(\Ab^{\Delta^{\op}}, \otimes) \ar[d]^{\cobarconst_{\otimes} \circ \rho^* \circ \dec^*_{\otimes}} \\
\mathrm{Mon}^{\Delta^{\op}} \ar[r]_-{\Z[-]} \ar[r] & \Alg(\Ab^{\Delta^{\op}}_{/\Z}, \otimes)
 }\]
 where on the left the cobar construction w.r.t.\@ $(\Set^{\Delta^{\op}}, \times) \to \OOO$ and on the right the cobar construction w.r.t.\@ $(\Ab^{\Delta^{\op}}_{/\Z}, \otimes) \to \OOO$ is used. 
\end{LEMMA}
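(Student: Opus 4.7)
The key observation is that the free Abelian group functor $\Z[-]\colon (\Set,\times)\to(\Ab,\otimes)$ is \emph{strong monoidal} (via the natural isomorphism $\Z[X\times Y]\cong \Z[X]\otimes\Z[Y]$) and left adjoint, hence cocontinuous. Applied pointwise, it induces a strong monoidal, colimit-preserving functor
\[ \Z[-]\colon (\Set^{\Delta^{\op}},\times)\to(\Ab^{\Delta^{\op}},\otimes), \]
which in turn gives a strict morphism of cofibrations of operads over $\OOO$. The plan is to show that each of the three building blocks --- the pull-back $\dec^*$ interpreted as a functor of cooperads, the reindexing $\rho^*$, and the classical cobar construction --- commutes naturally with $\Z[-]$, and then compose these compatibilities.

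First I would verify the compatibility of $\dec^*$ with $\Z[-]$. On underlying bisimplicial objects $\dec^*$ is just pre-composition along $\dec\colon\Delta^{\op}\times\Delta^{\op}\to\Delta^{\op}$, so $\dec^*\circ\Z[-] = \Z[-]\circ\dec^*$ trivially. The monoidal structures $\otimes$ and $\utildeotimes$ on source and target of $\dec^*_{\otimes}$ are both built using Day convolution (Example~\ref{EXDAY2}) with the cooperad $(\Delta,\coprod)^{\op}$; because $\Z[-]$ is strong monoidal and cocontinuous, it preserves these Day convolutions strictly, and the lax structure on $\dec^*_{\otimes}$ (Section~\ref{SECTDECOTIMES}, via $\mathfrak{AW}_2$) is functorial in strong monoidal cocontinuous functors. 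Hence $\Z[-]$ commutes with $\dec^*$ as a functor of cooperads, i.e.\ with the whole functor of cooperads $\dec^*_{\otimes}\colon ((\Ab^{\Delta^{\op}},\otimes)^{\vee})^{\OOO^{\op}}\to((\Ab^{\Delta^{\op}\times\Delta^{\op}},\utildeotimes)^{\vee})^{\OOO^{\op}}$, and similarly (trivially) for $\Set$. The compatibility with $\rho^*$ is then automatic, since $\rho^*$ is just precomposition with a morphism of operads depending only on $\OOO$-combinatorics.

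Next I would handle $\cobarconst$. By Theorem~\ref{THEOREMEXISTENCECOBAR}, the classical cobar construction is the relative left Kan extension $\pi_{3,!}^{(S)}$ along the cofibration $\pi_3\colon\twcop\OOO\to\OOO$, computed fibre-wise (Corollary~\ref{KOREXISTFIBERWISERELKANEXT}); in the case $I=\mathcal{S}=\OOO$ the explicit formula \eqref{eqformulacobar} presents the value as the coequalizer of a diagram built only from the tensor product, its unit, and countable coproducts, applied to the various $A_{[n]}$. Because $\Z[-]$ is strong monoidal (sending $\times$ to $\otimes$, the terminal simplicial set to $\Z$) and cocontinuous (commuting with arbitrary coproducts and coequalizers), the comparison morphism
\[ \Z[-]\circ\cobarconst_{(\Set^{\Delta^{\op}},\times)\to\OOO}\ \Longrightarrow\ \cobarconst_{(\Ab^{\Delta^{\op}}_{/\Z},\otimes)\to\OOO}\circ\Z[-] \]
obtained from the universal property of the left adjoint is an isomorphism --- one simply checks fibre-wise that both sides compute the same coequalizer of the same diagram.

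The main obstacle I anticipate is purely bookkeeping: one must check that all these natural isomorphisms interact correctly with the cooperad structures (i.e.\ that the $\Z[-]$-compatibility lifts from underlying bi/simplicial objects to coalgebra structures), and in particular that the augmentation/coaugmentation data encoded by $\rho^*$ is preserved under $\Z[-]$ (which sends a point to the unit $\Z$). Once these verifications are carried out, pasting the three natural isomorphisms together yields the commutativity of the square in the statement.
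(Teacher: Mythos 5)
Your proposal is correct and follows essentially the same route as the paper: the paper also factors the square through $\dec^*$, the $\AWfrak_2$-step (an isomorphism on the $\Set$-side by the trivial Eilenberg--Zilber theorem), and $\cobarconst\circ\rho^*$, and then uses exactly the monoidality and cocontinuity of $\Z[-]$ to get commutativity of the cobar square. The only cosmetic difference is that you argue the last step via the explicit coequalizer formula (\ref{eqformulacobar}) while the paper simply cites that $\Z[-]$ is a monoidal left adjoint.
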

\begin{proof}
The diagram in question is the outer rectangle in: 
\[ \xymatrix{ \Set^{\Delta^{\op}} \ar[d]_{  \dec^*} \ar[r]^-{\Z[-]} &  \Coalg(\Ab^{\Delta^{\op}}, \otimes) \ar[d]^{  \dec^* } \\
 \Coalg(\Set^{\Delta^{\op}\times \Delta^{\op}}, \times) \ar[d]_{\AWfrak_2}^{\sim} \ar[r]^-{\Z[-]} &  \Coalg(\Ab^{\Delta^{\op} \times \Delta^{\op}}, \otimes) \ar[d]^{\AWfrak_2} \\
 \Coalg(\Set^{\Delta^{\op}\times \Delta^{\op}}, \utildetimes) \ar[d]_{\cobarconst \circ \rho^*} \ar[r]^-{\Z[-]} &  \Coalg(\Ab^{\Delta^{\op} \times \Delta^{\op}}, \utildeotimes) \ar[d]^{\cobarconst_{\otimes} \circ \rho^*} \\
\mathrm{Mon}^{\Delta^{\op}} \ar[r]_-{\Z[-]} \ar[r] & \Alg(\Ab^{\Delta^{\op}}_{/\Z}, \otimes)
 }\]
The left morphism  $\AWfrak_2$ is an isomorphism because of the trivial Eilenberg-Zilber Theorem~\ref{SATZEZTRIVIAL}.
In the third horizontal morphism a coalgebra with structure map $\dec^* X \to  X  \boxtimes X$ is mapped to $\dec^* \Z[X]$ with structure map
$\dec^* \Z[X] = \Z[\dec^* X] \to \Z[X  \boxtimes X] = \Z[X] \boxtimes \Z[X]$.
The commutativity of the two upper squares is clear and the commutativity of the lower square stems from the
commutativity of $\Z[-]$ with colimits (being a left adjoint) and its monoidality $(\Set^{\Delta^{\op}}, \times) \to (\Ab^{\Delta^{\op}}, \otimes)$.
\end{proof}

\begin{BEM}
The Lemma may be expressed by saying that there is an isomorphism: 
\[ \boxed{ \Z[-] \circ M^{\mathrm{Kan}} \cong (\cobarconst_{\otimes} \circ \rho^* \circ \dec^*_{\otimes}) \circ \Z[-] } \]
i.e.\@ the complex of normalized singular chains of the geometric cobar construction of $X$ is the algebraic cobar construction of
the complex of normalized singular chains of $X$. With the caveat that this is the algebraic cobar construction w.r.t.\@ $\otimes$ and not
w.r.t.\@ $\tildeotimes$ (which would be the Adams cobar construction). Nevertheless, the latter two can be compared, which is --- perhaps surprisingly --- 
fairly intricate. We will do this in section~\ref{SECTFUNCT}.
\end{BEM}

\subsection{Coherent vs.\@ $A_\infty$-transformations}\label{SECTCOHERENTAINFTY}

\begin{PROP}\label{PROPCOHERENTAINFTY}
Consider the cooperad\footnote{Recall that $(-)^{\circ}$ means that we neglect the counits.} $(\mathcal{C}^{\Delta^{\op}}, \tildeotimes)^{\circ, \vee}$ with its simplicial enrichment
$F\Aw^* \underline{\Hom}^{\tildeotimes}$  (cf.\@ Lemma~\ref{LEMMAYONEDA}).
Then for any pair of (non-counital) dg-coalgebras $X, Y$
we have a natural morphism
\[ \boxed{ \uCoh(X, Y)_0 \to \Hom^{A_{\infty}}(X, Y) }  \]
which restricted to $\Hom(X, Y)$ is the usual embedding of coalgebra morphisms into $A_{\infty}$-coalgebra morphisms (Lemma~\ref{LEMMAAINFTYCORRESPONDENCE}). 
The functor is compatible with the composition defined in Section~\ref{SECTCOMPCOH} on the left hand side and
composition of morphisms of $A_{\infty}$-coalgebras on the right. 
\end{PROP}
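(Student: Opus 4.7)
The plan is to extract the $A_\infty$-morphism components from a coherent transformation by evaluating it on the maximal chain of factorizations, using the fact that the $n$-ary operations in $\OOO^\circ$ admit essentially unique such chains. Viewing $X$ and $Y$ as functors of cooperads $(\OOO^{\circ})^{\op} \to (\mathcal{C}^{\Delta^{\op}}, \tildeotimes)^{\circ, \vee}$, a $0$-simplex $H \in \uCoh(X, Y)_0$ unpacks (using Definition~\ref{DEFHOMHAT} and the fact that $\OOO^\circ$ has discrete enrichment) as the data of, for each active multi-morphism $\nu: [1] \to ([1], \dots, [1])$ of arity $n$ in $\OOO^\circ$ and each $k$-simplex in the nerve of the factorization category of $\nu$, an element of $F \Aw^* \uHom^{\tildeotimes}(X([1]); Y([1]), \dots, Y([1]))_k = \uHom^{\tildeotimes}(X; Y^{\tildeotimes n})_k$, compatible with composition and the simplicial structure. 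By Remark~\ref{BEMHOMCLASSICAL} the latter is a degree-$k$ map of complexes $X \to Y^{\tildeotimes n}$.

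First I would define the underlying $A_\infty$-morphism $X \to Y$ by the following explicit formula: its $n$-th component $\alpha_n : X \to Y^{\tildeotimes n}$ is the degree-$(n-1)$ map obtained by evaluating $H$ on the ``maximal'' factorization $[1] \to [2] \to \cdots \to [n]$ of the unique active $n$-ary coop, interpreted as the longest non-degenerate simplex of the factorization category, with sign determined by the Koszul convention of Section~\ref{PARKOSZUL} and the shift $s^{-1}$ (compare the sign rule of \ref{PARMYSTERIOUSSIGN}). Equivalently, this produces a degree-$0$ map $s^{-1}X \to T^{\prod,+}(s^{-1}Y)$, which by the dual of Lemma~\ref{LEMMATENSORCOALG}, 2.\@ extends uniquely to a graded algebra morphism $T^{\prod,+}(s^{-1}X) \to T^{\prod,+}(s^{-1}Y)$; the content of the proposition is that this extension is compatible with the $A_\infty$-differentials.

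Next I would verify the Stasheff-type compatibility, i.e.\@ that the constructed morphism commutes with the $A_\infty$-differentials on both sides. The sum in the Stasheff identity over decompositions $n = r + s + t$ with a factor $m_s$ on $X$ matches the contribution from the faces of the chain $[1] \to \cdots \to [n]$ that collapse a single sub-simplex (corresponding to inserting a comultiplication), while the sum over ordered partitions $n = i_1 + \cdots + i_r$ with factors $m_r(\alpha_{i_1} \otimes \cdots \otimes \alpha_{i_r})$ on $Y$ matches the contribution from the two ``extremal'' faces obtained by splitting the chain at an interior vertex (using the compositional compatibility of the enriched end, which is precisely the Yoneda product of Lemma~\ref{LEMMAYONEDA} governed by Alexander--Whitney). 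The equality of these two sums is forced by the simplicial boundary identity $\partial \tilde\alpha_n = \dd \circ \tilde\alpha_n - (-1)^{n-1} \tilde\alpha_n \circ \dd$ of Remark~\ref{BEMHOMCLASSICAL} combined with the cosimplicial identities in the factorization nerve. The base case of an honest coalgebra morphism $\alpha : X \to Y$ gives the tautological coherent transformation supported on the $0$-skeleton, and the construction recovers the embedding of Lemma~\ref{LEMMAAINFTYCORRESPONDENCE} with $\alpha_1 = \alpha$ and $\alpha_n = 0$ for $n \ge 2$. Compatibility with composition is then shown by matching the composition law of \ref{PARCOMPCOH} --- which composes coherent transformations via the Alexander--Whitney map on $\dec^* \widehat{\uHom}$ (Proposition~\ref{PROPCOMPCOHERENT}) --- against the composition of $A_\infty$-morphisms, which is the tensor-algebra composition of the graded algebra maps; both boil down to the same Yoneda product on $\uHom^{\tildeotimes}$ described in Lemma~\ref{LEMMAYONEDA}.

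The hard part will be the sign bookkeeping in the verification of the Stasheff identities. Every boundary in the factorization nerve, every shift by $s^{-1}$, and every Koszul swap contributes signs, and the $A_\infty$-relations only close up after a precise accounting. The author flags this explicitly (``the definition is \emph{very} simple it is by means of specifying a formula with signs''), so the expected outcome is an explicit but unavoidable sign formula. All remaining compatibilities (composition, the embedding of strict morphisms) should then follow formally from the Yoneda-product description and from the naturality of the enriched end, without further combinatorics.
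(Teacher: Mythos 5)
There is a genuine gap, and it sits at the very first step: your definition of the components $\alpha_n$. You propose to obtain $\alpha_n$ by ``evaluating $H$ on the maximal factorization $[1]\to[2]\to\cdots\to[n]$,'' justified by the claim that the $n$-ary operations in $\OOO^{\circ}$ admit essentially unique such chains. That claim is false: a chain of objects $[1]\to[2]\to\cdots\to[k+1]$ does not determine the maps, since at each step $[j]\to[j+1]$ there are $j$ distinct active faces $\delta_{i+1}$. There are $k!$ non-degenerate maximal simplices in the factorization nerve $N_{p_{k+1}}([1]\times_{/\Delta^{\circ}_{\act}}\Delta^{\circ}_{\act}\times_{/\Delta^{\circ}_{\act}}[k+1])$, indexed by vectors $\underline{i}=(i_0,\dots,i_{k-1})$ with $0\le i_j<k-j$, and the correct definition is the \emph{signed sum}
\[ \alpha_{k+1}(s^{-1}x) = (s^{-1})^{\tildeotimes(k+1)}\sum_{\underline{i}} \sgn(\underline{i})\,\mu(\underline{i})(x), \qquad \sgn(\underline{i})=(-1)^{\sum_j i_j}, \]
not an evaluation on a single chain. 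Evaluating on one chosen chain does not produce an $A_\infty$-morphism.

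This is not a cosmetic difference: the signed sum is exactly what makes your second step (the Stasheff-type verification) work. For a single chain the inner faces $\delta_m$, $0<m<k$, of the boundary do not vanish, and no ``cosimplicial identity'' forces them to; in the actual argument they cancel \emph{in pairs across different chains}, via the sign-reversing involutions $\sigma_m$ on factorizations satisfying $\mu(\delta_m\underline{i})=\mu(\delta_m\sigma_m\underline{i})$ and $\sgn(\sigma_m\underline{i})=-\sgn(\underline{i})$. Only then do the surviving $\delta_0$ and $\delta_k$ faces give, respectively, the source-comultiplication terms $\sum(\alpha_a\tildeotimes\alpha_b)\widetilde{F(m)}$ (via the Alexander--Whitney/Yoneda-product mechanism of Lemma~\ref{LEMMAYONEDA}, as you correctly anticipate) and the target term $\widetilde{G(m)}\alpha_k$, yielding the $A_\infty$-relation. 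Your outline of the remaining points (recovery of the strict embedding, compatibility of composition via the Yoneda product) is in the right spirit — indeed the paper itself defers the composition compatibility — but without replacing the single-chain evaluation by the signed sum over all $k!$ chains, the construction and the cancellation argument both fail.
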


\begin{BEM}
There is an algebra-version of the above as well, which will not be needed. We leave the details to the reader. In fact, the whole construction works for unbounded complexes and so 
the other argument is dual. 
\end{BEM}

\begin{proof}
Let $\mu \in \uCoh(F, G)_0$ be a coherent transformation. We have to define morphisms
\[ \alpha_n: s^{-1}F \to (s^{-1}G)^{\tildeotimes n} \qquad n \ge 1 \]
of degree 0 satisfying $\dd \alpha = \alpha \dd$ (for the extension of $\alpha:=\mlq \prod \mrq \alpha_n$ to $\mlq T^{\prod} \mrq (s^{-1} X) \to \mlq T^{\prod} \mrq (s^{-1} Y)$), or
more elementarily: 
\begin{equation} \label{eqalpha}  \sum_{i+j=k} (\alpha_{i} \otimes \alpha_{j}) \dd_2  + \alpha_{k+1} \dd_1 = \dd_2 \alpha_{k} + \dd_1 \alpha_{k+1}.     \end{equation}
Consider the set of factorizations of $p_{k+1}: [1] \to [k+1]$ in $\Delta^{\circ}_{\act} = \OOO^{\circ,\op}_{\act}$ of the form
\[ \xymatrix{  [1] \ar@{=}[r]  &  [1] \ar[r]^-{\delta_{i_{k-1}+1}} & \cdots   \ar[r]  & [k]  \ar[r]^-{\delta_{i_{0}+1}}  & [k+1] \ar@{=}[r] & [k+1]     } \]
in $\Delta_{\act}^{\circ, \op}$, i.e.\@ a sequence of injections with endpoints fixed, where all injections are non-identities, and thus necessarily of the form $\delta_i$. 
We have $0 \le i_n < k-n$. As in (\ref{PARREDUCT}) we see them either as sequences $\underline{i} = (i_0, \dots, i_{k-1})$ or as a leveled tree. 
In total, there are $k!$ such factorizations which constitute non-degenerate $k$-simplices of $N_{p_{k+1}}([1] \times_{/\Delta^{\circ}_{\act}} \Delta^{\circ}_{\act} \times_{/\Delta^{\circ}_{\act}} [k+1])_k$.
 
Each $\mu(\underline{i})$ defines an element in $\uHom_{k}^{\tildeotimes}(F, G^{\tildeotimes k+1})$ which we consider as a degree $k$ morphism
$F\to G^{\tildeotimes (k+1)}$. 
We can thus define a  degree 0 morphism:
\begin{eqnarray*}
 \alpha_{k+1}: s^{-1}F &\to& (s^{-1}G)^{\tildeotimes k+1} \\
s^{-1}x & \mapsto & (s^{-1})^{\tildeotimes (k+1)}  \sum_{\underline{i}}  \sgn(\underline{i}) \mu(\underline{i})(x)  
\end{eqnarray*}
where is sum is over all $\underline{i}=(i_0, \dots, i_{k-1})$ with $0 \le i_j < k-j$ and where we used the Koszul sign convention (\ref{PARKOSZUL}) and where
\[ \mathrm{sgn}(\underline{i}) :=  (-1)^{\sum_{j=0}^{k-1} i_j}. \]
$\alpha_1$ is just the degree 0 map $F \to G$ evaluation at $[1] \to [1] \to [1]$.

We a left to show (\ref{eqalpha}).
Define for each $m=1, \dots, k-1$ an involution $\sigma_m$ on these factorizations given by
\[ (\delta_i, \delta_j) \mapsto \begin{cases} (\delta_{j-1}, \delta_i) & i < j \\ (\delta_j, \delta_{i+1}) & i \ge j \end{cases} \] 
at positions $m, m+1$. 
Each involution $\sigma_m$ changes the sign: 
\begin{equation}\label{eqsgninv}
\mathrm{sgn}(\sigma_m (\underline{i})) = -\mathrm{sgn}(\underline{i}).
\end{equation}

Since the maps $\mu_k: N_{p_{k+1}}([1] \times_{/\Delta^{\circ}_{\act}} \Delta^{\circ}_{\act} \times_{/\Delta^{\circ}_{\act}} [k+1])_k \to \uHom^{\tildeotimes}_{k}(F, G^{\tildeotimes (k+1)})$ are simplicial, we have\footnote{ using $\dd (s^{-1})^{\otimes k+1} = (-1)^k (s^{-1})^{\otimes k+1} \dd$.  }: 
\begin{align*}   (-1)^k (\dd \alpha_{k+1}   -  \alpha_{k+1} \dd )&= \sum_{j=0}^{k} (s^{-1})^{\tildeotimes (k+1)}  \sum_{\underline{i}}  (-1)^{j}\sgn(\underline{i}) \mu(\delta_j \underline{i})s  \\
 &= \sum_{\underline{i}}  (s^{-1})^{\tildeotimes (k+1)} \sgn(\underline{i}) \mu(\delta_0 \underline{i}) s +  \sum_{\underline{i}}  (s^{-1})^{\tildeotimes (k+1)} (-1)^{k} \sgn(\underline{i}) \mu(\delta_k \underline{i})s
\end{align*} 
because $\mu(\delta_m \underline{i}) = \mu(\delta_m \sigma_m \underline{i})$ for $m=1, \dots, k-1$ and the sign property (\ref{eqsgninv}).

$\delta_0 \underline{i}$ is equal to 
\[ \xymatrix{ [1] \ar[r]^-{\delta_1} & [2] \ar[r] & \cdots  \ar[r] & [k] \ar[r]^-{\delta_{i_0+1}} &   [k+1]  \ar@{=}[r]  &  [k+1].  } \]
Write this as $\delta_0 \underline{i} =   (\underline{i}_1, \underline{i}_2) \circ \delta_1$ where $\underline{i}_1$ and $\underline{i}_2$ are compositions of $\delta_i$'s and $\id$'s.
We have that $\underline{i}_1 = \sigma(\underline{i}_1')$ and $\underline{i}_2 = \tau(\underline{i}_2')$ for uniquely determined shuffle $\sigma: [{k-1}] \twoheadrightarrow [a], \tau:  [{k-1}] \twoheadrightarrow [b]$
for $a+b = k-1$ and where $\underline{i}_1'$ and $\underline{i}_2'$ are of the form previously considered, i.e.\@ non-degenerate with extremal identities. 
Because of the relations in the coherent end, we have that
\[ \mu(\underline{i}_1) \otimes \mu(\underline{i}_2) \in \uHom^{\tildeotimes}(F,G^{\tildeotimes a+1})_{[k]} \otimes \uHom^{\tildeotimes}(F,G^{\tildeotimes b+1})_{[k]}  \]
map to $\mu(\underline{i})$ under the composition
\begin{gather*}
\xymatrix{
 \uHom_{[k]}(F,G^{\otimes a+1}) \otimes \uHom_{[k]}(F,G^{\otimes b+1})  \ar[r]^{\Awfrak} &  (\uHom(F,G^{\tildeotimes a+1}) \tildeotimes \uHom(F,G^{\tildeotimes b+1}))_{[k]} } \\
\xymatrix{ \ar[r]  & \uHom_{[k]}(F,G^{\tildeotimes k+1}) }
\end{gather*}
where the last map is the composition with the comultiplication $F \to F^{\tildeotimes 2}$. By Lemma~\ref{LEMMAYONEDA} thus
 to the morphism
\[ F \to G^{\tildeotimes k+1}  \]
of degree $k$, given by
\[ (\mu(i_1') \tildeotimes \mu(i_2')) F(m) \]
with Koszul sign convention, if $\sigma = s_l$ and $\tau = s_r$ and other-wise to something degenerate. 
For these we have $\sgn(\underline{i}_1') \sgn(\underline{i}_2') = (-1)^{(a+1)b} \sgn(\underline{i})$
in such a way that 
\begin{align*}
& \sum_{\underline{i}}  (s^{-1})^{\tildeotimes (k+1)} \sgn(\underline{i}) \mu(\delta_0 \underline{i})  \\
=& \sum_{a+b=k-1}  (s^{-1})^{\tildeotimes (k+1)} (-1)^{(a+1)b} \sgn(\underline{i}_1)\sgn(\underline{i}_2)  (\mu(\underline{i}_1') \otimes \mu(\underline{i}_2')) F(m)   \\
=& - \sum_{a+b=k-1}  (-1)^{(a+1)b} \sgn(\underline{i}_1)\sgn(\underline{i}_2) \cdot \\
 & ((s^{-1})^{\tildeotimes (a+1)} \tildeotimes (s^{-1})^{\tildeotimes (b+1)})  (\mu(\underline{i}_1') \tildeotimes \mu(\underline{i}_2')) (s \tildeotimes s) (s^{-1} \tildeotimes s^{-1}) F(m)    \\
=& - \sum_{a+b=k-1}  (-1)^{(a+1)b}   (-1)^{a(b+1)} (-1)  (\alpha_a \tildeotimes \alpha_b)  \widetilde{F(m)}    \\
=& \sum_{a+b=k-1} (-1)^{k-1} (\alpha_a \tildeotimes \alpha_b) \widetilde{F(m)} 
 \end{align*}

$\delta_k \underline{i}$ is equal to 
\[ \xymatrix{ [1] \ar@{=}[r] & [1] \ar[r]^{\delta_{i_{k-1}+1}} & [2] \ar[r] &  \cdots \ar[r] &  [k]  \ar[r]^{\delta_{i_{0}+1}} &   [k+1].    } \]
Thus $\delta_k \underline{i} = \delta_{i_{0}+1} \circ  \underline{i}$ and therefore
\[  (1^{\tildeotimes i_0} \tildeotimes G(m) \tildeotimes 1^{\tildeotimes k-i_0-1})  \mu(\underline{i}') = \mu(\delta_0 \underline{i}) \]
where $\underline{i}' = (i_1, \dots, i_{k-1})$. 
and therefore
\[ \sum_{\underline{i}}  (-1)^{k}\sgn(\underline{i})  (s^{-1})^{\tildeotimes (k+1)} \mu(\delta_k \underline{i}) = \sum_{\underline{i}}   (s^{-1})^{\tildeotimes (k+1)}   (-1)^{k}\sgn(\underline{i})  (1^{\tildeotimes i_0} \tildeotimes G(m) \tildeotimes 1^{\tildeotimes k-i_0-1})  \mu(\underline{i}') \]
\[ = \sum_{\underline{i}}  (-1)^{k}\sgn(\underline{i}) (-1)^{i_0} (1^{\tildeotimes i_0} \tildeotimes (s^{-1} \otimes s^{-1}) G(m) s \tildeotimes 1^{\tildeotimes k-i_0-1})   (s^{-1})^{\tildeotimes k}   \mu(\underline{i}')  \]
\[ = \sum_{\underline{i}'}  (-1)^{k}\sgn(\underline{i}')  \widetilde{G(m)}   (s^{-1})^{\tildeotimes k}  \mu(\underline{i}')  \]
\[ =   (-1)^{k} \widetilde{G(m)}  \alpha_k \]
where $\widetilde{G(m)}$ now has been extended --- as derivation --- to $s^{-1} G$.

Putting everything together, we arrive at
\[ (-1)^k( \dd_G \alpha_{k+1}  - \alpha_{k+1} \dd_F)    = (-1)^k \widetilde{G(m)} \alpha_{k} - (-1)^k \sum_{i+j=k} (\alpha_{i} \otimes \alpha_{j}) \widetilde{F(m)}    \]
i.e.\@
\[  \sum_{i+j=k} (\alpha_{i} \otimes \alpha_{j}) \widetilde{F(m)}  -   \alpha_{k+1} \dd_F  = \widetilde{G(m)} \alpha_{k} - \dd_G \alpha_{k+1}      \]
so indeed $(\alpha_k)$ is a morphism of $A_{\infty}$-coalgebras, observing that (on $F$ say) $\dd_1 = -\dd$ and $\dd_2 = \widetilde{F(m)}$.

The proof that this association is compatible with composition is omitted for the moment. 
\end{proof}

\subsection{Functoriality (Szczarba and Hess-Tonks morphisms)}\label{SECTFUNCT}

\begin{PAR}In this section, we relate the following two objects for a simplicial set $X \in \Set^{\Delta^{\op}}$:
 \begin{align*} 
\EZfrak^{\vee} \circ  \Z[M^{\mathrm{Kan}}(X)] \cong& \, \EZfrak^{\vee} \circ \Z[\cobarconst \circ \rho^* \circ \dec^* X],  \\
  \cobarconst^{\mathrm{Adams}} \circ P \circ  \AWfrak \circ \Z[X] \cong& \, \cobarconst_{\tildeotimes}  \circ \rho^* \circ \dec^*_{\tildeotimes} \circ \AWfrak\circ  \Z[X].  
 \end{align*}
 which, in more traditional language, would be called $C M^{\mathrm{Kan}}(X)$ and $\Omega C(X)$ 
 --- the translation of the (co)algebra structures via AW and EZ understood.
\end{PAR}

\begin{PAR}
 Recall from the Abelian Eilenberg-Zilber Theorem~\ref{KOREZAB} that we have functors (with mate) of (co)operads: 
 \[ \AWfrak: (\Ab^{\Delta^{\op}}, \otimes)^{\vee} \to  (\Ab^{\Delta^{\op}}, \tildeotimes)^{\vee} \qquad  \AWfrak^{\vee}: (\Ab^{\Delta^{\op}}, \tildeotimes) \to  (\Ab^{\Delta^{\op}}, \otimes)  \]
 and
 \[ \EZfrak: (\Ab^{\Delta^{\op}}, \tildeotimes)^{\vee} \to  (\Ab^{\Delta^{\op}}, \otimes)^{\vee} \qquad  \EZfrak^{\vee}: (\Ab^{\Delta^{\op}}, \otimes) \to  (\Ab^{\Delta^{\op}}, \tildeotimes)  \]
\end{PAR}
 
\begin{PAR}
 We have seen in Lemma~\ref{LEMMAINTERMEDIATECOBAR} that (quite obviously because $\Z[-]$ is cocontinuous and monoidal): 
 \[ \cobarconst_{\otimes}  \circ \rho^* \circ  \dec^*_{\otimes} \circ \Z[X] \cong  \Z[\cobarconst \circ \rho^* \circ \dec^* X].   \]
Thus, it remains to compare the following two dg-algebras
 \begin{gather*} 
    \EZfrak^{\vee} \circ  \cobarconst_{\otimes}  \circ \rho^* \circ  \dec^*_{\otimes} \circ \Z[X], \\
 \cobarconst_{\tildeotimes}  \circ \rho^* \circ \dec^*_{\tildeotimes} \circ \AWfrak\circ  \Z[X].   
 \end{gather*}
 \end{PAR}

 \begin{DEF}\label{DEFSZCZARBAHESSTONKS}
\begin{enumerate}
\item  The following composition of morphisms of dg-algebras is called the {\bf Szczarba morphism} $\phi$:
\[   \xymatrix{
 \cobarconst_{\tildeotimes}  \circ \rho^* \circ \dec^*_{\tildeotimes} \circ \AWfrak \circ  \Z[X] \ar[d]^{\EZfrak-\text{functoriality (Proposition~\ref{PROPFUNCTCOBAR})}}  \\
\EZfrak^{\vee} \circ \cobarconst_{\otimes}  \circ \rho^* \circ \EZfrak_1 \circ \dec^*_{\tildeotimes}  \circ \AWfrak \circ \Z[X] \ar[d]_{\sim}^{\text{Lemma~\ref{LEMMADECCOMM}}} \\
\EZfrak^{\vee} \circ \cobarconst_{\otimes}  \circ \rho^* \circ  \dec^*_{\otimes} \circ \EZfrak \circ \AWfrak \circ \Z[X] \ar@{-->}[d]_{\Sh(\Z[X])}^{\text{The Shih-operator in disguise, see \ref{PARSHIHSZCZARBA}}}  \\
\EZfrak^{\vee} \circ \cobarconst_{\otimes}  \circ \rho^* \circ  \dec^*_{\otimes} \circ \Z[X] \\ 
 } \] 
 where we wrote $\dec^*_{\otimes} := \AWfrak_2 \circ \dec^*$.
\item  The following composition of morphisms of dg-algebras is called the {\bf Hess-Tonks morphism} $\psi$: 
\[   \xymatrix{
\EZfrak^{\vee} \circ \cobarconst_{\otimes}  \circ \rho^* \circ \dec^*_{\otimes} \circ \Z[X]  \ar[d]^{\AWfrak-\text{functoriality (Proposition~\ref{PROPFUNCTCOBAR})}}  \\
 \cobarconst_{\tildeotimes}  \circ \rho^* \circ \AWfrak_1 \circ \dec^*_{\otimes} \circ \Z[X] \ar@{-->}[d]_{\Sh(\Z[X])^{-1}}^{\text{The Shih-operator in disguise, see \ref{PARSHIHSZCZARBA}}}  \\
\widehat{\cobarconst}_{\tildeotimes}  \circ \rho^* \circ  \AWfrak_1 \circ \dec^*_{\otimes} \circ \EZfrak \circ \AWfrak \circ \Z[X]\ar[d]_{\sim}^{\text{Lemma~\ref{LEMMADECCOMM}}} \\
 \widehat{\cobarconst}_{\tildeotimes}  \circ \rho^* \circ \dec^*_{\tildeotimes} \circ \AWfrak \circ  \Z[X]  
 } \] 
\end{enumerate}
Here $\widehat{\cobarconst}_{\tildeotimes}$ is a natural completion of $\cobarconst$ defined in Definition~\ref{DEFCOBARHAT}. 
\end{DEF}
We will see later in Corollary~\ref{KORSZCZARBA} that, the first morphism is the same as the one given by Szczarba in \cite{Szc61} (up to different indexing convention, see \ref{BEMINDEX}). That there is a relation between the Shih and Szczarba constructions has been observed before, cf.\@ \cite{Fra21}. It is not completely clear (to me), however, how the
construction of the deformation in \cite{Fra21} relates to the construction in \ref{PARSHIHSZCZARBA} below. 

 It seems likely that the second morphism, optimistically called Hess-Tonks morphism, gives the morphism defined in \cite[2.1]{HT10} (at least up to the same reindexing) but this remains to be checked in detail.

\begin{BEM}Notice that, in 2., there is no direct map of the form
\[   \xymatrix{
 \cobarconst_{\tildeotimes}  \circ \rho^* \circ \AWfrak_1 \circ \dec^*_{\otimes} \circ \Z[X] \ar@{-->}[d]^{\lightning}  \\
 \cobarconst_{\tildeotimes}  \circ \rho^* \circ \dec^*_{\tildeotimes} \circ \AWfrak \circ  \Z[X]  
 } \] 
 because the analogous diagram to the one in Lemma~\ref{LEMMADECCOMM} with the Alexander-Whitney morphisms does not commute!
\end{BEM}

\begin{PAR}\label{PARSHIHSZCZARBA}
{\bf The transport of the Shih operator: }

It remains to see how the deformation $\Xi^{\vee}$ (cf.\@ Theorem~\ref{SATZCOHEZ}, 3.) gives rise to the dashed morphisms in Definition~\ref{DEFSZCZARBAHESSTONKS}.

Recall from~\ref{PARHIGHERSHIH} the construction of the higher Shih operators which constitute a coherent transformation $\Ezfrak^* H =\Ezfrak^*  \exp(L_{(\mathcal{C},\otimes)^{\vee}}(\Xi^{\vee})) \in \uCoh_0(\id, \EZfrak \circ \AWfrak)$ where on the source the discrete enrichment and on the destination the weak enrichment $F \uHom^{\tildeotimes}_{(\Ab^{\Delta^{\op}}, \otimes)^{\vee}}$ (\ref{PARENRICHMENTS}) is chosen. 

For a dg-coalgebra $C \in \Coalg(\Ab^{\Delta^{\op}}, \otimes)$, it yields a coherent transformation $\Ezfrak^* H(C) \in  \uCoh_0(C, \EZfrak \circ \AWfrak \circ C  )$ for the discrete enrichment on $\OOO^{\op}$ and for the weak enrichment $F \uHom^{\tildeotimes}_{(\Ab^{\Delta^{\op}}, \otimes)^{\vee}}$ on the target. 

Forgetting the counit, we get a coherent transformation
\[ \Ezfrak^* H(C) \in  \uCoh_0(C, \EZfrak \circ \AWfrak \circ C  ) \]
for the truncated weak enrichment $F \uHom^{\tildeotimes, t}_{(\Ab^{\Delta^{\op}}, \otimes)^{\circ, \vee}}$ (cf.\@ Definition~\ref{DEFTRUNC}) on the target. 
{\em Reason:} We have obviously $\Z[N([1] \times_{/\Delta^{\circ}_{\act}} \Delta^{\circ}_{\act} \times_{/\Delta^{\circ}_{\act}} [k+1])]_n = 0$ for $n > k$ because every element of degree $> k$ in this nerve must be degenerate.  

Proposition~\ref{PROPSIMPLICIALLYENRICHEDDEC} states that $\tau \dec^*_{\otimes}$ is a weakly enriched functor
\[ (\mathcal{C}^{\Delta^{\op}}, \otimes)^{\circ, \vee} \rightarrow (\mathcal{C}^{\Delta^{\op} \times \Delta^{\op} }, \utildeotimes)^{\circ, \vee}   \]
for the weak $(\Ab^{\Delta^{\op}}, \otimes)$-enrichment $\Hom^{\tildeotimes, t}_{(\Ab^{\Delta^{\op}}, \otimes)^{\circ, \vee}}$ on the left and the $(\Ab^{\Delta^{\op}}, \otimes)$-enrichment  $\Aw^* \Hom^{\tildeotimes, t}_{(\Ab^{\Delta^{\op} \times \Delta^{\op}}, \utildeotimes)^{\circ, \vee}}$ (second variable) on the right. 

The coherent transformation $\Ezfrak^* H(C) $ yields thus a coherent transformation 
\[ \tau \dec^*_{\otimes} \Ezfrak^* H(C) \in \uCoh_0(\tau \dec^*_{\otimes} C, \tau \dec^*_{\otimes} \EZfrak \circ \AWfrak \circ C ).  \]

Applying Proposition~\ref{PROPCOHERENTAINFTY} produces a morphism of $A_{\infty}$-coalgebras, by definition a morphism of dg-algebras: 
\[ \Sh(C):  \mlq T^{\prod} \mrq  (s \tau \dec^*_{\otimes} C)  \to  \mlq T^{\prod} \mrq ( s \tau \dec^*_{\otimes} \EZfrak \circ \AWfrak \circ C ).  \]
Note that because the first component $\Sh(C)_1$ is the identity, this morphism of $A_{\infty}$-coalgebras is invertible. 
\end{PAR}

\begin{SATZ}\label{SATZSZCZARBA}
The morphism $\Sh(C)$ constructed above is bounded (that is, it maps $\cobarconst$ to $\cobarconst$ (without completion)) and we have a commutative diagram:
\[ \tiny \xymatrix{
H^0(T^{\oplus} (s \tau \dec^*_{\otimes} \circ \EZfrak \circ \AWfrak \circ \Z[X]) \ar[d]^{\Sh(\Z[X])}  \ar[r]^-\sim & \bigoplus_{k=0}^{\infty} (\tau_{\ge 1}\dec^* \Z[X])_1^{\otimes k} / I' & \ar[l]^-{s_{\can}} s \tau_{\ge 1} \Z[X]   \ar[d]^{\Sz(\Z[X])} \\
H^0(T^{\oplus} (s \tau \dec^*_{\otimes} \circ \Z[X]))  \ar[r]^-\sim & \bigoplus_{k=0}^{\infty} (\tau_{\ge 1} \dec^* \Z[X])_1^{\otimes k} / I  & \ar[l]^{1+\varepsilon}_{\sim} \bigoplus_{k=0}^{\infty} (\tau_{\ge 1} \dec^* \Z[X])_{[1]}^{\otimes k} / \widetilde{I}   }  \]
where the  map $\Sz$ on the right has components 
\[ \Sz_k: X_{[k]} \to \bigoplus_{k=0}^{\infty} (\tau_{\ge 1} \dec^* \Z[X])_{[1]}^{\otimes k} \] 
given by
\[ \Sz_k(s^{-1}x) =\begin{cases} 
-1 + x & k=1\\
 \sum_{\underline{i}}   \sgn(\underline{i}^{\vee}) s_{\can} (\Sz_{\underline{i}}^0 \ast' \id_{\Delta_1}) (s^{-1}x) \otimes \cdots \otimes (\Sz_{\underline{i}}^k \ast' \id_{\Delta_1})(s^{-1}x)  & k > 1
\end{cases}
\]
for the Szczarba-operators $\Sz_{\underline{i}}^j$ defined in Definition~\ref{DEFSZCZARBA}.
\end{SATZ}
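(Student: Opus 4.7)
The plan is to first unpack $\Sh(C)$ in components using Proposition~\ref{PROPCOHERENTAINFTY} applied to the coherent transformation $\tau\dec^*_{\otimes}\Ezfrak^* H(C)$, so that each component $\alpha_{k+1}\colon s^{-1}(\tau\dec^*_{\otimes}C) \to (s^{-1}\tau\dec^*_{\otimes} \EZfrak\AWfrak C)^{\tildeotimes (k+1)}$ is an alternating sum, indexed by active factorisations $\underline{i}=(i_0,\dots,i_{k-1})$ of $p_{k+1}\colon[1]\to[k+1]$, of the evaluations of $\tau\dec^*_{\otimes}\Ezfrak^*H(C)$ on those factorisations. Unwinding the definition of $\Ezfrak^*H$ from~\ref{PARHIGHERSHIH} expresses each such evaluation as a signed sum $\sum_{\underline{b}}\sgn(\sigma_{\underline{b}})H^n_{\underline{b},\underline{i}}$ of the higher Shih operators in the sense of Proposition~\ref{PROPHIGHERSHIH}. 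Hence on the level of representatives in the bisimplicial source the component $\alpha_{k+1}$ is built out of the combinatorial operators $\mathcal{H}^n_{\underline{b},\underline{i}}$ of Proposition~\ref{PROPHIGHERSHIHEXPL}.

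Next I would pass to the normalised quotient, exploiting that all entries of $\cobarconst\circ\rho^*\circ\dec^*_{\otimes}$ live in the image of the projection $\mathcal{P}$ of~\ref{PARP}: modulo degenerate summands, Proposition~\ref{PROPSHIHSZCZARBAGEN1} rewrites $\mathcal{P}\mathcal{H}^n_{\underline{b},\underline{i}}$ in terms of the Szczarba operators $\Sz^{c_j}_{\widetilde{\underline{i}}}$. Packaging the resulting sum over $\underline{b}$ and $\underline{i}$ with the signs recorded in~\ref{PARSHUFFLE} and (\ref{eqsignshuffle2}) yields exactly the global expression $\mathcal{K}^n_k$ introduced in (\ref{eqglobalszczarba}). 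This is the step where the ``mysterious'' Szczarba sign $\sgn(\widetilde{\underline{i}}^{\vee})$ appears, coming from the interplay of the Koszul rule in Proposition~\ref{PROPCOHERENTAINFTY} and the shuffle signs hidden inside $\Ezfrak$.

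Boundedness of $\Sh(\Z[X])$ --- i.e.\ the fact that it lands in $\cobarconst$ rather than merely in $\widehat{\cobarconst}$ --- is then reduced, term by term, to Proposition~\ref{PROPSZCZARBACANCELLATION}: the identity $\sum_n(1-\varepsilon)^{\otimes(n+1)}\mathcal{K}^{n+k+1}_n\equiv\mathcal{K}^{2k+1}_k$ modulo degenerates and constants shows that, after composing with $s_{\mathrm{can}}$ and the quotient by $I'$, all but the ``balanced'' contributions ($n=k$, i.e.\ $\widetilde{k}=k$) collapse. This produces only finitely many non-zero components of $\alpha_{k+1}$ in each fixed total degree, which is exactly the boundedness statement. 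The $n=k$ contribution, read off from Corollary~\ref{PROPSHIHSZCZARBA} (where $\underline{b}=(k-1,\dots,0)$ is the unique surviving sequence and $\varepsilon=1$), is precisely the tensor $(\Sz^0_{\underline{i}}\ast'\id_{\Delta_1})\otimes\cdots\otimes(\Sz^k_{\underline{i}}\ast'\id_{\Delta_1})$, which matches the stated formula for $\Sz_k$.

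Finally, the commutativity of the diagram amounts to: (i) identifying the two horizontal isomorphisms, which are simply Dold--Kan and the explicit description of $\cobarconst$ in Lemma~\ref{LEMMACOBARABELIAN} combined with the formula from Proposition~\ref{PROPCOBARABDEC} identifying $T^{\oplus}((\dec^*B)_1)/I\cong T^{\oplus}(s^{-1}B)$; (ii) matching the $s_{\mathrm{can}}$ lift on the right with the coaugmentation inherent in the construction of $\alpha_1$; and (iii) transporting the formulas for $\alpha_{k+1}$ obtained above through these isomorphisms. The only subtle points are sign bookkeeping (the factor $(-1)+x$ in degree $k=1$ comes from the $\Omega$-twist of Lemma~\ref{LEMMACOBARABELIAN}) and verifying that the projection by $1+\varepsilon$ intertwines $\widetilde{I}$ with $I$, which is immediate from the description of the ideals. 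The main obstacle is the combinatorial Proposition~\ref{PROPSHIHSZCZARBAGEN1}: once it is in hand together with the cancellation Proposition~\ref{PROPSZCZARBACANCELLATION}, both boundedness and the explicit Szczarba formula follow by bookkeeping.
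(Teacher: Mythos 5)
Your plan is correct and follows essentially the same route as the paper's proof: higher Shih operators via Propositions~\ref{PROPHIGHERSHIH} and~\ref{PROPHIGHERSHIHEXPL}, reduction to Szczarba operators via Proposition~\ref{PROPSHIHSZCZARBAGEN1} packaged as $\mathcal{K}^n_k$, transport through $\tau\dec^*_{\otimes}$ and the coherent-to-$A_\infty$ map of Proposition~\ref{PROPCOHERENTAINFTY}, and the cancellation Proposition~\ref{PROPSZCZARBACANCELLATION} yielding both boundedness and the explicit formula, with the bottom row identified via Lemma~\ref{LEMMACOBARABELIAN} and Proposition~\ref{PROPCOBARABDEC}. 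The only step you compress is the explicit decalage transport (restriction to the $j=1$ column, the identity converting the degeneracies $q_j$ into $\ast'\,\id_{[1]}$, and the check that the kernel of $\mathcal{P}$ maps to degenerates), which the paper carries out in detail using Proposition~\ref{PROPEXPLICITDECENRICHED}.
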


\begin{KOR}\label{KORSZCZARBA}
The Szczarba morphism in Definition~\ref{DEFSZCZARBAHESSTONKS} is up a different indexing (cf.\@ \ref{BEMINDEX}) the  morphism given by Szczarba in \cite{Szc61}, see also \cite[Theorem~7]{HT10}. 
\end{KOR}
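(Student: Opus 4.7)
The plan is to reduce the statement directly to Theorem~\ref{SATZSZCZARBA} together with the indexing comparison already noted in Remark~\ref{BEMINDEX}. Tracing through Definition~\ref{DEFSZCZARBAHESSTONKS}, the Szczarba morphism $\phi$ is, by construction, the composition of three maps: (i) the $\EZfrak$-functoriality morphism from Proposition~\ref{PROPFUNCTCOBAR}, (ii) the identification in Lemma~\ref{LEMMADECCOMM} relating $\EZfrak_1 \circ \dec^*_{\tildeotimes}$ and $\dec^*_{\otimes} \circ \EZfrak$, and (iii) the dashed map induced by the Shih operator in disguise, namely $\Sh(\Z[X])$, as explained in \ref{PARSHIHSZCZARBA}. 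Of these, only (iii) carries nontrivial combinatorial content; the other two are formal identifications that also appear (in the same form) in the classical constructions.

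First, I would unwind (iii): Theorem~\ref{SATZSZCZARBA} supplies an explicit formula for $\Sh(\Z[X])$ as a morphism of dg-algebras whose components on generators $s^{-1}x \in s\tau_{\ge 1}\Z[X]$ are precisely
\[ \Sz_k(s^{-1}x) = \sum_{\underline{i}}  \sgn(\underline{i}^{\vee}) s_{\can}(\Sz^0_{\underline{i}} \ast' \id_{\Delta_1})(s^{-1}x) \otimes \cdots \otimes (\Sz^k_{\underline{i}} \ast' \id_{\Delta_1})(s^{-1}x), \]
for $k > 1$, written in terms of the operators $\Sz^j_{\underline{i}}$ of Definition~\ref{DEFSZCZARBA}. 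Thus, after the formal identifications (i) and (ii), the morphism $\phi$ is given on generators by exactly this tensor expression.

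Next, I would compare this with Szczarba's original formula in \cite{Szc61}, as recalled in \cite[Theorem~7]{HT10}. By Remark~\ref{BEMINDEX}, the operators satisfy
\[ \Sz^{j}_{\underline{i}} = {}^t D^{k+1}_{k-j,\underline{i}}, \]
where $D^{k+1}_{\bullet,\underline{i}}$ are the operators defined in Szczarba's article and ${}^t$ denotes conjugation by the simplex-reversal. Substituting this identification into the formula above, the summands $(\Sz^0_{\underline{i}} \ast' \id_{\Delta_1}) \otimes \cdots \otimes (\Sz^k_{\underline{i}} \ast' \id_{\Delta_1})$ transform into the iterated tensor products of $D$-operators appearing in Szczarba's map, with the tensor order reversed and index $j$ replaced by $k-j$. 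This reversal is exactly the difference in indexing conventions alluded to in the statement.

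The main (only) obstacle will be bookkeeping: matching the sign convention $\sgn(\underline{i}^{\vee})$ in our formula with the one used by Szczarba, and verifying that the simplex-reversal ${}^t$ transforms the extra factors $s_{\can}$, $\ast'\id_{\Delta_1}$, and the Koszul signs inherited from Proposition~\ref{PROPCOHERENTAINFTY} into precisely Szczarba's normalization. I expect this to be purely combinatorial, along the lines of the manipulations already carried out in Proposition~\ref{PROPSHIHSZCZARBAGEN1} and Lemma~\ref{LEMMADEG}, with no further conceptual input.
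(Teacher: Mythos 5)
Your proposal is correct and follows the paper's intended route: the Corollary is an immediate consequence of Theorem~\ref{SATZSZCZARBA}, which identifies the composite of Definition~\ref{DEFSZCZARBAHESSTONKS} on generators with the $\Sz_k$-formula, combined with the dictionary $\Sz^{j}_{\underline{i}} = {}^t D^{k+1}_{k-j,\underline{i}}$ of Remark~\ref{BEMINDEX}. The remaining sign/normalization bookkeeping you flag is exactly the content already absorbed into Theorem~\ref{SATZSZCZARBA} and Proposition~\ref{PROPSHIHSZCZARBAGEN1}, so no further argument is needed.
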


\begin{proof}[Proof of Theorem~\ref{SATZSZCZARBA}.]
The components 
\[ (\Ezfrak^* H)_{\underline{i}}  \in  \uHom_{k}(\Z[X], \Z[X^{k+1}]) \]
of $\Ezfrak^* H$, i.e.\@ the higher Shih operators, are 
by Proposition~\ref{PROPHIGHERSHIH} given by
\[  (\Ezfrak^* H)_{\underline{i}}^n = \sum_{\underline{b}} \sgn(\sigma_{\underline{b}}) H_{\underline{b}, \underline{i}}^n.   \]
and by Proposition~\ref{PROPHIGHERSHIHEXPL} and Proposition~\ref{PROPSHIHSZCZARBAGEN1} we have that (the projection of) $\sgn(\sigma_{\underline{b}}) H_{\underline{b}, \underline{i}}^n$ is given by
\[ \sgn(\sigma_{\underline{b}})   \mathcal{P} \mathcal{H}_{\underline{b}, \underline{i}}^n \equiv \begin{cases} \sum_{\substack{\widetilde{\underline{i}} \text{ such that } \\ \text{$\underline{i}$ is a $\underline{b}$-reduction of $\widetilde{\underline{i}}$}} }  \varepsilon (\Sz^{c_0}_{\widetilde{\underline{i}}} \ast' q_{0}) \otimes \cdots \otimes  (\Sz^{c_k}_{\widetilde{\underline{i}}} \ast' q_{k}) & \underline{b}_{0} < \widetilde{k}  \\
0 & \text{otherwise} \end{cases}    \]
modulo degenerates, where $q_{j} = s_{j,j+1}^{k+1}$ and
\[ \varepsilon =   (-1)^{(\widetilde{k}+1)k} \sgn(\underline{i}^{\vee}) \sgn(\widetilde{\underline{i}}^{\vee}).   \]
Thus $\mu(\underline{i})$ is represented by maps
\begin{align*}
 \Z[X]_{n-k} &\to (\Z[X]^{\otimes k+1})_{n} \\
 x &\mapsto  \sum_{\underline{b}} \sum_{\substack{\widetilde{\underline{i}} \text{ such that } \\ \text{$\underline{i}$ is a $\underline{b}$-reduction of $\widetilde{\underline{i}}$}} }  \varepsilon (\Sz^{c_0}_{\widetilde{\underline{i}}} \ast' q_{0}) \otimes \cdots \otimes  (\Sz^{c_k}_{\widetilde{\underline{i}}} \ast' q_{k})   
\end{align*}
modulo degenerates and the kernel of $\mathcal{P}$, where the $\underline{b}$ run over sequences $(b_0, \dots, b_{k})$ with $\widetilde{k}>b_0> \dots >b_{k} \ge 0$ only.

Under the functor $\tau \dec^*_{\otimes}$ of weakly enriched operads of Proposition~\ref{PROPSIMPLICIALLYENRICHEDDEC} this is mapped to a morphism: 
\[ \uHom_{k}(\tau_{\le 1}\dec^*\Z[X], (\tau_{\le 1} \dec^*\Z[X])^{\utildeotimes k+1}). \]
Contemplating the construction in \ref{PARDECOTIMESENRICHED} we can first map it to 
\begin{equation} \label{eqfirstlift}
\uHom_{k}(\tau_{\le 1}\dec^*\Z[X], (\tau_{\le 1} \dec^*\Z[X])^{\otimes k+1}) \end{equation}
by the morphism in Proposition~\ref{PROPEXPLICITDECENRICHED} and then apply the $\AWfrak_2$ morphism. 
The image in (\ref{eqfirstlift}) is thus represented by morphisms (we will only need the case in which the first component is zero)
\begin{align*}
 \Z[X]_{i+j+1} \oplus \Z[X]_{i+j}  &\to (\Z[X]^{\otimes k+1})_{i+j+k+1} \oplus (\Z[X]^{\otimes k+1})_{i+j+k}  \\
(0, x) &\mapsto (0, (-1)^{ik}\sum_{\underline{b}} \sum_{\substack{\widetilde{\underline{i}} \text{ such that } \\ \text{$\underline{i}$ is a $\underline{b}$-reduction of $\widetilde{\underline{i}}$}} }  \varepsilon (\Sz^{c_0}_{\widetilde{\underline{i}}} \ast' q_{0}) \otimes \cdots \otimes  (\Sz^{c_k}_{\widetilde{\underline{i}}} \ast' q_{k}) )  
\end{align*}
 where now $\widetilde{k}=i+j-1$. We compose this with the bottom line map in the commutative diagram (cf.\@ Proposition~\ref{PROPAWEZAB}): 
 \[ \tiny \xymatrix{ (\Z[X]^{\otimes k+1})_{[i+j+k]} \ar[d] \ar[r]^-{s_{\can}}  &  (\Z[X]^{\otimes k+1})_{[i+j+k+1]} \ar[r]^-{\delta^{(0)}, \dots, \delta^{(k)}} \ar[d] & \bigoplus_{j_0+\cdots+j_k=j+k} (\dec^*\Z[X])_{[i+j_0+1]} \otimes \cdots \otimes (\dec^*\Z[X])_{[i+j_k+1]} \ar[d]\\
 (\Z[X]^{\otimes k+1})_{i+j+k} \ar@{^{(}->}[r] &  (\dec^*\Z[X]^{\otimes k+1})_{i, j+k} \ar[r]^-{\AWfrak_2} & \bigoplus_{j_0+\cdots+j_k=j+k} ((\dec^*\Z[X])_{\bullet, j_0} \otimes \cdots \otimes (\dec^*\Z[X])_{\bullet, j_k})_i   \\
 } \]
 with 
 \[ \delta^{(e)} = \id_{[i]} \ast \delta^{j+k}_{\sum_{f<e}j_f,\sum_{f\le e}j_f} \]
  where, however, only summands with $j_0, \dots, j_k \ge 1$ occur because of the truncation. Now set $j=1$ (hence $i = \widetilde{k}$) because this is the column appearing in the cobar construction. 
 In this case, there is only one summand and we have
 \[ \delta^{(e)} s_{\can}  = \begin{cases} (\id_{[\widetilde{k}]} \ast \delta^{k+1}_{0,1} ) s_{\can}  & e=0, \\  \id_{[\widetilde{k}]} \ast \delta^{k+1}_{e-1,e} & e>1.  \end{cases} \]
 This shows that elements in the kernel of $\mathcal{P}$ are mapped to degenerate (in the second index of $\dec^*$) elements. Furthermore
 \begin{gather*}  (\id_{[\widetilde{k}]} \ast \delta^{k+1}_{e,e+1}) \circ s_{\can} \circ (\Sz^{c_0}_{\widetilde{\underline{i}}} \ast' q_{e}) \\
 =
  (\id_{[\widetilde{k}]} \ast \delta^{k+1}_{e,e+1}) \circ  (\Sz^{c_0}_{\widetilde{\underline{i}}} \ast s_{e,e+1}^{k+1}) \circ s_{\can}  \\
=    (\Sz^{c_0}_{\widetilde{\underline{i}}} \ast \id_{[1]}) \circ s_{\can}  = s_{\can} \circ (\Sz^{c_0}_{\widetilde{\underline{i}}} \ast' \id_{[1]}).  \end{gather*}
 The composition is equal to 
 \begin{align*}
 \Z[X]_{\widetilde{k}+1}  &\to \Z[X]_{[\widetilde{k}+2]} \otimes \cdots \otimes \Z[X]_{[\widetilde{k}+2]} \\
x &\mapsto (-1)^{\widetilde{k} k} \sum_{\underline{b}} \sum_{\substack{\widetilde{\underline{i}} \text{ such that } \\ \text{$\underline{i}$ is a $\underline{b}$-reduction of $\widetilde{\underline{i}}$}} }  \varepsilon s_{\can}   (\Sz^{c_0}_{\widetilde{\underline{i}}}\ast' \id_{[1]}) \otimes \cdots \otimes  (\Sz^{c_k}_{\widetilde{\underline{i}}}\ast' \id_{[1]}) 
\end{align*}
modulo degenerates.

Under the morphism in Proposition~\ref{PROPCOHERENTAINFTY} this yields a morphism of $A_{\infty}$-coalgebras with components (in degree 1 and $\widetilde{k}$)
\[ \alpha_{k+1} =  (s^{-1})^{k+1} (-1)^{k} \sum_{\underline{i}, \underline{b}} \sum_{\substack{\widetilde{\underline{i}} \text{ such that } \\ \text{$\underline{i}$ is a $\underline{b}$-reduction of $\widetilde{\underline{i}}$}} }
(-1)^{\frac{(k-1)k}{2}} \sgn(\widetilde{\underline{i}}^{\vee}) s_{\can} (\Sz^{c_0}_{\widetilde{\underline{i}}}\ast' \id_{[1]}) \otimes \cdots \otimes  (\Sz^{c_k}_{\widetilde{\underline{i}}}\ast' \id_{[1]}))    \]
(Notice that $\sgn(\underline{i}^{\vee})=(-1)^{\frac{(k-1)k}{2}} \sgn(\underline{i})$).
The $(s^{-1})^{k+1}$ contributes (Koszul convention) a sign of $(-1)^{\frac{k(k+1)}{2} }$,
so 
\[ \alpha_{k+1} = \sum_{\underline{i}, \underline{b}} \sum_{\substack{\widetilde{\underline{i}} \text{ such that } \\ \text{$\underline{i}$ is a $\underline{b}$-reduction of $\widetilde{\underline{i}}$}} }
\sgn(\widetilde{\underline{i}}^{\vee}) s_{\can} ((\Sz^{c_0}_{\widetilde{\underline{i}}}\ast' \id_{[1]})(s^{-1}x) \otimes \cdots \otimes  (\Sz^{c_k}_{\widetilde{\underline{i}}}\ast' \id_{[1]})(s^{-1}x)).    \]
If we take $x \in X_{\widetilde{k} + 1}$ we have:
\[ (1-\varepsilon) \sum_{n=0}^{\infty} \alpha_{n+1} (x) = \begin{cases}
-1 + x & \widetilde{k} =1  \\
\sum_{\underline{i}} \sgn(\underline{i}^{\vee})  (\Sz^{0}_{\underline{i}}(s^{-1}x) \otimes \cdots \otimes  \Sz^{k}_{\underline{i}}(s^{-1}x)) & \widetilde{k} >1  
\end{cases}    \]
by Proposition~\ref{PROPSZCZARBACANCELLATION}.
 The $-1$ is the constant term for k=1. For higher $k$ the constant term is degenerate and thus can be omitted. 
\end{proof}

\begin{PROP}
We have $\psi \phi = \id$ and there is a chain homotopy $\psi \phi \Rightarrow \id$.
\end{PROP}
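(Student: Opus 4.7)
The plan is to unpack both compositions $\psi\phi$ and $\phi\psi$ step-by-step using the structure of their construction in Definition~\ref{DEFSZCZARBAHESSTONKS}, and then to exploit two crucial facts: (1) on the level of (co)operads, $\EZfrak \circ \AWfrak = \id$ holds on the nose (Corollary~\ref{KOREZAB}), whereas $\AWfrak \circ \EZfrak$ is only homotopic to $\id$ via $L_{(\mathcal{C},\otimes)^{\vee}}(\Xi^{\vee})$; (2) the Shih transport $\Sh(\Z[X])$ constructed in \ref{PARSHIHSZCZARBA} is an $A_{\infty}$-morphism whose first component is the identity, so it is \emph{invertible} as an $A_{\infty}$-morphism, and the construction of $\psi$ uses literally $\Sh(\Z[X])^{-1}$.

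First I would stack the two defining zig-zags of $\phi$ and $\psi$ on top of each other to form $\psi\phi$. The composition begins with the $\EZfrak$-functoriality of classical cobar and ends with the $\AWfrak$-functoriality; by Proposition~\ref{PROPFUNCTCOBAR} these together yield the classical cobar applied to $\AWfrak \circ \EZfrak$. But between them sits the full Lemma~\ref{LEMMADECCOMM} isomorphism and its inverse, and the Shih transport $\Sh(\Z[X])$ followed by $\Sh(\Z[X])^{-1}$. All four of these cancel in pairs — the Lemma~\ref{LEMMADECCOMM} isomorphisms are formal inverses of each other, and the Shih transports cancel by construction. What remains is an endomorphism of $\cobarconst_{\tildeotimes}\circ\rho^*\circ\dec^*_{\tildeotimes}\circ\AWfrak\circ \Z[X]$ induced by $\EZfrak\circ\AWfrak$, which is the identity by Corollary~\ref{KOREZAB}. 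This proves $\psi\phi = \id$ (with the small caveat that the result lands in the non-completed cobar).

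For $\phi\psi$ I would run the same bookkeeping in the opposite order. Now the two functoriality morphisms compose to the classical cobar applied to $\EZfrak \circ \AWfrak$ --- but this composition is only \emph{homotopic} to the identity on the source cooperad, via the coherent transformation $L_{(\mathcal{C},\otimes)^{\vee}}(\Xi^{\vee})$ of Theorem~\ref{SATZCOHEZ}. The Lemma~\ref{LEMMADECCOMM} pieces and the Shih transports still cancel (in the opposite order), so the chain homotopy $\phi\psi \Rightarrow \id$ is induced directly by $L_{(\mathcal{C},\otimes)^{\vee}}(\Xi^{\vee})$ pushed forward along $\dec^*_{\otimes}$ and then transferred through the relation between coherent transformations and $A_{\infty}$-morphisms of Proposition~\ref{PROPCOHERENTAINFTY}. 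Concretely, the same cosimplicial-level data that produced $\Sh(\Z[X])$ in \ref{PARSHIHSZCZARBA} now play the role of a chain homotopy, not of an isomorphism: its differential, computed via (\ref{eqhomotopyd}), is precisely the difference $\phi\psi - \id$ because $\exp(\Xi)^{\vee}_{\infty}$ is by construction a homotopy between $\AW\EZ$ and $\id$.

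The main obstacle will be the bookkeeping around the completion $\widehat{\cobarconst}_{\tildeotimes}$: the homotopy produced by $L(\Xi^{\vee})$ consists \emph{a priori} only of an $A_{\infty}$-morphism on $T^{\prod}(s^{-1}-)$ whose components live in all tensor degrees simultaneously, so one must argue (as in Theorem~\ref{SATZSZCZARBA}) that the particular homotopy extracted here is \emph{bounded}, i.e.\ that its components stabilize after finitely many tensor factors when evaluated on singular chains $\Z[X]$. Once boundedness is verified, the homotopy descends to the (non-completed) $\cobarconst$ and compares $\phi\psi$ to the identity as morphisms of dg-algebras. Verifying this boundedness amounts to an explicit analysis of the higher Shih operators $\mathcal{H}^n_{\underline{b},\underline{i}}$ completely analogous to the one in Proposition~\ref{PROPSHIHSZCZARBAGEN1} and Proposition~\ref{PROPSZCZARBACANCELLATION}, and is the one place where genuine combinatorial work is needed rather than a formal diagram chase.
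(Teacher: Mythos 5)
The paper's own proof of this Proposition is explicitly omitted, so there is no argument to compare yours against; judged on its own, your outline collects the right ingredients but its pivotal step is unjustified. In the composite $\psi\phi$ the six constituent maps occur in the order: $\EZfrak$-functoriality, Lemma~\ref{LEMMADECCOMM}, $\Sh(\Z[X])$, $\AWfrak$-functoriality, $\Sh(\Z[X])^{-1}$, Lemma~\ref{LEMMADECCOMM}. The two Shih transports are separated by the $\AWfrak$-functoriality map of Proposition~\ref{PROPFUNCTCOBAR}, and the two Lemma~\ref{LEMMADECCOMM} identifications are different instances of that lemma between different objects, not mutually inverse endomaps of a single object; they are separated by three further maps. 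So ``cancelling in pairs'' presupposes that you can commute the functoriality morphisms past $\Sh(\Z[X])$, i.e.\ a naturality of the comparison maps of Proposition~\ref{PROPFUNCTCOBAR} with respect to morphisms of $A_{\infty}$-coalgebras (equivalently, compatibility with the completed cobar's functoriality in $A_{\infty}$-morphisms, where one must also check that $\AWfrak$, being only lax monoidal, pushes such morphisms forward compatibly), together with the analogous compatibility for the Lemma~\ref{LEMMADECCOMM} isomorphisms. None of this is formal: $\Sh(\Z[X])$ is not a strict coalgebra map, so ordinary naturality does not apply, and establishing these interchanges --- or else computing the composite explicitly on generators as in Theorem~\ref{SATZSZCZARBA} --- is precisely the missing content, not a cleanup step.

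Second, your bookkeeping of which composite of $\AWfrak$ and $\EZfrak$ governs which half of the statement slips at the decisive moment: you conclude that the leftover endomorphism is ``induced by $\EZfrak\circ\AWfrak$, which is the identity by Corollary~\ref{KOREZAB}'', but that Corollary gives only $\AWfrak\,\EZfrak=\id$, while $\EZfrak\,\AWfrak$ is merely homotopic to the identity. For $\psi\phi$ (an endomorphism on the $\tildeotimes$-side) the strict identity must come from $\AWfrak\,\EZfrak=\id$; for $\phi\psi$ (on the $\otimes$-side) only $\EZfrak\,\AWfrak\simeq\id$ is available. This asymmetry is the entire reason the Proposition asserts an equality in one direction and only a chain homotopy in the other, so it must be tracked exactly rather than interchanged. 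Two further points need attention: $\psi\phi$ a priori lands in the completion $\widehat{\cobarconst}_{\tildeotimes}$, so ``$=\id$'' has to be read as the canonical map into the completion unless boundedness is proved (as you rightly anticipate, by an analysis parallel to Theorem~\ref{SATZSZCZARBA}); and for $\phi\psi\Rightarrow\id$ the chain homotopy extracted from the Shih deformation $\Xi^{\vee}$ still has to be intertwined with the conjugating maps $\Sh$ and $\Sh^{-1}$, which requires the same interchange statements (or the composition calculus for coherent transformations of Section~\ref{SECTCOMPCOH}) and is again more than a diagram chase.
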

\begin{proof}
Omitted for the moment.
\end{proof}

\appendix

\section{Exactness of some diagrams}

\begin{LEMMA}\label{LEMMAEXACT1}
The following natural morphism is an isomorphism of 1-profunctors (and also $\infty$-profunctors):
\[   {}^t\!\pi \cong  {}^t\! i\, \emptyset. \]
\end{LEMMA}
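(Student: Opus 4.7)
The strategy is to reduce the claim to the trivial factorization $\pi = \pi_\emptyset \circ i$ of functors $\Delta \to \cdot$ through $\Delta_\emptyset$, where $\pi_\emptyset: \Delta_\emptyset \to \cdot$ is the terminal projection. First I would interpret the symbol $\emptyset$ in the statement as the pro-functor ${}^t\pi_\emptyset : \cdot \to \Delta_\emptyset$, which is simply the constant pro-functor valued in $\{*\}$ since its codomain is the terminal category $\cdot$; equivalently this is the pro-functor right adjoint to $\iota(\pi_\emptyset) = {}^t[\emptyset]$ in $\Cat^{\PF}$ (cf.\@ the discussion in the proof of Lemma~\ref{LEMMAIOTACART}).

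With this interpretation, the 1-categorical case is immediate. The contravariant $2$-functoriality of ${}^t\iota: \Cat^{1\text{-}\op, 2\text{-}\op} \to \Cat^{\PF}$ (cf.\@ \ref{PAROPPF}) applied to $\pi = \pi_\emptyset \circ i$ yields
\[ {}^t\pi \;\cong\; {}^t(\pi_\emptyset \circ i) \;\cong\; {}^t i \circ {}^t\pi_\emptyset \;=\; {}^t i \circ \emptyset, \]
and the resulting isomorphism is the natural morphism in question.

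For the $\infty$-categorical version one cannot invoke this functoriality verbatim because compositions of pro-functors in $\Cat^{\PF}$ and $\Cat_\infty^{\PF}$ differ in general (cf.\@ \ref{PAROPPF}). Instead, I would check directly via the defining coend: for $j \in \Delta$,
\[ ({}^t i \circ \emptyset)(j,*) \;=\; \int^{k \in \Delta_\emptyset} \Hom_{\Delta_\emptyset}(i(j),k) \times \{*\} \;\simeq\; \{*\} \;=\; {}^t\pi(j,*), \]
where the middle equivalence is the ($\infty$-)categorical co-Yoneda lemma $\int^k \Hom(j,k) \times X(k) \simeq X(j)$ applied to the constant $\infty$-groupoid-valued functor $X = \{*\}$; this holds equally in the 1-categorical and $\infty$-categorical setting. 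The only point requiring care is that the two identifications (the $\Cat^{\PF}$-level ${}^t$-functoriality argument and the coend calculation) produce the same canonical ``natural morphism'' implicitly referenced in the statement; this is a routine unwinding of the coend construction and presents no real obstacle. The main thing to be mindful of is not confusing the various direction conventions for $\iota$ and ${}^t\iota$.
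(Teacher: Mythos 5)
Your proof is correct, and it ultimately rests on the same single fact as the paper's own one-line proof: that $\emptyset$ is final in $\Delta^{\op}_{\emptyset}$. The paper packages this by filling the square with legs $\pi$, $i$ and bottom arrow $\emptyset\colon\cdot\to\Delta^{\op}_{\emptyset}$ and invoking criterion 4 of \ref{PROPEXACT}: the contractibility one must check there is exactly your coend, since a coend against a constant factor is the colimit of the corepresentable functor, i.e.\ the classifying space of a coslice with an initial object. You instead factor $\pi=\pi_{\emptyset}\,i$, use the $2$-functoriality of ${}^t\iota$ (plus, for the $\infty$-case, a direct co-Yoneda computation, which is a legitimate way around the incompatibility of compositions under $\Cat^{\PF}\to\Cat^{\PF}_{\infty}$ — though note ${}^t\iota$ is also available directly in $\Cat^{\PF}_{\infty}$), and you absorb the finality into the \emph{interpretation} of the symbol $\emptyset$. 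That interpretation step is where the real content sits, so make it explicit and keep it on the correct side of $\op$: in the statement, $\emptyset$ is $\iota$ applied to the functor $\cdot\to\Delta^{\op}_{\emptyset}$ picking out the object $\emptyset$, and $\iota(\emptyset)\cong{}^t\pi_{\Delta^{\op}_{\emptyset}}$ (the constant singleton pro-functor $\cdot\to\Delta^{\op}_{\emptyset}$) precisely because $\pi_{\Delta^{\op}_{\emptyset}}\dashv\emptyset$, i.e.\ because $\emptyset$ is final there. On the un-opped side, where $\emptyset$ is initial in $\Delta_{\emptyset}$, the pro-functor $\iota(\emptyset)$ is empty away from $\emptyset$ and the statement read with it is false; so the direction convention you flag at the end is not cosmetic, and your displayed coend should be taken over $\Delta^{\op}_{\emptyset}$ with $\Hom_{\Delta^{\op}_{\emptyset}}(i(j),k)$ (equivalently, compute $\int^{k}\Hom_{\Delta^{\op}_{\emptyset}}(i(j),k)\times\Hom_{\Delta^{\op}_{\emptyset}}(k,\emptyset)\simeq\Hom_{\Delta^{\op}_{\emptyset}}(i(j),\emptyset)\simeq\{*\}$). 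Granting that, your computation is the paper's exactness check in disguise; and the same finality is what produces the $2$-cell $i\Rightarrow\emptyset\,\pi$ whose mate is the ``natural morphism'' of the statement, so your remaining compatibility check is indeed routine.
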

\begin{proof}
The diagram
\[ \xymatrix{  \Delta^{\op} \ar[d]_{\pi} \ar@{=}[r] &  \Delta^{\op} \ar[d]^{i} \\
\cdot   \ar[r]_-{\emptyset} \ar@{}[ru]|{\Nearrow} & \Delta^{\op}_{\emptyset}    }
 \]
is $\infty$-exact, because $\emptyset$ is final in $\Delta^{\op}_{\emptyset}$. 
\end{proof}

\begin{LEMMA}\label{LEMMAEXACT2}
The following natural morphism is an isomorphism of 1-profunctors (and also $\infty$-profunctors):
\[   {}^t\!\pr_{1}\, {}^t\!i  \cong {}^t\!(i, i)\,p_{1}.    \]
\end{LEMMA}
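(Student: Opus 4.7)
The plan is to identify the pro-functor $p_{1}$ as $\,{}^t\!\pr_{1}^{(\Delta_{\emptyset})}$ (i.e.\@ the right-adjoint pro-functor associated by \ref{PARPFLR} to the projection $\pr_{1}: \Delta_{\emptyset} \times \Delta_{\emptyset} \to \Delta_{\emptyset}$) and then to deduce the isomorphism purely from the 2-functoriality of ${}^t\iota$ applied to a strictly commutative square. This parallels the style of Lemma~\ref{LEMMAEXACT1}, although here the exactness does not even require a non-trivial 2-morphism, because the square in question commutes on the nose.

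Concretely, I would start from the diagram of 1-categories and functors
\[ \xymatrix{ \Delta \times \Delta \ar[r]^-{\pr_{1}} \ar[d]_-{(i,i)} & \Delta \ar[d]^{i} \\
\Delta_{\emptyset} \times \Delta_{\emptyset} \ar[r]_-{\pr_{1}} & \Delta_{\emptyset} } \]
and observe that both compositions $i \circ \pr_{1}^{(\Delta)}$ and $\pr_{1}^{(\Delta_{\emptyset})} \circ (i,i)$ are equal as functors $\Delta \times \Delta \to \Delta_{\emptyset}$, since each sends $(x,y)$ to $x$ under the inclusion $\Delta \hookrightarrow \Delta_{\emptyset}$. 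Applying the 2-functor ${}^t\iota: \Cat^{1\text{-}\op,2\text{-}\op} \to \Cat^{\PF}$, which reverses 1-morphisms and sends compositions to compositions (so ${}^t(\beta \circ \alpha) = {}^t\alpha \circ {}^t\beta$), this strict identity of functors becomes the strict identity of pro-functors ${}^t\!\pr_{1}\, {}^t\!i = {}^t(i,i)\, {}^t\!\pr_{1}$, which is the claimed isomorphism under the identification $p_{1} = {}^t\!\pr_{1}$. A brief verification via the coend formula confirms the interpretation: both sides evaluated at $(\bar z, \bar w, x) \in \Delta^{\op} \times \Delta^{\op} \times \Delta_{\emptyset}$ reduce to $\Hom_{\Delta_{\emptyset}}(\bar z, x)$, the right-hand side picking up a harmless factor of $\int^{x_{2}} \Hom_{\Delta_{\emptyset}}(\bar w, x_{2}) \cong \{*\}$ from the contractibility of the under-category $\bar w / \Delta_{\emptyset}$.

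For the $\infty$-categorical version, some care is warranted because the embedding $\Cat^{\PF} \hookrightarrow \Cat_{\infty}^{\PF}$ does not commute with composition (\ref{PAROPPF}); one cannot simply quote the 1-categorical statement. However, the ingredients of the argument all make sense at the $\infty$-level: ${}^t\iota_{\infty}$ is again a 2-functor, the source diagram involves only ordinary 1-categories and 1-functors, and the strict identity $i \circ \pr_{1}^{(\Delta)} = \pr_{1}^{(\Delta_{\emptyset})} \circ (i,i)$ persists under ${}^t\iota_{\infty}$. Alternatively, one can check the $\infty$-coend identity $\int^{y \in \Delta} \Hom_{\Delta}(\bar z, y) \times \Hom_{\Delta_{\emptyset}}(y, x) \simeq \Hom_{\Delta_{\emptyset}}(\bar z, x)$ directly using the $\infty$-categorical co-Yoneda lemma, noting that the coend is computed in the same way in $\Set$ and $\Gpd_{\infty}$ because the integrand is discrete.

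No step presents a real obstacle: the only potential subtlety is getting the interpretation of $p_{1}$ right, and once that is fixed the statement reduces to a formal application of 2-functoriality of ${}^t\iota$ to a square of 1-functors that commutes on the nose.
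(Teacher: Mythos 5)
Your formal skeleton (strict commutativity of $i\circ\pr_1=\pr_1\circ(i,i)$ plus contravariant functoriality of ${}^t(-)$) is fine, but the one step that carries the entire content of the lemma is the one you do not prove: the identification of $p_1$ with a transposed projection. In the paper $p_1$ is the honest functor $(\id,\emptyset)\colon \Delta^{\op}_{\emptyset}\to\Delta^{\op}_{\emptyset}\times\Delta^{\op}_{\emptyset}$, $x\mapsto(x,\emptyset)$ (see the right-hand vertical arrow in the paper's proof, and its use in Lemma~\ref{LEMMADECCONTRACTIBLE1}); it is not by definition ${}^t\pr_1$. The isomorphism $\iota(p_1)\cong{}^t\pr_1$ is exactly the statement that $p_1$ is \emph{right} adjoint to the projection $(\Delta^{\op}_{\emptyset})^2\to\Delta^{\op}_{\emptyset}$, i.e.\ that $\emptyset$ is final in $\Delta^{\op}_{\emptyset}$ --- this is how Lemma~\ref{LEMMAEXACT1} enters the paper's proof, and it is the only non-formal ingredient. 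You assert the identification instead of proving it, and your justification points the wrong way: you anchor it on $\pr_1\colon\Delta_{\emptyset}\times\Delta_{\emptyset}\to\Delta_{\emptyset}$, where $x\mapsto(x,\emptyset)$ is a \emph{left} adjoint of the projection (since $\emptyset$ is initial in $\Delta_{\emptyset}$), so there $\iota(p_1)\not\cong{}^t\pr_1$. Moreover your coend check only re-verifies the tautology ${}^t\pr_1\,{}^t i\cong{}^t(i,i)\,{}^t\pr_1$: the ``harmless factor'' $\int^{x_2}\Hom_{\Delta_{\emptyset}}(\bar w,x_2)$ comes from ${}^t\pr_1$, whereas with the actual $p_1$ and your variance the second factor is $\Hom_{\Delta_{\emptyset}}(\bar w,\emptyset)=\emptyset$, and the asserted isomorphism would fail. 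So the check is circular precisely where the lemma has content.

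The repair is short. Since $\emptyset$ is final in $\Delta^{\op}_{\emptyset}$ one has $\Hom_{(\Delta^{\op}_{\emptyset})^2}((u,v),(x,\emptyset))\cong\Hom_{\Delta^{\op}_{\emptyset}}(u,x)$, naturally in all variables; hence $p_1$ is right adjoint to $\pr_1$ on the op-side and $\iota(p_1)\cong{}^t\pr_1$ by uniqueness of right adjoints, in $\Cat^{\PF}$ and verbatim in $\Cat^{\PF}_{\infty}$ (the coends you then need are of co-Yoneda type, so they agree in $\Set$ and $\Gpd_{\infty}$). With that in place your argument does close, and it is a legitimate alternative to the paper's proof, which instead checks $\infty$-exactness of a square presented as the product of the square of Lemma~\ref{LEMMAEXACT1} with a trivially exact one; the two routes use the same essential fact (finality of $\emptyset$ in $\Delta^{\op}_{\emptyset}$), yours packaging it as an adjunction, the paper's as exactness of a square.
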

\begin{proof}
We have to see that the diagram
\[ \xymatrix{  \Delta^{\op} \times  \Delta^{\op}  \ar[r]^{(i, \pi)} \ar@{=}[d] &\Delta^{\op}_{\emptyset} \times \cdot \ar[d]^{p_1=(\id, \emptyset)} \\
\Delta^{\op} \times  \Delta^{\op} \ar[r]^{(i, i)} \ar@{}[ru]|{\Nearrow} & \Delta^{\op}_{\emptyset} \times  \Delta^{\op}_{\emptyset}   }
 \]
 is $\infty$-exact, which is the product of the one of Lemma~\ref{LEMMAEXACT1} with a trivially exact one. 
\end{proof}

\begin{LEMMA}\label{LEMMAEXACT3}
The following natural morphism is an isomorphism of 1-profunctors (and also $\infty$-profunctors):
\[ \dec\,  {}^t\!\pr_{i} \cong \id. \]
In particular, we also have $ \dec\,  {}^t\!\pi_{\Delta^{\op} \times \Delta^{\op}} \cong \,{}^t\!\pi_{\Delta^{\op}}$, in other words, $\dec$ is $\infty$-cofinal. 
\end{LEMMA}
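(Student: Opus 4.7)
The plan is to apply criterion~2 of Proposition~\ref{PROPEXACT} to the Cartesian square
\[
\xymatrix{
\Delta^{\op}\times\Delta^{\op} \ar[r]^-{\dec}\ar[d]_{\pr_i}\ar@{}[rd]|\Downarrow & \Delta^{\op}\ar[d]^{\id} \\
\Delta^{\op}\ar[r]_-{\id} & \Delta^{\op}
}
\]
in which the 2-morphism $\dec\Rightarrow\pr_i$ is induced, for $i=1$ (resp.\@ $i=2$), by the start inclusion $[j_1]\hookrightarrow[j_1]\ast[j_2]$ (resp.\@ the end inclusion $[j_2]\hookrightarrow[j_1]\ast[j_2]$) in $\Delta$. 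The mate of this square in $\Cat^{\PF}$ (resp.\@ $\Cat_\infty^{\PF}$) is precisely the morphism $\dec\,{}^t\!\pr_i\to\id$ of the statement, so by the equivalence of criteria in the proposition it suffices to show that the mate $\pr_{i,!}\,\dec^*\Rightarrow\id$ is an isomorphism in $\mathcal{C}^{\Delta^{\op}}$ for every (resp.\@ $\infty$-)cocomplete category $\mathcal{C}$.

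Since $\pr_i$ is a product projection, it is a cofibration with constant fiber $\Delta^{\op}$, so $\pr_{i,!}$ is computed fiber-wise (Corollary~\ref{KOREXISTFIBERWISERELKANEXT}). For $i=1$ and $X\in\mathcal{C}^{\Delta^{\op}}$ this yields $(\pr_{1,!}\,\dec^* X)_{[j_1]}=\colim_{[j_2]\in\Delta^{\op}} X_{j_1+j_2+1}$, and the mate morphism is the canonical map to $X_{j_1}$ induced by applying $X$ to the 2-morphism (at $[j_2]=[0]$ it is the face $d_{j_1+1}\colon X_{j_1+1}\to X_{j_1}$, and the higher levels assemble into a map of augmented simplicial objects).

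The crux is the observation that the simplicial diagram $[j_2]\mapsto X_{j_1+j_2+1}$, augmented over $X_{j_1}$ via $d_{j_1+1}$, admits a left extra degeneracy given by the simplicial operator $s_{j_1}\colon X_{j_1+j_2+1}\to X_{j_1+j_2+2}$: the required identities $d_{j_1+1}\,s_{j_1}=\id$ and $d_{j_1+1+i}\,s_{j_1}=s_{j_1}\,d_{j_1+i}$ for $i\ge 1$ (together with the analogous degeneracy identities) follow at once from the standard simplicial identities. An augmented simplicial object with an extra degeneracy is split, hence its augmentation realises its colimit in any cocomplete $1$-category and its homotopy colimit in any $\infty$-cocomplete $\infty$-category (``split simplicial objects are absolute''). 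This proves $\pr_{1,!}\,\dec^*\cong\id$; the case $i=2$ is symmetric, using the right extra degeneracy $s_{j_1+1}$ at shifted level $j_1$.

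The ``in particular'' assertion then drops out by composing with ${}^t\!\pi_{\Delta^{\op}}\colon\cdot\to\Delta^{\op}$: since $\pi_{\Delta^{\op}}\circ\pr_1=\pi_{\Delta^{\op}\times\Delta^{\op}}$, we have ${}^t\!\pi_{\Delta^{\op}\times\Delta^{\op}}\cong{}^t\!\pr_1\,{}^t\!\pi_{\Delta^{\op}}$, whence $\dec\,{}^t\!\pi_{\Delta^{\op}\times\Delta^{\op}}\cong\dec\,{}^t\!\pr_1\,{}^t\!\pi_{\Delta^{\op}}\cong{}^t\!\pi_{\Delta^{\op}}$. The only mildly subtle point in the whole argument is invoking the absoluteness of split augmented simplicial objects in the $\infty$-categorical setting, but this is entirely standard; everything else is bookkeeping with simplicial identities.
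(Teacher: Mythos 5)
Your proof is correct, but it takes a genuinely different route from the paper at the decisive step. Both arguments start the same way: the statement is exactly criterion~1 of Definition/Lemma~\ref{PROPEXACT} for the square with $\pr_i$, $\dec$ and two identities, and both exploit that $\pr_i$ is a cofibration to reduce to a fibre-wise statement. The paper then checks criterion~4: for each $[n]$ it shows that the relevant comma category (objects $[a]\leftarrow [b]\ast[n]$ with fixed restriction to $[n]$) is contractible, by splitting $[a]=[a']\ast'[a'']$ and identifying the category with a slice $[a']\times_{/\Delta^{\op}}\Delta^{\op}$, which has an initial object. You instead check criterion~2 directly: the pointwise formula $(\pr_{1,!}\dec^*X)_{[j_1]}=\colim_{[j_2]}X_{j_1+j_2+1}$, and the observation that this augmented simplicial object is \emph{split} via the extra degeneracy $s_{j_1}$ (your simplicial identities are correct), so the augmentation is an absolute colimit in any cocomplete $1$-category and any cocomplete $\infty$-category. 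What each buys: the paper's argument is self-contained and purely combinatorial within its profunctor/comma-category calculus, needing no external input; yours is the more conceptual ``d\'ecalage is split over the original simplicial object'' argument and treats the $1$- and $\infty$-categorical claims uniformly, at the price of invoking the standard external fact that split (augmented) simplicial objects are colimit diagrams preserved by all functors (split coequalizers in the $1$-categorical case, \cite[6.1.3.16]{Lur09} in the $\infty$-categorical case). Two small points: your citation of Corollary~\ref{KOREXISTFIBERWISERELKANEXT} is a slight misfit (that statement concerns relative Kan extensions over a base operad); the fact you actually use is simply that left Kan extension along a cofibration is computed fibre-wise, which the paper states in its discussion of Kan's formula. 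Also, for $i=2$ the extra degeneracy sits on the ``other side'' (your $s_{j_1+1}$), which gives a splitting in the opposite convention; this is still absolute, e.g.\ by transporting along the order-reversal automorphism of $\Delta$, but it is worth saying explicitly.
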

\begin{proof}
We have to show that the diagram
\[ \xymatrix{  \Delta^{\op} \times \Delta^{\op} \ar[r]^-{\dec} \ar[d]_{\pr_i} & \Delta^{\op}   \ar@{=}[d] \\
\Delta^{\op}   \ar@{=}[r]^{} \ar@{}[ru]|{\Downarrow} & \Delta^{\op}   }
 \]
 is $\infty$-exact. 
Since $\pr_i$ is a cofibration, it suffices to see that 
\[ \xymatrix{  \Delta^{\op}  \ar[r]^{\dec_n} \ar[d]^{\pi} & \Delta^{\op} \ar@{=}[d] \\
\cdot   \ar[r]^{[n]} \ar@{}[ru]|{\Downarrow} & \Delta^{\op}   }
 \]
 is exact (where $\dec_n := ([n], \id) \circ \dec$) for all $n$. 
 Using criterion (Lemma~\ref{PROPEXACT}, 4.) it boils down to show that the following category is contractible: 
 Its objects are morphisms
 \[ [a] \leftarrow [b] \ast [n] \]
 such that the composition $[a] \leftarrow [n]$ is fixed, and morphisms are the morphisms $[b] \leftarrow [b']$ making
 \[ \xymatrix{ [a] \ar@{<-}[r] \ar@{<-}[rd] & [b] \ast [n] \ar@{<-}[d] \\
 & [b'] \ast [n]  } \]
 commute. 
 Now write $[a] = [a'] \ast' [a'']$ (where $\ast'$ identifies the endpoints) 
 such that $a''$ is minimal with a factorization of the given morphism: 
 \[ [a] =  [a'] \ast' [a''] \leftarrow [a''] \leftarrow [n]. \]
A morphism $[a] \leftarrow [b] \ast [n]$ is then completely determined by a morphism $[a'] \leftarrow [b]$ to the extent that the category in question is isomorphic
to 
\[ [a'] \times_{/\Delta^{\op}} \Delta^{\op} \] 
which is contractible, having an initial object.
\end{proof}

\begin{LEMMA}\label{LEMMAEXACT5}
$\delta$ is $\infty$-cofinal, i.e.\@ we have that the natural morphism
\[ \delta\,{}^t\!\pi  \cong {}^t\!\pi. \]
is an isomorphism of $\infty$-profunctors. 
\end{LEMMA}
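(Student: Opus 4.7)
\medskip

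The plan is to verify criterion (4) of Proposition~\ref{PROPEXACT} for the square
\[ \xymatrix{ \Delta^{\op} \ar[d] \ar[r]^-{\delta} & \Delta^{\op} \times \Delta^{\op} \ar[d] \\ \cdot \ar@{=}[r] & \cdot } \]
which reduces the assertion to showing that, for every object $(i,j) \in \Delta^{\op} \times \Delta^{\op}$, the comma $\infty$-category
\[ C_{i,j} := (i,j) \times_{/\Delta^{\op} \times \Delta^{\op}} \Delta^{\op} \]
has contractible classifying space.

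First I would unravel the definition of $C_{i,j}$. Its objects are triples $([k], \phi_1, \phi_2)$ where $\phi_1: [k] \to [i]$ and $\phi_2: [k] \to [j]$ are morphisms in $\Delta$ (coming from morphisms $(i,j) \to ([k],[k])$ in $(\Delta^{\op})^2$), and a morphism $([k], \phi_1, \phi_2) \to ([k'], \phi_1', \phi_2')$ is a map $[k'] \to [k]$ in $\Delta$ intertwining the $\phi$'s. Since a pair $(\phi_1, \phi_2)$ is precisely a $k$-simplex of the product simplicial set $\Delta_i \times \Delta_j$, this identifies $C_{i,j}^{\op}$ with the category of simplices $\int (\Delta_i \times \Delta_j)$ of the simplicial set $\Delta_i \times \Delta_j$ (as defined in the first paragraph of Example~\ref{PARFINAL}).

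Next I would invoke the fact recalled in Example~\ref{PARFINAL}: for any simplicial set $X$ there is a canonical weak equivalence $X \to N\!\int\! X$. Applied to $X = \Delta_i \times \Delta_j$, this yields $N C_{i,j} \simeq N C_{i,j}^{\op} \simeq \Delta_i \times \Delta_j$ in $\Gpd_{\infty}$ (the first equivalence holds because nerve and opposite have the same weak homotopy type). It remains to observe that $\Delta_i \times \Delta_j$ is weakly contractible: each standard simplex $\Delta_n$ is contractible (the inclusion of any vertex is a trivial cofibration), and weak equivalences in $\Set^{\Delta^{\op}}$ are preserved by the Cartesian product, which is axiom (W2) of \ref{PARAXIOMSW} as verified for $(\Set^{\Delta^{\op}}, \mathcal{W})$ in Lemma~\ref{LEMMAW2}.

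There is no real obstacle here; the only slightly delicate step is the bookkeeping of directions in Step 1, because one must keep track that $\Delta^{\op}$ appears as both source and target. The argument is entirely parallel to (indeed, simpler than) the proof of Lemma~\ref{LEMMAEXACT3}, and it is worth noting that the result can also be viewed as the statement that $\Delta^{\op}$ is sifted — consistent with the well-known fact that geometric realization of simplicial sets commutes with finite products.
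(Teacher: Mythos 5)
Your proof is correct and takes essentially the same route as the paper: both identify the comma category of criterion 4 with the category of simplices of $\Delta_i \times \Delta_j = N([i]\times[j])$ and then compare it with this (contractible) simplicial set, the paper via the weak equivalence $\int_{\Delta^{\op}} N(I) \to I$ and the initial object of $[i]\times[j]$, you via the equivalent comparison $X \to N\!\int\! X$ from the footnote to Example~\ref{PARFINAL}. The only nitpick is that with the paper's convention (unstraightening over $\Delta^{\op}$) the comma category is $\int(\Delta_i\times\Delta_j)$ itself rather than its opposite, which, as you note, is immaterial after passing to nerves.
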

\begin{proof}
We have to show that the diagram
\[ \xymatrix{  \Delta^{\op}  \ar[r]^-{\pi} \ar[d]_{\delta} & \cdot  \ar@{=}[d] \\
\Delta^{\op} \times \Delta^{\op}  \ar[r]^-{\pi} \ar@{}[ru]|{\Downarrow} & \cdot   }
 \]
 is $\infty$-exact. 
 
 Using criterion (Lemma~\ref{PROPEXACT}, 4.)  it boils down to show that the following category is contractible: 
 \[ [n],[m] \times_{/\Delta^{\op} \times \Delta^{\op},\delta} \Delta^{\op}  \]
This category is cofibered over $\Delta^{\op}$ with fibers being the set $\Hom_{\Delta}(-, [n] \times [m])$. But the
latter simplicial set is the nerve of the category $[n] \times [m]$ which is obviously contractible, having an initial object (or here equivalently a final object). 
Moreover, for any category there
is a natural transformation $\int_{\Delta^{\op}} N(I) \to I$ which is a weak equivalence (i.e.\@ the nerve applied to it is a weak equivalence). 
 \end{proof}

\begin{LEMMA}\label{LEMMAEXACT4}
The following natural morphism is an isomorphism of 1-profunctors (and also $\infty$-profunctors):
\[   \dec\,{}^t\!(i, i)  \cong {}^{t}\!i\, \dec_{\emptyset}.    \]
\end{LEMMA}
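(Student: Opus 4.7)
The plan is to invoke criterion~4 of Proposition~\ref{PROPEXACT} applied to the strictly commutative square
\[
\xymatrix{
\Delta^{\op} \times \Delta^{\op} \ar[r]^-{\dec} \ar[d]_{(i,i)} & \Delta^{\op} \ar[d]^i \\
\Delta_{\emptyset}^{\op} \times \Delta_{\emptyset}^{\op} \ar[r]_-{\dec_\emptyset} & \Delta_{\emptyset}^{\op},
}
\]
whose mate is precisely the natural morphism $\dec\,{}^t(i,i) \to {}^ti\,\dec_\emptyset$ in question. It therefore suffices to verify that for each $[n] \in \Delta^{\op}$ and $([a],[b]) \in (\Delta_\emptyset^{\op})^2$ the canonical functor from the double comma category to $\Hom_{\Delta_\emptyset}([a]\ast[b],[n])$ has contractible fibers (for $\infty$-exactness; connected fibers suffice for the 1-exact version). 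Unpacking, an object of the double comma is a tuple $([x],[y] \in \Delta,\; f: [x]\ast[y] \to [n],\; g_a: [a] \to [x],\; g_b: [b] \to [y])$ with $g_a, g_b$ in $\Delta_\emptyset$, and the fiber over $f_0$ is cut out by $f \circ (g_a \ast g_b) = f_0$.

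I split into cases on whether $[a]$ and $[b]$ are empty. When both are non-empty, $([a], [b], f_0, \id_{[a]}, \id_{[b]})$ is a terminal object of the fiber: for any other object $([x],[y], f, g_a, g_b)$ the only candidate morphism to it is $(g_a, g_b)$ itself, and the compatibility with $f$ is precisely the fiber equation. When both are empty, $\Hom_{\Delta_\emptyset}(\emptyset,[n]) = \{*\}$ and the whole double comma collapses to $[n] \times_{/\Delta^{\op}} (\Delta^{\op} \times \Delta^{\op})$, which is contractible because $\dec$ is $\infty$-cofinal by Lemma~\ref{LEMMAEXACT3}.

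The delicate case is when exactly one of $[a], [b]$ is empty, say $[a] = \emptyset$ and $[b] \in \Delta$. Let $\mathcal{F}$ be the fiber over $f_0: [b] \to [n]$ and consider the full subcategory $\mathcal{F}_0 \subset \mathcal{F}$ of tuples with $[y] = [b]$ and $g_b = \id_{[b]}$; it is isomorphic to $(\Delta_{/[m]})^{\op}$ for $m = f_0(0) \in [n]$, so has an initial object and is contractible. I would then show that the inclusion $\mathcal{F}_0 \hookrightarrow \mathcal{F}$ is $\infty$-cofinal, using again criterion~4 of Proposition~\ref{PROPEXACT}: for each $F = ([x],[y],f,g_b) \in \mathcal{F}$, the $h_y$-component of a morphism $F \to ([x'],[b],f',\id_{[b]})$ is forced to equal $g_b$ by the compatibility $g_b = h_y \circ \id_{[b]}$, leaving $h_x: [x'] \to [x]$ as the only free parameter (and $f'$ then determined by $f$ and $(h_x, g_b)$), so the under-category $\mathcal{F}_{0, F/}$ is identified with $(\Delta_{/[x]})^{\op}$ and is contractible via its initial object. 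It follows that $\mathcal{F} \simeq \mathcal{F}_0$ is contractible.

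The main obstacle is this mixed case: the fiber $\mathcal{F}$ lacks an evident terminal or initial object because the $[x]$-component is entirely unconstrained by $f_0$, so one has to produce a cofinal subcategory and verify cofinality by hand. This asymmetry reflects the fact that the square under consideration is not Cartesian --- the fiber product $\Delta^{\op} \times_{\Delta_\emptyset^{\op}} (\Delta_\emptyset^{\op})^2$ properly contains $(\Delta^{\op})^2$ along the locus of pairs with exactly one empty coordinate --- which is precisely what forces the separate analysis in the mixed case.
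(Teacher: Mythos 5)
Your proof is correct, but it takes a genuinely different route from the paper. You verify criterion~4 of \ref{PROPEXACT} directly for the square, decomposing the check over objects $([a],[b])$ of $\Delta_{\emptyset}^{\op}\times\Delta_{\emptyset}^{\op}$ into the cases ``both non-empty'' (the fiber has the terminal object $([a],[b],f_0,\id,\id)$), ``both empty'' (reduce to the $\infty$-cofinality of $\dec$, Lemma~\ref{LEMMAEXACT3}), and the mixed case, where you exhibit the contractible subcategory $\mathcal{F}_0\cong(\Delta_{/[m]})^{\op}$ and check by hand, again via criterion~4, that its inclusion into the fiber is $\infty$-cofinal (the comma $F\times_{/\mathcal{F}}\mathcal{F}_0\cong(\Delta_{/[x]})^{\op}$ being contractible); since contractible fibers give $\infty$-exactness and $\infty$-exact implies exact for squares of 1-categories, both the 1- and $\infty$-profunctor statements follow. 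The paper instead argues formally: it pastes the square with the left squares given by $(i,i)$ and by $p_i=(\id,\emptyset)$, observes that $(i,i)^*$, $p_1^*$, $p_2^*$ are jointly conservative (they jointly hit every object of $\Delta_{\emptyset}^{\op}\times\Delta_{\emptyset}^{\op}$), and then the composite squares are exact either because $i$ and $(i,i)$ are fully faithful or by Lemmas~\ref{LEMMAEXACT2}--\ref{LEMMAEXACT3}. Your case split is exactly the object-level shadow of that joint-conservativity decomposition, but where the paper disposes of the ``one coordinate empty'' case by citing \ref{LEMMAEXACT2} and \ref{LEMMAEXACT3}, you redo the comma-category combinatorics explicitly; this costs a cofinality verification (and you should note the second mixed case $[b]=\emptyset$ is handled by the order-reversed version of the same argument, as $\ast$ is not symmetric), but buys a self-contained argument that only uses \ref{LEMMAEXACT3}, and your closing observation that the square is not Cartesian correctly explains why no soft criterion applies and some such work is unavoidable.
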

\begin{proof}
Look at the compositions: 
\[ \xymatrix{ \Delta^{\op} \times  \Delta^{\op} \ar@{=}[r] \ar@{=}[d] &  \Delta^{\op} \times  \Delta^{\op} \ar[r]^-{\dec} \ar[d]_{(i, i)} \ar@{}[rd]|{=} & \Delta^{\op} \ar[d]^{i}  \\
 \Delta^{\op} \times  \Delta^{\op} \ar[r]_{(i,i)} & \Delta^{\op}_{\emptyset} \times  \Delta^{\op}_{\emptyset} \ar[r]_-{\dec_{\emptyset}}  & \Delta^{\op}_{\emptyset} }  \]
 and 
\[ \xymatrix{ \Delta^{\op}  \times \Delta^{\op}   \ar@{=}[r] \ar[d]^{i \pr_i} &  \Delta^{\op} \times  \Delta^{\op} \ar[r]^-{\dec} \ar[d]_{(i, i)} \ar@{}[rd]|{=} & \Delta^{\op} \ar[d]^{i}  \\
 \Delta^{\op}_{\emptyset}  \ar[r]_-{p_i} & \Delta^{\op}_{\emptyset} \times  \Delta^{\op}_{\emptyset} \ar[r]_-{\dec_{\emptyset}}  & \Delta^{\op}_{\emptyset} }  \]
 We have to show that the common right square is $\infty$-exact.  
 Since $(i,i)^*$, $p_1^*$, and $p_2^*$, are jointly conservative, it suffices to see that the left and composite squares are $\infty$-exact in both cases. This is either because $i$ and $(i, i)$ are fully-faithful, or a consequence of  Lemmas~\ref{LEMMAEXACT2}--\ref{LEMMAEXACT3}. 
 \end{proof}

\begin{LEMMA}\label{LEMMAIOTAFINAL}
$\iota: \Delta^{\op} \to \FinSet^{\op}$ is 1-cofinal, i.e.
\[ \xymatrix{\Delta^{\op} \ar[r] \ar[d] & \FinSet^{\op} \ar[d] \\ \cdot \ar@{=}[r] & \cdot }\]
is 1-exact.   
\end{LEMMA}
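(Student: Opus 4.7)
The plan is to apply criterion~4 of Proposition~\ref{PROPEXACT} for 1-exactness. Since both lower corners of the square are terminal and the bottom arrow is the identity, the criterion reduces to showing that for every $j \in \FinSet^{\op}$, the comma category $j \times_{/\FinSet^{\op}} \Delta^{\op}$ has a \emph{connected} classifying space; equivalently, that it is non-empty and any two objects can be connected by a zig-zag.

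Unraveling the definition, an object of this comma category is a pair $([m], \alpha)$ with $\alpha \colon \iota[m] \to j$ a map of finite sets, and a morphism $([m], \alpha) \to ([m'], \alpha')$ is an order-preserving map $\beta \colon [m'] \to [m]$ in $\Delta$ (i.e.\ a morphism $[m] \to [m']$ in $\Delta^{\op}$) satisfying $\alpha \circ \beta = \alpha'$. Non-emptiness is immediate: any bijection $\iota[|j|-1] \iso j$ obtained by choosing a total order on $j$ provides an object.

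The key step is the following binary construction. Given two objects $([m_1], \alpha_1)$ and $([m_2], \alpha_2)$, I would set $[m_3] := [m_1] \ast [m_2] = [m_1 + m_2 + 1]$ and define $\alpha_3 \colon \iota[m_3] \to j$ by concatenation, so that $\alpha_3$ agrees with $\alpha_1$ on $\{0, \ldots, m_1\} \subset [m_3]$ and with $\alpha_2$ (shifted by $m_1+1$) on $\{m_1+1, \ldots, m_3\} \subset [m_3]$. The standard face maps $\delta_l \colon [m_1] \hookrightarrow [m_3]$ and $\delta_r \colon [m_2] \hookrightarrow [m_3]$ in $\Delta$ (the inclusions into the first and second summand of $\ast$) satisfy $\alpha_3 \circ \delta_l = \alpha_1$ and $\alpha_3 \circ \delta_r = \alpha_2$ by construction, hence define morphisms $([m_3], \alpha_3) \to ([m_1], \alpha_1)$ and $([m_3], \alpha_3) \to ([m_2], \alpha_2)$ in the comma category. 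This produces a length-2 zig-zag between any two objects, completing the argument.

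There is no serious obstacle; the only point needing care is keeping track of the opposing direction conventions, since objects involve maps in $\FinSet$ while the morphism direction is dictated by $\Delta^{\op}$. Observe that the same argument does \emph{not} yield $\infty$-cofinality: the comma category is not contractible in general, because the concatenation $\alpha_1 \ast \alpha_2$ is an order-dependent choice that cannot be coherently symmetrized against the $S_{|j|}$-action on $j$.
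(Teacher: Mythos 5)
Your proposal is correct and coincides with the paper's own argument: the paper likewise reduces to connectedness of the comma categories $\Delta \times_{/\FinSet} [n]$ and connects two objects $[m] \to [n]$, $[k] \to [n]$ through the concatenation $[m] \ast [k] \to [n]$ (the coproduct in $\FinSet$) via the two inclusions, which are morphisms in $\Delta$. Your concatenation construction with the faces $\delta_l, \delta_r$ is exactly this zig-zag, just written in the opposite (equivalent) direction convention.
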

\begin{proof}
To show that the categories 
\[ \Delta \times_{/\FinSet} [n]  \]
are  connected. 
Let $[m] \to [n]$ and $[k] \to [n]$ be morphisms in $\FinSet$. They give rise to a morphism
\[ [m] \ast [k] \to [n] \]
in $\FinSet$ ($\ast$ is the coproduct in $\FinSet$). We get a commutative diagram
\[ \xymatrix{[m] \ar[rd] \ar@{^{(}->}[r] & [m] \ast [k] \ar[d] \ar@{<-^{)}}[r] & [k] \ar[ld]  \\
&  [n]  }\]
where the inclusions are morphisms in $\Delta$. The category, being non-empty, is thus connected. 
\end{proof}

\section{Leveled trees, Shih, and Szczarba}\label{APXSHIHSZCZARBA}

This section is devoted to the proof of Proposition~\ref{PROPSHIHSZCZARBAGEN1}.
Actually a slight generalization, Proposition~\ref{PROPSHIHSZCZARBAGEN2} will be stated and proven, in which the $e$ and $\sigma$ are needed 
to make the induction work.

\begin{PAR}\label{PARPATTERN}We need the following generalization of (\ref{PARP}).
Let $k, e, m$ be non-negative integers. 
For $\sigma: [k+e] \twoheadrightarrow [k]$, we define the operator
\[ \mathcal{P}_{\sigma}: \Z[\Hom([n], [m])]^{\otimes (k+1)} \to \Z[\Hom([n], [m])]^{\otimes (k+1)} \]
by
\[ \mathcal{P}_{\sigma} = \prod_{i=0}^{k+e} 1^{\otimes \sigma(i)} \otimes \mathcal{P}_{n-k-e-1+i} \otimes 1^{\otimes k-\sigma(i)}  \]
i.e.\@ it throws away all summands that are $n-k-e-1+i$ degenerate at $\sigma(i)$ for $i=0, \dots, k+e$.

We also let $\delta:=\Hom(\sigma, [1]): [k+1] \hookrightarrow  [k+e+1]$ the dual active face map, here with
\[  \delta(j+1) = \delta(j) + \#\sigma^{-1}(j). \]
For $\sigma = \id$, $e=0$ we also write $\mathcal{P}:=\mathcal{P}_{\sigma}$.
We denote the jumps 
\[ \alpha_j :=  \delta(j+1) - \delta(j) = \#\sigma^{-1}(j).  \]

We have
\[  \sigma = s^{\alpha_0} \ast' \id_{[1]} \ast' s^{\alpha_1} \ast' \cdots \ast' \id_{[1]} \ast' s^{\alpha_{k}},   \]
where $\alpha_i$ is the number of preimages, denoting $s^{\alpha_i}: [\alpha_i-1] \twoheadrightarrow [0]$.

By formula (\ref{eqsignshuffle}) for the signum, we have thus
\[ \sgn(\sigma) = \prod_{i} (-1)^{(k-i)(\alpha_i-1)}.   \]
\end{PAR}

\begin{PAR}\label{PARPATTERNDERIVE}
In a situation as in \ref{PARREDUCT}, given a vector $\underline{b} = (b_0, \dots, b_{k-1})$ with $\widetilde{k} > b_0> \dots > b_{k-1} \ge 0$ --- 
which is also seen as a shuffle $\sigma_{\underline{b}}: [n] \to [k]$, $\tau_{\underline{b}}: [n] \to [n-k]$ (notice the $n$ instead of $\widetilde{k}$) where $\tau_{\underline{b}}$ is degenerate precisely at the intervals $b_i+1$ ---
 and $\widetilde{\underline{i}}$ of length $\widetilde{k} \ge k$, such that 
$\underline{i}$ is a $\underline{b}$-reduction of $\widetilde{\underline{i}}$, then given $\sigma: [k+e]  \to [k]$, we get a unique extension
$\widetilde{\sigma}:   [\widetilde{k}+e] \to  [\widetilde{k}]$  
satisfying
\[ \# (\widetilde{\sigma})^{-1}(i) = \begin{cases} \# \sigma^{-1}(j) &  i=c_j \text{ for some $j$ } \\ 1 & \text{otherwise}   \end{cases}\]
Denoting
\[ \beta_j :=  c_{j} - c_{j-1} -1  \]
(setting $c_{-1} = -1$), we have
\[  \widetilde{\sigma} = \id_{[\beta_0]} \ast' s^{\alpha_0} \ast' \id_{[\beta_1+1]} \ast' s^{\alpha_1} \ast' \cdots \ast' \id_{[\beta_{k}+1]} \ast' s^{\alpha_{k}}   \]
and get by formula (\ref{eqsignshuffle}) for the signum
\[ \sgn(\widetilde{\sigma}) = \prod_{i<j} (-1)^{(\beta_j+1)(\alpha_i-1)}  \]
which specializes to the formula before, if all $\beta_j=0$. In particular, 
\begin{equation}\label{signsigmatildesigma}
 \sgn(\sigma) \sgn(\widetilde{\sigma}) = \prod_{i<j} (-1)^{\beta_j(\alpha_i-1)}.  
\end{equation}
\end{PAR}

\begin{DEF}
For a vector (i.e.\@ leveled tree) $\underline{i} = (i_0, \dots, i_k)$ and face $\delta: [k+1] \to [m]$ (arbitrary $m \ge k+1$) 
define
\[ \Sz_{\underline{i},\delta}^j = \Sz_{\underline{i}}^j \delta|_{[j]}.  \]
\end{DEF}
Directly from (\ref{eqszczarba2}) follows: 
\begin{LEMMA}\label{LEMMASZCZARBAMOD}
\begin{equation}\label{eqszrecdelta} 
\Sz_{\underline{i},\delta}^j = \begin{cases} 
\Sz_{\underline{i}',\delta'}^j \ast \delta_0^{\alpha_{i_0} -1} & j = i_0+1 \\
s_{i_0} \Sz_{\underline{i}', \delta'}^{j'} & \text{otherwise} 
\end{cases} 
\end{equation}
with $\delta' = \delta_{i_0+1} \delta$ and $j' = s_{i_0}(j)$ and $\alpha_{i_0} = \delta(i_0+1)-\delta(i_0)$.
\end{LEMMA}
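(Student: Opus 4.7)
The plan is to prove Lemma~\ref{LEMMASZCZARBAMOD} by a direct case analysis on the three branches of the recursion (\ref{eqszczarba2}) depending on whether $j < i_0+1$, $j = i_0+1$, or $j > i_0+1$, tracking in each case how the restriction $\delta|_{[j]}$ of the face $\delta\colon [k+1] \to [m]$ transforms under the recursion on $\underline{i}$.

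First I would address the middle case $j = i_0+1$. Here the second branch of (\ref{eqszczarba2}) gives $\Sz^{i_0+1}_{\underline{i}} = \Sz^{i_0}_{\underline{i}'} \ast \id_{[0]}$, and the key combinatorial observation is that $\delta|_{[i_0+1]}$ decomposes as the concatenation of $\delta'|_{[i_0]}$ (with $\delta' := \delta_{i_0+1}\delta$) and the $\alpha_{i_0}$-step segment mapping $\{i_0, i_0+1\} \subseteq [k+1]$ to $\{\delta(i_0), \delta(i_0+1)\} \subseteq [m]$; this latter segment further factors as $\id_{[0]}$ followed by $\alpha_{i_0}-1$ copies of the extremal face $\delta_0$, producing precisely the factor $\delta_0^{\alpha_{i_0}-1}$ asserted by the first branch of the conclusion. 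For the side cases $j \neq i_0+1$, the first and third branches of (\ref{eqszczarba2}) both express $\Sz^j_{\underline{i}}$ as a degeneracy applied to $\Sz^{j''}_{\underline{i}'}$ (with the third branch additionally preceded by $\delta_{i_0+1}$); the uniform substitution $j' = s_{i_0}(j)$ collapses the two sub-cases, since $s_{i_0}$ fixes indices $\le i_0$ and decrements those $\ge i_0+1$, matching the $j$ versus $j-1$ appearing in the two branches, while the $\delta_{i_0+1}$ of the third branch is absorbed into $\delta'$.

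The main obstacle will be verifying the simplicial identity needed to rewrite the $s_{k-1}$ appearing in (\ref{eqszczarba2}) as the $s_{i_0}$ appearing in the Lemma: this reduces to checking that, after post-composition with $\delta|_{[j]}$, the two degeneracies induce the same morphism into $[m]$, which follows by iterated application of the standard simplicial identity $\delta_p s_q = s_{q-1}\delta_p$ for $p<q$ once one notes that the image of $\Sz^{j''}_{\underline{i}'}$ lies in the low-dimensional simplex $[j''] \subseteq [k-1]$ on which the two degeneracies act compatibly with the subsequent face map. Once this rewriting is carried out, the remainder of the proof is pure bookkeeping with simplicial identities, consistent with the paper's phrasing that the lemma follows ``directly'' from (\ref{eqszczarba2}).
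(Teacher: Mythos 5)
Your case $j=i_0+1$ is essentially the intended argument: post-compose the branch $\Sz^{i_0+1}_{\underline i}=\Sz^{i_0}_{\underline i'}\ast\id_{[0]}$ of (\ref{eqszczarba2}) with $\delta|_{[i_0+1]}$ and split off the last step of $\delta$, of length $\alpha_{i_0}$, which produces the factor $\delta_0^{\alpha_{i_0}-1}$. Note, however, that what your computation actually yields on the right is $\Sz^{j'}_{\underline i',\delta'}$ with $j'=i_0=j-1$, not $\Sz^{j}_{\underline i',\delta'}$: the superscript $j$ in the printed first branch is a slip (for $i_0=k-1$ the term $\Sz^{i_0+1}_{\underline i'}$ is not even defined), and the $j'$-form is the one used later in the paper.

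The genuine gap is in your ``otherwise'' case: the step converting the $s_{k-1}$ delivered by (\ref{eqszczarba2}) into the $s_{i_0}$ of the printed statement cannot be carried out. Both degeneracies are precompositions on the domain $[k]$ of the same map $\Sz^{j'}_{\underline i',\delta'}$, so the identity $\delta_p s_q=s_{q-1}\delta_p$ (which moves a degeneracy past a face on opposite sides) does not apply, and the observation that the image of $\Sz^{j'}_{\underline i'}$ lies in $[j']$ concerns the codomain and says nothing about which points of the domain get identified. In fact the two expressions genuinely differ: take $k=2$, $\underline i=(0,0)$, $j=2$, $\delta=\id_{[3]}$, so $i_0=0$, $j'=1$, $\delta'=\delta_1\colon[2]\to[3]$. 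Then $\Sz^2_{\underline i,\delta}$ takes the values $(0,2,2)$ on $(0,1,2)$, which agrees with $s_{1}\Sz^{1}_{(0),\delta'}$ (i.e.\ $s_{k-1}$, since $\Sz^{1}_{(0),\delta'}$ has values $(0,2)$), whereas $s_{0}\Sz^{1}_{(0),\delta'}$ has values $(0,0,2)$. So the statement with $s_{i_0}$ is false as printed; the correct (and intended) statement has $s_{k-1}$, consistent with its use in the induction base of Lemma~\ref{LEMMABIJECTION}, where the degeneracy that appears is $s_{b_0}=s_{\widetilde k-1}$. With that correction the proof is exactly the rest of your argument, which is the paper's: post-compose each branch of (\ref{eqszczarba2}) with $\delta|_{[j]}$ and use that $\delta'|_{[j']}$ equals $\delta|_{[j]}$ for $j\le i_0$, respectively equals $\delta_{i_0+1}\colon[j-1]\to[j]$ followed by $\delta|_{[j]}$ for $j>i_0+1$; no conversion of degeneracies is needed, or possible.
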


The following gives Proposition~\ref{PROPSHIHSZCZARBAGEN1} in the special case $e=0$ and $\sigma = \id$. 

\begin{PROP}\label{PROPSHIHSZCZARBAGEN2} With the notation from $\ref{PARPATTERN}$, we have for arbitrary $n$, setting $\widetilde{k}:=n-k-1-e$:
\[  \mathcal{P}_{\sigma} \mathcal{H}_{\underline{b}, \underline{i}}^n \equiv \begin{cases} \sum_{\substack{\widetilde{\underline{i}} \text{ such that } \\ \text{$\underline{i}$ is a $\underline{b}$-reduction of $\widetilde{\underline{i}}$}} }  \varepsilon (\Sz^{c_0}_{\widetilde{\underline{i}}, \widetilde{\delta}} \ast' q_{\sigma,0}) \otimes \cdots \otimes  (\Sz^{c_k}_{\widetilde{\underline{i}}, \widetilde{\delta}} \ast' q_{\sigma,k}) & \underline{b}_{0} < \widetilde{k}  \\
0 & \text{otherwise} \end{cases}    \]
modulo degenerates, where $q_{\sigma,j} = s_{\delta(j),\delta(j+1)}^{k+1+e}$. In particular the expession is zero unless $\widetilde{k} \ge k$ (i.e.\@ $n \ge 2k+e+1$).

Here the $\widetilde{\underline{i}}$ are vectors of length $\widetilde{k}$ and $c_0, \dots, c_k$ are determined by $\widetilde{\underline{i}}$ as in \ref{PARREDUCT} and 
\[ \varepsilon = (-1)^{(\widetilde{k}+1)k}\sgn(\underline{i}^{\vee}) \sgn(\widetilde{\underline{i}}^{\vee}) \sgn(\sigma) \sgn(\widetilde{\sigma}) \sgn(\sigma_{\underline{b}})  \]

The $\Sz_{\overline{i},\delta}^j \ast' q_{\sigma j}$ have values in $[\widetilde{\delta}(c_j)] \ast' [\#\sigma^{-1}(j)] = [\widetilde{\delta}(c_j+1)]$ and are considered as $[n-k]$-valued via
the inclusion $[\widetilde{\delta}(c_j+1)] \subset [n-k]$. $\widetilde{\delta}$ is the (active) morphism $[\widetilde{k}+1] \hookrightarrow [\widetilde{k}+e+1]$ dual to $\widetilde{\sigma}$ (\ref{PARPATTERN}).
\end{PROP}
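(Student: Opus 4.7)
The proof is by induction on $k$, the length of $\underline{i}$. The reason for working with arbitrary $e$ and $\sigma$ (rather than the special case $e = 0$, $\sigma = \id$ of Proposition~\ref{PROPSHIHSZCZARBAGEN1}) is precisely that the recursion (\ref{highershih}) defining $\mathcal{H}^n_{\underline{b},\underline{i}}$ does not preserve the trivial pattern: when we peel off one level, the surviving operator $\mathcal{H}^{n-1}_{\underline{b}',\underline{i}'}$ gets applied with an extra degeneracy $s_{b_0}$ on each factor except the one acted on by $\mathcal{H}^n_{b_0}$, so the induction must be carried through for a broader class of projection patterns.

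For the base case $k = 0$, both $\underline{i}$ and $\underline{b}$ are empty, and $\mathcal{H}^n_{(),()}$ is the identity on a single tensor factor. The requirement that $()$ be a $()$-reduction of $\widetilde{\underline{i}}$ forces $\widetilde{\underline{i}} = ()$, hence $\widetilde{k} = 0$ and $n = e+1$; both sides reduce to $\mathcal{P}_\sigma$ with $\Sz^0_{(),\widetilde{\delta}}$ providing the unique boundary term. For the inductive step, we expand $\mathcal{H}^n_{b_0} = (\id_{[b_0]} \ast \mathcal{Q}^{n-b_0-1})\, s_{\can}$ and apply Lemma~\ref{LEMMAH} to parametrize $\mathcal{Q}^{n-b_0-1}$ by vectors $\underline{y} \in \{0,1\}^{n-b_0-1}$ carrying the explicit sign $(-1)^{\sum_{a<b} y_b(y_a+1)}$ and the factorization $(B^0_{\underline{y}}, B^1_{\underline{y}})$ of (\ref{eqb}). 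Composing with $\mathcal{P}_\sigma$ kills every summand where either the $B^0$- or $B^1$-factor is degenerate at the required position, so only those $\underline{y}$ survive that encode adding precisely the right number of ``no-left-child'' rows between the level $b_0$ and the top of the extended tree.

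The Szczarba side has a matching recursion given by Lemma~\ref{LEMMASZCZARBAMOD}: one summand of (\ref{eqszrecdelta}) feeds the $j = i_0+1$ leaf position, and the other absorbs the $s_{i_0}$-degeneracies into the remaining $k$ factors. The bijection between surviving $\underline{y}$-patterns and $\underline{b}$-reductions $\widetilde{\underline{i}}$ is essentially the one described in \ref{PARREDUCT}: the ``extra rows'' of $\widetilde{\underline{i}}$ outside $\{b_0,\dots,b_{k-1}\}$ are precisely the positions where the $y$-entries equal a fixed value. Under this correspondence, the face $\widetilde{\delta}$ on the right matches the product of faces built from $\delta^*_{i_0}$ and $B^1_{\underline{y}}$ on the left, the jumps $\alpha_j$ of the extended pattern $\widetilde{\sigma}$ match the multiplicity of $s_{\can}$-applications at the $j$-th leaf, and the inductive hypothesis (applied to $\underline{b}', \underline{i}', \widetilde{\underline{i}}'$ and to the pattern $\sigma'$ obtained from $\sigma$ by deleting the $(i_0+1)$-th block and merging with the filler columns above $b_0$) supplies the remaining $k$ tensor factors as Szczarba operators on the reduced tree.

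The main obstacle is the accounting of signs. One must match, on the LHS, the $\underline{y}$-sign $(-1)^{\sum_{a<b} y_b(y_a+1)}$ from Lemma~\ref{LEMMAH} together with the sign coming from shuffling the new factor past the existing $k$ factors in the tensor, and, on the RHS, the claimed sign
\[
\varepsilon = (-1)^{(\widetilde{k}+1)k}\,\sgn(\underline{i}^\vee)\,\sgn(\widetilde{\underline{i}}^\vee)\,\sgn(\sigma)\,\sgn(\widetilde{\sigma})\,\sgn(\sigma_{\underline{b}}).
\]
The crucial identity here is (\ref{signsigmatildesigma}), which rewrites $\sgn(\sigma)\sgn(\widetilde{\sigma})$ in terms of the gaps $\beta_j = c_j - c_{j-1} - 1$; these gaps count exactly the filler rows encoded by the $\underline{y}$-vector, so substituting (\ref{signsigmatildesigma}) transforms the tree-theoretic sign into a product of binomial factors that regroups termwise against the $\underline{y}$-sign from Lemma~\ref{LEMMAH}. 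The prefactor $(-1)^{(\widetilde{k}+1)k}$ comes from promoting a degree-$\widetilde{k}$ map on one factor to a degree-$\widetilde{k}$ map on position $i_0$ of a $(k+1)$-fold tensor (Koszul convention), and the recursion on $k$ produces it inductively by peeling off one Koszul swap at each step.
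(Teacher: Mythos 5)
Your overall strategy (induction on $k$, peeling off $\mathcal{H}^n_{b_0}=(\id_{[b_0]}\ast\mathcal{Q}^{n-b_0-1})s_{\can}$, expanding $\mathcal{Q}$ via Lemma~\ref{LEMMAH}, and matching against the Szczarba recursion of Lemma~\ref{LEMMASZCZARBAMOD}) is the same as the paper's, but there are two genuine gaps. First, your base case is wrong: $\widetilde{k}=n-k-1-e$ is \emph{fixed by the data}, not determined by the extension, and the inductive step replaces $(k,n,e)$ by $(k-1,n-1,e+1)$, so $\widetilde{k}$ drops only by one per step and the induction terminates at $k=0$ with $\widetilde{k}$ arbitrary. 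The claim that ``$()$ being a $()$-reduction of $\widetilde{\underline{i}}$ forces $\widetilde{\underline{i}}=()$, hence $n=e+1$'' is false: for any $\widetilde{k}$ the caterpillar tree (every vertex without left children) has the empty tree as its $()$-reduction, and it is the unique such extension. The $k=0$ statement is therefore a nontrivial identity, for all $n\ge e+1$, asserting that $\mathcal{P}_\sigma(\id)$ agrees modulo degenerates with $\varepsilon(\Sz^{c_0}_{\widetilde{\underline{i}},\widetilde{\delta}}\ast' q_{\sigma,0})$ for this caterpillar extension (with $\varepsilon=1$); it must be verified, and with your base case the induction does not close.

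Second, the heart of the argument is missing. After applying the projected expansion of $\mathcal{Q}$ (the paper's Lemma~\ref{LEMMAHALT}), one must prove that the surviving $\underline{z}$-summands, i.e.\ the operators $s_{b_0}\Sz^{c'_{i_0}}_{\widetilde{\underline{i}'},\widetilde{\delta}'}|_{[b_0]}\ast'(sB^0_{\underline{z}})$ and $\Sz^{c'_{i_0}}_{\widetilde{\underline{i}'},\widetilde{\delta}'}|_{[b_0]}\ast(sB^1_{\underline{z}}\delta_0^{\alpha_{i_0}-1})$, are \emph{exactly} the Szczarba operators $\Sz^{c_{i_0}}_{\widetilde{\underline{i}},\widetilde{\delta}}$ and $\Sz^{c_{i_0}+1}_{\widetilde{\underline{i}},\widetilde{\delta}}$ of the extended tree, with the precise sign relation between $\sgn(\widetilde{\underline{i}}^{\vee})\sgn(\sigma)\sgn(\widetilde{\sigma})$ and the $\underline{z}$-sign $(-1)^{\sum_{i<j}z_j(z_i+1)+\alpha_{i_0}\sum z_i}$. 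In the paper this is Lemma~\ref{LEMMABIJECTION} (equations (\ref{eqsz1}) and (\ref{eqsz1sgn})), which requires its own induction on $\widetilde{k}$ with a six-case analysis, using the splitting $\Sz^{c'_{i_0}}_{\widetilde{\underline{i}'},\widetilde{\delta}'}=\Sz|_{[b_0]}\ast' s$; you assert this matching ``under the correspondence'' without proof. Relatedly, your explanation of the prefactor $(-1)^{(\widetilde{k}+1)k}$ as a Koszul sign is not correct — these are set-level morphisms in $\Delta$, with no grading in play; the prefactor arises from accumulating the per-step factors $(-1)^{b_i-\widetilde{k}-i+1}$ produced by (\ref{eqsz1sgn}) and comparing with $\sgn(\sigma_{\underline{b}})$ via (\ref{eqsignshuffle2}).
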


Before giving the proof, we need a couple of Lemmas: 
\begin{LEMMA}\label{LEMMASHIH0} Let $\tau \in \Hom([n-1], [m])$. Then  
$\mathcal{H}_b^n ( \tau )$ is a sum of terms of the form $\tau' \otimes \tau''$ that are
\begin{itemize}
\item for $i<b$, $i$-degenerate at the left and right if $\tau$ is degenerate at $i$ and not $i$-degenerate neither on the left nor right, otherwise.
\item for $i=b$, $i$-degenerate at the left (at least) 
\item for $i>b$ either $i$-degenerate at the right or at the left (not both) if $\tau$ is not-$i-1$-degenerate and fully $i$-degenerate, if $\tau$ is $i-1$-degenerate.
\end{itemize}
Furthermore, for $i \ge b$ each term is at least $i$-degenerate at the right for $i=b$ or such that $\tau$ is not $i-1$-degenerate.  
\end{LEMMA}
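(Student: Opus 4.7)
The plan is to unwind the explicit definition $\mathcal{H}^n_b = (\id_{[b]} \ast \mathcal{Q}^{n-b-1})\, s_{\can}$ and track degeneracies position by position. Recall $\mathcal{Q}^{n-b-1} = \sum_{p+q=n-b-1} \sum_{\rho,\rho'} \mathrm{sgn}(\rho,\rho')\,(\rho\,\delta_{0,p}) \otimes (\rho'\,\delta_{p,n-b-1})$, with $(\rho,\rho')$ running over $(p,q)$-shuffles of $[n-b-1]$. Each summand of $\mathcal{H}^n_b$ is then a pair $(\alpha,\beta) \in \Hom_\Delta([n],[n-1])^{\otimes 2}$ of the form $\alpha = (\id_{[b]} \ast \alpha_\rho) \circ s_b$ and $\beta = (\id_{[b]} \ast \beta_{\rho'}) \circ s_b$, where $s_b$ identifies the two extremal points separating the $[b]$-block from the $[n-b-1]$-block, and $\alpha_\rho, \beta_{\rho'}$ are the shuffle-face operators from the $(p,q)$-shuffle. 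A generic summand of $\mathcal{H}^n_b(\tau)$ thus has the form $\tau\alpha \otimes \tau\beta$.

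I would then split into the three ranges of $i$. For $i<b$ both $\alpha$ and $\beta$ act as the identity on the coordinates $\{i,i+1\}$, so $\tau\alpha$ and $\tau\beta$ are $i$-degenerate exactly when $\tau$ is. For $i=b$ the map $s_b$ identifies positions $b$ and $b+1$, and the left-shuffle $\alpha_\rho$ sends the first point of the $[n-b-1]$-piece to a boundary of $\delta_{0,p}$, producing an extra collapse that makes $\tau\alpha$ always $b$-degenerate. For $i>b$, the joint injectivity property of a shuffle forces exactly one of $\alpha_\rho, \beta_{\rho'}$ to contract the relevant pair of adjacent positions; combining this with $s_b$ and the hypothesis on $\tau$ yields the stated alternative, while if $\tau$ is $(i-1)$-degenerate the resulting composition is fully degenerate on both factors simultaneously.

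The main obstacle is the furthermore clause: for $i>b$ with $\tau$ not $(i-1)$-degenerate, one must show that after summing over all shuffles only the right-$i$-degenerate terms survive, i.e.\@ the left-$i$-degenerate summands cancel pairwise. I would establish this by exhibiting a sign-reversing involution on the set of contributing shuffles that swaps adjacent entries near position $i-b$: this toggles the degeneracy pattern from ``left-degenerate at $i$'' to ``right-degenerate at $i$'' and flips the shuffle sign (by formula~(\ref{eqsignshuffle})), so the left-degenerate contributions pair up and cancel, leaving only the residual right-degenerate terms. The same involution handles the right-factor claim at $i=b$.

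Finally, the case where $\tau$ is $(i-1)$-degenerate for $i > b$ is best handled using the commutation rule $\mathcal{H}^n_b s_{i-1} = s_i \mathcal{H}^{n-1}_b$ recorded just after Definition~\ref{DEFSHIH}, which exposes an outer $s_i$ common to both tensor factors and thus produces full $i$-degeneracy simultaneously on both sides, matching the stated conclusion. The substantive work is the position-by-position bookkeeping of which coordinates are collapsed by $\alpha_\rho$ versus $\beta_{\rho'}$ and identifying the required involution; once set up, the verification is mechanical.
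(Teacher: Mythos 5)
Your proposal works throughout with the naive expansion $\mathcal{H}^n_b(\tau)=\sum \tau\alpha\otimes\tau\beta$ over shuffles, i.e.\ with post-composition of the fixed element $(\id_{[b]}\ast\mathcal{Q}^{n-b-1})s_{\can}$ by $\tau$ in both slots. But the lemma is about the \emph{modified} operator $\mathcal{H}^n_b$ defined just before Proposition~\ref{PROPHIGHERSHIHEXPL}: one first writes the input as an iterated (joint) degeneracy of a non-degenerate element and transports the degeneracies through the rule $\mathcal{H}^n_b s_i = s_i\mathcal{H}^{n-1}_{b-1}$ ($i<b$), $=s_{i+1}\mathcal{H}^{n-1}_{b}$ ($i\ge b$). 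For degenerate $\tau$ the two operators agree only modulo degenerate elements — that discrepancy is exactly why the rewritten definition is introduced — and the lemma is \emph{false} for the naive expansion. Concretely, take $n=3$, $b=0$, $\tau=s_0=(0,0,1)$ (maps written by value sequences): the naive summand attached to the $(1,1)$-shuffle $(\rho,\rho')=(s_0,s_1)$ is $\tau\circ(0,0,0,1)\otimes\tau\circ(0,1,2,2)=(0,0,0,0)\otimes(0,0,1,1)$, whose right factor is not $1$-degenerate although $\tau$ is $0$-degenerate, contradicting the third bullet (and this term cancels against nothing). Similarly, if $\tau$ is degenerate at a position different from $i-1$, post-composition with $\tau$ can make \emph{both} factors $i$-degenerate, so the ``not both'' alternative also fails naively. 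Your final paragraph cannot repair this by invoking the commutation rule on top of the naive expansion: at the level of the chosen representatives that rule is not an identity (for $H^n_b$ it holds only up to degenerate terms); it is the \emph{definition} of $\mathcal{H}^n_b$ on degenerate inputs. The proof has to be an induction on the number of degeneracies of $\tau$: your shuffle bookkeeping is essentially correct and settles the non-degenerate base case (where every condition on $\tau$ is automatic), and the inductive step must verify that all four claims survive prefixing the diagonal outer degeneracy $s_i$, resp.\ $s_{i+1}$, prescribed by the rule, tracking how the inner positions and the degeneracy positions of $\tau$ shift. None of that bookkeeping appears in your proposal.

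You also misread the ``furthermore'' clause. It is a term-by-term \emph{existence} statement: each summand has its right factor $i$-degenerate for some $i\ge b$ with either $i=b$ or $\tau$ not $(i-1)$-degenerate. It does not claim that only right-$i$-degenerate terms survive, and no cancellation occurs: for non-degenerate $\tau$ distinct shuffles yield distinct pairs of maps, and summands whose left factor alone is degenerate at a given $i>b$ (those whose front component $\rho$ is degenerate there) are explicitly allowed by the third bullet. A statement true only after cancellation would anyway be unusable, since Lemmas~\ref{LEMMASHIH1} and~\ref{LEMMASHIH2} apply the present lemma to each summand separately. For the non-degenerate core the clause is immediate from your own expansion: if $p\ge1$ the back component $\rho'$ is degenerate at some interval, giving a right degeneracy at some $i>b$ (and the side condition is vacuous), while for $p=0$ the right factor is $s_{\can}=s_b$ itself; in the degenerate case one must check, within the induction, that the transported inner right-degeneracy lands at a position $i$ at which $\tau$ is not $(i-1)$-degenerate. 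The proposed sign-reversing involution is therefore both unnecessary and asserts a cancellation that does not take place.
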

The proof is left to the reader. 
We can also say that $\mathcal{H}_b^n ( \tau )$ is a sum of terms of the form $s_b \tau' \otimes s_i \tau''$ for $i \ge b$ and such that either $i=b$ (thus fully degenerate) or $\tau$ is not $i-1$-degenerate.

\begin{LEMMA}\label{LEMMASHIH1}
Let $\tau_0 \otimes \cdots \otimes \tau_{k}$ be a summand of $\mathcal{H}_{\underline{b}, \underline{i}}^n$.
Then for $i>b_0$ at most one $\tau_j$ is not $i$-degenerate.
\end{LEMMA}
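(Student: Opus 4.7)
I will prove this by induction on $k$, the length of $\underline{b}$. The base case $k=0$ is trivial, as only $\tau_0$ is present. For the inductive step, I would unpack the recursive definition \eqref{highershih}: a summand $\tau_0 \otimes \cdots \otimes \tau_k$ of $\mathcal{H}^n_{\underline{b},\underline{i}}$ is built from a summand $\tau'_0 \otimes \cdots \otimes \tau'_{k-1}$ of $\mathcal{H}^{n-1}_{\underline{b}',\underline{i}'}$ (where $\underline{b}' = (b_1,\ldots,b_{k-1})$ and $\underline{i}' = (i_1,\ldots,i_{k-1})$) by duplicating the $i_0$-th slot, applying $\mathcal{H}^n_{b_0}$ to the duplicated input $\tau'_{i_0}$ (producing $\tau_{i_0} = \tau'_{i_0}\alpha$, $\tau_{i_0+1} = \tau'_{i_0}\beta$ for a summand $\alpha \otimes \beta$ of $\mathcal{H}^n_{b_0}$), and pre-composing every other factor with $s_{b_0}$.

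Two elementary facts drive the argument. First, for $i > b_0$ and any $\tau'\colon [n-k] \to [m]$, the pre-composite $\tau' s_{b_0}$ is $i$-degenerate if and only if $\tau'$ is $(i-1)$-degenerate, since $s_{b_0}(i)=i-1$ and $s_{b_0}(i+1)=i$. Second, Lemma~\ref{LEMMASHIH0} applied to $\mathcal{H}^n_{b_0}(\tau'_{i_0})$ gives the crucial dichotomy in the third bullet point there: for $i > b_0$, \emph{if $\tau'_{i_0}$ is $(i-1)$-degenerate, then both $\tau_{i_0}$ and $\tau_{i_0+1}$ are $i$-degenerate; otherwise exactly one of them is}. (Left-composition with $\tau'_{i_0}$ preserves the $i$-degeneracy conclusion for $\alpha$ or $\beta$ immediately.)

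Combining these with the inductive hypothesis applied to $\mathcal{H}^{n-1}_{\underline{b}',\underline{i}'}$ at threshold $i-1 > b_0 - 1 \ge b_1$, at most one $\tau'_j$ fails to be $(i-1)$-degenerate. Consider the two possibilities. If the unique non-$(i-1)$-degenerate $\tau'_{j^*}$ (if it exists) equals $\tau'_{i_0}$, then every $\tau_j$ for $j \notin \{i_0, i_0+1\}$ is $i$-degenerate by the first fact, while at most one of the pair $\tau_{i_0}, \tau_{i_0+1}$ fails to be $i$-degenerate by the second. If instead $j^* \neq i_0$ (or no such $j^*$ exists), then $\tau'_{i_0}$ is itself $(i-1)$-degenerate, so by the second fact \emph{both} of $\tau_{i_0}, \tau_{i_0+1}$ are $i$-degenerate, and the only factor that can fail to be $i$-degenerate is the one of the form $\tau'_{j^*} s_{b_0}$. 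In either case, at most one $\tau_j$ is not $i$-degenerate, completing the induction.

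The main potential pitfall is the combinatorial bookkeeping when translating indices through the slot duplication at position $i_0$ (which $\tau'_j$ becomes which $\tau_j$), and verifying that Lemma~\ref{LEMMASHIH0}'s ``fully $i$-degenerate'' clause — stated for $\mathcal{H}^n_{b_0}(\tau'_{i_0})$ itself — does propagate to the composites $\tau'_{i_0}\alpha$ and $\tau'_{i_0}\beta$; but this propagation is immediate from the one-sided nature of $i$-degeneracy under left composition. The key \emph{conceptual} point is that this second clause of Lemma~\ref{LEMMASHIH0} is precisely what consolidates two independent ``at most one'' counts (one from the inductive hypothesis, one from the shuffle property at $\mathcal{H}^n_{b_0}$) into a single joint count.
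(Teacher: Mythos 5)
Your proof is correct and takes essentially the same route as the paper, whose entire proof is the one-line remark that the statement follows by induction using Lemma~\ref{LEMMASHIH0}, the third case being the relevant one at each step because $i > b_0 > \dots > b_{k-1}$. Your write-up merely makes explicit what the paper leaves implicit: the shift of the degeneracy index under composition with $s_{b_0}$ and the two-case bookkeeping that merges the inductive count with the dichotomy from the third bullet of Lemma~\ref{LEMMASHIH0}.
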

\begin{proof}
Induction over Lemma~\ref{LEMMASHIH0} (because of $i > b_0 > \dots > b_{k-1}$, in each step, the third case is relevant).
\end{proof}

\begin{LEMMA}\label{LEMMASHIH2}
If $\mathcal{P}_{\sigma} \mathcal{H}_{\underline{b}, \underline{i}}^n \not\equiv 0$ modulo degenerates then $b_{0} < n-k-e-1$.
\end{LEMMA}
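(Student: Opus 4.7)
The strategy is to prove the stronger statement that every tensor factor appearing in every term of $\mathcal{H}^n_{\underline{b}, \underline{i}}$ is $b_0$-degenerate at source. Granting this, the lemma follows immediately: if $b_0 \geq \widetilde{k} = n - k - e - 1$, then since $\underline{b} \subset \{0, \ldots, n-1\}$ forces $b_0 \leq n - 1 = \widetilde{k} + k + e$, the integer $i^* := b_0 - \widetilde{k}$ lies in $\{0, 1, \ldots, k+e\}$. The operator $\mathcal{P}_\sigma$ contains the factor $\mathcal{P}_{\widetilde{k}+i^*} = \mathcal{P}_{b_0}$ applied to the $\sigma(i^*)$-th tensor slot, which kills every term, yielding $\mathcal{P}_\sigma \mathcal{H}^n_{\underline{b}, \underline{i}} = 0$ (even without passing to degenerates).

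To verify the stronger claim, I would inspect the recursive definition~(\ref{highershih}) of $\mathcal{H}^n_{\underline{b}, \underline{i}}$. The outer layer either inserts $\mathcal{H}^n_{b_0}$ at positions $i_0$ and $i_0+1$, or precomposes the factor with $s_{b_0}$ at each of the remaining $k-1$ positions. Precomposition with $s_{b_0}$ trivially produces a $b_0$-degenerate factor. For the slots $i_0$ and $i_0+1$, the operator $\mathcal{H}^n_{b_0}$ applied to any input produces outputs whose both tensor factors are $b_0$-degenerate on the left; this is precisely the third bullet of Lemma~\ref{LEMMASHIH0} with $b = b_0$, and traces back to Lemma~\ref{LEMMAH}, where $\mathcal{H}^n_{b_0} = (\id_{[b_0]} \ast \mathcal{Q}^{n-b_0-1}) s_{\can}$: the rightmost factor $s_{\can}$ is exactly the degeneracy identifying positions $b_0$ and $b_0+1$ in the source.

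The main (modest) obstacle is bookkeeping: one must confirm that the $b_0$-degeneracy installed by $s_{b_0}$ or by $s_{\can}$ on the source of each factor really survives the further compositions with $(-\circ \delta_{i_0}^*)$ and the inner $\mathcal{H}^{n-1}_{\underline{b}', \underline{i}'}$ that precede it in the recursion—but since these are applied to the \emph{right} of the outer $s_{b_0}$ and $s_{\can}$, they act on the target, leaving the source-degeneracy at $b_0$ intact. Once this is observed, the argument is essentially a one-line consequence of Lemma~\ref{LEMMASHIH0}.
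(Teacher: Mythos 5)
There is a genuine gap: your ``stronger statement'' is false. The terms of $\mathcal{H}^n_{\underline{b},\underline{i}}$ are \emph{not} $b_0$-degenerate in every tensor slot. By the recursion, the outermost step inserts $\mathcal{H}^n_{b_0}$ at slot $i_0$, and Lemma~\ref{LEMMASHIH0} (second bullet, and the remark following the proof) only guarantees that its \emph{left} output factor is $b_0$-degenerate: modulo degenerates the terms have the form $s_{b_0}\tau' \otimes s_i\tau''$ with $i > b_0$, so the factor landing in slot $i_0+1$ is $i$-degenerate for some $i>b_0$ but in general not $b_0$-degenerate. (Concretely, in $\mathcal{H}^n_{b_0}=(\id_{[b_0]}\ast\mathcal{Q}^{n-b_0-1})s_{\can}$ the right factor of a $p,q$-shuffle summand with $p\ge 1$ no longer identifies the points $b_0,b_0+1$ of the source after composing with $s_{\can}$; the $b_0$-degeneracy you attribute to $s_{\can}$ survives only in the left factor. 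You cite the third bullet of Lemma~\ref{LEMMASHIH0}, but that bullet concerns indices $i>b$ and says ``either at the right or at the left (not both)''.) The paper's own proof confirms this: it states that $x$ is $b_0$-degenerate at all slots \emph{other than} $i_0+1$.

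Consequently your one-line conclusion fails exactly in the case $\sigma(i^*)=i_0+1$, where $i^*=b_0-(n-k-e-1)$: the projector $\mathcal{P}_{b_0}$ sitting inside $\mathcal{P}_\sigma$ then tests the one slot where $b_0$-degeneracy is not available, and nothing in your argument kills such a term. This case is the actual content of the lemma. The paper handles it by a second layer of degeneracy bookkeeping: for the index $i>b_0$ attached to slot $i_0+1$, Lemma~\ref{LEMMASHIH1} (applied to $\mathcal{H}^{n-1}_{\underline{b}',\underline{i}'}$, using $i-1>b_1$) shows all slots except $i_0$ are $i$-degenerate, so survival under $\mathcal{P}_\sigma$ forces the slot tested for $i$-degeneracy to be $i_0$ or $i_0+1$; monotonicity of $\sigma$ together with $\sigma(i^*)=i_0+1$ and $i>b_0$ then forces that slot to be $i_0+1$, which \emph{is} $i$-degenerate, so the term is killed anyway. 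You would need to supply this (or an equivalent) argument; as written, the proposal proves the statement only in the easy subcase $\sigma(i^*)\neq i_0+1$ and rests on a misreading of Lemma~\ref{LEMMASHIH0} for the rest.
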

\begin{proof}
We have
\[ \mathcal{H}_{\underline{b}, \underline{i}}^n (\tau) = (s_{b_{0}}^{\otimes i_{0}} \otimes \mathcal{H}_{b_{0}}^n \otimes s_{b_{0}}^{\otimes k-1-i_{0}}) \mathcal{H}_{\underline{b}', \underline{i}'}^{n-1}. \]
Let $\tau_0 \otimes \cdots \otimes \tau_{k-1}$ be a term of $\mathcal{H}_{\underline{b}', \underline{i}'}^{n-1}$.
The operator $\mathcal{H}_{b_{0}}^n (\tau_{i_{0}})$ is (modulo degenerates) a sum over terms of the form $s_{b_{0}} \tau' \otimes s_i \tau''$ for $i > b_{0}$ such that $\tau_{i_{0}}$ is not $i-1$-degenerate (Lemma~\ref{LEMMASHIH0}, and the comment after) and thus $\mathcal{H}_{\underline{b}, \underline{i}}^n (\tau)$ is a sum over terms of the form
\[ x := s_{b_{0}}\tau_0 \otimes \cdots \otimes (\underbrace{s_{b_{0}}\tau'}_{\text{ at } i_0} \otimes \underbrace{s_i \tau''}_{\text{ at } i_0+1}) \otimes \cdots \otimes s_{b_{0}}\tau_{k-1}   \]

However, since $i-1 > b_{1}$, $\tau_0 \otimes \cdots \otimes \tau_{k-1}$ is (either degenerate or) $i-1$-degenerate at all except one slot (Lemma~\ref{LEMMASHIH1}).
Thus all $\tau_j$ for $j \not= i_{0}$ are $i-1$-degenerate and thus  $x$ is $i$-degenerate except at $i_{0}$ and $i_{0}+1$.
If we assume $\mathcal{P}_{\sigma} x \not\equiv 0$,
 we must have $\sigma(i) = i_{0}$ or  $\sigma(i) = i_{0}+1$,
and also $\sigma(b_{0}) = i_{0}+1$ because $x$ is obviously $b_0$-degenerate at all other slots. 
Hence  $\sigma(i) = i_{0}+1$ (because $i>b_{0}$) and hence $\mathcal{P}_{\sigma} x \equiv 0$ anyway.
\end{proof}

Recall from Lemma~\ref{LEMMAH} the definition of the operators $B_{\underline{z}}^j$.
\begin{LEMMA}\label{LEMMAHALT}
\item Let $n, \alpha_0, \alpha_1$ be positive integers. 
\[ \mathcal{P}_{\alpha_0,\alpha_1} \mathcal{H}^{n} = \sum_{\underline{z}} (-1)^{\sum_{i<j} z_j(z_i+1) + \alpha_0 \sum z_i } (B_{\underline{z}}^0 \ast' s_{\alpha_0,\alpha_0+\alpha_1}^{\alpha_0+\alpha_1}  \otimes B_{\underline{z}}^1 \delta_0^{\alpha_0} \ast' s_{0,\alpha_0}^{\alpha_0+\alpha_1} ) \]
in which $\underline{z} = (z_0, \dots, z_{n-\alpha_0-\alpha_1-1})$ runs over all vectors with $z_i \in \{0, 1\}$.
Implicitly, in the formula the $B_{\underline{z}}^0 \ast' s_{\alpha_0,\alpha_0+\alpha_1}^{\alpha_0+\alpha_1}$ is composed with the inclusion $[m+\alpha_1] \hookrightarrow [n+1]$. 
\end{LEMMA}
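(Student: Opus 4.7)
The strategy is to unfold $\mathcal{H}^n$ via Lemma~\ref{LEMMAH} as a signed sum over $\underline{y} \in \{0,1\}^n$ and then determine directly which terms survive the projector $\mathcal{P}_{\alpha_0,\alpha_1}$, carrying along the induced signs and the factorization of the simplicial maps.

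First, I record the degeneracy pattern of each summand. By Lemma~\ref{LEMMAH}, each term $B^0_{\underline{y}} \otimes B^1_{\underline{y}}$ is $i$-degenerate on the left iff $y_i = 0$, and $i$-degenerate on the right iff $y_i = 1$. Reading off the definition of $\mathcal{P}_{\alpha_0,\alpha_1}$ from~\ref{PARPATTERN}, the corresponding surjection $\sigma \colon [\alpha_0 + \alpha_1 - 1] \twoheadrightarrow [1]$ sends the first $\alpha_0$ elements to $0$ and the remaining $\alpha_1$ to $1$, so the projector kills the $\underline{y}$-summand unless the $\alpha_0 + \alpha_1$ trailing entries of $\underline{y}$ equal $(\underbrace{1,\ldots,1}_{\alpha_0},\underbrace{0,\ldots,0}_{\alpha_1})$ (those positions where $B^0$ would have to be non-degenerate take the value $1$, and those where $B^1$ has to be non-degenerate take the value $0$). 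The remaining free entries $(y_0, \ldots, y_{n-\alpha_0-\alpha_1-1})$ are then relabelled as $\underline{z}$ in the final formula, which is exactly the parametrisation appearing on the right-hand side of the statement.

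Second, I track the sign. Writing $\underline{y}$ as the concatenation of $\underline{z}$ with the fixed tail, I split $\sum_{i<j} y_j(y_i+1)$ into contributions from intra-$\underline{z}$ pairs, intra-tail pairs, and the three types of cross pairs. The intra-tail contribution reduces to a constant mod $2$ (which is absorbed by the standing convention modulo degenerates and constants), while the cross pairs matching a free $z_i$ with one of the $\alpha_0$ trailing ones produce exactly $\alpha_0 \sum z_i$ mod~$2$, and the cross pairs with the trailing zeros contribute nothing; the intra-$\underline{z}$ part reassembles the expected $(-1)^{\sum_{i<j} z_j(z_i+1)}$. Third, I identify the simplicial maps by iterating the recursion~(\ref{eqb}) through the prescribed tail: the $\alpha_0$ tail steps with $y = 1$ produce, via the $j = y_0$ branch, an $\ast'$-tail on $B^1_{\underline{y}}$ and, via the $j < y_0$ branch, a face $\delta_0^{\alpha_0}$ together with a string of $s_\bullet$'s on $B^0_{\underline{y}}$; symmetrically, the $\alpha_1$ tail steps with $y = 0$ supply an $\ast'$-tail on $B^0_{\underline{y}}$. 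These strings of degeneracies assemble into the block degeneracies $s^{\alpha_0 + \alpha_1}_{\alpha_0,\alpha_0+\alpha_1}$ on the left and $s^{\alpha_0+\alpha_1}_{0,\alpha_0}$ on the right, while the residual operators are precisely $B^0_{\underline{z}}$ and $B^1_{\underline{z}}$; combining with the sign computation recovers the displayed formula.

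The main obstacle will be the sign bookkeeping in the second step: the split of $\sum_{i<j} y_j(y_i+1)$ involves constant shifts depending on $\alpha_0$, $\alpha_1$ and $n$, and a miscount produces spurious factors of $(-1)^{\alpha_0}$ or $(-1)^{\alpha_0\alpha_1}$ which then propagate through the identification of the maps in the third step. Once the sign is stabilised, the unfolding of the recursion through the fixed tail is a direct combinatorial bookkeeping of $\ast'$-concatenations and iterated degeneracies, exactly parallel to (and subsumed by the same machinery as) the more general Proposition~\ref{PROPSHIHSZCZARBAGEN2} in the special case $k = 1$, $e = \alpha_0 + \alpha_1 - 2$.
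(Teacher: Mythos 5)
Your strategy is the same as the paper's: the proof really is ``expand $\mathcal{H}^{n}$ by Lemma~\ref{LEMMAH} and pick out the summands that survive the projector''. The problem is the identification of the survivors, and it is not cosmetic. As you say, $\mathcal{P}_{\alpha_0,\alpha_1}$ is $\mathcal{P}_\sigma$ of \ref{PARPATTERN} for the block surjection, so it tests non-degeneracy at the \emph{top} positions $n-\alpha_0-\alpha_1,\dots,n-1$ (the operators occurring are $\mathcal{P}_{n-k-e-1+i}$). But in the recursion (\ref{eqb}) it is the \emph{first} entry $y_0$ that creates the new top-dimensional structure ($s_{n-1}$, resp.\ the join $\ast\,\id_{[0]}$); unwinding, $y_i$ governs the degeneracy behaviour at position $n-1-i$, with the roles of $0$ and $1$ opposite to the literal reading of the one-line dictionary in Lemma~\ref{LEMMAH} (check $n=1,2$ against (\ref{eqb})). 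Consequently the entries pinned by the projector are the \emph{leading} $\alpha_0+\alpha_1$ entries of $\underline{y}$, not the trailing ones, and the surviving prefix is ($\alpha_1$ ones followed by $\alpha_0$ zeros), not your trailing block $(1^{\alpha_0},0^{\alpha_1})$. Concretely, for $n=3$, $\alpha_0=\alpha_1=1$ the survivors are $\underline{y}=(1,0,z)$ and reproduce exactly the two claimed terms $B^0_{(z)}\ast' s^{2}_{1,2}\otimes B^1_{(z)}\delta_0\ast' s^{2}_{0,1}$ with signs $(-1)^{z}$, while the summand $\underline{y}=(z,1,0)$ your rule keeps has $B^0_{(0,1,0)}=(0,1,1,2)$, degenerate at position $1$, hence is killed. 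Your third step tacitly assumes the correct picture anyway: you ``iterate the recursion through the prescribed tail'' via the $j<y_0$, $j=y_0$, $j>y_0$ branches, but (\ref{eqb}) peels the \emph{first} entry, so only a prescribed head can be processed that way; with a pinned tail the fixed block would appear at the bottom of the maps, not as the $\ast'$-factor $s^{\alpha_0+\alpha_1}_{\alpha_0,\alpha_0+\alpha_1}$ (resp.\ $\delta_0^{\alpha_0}\ast' s^{\alpha_0+\alpha_1}_{0,\alpha_0}$) at the top, and the displayed formula would not come out.

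The second issue is the sign. Lemma~\ref{LEMMAHALT} is an exact identity — there is no ``modulo degenerates and constants'' in its statement — so a $\underline{z}$-independent contribution to $\sum_{i<j}y_j(y_i+1)$ cannot be absorbed; it has to come out even. With the correct survivors $(1^{\alpha_1},0^{\alpha_0},\underline{z})$ the cross pairs contribute $(2\alpha_1+\alpha_0)\sum z_i\equiv\alpha_0\sum z_i$ and the intra-block pairs contribute $0$, which is exactly what the statement requires; with your trailing-block identification one picks up the extra constant $\alpha_0(n-\alpha_0-\alpha_1)$, which is genuinely there and would flip the global sign whenever it is odd. So the route is the right one (and, once the survivor bookkeeping is corrected, peeling the leading block through the three branches of (\ref{eqb}) and tracking the cross-term sign is precisely the paper's argument), but as written the identification of the surviving summands and the sign treatment both fail and must be redone from the recursion, not from the informally stated degeneracy dictionary.
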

\begin{proof}
Follows from Lemma~\ref{LEMMAH} identifying the $\underline{z}$-summand above with the  
\[ (\underbrace{0, \dots, 0}_{\alpha_0 \text{ times}}, \underbrace{1, \dots, 1}_{\alpha_1 \text{ times}}, z_0, \dots, z_{n-\alpha_0-\alpha_1-1})-\text{summand}. \qedhere \] 
\end{proof}

\begin{LEMMA}\label{LEMMABIJECTION}
Given $\underline{i} = (i_0, \dots, i_{k-1})$, $\widetilde{k} \ge k$, and $\underline{b}=(b_0, \dots, b_{k-1})$ with $\widetilde{k} > b_0 > \dots > b_{k-1} \ge 0$, and denoting $\underline{i}' = (i_1, \dots, i_{k-1})$
and $\underline{b}' = (b_1, \dots, b_{k-1})$,
we have a bijection
\[ \left\{ \substack{\widetilde{\underline{i}'} \\ \text{$\underline{i}'$ is a $\underline{b}'$-reduction of $\widetilde{\underline{i}'}$}  \\ 
\overline{z} \in \{0,1\}^{x} } \right\}  \cong \left\{ \substack{\widetilde{\underline{i}} \\ \text{$\underline{i}$ is a $\underline{b}$-reduction of $\widetilde{\underline{i}}$} } \right\}   \]
 where $\widetilde{\underline{i}}$ has length $\widetilde{k}$ (resp.\@ $\widetilde{\underline{i}'}$ has length $\widetilde{k}-1$), and $x$ is the cardinality of 
 \begin{equation} \label{eqx}
 \{j_0, \dots, j_{x-1}\} = \{0 \le j < \widetilde{k}-b_0-1 \text{ s.t.\@ } y_j' = i_0 \}
 \end{equation}  
 (cf.\@ \ref{PARREDUCT} for the definition of the $y_j'$). Then we have
 \begin{equation} \label{eqszs}
  \Sz^{c_{i_0}'}_{\widetilde{\underline{i}'},\widetilde{\delta}'}  = \Sz^{c_{i_0}'}_{\widetilde{\underline{i}'},\widetilde{\delta}'}  |_{[b_0]} \ast' s  
  \end{equation}
 where $\Sz^{c_{i_0}'}_{\widetilde{\underline{i}'},\widetilde{\delta}'}  |_{\Delta_{b_0}} : [b_0] \to [c_{i_0}'-x]$
 for a degeneracy $s: [\widetilde{k}-b_0-1] \to [x]$.

 Given $\sigma$ and $\sigma' = \sigma s_{i_0}$ as in \ref{PARPATTERN} with dual faces $\delta$ and $\delta' = \delta_{i_0+1}\delta$. 
 Recall their  extensions $\widetilde{\delta}: [\widetilde{k}+1] \to [n+1]$ and $\widetilde{\delta}': [\widetilde{k}] \to [n]$ defined in \ref{PARPATTERNDERIVE}.  
 Then we have: 
\begin{equation}\label{eqsz1} 
\Sz^{c_j}_{\widetilde{\underline{i}}, \widetilde{\delta}}  = \begin{cases} 
s_{b_0} \Sz^{c_{i_0}'}_{\widetilde{\underline{i}'}, \widetilde{\delta}'}|_{[b_0]}  \ast' (s B_{\underline{z}}^0)  & j = i_0, \\
\Sz^{c_{i_0}'}_{\widetilde{\underline{i}'}, \widetilde{\delta}'}|_{[b_0]}   \ast (s B_{\underline{z}}^1 \delta_0^{\alpha_{i_0}-1})   & j = i_0 + 1,   \\
s_{b_0} \Sz^{c_{s_{i_0}(j)}'}_{\widetilde{\underline{i}'}, \widetilde{\delta}'}  & \text{otherwise}.
\end{cases} 
\end{equation}

Furthermore, we have
\begin{gather} 
 \sgn(\underline{i}^{\vee})\sgn(\widetilde{\underline{i}}^{\vee}) \sgn(\sigma) \sgn(\widetilde{\sigma})  \nonumber \\
 = (-1)^{b_0-\widetilde{k}+1+\sum_{i<j} z_j (z_i+1) + \sum_i \alpha_{i_0} z_i} \sgn((\underline{i}')^{\vee})\sgn((\widetilde{\underline{i}'})^{\vee}) \sgn(\sigma') \sgn(\widetilde{\sigma}').  \label{eqsz1sgn}
\end{gather}
\end{LEMMA}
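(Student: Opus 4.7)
The plan is to proceed by a careful combinatorial analysis that exploits the recursive structure of leveled trees. Given $\widetilde{\underline{i}}$ of length $\widetilde{k}$ such that $\underline{i}$ is a $\underline{b}$-reduction, I would identify the unique vertex of the reduced tree at row $b_0$ (which sits in column $i_0$). Removing this vertex together with its left leaf produces a smaller leveled tree $\widetilde{\underline{i}'}$ of length $\widetilde{k}-1$ of which $\underline{i}'$ is a $\underline{b}'$-reduction. The remaining information --- the binary vector $\overline{z} \in \{0,1\}^x$ with $x$ as in (\ref{eqx}) --- records, at each of the $x$ positions $j_l$ where the smaller tree carries a node at column $i_0$, whether that node originally sat to the left ($z_l = 0$) or to the right ($z_l = 1$) of the removed $b_0$-vertex. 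This gives an explicit bijection, with inverse obtained by re-inserting a vertex at row $b_0$ and distributing the $x$ nodes in column $i_0$ of $\widetilde{\underline{i}'}$ to the left or right according to $\overline{z}$.

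Next I would establish the factorisation (\ref{eqszs}) by unwinding the recursion (\ref{eqszrecdelta}) down to row $b_0$: each of the $x$ positions with $y'_{j_l} = i_0$ contributes a degeneracy factor, and collecting these produces the degeneracy $s \colon [\widetilde{k}-b_0-1] \twoheadrightarrow [x]$ appearing on the right, while the remaining initial segment of the recursion yields the restriction to $[b_0]$. For equation (\ref{eqsz1}) I would then apply (\ref{eqszrecdelta}) directly to $\Sz^{c_j}_{\widetilde{\underline{i}},\widetilde{\delta}}$ in three cases according to whether $j = i_0$, $j = i_0+1$, or $j \notin \{i_0, i_0+1\}$; in the first two cases, combining with the factorisation (\ref{eqszs}) and with Lemma~\ref{LEMMAHALT} to recognise the operators $B^0_{\underline{z}}$ and $B^1_{\underline{z}}\delta_0^{\alpha_{i_0}-1}$ gives the stated formulas, while the third case is immediate from (\ref{eqszrecdelta}).

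For the sign identity (\ref{eqsz1sgn}), I would feed the passage $(\widetilde{\underline{i}}, \sigma) \leftrightarrow (\widetilde{\underline{i}'}, \overline{z}, \sigma')$ through formula (\ref{signsigmatildesigma}), noting that the gaps $\beta_j$ of $\widetilde{\sigma}$ differ from those of $\widetilde{\sigma}'$ precisely at the $x$ positions recorded by $\overline{z}$, producing the factor $(-1)^{\alpha_{i_0} \sum_l z_l}$, while the sign contribution of $\underline{i}^\vee$ vs $(\underline{i}')^\vee$ combined with the reindexing of $\widetilde{\underline{i}}^\vee$ vs $(\widetilde{\underline{i}'})^\vee$ and the order data encoded by $\overline{z}$ (via Lemma~\ref{LEMMAH}, equation (\ref{eqalt})) supplies the remaining factor $(-1)^{b_0 - \widetilde{k} + 1 + \sum_{i<j} z_j(z_i+1)}$. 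The hard part will be the last step: keeping track of the many interacting sign conventions --- the signum of the tree (\ref{eqsgntree}), of the shuffle (\ref{eqsignshuffle2}), and of the combined extension (\ref{signsigmatildesigma}) --- simultaneously with the re-indexing $c_j \leftrightarrow c_{s_{i_0}(j)}'$, which is where errors most easily creep in. Once this bookkeeping is done, however, the identity reduces to a direct comparison of exponents modulo $2$.
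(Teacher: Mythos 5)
Your bijection and your treatment of (\ref{eqszs}) coincide with the paper's: re-inserting a vertex at row $b_0$, column $i_0$, and letting $\overline{z}$ record whether each node of $\widetilde{\underline{i}'}$ sitting at column $i_0$ in a row above $b_0$ attaches to the left or to the right branch is exactly the explicit formula $y_j = y'_j$, $y'_j + z_i$, or $y'_j+1$ by which the paper defines the bijection, and (\ref{eqszs}) is indeed obtained by unwinding (\ref{eqszrecdelta}) down to row $b_0$. The gap is in your plan for (\ref{eqsz1}) and (\ref{eqsz1sgn}). A single application of (\ref{eqszrecdelta}), with cases keyed only to whether $j\in\{i_0,i_0+1\}$, cannot yield (\ref{eqsz1}): the recursion strips the \emph{top} entry $\widetilde{i}_0$ of the big tree, which in general is not the row-$b_0$ vertex, so after one step both sides still contain all rows above $b_0$ and nothing has been "recognised". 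What is actually required --- and what the paper does --- is a simultaneous induction on $\widetilde{k}$ with base $\widetilde{k}=b_0+1$, whose induction step is split according to whether the top node of $\widetilde{\underline{i}}$ lies at a column outside $\{i_0,i_0+1\}$, at $i_0$, or at $i_0+1$ (this decides how $\overline{z}$ and $\alpha_{i_0}$ evolve), crossed with whether the slot $j$ equals that column; at each step the recursion (\ref{eqszrecdelta}), applied to both the big and the small tree via Lemma~\ref{LEMMASZCZARBAMOD}, must be matched against the recursion (\ref{eqb}) defining $B^0_{\underline{z}}$ and $B^1_{\underline{z}}$. Your proposal instead wants to identify $B^0_{\underline{z}}$ and $B^1_{\underline{z}}\delta_0^{\alpha_{i_0}-1}$ via Lemma~\ref{LEMMAHALT}; that lemma is a statement about $\mathcal{P}_{\alpha_0,\alpha_1}\mathcal{H}^n$ and cannot enter a pure identity among Szczarba and $B$-operators --- it is used only later, in the proof of Proposition~\ref{PROPSHIHSZCZARBAGEN2}, in the slot where $\mathcal{H}_{b_0}$ acts. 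The needed input here is (\ref{eqb}).

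The same issue affects the sign identity. Citing (\ref{eqalt}) for the factor $(-1)^{\sum_{i<j}z_j(z_i+1)}$ is pattern-matching rather than a derivation: in the lemma this exponent must be extracted from $\sgn(\underline{i}^{\vee})\sgn(\widetilde{\underline{i}}^{\vee})\sgn(\sigma)\sgn(\widetilde{\sigma})$ versus the primed quantities, and the paper obtains it inside the same induction, recording at each step how $\alpha_{y_0}$ increases and $\beta_{y_0}$ decreases and feeding this into (\ref{signsigmatildesigma}), (\ref{eqsgntree}); the extra $(-1)^{\alpha_{i_0}}$ appears precisely in the case where the top node sits at column $i_0+1$ (your $z=1$ positions), which is how the term $\alpha_{i_0}\sum_i z_i$ accumulates. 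Your shuffle-sign bookkeeping via (\ref{signsigmatildesigma}) points in the right direction, but as stated it neither produces the quadratic term $\sum_{i<j}z_j(z_i+1)$ nor the contribution $(-1)^{b_0-\widetilde{k}+1}$ (which in the paper's argument comes out of the base case $\widetilde{k}=b_0+1$ via $c_{i_0}=i_0+\sum_{i\le i_0}\beta_i$ and $\sum_i b_i=\widetilde{k}-k$). So the strategy is the right one, but the actual engine of the proof --- the synchronized induction with the three-way case split on the top node and the $B$-recursion (\ref{eqb}) --- is missing from your plan and cannot be replaced by the lemmas you cite.
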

\begin{proof}
The bijection is given, identifying the extensions $\widetilde{\underline{i}'}$, and $\widetilde{\underline{i}}$, respectively, with collections $(y_0', \dots, y_{\widetilde{k}-k-1}')$, and  $(y_0, \dots, y_{\widetilde{k}-k-1})$, respectively, as in \ref{PARREDUCT}, by
\[ y_j := \begin{cases} y_j' & j \ge \widetilde{k}-b_0-1,  \\
 y_j'  & j < \widetilde{k}-b_0-1 \text{ and } y_j' < i_0, \\
 y_j'+z_{i}  & j < \widetilde{k}-b_0-1 \text{ and } y_j' = i_0, \text{ i.e.\@ } j = j_i \text{ for some $i$; cf.\@ (\ref{eqx}) },  \\
 y_j'+1 & j < \widetilde{k}-b_0-1 \text{ and } y_j' > i_0.
 \end{cases}. \]
 
The representation (\ref{eqszs}) follows directly from the recursive property of $\Sz$ and the construction of $\widetilde{\delta}'$.
We show (\ref{eqsz1}) and (\ref{eqsz1sgn}) by induction on the length $\widetilde{k}$ of $\widetilde{\underline{i}}$, starting from $b_0+1$. 

The induction base is $\widetilde{k} = b_0 + 1$, whence $c_{i_0}' = c_{i_0} = c_{i_0+1}-1$, $\widetilde{\underline{i}} = (c_{i_0}, \widetilde{i}'_0, \dots, \widetilde{i}'_{\widetilde{k}-1})$, $\widetilde{\delta}' = \delta_{c_{i}+1}\widetilde{\delta}$,
$\underline{z} = ()$, $s=\id_{[0]}$, and (\ref{eqszrecdelta}) yields: 

\begin{equation*} 
\Sz^{c_j}_{\widetilde{\underline{i}}, \widetilde{\delta}} = \begin{cases} 
 \Sz^{c_{i_0}'}_{\widetilde{\underline{i}'}, \widetilde{\delta}'} \ast \delta_0^{\alpha_{i_0}-1} & j = i_0+1, \\
s_{b_0} \Sz^{c_{s_{i_0}(j)}'}_{\widetilde{\underline{i}'}, \widetilde{\delta}'}   & \text{otherwise}.
\end{cases} 
\end{equation*}
These yield (\ref{eqsz1}). For the sign, notice that we have $\alpha_j' = \alpha_{\delta_{i_0+1}(j)}$ for $j \not= i_0$ and $\alpha_{i_0}' = \alpha_{i_0}+\alpha_{i_0+1}$ and $\beta_j' = \beta_{\delta_{i_0+1}(j)}$ for $j \not= i_0+1$ and $\beta_{i_0+1} = 0$. We have to show:
\[ c_{i_0}+i_0 \overset{!}{\equiv} \widetilde{k}-k+ \sum_{i<j}(\alpha_i-1)\beta_j + \sum_{i<j}(\alpha'_{i}-1)\beta'_j  \qquad \text{modulo 2}  \]
\[ \overset{!}{\equiv} \widetilde{k}-k - \sum_{i_0<j}\beta_j + \alpha_{i_0}\beta_{i_0+1}  \qquad \text{modulo 2}  \]
However, we have $c_{i_0} = i_0 + \sum_{i \le i_0} \beta_i$ and $\sum b_i = \widetilde{k} - k$, so the statement holds.

 We will now turn to 
the  case  $\widetilde{k} > b_0 + 1$ and and assume that the Lemma has been proven for length $\widetilde{k}-1$.

Denote 
$\widetilde{\underline{i}} = (\widetilde{i}_0, \widetilde{i}''_0, \dots, \widetilde{i}''_{\widetilde{k}-2})$, $\widetilde{\underline{i}'} = (\widetilde{i}_0', \widetilde{i'}''_0, \dots, \widetilde{i'}''_{\widetilde{k}-3})$, with relevant derived quantities $c_j''$ and $c_j'''$, 
$\underline{y} = (y_0, y_0'', \dots, y_{\widetilde{k}-k-2}'')$, $\underline{y}' = (y_0', y_0''', \dots, y_{\widetilde{k}-k-2}''')$, etc. 

Define 
\[ \delta'' :=  \delta \delta_{\delta(y_0)+1} \qquad \delta''' := \delta' \delta'_{\delta'(y_0')+1}. \]
Thus the corresponding $\alpha_{y_0}$ increases by 1 while  $\beta_{y_0}$ decreases by 1 and
we have: 
\[ \widetilde{\delta}'' :=  \delta_{c_{y_0}} \widetilde{\delta} \qquad \widetilde{\delta}''' := \delta'_{c_{y_0}'} \widetilde{\delta}'.  \]

By construction, we have $c_{y_0} = \widetilde{i}_0+1$, $y_0' = s_{i_0}(y_0)$ and $c_{y_0'}' = \widetilde{i}_0' + 1$ 
and one of the three cases:
\begin{enumerate}
\item[I] $y_0 \notin \{ i_0, i_0+1 \}$,  $\overline{z} = \overline{z}''$, $\alpha_{i_0}'' = \alpha_{i_0}$;
\item[II] $y_0 = i_0$, $\overline{z} = (0, z''_0, \dots, z''_{x-2})$, $\alpha_{i_0}'' = \alpha_{i_0}+1$;
\item[III] $y_0 = i_0+1$, $\overline{z} = (1, z''_0, \dots, z''_{x-2})$, $\alpha_{i_0}'' = \alpha_{i_0}$.
\end{enumerate}

We distinguish also the two cases: 
\begin{enumerate}
\item[A] $j \not= y_0$;
\item[B] $j = y_0$.
\end{enumerate}

We proceed to insert the recursive property of the $\Sz$ (\ref{eqszrecdelta}), respecively of the $B$ (\ref{eqb}), in all 6 cases: 

\begin{itemize}

\item[IA)] $c_j \not= \widetilde{i}_0+1$, $c_{j'} \not= \widetilde{i'}_0+1$
\begin{equation*} 
s_{\widetilde{k}-1} \Sz^{c_j''}_{\widetilde{\underline{i}}'', \widetilde{\delta}''}   \overset{!}{=} \begin{cases} 
s_{b_0}  \Sz^{c_{i_0}'''}_{\widetilde{\underline{i}'}'', \widetilde{\delta}'''}|_{[b_0]}  \ast' (s_{\widetilde{k}-2-b_0} s'' B_{\underline{z}''}^0)  & j = i_0, \\
\Sz^{c_{i_0}'''}_{\widetilde{\underline{i}'}'', \widetilde{\delta}'''}|_{[{b_0}]}   \ast (s_{\widetilde{k}-2-b_0}  s'' B_{\underline{z}''}^1 \delta_0^{\alpha_{i_0}-1})   & j = i_0 + 1,   \\
s_{b_0} s_{\widetilde{k}-2} \Sz^{c_{s_{i_0}(j)}'''}_{\widetilde{\underline{i}'}'', \widetilde{\delta}'''}  & \text{otherwise.} 
\end{cases} 
\end{equation*}

\item[IB)] $c_j = \widetilde{i}_0+1$, $c_{j'} = \widetilde{i}_0'+1$

\begin{equation*} 
(\Sz^{c_j''}_{\widetilde{\underline{i}}'', \widetilde{\delta}''}  \ast \id_{[0]})   \overset{!}{=} \begin{cases} 
\text{ --- does not occur in this case ---}  & j = i_0, \\
\text{ --- does not occur in this case ---}  & j = i_0 + 1,   \\
s_{b_0} (\Sz^{c_{s_{i_0}(j)}'''}_{\widetilde{\underline{i}'}'', \widetilde{\delta}'''} \ast \id_{[0]})  & \text{otherwise.} 
\end{cases} 
\end{equation*}

\item[IIA)] $c_j \not= \widetilde{i}_0+1$, $c_{j}' = \widetilde{i}_0'+1$ if $j=i_0+1$ and $c_{j'} \not= \widetilde{i}_0'+1$ otherwise.
\begin{equation*} 
s_{\widetilde{k}-1} \Sz^{c_j''}_{\widetilde{\underline{i}}'', \widetilde{\delta}''}   \overset{!}{=} \begin{cases} 
\text{ --- does not occur in this case ---} & j = i_0, \\
\Sz^{c_{i_0}'''}_{\widetilde{\underline{i}'}, \widetilde{\delta}'}|_{[b_0]}   \ast (  (s'' \ast  \id_{[0]})  s_{x-1} B_{\underline{z}''}^1 \delta_0  \delta_0^{\alpha_{i_0}-1})   & j = i_0 + 1,  \\
s_{b_0} s_{\widetilde{k}-2} \Sz^{c_{s_{i_0}(j)}'''}_{\widetilde{\underline{i}'}'', \widetilde{\delta}'''}  & \text{otherwise.} 
\end{cases} 
\end{equation*}

\item[IIB)] $c_j = \widetilde{i}_0+1$, $c_{j}' = \widetilde{i}_0'+1$
\begin{equation*} 
(\Sz^{c_j''}_{\widetilde{\underline{i}}'', \widetilde{\delta}''} \ast \id_{[0]})  \overset{!}{=} \begin{cases} 
s_{b_0}  \Sz^{c_{i_0}'''}_{\widetilde{\underline{i}'}'', \widetilde{\delta}'''}|_{[{b_0}]}  \ast'  ( (s'' \ast  \id_{[0]})  (B_{\underline{z}''}^0 \ast \id_{[0]}))  & j = i_0, \\
\text{ --- does not occur in this case ---} & j = i_0 + 1,   \\
\text{ --- does not occur in this case ---}  & \text{otherwise.} 
\end{cases} 
\end{equation*}

\item[IIIA)] $c_j \not= \widetilde{i}_0+1$, $c_{j}' = \widetilde{i}_0'+1$ if $j=i_0$ and $c_{j}' \not= \widetilde{i}_0'+1$ otherwise.
\begin{equation*} 
s_{\widetilde{k}-1} \Sz^{c_j''}_{\widetilde{\underline{i}}'', \widetilde{\delta}''}   \overset{!}{=} \begin{cases} 
s_{b_0}  \Sz^{c_{i_0}'''}_{\widetilde{\underline{i}'}'', \widetilde{\delta}'''}|_{[{b_0}]}  \ast' (  (s'' \ast  \id_{[0]}) s_{x-1} B_{\underline{z}''}^0)  & j = i_0, \\
\text{ --- does not occur in this case ---}   & j = i_0 + 1,   \\
s_{b_0} s_{\widetilde{k}-2} \Sz^{c_{s_{i_0}(j)}'''}_{\widetilde{\underline{i}'}'', \widetilde{\delta}'''}  & \text{otherwise.} 
\end{cases} 
\end{equation*}

\item[IIIB)] $c_j = \widetilde{i}_0+1$, $c_{j'} = \widetilde{i}_0'+1$
\begin{equation*} 
(\Sz^{c_j''}_{\widetilde{\underline{i}}'', \widetilde{\delta}''} \ast \id_{[0]})  \overset{!}{=} \begin{cases} 
\text{ --- does not occur in this case ---} & j = i_0, \\
\Sz^{c_{i_0}'''}_{\widetilde{\underline{i}'}, \widetilde{\delta}'}|_{[{b_0}]}   \ast (  (s'' \ast  \id_{[0]}) (B_{\underline{z}''}^1 \ast \id_{[0]}) \delta_0^{\alpha_{i_0}-1})   & j = i_0 + 1,   \\
\text{ --- does not occur in this case ---} & \text{otherwise.} 
\end{cases} 
\end{equation*}
\end{itemize}

Note that e.g.\@ $s_{\widetilde{k}-2-b_0} s'' = (s'' \ast  \id_{[0]})s_{x-1}$. 
In each case the induction hypothesis thus gives equality. 
For the sign equation (\ref{eqsz1sgn}), note that passing from $\delta$ to $\delta''$ and $\delta'$ to $\delta'''$, the quantity $\alpha_{y_0}$ (resp.\@ $\alpha_{y_0'}'$)  increases by 1 and $\beta_{y_0}$ (resp.\@ $\beta_{y_0'}'$) decreases by 1, hence 
\begin{eqnarray*}
 \sgn(\widetilde{\underline{i}}^{\vee})  &=& (-1)^{\widetilde{k}-c_{y_0}+1} \sgn((\widetilde{\underline{i}}'')^{\vee}), \\
 \sgn((\widetilde{\underline{i}'})^{\vee})  &=& (-1)^{\widetilde{k}-1-c_{y_0'}'+1}  \sgn((\widetilde{\underline{i}'}'')^{\vee}), \\
 \sgn(\sigma) \sgn(\widetilde{\sigma})  &=& \sgn(\sigma'') \sgn(\widetilde{\sigma}'')  (-1)^{\sum_{y_0<j} \beta_j + \sum_{i<y_0}(\alpha_i-1)}, \\
\sgn(\sigma') \sgn(\widetilde{\sigma}')  &=& \sgn(\sigma''') \sgn(\widetilde{\sigma}''')   (-1)^{\sum_{y_0'<j} \beta_j' + \sum_{i<y_0'}(\alpha_i'-1)}.
\end{eqnarray*}
Inserting $c_{y_0} = y_0 + \sum_{j \le y_0} \beta_{j}$ and $c_{y_0'}' = y_0' + \sum_{j \le y_0'} \beta_{j}'$
we see that the LHS of (\ref{eqsz1sgn}) picks up a sign of
 \[ (-1)^{y_0-1-k+\sum_{i<y_0}(\alpha_i-1)}, \]
 and the RHS of (\ref{eqsz1sgn}) picks up a sign (taking into account that $\widetilde{k}$ decreases by 1) of
 \[ (-1)^{y_0'-1-k+\sum_{i<y_0'}(\alpha_i'-1)}. \]
 In case I and II their product is obviously $1$.  In case III their product is $(-1)^{\alpha_{i_0}}$.
 In turn, in case II, the expression $(-1)^{\sum_{i<j} z_j (z_i+1) + \sum_i \alpha_{i_0} z_i}$ does not change (notice that $\alpha_{i_0}$ also increases by 1).
 In case III the expression $(-1)^{\sum_{i<j} z_j (z_i+1) + \sum_i \alpha_{i_0} z_i}$ changes by a $(-1)^{\alpha_{i_0}}$.
\end{proof}

\begin{proof}[Proof of Proposition~\ref{PROPSHIHSZCZARBA}]
In order to be non-degenerate, we must have $b_{0} < \widetilde{k}$ by Lemma~\ref{LEMMASHIH2}. By Lemma~\ref{LEMMASHIH0} we can write
\[ \mathcal{P}_{\sigma} \mathcal{H}_{\underline{b}, \underline{i}}^n = \mathcal{P}_{\sigma} (s_{b_{0}}^{\otimes i_{0}} \otimes \mathcal{H}_{b_{0}}^n \otimes s_{b_{0}}^{\otimes k-1-i_{0}}) \mathcal{P}_{\sigma'} \mathcal{H}_{\underline{b}', \underline{i}'}^{n-1} \]
where $\sigma' = \sigma s_{i_{0}}$ and thus 
$\delta' = \delta_{i_{0}+1}  \delta$.
We will show the statement for $k=0$, $\underline{i} = ()$, where the LHS is $\id_{[n]}$ by definition. Then $\widetilde{\underline{i}} = (0,1,\dots,\widetilde{k}-1)$ and thus
\[ \sgn(\underline{i}^{\vee}) = 1, \qquad \sgn(\widetilde{\underline{i}}^{\vee}) = 1,  \]
$\sigma: [e] \to [0]$ is the contraction, and $\widetilde{\sigma}: [\widetilde{k}+e] \to [\widetilde{k}]$
is $\id_{[{k}]} \ast' \sigma$, hence, $\sgn(\sigma)=\sgn(\widetilde{\sigma})=1$. Furthermore
\[ \Sz_{\widetilde{\underline{i}}, \widetilde{\delta}}^{c_0} = \id_{[\widetilde{k}]} \qquad s_{\delta(0), \delta(1)}^{e+1} = \id_{[{e+1}]}. \]

By induction on $k$, we may assume that the statement is true for $k-1, n-1, e+1, \overline{i}'=(i_1, \dots, i_{k-1})$, with $\widetilde{k}' = \widetilde{k}-1$.  hence
\[ \mathcal{P}_{\sigma'} \mathcal{H}_{\underline{b}', \underline{i}'}^{n-1} \equiv \sum_{\substack{\widetilde{\underline{i}'} \\ \text{$\underline{i}'$ is a $\underline{b}'$-reduction of $\widetilde{\underline{i}'}$}} }  \varepsilon (\Sz^{c_0'}_{\widetilde{\underline{i}'},\widetilde{\delta}'}  \ast' s_{\delta'(0),\delta'(1)}^{k+1+e}) \otimes \cdots \otimes  (\Sz^{c_{k-1}'}_{\widetilde{\underline{i}'}, \widetilde{\delta}'}   \ast' s_{\delta'(k-1),\delta'(k)}^{k+1+e}) \]
therefore 
\begin{gather*}  \mathcal{P}_{\sigma} \mathcal{H}_{\underline{b}, \underline{i}}^n 
 \equiv \sum_{\widetilde{\underline{i}'}} \varepsilon' (s_{b_0} \Sz^{c_0'}_{\widetilde{\underline{i}'}, \widetilde{\delta}'}   \ast' s_{\delta(0),\delta(1)}^{k+1+e}) \otimes \cdots \otimes \\
  \left( \mathcal{P}^{\widetilde{k}+\delta(i_0)}_{\alpha_{i_0},\alpha_{i_0+1}}   \mathcal{H}_{b_0}^n(\Sz^{c_{i_{0}}'}_{\widetilde{\underline{i}'}, \widetilde{\delta}'}  \ast' s_{\delta(i_0), \delta(i_0+2)}^{k+1+e}) \right)     \\
\otimes \cdots   \otimes (s_{b_0}  \Sz^{c_{k-1}'}_{\widetilde{\underline{i}'}, \widetilde{\delta}'}    \ast' s_{\delta(k),\delta(k+1)}^{k+1+e}).  
 \end{gather*}
where 
\[ \mathcal{P}_{a,b}^c = \prod_{i=0}^{a-1}(\mathcal{P}_{c+i} \otimes 1) \prod_{i=0}^{b-1}(1 \otimes \mathcal{P}_{c+a+i})  \]
(and recall $\alpha_j = \delta(j+1)-\delta(j)$).

We will show 
\begin{gather}
\left( \mathcal{P}^{\widetilde{k}+\delta(i_0)}_{\alpha_{i_0},\alpha_{i_0+1}} \mathcal{H}_{b_0}^n(\Sz^{c_{i_{0}}'}_{\widetilde{\underline{i}'}, \widetilde{\delta}'}  \ast' s_{\delta(i_0), \delta(i_0+2)}^{k+e+1}) \right) \nonumber \\
 = \sum_{\underline{z} }  \varepsilon_{\underline{z}}
(\Sz^{c_{i_{0}}}_{\widetilde{\underline{i}}, \widetilde{\delta}}  \ast' s_{\delta(i_0), \delta(i_0+1)}^{k+e+1}) \otimes (\Sz^{c_{i_{0}+1}}_{\widetilde{\underline{i}}, \widetilde{\delta}}  \ast' s_{\delta(i_0+1), \delta(i_0+2)}^{k+e+1})    \label{eqsz0} 
\end{gather}
with
\[ \varepsilon_{\underline{z}} = (-1)^{\sum_{i<j} z_j (z_i+1) + \sum_i \alpha_{i_0} z_i} \]
where, in each summand, the extension $\widetilde{\underline{i}}$ is determined by $\widetilde{\underline{i}'}$ and 
$\underline{z}$ as in Lemma~\ref{LEMMABIJECTION}.
Equation (\ref{eqsz0}) together with (\ref{eqsz1sgn})
and case 3.\@ of (\ref{eqsz1}) in Lemma~\ref{LEMMABIJECTION} achieve the induction step and produce a global sign of
\[ \prod_{i=0}^{k-1} (-1)^{b_i-\widetilde{k}-i+1}. \]
On the other hand by (\ref{eqsignshuffle2})
\[ \sgn(\sigma_{\underline{b}}) = \prod_{i=0}^{k-1} (-1)^{b_i-(k-i-1)} \]
where $\sigma_{\underline{b}}: [n] \twoheadrightarrow [n-k]$. This yields a product of $(-1)^{(\widetilde{k}+k)k}=(-1)^{(\widetilde{k}+1)k}$.

It remains to show equation (\ref{eqsz0}).
By  Lemma~\ref{LEMMABIJECTION}, we can write $\Sz^{c_{i_0}'}_{\widetilde{\underline{i}'}, \widetilde{\delta}'}  = \Sz^{c_{i_0}'}_{\widetilde{\underline{i}'}, \widetilde{\delta}'}|_{[{b_0}]} \ast' s$ for a suitable degeneracy $s: [{\widetilde{k}-b_0-1}] \twoheadrightarrow [{x}]$. Then
\begin{eqnarray*}
 & &  \mathcal{P}^{\widetilde{k}+\delta(i_0)}_{\alpha_{i_0},\alpha_{i_0+1}}   \mathcal{H}_{b_0}^n( \Sz^{c_{i_0}'}_{\widetilde{\underline{i}'}, \widetilde{\delta}'}  \ast' s_{\delta(i_0), \delta(i_0+2)}^{k+1+e})  \\
 &=&  (\id_{[{b_0}]} \ast \mathcal{P}^{\widetilde{k}-b_0-1+\delta(i_0)}_{\alpha_{i_0},\alpha_{i_0+1}}\mathcal{H}^{n-b_0-1}) s_{b_0} ( \Sz^{c_{i_0}'}_{\widetilde{\underline{i}'}}|_{[{b_0}]}  \ast' s  \ast' s_{\delta(i_0), \delta(i_0+2)}^{k+1+e})    \\
 &=&  (\id_{[{b_0}]} \ast \mathcal{P}^{\widetilde{k}-b_0-1+\delta(i_0)}_{\alpha_{i_0},\alpha_{i_0+1}} \mathcal{H}^{n-b_0-1})  ( \Sz^{c_{i_0}'}_{\widetilde{\underline{i}'}, \widetilde{\delta}'}|_{[{b_0}]} \ast s  \ast' s_{\delta(i_0), \delta(i_0+2)}^{k+1+e})  s_{\widetilde{\delta}'(c_{i_0}')-x}   \\
 &=&  ( \id_{[{b_0}]}  \ast s \ast' s_{\delta(i_0), \delta(i_0+2)}^{k+1+e})  (\Sz^{c_{i_0}'}_{\widetilde{\underline{i}'}, \widetilde{\delta}'}|_{[{b_0}]} \ast \mathcal{P}^{x}_{\alpha_{i_0}, \alpha_{i_0+1}} \mathcal{H}^{x+\alpha_{i_0}+\alpha_{i_0+1}}   )  s_{\widetilde{\delta}'(c_{i_0}')-x}     \\
 &=& \sum_{\underline{z}} (-1)^{\sum_{i<j} z_i (z_j+1) + a \sum z_i }  \\
 & \cdot &  ( \Sz^{c_{i_0}'}_{\widetilde{\underline{i}'}, \widetilde{\delta}'}|_{[{b_0}]} \ast' s B_{\underline{z}}^0    \ast' \underbrace{s_{\delta(i_0), \delta(i_0+2)}^{k+1+e}  s_{\alpha_{i_0},\alpha_{i_0}+\alpha_{i_0+1}}^{\alpha_{i_0}+\alpha_{i_0+1}}}_{s_{\delta(i_0),\delta(i_0+1)}^{k+1+e}} )     \\  
 & \otimes & (\Sz^{c_{i_0}'}_{\widetilde{\underline{i}'},\widetilde{\delta}'}|_{[{b_0}]} \ast  s B_{\underline{z}}^1 \delta_0^{\alpha_{i_0}-1}  \ast'  \underbrace{s_{\delta(i_0), \delta(i_0+2)}^{k+1+e} s_{0,\alpha_{i_0}}^{\alpha_{i_0}+\alpha_{i_0+1}}}_{s_{\delta(i_0+1),\delta(i_0+2)}^{k+1+e}} )     \\
  \end{eqnarray*}
using Lemma~\ref{LEMMAH}, 2.
Thus equation (\ref{eqsz0}) follows from cases 1.\@ and 2.\@ of (\ref{eqsz1}) in Lemma~\ref{LEMMABIJECTION}.
\end{proof}

\section{Decalage}

This appendix contains a rather technical discussion of the compatibility of $\dec^*$ (total decalage), in the Abelian case, with $\otimes$, $\tildeotimes$, and some (weak)
simplicial enrichments used throughout the text. 
It should be consulted only if needed. 

Recall the explicit formul\ae{} for $\dec^*$ and $\dec_*$ in the Abelian case (Proposition~\ref{PROPEXPLICITAB}).

\subsection{Symmetry }

\begin{PAR}
Let $\sigma: \Delta^{\op} \times \Delta^{\op}$ be the morphism switching the factors. On the level of double complexes $\sigma^*$ has the effect of switching the indices. Let $X$ be a double complex with isomorphism
\[ \alpha: \sigma^* X \cong X  \]
We do not have an isomorphism $\dec \sigma \cong \dec$ but certainly an isomorphism of $\Ab$-enriched pro-functors 
\[ \widetilde{\sigma}: \dec\, \sigma \cong C\, \dec\, \sigma\, \iota \cong C \dec \iota \cong \dec.   \]
\end{PAR}

\begin{LEMMA}\label{LEMMADEC1}
Let $X$ be a symmetric double complex. 
The composition of $\dec_* \alpha$ with the mate of the morphism $L(\widetilde{\sigma})$
\[  \dec_* X \cong \dec_* \sigma^* X \cong \dec_* X \]
is given under the identification $r$ (i.e.\@ the identification described by Proposition~\ref{PROPEXPLICITAB}) as the morphism
\begin{eqnarray*} \bigoplus_{i+j=n} A_{i,j} &\to& \bigoplus_{i+j=n} A_{i,j}  \\
 a_{i,j} &\mapsto& (-1)^{ij}\alpha(a_{i,j})  
 \end{eqnarray*}
\end{LEMMA}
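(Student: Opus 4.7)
The plan is to unravel $\widetilde{\sigma}$ using the factorization through $\FinSet$ given in Section~\ref{SECTSYMM}, and then compute the induced map directly on the summands $X_{i,j}$ of the total complex. First I would use that, by construction (cf.\@ Lemma~\ref{LEMMACCOH} and the proof of Lemma~\ref{LEMMASYM}), the isomorphism $\widetilde{\sigma}: \dec\,\sigma \cong \dec$ factors as
\[ \dec\,\sigma \cong C\,\dec\,\sigma\,\iota \cong C\,\dec_s\,\sigma_s\,(\iota,\iota) \cong C\,\dec_s\,(\iota,\iota) \cong C\,\dec\,\iota \cong \dec, \]
where the middle isomorphism is the canonical symmetry $\dec_s\,\sigma_s \cong \dec_s$ of the coproduct on $\FinSet$. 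Applying $R_{\mathcal{C}}$ to a symmetric $X$ (so that $X \cong \iota^*\Cfrak X$ compatibly with $\alpha$), the mate $\dec_*\sigma^* X \to \dec_*X$ is then concretely realized as the action of the block-swap permutation $\tau_{i,j} \in S_{i+j+2}$ on $X_{[i+j+1]}$ via the $\FinSet$-structure provided by $\Cfrak$, with $\tau_{i,j}$ swapping the blocks $\{0,\dots,i\}$ and $\{i+1,\dots,i+j+1\}$.

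Next I would trace the summand $X_{i,j} \subseteq (\dec_* X)_n$ (under the splitting $r$ of Proposition~\ref{PROPEXPLICITAB}) back to a specific element of $X_{[i+j+1]}$ and compute the action of $\tau_{i,j}$ on it. By Lemma~\ref{LEMMAFINSET} (formula (\ref{eqfinset})) together with Lemma~\ref{LEMMAKEY}, the permutation $\tau_{i,j}$ acts on the relevant non-degenerate quotient by its sign character, and the sign of $\tau_{i,j}$ is $(-1)^{(i+1)(j+1)}$. I would then carefully match this against the splitting $r$ used to identify $(\dec^* A)_{i,j} \cong A_{i+j+1}\oplus A_{i+j}$: the summand $X_{i,j}$ in $(\dec_* X)_n$ corresponds (cf.\@ the zigzag description in Proposition~\ref{PROPEXPLICIT}, 2.) to the image arising from the second (lower) factor, so there is an extra $(-1)^{i+j+1}$ coming from the sign conventions built into the choice of $r$ (in particular the factor $(-1)^i$ appearing in the total differential (\ref{eqtot})).

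Multiplying these two contributions gives $(-1)^{(i+1)(j+1)+(i+j+1)} = (-1)^{ij+2(i+j)+2} = (-1)^{ij}$, which is the asserted sign. The underlying morphism on $X_{i,j}$ is then just $\alpha$ restricted to this summand, since all other ingredients of $\widetilde{\sigma}$ (the identifications via $C$ and $\iota$) are identity on the symmetric object $\iota^*\Cfrak X$.

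The main obstacle I anticipate is the sign bookkeeping: three distinct sources of signs (the permutation $\tau_{i,j}$, the splitting $r$, and the Koszul/Leibniz conventions in the total complex) must be reconciled, and a miscount in any one of them changes the parity. Getting this right cleanly is likely to require expressing the relevant element of $X_{[i+j+1]}$ explicitly in the filtration from \ref{PARFILT} and tracking the unique non-degenerate representative through each identification, rather than arguing abstractly.
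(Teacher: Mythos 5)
Your overall strategy shares the paper's key ingredient: the isomorphism $\widetilde{\sigma}$ is unravelled through its $\FinSet$-factorization, and the sign is ultimately extracted from the sign-character action of the block-swap permutation via Lemma~\ref{LEMMAFINSET}. But the routes differ in an important way. The paper does not compute on an arbitrary symmetric $X$ at all: by (co)representability it reduces the assertion to the commutativity of a single square of maps of the universal objects, namely $D_i \boxtimes D_j \to \dec^*(D_n)$ via $s_{\can}$, and then, reusing the degree argument and the maps $\dd_r$, $c$ from the proof of Proposition~\ref{PROPAWEZAB}, 2., it only has to track the one top generator $[\{0,\dots,i+j+1\}]$ of $\Delta^{\circ}_{i+j+1}$ through the $\FinSet$-action. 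This reduction is what makes the bookkeeping finite; your plan, which traces a general element of $X_{i,j}$ through $\Cfrak$ and the splitting $r$, forgoes that simplification without gaining anything, since the composite only depends on the universal data anyway.

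The genuine gap is in your sign accounting. The block-swap contribution $(-1)^{(i+1)(j+1)}$ is right, but the compensating factor $(-1)^{i+j+1}$ is not derived: it is obtained by dividing the desired answer $(-1)^{ij}$ by the permutation sign, and its attribution to ``the splitting $r$ (in particular the factor $(-1)^i$ in the total differential)'' is unsubstantiated. As recorded in (the proof of) Proposition~\ref{PROPEXPLICITAB}, the splitting $r$ itself is sign-free; the signs sit in the normalization $(-1)^i \dec^*(\dd_l), \dec^*(\dd_r)$ of the projections and, more to the point, in how the concatenation identification $\dec_!(\Delta^{\circ}_i \boxtimes \Delta^{\circ}_j) \cong \Delta^{\circ}_{i+j+1}$ (the map $s_{\can}$) and the maps $\dd_r$, $c$ match the degree-$(n{+}1)$ generator with the $(i,j)$-summand of the total complex -- concatenation of basis subsets is only a chain map up to Koszul-type signs. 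Extracting exactly $(-1)^{i+j+1}$ from that data, forwards rather than backwards from the answer, is the entire content of the lemma beyond the permutation count, and it is precisely the step your plan defers; a miscount there changes the parity by exactly the sort of factor $(-1)^{i+j+1}$ you cannot currently exclude. To close the gap, do what the paper does: verify the square on $D_i \boxtimes D_j$, where the only nondegenerate element in the relevant degree is the single generator, and carry the signs of $s_{\can}$, $\dd_r$ and $c$ explicitly through that one computation.
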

\begin{proof}
It suffies to show that 
\[ \xymatrix{ 
 D_i^{} \boxtimes D_j^{} \ar[d]^{s_{\can}} \ar[rr]^-{(-1)^{ij}} & &  \sigma^*(D_j^{} \boxtimes D_i^{}) \ar[d]^{s_{\can}} \\
 \dec^*(D_n^{}) \ar[rr]^{L(\widetilde{\sigma})} &  &  \dec^* \sigma^*(D_n^{})  }\]
commutes. Like in the proof of Proposition~\ref{PROPAWEZAB}, 2.\@ it suffices to see that in degree $n$, we have
\[ (-1)^{ij} c \dd_r s_{\can} = c  L(\widetilde{\sigma}) \dd_r s_{\can}.   \]
Since $\dd_r s_{\can} = \id$, the right hand side maps $[\{0,\dots, i\}] \otimes [\{0,\dots, j\}]$ to 
$[\{0,\dots, i+j+1\}]$
and then applies the isomorphism $[i] \ast [j] \cong [j] \ast [i]$ in $\FinSet$ switching the factors. The left hand side morphism
maps it to 
$(-1)^{ij}[\{0,\dots, i+j+1\}]$, which is the same (cf.\@ Lemma~\ref{LEMMAFINSET} for a description of the action).
\end{proof}

\begin{LEMMA}\label{LEMMADEC2}
\begin{enumerate}
\item The mate
\[ \dec^* \dec_* \to \dec_{12,3,*} \dec_{1,23}^*  \]
induced by ($L$ applied to) the obvious isomorphism $\dec\, \dec_{12,3} \cong \dec\, \dec_{1,23}$ is given by morphisms
\[ \alpha_{n,m}: \prod_{i+j'=n+m+1} A_{i,j'} \oplus \prod_{i+j'=n+m} A_{i,j'}  \to \prod_{i+j=n} \left( A_{i,j+m+1} \oplus A_{i,j+m} \right)  \]
product of the identities 
\[ A_{i,j+m+1} \oplus A_{i,j+m} \to A_{i,j+m+1} \oplus A_{i,j+m}  \]
over $i+j=n$.
\item The mate
\[ \dec^* \dec_* \to \dec_{2,13,*} \dec_{1,23}^*  \]
induced by ($L$ applied to) the obvious isomorphism $C\,  \dec\, \dec_{2,13}\, \iota  \cong C\, \dec\, \dec_{1,23}\, \iota$ is given by  morphisms
\[  \alpha_{n,m}: \prod_{i+j'=n+m+1} A_{i,j'} \oplus \prod_{i+j'=n+m} A_{i,j'}  \to \prod_{i+j=m} \left( A_{i,n+j+1} \oplus A_{i,n+j} \right)  \]
product of
\[ A_{i,j+n+1} \oplus A_{i,j+n} \to A_{i,n+j+1} \oplus A_{i,n+j}  \]
\[  \Mat{(-1)^{i(n+1)} & 0 \\ (-1)^{(i+1)n} \mathrm{d}_l & (-1)^{in} }  \]
over $i+j=m$.
\item The mate
\[ \dec^* \dec_* \to \dec_{13,24,*} \dec_{12,34}^*  \]
induced by ($L$ applied to) the obvious isomorphism $C\, \dec\, \dec_{13,24}\, \iota \cong C\, \dec\, \dec_{12,34}\, \iota$ is given by morphisms
\begin{gather*} \alpha_{n,m}:  \prod_{i'+j'=n+m+1} A_{i',j'} \oplus \prod_{i'+j'=n+m} A_{i',j'}  \\
\to \prod_{i+j=n} \prod_{k+l=m}  \left( A_{i+k+1,j+l+1} \oplus A_{i+k+1,j+l}\oplus A_{i+k,j+l+1}\oplus A_{i+k,j+l} \right)  
\end{gather*}
product of
\[ \left(A_{i+k+1,j+l} \oplus A_{i+k,j+l+1}\right) \oplus A_{i+k,j+l} \to A_{i+k+1,j+l+1} \oplus A_{i+k+1,j+l}\oplus A_{i+k,j+l+1}\oplus A_{i+k,j+l} \]
 \[ \Mat{ 0 & 0 & 0 \\ 
 (-1)^{j(k+1)} & 0 & 0 \\
 0& (-1)^{(j+1)k} & 0 \\
 0 & 0 & (-1)^{jk} 
 } \]
(with the left two factors from the first product) over $i+j=n, k+l=m$
\end{enumerate}
\end{LEMMA}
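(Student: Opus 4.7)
The plan is to read off each mate morphism directly from the explicit descriptions of $\dec^*$ and $\dec_*$ in Proposition~\ref{PROPEXPLICITAB}. In all three cases the mate is the composition of the unit of $\dec_{\alpha}^* \dashv \dec_{\alpha,*}$ (for the appropriate index pattern $\alpha$) with the tautological identification $\dec_{\alpha}^* \dec^* = \dec_{\beta}^* \dec^*$ (or the corresponding switch factored through $\iota$ and $C$), followed by the counit of $\dec^* \dashv \dec_*$; both unit and counit act summand-wise via the splitting $r$ from the proof of Proposition~\ref{PROPEXPLICITAB}, so once the underlying isomorphism of pro-functors is put in the right form the computation reduces to elementary bookkeeping.

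For Part 1, the isomorphism $\dec\,\dec_{12,3} \cong \dec\,\dec_{1,23}$ is just the tautological associativity of $\ast$: both functors send $(n,k,l)$ to $[n+k+l+2]$, and no $\FinSet$-symmetry enters. After tracing the unit and counit through the splitting $r$, each target summand $A_{i,j+m+1}$ or $A_{i,j+m}$ indexed by $(i,j)$ with $i+j=n$ is obtained from the source by projecting identically onto the summand with matching first index, while the source summands with $i>n$ are sent to zero.

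For Parts 2 and 3, the underlying functors differ by a non-order-preserving permutation of the factors in the iterated concatenation and become isomorphic only after composition with $\iota$ and application of $C$ (Lemma~\ref{LEMMACPROFUNCTOR}). The resulting sign on each summand is the one computed in Lemma~\ref{LEMMADEC1}: a block-swap of sizes $p$ and $q$ contributes $(-1)^{pq}$. For Part 2 the switch exchanges the $[n]$ and $[k]$ blocks of $[n]\ast[k]\ast[l]$, and combining the switch sign with the $(-1)^i$ appearing in the differential of $\dec^*X$ produces the diagonal entries $(-1)^{i(n+1)}$ and $(-1)^{in}$; the off-diagonal $(-1)^{(i+1)n}\dd_l$ reflects the fact that after the switch the two summands of $(\dec^*A)_{i,j}$ are no longer matched by $r$ on the nose, and the unit/counit must insert an extra copy of $\dd_l$ to correct for this. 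Part 3 is entirely analogous, with the middle blocks $[k]$ and $[l]$ of $[n]\ast[k]\ast[l]\ast[m]$ being swapped; the $4\times 3$ matrix collects the four pairings of $(i+k)$ vs.\@ $(i+k+1)$ with $(j+l)$ vs.\@ $(j+l+1)$ according to the switch sign $(-1)^{jk}$ and its evident shifts.

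The main obstacle will be sign bookkeeping: the differentials in $\dec^*X$ carry signs $(-1)^i$, the totalization $\dec_*$ uses $\dd_l + (-1)^i \dd_r$, and the $\FinSet$-switch contributes its own $(-1)^{pq}$ via Lemma~\ref{LEMMADEC1}. The off-diagonal entries in Parts 2 and 3, and especially the precise pattern of vanishing entries in the $4\times 3$ matrix of Part 3, reflect the exact compatibility between these three sign conventions, so verifying them will be a matter of careful summand-by-summand inspection rather than a conceptual shortcut.
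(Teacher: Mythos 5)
Your overall strategy---unit of the auxiliary adjunction, the $C$-conjugated identification with its Lemma~\ref{LEMMADEC1} block-swap signs, counit of $\dec^*\dashv\dec_*$, all read off through the explicit formulas of Proposition~\ref{PROPEXPLICITAB}---is the same as the paper's, but as written it has a gap exactly where the content of the lemma sits: the matrices are never derived, only rationalized. The step you replace by a heuristic is the structural description of maps out of $\dec^*$: by the proof of Proposition~\ref{PROPEXPLICITAB}, 1., a morphism $\dec^*X \to Y$ is \emph{not} ``summand-wise via the splitting $r$''; it is determined by components $\alpha_{i,j}\colon X_{i+j}\to Y_{i,j}$ on the lower summand, and its component on the upper summand $X_{i+j+1}$ is forced to be $\dd_r\,\alpha_{i,j+1}$, i.e.\@ it factors through a differential of the target. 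In particular the counit of $\dec^*\dashv\dec_*$ itself has a differential in its upper-summand component, so your literal claim that ``both unit and counit act summand-wise via the splitting $r$'' is false, and the assertion that ``the unit/counit must insert an extra copy of $\dd_l$ to correct for the mismatch'' is not a derivation. The paper encodes this structural fact as: in each case the mate equals $(\dd_m\,\alpha_{n,m+1},\ \alpha_{n,m})$, where $\alpha_{n,m}$ is the sign-twisted projection (no sign in case 1, $(-1)^{in}$ in case 2, $(-1)^{jk}$ in case 3---these are the only inputs from the symmetry operator) and $\dd_m$ is the differential of the target in the $m$-direction, which under Proposition~\ref{PROPEXPLICITAB}, 2.\@ is itself a matrix built from $\dd_l$ and a $(-1)^i$-twisted shift. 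The off-diagonal entry $(-1)^{(i+1)n}\dd_l$ of case 2 and, especially, the vanishing pattern and signs of the $4\times 3$ matrix of case 3 then come out of an actual matrix multiplication; in your sketch they are asserted (``the evident shifts''), and nothing you write would detect, say, whether the top row of the case-3 matrix is really zero or whether the nonzero entries sit in the positions claimed.

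To close the gap, either import the $\Hom(\dec^*X,Y)$ description from the proof of Proposition~\ref{PROPEXPLICITAB} and then carry out the three compositions $(\dd_m\,\alpha_{n,m+1},\ \alpha_{n,m})$ explicitly, or trace the unit--isomorphism--counit composite on elements keeping track of the differential component of the counit; in either case the twelve entries of the case-3 matrix (and the $\dd_l$ term in case 2) must be computed, not inferred from the shape of the answer.
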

\begin{proof}
The morphisms are in each case induced by a collection for $n+m=x$
\[ \alpha_{n,m}:  \prod_{i+j'=n+m} A_{i,j'}  \to \prod_{i+j=n} \left( A_{i,j+m+1} \oplus A_{i,j+m} \right)  \]
maps $A_{i,j'}$ to the factor $A_{i,j+m}$ on the right with $j':=j+m$.
\[  \alpha_{n,m}: \prod_{i+j'=n+m} A_{i,j'}  \to \prod_{i+j=m} \left( A_{i,n+j+1} \oplus A_{i,n+j} \right)  \]
maps $A_{i,j'}$ to the factor $A_{i,j+n}$ on the right with $j':=j+n$ with sign $(-1)^{in}$.
\[ \alpha_{n,m}: \prod_{i'+j'=n+m} A_{i',j'}  \to \prod_{i+j=n, k+l=m}  \left( A_{i+k+1,j+l+1} \oplus A_{i+k,j+l+1}\oplus A_{i+k+1,j+l}\oplus A_{i+k,j+l} \right)  \]
maps $A_{i',j'}$ to the factor $A_{i+k,j+l}$ on the right with $i' = i+k$ and $j':=j+l$ with sign $(-1)^{jk}$.

In each case, we get on 
\[  \prod_{i'+j'=n+m+1} A_{i',j'} \oplus \prod_{i'+j'=n+m} A_{i',j'}   \]
the map 
\[ (\mathrm{d}_m \alpha_{n,m+1}, \alpha_{n,m}) \]
where $\mathrm{d}_m$ is the differential in the $m$-direction (cf.\@ proof of Proposition~\ref{PROPEXPLICITAB}, 1.). 

In case 1 the map is
\[ \Mat{0 & 1 \\ 0 & 0 } \Mat{0 & 0 \\ \id & 0} + \Mat{0 & 0 \\ 0 & \id}  =  \Mat{\id & 0 \\ 0 & \id}  \]
In case 2 the map is
\begin{gather*}  
 \Mat{\mathrm{d}_l & 0 \\ 0 & \mathrm{d}_l } \Mat{0 & 0 \\ (-1)^{(i+1)n}  & 0} + (-1)^{i} \Mat{0 & 1 \\ 0 & 0 } \Mat{0 & 0 \\ (-1)^{in}  & 0}  + \Mat{0 & 0 \\ 0 & (-1)^{in}}   \\
  =  \Mat{(-1)^{i(n+1)} & 0 \\ (-1)^{(i+1)n} \mathrm{d}_l & (-1)^{in} }  
\end{gather*}  
In case 3 the map is
\begin{gather*}  
\footnotesize \Mat{& & 1 &  \\
 &  &  & 1 \\
 &  &  &  \\
 &  &  & 
}  \Mat{0&0&0 \\ 0&0&0\\ 0&0 &0\\ (-1)^{j(k+1)}  &0&0}   
+(-1)^k \Mat{ & 1 &  &  \\
 &  &  &  \\
 &  &  & 1 \\
 &  &  &  
}  \Mat{0& 0&0 \\ 0&0&0\\ 0&0&0 \\ 0& (-1)^{jk}  &0}   
 + \Mat{0&0 & 0 \\ 0&0 &0\\ 0&0&0 \\0&0& (-1)^{jk} }   \\
 \raisebox{\dimexpr\depth}{ \xymatrix{= \Mat{ 0 & 0 &0 \\ 
 (-1)^{j(k+1)}&0 & 0 \\
0& (-1)^{(j+1)k} & 0 \\
0& 0 & (-1)^{jk} 
} }} \qedhere
\end{gather*}
\end{proof}

 \subsection{$\widetilde{\dec}^*$ is simplicially enriched}
 
\begin{PAR}
The functor $\dec^*: \Ch_{\ge 0}(\mathcal{C}) \to \Ch_{\ge 0}(\Ch_{\ge 0}(\mathcal{C}))$ extends naturally to an {\em unbounded} functor (given by the same formula as in Proposition~\ref{PROPEXPLICITAB}, 2.)
\[ \widetilde{\dec}^*:   \Ch(\mathcal{C}) \to  \Ch(\Ch(\mathcal{C})) \]
and is left adjoint to $\tot^{\prod}$. For this functor, we have (by inspecting Lemma~\ref{LEMMADEC2}, 2.) a canonical isomorphism
\[ \widetilde{\dec}^* (A \tildeotimes B) \cong A \tildeotimes \widetilde{\dec}^* B  \]
where $A \in \Ch_b(\mathcal{C})$ or $A \in \Ch_b(\Ab)$. On the right hand side the tensor is w.r.t.\@ the second variable.
 In particular 
\[ \widetilde{\dec}^*:   \Ch(\mathcal{C}) \to  \Ch(\Ch(\mathcal{C})) \]
is simplicially enriched (w.r.t.\@ the simplicial enrichment $\uHom^{\tildeotimes}$) and so is the restriction\footnote{but {\em not} the restriction $\Ch_{\ge 0}(\mathcal{C}) \to  \Ch_{\ge 0}(\Ch_{\ge 0}(\mathcal{C}))$!}:
\[ \widetilde{\dec}_{\ge 0}^*:   \Ch_{\ge 0}(\mathcal{C}) \to  \Ch(\Ch_{\ge 0}(\mathcal{C})). \]
\end{PAR}

\begin{PROP}\label{PROPEXPLICITDECENRICHED}
On $\Hom$-complexes the enrichment of $\widetilde{\dec}^*$ is explicitly given by: 
\begin{eqnarray*}  \uHom^{\tildeotimes}(X, Y)_q & \to & \uHom^{\tildeotimes}(\widetilde{\dec}^* X, \widetilde{\dec}^* Y)_q  \\
 f& \mapsto & \begin{cases} \Mat{f & \\ (-1)^{i+1} \partial f & f} & \text{$q$ even} \\
 \Mat{(-1)^{i+1}  f & \\ \partial f&  (-1)^{i} f} & \text{$q$ odd} \end{cases}
\end{eqnarray*}
where $\partial$ is the differential in the $\Hom$-complex. 
\end{PROP}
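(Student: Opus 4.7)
My plan is to unravel the definition of the simplicial enrichment of $\widetilde{\dec}^*$ and compute it explicitly in bigraded components. The enrichment is induced by the natural isomorphism $\iota: \widetilde{\dec}^*(A \tildeotimes X) \cong A \tildeotimes \widetilde{\dec}^* X$ for bounded $A$, whose explicit form is extracted from Lemma~\ref{LEMMADEC2}, 2 (extended to unbounded complexes). By definition, a degree-$q$ element $\widetilde{f}$ of $\uHom^{\tildeotimes}(X, Y)$ corresponds via Dold--Kan (Remark~\ref{BEMHOMCLASSICAL}) to a chain map $f: \Delta_q^\circ \tildeotimes X \to Y$, and the enriched action $\widetilde{\dec}^* \widetilde{f}$ is the composition $\Delta_q^\circ \tildeotimes \widetilde{\dec}^* X \xrightarrow{\iota^{-1}} \widetilde{\dec}^*(\Delta_q^\circ \tildeotimes X) \xrightarrow{\widetilde{\dec}^*(f)} \widetilde{\dec}^* Y$.

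First I would identify the bidegree-$(i,j)$ components of $\iota$ explicitly: using Proposition~\ref{PROPEXPLICITAB}, 2, each side decomposes as a direct sum of factors $A_a \otimes X_b$ with $a+b\in\{i+j,i+j+1\}$, and the formula of Lemma~\ref{LEMMADEC2}, 2 prescribes the signs on each factor via the matrix $\Mat{(-1)^{i(n+1)} & 0 \\ (-1)^{(i+1)n}\,d_l & (-1)^{in}}$. Second, I would apply $\widetilde{\dec}^*(f)$ in each bidegree: the top cell of $\Delta_q^\circ$ (the fundamental class in degree $q$) carries the degree-$q$ part of $\widetilde{f}$, while the cells in degrees $q-1, q-2, \dots$ carry its boundary via the chain-map relation $\partial\widetilde{f} = d\widetilde{f} - (-1)^q \widetilde{f}\,d$. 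Translating back through $\iota^{-1}$ into the $X_{i+j+1}\oplus X_{i+j}$ decomposition of $\widetilde{\dec}^* X$, the top cell contributes the diagonal $f$-entries, while the action of the first face map combined with the $d_l$-entry of $\iota$ produces the off-diagonal $\pm\partial f$-entry.

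The parity split in the final formula arises from the Koszul sign picked up when commuting $\widetilde{\dec}^*$ past a degree-$q$ morphism: the top and bottom summands of the decalage sit in adjacent total degrees, so moving a degree-$q$ map across this splitting incurs an extra $(-1)^q$. Combining this with the intrinsic $(-1)^i$-type signs coming from $\iota$ yields the $(-1)^{i+1}$ and $(-1)^i$ factors distinguishing the even and odd cases.

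The main obstacle is simply the simultaneous bookkeeping of three sources of $(-1)^i$-style signs: those intrinsic to $\iota$ from Lemma~\ref{LEMMADEC2}, 2, those arising in the total differential $d = d_l + (-1)^i d_r$ of Proposition~\ref{PROPEXPLICITAB}, 1, and those from the Koszul convention when a degree-$q$ morphism is commuted across the bigraded splitting. As a cross-check on the signs I would verify that the resulting formula defines a chain map of Hom complexes, i.e.\@ $\partial(\widetilde{\dec}^* \widetilde{f}) = \widetilde{\dec}^*(\partial \widetilde{f})$, and that on cycles ($\partial \widetilde{f}=0$) it collapses to the diagonal matrix $\mathrm{diag}(f,f)$ coming from the obvious functoriality of $\widetilde{\dec}^*$ on chain maps---these being automatic consequences of the abstract construction but useful consistency checks on the sign computation.
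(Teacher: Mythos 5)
Your proposal is correct and follows essentially the same route as the paper's proof: one composes $\widetilde{\dec}^*(f)$ with the inverse of the comparison map of Lemma~\ref{LEMMADEC2}, 2 (extended to unbounded complexes), represents the degree-$q$ element by its value on the top simplex of $\Delta_q^{\circ}$ as in Remark~\ref{BEMHOMCLASSICAL}, and reads off the matrix, with the off-diagonal $\partial f$ coming from the $\dd_l$-entry via $\kappa\,\dd_l(\{0,\dots,q\}\otimes b)=\partial(\widetilde{\kappa})(b)$ and the even/odd split coming from evaluating the signs $(-1)^{q(n+1)}$, $(-1)^{(q+1)(n+1)}$, $(-1)^{qn}$ at $i=q$.
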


\begin{proof}
In Lemma~\ref{LEMMADEC2}, 2., working with unbounded complexes (in one direction) we have the morphism
\[ \dec^* \dec_* \to \dec_{2,13,*} \dec_{1,23}^*  \]
which is explicitly given by
\[  \alpha_{n,m}: \prod_{i+j'=n+m+1} A_{i,j'} \oplus \prod_{i+j'=n+m} A_{i,j'}  \to \prod_{i+j=m} \left( A_{i,n+j+1} \oplus A_{i,n+j} \right)  \]
whose value at the pair at $i,j$ is the value at the pair at $i, j' = j+n$ transformed by
\[  \Mat{(-1)^{i(n+1)} & 0 \\ (-1)^{(i+1)n} \mathrm{d}_l & (-1)^{in} }  \]
It extends to unbounded complexes in the $j$-direction and has an inverse there, 
 given by 
\[  \Mat{(-1)^{i(n+1)} & 0 \\ (-1)^{(i+1)(n+1)} \mathrm{d}_l & (-1)^{in} }   \]
and letting $j = j'-n$.

 Now let
 \[ \kappa: \Delta_q^{\circ} \tildeotimes B \to C  \]
 be a morphism determined up to degeneracies by $\kappa(\{0, \dots, q\} \otimes b_{i}) = \widetilde{\kappa}(b_{i})$.
 We have $\kappa \dd_l (\{0, \dots, q\} \otimes b_{i}) = \partial(\widetilde{\kappa})(b_{i})$ where $\partial$ is the differential in the $\uHom$-complex. 
 $\dec^* \kappa$ is the diagonal 
 \[ \prod_{i+j'=n+m+1} (\Delta_q^{\circ})_i \otimes B_{j'} \oplus \prod_{i+j'=n+m} (\Delta_q^{\circ})_i \otimes B_{j'}  \to C_{n+m+1} \oplus C_{n+m}   \]
 and the composition
 \[ \prod_{i+j=m} (\Delta_q^{\circ})_i \otimes (B_{n+j+1} \oplus  B_{n+j}) \to 
 \prod_{i+j'=n+m+1} (\Delta_q^{\circ})_i \otimes B_{j'} \oplus \prod_{i+j'=n+m} (\Delta_q^{\circ})_i \otimes B_{j'}  \to C_{n+m+1} \oplus C_{n+m}   \]
 maps $\{0, \dots, q\} \otimes (b_{m+j+1}, b_{m+j} )$ for $i=q$ to 

 \[  \Mat{(-1)^{q(n+1)} \widetilde{\kappa} & 0 \\ (-1)^{(q+1)(n+1)} \partial(\widetilde{\kappa})  & (-1)^{qn} \widetilde{\kappa}}  \Mat{b_{m+j+1} \\ b_{m+j}  }   \]
This gives the formula stated in the Proposition. 
\end{proof}

Notice that this enrichment does not work properly in the bounded case. However, see (\ref{PARDECOTIMESENRICHED}) for a workaround.

 \subsection{Compatibility with $\otimes$}\label{SECTDECOTIMES}
 
 $\dec^*$ is canonically monoidal for $\otimes$ because the latter is computed point-wise interpreting source and destination as (bi)simplicial diagrams. 
 We denote the corresponding functor as
 \[ \dec^*: (\mathcal{C}^{\Delta^{\op}}, \otimes) \to (\mathcal{C}^{\Delta^{\op} \times \Delta^{\op}}, \otimes)  \]
(the notation $\dec^*_{\otimes}$  being reserved for its composition with the Alexander-Whitney map in the second variable). 
 
Since this functor cannot be extended to the unbounded case, we cannot easily extend it to a simplicially enriched functor. However, we can do the following: 

\begin{PAR}\label{PARDECOTIMESENRICHED}
Let's call $\tau_{\ge 1}$ the truncation with $0$ in degree $0$ and the grading of $\dec^* X$ is --- as always --- w.r.t.\@ the second variable. 

Denote $\tau\dec^*_{\otimes} := \tau_{\ge 1} \Awfrak_2 \dec^*$. We will enhance this functor to a functor of weakly simplicially enriched operads: 
We define a morphism 
\begin{equation}\label{eqdecotimes}
   \tau\dec^*_{\otimes}: \uHom^{\tildeotimes, t}_{(\mathcal{C}^{\Delta^{\op}}, \otimes)^{\circ, \vee}}(X;  Y_1, \dots, Y_k) \to \uHom^{\tildeotimes, t}_{(\mathcal{C}^{\Delta^{\op}\times \Delta^{\op}}, \utildeotimes)^{\circ, \vee}}(\tau_{\ge 1} \dec^*X; \tau_{\ge 1} \dec^* Y_1, \dots, \tau_{\ge 1} \dec^* Y_k)  
\end{equation}
(cf.\ Definition~\ref{DEFTRUNC}) as follows:

The morphism
\[ \dec^* \dec_* \to \dec_{2,13,*} \dec_{1,23}^*  \]
gives a comparison map
\[ \Xi: \dec^* (\Delta_{n} \tildeotimes X) \to \Delta_{n} \tildeotimes (\dec^* X)  \]
which is an isomorphism {\em in degree $\ge n$} as follows from Lemma~\ref{LEMMADEC2}, 2.\@

Given a morphism
\[ \alpha: \Delta_n \tildeotimes X \to Y_1 \otimes \cdots \otimes Y_k  \]
Apply $\dec^*$, $\Awfrak_2$ and  $\tau_{\ge 1}$  to get
\[ \xymatrix{ \Delta_n \tildeotimes \dec^* X &  \ar[l]_{\Xi}  \dec^* (\Delta_n \tildeotimes X) \ar[d]^{\dec^* \alpha} \\
&  \dec^*Y_1 \otimes \cdots \otimes \dec^* Y_k \ar[d]^{\Awfrak_2} \\
 & \dec^*Y_1 \utildeotimes \dots \utildeotimes \dec^*Y_k  \ar[d] \\
& \tau_{\ge 1} \dec^*Y_1 \utildeotimes \dots \utildeotimes \tau_{\ge 1} \dec^*Y_k }   \]
Since the comparison map $c$ is an isomorphism in degree $\ge n$, for $n < k$, we get a unique extension: 
\[ \xymatrix{ \Delta_n \tildeotimes \dec^* X \ar@{-->}[ddd] &  \ar[l]  \dec^* (\Delta_n \tildeotimes X) \ar[d] \\
&  \dec^*Y_1 \otimes \cdots \otimes \dec^* Y_k \ar[d] \\
 & \dec^*Y_1 \utildeotimes \dots \utildeotimes \dec^*Y_k  \ar[d] \\
\Delta_n \tildeotimes (\tau_{\ge 1} \dec^* X) \ar@{-->}[r] & \tau_{\ge 1} \dec^*Y_1 \utildeotimes \dots \utildeotimes \tau_{\ge 1} \dec^*Y_k }   \]
\end{PAR}

\begin{PROP}\label{PROPSIMPLICIALLYENRICHEDDEC}
The morphisms (\ref{eqdecotimes}) 
\[ \tau \dec^*_{\otimes}:  \uHom^{\tildeotimes, t}_{(\mathcal{C}^{\Delta^{\op}}, \otimes)^{\circ, \vee}}(X; Y_1, \dots, Y_k) \to  \Aw^* \uHom^{\tildeotimes, t}_{(\mathcal{C}^{\Delta^{\op} \times \Delta^{\op}}, \utildeotimes)^{\circ, \vee}}(\tau_{\ge 1} \dec^*X; \tau_{\ge 1} \dec^* Y_1, \dots, \tau_{\ge 1} \dec^* Y_k)  \]
assemble to a functor between weakly $(\Ab^{\Delta^{\op}}, \otimes)$-enriched cooperads.
\end{PROP}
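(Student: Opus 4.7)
The plan is to verify the two partial composition compatibilities that define a functor of weakly enriched cooperads, working directly with the explicit description of $\tau\dec^*_{\otimes}$ given in \ref{PARDECOTIMESENRICHED}. First I would study the comparison map $\Xi\colon \dec^*(\Delta_n \tildeotimes X) \to \Delta_n \tildeotimes \dec^* X$ on its own. By Lemma~\ref{LEMMADEC2}, 2., its explicit block-matrix form shows that $\Xi$ is an isomorphism in simplicial degree $\ge n$, so the composite landing in $\tau_{\ge 1}\dec^*Y_1 \utildeotimes \cdots \utildeotimes \tau_{\ge 1}\dec^*Y_k$ indeed extends uniquely to a morphism out of $\Delta_n\tildeotimes\tau_{\ge 1}\dec^*X$ for $n<k$, exactly the range used in Definition~\ref{DEFTRUNC}. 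I would also record that $\Xi$ is natural in $X$ and compatible with the Alexander--Whitney diagonal $\Delta_n \to \Delta_n\tildeotimes\Delta_n$ used in the composition formula (\ref{eqtildeotimesotimescompose}); both assertions follow from the fact that $\Xi$ is the mate of a $2$-morphism of pro-functors.

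Next I would check the ``outer'' partial composition: for $\alpha \in \uHom^{\tildeotimes,t}(X; Y_1,\dots,Y_k)_n$ and constants $f_i\colon Y_i \to Z_{i,1}\otimes\cdots\otimes Z_{i,m_i}$ of degree $0$, the composite post-composes with $f_1\otimes\cdots\otimes f_k$. Applying $\dec^*$ and then $\Awfrak_2$ commutes with this post-composition by naturality of $\Awfrak_2$ and its iterated multi-ary version, which is the coherence statement of Corollary~\ref{KOREZAB}; the truncation $\tau_{\ge 1}$ is tautologically functorial. Hence this side is essentially a diagram chase with $\Xi$ inserted as the source.

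The ``inner'' partial composition, where a constant $\alpha$ of degree $0$ is composed with arbitrary $\beta_i$, is the one where the Alexander--Whitney diagonal really enters. Here I would expand both sides using (\ref{eqtildeotimesotimescompose}) and then interpolate $\Xi$ at each tensor factor, applying the coherence diagram of Lemma~\ref{LEMMABATEAU1} (and its obvious higher-arity iterate) to reshuffle the Alexander--Whitney diagonals on the various $\Delta_n$'s past the $\dec^*$'s. The resulting identity holds degreewise in the range $\ge n$ because $\Xi$ is an isomorphism there, and then descends to the truncated target by uniqueness of the extension.

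The main obstacle will be controlling this last uniqueness argument in the inner-composition case: one must verify that both sides of the desired identity genuinely factor through $\Delta_n\tildeotimes\tau_{\ge 1}\dec^*(-)$, i.e.\ that the part of the construction that is not forced by $\Xi$ being an isomorphism lives entirely in the kernel of $\tau_{\ge 1}$. This bookkeeping is read off from the explicit form in Lemma~\ref{LEMMADEC2}, 2.\ together with Proposition~\ref{PROPEXPLICITDECENRICHED}, after which the remaining verification is a pure naturality and Eilenberg--Zilber coherence chase assembled from Theorem~\ref{SATZCOHEZ} and Lemmas~\ref{LEMMABATEAU1}--\ref{LEMMABATEAU3}.
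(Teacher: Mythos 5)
Your overall frame is right (reduce to the two partial compositions; the inner one, where a degree-zero multi-ary morphism $X \to Y_1 \otimes Y_2$ is composed with simplicially parametrized $\Delta_n \tildeotimes Y_i \to Z_i^{\otimes k_i}$, is the essential case; use that $\Xi$ is an isomorphism in degrees $\ge n$ to extend through the truncation), and your treatment of the outer composition and of the $\tau_{\ge 1}$-bookkeeping is fine. But the heart of the inner case is missing. The composition in the weak enrichment $\uHom^{\tildeotimes}_{(\mathcal{C}^{\Delta^{\op}},\otimes)^{\vee}}$ is built from the ordinary diagonal $\Delta_n \to \Delta_n \otimes \Delta_n$ (not the AW-diagonal, which belongs to the $\tildeotimes$-enrichment) followed by the $\switch$ map of Definition~\ref{DEFSWITCH}, i.e.\ the composite $\Ezfrak$, $\sigma$, $\Awfrak \otimes \Awfrak$. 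What has to be proved is that the comparison map $\Xi\colon \dec^*(\Delta_n \tildeotimes -) \to \pr_2^*\Delta_n \utildeotimes \dec^*(-)$, together with the projection $\dec^*(- \otimes \Delta_n) \to \dec^*(-) \otimes \pr_2^*\Delta_n$, intertwines $\dec^*(\switch)$ on the source side with the $\switch$ map of the target enrichment (note the target side involves $\pr_2^*\Delta_n$, not $\dec^*\Delta_n$, which your proposal does not confront). Neither of the tools you invoke can deliver this: Lemma~\ref{LEMMABATEAU1} (and \ref{LEMMABATEAU2}--\ref{LEMMABATEAU3}) are pure AW/EZ/symmetry coherences with no occurrence of $\dec^*$, so they cannot ``reshuffle diagonals past the $\dec^*$'s''; and naturality of $\Xi$ as the mate of a pro-functor isomorphism does not apply, because $\Ezfrak$ and $\switch$ are not simplicial-level maps seen by that naturality --- $\Ezfrak$ is constructed through the symmetry operator $\Cfrak$, and its interaction with $\dec^*$ needs the separate fact $\Cfrak\,\dec^* \cong \dec^*\,\Cfrak$ (Lemma~\ref{LEMMACCOH}) plus explicit checks.

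That such compatibilities are genuinely delicate, and not formal, is illustrated by Remark~\ref{BEMDECNOTCOMM}: the analogue of Lemma~\ref{LEMMADECCOMM} with the Alexander--Whitney maps in place of Eilenberg--Zilber fails. The paper isolates exactly the missing step as a standalone statement (Lemma~\ref{LEMMADECSWITCH}), proved by decomposing $\switch$ into its three constituents and verifying three squares involving $\Xi$, $\pr$, $\Ezfrak$, $\sigma$, $\Awfrak$; each verification is put into standard form (\ref{PARSTANDARD}) and reduced, via $\Cfrak\,\dec^* \cong \dec^*\,\Cfrak$, to explicit commutative diagrams of functors $\FinSet^2 \to \FinSet$, resp.\ $\Delta^3 \to \Delta^2$. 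Your proof becomes complete once you state and prove this $\Xi$-versus-$\switch$ compatibility (or the three squares constituting it); as written, the step ``interpolate $\Xi$ at each tensor factor and apply Lemma~\ref{LEMMABATEAU1}'' skips precisely the non-trivial content of the proposition.
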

\begin{proof}We will show only the essential non-trivial case.
Given morphisms $X \to Y_1 \otimes Y_2$, $\Delta_n \tildeotimes Y_1 \to Z_1^{\otimes k_1}$, and $\Delta_n \tildeotimes Y_2 \to Z_2^{\otimes k_2}$,
recall that their composition is defined as
\[  \Delta_n \tildeotimes X  \to (\Delta_n \otimes \Delta_n) \tildeotimes (Y_1 \otimes Y_2) \to (\Delta_n \tildeotimes Y_1) \otimes (\Delta_n \tildeotimes Y_2) \to Z_1^{\otimes k_1} \otimes Z_2^{\otimes k_2}   \]
where the second map is the switch map (Definition~\ref{DEFSWITCH}). 
In the following diagram 
\[ \tiny \xymatrix{   \dec^* (\Delta_n \tildeotimes X) \ar[r] \ar[d] & \pr_2^* \Delta_n \utildeotimes \dec^* X \ar[d] \ar@{=}[r] &  \pr_2^* \Delta_n \utildeotimes \dec^* X \ar[d]   \\
\dec^* ((\Delta_n \otimes \Delta_n) \tildeotimes (Y_1 \otimes Y_2)) \ar[r] \ar[d]^{\dec^* \switch} & \pr_2^* (\Delta_n \otimes \Delta_n) \utildeotimes \dec^* (Y_1 \otimes Y_2) \ar[d]^{\switch} \ar[r] & 
\pr_2^* \Delta_n \utildeotimes \pr_2^*\Delta_n \utildeotimes \dec^* Y_1 \utildeotimes \dec^* Y_2 \ar[d] \\
\dec^*(\Delta_n \tildeotimes Y_1) \otimes  \dec^*(\Delta_n \tildeotimes Y_2) \ar[r] \ar[d] &  (\pr_2^* \Delta_n \utildeotimes  \dec^* Y_1) \otimes (\pr_2^* \Delta_n  \utildeotimes \dec^* Y_2) \ar[d] \ar[r] & 
\pr_2^* \Delta_n \utildeotimes \dec^* Y_1 \utildeotimes  \pr_2^*\Delta_n \utildeotimes \dec^* Y_2 \ar[d] \\ 
\dec^* Z_1^{\otimes k_1} \otimes  \dec^* Z_2^{\otimes k_1} \ar[r]  &  (\tau_{\ge 1} \dec^* Z_1) ^{\utildeotimes k_1} \otimes  (\tau_{\ge 1} \dec^* Z_1) ^{\utildeotimes k_1} \ar[r]  &
(\tau_{\ge 1} \dec^* Z_1) ^{\utildeotimes k_1} \utildeotimes  (\tau_{\ge 1} \dec^* Z_1) ^{\utildeotimes k_1}    &  \\
  } \]
the middle left square commutes by Lemma~\ref{LEMMADECSWITCH} below and the others in the left column obviously. The right column is essentially the same as in Proposition~\ref{PROPSIMPLICIALLYENRICHED}, 2. 
The statement follows (suitably applying $\tau_{\ge 1}$). 
\end{proof}

\begin{LEMMA}\label{LEMMADECSWITCH}
We have a commutative diagram in which the switch map is defined in Definition~\ref{DEFSWITCH}:
\[ \xymatrix{ 
  \dec^* (Y_1 \otimes Y_2) \utildeotimes \pr_2^* (\Delta_n \otimes \Delta_n)  \ar@{<-}[d]_\Xi \ar[r]^-{\mathrm{switch}} &    ((\dec^* Y_1) \utildeotimes (\pr_2^* \Delta_n)) \otimes ((\dec^*Y_2)) \utildeotimes (\pr_2^* \Delta_n)) \ar@{<-}[d]^{\Xi \otimes \Xi} \\   
\dec^*((Y_1 \otimes Y_2) \tildeotimes (\Delta_n \otimes \Delta_n)) \ar[r]_-{\dec^*(\mathrm{switch})} & \dec^*(Y_1 \tildeotimes \Delta_n) \otimes \dec^*(Y_2 \tildeotimes \Delta_n)
}\]
\end{LEMMA}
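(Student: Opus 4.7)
The natural approach is to factor the switch map, by Definition~\ref{DEFSWITCH}, into its three constituents:
\[ (Y_1 \otimes Y_2) \tildeotimes (\Delta_n \otimes \Delta_n) \xrightarrow{\Ezfrak} Y_1 \otimes Y_2 \otimes \Delta_n \otimes \Delta_n \xrightarrow{\sigma} Y_1 \otimes \Delta_n \otimes Y_2 \otimes \Delta_n \xrightarrow{\Awfrak \tildeotimes \Awfrak} (Y_1 \tildeotimes \Delta_n) \otimes (Y_2 \tildeotimes \Delta_n) \]
and to check compatibility of $\Xi$ with each of the three sub-morphisms separately. This decomposes the original square into three sub-squares, each of which is a compatibility of $\Xi$ with a single natural transformation (Eilenberg-Zilber, symmetry, Alexander-Whitney, respectively).

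The first and third sub-squares (compatibility with $\Ezfrak$ and with $\Awfrak \tildeotimes \Awfrak$) should follow formally: both $\Awfrak$ and $\Ezfrak$ are maps in standard form (cf.\@ \ref{PARSTANDARD}), coming from the pro-functor morphisms (\ref{eqawpro})--(\ref{eqezpro}), and $\Xi$ is itself the mate of the isomorphism $C\,\dec\,\dec_{2,13}\,\iota \cong C\,\dec\,\dec_{1,23}\,\iota$ applied via $L$. Thus both sub-squares are images, under $L_{\mathcal{C}}$, of commutative diagrams of $\Ab$-enriched pro-functors, which reduce to elementary diagrams of functors on $\Delta$ and $\FinSet$ built from $\dec$, $\delta$, $\pr_i$, $\iota$, $C$. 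These in turn follow from the exactness Lemmas~\ref{LEMMAEXACT3}--\ref{LEMMAEXACT4} and \ref{LEMMACCOH} together with the naturality of the $C$-isomorphisms, by a routine but somewhat tedious diagram chase in $\AbCat^{\PF}$.

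For the middle sub-square (compatibility with the symmetry $\sigma$), I would use Lemma~\ref{LEMMADEC1}: the symmetry's action under $\dec_*$ picks up the sign $(-1)^{ij}$ on the $(i,j)$-summand. The comparison map $\Xi$, by Lemma~\ref{LEMMADEC2}(2), is given explicitly by the $2 \times 2$ matrix involving signs $(-1)^{i(n+1)}$, $(-1)^{(i+1)n}$, $(-1)^{in}$. The commutativity of this sub-square thus reduces to an identity among Koszul signs when the various matrices are composed on pure tensors.

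\textbf{Main obstacle.} The signs are the only real difficulty. The purely categorical/pro-functorial content of the lemma is essentially tautological, given the standard-form presentations of all the morphisms involved; but because the switch incorporates both a reshuffling of tensor factors and the (signed) operations $\Awfrak, \Ezfrak$, while $\Xi$ itself contributes a non-trivial sign matrix, a careful bookkeeping exercise is required to see that the Koszul signs on both sides of each sub-square agree. In particular, one should verify that the sign $(-1)^{ij}$ from the symmetry, the sign $(-1)^{jk}$ coming out of Proposition~\ref{PROPAWEZAB} for $\Ezfrak$, and the sign matrix from Lemma~\ref{LEMMADEC2}(2) all combine consistently --- this is plausibly the same sign-matching that already appeared in the proof of Lemma~\ref{LEMMABATEAU2}, to which one might reduce by observing that the switch map and $\Xi$ can both be expressed as mates of morphisms involving $\dec$ and $\delta$ composed in complementary orders.
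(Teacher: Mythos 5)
Your overall strategy coincides with the paper's: factor the switch, via Definition~\ref{DEFSWITCH}, into $\Ezfrak$, $\sigma$, and the Alexander--Whitney step, and check compatibility of the comparison map with each piece; and for the $\Ezfrak$- and $\Awfrak$-squares the paper does exactly what you anticipate, namely pass to mates, use the standard-form mechanism of \ref{PARSTANDARD} together with Lemma~\ref{LEMMACCOH}, and end up with elementary commutative diagrams of honest functors (one of shape $\FinSet^2 \to \FinSet$ for the $\Ezfrak$-square, one of shape $\Delta^3 \to \Delta^2$, with no $\FinSet$-symmetry involved, for the $\Awfrak$-square), which are then checked directly. You leave these final checks as a ``routine but tedious chase''; that is where all the actual verification sits, but the reduction you describe is the right one.

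Where you diverge is the middle square, and there your plan misidentifies the content. The symmetry $\sigma$ occurring in the switch is the symmetry of the \emph{point-wise} product $\otimes$, i.e.\@ a reordering of tensor factors of simplicial objects, which at the simplicial level carries no Koszul signs at all; after the $\Ezfrak$-step one is entirely in the $\otimes$-world, the relevant vertical maps are plain projections, and the square commutes on the nose --- the paper simply records it as trivial. Lemma~\ref{LEMMADEC1}, which you propose to invoke, concerns a different symmetry, namely the interchange of the two simplicial directions of a bisimplicial object under $\dec_*$ (that is where $(-1)^{ij}$ comes from), and it plays no role here; likewise no composition of the sign matrix of Lemma~\ref{LEMMADEC2}, 2.\@ against $(-1)^{ij}$ is needed. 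A component-wise computation along the lines you sketch would presumably still close the square, but the ``main obstacle'' you flag is not an obstacle: in the paper's organization all signs are absorbed into the two standard-form reductions, and the symmetry square requires no bookkeeping whatsoever.
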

\begin{proof}
By definition of the switch morphism this boils down to see the commutativity of the three squares: 
1.
 \[ \xymatrix{ 
\dec^*(A \tildeotimes B) \ar[r]^{\dec^*(\Ezfrak)} \ar[d]_{\Xi} & \dec^*(A \otimes B) \ar[d]^{\pr} \\ 
  \dec^*B \utildeotimes \pr_2^*(A)   \ar[r]_{\Ezfrak} &   \dec^*(B) \otimes \pr_2^*(A)  } \]
  This would follow from the commutativity of: 
 \[ \xymatrix{ 
\dec^* \dec_* \ar[rr]^{\dec^*\Ez} \ar[d]_{\Xi} & & \dec^* \delta^* \ar[d]^{\pr} \\ 
\dec_{23,*} \dec_{12}^*    \ar[rr]_{\Ez \dec_{12}^*} & & \delta_{23}^* \dec_{12}^*   } \]
which is adjoint to
 \[ \xymatrix{ 
 \dec_* \ar[r]^-{\Ez} \ar[d]_{\Xi} &  \delta^* \ar[d]^{\pr} \\ 
\dec_* \dec_{23,*} \dec_{12}^*    \ar[r]_-{\Ez} & \dec_*\delta_{23}^* \dec_{12}^*   } \]
This is in standard form (\ref{PARSTANDARD}) such that it suffices to show that 
\[ \xymatrix{  \dec^* \ar[r]^-{\mlq c^{\op} \mrq} \ar[d]_-{\mlq c^{\op} \mrq} &  \dec^* \delta^* \dec^*  \ar[d]^{\pr} \\
   \delta^*_{12} \dec^*_{12}  \dec^*    \ar[r]_-{\Xi} &   \delta^*_{12} \dec^*_{23} \dec^*  } \]
   is commutative. Since $C$ commutes with $\dec^*$ (cf.\@ Lemma~\ref{LEMMACCOH}) it suffices to see that the  diagram of functors $\FinSet^2 \to \FinSet$
\[ \xymatrix{  \dec_s \ar@{<-}[r]^-{c} \ar@{<-}[d]_{c} &  \dec_s \delta_s \dec_s  \ar@{<-}[d]^{\pr} \\
    \dec_s \dec_{s,12} \delta_{s,12}    \ar@{<-}[r]_-{\Xi} &    \dec_s \dec_{s,23} \delta_{s,12}   } \]
   commutes. This is a straightforward check.

2. The commutativity of
\[ \footnotesize \xymatrix{ 
\dec^*(\Delta_n \otimes \Delta_n \otimes X \otimes Y) \ar[r]^{\dec^*(\sigma)} \ar[d]_{\pr} & \dec^*(\Delta_n \otimes X \otimes \Delta_n \otimes Y) \ar[d]^{\pr} \\ 
\pr_2^*(\Delta_n) \otimes \pr_2^*(\Delta_n) \otimes \dec^*(X) \otimes \dec^*(Y)   \ar[r]_{\sigma} & \pr_2^*(\Delta_n) \otimes \dec^*(X) \otimes \pr_2^*(\Delta_n) \otimes \dec^*(Y)   } \]
 is trivial.

3.\@ The commutativity of 
\[ \xymatrix{ 
\dec^*( X \otimes \Delta_n)\ar[r]^{\dec^*(\Aw)} \ar[d]_{\pr} & \dec^*(  X \tildeotimes \Delta_n) \ar[d]^{\Xi} \\ 
  \dec^*(X) \otimes \pr_2^*(\Delta_n)   \ar[r]_{\Aw} &   \dec^*(X) \utildeotimes  \pr_2^*(\Delta_n)  } \]
 would follow from the commutativity of
 \[ \xymatrix{ 
\dec^* \dec_* \ar@{<-}[r]^-{\Aw} \ar[d]_-{\Xi} & \dec^* \delta^* \ar[d]^{\pr} \\ 
\dec_{23,*} \dec_{12}^*    \ar@{<-}[r]_-{\Aw} & \delta_{23}^* \dec_{12}^*   } \]
which is adjoint to
 \[ \xymatrix{ 
 \dec_* \ar@{<-}[r]^-{\Aw} \ar[d]_{\Xi} &  \delta^* \ar[d]^{\pr} \\ 
\dec_* \dec_{23,*} \dec_{12}^*  \ar@{<-}[r]^-{\Aw} & \dec_*\delta_{23}^* \dec_{12}^*   } \]
This is in standard form (\ref{PARSTANDARD}) such that it suffices to show that 
\[ \xymatrix{  \dec_{23}^* \dec^*  \delta^*  \ar[r]^-{\Xi} \ar[d]_{\pr} & \dec_{12}^* \dec^* \delta^*  \ar[d]^{\dec_{12}^* u^{\op}} \\
  \dec_{23}^* \delta_{23}^* \dec_{12}^*    \ar[r]_-{u^{\op} \dec_{12}^* } &  \dec_{12}^*   } \]
   is commutative, which follows from the commutativity (straighforward check) of the diagram of functors $\Delta^3 \to \Delta^2$ (no $\FinSet$-symmetry involved):
   \[ \raisebox{\dimexpr\depth-3\fboxsep}{ \xymatrix{   \delta \dec \dec_{23}  \ar@{<-}[rr]^{\Xi} \ar@{<-}[d]_{\pr} & & \delta \dec  \dec_{12}  \ar[d]^{u \dec_{12}} \\
 \dec_{12}  \delta_{23}  \dec_{23}  \ar@{<-}[rr]_{\dec_{12} u} & &  \dec_{12}   }}  \qedhere \]
\end{proof}

 \subsection{Compatibility with $\tildeotimes$} \label{SECTDECTILDEOTIMES} \label{PARDEC}

Recall the adjunction
\[ \xymatrix{ \mathcal{C}^{\Delta^{\op}} \ar@<3pt>[r]^-{\dec^*}  & \ar@<3pt>[l]^-{\dec_*} \mathcal{C}^{\Delta^{\op} \times \Delta^{\op}} }  \]
where $\dec_* \cong \tot$ interpreting the objects as complexes, and double complexes, respectively (cf.\@ Proposition~\ref{PROPEXPLICITAB}, 1.). 
It actually induces an adjunction between cooperads
\[ \xymatrix{ (\mathcal{C}^{\Delta^{\op}}, \tildeotimes)^{\vee}  \ar@<3pt>[r]^-{\dec^*_{\tildeotimes}}  & \ar@<3pt>[l]^-{\dec_{\tildeotimes,*}} (\mathcal{C}^{\Delta^{\op} \times \Delta^{\op}}, \tildeotimes)^{\vee} }  \]
as follows, where the monoidal product $\tildeotimes$ on the right hand side is given by
\[ \dec_{13,24,*} - \boxtimes - \]
which can also be seen with the interpretation $\mathcal{C}^{\Delta^{\op} \times \Delta^{\op}} = (\mathcal{C}^{\Delta^{\op}})^{\Delta^{\op}}$ as the usual $\tildeotimes$-product
\[ A \tildeotimes B = \dec_* A \boxtimes B\]
where $\boxtimes$ is the exterior product $(\mathcal{C}^{\Delta^{\op}})^{\Delta^{\op}} \times (\mathcal{C}^{\Delta^{\op}})^{\Delta^{\op}} \to (\mathcal{C}^{\Delta^{\op}})^{\Delta^{\op} \times \Delta^{\op}}$ applying (the internal) 
$\tildeotimes$ point-wise. 

The functor $\dec_*$ is actually monoidal by means of the  isomorphism: 
\[  \dec_*(A \tildeotimes B) = \dec_* \dec_{13,24,*} A \boxtimes B  \ \cong\  \dec_* \dec_{12,34,*} A \boxtimes B  =  (\dec_*A) \tildeotimes (\dec_*B)  \]
where the middle isomorphism is induced by the isomorphism
\[ \dec \dec_{13,24} \cong C \dec \dec_{13,24} \iota \cong C \dec \dec_{12,34} \iota \cong \dec \dec_{12,34} \]
of $\Ab$-enriched pro-functors. It follows from Lemma~\ref{LEMMADEC1}, that explicitly, for a double complexes $A$, $B$:
\begin{eqnarray*}
 \tot(A \tildeotimes B) = \bigoplus_{i,j,k,l} A_{i,j} \otimes B_{k, l}  &\cong& (\tot A) \tildeotimes (\tot B) =  \bigoplus_{i,j,k,l} A_{i,j} \otimes B_{k, l}  \\
 a_{i,j} \otimes b_{k,l} &\mapsto& (-1)^{jk} a_{i,j} \otimes b_{k,l}  
\end{eqnarray*}
The left adjoint $\dec^*$ is automatically lax monoidal (hence a functor of cooperads) via the mate 
\[  \dec^*(A \tildeotimes B) = \dec^* \dec_* A \boxtimes B  \ \to\  \dec_{13,24,*} \dec_{12,34}^* A \boxtimes B  =  \dec^*A \tildeotimes \dec^*B  \]
and induces an adjunction between the categories of coalgebras. An explicit formula of the mate is given in Lemma~\ref{LEMMADEC2}, 3.
We denote the corresponding functors of cooperads (and of coalgebras) by $\dec_{\tildeotimes}^*$, and $\dec_{\tildeotimes,*}$, respectively.

Finally, in the corresponding adjunction
\[ \xymatrix{ \mathcal{C}^{\Delta^{\op}}_{\mathrm{/1}}  \ar@<3pt>[r]^-{\dec^*}  & \ar@<3pt>[l]^-{\dec_*} \mathcal{C}^{\Delta^{\op} \times \Delta^{\op}}_{\mathrm{/1}} }  \]
between augmented objects, both functors commute with the forgetful functors forgetting the augmentation. For $\dec^*$ this is clear, and for $\dec_*$ it follows because the latter is computed by a connected finite limit (Proposition~\ref{PROPEXPLICIT}).
Hence we will use them on augmented objects without further distinction.

\subsection{More adjoints}\label{SECTMOREADJOINTS}

We also have the formula
\[ \dec_* = \int^n \Z[\Delta_n] \tildeotimes X_{n,\bullet}  \]
which comes from the tautological formula (Yoneda):
\[ X = \int^n \Delta_n \boxtimes X_{n,\bullet}  \]
and the fact that $\dec_*$ obviously commutes with colimits (as one can see from the explicit formula $\dec_* \cong \tot$, cf.\@ Proposition~\ref{PROPEXPLICITAB}). 
This implies that $\dec_*$ has a further right adjoint given by
\[ (\dec^? X)_{[n], \bullet} =  \mathcal{HOM}(\Z[\Delta_n], X)_{\bullet},  \]
or also (modulo degeneracies) 
\[ (\dec^? X)_{n, \bullet} =  \mathcal{HOM}(D_n, X)_{\bullet},  \]
which is similar to $\dec^*$ except in low degrees: 

\[ \xymatrix{ 
\ddots & \vdots \ar[d]& \vdots \ar[d]& \vdots \ar[d] \\
\cdots \ar[r] & X_4 \oplus X_3 \ar[d]   \ar[r] & X_3 \oplus X_2 \ar[r] \ar[d]  & X_2 \ar[d] \\
\cdots \ar[r] & X_3 \oplus X_2 \ar[d]  \ar[r] & X_2 \oplus X_1 \ar[r] \ar[d] & X_1 \ar[d] \\
\cdots \ar[r] & X_2 \ar[r] & X_1 \ar[r] & X_0 
} \]

\begin{LEMMA}\label{LEMMADECQM}
There is a natural transformation 
\[ \pr_2^* \Rightarrow \dec^?  \]
(adjoint of $\dec_* \pr_2^* \cong \id$ whose inverse is induced by $\dec^* \Rightarrow \pr_2^*$. ) 
which is a row-wise quasi-isomorphism.
\end{LEMMA}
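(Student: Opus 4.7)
The natural transformation is defined as the mate of $\dec_* \pr_2^* \cong \id$ (which is $R_{\mathcal{C}}$ applied to the profunctor isomorphism of Lemma~\ref{LEMMAEXACT3}) under the adjunction $\dec_* \dashv \dec^?$: explicitly, it is the composition
\[ \pr_2^* \xrightarrow{\eta \pr_2^*} \dec^? \dec_* \pr_2^* \xrightarrow{\sim} \dec^?, \]
where $\eta$ denotes the unit. For the equivalent description from the inverse direction, one can instead take the mate of $\dec^* \Rightarrow \pr_2^*$ (Lemma~\ref{LEMMADECCONTRACTIBLE}, 1.) under $\dec^* \dashv \dec_*$ to recover the same transformation after identifying both sides via $\dec_* \pr_2^* \cong \id$; the two descriptions agree by the standard bookkeeping of adjunction squares.

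To verify that the transformation is a row-wise quasi-isomorphism, I would fix the first index $[n]$ and compute both sides explicitly. The row $[n]$ of $\pr_2^* Y$ is simply the complex $Y$, while the defining universal property of $\dec^?$ combined with the coend formula $\dec_* X = \int^n \Z[\Delta_n] \tildeotimes X_{n,\bullet}$ yields by the usual adjunction manipulation $\Hom(\int^n \Z[\Delta_n] \tildeotimes X_{n,\bullet}, Y) = \int_n \Hom(X_{n,\bullet}, \mathcal{HOM}(\Z[\Delta_n], Y))$, so that the $[n]$-th row of $\dec^? Y$ is $\mathcal{HOM}(\Z[\Delta_n], Y)$. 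Tracking through the adjunction, the row-$[n]$ component of the natural transformation is the map $Y \cong \mathcal{HOM}(\Z, Y) \to \mathcal{HOM}(\Z[\Delta_n], Y)$ induced by the augmentation $\varepsilon_n : \Z[\Delta_n] \to \Z$ that collapses $\Delta_n$ to a vertex.

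The geometric input is then that $\Delta_n$ is contractible, or equivalently that $\Delta_n^{\circ} = \Z[\Delta_n]$ is chain-homotopy equivalent to $\Z$ in degree $0$: any vertex inclusion $[0] \hookrightarrow [n]$ defines a section of $\varepsilon_n$, and the standard prismatic contraction provides a chain homotopy between the identity and the resulting endomorphism. (Equivalently, the decomposition~(\ref{eqdoldkandecomp}) writes $\Delta_n^{\circ} \cong D_0 \oplus \bigoplus_{k\geq 1,\, \Delta_n \twoheadrightarrow \Delta_k} D_k$, and each $D_k$ with $k \geq 1$ is null-homotopic.) Since $\mathcal{HOM}_r(-,Y)$ is an additive functor in its first variable, it preserves chain homotopies, hence $\mathcal{HOM}(\Z, Y) \to \mathcal{HOM}(\Z[\Delta_n], Y)$ is a chain homotopy equivalence, and in particular a quasi-isomorphism.

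The main technical point to be careful about is the identification of the abstract unit-induced map with the concrete map dual to $\varepsilon_n$; this reduces to tracing the isomorphism $\dec_* \pr_2^* \cong \id$ (computed explicitly in the proof of Proposition~\ref{PROPEXPLICIT} via the section $Y_{[n]} \to \prod_i Y_{[n-i]}$, $y_n \mapsto (\delta_0^i y_n)_i$) through the adjunction correspondence, where it manifestly produces the dual of the vertex-collapse augmentation. Once this identification is made, the quasi-isomorphism assertion is immediate from the contractibility argument above.
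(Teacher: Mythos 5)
Your proposal is correct and follows essentially the paper's own argument: the paper likewise reduces the statement to the fact that $\mathcal{HOM}^{\tildeotimes}(-,X)$ sends the homotopy equivalence $\Z[\Delta_n] \to \Z[\Delta_0]$ to a quasi-isomorphism, the only difference being that you verify this homotopy equivalence by a direct chain contraction (or the Dold--Kan decomposition into the contractible $D_k$'s) whereas the paper cites the $\otimes$-homotopy equivalence $\Delta_n \to \Delta_0$ transported via Eilenberg--Zilber as in Lemma~\ref{LEMMAW1}. Your extra care in identifying the abstract mate with the augmentation-induced map row-wise is a welcome, but not substantively different, elaboration of the paper's ``by inspection'' step.
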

\begin{proof}
This can be seen directly by inspection or by the argument of Lemma~\ref{LEMMAW1} using that $\Delta_1 \tildeotimes X \to X$ is a quasi-isomorphism and hence by associativity $\mathcal{HOM}^{\tildeotimes}$ maps the
homotopy equivalence (w.r.t.\@ $\otimes$ and hence also w.r.t.\@ $\tildeotimes$) $\Z[\Delta_n] \to \Z[\Delta_0]$ to a quasi-isomorphism. 
\end{proof}

We have a functor of cooperads
\[ \pr^*_{2,\tildeotimes}: (\mathcal{C}^{\Delta^{\op}}, \tildeotimes)^{\vee} \to ((\mathcal{C}^{\Delta^{\op}}, \tildeotimes)^{\vee})^{(\Delta, \ast)^{\op}} \cong  (\mathcal{C}^{\Delta^{\op} \times \Delta^{\op}}, \tildeotimes)^{\vee}  \]
which comes from the functoriality of the Day-convolution w.r.t.\@ the projection $(\Delta, \ast)^{\op} \to \OOO^{\op}$. Unraveling the definition it boils down to the obvious morphism
\[ \pr_2^{*}(X \tildeotimes Y)\to \pr_2^{*}X \tildeotimes \pr_2^{*}Y  \]
noticing that, in the complexes viewpoint, $\pr_2^* X$ places $X$ in horizontal degree 0 and is zero other-wise. Since this is an isomorphism, we also have a morphism 
between the corresponding operads: 
\[ \pr^*_{2,\tildeotimes}: (\mathcal{C}^{\Delta^{\op}}, \tildeotimes) \to  (\mathcal{C}^{\Delta^{\op} \times \Delta^{\op}}, \tildeotimes).  \]

On the other hand, in \ref{PARDEC}, we have enhanced $\dec_*$ to a morphism of cooperads: 
\[ \dec_{\tildeotimes,*}:    (\mathcal{C}^{\Delta^{\op} \times \Delta^{\op}}, \tildeotimes)^{\vee}  \to (\mathcal{C}^{\Delta^{\op}}, \tildeotimes)^{\vee} \]
Being monoidal, it is actually also a morphism of the corresponding operads, and induces a right adjoint
\[ \dec_{\tildeotimes}^?:   (\mathcal{C}^{\Delta^{\op}}, \tildeotimes)  \to   (\mathcal{C}^{\Delta^{\op} \times \Delta^{\op}}, \tildeotimes)   \]

\begin{LEMMA}\label{LEMMADECQMTILDEOTIMES}
The natural transformation from Lemma~\ref{LEMMADECQM} is actually a natural transformation between functors of operads: 
\[  \pr^*_{2,\tildeotimes} \Rightarrow \dec_{\tildeotimes}^?  \]
\end{LEMMA}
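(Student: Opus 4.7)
The plan is to verify compatibility with the lax monoidal (operadic) structures using the standard form reduction of \ref{PARSTANDARD}. Recall that in \ref{PARDEC} the functor $\dec_{\tildeotimes,*}$ is established as \emph{strong} monoidal via the isomorphism of $\Ab$-enriched pro-functors $\dec\,\dec_{13,24}\cong C\,\dec\,\dec_{13,24}\,\iota\cong C\,\dec\,\dec_{12,34}\,\iota\cong \dec\,\dec_{12,34}$. Since $\dec^?_{\tildeotimes}$ is its right adjoint, it inherits a canonical lax monoidal structure whose constraint $\dec^?(A\tildeotimes B)\to \dec^? A\tildeotimes\dec^? B$ is by definition the mate of the monoidal iso of $\dec_{\tildeotimes,*}$ composed with the unit/counit. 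Similarly, $\pr^*_{2,\tildeotimes}$ is strong monoidal (actually, the constraint $\pr_2^*(X\tildeotimes Y)\cong \pr_2^*X\tildeotimes\pr_2^*Y$ is an iso, as noted after its definition). Finally, the natural transformation $\eta\colon\pr_2^*\Rightarrow\dec^?$ is itself, by construction, the mate under $\dec_*\dashv\dec^?$ of the isomorphism $\dec_*\pr_2^*\cong\id$ from Lemma~\ref{LEMMAEXACT3}.

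First I would write out the square whose commutativity is to be established:
\[
\xymatrix{
\pr_2^*(X\tildeotimes Y) \ar[r]^-{\eta} \ar[d]_{\sim} & \dec^?(X\tildeotimes Y) \ar[d]^{\mathrm{lax}} \\
\pr_2^* X\tildeotimes \pr_2^* Y \ar[r]_-{\eta\,\tildeotimes\,\eta} & \dec^? X\tildeotimes \dec^? Y.
}
\]
Every morphism appearing here is built from the pair of adjunctions $\dec_*\dashv\dec^?$ together with the strong monoidal structures on $\dec_{\tildeotimes,*}$ and $\pr^*_{2,\tildeotimes}$. Thus the whole diagram is in standard form in the sense of \ref{PARSTANDARD}: the only non-formal input is the single 2-morphism $\dec_*\pr_2^*\cong\id$. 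By the 2-functoriality of $\can$ in $\Cor(X,Y)$, commutativity of the above square follows from the commutativity of its mate obtained by transposing $\eta$ along $\dec_*\dashv\dec^?$ and using strong monoidality to invert the constraints of $\pr^*_{2,\tildeotimes}$ and $\dec_{\tildeotimes,*}$.

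The mate diagram reads
\[
\xymatrix{
\dec_*(\pr_2^* X\tildeotimes \pr_2^* Y) \ar[r]^-{\sim} \ar[d]_{\sim} & \dec_*\pr_2^* X\tildeotimes \dec_*\pr_2^* Y \ar[d]^{\sim} \\
\dec_*\pr_2^*(X\tildeotimes Y) \ar[r]_-{\sim} & X\tildeotimes Y,
}
\]
in which the top arrow is $\dec_*$ applied to the strong constraint of $\pr^*_{2,\tildeotimes}$ followed by the strong constraint of $\dec_{\tildeotimes,*}$, the right and bottom arrows are the cofinality isomorphism $\dec_*\pr_2^*\cong\id$ applied once and twice respectively, and the left arrow is $\dec_*$ applied to the strong constraint of $\pr^*_{2,\tildeotimes}$. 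After applying $R_{(\mathcal{C},\otimes)^\vee}$ this translates, at the level of pro-functors over $\Delta^{\op}$, into the assertion that the two composite isomorphisms
\[
\dec\,\dec_{13,24}\,({}^t\!\pr_2,{}^t\!\pr_2) \;\cong\; \dec\,{}^t\!\pr_2\;\cong\;\id
\]
coming from the two ways of collapsing $\Delta^{\op}\times\Delta^{\op}\times\Delta^{\op}\times\Delta^{\op}$ coincide. Using Lemma~\ref{LEMMAEXACT3} and the equalities of pro-functors $\dec\,\dec_{13,24}\cong\dec\,\dec_{12,34}$ already used in \ref{PARDEC} (together with the elementary identities $\dec_{12,34}\,({}^t\!\pr_2,{}^t\!\pr_2)\cong{}^t\!\pr_2\,\dec$), this is a purely combinatorial diagram chase of functors of ordered finite sets, entirely analogous to the verifications carried out in Lemmas~\ref{LEMMAEXACT3}--\ref{LEMMAEXACT4}.

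The main obstacle is not conceptual but organizational: one must carefully thread the several adjunctions $\dec^*\dashv\dec_*\dashv\dec^?$ and $\pr_2^*\dashv\pr_{2,*}$, keep track of which constraints are strong vs.\ lax, and verify that the pro-functorial isomorphism $\dec\,\dec_{13,24}\cong\dec\,\dec_{12,34}$ (which involves the auxiliary operator $C$ on $\FinSet$) interacts correctly with the pull-back ${}^t\!\pr_2$. Since all these identifications take place over 1-categories and respect the $\iota$-Cartesianity and $\iota$-coCartesianity established in \ref{PARIOTACOCART}, no subtle $\infty$-categorical issue arises, and the verification reduces to inspecting a finite commutative diagram in $\AbCat^{\PF}$.
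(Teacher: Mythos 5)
Your proposal matches the paper's own argument: both reduce the lax-monoidal compatibility square for $\pr^*_{2,\tildeotimes}\Rightarrow\dec^?_{\tildeotimes}$, via the adjunction $\dec_{\tildeotimes,*}\dashv\dec^?_{\tildeotimes}$, to the same mate square built from $\dec_*\pr_2^*\cong\id$ and the monoidal constraint $\widetilde{\sigma}$ of $\dec_*$, whose final check the paper leaves to the reader and which you sketch further at the level of $\Ab$-enriched pro-functors. This is essentially the same proof, carried slightly farther in the last step.
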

\begin{proof}
This boils down to the commutativity of
\[ \xymatrix{ \pr_2^{*}(X \tildeotimes Y) \ar@{<-}[r] \ar[d]  & \pr_2^{*}X \tildeotimes \pr_2^{*}Y \ar[d]  \\
\dec^? (X \tildeotimes Y) \ar@{<-}[r] & \dec^? X \tildeotimes \dec^? Y }\]
or equivalently to the commutativity of
\[ \xymatrix{ \dec_* \pr_2^{*}(X \tildeotimes Y) \ar@{<-}[r] \ar@{<-}[d]^-{\sim}  & \dec_*(\pr_2^{*}X \tildeotimes \pr_2^{*}Y) \ar@{<-}[d]^{\widetilde{\sigma}}  \\
X \tildeotimes Y \ar[r]^-{\sim} & \dec_* \pr_2^* X \tildeotimes \dec_* \pr_2^* Y }\]
which we leave to the reader to verify. 
\end{proof}

\subsection{Further compatibilities}

Recall the enhancements of $\dec^*$ to a functor of cooperads (which is monoidal, cf.\@ also \ref{PROPSIMPLICIALOPERAD})
\[ \dec^*: (\mathcal{C}^{\Delta^{\op}}, \otimes)^{\vee} \rightarrow (\mathcal{C}^{\Delta^{\op} \times \Delta^{\op}}, \otimes)^{\vee}  \]
and (cf.\@ \ref{PARDEC})
\[ \dec^*_{\tildeotimes}: (\mathcal{C}^{\Delta^{\op}}, \tildeotimes)^{\vee} \rightarrow (\mathcal{C}^{\Delta^{\op} \times \Delta^{\op}}, \tildeotimes)^{\vee}  \]

\begin{LEMMA}\label{LEMMADECCOMM}
The following is commutative: 
\[ \xymatrix{
 (\mathcal{C}^{\Delta^{\op}}, \tildeotimes)^{\vee}  \ar[r]^-{\dec^*_{\tildeotimes}}  \ar[d]_{\EZfrak}& (\mathcal{C}^{\Delta^{\op} \times \Delta^{\op}}, \tildeotimes)^{\vee} \ar[d]^{(\EZfrak, \EZfrak)}  \\
 (\mathcal{C}^{\Delta^{\op}}, \otimes)^{\vee}  \ar[r]_-{\dec^*} & (\mathcal{C}^{\Delta^{\op} \times \Delta^{\op}}, \otimes)^{\vee} \\
}\]
\end{LEMMA}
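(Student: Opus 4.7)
The plan is to reduce the commutativity of this square of cooperad functors to a commutativity of $\Ab$-enriched pro-functors, and ultimately to a combinatorial identity. First I observe that all four functors in the square are the identity on underlying objects over $[1]\in\OOO^{\vee}$, so the content of the claim is the compatibility of the lax monoidality constraints on each side. By the inert-pseudo compatibility and multiplicativity of cooperad morphisms, it suffices to check equality of the two constraints over the unique 2-ary active morphism $[1]\to[2]$, that is, equality of the two induced maps
\[ \dec^*(A\tildeotimes B) \longrightarrow \dec^* A \otimes \dec^* B \]
in $\mathcal{C}^{\Delta^{\op}\times\Delta^{\op}}$, where one path applies $\dec^*_{\tildeotimes}$ first (producing a map into $\dec^*A\tildeotimes\dec^*B$) then $(\EZfrak,\EZfrak)$, while the other applies $\EZfrak$ first then the point-wise $\dec^*$.

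Next I would recognize that the square is in standard form (\ref{PARSTANDARD}): $\dec^*_{\tildeotimes}$ is the mate of the monoidality isomorphism of $\dec_*$ w.r.t.\ $\tildeotimes$ (described explicitly in Lemma~\ref{LEMMADEC2}, 3, and arising from the pro-functor isomorphism $\dec\,\dec_{12,34}\cong\dec\,\dec_{13,24}$); $\dec^*$ w.r.t.\ $\otimes$ corresponds to the point-wise pro-functor $\dec$ in both variables; and $\EZfrak = R_{(\mathcal{C},\otimes)^{\vee}}(\EZ)$ is induced via the pro-functor morphism $\EZ:{}^t\delta\to\dec$ constructed in Theorem~\ref{SATZCOHEZ} out of the EZ-counit $c:\dec_s\delta_s\to\id$ and the symmetry operator $C$. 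Passing through \ref{PARSTANDARD} to the adjoint diagram of left adjoints and using Lemma~\ref{LEMMACCOH} (in particular $\rho:\dec_s^*\mathfrak{C}\cong\mathfrak{C}\dec^*$, and the compatibility of $C$ with $\dec$), the commutativity reduces to a diagram of functors between categories built from $\Delta$ and $\FinSet$, analogous to the reductions performed in the proofs of Lemmas~\ref{LEMMABATEAU1}–\ref{LEMMABATEAU3}.

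The hard part will be setting up this combinatorial diagram correctly, because two different decalage operations ($\dec$ in one variable, and either $\dec_{12,34}$ or $\dec_{13,24}$ in the bi-indexed setting) must be juggled simultaneously with the symmetry $C$. Once the setup is correct, the check is essentially the naturality of $C$ with respect to $\dec$ combined with the definition of $\EZ$, in the same spirit as the ``bateau'' diagram of Lemma~\ref{LEMMABATEAU1}.

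As a fallback, a direct computation is feasible: combining the explicit matrix formulas of Proposition~\ref{PROPEXPLICITAB} and Lemma~\ref{LEMMADEC2}, 3, with the shuffle formula for $\EZfrak$ from Proposition~\ref{PROPAWEZAB}, both compositions become explicit sums over shuffles with signs, and one can check summand by summand that the twist $(-1)^{jk}$ appearing in Lemma~\ref{LEMMADEC1}/\ref{LEMMADEC2} conspires with the shuffle signs so that the two maps coincide; this would be the ``grind'' option if the abstract argument above runs into bookkeeping difficulties.
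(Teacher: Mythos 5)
Your proposal is correct and follows essentially the same route as the paper: reduce to the 2-ary monoidality constraint, pass to the adjoint diagram and invoke the standard-form machinery of \ref{PARSTANDARD}, use Lemma~\ref{LEMMACCOH} to commute $C$ past $\dec^*$, and finish with an easy combinatorial check of functors on $\FinSet$ (the paper's final diagram is a trivially commuting square of functors $\FinSet^2\to\FinSet$ built from $\dec_s$, $\delta_s$, $c$ and the switch $\sigma$). The explicit shuffle-and-sign computation you mention is not needed, but the abstract plan as stated matches the paper's proof.
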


\begin{BEM}\label{BEMDECNOTCOMM}
The corresponding square with the Alexander-Whitney-maps is {\em not} commutative!
\end{BEM}

\begin{proof}
Need to show the commutativity of
\[ \xymatrix{
\dec^* \dec_* \ar[d]_{\Ez} \ar[r] & \dec_{13,24,*}  \dec_{12,34}^*  \ar[d]^{(\Ez,\Ez)}  \\
\dec^* \delta^* \ar[r] &  \delta_{13,24}^* \dec^*_{12,34}
} \]
which is adjoint to
\[ \xymatrix{
 \dec_* \ar[d]_{\mathrm{unit} \circ \Ez} \ar[r] &  \dec_* \dec_{13,24,*}  \dec_{12,34}^*  \ar[d]^{(\Ez,\Ez)}  \\
\dec_* \dec^* \delta^* \ar[r] &  \dec_* \delta_{13,24}^* \dec^*_{12,34}
} \]
which is in standard form (\ref{PARSTANDARD}) 
and thus it suffices to see that
\[  \xymatrix{
\dec^* \ar[rr]^-{ (\mlq c^{\op} \mrq,\mlq c^{\op} \mrq) \dec^* }  \ar[d]_-{\dec^* \mlq c^{op} \mrq } &  & \delta_{13,24}^* \dec_{13,24}^* \dec^*  \ar[d]^{\sigma} \\
\dec^* \delta^* \dec^* \ar@{=}[rr] & &  \delta_{13,24}^*   \dec_{12,34}^* \dec^*  } \]
commutes.
Using the commutativity of $C$ with $\dec^*$ (Lemma~\ref{LEMMACCOH}), this boils down to the (trivial) commutativity of the following diagram of functors
$\FinSet^2 \to \FinSet$: 
\[ \raisebox{\dimexpr\depth-3\fboxsep}{ \xymatrix{
\dec_s \ar@{<-}[r]^-{\dec_s (c,c)}  \ar@{<-}[d]_-{c \dec_s}  & \dec_s \dec_{s,13,24} \delta_{s,13,24} \ar@{<-}[d]^{\sigma} \\
\dec_s \delta_s \dec_s \ar@{=}[r] &\dec_s  \dec_{s,12,34}  \delta_{s,13,24}   }} \qedhere \]
\end{proof}

\newpage

\bibliographystyle{abbrvnat}
\bibliography{cobar}

\begin{thebibliography}{54}
\providecommand{\natexlab}[1]{#1}
\providecommand{\url}[1]{\texttt{#1}}
\expandafter\ifx\csname urlstyle\endcsname\relax
  \providecommand{\doi}[1]{doi: #1}\else
  \providecommand{\doi}{doi: \begingroup \urlstyle{rm}\Url}\fi

\bibitem[Abell{\'a}n et~al.(2024)Abell{\'a}n, Gagna, and Haugseng]{AGH24}
F.~Abell{\'a}n, A.~Gagna, and R.~Haugseng.
\newblock Straightening for lax transformations and adjunctions of
  $({{\infty}},2)$-categories.
\newblock Preprint, {arXiv}:2404.03971 [math.{CT}] (2024), 2024.
\newblock URL \url{https://arxiv.org/abs/2404.03971}.

\bibitem[Abell{\'a}n~Garc{\'{\i}}a(2022)]{Abe22}
F.~Abell{\'a}n~Garc{\'{\i}}a.
\newblock Marked colimits and higher cofinality.
\newblock \emph{J. Homotopy Relat. Struct.}, 17\penalty0 (1):\penalty0 1--22,
  2022.

\bibitem[Adams(1956)]{Ada56}
J.~F. Adams.
\newblock On the cobar construction.
\newblock \emph{Proc. Natl. Acad. Sci. USA}, 42:\penalty0 409--412, 1956.
\newblock URL \url{www.ncbi.nlm.nih.gov/pmc/articles/PMC534237}.

\bibitem[Anel(2014)]{Ane14}
M.~Anel.
\newblock Cofree coalgebras over operads and representative functions.
\newblock Preprint, {arXiv}:1409.4688 [math.{CT}] (2014), 2014.
\newblock URL \url{https://arxiv.org/abs/1409.4688}.

\bibitem[Anel and Joyal(2013)]{AJ13}
M.~Anel and A.~Joyal.
\newblock Sweedler {Theory} for (co)algebras and the bar-cobar constructions.
\newblock Preprint, {arXiv}:1309.6952 [math.{CT}] (2013), 2013.
\newblock URL \url{https://arxiv.org/abs/1309.6952}.

\bibitem[Ayala and Francis(2020)]{AF20}
D.~Ayala and J.~Francis.
\newblock Fibrations of {{\({{\infty}} \)}}-categories.
\newblock \emph{High. Struct.}, 4\penalty0 (1):\penalty0 168--265, 2020.

\bibitem[Barkan and Steinebrunner(2023)]{BS23}
S.~Barkan and J.~Steinebrunner.
\newblock Segalification and the {Boardman}-{Vogt} tensor product.
\newblock Preprint, {arXiv}:2301.08650 [math.{AT}] (2023), 2023.
\newblock URL \url{https://arxiv.org/abs/2301.08650}.

\bibitem[Barr(1971)]{Bar71}
M.~Barr.
\newblock Exact categories.
\newblock Exact {Categories} and {Categories} {Sheaves}, {Lect}. {Notes}
  {Math}. 236, 1-120 (1971)., 1971.

\bibitem[Barr(2002)]{Bar02}
M.~Barr.
\newblock \emph{Acyclic models}, volume~17 of \emph{CRM Monogr. Ser.}
\newblock Providence, RI: American Mathematical Society (AMS), 2002.

\bibitem[Baues(1980)]{Bau80}
H.~J. Baues.
\newblock \emph{Geometry of loop spaces and the cobar construction}, volume 230
  of \emph{Mem. Am. Math. Soc.}
\newblock Providence, RI: American Mathematical Society (AMS), 1980.

\bibitem[Beardsley and P{\'e}roux(2023)]{BP23}
J.~Beardsley and M.~P{\'e}roux.
\newblock Koszul duality in higher topoi.
\newblock \emph{Homology Homotopy Appl.}, 25\penalty0 (1):\penalty0 53--70,
  2023.

\bibitem[Berger and Moerdijk(2011)]{BM11}
C.~Berger and I.~Moerdijk.
\newblock On an extension of the notion of {Reedy} category.
\newblock \emph{Math. Z.}, 269\penalty0 (3-4):\penalty0 977--1004, 2011.

\bibitem[Bergner and Rezk(2013)]{BR13}
J.~E. Bergner and C.~Rezk.
\newblock Reedy categories and the {{\(\varTheta\)}}-construction.
\newblock \emph{Math. Z.}, 274\penalty0 (1-2):\penalty0 499--514, 2013.

\bibitem[Berman(2024)]{Ber24}
J.~D. Berman.
\newblock On lax limits in {{\(\infty\)}}-categories.
\newblock \emph{Proc. Am. Math. Soc.}, 152\penalty0 (12):\penalty0 5055--5066,
  2024.

\bibitem[Boardman and Vogt(1973)]{BV73}
J.~M. Boardman and R.~M. Vogt.
\newblock \emph{Homotopy invariant algebraic structures on topological spaces},
  volume 347 of \emph{Lect. Notes Math.}
\newblock Springer, Cham, 1973.

\bibitem[Cegarra and Remedios(2005)]{CR05}
A.~M. Cegarra and J.~Remedios.
\newblock The relationship between the diagonal and the bar constructions on a
  bisimplicial set.
\newblock \emph{Topology Appl.}, 153\penalty0 (1):\penalty0 21--51, 2005.

\bibitem[Ching(2012)]{Chi12}
M.~Ching.
\newblock Bar-cobar duality for operads in stable homotopy theory.
\newblock \emph{J. Topol.}, 5\penalty0 (1):\penalty0 39--80, 2012.

\bibitem[Cisinski(2019)]{Cis19}
D.-C. Cisinski.
\newblock \emph{Higher categories and homotopical algebra}, volume 180 of
  \emph{Camb. Stud. Adv. Math.}
\newblock Cambridge: Cambridge University Press, 2019.

\bibitem[Cordier and Porter(1997)]{CP97}
J.-M. Cordier and T.~Porter.
\newblock Homotopy coherent category theory.
\newblock \emph{Trans. Am. Math. Soc.}, 349\penalty0 (1):\penalty0 1--54, 1997.

\bibitem[Dugger and Spivak(2011)]{DS11}
D.~Dugger and D.~I. Spivak.
\newblock Rigidification of quasi-categories.
\newblock \emph{Algebr. Geom. Topol.}, 11\penalty0 (1):\penalty0 225--261,
  2011.

\bibitem[Duskin(1975)]{Dus75}
J.~Duskin.
\newblock \emph{Simplicial methods and the interpretation of ''triple''
  cohomology}, volume 163 of \emph{Mem. Am. Math. Soc.}
\newblock Providence, RI: American Mathematical Society (AMS), 1975.

\bibitem[Eilenberg and MacLane(1953)]{EM53}
S.~Eilenberg and S.~MacLane.
\newblock On the groups {{\(H(\Pi,n)\)}}. {I}.
\newblock \emph{Ann. Math. (2)}, 58:\penalty0 55--106, 1953.

\bibitem[Franz(2001)]{Fra01}
M.~Franz.
\newblock \emph{Koszul duality for tori}.
\newblock Konstanz: Hartung-Gorre Verlag; Konstanz: Univ. Konstanz (Thesis),
  2001.

\bibitem[Franz(2021)]{Fra21}
M.~Franz.
\newblock Szczarba's twisting cochain and the {Eilenberg}-{Zilber} maps.
\newblock \emph{Collect. Math.}, 72\penalty0 (3):\penalty0 569--586, 2021.

\bibitem[Franz(2024)]{Fra24}
M.~Franz.
\newblock Szczarba's twisting cochain is comultiplicative.
\newblock \emph{Homology Homotopy Appl.}, 26\penalty0 (1):\penalty0 287--317,
  2024.

\bibitem[Gagna et~al.(2020)Gagna, Harpaz, and Lanari]{GHL20}
A.~Gagna, Y.~Harpaz, and E.~Lanari.
\newblock Fibrations and lax limits of $({{\infty}},2)$-categories.
\newblock Preprint, {arXiv}:2012.04537 [math.{AT}] (2020), 2020.
\newblock URL \url{https://arxiv.org/abs/2012.04537}.

\bibitem[Gagna et~al.(2024)Gagna, Harpaz, and Lanari]{GHL24}
A.~Gagna, Y.~Harpaz, and E.~Lanari.
\newblock Cartesian fibrations of {{\(( {{\infty}},2)\)}}-categories.
\newblock \emph{Algebr. Geom. Topol.}, 24\penalty0 (9):\penalty0 4731--4778,
  2024.

\bibitem[Godement(1958)]{God58}
R.~Godement.
\newblock \emph{Topologie alg{\'e}brique et th{\'e}orie des faisceaux},
  volume~13 of \emph{Publ. Inst. Math. Univ. Strasbourg}.
\newblock Hermann, Paris, 1958.

\bibitem[Goerss and Jardine(2009)]{GJ09}
P.~G. Goerss and J.~F. Jardine.
\newblock \emph{Simplicial homotopy theory}.
\newblock Mod. Birkh{\"a}user Class. Basel: Birkh{\"a}user, reprint of the 1999
  original edition, 2009.

\bibitem[Haugseng et~al.(2023{\natexlab{a}})Haugseng, Hebestreit, Linskens, and
  Nuiten]{HHLN23}
R.~Haugseng, F.~Hebestreit, S.~Linskens, and J.~Nuiten.
\newblock Two-variable fibrations, factorisation systems and
  {{\(\infty\)}}-categories of spans.
\newblock \emph{Forum Math. Sigma}, 11:\penalty0 70, 2023{\natexlab{a}}.
\newblock Id/No e111.

\bibitem[Haugseng et~al.(2023{\natexlab{b}})Haugseng, Hebestreit, Linskens, and
  Nuiten]{HHLN23a}
R.~Haugseng, F.~Hebestreit, S.~Linskens, and J.~Nuiten.
\newblock Lax monoidal adjunctions, two-variable fibrations and the calculus of
  mates.
\newblock \emph{Proc. Lond. Math. Soc. (3)}, 127\penalty0 (4):\penalty0
  889--957, 2023{\natexlab{b}}.

\bibitem[Hebestreit and Steinebrunner(2023)]{HS23}
F.~Hebestreit and J.~Steinebrunner.
\newblock A short proof that {Rezk}'s nerve is fully faithful.
\newblock Preprint, {arXiv}:2312.09889 [math.{CT}] (2023), 2023.
\newblock URL \url{https://arxiv.org/abs/2312.09889}.

\bibitem[Hess and Tonks(2010)]{HT10}
K.~Hess and A.~Tonks.
\newblock The loop group and the cobar construction.
\newblock \emph{Proc. Am. Math. Soc.}, 138\penalty0 (5):\penalty0 1861--1876,
  2010.

\bibitem[Hoffbeck and Moerdijk(2025)]{HM25}
E.~Hoffbeck and I.~Moerdijk.
\newblock Homology of infinity-operads.
\newblock \emph{Ann. Inst. Fourier}, 75\penalty0 (3):\penalty0 929--965, 2025.

\bibitem[H{\"o}rmann(2018)]{Hor18}
F.~H{\"o}rmann.
\newblock Six-functor-formalisms and fibered multiderivators.
\newblock \emph{Sel. Math., New Ser.}, 24\penalty0 (4):\penalty0 2841--2925,
  2018.

\bibitem[Keller(2001)]{Kel01}
B.~Keller.
\newblock Introduction to {{\(A\)}}-infinity algebras and modules.
\newblock \emph{Homology Homotopy Appl.}, 3\penalty0 (1):\penalty0 1--35, 2001.
\newblock URL \url{https://eudml.org/doc/229468}.

\bibitem[Kelly(1982)]{Kel82}
G.~M. Kelly.
\newblock \emph{Basic concepts of enriched category theory}, volume~64 of
  \emph{Lond. Math. Soc. Lect. Note Ser.}
\newblock Cambridge University Press, Cambridge. London Mathematical Society,
  London, 1982.

\bibitem[Loday and Vallette(2012)]{LV12}
J.-L. Loday and B.~Vallette.
\newblock \emph{Algebraic operads}, volume 346 of \emph{Grundlehren Math.
  Wiss.}
\newblock Berlin: Springer, 2012.
\newblock URL \url{hdl.handle.net/21.11116/0000-0004-1D0F-D}.

\bibitem[Lowen and Mertens(2025)]{LM25}
W.~Lowen and A.~Mertens.
\newblock Enriched quasicategories and the templicial homotopy coherent nerve.
\newblock \emph{Algebr. Geom. Topol.}, 25\penalty0 (2):\penalty0 1029--1074,
  2025.

\bibitem[Lurie(2009)]{Lur09}
J.~Lurie.
\newblock \emph{Higher topos theory}, volume 170 of \emph{Annals of Mathematics
  Studies}.
\newblock Princeton University Press, Princeton, NJ, 2009.

\bibitem[Lurie(2011)]{Lur11}
J.~Lurie.
\newblock Higher algebra.
\newblock preprint, \url{http://www.math.harvard.edu/~lurie/}, 2011.

\bibitem[Markl et~al.(2002)Markl, Shnider, and Stasheff]{MSS02}
M.~Markl, S.~Shnider, and J.~Stasheff.
\newblock \emph{Operads in algebra, topology and physics}, volume~96 of
  \emph{Math. Surv. Monogr.}
\newblock Providence, RI: American Mathematical Society (AMS), 2002.

\bibitem[May(1975)]{May75}
J.~P. May.
\newblock \emph{Classifying spaces and fibrations}, volume 155 of \emph{Mem.
  Am. Math. Soc.}
\newblock Providence, RI: American Mathematical Society (AMS), 1975.

\bibitem[Mertens(2024)]{Mer24}
A.~Mertens.
\newblock Nerves of enriched categories via necklaces.
\newblock Preprint, {arXiv}:2408.10049 [math.{CT}] (2024), 2024.
\newblock URL \url{https://arxiv.org/abs/2408.10049}.

\bibitem[Meyer(1984)]{Mey84}
J.-P. Meyer.
\newblock Bar and cobar constructions. {I}.
\newblock \emph{J. Pure Appl. Algebra}, 33:\penalty0 163--207, 1984.

\bibitem[Meyer(1986)]{Mey86}
J.-P. Meyer.
\newblock Bar and cobar constructions. {II}.
\newblock \emph{J. Pure Appl. Algebra}, 43:\penalty0 179--210, 1986.

\bibitem[Minichiello et~al.(2023)Minichiello, Rivera, and Zeinalian]{MRZ23}
E.~Minichiello, M.~Rivera, and M.~Zeinalian.
\newblock A detailed look at the {Szczarba} map.
\newblock Preprint, {arXiv}:2305.13638 [math.{AT}] (2023), 2023.
\newblock URL \url{https://arxiv.org/abs/2305.13638}.

\bibitem[Ozornova and Rovelli(2020)]{OR20}
V.~Ozornova and M.~Rovelli.
\newblock The unit of the total d{\'e}calage adjunction.
\newblock \emph{J. Homotopy Relat. Struct.}, 15\penalty0 (2):\penalty0
  333--349, 2020.

\bibitem[P{\'e}roux(2022)]{Per22}
M.~P{\'e}roux.
\newblock The coalgebraic enrichment of algebras in higher categories.
\newblock \emph{J. Pure Appl. Algebra}, 226\penalty0 (3):\penalty0 11, 2022.
\newblock Id/No 106849.

\bibitem[Rivera(2022)]{Riv22}
M.~Rivera.
\newblock Adams' cobar construction revisited.
\newblock \emph{Doc. Math.}, 27:\penalty0 1213--1223, 2022.

\bibitem[Shih(1962)]{Shi62}
W.~Shih.
\newblock Homologie des espaces fibr{\'e}s.
\newblock \emph{Publ. Math., Inst. Hautes {\'E}tud. Sci.}, 13:\penalty0
  93--176, 1962.
\newblock URL \url{https://eudml.org/doc/103830}.

\bibitem[Stevenson(2012)]{Ste12}
D.~Stevenson.
\newblock D{\'e}calage and {Kan}'s simplicial loop group functor.
\newblock \emph{Theory Appl. Categ.}, 26:\penalty0 768--787, 2012.

\bibitem[Szczarba(1961)]{Szc61}
R.~H. Szczarba.
\newblock The homology of twisted cartesian products.
\newblock \emph{Trans. Am. Math. Soc.}, 100:\penalty0 197--216, 1961.

\bibitem[Zisman(2015)]{Zis15}
M.~Zisman.
\newblock Comparaison de deux diagonales pour un ensemble bisimplicial.
\newblock \emph{J. Homotopy Relat. Struct.}, 10\penalty0 (4):\penalty0
  1013--1021, 2015.

\end{thebibliography}

\end{document}